\newdimen\paperwidth
\newdimen\paperheight
\def\papersize#1#2{\let\p@persize\relax\paperwidth#1\paperheight#2}
\def\Afour{\papersize{210truemm}{297truemm}}
\let\p@persize\Afour
\let\onesidestyle\@twosidefalse
\let\twosidestyle\@twosidetrue
\def\margins{\@ifnextchar[{\@margins}{\@margins[\z@]}}
\def\@margins[#1]#2#3{
  \p@persize\dimen0 #3\dimen0 .5\dimen0\normalsize%
  \oddsidemargin-1truein\advance\oddsidemargin#2%
  \evensidemargin-1truein\advance\evensidemargin#2%
  \topmargin-1truein\advance\topmargin\dimen0\headsep\dimen0\footskip\dimen0%
  \textwidth\paperwidth\advance\textwidth-#2\advance\textwidth-#2%
  \textheight\paperheight\advance\textheight-#3\advance\textheight-#3%
  \headheight\baselineskip\advance\topmargin-.5\baselineskip%
  \advance\headsep-.5\baselineskip%
  \footheight\baselineskip
  \advance\textwidth-#1\advance\oddsidemargin#1
  \if@twoside\def\@themargin%
    {\ifodd\count\z@\oddsidemargin\else\evensidemargin\fi}\fi}
\def\headlinesep#1{\advance\topmargin\headsep\advance\topmargin -#1
  \advance\topmargin.5\baselineskip\headsep #1\advance\headsep-.5\baselineskip}
\def\headline{\if@twoside\let\n@xt\h@dlin@\else\let\n@xt\h@@dlin@\fi\n@xt}
\def\h@dlin@#1#2{%
  \def\@oddhead{%
    {{\leftskip\z@\rightskip\z@\noindent\normalsize#1}}}
  \def\@evenhead{%
    {{\leftskip\z@\rightskip\z@\noindent\normalsize#2}}}}
\def\h@@dlin@#1{%
  \def\@oddhead{{{\leftskip\z@\rightskip\z@\noindent\normalsize#1}}}}
\def\footline{\if@twoside\let\n@xt\f@tlin@\else\let\n@xt\f@@tlin@\fi\n@xt}
\def\f@tlin@#1#2{%
  \def\@oddfoot{%
    {{\leftskip\z@\rightskip\z@\noindent\normalsize#1}}}
  \def\@evenfoot{%
    {{\leftskip\z@\rightskip\z@\noindent\normalsize#2}}}}
\def\f@@tlin@#1{%
  \def\@oddfoot{{{\leftskip\z@\rightskip\z@\noindent\normalsize#1}}}}
\def\normalpage{\global\@specialpagefalse}
\def\ft{\@ifnextchar[{\ft@s}{\ft@}}
\def\ft@{\ft@@@s[\f@size]}
\def\ft@s[{\@ifnextchar{a}{\ft@sz[}{\ft@@s[}}
\def\ft@@s[{\@ifnextchar{s}{\ft@sz[}{\ft@@@s[}}
\def\ft@@@s[#1]{\ft@sz[at #1pt]}
\def\ft@sz[#1]#2{\font\fonttemp=#2 #1\fonttemp\ignorespaces}
\def\@@bold{bold}
\def\widebar{\ifx\math@version\@@bold
  \let\@widebar\@@@widebar\else\let\@widebar\@@widebar\fi\@widebar}
\def\@@widebar#1{\text{\setbox15\hbox{$#1$}%
  \dimen15 0.45\wd15\advance\dimen15 0.15\ht15%
  \dimen16\ht15\advance\dimen16 0.00em\advance\dimen16 0.3ex%
  \dimen17 0.65\wd15\advance\dimen17 0.05\ht15\advance\dimen17 0.1ex%
  \dimen18 0.035em\advance\dimen18 0.00ex
  \put[\dimen15,\dimen16][c]{\vrule depth 0pt height \dimen18 width \dimen17}}#1}
\def\@@@widebar#1{\text{\setbox15\hbox{$#1$}%
  \dimen15 0.45\wd15\advance\dimen15 0.15\ht15%
  \dimen16\ht15\advance\dimen16 0.00em\advance\dimen16 0.26ex%
  \dimen17 0.65\wd15\advance\dimen17 0.05\ht15\advance\dimen17 0.1ex%
  \dimen18 0.05em\advance\dimen18 0.00ex
  \put[\dimen15,\dimen16][c]{\vrule depth 0pt height \dimen18 width \dimen17}}#1}
\def\smallsquare{\raise-.065em\hbox{$\Box$}}
\def\smallblacksquare{%
  \kern.3ex\vrule depth-.03ex height1.27ex width1.15ex \kern-1.45ex \smallsquare}
\def\smallcircc{\mathop{\mkern3.5mu\text{\raise.58ex\hbox{\ft{lcircle10}a}}}}
\def\varemptyset{{\text{\raise.21ex\hbox{$\not$}}\mkern.15mu\mathrm{O}\mkern.15mu}}
\def\put{\@ifnextchar[{\@put}{\@@rput[\z@,\z@][r]}}
\def\@put[#1]{\@ifnextchar[{\@@put[#1]}{\@@@@@put[#1]}}
\def\@@put[#1][{\@ifnextchar{l}{\@@lput[#1][}{\@@@put[#1][}}
\def\@@@put[#1][{\@ifnextchar{c}{\@@cput[#1][}{\@@@@put[#1][}}
\def\@@@@put[#1][{\@ifnextchar{r}{\@@rput[#1][}{\relax}}
\def\@@@@@put[{\@ifnextchar{l}{\@@lput[\z@,\z@][}{\@@@@@@put[}}
\def\@@@@@@put[{\@ifnextchar{c}{\@@cput[\z@,\z@][}{\@@@@@@@put[}}
\def\@@@@@@@put[{\@ifnextchar{r}{\@@rput[\z@,\z@][}{\@@@@@@@@put[}}
\def\@@@@@@@@put[#1]{\@@rput[#1][r]}
\let\hm@d@\leavevmode
\long\def\@@lput[#1,#2][l]#3{\setbox0\hbox{#3}\hm@d@\raise#2\hbox to\z@{\dimen0 #1%
  \advance\dimen0-\wd0\kern\dimen0\dp0\z@\ht0\z@\wd0\z@\box0\hss}\ignorespaces}
\long\def\@@cput[#1,#2][c]#3{\setbox0\hbox{#3}\hm@d@\raise#2\hbox to\z@{\dimen0 #1%
  \advance\dimen0-.5\wd0\kern\dimen0\dp0\z@\ht0\z@\wd0\z@\box0\hss}\ignorespaces}
\long\def\@@rput[#1,#2][r]#3{\setbox0\hbox{\kern#1\raise#2\hbox{#3}}%
  \dp0\z@\ht0\z@\wd0\z@\hm@d@\box0\ignorespaces}
\def\flbox{\@ifnextchar[{\@flbox}{\@@rflbox[\z@,\z@][r]}}
\def\@flbox[#1]{\@ifnextchar[{\@@flbox[#1]}{\@@@@@flbox[#1]}}
\def\@@flbox[#1][{\@ifnextchar{l}{\@@lflbox[#1][}{\@@@flbox[#1][}}
\def\@@@flbox[#1][{\@ifnextchar{c}{\@@cflbox[#1][}{\@@@@flbox[#1][}}
\def\@@@@flbox[#1][{\@ifnextchar{r}{\@@rflbox[#1][}{\relax}}
\def\@@@@@flbox[{\@ifnextchar{l}{\@@lflbox[\z@,\z@][}{\@@@@@@flbox[}}
\def\@@@@@@flbox[{\@ifnextchar{c}{\@@cflbox[\z@,\z@][}{\@@@@@@@flbox[}}
\def\@@@@@@@flbox[{\@ifnextchar{r}{\@@rflbox[\z@,\z@][}{\@@@@@@@@flbox[}}
\def\@@@@@@@@flbox[#1]{\@@rflbox[#1][r]}
\long\def\@@lflbox[#1,#2][l]#3{\@@lput[#1,#2][l]{%
  \vtop{\leftskip\z@\parindent\z@\raggedleft\hm@d@#3}}}
\long\def\@@cflbox[#1,#2][c]#3{\@@cput[#1,#2][c]{%
  \vtop{\leftskip\z@\parindent\z@\raggedcenter\hm@d@#3}}}
\long\def\@@rflbox[#1,#2][r]#3{\@@rput[#1,#2][r]{%
  \vtop{\leftskip\z@\parindent\z@\raggedright\hm@d@#3}}}
\def\epsfdirectory#1{\xdef\epsfdir@{#1}}
\def\epsfgetlitbb#1#2 #3 #4 #5]#6{\epsfgrab #2 #3 #4 #5 .\\%
   \epsfsetgraph{\epsfdir@#6}}%
\def\epsfnormal#1{\epsfgetbb{\epsfdir@#1}\epsfsetgraph{\epsfdir@#1}}%
\def\epsfsize#1#2{%
  \ifnum\epsfscale=1000
    \ifnum\epsfxsize=0
      \ifnum\epsfysize=0
      \else \computescale{\epsfysize}{#2}
      \fi
    \else \computescale{\epsfxsize}{#1}
    \fi
  \else
    \epsfxsize=#1
    \divide\epsfxsize by 1000
    \multiply\epsfxsize by \epsfscale
  \fi}
\def\showfig#1#2{\epsfbox{#2}}
\def\fig@#1#2{\leavevmode{\framebox{\figstyl@\strut{ #1 }}}}
\def\figstyle#1{\def\figstyl@{#1}}
\def\showfigurestrue{\let\fig\showfig}
\def\showfiguresfalse{\let\fig\fig@}
  \let\epsilon\varepsilon
\let\textheta\theta      \let\theta\vartheta
          \let\phi\varphi
   \let\emptyset\varemptyset
\let\Larg@\Large
\let\hug@\huge
\def\usepackage#1{\input{#1.sty}}
\def\rm{\ifmmode\else\protect\prm\fi}
\def\it{\ifmmode\else\protect\pit\fi}
\let\mathbf\bold
\def\r@adlabel#1#2{\global\@namedef{#1@\the\@key}{#2}}
\let\Large\Larg@
\let\huge\hug@
\def\smallskip{\vskip\smallskipamount}
\def\medskip{\vskip\medskipamount}
\def\bigskip{\vskip\bigskipamount}
\def\mytrivlist{\parsep\parskip\@nmbrlistfalse
  \my@trivlist \labelwidth\z@ \leftmargin\z@
  \itemindent\z@ \def\makelabel##1{##1}}
\def\my@trivlist{\global\@newlisttrue \@outerparskip\parskip}
\def\end#1{\csname end#1\endcsname\@checkend{#1}%
  \expandafter\endgroup\if@endpe\@doendpe\fi
  \if@ignore \global\@ignorefalse \ignorespaces\fi}
\def\maketitle{\par
 \begingroup
 \def\thefootnote{\fnsymbol{footnote}}
 \def\@makefnmark{\hbox 
 to 0pt{$^{\@thefnmark}$\hss}} 
 \if@twocolumn 
 \twocolumn[\@maketitle] 
 \else 
 \global\@topnum\z@ \@maketitle \fi\thispagestyle{plain}\@thanks
 \endgroup
 \setcounter{footnote}{0}
 \let\maketitle\relax
 \let\@maketitle\relax
 \gdef\@thanks{}\gdef\@author{}\gdef\@title{}\let\thanks\relax}
\def\@maketitle{ 
 \null
 \vskip 2em \begin{center}
 {\LARGE \@title \par} \vskip 1.5em {\large \lineskip .5em
\begin{tabular}[t]{c}\@author 
 \end{tabular}\par} 
 \vskip 1em {\large \@date} \end{center}
 \par
 \vskip 1.5em}
\def\partbeforeskip#1{\def\p@rtbeforeskip{#1}}
\def\partstyle#1{\def\p@rtstyl@{#1}}
\def\partdot#1{\def\partd@t{#1}}
\def\partafterskip#1{\def\p@rtafterskip{#1}}
\def\partintrostyle#1{\def\partintr@styl@{#1}}
\def\partintrodot#1{\def\partintr@dot{#1}}
\long\def\partintrosep#1{\long\def\partintr@sep{#1}}
\def\partnewpagetrue{\def\p@rtnewp@ge{\newpage}}
\def\partnewpagefalse{\long\def\p@rtnewp@ge{\par}}
\def\partname{Part}
\def\part{\p@rtnewp@ge\addvspace\p@rtbeforeskip\@afterindentfalse\secdef\@part\@spart}
\def\@part[#1]#2{\ifnum \c@secnumdepth >-1\relax  
        \refstepcounter{part}                     
        \def\@tempa{\addcontentsline{toc}{part}}  %
        \expandafter\@tempa\expandafter{\thepart  
          \hspace{1em}#1}\else                    
        \addcontentsline{toc}{part}{#1}\fi        
   {\p@rtstyl@                       
    \ifnum \c@secnumdepth >-1\relax        
      {\partintr@styl@\partname\ \thepart  
       \partintr@dot}\partintr@sep\nobreak 
    \fi                                    
    #2\partd@t\markboth{}{}\par}
    \nobreak                       
    \vskip\p@rtafterskip           
   \@afterheading                  
    }                              
\def\@spart#1{{\p@rtcentering\p@rtstyl@                      
    #1\partd@t\par}                 
    \nobreak                        
    \vskip\p@rtafterskip            
    \@afterheading                  
  }                                 
\newif\ifsection@ftind
\newif\ifsection@ftpar
\def\sectionbeforeskip#1{\def\s@ctbeforeskip{#1}}
\def\sectionstyle#1{\def\s@ctstyl@{#1}}
\def\sectiondot#1{\def\sectiond@t{#1}}
\def\sectionafterskip#1{\def\s@ctafterskip{#1}}
\def\sectionintrostyle#1{\def\sectionintr@styl@{#1}}
\def\sectionintro#1{\def\sectionintr@{#1}}
\def\sectionintrodot#1{\def\sectionintr@dot{#1}}
\def\sectionintrosep#1{\def\sectionintr@sep{#1}}
\def\sectionindenttrue{\def\s@ctind{\parindent}}
\def\sectionindentfalse{\def\s@ctind{\z@}}
\def\sectionafterindenttrue{\section@ftindtrue}
\def\sectionafterindentfalse{\section@ftindfalse}
\def\sectionafternewlinetrue{\section@ftpartrue}
\def\sectionafternewlinefalse{\section@ftparfalse}
\newif\ifsubsection@ftind
\newif\ifsubsection@ftpar
\def\subsectionbeforeskip#1{\def\ss@ctbeforeskip{#1}}
\def\subsectionstyle#1{\def\ss@ctstyl@{#1}}
\def\subsectiondot#1{\def\subsectiond@t{#1}}
\def\subsectionafterskip#1{\def\ss@ctafterskip{#1}}
\def\subsectionintrostyle#1{\def\subsectionintr@styl@{#1}}
\def\subsectionintro#1{\def\subsectionintr@{#1}}
\def\subsectionintrodot#1{\def\subsectionintr@dot{#1}}
\def\subsectionintrosep#1{\def\subsectionintr@sep{#1}}
\def\subsectionindenttrue{\def\ss@ctind{\parindent}}
\def\subsectionindentfalse{\def\ss@ctind{\z@}}
\def\subsectionafterindenttrue{\subsection@ftindtrue}
\def\subsectionafterindentfalse{\subsection@ftindfalse}
\def\subsectionafternewlinetrue{\subsection@ftpartrue}
\def\subsectionafternewlinefalse{\subsection@ftparfalse}
\newif\ifsubsubsection@ftind
\newif\ifsubsubsection@ftpar
\def\subsubsectionbeforeskip#1{\def\sss@ctbeforeskip{#1}}
\def\subsubsectionstyle#1{\def\sss@ctstyl@{#1}}
\def\subsubsectiondot#1{\def\subsubsectiond@t{#1}}
\def\subsubsectionafterskip#1{\def\sss@ctafterskip{#1}}
\def\subsubsectionintrostyle#1{\def\subsubsectionintr@styl@{#1}}
\def\subsubsectionintro#1{\def\subsubsectionintr@{#1}}
\def\subsubsectionintrodot#1{\def\subsubsectionintr@dot{#1}}
\def\subsubsectionintrosep#1{\def\subsubsectionintr@sep{#1}}
\def\subsubsectionindenttrue{\def\sss@ctind{\parindent}}
\def\subsubsectionindentfalse{\def\sss@ctind{\z@}}
\def\subsubsectionafterindenttrue{\subsubsection@ftindtrue}
\def\subsubsectionafterindentfalse{\subsubsection@ftindfalse}
\def\subsubsectionafternewlinetrue{\subsubsection@ftpartrue}
\def\subsubsectionafternewlinefalse{\subsubsection@ftparfalse}
\newif\ifparagraph@ftind
\newif\ifparagraph@ftpar
\def\paragraphbeforeskip#1{\def\p@rbeforeskip{#1}}
\def\paragraphstyle#1{\def\p@rstyl@{#1}}
\def\paragraphdot#1{\def\paragraphd@t{#1}}
\def\paragraphafterskip#1{\def\p@rafterskip{#1}}
\def\paragraphintrostyle#1{\def\paragraphintr@styl@{#1}}
\def\paragraphintro#1{\def\paragraphintr@{#1}}
\def\paragraphintrodot#1{\def\paragraphintr@dot{#1}}
\def\paragraphintrosep#1{\def\paragraphintr@sep{#1}}
\def\paragraphindenttrue{\def\p@rind{\parindent}}
\def\paragraphindentfalse{\def\p@rind{\z@}}
\def\paragraphafterindenttrue{\paragraph@ftindtrue}
\def\paragraphafterindentfalse{\paragraph@ftindfalse}
\def\paragraphafternewlinetrue{\paragraph@ftpartrue}
\def\paragraphafternewlinefalse{\paragraph@ftparfalse}
\newif\ifsubparagraph@ftind
\newif\ifsubparagraph@ftpar
\def\subparagraphbeforeskip#1{\def\sp@rbeforeskip{#1}}
\def\subparagraphstyle#1{\def\sp@rstyl@{#1}}
\def\subparagraphdot#1{\def\subparagraphd@t{#1}}
\def\subparagraphafterskip#1{\def\sp@rafterskip{#1}}
\def\subparagraphintrostyle#1{\def\subparagraphintr@styl@{#1}}
\def\subparagraphintro#1{\def\subparagraphintr@{#1}}
\def\subparagraphintrodot#1{\def\subparagraphintr@dot{#1}}
\def\subparagraphintrosep#1{\def\subparagraphintr@sep{#1}}
\def\subparagraphindenttrue{\def\sp@rind{\parindent}}
\def\subparagraphindentfalse{\def\sp@rind{\z@}}
\def\subparagraphafterindenttrue{\subparagraph@ftindtrue}
\def\subparagraphafterindentfalse{\subparagraph@ftindfalse}
\def\subparagraphafternewlinetrue{\subparagraph@ftpartrue}
\def\subparagraphafternewlinefalse{\subparagraph@ftparfalse}
\let\@partoken\par
\long\def\@@gobble#1{}
\def\ignorepar{\@ifnextchar\@partoken{\expandafter\ignorepar\@@gobble}{\ignorespaces}}
\def\@startsection#1#2#3#4#5#6{
   \@tempskipa #4\relax
   \csname if#1@ftind\endcsname\@afterindenttrue\else\@afterindentfalse\fi
   \advance\@tempskipa by\presection
   \if@nobreak \everypar{}\else
     \addpenalty{\@secpenalty}\addvspace{\@tempskipa}%
     \allowbreak\vskip -\presection \fi \@ifstar
     {\@ssect{#1}{#2}{#3}{#4}{#5}{#6}}{\@dblarg{\@sect{#1}{#2}{#3}{#4}{#5}{#6}}}}
\def\@sect#1#2#3#4#5#6[#7]#8{\def\object@type{#1}%
   \ifnum #2>\c@secnumdepth\def\@svsec{}\def\@tempb{}%
      \else\refstepcounter{#1}\def\@svsec{{\csname #1intr@styl@\endcsname%
        {\csname #1intr@\endcsname}\csname the#1\endcsname%
        \csname #1intr@dot\endcsname\kern\csname #1intr@sep\endcsname}}%
        \edef\@tempb{\noexpand\numberline{\csname the#1\endcsname}}\fi%
   \def\@tempa{\addcontentsline{toc}{#1}}%
   \csname if#1@ftpar\endcsname%
      \begingroup #6\relax%
        \@hangfrom{\hskip #3\relax\@svsec}{\interlinepenalty \@M{#8}%
        \csname #1d@t\endcsname\par}%
      \endgroup%
      \csname #1mark\endcsname{#7}%
      \expandafter\@tempa\expandafter{\@tempb #7}%
      \ifautolabel\label*{#8}\fi%
   \else%
      \def\@svsechd{#6\hskip #3\relax%
         \@svsec{#8}%
         \csname #1d@t\endcsname%
         \csname #1mark\endcsname{#7}%
         \expandafter\@tempa\expandafter{\@tempb #7}%
         \ifautolabel\label*{#8}\fi}\fi%
   \@xsect{#1}{#5}\ignorepar}
\def\@ssect#1#2#3#4#5#6#7{%
   \ifnum #2>\c@secnumdepth\def\@tempb{}\else \def\@tempb{\numberline{}}\fi%
     \def\@tempa{\addcontentsline{toc}{s#1}}%
     \csname if#1@ftpar\endcsname
        \begingroup #6\relax
           \@hangfrom{\hskip #3}{\interlinepenalty \@M{#7}%
           \csname #1d@t\endcsname\par}%
        \endgroup
        \csname s#1mark\endcsname{#7}%
        \ifstarredcontents\expandafter\@tempa\expandafter{\@tempb #7}\fi%
        \ifautolabel\label*{#7}\fi%
     \else%
        \def\@svsechd{#6\hskip #3\relax{#7}%
        \csname #1d@t\endcsname%
        \csname s#1mark\endcsname{#7}%
        \ifautolabel\label*{#7}\fi}\fi
   \@xsect{#1}{#5}\ignorepar}
\def\@xsect#1#2{
   \csname if#1@ftpar\endcsname 
       \par \nobreak \vskip #2\relax \@afterheading
    \else \global\@nobreakfalse \global\@noskipsectrue
       \everypar{\if@noskipsec \global\@noskipsecfalse
                   \clubpenalty\@M \hskip -\parindent
                   \begingroup \@svsechd \endgroup \unskip
                   \hskip #2\relax  
                  \else \clubpenalty \@clubpenalty
                    \everypar{}\fi}\fi\ignorespaces}
\def\section{\@startsection{section}{1}{\s@ctind}
  {\s@ctbeforeskip}{\s@ctafterskip}{\s@ctstyl@}}
\def\subsection{\@startsection{subsection}{2}{\ss@ctind}
  {\ss@ctbeforeskip}{\ss@ctafterskip}{\ss@ctstyl@}}
\def\subsubsection{\@startsection{subsubsection}{3}{\sss@ctind}
  {\sss@ctbeforeskip}{\sss@ctafterskip}{\sss@ctstyl@}}
\def\paragraph{\@startsection{paragraph}{4}{\p@rind}
  {\p@rbeforeskip}{\p@rafterskip}{\p@rstyl@}}
\def\subparagraph{\@startsection{subparagraph}{4}{\sp@rind}
  {\sp@rbeforeskip}{\sp@rafterskip}{\sp@rstyl@}}
\def\statementabove#1{\def\th@bove{#1}}
\def\statementstyle#1{\def\thstyl@{#1}}
\def\statementbelow#1{\def\thb@low{#1}}
\def\statementindentfalse{\let\thind@nt\relax}
\def\statementindenttrue{\let\thind@nt\indent}
\def\statementintrostyle#1{\def\thintr@style{#1}}
\def\statementintrodot#1{\def\thintr@dot{#1}}
\def\statementintrosep#1{\def\thintr@sep{#1}}
\def\statementintrobrackets#1#2{\def\thintr@left{#1}\def\thintr@right{#2}}
\def\@thskip{\dimen100\lastskip\vskip-\dimen100%
  \th@bove\dimen101\lastskip\vskip-\dimen101%
  \ifdim\dimen100>\dimen101\else\dimen100\dimen101\fi\vskip\dimen100\vskip0pt}
\long\def\@@newtheorem#1#2#3{%
  \newenvironment{#3}%
    {\def\object@type{#3}\par\@thskip%
     \@ifnextchar[{\@enva{#3}{\thstyl@#1{#2}}}{\@envb{#3}{\thstyl@#1{#2}}}}%
    {\end{#3@}}%
  \@ifnextchar[{\@nothm{#3}}{\@nnthm{#3}}}
\def\@nothm#1[#2]#3{%
  \@ifundefined{c@#2}{\@latexerr{No theorem environment `#2' defined}\@eha}%
  {\expandafter\@ifdefinable\csname #1@\endcsname
  {\global\@namedef{the#1}{\@nameuse{the#2}}%
   \global\@namedef{c@#1}{\@nameuse{c@#2}}
   \global\@namedef{p@#1}{\@nameuse{p@#2}}
   \global\@namedef{#1@}{\@nnnthm{#2}{#3}}%
   \global\@namedef{end#1@}{\@endtheorem}}}}
\def\@nnnthm#1#2{\refstepcounter
    {#1}\@ifnextchar[{\@ynnnthm{#1}{#2}}{\@xnnnthm{#1}{#2}}}
\def\@xnnnthm#1#2{\@begintheorem{#2}{\csname the#1\endcsname}\ignorespaces}
\def\@ynnnthm#1#2[#3]{\@opargbegintheorem{#2}{\csname the#1\endcsname}{#3}\ignorespaces}
\def\renewtheorem{\@ifnextchar[{\@renewtheorem}{\@renewtheorem[{}{}]}}
\long\def\@renewtheorem[#1]{\@@renewtheorem#1}
\long\def\@@renewtheorem#1#2#3{%
  \expandafter\let\csname#3@\endcsname\undefined
  \renewenvironment{#3}%
    {\def\object@type{#3}\par\@thskip%
     \@ifnextchar[{\@enva{#3}{\thstyl@#1{#2}}}{\@envb{#3}{\thstyl@#1{#2}}}}%
    {\end{#3@}}%
  \@ifnextchar[{\@nothm{#3}}{\@nnthm{#3}}}
\def\@begintheorem#1#2{\@opargbegintheorem{#1}{#2}{}}
\def\@opargbegintheorem#1#2#3{%
        \edef\@tempx{#1}%
        \expandafter\let\expandafter\@tempy#2
        \def\@tempz{#3}%
        \mytrivlist\item[\thind@nt\hskip\labelsep%
        {\thintr@style%
          #1\ifx\@tempx\@empty\else\ifx\@tempy\relax\else\kern1ex\fi\fi#2%
          \ifx\@tempz\@empty%
            \ifx\@tempx\@empty\ifx\@tempy\relax%
            \else\thintr@dot\thintr@sep\fi\else\thintr@dot\thintr@sep\fi%
            \else%
            \ifx\@tempx\@empty\ifx\@tempy\relax\else\kern1ex\fi\else\kern1ex\fi%
           \thintr@left{#3}\thintr@right\thintr@dot\thintr@sep\fi}%
            \hskip-\labelsep]%
        \ifautolabel\label*{#3}\fi}
\def\@endtheorem{\strut\endtrivlist\thb@low}
\def\proofname{Proof}
\def\proofabove#1{\def\pf@bove{#1}}
\def\proofstyle#1{\def\pfstyl@{#1}}
\def\proofbelow#1{\def\pfb@low{#1}}
\def\proofindentfalse{\let\pfind@nt\relax}
\def\proofindenttrue{\let\pfind@nt\indent}
\def\proofintrostyle#1{\def\pfintr@style{#1}}
\def\proofintrodot#1{\def\pfintr@dot{#1}}
\def\proofintrosep#1{\def\pfintr@sep{#1}}
\def\proofintrobrackets#1#2{\def\pfintr@left{#1}\def\pfintr@right{#2}}
\def\@pfskip{\dimen100\lastskip\vskip-\dimen100%
  \pf@bove\dimen101\lastskip\vskip-\dimen101%
  \ifdim\dimen100>\dimen101\else\dimen100\dimen101\fi\vskip\dimen100\vskip0pt}
\renewenvironment{proof}%
  {\@pfskip\mytrivlist\item[\pfind@nt]\@ifnextchar[{\pro@f}{\pro@f[\prooftag]}}
  {\ifvoid\provedbox\else\hproved\fi\endtrivlist\pfb@low}
\def\pro@f[#1]{\setbox\provedbox\hbox{\provedboxcontents{#1}}\proofintro{#1}}
\def\proofintro#1{\expandafter\def\expandafter\@tempa\expandafter{#1}%
  {\pfintr@style{\proofname\ifx\@tempa\empty\else\kern1ex\pfintr@left{#1}%
  \pfintr@right\fi}\pfintr@dot\pfintr@sep}\pfstyl@\ignorespaces}
\def\provedmark#1{\def\prm@rk{#1}}
\def\provedsep#1{\def\prs@p{#1}}
\def\provedtexttrue{\def\prb@x##1{\fbox{\small##1}}}
\def\provedtextfalse{\def\prb@x##1{\prm@rk}}
\def\provedmarkrighttrue{\let\prhf@l\hfill}
\def\provedmarkrightfalse{\let\prhf@l\relax}
\def\provedboxcontents#1{\expandafter\def\expandafter\@tempa\expandafter{#1}%
  \ifx\@tempa\empty\prm@rk\else\prb@x{#1}\fi}
\def\proved{\ifmmode\eqno{\box\provedbox}\else\hproved\fi}
\def\hproved{\unskip\nobreak\prhf@l\penalty50\prs@p\hbox{}\nobreak\prhf@l
  \box\provedbox{\finalhyphendemerits=0\par}}
\def\captionstyle#1{\def\c@ptstyl@{#1}}
\def\captionintrostyle#1{\def\c@pintr@style{#1}}
\def\captionintrodot#1{\def\c@pintr@dot{#1}}
\def\captionintrosep#1{\def\c@pintr@sep{#1}}
\long\def\@makecaption#1#2{%
    \vskip\captionskip
    \setbox\@tempboxa\hbox{%
      \ifproofing\@ifundefined{the@label}{}
        {\hbox to 0pt{\vbox to 0pt{\vss\hbox{\tiny\the@label}\bigskip}\hss}}\fi
      \c@ptstyl@{\c@pintr@style #1\c@pintr@dot}\ignorespaces #2}%
    \@captionwidth=\hsize \advance\@captionwidth-2\@captionmargin
    \ifdim \wd\@tempboxa >\@captionwidth {%
        \rightskip=\@captionmargin\leftskip=\@captionmargin
        \unhbox\@tempboxa\par}%
      \else
        \hbox to\hsize{\hfil\box\@tempboxa\hfil}%
    \fi}
\def\end@Float#1{%
  \expandafter\caption\expandafter[\the@title]{%
   {\c@pintr@style%
   \ifx\the@caption\empty\ifx\the@title\empty
   \else\c@pintr@sep\fi\else\c@pintr@sep\fi
    \the@title\ifx\the@caption\empty%
     \expandafter\label\expandafter*\expandafter{\the@label}%
    \else\ifx\the@title\empty%
     \expandafter\label\expandafter*\expandafter{\the@label}%
    \else\c@pintr@dot\c@pintr@sep%
     \expandafter\label\expandafter*\expandafter{\the@label}\fi\fi}%
   \ignorespaces\the@caption}%
  \end{#1}}
\renewenvironment{Figure}{\@ifnextchar[%
  {\@myFloat{figure}}{\@myFloat{figure}[htbp]}}{\end@Float{figure}}
\def\@myFloat#1[#2]#3{%
  \begin{#1}[#2]\def\the@label{#3}}
\def\showfig{\showfigurestrue\fig}
\def\fig#1{\@ifnextchar[{\@fig{#1}}{\@fig{#1}[0pt]}}
\def\@fig#1[#2]#3{\@ifnextchar[{\@@fig{#1}[#2]{#3}}{\@@fig{#1}[#2]{#3}[0pt]}}
\def\@@fig#1[#2]#3[#4]#5#6{%
  \def\the@title{#5}\def\the@caption{#6}\centerline{\fig@{#1}{#2}{#3}}\vskip#4}
\def\fig@@#1#2#3{\leavevmode{\figstyl@\vrule width 0pt height 1.8ex%
 \smash{\framebox{\strut\def\@temp{#1}\ifx\@temp\@empty{ #3 }\else{ #1 }\fi}}}}
\def\fig@@@#1#2#3{\leavevmode\kern#2\epsfbox{#3}}
\def\figstyle#1{\def\figstyl@{#1}}
\def\oldFigure{%
  \renewenvironment{Figure}{\@ifnextchar[%
    {\@Float{figure}}{\@Float{figure}[htbp]}}{\end@Float{figure}}
    \let\showfig\@ldshowfig \let\fig\@ldfig
    \let\showfigurestrue\@ldshowfigurestrue
    \let\showfiguresfalse\@ldshowfiguresfalse
    \showfiguresfalse}
\def\@ldshowfig#1#2{\epsfbox{#2}}
\def\@ldfig@#1#2{\leavevmode{\framebox{\figstyl@\strut{ #1 }}}}
\def\@ldshowfigurestrue{\let\fig\@ldshowfig}
\def\@ldshowfiguresfalse{\let\fig\@ldfig@}
\newcounter{diagram}
\let\thediagram\theequation
\def\ftype@diagram{2}
\def\ext@diagram{lod}
\def\diagram{\@float{diagram}}
\let\enddiagram\end@float
\newif\if@diagnum
\def\diag#1{\@ifnextchar[{\@diag{#1}}{\@diag{#1}[0pt]}}
\def\@diag#1[#2]#3{\@ifnextchar[{\@@diag{#1}[#2]{#3}}{\@@diag{#1}[#2]{#3}[0pt]}}
\def\@@diag#1[#2]#3[#4]#5{
  \def\the@tag{#5}\@eqnswtrue%
  \centerline{\setbox0\hbox{\diag@{#1}{#2}{#3}}
  \dimen0 -0.5\wd0\dimen1 0.5\ht0\box0%
  \advance\dimen0 0.5\hsize\advance\dimen0 -\rightskip\advance\dimen1 #4%
  \let\@currentlabel\the@tag%
  \setbox0\hbox to 0pt{\hss\family{cmr}\shape{n}\series{m}\selectfont(\the@tag)}%
  \ifx\the@tag\@empty\refstepcounter{equation}\let\@currentlabel\theequation%
    \setbox0\hbox to 0pt{\hss\family{cmr}\shape{n}\series{m}\selectfont(\thediagram)}\fi%
  \if@eqnsw\else\let\@currentlabel\relax\setbox0\hbox to 0pt{}\fi%
  \advance\dimen1 -0.5\ht0%
  \put[\dimen0,\dimen1][l]{%
    \box0\expandafter\label\expandafter*\expandafter{\the@label}\kern0.15em}}}
\def\diag@@#1#2#3{\leavevmode{\diagstyl@\vrule width 0pt height 1.8ex%
 \smash{\framebox{\strut\def\@temp{#1}\ifx\@temp\@empty{ #3 }\else{ #1 }\fi}}}}
\def\diag@@@#1#2#3{\leavevmode\kern#2\epsfbox{#3}}
\def\diagstyle#1{\def\diagstyl@{#1}}
\def\showfiguresfalse{\let\fig@\fig@@}
\def\showfigurestrue{\let\fig@\fig@@@}
\def\showdiagramsfalse{\let\diag@\diag@@}
\def\showdiagramstrue{\let\diag@\diag@@@}
\def\n@number{\@eqnswfalse\let\@currentlabel\relax\let\the@tag\relax}
\def\equation{$$
  \@eqnswtrue\def\object@type{equation}\let\nonumber\n@number%
  \advance\c@equation1\edef\@currentlabel{\theequation}\advance\c@equation-1%
  \def\the@tag{\refstepcounter{equation}\eqno\hbox{\@eqnnum}}}
\def\tag#1{\edef\@currentlabel{#1}\def\the@tag{\eqno\hbox{\reset@font\rm(#1)}}}
\def\endequation{\the@tag$$
  \global\@ignoretrue}
\let\it@m\item
\def\item{\@ifnextchar[{\item@}{\item@@}}
\def\item@[#1]{\it@m[#1]\vskip-\lastskip\vskip\itemsep}
\def\item@@{\it@m\vskip-\lastskip\vskip\itemsep}
\def\s@titemsep{\@ifnextchar[{\s@@titemsep}{\relax}}
\def\s@@titemsep[#1]{\itemsep#1}
\let\@itemize\itemize
\let\@enditemize\enditemize
\renewenvironment{itemize}
{\@itemize\itemsep3pt\parsep0pt\topsep0pt\partopsep0pt\s@titemsep}
{\@enditemize\vskip-\lastskip\vskip\itemsep}
\let\@enumerate\enumerate
\let\@endenumerate\endenumerate
\let\@description\description
\let\@enddescription\enddescription
\def\thebibliography#1{%
 \section*{\refname}\vskip-\lastskip%
 \list{[\arabic{bibenumi}]}{\topsep0pt\settowidth\labelwidth{[#1]}%
 \leftmargin\labelwidth\advance\leftmargin\labelsep\usecounter{bibenumi}}%
 \def\newblock{\hskip .11em plus .33em minus .07em}%
 \sloppy\clubpenalty4000\widowpenalty4000\sfcode`\.=1000\relax}
\newtheorem{stat}{\statname}  \unnumbered{stat}
\newenvironment{statement}[1]{\def\statname{#1}\begin{stat}}{\end{stat}}
\newtheorem{nstat}{\nstatname}[section]
\newtheorem{definition}[nstat]{Definition}
\newtheorem{lemma}[nstat]{Lemma}
\newtheorem{proposition}[nstat]{Proposition}
\newtheorem{theorem}[nstat]{Theorem}
\newtheorem{corollary}[nstat]{Corollary}
\newtheorem{remark}[nstat]{Remark}
\let\ns\normalshape
\newtheorem[{\ns}{}]{block}[nstat]{}
\let\c@table\c@figure
\def\appendix#1{\refstepcounter{section}
  \section*{Appendix \thesection. #1}
  \setcounter{theorem}{0}}
\def\tocfill{{\ms\ \leaders\hbox{. }\hfill}}
\def\tocpageref#1{\hbox to 12pt{\hss\pageref{#1}}}
\let\ms\normalshape
\let\boldsymbol\bold
\let\nine\footnotesize
\let\mycal\cal
\def\cal#1{{\mycal #1}}
\let\mymathrm\mathrm
\def\mathrm#1{{\mymathrm #1}}
\let\texbf\bf
\def\bf{\texbf\boldmath}
\def\rightarrow{\ifx\math@version\@@bold
  \let\@rightarrow\@@@rightarrow\else\let\@rightarrow\@@rightarrow\fi\@rightarrow}
\def\@@rightarrow%
\mathchardef\@@@rightarrow="3221
\def\leftarrow{\ifx\math@version\@@bold
  \let\@leftarrow\@@@leftarrow\else\let\@leftarrow\@@leftarrow\fi\@leftarrow}
\def\@@leftarrow%
\mathchardef\@@@leftarrow="3220
\def\mapsto%
\def\downarrow{\ifx\math@version\@@bold
  \let\@downarrow\@@@downarrow\else\let\@downarrow\@@downarrow\fi\@downarrow}
\def\@@downarrow%
\def\@@@downarrow{\delimiter"3223379 }            
\def\uparrow{\ifx\math@version\@@bold
  \let\@uparrow\@@@uparrow\else\let\@uparrow\@@uparrow\fi\@uparrow}
\def\@@uparrow%
\def\@@@uparrow{\delimiter"3222378 }
\let\to\rightarrow
\def\({\gdef\nextsp{}\mbox\bgroup{\ns(}\sl\aux}
\def\aux#1{\def\tempx{#1}\let\next\aux%
   \if\tempx)\let\next\egroup{\/\nextsp\ns)}%
   \else\if\tempx f{\nextsp f\gdef\nextsp{\kern0.2ex}}%
   \else\if\tempx i{\nextsp\kern0.1ex i\gdef\nextsp{\kern0.1ex}}%
   \else\if\tempx j{\nextsp\kern0.2ex j\gdef\nextsp{\kern0.1ex}}%
   \else\if\tempx l{\nextsp\kern0.1ex l\gdef\nextsp{\kern0.2ex}}%
   \else\if\tempx I{\nextsp I\gdef\nextsp{\kern0.2ex}}%
   \else\if\tempx'{\/$\mkern0.5mu'$\gdef\nextsp{}}%
   \else\if\tempx-{\/\ns-\gdef\nextsp{}}%
   \else\nextsp\tempx\gdef\nextsp{}%
   \fi\fi\fi\fi\fi\fi\fi\fi\next}
\def\up{\@ifnextchar[{\@up}{\mathop{\uparrow}\nolimits}}
\def\@up[#1]{{\uparrow}\text{\raise .6ex\hbox{$_#1$}}}
\def\down{\@ifnextchar[{\@down}{\mathop{\downarrow}\nolimits}}
\def\@down[#1]{{\downarrow}\text{\raise .6ex\hbox{$_#1$}}}
\newcommand{\mapright}[1]
  {\smash{\mathop{\longrightarrow}\limits^{\text{\hbox to 0pt{\hss$#1$\hss}}}}}
\renewcommand{\_}{{\hbox to 1.2ex{\hss\vrule width1ex height0pt depth.4pt\hss}}}
\newcommand{\bs}{\text{\raise.4ex\hbox{\bf$\scriptscriptstyle\backslash$}}}
\newcommand\tp[2]{(#1\phantom{,}#2)}
\newcommand\red%
\newcommand\seq{\Pi\mkern1mu}
\newcommand{\diam}{\diamond}
\newcommand{\rdiam}{\mathbin{\diamond\mkern-6.5mu\diamond}}
\let\emptyset\varemptyset
\let\theta\textheta
\let\bar\widebar
\let\hat\widehat
\let\tilde\widetilde
\newcommand{\Alg}{{\cal A\mkern0.6mu l\mkern-1.2mu g}}
\newcommand{\Chb}{{\cal C\mkern-1.8mu h\mkern-0.3mu b}}
\newcommand{\Cob}{{\cal C\mkern-1.1mu o\mkern-0.2mu b}}
\newcommand{\Cobt}{\tilde{\vrule height1.4ex width0pt \smash{\Cob}}{}}
\newcommand{\bfCobt}{\text{\put[-0.07pt,0.2pt]{%
  $\tilde{\phantom{\vrule height1.4ex width0pt \smash{\Cob}}}$}}%
  \tilde{\vrule height1.4ex width0pt \smash{\Cob}}{}}
\newcommand{\Vec}{\mathrm{Vec}}
\newcommand{\Kb}{\bar\K}
\newcommand{\Kbb}{\bar\Kb}
\newcommand{\Sb}{\bar\S}
\newcommand{\Sbb}{\bar\Sb}
\newcommand{\Hb}{{\mkern1mu\bar{\mkern-1mu{\H}}}{}}
\newcommand{\Hbb}{{\mkern1mu\bar{\bar{\mkern-1mu{\H}}}}{}}
\newcommand{\Sbbh}{{\mkern3.6mu\hat{\mkern-3.6mu\Sbb}}{}}
\newcommand{\Kbbh}{{\mkern3.8mu\hat{\mkern-3.8mu\Kbb}}{}}
\newcommand{\Obj}{\mathop{\mathrm{Obj}}\nolimits}
\newcommand{\Mor}{\mathop{\mathrm{Mor}}\nolimits}
\newcommand{\id}{\mathrm{id}}
\newcommand{\one}{{\boldsymbol 1}}
\newcommand{\Int}{\mathop{\mathrm{Int}}\nolimits}
\newcommand{\Bd}{\mathop{\mathrm{Bd}}\nolimits}
\newcommand{\Cl}{\mathop{\mathrm{Cl}}\nolimits}
\newcommand{\fr}{\mathop{\mathrm{fr}}\nolimits}
\newcommand{\lk}{\mathop{\mathrm{lk}}\nolimits}
\newcommand{\pr}{\mathrm{pr}}
\newcommand{\ad}{\mathop{\mathrm{ad}}\nolimits}
\newcommand{\rot}{\mathop{\mathrm{rot}}\nolimits}
\newcommand{\sym}{\mathop{\mathrm{sym}}\nolimits}
\newcommand{\inp}{\mathrm{in}}
\newcommand{\out}{\mathrm{out}}
\newcommand{\SO}{\mathrm{SO}}
\newcommand{\Spin}{\mathrm{Spin}}
\newcommand{\B}{{\cal B}}
\newcommand{\C}{{\cal C}}
\newcommand{\D}{{\cal D}}
\newcommand{\G}{{\cal G}}
\renewcommand{\H}{{\cal H}}
\newcommand{\K}{{\cal K}}
\renewcommand{\S}{{\cal S}}
\newcommand{\T}{{\cal T}}
\newcommand{\bH}{\mkern2mu\bar{\mkern-2mu H}}
\newcommand{\bTheta}{\mkern-1mu\bar{\mkern1mu \Theta\mkern-1.4mu}\mkern1.4mu}
\newcommand{\bPhi}{\mkern-1mu\bar{\mkern1mu \Phi\mkern-1.6mu}\mkern1.6mu}
\newcommand{\bPsi}{\mkern-1mu\bar{\mkern1mu \Psi\mkern-1.6mu}\mkern1.6mu}
\newcommand{\bXi}{\mkern-1mu\bar{\mkern1mu \Xi\mkern-1.6mu}\mkern1.6mu}
\newcommand{\bbPhi}{\mkern-1.27mu\bar{\mkern1.27mu{\bPhi}\mkern-2.03mu}\mkern2.03mu}
\newcommand{\bbPsi}{\mkern-1.25mu\bar{\mkern1.25mu{\bPsi}\mkern-2.06mu}\mkern2.06mu}
\newcommand{\bbTheta}{\mkern-1.27mu\bar{\mkern1.27mu{\bTheta}\mkern-1.84mu}\mkern1.84mu}
\newcommand{\bbXi}{\mkern-1.29mu\bar{\mkern1.29mu{\bXi}\mkern-2.04mu}\mkern2.04mu}
\begin{document}

\title{\large\bf
ON FOUR-DIMENSIONAL 2-HANDLEBODIES\\ AND THREE-MANIFOLDS
\label{Version 1.0 / \today}}
\author{\normalsize\sc I. Bobtcheva\\
\normalsize\sl Dipartimento di Scienze Matematiche\\[-3pt]
\normalsize\sl Universit\`a Politecnica delle Marche -- Italia\\
\small\tt bobtchev@dipmat.univpm.it
\and
\normalsize\sc R. Piergallini\\
\normalsize\sl Scuola di Scienze e Tecnologie\\[-3pt]
\normalsize\sl Universit\`a di Camerino -- Italia\\
\small\tt riccardo.piergallini@unicam.it}
\date{}

\maketitle

\bigskip

\begin{abstract}
\baselineskip13.5pt
\medskip

We show that for any $n \geq 4$ there exists an equivalence functor $\S_n^c \to
\Chb^{3+1}$ from the category $\S_n^c$ of $n$-fold connected simple coverings of $B^3
\times [0, 1]$ branched over ribbon surface tangles up to certain local ribbon moves, and
the cobordism category $\Chb^{3+1}$ of orientable relative 4-dimensional 2-handlebody
cobordisms up to 2-deformations.

As a consequence, we obtain an equivalence theorem for simple coverings of $S^3$ branched
over links, which provides a complete solution to the long-standing Fox-Montesinos
covering moves problem. This last result generalizes to coverings of any degree results by
the second author and Apostolakis, concerning respectively the case of degree 3 and 4. We
also provide an extension of the equivalence theorem to possibly non-simple coverings of
$S^3$ branched over embedded graphs.

Then, we factor the functor above as $\S_n^c \to \H^r \to \Chb^{3+1}$, where
$\S_n^c \to \H^r$ is an equivalence functor to a universal braided category $\H^r$ freely
generated by a Hopf algebra object $H$. In this way, we get a complete algebraic
description of the category $\Chb^{3+1}$. From this we derive an analogous description of 
the category $\Cobt^{2+1}$ of 2-framed relative 3-dimensional cobordisms, which
resolves a problem posed by Kerler.

\medskip\smallskip\noindent
{\sl Keywords}\/: 3-manifold, 4-manifold, handlebody, branched covering,
ribbon surface, Kirby calculus, braided Hopf algebra, quantum invariant.

\medskip\noindent
{\sl AMS Classification}\/: 57M12, 57M27, 57N13, 57R56, 57R65, 16W30, 17B37, 18D35.

\end{abstract}
\newpage

\section*{Contents}

\vskip-\lastskip
\begin{itemize}\itemsep6pt

\item[]{\bf Introduction
  \tocfill\tocpageref{introduction/sec}}

\item[\bf\ref{preliminaries/sec}.]
  {\bf Preliminaries
  \tocfill\tocpageref{preliminaries/sec}}
\begin{itemize}\itemsep0pt
\item[\ref{links/sec}.]
  {Links and diagrams
  \tocfill\tocpageref{links/sec}}
\item[\ref{handles/sec}.]
  {Handlebodies
  \tocfill\tocpageref{handles/sec}}
\item[\ref{ribbons/sec}.]
  {Ribbon surfaces
  \tocfill\tocpageref{ribbons/sec}}
\item[\ref{coverings/sec}.]
  {Branched coverings
  \tocfill\tocpageref{coverings/sec}}
\item[\ref{categories/sec}.]
  {Categories
  \tocfill\tocpageref{categories/sec}}
\end{itemize}\vskip-\lastskip

\item[\bf\ref{cobordisms/sec}.]
  {\bf 4-dimensional cobordisms and Kirby tangles
  \tocfill\tocpageref{cobordisms/sec}}
\begin{itemize}\itemsep0pt
\item[\ref{Chbn/sec}.]
  {The categories of relative handlebody cobordisms $\Chb_n^{3+1}$
  \tocfill\tocpageref{Chbn/sec}}
\item[\ref{Kirby/sec}.]
  {Bridged tangles and labeled Kirby tangles
  \tocfill\tocpageref{Kirby/sec}}
\item[\ref{K/sec}.]
  {The categories $\K_n$ and the functors $\up_k^n$ and $\down_k^n$
  \tocfill\tocpageref{K/sec}}
\end{itemize}\vskip-\lastskip

\item[\bf\ref{surfaces/sec}.]
  {\bf Labeled ribbon surface tangles
  \tocfill\tocpageref{surfaces/sec}}
\begin{itemize}\itemsep0pt
\item[\ref{rs-tangles/sec}.]
  {The category $\S$ of ribbon surface tangles
  \tocfill\tocpageref{rs-tangles/sec}}
\item[\ref{S/sec}.]
  {The categories $\S_n$ and the functors $\up_k^n$
  \tocfill\tocpageref{S/sec}}
\item[\ref{Theta/sec}.]
  {The functors $\Theta_n: \S_n \to \K_n$
  \tocfill\tocpageref{Theta/sec}}
\item[\ref{fullness/sec}.]
  {Fullness of $\Theta_n: \S^c_n \to \K^c_n$ for $n \geq 3$
  \tocfill\tocpageref{fullness/sec}}
\item[\ref{Xi/sec}.]
  {The functor $\Xi_n: \K_1 \to \S^c_n$ for $n \geq 4$
  \tocfill\tocpageref{Xi/sec}}
\item[\ref{K=S/sec}.]
  {Equivalence between $\K^c_n$ and $\S^c_n$ for $n \geq 4$
  \tocfill\tocpageref{K=S/sec}}
\end{itemize}\vskip-\lastskip

\item[\bf\ref{algebra/sec}.]
  {\bf Universal groupoid ribbon Hopf algebra
  \tocfill\tocpageref{algebra/sec}}
\begin{itemize}\itemsep0pt
\item[\ref{HG/sec}.]
  {The universal groupoid Hopf algebras $\H(\G)$ and $\H^u(\G)$
  \tocfill\tocpageref{HG/sec}}
\item[\ref{HrG/sec}.]
  {The universal groupoid ribbon Hopf algebra $\H^r(\G)$
  \tocfill\tocpageref{HrG/sec}}
\item[\ref{Phi/sec}.]
  {The functors $\Phi_n: \H^r_n \to \K_n$
  \tocfill\tocpageref{Phi/sec}}
\item[\ref{adjoint/sec}.]
  {The adjoint morphisms
  \tocfill\tocpageref{adjoint/sec}}
\item[\ref{reduction/sec}.]
  {The stabilization and reduction functors $\up_X$ and $\down_X$
  \tocfill\tocpageref{reduction/sec}}
\item[\ref{Psi/sec}.]
  {The functors $\Psi_n: \S_n \to \H_n^r$
  \tocfill\tocpageref{Psi/sec}}
\item[\ref{K=H/sec}.]
  {Equivalence between $\K_n^c$ and $\H_n^{r,c}$
  \tocfill\tocpageref{K=H/sec}}
\end{itemize}\vskip-\lastskip

\item[\bf\ref{boundaries/sec}.]
  {\bf 3-dimensional cobordisms as boundaries
  \tocfill\tocpageref{boundaries/sec}}
\begin{itemize}\itemsep0pt
\item[\ref{Cob/sec}.]
  {The categories of relative cobordisms $\bfCobt^{2+1}_n$ and $\Cob^{2+1}_n$
  \tocfill\tocpageref{Cob/sec}}
\item[\ref{quotients/sec}.]
  {The quotient categories $\Kb_n$, $\Kbb_n$, $\Sb_n$ and $\Sbb_n$
  \tocfill\tocpageref{quotients/sec}}
\item[\ref{Kb=Sb/sec}.]
  {Equivalences $\Kb_n^c \cong \Sb_n^c$ and $\Kbb_n^c \cong \Sbb_n^c$ for $n \geq 4$
  \tocfill\tocpageref{Kb=Sb/sec}}
\item[\ref{Hb/sec}.]
  {The quotient categories $\Hb_n$ and $\Hbb_n$
  \tocfill\tocpageref{Hb/sec}}
\item[\ref{Kb=Hb/sec}.]
  {Equivalences $\Kb_n^c \cong \Hb_n^{r,c}$ and $\Kbb_n^c \cong \Hbb_n^{r,c}$
  \tocfill\tocpageref{Kb=Hb/sec}}
\end{itemize}\vskip-\lastskip

\item[\bf\ref{covering-moves/sec}.]
  {\bf Branched coverings of $B^4$ and $S^3$
  \tocfill\tocpageref{covering-moves/sec}}
\begin{itemize}\itemsep0pt
\item[\ref{B4/sec}.]
  {Covers of $B^4$ simply branched over ribbon surfaces
  \tocfill\tocpageref{B4/sec}}
\item[\ref{S3/sec}.]
  {Equivalence of branched covers of $S^3$
  \tocfill\tocpageref{S3/sec}}
\end{itemize}\vskip-\lastskip

\item[]{\bf References
  \tocfill\tocpageref{references/sec}}

\end{itemize}

\newpage

\section*{Introduction%
\label{introduction/sec}}

The present work is based on the preprints \cite{BP04} and \cite{BP05} and extends
their main results from closed manifolds to the category of cobordisms. In particular, it
is the synthesis of years-long search for the answers to the following distinct problems
in the topology of 3-manifolds:

\begin{statement}{Problem A}
Find a finite set of local moves that relate any two labeled links representing the
same 3-manifold as a simple branched covering of $S^3$.
\end{statement}

\begin{statement}{Problem B}
Find a universal monoidal braided category freely generated by a Hopf algebra object,
which is equivalent to the category $\Cobt^{2+1}\!$ of 2-framed relative 3-cobordisms.
\end{statement}

Problem A is an old one, having been formulated by Fox and Montesinos in the seventies.
It naturally arose in relation to the Hilden-Hirsch-Montesinos theorem
\cite{Hi74,Hs74,Mo74}, which states that any closed connected oriented 3-manifold can be
realized as a 3-fold covering of $S^3$ branched over a link (actually a knot). Such a
covering can be described in terms of its branching link, with each arc labeled by the
monodromy of the corresponding meridian (a transposition in the symmetric group
$\Sigma_n$). Hence, Problem A just expresses the equivalence problem for branched covers
in terms of moves for labeled links, usually called covering moves.

In \cite{Mo85}, Montesinos conjectured that the two covering moves \(M1) and \(M2) in
Figure \ref{covering-moves05/fig} suffice to relate 3-fold simple coverings of $S^3$
representing the same 3-manifold, up to 4-fold stabilization. This was proved to be true
by the second author in \cite{Pi95}, where the question was also posed, whether these
local moves together with stabilization also suffice for simple coverings of arbitrary
degree. In \cite{Ap03} Apostolakis answered this question in the positive for 4-fold
coverings, up to 5-fold stabilization.

The complete solution of Problem A is given by Theorem \ref{equiv3/thm}, asserting
that labeled isotopy and Montesinos moves suffice to relate any two $n$-fold branched
covering representations of the same 3-manifold, provided that $n \geq 4$. The proof is
independent on all the above-mentioned partial results, being based on the branched
covering interpretation of the generalized Kirby calculus developed in Chapter
\ref{surfaces/sec} and Section \ref{Kb=Sb/sec}.

We note that this result and its proof were already contained in the preprint \cite{BP04}
(see below for some already published applications). However, the exposition in the
present work is adapted to the category of cobordisms, as a preliminary result for
answering Problem B.

\medskip

Problem B was risen by Kerler in \cite{Ke02} (cf. \cite[Problem 8-16 (1)]{Oh02}). In that
paper, the author considers a universal monoidal braided category $\Alg$, freely generated
by a Hopf algebra object and a full (surjective) functor $\Alg \to \Cobt^{2+1}$. Moreover,
he poses the challenge to find a set of additional relations for $\Alg$, such that the
above functor induces a category equivalence on the quotient of $\Alg$ by the new
relations. 

This problem is related to the origin of the $(2+1)$-dimensional TQFT's (Topological
Quantum Field Theories). We remind that any such theory is a functor from $\Cobt^{2+1}$ to
the category of vector spaces $\Vec_k$ over a field $k$. The first constructions of TQFT's
were based on semisimple quotients of the representation spaces of quasi\-triangular
ribbon Hopf algebras (see \cite{RT91}). Such approach was generalized in \cite{KL01},
where a TQFT was associated to any modular category with a special braided Hopf algebra in
it. The solution to Problem B implies that any TQFT's lives on the representation
space of a braided Hopf algebra. Of course, the braided Hopf algebra axioms alone are too
weak. Indeed, in the definition of $\Alg$, Kerler adds the requirement that there exist a
ribbon and a non-degenerate Hopf copairing morphisms, satisfying some extra axioms. 

Here, we prove that in order to have that the quotient algebraic category is equivalent to
$\Cobt^{2+1}$, two more axioms suffice in addition to the ones presented by Kerler in
\cite{Ke02} (cf. Theorem \ref{alg-kirby-eq-bd/thm}). The first one describes the
propagation of the ribbon element through the comultiplication morphism, while the second
one relates the copairing and the braiding morphisms. We call the resulting quotient
algebra {\sl the universal self-dual ribbon Hopf algebra} and denoted it by $\Hb^r$. The
complete list of its relations, including the ones of Kerler, is presented in Tables
\ref{table-Hr1axioms/fig} and \ref{table-Hrbb1/fig}. We note that the algebra $\Hb^r$
(with all its relations) was already defined in the preprint \cite{BP05}. Moreover, it was
shown there that the functor $\Hb^r\to \Cobt^{2+1}$ induces a bijective map on the closed
morphisms. So, the only new point in the present work is the generalization of this result
to all morphisms, completing in this way the answer to Problem B.

\medskip

As it should be clear from the discussion above, Problems A and B originate from different
contexts and at first glance they seem quite distant. Yet, an indication that they may be
related comes from the very reason for which they are posed. Their solution leads in both
cases to a complete diagrammatic language for describing the concerned topological
objects. Such languages are respectively based on planar diagrams of links labeled by
transpositions in $\Sigma_n$, and on planar diagrams of certain decorated uni- and
tri-valent graphs describing morphisms in the corresponding Hopf algebra. Both kinds of
diagrams are taken modulo a finite set of local moves, i.e. moves which only change a
given portion of the diagram inside a disk in a way independent on its outside. The fact
that the equivalence moves are finite in number and local in nature, is an important
feature in view of the definition of invariants.

\medskip

Of course, the reason for putting the two results together in the present work is not the
philosophical similarity in the statements, but the fact that the proofs are intrinsically
related. In both cases, we go up one dimension and discuss the following problems on the
cobordism category $\Chb^{3+1}$ of relative 4-dimensional 2-handlebodies up to
2-deformations (handle slidings and creation/cancelation of 1/2-handle pairs).

\begin{statement}{Problem A$'$}
Find a finite set of local moves which relate any two labeled ribbon surface tangles
representing the same cobordism in $\Chb^{3+1}$ as a simple branched covering of $B^3
\times [0,1]$.
\end{statement}

\begin{statement}{Problem B$'$}
Find a universal monoidal braided category freely generated by a Hopf algebra object,
which is equivalent to the cobordisms category $\Chb^{3+1}\!$.
\end{statement}
 
Any cobordism in $\Cobt^{2+1}$ represents the framed boundary of one in $\Chb^{3+1}$.
Moreover, $\Cobt^{2+1}$ is equivalent to the quotient of $\Chb^{3+1}$ modulo 1/2-handle
tradings. This allows us to derive the solutions of Problems A and B from those of
Problems A$'$ and B$'$ respectively.

\medskip

If we think of the handlebodies in $\Chb^{3+1}$ as build on a single 0-handle, a standard
way of describing them is through Kirby tangles, representing the attaching maps of the
corresponding 1- and 2-handles in the boundary of the 0-handle. Such tangles, modulo
isotopy and 1/2-handle moves, form a category $\K$ equivalent to $\Chb^{3+1}$.

Even if our goal is the description of the morphisms in $\Chb^{3+1}$, as an intermediate
step we will need to work with the category of relative 4-dimensional 2-handlebodies build
on $n$ 0-handles. In Section \ref{K/sec}, we introduce the corresponding category $\K_n$
of generalized admissible Kirby tangles, which describes such handlebodies through the
attaching maps of their handles in the boundaries of the $n$ 0-handles. Clearly, $\K_n$
with $n > 1$ is much ``bigger'' then $\K = \K_1$, but for any $n > k \geq 1$ there exists
an injective stabilization functor $\up_k^n: \K_k \to \K_n$. The restriction of this
functor to the subcategories $K_k^c \subset \K_k$ and $K_n^c \subset \K_n$ of
``connected'' cobordisms is invertible, i.e. there exists a reduction functor $\down_k^n:
\K_n^c \to \K_k^c$ such that ${\down_k^n} \circ {\up_k^n} = \id$, while ${\up_k^n} \circ
{\down_k^n} \simeq \id$ up to natural transformation. In particular, for any $n \geq 2$,
the category $\K_n^c$ is equivalent to $\K^c_1 = \K_1$.

The reason for considering 4-dimensional 2-handlebodies build on $n$ 0-handles is that
they naturally occur when representing handlebodies in $\Chb^{3+1}$ as simple $n$-fold ($n
\geq 2$) coverings of $B^3 \times [0,1]$ branched over ribbon surfaces. In fact, we will
use such branched covering representation of handlebodies in $\Chb^{3+1}$ to solve 
problems A$'$ and B$'$, according to the following scheme.

\medskip

In Chapter \ref{surfaces/sec}, we construct the category $\S_n$ of ribbon surface tangles,
labeled in the symmetric group $\Sigma_n$, up to certain labeled isotopy moves, called
1-isotopy moves, and two covering moves. Analogously to $\K_n$ such category describes
possibly disconnected cobordisms. Indeed, there is a naturally defined functor $\Theta_n:
\S_n \to \K_n$. Then, we consider the subcategory $\S_n^c \subset \S_n$ which describes
connected cobordisms through the restriction $\Theta_n: \S_n^c \to \K_n^c$. Finally, we
define a functor $\Xi_n: \K_1 \to \S_n^c$ for $n \geq 4$, and show that ${\down_1^n} \circ
\Theta_n$ and $\Xi_n$ are inverse to each other up to natural equivalences. Therefore, we
have the following diagram of category equivalences, which solves Problem A$'$.

\vskip9pt
\centerline{\epsfbox{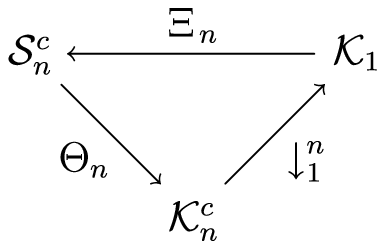}}
\vskip12pt

As a consequence, Theorem \ref{equiv4/thm} provides local moves (see Figures
\ref{covering-moves01/fig} and \ref{covering-moves02/fig}) to relate any two labeled
ribbon surfaces representing the same connected oriented 4-dimensional 2-handlebody (up to
2-equivalence) as $n$-fold simple covering of $B^4$, for $n \geq 4$. Based on this result
and passing to suitable quotient categories representing the boundary, we obtain the
above-mentioned answer to Problem A.

\medskip

In order to answer Question B, in Chapter \ref{algebra/sec} we introduce the algebraic
analog of the category $\K_n$, which is the universal ribbon $\G_n$-Hopf algebra $\H_n^r$.
Here $\G_n$ is the groupoid with the numbers $1, \dots, n$ as the objects and with a
unique morphism for each ordered pair of objects. In particular, we define a functor
$\Phi_n: \H_n^r \to \K_n$, which is the analog of Kerler's functor $\Alg \to \Cobt^{2+1}$
in the context of relative 4-dimensional 2-handlebodies build on multiple 0-handles.
Analogously to $\K_n$, for any $n > k \geq 1$ there exist a stabilization functor
${\up_k^n}: \H_k^r \to \H_n^r$, which is invertible (up to natural transformations) on a
subcategory $\H_n^{r,c}$, through the reduction functor ${\down_k^n}: \H_n^{r,c} \to
\H_k^{r,c}$. In particular, for any $n \geq 2$, the category $\H_n^{r,c}$ is equivalent to
$\H_1^{r,c} = \H_1^r$. Moreover, for any $n > k \geq 1$ we have that ${\down_k^n} \circ
\Phi_n = \Phi_k \circ {\down_k^n}: \H_n^r \to \K_k$.

Our goal is to show that $\Phi_1: \H^r \to \K$ is a category equivalence. We prove this
by factoring the functor $\Theta_n: \S_n \to \K_n$ as $\Phi_n\circ \Psi_n$ with $\Psi_n:
\S_n \to \H_n^r$, and showing that for any $n \geq 4$ the restriction $\Psi_n: \S_n^c \to
\H_n^{r,c}$ is a category equivalence. This fact and the commutative diagram below imply 
that the same is true for $\Phi_k: \H_k^{r,c} \to \K_k^c$ for any $k \geq 1$.

\vskip9pt
\centerline{\epsfbox{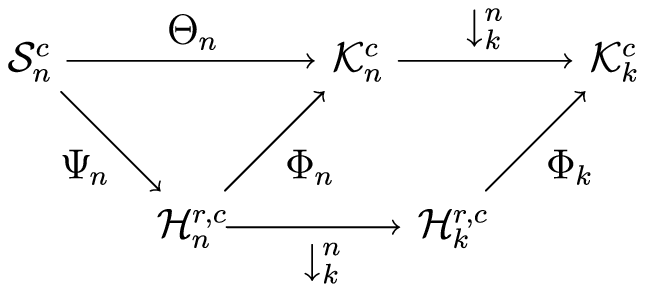}}
\vskip12pt

In this way, we get a completely algebraic description of the category $\Chb^{3+1}$ of
connected 4-dimensional 2-handlebodies up to 2-equivalence, as the universal ribbon
category $\H^r = \H_1^r$ (cf. Theorem \ref{ribbon-kirby/thm}). This solves Problem B$'$. 
Then, the answer to Problem B is obtained in Section \ref{Kb=Hb/sec}, by passing
once again to suitable quotient categories $\Cobt^{2+1}$ and $\Hb^r$.

\smallskip

We would like to make few comments about applications of the present work. 
\begin{itemize}
\item[{\sl a}\/)] 
The solution of Problem A, i.e. the description of 3-dimensional manifolds in terms of
labeled links modulo isotopy and covering moves, has already been ap\-plied in \cite{Ht10}
and \cite{No11} to the construction of invariants of 3-manifolds, and in \cite{DMA10} to
the construction of convolution algebras of spin networks and spin foams.
\item[{\sl b}\/)] 
The branched covering representation of 4-dimensional 2-handlebodies provided by Theorem
\ref{equiv4/thm} is used in \cite{APZ11}, to relate the monodromy descriptions of any two
topological Lefschetz fibrations over the disk having 2-equivalent total spaces.
\item[{\sl c}\/)]
The notion of 2-deformation of 4-dimensional 2-handlebodies is conjectured to be different
from the one of diffeomorpism, which in this context is equivalent to 3-deformation (cf.
Section \ref{handles/sec} and Remark \ref{1-isotopy/rem}). The solution of Problem~B$'$
associates to any braided ribbon Hopf algebra an invariant of 4-dimensional 2-handlebodies
under 2-deformations and implies that, if the conjecture is true, there should exist a
braided ribbon Hopf algebra whose invariant distinguishes diffeomorphic but not
2-equivalent handlebodies. The search for such Hopf algebras is a non-trivial challenge,
since they have to combine the properties of being unimodular, not self-dual and not
semisimple (see the discussion in \cite{BM03} about the the HKR-type invariants associated
to an ordinary ribbon Hopf algebra).
\item[{\sl d}\/)] 
By restricting the map ${\down_1^2} \circ \Psi_2$ to double branched covers of $B^4$, i.e.
to ribbon surfaces labeled with the single permutation $(1\;2)$, one obtains an invariant
of ribbon surfaces under 1-isotopy moves, taking values in $\H^r$. We remind that the
description of all the moves relating isotopic ribbon surfaces is still an open question.
We conjecture here (see Remark \ref{1-isotopy/rem}) that 1-isotopy is weaker than isotopy
and that braided ribbon Hopf algebras should detect such a difference.
\item[{\sl e}\/)] 
In chapter \ref{algebra/sec} we introduce and study the general concept of a groupoid
ribbon Hopf algebra, even if it is being used later only in the case of the specific and
very simple groupoid $\G_n$. The reasons for doing this are two. The first one is that
working with the general case does not make heavier the algebraic part, actually it makes
it easier to follow. The second one is that, in our believe, the group ribbon Hopf algebra
(which is another particular case of the construction) should be useful in finding an
algebraic description of other types of topological objects, for example the group
manifolds studied in \cite{Vi01}.
\end{itemize}

\medskip

As we already said, the present work is based on the preprints \cite{BP04} and
\cite{BP05}. The editor of the journal to which we submitted those preprints for
publication, suggested to put them together in a single monograph. Moreover, the referee
of \cite{BP05} observed that it should not be too difficult to extend the result there to
cobordisms. Indeed, we have been able to achieve such extension by following the main line
of the proof in \cite{BP05} and overcoming some technical obstacles in the definition of
the natural transformations. Following the indications of the referee, we have also
expanded Section \ref{cobordisms/sec} with the definition of the topological category and
tried to improve the exposition of the algebraic part. In particular, we have added
Section \ref{adjoint/sec}, where we describe the properties of the adjoint morphisms and
express the two new axioms for $\H^r$ in terms of such action. This allows to simplify the
proofs in Section \ref{reduction/sec}, even if they still remain highly technical.

\newpage

\section{Preliminaries%
\label{preliminaries/sec}}

In this chapter we collect some preliminary definitions and results. Some are known and we
just recall them in order to establish terminology and notations. Some others are new and
we include them here rather than in a specific chapter, since they are widely used in the 
following or they seem to have interest in their own right, independently from their
application in the present context.

In particular: in Section \ref{links/sec} we define the notion of vertically trivial
state for a link diagram and prove an elementary property of such states, that makes them
much more manageable than trivial states; in Section \ref{handles/sec} we briefly
discuss relative handlebodies build on a manifold with boundary; in Section
\ref{ribbons/sec} we define the 1-isoto\-py relation for ribbon surfaces and express it 
in terms of certain moves of planar diagrams; in Section \ref{coverings/sec} we introduce 
ribbon moves for labeled ribbon surfaces representing simple branched coverings of $B^4$.

\subsection{Links and diagrams%
\label{links/sec}}

As usual, we represent a link $L \subset R^3 \subset R^3 \cup \infty \cong S^3$ by a
planar {\sl diagram} $D \subset R^2$, consisting of the orthogonal projection of $L$ into
$R^2$, that can be assumed self-transversal after a suitable horizontal (height
preserving) isotopy of $L$, with a {\sl crossing state} for each double point, telling
which arc passes over the other one. Such a diagram $D$ uniquely determines $L$ up to
vertical isotopy. On the other hand, link isotopy can be represented in terms of diagrams
by crossing preserving isotopy in $R^2$ and Reidemeister moves.

A link $L$ is called {\sl trivial} if it bounds a disjoint union of disks in $R^3$. It is
well-known that any link diagram $D$ can be transformed into a diagram $D'$ of a trivial
link by suitable crossing changes, that is by inverting the state of some of its
crossings. We say that $D'$ is a {\sl trivial state} of $D$. Actually, any non-trivial
link diagram $D$ has many trivial states, but it is not clear at all how they are related
to each other. For this reason, we are lead to introduce the more restrictive notions of
vertically trivial link and vertically trivial state of a link diagram.

\begin{definition}\label{vert-trivial/def}
We say that a link $L \subset R^3$ is {\sl vertically trivial} if it meets any horizontal
plane (parallel to $R^2$) in at most two points belonging to the same component.
\end{definition}

If $L$ is a vertically trivial link, then the height function separates the components of
$L$ (that is the height intervals of different components are disjoint), so that we can
vertically order the components of $L$ according to their height. Moreover, each component
can be split into two arcs on which the height function is monotone, assuming the only
unique minimum and maximum values at the common endpoints. Then, all the (possibly
degenerate) horizontal segments spanned by $L$ in $R^3$ form a disjoint union of disks
bounded by $L$. This proves that $L$ is a trivial link.

\begin{definition}\label{vert-state/def}
By a {\sl vertically trivial state} of a link diagram $D \subset R^2$ we mean any trivial
state of $D$ which is the diagram of a vertically trivial link.
\end{definition}

A vertically trivial state $D'$ of $D$ can be constructed by the usual naive unlinking
procedure: 1) number the components of the link $L$ represented by $D$ and fix on each
component an orientation and a starting point away from crossings; 2) order the points of
$L$ lexicographically according to the numbering of the components and then to the
starting point and the orientation of each component; 3) resolve each double point of $D$
into a crossings of $D'$ by letting the arc which comes first in the order pass under the
other one. The link $L'$ represented by $D'$ can be clearly assumed to be vertically
trivial, considering on it a height function which preserves the order induced by the
vertical bijection with $L$ except for a small arc at the end of each component. Figure
\ref{links01/fig} \(a) shows how the height function of a component looks like with
respect to a parametrization having the starting point and the orientation fixed above.
Keeping the parametrization fixed but changing the starting point or the orientation we
get different height functions as in Figures \ref{links01/fig} \(b) and \(c)
respectively.

\begin{Figure}[htb]{links01/fig}
{}{Height functions for vertically trivial knots}
\centerline{\fig{}{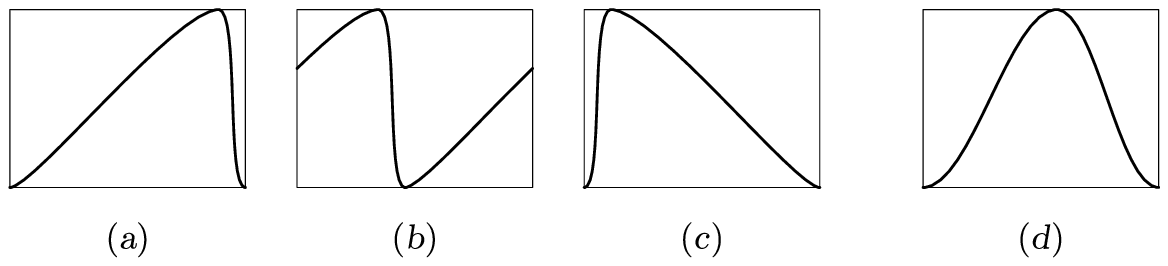}}
\vskip-6pt
\end{Figure}

Notice that the above unlinking procedure gives us only very special vertically trivial
states. While it is clear how to pass from \(a) to \(b), by moving the starting point
along the component, going from \(a) to \(c) turns out to be quite mysteri\-ous without
considering generic vertically trivial states. The height function of a component for such
a state, with respect to a parametrization starting from the unique minimum point, looks
like in Figure \ref{links01/fig} \(d), that is apparently an intermediate state between
\(a) and \(c). The following proposition settles the problem of relating different
vertically trivial states of the same link diagram.

\begin{proposition}\label{vert-state/thm}
Any two vertically trivial states $D'$ and $D''$ of a link diagram $D$ are related by a
sequence $D_0, D_1, \dots, D_n$ of vertically trivial states of $D$, such that $D_0 = D'$,
$D_n = D''$ and, for each $i = 1, \dots, n$, $D_i$ is obtained from $D_{i-1}$ by changing 
all the crossings between two vertically adjacent components or by changing a single 
self-crossing of one component. Moreover, in the latter case the singular link between 
$D_{i-1}$ and $D_i$ is trivial, meaning that the unique singular component of it spans a 
1-point union of two disks disjoint from all the other components.
\end{proposition}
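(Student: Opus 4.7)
The plan is to split the difference between $D'$ and $D''$ into two independent contributions: the vertical ordering of the components, and the self-crossing pattern of each individual component. The first type of move in the statement (flipping all crossings between two vertically adjacent components) will handle the ordering, while the second type (a single trivial self-crossing change) will handle the self-crossings.

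For the vertical ordering, observe that every vertically trivial state of $D$ determines a permutation of the components, recording their relative heights. If the permutations $\sigma'$ and $\sigma''$ associated to $D'$ and $D''$ differ, we factor $\sigma'' \circ (\sigma')^{-1}$ as a product of adjacent transpositions and realize each such transposition by a type 1 move: swapping the heights of the two vertically adjacent components flips precisely the crossings between them and preserves the vertically trivial character of the link. After these moves we are reduced to the case $\sigma' = \sigma''$, in which the inter-component crossings of $D'$ and $D''$ coincide and only the self-crossings of individual components can still differ.

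We then work one component at a time and reduce to the key claim: two vertically trivial states $K'$ and $K''$ of the self-diagram $D_K$ of a single component can be connected by single self-crossing changes through vertically trivial states, each change being trivial in the sense of the proposition. The geometric content is that such a change is necessarily localized at a \emph{mixed} crossing, one whose two preimages lie on opposite monotone arcs of the knot; at the instant when their heights coincide, the resulting singular knot has a single double point splitting it into an upper loop and a lower loop, each of which is vertically trivial in a closed half-space. Each loop therefore bounds an embedded disk in its half-space, the two disks meet only at the double point, and they are disjoint from the other components since $K$ lies in a separate height slab. This yields exactly the required 1-point union of two disks.

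The main obstacle is producing such an interpolating sequence in the first place. The idea is to view the set of vertically trivial realizations of $D_K$ as a subset of a contractible space of height functions on $S^1$ (those with a unique minimum and maximum and monotone on the two complementary arcs), and to show that this subset is path-connected, stratified by codimension-one walls where a single mixed crossing becomes singular. A generic path between $K'$ and $K''$ crosses each wall transversely at isolated times, producing the desired sequence of trivial self-crossing changes. Some care is needed for paths that would slide the minimum or maximum past a self-crossing of $D_K$, which corresponds to interpolating among the different types of height functions shown in Figure \ref{links01/fig}; this can be handled either by perturbing such paths off the higher-codimension stratum, or by decomposing such critical movements explicitly as finite compositions of mixed crossing changes.
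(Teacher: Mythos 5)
Your proposal is correct and takes essentially the same route as the paper's proof: reduce to a single component by realizing transpositions of vertically adjacent components, identify vertically trivial realizations with height functions on $S^1$ having a unique minimum and maximum, join two such functions by a path transverse to the codimension-one locus where the heights of the two preimages of a crossing agree, and verify triviality of the singular knot via the union of horizontal segments spanned by it (the two cone-like disks meeting at the double point). Your closing caveat about sliding the minimum or maximum past a self-crossing is not a genuine obstruction, since the only walls are where the two heights over a crossing coincide and such a crossing is automatically mixed, so the generic perturbation you invoke already suffices.
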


\begin{proof} 
Since the effect of changing all the crossings between two vertically adjacent components
is the transposition of these components in the vertical order, by iterating this kind of
modification we can permute as we want the vertical order of all the components. Hence, we
only need to address the case of a knot diagram.

Given a knot diagram $D \subset R^2$ with double points $x_1, \dots, x_n \in R^2$, we
consider a parametrization $f:S^1 \to D$ and denote by $t'_i,t''_i \in S^1$ the two values
of the parameter such that $f(t'_i) = f(t''_i) = x_i$, for any $i = 1, \dots, n$.

For any smooth knot $K \subset R^3$ which projects to a vertically trivial state of $D$,
let $f_K: S^1 \to R^3$ be the parametrization of $K$ obtained by lifting $f$ and $h_K:S^1
\to R$ be the composition of $f_K$ with the height function. Then, $h_K$ is a smooth
function with the following properties: 1)~$h_K$ has only one minimum and one maximum;
2)~$h_K(t'_i) \neq h_K(t''_i)$, for any $i = 1, \dots, n$. In this way, the space of all
smooth knots which project to vertically trivial states of $D$ can be identified with the
space of all smooth functions $h:S^1 \to R$ satisfying properties 1 and 2.

Now, the space $\cal S$ of all smooth functions $h:S^1 \to R$ satisfying property 1 is
clearly pathwise connected, while the complement $\cal C \subset \cal S$ of property 2 is
a closed codimension 1 stratified subspace. Therefore, if $K'$ and $K''$ are knots
projecting to the vertically trivial states $D'$ and $D''$, then we can join $h_{K'}$ and
$h_{K''}$ by a path in $\cal S$ transversal with respect to $\cal C$. This, path gives
rise to a finite sequence of self-crossing changes as in the statement, one for each
transversal intersection with $\cal C$.

The second part of the proposition, follows from \cite[Theorem 1.4]{ST89} but can also be 
immediately realized by considering the union of all the (possibly degenerate) horizontal 
segments spanned by the singular component.
\end{proof}

We emphasize that the property of vertically trivial states given by the proposition,
which will play a crucial role in the proof of Lemma \ref{SK-welldef/thm}, is far from
being shared by all trivial states. For example, Figure \ref{links02/fig} shows a
(non-vertically) trivial knot diagram, that is made knotted by any crossing change
performed on it.

\begin{Figure}[htb]{links02/fig}
{}{}
\vskip3pt
\centerline{\fig{}{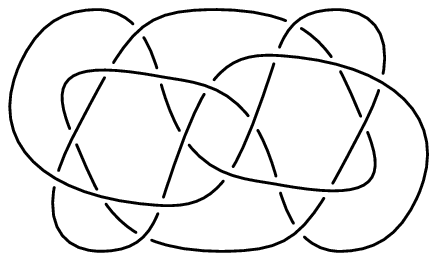}}
\end{Figure}

\subsection{Handlebodies%
\label{handles/sec}}

In this section we review some basic definitions and facts about handlebodies, referring
to \cite{GS99} for a detailed discussion of the subject. Our aim here is only to extend
the standard general set up given in \cite{GS99} to the notion of relative handlebody
build on a bounded manifold.

\medskip

Given $0 \leq i \leq d$, a {\sl $d$-dimensional $i$-handle} (or handle of {\sl index} $i$)
is a copy $H^i$ of $B^i \times B^{d-i}$ attached to a smooth $d$-manifold $W$ by a smooth
embedding $\phi: S^{i-1} \times B^{d-i} \to \Bd W$. The map $\phi$ and its image
$\phi(S^{i-1} \times B^{d-i}) \subset \Bd W$ are called respectively the {\sl attaching
map} and the {\sl attaching region} of $H^i$.

The two balls $B^i \times \{0\}$ and $\{0\} \times B^{d-i}$ are called respectively the
{\sl core} and the {\sl cocore} of $H^i$, while their boundaries $S^{i-1} \times \{0\}$
and $\{0\} \times S^{{d-i-1}}$ are called the {\sl attaching sphere} and the {\sl belt
sphere} of $H^i$ (this terminology refers to the corresponding subspaces of $W \cup_\phi
H^i$ as well). Inside $H^i$, {\sl longitudinal} means parallel to the core and {\sl
transversal} means parallel to the cocore.

By smoothing the corners in a canonical way, $W \cup_\phi H^i$ becomes a smooth
$d$-manifold, which only depends on the isotopy class of the attaching map $\phi$.

\medskip

We work only with 0- and 1-handles in dimension $d \leq 4$ and with 2-handles in dimension
$d = 4$. So, let us have a closer look at these kinds of handles.

The 0-handles are topologically trivial. In fact, attaching a 0-handle $H^0$ to $W$ is
the same as taking the disjoint union $W \sqcup H^0 \cong W \sqcup B^d$.

For a 1-handle $H^1$, the attaching map $\phi: S^0 \times B^d \cong B^d \sqcup B^d \to \Bd
W$, is uniquely determined up to isotopy by the (possibly coinciding) components of $\Bd
W$ where the two $(d-1)$-balls forming the attaching region are located and by the
orientations induced by $\phi$ on such attaching balls. By reversing the handle $H^1$
those balls can be interchanged, hence only an unordered pair of components of $\Bd W$
needs to be specified when describing $H^1$. Concerning the orientations of the attaching
balls, the only relevant information is whether they coincide or not, when the balls are
in the same component of $\Bd W$ and this is orientable. If $W$ is orientable, the
difference between the two possibilities results in the orientability or non-orientability
of $W \cup_\phi H^1$. In dimension 3 and 4 we always assume that $W$ and $W \cup_\phi H^1$
are orientable, which implies that there is essentially only one way to attach $H^1$ to
$W$ for a given pair of components of $\Bd W$.

The case of a 2-handle attached to a 4-manifold $W$ requires some more work. In this case
the attaching sphere is a knot $K = \phi(S^1 \times \{0\})$ in $\Bd W$. Up to reversing
the handle $H^2$ the orientation of $K$ is not relevant. Then $H^2$ is uniquely determined
by the isotopy class of the unoriented knot $K \subset \Bd W$ and by the choice of a
framing along $K$, that means an isotopy class of trivializations of the normal disk
bundle $\nu K$ of $K$ in $\Bd W$. Since we always assume $W$ to be orientable, actually
$K$ can be any knot in $\Bd W$. Moreover, the set of all possible framings along $K$
bijectively corresponds to $\pi_1(\SO(2)) \cong \Bbb Z$. Unfortunately, unless $K$ is
null-homologous in $\Bd W$, there is no canonical way of fixing the zero framing, so
in general an explicit description for the framing is needed. This can be given in terms
of framed knots in the sense of the following definition.

\begin{definition}\label{framed-curve/def}
Let $M$ be a possibly bounded 3-manifold and $C \subset M$ be regularly embedded smooth
curve. Then, a {\sl framed curve} based on $C$ is an embedded smooth band $B \subset M$
with an identification $B \cong C \times [0,1]$, such that $C \times \{0\}$ coincides with
the {\sl base curve} $C$, $\Bd C \times [0,1]$ corresponds to $B \cap \Bd M$, while the
regularly embedded smooth curve $C_{\fr} \subset M$ corresponding to $C \times \{1\}$ is
the {\sl framing curve} which represents the framing along $C$. In particular, if $C$ is a
knot (resp. a link) in $\Int M$ then we call the framed curve a {\sl framed knot} (resp. a
{\sl framed link}).
\end{definition}

Therefore, to specify a 2-handle attached to a 4-manifold $W$ along a knot $C \subset \Bd
W$ with attaching map $\phi: S^1 \times B^2 \to \Bd W$, we can use a framed knot based on
$C$ by letting the band $B$ in the definition above to be $\phi(S^1 \times [0,p])$ for an
arbitrary point $p \in B^2 - \{0\}$. If $C$ is null-homologous in $\Bd W$, then the
framing is determined by the {\sl framing number} $\fr(C) = \lk(C,C_{\fr}) \in \Bbb
Z$ for $C$ and $C_{\fr}$ coherently oriented.

\begin{definition}\label{handlebody/def}
Let $M$ be a compact smooth $(d-1)$-manifold with (possibly empty) boundary and let $k
\leq d$. A $d$-dimensional {\sl relative $k$-handlebody} build on $M$ is a smooth
$d$-manifold $W$ with a given filtration $M \times [0,1] = W^{-1} \subset W^0 \subset W^1
\subset \dots \subset W^k = W$ of smooth $d$-submanifolds, such that $W^i = W^{i-1}
\cup_{j=1}^{n_i} H^i_j$ is obtained by attaching $n_i$ disjoint $i$-handles to $W^{i-1}$,
with attaching regions contained in $\delta W^{i-1}$, where $\delta W^i = \Bd W^i - (M
\times \{0\} \cup \Bd M \times [0,1])$ for any $0 \leq i \leq k$.

By identifying $M$ with $M \times \{0\} \subset W$, we think of it as a smooth submanifold
of $\Bd W$ of dimension $d - 1$. Then, the given family of handles forming $W$ starting
from $M \times [0,1]$ is called a {\sl relative $k$-handlebody decomposition} of the pair
$(W,M)$.

When $M$ is empty, we simply say that $W$ is a {\sl $k$-handlebody} and call the given
family of handles a {\sl $k$-handlebody decomposition} of $W$.
\end{definition}

We remark that in the usual definition of relative handlebody (cf. Definition 4.2.1 in
\cite{GS99}) the manifold $M$ is taken to be closed. Yet, the cobordisms which we study
here are relative handlebodies build on 3-manifolds with boundary, so we need this
generalization.

\medskip

By a well-known result of Cerf \cite{Ce70} (cf. Theorem 4.2.12 in \cite{GS99}, for the
case when $\Bd M = \emptyset$), two handlebody decompositions of the same pair $(W,M)$ can
be related, up to ambient isotopy of $W$ fixing $M$, by a finite sequence of the following
{\sl handle moves}, all considered for any $i = 1, \dots, d$ (the viceversa trivially
holds as well):
\begin{itemize}
\item[1)]
{\sl isotoping the attaching maps} of $i$-handles $\sqcup_{j=1}^{n_i}H^i_j$ in the
submanifold $\delta W^{i-1}$ of the boundary of the $(i-1)$-handlebody $W^{i-1}$;
\item[2)]
{\sl adding/deleting a pair of canceling handles}, that is an $i$-handle $H^i$ and an
$(i-1)$-handle $H^{i-1}$, such that the attaching sphere of $H^i$ intersects the belt
sphere of $H^{i-1}$ transversally in a single point;
\item[3)]
{\sl handle sliding} of one $i$-handle $H^i_{j_1}$ over another one $H^i_{j_2}$, that
means changing the attaching map of $H^i_{j_1}$ by an isotopy in the submanifold $\delta
W^i_{j_1}$ of the boundary of the $i$-handlebody $W^i_{j_1} = W^{i-1} \cup_{j \neq j_1}
H^i_j \subset W^i$, which pushes attaching\break sphere of $H^i_{j_1}$ through the belt
sphere of $H^i_{j_2}$.
\end{itemize}

\begin{definition}\label{k-equivalent/def}
Let $W$ be a $d$-dimensional relative $k$-handlebody $W$ for some $k \leq d$. Then a {\sl
$k$-deformation} of $W$ consists in a finite sequence of the handle moves described above,
such that at each stage we still have a relative $k$-handlebody, i.e. we never add any
canceling $i$-handle with $i > k$. Two relative $k$-handlebodies related by a
$k$-deformation will be called {\sl $k$-equivalent}.
\end{definition}

In the light of this definition, the result of Cerf can be restated by saying that two
$d$-dimensional relative handlebodies $(W,M)$ and $(W',M)$ are diffeomorphic if and
only if they are $d$-equivalent. 

\medskip

For any compact smooth $d$-manifold $W$ and any (possibly empty) smooth
$(d-1)$-submanifold $M \subset \Bd W$ with (possibly empty) boundary, a relative
$d$-handlebody decomposition of the pair $(W,M)$ can be derived from a suitable Morse
function.

Such a decomposition can be assumed to have only one 0-handle in each component disjoint
from $M$ and no 0-handles in the other components, and to have only one $d$-handle in each
closed component and no $d$-handle in the other components. Concerning 0-handles, this can
be proved by using handle moves to reduce them to at most one, while the case of
$d$-handle is dual (see \cite[p. 103]{GS99} for handlebody duality).

Actually, the reduction of 0-handles and dually of $d$-handles also applies to
deformations, hence something more can be said about them. Namely, if $(W,M)$ and $(W',M)$
are diffeomorphic and have the same number of 0-handles in the corresponding components,
then no addition/deletion of canceling 0-handles is needed to deform $(W,M)$ into
$(W',M)$. An analogous dual fact also holds for addition/deletion of $d$-handles. In
particular, if $(W,M)$ and $(W',M)$ are diffeomorphic relative $(d-1)$-handlebodies, then
they are $(d-1)$-equivalent.

\medskip

The main focus of this paper is on 2-equivalence of 4-dimensional 2-handlebodies. By the
discussion above, two such handlebodies are diffeomorphic if and only if they are
3-equivalent, but whether they are 2-equivalent is an open question, which is expected to
have negative answer (cf. Section I.6 of \cite{Ki89} and Section 5.1 of
\cite{GS99}).\break A list of 4-dimensional 2-handlebodies which are diffeomorphic, but
conjecturally not 2-equivalent can be found in \cite{Go91}.

Lemma \ref{ribbon-to-kirby/thm} will provide some connection between the notion of
2-deformation of 4-dimensional 2-handlebodies and that of embedded 1-deformation of
2-dimen\-sional 1-handlebodies in $B^4$. In a speculative sense, this relates the above
mentioned diffeomorphism problem for 4-dimensional 2-handle\-bodies to the question of
whether isotopic ribbon surfaces in $B^4$ are 1-isotopic, according to the definition we
will give in Section \ref{ribbons/sec} (cf. Remark \ref{1-isotopy/rem}).

\medskip

Since the reduction of 0-handles appears repeatedly and in different contexts in the
paper, we state the result in the next proposition and give a sketch of the reduction
procedure in the proof. We also consider here the case of $k$-deformations of relative
handlebodies with the minimum number of 0-handles as above, which is essentially the only
one occurring in this paper.

\begin{proposition}\label{0-handles/thm}
Any $k$-handlebody decomposition of a pair $(W,M)$ can be changed by handle isotopy,
1-handle sliding and deletion of canceling 0/1-handle pairs, in order to leave only one
0-handle in each component disjoint from $M$ and no 0-handles in the other components.
Moreover, if $(W,M)$ and $(W',M)$ are $k$-equiva\-lent relative handlebodies build on the
same manifold $M$, both having only one\break 0-handle in each component disjoint from $M$
and no 0-handles in the other components, then there is a $k$-deformation relating them
that does not involve any extra 0-handle.
\end{proposition}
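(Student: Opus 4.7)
My plan is to reduce the first assertion to a graph-theoretic observation about the 1-skeleton $W^1$. Define a graph $G$ whose vertices are the 0-handles together with one vertex for each component of $M \times [0,1]$ sitting in a given component of $W^1$, and whose edges are the 1-handles, with a loop whenever both attaching balls of a 1-handle lie in the same vertex. Because the attaching sphere of any handle of index at least $2$ is connected, no such handle can merge components of $W^1$, so the components of $W$ correspond bijectively to those of $G$, and each component of $G$ is connected. In each component of $G$ choose a spanning tree rooted at the unique $M$-vertex when present and at an arbitrary 0-handle otherwise, and iteratively remove a leaf 0-handle distinct from the root: the tree-edge incident to such a leaf is a 1-handle with exactly one attaching ball on that 0-handle, so its attaching sphere meets the belt sphere $S^{d-1}$ of the 0-handle transversally in a single point, and the pair cancels. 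After finitely many such cancellations, only the root 0-handle (resp.\ no 0-handle) survives in each component disjoint from (resp.\ meeting) $M$, using only handle isotopy, 1-handle sliding (needed to route the tree edges in the presence of loops and extra parallel edges) and deletion of 0/1-canceling pairs.

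For the relative statement I would track, along a given $k$-deformation from $(W,M)$ to $(W',M)$, each auxiliary 0-handle $H$ from its creation to its eventual destruction. Since both endpoints are already reduced, each such $H$ is created exactly once and destroyed exactly once, and destruction must proceed by a 0/1-cancellation involving some 1-handle still attached to $H$. My plan is to rearrange the moves so that each auxiliary $H$ together with a chosen partner 1-handle is created and cancelled at consecutive steps, after which the two steps annihilate and can be dropped. The rearrangement has two stages: first, slide off any additional 1-handles that got attached to $H$ between its creation and destruction, so that $H$ is again incident to its original partner alone; second, commute the creation (and immediately the cancellation) past all intervening moves, which involve only handles disjoint from $H$ once the original partner has been isotoped away from their attaching regions. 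Iterating on all auxiliary 0-handles produces a $k$-deformation whose 0-handle count is constant throughout.

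The main obstacle is the commutation step in the relative statement, specifically when the partner 1-handle of an auxiliary $H$ either slides over, or is slid over by, other 1-handles before $H$ is cancelled. To untangle these dependencies one must reroute the intervening slides so that they no longer involve the partner, then re-establish the original cancellation datum; this is formally analogous to the Cerf-theoretic move manipulations underlying \cite{Ce70} and \cite[Theorem~4.2.12]{GS99}, but must be carried out inside the class of $k$-handlebodies so that no canceling pair of index exceeding $k$ is ever created. Once this commutation lemma is established, the iterative elimination of auxiliary 0-handles delivers the required $k$-deformation without any extra 0-handles, and the proposition follows.
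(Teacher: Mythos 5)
Your proof of the first assertion is essentially the paper's: the same graph whose vertices are the $0$-handles (plus $M$ when present) and whose edges are the $1$-handles, a maximal tree rooted at $M$ or at a chosen $0$-handle, and cancellation of $0/1$-pairs along the tree. That part is fine.

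For the second assertion, however, there is a genuine gap: your whole argument hinges on the ``commutation lemma'' that you yourself identify as the main obstacle and never prove. Bringing the birth and death of an auxiliary $0$-handle $H$ to consecutive steps requires commuting the creation/cancellation pair past arbitrary intervening moves of the $k$-deformation: in between, $2$-handles may be attached running over the partner $1$-handle, the partner may slide over other $1$-handles or be slid over, other handles' attaching maps may cross the belt region of $H$, and a later $1$-handle may even join $H$ to another auxiliary $0$-handle, so the ``rerouting'' and ``commuting'' you invoke is exactly the nontrivial Cerf-theoretic rearrangement, and it must moreover be performed without ever leaving the class of relative $k$-handlebodies. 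Asserting it as a plan does not close the proof. The paper's proof avoids this difficulty altogether: instead of reordering moves in time, it applies the first-part reduction simultaneously to every intermediate handlebody of the given $k$-deformation, choosing the maximal trees coherently along the sequence, so that each addition/deletion of a canceling $0/1$-pair in the original deformation corresponds merely to an edge expansion/contraction of the tree and is absorbed by the reduction, while some of the original $1$-handle slidings become attaching-map isotopies. The resulting sequence of reduced handlebodies is the desired $k$-deformation with no extra $0$-handles, and no commutation of moves is ever needed. As written, your argument for the second statement is incomplete until the commutation lemma is actually established, and the stage-by-stage reduction is the more economical route.
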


\begin{proof}
We can limit ourselves to the case when $W$ and $W'$ are connected.

Given any $k$-handlebody decomposition $(W,M)$, let $G$ be the graph, whose vertices
represent the 0-handles and $M$ if this is non-empty and whose edges represent the
1-handles (with an edge connecting two vertices for each 1-handle between the
corresponding subspaces of $W^0$). If $W$ is connected then $G$ is connected as well.
Consider a maximal tree $T \subset G$ having as the root $M$ if this is non-empty, or any
0-handle if $M$ is empty. Then, $G$ contains all the 0-handles of $W$ and we can use
deletion of canceling 0/1-handle pairs to reduce all the handles in $T$ to the root.

Now, given any $k$-deformation realizing the $k$-equivalence between $(W,M)$ and $(W',M)$,
we can perform the 0-handle reduction described above on all the intermediate handlebodies
in the order they appear, in such a way that any addition/deletion of a canceling
0/1-handle pair corresponds to an edge expansion/contraction of the tree $T$ used for the
reduction. The resulting sequence of handlebodies represents a new $k$-deformation from
$(W,M)$ to $(W',M)$ without any addition/deletion of canceling 0/1-handle pairs and with
some of the 1-handle slidings of the original $k$-deformation turned into attaching map
isotopy.
\end{proof}

\medskip

We conclude this section by briefly discussing the notion of cobordism between  
oriented manifolds with boundary and its relations with that of relative handlebody.

By an oriented $d$-manifold with {\sl marked $B$-boundary}, we mean any pair $(M,\beta)$
where $M$ is a compact oriented smooth $d$-manifold and $\beta: \Bd M \to B$ is an
orientation preserving diffeomorphism, with $B$ a given (possibly empty) closed oriented
smooth $(d-1)$-manifold. Due to the existence of a collar of the boundary, we can always
consider $\beta$ up to isotopy.

If $(M_0,\beta_0)$ and $(M_1,\beta_1)$ are two oriented $d$-manifolds with marked
$B$-boundary, then a {\sl relative cobordism} (with corners) between them is a compact
oriented smooth $(d+1)$-manifold with marked boundary $(W,\eta)$, where $\eta: \Bd W \to
-M_0 \cup_{\beta_0 \times \{0\}} (B \times [0,1]) \cup_{\beta_1 \times \{1\}} M_1$ is an
orientation preserving diffeomorphism. We can think of $-M_0$ and $M_1$ as oriented smooth
$d$-submanifolds of $\Bd W$, by identifying them with their images under $\eta^{-1}$.

We say that $(M_0, \beta_0)$ and $(M_1, \beta_1)$ are {\sl cobordant} when a relative
cobordism as above exists. This gives an equivalence relation on the set of all the
compact smooth $d$-manifolds with marked $B$-boundary, for a fixed closed smooth
$(d-1)$-manifold $B$. In particular, the transitivity is guaranteed by the possibility of
composing two relative cobordisms $(W_1,\eta_1)$ and $(W_2,\eta_2)$ respectively from
$(M_0,\beta_0)$ to $(M_1, \beta_1)$ and from $(M_1, \beta_1)$ to $(M_2,\beta_2)$ to form a
cobordism $(W,\eta)$ from $(M_0, \beta_0)$ to $(M_2, \beta_2)$, with $W = W_1 \cup_{M_1}\!
W_2$ and $\eta$ canonically induced in the obvious way by $\eta_{1|M_0} \!\cup
\eta_{2|M_2}$.

Any relative cobordism $(W,\eta)$ between $(M_0, \beta_0)$ and $(M_1, \beta_1)$ can be
endowed with a structure of relative handlebody build on $M_0$ with $\delta W = M_1$
(cf. notation introduced in Definition \ref{handlebody/def}), such that the product $M_0
\times [0,1] \subset W$ satisfies the condition $\eta_{|\Bd M_0 \times [0,1]} = \beta_0
\times \id_{[0,1]}$. Viceversa, any relative handlebody $W$ build on $M_0$ with $\delta
W = M_1$ gives raise to a relative cobordism $(W,\eta)$ from $(M_0,\beta_0)$ to
$(M_1,\beta_1)$, such that $\eta_{|\Bd M_0 \times [0,1]} = \beta_0 \times \id_{[0,1]}$. Of
course, here the roles of $M_0$ and $M_1$ can be exchanged by handlebody duality.

Handlebody decomposition allows us to express any relative cobordism as the composition of
elementary cobordisms, meaning cobordisms admitting a relative handlebody structure with
only one handle. Namely, for an {\sl elementary cobordism} $(W,\eta)$ relating
$(M_0,\beta_0)$ to $(M_1,\beta_1)$ we have $W = M_0 \times [0,1] \cup_\phi H^i$ and $M_1 =
\delta W$. This implies that $M_1$ can be obtained from $M_0$, once this is canonically
identified with $M_0 \times \{1\}$, by replacing the attaching region $\phi(S^{i-1} \times
B^{d-i}) \subset \Int M_0$ of $H^i$ with $B^i \times S^{d-i-1} \subset \Bd H^i$ attached
to $\Cl(M_0 - \phi(S^{i-1} \times B^{d-i}))$ through the restriction of $\phi$ to $S^{i-1}
\times S^{d-i-1}$.

The replacement of a diffeomorphic copy of $S^{i-1} \times B^{d-i}$ in $\Int M_0$ by a 
diffeomorphic copy of $B^i \times S^{d-i-1}$ attached to the rest of $M_0$ through a given 
diffeomorphism between $S^{i-1} \times S^{d-i-1} = \Bd(B^i \times S^{d-i-1})$ and $\Bd 
(S^{i-1} \times B^{d-i}) \subset \Int M_0$ is usually referred to as a {\sl surgery} of 
index $i$ ($i$-surgery in short) on $M_0$. 

Since surgery takes place in the interior of the manifold and leaves the boundary
unchanged, it can be performed on manifolds with marked $B$-boundary as well. Moreover,
the argument above can be reversed to see that if $(M_1,\beta_1)$ is obtained by
$i$-surgery from $(M_0,\beta_0)$, then there is an elementary cobordism from
$(M_0,\beta_0)$ to $(M_1,\beta_1)$ with only one handle of index $i$. This allows us to 
conclude that two oriented manifolds with marked $B$-boundary are cobordant if and only if 
they are surgery equivalent, that is they are related by a finite sequence of surgeries.

Now, let $W = M_0 \times [0,1] \cup_\phi H^i$ be a $d$-dimensional elementary cobordism
from $M_0$ to $M_1 = \delta W$, with $H^i$ a {\sl trivially attached} $i$-handle,
meaning that the attaching map $\phi$ is isotopic to an embedding of $S^{i-1} \times
B^{d-i}$ in some smooth $(d-1)$-cell $C \subset M_0 \times \{1\}$, which is diffeomorphic
to the standard embedding $S^{i-1} \times B^{d-i} \to B^{d-1}$. In this case, the entire
handle $H^i$ can be thought as standardly embedded in a copy of $B^d_+$ attached to $M_0
\times [0,1]$ through a diffeomorphism $\rho: B^{d-1} \to C$. This gives raise, to a
cobordism $W' = M_0 \times [0,1] \cup_\rho B^d_+ = M_0 \times [0,1] \cup_\phi H^i
\cup_\psi H^{i+1}$, obtained by adding to $W$ an $(i+1)$-handle $H^{i+1}$ that forms a
canceling pair with the $i$-handle $H^i$. Both $W'$ and its dual handlebody $\bar W' = M_0
\times [0,1] \cup_{\bar\psi} \bH^{d-i-1} \cup_{\bar\phi} \bH^{d-i}$ are diffeomorphic to
the trivial cobordism $M_0 \times [0,1]$. Moreover, $\bar W = M_0 \times [0,1]
\cup_{\bar\psi} \bH^{d-i-1}$ is a new elementary cobordism from $M_0$ to $M_1 = \delta
\bar W = \delta W$.

The replacement of the trivially attached $i$-handle $H^i$ by the trivially attached
$(d-i-1)$-handle $\bH^{d-i-1}$, converting the cobordism $W$ into the new cobordism $\bar
W$ between the same manifolds, is called an {\sl $i/(d-i-1)$-handle trading}.

Performing such an $i/(d-i-1)$-handle trading produces the same effect on $W$ as an 
$(i+1)$-surgery along the $i$-sphere given by the union of the core of $H^i$ and a smooth
$i$-cell properly embedded in $M_0 \times [0,1]$ spanned by the attaching sphere of $H^i$.
Hence, the cobordisms $W$ and $\bar W$ turns out to be cobordant.

We observe that under the condition of isotopic triviality of the attaching map of the
original handle $H^i$ (or equivalently of the new handle $\bH^{d-i-1}$), an analogous
$i/(d-i-1)$-handle trading can also be performed on any (possibly non-elementary)
cobordism $(W,\eta)$ between two $d$-manifolds with $B$-marked boundary $(M_0,\beta_0)$ 
and $(M_1,\beta_1)$, with a given handlebody decomposition. The result is a new cobordism 
$(W',\eta')$ between $(M_0,\beta_0)$ and $(M_1,\beta_1)$, with a handlebody decomposition
obtained from the original one by replacing the trivially attached $i$-handle $H^i$ with 
the trivially attached $(d-i-1)$-handle $\bH^{d-i-1}$ and then reordering the handles.

Actually, handle trading together with handlebody deformation can be proved to generate
the cobordism equivalence on the set of all the cobordisms between given manifolds
$(M_0,\beta_0)$ and $(M_1,\beta_1)$ with marked $B$-boundary. In other words, two such
cobordisms are cobordant if and only if they admit handlebody decompositions that are
related by handlebody deformation and handle trading.

The case of interest in this paper is that of 1/2-handle trading in dimension 4.\break In
a 4-dimensional cobordism $W$, for a 1-handle being trivially attached means that both its
attaching balls are in the same component of $\delta W^0$ (orientability is assumed for
cobordisms), while a 2-handle is trivially attached if the attaching map is determined by
a trivial knot with trivial framing in $\delta W^1$. Then, given a 4-dimensional
relative 2-handlebody decomposition of a relative cobordism, once the reduction described
in Proposition \ref{0-handles/thm} has been performed, we can always trade all the
1-handles for trivially attached 2-handles. This way, we obtain a relative handlebody
without 1-handles representing a new relative cobordism between the same manifolds, which
is cobordant to the original one.

\subsection{Ribbon surfaces%
\label{ribbons/sec}}

A regularly embedded smooth compact surface $F \subset B^4$ with is called a {\sl ribbon
surface} if the Euclidean norm restricts to a Morse function on $F$ with no local maxima
in $\Int F$. Assuming $F \subset R^4_+ \subset R^4_+ \cup \{\infty\} \cong B^4$, this
property is topologi\-cally equivalent to the fact that the fourth Cartesian coordinate
restricts to a Morse height function on $F$ with no local minima in $\Int F$. 
In particular, $F$ has non-empty boundary $\Bd F \subset R^3$ (actually $F$ has no closed 
components).

\begin{definition}\label{ribbontangle/def}
By a {\sl ribbon surface tangle} in $E \times [0,1] \times \left[0,1\right[\,$, where $E =
[0,1]^2$ denotes the standard square, we mean a slice $S = F \cap (E \times [0,1] \times
\left[0,1\right[\,)$ of a ribbon surface $F \subset \Int E \times R \times
\left[0,1\right[ \subset R^4_+$, such that the intersections $\partial_0 S = F \cap (E
\times \{0\} \times \left[0,1\right[\,)$ and $\partial_1 S = F \cap (E \times \{1\} \times
\left[0,1\right[\,)$ are transversal and project to trivial families of regularly embedded
arcs in $E \times \left[0,1\right[$ (trivial means that the arcs are unknotted and
unlinked), by the projection forgetting the third coordinate. We call $\partial_0 S$ and
$\partial_1 S$ respectively the {\sl lower end} and the {\sl upper end} of $S$. We also
call $\partial S = S \cap (E \times [0,1] \times \{0\}) = S \cap \Bd F$ the {\sl tangle
boundary} of $S$, while the boundary of $S$ as a surface is $\Bd S = \partial S \cup
\partial_0 S \cup \partial_1 S$.
\end{definition}

Of course, ribbon surface tangles with empty ends reduce to ribbon surfaces contained in
$\Int E \times \left]0,1\right[ \times \left[0,1\right[ \subset R^4_+$. Then, without loss
of generality up to isotopy, we can think of any ribbon surface as a tangle with empty
ends. In this way, all the definitions and results given below for ribbon surface tangles
specialize to ribbon surfaces.

A ribbon surface tangle $S \subset E \times [0,1] \times \left[0,1\right[ \subset R^4_+$
can be isotoped, through an isotopy preserving the fourth coordinate, to make its
projection into $E \times [0,1] \subset R^3$ a regularly immersed surface, whose
self-intersections consist only of disjoint double arcs as in Figure
\ref{ribbon-surf01/fig} \(a).

\begin{Figure}[htb]{ribbon-surf01/fig}
{}{A ribbon intersection}
\centerline{\fig{}{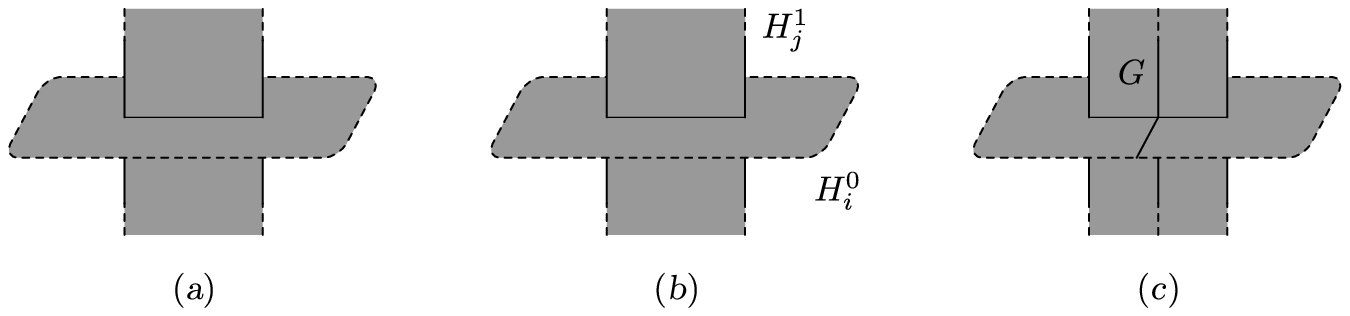}}
\vskip-3pt
\end{Figure}

We will refer to such a projection as a 3-dimensional {\sl diagram} of $S$ and use the
same notation $S$ for it (the meaning will be clear from the context). Actually, any
regularly immersed smooth compact surface $S \subset \Int E \times [0,1]$ with no closed
components and such that all its self-intersections are as above and $S \cap (E \times
\{0\})$ and $S \cap (E \times \{1\})$ both consist of trivial regularly embedded arcs, is
the diagram of a ribbon surface tangle uniquely determined up to vertical isotopy. This
can be obtained by pushing $\Int S$ up inside $\Int R^4_+$ in such a way that all
self-intersections disappear.

\medskip

Since a ribbon surface tangle $S$ is a surface with no closed components, it admits a
relative handlebody decomposition $S = C \cup H^0_1 \cup \dots \cup H^0_r \cup H^1_1 \cup
\dots \cup H^1_s$ build on $\partial_0 S \cup \partial_1 S$ with only 0- and 1-handles,
where $C \cong (\partial_0 S \cup \partial_1 S) \times [0,1]$ is a collar of $\partial_0 S
\cup \partial_1 S$ in $S$.

\begin{definition}\label{adapted-handlebody/def}
A relative 1-handlebody decomposition of a ribbon surface tangle $S$ as above is called
{\sl adapted} when the following conditions are satisfied:
\begin{itemize}
\item[\(a)]
each ribbon self-intersection involves an arc contained in the interior of a 0-han\-dle
and a proper transversal arc inside a 1-handle as in Figure \ref{ribbon-surf01/fig} \(b)
(cf. \cite{Ru85} for the case of ribbon surfaces);
\item[\(b)] 
to each component of $C$ is attached a single 1-handle connecting it to a 0-handle or 
to another component of $C$ itself.
\end{itemize}
The ribbon surface tangle $S$ endowed with such an adapted 1-handlebody decomposition will
be called an {\sl embedded 2-dimensional relative 1-handlebody} (build on $\partial_0 S \cup \partial_1 S$).
\end{definition}

According to this definition, the $H^0_i$'s are disjoint non-singular disks, while the
$H^1_j$'s are non-singular bands attached to $C$ and to the $H^0_i$'s, which possibly pass
through the $H^0_i$'s as shown in Figure \ref{ribbon-surf01/fig} \(b). The handlebody
decomposition is induced by the height function, if $S$ is realized as a suitable smooth
perturbation of the boundary of $((C \cup H^0_1 \cup \dots \cup H^0_r) \times [0,2/3])
\cup ((H^1_1 \cup \dots \cup H^1_s) \times [0,1/3])$ in $E \times [0,1] \times
\left[0,1\right[\,$.

\begin{definition}\label{1-equivalence/def}
We say that two embedded 2-dimensional relative 1-handle\-bodies build on the same sets of
intervals are equivalent up to {\sl embedded 1-defor\-mation}, or briefly that they are
{\sl 1-equivalent}, if they are related by a finite sequence of the following
modifications, all keeping those intervals fixed:
\begin{itemize}
\item[\(a)]
{\sl adapted isotopy}, that is isotopy of embedded relative 1-handlebodies build on a
fixed set of intervals, all adapted except for a finite number of intermediate critical
stages, at which one of the modifications described in Figure \ref{ribbon-surf02/fig} 
takes place (between any two such critical stages, we have isotopy of diagrams in $R^3$,
preserving ribbon intersections);
\begin{Figure}[htb]{ribbon-surf02/fig}
{}{Adapted isotopy moves}
\centerline{\fig{}{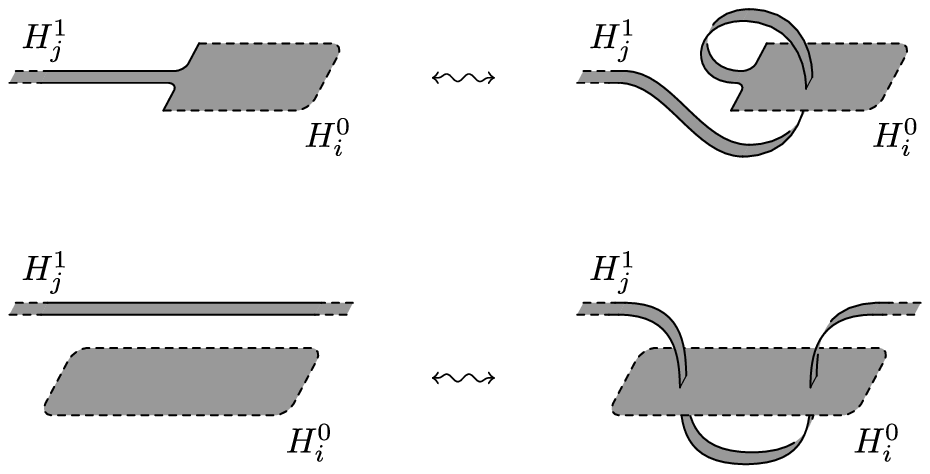}}
\end{Figure}
\item[\(b)]
{\sl ribbon intersection sliding}, allowing a ribbon intersection to run along a 1-handle
from one 0-handle to another one, as shown in Figure \ref{ribbon-surf03/fig};%
\begin{Figure}[htb]{ribbon-surf03/fig}
{}{Sliding a ribbon intersection along a 1-handle}
\centerline{\fig{}{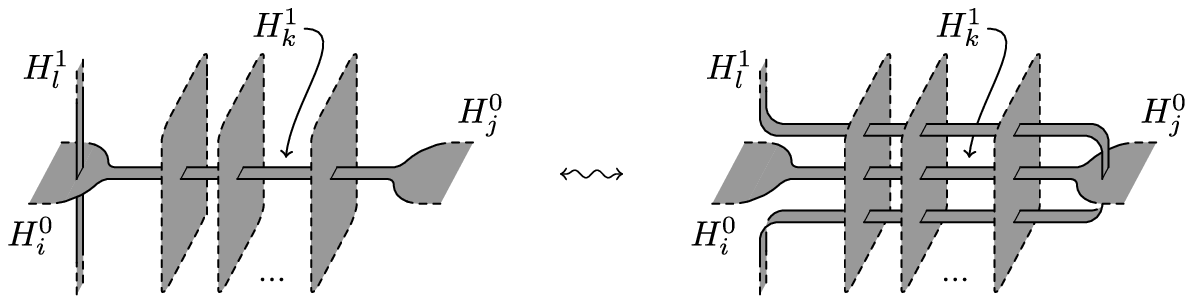}}
\end{Figure}
\item[\(c)]
{\sl embedded handles operations}, that is addition/deletion of canceling pairs of
0/1-handles and embedded 1-handle slidings (see Figure \ref{ribbon-surf04/fig}).
\begin{Figure}[htb]{ribbon-surf04/fig}
{}{Embedded handle operations}
\centerline{\fig{}{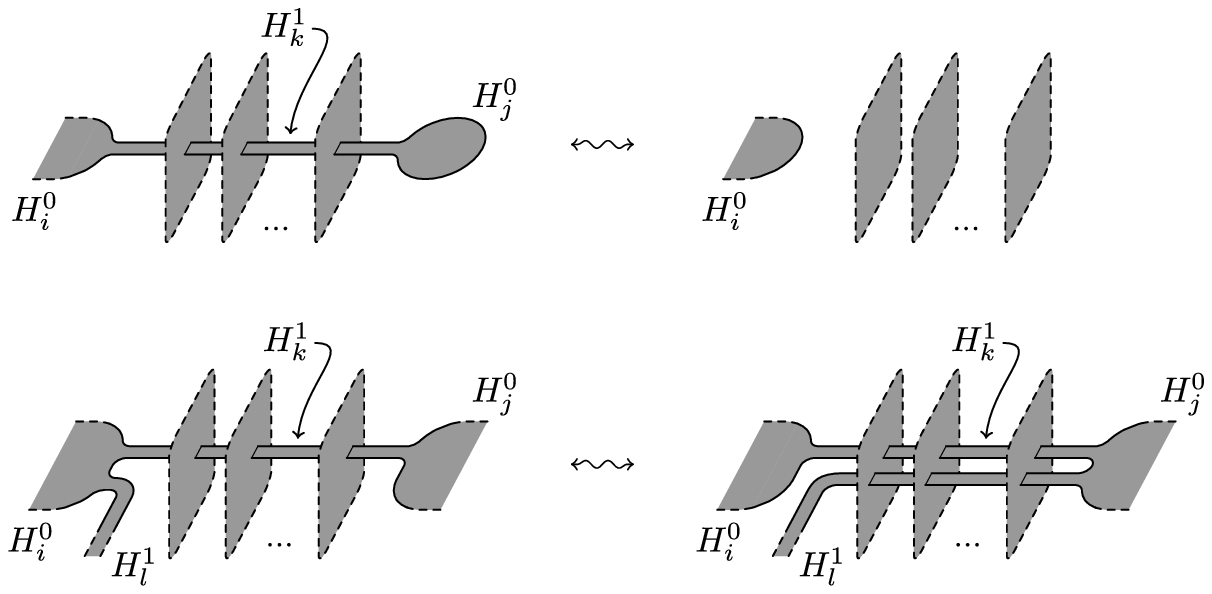}}
\vskip-3pt
\end{Figure}
\end{itemize}
\end{definition}

\pagebreak

We observe that the second modification of Figure \ref{ribbon-surf02/fig} is actually
redundant in presence of the handle operations of Figure \ref{ribbon-surf04/fig} (cf.
proof of Proposition \ref{1-isotopy/thm}).\break It is also worth noting that the
0-handles $H^0_i$ and $H^0_j$ in Figures \ref{ribbon-surf03/fig} and
\ref{ribbon-surf04/fig} can be assumed to be distinct in all the cases, up to
addition/deletion of canceling pairs of 0/1-handles (where they are always distinct).

\begin{proposition} \label{1-handles/thm}
All the adapted 1-handlebody decompositions of a ribbon surface tangle $S$ are
1-equivalent as embedded 2-dimensional relative 1-handle\-bodies. More precisely, up to
isotopy inside the 3-dimensional diagram of $S$, they are related to each other by the
special cases without vertical disks of the moves of Figures \ref{ribbon-surf03/fig} and
\ref{ribbon-surf04/fig}, realized inside $S$ in such a way that $S$ itself is kept
invariant.
\end{proposition}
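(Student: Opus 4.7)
The plan is to derive the statement from a relative form of Cerf theory on the abstract surface $S$, treating the ribbon self-intersections as a preassigned configuration of arcs on $S$ that the decomposition must respect.

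First I would reformulate the data. Let $\Sigma\subset S$ be the finite disjoint union of arcs obtained as the preimages in $S$ of the ribbon self-intersections of the $3$-dimensional diagram. Definition \ref{adapted-handlebody/def}(a) says exactly that, for any adapted decomposition $D$, the set $\Sigma$ splits as $\Sigma^0_D\sqcup\Sigma^1_D$, where $\Sigma^0_D$ consists of the arcs lying properly in the interior of $0$-handles and $\Sigma^1_D$ of the arcs appearing as proper transversal arcs inside $1$-handles; condition (b) then fixes how the collar $C$ sits. Consequently an adapted decomposition is just a $1$-handlebody decomposition of the abstract surface $S$, built on $\partial_0 S\cup\partial_1 S$, for which each arc of $\Sigma$ is either an interior arc of some $0$-handle or a transversal arc of some $1$-handle.

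Next I would invoke the standard two-dimensional Cerf theorem: any two relative $1$-handlebody decompositions of the compact surface $S$ built on the fixed interval set $\partial_0 S\cup\partial_1 S$ are related by a finite sequence of handle isotopies, $1$-handle slidings, and creations/cancellations of canceling $0/1$-handle pairs, all performed inside $S$. Applied to two given adapted decompositions $D$ and $D'$, this yields a path $D=D_0,D_1,\ldots,D_n=D'$; however, the intermediate $D_i$ need not be adapted, because some arc of $\Sigma$ can fail to sit correctly with respect to the handlebody.

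Then I would refine the path to make every stage adapted while producing only the moves listed. By a generic perturbation the finitely many ``transition times'' at which some arc of $\Sigma$ crosses from a $0$-handle into an adjacent $1$-handle (or vice versa) can be separated from the Cerf singularities and handled one at a time. A single such transition corresponds precisely to a ribbon intersection sliding along a $1$-handle from one $0$-handle to another, as in Figure \ref{ribbon-surf03/fig}; the intermediate critical stage, in which the arc of $\Sigma$ sits on the boundary between the $0$- and $1$-handle, is absorbed into adapted isotopy of the type shown in Figure \ref{ribbon-surf02/fig}. The Cerf events themselves, occurring away from $\Sigma$, are the embedded $0/1$-handle pair creations/cancellations and $1$-handle slidings of Figure \ref{ribbon-surf04/fig}; because no arc of $\Sigma$ is involved in them, they occur in precisely the ``special cases without vertical disks'' required by the statement. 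Since the whole procedure acts on the handlebody structure of a fixed abstract surface, the $3$-dimensional diagram of $S$ is kept invariant throughout.

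The main obstacle I anticipate is the genericity bookkeeping in the last step, i.e.\ arranging that the Cerf path on $S$ and the transitions of the arcs of $\Sigma$ can be made generic simultaneously, so that a handle slide or a pair cancellation never happens exactly at a point where an arc of $\Sigma$ is changing status. A clean way to organize this is to perform Cerf theory relative to $\Sigma$, replacing $S$ by $S\setminus\nu(\Sigma)$ with $\partial\nu(\Sigma)$ added to the fixed boundary; each arc of $\Sigma$ is then tracked either as a piece of a $0$-handle, to which a collar of $\Sigma$ is attached, or as a $1$-handle cocore, and the only events that move an arc between these two states are the ribbon intersection slidings of Figure \ref{ribbon-surf03/fig}. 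This reduction turns the remaining analysis into the classical $1$-parameter handle decomposition theory for surfaces with boundary, which is routine.
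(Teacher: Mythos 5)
Your reduction to abstract handle theory on $S$ rel $\partial_0S\cup\partial_1S$ is a reasonable starting point, but the step where the real content lies is asserted rather than proved. A generic one-parameter family of decompositions does not interact with $\Sigma$ only through isolated ``transitions'' that are Figure~\ref{ribbon-surf03/fig} moves. First, the status of each arc of $\Sigma$ is actually forced (an arc with endpoints in $\Int S$ can only sit in a $0$-handle, a properly embedded one only as a transversal arc of a $1$-handle), so what a generic path destroys is not the ``state'' of an arc but adaptedness itself: the $1$-handle containing a proper arc $\alpha'$ can be cancelled, slid, or simply isotoped off $\alpha'$, a $0/1$-death can involve a $0$-handle whose interior contains an arc $\alpha''$, and an interior arc can enter a $1$-handle and leave through the same end; none of these is a move of Figure~\ref{ribbon-surf03/fig} or \ref{ribbon-surf04/fig} between adapted decompositions, and reorganizing such stretches of non-adapted stages into the allowed discrete moves is exactly what needs an argument. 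Your proposed fix --- handle theory on $S\setminus\nu(\Sigma)$ rel the enlarged boundary --- does not obviously help: relative decompositions of that surface are not in bijection with adapted decompositions of $S$ (the $0$-handle containing $\alpha''$ loses an interior disk, the $1$-handle containing $\alpha'$ is cut in two), so the dictionary between the standard relative statement and the moves of Figures~\ref{ribbon-surf03/fig} and \ref{ribbon-surf04/fig} is missing. Finally, condition \(b) of Definition~\ref{adapted-handlebody/def} (one $1$-handle attached to each collar component) is a constraint your path never respects and which you do not address; and the appeal to the moves of Figure~\ref{ribbon-surf02/fig} is not available here, since the statement only allows isotopy inside the diagram together with Figures~\ref{ribbon-surf03/fig} and \ref{ribbon-surf04/fig}.

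For comparison, the paper avoids the one-parameter analysis altogether: it first observes that the allowed moves generate $0$-handle and $1$-handle splitting, then normalizes the two decompositions so that the collars, the $1$-handles at the collar components and the $1$-handles carrying the ribbon self-intersections literally coincide (this disposes of $\Sigma$ and of condition \(b) once and for all), makes the remaining cocores transverse so the leftover $1$-handles meet in finitely many squares, and pushes these squares off $S$ one at a time by $0$-handle splitting and handle sliding (Figure~\ref{ribbon-surf05/fig}); both decompositions are then refinements, by splitting, of a common one. If you want to keep your Cerf-theoretic framing, the workable version is essentially this: first use the allowed moves to pin down common handles along $\Sigma$ and the collars, and only then apply unconstrained relative handle theory to the complement --- at which point your argument collapses into the paper's.
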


\begin{proof}
First of all, we observe that the moves specified in the statement allow us to realize the
following two modifications: 1) split a 0-handle along any regular arc that avoids ribbon
intersections in the diagram, into two 0-handles joined by a new 1-handle, which coincides
with a regular neighborhood of the splitting arc; 2) split a 1-handle at any transversal
arc that avoids ribbon intersections in the diagram, into two 1-handles, by inserting a
new 0-handle given by a regular neighborhood of the splitting arc. We leave the
straightforward verification of this to the reader.

Let $S = C \cup H^0_1 \cup \dots \cup H^0_r \cup H^1_1 \cup \dots \cup H^1_s = \bar C \cup
\bH^0_1 \cup \dots \cup \bH^0_{\bar r} \cup \bH^1_1 \cup \dots \cup \bH^1_{\bar s}$\break
be any two 1-handlebody decompositions of a ribbon surface tangle $S$, which we denote
respectively by $H$ and $\bH$. Up to isotopy, we can assume $C = \bar C$. Moreover, we can
suitably split the 1-handles of $H$ and $\bH$, in such a way that any 1-handle contains at
most one ribbon self-intersection of $S$. Up to isotopy, we can also assume that the
1-handles of $H$ and $\bH$ attached to the same component of $C$ and those forming the
same ribbon self-intersection coincide. Let $H_1 = \bH_1, \dots, H_k = \bH_k$ be all these
1-handles. Then, it suffices to see how to transform the remaining 1-handles $H^1_{k+1},
\dots, H^1_s$ into $\bH^1_{k+1}, \dots, \bH^1_{\bar s}$, without changing $H^1_1, \dots,
H^1_k$.

Calling $\eta_i$ (resp. $\bar \eta_j$) the cocore of $H^1_i$ (resp. $\bH^1_j$), we have
$\eta_1 = \bar\eta_1, \dots, \eta_k = \bar\eta_k$, while the arcs $\eta_{k+1}, \dots,
\eta_s$ can be assumed to be transversal with respect to the arcs $\bar\eta_{k+1}, \dots,
\bar\eta_{\bar s}$. Up to isotopy, we can think of each 1-handle as a tiny regular
neighborhood of its cocore, so that the intersection between $H^1_{k+1} \cup \dots \cup
H^1_s$ and $\bH^1_{k+1} \cup \dots \cup \bH^1_{\bar s}$ consists only of a certain number
$h$ of small four-sided regions.

We eliminate all these intersection regions in turn, by pushing them outside $S$ along the
$\bH^1_j$'s. This is done by performing on $H$ moves of the types specified in the
statement of the proposition, as suggested by the Figure \ref{ribbon-surf05/fig}, which
concerns the $l$-th elimination. Namely, in \(a) we assume that the intersection is the
first one along $\bar\eta_j$ starting from the shown end-point in $\partial S$, then we
generate a new 1-handle $H^1_{s+l}$ by 0-handle splitting to get \(b), while \(c) is
obtained by handle sliding.

\begin{Figure}[htb]{ribbon-surf05/fig}
{}{Eliminating intersections of 1-handles of different decompositions}
\centerline{\fig{}{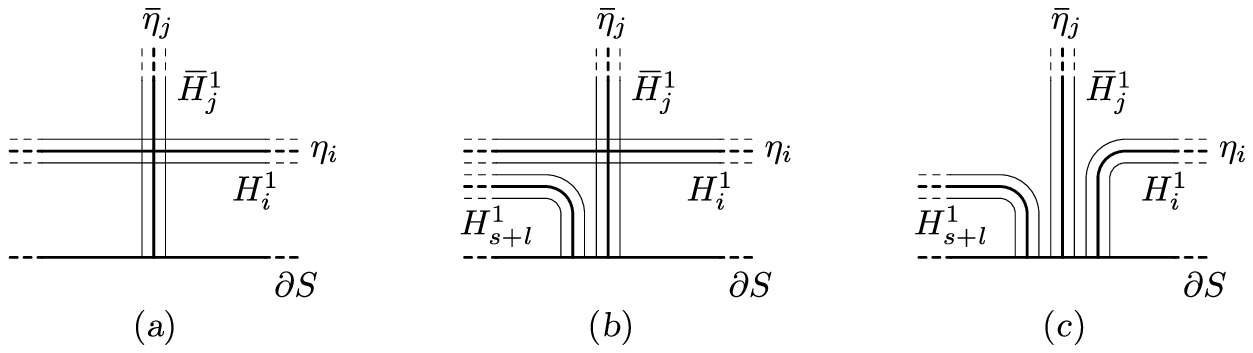}}
\vskip-3pt
\end{Figure}

After that, $H$ has been changed into a new handlebody decomposition $H'$ with 1-handles
$H^1_1, \dots, H^1_{s + h}$, such that $H^1_i$ is the same as above for $i \leq k$, while
it is disjoint from the $\bH^1_j$'s for $i > k$. Hence, $H^1_1, \dots, H^1_k,H^1_{k+1},
\dots, H^1_{s+h}, \bH^1_{k+1}, \dots, \bH^1_{\bar s}$ can be considered as the 1-handles
of a relative handlebody decomposition of $S$ which can be obtained from both $H'$ and
$\bH$ by 0-handle splitting.
\end{proof}

Forgetting the handlebody structure, 1-equivalence of embedded 2-dimensional relative
1-handlebodies induces an equivalence relation between ribbon surface tangles, which we
call 1-isotopy.

\begin{definition}\label{1-isotopy/def}
Two ribbon surface tangles are called {\sl 1-isotopic} when they admit 1-equivalent
adapted relative 1-handlebody decompositions (by the above proposition, this implies that
any two such 1-handlebody decompositions of them are 1-equivalent).
\end{definition}

Of course 1-isotopy implies isotopy, but the converse is not known and seems to be a
delicate question. Actually, also the problem of finding a complete set of moves
representing isotopy of ribbon surface tangles is still open, even in the special case of
ribbon surfaces.

As we anticipated in Section \ref{handles/sec}, the problem of whether isotopy of ribbon
surfaces implies 1-isotopy looks like an embedded lower dimensional analog of the
problem of whether diffeomorphism of 4-dimensional 2-handlebodies implies 2-equivalence.
In Section \ref{Theta/sec}, we will see how this analogy is supported by the connection
between the two concepts given in terms of branched coverings.

\medskip

Now, we want to provide a description of ribbon surface tangles and 1-isotopy in terms of
certain planar diagrams and moves between them. A planar diagram of ribbon surface
tangle will be based on the projection of a suitable 3-dimensional diagram $S \subset
\Int E \times [0,1]$ into the square $\left]0,1\right[ \times [0,1]$, given by forgetting
the second coordinate of $E = [0,1]^2$.

Since 1-isotopy fixes the ends $\partial_0 S$ and $\partial_1 S$, we will consider only
the case when these project regularly into disjoint unions of intervals in
$\left]0,1\right[ \times \{0\}$ and $\left]0,1\right[ \times \{1\}$ respectively.
In this case we say that the ribbon surface tangle $S$ has {\sl flat ends}.

We start with the observation that any 3-dimensional diagram of a ribbon surface tangle
$S$ with flat ends, considered as a 2-dimensional complex in $\Int E \times [0,1]$,
collapses to a graph $G$. We can choose $G$ to be the projection of a smooth simple spine
$P$ of $S$ (simple means that all the vertices have valence one or three) contained in $S
- \partial S$. Moreover, we can assume that $G$ meets at exactly one 1-valent vertex each
arc of $\partial_0 S \cup \partial_1 S$ and at exactly one 3- or 4-valent vertex each
ribbon intersection arc of $S$. In this way, $G$ turns out to have vertices of valence 1,
3 and 4.

We call {\sl flat vertices} the 1- or 3-valent vertices of $G$ whose inverse image in $P$
is one vertex with the same valence and {\sl singular vertices} the 3- or 4-valent
vertices of $G$ located at the ribbon intersections. The inverse image in $P$ of a 
singular vertex of $G$ consists of two points along edges of $P$ in the case of valence 4, 
while it consists of one point along an edge of $P$ and one end point of $P$ in the case 
of valence 3.

Finally, we assume $G$ to have three distinct tangent lines at each flat 3-valent vertex
and two distinct tangent lines at each 3- or 4-valent singular vertex.

Up to horizontal isotopy of $S$ mod $\partial_0 S \cup \partial_1 S$, we can contract its
diagram to a narrow regular neighborhood of the graph $G$. Then, by considering a planar
diagram of $G$, we can easily get a planar diagram of $S$ in the sense of the following
definition.

\begin{definition}\label{planar-diagram/def}
A {\sl planar diagram} of a ribbon surface tangle with flat ends is the planar projection
of a 3-dimensional diagram $S \subset \Int E \times [0,1]$ in the square $\left]0,1\right[
\times [0,1]$ given by forgetting the second coordinate of $E = [0,1]^2$, decorated
consistently with the height function in correspondence of crossings and ribbon
intersections, in such a way that it consists of a certain number of copies of the spots
\(a) to \(h) in Figure \ref{ribbon-surf06/fig} and some flat bands connecting pairwise the
end arcs of them and those in the projection of $\partial_0 S \cup \partial_1 S$. A planar
diagram whose ribbon intersections are all modeled on spot \(h) will be called a {\sl
special planar diagram}.
\end{definition}

\begin{Figure}[htb]{ribbon-surf06/fig}
{}{Local models for planar diagrams}
\centerline{\fig{}{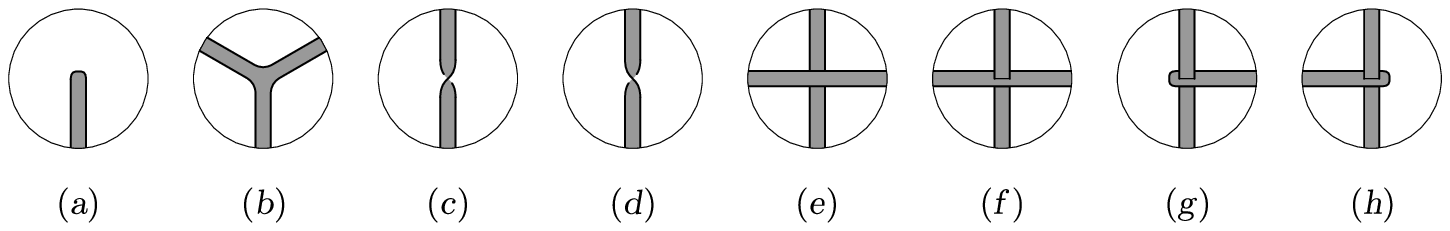}}
\vskip-3pt
\end{Figure}

As said above, a planar diagram of a ribbon surface tangle with flat ends arises as a
diagram of a pair $(S,G)$ where $G$ is a graph in a 3-dimensional diagram $S$, and this is
the right way to think of it.

However, we omit to draw the diagram of the graph $G$ in the pictures of a planar diagram,
since it can be trivially recovered, up to diagram isotopy, as the core of the diagram
itself. In particular, its singular vertices and diagram crossings are located at the
centers of the spots modeled on the three rightmost ones in Figure
\ref{ribbon-surf06/fig}, while the flat vertices of $G$ are located at the centers of the
spots modeled on the two leftmost ones in the same figure.

To be precise, there are two choices in recovering the graph $G$ at a singular vertex, as
shown in Figure \ref{ribbon-surf07/fig} for a ribbon intersection of type \(h). They give
the same graph diagram of each other, but differ for the way the graph is embedded in $S$.
We consider the move depicted in Figure \ref{ribbon-surf07/fig} as an equivalence move for
the pair $(S,G)$. Up to this move, which does not change the tangle, $G$ is uniquely
determined also as a graph in $S$.

\begin{Figure}[htb]{ribbon-surf07/fig}
{}{The graph $G$ at a ribbon intersection of type \(h)}
\centerline{\fig{}{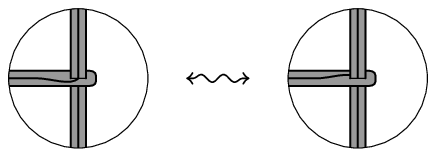}}
\end{Figure}

Like 3-dimensional diagrams, also planar diagrams uniquely determine the ribbon surface
tangle $S$ up to vertical isotopy. Here, by vertical isotopy we mean an isotopy that
preserves the first and third coordinates. In other words, the 3-dimensional height
function (as well as the 4-dimensional one) is left undetermined when presenting $S$ by
a planar diagram. Of course, this height function is required to be consistent with the
restrictions deriving from the local configurations in Figure \ref{ribbon-surf06/fig}.

We remark that any planar diagram can be made special by using the moves \(S1) and \(S2)
depicted in Figure \ref{ribbon-surf08/fig} to remove spots of type \(f) and \(g)
respectively.

\begin{Figure}[htb]{ribbon-surf08/fig}
{}{Making ribbon intersections special}
\centerline{\fig{}{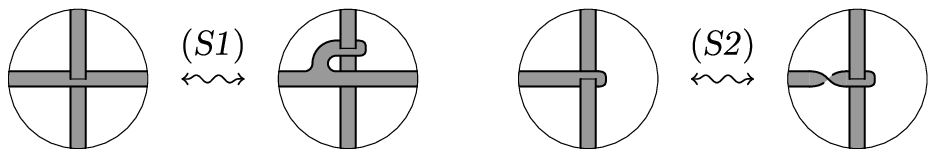}}
\end{Figure}

\begin{proposition}\label{planar-diagram/thm}
Any 3-dimensional diagram of a ribbon surface tangle $S$ with flat ends $\partial_0 S$
and $\partial_1 S$, up to 3-dimensional isotopy mod $\partial_0 S \cup \partial_1 S$, has
a (special) planar diagram.
\end{proposition}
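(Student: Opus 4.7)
The plan is to execute rigorously the informal construction sketched in the paragraphs just before the statement. Starting from a given 3-dimensional diagram of $S$, I first choose a simple (at most 3-valent) spine $P \subset S - \partial S$ with the properties already listed: one 1-valent vertex on each arc of $\partial_0 S \cup \partial_1 S$, one 3- or 4-valent vertex at each ribbon self-intersection arc, and all other vertices flat and 3-valent. The existence of such a $P$ follows from the fact that $S$ has no closed components and deformation retracts onto a graph which can be made simple by the usual generic position argument; the prescribed incidences with $\partial_0 S \cup \partial_1 S$ and with the ribbon arcs can always be arranged by a preliminary isotopy of the spine inside $S$ relative to its required vertex set.

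Next, I put $P$ into generic position with respect to the planar projection $\pi: \Int E \times [0,1] \to \left]0,1\right[ \times [0,1]$ that forgets the second coordinate. By a small perturbation of $P$ inside $S$ (that is, a horizontal isotopy of the whole pair $(S,P)$ modulo $\partial_0 S \cup \partial_1 S$), I may assume that $G = \pi(P)$ has only transverse double points as singularities away from the vertices and that at each vertex the distinct-tangent condition from the preceding paragraph holds. At the same time, I arrange the two branches of $S$ meeting at each ribbon self-intersection so that their projection to the plane falls in one of the three admissible local models at a singular vertex of $G$, corresponding to the spots (f), (g), (h); this is possible because the normal direction to the singular branch inside $S$ can be rotated freely by an isotopy of $S$ supported in a neighborhood of the intersection arc.

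Now I contract $S$, by a horizontal isotopy of $S$ in $\Int E \times [0,1]$ keeping $\partial_0 S \cup \partial_1 S$ fixed, to lie inside a narrow regular neighborhood $N(P) \subset S$ whose width is much smaller than the distance between non-incident features of $G$. Outside small disks around the vertices and crossings of $G$, the image $\pi(N(P))$ is then a union of flat bands, while inside these small disks the configuration is, by the generic position arrangements of the previous step, precisely one of the local models (a)--(h) of Figure \ref{ribbon-surf06/fig}. This produces a planar diagram of $S$ in the sense of Definition \ref{planar-diagram/def}, proving the first part of the statement. For the parenthetical refinement, it then suffices to apply the moves (S1) and (S2) of Figure \ref{ribbon-surf08/fig} in succession to each ribbon intersection of type (f) or (g), obtaining a diagram whose ribbon intersections are all of type (h), i.e.\ a special planar diagram.

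The main obstacle is the generic position step for the ribbon self-intersections: one has to check that the projection $\pi$ really can be arranged so that each ribbon arc, together with the two local sheets of $S$ meeting along it, falls inside one of the three prescribed local models, rather than some exceptional configuration in which the singular arc is tangent to the fiber direction of $\pi$ or its two sheets project on top of each other in a degenerate way. Once this local normal form is secured by a suitable isotopy supported in a tubular neighborhood of each ribbon arc, the rest of the argument is a routine ``thin neighborhood of the spine'' collapse, and the reduction to a special planar diagram is a direct consequence of moves (S1) and (S2).
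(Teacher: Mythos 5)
Your proposal is correct and follows essentially the same route as the paper's proof: contract $S$ to a narrow regular neighborhood of the core graph, perturb so that the projection is regular with the allowed local models of Figure \ref{ribbon-surf06/fig}, and then use moves \(S1) and \(S2) to make the diagram special, all realized by 3-dimensional isotopies fixing $\partial_0 S \cup \partial_1 S$. The extra care you take with generic position at the ribbon intersection arcs only fills in details the paper leaves implicit.
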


\begin{proof}
Starting from the 3-dimensional diagram $S$, we can get a special planar diagram by the
following steps: 1) contract $S$ to a narrow regular neighborhood $N$ of the graph $G$; 2)
perturb $N$ to make the planar projection of $G$ into a graph diagram and the projection
of $N$ itself regular expect for a finite number of half-twists as in Figure
\ref{ribbon-surf06/fig} \(c) and \(d); 3) perform moves \(S1) and \(S2) at each ribbon
intersection of type \(f) and \(g) respectively. Then, the proposition immediately follows
from the observation that all this steps can be realized by 3-dimensional isotopies
keeping $\partial_0 S \cup \partial_1 S$ fixed.
\end{proof}

Figures \ref{ribbon-surf09/fig}, \ref{ribbon-surf10/fig} and \ref{ribbon-surf11/fig} show
how to interpret the 3-dimensional diagram isotopy in terms of planar diagrams, up to
planar isotopy. Actually, moves \(S3) and \(S4) could be realized by planar isotopy if we
think of the planar diagram just as representing the surface $S$, but this not true if we
take into account the graph $G$.

\begin{Figure}[htb]{ribbon-surf09/fig}
{}{Graph changing moves for special planar diagrams}
\centerline{\fig{}{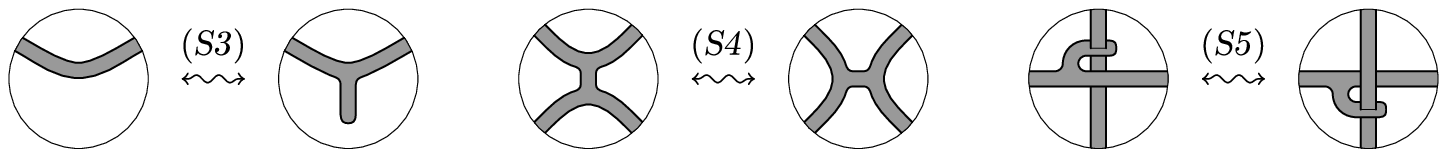}}
\end{Figure}

\begin{Figure}[htb]{ribbon-surf10/fig}
{}{Regular isotopy moves for special planar diagrams}
\centerline{\fig{}{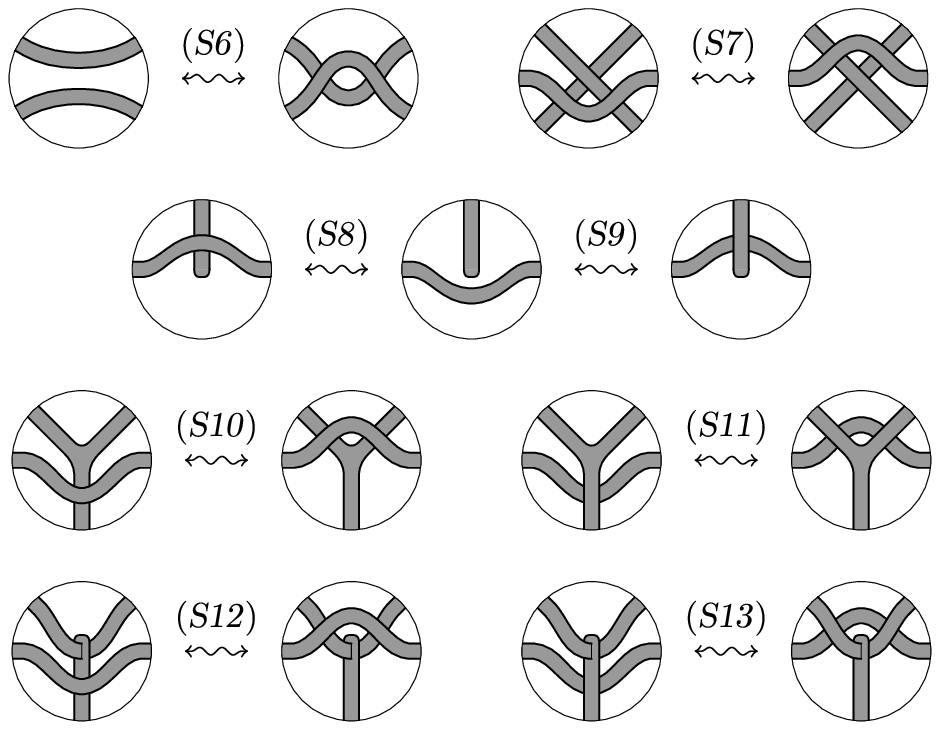}}
\end{Figure}

\begin{Figure}[htb]{ribbon-surf11/fig}
{}{Other 3-dimensional isotopy moves for special planar diagrams}
\vskip3pt
\centerline{\fig{}{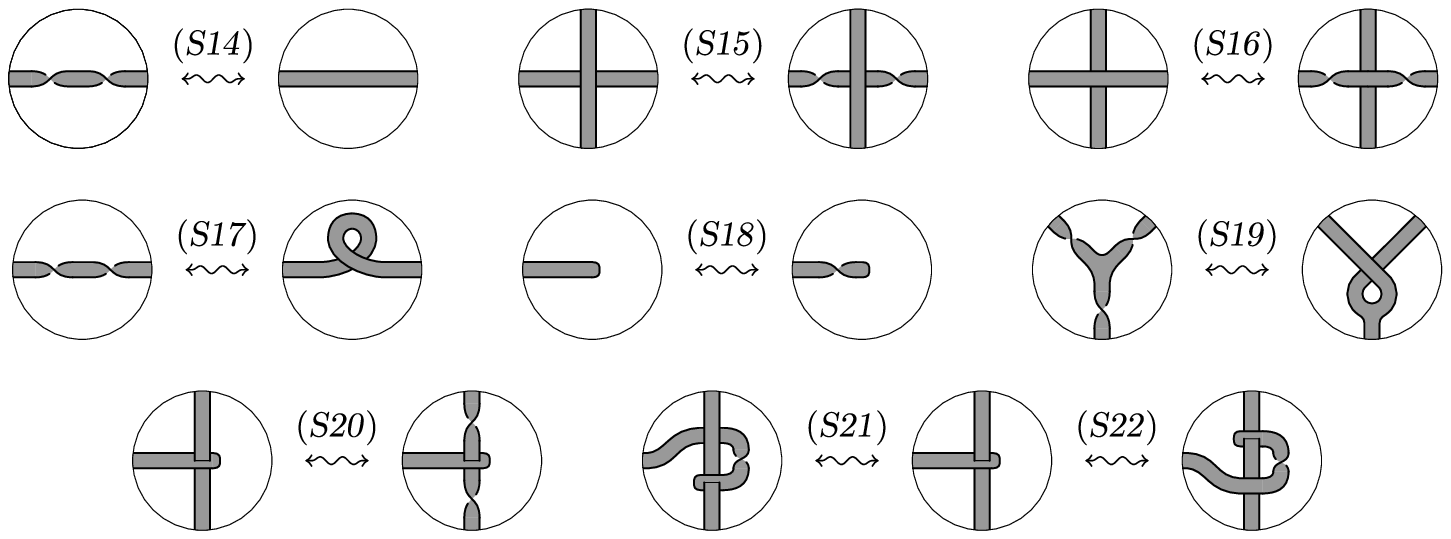}}
\end{Figure}

\begin{proposition}\label{isotopy/thm}
Two planar diagrams represent the same 3-dimensional diagram of a ribbon surface tangle
$S$, up to 3-dimensional isotopy mod $\partial_0 S \cup \partial_1 S$, if and only if they
are related by finite sequence of planar isotopies (induced by smooth ambient isotopies of
$\left]0,1\right[ \times [0,1]$ mod $\left]0,1\right[ \times \{0,1\}$ in the projection
plane) and moves \(S1) to \(S22) as in Figures \ref{ribbon-surf08/fig},
\ref{ribbon-surf09/fig}, \ref{ribbon-surf10/fig} and \ref{ribbon-surf11/fig}. Moreover,
moves \(S1) and \(S2) are not needed if both the planar diagrams are special.
\end{proposition}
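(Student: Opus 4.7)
The plan is to prove the two implications separately. The forward direction, that each of the moves (S1)--(S22) can be realized by a 3-dimensional isotopy of the pair $(S,G)$ fixing $\partial_0 S \cup \partial_1 S$, is verified move-by-move by direct local inspection. In each case one exhibits an explicit ambient isotopy of $\Int E \times [0,1]$ realizing the indicated change of the planar projection; the half-twist moves (S14)--(S18) are the most subtle but still local. This part is routine and I would present it as a table rather than case work.

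For the converse, the strategy is the standard genericity argument in the style of Reidemeister's theorem, applied to 1-parameter families of pairs $(S_t,G_t)$, $t\in[0,1]$, interpolating between the 3-dimensional diagrams underlying the two given planar diagrams. By Proposition \ref{1-handles/thm} and the discussion preceding Definition \ref{planar-diagram/def}, the graph $G_t$ can be chosen to depend smoothly on $t$ together with $S_t$. First I would make the projection $\pi_2$ forgetting the second coordinate of $E$ generic in the parameter $t$, so that for all but finitely many values of $t$ the pair $(S_t,G_t)$ projects to a planar diagram in the sense of Definition \ref{planar-diagram/def}, and between consecutive critical values the projected diagrams differ only by planar isotopy mod $\left]0,1\right[\times\{0,1\}$.

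The heart of the argument is the classification of codimension-one singular events at the critical values. These fall into three families: (i) events coming from the projection of the underlying graph $G_t$ alone, which produce the graph-changing moves (S3), (S4) and the Reidemeister-type moves (S5)--(S13) involving strands, 3- and 4-valent vertices, and crossings; (ii) events coming from the band structure around $G_t$, which produce the half-twist moves (S14)--(S18) for flat bands and their interactions with crossings and vertices; (iii) events involving ribbon self-intersections passing through crossings, vertices, or through each other, producing the remaining moves (S19)--(S22). At each critical time, I would identify the germ of the singular event, reduce it by an ambient planar isotopy to one of the standard local models depicted in Figures \ref{ribbon-surf09/fig}--\ref{ribbon-surf11/fig}, and then apply the corresponding move. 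The ambiguity of Figure \ref{ribbon-surf07/fig} in recovering $G$ at a ribbon intersection must be handled separately and is absorbed into moves (S3)--(S4). The main obstacle is precisely this exhaustive enumeration, and in particular verifying completeness: one must check that no further codimension-one stratum has been missed, which amounts to analyzing all transversal crossings of the surface of degenerate projections with the discriminant in the space of pairs (surface, spine) equipped with a planar projection.

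Finally, for the addendum about special diagrams, I would argue that moves (S1) and (S2) can be eliminated whenever both the initial and terminal diagrams are special. The idea is that any use of (S1) or (S2) in the middle of the sequence produces a non-special ribbon intersection of type \((f)\) or \((g)\), which must be eliminated again later since the final diagram is special; by pushing all (S1) and (S2) applications to the start of the sequence (creating non-special spots) and then immediately cancelling them, one checks that the combined effect on the remaining moves reduces to moves already in the list (S3)--(S22), using the local relations between (S1)/(S2) and the neighboring ribbon-intersection moves. This is a commutation-and-cancellation argument local to each ribbon intersection, and once the commutation relations between (S1)/(S2) and each of (S3)--(S22) are tabulated, the conclusion is immediate.
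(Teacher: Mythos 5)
Your overall strategy for the ``only if'' direction --- a genericity argument on a 1-parameter family with a classification of codimension-one events --- is indeed the route the paper takes, but there is a genuine gap in how you treat the core graph. A planar diagram is a diagram of a \emph{pair} $(S,G)$, and the two given diagrams come with their own core graphs $G_0$ and $G_1$. Your assertion that ``the graph $G_t$ can be chosen to depend smoothly on $t$ together with $S_t$'' only produces, at time $t=1$, the transported graph $H(G_0,1)\subset S_1$, which in general is \emph{not} the core graph $G_1$ of the second diagram; the citation of Proposition \ref{1-handles/thm} does not help here, since that statement concerns adapted 1-handlebody decompositions, not spines. The local two-choice ambiguity of Figure \ref{ribbon-surf07/fig}, which you do mention, is only a small part of the problem: two core graphs of the same 3-dimensional diagram can differ by arbitrary spine changes (sliding trivalent vertices, different relative spines of the cut surface, repositioning with respect to the ribbon intersection arcs), and these are exactly what moves \(S3), \(S4) and \(S5) are for. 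The paper closes this gap with a separate argument: cut $S$ along its ribbon intersection arcs, so that the two core graphs become simple spines of the cut surface relative to the marked arcs and to $\partial_0 S\cup\partial_1 S$, invoke the theory of simple spines of surfaces to relate them by the moves of Figure \ref{ribbon-surf09/fig}, and then check that each spine modification can be isolated in the planar diagram up to the isotopy moves of Figures \ref{ribbon-surf10/fig} and \ref{ribbon-surf11/fig}. Without some argument of this kind your proof only compares diagrams built on ``the same'' graph carried along by the isotopy, which is strictly weaker than the statement.

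Two secondary remarks. First, for the half-twist bookkeeping the paper introduces a concrete device you will need in some form: the locus $\Gamma\subset G_0\times[0,1]$ of vertical tangencies of $S_t$ along the core, made generic so that its edges separate regions of opposite local orientation of the projection; the four types of critical levels (vertical tangency of an edge of $G_t$, failure of the graph projection to be a diagram, a point of $\Gamma$ over a uni-valent or singular vertex, a Morse critical point of the height on $\Gamma$) are read off from this, and this is what makes your ``completeness of the enumeration'' worry tractable. Second, your treatment of the addendum differs from the paper's: you propose pushing \(S1)/\(S2) occurrences around and cancelling them, whereas the paper simply runs the whole argument inside the class of special diagrams, inserting and later removing auxiliary positive half-twists whenever a ribbon intersection is about to project in the wrong way, so that \(S1) and \(S2) are never used after the initial reduction. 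Your commutation-and-cancellation scheme is plausible (since moves \(S3)--\(S22) never touch spots of type \(f) or \(g)), but it needs an actual verification that the creating and destroying applications of \(S1)/\(S2) for a given non-special spot can be matched up after the intervening moves; the paper's organization avoids this issue altogether.
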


\begin{proof}
The ``if'' part is trivial, since all the moves \(S1) to \(S22) represent special
3-dimensional isotopies of $S$ mod $\partial_0 S \cup \partial_1 S$. To prove the ``only
if'' part, we need to show that these moves do generate any such 3-dimensional isotopy of
the 3-di\-mensional diagrams represented by planar diagrams.

Moves \(S1) and \(S2) allow us to restrict our attention to special planar diagrams. All
the remaining moves \(S3) to \(S22) only contain terminal ribbon intersections of type
\(h), hence they can be performed in the context of special planar diagrams. Actually, we
will use only them in this special case, proving in this way also the last part of the
statement.

Now, consider two special planar diagrams representing ribbon surface tangles $S_0$ and
$S_1$ as regular neighborhoods of their core graphs $G_0$ and $G_1$, such that there is a
3-dimensional isotopy $H :(S_0,G_0) \times [0,1] \to \Int E \times [0,1]$ taking
$(S_0,G_0)$ to $(S_1,G_1)$, as singular surfaces with ribbon self-intersections, and
keeping the tangle ends fixed. Notice that the intermediate pairs $(S_t,G_t) =
H((S_0,G_0),t)$ with $0 < t < 1$ do not necessarily project suitably into
$\left]0,1\right[ \times [0,1]$ to give planar diagrams.

Of course, we can assume that $H$ is smooth, as a map defined on a pair of smooth
stratified spaces, and that the graph $G_t$ regularly projects to a diagram in
$\left]0,1\right[ \times [0,1]$ for every $t \in [0,1]$, except a finite number of $t$'s
corresponding to extended Reidemeister moves for graphs. For such exceptional $t$'s, the
lines tangent to $G_t$ at its vertices are assumed not to be vertical.

We define $\Gamma \subset G_0 \times [0,1]$ as the subspace of pairs $(x,t)$ for which
$S_t$ has a vertical tangency at $x_t = H(x,t)$ (if $x \in G$ is a singular vertex, there
are two such tangent planes and we require that one of them is vertical).

We can assume that $\Gamma$ does not meet $(\partial_0 S_0 \cup \partial_1 S_0) \times
[0,1]$. Moreover, by a standard transversality argument, we can perturb $H$ in such a way
that:
\begin{itemize}\itemsep0pt
\item[\(a)]
$\Gamma$ is a graph embedded in $G_0 \times [0,1]$ as a smooth stratified subspace of
constant codimension 1 and the restriction $\eta: \Gamma \to [0,1]$ of the height function
$(x,t) \mapsto t$ is a Morse function on each edge of $\Gamma$;
\item[\(b)] 
the edges of $\Gamma$ locally separate regions consisting of points $(x,t)$ for which the
projection of $S_t$ into $\left]0,1\right[ \times [0,1]$ has opposite local orientations
at $x_t$;
\item[\(c)] 
the two planes tangent to any $S_t$ at a singular vertex of $G_t$ are not both vertical,
and if one of them is vertical then it does not contain both the lines tangent to $G_t$ at
that vertex.
\end{itemize}

As a consequence of \(b), for each flat vertex $x \in G_0$ of valence one (resp. three)
there are finitely many points $(x,t) \in \Gamma$, all of which have the same valence one
(resp. three) as vertices of $\Gamma$. Similarly, as a consequence of \(c), for each
singular vertex $x \in G_0$ there are finitely many points $(x,t) \in \Gamma$, all of
which have valence one or two as vertices of $\Gamma$. Moreover, the above-mentioned
vertices of $\Gamma$ of valence one or three are the only vertices of $\Gamma$ of valence
$\neq 2$.

Let $0 < t_1 < \dots < t_k < 1$ be the critical levels $t_i$ at which one of the
following facts happens:
\begin{itemize}\itemsep0pt
\item[1)]\vskip-\lastskip
$G_{t_i}$ does not project regularly in $R^2$, since there is one point $x_i$ along an
edge of $G_0$ such that the line tangent to $G_{t_i}$ at $H(x_i,t_i)$ is vertical;
\item[2)] 
$G_{t_i}$ projects regularly in $R^2$, but its projection is not a graph diagram, due to a
multiple tangency or crossing;
\item[3)]
there is one point $(x_i,t_i) \in \Gamma$ with $x_i$ a uni-valent or a singular vertex of
$G_0$;
\item[4)]
there is one critical point $(x_i,t_i)$ for the function $\eta$ along an edge of $\Gamma$.
\end{itemize}

Without loss of generality, we assume that only one of the four cases above occurs for
each critical level $t_i$. Notice that the points $(x,t)$ of $\Gamma$ such that $x \in
G_0$ is a flat tri-valent vertex represent a subcase of 2 and for this reason
they are not included in case 3.

For $t \in [0,1] - \{t_1, \dots, t_k\}$, there exists a sufficiently small regular
neighborhood $N_t$ of $G_t$ in $S_t$, such that the pair $(N_t,G_t)$ projects to a special
planar diagram, except for the possible presence of some ribbon intersection projecting
in the wrong way, as in Figure \ref{ribbon-surf06/fig} \(g). We fix this problem by
inserting an auxiliary positive half-twists along the tongues containing those ribbon
intersections, as in move \(S2). The resulting singular surfaces, still denoted by $N_t$,
projects to a special planar diagram.

Actually, we modify the $N_t$'s all together to get a new isotopy where no wrong
projection of ribbon intersection occurs, so that $N_t$ projects to a special planar
diagram for each $t \in [0,1] - \{t_1, \dots, t_k\}$. Namely, at each critical level
when a wrong\break projection of a ribbon intersection is going to appear in the original
isotopy, we insert an auxiliary half-twist, to prevent the projection from becoming wrong.
Such half-twist remains close to the ribbon intersection until the first critical level
when the projection becomes good again in the original isotopy (remember that
3-dimensional diagram isotopy preserves ribbon intersections). At that critical level we
remove the auxiliary half-twist. We remark that the second part of condition \(c) is
violated when inserting/removing an auxiliary half-twist at critical points of type
2, as it can be seen by looking at moves \(S21) and \(S22) where this happens.

We observe that the planar diagram of $N_t$ is uniquely determined up to diagram isotopy
by that of the graph $G_t$ and by the tangent planes of $S_t$ at $G_t$. In fact, the
half-twists of $N_t$ along the edges of $G_t$ correspond to the transversal intersections
of $\Gamma$ with $G \times \{t\}$ and their signs, depend only on the local behavior of
the tangent planes of $S_t$. In particular, the planar diagrams of $(N_0,G_0)$ and
$(N_1,G_1)$ coincide, up to diagram isotopy, with the original ones of $(S_0,G_0)$ and
$(S_1,G_1)$.

If the interval $[t',t'']$ does not contain any critical level $t_i$, then each single
half-twist persists between the levels $t'$ and $t''$, and hence the planar isotopy
relating the diagrams of $G_{t'}$ and $G_{t''}$ also relate the diagrams of $N_{t'}$ and
$N_{t''}$, except for possible slidings of half-twists along ribbons over/under crossings.
Therefore the planar diagrams of $(N_{t'}, G_{t'})$ and $(N_{t''}, G_{t''})$, up to
diagram isotopy and moves \(S14), \(S15) and \(S16).

On the other hand, if the interval $[t',t'']$ is a sufficiently small neighborhood of a
critical level $t_i$, then the planar diagrams of $N_{t'}$ and $N_{t''}$ are related by
the moves in Figures \ref{ribbon-surf10/fig} and \ref{ribbon-surf11/fig}, depending on the
type of $t_i$ as follows.

If $t_i$ is of type 1, then a positive or negative kink is appearing (resp. disappearing)
along an edge of the core graph. When the kink is positive and $(x_i, t_i)$ is a local
maximum (resp. minimum) point for $\eta$, i.e. two positive half-twists along the ribbon
corresponding to the edge are being converted into a kink (resp. viceversa), the diagrams
of $N_{t'}$ and $N_{t''}$ are directly related by move \(S17). The cases when $(x_i,t_i)$
is not an extremal point for $\eta$, that is one or two negative half-twists appear (resp.
disappear) together with the positive kink, can be reduced to the previous case by means
of move \(S14). On the other hand, by using the regular isotopy moves \(S6) and \(S7) in
order to create or delete in the usual way a pair of canceling kinks (without introducing
any half-twist) along the ribbon, we can reduce the case of a negative kink to that of a
positive one.

If $t_i$ is of type 2, then either a regular isotopy move is occurring between $G_{t'}$
and $G_{t''}$ or two tangent lines at a tri-valent vertex $x_i$ of the graph project to
the same line in the plane. In the first case, the regular isotopy move occurring between
$G_{t'}$ and $G_{t''}$, trivially extends to one of the moves \(S6) to \(S13). In the
second case, $x_i$ may be either a flat or a singular vertex. If $x_i$ is a flat vertex,
then the tangent plane to $S_t$ at $H(x_i,t)$ is vertical for $t = t_i$ and its projection
reverses the orientation when $t$ passes from $t'$ to $t''$. Move \(S19) (modulo moves
\(S6) and \(S14)) describes the effect on the diagram of such a reversion of the tangent
plane. If $x_i$ is a singular vertex, then $N_{t'}$ changes into $N_{t''}$ by one of the
moves \(S21) or \(S21), where auxiliary half-twists are inserted according to what we have
said above.

If $t_i$ is of type 3, then either a half-twist is appearing/disappearing at the tip of
the tongue of surface corresponding to a uni-valent vertex or one of the two bands at the
ribbon intersection corresponding to a singular vertex is being reversed in the planar
projection. The first case corresponds to move \(S18) (here we have a positive half-twist,
for dealing with a negative one we combine this move with \(S14)). The second case may
happen in two different ways, depending on which band is being reversed. If such band is
the one passing through the other in the ribbon intersection, then, we can transform
$N_{t'}$ into $N_{t''}$ by applying move \(S20), possibly modulo \(S14). Otherwise, the
projection of the ribbon intersection is changing from good to wrong, and the appearing
half-twist is compensated by the auxiliary one up to move \(S14).

Finally, if $t_i$ is of type 4, a pair of canceling half-twists is appearing or
disappearing along an edge of the graph. This is just move \(S14).

At this point, to conclude that moves \(S3) to \(S22) suffice to realize
3-dimension\-al isotopy between any two special planar diagrams of a given ribbon surface
tangle $S$, it is left to prove that, given two different core graphs $G'$ and $G''$ of
$S$ as above, the planar diagrams $S'$ and $S''$ determined respectively by $G'$ and
$G''$, are related by those moves. This is quite straightforward. In fact, by cutting the
3-dimensional diagram $S$ along the ribbon intersection arcs, we get a new surface $\bar
S$ with some marked arcs. This operation also makes the graphs $G'$ and $G''$ into two
simple spines $P'$ and $P''$ of $\bar S$ relative to those marked arcs (Figure
\ref{ribbon-surf12/fig} shows the effect of the cut at the ribbon intersection in Figure
\ref{ribbon-surf07/fig}) and to the arcs in $\partial_0 S \cup \partial_1 S$.

\begin{Figure}[htb]{ribbon-surf12/fig}
{}{Cutting a ribbon surface tangle at a ribbon intersection}
\centerline{\fig{}{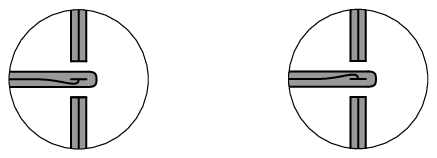}}
\end{Figure}

From intrinsic point of view, that is considering $\bar S$ as an abstract surface and
forgetting its inclusion in $R^3$, the theory of simple spines tells us that the moves in
Figure \ref{ribbon-surf09/fig} suffice to transform $P'$ into $P''$. In particular, moves
\(S3) and \(S4) correspond to the well-known moves for simple spines of surfaces, while
\(S5) relates the different positions of the spine with respect to the marked arcs in the
interior of $\bar S$. It remains only to observe that, up to a 3-dimensional diagram
isotopy preserving the core graph, hence up to the moves in Figures
\ref{ribbon-surf10/fig} and \ref{ribbon-surf11/fig}, the portion of the surface involved
in each single spine modification can be isolated in the planar diagram.
\end{proof}

The next proposition says that, up to 3-dimensional diagram isotopy, 1-isotopy of ribbon
tangles is generated by the local isotopy moves of Figure \ref{ribbon-surf13/fig}.

\begin{proposition} \label{1-isotopy/thm}
Two ribbon surface tangles with flat ends are 1-iso\-topic if and only if their planar
diagrams can be related by a finite sequence of 3-dimensional diagram isotopies fixing the
tangle ends (cf. Proposition \ref{isotopy/thm}) and moves \(S23) to \(S26) in Figure
\ref{ribbon-surf13/fig}.
\end{proposition}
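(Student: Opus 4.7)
The \emph{if} direction is routine: each of the moves (S23) to (S26) should be exhibited as the planar shadow of one of the basic 1-equivalence operations of Definition \ref{1-isotopy/def}, namely ribbon intersection sliding of Figure \ref{ribbon-surf03/fig} and the handle operations of Figure \ref{ribbon-surf04/fig}. For each move, one produces compatible adapted 1-handlebody structures on the two sides and writes down a short 1-equivalence realizing the pictured change, using moves (S1)--(S22) to absorb any intermediate 3-dimensional diagram isotopy on regions untouched by the local move.

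For the \emph{only if} direction, suppose $S_0$ and $S_1$ are 1-isotopic ribbon surface tangles with flat ends. I would start by fixing, on each side, an adapted 1-handlebody decomposition $H_0$, $H_1$ and a sequence of 1-equivalence operations joining $(S_0,H_0)$ to $(S_1,H_1)$. The key bridge to planar diagrams is to choose, for every intermediate adapted handlebody $(S,H)$ in this sequence, a simple spine $G$ of $S$ that is \emph{compatible} with $H$ in the sense that the 0-handles are regular neighborhoods of the flat $1$- and $3$-valent vertices of $G$, each 1-handle is a regular neighborhood of one edge of $G$, and each ribbon intersection meets $G$ in a singular vertex as in the discussion preceding Definition \ref{planar-diagram/def}. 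This gives a planar diagram of $S$ that records the handlebody structure.

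The work is then a case analysis on the three types of 1-equivalence operations. Between two consecutive operations, the underlying tangle is unchanged and only the handlebody structure is reorganized: by Proposition \ref{1-handles/thm} any two compatible spines can be related by 3-dimensional diagram isotopies of $(S,G)$, so the corresponding planar diagrams are joined by moves (S1)--(S22), i.e.\ by ordinary 3-dimensional diagram isotopy of planar diagrams as in Proposition \ref{isotopy/thm}. For an adapted isotopy (part (a) of Definition \ref{1-isotopy/def}) taking place between two critical stages of Figure \ref{ribbon-surf02/fig}, the effect on a compatible planar diagram is a 3-dimensional diagram isotopy of the tangle itself, again handled by Proposition \ref{isotopy/thm}; at a critical stage one reads off a local picture, and verifies that the first modification of Figure \ref{ribbon-surf02/fig} is realized by one of (S23)--(S26), while the second one is reduced to a handle slide plus cancellation (as already noted after Definition \ref{1-isotopy/def}) and therefore produced from the same list. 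Ribbon intersection sliding (Figure \ref{ribbon-surf03/fig}) and the embedded handle operations (Figure \ref{ribbon-surf04/fig}) are by design exactly the local changes recorded by (S23)--(S26), so each single 1-equivalence step is converted into one such move up to 3-dimensional isotopy.

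The main technical obstacle will be the compatibility of planar diagrams across moves, namely arranging that when the handlebody structure changes non-trivially (for instance when a ribbon intersection slides from one 0-handle to another that was previously ``hidden'' in a subdivision), the chosen spines on the two sides agree outside a small disk where the move is performed. This is handled by the same 0-handle splitting and 1-handle splitting tricks used in the proof of Proposition \ref{1-handles/thm}: first one subdivides the handlebody on both sides so that the local models match, absorbing the subdivisions into 3-dimensional diagram isotopies of the resulting planar pictures, and only then does one read off the single move (S23)--(S26) corresponding to the given 1-equivalence step. A secondary but important point, already flagged after Definition \ref{1-isotopy/def}, is that up to addition and deletion of canceling 0/1-handle pairs one may always assume the two 0-handles in Figures \ref{ribbon-surf03/fig} and \ref{ribbon-surf04/fig} are distinct, so one needs to check that the four moves (S23)--(S26) cover exactly this generic case; the degenerate cases follow by combining them with a handle cancellation.
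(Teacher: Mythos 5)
Your proposal follows essentially the same route as the paper: the generating 1-equivalence modifications of Figures \ref{ribbon-surf02/fig}, \ref{ribbon-surf03/fig} and \ref{ribbon-surf04/fig} are realized, disregarding the handlebody structure, by combinations of the moves (S23)--(S26) together with 3-dimensional diagram isotopy, and conversely the surfaces appearing in those moves are endowed with adapted handlebody decompositions so that the dictionary can be reversed (only the one-vertical-disk special cases being needed). Note only that the correspondence is not literally move-for-move as you assert (e.g.\ ribbon intersection sliding requires one (S24), one (S25) and one (S26) for each vertical disk, and the lower modification of Figure \ref{ribbon-surf02/fig} requires (S24) combined with (S25)), and that the apparatus of spines compatible with the handle decomposition is unnecessary, since planar diagrams never record the handlebody structure.
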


\begin{Figure}[htb]{ribbon-surf13/fig}
{}{1-isotopy moves for planar diagrams}
\centerline{\fig{}{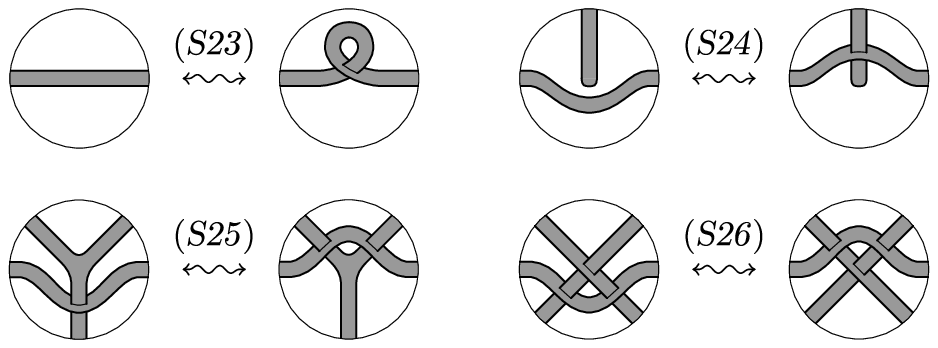}}
\end{Figure}

\begin{proof}
On one hand, we have to realize the modifications of Figures \ref{ribbon-surf02/fig},
\ref{ribbon-surf03/fig} and \ref{ribbon-surf04/fig}, disregarding the handlebody
structure, by moves \(S23) to \(S26). Proceeding in the order: one move \(S23) suffices
for the upper part of Figure \ref{ribbon-surf02/fig}, while the lower part can be obtained
by combining one move \(S24) with one move \(S25); Figure \ref{ribbon-surf03/fig} requires
three moves for each vertical disk, one \(S24), one \(S25) and one \(S26); the upper
(resp. lower) part of Figure \ref{ribbon-surf04/fig} can be achieved by one move \(S24)
(resp. \(S25)) for each vertical disk.

On the other hand, the ribbon surfaces of Figure \ref{ribbon-surf13/fig} can be easily
provided with adapted handlebody decompositions, so that the relations just described
between moves \(S23) to \(S26) and the above modifications can be reversed. In fact, only
the special cases of those modifications with one vertical disk are needed.
\end{proof}

\subsection{Branched coverings%
\label{coverings/sec}}

In this section we provide a short account of the theory of branched coverings, with a 
special emphasis on covering moves in dimension 3 and 4. 

Here, we adopt the piecewise-linear approach, which is the most suitable for a unified
exposition of the topic in a general context. However, this approach is equivalent to the
differentiable one for the special cases we will consider in the next chapters: branched
coverings in dimensions 2 and 3 or branched coverings of $B^4$ whose branching set is a
ribbon surface.

\begin{definition}\label{covering/def}
A non-degenerate PL map $p:P \to Q$ between compact PL manifolds of the same dimension $d$
is called a {\sl branched covering} if there exists an $(d-2)$-dimensional subcomplex $B_p
\subset Q$, the {\sl branching set} of $p$, such that the restriction $p_|: P-p^{-1}(B_p)
\to Q - B_p$ is an ordinary covering of finite degree $n$. By the {\sl monodromy} of
$p$ we mean the monodromy $\omega_p: \pi_1(Q-B_p, \ast) \to \Sigma_n$ of this restriction,
which is defined up to conjugation in the permutation group $\Sigma_n$, depending on the
choice of the base point $\ast$ and on the numbering of $p^{-1}(\ast)$. We call
$p$ a {\sl simple} branched covering if the monodromy of any meridian loop around a
$(d-2)$-simplex of $B_p$ is a transposition.
\end{definition}

If the subcomplex $B_p \subset Q$ in the definition is minimal with respect to the
required property, then we have $B_p = p(S_p)$, where $S_p$ is the {\sl singular set} of
$p$, that is the set of points at which $p$ is not locally injective. In this case, both
$B_p$ and $S_p$, as well as the {\sl pseudo-singular set} $S'_p = \Cl(p^{-1}(B_p) - S_p)$,
are (possibly empty) homogeneously $(d-2)$-dimensional complexes.

Since $p$ is completely determined, up to PL homeomorphisms, by the ordinary covering
$p_|$ (cf. \cite{Fo57}), we can describe it in terms of the branching set $B_p$ and the
monodromy $\omega_p$. In particular, the monodromies of the meridians around the
$(d-2)$-simplices of $B_p$ determine the structure of the singularities of $p$. If $p$ is
simple. then every point in the interior of a $(d-2)$-simplex of $B_p$ is the image of one
singular point, at which $p$ is topologically equivalent to the complex map $z \mapsto
z^2$, and $n-2$ pseudo-singular points.

Starting from $B_p \subset Q$ and $\omega_p$, we can explicitly reconstruct $P$ and $p$ by
following steps: 1) choose a $(d-1)$-dimensional {\sl splitting complex}, that means a
subcomplex $C \subset Q - \{\ast\}$ such that $B_p \subset C$ and the restriction
$\omega_{p|}:\pi_1(Q - C,\ast) \to \Sigma_d$ vanishes; 2) cut $Q$ along $C$ in such a way
that each $(d-1)$-simplex $\sigma$ of $C$ gives raise to 2 simplices $\sigma^-$ and
$\sigma^+$; 3) take $n$ copies of the obtained complex (called the {\sl sheets} of the
covering) and denote by $\sigma^\pm_1, \dots, \sigma^\pm_n$ the corresponding copies of
$\sigma^\pm$; 4) identify in pairs the $\sigma^\pm_i$'s according to the monodromy $\rho =
\omega_p(\alpha)$ of a loop $\alpha$ meeting $C$ transversally at one point of $\sigma$,
that is $\sigma^-_i$ with $\sigma^+_{\rho(i)}$. Up to PL homeomorphisms, $P$ is the result
of such identification and $p$ is the map induced by the natural projection of the sheets
onto $Q$.

\medskip

A convenient representation of $p$ can be given by labeling each $(d-2)$-simplex of $B_p$
by the monodromy of a preferred meridian around it and each generator (in a finite
generating set) of $\pi_1(Q,\ast)$ by its monodromy, since those loops together generate
$\pi_1(Q-B_p,\ast)$. Of course, only the labels on $B_p$ are needed when $Q$ is simply
connected. In any case, with a slight abuse of language if $Q$ is not simply connected, we
refer to such a representation as a {\sl labeled branching set}.

\begin{definition}\label{covering-equiv/def}
Two branched coverings $p:P \to Q$ and $p':P' \to Q$ are called {\sl equivalent} if and
only if there exists PL homeomorphism $h:Q \to Q$ isotopic to the identity which lifts to
a PL homeomorphism $k:P \to P'$.
\end{definition}

By the classical theory of ordinary coverings and \cite{Fo57}, such a lifting $k$ of $h$
exists if and only if $h(B_p) = B_{p'}$ and $\omega_{p'}h_* = \omega_p$ up to conjugation
in $\Sigma_n$, where $h_\ast: \pi_1(Q - B_p,\ast) \to \pi_1(Q - B_{p'},h(\ast))$ is the
homomophism induced by $h$. Therefore, in terms of labeled branching set, the equivalence
of branched coverings can be rep\-resented by {\sl labeled isotopy}.

\medskip

Before going on, let us say some further words about the representation through labeled
diagrams of the branched coverings in the cases of interest for our purposes. We remark
once again that in all those cases PL and smooth are interchangeable.

We represent an $n$-fold covering $p: P \to S^3$ (resp. $B^3$) branched over a link $L
\subset S^3$ (resp. a tangle $T \subset B^3$) by a $\Sigma_n$-labeled oriented diagram $D$
of $L$ (resp. $T$), which describes the monodromy of $p$ in terms of the Wirtinger
presentation of $\pi_1(S^3 - L)$ (resp. $\pi_1(B^3 - T)$) associated to $D$. Namely, we
label each arc of $D$ by the monodromy of the standard positive meridian around it. Of
course, the Wirtinger relations impose constraints on the labeling at crossings, and each
$\Sigma_n$-labeling of $D$ satisfying such constraints do actually represent an $n$-fold
covering branched over $L$ (resp. $T$). In this context, labeled isotopy can be realized
by means of labeled Reidemeister moves.

For simple coverings, the orientation of $D$ is clearly unnecessary and there are only
three possible ways of labeling the arcs at each crossing. These are depicted in Figure
\ref{coverings01/fig}. The extension from branching links/tangles to branching embedded
graphs is straightforward. In fact, we only need to take into account extra labeling
constraints and labeled moves at the vertices of the graph.

\begin{Figure}[htb]{coverings01/fig}
{}{Simple labelings of a link/tangle ($i,j,k$ and $l$ all different)}
\centerline{\fig{}{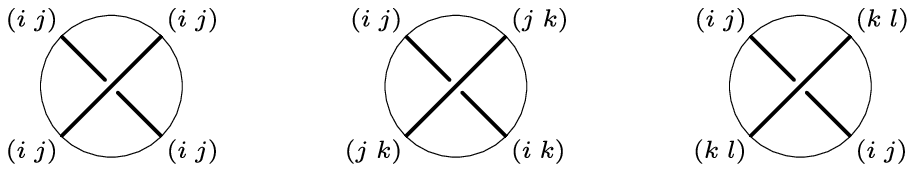}}
\end{Figure}

Now consider an $n$-fold covering $p:P \to B^4$ branched over a ribbon surface $S \subset
B^4$. We represent it by labeled locally oriented planar diagram of $S$, where the labels
give the monodromy of the meridians in the Wirtinger presentation of $\pi_1(B^4 - S)$
associated to the diagram. Actually, since we will only consider simple coverings, we will
never need local orientations.

The same labeling rules as above apply to ribbon intersections as well as to ribbon
crossings, these are depicted in Figure \ref{coverings02/fig}. In particular, at a ribbon
intersection the label of the band passing through the other gets conjugated by the label
of this. Notice that, contrary to what happens for ribbon intersections (in the case of
distinct but not disjoint labels), when a ribbon crosses under another one, its label
changes only locally (on the undercrossing region).

\begin{Figure}[htb]{coverings02/fig}
{}{Simple labelings of a ribbon surface ($i,j,k$ and $l$ all different)}
\centerline{\fig{}{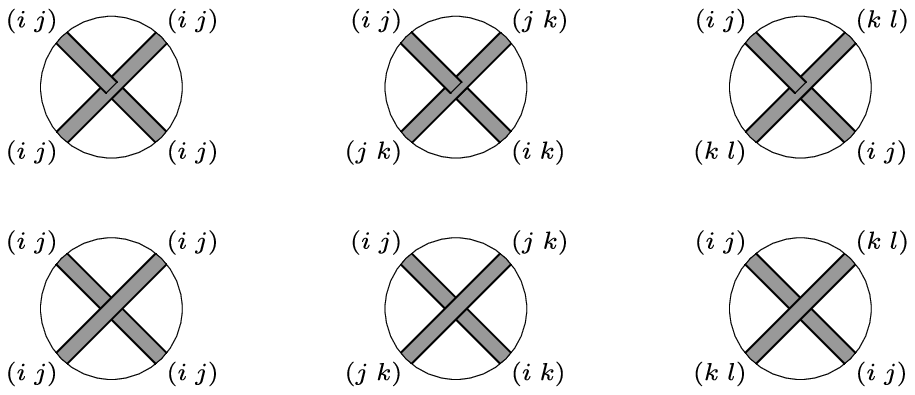}}
\end{Figure}

We remark that, if $S \subset B^4$ is a labeled ribbon surface representing an $n$-fold
(simple) covering of $p: P \to B^4$, then $L = S \cap S^3$ is a labeled link representing
the restriction $p_{|\Bd}: \Bd P \to S^3$. This is still a $n$-fold (simple) covering,
having the diagram of $S$ as a splitting complex.

Labeled ribbon surfaces in $B^4$ (that is coverings of $B^4$ simply branched over ribbon
surfaces) represent all the 4-dimensional 2-handlebodies. In fact, by Montesinos
\cite{Mo78} (cf. Chapter \ref{surfaces/sec}), for the connected case it suffices to take
labels from the three transpositions of $\Sigma_3$ (that is to consider 3-fold simple
coverings).

Though labeled isotopy of branching ribbon surfaces preserves the covering manifold $P$ up
to diffeomorphisms, we are interested in the (perhaps more restrictive) notion of {\sl
labeled 1-isotopy}, which preserves $P$ up to 2-deformations (cf. Lemma
\ref{ribbon-to-kirby/thm}). This can be realized by means of labeled diagram isotopy and
labeled 1-iso\-topy moves, that is diagram isotopy and 1-isotopy moves of Figure
\ref{ribbon-surf13/fig} suitably labeled according to the rules discussed above.

\medskip

By a {\sl covering move}, we mean any non-isotopic modification making a labeled branching
set representing a branched covering $p: P \to Q$ into one representing a different
branched covering $p': P \to Q$ between the same manifolds (up to PL homeomorphisms). We
call such a move {\sl local}, if the modification takes place inside a cell and can be
performed whatever is the rest of labeled branching set outside. In the figures depicting
local moves, we will draw only the portion of the labeled branching set inside the
relevant cell, assuming everything else to be fixed.

As a primary source of covering moves, we consider the following two very general
equivalence principles (cf. \cite{PZ03}). Several special cases of these principles have
already appeared in the literature and we can think of them as belonging to the
``folklore'' of branched coverings.

\begin{statement}{Disjoint monodromies crossing}
Subcomplexes of the branching set of a covering that are labeled with disjoint
permutations can be isotoped independently from each other without changing the covering
manifold.
\end{statement}

The reason why this principle holds is quite simple. Namely, being the labeling of the
subcomplexes disjoint, the sheets non-trivially involved by them do not interact, at least
over the region where the isotopy takes place. Hence, the relative position of such
subcomplexes is not relevant in determining the covering manifold. Typical applications of
this principle are given by the local covering moves \(M2), \(R2) and \(R4) in Figures 
\ref{coverings04/fig}, \ref{coverings05/fig} and \ref{coverings07/fig}).

It is worth observing that, abandoning transversality, the disjoint monodromies crossing
principle also gives the special case of the next principle when the $\sigma_i$'s are
disjoint and $L$ is empty.

\begin{statement}{Coherent monodromies merging}
Let $p:P \to Q$ be any branched covering with branching set $B_p$ and let $\pi: E \to K$
be a connected disk bundle embedded in $Q$, in such a way that: 1)~there exists a
(possibly empty) subcomplex $L \subset K$ for which $B_p \cap \pi^{-1}(L) = L$ and the
restriction of $\pi$ to $B_p \cap \pi^{-1}(K-L)$ is an unbranched covering of $K-L$;
2)~the monodromies $\sigma_1, \dots, \sigma_k$ relative to a fundamental system $\omega_1,
\dots, \omega_k$ for the restriction of $p$ over a given disk $D = \pi^{-1}(x)$, with $x
\in K - L$, are coherent in the sense that $p^{-1}(D)$ is a disjoint union of disks. Then,
by contracting the bundle $E$ fiberwise to $K$, we get a new branched covering $p': P \to
Q$, whose branching set $B_{p'}$ is equivalent to $B_p$, except for the replacement of
$B_p \cap \pi^{-1}(K-L)$ by $K-L$, with the labeling uniquely defined by letting the
monodromy of the meridian $\omega = \omega_1 \dots \omega_k$ be $\sigma = \sigma_1 \dots
\sigma_k$.
\end{statement}

We remark that, by connectedness and property 1, the coherence condition required in 2
actually holds for any $x \in K$. Then, we can prove that $p$ and $p'$ have the same
covering manifold, by a straightforward fiberwise application of the Alexander's trick to
the components of the bundle $\pi \circ p: p^{-1}(E) \to K$. A coherence criterion can be
immediately derived from Section 1 of \cite{MP01}.

The coherent monodromy merging principle originated from a classical perturbation argument
in algebraic geometry and appeared in the literature as a way to deform non-simple
coverings between surfaces into simple ones, by going in the opposite direction from $p'$
to $p$ (cf. \cite{BE79}). In the same way, it can be used in dimension 3, both for
achieving simplicity (cf. \cite{Hr03}) and removing singularities from the branching set.
We will do that in the proof of Theorem \ref{equiv3g/thm} by means of the moves \(G1) and 
\(G2) of Figure \ref{coverings03/fig}, which are straightforward applications of this 
principle.\break Actually, similar results could be proved in dimension 4 for labeled 
singular surfaces representing possibly non-simple branched coverings, but we will not 
consider them here. 

\begin{Figure}[htb]{coverings03/fig}
{}{Covering moves for labeled graphs ($\sigma = \sigma_1 \!\cdot \sigma_2$)}
\centerline{\fig{}{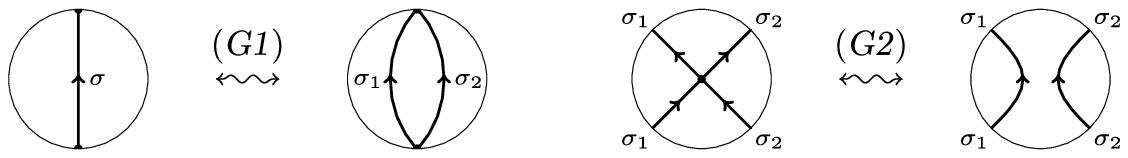}}
\vskip-3pt
\end{Figure}

Finally, we consider the notion of stabilization. This is a particular local covering
move, which makes sense only for branched coverings of $S^m$ or $B^m$ and, differently
from all the previous moves, changes the degree of the covering, increasing it by one.

\begin{statement}{Stabilization}
A branched covering $p:P \to S^d$ (resp. $p:P \to B^d$) of degree $n$, can be stabilized
to degree $n + 1$ by adding to the labeled branching set a trivial separate
$(d-2)$-sphere (resp. regularly embedded $(d-2)$-disk) labeled with the transposition
$\tp{i}{n{+}1}$, for some $i = 1, \dots, n$.
\end{statement}

The covering manifold of such a stabilization is still $P$, up to PL homeomorphisms. In
fact, it turns out to be the connected sum (resp. boundary connected sum) of $P$ itself,
consisting of the sheets $1, \dots, n$, with the copy of $S^d$ (resp. $B^d$) given by the
extra trivial sheet $n+1$.

By {\sl stabilization to degree $m$} (or {\sl $m$-stabilization}) of a branched covering
$p: P \to S^d$ (resp. $p:P \to B^d$) of degree $n \leq m$ we mean the branched covering of
degree $m$ obtained from it by performing $m - n$ stabilizations as above. In particular,
this leaves $p$ unchanged if $m = n$.

\medskip

Concerning the cases of interest for this paper, Figure \ref{coverings04/fig} shows the
covering moves introduced by Montesinos (cf. \cite{Mo85}) for labeled links representing
simple coverings of $S^3$, while in Figure \ref{coverings05/fig} we introduce new local
covering moves for labeled ribbon surfaces representing simple coverings of $B^4$, which
we call {\sl ribbon moves}.

\begin{Figure}[htb]{coverings04/fig}
{}{Montesinos moves ($i,j,k$ and $l$ all different)}
\centerline{\fig{}{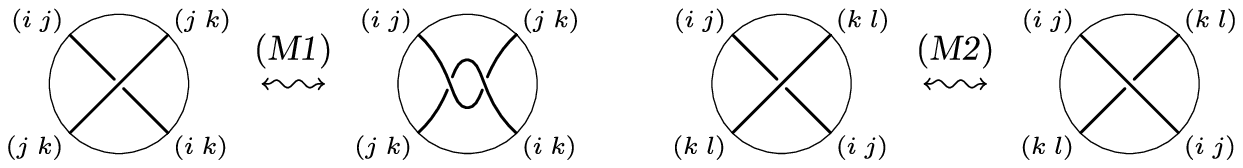}}
\vskip-3pt
\end{Figure}

\begin{Figure}[htb]{coverings05/fig}
{}{Ribbon moves ($i,j,k$ and $l$ all different)}
\centerline{\fig{}{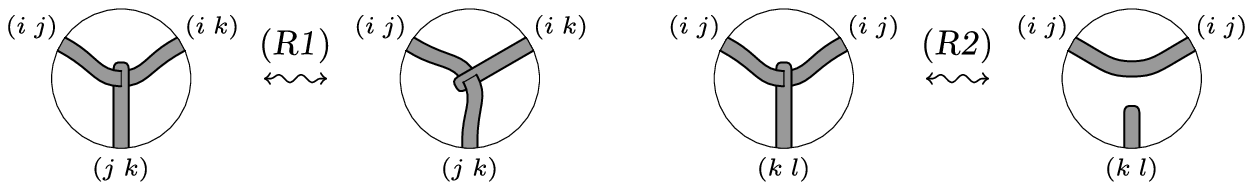}}
\vskip-6pt
\end{Figure}

The coherent monodromy merging principle provides an easy way to verify that \(M1) and
\(R1) are covering moves. This is shown in Figure \ref{coverings06/fig}, where the
principle is applied in the first and in the last step for both the moves, the middle step
being just labeled isotopy. Moves \(R2) and \(M2) are nothing else than simple
applications of the disjoint monodromies principle, as we observed above.

\begin{Figure}[htb]{coverings06/fig}
{}{Moves \(M1) and \(R1) are local covering moves}
\centerline{\fig{}{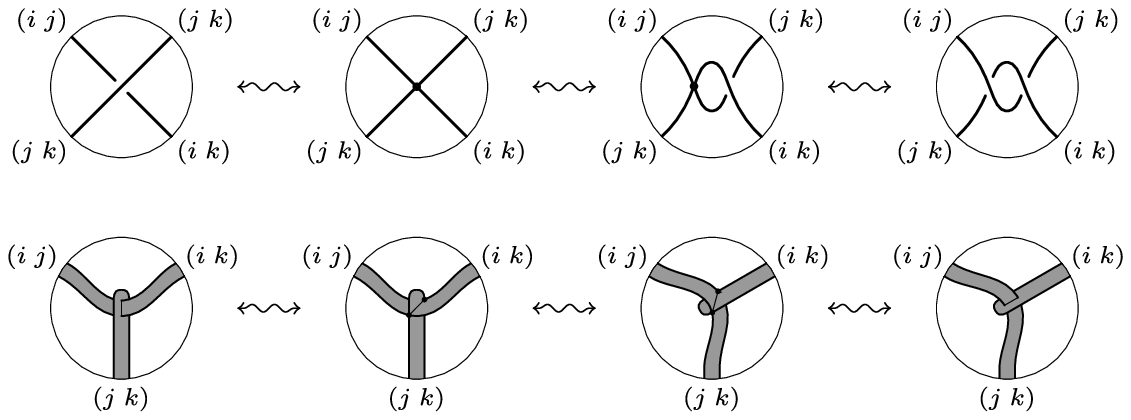}}
\vskip-6pt
\end{Figure}

In the next proposition, we derive from \(R1) and \(R2) the auxiliary covering moves 
\(R3) to \(R6) depicted in Figure \ref{coverings07/fig}, which will be very useful in 
the following chapters. 

\begin{Figure}[htb]{coverings07/fig}
{}{Other moves for labeled ribbon surfaces ($i,j,k$ and $l$ all different)}
\vskip-3pt
\centerline{\fig{}{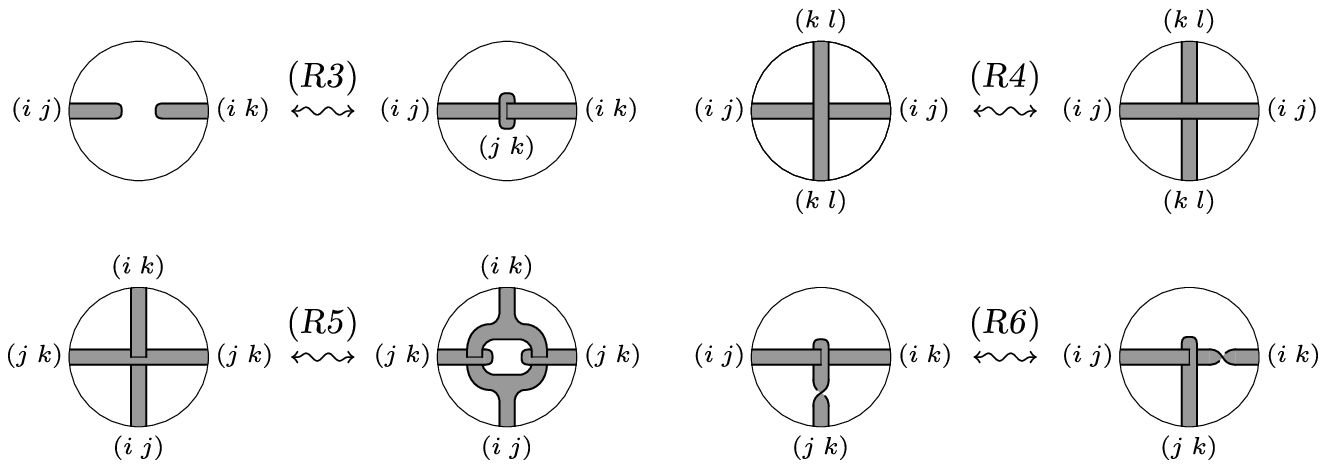}}
\vskip-6pt
\end{Figure}

\begin{proposition} \label{moves-aux/thm}
Up to labeled 1-isotopy, the local moves \(R3) to \(R6) depicted in Figure
\ref{coverings07/fig} can be generated the ribbon moves \(R1) and \(R2), hence they are
covering moves.
\end{proposition}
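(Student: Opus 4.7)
The plan is to derive each of the moves \(R3) through \(R6) individually, as a finite composition of the basic ribbon moves \(R1) and \(R2) interleaved with labeled 1-isotopy moves (the labeled versions of moves \(S23)--\(S26) from Figure \ref{ribbon-surf13/fig}). The overall strategy for each derivation is to use labeled 1-isotopy to introduce auxiliary ribbon pieces or reconfigure the local diagram so that either \(R1) or \(R2) becomes applicable to a portion, apply the ribbon move, and then clean up the resulting diagram by further 1-isotopy. In this whole process, the labels must be propagated through ribbon intersections according to the conjugation rule of Figure \ref{coverings02/fig}: when a band labeled $\tau$ passes through a band labeled $\sigma$, the label becomes $\sigma\tau\sigma^{-1}$ on the far side.

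I would treat the four moves case by case. For those among \(R3)--\(R6) whose labels are pairwise disjoint transpositions (i.e.\ involve four distinct indices $i,j,k,l$), I expect the derivation to reduce essentially to \(R2) together with 1-isotopy, since the disjoint monodromies crossing principle is directly incarnated in \(R2) and the labels can be slid past one another freely. For moves whose labels involve overlapping indices (transpositions sharing an element, such as $(i\,j)$ and $(j\,k)$), the derivation will necessarily invoke \(R1) at least once, since \(R1) is the only one of the two basic moves that changes the coherent combined monodromy. The mixed cases will be handled by successive applications of both types.

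A key auxiliary trick is the following. By inserting a pair of cancelling ribbon intersections of a band through another band labeled $\sigma$ via 1-isotopy of type \(S23)--\(S26), one can effectively conjugate the label of a local portion by $\sigma$ without altering the covering; this conjugation is then removed by cancelling the pair after the essential ribbon move has been performed. This mechanism is what lets us bridge between the various label configurations appearing in \(R3)--\(R6) and the canonical forms in \(R1) and \(R2).

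The main obstacle will be the careful bookkeeping of labels through the sequence of intermediate diagrams. In particular, at every stage one must check that the labeling satisfies the Wirtinger-type consistency at each ribbon intersection and crossing (as in Figure \ref{coverings02/fig}), and that the auxiliary ribbon pieces introduced to enable an application of \(R1) or \(R2) can be removed by legitimate labeled 1-isotopy moves, so that the endpoints of the derivation agree exactly with the left- and right-hand sides of the given auxiliary move. Once this verification is carried out for each of \(R3)--\(R6), the proposition follows immediately, since any local modification realizable by a finite sequence of \(R1), \(R2) and labeled 1-isotopy is a covering move by Proposition \ref{1-isotopy/thm} and the fact that \(R1), \(R2) are already known to be covering moves (as verified by the decompositions in Figure \ref{coverings06/fig}).
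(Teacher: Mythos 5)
Your proposal sets out the right general strategy---derive each of \(R3)--\(R6) separately as a chain of labeled 1-isotopies interspersed with applications of \(R1) and \(R2), which is exactly how the paper proceeds (\(R4) is two applications of \(R2); \(R3), \(R5), \(R6) are obtained from \(R1) together with labeled 1-isotopy)---but as written it contains a genuine gap: you never actually produce a single one of the four derivations. The entire mathematical content of this proposition \emph{is} the explicit sequences of moves; statements such as ``I expect the derivation to reduce essentially to \(R2)'' or ``the derivation will necessarily invoke \(R1) at least once'' are predictions, not arguments, and the ``auxiliary trick'' of inserting and later cancelling a pair of ribbon intersections is described only in the abstract, with no verification that it can be made to terminate in precisely the right-hand sides of \(R3)--\(R6) with consistent labels. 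Deferring ``the careful bookkeeping of labels'' is deferring the proof itself: without exhibiting, for each move, the intermediate diagrams and the specific moves relating them, nothing has been established.

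Two further points of imprecision. First, you identify labeled 1-isotopy with the labeled versions of \(S23)--\(S26) only; in fact 1-isotopy also comprises labeled 3-dimensional diagram isotopy, and the actual derivations use diagram-isotopy moves in an essential way (the paper's chains use \(S24) for \(R3); \(S1), \(S2), \(S5), \(S21), \(S25) for \(R5); \(S14), \(S20) for \(R6)), so restricting to \(S23)--\(S26) would not suffice as stated. Second, your dichotomy ``disjoint labels $\Rightarrow$ \(R2) only, overlapping labels $\Rightarrow$ \(R1) needed'' happens to match the outcome, but it is asserted rather than proved, and the mixed-label cases (e.g.\ the half-twist transfer \(R6), whose derivation hinges on \(S14) and \(S20) rather than on any conjugation device) do not follow from it. To complete the proof you must draw out, for each of \(R3), \(R5), \(R6), the explicit sequence of labeled diagrams and name the move used at each step, checking the Wirtinger-type label consistency throughout, and for \(R4) exhibit the two \(R2) applications.
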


\begin{proof}
Move \(R4) can be easily obtained as the composition of two moves \(R2). Figures
\ref{coverings08/fig}, \ref{coverings09/fig} and \ref{coverings10/fig}
respectively shows how to get moves \(R3), \(R5) and \(R6) in terms of labeled 1-isotopy
and moves \(R1).

\begin{Figure}[b]{coverings08/fig}
{}{Deriving the covering move \(R3)}
\centerline{\fig{}{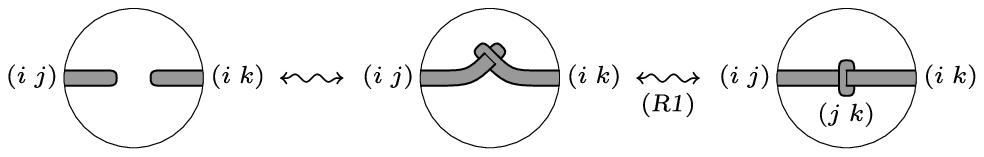}}
\vskip-6pt
\end{Figure}

\begin{Figure}[htb]{coverings09/fig}
{}{Deriving the covering move \(R5)}
\centerline{\fig{}{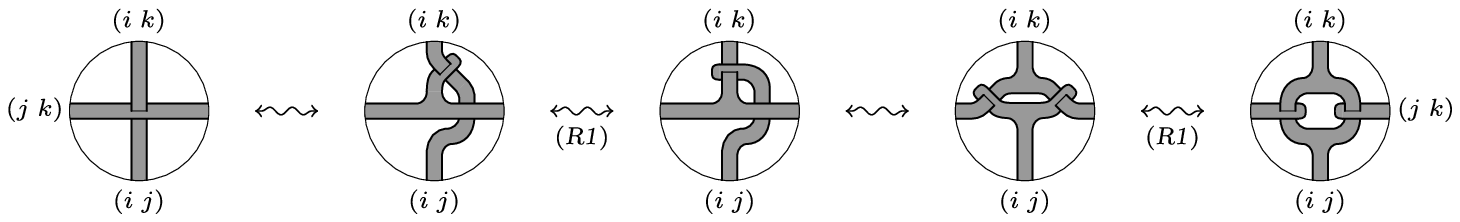}}
\vskip-6pt
\end{Figure}

\begin{Figure}[htb]{coverings10/fig}
{}{Deriving the covering move \(R6)}
\centerline{\fig{}{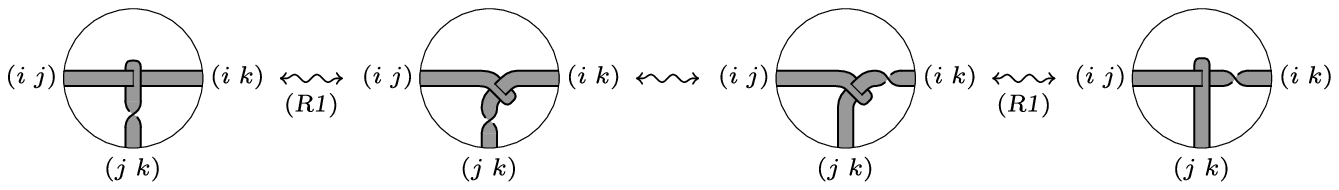}}
\vskip-6pt
\end{Figure}

Here, all the arrows which are not marked by \(R1) represent labeled 1-isotopy. Namely, we
use the following labeled 1-isotopy moves: \(S24) in Figure \ref{coverings08/fig};
\(S1), \(S2), \(S5), \(S21) and \(S25) in Figure \ref{coverings09/fig}; \(S14) and
\(S20) in Figure \ref{coverings10/fig}.
\end{proof}

\begin{remark} \label{orient/rem}
By labeled 1-isotopy and moves above, any $n$-labeled ribbon surface tangle $S$ can be
made orientable under the mild hypothesis that there are enough different labels to
generate $\Sigma_n$, or equivalently that the covering space represented by $S$ is
connected. In fact, twist transfer \(R6) allows us to eliminate non-orientable bands as
shown in Figure \ref{coverings11/fig}.
\begin{Figure}[htb]{coverings11/fig}
{}{Making labeled ribbon surfaces orientable ($i,j$ and $k$ all different)}
\centerline{\fig{}{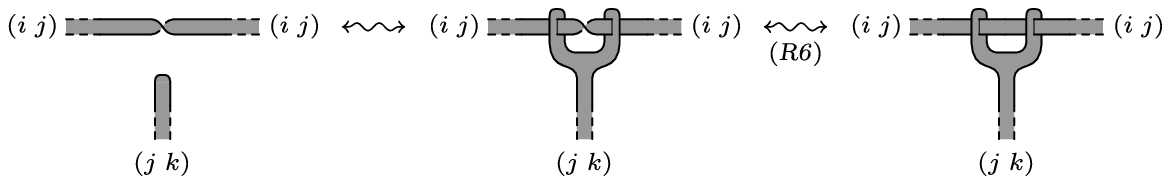}}
\vskip-6pt
\end{Figure}
\end{remark}

\subsection{Categories%
\label{categories/sec}}

We list here some basic definitions and statements from the general theory of
categories, which are used repeatedly in the paper. A complete reference on the subject 
is \cite{McL71}.

\medskip

Given a category $\C$, we denote by $\Obj \C$ the set of its objects and by $\Mor \C$ the
set of morphisms of $\C$, always assuming that $\C$ is a small category. Moreover, for any
$C,C' \in \Obj \C$, the set of morphisms of $\C$ with source $C$ and target $C'$ are
denoted by $\Mor_\C(C,C')$.

\begin{definition}\label{nat-equiv/def} 
Given two functors $S,T: \C \to \D$, a {\sl natural transformation} $\tau: S \to T$ is a
map that assigns to each object $C \in \Obj\C$ a morphism $\tau_C: S(C) \to T(C)$ of $\D$,
in such a way that for every morphism $f: C \to C'$ of $\C$, the following diagram
commutes.

\vskip3pt
\centerline{\epsfbox{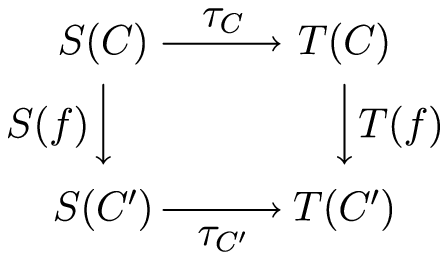}}
\vskip6pt

A natural transformation $\tau$ is called {\sl natural equivalence}, if $\tau_C$ is an 
isomorphisms for every $C \in \Obj \C$. In this case, we write $\tau: S \simeq T$.
\end{definition}

\begin{definition}\label{equiv-cat/def}
Two categories $\C$ and $\D$ are said to be {\sl equivalent} if there exist functors
$T:\C \to \D$ and $T':\D \to \C$, such that $T' \circ T \simeq \id_\C$ and $T \circ T' 
\simeq \id_\D$. In this case we call $T$ (and $T'$ as well) an {\sl equivalence of 
categories}.
\end{definition}

We remind that a functor $T:\C \to \D$ is {\sl faithful} (resp. {\sl full}\/) if for
every $C,C' \in \Obj \C$ the induced map $\Mor_\C(C,C') \to \Mor_\D(T(C),T(C'))$ is
injective (resp. surjective).

\begin{proposition}\label{cat-equiv/thm}
A functor $\,T:\C \to \D$ is an equivalence of categories if and\break only if it is full
and faithful and for any object $D \in \Obj\D$ there exists an object $C \in \Obj\C$ and
an isomorphism $\sigma_D: D \to T(C)$.
\end{proposition}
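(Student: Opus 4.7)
The plan is to prove the two implications separately, using standard category-theoretic arguments. The forward direction is essentially bookkeeping, while the backward direction requires constructing a quasi-inverse functor.

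For the forward implication, suppose $T$ is an equivalence with quasi-inverse $T': \D \to \C$ and natural equivalences $\tau: T' \circ T \simeq \id_\C$ and $\eta: T \circ T' \simeq \id_\D$. Essential surjectivity is immediate: for $D \in \Obj\D$, set $C = T'(D)$ and $\sigma_D = \eta_D^{-1}: D \to T(T'(D)) = T(C)$. Faithfulness follows by applying $T'$ to the equation $T(f) = T(g)$ and using naturality of $\tau$ to rewrite $f = \tau_{C'} \circ T'(T(f)) \circ \tau_C^{-1}$, and likewise for $g$. Fullness: given $h: T(C) \to T(C')$, define $f = \tau_{C'} \circ T'(h) \circ \tau_C^{-1}$, then check by naturality that $T(f) = h$.

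For the backward implication, the main work is to build $T': \D \to \C$. For each $D \in \Obj\D$, using essential surjectivity, choose an object $T'(D) \in \Obj\C$ together with an isomorphism $\sigma_D: D \to T(T'(D))$. For a morphism $h: D \to D'$, the composite $\sigma_{D'} \circ h \circ \sigma_D^{-1}: T(T'(D)) \to T(T'(D'))$ is of the form $T(f)$ for a unique morphism $f: T'(D) \to T'(D')$, by fullness and faithfulness of $T$; define $T'(h) := f$. Functoriality of $T'$ (preservation of identities and composition) follows from uniqueness and from $T$ being faithful applied to identities such as $T(T'(h_2) \circ T'(h_1)) = T(T'(h_2 \circ h_1))$. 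By construction, the family $\sigma_D$ is a natural equivalence $\id_\D \simeq T \circ T'$.

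It remains to produce the natural equivalence $T' \circ T \simeq \id_\C$. For each $C \in \Obj\C$, the isomorphism $\sigma_{T(C)}: T(C) \to T(T'(T(C)))$ lies in the image of $T$, so by fullness and faithfulness there is a unique morphism $\tau_C: C \to T'(T(C))$ with $T(\tau_C) = \sigma_{T(C)}$; this $\tau_C$ is an isomorphism, since its would-be inverse can likewise be extracted from $\sigma_{T(C)}^{-1}$. Naturality of $\tau$ reduces, after applying $T$, to naturality of $\sigma$, and then faithfulness of $T$ completes the argument.

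I expect no essential obstacle here: the construction is routine once essential surjectivity is used (with the axiom of choice) to pick $T'(D)$ and $\sigma_D$, and then fullness plus faithfulness allow us to transport morphisms back to $\C$ with the required uniqueness. The only subtlety worth watching is the verification of naturality of $\tau$, which must be carried out carefully but ultimately follows from repeatedly applying the faithfulness of $T$ to identities that hold in $\D$ by naturality of $\sigma$.
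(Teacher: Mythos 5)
Your proposal is correct and follows essentially the same route as the paper's proof: quasi-inverse plus naturality for the "only if" direction, and construction of $T'$ via essential surjectivity together with fullness/faithfulness (with $\tau_C$ obtained as the unique preimage of $\sigma_{T(C)}$) for the converse. The only gloss is in the fullness check of the forward direction, where "check by naturality that $T(f)=h$" is most cleanly completed, as in the paper, by invoking the (symmetrically established) faithfulness of $T'$.
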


\begin{proof}
Suppose that $T$ is equivalence of categories. Then there exist a functor $T': \D \to \C$
and natural equivalences $\tau: T'\circ T \simeq \id_\C$ and $\sigma: T \circ T' \simeq
\id_\D$. If $T(f_1) = T(f_2)$ for some $f_1,f_2 \in \Mor_\C(C,C')$, we have $T'(T(f_1)) =
T'(T(f_2))$. Since $T'(T(f_i)) = \tau_{C'}^{-1} \circ f_i \circ \tau_C$ we can conclude
that $f_1 = f_2$. Therefore $T$ is faithful, and symmetrically $T'$ is faithful as well.
To see that $T$ is full, take $g \in \Mor_\D(T(C),T(C'))$ and let $f = \tau_{C'} \circ
T'(g) \circ \tau_C^{-1} \in \Mor_\C(C,C')$. Then $T'(T(f)) = \tau_{C'}^{-1} \circ f \circ
\tau_C = T'(g)$. Since $T'$ is faithful, this implies that $T(f) = g$, which proves that
$T$ is full. Moreover, any $D \in \Obj\D$ is isomorphic to $T(T'(D))$ through $\sigma_D$.
This completes the proof that the condition is necessary.

To see that the condition is sufficient, suppose that $T: \C \to \D$ is full and
faithful and that for any object $D \in \Obj\D$ there exists an object $C \in \Obj\C$ and
an isomorphism $\sigma_D: D \to T(C)$. Then define the functor $T': \D \to \C$ as 
$T'(g) = T^{-1}(\sigma_{D'} \circ g \circ \sigma_D^{-1})$ for any $g \in \Mor_\D(D,D')$, 
and put $\tau_C: T^{-1}(\sigma_{T(C)})$ for any $C \in \Obj \C$. We leave to the reader 
the rest of the details.
\end{proof}

We remind that a subcategory $\B\subset \C$ is said to be {\sl full} if $\Mor_\B(B,B') =
\Mor_\C(B,B')$ for any $B,B' \in \B$, in other word if the inclusion functor $\iota: \B
\hookrightarrow \C$ is a full functor.

\begin{corollary}\label{subcat-equiv/thm} 
Given a subcategory $\,\B \subset \C$, the inclusion functor $\iota: \B \hookrightarrow 
\C$ is an equivalence of categories if and only if $\,\B$ is full and any object of $\,\C$ 
is isomorphic to an object of $\,\B$.
\end{corollary}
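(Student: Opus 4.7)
The plan is to apply Proposition \ref{cat-equiv/thm} directly with $T = \iota$, so the corollary will essentially be a translation of the three characterizing conditions (faithful, full, essentially surjective) into the language of subcategories.

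First I would observe that the inclusion functor $\iota: \B \hookrightarrow \C$ is always faithful: on any pair $B,B' \in \Obj\B$ it acts as the identity from $\Mor_\B(B,B') \subset \Mor_\C(B,B')$ into $\Mor_\C(B,B')$, which is obviously injective. So the faithfulness hypothesis in Proposition \ref{cat-equiv/thm} is automatic and imposes no condition on $\B$.

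Next I would match the remaining two hypotheses. Fullness of $\iota$ means that for every $B,B' \in \Obj\B$ the inclusion $\Mor_\B(B,B') \hookrightarrow \Mor_\C(B,B')$ is surjective, i.e. $\Mor_\B(B,B') = \Mor_\C(B,B')$, which is exactly the definition of $\B$ being a full subcategory of $\C$. The essential surjectivity condition in Proposition \ref{cat-equiv/thm}, namely that for every $C \in \Obj\C$ there exist $B \in \Obj\B$ and an isomorphism $\sigma_C: C \to \iota(B) = B$, is exactly the requirement that every object of $\C$ be isomorphic to an object of $\B$.

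Combining the three items, Proposition \ref{cat-equiv/thm} yields that $\iota$ is an equivalence of categories if and only if $\B$ is full in $\C$ and every object of $\C$ is isomorphic to an object of $\B$. I do not foresee any real obstacle here; the entire argument is bookkeeping, and the only mild point worth mentioning explicitly in the write-up is the automatic faithfulness of $\iota$, which is why only two of the three conditions of Proposition \ref{cat-equiv/thm} appear in the statement.
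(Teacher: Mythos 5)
Your proposal is correct and is exactly the intended argument: the paper treats the corollary as an immediate consequence of Proposition \ref{cat-equiv/thm} (giving no separate proof), and your reduction — automatic faithfulness of $\iota$, fullness of $\iota$ equals fullness of $\B$, essential surjectivity equals every object of $\C$ being isomorphic to one of $\B$ — is precisely that reduction.
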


\begin{definition}\label{strict-mon-cat/def} 
We say that $\C$ is a {\sl strict monoidal category} if there exists an associative
bifunctor $\diam: \C \times \C \to \C$ and an object $\one$ which is right and
left unit for $\diam\,$. Then $\diam$ is called the {\sl product} on $\C$ and $\id_\one$
is called the {\sl unit} of $\diam\,$.
\end{definition}

\noindent Being more explicit, the bifunctor $\diam$ consists in two associative binary 
operations
$$
\begin{array}{c}
  \diam: \Obj(\C) \times \Obj(\C) \to \Obj(\C)\,,\\[4pt]
  \diam: \Mor_\C(A,B) \times \Mor_\C(A',B') \to \Mor_\C(A \,\diam B, A' \diam B')\,,
\end{array}
$$
such that
$$\one \diam A = A = A \diam \one \quad \text{and} \quad 
\id_A \diam \id_B = \id_{A \diam B}$$
for any $A,B \in \Obj\C$, while
$$\id_\one \diam f = f = f \diam \id_\one \quad \text{and} \quad 
(f' \diam g') \circ (f \diam g) = (f' \circ f) \diam (g' \circ g)$$
for any $f \in \Mor_\C(A,B), f' \in \Mor_\C(B,C), g \in \Mor_\C(A',B'), g' \in 
\Mor_\C(B',C')$.

\begin{definition}\label{braided-cat/def} 
A strict monoidal category $\C = (\C,\diam\,,\one)$ is called {\sl braided}\break if for
any $A,B \in \Obj(\C)$ there exists natural isomorphism $\gamma_{A,B}: A \diam B \to B
\diam A$ such that
$$\gamma_{A, B \diam C} = (\id_B \diam \gamma_{A,C}) \circ (\gamma_{A,B} \diam \id_C)
\quad \text{and} \quad
\gamma_{A \diam B, C} = (\gamma_{A,C} \diam \id_C) \circ (\id_A \diam \gamma_{B,C})\,.$$
We recall that a family of morphisms $\nu_A: A \to B$ is called {\sl natural} if for any
morphism $f: A \to A'$ we have $\nu_{A'} \circ f = f \circ \nu_A$.
\end{definition}

\begin{definition}\label{autonomous/def}
A braided strict monoidal category $\C = (\C,\diam\,,\one,\gamma)$ is called 
{\sl autonomous} (or rigid) (see \cite{Sh94}) if for every $A \in \Obj \C$ it is given a 
{\sl right dual} object $A^\ast \in \Obj \C$ and two morphisms
$$
\begin{array}{ll} 
\Lambda_A: \one \to A^\ast \diam A \ \ & \text{({\sl coform}\/)\,,}\\[2pt]
  \lambda_A: A \diam A^\ast \to \one \ \ &\text{({\sl form}\/)\,,}
\end{array}
$$
such that the compositions
$$
\begin{array}{ccccccccccc}
  A &\mapright{} & A \diam \one & \mapright{\id {\diam} \Lambda_A} 
  & A \diam (A^\ast \diam A) & \mapright{} & (A \diam A^\ast) \diam A   
  &\mapright{\lambda_A {\diam} \id} & \one \diam A & \mapright{} & A\,,\\[4pt]
  A^\ast & \mapright{} & \one \diam A^\ast & \mapright{\Lambda_A {\diam} \id}
  & (A^\ast \diam A) \diam A^\ast & \mapright{} & A^\ast \diam (A \diam A^\ast)
  & \mapright{\id {\diam} \lambda_A} &A^\ast \diam \one & \mapright{} & A^\ast,
\end{array}
$$
are the identities. Then, given any morphism $F: A \to B$ in $\C$, we define its 
{\sl dual} $F^\ast: B^\ast \to A^\ast$ as follows:
$$F^\ast = (\id_{A^\ast} \diam \lambda_B) \circ (\id_{A^\ast} \diam F \diam \id_{B^\ast})
\circ (\Lambda_A \diam \id_{B^\ast})\,.$$
\end{definition}

\begin{definition}\label{tortile/def}
A {\sl twist} in a braided strict monoidal category $\C = (\C, \diam\,,\break \one,
\gamma)$ is a family of natural isomorphisms $\theta_A: A \to A$ with $A \in \Obj\C$, such
that $\theta_{\one} = \id_{\one}$ and $$\theta_{A \diam B} = \gamma_{B,A} \circ (\theta_B
\diam \theta_A) \circ \gamma_{A,B}$$
for any $A,B \in \Obj\C$.

An autonomous braided strict monoidal category $\C = (\C, \diam\,, \one, \gamma, \_^*,
\Lambda, \lambda)$\break equipped with a distinguished twist such that $\theta_{A^\ast} =
(\theta_A)^\ast$ for any object $A \in \Obj\C$ is called {\sl tortile} (the terminology is
from \cite{Sh94}).
\end{definition}

Many of the categories that we use in the present work are strict monoidal categories
generated by certain set of elementary morphisms and relations between those morphisms. 
We outline here the general construction of such categories (see also section 2 in
\cite{Sh94} and the references thereby).

Let $S$ be a (finite) set of {\sl elementary objects}. We denote by $\seq S = \cup_{m=
0}^\infty S^m$ the free monoid generated by $S$, concretely realized as the set of
(possibly empty) finite sequences of objects in S, with monoidal product $\diam: \seq S
\times \seq S \to \seq S$ given by juxtaposition of sequences and unit $\one$ given by the
empty sequence (cf. Corollary II.7.2 in \cite{McL71}). Let also $G(S,E)$ be a directed
graph, having $\seq S$ as set of vertices and a finite set $E$ of arrows (oriented
edges), which we call {\sl elementary morphisms}. We will always assume that $E$
contains a distinguished loop arrow $\langle \id_s \rangle$ starting and ending at $s$,
for every $s \in S$.

Let now $\bar G(S,E)$ be the graph with the same set of vertices $\seq S$ and arrows
$$\sigma \diam e \diam \sigma': \sigma \diam \sigma_0 \diam \sigma' \to \sigma \diam
\sigma_1 \diam \sigma'$$ for every $e: \sigma_0 \to \sigma_1 \in E$ and every $\sigma,
\sigma' \in \seq S$. We call any arrow of this form an {\sl expansion} of the elementary
morphism $e$ and use the notations $\sigma \diam e = \sigma \diam e \diam \one$ and $e
\diam \sigma' = \one \diam e \diam \sigma'$. Moreover, we identify $e$ with $\one \diam e
\diam \one$ for every $e \in E$, in such a way that $G(S,E) \subset \bar G(S,E)$.

Finally, we denote by $F(S,E)$ the free category generated by the graph $\bar G(S,E)$. We
recall from \cite{McL71} that set of objects of $F(S,E)$ is $\seq S$, the set of vertices
of the\break graph, while the set of morphisms $\Mor_{F(S,E)}(\sigma_0, \sigma_1)$
consists of all paths of consecutive arrows from $\sigma_0$ to $\sigma_1$ in $\bar
G(S,E)$. Here, the composition is given by path concatenation and the identity morphisms
represented by the paths of length 0.

The notion of expansion can be extended in a unique way to a compositive biaction of $\seq 
S$ on the morphisms of $F(S,E)$. Explicitly, this is defined by the formulas
\vskip-4pt
$$
\begin{array}{c} 
\sigma \diam (\tau \diam e \diam \tau') \diam \sigma' =
(\sigma \diam \tau) \diam e \diam (\tau' \diam \sigma')\,,\\[4pt]
\sigma \diam (f_1 \dots f_n) \diam \sigma' = (\sigma \diam f_1 \diam \sigma') 
\dots (\sigma \diam f_n \diam \sigma')\,,
\end{array}
$$
for every $e \in E$, every $\sigma,\sigma',\tau, \tau' \in \seq S$ and every $f_1, \dots, 
f_n \in \bar G(E,S)$ with $n \geq 0$. We still call $\sigma \diam f \diam \sigma'$ an {\sl 
expansion} of $f$ for every $f = f_1 \dots f_n \in \Mor F(S,E)$. In particular, for $n = 
0$ we have that identities expand to identities.

\begin{proposition}\label{presentation/thm}
Given $S$ and $E$ as above and $R = \{R(\sigma_0,\sigma_1)\,|\,\sigma_0,\sigma_1 \in \seq
S\}$, with each $R(\sigma_0,\sigma_1)$ being a (possibly empty) relation on
$\Mor_{F(S,E)}(\sigma_0, \sigma_1)$, let $C(S,E,R)$ be the quotient category of $F(S,E)$
modulo the equivalence relations generated by:
\begin{itemize}
\item[\(a)] 
$\langle \id_s \rangle \simeq \id_s$ for every $s \in S$;
\item[\(b)]
$\sigma \diam \id_s \diam \sigma' \simeq \id_{\sigma \diam s \diam \sigma'}$ for every 
$s \in S$ and $\sigma,\sigma' \in \seq S$; 
\item[\(c)]
$(f \diam \sigma '_1) \circ (\sigma_0 \diam f') \simeq (\sigma_1 \diam f') \circ (f \diam
\sigma_0')$ for any $f: \sigma_0 \to \sigma_1,f'\!: \sigma'_0 \to \sigma'_1 \in F(S,E)$;
\item[\(d)] 
all the expansions of the relations in $R$.
\end{itemize}
Then $C(S,E,R)$ admits a strict monoidal structure defined by $$f \diam f' = (f \diam
\sigma '_1) \circ (\sigma_0 \diam f') = (\sigma_1 \diam f') \circ (f \diam \sigma_0')$$
for any $f: \sigma_0 \to \sigma_1,f'\!: \sigma'_0 \to \sigma'_1 \in C(S,E,R)$, with the
unit object $\one$ being the empty set.
\end{proposition}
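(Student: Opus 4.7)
The plan is to break the proposition into three verification steps: (i) that the listed equivalences generate a congruence on $F(S,E)$ so that the quotient $C(S,E,R)$ is a well-defined category, (ii) that the formula $f \diam f' = (f \diam \sigma'_1) \circ (\sigma_0 \diam f') = (\sigma_1 \diam f') \circ (f \diam \sigma'_0)$ makes sense on equivalence classes and depends only on $f$ and $f'$ and not on their chosen representatives, and (iii) that the resulting binary operation on morphisms satisfies the strict monoidal axioms (bifunctoriality, associativity, unit laws).

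First I would note that the product on objects is strictly associative with unit $\one$ because $\seq S$ is by construction the free monoid on $S$, so no coherence work is needed at the level of objects. For morphisms, the left and right biactions $\sigma \diam (\_) \diam \sigma'$ of $\seq S$ on $F(S,E)$ are already compositive by the defining formulas, so they descend to $C(S,E,R)$ provided relations (a)--(d) are stable under these biactions; this is precisely the content of clause (d), which requires expansions of $R$, together with clauses (a), (b) for identities. Once the biactions descend, clause (c), which is imposed only for pairs of elementary morphisms, must be bootstrapped to arbitrary morphisms $f: \sigma_0 \to \sigma_1$ and $f': \sigma'_0 \to \sigma'_1$ of $C(S,E,R)$. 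I would do this by a double induction on the word lengths of representatives $f = f_1 \cdots f_n$ and $f' = f'_1 \cdots f'_m$ in $F(S,E)$: decomposing both compositions as products of ``elementary boxes'' $f_i \diam f'_j$ and shuffling these boxes past each other via repeated application of the elementary instance of (c) produces a common normal form (the standard ``grid'' or ``staircase'' argument). This establishes the interchange identity $(f \diam \sigma'_1) \circ (\sigma_0 \diam f') = (\sigma_1 \diam f') \circ (f \diam \sigma'_0)$ in $C(S,E,R)$ and simultaneously shows that the common value depends only on the classes of $f$ and $f'$.

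Having a well-defined operation $\diam$ on morphisms, the bifunctor axiom $\id_\sigma \diam \id_{\sigma'} = \id_{\sigma \diam \sigma'}$ follows from clauses (a) and (b), while the interchange law $(f' \circ f) \diam (g' \circ g) = (f' \diam g') \circ (f \diam g)$ is obtained by writing both sides via the double formula and reducing to the already-established extended version of (c). The unit laws $\id_\one \diam f = f = f \diam \id_\one$ come immediately from the convention $\one \diam \sigma = \sigma = \sigma \diam \one$ on objects together with (b). Associativity of $\diam$ on morphisms reduces, by writing $f \diam f' \diam f''$ in its two bracketings as an iterated composition of three elementary boxes, to associativity of the juxtaposition of sequences, which holds on the nose.

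The main obstacle is the extension of clause (c) from elementary morphisms to general morphisms, together with the verification that both sides of the defining formula for $f \diam f'$ represent the same class. All other verifications are direct consequences of the generating relations, but this extension requires the inductive shuffling argument described above and the careful check that each elementary exchange step is legal modulo the congruence generated by (a)--(d); once this is in place, bifunctoriality, associativity, and the unit laws follow mechanically.
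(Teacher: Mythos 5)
Your proposal is correct, but it is considerably more elaborate than what the paper does: the paper's own proof consists of the single remark that the claim is straightforward from the definitions, so your write-up is essentially supplying the verification the paper leaves implicit. One point should be fixed, though. You describe relation (c) as "imposed only for pairs of elementary morphisms" and make its extension to arbitrary morphisms, via the double induction on word length and the grid/staircase shuffle, the main obstacle of the proof. In the proposition as stated, (c) is imposed for arbitrary $f,f' \in F(S,E)$, not just for elementary ones, so no bootstrapping is needed: both expressions for $f \diam f'$ are identified by a single generating relation. What does require a (routine) check is that the generating relations are stable under the $\seq S$-biactions, so that the congruence is compatible with expansion as well as with composition and hence $f \simeq g$ implies $\sigma \diam f \diam \sigma' \simeq \sigma \diam g \diam \sigma'$; for (a), (b), (d) this is built into the definitions, and for (c) it holds because an expansion of an instance of (c) is again an instance of (c) (replace $f$ by $\sigma \diam f$ and $f'$ by $f' \diam \sigma'$). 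With that, well-definedness of $\diam$ on classes and the bifunctoriality, unit and associativity axioms follow exactly as in your last paragraph. Your shuffle argument is the right tool for the common variant in which the interchange relation is only postulated for elementary morphisms, so it is not wrong, merely unnecessary here; if you keep it, state explicitly which version of (c) you are working with.
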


\begin{proof}
In the light of the definitions, the proof is straightforward.
\end{proof}

\begin{definition}\label{presentation/def}
We call the strict monoidal category $C(S,E,R)$ defined in the previous proposition the
{\sl strict monoidal category generated by the set $S$ of elementary objects and the set
$E$ of elementary morphisms modulo the set $R$ of elementary relations}. In the case when
$R = B \cup A$, where $B$ represents defining axioms of a braided structure in a monoidal
category, while $E$ and $A$ represent respectively the basic morphisms and the defining
axioms which endow $S$ with a certain algebraic structure, for example braided Hopf
algebra, we will refer to the category $C(S,E,R)$ as the {\sl braided strict monoidal
category freely generated by the algebraic structure $S$} (cf. \cite{Ke02}, Section 4,
Definition 1).
\end{definition}

\newpage

\section{4-dimensional cobordisms and Kirby tangles%
\label{cobordisms/sec}}

In this chapter we introduce 4-dimensional relative 2-handlebody cobordisms between 
3-dimensional 1-handlebodies and their representations in terms of bridged tangles and 
Kirby tangles. 

We allow handlebodies to have more than one 0-handles and even to be disconnected, then we
need to generalize the usual notions of bridged tangle and Kirby tangle to include the
case of multiple 0-handles. This is done in Section \ref{Kirby/sec}.

{\sl All the handlebodies in this chapter are assumed to be orientable.} According to the
discussion in Section \ref{handles/sec}, this means that 1-handles can be specified
just by the unordered pair of (possibly coinciding) 0-handles where they are attached.

\subsection{The categories of relative handlebody cobordisms $\Chb_n^{3+1}$%
\label{Chbn/sec}}

Here, we define the category $\Chb_n^{3+1}$ of 4-dimensional relative 2-handlebody
co\-bordisms between 3-dimensional 1-handlebodies with $n$ 0-handles, for any $n \geq 1$.

An object in $\Chb_n^{3+1}$ is a 3-dimensional relative 1-handlebody $M$ build on the
disjoint union $\sqcup_{i=1}^n \Int B_i^2$ of $n$ disks, having no 0-handles. With some
abuse of notation we write $M = \cup_{i=1}^n H^0_i\cup_{j=1}^m H^1_j$ and call $H^0_i =
B_i^2 \times [0,1]$ the $i$-th 0-handle of $M$.\break While the $H^1_j$'s are the
1-handles whose attaching regions are contained in $\delta M^0 = \sqcup_{i=1}^n \Int B_i^2
\times \{1\}$ (cf. Definition \ref{handlebody/def}). We define the {\sl front boundary} of
$M$\label{front-bdM}\ as
$$\partial M = \Cl \delta M = \Cl(\Bd M - \cup_{i=1}^n (B^2_i 
\times \{0\}) \cup (\Bd B^2_i \times [0,1]))\,.$$

\begin{Figure}[b]{cobordism01/fig}
{}{The base space of a relative handlebody cobordism in $\Chb_n^{3+1}$}
\vskip3pt
\centerline{\fig{}{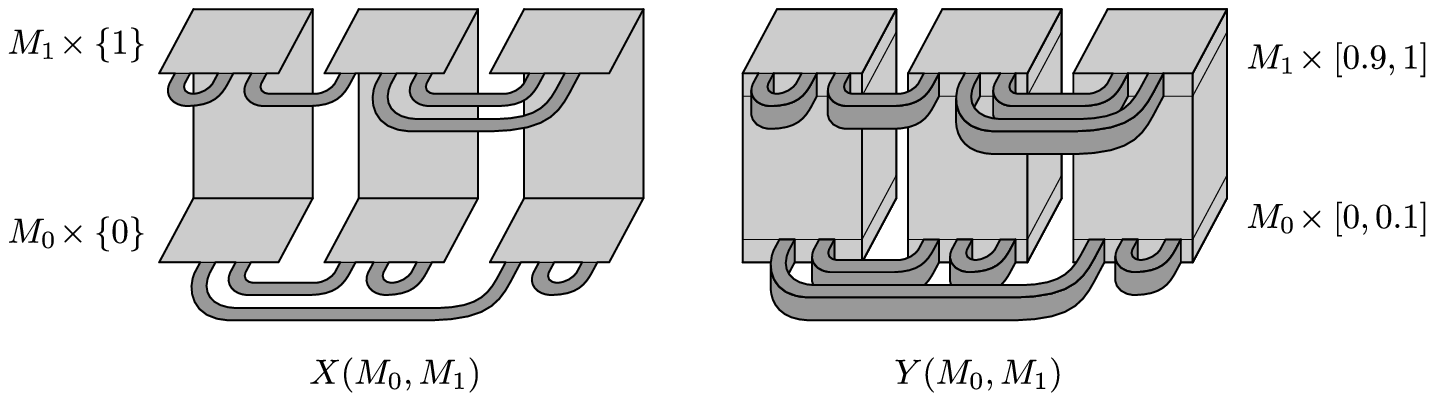}}
\vskip-6pt
\end{Figure}

Given two objects $M_0 = \cup_{i=1}^n H^0_{i,0} \cup_{j=1}^{m_0} H^1_{j,0}$ and $M_1 =
\cup_{i=1}^n H^0_{i,1} \cup_{k=1}^{m_1} H^1_{k,1}$,\label{X/def}\ let $X(M_0, M_1)$ be the
3-dimensional 1-handlebody obtained from $M_0 \sqcup M_1$ by attaching for any $i \leq n$
a single 1-handle $\bH_i^1 \cong B_i^2 \times [0,1]$ between the $i$-th 0-handles of $M_0$
and $M_1$, through the identification of $B^2_i\times\{c\}\subset \bH_i^1$ with
$B^2_i\times\{0\} \subset H^0_{i,c}$ for $c = 0,1$. Actually, one could cancel the new
1-handles against some of the 0-handles, thinking of $X(M_0, M_1)$ as a 3-dimen\-sional
1-handlebody with $n$ 0-handles of the form $\bH^0_i = H^0_{i,0} \cup H^0_{i,1} \cup
\bH^1_i$.

Now, let $Y(M_0,M_1) = \sqcup_{i=1}^n B^2_i \times [0,1] \times [0,1] \cup (M_0 \times
[0,0.1]) \cup (M_1 \times [0.9,1])$.\break We identify $X(M_0,M_1) \times [0,1]$ with
$Y(M_0,M_1)$, in such a way that $X(M_0,M_1) \times \{0\}$ is canonically identified with
$\sqcup_{i=1}^n B^2_i\times \{0\} \times [0,1] \cup (M_0 \times \{0\}) \cup (M_1 \times
\{1\}) \subset Y(M_0,M_1)$, while $X(M_0,M_1) \times \{1\}$ corresponds to $\sqcup_{i=1}^n
(B^2_i \times \{1\} \times [0.1,0.9]) \cup_{j=1}^{m_0} (H_{j,0}^1 \times \{0.1\})
\cup_{k=1}^{m_1} (H_{k,1}^1 \times \{0.9\}) \subset Y(M_0,M_1)$. An example of
$X(M_0,M_1)$ and $Y(M_0,M_1)$ is presented in Figure \ref{cobordism01/fig}, where all the
horizontal segments represent copies of $B^2$.

A morphism $W: M_0 \to M_1$ in $\Chb_n^{3+1}$ is a 4-dimensional relative 2-handlebody
build on $X(M_0, M_1)$ without 0-handles, considered up to 2-deformation of relative
handlebodies (cf. Section \ref{handles/sec}). We remind that $W$ is obtained by attaching
1- and 2- handles to $W^0 = W^{-1} = X(M_0,M_1) \times [0,1]$. According to the setting
above, we identify $W^0$ with $Y(M_0,M_1)$, in such a way that $M_0$ and $M_1$ correspond
to $M_0 \times \{0\} \subset Y(M_0,M_1)$ and $M_1 \times \{1\} \subset Y(M_0,M_1)$
respectively. Moreover, we define the {\sl front boundary} of $W$\label{front-bdW}\ as the
following bounded 3-manifold in $\Bd W$: $$\partial W = \Cl(\Bd W - (X(M_0,M_1) \times
\{0\}) - \cup_{i=1}^n (\Bd B^2_i \times [0,1] \times [0,1]))\,.$$ In particular, if $W$
has no 1- and 2-handles we have: $$\partial Y(M_0,M_1) \cong (\partial M_0 \times [0,1])
\cup (X(M_0,M_1) \times \{1\}) \cup (\partial M_1 \times [0,1])\,.$$

The composition of two morphisms $W_1: M_0\to M_1$ and $W_2: M_1 \to M_2$ in\break
$\Chb_n^{3+1}$ is the morphism $W: M_0 \to M_2$ defined in the following way. Let $W = W_1
\cup_{M_1}\! W_2$ be the space obtained from $W_1 \sqcup W_2$ by identifying the target
$M_1 \subset W_1$ of $W_1$ with the source $M_1 \subset W_2$ of $W_2$, through the
identity. Up to rescaling the last coordinate, we regard the subspace $Y(M_0,M_1)
\cup_{M_1}\! Y(M_1,M_2) \subset W$ as $Y(M_0,M_2)$ with certain 1-handles attached to it
on the part of the boundary corresponding to the interior of $X(M_0,M_2) \times \{1\}$.
Namely, we have one (4-dimensional)\break 1-handle $\bH_k^1 = (H_{k,1}^1 \times [0.9,1])
\cup_{H_{k,1}^1}\! (H_{k,1}^1 \times [0,0.1])$ attached to $Y(M_0,M_2)$ for each
(3-dimensional) 1-handle $H_{k,1}^1$ of $M_1$ (see Figure \ref{cobordism02/fig} for an
example). Now, all the handles of the relative handlebodies $W_1$ and $W_2$ are attached
to $Y(M_0,M_1) \cup_{M_1}\! Y(M_1,M_2) = Y(M_0,M_1) \cup_{k=1}^{m_1}\! \bH_k^1$, to endow
$W$ of a structure of 4-dimensional relative 2-handlebody build on $X(M_0,M_2)$. We notice
that $\partial W = \partial W_1 \cup_{\partial M_1}\! \partial W_2$.

\begin{Figure}[htb]{cobordism02/fig}
{}{Composing two relative handlebody cobordisms in $\Chb_n^{3+1}$}
\vskip3pt
\centerline{\fig{}{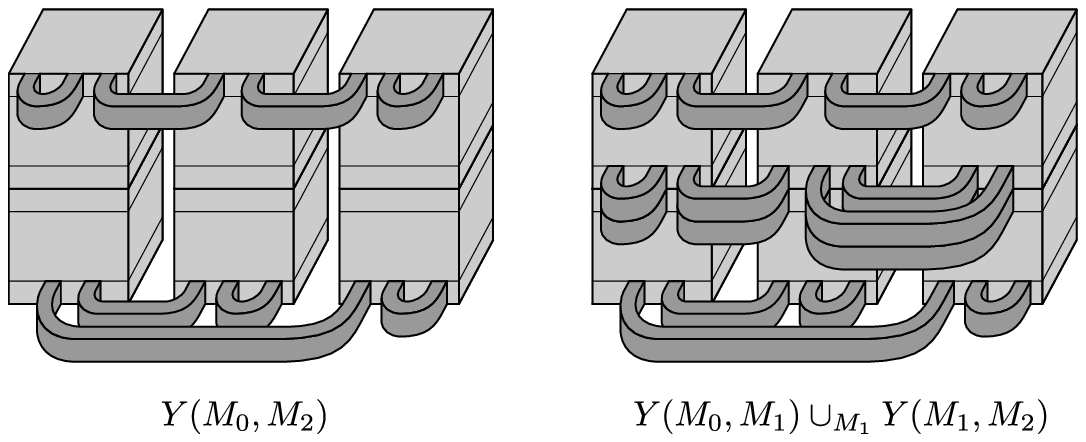}}
\vskip-6pt
\end{Figure}

The identity $\id_M$ of an object $M = \cup_{i=1}^n H^0_i \cup_{j=1}^m H^1_j$ is
represented by the product cobordism $M \times [0,1]$, whose handlebody structure is given
by one 2-handle $H_j^2 = H_j^1 \times [0.1,0.9] \subset M \times [0,1]$ for each 1-handle
$H_j^1$ of $M$ (cf. Figure \ref{cobordism03/fig}).

\begin{Figure}[htb]{cobordism03/fig}
{}{Identity and isotopy cobordisms}
\centerline{\fig{}{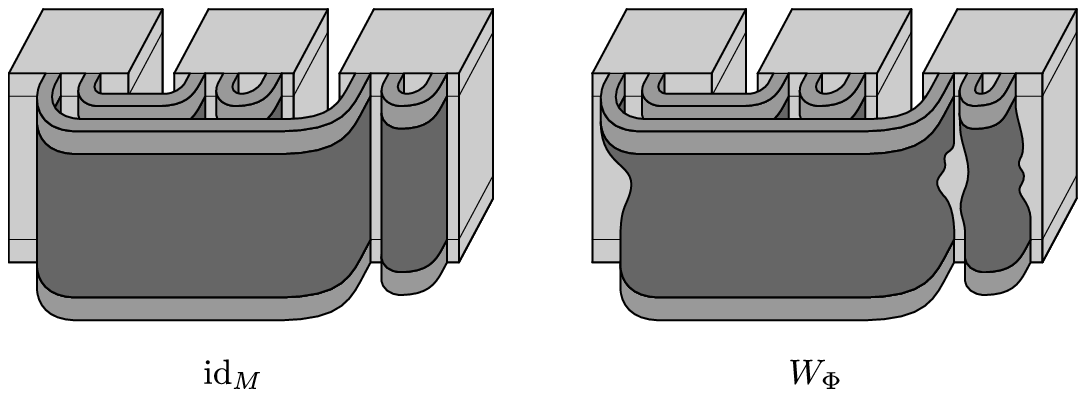}}
\vskip-6pt
\end{Figure}

Moreover, given any ambient isotopy $\Phi = (\phi_t)_{t \in [0,1]}$ of $\cup_{i=1}^n \Bd
H_i^0$ fixed outside $\cup_{i=1}^n B^2_i \times \{1\}$ and constant in the time
intervals $[0,0.1]$ and $[0.9,1]$, we consider the new object $M'$ which differs from $M$
only for the attaching maps of the 1-handles being isotoped through $\Phi$. In other
words, if $h_j^1$ is the attaching map of the 1-handle $H_j^1$ in $M$, then $\phi_1
\circ h_j^1$ is the attaching maps of the same 1-handle in $M'$. Now, for any $j \leq
m$ we attach $H_j^1 \times [0,1]$ to $\cup_{i=1}^n H^0_i \times[0,1]$ by the map $(x,t)
\mapsto (\phi_t(h_j^1(x)),t))$. In this way, we obtain a cobordism $W_\Phi$ between $M$
and $M'$ which has a natural structure of 4-dimensional relative 2-handlebody build on
$X(M,M')$, with one\break 2-handle $H_j^2$ given by $H_j^1 \times [0.1,0.9] \subset
W_\Phi$ for each 1-handle $H_j^1$ of $M$ (cf. Figure \ref{cobordism03/fig}). Then, the
morphism $W_\Phi: M \to M'$ represented by such cobordism is an isomorphism, whose inverse
is $W_{\Phi^{-1}}:M' \to M$ with $\Phi^{-1} = (\phi_t^{-1})_{t \in [0,1]}$.

According to Corollary \ref{subcat-equiv/thm}, the fact that there is an isomorphism
between any two 3-dimensional handlebodies which are obtained by changing the attaching
maps through isotopy, allows us to replace the category $\Chb_n^{3+1}$ with the full
subcategory equivalent to it, whose objects are standard handlebodies.

\medskip

For $n \geq 1$, let $\G_n = \{1,\dots,n\}^2$ be the set of ordered pairs of integers
between 1 and $n$ (the reason for this notation is that in Chapter \ref{algebra/sec} we
will consider $\G_n$ as a groupoid) and let $\seq\G_n = \cup_{m=0}^\infty \G_n^m$ be
the set of (possibly empty) finite sequences of pairs in $\G_n$.

\begin{definition}\label{standard-hb/def}
Given any $\pi = ((i_1,j_1), \dots, (i_m,j_m)) \in \seq\G_n$ we define the
{\sl standard 3-dimensional 1-handlebody} $M^n_\pi = \cup_{i=1}^n H^0_i \cup_{j=1}^m
H^1_j$ as follows: 
\begin{itemize}
\item[1)]
for $1 \leq i \leq n$, let $H_i^0 = E_i \times [0,1]$ be a copy of $E \times [0,1]$, with 
$E = [0,1]^2$ the standard square;%
\item[2)]
if $m \geq 1$, we consider the $2m$ boxes
$$\begin{array}{c}
b'_{m,k} = [(k - 0.8)/m, (k - 0.7)/m] \times [0.4,0.6] \subset E\\[6pt]
b''_{m,k} = [(k - 0.3)/m, (k - 0.2)/m] \times [0.4,0.6] \subset E
\end{array}$$
with $1 \leq k \leq m$, and let $b'_{m,k,i\!}$ (resp. $b''_{m,k,i}$) be the copy of
$b'_{m,k\!}$ (resp. $b''_{m,k}$) in $E_i$;
\item[3)]
for $1 \leq k \leq m$, let $H_k^1$ be the 1-handle between the 0-handles $H_{i_k}^0$ and
$H_{j_k}^0$ given by the identification of $b'_{m,k,i_k} \!\times \{1\}$ with
$b''_{m,k,j_k} \!\times \{1\}$ through the map $(x,y,1) \mapsto (x + 0.5/m,1-y,1)$.
\end{itemize}
In particular, $M^n_\emptyset = \cup_{i=1}^n H^0_i = \cup_{i=1}^n E_i \times [0,1]$ is the 
disjoint union of $n$ 3-balls. 
\end{definition}

If $M^n_{\pi_0}$ and $M^n_{\pi_1}$ are two standard 3-dimensional 1-handlebodies with
$\pi_0,\pi_1 \in \seq\G_n$, then $X(M^n_{\pi_0}, M^n_{\pi_1})$ can be thought as a 
quotient of $\sqcup_{i=1}^n E_i \times [0,1]$, up to canonical identifications of 
$H_{i,0}^0 = E_i \times [0,1] \cong E_i \times [0,1] \times \{0\}$ and $H_{i,1}^0 = E_i 
\times [0,1] \cong E_i \times [0,1] \times \{1\}$ respectively with $E_i \times [0,0.1]$ 
and $E_i \times [0.9,1]$.

The standard 3-dimensional 1-handlebodies $M^3_{\pi_0}$ and $M^3_{\pi_1}$ for $\pi_0 =
((1,3), (1,2),\break (2,2), (3,3))$ and $\pi_1 = ((1,1), (1,2), (2,3), (2,3))$, which are
respectively isomorphic to $M_0$ and $M_1$ of Figure \ref{cobordism01/fig}, are shown on
the left side of Figure \ref{cobordism04/fig}. Here, the arrows indicate the prescribed
identifications. On the right side of the same Figure \ref{cobordism04/fig} is shown a
copy of $X(M^3_{\pi_0}, M^3_{\pi_1})$ described as a quotient of $\sqcup_{i=1}^3 E_i
\times [0,1]$.

\begin{Figure}[htb]{cobordism04/fig}
{}{Standard 3-dimensional 1-handlebodies}
\centerline{\fig{}{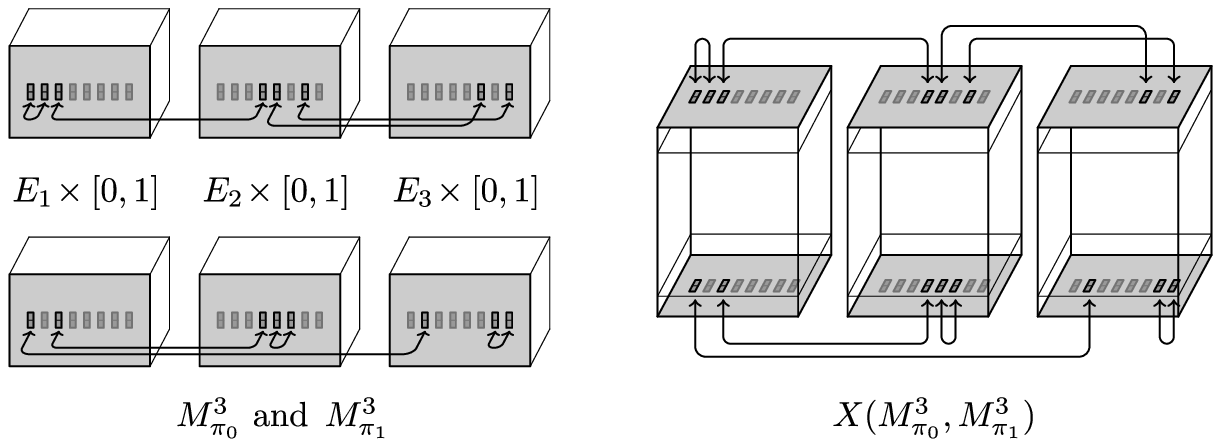}}
\vskip-6pt
\end{Figure}

\begin{proposition}\label{standard-hb/thm}
For any $n \geq 1$, the category $\Chb_n^{3+1}$ is equivalent to its full subcategory,
whose objects are the standard 3-dimensional 1-handlebodies $M^n_\pi$ with $\pi \in
\seq\G_n$, through the inclusion functor.
\end{proposition}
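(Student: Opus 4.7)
The plan is to invoke Corollary \ref{subcat-equiv/thm}: since the subcategory of standard handlebodies is full by hypothesis, it suffices to produce, for every object $M \in \Obj \Chb_n^{3+1}$, an isomorphism from $M$ to some $M^n_\pi$. The natural candidate for this isomorphism is one of the cobordisms $W_\Phi$ constructed earlier in this section from an ambient isotopy $\Phi$ of the boundary of the 0-handles.

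First I would extract the combinatorial data $\pi$ from $M = \cup_{i=1}^n H^0_i \cup_{j=1}^m H^1_j$. For each 1-handle $H^1_j$, its attaching region is contained in $\delta M^0 = \sqcup_{i=1}^n \Int B^2_i \times \{1\}$ and consists of two disjoint 2-disks located in two (possibly coinciding) components, say $\Int B^2_{i_j} \times \{1\}$ and $\Int B^2_{k_j} \times \{1\}$. I would then set $\pi = ((i_1, k_1), \dots, (i_m, k_m)) \in \seq\G_n$, noting that either choice of ordering is acceptable because swapping the two attaching disks of a 1-handle amounts to reversing the handle.

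Next I would construct an ambient isotopy $\Phi = (\phi_t)_{t \in [0,1]}$ of $\cup_{i=1}^n \Bd H^0_i$, fixed outside $\cup_{i=1}^n B^2_i \times \{1\}$ and constant on $[0, 0.1] \cup [0.9, 1]$, with $\phi_1$ carrying the standard attaching 2-disks of $M^n_\pi$ in each $B^2_i \times \{1\}$ onto the attaching 2-disks of $M$. Within each square $B^2_i \times \{1\}$ this reduces to the elementary fact that any two configurations of finitely many disjoint, smoothly embedded 2-disks in the interior of a 2-disk, tagged with the same combinatorial pattern, are ambient isotopic rel boundary. Once the attaching disks are matched, the actual attaching maps agree up to a further isotopy, because in an oriented 3-handlebody the attaching map of a 1-handle is determined up to isotopy by the (possibly coinciding) pair of 0-handles it joins, as discussed in Section \ref{handles/sec}. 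After a routine reparametrization in $t$ to install the constancy near $t=0$ and $t=1$, the construction described just before Proposition \ref{standard-hb/thm} turns $\Phi$ into a cobordism $W_\Phi \colon M^n_\pi \to M$, which is an isomorphism in $\Chb_n^{3+1}$ with inverse $W_{\Phi^{-1}}$. Corollary \ref{subcat-equiv/thm} then concludes the proof.

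I do not expect a genuine obstacle here: the argument is essentially bookkeeping on top of the isotopy-based isomorphism construction $W_\Phi$ already set up in this section. The only point deserving care is the orientation issue for 1-handle attaching maps, which could in principle leave a residual $\mathbb{Z}/2$ ambiguity; but because we work throughout in the orientable category, the results recalled in Section \ref{handles/sec} rule out this ambiguity, so the combinatorial datum $\pi$ genuinely determines the isomorphism type of the handlebody.
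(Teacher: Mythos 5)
Your proposal is correct and follows essentially the same route as the paper: the paper's proof is simply an appeal to Corollary \ref{subcat-equiv/thm}, relying on the isomorphisms $W_\Phi$ constructed just before the proposition from boundary isotopies that carry the attaching regions of an arbitrary object onto those of a standard handlebody. Your write-up just makes explicit the bookkeeping (extraction of $\pi$ and the disk-matching isotopy) that the paper leaves implicit.
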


\begin{proof}
This is an immediate consequence of Corollary \ref{subcat-equiv/thm}.
\end{proof}

{\sl From now on, $\Chb_n^{3+1}$ will denote the full subcategory given in Proposition
\ref{standard-hb/thm}.}

\medskip

The set $\seq\G_n$, can be endowed with a monoidal structure, where the product $\pi \diam
\pi'$ is the juxtaposition of $\pi$ and $\pi'$ and the unit element is the empty sequence. 
This induces a monoidal structure on $\Obj \Chb_n^{3+1} = \{M^3_\pi \;|\; \pi \in
\seq\G_n\}$, which will be denoted in the same way: $$M^n_\pi \diam M^n_{\pi'} = M^n_{\pi
\diam \pi'}.$$

We observe that the unit of this product is $M^n_\emptyset$, the disjoint union of $n$
3-balls, and that given $\pi = ((i_1,j_1), \dots , (i_m,j_m)) \in \seq\G_n$ we have:
$$M^n_\pi = M^n_{(i_1,j_1)} \diam \dots \diam M^n_{(i_m,j_m)}.$$

Moreover, $M^n_\pi \diam M^n_{\pi'}$ is the 3-dimensional handlebody obtained from
$M^n_\pi \sqcup M^n_{\pi'}$\break by glueing the $i$-th 0-handle of $M^n_\pi$ to that of
$M^n_{\pi'}$ through the map $(1,y,z) \mapsto (0,y,z)$ and then suitably reparametrizing
the first coordinate.

Now, we extend the monoidal structure on $\Obj \Chb_n^{3+1}$ to the whole category
$\Chb_n^{3+1}$ in the following way. Let $W: M^n_{\pi_0} \to M^n_{\pi_1}$ and $W':
M^n_{\pi'_0} \to M^n_{\pi'_1}$ be two morphisms in $\Chb_n^{3+1}$. Starting from
$X(M^n_{\pi_0}, M^n_{\pi_1}) \sqcup X(M^n_{\pi_0'}, M^n_{\pi_1'})$, we glue the $i$-th
0-handle of $X(M^n_{\pi_0}, M^n_{\pi_1})$ (consisting of the $i$-th 0-handles of
$M^n_{\pi_0}$ and $M^n_{\pi_1}$ joined by the 1-handle $\bH_i^1$) to that of
$X(M^n_{\pi'_0}, M^n_{\pi'_1})$ through the map $(1,y,z) \mapsto (0,y,z)$. Then,
we identify the result of the gluing with $X(M^n_{\pi_0} \diam M^n_{\pi_0'}, M^n_{\pi_1}
\diam M^n_{\pi_1'})$, by thinking all the three $X$ spaces involved as quotients of
$\sqcup_{i=1}^n E_i \times [0,1]$ and applying a reparametrization of the first coordinate
depending on the last one, that is a diffeomorphism $(x,y,z) \mapsto (h^z(x),y,z)$ with
$h^z$ an increasing function of $x$ for every $z$.\break The same construction crossed 
by $[0,1]$, gives $Y(M^n_{\pi_0} \diam M^n_{\pi_0'}, M^n_{\pi_1} \diam M^n_{\pi_1'})$ 
starting from $Y(M^n_{\pi_0}, M^n_{\pi_1}) \sqcup Y(M^n_{\pi_0'}, M^n_{\pi_1'})$.

We define the product $W \diam W'$ to be the 4-dimensional relative 2-handlebody build on
$X(M^n_{\pi_0} \diam M^n_{\pi_0'}, M^n_{\pi_1} \diam M^n_{\pi_1'})$ that one obtains by
starting from $W \sqcup W'$ and performing the construction above to the subspace
$Y(M^n_{\pi_0}, M^n_{\pi_1}) \sqcup Y(M^n_{\pi_0'}, M^n_{\pi_1'})$. The handlebody
structure on $W \diam W'$ consists of all the handles of $W$ and $W'$.

The unit of the product between morphisms is the ``empty'' relative handlebody
$\id_{M^n_\emptyset} = Y(M^n_\emptyset, M^n_\emptyset): M^n_\emptyset \to M^n_\emptyset$
without any handle, that is the dis\-joint union of $n$ 4-balls.

\begin{proposition}\label{monoidal-str/thm}
The product $\,\diam: \Chb_n^{3+1} \times\, \Chb_n^{3+1}\to \Chb_n^{3+1}$ makes 
$\Chb_n^{3+1}$ into a strict monoidal category, for any $n \geq 1$.
\end{proposition}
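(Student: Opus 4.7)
The plan is to verify the four strict monoidal axioms recalled in Definition \ref{strict-mon-cat/def}: that $\diam$ is a well-defined bifunctor, that it is strictly associative, that $M^n_\emptyset$ is a strict left and right unit, and that the interchange law $(f'\diam g')\circ (f\diam g)=(f'\circ f)\diam (g'\circ g)$ holds. Everything on the level of objects is immediate from the monoidal structure on $\seq\G_n$: juxtaposition of sequences is associative with the empty sequence as two-sided unit, and by construction $M^n_{\pi}\diam M^n_{\pi'}=M^n_{\pi\diam\pi'}$, so associativity and unitality of $\diam$ on $\Obj\Chb_n^{3+1}$ are literal equalities of standard handlebodies, once one checks that the reparametrization $(x,y,z)\mapsto(h^z(x),y,z)$ in the horizontal direction produces the prescribed standard model on the nose.

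Next I would check that $\diam$ is well-defined on morphisms, i.e.\ that if $W$ and $\widetilde W$ represent the same morphism (so that they are 2-equivalent relative handlebodies built on $X(M^n_{\pi_0},M^n_{\pi_1})$) and similarly $W'\sim\widetilde W'$, then $W\diam W'$ and $\widetilde W\diam \widetilde W'$ are 2-equivalent relative handlebodies built on $X(M^n_{\pi_0}\diam M^n_{\pi'_0},M^n_{\pi_1}\diam M^n_{\pi'_1})$. This is immediate: each of the three types of handle moves listed in Section \ref{handles/sec} (attaching map isotopy, $k$-handle sliding for $k\le 2$, and addition/deletion of canceling $0/1$- or $1/2$-handle pairs) takes place in a regular neighborhood of the 1- or 2-handles involved and so can be performed inside the $W$-factor of $W\diam W'$ without touching $W'$ or the gluing region. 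Hence a 2-deformation of $W$ extends trivially to a 2-deformation of $W\diam W'$, and similarly on the other factor.

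The core technical point is the interchange law together with the two identity axioms. For the identity on the unit, observe that $\id_{M^n_\emptyset}$ is, as a handlebody, the disjoint union of $n$ 4-balls and, as a subspace of $Y(M^n_\emptyset,M^n_\emptyset)$, occupies only the collar region $\sqcup_{i=1}^n E_i\times[0,1]\times[0,1]$; the construction of $W\diam\id_{M^n_\emptyset}$ then glues this collar to $Y(M^n_{\pi_0},M^n_{\pi_1})$ along the right-hand side of each 0-handle, and the horizontal reparametrization $h^z$ turns the result into a copy of $Y(M^n_{\pi_0},M^n_{\pi_1})$ again, carrying all handles of $W$ to themselves. The same argument on the left yields $\id_{M^n_\emptyset}\diam W=W$, and $\id_M\diam\id_{M'}=\id_{M\diam M'}$ follows from the observation that the identity cobordism is described entirely by its handles $H_j^1\times[0.1,0.9]$ which are disjoint from the gluing region. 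For the interchange law, I would fix four composable morphisms $W_0\xrightarrow{f}W_1\xrightarrow{f'}W_2$ and $W'_0\xrightarrow{g}W'_1\xrightarrow{g'}W'_2$ and compare the two constructions: both $(f'\diam g')\circ(f\diam g)$ and $(f'\circ f)\diam(g'\circ g)$ are obtained from the disjoint union of the four 4-manifolds by the same identifications of 0-handles (along the $i$-th $E_i$-factor) and the same vertical stacking along the middle $M_1\sqcup M'_1$, composed with the same horizontal reparametrization. The two procedures differ only in the order in which one performs the horizontal gluing and the vertical stacking, and these operations take place on disjoint portions of the boundary, so they commute up to an ambient diffeomorphism of $X(M^n_{\pi_0}\diam M^n_{\pi'_0},M^n_{\pi_2}\diam M^n_{\pi'_2})\times[0,1]$ relative to the base. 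Since 2-deformation is a diffeomorphism-invariant equivalence relation on relative 2-handlebodies over a fixed base, this identifies the two morphisms.

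The main obstacle is bookkeeping: the standard model fixes concrete identifications in the first coordinate, so the commuting-squares argument for the interchange law must be carried out with the reparametrization $h^z$ explicit enough to ensure that the 4-manifold one obtains really is \emph{equal} (not merely diffeomorphic) to $X(M^n_{\pi_0}\diam M^n_{\pi'_0},M^n_{\pi_2}\diam M^n_{\pi'_2})\times[0,1]$ with the prescribed handle attachments; the freedom allowed by 2-deformation absorbs any residual discrepancy, but one must check that this discrepancy lies within handle-attaching-map isotopy. Everything else is a direct unraveling of Definition \ref{standard-hb/def} and the construction preceding Proposition \ref{standard-hb/thm}.
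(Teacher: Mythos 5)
Your verification is correct and is exactly the routine check the paper has in mind: its own proof simply states that one observes associativity of $\diam$ and verifies the identities of Definition \ref{strict-mon-cat/def}, leaving the details (which you have filled in, including the well-definedness up to 2-deformation and the interchange law) to the reader.
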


\begin{proof}
The proof consists into observing that $\diam$ is associative and verifying the identities
which follow the Definition \ref{strict-mon-cat/def} of a strict monoidal category. This
is straightforward and we leave it to the reader.
\end{proof}

For $n > 1$ the category $\Chb_n^{3+1}$ may have quite complex structure, but what we
really want is to understand the category $\Chb_1^{3+1}$ ($n = 1$) of connected cobordisms
between connected 3-dimensional handlebodies. In this perspective, the categories with $n
> 1$ represent an intermediate step in order to establish in a natural way the connection
between the morphisms in $\Chb_1^{3+1}$ and the morphisms of the category of labeled
ribbon surfaces coming from the description of the 4-dimensional handlebodies as $n$-fold
branched coverings. In particular, we need a faithful functor (or embedding) from
$\Chb_1^{3+1} \to \Chb_n^{3+1}$ whose image defines a subcategory of $\Chb_n^{3+1}$
equivalent to $\Chb_1^{3+1}$.

First of all, we have that the inclusion $\seq\G_k \subset \seq\G_n$ with $1 \leq k < n$
induces inclusions $M^k_\pi \subset M^n_\pi$, $X(M^k_{\pi_0}, M^k_{\pi_1}) \subset
X(M^n_{\pi_0}, M^n_{\pi_1})$ and $Y(M^k_{\pi_0}, M^k_{\pi_1}) \subset Y(M^n_{\pi_0},
M^n_{\pi_1})$, for any $\pi,\pi_0,\pi_1 \in \seq\G_k$.

Then, we define the faithful functor $\iota_k^n: \Chb_k^{3+1} \subset \Chb_n^{3+1}$, by
putting $\iota_k^n(M^k_\pi) = M^n_\pi$ for any $M^k_\pi \in \Obj \Chb_k^{3+1}$ and
$\iota_k^n(W) = W \cup_{Y(M^k_{\pi_0}, M^k_{\pi_1})} Y(M^n_{\pi_0}, M^n_{\pi_1})$ for
any $W: M^k_{\pi_0} \to M^k_{\pi_1} \in \Mor \Chb_k^{3+1}$.

\begin{definition}\label{stabilization/def}
Given $n > k \geq 1$, let $\pi_{n \red k} = ((n,n-1), \dots, (k+1,k))$ and let
$\id_{\pi_{n \red k}}$ be the identity morphism of $M^n_{\pi_{n \red k}}$. Then, the
{\sl stabilization functor} $\up_k^n: \Chb_k^{3+1} \to \Chb_n^{3+1}$ is defined by:
$$
\begin{array}{c}
\up_k^n M^k_\pi = M^n_{\pi_{n \red k}} \!\diam \iota_k^n(M^k_\pi) 
\,\text{ for any } M^k_\pi \in \Obj \Chb_k^{3+1},\\[6pt]
\up_k^n W = \id_{\pi_{n \red k}} \!\diam \iota_k^n(W) 
\,\text{ for any } W \in \Mor \Chb_k^{3+1}.
\end{array}$$
Moreover, we put $\Chb_n^{3+1,c} = \up_1^n(\Chb_1^{3+1}) \subset \Chb_n^{3+1}.$
\end{definition}

We will show in Section \ref{K/sec} that the restriction of $\up_k^n$ to $\Chb_k^{3+1,c}$
gives a category equivalence between the $\Chb_k^{3+1,c}$ and $\Chb_n^{3+1,c}$. In
particular, $\Chb_1^{3+1} \cong \Chb_n^{3+1,c}$ for any $n \geq 1$. Even if the proof of
this will be somewhat technical, the reader\break should find the statement quite obvious.
Indeed $M^n_{\pi_{n \red 1}}\!$ is a 3-ball and therefore the product cobordism
$\id_{\sigma_{1 \to n}}\!$ is a 4-ball. Hence, we may think of the cobordism $\up_1^n W =
\id_{\pi_{n \red 1}} \!\diam \iota_1^n(W)$ as the handlebody obtained by attaching the 1-
and 2-handles of $W$ to a single 4-ball.

\subsection{Bridged tangles and labeled Kirby tangles%
\label{Kirby/sec}}

We recall that in the literature can be found two different diagrammatic descriptions of
4-dimensional 2-handlebodies with a single 0-handle: bridged links and Kirby diagrams.
Both are based on the representation of the attaching maps of the 1- and 2-handles in the
boundary of the 0-handle. In the case of a bridged link (cf. \cite{Ke99}), the 1-handles
are represented directly by drawing their attaching regions (disjoint pairs of 3-balls),
while in the case of a Kirby diagram (cf. \cite{Ki89, GS99}) the 1-handles are represented
instead by a ``dotted'' 0-framed trivial knot.

In \cite{Ke99} bridged tangles were used for describing cobordisms in $\Chb_1^{3+1}$ and
then in \cite{Ke02, KL01} ribbon tangles, equivalent under Kirby moves, are used to
describe the morphisms of the category $\Cob^{2+1}_1$ of 2-framed relative 3-cobordisms
which, as explained in Chapter \ref{boundaries/sec}, is the ``framed boundary'' of
$\Chb_1^{3+1}$.

Here we will generalize the notion of bridged tangles and Kirby tangles in order to be
able to describe the morphisms of $\Chb_n^{3+1}$, i.e. 4-dimensional relative
2-handle\-bodies build on $X(M^n_{\pi_0},M^n_{\pi_1})$, where $M^n_{\pi_0}$ and
$M^n_{\pi_1}$ are standard 3-dimensional handlebodies with $n$ 0-handles.

\begin{definition}\label{bridged-tangle/def}
For any $n \geq 1$ and any two finite sequences of ordered pairs $\pi_0 = ((i^0_1,j^0_1),
\dots, (i^0_{m_0},j^0_{m_0}))$ and $\pi_1 = ((i^1_1,j^1_1), \dots, (i^1_{m_1},j^1_{m_1}))$
in $\seq\G_n$, a {\sl bridged tangle} from $\pi_0$ to $\pi_1$ consists of the following
data:
\begin{itemize}
\item[1)]
the space $Z_n = \sqcup_{i=1}^n E_i \times [0,1]$ disjoint union of $n$ numbered copies of
$E \times [0,1]$, where $E = [0,1]^2$ denotes the standard square, with the subspaces and 
maps (cf. Definition \ref{standard-hb/def})
$$\mkern-6mu
\begin{array}{l}
\begin{array}{l}
B'_{\inp,k} = b'_{m_0,k,i^0_k} \times [0,0.1] \subset E_{i^0_k} \times [0,1]\\[4pt]
B''_{\inp,k} = b''_{m_0,k,j^0_k} \times [0,0.1] \subset E_{j^0_k} \times [0,1]\\
\rho_{\inp,k} : B'_{\inp,k} \!\to B''_{\inp,k} 
\text{ given by } (x,y,z) \mapsto (x + 0.5/m_0, 1-y, z)
\end{array}
\!\raise1.5pt\hbox{\nine$\left.\vphantom{\vrule height 32pt}\right\}$}
\;\text{for } 1 \leq k \leq m_0\,,\mkern-5mu\\[28pt]
\begin{array}{l}
B'_{\out,k} = b'_{m_1,k,i^1_k} \times [0.9,1] \subset E_{i^1_k} \times [0,1]\\[4pt]
B''_{\out,k} = b''_{m_1,k,j^1_k} \times [0.9,1] \subset E_{j^1_k} \times [0,1]\\
\rho_{\out,k}\!:\!B'_{\out,k} \!\to\! B''_{\out,k} 
\text{ given by } 
(x,y,z) \mapsto (x \!+\! 0.5/m_1, 1 \!-\! y, z)
\end{array}
\!\raise1.5pt\hbox{\nine$\left.\vphantom{\vrule height 32pt}\right\}$}
\;\text{for } 1 \leq k \leq m_1;\mkern-5mu
\end{array}
$$
\smallskip
\item[2)]
an orientation preserving embedding $\Phi: P \to \sqcup_{i=1}^n \Int E_i \times
\left]0.1,0.9\right[ \subset Z_n$, where $P = \sqcup_{k=1}^r P_k$ with $P_k = P'_k \sqcup
P''_k$ a copy of $B^3(0,0,2) \sqcup B^3(0,0,-2) \subset R^3$, the union of the pair of
unitary 3-balls with centers $(0,0,\pm2)$; we put
$$
\begin{array}{l}
B'_k = \Phi(P'_k)\\[6pt]
B''_k = \Phi(P''_k)\\[4pt]
\rho_k: B'_k \to B''_k \text{ given by } \Phi(x,y,z) \mapsto \Phi(x,y,-z)
\end{array}
\!\raise1.5pt\hbox{\nine$\left.\vphantom{\vrule height 30pt}\right\}$}
\;\text{for } 1 \leq k \leq r\,;
$$
\smallskip
\item[3)]
an embedding $\Psi: Q \to \sqcup_{i=1}^n \Int E_i \times [0.1,0.9] - \cup_{k=1}^r (\Int
B'_k \sqcup \Int B''_k) \subset Z_n$, with $Q$ the space obtained by cutting the disjoint
union $\sqcup_{h=1}^s A_h$ of $s$ copies of the annulus $A = S^1 \times [0,1]$ along a set
of meridian arcs $\alpha_j = \{p_j\} \times [0,1] \subset A_{h_j}$ for $j = 1, \dots, c$;
denoting by $\alpha'_j$ and $\alpha''_j$ the two copies of $\alpha_j$ in $\Bd Q$, we
require that: $\Psi(Q)$ meets the boundary of the ambient space transversally in
$\cup_{j=1}^c \Psi(\alpha'_j \sqcup \alpha''_j) \subset \cup_{k=1}^r (\Bd B'_k \cup \Bd
B''_k) \cup_{k=1}^{m_0} (\Bd B'_{\inp,k} \cup \Bd B''_{\inp,k}) \cup_{k=1}^{m_1} (\Bd
B'_{\out,k} \cup \Bd B''_{\out,k})$ and $(\cup_{k=1}^r \rho_k \cup_{k=1}^{m_0}
\rho_{\inp,k} \cup_{k=1}^{m_1} \rho_{\out,k}) (\Psi(\alpha'_j)) = \Psi(\alpha''_j)$ for
any $j = 1, \dots, c$.
\end{itemize}

\medskip

We will call $B'_{\inp,k}$ and $B''_{\inp,k}$ with $k = 1, \dots, m_0$ (resp.
$B'_{\out,k}$ and $B''_{\out,k}$ with $k = 1, \dots, m_1$) the {\sl in-boxes} (resp. {\sl
out-boxes}) of the bridged tangle, while the 3-balls $B'_k$ and $B''_k$ with $k = 1,
\dots, r$ will be called {\sl internal balls}.

\medskip

By a {\sl bridged tangle diagram} we mean the set of the in- and out-boxes together with
the images of $\Phi$ and $\Psi$ in $Z_n$. We will use the notation $T(\Phi,\Psi)$ for both 
the bridged tangle determined by $\Phi$ and $\Psi$ and its diagram.
\end{definition}

In a bridged tangle diagram we think of $\Psi(Q)$ as a set of framed curves. In the
figures, we always represent such a framed curve as a narrow band and draw the base curve
$C$ as a thick curve and the framing curve $C'$ as a ``parallel'' thin curve (cf. Section
\ref{handles/sec}). Of course, the choices of the base curve and the framing curve for
different components of $\Phi(Q)$ have to be compatible with the map $\cup_{k=1}^r \rho_k
\cup_{k=1}^{m_0} \rho_{\inp,k} \cup_{k=1}^{m_1} \rho_{\out,k}$, according to property 3 in
Definition \ref{bridged-tangle/def}.

An example of a bridged tangle diagram is presented in Figure \ref{bridged-tang01/fig}.
Here, $\pi_0$ and $\pi_1$ are as in Figure \ref{cobordism04/fig} and the arrows
specify the pairing of boxes and balls.

\begin{Figure}[htb]{bridged-tang01/fig}
{}{Bridged tangle representation of a cobordism}
\centerline{\fig{}{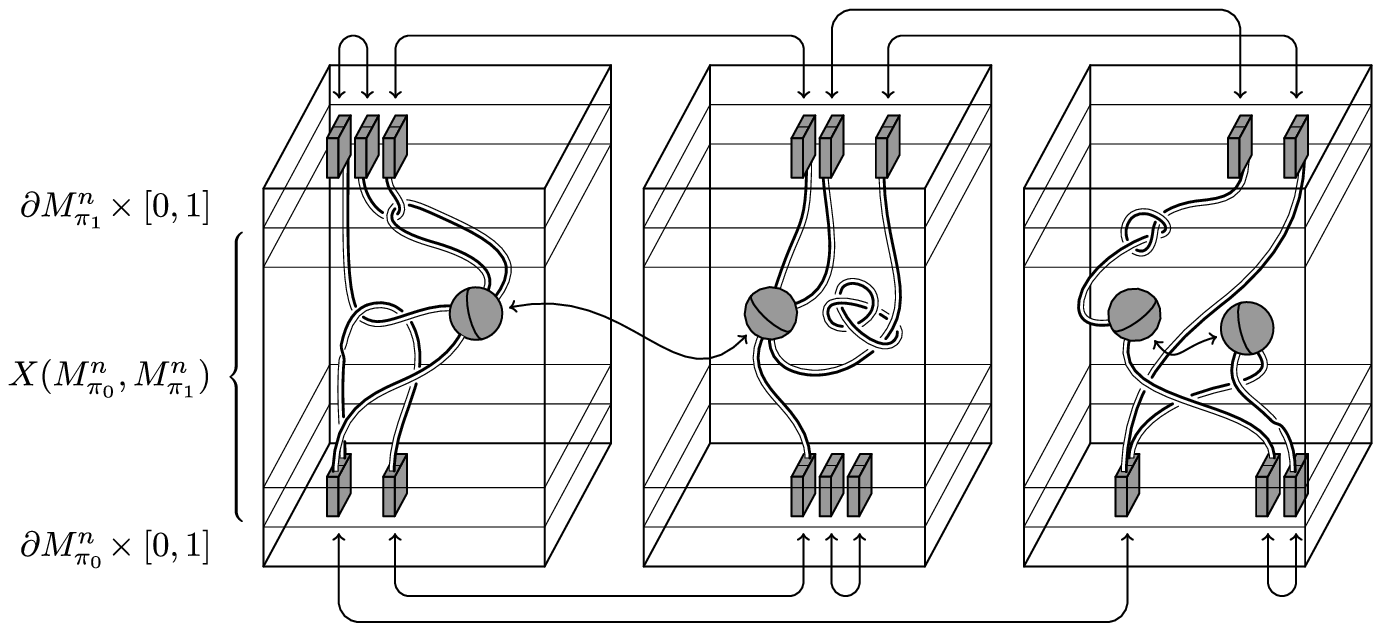}}
\end{Figure}

\begin{proposition} \label{bridged-tangle/thm}
Given $\pi_0,\pi_1 \in \seq\G_n$, a bridged tangle $T = T(\Phi,\Psi)$ from $\pi_0$ to
$\pi_1$ specifies a unique 4-dimensional relative 2-handlebody $W_T = W(\Phi,\Psi)$ build
on $X(M^n_{\pi_0},M^n_{\pi_1})$ without 0-handles. Viceversa, any 4-dimensional relative
2-handle\-body build on $X(M^n_{\pi_0}, M^n_{\pi_1})$ without 0-handles can be specified
in this way by a bridged tangle, up to isotopy of the attaching maps of the 2-handles.
\end{proposition}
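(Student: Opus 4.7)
The plan is to prove the two assertions in turn and finally address the uniqueness up to isotopy of 2-handle attaching maps.

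For the direct construction $T \mapsto W_T$, I would start from $W^{-1} = W^0 = X(M^n_{\pi_0}, M^n_{\pi_1}) \times [0,1]$, which under the identifications fixed in Section \ref{Chbn/sec} becomes $Y(M^n_{\pi_0}, M^n_{\pi_1})$ sitting inside $Z_n \times [0,1]$. The in- and out-boxes of Definition \ref{bridged-tangle/def} are precisely the attaching regions already occupied by the 1-handles of $M^n_{\pi_0}$ and $M^n_{\pi_1}$ in $X(M^n_{\pi_0},M^n_{\pi_1})$, with orientation-reversing identifications $\rho_{\inp,k}, \rho_{\out,k}$ that match those built into the standard handlebody of Definition \ref{standard-hb/def}. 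For each pair of internal balls $(B'_k,B''_k)$ I would attach a 4-dimensional 1-handle $H^1_k \cong B^1 \times B^3$ to $\delta W^0$ via the map $S^0\times B^3 \to B'_k \sqcup B''_k$ encoded by $\rho_k$. After these attachments, each meridian arc $\alpha_j\subset \Bd Q$ lies in the boundary of one of the attaching balls or boxes, and the identifications $\rho_k$, $\rho_{\inp,k}$, $\rho_{\out,k}$ glue $\Psi(\alpha'_j)$ to $\Psi(\alpha''_j)$; this closes up $\Psi(Q)$ into a disjoint union of $s$ framed circles in $\delta W^1$. Finally I attach a 2-handle along each such framed circle (with framing given by the second boundary curve of the band), producing $W_T$. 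By construction $W_T$ has no 0-handles and is build on $X(M^n_{\pi_0},M^n_{\pi_1})$.

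For the converse, given a 4-dimensional relative 2-handlebody $W$ on $X(M^n_{\pi_0},M^n_{\pi_1})$ without 0-handles, I would first isotope the attaching map of each 1-handle $H^1_k$ within $\delta W^0$ so that its two attaching balls lie in the interior of $\sqcup_{i=1}^n E_i \times \{0.5\} \subset Z_n \times [0,1]$, at disjoint levels from the in- and out-boxes. The resulting embedding of $\sqcup_k (B^3\sqcup B^3)$ together with the identifying maps $\rho_k$ provides $\Phi$. Next, I push the attaching circles of the 2-handles (which a priori live in $\delta W^1$, hence may traverse the 1-handles) into general position with respect to the cocore disks of the $H^1_k$ and with respect to the corresponding cocore disks of the 1-handles of $M^n_{\pi_0}$ and $M^n_{\pi_1}$ inside the in/out-boxes. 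Cutting each framed circle at every point where it crosses such a cocore disk produces a disjoint union of framed arcs properly embedded in $\sqcup_{i=1}^n E_i \times [0.1,0.9]$ with endpoints paired by the various $\rho$-maps; regarding these framed arcs as the image of a space $Q$ of cut annuli as in Definition \ref{bridged-tangle/def} yields the embedding $\Psi$.

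The main technical obstacle is to verify that the bridged tangle so extracted really lies in the prescribed subspace of $Z_n$, hits the boundary exactly in the paired meridian arcs $\Psi(\alpha'_j\sqcup\alpha''_j)$, and recovers the original handlebody up to handle isotopy. This is handled by two transversality arguments: first the cocores of the 1-handles meet each 2-handle attaching circle transversally after a small perturbation, and each such intersection point corresponds to a single meridian arc $\alpha_j$; second the framings extend consistently across the cuts because each $\rho_k$ is orientation preserving and reverses the vertical coordinate, matching the conventions on the boundary framings set up in Definition \ref{framed-curve/def}. Finally, for uniqueness, I would observe that the positions of the internal balls and of the in/out boxes are rigid up to ambient isotopy of $Z_n$ fixing the boundary (so $\Phi$ is determined up to isotopy of attaching maps of 1-handles, which is absorbed into handle equivalence), while two choices of how to push the 2-handle attaching circles differ by an ambient isotopy in $\delta W^1$, i.e.\ by an isotopy of the attaching maps of the 2-handles, as claimed.
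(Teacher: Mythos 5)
Your proposal is correct and follows essentially the same route as the paper: $\Phi$ is read as the attaching data of the 4-dimensional 1-handles, $\Psi$ as framed attaching curves of the 2-handles (with the in/out boxes absorbed into the 1-handles of the source and target, and condition 3 of the definition guaranteeing well-defined framings), and the converse is obtained by reversing the construction after making the 2-handle attaching circles longitudinal along the 1-handles. Your cutting of the attaching circles at the belt spheres of the 1-handles is just a restatement of the paper's step of isotoping the attaching maps to be parallel to the cores, so no genuinely different argument is involved.
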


\begin{proof}
Given the bridged tangle $T(\Phi,\Psi)$, let $Z_n$ be as in Definition
\ref{bridged-tangle/def} and let $S$ be the space obtained from $Z_n$ by removing the
interiors of all the in-boxes $B'_{\inp,k}$ and $B''_{\inp,k}$ and all the out-boxes
$B'_{\out,k}$ and $B''_{\out,k}$ and identifying their boundaries through the maps
$\rho_{\inp,k}$ and $\rho_{\out,k}$ respectively. Then $S$ can be identified in a
canonical way with $\partial Y(M^n_{\pi_0},M^n_{\pi_1})$ (cf. Figure
\ref{bridged-tang01/fig}). The subspace $S' \subset S$ coming from $\sqcup_{i=1}^n E_i
\times [0.1,0.9]$ can be canonically identified with $X(M^n_{\pi_0},M^n_{\pi_1}) \times
\{1\}$. Then $\Phi$ specifies the attaching maps of $r$ 1-handles $H^1_1, \dots, H^1_r$ to
$X(M^n_{\pi_0}, M^n_{\pi_1}) \times [0,1]$.\break The relative 1-handlebody $W^1$ deriving
from attaching these 1-handles, can be realized as the quotient of $S'$ given by the
identification of the internal balls $B'_k$ and $B''_k$ through the map $\rho_k$ for $k =
1, \dots, r$. Under this identification $\Psi$ represents the attaching maps of $s$
2-handles $H^2_1, \dots, H^2_s$ in the boundary $W^1$ in terms of framed knots, assuming
the attachment longitudinal along each 1-handle (cf. Section \ref{handles/sec}).
Observe that the requirements in point 3 of Definition \ref{bridged-tangle/def} insure
that the framing of the 2-handles is well-defined. Then, $W(\Phi,\Psi)$ is the relative
handlebody $(X(M^n_{\pi_0}, M^n_{\pi_1}) \times [0,1]) \cup_{k=1}^r H^1_k \cup_{h=1}^s
H^2_h$.

At this point, the second part of the proposition just follows from the fact that the
construction above can be reversed starting from any 4-dimensional relative 2-handlebody
build on $X(M^n_{\pi_0},M^n_{\pi_1})$ without 0-handles, once the attaching maps of the
2-handles have been isotoped to be parallel to the core along each 1-handle.
\end{proof}

Now we want to interpret 2-equivalence of 4-dimensional relative 2-handlebodies in terms 
of bridged tangles. To this aim, let us consider the following operations on a bridged 
tangle $T(\Phi,\Psi)$.
\begin{itemize}
\item[\(a)]\label{bt-moves/def}
{\sl Isotopy} of $\Phi$ and $\Psi$, which preserves the intersections of $\Psi(Q)$ with
the in- and out-boxes and with the internal balls, as well as the conditions in point 3 of
Definition \ref{bridged-tangle/def}.
\item[\(b)]
{\sl Pushing through an internal pair of 3-balls} (cf. Figure \ref{bridged-tang02/fig}).
Let $B'_k$ and $B''_k$ be a pair of internal balls and put $\Phi'_k = \Phi_{|P'_k}$ and
$\Phi''_k = \Phi_{|P''_k}$. Assume that we are given: 1)~a 3-ball $B \subset
\sqcup_{i=1}^n \Int E_i \times \left]0.1,0.9\right[$, such that $B'_k \subset \Int B$,
while $\Bd B$ is disjoint from all the internal balls and meets transversally $\Psi(Q)$
along the image of some meridian arcs like those in point 3 of Definition
\ref{bridged-tangle/def}; 2)~a 3-ball $C \subset \Int B''_k$ and an orientation preserving
diffeomorphism $\eta: \Cl(B - B'_k) \to \Cl(B''_k - C)$ such that $\eta_{|\Bd B'_k} =
\rho_{k|\Bd B'_k}$. Then, we modify $T(\Phi,\Psi)$ as follows: replace $\Phi'_k$ and
$\Phi''_k$ with $\bPhi'_k: P'_k \to \bar B'_k = B$ and $\bPhi''_k: P''_k \to \bar B''_k =
C$ respectively, such that $\eta ( \bPhi'_k(x,y,z)) = \bPhi''_k(x,y,-z)$ for any $(x,y,z)
\in \Bd P'_k$; delete the part of the diagram in $\Cl(B - B'_k)$ and insert its image
through $\eta$ in $\Cl(B''_k - C)$; perform all the consequent modifications on the space
$Q$ and the map $\Psi$. An example is depicted in Figure \ref{bridged-tang02/fig} (the two
balls $B'_k$ and $B''_k$ can lie in two different connected components of $Z_n$).
\begin{Figure}[htb]{bridged-tang02/fig}
{}{Pushing through an internal pair of 3-balls}
\centerline{\fig{}{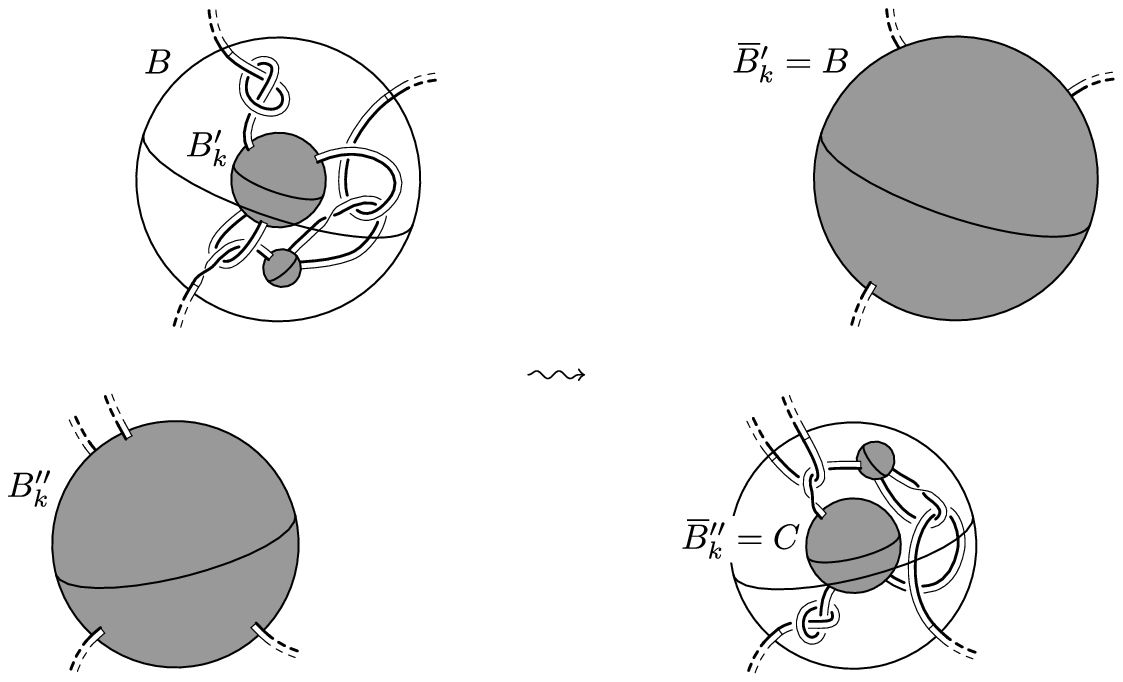}}
\end{Figure}
\item[\(c)]
{\sl Pushing through an in/out pair of boxes}. This move is analogous to \(b), but instead
of 3-balls one works with boxes and, as an additional final step, one has to rescale the
new boxes to make them as in point 1 of Definition \ref{bridged-tangle/def}.
\item[\(d)] 
{\sl 2-handle sliding}. Given two different annuli $A_i$ and $A_j$ with $i,j = 1, \dots,
s$ as in point 3 of Definition \ref{bridged-tangle/def}, let $Q_i$ and $Q_j$ the
corresponding subspaces of $Q$. Then the move consists in taking a parallel copy
$\Psi(Q_j)^\parallel$ of $\Psi(Q_j)$ in the diagram and replacing $\Psi(Q_i)$ by the band
connected sum of it with $\Psi(Q_j)^\parallel$, made through band $\beta$ connecting any
two components of $\Psi(Q_i)$ and $\Psi(Q_j)^\parallel$ contained in the same component of
$Z_n$ (cf. Figure \ref{bridged-tang03/fig}).
\begin{Figure}[htb]{bridged-tang03/fig}
{}{A 2-handle sliding}
\vskip3pt
\centerline{\fig{}{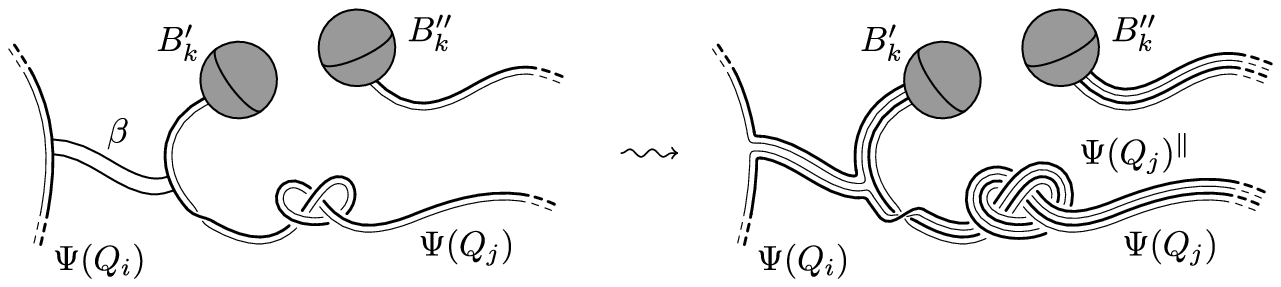}}
\end{Figure}
\item[\(e)]
{\sl Adding/deleting a canceling 1/2-pair}, that is two internal balls $B'_k$ and $B''_k$
which are joined by a single band and do not intersect elsewhere $\Psi(Q)$ (cf. Figure
\ref{bridged-tang04/fig}).
\begin{Figure}[htb]{bridged-tang04/fig}
{}{A canceling 1/2-pair}
\centerline{\fig{}{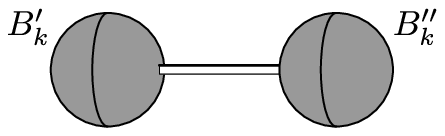}}
\end{Figure}
\end{itemize}

\begin{definition}\label{bt-equivalence/def}
Two bridged tangles are said to be {\sl 2-equivalent} if they are related by a finite 
sequence of moves of types \(a) to \(e) and their inverses.
\end{definition}

\begin{proposition}\label{bt-equivalence/thm}
Given $\pi_0,\pi_1 \in \seq\G_n$, two bridged tangles $T(\Phi_1,\Psi_1)$ and
$T(\Phi_2,\Psi_2)$ from $\pi_0$ to $\pi_1$ are 2-equivalent in the sense of the previous
definition if and only if the corresponding 4-dimensional relative 2-handlebodies
$W(\Phi_1,\Psi_1)$ and $W(\Phi_2,\Psi_2)$ are 2-equivalent as relative handlebodies.
\end{proposition}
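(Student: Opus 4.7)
The plan is to prove the two implications separately, using Cerf's theorem on 2-deformations (recalled in Section \ref{handles/sec}) as the backbone of the harder direction.

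For the ``if'' direction, I would go through moves (a) to (e) one at a time and verify that each induces a 2-equivalence between the corresponding relative handlebodies $W(\Phi,\Psi)$. Move (a) is an isotopy of the attaching maps of the 1- and 2-handles inside $\partial Y(M^n_{\pi_0},M^n_{\pi_1})$ and therefore gives a diffeomorphism of relative handlebodies. Moves (b) and (c) translate at the handlebody level into isotoping the attaching map of one 1-handle (either an internal one, or a ``boundary'' one coming from the in/out boxes) across another subspace, together with an induced isotopy of the 2-handle attaching curves passing through the old attaching region; in particular, they preserve the handlebody up to 2-deformation. Move (d) is, by construction, exactly a 2-handle sliding in the sense of Section \ref{handles/sec}. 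Move (e) is a canceling 1/2-pair in the literal sense of Cerf. Thus each move on bridged tangles comes from a handle move in the list defining 2-equivalence.

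For the ``only if'' direction, assume $W(\Phi_1,\Psi_1)$ and $W(\Phi_2,\Psi_2)$ are 2-equivalent. By Cerf's theorem (as stated in Section \ref{handles/sec}), a 2-deformation is a finite sequence of: isotopy of attaching maps of 1- and 2-handles, 1- and 2-handle slidings, and addition/deletion of canceling 1/2-pairs. I would now translate each elementary handle operation into a sequence of moves (a)--(e) on bridged tangles. Isotopy of attaching maps of 2-handles that remains ``away'' from 1-handles is realized by move (a); when the isotopy pushes the attaching curve of a 2-handle across the belt sphere of a 1-handle, one first applies (b) or (c) to enlarge the region of the 3-ball containing $B'_k$ (or of the box), so that the desired motion becomes an ambient isotopy of $\Psi$ in the enlarged region. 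Isotopy of attaching maps of 1-handles amounts to moving the internal balls $B'_k,B''_k$ (or the in/out boxes) in $Z_n$, which is either move (a) or, when the ball meets another internal ball, move (b). The 2-handle sliding is move (d). A canceling 1/2-pair in $W$ corresponds exactly to a pair of internal balls connected by a single band as in move (e). The only nontrivial point is 1-handle sliding: I would realize a 1-handle slide of $H^1_k$ over $H^1_l$ by choosing in move (b) the 3-ball $B$ large enough to contain $B'_k$ together with a band connecting it to $B'_l$, thereby pushing $B'_k$ to the other side of $H^1_l$, and then reversing any auxiliary enlargements by inverse moves of type (a)-(b).

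The main obstacle is the realization of a 1-handle slide by the combination (a)-(b)-(a) described above, together with a careful account of what happens to the 2-handle attaching curves $\Psi(Q)$ that threaded the original attaching ball of $H^1_k$: during the push, their intersections with $\Bd B'_k$ get carried along by $\eta$ into the new attaching region, which is exactly how 2-handle attaching maps are supposed to transform under a 1-handle slide of $H^1_k$ over $H^1_l$. Verifying that the extra identifications in point 3 of Definition \ref{bridged-tangle/def} are preserved under this procedure is the tedious but routine core of the argument. Once this is done, the inverse directions of all moves are built into the definition of 2-equivalence of bridged tangles, so the two implications together give the proposition.
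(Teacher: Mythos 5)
Your overall strategy coincides with the paper's: check that each of the moves (a) to (e) is realized by a 2-deformation of $W(\Phi,\Psi)$, and conversely translate the elementary handle moves of a 2-deformation into moves (a) to (e), with the ``pushing through'' move (b) doing double duty for isotopies of 2-handle attaching maps across a 1-handle and for 1-handle slidings. There is, however, a genuine gap in your ``only if'' direction. You assert that a 2-deformation consists only of isotopies of the attaching maps of 1- and 2-handles, 1- and 2-handle slidings, and addition/deletion of canceling 1/2-pairs. According to Definition \ref{k-equivalent/def}, a 2-deformation may also add and delete canceling 0/1-handle pairs, so the intermediate handlebodies can acquire 0-handles; such handlebodies are not represented by bridged tangles at all (Proposition \ref{bridged-tangle/thm} only covers relative 2-handlebodies built on $X(M^n_{\pi_0},M^n_{\pi_1})$ without 0-handles), and none of the moves (a) to (e) corresponds to creating or canceling a 0-handle. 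Since $W(\Phi_1,\Psi_1)$ and $W(\Phi_2,\Psi_2)$ have no 0-handles, one must first replace the given 2-deformation by one that never introduces extra 0-handles; this is exactly what the second part of Proposition \ref{0-handles/thm} provides, and the paper's proof invokes it at this point. Without that reduction your case analysis of the handle moves is incomplete, and your appeal to ``Cerf's theorem as stated in Section \ref{handles/sec}'' rests on a mis-statement of what a 2-deformation is allowed to do.

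Once that reduction is inserted, the remaining translation goes through essentially as you describe, and it is the paper's argument. One smaller point: in your realization of a 1-handle slide of $H^1_k$ over $H^1_l$ the roles of the indices are reversed. Move (b) must be applied to the pair $B'_l$, $B''_l$ (so the push is through $H^1_l$), with the 3-ball $B$ containing $B'_l$ chosen large enough to also contain the whole attaching ball $B'_k$ of the handle being slid; since the boundary of $B$ is required to be disjoint from all internal balls, $B'_k$ lies entirely inside $B$ and is carried through $H^1_l$ by $\eta$, together with the strands of $\Psi(Q)$ that meet it, which is precisely the effect of the slide on the 2-handle attaching maps.
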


\begin{proof}
First of all, we observe that the moves \(a) to \(e) on a bridged tangle $T(\Phi,\Psi)$
represent 2-deformations of the handlebody $W(\Phi,\Psi)$. In particular, \(b) represents
isotopy of the attaching maps of the 2-handles and 1-handles slidings over the $k$-th
1-handle of $W(\Phi,\Psi)$.

Viceversa, assume that $W(\Phi_1,\Psi_1)$ and $W(\Phi_2,\Psi_2)$ are 2-equivalent. Since
they are relative handlebodies without 0-handles, there is a 2-deformation relating them
that does not involve addition/deletion of canceling pairs of 0- and 1-handles (see
Proposition \ref{0-handles/thm}). Such a 2-deformation consists of a finite sequence of
the following modifications: isotopies of the attaching maps of the 1-handles and of the
2-handles; addition/deletion of pairs of canceling 1- and 2-handles; sliding of a 1- or
2-handle over another 1- or 2-handle. Isotopy and 1-handle sliding on a handlebody
$W(\Phi,\Psi)$ can be represented by moves \(a), \(b) and \(c) on the bridged tangle
$T(\Phi,\Psi)$, while the other two modifications just correspond to moves \(d) and \(e).
\end{proof}

In the light of Propositions \ref{bridged-tangle/thm} and \ref{bt-equivalence/thm}, for
any $n \geq 1$ we can form a strict monoidal category of bridged tangles $\T_n$ equivalent
to $\Hb_n^{3+1}$ through the functor induced by the map $T(\Phi,\Psi) \mapsto 
W(\Phi,\Psi)$.

\pagebreak

The objects of $\T_n$ are the sequences in $\seq\G_n$. Given two sequences $\pi_0,\pi_1
\in \seq\G_n$, a morphism in $\T_n$ with source $\pi_0$ and target $\pi_1$ is a bridged
tangle from $\pi_0$ to $\pi_1$, considered up to 2-equivalence of bridged tangles.

If $T_1 = T(\Phi_1,\Psi_1): \pi_0 \to \pi_1$ and $T_2 = T(\Phi_2,\Psi_2): \pi_1 \to \pi_2$
are two morphisms in $\T_n$, then their composition is the morphism $T = T(\Phi,\Psi):
\pi_0 \to \pi_2$ defined as follows. Translate $T_2$ in the space $\bar Z_n =
\sqcup_{i=1}^n E_i \times [1,2]$, glue it to $T_1$ by identifying the two copies of
$\sqcup_{i=1}^n E_i \times \{1\}$ in $Z_n$ and $\bar Z_n$, and rescale the third
coordinate by the factor $1/2$. In the identification the out-boxes of $T_1$ are glued to
the corresponding in-boxes of $T_2$ to give a new boxes that, up to smoothing the corners,
can be considered as extra internal balls. Then, under the above identification, $T$ is
determined by $\Phi = \Phi_1 \sqcup \Phi_2$ extended to include those extra internal balls
and by $\Psi = \Psi_1 \sqcup \Psi_2$.

The identity morphism $\id_\pi$ of a sequence $\pi \in \seq\G_n$ is represented by the
bridged tangle from $\pi$ to $\pi$ without any internal ball and with a single band
connecting any in-box with the corresponding out-box. Notice that the two bands connecting
a pair $B'_k$ and $B''_k$ of in-boxes with the corresponding pair of out-boxes, give the
attaching map of the 2-handle of $\id_{M^n_\pi}\!$ between the two copies (in the source a
in the target) of the $k$-th 1-handle of $M^n_\pi$.

\label{T-monoidal/def}
Finally, $\T_n$ has a strict monoidal structure whose product (which we will denote again
by $\diam\,$) is given by juxtaposition on the objects, while on the morphisms $T \diam
T'$ is given by translating $T'$ in the space $Z_n' = \sqcup_{i=1}^n [1,2] \times [0,1]
\times [0,1]$, glueing the two tangles by identifying the corresponding copies of $\{1\}
\times [0,1] \times [0,1]$ in $Z_n$ and $Z_n'$ and then applying a reparametrization of
the first coordinate depending on the last one, that is a diffeomorphism $(x,y,z) \mapsto
(h^z(x),y,z)$ with $h^z$ increasing function of $x$ for every $z$. The unit of the product
between morphisms is the empty tangle.

\medskip

Describing the cobordisms in $\Chb_n^{3+1}$ through bridged tangles presents two main
difficulties. The first one is that the space $Z_n$ in which lives the tangle diagram is
not connected when $n > 1$. The second and most important one is that from a diagrammatic
point of view the ``pushing through 1-handle'' move is highly non-local. Indeed, part of
the diagram which is in the neighborhood of one 3-ball appears in the a neighborhood of
another 3-ball; moreover, this last ball can even be in a different connected component of
the diagram.

Introducing $n$-labeled Kirby tangles resolves both these problems. Basically the idea
is to allow only the use of bridged tangle diagrams in special form. Obviously, this
requires a specification of the 2-equivalence moves relating two such special diagrams.

\begin{definition}\label{special-bt/def}
A bridged tangle $T(\Phi,\Psi)$ will be called a {\sl special bridged tangle} if, using 
the same notation as in Definition \ref{bridged-tangle/def}, the following properties are 
satisfied:
\begin{itemize}
\item[1)]
the map $\pr: Z_n = \sqcup_{i=1}^n E_i \times [0,1] \to E \times [0,1]$ projecting each 
copy $E_i \times [0,1]$ onto $E \times [0,1]$ by the identity is injective on $\Psi(P) 
\cup \Phi(Q)$;
\item[2)]
$\Psi(Q)$ meets any internal ball $B'_k$ (resp. $B''_k$) only in the disks $D'_k =
\Phi(S^2_+(0,0,2))\break \subset \Bd B'_k$ (resp. $D''_k = \Phi(S^2_-(0,0,-2)) \subset \Bd
B''_k$) image of the copy in $P_k$ of the upper half-sphere $S^2_+(0,0,2) \subset \Bd
B^3(0,0,2)$ (resp. the lower half-sphere $S^2_-(0,0,-2) \subset \Bd B^3(0,0,-2)$);
moreover, $\Psi(Q)$ meets each in- or out-box $B'_k$ (resp. $B''_k$) only in points of the
half-space $y > 0$ (resp. $y < 0$);
\item[3)] 
an embedding $\bPhi: C \to E \times \left]0.1,0.9\right[$ is given, with $C =
\sqcup_{k=1}^r C_k$ and $C_k$ the convex hull of $P_k$, such that $\bPhi_{|P} = \pr \circ
\Phi$ and $\bPhi(C) \cap \pr(\Psi(Q)) = \pr(\Phi(P) \cap \Psi(Q))$; we denote by $D_k =
\bPhi(B^2)$ the image of the copy in $C_k$ of the standard disk $B^2 \subset R^2 \subset
R^3$ (cf. Figure \ref{bridged-tang05/fig}).
\end{itemize}
\end{definition}

Representing a special bridged tangle diagram is much simpler then a general one. Indeed,
thanks to property 1, instead of using $n$ copies of $E \times [0,1]$, one for each
connected component of the diagram, we can draw the diagram directly in $E \times [0,1]$.
Namely, we consider $\pr \circ (\Phi \sqcup \Psi) \subset E \times [0,1]$ and label
each part of the diagram with a number from 1 to $n$, to keep track of the original
component where it lives.

Moreover, by properties 2 (first part) and 3, we can draw only the disk $D_k$ in place of
each pair of internal 3-balls $\pr(B'_k)$ and $\pr(B''_k)$, extending the ribbons inside
$\bPhi(C_k)$ by the image of vertical bands under the identification $\bPhi: C_k \to
\bPhi(C_k)$, until they intersect it as it is shown on the right side of Figure
\ref{bridged-tang05/fig}.\break As usual, in the diagrams we mark the unknot $\Bd D_k$ by
a dot to indicate that it stands for the attaching map of a 1-handle.

\begin{Figure}[htb]{bridged-tang05/fig}
{}{Dot notation for a pair of internal balls in a special bridged tangle}
\vskip-3pt
\centerline{\hskip-20pt\fig{}{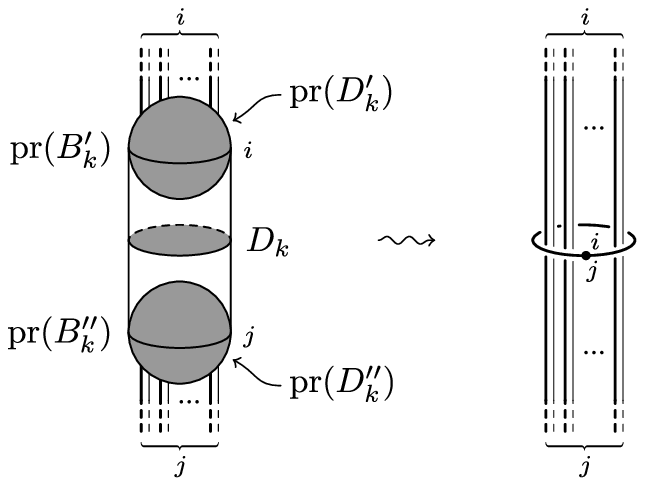}}
\vskip-6pt
\end{Figure}

\begin{Figure}[b]{bridged-tang06/fig}
{}{Dot notation for pairs of in/out-boxes in a special bridged tangle}
\vskip-3pt
\centerline{\fig{}{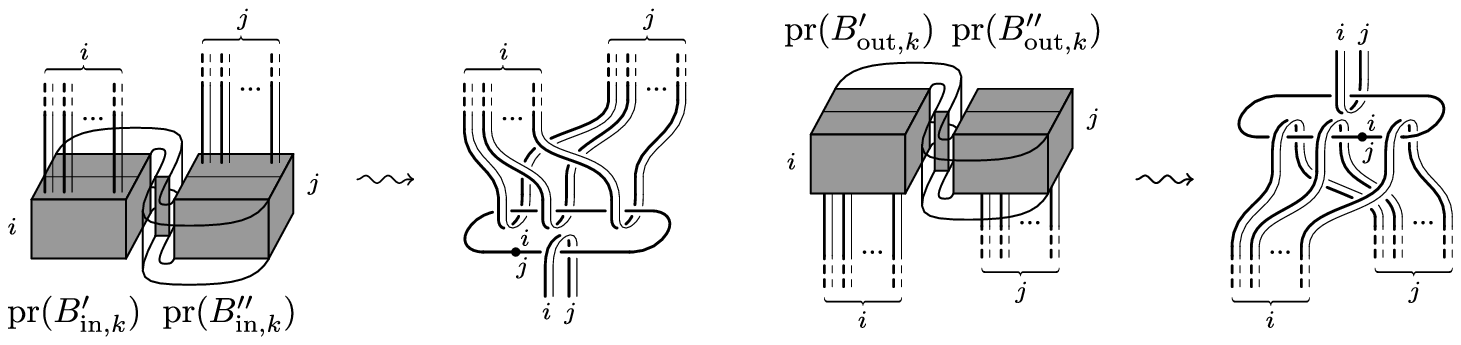}}
\vskip-6pt
\end{Figure}

Taking into account the last part of property 2, we can do a similar thing for the $k$-th
pair of in-boxes (resp. out-boxes), but adding an extra open framed component. Such extra
component passes once through the resulting dotted component and ends in a canonical pair
of intervals in $E \times \{0\}$ (resp. $E \times \{1\}$), which represent the $k$-th
element of the source (resp. target) of the tangle (cf. definition below), as shown in
Figure \ref{bridged-tang06/fig}. The 3-cell $\bPhi(C_k)$ and the disk $D_k$ in the above
construction, are respectively replaced by the pair of boxes joined by the bent
rectangular tube drawn in the figure and by the meridian rectangle of it. The
corresponding dotted unknot and the involved framed components has been isotoped in a
different standard position to make the diagram simpler. After such isotopy, the whole
configuration is assumed to have the standard form depicted in the figure.

In order to introduce the notion of $n$-labeled Kirby tangle, we need a preliminary
definition. Given any finite sequence $\pi = ((i_1,j_1), \dots, (i_m,j_m)) \in \seq\G_n$,
let $I_\pi = ((a'_{m,1},a''_{m,1}), \dots, (a'_{m,m},a''_{m,m}))$ the sequence of pairs of
intervals in $E = [0,1]^2$, each labeled with the corresponding element of $\pi$, defined
for $1 \leq k \leq m$ by
$$\begin{array}{c}
a'_{m,k} = [(2k - 1.6)/2m, (2k - 1.4)/2m] \times \{0.5\}\,,\\[6pt]
a''_{m,k} = [(2k - 0.6)/2m, (2k - 0.4)/2m] \times \{0.5\}\,.
\end{array}$$

\smallskip

\begin{definition}\label{kirby-tangle/def}
Let $n \geq 1$ and $\pi_0 = ((i^0_1,j^0_1), \dots, (i^0_{m_0},j^0_{m_0}))$ and $\pi_1 =
((i^1_1,j^1_1), \dots, (i^1_{m_1},j^1_{m_1}))$ be two sequences of in $\seq\G_n$. Then an
{\sl $n$-labeled (admissible) Kirby tangle} in $E \times [0,1]$ from $I_{\pi_0}$ to
$I_{\pi_1}$ consists of the following data:
\begin{itemize}
\item[1)]
$r$ dotted unknots spanning disjoint flat disks $D_1, \dots, D_r \subset \Int E \times 
\left]0,1\right[$;
\item[2)]
a tangle consisting of $s$ framed curves $C_1, \dots, C_s$ (cf. Section
\ref{handles/sec}) regularly embedded in $\Int E \times \left[0,1\right]$ and transversal
with respect to those disks, such that each open framed curve $C_h$ joins a pair of
intervals $(a'_{m_0,k} \times \{0\},a''_{m_0,k} \times \{0\})$ for some
$(a'_{m_0,k},a''_{m_0,k}) \in I_{\pi_0}$ or $(a'_{m_1,k} \times \{1\},a''_{m_1,k} \times
\{1\})$ for some $(a'_{m_1,k},a''_{m_1,k}) \in I_{\pi_1}$, with the base curve always
ending in the left end-points of the intervals;
\item[3)]
a labeling from $\{1, \dots, n\}$ for each side of the disks $D_1, \dots, D_r$ and each
component of the tangle once it has been cut at the intersection with these disks; the
labeling must be consistent in the sense that all the framed arcs coming out from one side
of a disks (or ending at an interval from $I_{\pi_0}$ or $I_{\pi_1}$) have the same label
of that side (or that interval) (cf. Figures \ref{bridged-tang05/fig} and
\ref{bridged-tang06/fig}).
\end{itemize}
\end{definition}

The term ``admissible'' in the denomination of Kirby tangles refers to the condi\-tion in
point 2 of the definition, that no framed curve $C_k$ joins an interval of $I_{\pi_0}\!$
at level 0 with one of $I_{\pi_1}\!$ at level 1 (cf. \cite{MaP92, KL01}). However, since
we will always work with\break admissible tangles, we will simply write Kirby tangle to
mean an admissible one.

Moreover, the consistency rule in point 3 of the definition makes the labeling redundant
and sometimes we will omit the superfluous labels. Observe also that in the case $n = 1$,
all labels in the diagram have value 1, so they will be omitted, and we are reduced to an
ordinary Kirby tangle.

\medskip

What we have said after Definition \ref{special-bt/def} can be restated by saying that to
any special bridged tangle $T$ we can associate a uniquely determined Kirby tangle $K_T$
representing it. This is obtained from $T$ by replacing internal balls and in/out-boxes as
described in Figures \ref{bridged-tang05/fig} and \ref{bridged-tang06/fig}. Notice that
the replacement of internal balls depends on the extra structure given by the embedding
$\bPhi$ in point 3 of Definition \ref{special-bt/def}, hence the construction of $K_T$
cannot be immediately applied to a (non-special) bridged tangle $T$.

Viceversa, given an $n$-labeled Kirby tangle $K$, we can construct a corresponding special
bridged tangle, which we will denote by $T_K$, in the following way: we first convert the
dot notation for 1-handles into the ball notation, by reversing the step in Figure
\ref{bridged-tang05/fig}; after that we take the disjoint union $Z_n = \sqcup_{i=1}^n E_i
\times [0,1]$ of $n$ copies of $E \times [0,1]$ and put in the $i$-th component the
portion of the diagram labeled with $i$; eventually, we transform the intervals of
$I_{\pi_0}$ at level 0 into in-boxes and those of $I_{\pi_1}$ at level 1 into out-boxes.

We observe that the maps $T \mapsto K_T$ and $K \mapsto T_K$ are not exactly the inverse
of each other. In particular, $T_{K_T}\!$ does not coincide with $T$, but we will see in
Proposition \ref{kirby-tangle/thm} that they are 2-equivalent.

\medskip

In order to extend the definition of $K_T$, modulo certain moves, to any bridged tangle
$T$ (see proof Proposition \ref{kirby-tangle/thm}) and to interpret the 2-equivalence of
bridged tangles in terms of Kirby tangles, we need the moves depicted in Figure
\ref{kirby-tang01/fig}.

\begin{Figure}[htb]{kirby-tang01/fig}
{}{Equivalence moves for $n$-labeled Kirby tangles}
\vskip-3pt
\centerline{\fig{}{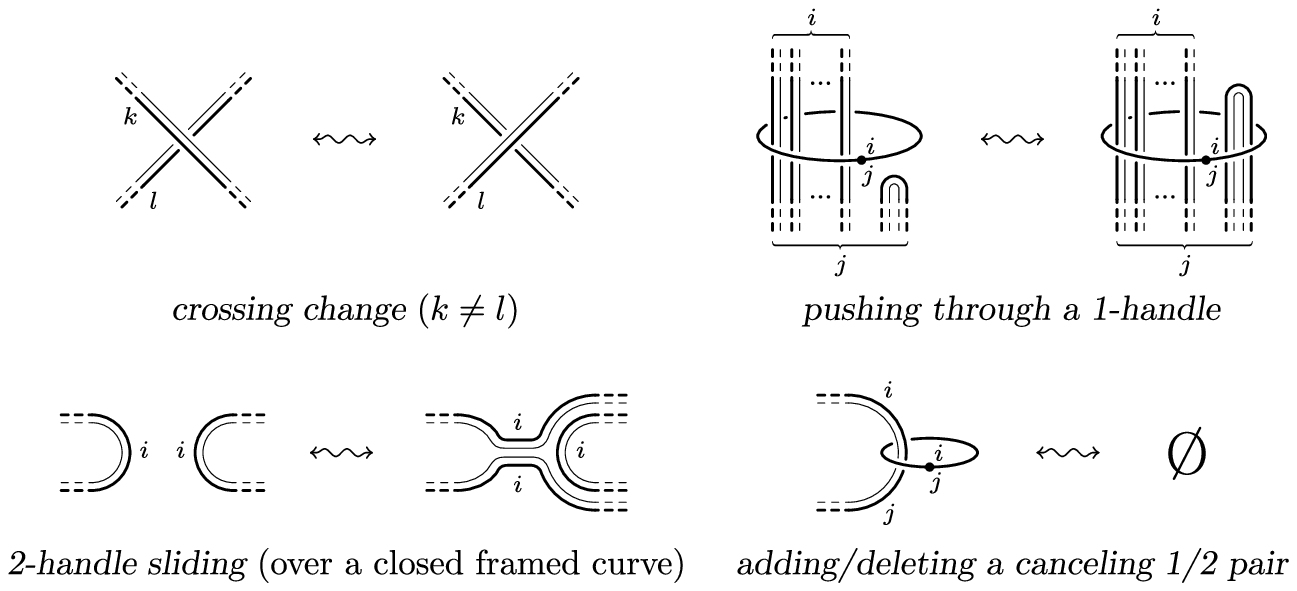}}
\vskip-3pt
\end{Figure}

We want to emphasize some facts: 1)~crossing change and pushing through a 1-handle are
local moves, while 2-handle sliding and adding/deleting a canceling 1/2-pair are global
moves; 2)~in the 2-handle sliding, we can slide any (possibly open) framed curve, but the
framed curve over which the sliding takes place has to be a closed one; 3)~for ordinary
Kirby tangles, that is for $n=1$, the crossing change cannot be realized and the other
moves are considered only in the usual ordinary case, that is for $i = j = 1$.

\begin{definition}\label{kt-equivalence/def}
Two $n$-labeled Kirby tangles are said to be {\sl 2-equivalent} if they are related by
labeled isotopy (preserving the intersections between framed curves and disks) and the
moves of Figure \ref{kirby-tang01/fig}.
\end{definition}

\begin{Figure}[b]{kirby-tang02/fig}
{}{1-handle moves for $n$-labeled Kirby tangles}
\vskip-3pt
\centerline{\kern6pt\fig{}{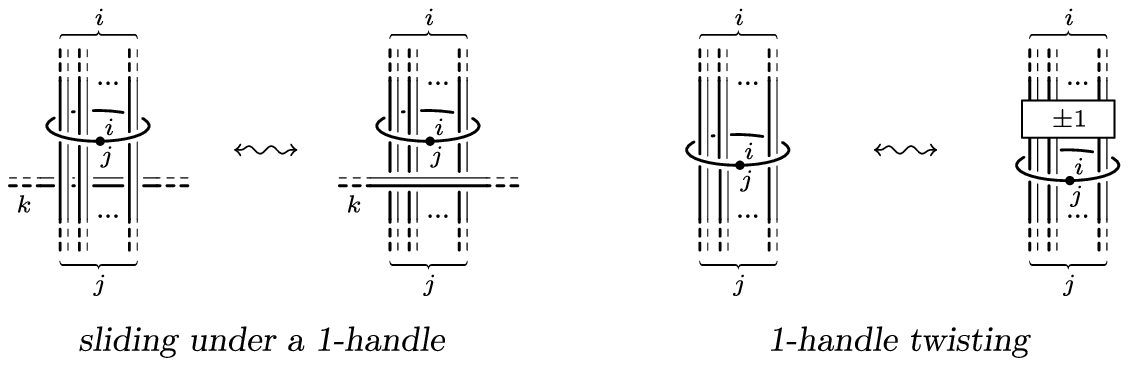}}
\vskip-3pt
\end{Figure}

Before going on, we introduce two auxiliary 2-equivalence moves on Kirby tangles, which
will be needed in the proof of the next proposition. These are the 1-handle moves in
Figure \ref{kirby-tang02/fig}. They can be derived from the 2-equivalence moves in
Figure \ref{kirby-tang01/fig}, as shown in Figure \ref{kirby-tang03/fig}.

\begin{Figure}[htb]{kirby-tang03/fig}
{}{Deriving the 1-handle moves}
\centerline{\fig{}{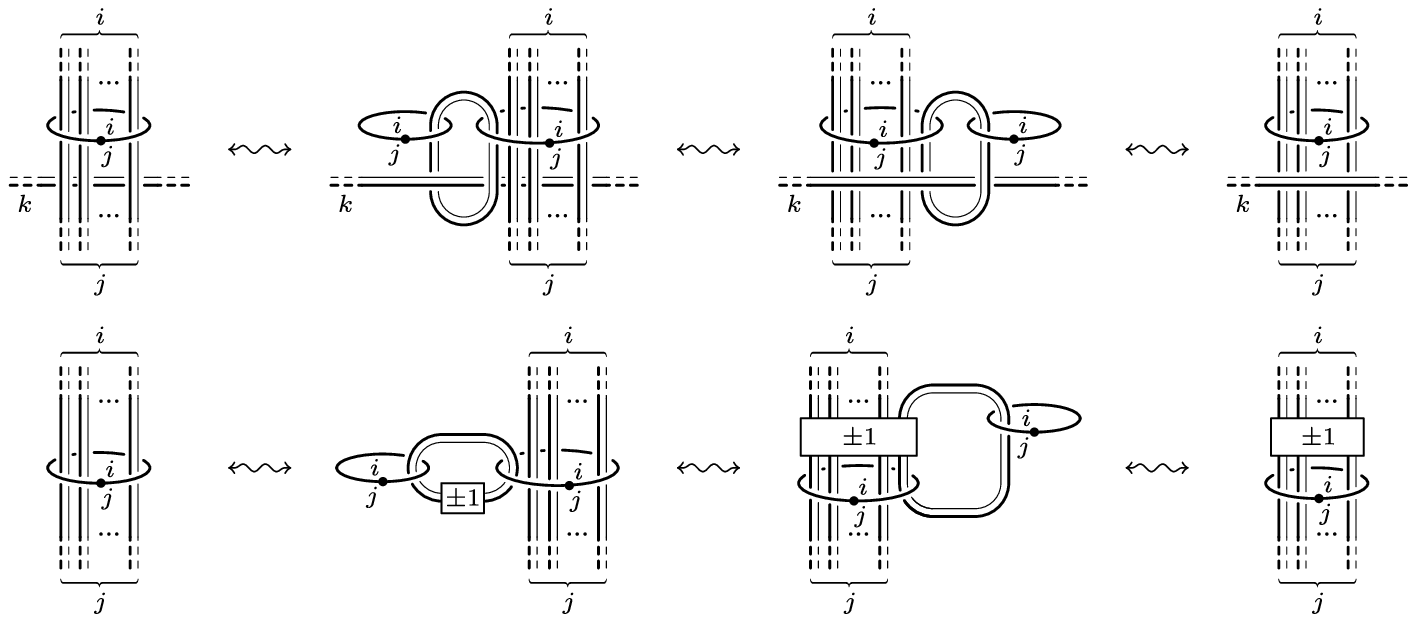}}
\vskip-3pt
\end{Figure}

\begin{proposition}\label{kirby-tangle/thm}
Any orientable 4-dimensional relative 2-handlebody $W$ build on $X(M^n_{\pi_0},
M^n_{\pi_1})$, with $\pi_0,\pi_1 \in \seq\G_n$, can be represented by an $n$-labeled Kirby
tangle $K$ such that $W = W_{T_K}$. Moreover, given two $n$-labeled Kirby tangles $K_1$
and $K_2$, the 4-dimensional relative 2-handlebodies $W_1$ and $W_2$ that they represent
are 2-equivalent if and only if $K_1$ and $K_2$ are 2-equivalent in the sense of the
previous definition.
\end{proposition}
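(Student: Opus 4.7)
The plan is to derive this proposition from the analogous statement for bridged tangles (Propositions \ref{bridged-tangle/thm} and \ref{bt-equivalence/thm}), by systematically translating between the two diagrammatic languages through the maps $T \mapsto K_T$ and $K \mapsto T_K$ defined in the preceding text. The key point is that $n$-labeled Kirby tangles are a compact projected encoding of special bridged tangles, so both existence and 2-equivalence reduce to understanding when a bridged tangle can be put in special form and what moves between special forms correspond to in the projected diagram.

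For the existence assertion, I would start with a bridged tangle $T(\Phi,\Psi)$ representing $W$ (available by Proposition \ref{bridged-tangle/thm}) and use a sequence of isotopies of $\Phi$ and $\Psi$ (moves of type (a) in the bridged tangle calculus) to put it into special form in the sense of Definition \ref{special-bt/def}. A standard general position argument in each component $E_i\times[0,1]$ produces the embedding $\bar\Phi$ of the convex hulls $C_k$, and ensures that the restriction of the projection $\mathrm{pr}$ to $\Phi(P)\cup\Psi(Q)$ is injective on each component and self-transverse; the requirement that $\Psi(Q)$ meet each internal ball only in the prescribed half-sphere disks is achieved by a small radial adjustment of the framed strands near each $B'_k$, $B''_k$. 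Once $T$ is special, the Kirby tangle $K_T$ is read off by the rules of Figures \ref{bridged-tang05/fig} and \ref{bridged-tang06/fig}, with labels from $\{1,\dots,n\}$ recording the component of $Z_n$ from which each arc originates. The consistency required in point 3 of Definition \ref{kirby-tangle/def} is automatic, and one verifies directly that $W_{T_K}$ is 2-equivalent (in fact isomorphic after a small isotopy) to $W$.

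For the ``only if'' direction of the equivalence part, I would check that each of the four moves in Figure \ref{kirby-tang01/fig} lifts to a 2-equivalence of bridged tangles: a crossing change between strands with distinct labels $i\neq j$ lifts to a trivial isotopy, since the two strands live in disjoint components $E_i\times[0,1]$ and $E_j\times[0,1]$ of $Z_n$ and therefore never actually intersect; the pushing-through-a-1-handle move lifts to move (b) of Definition \ref{bt-equivalence/def} via the trivial small 3-ball $B$ and a local homeomorphism $\eta$; 2-handle slides and creation/annihilation of canceling 1/2-pairs lift directly to (d) and (e), respectively. Combining with Proposition \ref{bt-equivalence/thm}, 2-equivalent Kirby tangles represent 2-equivalent 4-dimensional handlebodies.

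The harder ``if'' direction is the main obstacle and requires projecting an arbitrary 2-equivalence of bridged tangles to Kirby moves. Given Kirby tangles $K_1,K_2$ with $W_1,W_2$ 2-equivalent, Propositions \ref{bridged-tangle/thm} and \ref{bt-equivalence/thm} yield a sequence of bridged tangle moves from $T_{K_1}$ to $T_{K_2}$; I would then go through the intermediate tangles and argue that at each stage the bridged tangle can be brought back to special form by moves which, on the Kirby side, are generated by isotopy, crossing changes, pushing through a 1-handle, and the auxiliary 1-handle moves of Figure \ref{kirby-tang02/fig} (which were shown to be derivable from the listed Kirby moves in Figure \ref{kirby-tang03/fig}). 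The critical point to verify is that a generic isotopy of $\Phi\sqcup\Psi$ in $Z_n$, when projected to $E\times[0,1]$, induces on $K_T$ a planar isotopy together with finitely many crossing changes between differently-labeled strands and finitely many passages of a strand across an internal disk $D_k$ (which is the pushing-through-a-1-handle move); moves (b), (c), and (e) correspond transparently after possibly standardizing the position of the strands at the relevant disks and boxes via the 1-handle moves of Figure \ref{kirby-tang02/fig}, while move (d) is literally 2-handle sliding. Once this move-by-move dictionary is established, the ``if'' implication follows, completing the proof.
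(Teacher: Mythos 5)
Your overall route is the same as the paper's: reduce both assertions to bridged tangles via Propositions \ref{bridged-tangle/thm} and \ref{bt-equivalence/thm}, check that isotopy and the moves of Figure \ref{kirby-tang01/fig} lift to 2-equivalences of bridged tangles, and conversely translate an arbitrary 2-equivalence of bridged tangles stage by stage back into Kirby moves after re-specializing each intermediate tangle, using the auxiliary moves of Figure \ref{kirby-tang02/fig} (derived in Figure \ref{kirby-tang03/fig}) to absorb the non-uniqueness of the extension $\bPhi$ and of the specializing isotopy. Up to that point your outline matches the paper's proof.

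There is, however, a genuine gap at the step you declare ``transparent'': the projection of move (b) (and likewise (c)) in the case where the ball $B$ being pushed through contains another internal 3-ball, i.e.\ where a whole 1-handle attaching ball is pushed through a 1-handle. On the Kirby side this is a slide of one dotted unknot over another, which is not among the moves of Figure \ref{kirby-tang01/fig}, and it is not produced by the 1-handle moves of Figure \ref{kirby-tang02/fig} either --- those only flip a spanning disk or twist a 1-handle, and their role is a different one (independence of $K_T$ from the choice of the arcs and framings defining $\bPhi$). The paper isolates exactly this case (Figure \ref{kirby-tang04/fig}) and derives it from the listed moves by inserting a canceling 1/2-pair, sliding all the 2-handles running through the pushed 1-handle over the new 2-handle, and then canceling again (Figure \ref{kirby-tang05/fig}). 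Your dictionary is adequate when only framed strands are pushed through --- that is literally labeled isotopy plus the top-right move of Figure \ref{kirby-tang01/fig} --- but without the canceling-pair derivation the ball-through-1-handle case, which is the geometric heart of the ``if'' direction, is left unproved; you need to supply that argument explicitly.
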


\begin{proof}
By Proposition \ref{bt-equivalence/thm} it suffices to show that the map $K \mapsto T_{K}$
from labeled Kirby tangles to (special) bridged tangles induces a bijection between the
equivalence classes of Kirby tangles under isotopy and the moves in Figures
\ref{kirby-tang01/fig} and the 2-equiva\-lence classes of bridged tangles.

First, we observe that the map $K \mapsto T_K$ induces a well-defined map at the level of
such equivalence classes, that is changing $K$ through isotopy and the moves in Figures
\ref{kirby-tang01/fig} produces a 2-equivalent bridged tangle. Indeed, any isotopy of $K$,
as well as any crossing change, clearly induces a suitable isotopy of the maps $\Phi$ and
$\Psi$ determining $T_K$, while the other three moves in Figure \ref{kirby-tang01/fig}
just induces homonymous moves of bridged tangles (cf. definition starting at page
\pageref{bt-moves/def}).

Now, we want to define the inverse map which associates to the 2-equivalence class of
bridged tangle $T$ the 2-equivalence class of a labeled Kirby tangle $K_T$, based on
the construction $T \mapsto K_T$ considered above for $T$ a special bridged tangle.

Any bridged tangle $T = T(\Phi,\Psi)$ can be made into a special one $T'$ by an isotopy of
bridged tangles (that is an isotopy of the maps $\Phi$ and $\Psi$, as specified in point
\(a) at page \pageref{bt-moves/def}) to achieve properties 1 and 2 in Definition
\ref{special-bt/def}, followed by the extension of $\Phi$ to a map $\bPhi$ as in point 3
of the same definition.

The resulting special bridged tangle $T'$ is not unique, since it depends on the choice of
the isotopy and of the extension $\bPhi$. Nevertheless, we are going to show that the
2-equivalence class of $K_{T'}$ is uniquely determined, depending only on the original
bridged tangle $T$.

Once the isotopy is fixed, different choices of $\bPhi$ leads to labeled Kirby tangles
which can be related by labeled tangle isotopy (preserving the intersections between
framed curves and disks) and the two moves in Figure \ref{kirby-tang02/fig}.
In fact, it is clear that isotopy of $\bPhi$ which preserves property 3 of Definition
\pagebreak
\ref{special-bt/def} induces an isotopy of the corresponding Kirby tangle. Up to such an
isotopy, $\bPhi$ is determined by the set of arcs $\bPhi(\sqcup_{k=1}^r \gamma_k)
\subset E \times \left]0.1, 0.9\right[ - \pr(\cup_{k=1}^r \Int(B'_k \cup B''_k))$ with
$\gamma_k = \{(0,0)\} \times [0,1] \subset C_k$ and by the framings along them. Of course,
different choices for these arcs are always isotopic keeping their end-points fixed, but
during the isotopy they could cross the framed curves of $T$, and each time this happens
$K_T$ changes by a sliding under a 1-handle. While adding a full twist to the framing
along any arc induces on $T_K$ a twisting on the corresponding 1-handle.

Concerning the choice of the isotopy, we have that any other special
bridged tangle $T''$ isotopic to $T$ is also isotopic to $T'$. Moreover, we can assume the 
isotopy relating $T'$ and $T''$ to be realized by bridged tangles which satisfy
properties 1 and 2 of Definition \ref{special-bt/def} at every time, except for a finite
number of crossing changes between two framed curves. It follows that the labeled Kirby
diagrams $K_{T'}$ and $K_{T''}$ are related by labeled isotopy and crossing changes as in
Figure \ref{kirby-tang01/fig} (remember that the conditions in point 3 of Definition
\ref{bridged-tangle/def} has to be preserved during the isotopy and this allows us to
trivially extend it inside the balls $\bPhi(C_k)$ when passing to Kirby tangles).

Then, we can define $K_T$ up to 2-equivalence of Kirby tangles for any (possibly
non-special) bridged tangle $T$, just by putting $K_T = K_{T'}$ for some special bridged
tangle $T'$ isotopic to $T$.

At this point, we have to show that if $T_1$ and $T_2$ are 2-equivalent bridged tangles,
then $K_{T_1}$ and $K_{T_2}$ are equivalent through labeled isotopy and the moves in
Figures \ref{kirby-tang01/fig}). If $T_1$ and $T_2$ are isotopic bridged tangles, then
$K_{T_1}$ and $K_{T_2}$ are equivalent by argument above.

Concerning the other operations on bridged tangles, we have only to address the pushing
through 1-handle operations \(b) and \(c), since 2-handle sliding and adding/deleting a
canceling 1/2-pair are explicitly represented in terms of Kirby diagrams in Figure
\ref{kirby-tang01/fig}. Moreover, if a pushing through a 1-handle move involves only 
pieces of framed tangle and no 3-ball, then it can be realized through labeled isotopy and 
the top-right move in Figure \ref{kirby-tang01/fig}.

\begin{Figure}[b]{kirby-tang04/fig}
{}{Pushing a 3-ball through a 1-handle}
\centerline{\fig{}{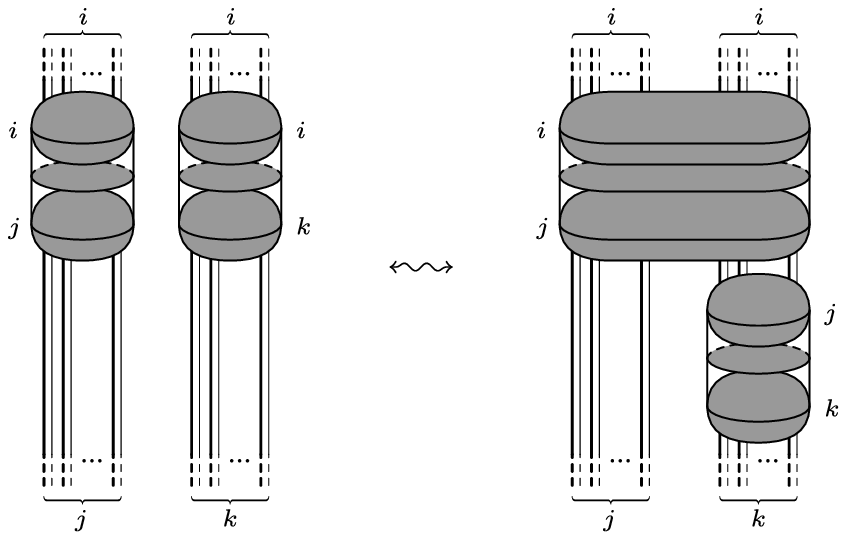}}
\vskip-3pt
\end{Figure}

So, we only need to discuss the case when a 3-ball is pushed through a 1-handle (cf.
Figure \ref{kirby-tang04/fig} for an internal pair of balls). The proof that in this case
the corresponding Kirby tangle changes through adding/deleting canceling 1/2-pairs and
2-handle slides is presented in Figure \ref{kirby-tang05/fig}.

\begin{Figure}[htb]{kirby-tang05/fig}
{}{Deriving the 1-handle sliding}
\centerline{\fig{}{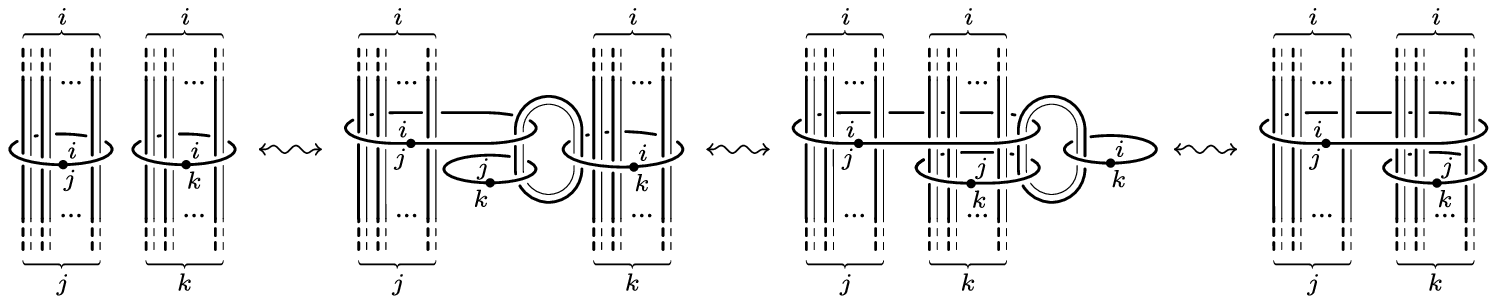}}
\vskip-3pt
\end{Figure}

To conclude the proof, we observe that the two maps $K \mapsto T_K$ and $T \mapsto K_T$
are inverses to each other on 2-equivalence classes. In other words, $K_{T_K}$ is
2-equivalent to $K$ as a Kirby tangle and $T_{K_T}$ is 2-equivalent to $T$ as a bridged
tangle. In both the cases, the 2-equivalence is realized by performing the obvious
sequence of 2-handle slidings, pushing through a 1-handle and then deletion of a canceling
1/2-pair, in order to remove the extra pair of 1- and 2-handles that arises from each pair
of in- and out-boxes as shown in Figure \ref{bridged-tang06/fig}.
\end{proof}

In the next chapters, labeled Kirby tangles will be always represented through their 
planar diagrams, so we conclude this section by discussing such representation in some 
more details.

A labeled Kirby tangle lives in $\Int E \times [0,1]$ and a {\sl planar diagram} of it is
always realized by the projection into the square $\left]0,1\right[ \times [0,1]$
forgetting the second coordinate (in such a way that $E$ projects into
$\left]0,1\right[\,$). As usual, we require that the restriction of the projection to the
tangle, including both framed and dotted curves, is regular and that it is injective
except for a finite number of transversal double points, which give rise to the crossings.

\begin{Figure}[b]{kirby-tang06/fig}
{}{The 1-handle moves for representing isotopy by planar diagrams}
\vskip-3pt
\centerline{\fig{}{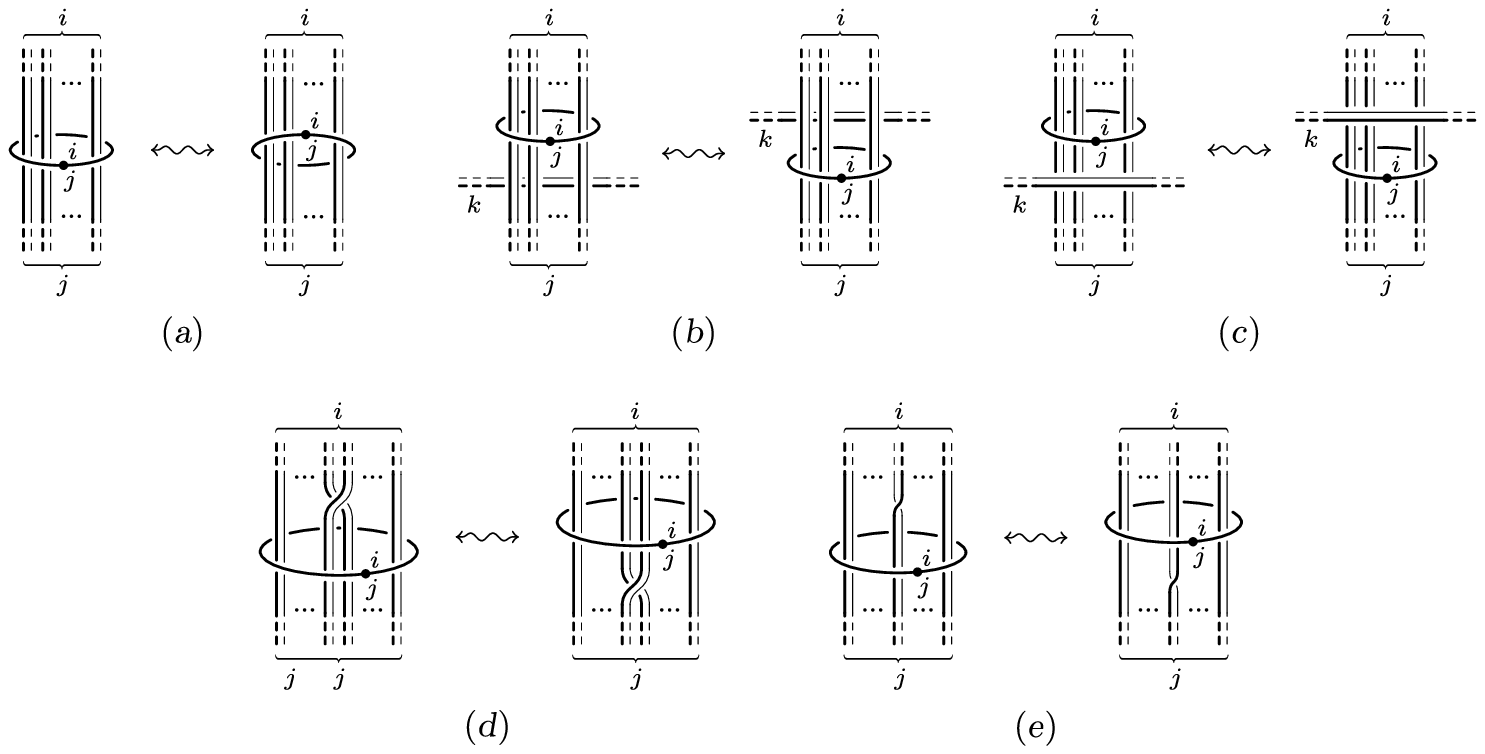}}
\vskip-3pt
\end{Figure}

\begin{definition}\label{strictly-regular/def}
We call {\sl strictly regular} a planar diagram of a labeled Kirby tangle $K$, if the 
disks $D_1, \dots, D_r$ spanned by the dotted unknots projects bijectively onto disjoint 
planar disks and the projection of the framed tangle intersects each of such disks as
presented on the right side of Figure \ref{bridged-tang05/fig} (up to planar isotopy).
\end{definition}

Of course, any labeled Kirby tangle admits a strictly regular planar diagram. The proof of 
the following proposition is quite a standard exercise left to the reader.

\begin{proposition}\label{strictly-regular-isotopy/thm}
Two strictly regular planar diagrams represent isotopic labeled Kirby tangles (where the
isotopy is assumed to preserve the intersections between framed curves and disks) if and
only if they are related by planar isotopy, labeled framed Reidemeister moves and the
moves presented in Figure \ref{kirby-tang06/fig}.
\end{proposition}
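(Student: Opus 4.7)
The ``if'' direction is routine: each planar isotopy, each labeled framed Reidemeister move, and each of the moves in Figure \ref{kirby-tang06/fig} is readily seen to extend to a labeled isotopy of the Kirby tangle in $\Int E \times [0,1]$ that preserves transversality between the framed curves and the disks spanned by the dotted unknots; moreover, the result of each move is again strictly regular. Consistency of the labeling is automatic, since none of the moves alters the connectivity pattern of framed arcs entering a given side of a disk or a given end-point at levels $0,1$.

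For the ``only if'' direction, the plan is to take an isotopy $(K_t)_{t \in [0,1]}$ between the two labeled Kirby tangles represented by the given strictly regular diagrams and, after a generic perturbation, to reduce it to a finite composition of elementary moves of the listed types. The first step is to keep the disks $D_1(t), \dots, D_r(t)$ spanned by the dotted unknots in a controlled form: up to replacing them by flat disks at each time $t$ (possible because the dotted unknots are unknotted, and the intersection pattern with the framed tangle on each disk is combinatorially rigid by strict regularity), we may assume that $D_1(t), \dots, D_r(t)$ are flat disks varying smoothly with $t$, each meeting the framed tangle as on the right-hand side of Figure \ref{bridged-tang05/fig}. Since the isotopy preserves the intersections between framed curves and disks, the induced motion of the framed arcs on each $D_k(t)$ is controlled, and the contribution of the disks themselves to the planar diagram varies only by planar isotopy between critical times.

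The next step is to analyze the critical times $0 < t_1 < \dots < t_N < 1$ at which the projection of $K_t$ into the square $\left]0,1\right[ \times [0,1]$ (forgetting the second coordinate) fails to be a strictly regular diagram, assumed generic by a standard transversality argument. The usual classification yields only the following types of critical events, each of which can be isolated inside a small cell of the square: (i)~a tangency, a triple point, or a cusp involving only strands of the framed tangle, producing the classical framed Reidemeister moves in labeled form; (ii)~a tangency or crossing event between a strand of the framed tangle and the boundary of a projected disk $\Bd D_k(t)$, while the intersection combinatorics on the interior of $D_k$ is preserved, producing the first move in Figure \ref{kirby-tang06/fig} (sliding a strand past the circle representing a 1-handle); (iii)~an event in which a strand of the framed tangle moves tangentially across the interior of a disk or in which two consecutive intersection points on a disk interchange their heights, producing the second move in Figure \ref{kirby-tang06/fig}; (iv)~boundary events at levels $0$ or $1$, which are absorbed by planar isotopy because the end-intervals $I_{\pi_0}, I_{\pi_1}$ are fixed. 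Between consecutive critical times the diagrams differ only by planar isotopy, and the labeling is transported along the isotopy in the unique way compatible with the consistency rule, so no separate labeled moves are needed to handle it.

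The main technical obstacle is the bookkeeping of case (ii): ensuring that a strand passing under/over a dotted unknot in the ambient isotopy is always realized, in the diagram, by exactly one of the local configurations shown in Figure \ref{kirby-tang06/fig}, rather than by something genuinely new at the projection level. This is handled by observing that, in a neighborhood of the disk $D_k(t)$, strict regularity together with the flat-disk normalization forces the projection of the relevant portion of the framed tangle to match the standard model on the right of Figure \ref{bridged-tang05/fig}, so any generic one-parameter family of such projections factors, up to planar isotopy, through a sequence of the prescribed local transitions. Assembling all critical events in order then gives the required decomposition of the isotopy into the listed moves.
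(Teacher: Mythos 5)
Note first that the paper itself offers no written proof of this proposition (it is explicitly ``left to the reader''), so your attempt can only be judged on its own merits. The overall strategy you choose — take a generic ambient isotopy, normalize the spanning disks, and classify the codimension-one failures of strict regularity of the projection as local moves — is indeed the standard and intended route, and your ``if'' direction is fine. The problem is in the case analysis of the ``only if'' direction: you speak of ``the first move'' and ``the second move'' of Figure \ref{kirby-tang06/fig}, but that figure contains more local moves than two (they are used later, in the proof of Lemma \ref{K-push/thm}, as moves \(a) through \(e): flipping a spanning disk over, two moves \(b), \(c) involving a strand crossing the region of a disk with labels $i,j$ and a strand label $k$, and the slidings \(d), \(e) of strands under a 1-handle). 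Your classification (i)--(iv) does not produce all of them. Most concretely, it misses the critical event in which a flat spanning disk $D_k(t)$ turns edge-on to the projection direction and comes out with its two sides exchanged: your normalization to flat disks does not prevent this, it is a genuine failure of strict regularity (the disk no longer projects bijectively), and crossing it changes the diagram by the disk-flipping move \(a), which swaps the side labels $i,j$ and rearranges the crossings of the strands through the disk. This is emphatically not a planar isotopy, so your blanket claim that ``the contribution of the disks themselves to the planar diagram varies only by planar isotopy between critical times'' is false across such an event, and an isotopy that turns a 1-handle over cannot be decomposed using only the events you list.

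A second, related weakness is that your treatment of case (ii) — which you yourself flag as the main technical obstacle — is asserted rather than argued: you need to check that a generic event where a framed strand meets the projected boundary or interior of a disk, with the intersection pattern on the disk fixed, really lands (after planar isotopy) in one of the finitely many standard local models of Figure \ref{kirby-tang06/fig}, distinguishing the strand passing in front of, behind, or through the disk, and keeping track of the resulting label changes on the undercrossing pieces. To repair the proof you should (1) enumerate the generic critical configurations of the projection in a neighborhood of a disk, including the edge-on event and the front/behind/through trichotomy for strands, and (2) match this list bijectively (up to planar isotopy and labeled Reidemeister moves) with the actual moves \(a)--\(e) of the figure, verifying in each case that the labeling transported by the ambient isotopy agrees with the labeled move.
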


All the planar diagrams we have drawn until now are strictly regular, but using strictly
regular diagrams would make pictures quite heavy for Kirby tangles which are not so
simple. In this case, when this does not cause confusion we will draw planar diagrams that
are not strictly regular. However, we will always keep the condition that the disks $D_1,
\dots, D_r$ project bijectively onto disjoint planar disks.

Sometimes it could be convenient to derogate from the labeling consistency rule
for Kirby tangles (cf. point 3 of Definition \ref{kirby-tangle/def}), by allowing a framed
component with label $k$ to cross a disk spanned by a dotted component with labels $i$ and
$j$, provided that $k \notin \{i,j\}$. Clearly, such a crossing does not mean that the
framed loop goes over the 1-handle represented by the dotted one, since it originates
from the identification of different 0-handles. Figure \ref{kirby-tang07/fig} shows the
way to eliminate it.

\begin{Figure}[htb]{kirby-tang07/fig}
{}{Derogating from the labeling consistency rule for $k \notin \{i,j\}$}
\vskip3pt
\centerline{\fig{}{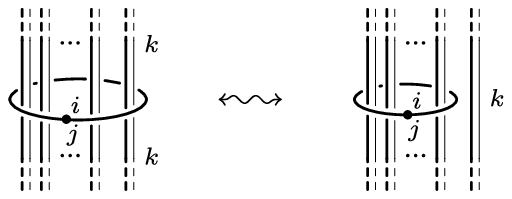}}
\vskip-3pt
\end{Figure}

\subsection{The categories $\K_n$ and the functors $\up_k^n$ and $\down_k^n$%
\label{K/sec}}

For any $n \geq 1$, we define the category $\K_n$ of $n$-labeled Kirby tangles as follows.
The objects of $\K_n$ are the sequences of pairs of labeled intervals $I_\pi$ with $\pi =
((i_1,j_1), \dots, (i_m,j_m)) \in \seq\G_n$ (cf. the notation introduced just before
Definition \ref{kirby-tangle/def}), while the morphisms of $\K_n$ with source $I_{\pi_0}$
and target $I_{\pi_1}$ are the $n$-labeled Kirby tangles from $I_{\pi_0}$ to $I_{\pi_1}$,
considered up to 2-equivalence of Kirby tangles. The composition of two morphisms $K_1:
I_{\pi_0} \to I_{\pi_1}$ and $K_2: I_{\pi_1} \to I_{\pi_2}$ in $\K_n$ is given by
translating $K_2$ on the top of $K_1$, glueing the two tangles along $I_{\pi_1}$ and then
rescaling the third coordinate by the factor $1/2$.

On $\K_n$ we also define a strict monoidal structure, whose product, once again denoted by
$\diam\,$, is given by $I_\pi \diam I_{\pi'} = I_{\pi \diam \pi'}$ on the objects, while
on the morphisms $K \diam K'$ is obtained by translating $K'$ in the space on the right of
$K$ and then applying a reparametrization of the first coordinate depending on the last
one (cf. definition of the monoidal structure on $\T_n$ at page \pageref{T-monoidal/def}).
The unit of the product is the empty tangle $\id_\emptyset: I_\emptyset \to I_\emptyset$.

For $\pi = ((i_1,j_1), \dots, (i_m,j_m)) \in \seq\G_n$, we denote by $\id_\pi: I_\pi \to
I_\pi$ the identity morphism of $I_\pi$. It is easy to see that $\id_{(i,j)}$ is given by
the tangle presented in Figure \ref{kirby-morph01/fig}, and that $\id_\pi =
\id_{(i_1,j_1)} \diam \dots \diam \id_{(i_m,j_m)}$. Indeed, if $K: I_{\pi_0} \to
I_{\pi_1}$ is any Kirby\break tangle, in $K \circ \id_{\pi_0}$ the upper framed components
of $\id_{\pi_0}$ get closed and we can slide the lower open components over the closed
ones and then cancel them with the dotted components; a symmetric argument works on the
top of ${\id_{\pi_1}} \!\circ K$.

\begin{Figure}[htb]{kirby-morph01/fig}
{}{Some morphisms in $\K_n$}
\vskip-3pt
\centerline{\fig{}{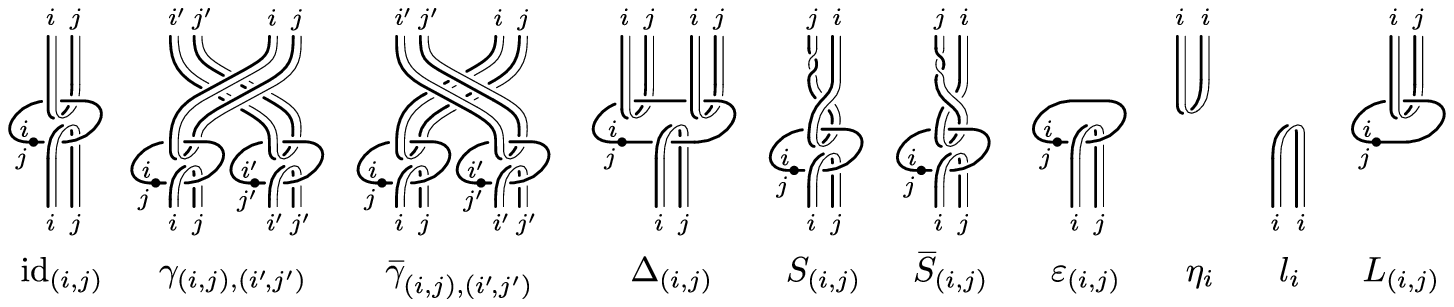}}
\vskip-6pt
\end{Figure}

Finally, we endow $\K_n$ with a family of braiding isomorphisms $\gamma_{\pi,\pi'}: I_{\pi
\diam \pi'} \to I_{\pi' \diam \pi}$. Namely, $\gamma_{(i,j), (i',j')}$ is presented in
Figure \ref{kirby-morph01/fig} and its inverse is $\gamma_{(i,j), (i',j')}^{-1} =
\bar\gamma_{(i',j'),(i,j)}$, while the braiding isomorphisms on the other objects are
obtained inductively by the relations in Definition \ref{braided-cat/def} (see Figure
\ref{kirby-morph02/fig} and note that $\gamma_{\pi, \pi'}^{-1} = \bar\gamma_{\pi',\pi}$).

\begin{Figure}[htb]{kirby-morph02/fig}
{}{The braiding isomorphisms in $\K_n$}
\vskip-3pt
\centerline{\fig{}{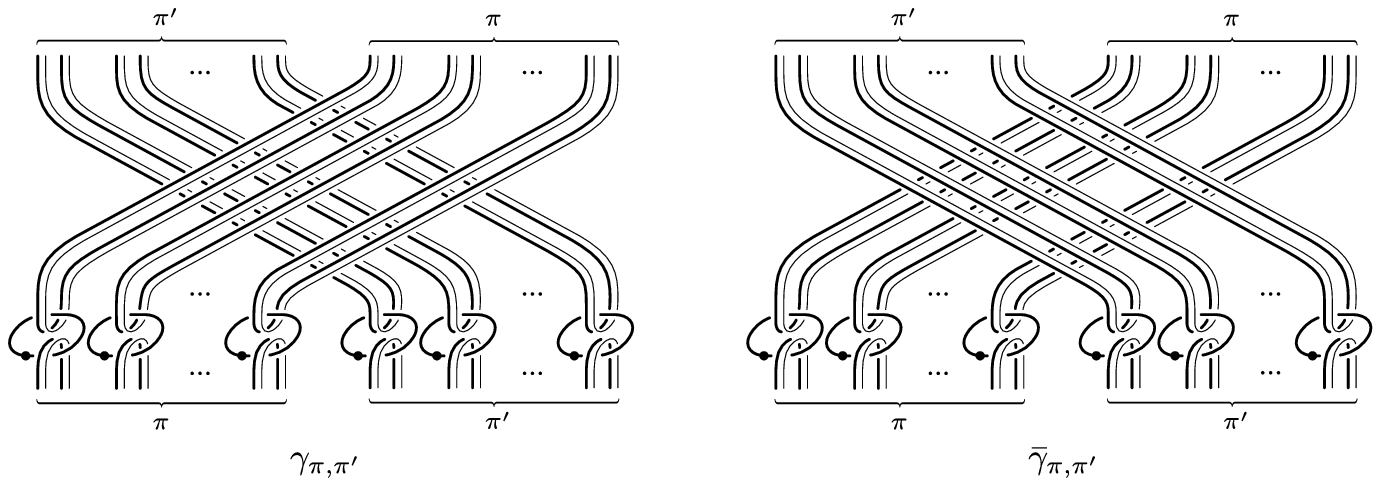}}
\vskip-6pt
\end{Figure}

\begin{proposition}\label{K-category/thm}
For any $n \geq 1$, the category $\K_n$ is equivalent as a strict monoidal category to
$\Chb_n^{3+1}$, through the functor induced by the map $K \mapsto W_{T_K}$. Moreover, the
family of braiding isomorphisms defined above makes $\K_n$ into a braided strict monoidal
category.
\end{proposition}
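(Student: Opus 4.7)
The plan is to leverage the already-established correspondence between bridged tangles and 4-dimensional relative 2-handlebodies, and to verify that the Kirby-tangle encoding is compatible with all the structural data (composition, monoidal product, unit, braiding).

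First, I would define the candidate functor $\mathcal{F}_n\colon \K_n \to \Chb_n^{3+1}$ by sending $I_\pi$ to $M^n_\pi$ on objects and the 2-equivalence class of $K$ to $W_{T_K}$ on morphisms. By Proposition \ref{kirby-tangle/thm}, this map is well-defined on 2-equivalence classes and induces a bijection between $\mathrm{Mor}_{\K_n}(I_{\pi_0},I_{\pi_1})$ and $\mathrm{Mor}_{\Chb_n^{3+1}}(M^n_{\pi_0},M^n_{\pi_1})$ for every pair $\pi_0,\pi_1 \in \seq\G_n$. Functoriality (identity and composition) is then the main compatibility to check: I would verify $\mathcal{F}_n(\id_\pi)=\id_{M^n_\pi}$ by observing that the bridged tangle $T_{\id_\pi}$ is, by construction, the one described right after Definition~\ref{kt-equivalence/def} as representing the identity, whose handlebody realization is precisely the product cobordism $M^n_\pi\times[0,1]$. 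For composition, the definition of $K_2 \circ K_1$ as vertical stacking in $\K_n$ corresponds, under $K\mapsto T_K$, to the vertical stacking of bridged tangles, which in turn corresponds (by inspection of the construction in Proposition~\ref{bridged-tangle/thm}) to glueing handlebodies along $M^n_{\pi_1}$; this is exactly the composition in $\Chb_n^{3+1}$.

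Next, I would check the strict monoidal structure. On objects this is immediate: $\mathcal{F}_n(I_\pi \diam I_{\pi'}) = M^n_{\pi\diam\pi'} = M^n_\pi\diam M^n_{\pi'}$, and $\mathcal{F}_n(I_\emptyset)=M^n_\emptyset$. On morphisms, horizontal juxtaposition of Kirby tangles translates into horizontal juxtaposition of bridged tangles and, via the reparametrization in the first coordinate described in the monoidal structure on $\T_n$, into the monoidal product on $\Chb_n^{3+1}$ defined in Proposition~\ref{monoidal-str/thm}. Since $\mathcal{F}_n$ is a bijection on each hom-set and surjective on objects (given any $M \in \Chb_n^{3+1}$, we can choose a standard representative $M^n_\pi$ in its isomorphism class by Proposition~\ref{standard-hb/thm}), Corollary~\ref{subcat-equiv/thm} (applied after noting that $\mathcal{F}_n$ is full and faithful) yields the claimed equivalence of categories.

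For the braiding part, I would verify three things: (i) each $\gamma_{\pi,\pi'}$ is an isomorphism in $\K_n$; (ii) the family is natural in both variables; (iii) the two hexagon identities from Definition~\ref{braided-cat/def} hold. Invertibility is clear from the picture, since $\gamma_{(i,j),(i',j')} \circ \bar\gamma_{(i',j'),(i,j)}$ can be untangled to $\id_{(i',j')\diam(i,j)}$ by planar isotopy and the double-crossing-change move (top-right of Figure~\ref{kirby-tang01/fig}), and the general case reduces inductively. Naturality amounts to showing that sliding any morphism $K$ through the braid $\gamma$ produces the same Kirby tangle up to 2-equivalence; this is a diagrammatic exercise combining crossing changes, 2-handle slides, and the 1-handle moves of Figure~\ref{kirby-tang02/fig} to push dotted components past each other. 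The hexagons follow directly from the inductive definition of $\gamma_{\pi,\pi'}$ as compositions of the elementary braids $\gamma_{(i,j),(i',j')}$ together with the identification $\gamma_{\pi,\pi'}^{-1}=\bar\gamma_{\pi',\pi}$.

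The main obstacle I expect is the naturality of the braiding: Kirby tangles contain both framed and dotted strands with labels that must be pushed consistently past each other, and checking that an arbitrary morphism $K$ commutes with $\gamma$ up to 2-equivalence requires a careful case analysis (strands of $K$ crossing a 1-handle of $K'$ and vice versa). Everything else is bookkeeping on top of Propositions~\ref{bt-equivalence/thm} and~\ref{kirby-tangle/thm}; the braiding verification is where actual diagrammatic calculations in the $n$-labeled setting — in particular uses of the crossing-change move, which is only available for genuinely $n$-labeled diagrams with $n\ge 2$ — are unavoidable.
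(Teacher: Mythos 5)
Your proposal is correct and takes essentially the same route as the paper, which proves the first part by observing it is the categorical version of Proposition \ref{kirby-tangle/thm} and the second part by a direct verification of the naturality of the braiding isomorphisms (the hexagon identities holding by the inductive definition of $\gamma_{\pi,\pi'}$). The only cosmetic point is that full-and-faithful plus surjectivity on objects should be concluded via Proposition \ref{cat-equiv/thm} rather than Corollary \ref{subcat-equiv/thm}, and for $n=1$ the naturality check uses only ordinary Kirby moves since no crossing change is available.
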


\begin{proof}
The first part of the statement is nothing else than the categorical version of
Proposition \ref{kirby-tangle/thm}. The proof of the second part consists in the
straightforward verification of the naturality of the braiding isomorphisms.
\end{proof}

Before going on, we observe that the Kirby tangles in Figure \ref{kirby-morph01/fig}
represent the elementary morphisms of a braided Hopf algebra structure on $\K_n$ in the
sense of Definition \ref{hopf-algebra/def}. This will be shown in Section \ref{Phi/sec}, 
where we will relate $\K_n$ to the algebraic category $\H^r_n$. Here, we only need the 
comultiplication of that structure, in order to introduce reducible Kirby tangles, as 
discussed below.

\medskip

For $n > k \geq 1$, we denote by $\iota_k^n: \K_k \subset \K_n$ the faithful functor
induced by the inclusion $\G_k \subset \G_n$, which considers any $k$-labeled Kirby
tangle as an $n$-labeled one. This functor corresponds to the homonymous functor
$\iota_k^n: \Chb_k^{3+1} \subset \Chb_n^{3+1}$ through the equivalence of Proposition
\ref{K-category/thm}.

At the end of Section \ref{Chbn/sec} we defined the stabilization functor $\up_k^n:
\Chb_k^{3+1} \to \Chb_n^{3+1}$ and the subcategory $\Chb_n^{3+1,c}$ of $\Chb_n^{3+1}$ for
any $n > k \geq 1$, and we claimed that $\Chb_n^{3+1,c}$ is equivalent to $\Chb_1^{3+1}$
through the this functor. Here, we will translate those definitions in terms of Kirby
tangles and prove the statement.

\begin{definition}\label{K-stabilization/def}
Given $n > k \geq 1$, let $\pi_{n \red k} = ((n,n-1), \dots, (k+1,k))$ and let
$\id_{\pi_{n \red k}}\!$ be the identity morphism of $I_{\pi_{n \red k}}\!$ in $\K_n$.
Then, the {\sl stabilization functor}\break $\up_k^n: \K_k \to \K_n$ is defined by:
$$
\begin{array}{c}
\up_k^n I_\pi = I_{\pi_{n \red k}} \!\diam \iota_k^n(I_\pi) 
\,\text{ for any } I_\pi \in \Obj \K_k\,,\\[6pt]
\up_k^n K = \id_{\pi_{n \red k}} \!\diam \iota_k^n(K) 
\,\text{ for any } K \in \Mor \K_k\,.
\end{array}$$\vskip-\lastskip
\end{definition}\vskip-\lastskip

From the definition, we immediately see that $\up_k^n = {\up_{n-1}^n} \circ \dots \circ
{\up_k^{k+1}}$. Figure \ref{kirby-stab01/fig} shows the stabilization $\up_k^n K$ of
an $k$-labeled Kirby tangle $K \in \K_k$ from $\pi_0$ to $\pi_1$.

\begin{Figure}[htb]{kirby-stab01/fig}
{}{The stabilization $\up_k^n K$ of $K \in \K_k$}
\vskip-3pt
\centerline{\fig{}{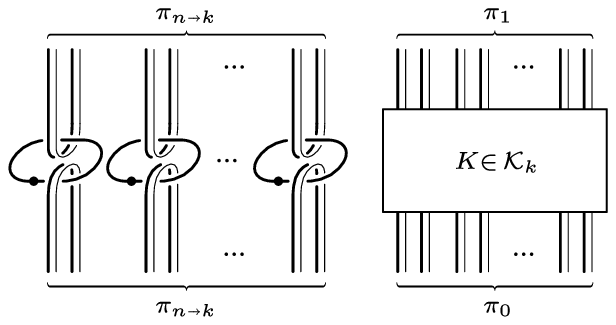}}
\vskip-6pt
\end{Figure}

Clearly, this stabilization functor corresponds, through the category equivalences given
by Proposition \ref{K-category/thm}, to the stabilization functor $\up_k^n: \Chb_k^{3+1}
\to \Chb_n^{3+1}$ defined at the end of Section \ref{Chbn/sec}.

Now, let $\Delta_{(i,j)}:I_{(i,j)} \to I_{(i,j)} \diam I_{(i,j)}$ be the tangle presented
in Figure \ref{kirby-morph01/fig}, for any $(i,j) \in \G_n$. We extend this definition to
$\Delta_\pi: I_\pi \to I_\pi \diam I_\pi$ for any $\pi \in \seq\G_n$ (see Figure
\ref{kirby-morph03/fig}) by the recursive formula:
\label{Delta-pi/def}
$$
\Delta_\pi = \Delta_{\pi' \diam \pi''} = 
(\id_{\pi'} \diam \gamma_{\pi',\pi''} \diam
\id_{\pi''}) \circ (\Delta_{\pi'} \diam \Delta_{\pi''})\,,
$$
which can be easily seen to give always the same result for $\Delta_\pi$, whatever
the decomposition $\pi = \pi' \diam \pi''$ with $\pi',\pi'' \in \seq \G_n$.

\begin{Figure}[htb]{kirby-morph03/fig}
{}{The comultiplication morphism $\Delta_\pi$ in $\K_n$}
\centerline{\fig{}{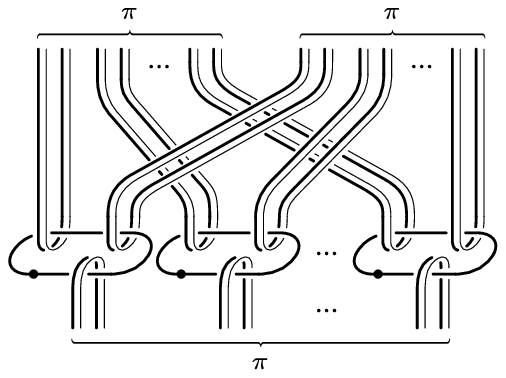}}
\vskip-6pt
\end{Figure}

According to observation above, the family $\Delta_\pi$ represents the comultiplication in 
$\K_n$ in sense of Definition \ref{hopf-algebra/def}. In particular, next proposition 
states that $\Delta$ satisfies the coassociativity property.

\begin{proposition}\label{K-coassociativity/thm}
For any $\pi \in \seq\G_n$, we have $$ (\Delta_\pi \diam \id_\pi) \circ \Delta_\pi = 
(\id_\pi \diam \Delta_\pi) \circ \Delta_\pi\,.$$
\end{proposition}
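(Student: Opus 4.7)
The plan is to argue by induction on the length $m$ of the sequence $\pi = ((i_1,j_1),\dots,(i_m,j_m))$, mirroring the recursive definition of $\Delta_\pi$.

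For the base case $m = 1$, with $\pi = (i,j) \in \G_n$, the identity becomes a statement about the concrete elementary Kirby tangle $\Delta_{(i,j)}$ displayed in Figure \ref{kirby-morph01/fig}. Drawing out both compositions $(\Delta_{(i,j)} \diam \id_{(i,j)}) \circ \Delta_{(i,j)}$ and $(\id_{(i,j)} \diam \Delta_{(i,j)}) \circ \Delta_{(i,j)}$ explicitly, each consists of three parallel framed strands threaded through a common dotted component. The two resulting diagrams are seen to be 2-equivalent by planar isotopy together with, if necessary, a single 2-handle slide from Figure \ref{kirby-tang01/fig}, establishing the base case.

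For the inductive step, assume coassociativity holds for all sequences of length strictly less than $m$. Choose a decomposition $\pi = \pi' \diam \pi''$ with both $\pi'$ and $\pi''$ of length less than $m$ (for instance $\pi'$ a single pair and $\pi''$ of length $m-1$), and substitute the recursive formula $\Delta_\pi = (\id_{\pi'} \diam \gamma_{\pi',\pi''} \diam \id_{\pi''}) \circ (\Delta_{\pi'} \diam \Delta_{\pi''})$ into both sides of the desired equality. After this expansion, each side becomes a composition involving $\Delta_{\pi'}$ and $\Delta_{\pi''}$, applied twice in either ``left'' or ``right'' coassociative order, interwoven with several braiding isomorphisms. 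The inductive hypothesis applied to $\pi'$ and $\pi''$ separately identifies the two inner sub-expressions, and it remains only to verify that the surrounding braiding patterns on the two sides match.

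The main obstacle is precisely this final matching of braiding patterns: a total of four braiding factors (one from each of the two outer applications of $\Delta_\pi$, plus one from each inner expansion) appear on each side, and one must slide them past the comultiplications in a controlled way to see the two sides agree. The convenient tool is naturality of $\gamma$ applied to the morphisms $\Delta_{\pi'}$ and $\Delta_{\pi''}$, giving commutation relations of the form
\[
\gamma_{\pi,\sigma \diam \sigma} \circ (\id_\pi \diam \Delta_\sigma) = (\Delta_\sigma \diam \id_\pi) \circ \gamma_{\pi,\sigma},
\]
together with the hexagon identities of Definition \ref{braided-cat/def} to split and recombine the braidings across the three factors. With these commutations in hand, both sides of coassociativity reduce to the same canonical ``triple-doubling'' tangle, closing the induction.
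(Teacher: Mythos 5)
Your proposal is correct and follows essentially the same route as the paper: the elementary case $\pi=(i,j)$ is checked directly on the tangle of Figure \ref{kirby-morph01/fig} (the paper does this in Figure \ref{kirby-morph04/fig}), and the general case is obtained by induction on the length of $\pi$ using the recursive formula for $\Delta_\pi$, with the bookkeeping of braidings absorbed into naturality of $\gamma$ and the hexagon identities, i.e.\ into the ``tangle isotopy'' invoked in the paper. The only caveat is that the base-case verification may need a couple of handle slides and 1/2-handle cancelations rather than ``at most one slide'', but that is a detail of the figure check, not a gap in the argument.
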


\begin{proof}
The elementary case of $\pi = (i,j)$ is shown in Figure \ref{kirby-morph04/fig}, while the 
general case follows from this by induction and tangle isotopy.
\end{proof}

\begin{Figure}[t]{kirby-morph04/fig}
{}{The coassociativity property for $\Delta_{(i,j)}$}
\centerline{\fig{}{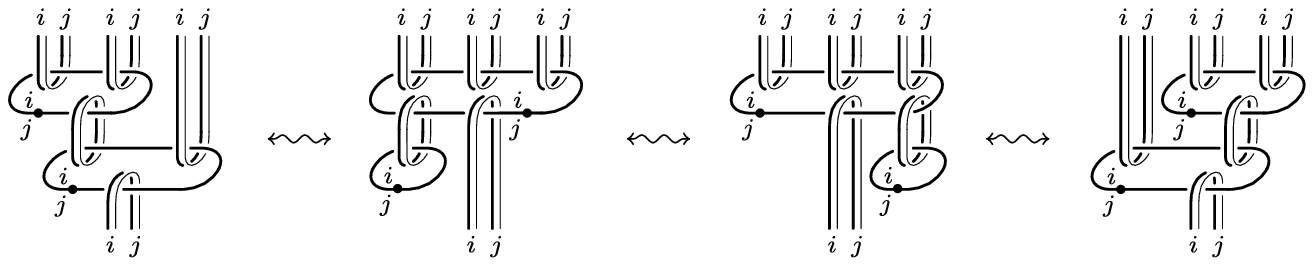}}
\vskip-3pt
\end{Figure}

\begin{definition}\label{K-reducible/def}
Given $n > k \geq 1$ and $\pi_0,\pi_1 \in \seq\G_n$, we say that an $n$-labeled Kirby
tangle $K \in \K_n$ from $I_{\pi_{n \red k}} \!\diam I_{\pi_0}$ to $I_{\pi_{n \red k}}
\!\diam I_{\pi_1}$ is {\sl $k$-reducible} if it has the form $$K = (\id_{\pi_{n \red k}}
\!\diam L) \circ (\Delta_{\pi_{n \red k}} \!\diam \id_{\pi_0})\,,$$ 
\vskip3pt\noindent
for some $n$-labeled Kirby tangle $L \in \K_n$ from $I_{\pi_{n \red k}} \!\diam I_{\pi_0}$
to $I_{\pi_1}$ (see Figure \ref{kirby-stab02/fig}).

The composition of two $k$-reducible Kirby tangles is still $k$-reducible (by
coassociativity) and we denote by $\K_{n \red k}$ the subcategory of $\K_n$, whose objects
are $I_{\pi_{n \red k}} \!\diam I_\pi$ with $\pi \in \seq\G_n$ and whose morphisms are
$k$-reducible $n$-labeled Kirby tangles. In particular, we denote by $\K_n^c$ the
subcategory $\K_{n \red 1}$ of $1$-reducible tangles in $\K_n$.
\end{definition}

\begin{Figure}[htb]{kirby-stab02/fig}
{}{The generic $k$-reducible morphism $K \in \K_{n \red k}$}
\vskip-6pt
\centerline{\fig{}{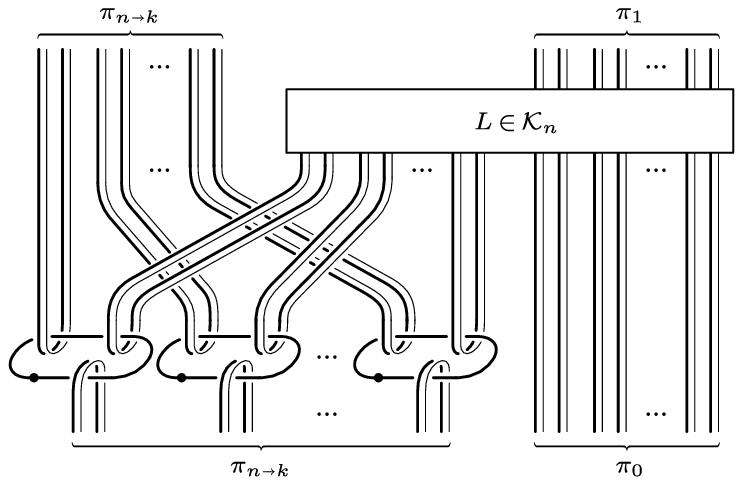}}
\vskip-6pt
\end{Figure}

We observe that the subcategory $\K_{n \red k}$ of $k$-reducible Kirby tangles is not
closed with respect to the monoidal product ${\diam}: \K_n \times \K_n \to \K_n$.
Nevertheless, we can define a product structure ${\rdiam}: \Mor_{\K_{n \red k}} \times
\Mor_{\K_{n \red k}} \to \Mor_{\K_{n \red k}}$ in the following way.

Given two morphisms $K = (\id_{\pi_{n \red k}} \! \diam L) \circ (\Delta_{\pi_{n \red k}}
\!\diam \id_{\pi_0}):I_{\pi_{n \red k}}\diam I_{\pi_0}\to I_{\pi_{n \red k}}\diam
I_{\pi_1}$ and $K' = (\id_{\pi_{n \red k}} \! \diam L') \circ (\Delta_{\pi_{n \red k}}
\!\diam \id_{\pi'_0}):I_{\pi_{n \red k}}\diam I_{\pi_0'}\to I_{\pi_{n \red k}}\diam
I_{\pi_1'}$ in $\K_{n \red k}$, their product $K \rdiam K': I_{\pi_{n \red k}}\diam
I_{\pi_0 \diam \pi_0'} \to I_{\pi_{n \red k}} \diam I_{\pi_1\diam\pi_1'}$ is defined by
\begin{eqnarray*}
K \rdiam K' 
&=& K \circ (\id_{\pi_{n \red k}} \!\diam \gamma_{\pi'_1,\pi_0}) \circ (K' \diam
\id_{\pi_0}) \circ (\id_{\pi_{n \red k}} \!\diam \gamma^{-1}_{\pi'_0,\pi_0})\\
&=& (\id_{\pi_{n \red k}} \!\diam L \diam L') \circ (\Delta_{\pi_{n \red k}} \!\diam
\gamma_{\pi_{n \red k},\pi_0} \!\diam \id_{\pi'_0}) \circ (\Delta_{\pi_{n \red k}}
\!\diam \id_{\pi_0 \diam \pi'_0})\,.
\end{eqnarray*}
These two expressions for $K \rdiam K'$ are related by diagram isotopy, as the reader can
easily realize by looking at Figure \ref{kirby-stab03/fig} that represents the second one.
The associativity of $\,\rdiam\,$ is a consequence of the coassociativity property of
$\Delta$ and its unit is given by $\id_{\pi_{n \red k}}\!$. Observe that $\rdiam$ does not
define a monoidal structure on $\K_{n \red k}$\break since in general the product of
compositions $(K_2 \circ K_1) \rdiam (K'_2 \circ K'_1)$ does not coincide with the
composition of products $(K_2 \rdiam K_2') \circ (K_1 \rdiam K_1')$. Yet, we will find the
notation a useful tool in describing some identities.

\begin{Figure}[htb]{kirby-stab03/fig}
{}{The product $K \rdiam K'$ of two morphisms in $\K_{n \red k}$}
\vskip-3pt
\centerline{\fig{}{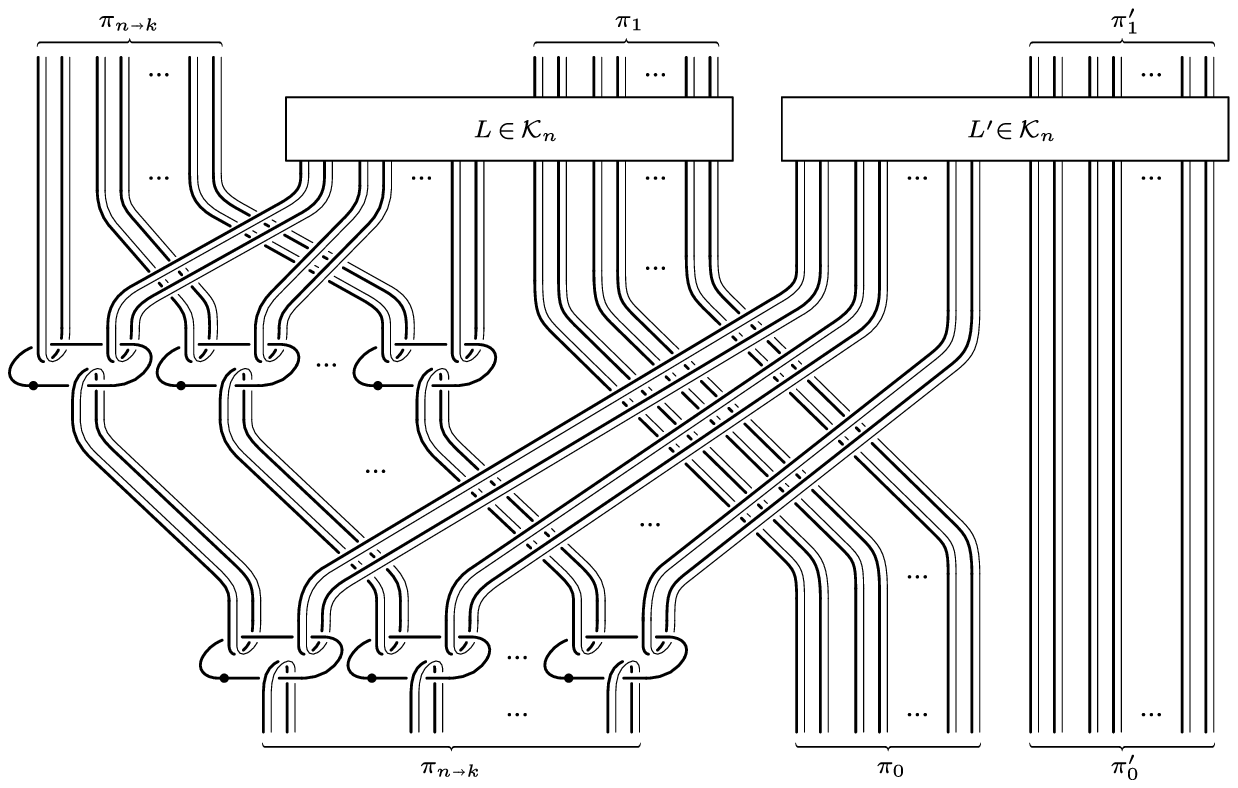}}
\vskip-6pt
\end{Figure}

\begin{proposition}\label{K-monoidal-stab/thm}
For any $n > k \geq 1$, the image $\up_k^n \K_k$ of the stabilization functor is a
subcategory of $\K_{n \red k}$. Moreover, for any two morphisms $K$ and $K'$ in $\K_k$, we
have $\up_k^n(K \diam K') = (\up_k^n K) \rdiam (\up_k^n K')$. Hence, the product 
$\,\rdiam\,$ defines a monoidal structure on the subcategory $\up_k^n \K_k$.
\end{proposition}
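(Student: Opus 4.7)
The plan is to verify the three assertions in sequence, in increasing order of difficulty.

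The heart of the proof is the inclusion $\up_k^n \K_k \subset \K_{n \red k}$. Given $K: I_{\pi_0} \to I_{\pi_1}$ in $\K_k$, I need to exhibit $\up_k^n K = \id_{\pi_{n \red k}} \diam \iota_k^n(K)$ as a $k$-reducible morphism. I would construct explicitly a morphism $L: I_{\pi_{n \red k}} \diam I_{\pi_0} \to I_{\pi_1}$ in $\K_n$ by placing $\iota_k^n(K)$ on the $I_{\pi_0}$ factor and, on the $I_{\pi_{n \red k}}$ factor, inserting a cancelling pattern that annihilates the duplicate $I_{\pi_{n \red k}}$ strand produced by $\Delta_{\pi_{n \red k}}$: for each elementary component $\Delta_{(i+1,i)}$ of $\Delta_{\pi_{n \red k}}$ one inserts a meridional $0$-framed $2$-handle which, via a $1/2$-handle cancellation, kills the duplicated dotted circle together with the open framed strands threading through it. The identity
\[
\up_k^n K = (\id_{\pi_{n \red k}} \diam L) \circ (\Delta_{\pi_{n \red k}} \diam \id_{\pi_0})
\]
is then verified up to $2$-equivalence of $n$-labeled Kirby tangles, by induction on the length of $\pi_{n \red k}$, using the basic moves of Figure~\ref{kirby-tang01/fig} together with the definition of $\Delta_{(i,j)}$ from Figure~\ref{kirby-morph01/fig}.

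The product-preserving identity $\up_k^n(K \diam K') = (\up_k^n K) \rdiam (\up_k^n K')$ is a direct algebraic computation. Substituting $\up_k^n K = \id_{\pi_{n \red k}} \diam \iota_k^n(K)$ and the corresponding formula for $\up_k^n K'$ into the first of the two expressions given for $\rdiam$ just before the proposition, the common $\id_{\pi_{n \red k}}$ factor in each composing piece can be pulled out of the composition by the interchange law, reducing everything to an equality inside the bracket
\[
(\iota_k^n(K) \diam \id_{\pi'_1}) \circ \gamma_{\pi'_1, \pi_0} \circ (\iota_k^n(K') \diam \id_{\pi_0}) \circ \gamma^{-1}_{\pi'_0, \pi_0} = \iota_k^n(K) \diam \iota_k^n(K').
\]
The middle three factors on the left-hand side collapse to $\id_{\pi_0} \diam \iota_k^n(K')$ by a single application of the naturality of $\gamma$, and the remaining expression $(\iota_k^n(K) \diam \id_{\pi'_1}) \circ (\id_{\pi_0} \diam \iota_k^n(K'))$ equals $\iota_k^n(K) \diam \iota_k^n(K') = \iota_k^n(K \diam K')$ by the monoidal axioms and the functoriality of $\iota_k^n$, giving the required identity.

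Once the two previous steps are in place, the final assertion follows formally: closure of $\rdiam$ on $\up_k^n \K_k$ is immediate from the product-preserving identity, associativity of $\rdiam$ on the image is transported from the associativity of $\diam$ on $\K_k$ via $\up_k^n$, and the unit is $\up_k^n(\id_{I_\emptyset}) = \id_{\pi_{n \red k}}$, which acts as a two-sided identity for $\rdiam$ on $\up_k^n \K_k$ by the same naturality argument. The main obstacle is the first step: while there is no conceptual difficulty, the bookkeeping is delicate, because $\Delta_{\pi_{n \red k}}$ is defined recursively with interlaced braidings (cf. Figure~\ref{kirby-morph03/fig}), the cancelling pattern in $L$ must be threaded through those braidings, and the overlap of the label set $\{k, k+1, \dots, n\}$ of $\pi_{n \red k}$ with that of $\iota_k^n(\pi_0)$ at the label $k$ calls for slightly more care when handling the label-$k$ strands.
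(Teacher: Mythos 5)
Your proposal is correct and follows essentially the same route as the paper: the inclusion $\up_k^n \K_k \subset \K_{n \red k}$ is obtained by capping the duplicated reduction strands produced by $\Delta_{\pi_{n \red k}}$ with a counit-type pattern that dies by 1/2-handle cancellation, and the product identity is exactly the naturality-of-$\gamma$ computation you give. The only (organizational) difference is that the paper establishes reducibility just for the elementary stabilization $\up_{n-1}^n$, where $\Delta_{\pi_{n \red (n-1)}} = \Delta_{(n,n-1)}$ involves no braidings, and then inducts along $\up_k^n = {\up_{n-1}^n} \circ \dots \circ {\up_k^{k+1}}$ — which sidesteps precisely the bookkeeping with the interlaced braidings in $\Delta_{\pi_{n \red k}}$ that you flag as the main obstacle.
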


\begin{proof}
Figure \ref{kirby-stab04/fig} shows that the $n$-stabilization of an $(n-1)$-labeled Kirby
tangle is $(n-1)$-reducible, in other words $\up_{n-1}^n\! \K_{n-1} \subset \K_{n \red
(n-1)}$, for any $n \geq 2$. This fact easily implies by induction that $\up_k^n \K_k$ is
a subcategory of $\K_{n \red k}$ for any $n > k \geq 1$.

\begin{Figure}[htb]{kirby-stab04/fig}
{}{Stabilizations are reducible}
\vskip-3pt
\centerline{\fig{}{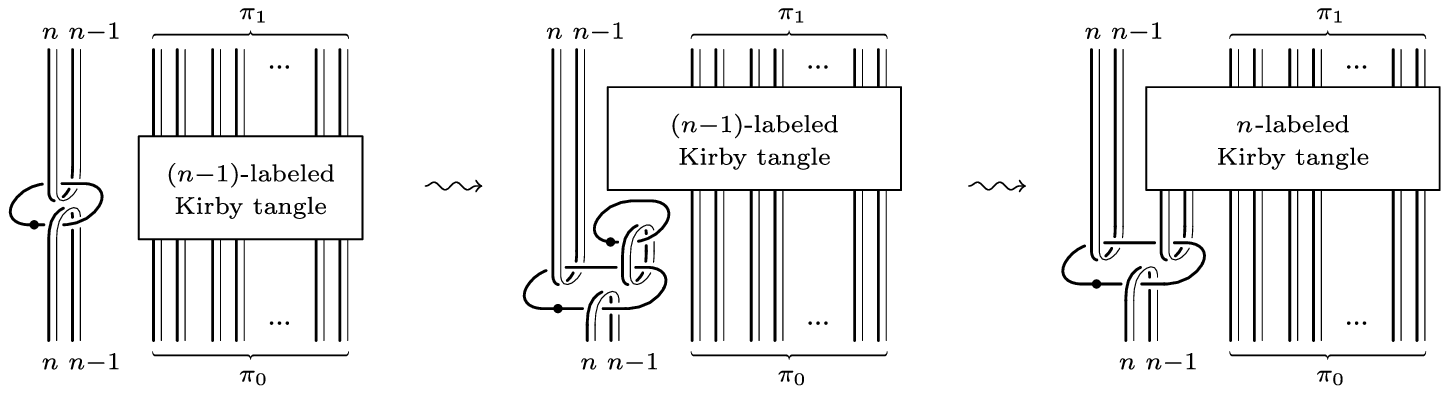}}
\vskip-6pt
\end{Figure}

The identity $\up_k^n (K \diam K') = (\up_k^n K) \rdiam (\up_k^n K')$ for any $K$ and $K'$
in $\K_k$, immediately follows from the definition of the product $\rdiam$ in $\K_{n \red
k}$.
\end{proof}

Our goal is to prove that $\up_k^n: \K_k \to \K_{n \red k}$ is actually an equivalence of
monoidal categories. We will show this by defining a reduction functor $\down_k^n: \K_{n
\red k} \to \K_k$, which is the inverse of the stabilization functor up to natural
equivalence. Actually, it is enough to define $\down_{n-1}^n$ and then proceed
inductively. The main idea behind the definition of such functor is that, in the presence
of a 1-handle with label $(n,n-1)$, we can push the part of diagram contained in the
$n$-th 0-handle through such 1-handle, obtaining in this way a diagram which lives
entirely in the first $n-1$ 0-handles. Therefore, as first step we will prove Lemma
\ref{K-push/thm} below, which formalizes the move of pushing through a 1-handle with
generic label $(i_0,j_0)$. In the present context such formalization may seem excessive,
since the statement of the lemma and its proof are quite straightforward. Nevertheless,
the proof of its algebraic analog (Proposition \ref{H-push/thm}) will require a
significant amount of work and we hope that seeing how things work out for Kirby tangles
will be helpful.

\begin{lemma}\label{K-push/thm}
Given $x = (i_0,j_0) \in \G_n$ and $\pi \in \seq\G_n$, let $\pi^x$  be the sequence
obtained from $\pi$ by changing all elements  $i_0$ to $j_0$. Then, there exists a 
monoidal functor 
$\_^x: \K_n\to\K_n$ such that $(I_\pi)^x = I_{\pi^x}$ on the set of objects and the 
following properties hold:
\begin{itemize}
\item[\(a)] 
if $i_0 = j_0$, then $K^x = K^{(i_0,i_0)} = K$ for every $K \in \K_n$, that is
$\_^{(i_0,i_0)} = \id_{\K_n}$;
\item[\(b)] 
if $i_0 \neq j_0$, then $K^x = K^{(i_0,j_0)} \in \K_n^{\bs i_0}$ for every $K \in \K_n$,
where $\K_n^{\bs i_0}$ is the sub\-category of $\K_n$ generated by objects and morphisms
which do not contain the label $i_0$, hence we have a functor $\_^{(i_0,j_0)}: \K_n \to
\K_n^{\bs i_0}$;
\item[\(c)]
given any other $y = (i_0,k_0) \in \seq\G_n$, there exists a natural equivalence 
$$\xi^{x,y}: \id_{(k_0,j_0)} \diam \_^x \to \id_{(k_0,j_0)} \diam \_^y\,.$$
\smallskip
\end{itemize}
\vskip-\lastskip
In particular, we put $\xi^x = \xi^{x,(i_0,i_0)}$ and denote by $\xi^x_\pi: I_{x}\diam
I_{\pi^x}\to I_{x} \diam I_\pi$ the relative isomorphism for $\pi \in \seq\G_n$, in such a
way that the following identity holds for any diagram $K \in \K_n$ from $I_{\pi_0}$ to
$I_{\pi_1}$:
$$(\id_x \diam K) \circ \xi_{\pi_0}^x = \xi_{\pi_1}^x \circ (\id_x \diam K^x)\,.$$
\end{lemma}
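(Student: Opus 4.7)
The plan is to define the functor $\_^x$ by relabeling (replacing every occurrence of the index $i_0$ by $j_0$) and to construct the natural equivalence $\xi^{x,y}$ by explicit push-through tangles built using the 1-handle of label $(k_0,j_0)$, then verify naturality by reducing to a finite family of elementary Kirby tangles.

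First, on objects put $(I_\pi)^x = I_{\pi^x}$. On morphisms, given an $n$-labeled Kirby tangle $K$, define $K^x$ by changing every label $i_0$ appearing on a side of a dotted disk or on an arc of the framed tangle to $j_0$ (so a 1-handle of label $(k,i_0)$ becomes one of label $(k,j_0)$, etc.). I would check well-definedness by verifying that each of the four generating 2-equivalence moves of Figure \ref{kirby-tang01/fig} is carried to a valid 2-equivalence move after relabeling; this is essentially immediate, since none of the moves depend on the particular values of the labels beyond their consistency. Functoriality and monoidality follow because composition and $\diam$ of Kirby tangles respect relabeling. Properties (a) and (b) are then direct: if $i_0 = j_0$ the relabeling is trivial, while if $i_0 \neq j_0$ the image diagram contains no label $i_0$ anywhere.

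Next, I would construct $\xi^x_\pi: I_x \diam I_{\pi^x}\to I_x \diam I_\pi$ by induction on the length of $\pi$. For the base case $\pi=(i,j)$ with $\{i,j\}\cap\{i_0\}=\emptyset$, take $\xi^x_{(i,j)} = \id_x \diam \id_{(i,j)}$. When $i=i_0$ or $j=i_0$, use the Kirby tangle that sends the strand labeled $j_0$ (image of the original $i_0$) through the dotted unknot of $x$ so that it emerges on the $i_0$ side; this tangle is manifestly invertible, its inverse being the analogous push back. For a general $\pi = \pi' \diam \pi''$, use braiding to juxtapose $\xi^x_{\pi'}$ and $\xi^x_{\pi''}$, treating the single copy of $I_x$ as shared on the left; equivalently,
$$\xi^x_{\pi' \diam \pi''} = (\id_x \diam \xi^x_{\pi'} \diam \id_{(\pi'')^x})
\circ (\xi^x_{\pi''})\text{-shift via }\gamma,$$
suitably reassociated. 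The general $\xi^{x,y}$ with $y=(i_0,k_0)$ is obtained by prefixing a 2-handle slide of the $(k_0,j_0)$-identity over the $x$-handle, bridging the $j_0$ indices to $k_0$; equivalently, $\xi^{x,y} = (\id_{(k_0,j_0)} \diam \xi^x_\pi^{-1})$ composed with the analogous morphism for $y$, using coassociativity of the merging.

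Finally, I would check the naturality identity $(\id_x \diam K)\circ \xi^x_{\pi_0}=\xi^x_{\pi_1}\circ(\id_x \diam K^x)$ by induction on the Kirby-tangle structure of $K$. It suffices to verify the equation for a small set of elementary tangles generating $\K_n$ under composition, $\diam$, and 2-equivalence, namely: straight strands, braidings $\gamma_{\pi,\pi'}^{\pm 1}$, dotted disks of all labels, cups/caps closing framed arcs, framings (kinks), and the multiplication/unit tangles of Figure \ref{kirby-morph01/fig}. In each case the verification is a local move in which a portion of the diagram of $K$ labeled $i_0$ is slid through the 1-handle $x$; the 2-handle slide and pushing-through-a-1-handle moves of Figure \ref{kirby-tang01/fig} (together with canceling 1/2-pair insertions) achieve precisely the relabeling $i_0 \mapsto j_0$. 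The main obstacle is the bookkeeping for disks of labels $(k,i_0)$, $(i_0,k)$, or $(i_0,i_0)$ in $K$: here the push-through creates an auxiliary 1-handle that must be canceled against the original using slides over the 1-handle $x$, and this is where most of the technical work lies. Once naturality is checked on the generators, it extends to arbitrary $K$ by the compositionality of both sides and a straightforward induction.
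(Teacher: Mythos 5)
There is a genuine gap at the very first step: defining $K^x$ by pure relabeling ($i_0\mapsto j_0$) is not well defined on 2-equivalence classes of $n$-labeled Kirby tangles. The crossing change of Figure \ref{kirby-tang01/fig} is a legal move only between arcs with \emph{different} labels; if $K_1$ and $K_2$ differ by a crossing change between an arc labeled $i_0$ and an arc labeled $j_0$, then after your relabeling both arcs carry the label $j_0$ and the two diagrams differ by a same-label crossing change, which is not a 2-equivalence move and in general changes the handlebody. Concretely, a $0$-framed two-component unlink with labels $i_0$ and $j_0$ is 2-equivalent to the correspondingly labeled $0$-framed Hopf link, but after relabeling one gets a $j_0$-labeled unlink versus a $j_0$-labeled Hopf link, which are not 2-equivalent. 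So your claim that well-definedness is ``essentially immediate'' fails precisely at crossings between $i_0$- and $j_0$-labeled parts. The paper avoids this by first normalizing a strictly regular diagram of $K$: all parts labeled $i_0$ are pulled on top of everything else (performing the then-legal crossing changes and flipping the spanning disks whose two sides carry $i_0$ and $i\neq i_0$), and only afterwards is the label substituted. This canonical choice of crossing states is exactly the geometric content of pushing the $i_0$-th 0-handle through the 1-handle $x$, and it is what makes $K^x$ well defined; it also produces genuinely different answers from naive relabeling, e.g.\ $(\Delta_{(j_0,i_0)})^x$ acquires a 1-handle twist (Figure \ref{kirby-stab12/fig}) rather than being $\Delta_{(j_0,j_0)}$.

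The same issue propagates to your naturality argument. The paper's proof of the identity $(\id_x\diam K)\circ\xi^x_{\pi_0}=\xi^x_{\pi_1}\circ(\id_x\diam K^x)$ pushes the components of $K^x$ that were originally labeled $i_0$ through the spanning disk of the dotted unknot of $\xi^x$, and this works \emph{because} those components lie above the rest of the tangle by construction; with a naive relabeling the originally-$i_0$ strands are entangled with the originally-$j_0$ ones and the push-through cannot be executed without unaccounted crossing changes, so the identity would fail for your $K^x$. (Two smaller points: the paper takes $\xi^{x,y}_\pi$ to be a single dotted unknot of label $(k_0,j_0)$ embracing all strands originally labeled $i_0$, rather than an inductive juxtaposition, and it verifies naturality directly for arbitrary $K$ by this geometric push-through; your plan to check naturality only on the elementary tangles of Figure \ref{kirby-morph01/fig} presupposes that they generate $\K_n$, which at this point of the paper has not been established.)
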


Before proving the lemma, we make a few observations. The natural equivalence $\xi^x$
will be given by a 1-handle of label $x = (i_0,j_0)$ (cf. Figure \ref{kirby-stab06/fig}).
Then the last identity implies that the map $K \mapsto K^x$ represents how a Kirby tangle
changes when the part of the diagram which lives in the $i_0$-th 0-handle is pushed to the
$j_0$-th 0-handle through such 1-handle. In this perspective, points \(a) and \(b) of the
statement indicate that if $i_0 = j_0$ the tangle can be pushed through without any
change, while if $i_0 \neq j_0$ the resulting tangle lives in the other 0-handles
different from the $i_0$-th. More generally, for $y = (i_0,k_0)$, the natural equivalence 
$\xi^{x,y}$ in \(c) will be given by a 1-handle of label $(k_0,j_0)$. Then \(c) implies 
that $K^y$ can be obtained from $K^x$ by pushing it through such 1-handle (cf. Figures
\ref{kirby-stab06/fig} and \ref{kirby-stab08/fig}).

\begin{proof}[Lemma \ref{K-push/thm}]
Let $K \in \K_n$ be a labeled Kirby tangle from $I_{\pi_0}$ to $I_{\pi_1}$. We define
$K^x$ for $x = (i_0,j_0)$ to be the labeled Kirby tangle obtained from any strictly
regular plane diagram of $K$ in the following way (see Figure \ref{kirby-stab05/fig}): we
first pull all the parts of the diagram with label $i_0$ on the top of the ones with label
$i \neq i_0$, by performing a crossing change (as in Figure \ref{kirby-tang01/fig}) at the
crossings where a framed arc labeled $i_0$ passes under one labeled $i \neq i_0$, and
flipping over (as in Figure \ref{kirby-tang06/fig} \(a)) the spanning disks of the dotted
unknots with the bottom side labeled $i_0$ and the top one $i \neq i_0$; then we replace
all labels $i_0$ by $j_0$.

\begin{Figure}[htb]{kirby-stab05/fig}
{}{The functorial map $K \mapsto K^x$ ($x = (i_0,j_0)$ and $i \neq i_0$)}
\centerline{\fig{}{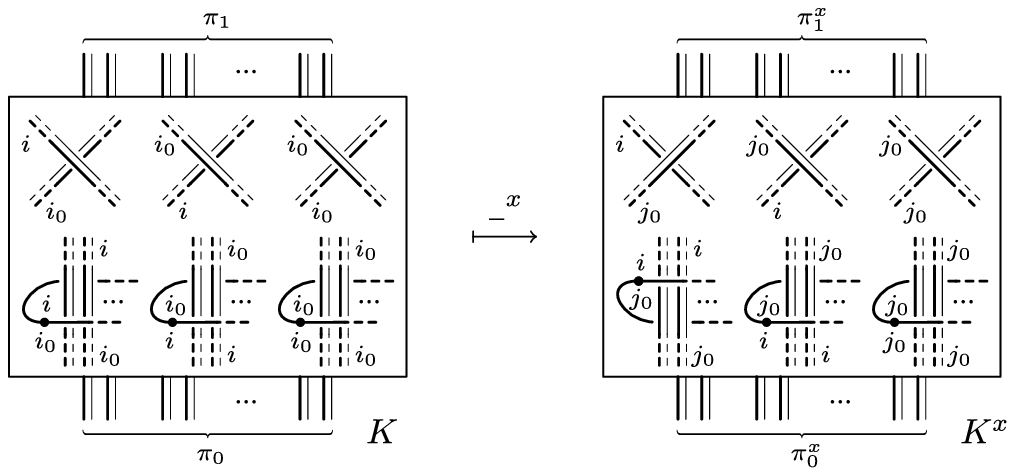}}
\vskip-6pt
\end{Figure}

\pagebreak

As the reader might have understood, the only essential modifications before the label
replacement are the crossing changes with $i = j_0 \neq i_0$, being the other crossing
changes and the disk flippings reversible after that replacement. Nevertheless,
including also these inessential modifications in the definition of $K^x$, better
interprets the geometric idea of pulling all the parts labeled by $i_0$ on the top and
makes more transparent the proof of \(c) below.

To see that $K^x$ is well-defined, we first check that it does not depend on the strictly
regular diagram of $K$ we started from. Indeed, when changing $K$ by planar isotopy,
labeled framed Reidemeister moves and the moves in Figure \ref{kirby-tang06/fig}, $K^x$
changes in the same way, except for some extra crossing changes and an obvious extra
sliding under a 1-handle for particular labelings of the moves \(b) and \(c) in Figure
\ref{kirby-tang06/fig}. Namely, the exceptions occur in \(b) (resp. \(c)) when $k = i_0$
(resp. $k \neq i_0$) and exactly one of $i$ and $j$ coincides with $i_0$.
Then, we observe that any of the moves in Figure \ref{kirby-tang01/fig} applied to $K$, 
induces an analogous move on $K^x$. Therefore, the 2-equivalence class of $K^x$ depends 
only on the 2-equivalence class of $K$.

\begin{Figure}[b]{kirby-stab06/fig}
{}{The morphisms $\xi^{x,y}_\pi$ and $(\xi^{x,y}_\pi)^{-1}$
   ($x = (i_0,j_0)$ and $y = (i_0,k_0)$)}
\vskip-3pt
\centerline{\fig{}{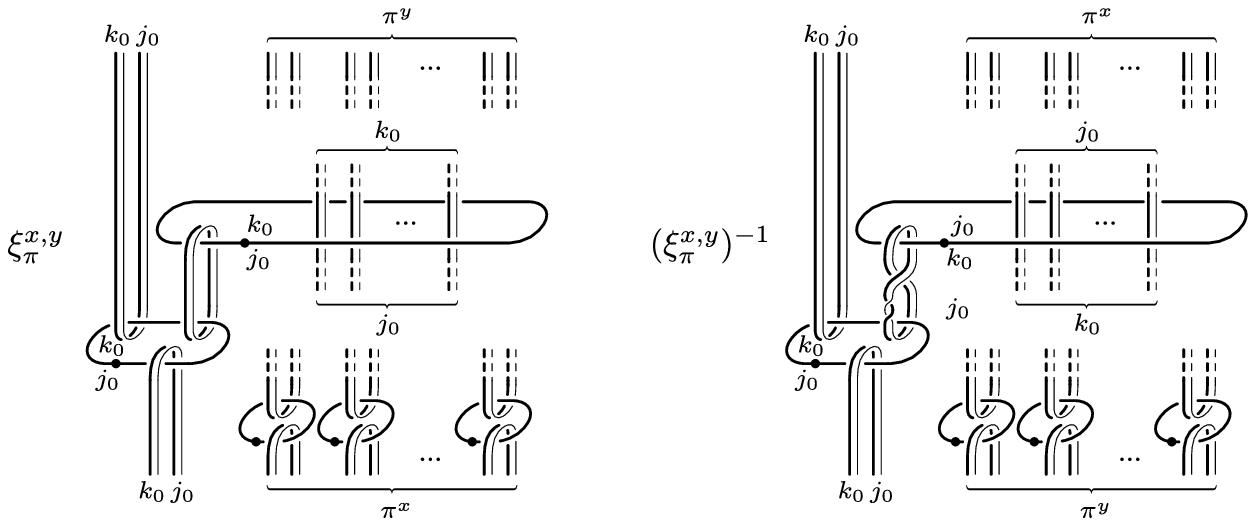}}
\vskip-6pt
\end{Figure}

At this point, the functoriality and the monoidality of the map $\_^x: K \mapsto K^x$, as
well as property \(b), are immediate. Moreover, if $i_0 = j_0$ no label change occurs and
we can undo the crossing changes obtaining $K^x = K$, which gives \(a).

It remains to prove \(c). Given $x = (i_0,j_0)$ and $y = (i_0,k_0) $ in $\G_n$, we define
$\xi^{x,y}_\pi: I_{(k_0,j_0)} \diam I_{\pi^x} \to I_{(k_0,j_0)} \diam I_{\pi^y}$ to be the
tangle presented in Figure \ref{kirby-stab06/fig}, where the largest dotted unknot
embraces only the framed strings originally labeled $i_0$ in $I_\pi$.\break
In the same Figure \ref{kirby-stab06/fig} it is also presented the inverse tangle
$(\xi^{x,y}_\pi)^{-1}$. The equivalence $\xi^{x,y}_\pi \circ (\xi^{x,y}_\pi)^{-1} =
\id_{(k_0,j_0) \diam \pi^y}$ is shown in Figure \ref{kirby-stab07/fig}, where the last
step is not drawn and consists in canceling the two framed unknots with the dotted ones.
The reader can check in a similar way that $(\xi^{x,y}_\pi)^{-1} \!\circ \xi^{x,y}_\pi =
\id_{(k_0,j_0) \diam \pi^x}$ as well.

\begin{Figure}[t]{kirby-stab07/fig}
{}{$\xi^{x,y}_\pi \circ (\xi^{x,y}_\pi)^{-1} = \id_{(k_0,j_0) \diam \pi^y}$ 
   ($x = (i_0,j_0)$ and $y = (i_0,k_0)$)}
\centerline{\fig{}{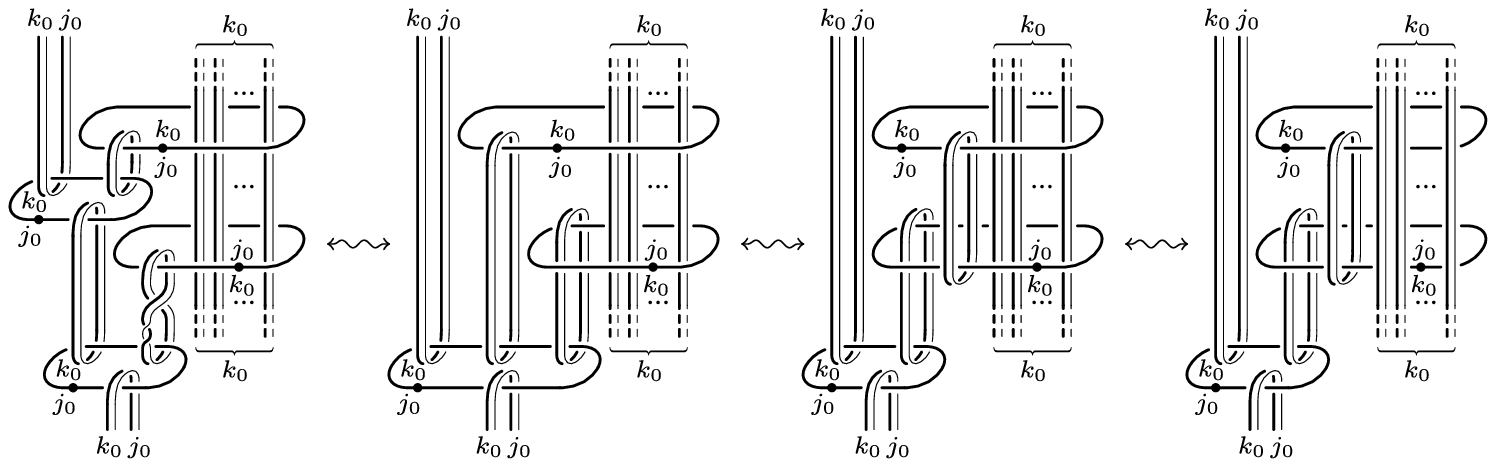}}
\vskip-3pt
\end{Figure}

\begin{Figure}[b]{kirby-stab08/fig}
{}{The natural equivalence $\xi^{x,y}$ ($x = (i_0,j_0)$ and $y = (i_0,k_0)$)}
\vskip3pt
\centerline{\fig{}{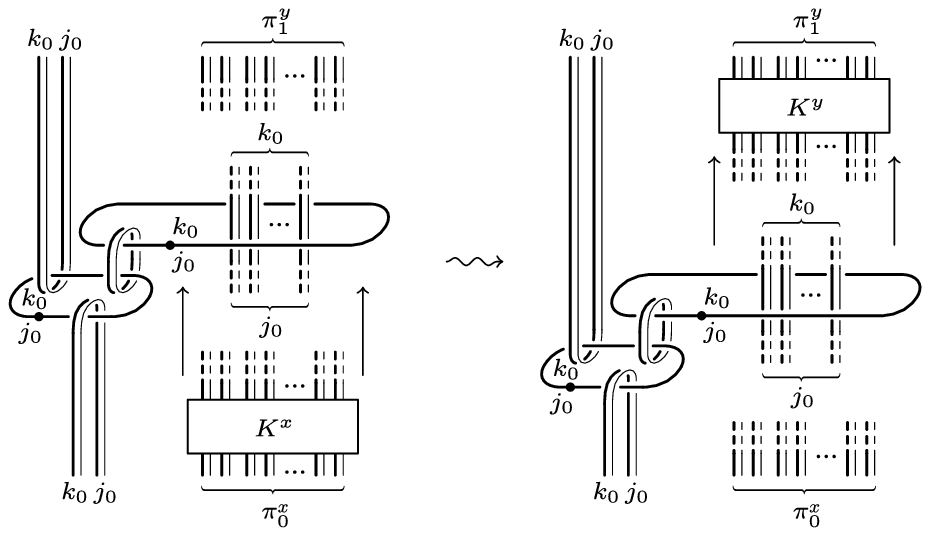}}
\vskip-6pt
\end{Figure}

\begin{Figure}[htb]{kirby-stab09/fig}
{}{Pushing a dotted unknot through another one ($i \neq j_0$)}
\centerline{\fig{}{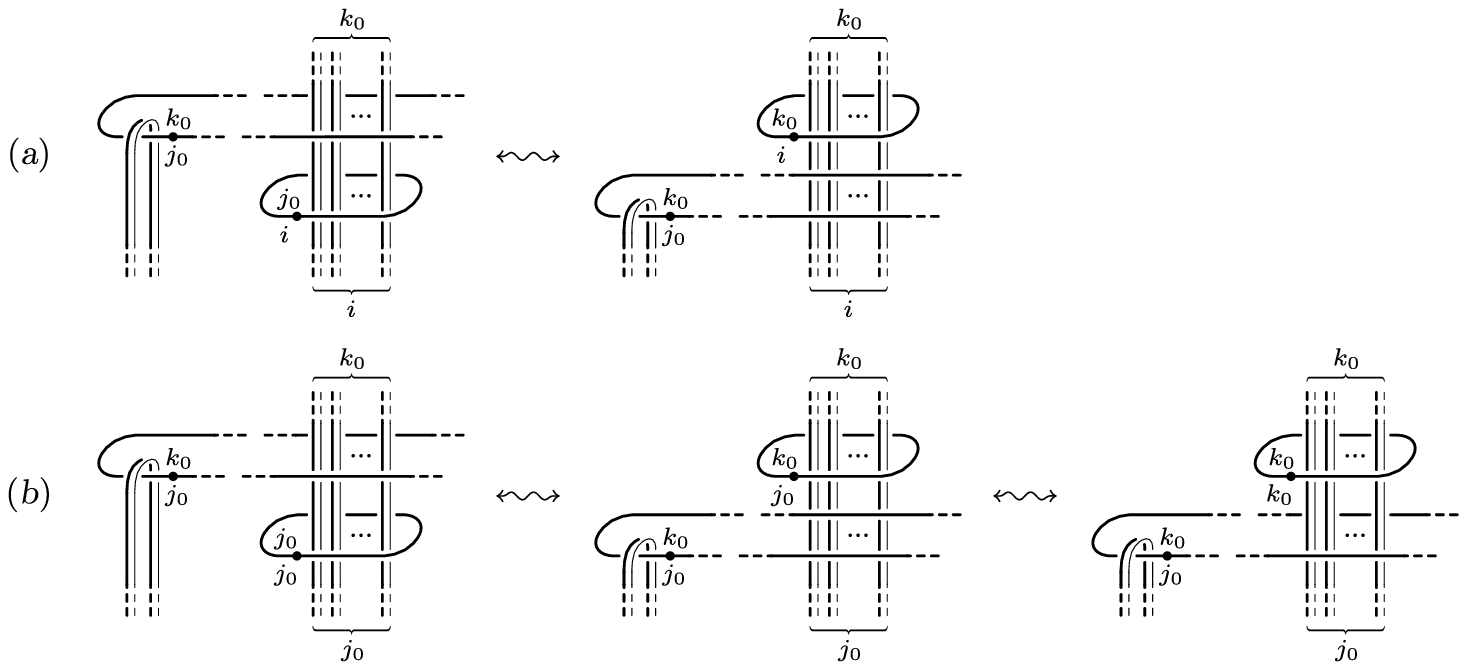}}
\vskip-6pt
\end{Figure}

With the above definition of $\xi^{x,y}_\pi$, the identity 
\vskip-12pt
$$(\id_{(k_0,j_0)} \diam K^y)
\circ \xi_{\pi_0}^{x,y} = \xi_{\pi_1}^{x,y} \circ (\id_{(k_0,j_0)} \diam K^x)$$
\vskip4pt\noindent
corresponding to \(c) is obtained by pushing all (and only) the tangle components labeled
by $j_0$ in $K^x$ and originally labeled $i_0$ in $K$, through the spanning disk of the
dotted unknot of $\xi^{x,y}_{\pi_1^x}$, as indicated by the arrows in Figure
\ref{kirby-stab08/fig}. Since those components lie above the rest of the tangle, the
pushing through can be realized by Reidemeister moves and the moves \(d) and \(e) of
Figure \ref{kirby-tang06/fig}, until a dotted unknot is encountered, whose spanning disk
has one or both sides labeled by $j_0$ while originally labeled $i_0$ in $K$. When this
happens we proceed as shown in Figure \ref{kirby-stab09/fig} \(a) or \(b) respectively,
once the disk has been put in the right position by planar isotopy. In this figure, each
step consists in a 1-handle sliding (cf. Figure \ref{kirby-tang05/fig}).
\end{proof}

Looking at Figure \ref{kirby-morph01/fig}, let us discuss how the functor $\_^x$ with $x =
(i_0,j_0)$ acts on the diagrams drawn there. This will be useful in Section
\ref{adjoint/sec}, when we will relate $\_^x$ to its algebraic analog (cf. Proposition
\ref{KtoH-push/thm}).

First of all, we observe that all of them but $\Delta_{(i,j)}$ are represented by strictly
regular diagrams, therefore the definition directly applies on those diagrams. According
to the definition of $\_^x$ and the subsequent observation about the essential
modifications occurring in it, we immediately see that $\_^x$ only leads to labeling
changes for $\id_{(i,j)}$, $\epsilon_{(i,j)}$, $\eta_i$, $l_i$ and $L_{(i,j)}$, while it
also involves some crossing changes for $S_{(i,j)}$, $\bar S_{(i,j)}$,
$\gamma_{(i,j),(i',j')}$ and $\bar\gamma_{(i,j),(i',j')}$, in the cases when some of the
labels are equal to $i_0$. We emphasize once again that not all the involved crossing
changes are essential. The only essential ones occur on $S_{(i,j)}$ when $i = j_0$ and $j
= i_0$, on $\bar S_{(i,j)}$ when $i = i_0$ and $j = j_0$, on $\gamma_{(i,j),(i',j')}$ when
$j_0 \in \{i,j\}$ and $i_0 \in \{i',j'\}$, and on $\bar\gamma_{(i,j),(i',j')}$ when $i_0
\in \{i,j\}$ and $j_0 \in \{i',j'\}$ (cf. Figures \ref{kirby-stab10/fig} and
\ref{kirby-stab11/fig}).

\begin{Figure}[htb]{kirby-stab10/fig}
{}{The tangles $(S_{(j_0,i_0)})^x$ and $(\bar S_{(i_0,j_0)})^x$}
\vskip3pt
\centerline{\fig{}{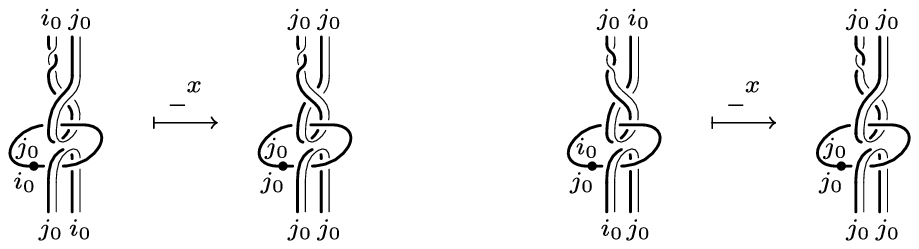}}
\vskip-3pt
\end{Figure}

\begin{Figure}[htb]{kirby-stab11/fig}
{}{The tangle $(\gamma_{(i,j),(i',j')})^x$ for some particular $(i,j),(i',j') \in \G_n$}
\centerline{\fig{}{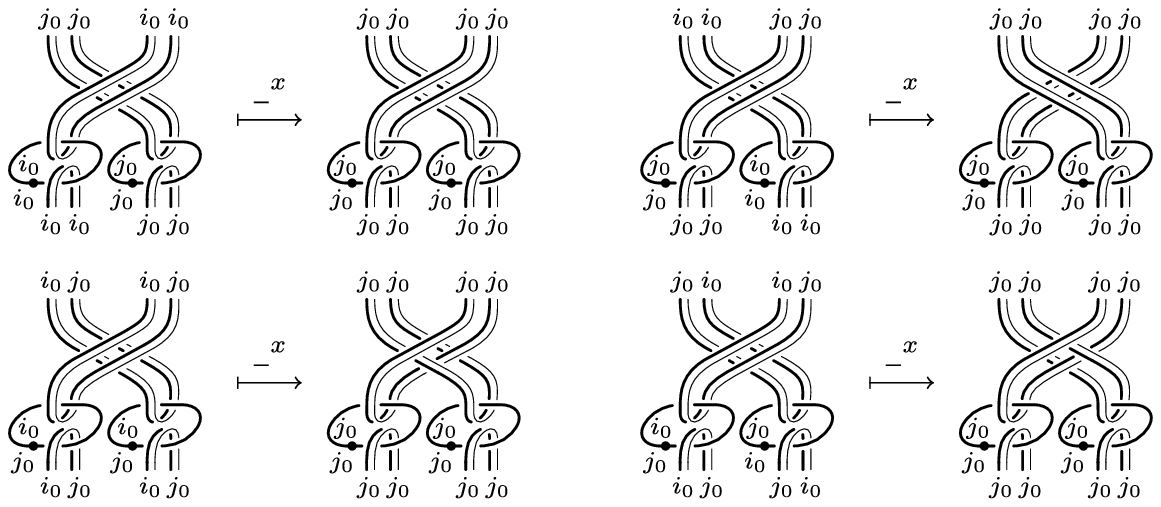}}
\vskip-3pt
\end{Figure}

Concerning the remaining morphism $\Delta_{(i,j)}$, we have first to put its diagram in
strictly regular form. However, also for this morphism the functor $\_^x$ does not imply
anything more that a labeling change, except in the case when $i \neq i_0$ and $j = i_0$,
which is illustrated in Figure \ref{kirby-stab12/fig}. Here, \(b) and \(c) are the
required strictly regular diagrams of the morphism and of its image under $\_^x$, \(d) is
just the same as \(c) up diagram isotopy, while a 1-handle twist is needed to get \(e). It
is worth noticing that the final result is the same as if we had reversed the 1-handle in
the original diagram, by using the general reversing procedure described in Figure
\ref{ribbon-kirby08/fig} of the next chapter, and then performed the prescribed crossing
changes and labeling replacement, even without converting the diagram in strictly regular
form. Moreover, we observe that the crossing change between \(b) and \(c) is essential
only when $i = j_0$ as above, hence for $i \neq j_0$ we still have $(\Delta_{(i,i_0)})^x =
\Delta _{(i,j_0)}$.

\begin{Figure}[htb]{kirby-stab12/fig}
{}{The tangle $(\Delta_{(i,j)})^x$ for $i \neq i_0$ and $j = i_0$}
\centerline{\fig{}{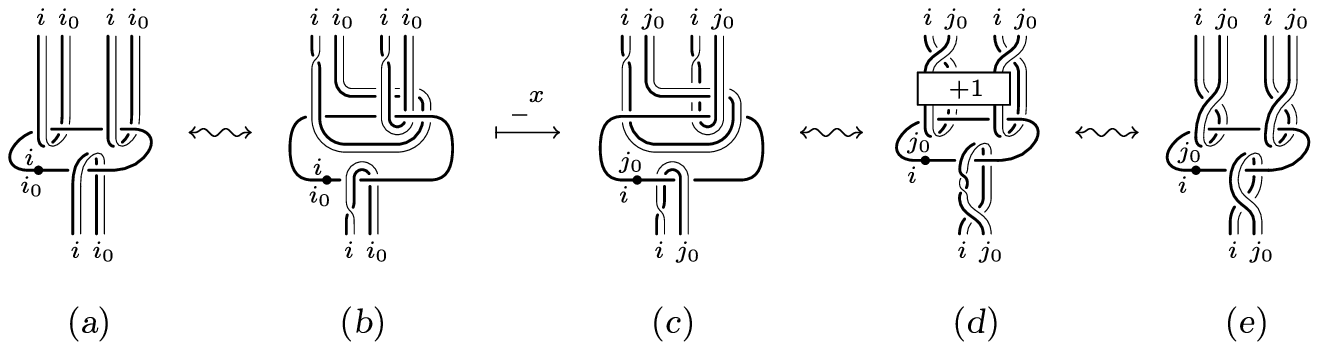}}
\vskip-3pt
\end{Figure}


Now we can proceed with the definition of the reduction functor.

\begin{Figure}[b]{kirby-stab13/fig}
{}{The reduction functor $\down_{n-1}^n$}
\centerline{\fig{}{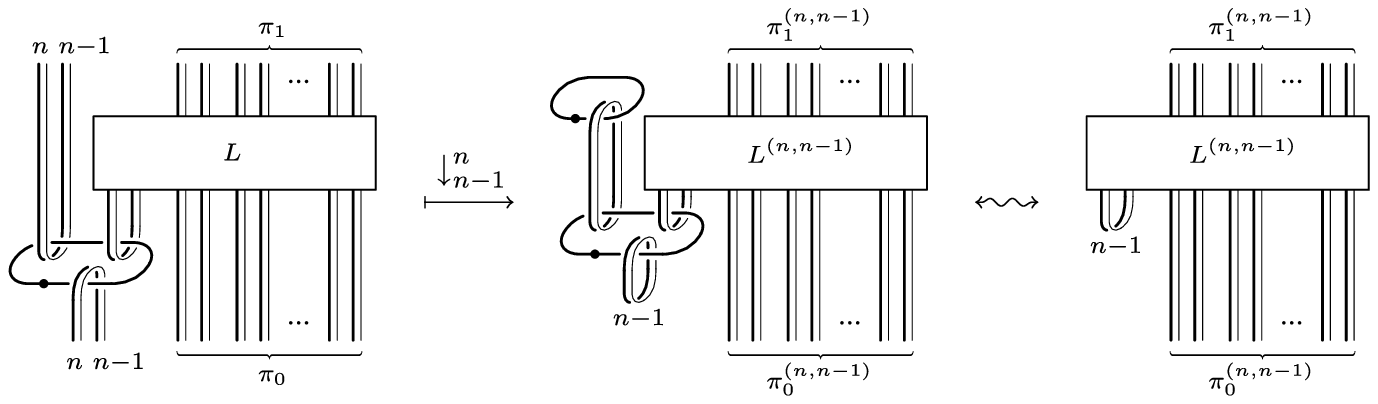}}
\vskip-6pt
\end{Figure}

\begin{definition}\label{K-reduction/def}
Given $n \geq 2$, we define the {\sl elementary reduction functor} $\down_{n-1}^n: \K_{n 
\red (n-1)} \!\to \K_{n-1}$ as follows. For any object of $I_{(n,n-1)} \diam I_\pi$ of 
$\K_{n \red (n-1)}$ we put
$$\down_{n-1}^n (I_{(n,n-1)} \diam I_\pi) = I_{\pi^{(n,n-1)}}\,,$$
while for any morphism $K = (\id_{(n,n-1)} \! \diam\, L) \circ (\Delta_{(n,n-1)} \!\diam
\id_{\pi_0})$ of $\K_{n \red (n-1)}$ from $I_{(n,n-1)} \diam I_{\pi_0}$ to $I_{(n,n-1)}
\diam I_{\pi_1}$ we define (see Figure \ref{kirby-stab13/fig})
\begin{eqnarray*}
\down_{n-1}^n K
&=& (\epsilon_{(n-1,n-1)} \diam \id_{\pi_1^{(n,n-1)}}) \circ K^{(n,n-1)} 
\circ (\eta_{n-1} \diam \id_{\pi_0^{(n,n-1)}})\\
&=& L^{(n,n-1)} \circ (\eta_{n-1} \diam \id_{\pi_0^{(n,n-1)}})\,,
\end{eqnarray*}
where $\epsilon_{(n-1,n-1)}$ and $\eta_{n-1}$ are as in Figure \ref{kirby-morph01/fig}
(for $i = j = n-1)$, $K^{(n,n-1)}$ is defined in Lemma \ref{K-push/thm}, and the last
equality is obtained by 1/2-handle cancelation.

Given $n \geq k \geq 1$, the {\sl reduction functor} $\down_k^n: \K_{n \red k} \!\to \K_n$
is defined as the composition $\down_k^n = {\down_k^{k+1}}\circ \dots \circ
{\down_{n-1}^n}$ of elementary reduction functors.
\end{definition}

\begin{lemma}\label{K-reduction1/thm}
For any $n \geq 2$, the reduction ${\down_{n-1}^n}: \K_{n \red (n-1)} \!\to \K_{n-1}$ is a
functor such that ${\down_{n-1}^n} \circ {\up_{n-1}^n} = \id_{\K_{n-1}}$, while
${\up_{n-1}^n} \circ {\down_{n-1}^n} \simeq \id_{\K_{n \red (n-1)}}$ up to the natural
equivalence $\xi^{(n,n-1)} = \xi^{(n,n-1),(n,n)}$. Therefore, $\down_{n-1}^n$ and
$\up_{n-1}^n$ are category equiva\-lences between $\K_{n \red (n-1)}$ and $\K_{n-1}$.
\end{lemma}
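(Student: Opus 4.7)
The plan is to verify, in order: (i) that $\down_{n-1}^n$ is well defined as a functor with image in $\K_{n-1}$; (ii) the identity $\down_{n-1}^n \circ \up_{n-1}^n = \id_{\K_{n-1}}$; and (iii) the natural equivalence $\up_{n-1}^n \circ \down_{n-1}^n \simeq \id_{\K_{n \red (n-1)}}$ witnessed by $\xi^{(n,n-1)}$.

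For (i), first I would check that $\down_{n-1}^n K$ does not depend on the choice of the factorization $K = (\id_{(n,n-1)} \diam L) \circ (\Delta_{(n,n-1)} \diam \id_{\pi_0})$: two such $L$ and $L'$ differ only on the leftmost strand coming from $\Delta_{(n,n-1)}$, and this difference is annihilated by the composition with $\eta_{n-1}$ and a 1/2-handle cancellation inside $L^{(n,n-1)} \circ (\eta_{n-1} \diam \id)$. Next, the image lies in $\K_{n-1}$ because Lemma \ref{K-push/thm}(b) places $K^{(n,n-1)}$ in $\K_n^{\bs n}$, and the flanking morphisms $\epsilon_{(n-1,n-1)}$ and $\eta_{n-1}$ use only the label $n-1$. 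Preservation of identities is direct, since $\id_{(n,n-1)} \diam \id_\pi$ has a canonical reducible form in which $L$ is a counit on one copy of $\Delta_{(n,n-1)}$ tensored with $\id_\pi$; applying $\_^{(n,n-1)}$ and cancelling $\epsilon_{(n-1,n-1)} \circ \eta_{n-1}$ returns $\id_{\pi^{(n,n-1)}}$. Functoriality on composition follows by stacking two reducible tangles $K_1$ and $K_2$, using coassociativity of $\Delta_{(n,n-1)}$ (Proposition \ref{K-coassociativity/thm}) to rearrange the two comultiplications into one followed by a local factor, and then invoking the functoriality and monoidality of $\_^{(n,n-1)}$ from Lemma \ref{K-push/thm}.

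For (ii), I would unpack $\up_{n-1}^n K = \id_{(n,n-1)} \diam \iota_{n-1}^n(K)$ in the reducible form suggested by Figure \ref{kirby-stab04/fig}, namely with $L = \epsilon_{(n,n-1)} \diam \iota_{n-1}^n(K)$ (or whatever counit-based expression makes $(\id_{(n,n-1)} \diam L) \circ (\Delta_{(n,n-1)} \diam \id_{\pi_0})$ reduce to $\id_{(n,n-1)} \diam \iota_{n-1}^n(K)$ via a coalgebra counit axiom). Since $\iota_{n-1}^n(K)$ lives already in $\K_n^{\bs n}$, Lemma \ref{K-push/thm}(b) plus the discussion following the lemma give $\iota_{n-1}^n(K)^{(n,n-1)} = \iota_{n-1}^n(K)$ up to a pure relabelling that leaves it in $\K_{n-1}$. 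The piece coming from $\epsilon_{(n,n-1)}$ becomes $\epsilon_{(n-1,n-1)}$ after pushing, and together with the prepended $\eta_{n-1}$ it collapses by the unit/counit relation, leaving the original $K$.

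For (iii), the object components of $\xi^{(n,n-1)}$ already match: on $I_{(n,n-1)} \diam I_\pi$, we have $\up_{n-1}^n \down_{n-1}^n (I_{(n,n-1)} \diam I_\pi) = I_{(n,n-1)} \diam I_{\pi^{(n,n-1)}}$, and Lemma \ref{K-push/thm}(c) supplies the isomorphism $\xi_\pi^{(n,n-1)}$ to $I_{(n,n-1)} \diam I_\pi$. The content of naturality is to show that for every reducible morphism $K: I_{(n,n-1)} \diam I_{\pi_0} \to I_{(n,n-1)} \diam I_{\pi_1}$ the square
\[
\xi_{\pi_1}^{(n,n-1)} \circ \up_{n-1}^n(\down_{n-1}^n K) = K \circ \xi_{\pi_0}^{(n,n-1)}
\]
holds in $\K_n$. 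I would substitute the reducible form $K = (\id_{(n,n-1)} \diam L) \circ (\Delta_{(n,n-1)} \diam \id_{\pi_0})$, expand both sides, and apply the identity $(\id_x \diam K) \circ \xi_{\pi_0}^x = \xi_{\pi_1}^x \circ (\id_x \diam K^x)$ from Lemma \ref{K-push/thm} to the $L$-factor, reducing the question to a local check that $\xi^{(n,n-1)}$ correctly absorbs the $\Delta_{(n,n-1)}$-part of $K$. That local check is a standard handle-slide calculation: the extra $(n,n-1)$ 1-handle introduced by $\xi^{(n,n-1)}$ combines with the $\Delta$-generated strand to form a canceling 1/2-pair, restoring the factor $\Delta_{(n,n-1)} \diam \id_{\pi_0}$ of $K$.

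The main obstacle will be step (iii), specifically the local diagrammatic identity that reconciles the extra 1-handle coming from $\xi^{(n,n-1)}$ with the comultiplication factor $\Delta_{(n,n-1)}$ inside a reducible tangle. Tracking the labellings around the $\_^{(n,n-1)}$-modified occurrences of $\Delta$, $S$, $\bar S$, and the braidings (as catalogued in Figures \ref{kirby-stab10/fig}--\ref{kirby-stab12/fig}) and verifying that the requisite handle slidings and 1/2-handle cancellations produce exactly the identity, is the calculation that will require the most care; functoriality on composition in step (i) is the only other point where coassociativity enters non-trivially, but it is dwarfed by the case analysis needed to make the naturality square commute.
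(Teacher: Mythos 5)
Your proposal is correct and follows essentially the same route as the paper: functoriality of $\down_{n-1}^n$ is inherited from the functoriality of $K \mapsto K^{(n,n-1)}$ in Lemma \ref{K-push/thm}, the identity $\down_{n-1}^n \circ \up_{n-1}^n = \id$ is a direct unit/counit cancellation on the reducible form of a stabilization, and naturality of $\xi^{(n,n-1)}$ reduces via Lemma \ref{K-push/thm}\(c) to the handle-slide computation of Figure \ref{kirby-stab14/fig} that you correctly single out as the crux. The only divergence is that you spend effort on independence of the factorization $K = (\id_{(n,n-1)} \diam L) \circ (\Delta_{(n,n-1)} \diam \id_{\pi_0})$, which is moot since the primary formula $(\epsilon_{(n-1,n-1)} \diam \id) \circ K^{(n,n-1)} \circ (\eta_{n-1} \diam \id)$ depends only on $K$ itself.
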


\begin{proof}
The functoriality of $\down_{n-1}^n$ directly follows from that of the map $K \mapsto
K^{(n,n-1)}$ (cf. Lemma \ref{K-push/thm}). Looking at Figure \ref{kirby-stab13/fig}, we
see that $\down_{n-1}^n \circ \up_{n-1}^n = \id_{\K_{n-1}}$. In fact, if the leftmost
diagram in the figure comes from the stabilization of an $(n-1)$-labeled Kirby tangle as
on the left in Figure \ref{kirby-stab04/fig}, then $\pi_0^{(n,n-1)} = \pi_0$,
$\pi_1^{(n,n-1)} = \pi_1$ and essen\-tially nothing is changed inside the box by the
reduction. Hence, we end up with the rightmost diagram that represents the $(n-1)$-labeled
Kirby tangle itself (with an extra canceling 1/2-pair labeled by $n-1$).

\begin{Figure}[htb]{kirby-stab14/fig}
{}{The natural equivalence $\xi^{(n,n-1)}: \up_{n-1}^n \circ \down_{n-1}^n 
   \simeq \id_{\K_{n \red (n-1)}}$}
\centerline{\fig{}{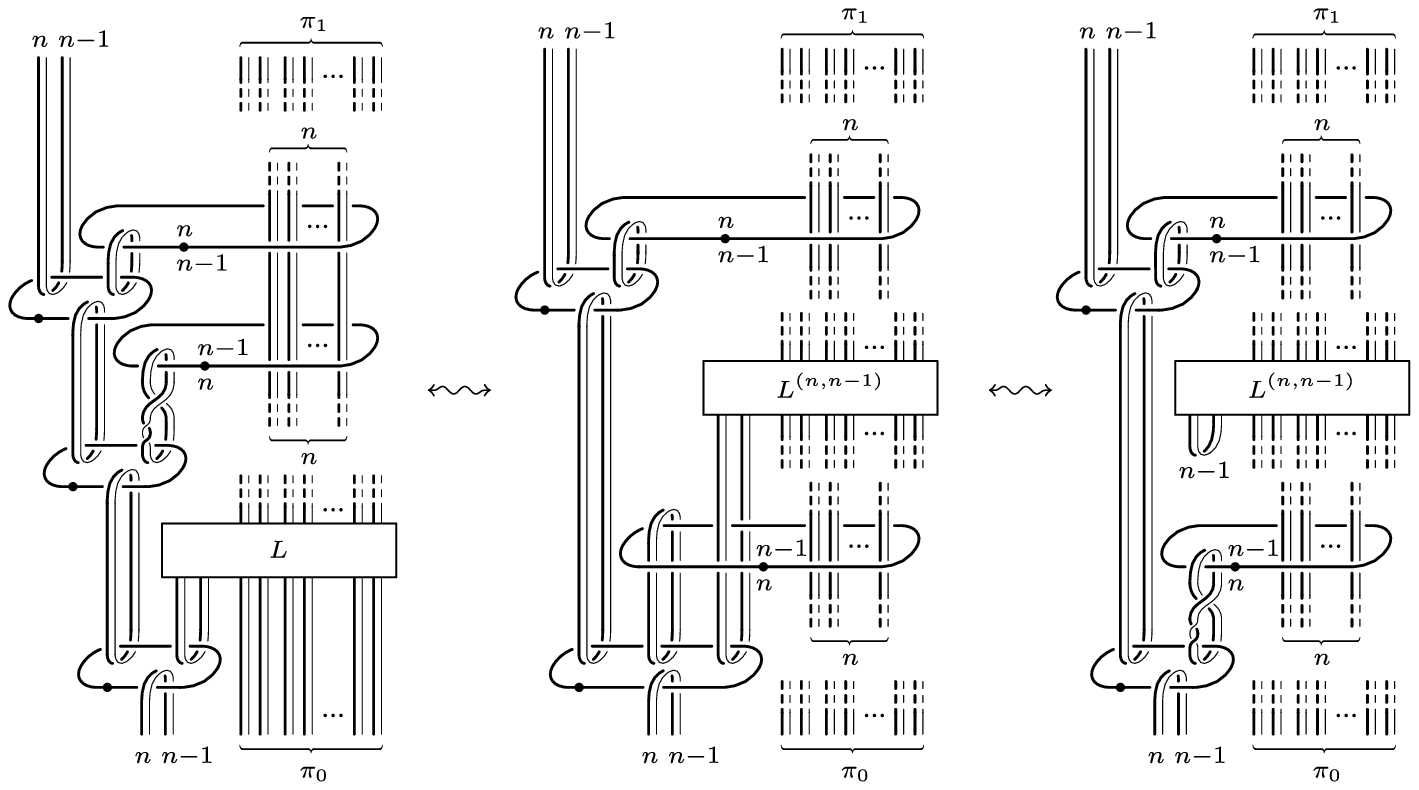}}
\vskip-6pt
\end{Figure}

The proof that $\xi^{(n,n-1)}$ gives a natural equivalence $\id_{\K_{n \red (n-1)}} \to
{\up_{n-1}^n} \circ {\down_{n-1}^n}$ is provided by Figure \ref{kirby-stab14/fig}. Here,
the first diagram is equivalent to the $(n-1)$-re\-ducible tangle $K = (\id_{(n,n-1)}
\!\diam L) \circ (\Delta_{(n,n-1)} \!\diam \id_{\pi_0})$ since $\xi^{(n,n-1)}$ cancels
with its inverse, the second tangle is obtained from the first by using Lemma
\ref{K-push/thm} \(c) (cf. Figure \ref{kirby-stab08/fig} with $x = (n,n-1)$ and $y =
(n,n)$), while the third tangle is obtained through 2-handle slidings and isotopy. This
completes the proof that $\down_{n-1}^n$ and $\up_{n-1}^n$ are equivalences of monoidal
categories.
\end{proof}

\begin{proposition} \label{K-reduction/thm}
For any $n > k \geq 1$, the reduction $\down_k^n: \K_{n \red k}\! \to \K_k$ is a functor
such that ${\down_k^n} \circ {\up_k^n} = \id_{\K_k}$, while there is a natural equivalence
$\xi^{n \red k}: {\up_k^n} \circ {\down_k^n} \simeq \id_{\K_{n \red k}}$, inductively
defined by $\xi^{n \red (n-1)} = \xi^{(n,n-1)}$ and $$\xi^{n \red k} = \xi^{(n,n-1)} \circ
(\id_{(n,n-1)} \!\diam \xi^{(n-1) \red k})\,.$$ Therefore, $\down_k^n$ and $\up_k^n$ are
category equivalences between $\K_{n \red k}$ and $\K_k$.
\end{proposition}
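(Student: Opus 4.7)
The proof will proceed by induction on $n-k$, with the base case $n = k+1$ given by Lemma \ref{K-reduction1/thm}. For the inductive step, the key observation is that by construction $\down_k^n = \down_k^{n-1} \circ \down_{n-1}^n$ and $\up_k^n = \up_{n-1}^n \circ \up_k^{n-1}$, so most statements should follow by assembling the already-established elementary equivalence with the inductive hypothesis. Before doing this, I would verify the compatibility needed for the composition to make sense: first, that $\up_k^n K$ is actually $k$-reducible (not only $(n-1)$-reducible) for $K \in \K_k$, which follows inductively using Proposition \ref{K-monoidal-stab/thm} together with the observation that $\pi_{n \red k} = (n,n-1) \diam \pi_{(n-1) \red k}$ and that $\Delta_{(n,n-1)} \diam \Delta_{\pi_{(n-1) \red k}}$ appears as a factor after applying coassociativity (Proposition \ref{K-coassociativity/thm}); second, that the elementary reduction $\down_{n-1}^n$ maps $\K_{n \red k}$ into $\K_{n-1 \red k}$, so the composition is well defined. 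This second point is where I would focus most care: given a $k$-reducible $K$ in $\K_{n \red k}$, decomposed as $(\id_{(n,n-1)} \diam \id_{\pi_{(n-1) \red k}} \diam L') \circ (\Delta_{(n,n-1)} \diam \Delta_{\pi_{(n-1) \red k}} \diam \id_{\pi_0}) \circ \ldots$, I would verify that pushing through the $(n,n-1)$-handle via $\_^{(n,n-1)}$ preserves the outer $\Delta_{\pi_{(n-1) \red k}}$ factor (since its labels avoid $n$), so the image indeed lies in $\K_{n-1 \red k}$.

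Granted these compatibility points, the identity ${\down_k^n} \circ {\up_k^n} = \id_{\K_k}$ follows immediately by composition:
\[
{\down_k^n} \circ {\up_k^n} = {\down_k^{n-1}} \circ {\down_{n-1}^n} \circ {\up_{n-1}^n} \circ {\up_k^{n-1}} = {\down_k^{n-1}} \circ \id_{\K_{n-1}} \circ\, {\up_k^{n-1}} = \id_{\K_k}\,,
\]
using Lemma \ref{K-reduction1/thm} in the middle and the inductive hypothesis on the outside.

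For the natural equivalence, I would define $\xi^{n \red k}_{I_{(n,n-1)} \diam I_\pi}$ by following the inductive formula: first apply $\id_{(n,n-1)} \diam \xi^{(n-1) \red k}_{I_\pi}$ to convert the morphism into one whose $\K_{n-1}$-part is in the image of ${\up_k^{n-1}} \circ {\down_k^{n-1}}$, and then apply $\xi^{(n,n-1)}$ to absorb the top $(n,n-1)$-handle into the ${\up_{n-1}^n} \circ {\down_{n-1}^n}$ form. To check this is a natural equivalence between $\id_{\K_{n \red k}}$ and ${\up_k^n} \circ {\down_k^n}$, I would take an arbitrary $k$-reducible morphism $K: I_{\pi_{n \red k}} \diam I_{\pi_0} \to I_{\pi_{n \red k}} \diam I_{\pi_1}$ and verify the square
\[
(\up_k^n \circ \down_k^n)(K) \circ \xi^{n \red k}_{I_{\pi_{n \red k}} \diam I_{\pi_0}} = \xi^{n \red k}_{I_{\pi_{n \red k}} \diam I_{\pi_1}} \circ K\,,
\]
by factoring $K$ through $\down_{n-1}^n(K)$ and invoking naturality of $\xi^{(n,n-1)}$ (from Lemma \ref{K-reduction1/thm}) on the outside together with naturality of $\xi^{(n-1) \red k}$ (from the inductive hypothesis) on $\down_{n-1}^n(K)$. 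The invertibility of $\xi^{n \red k}_{I_{\pi_{n \red k}} \diam I_\pi}$ follows from the invertibility of its two building blocks.

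The main obstacle I anticipate is the bookkeeping needed to show the preservation of $k$-reducibility under the elementary reduction $\down_{n-1}^n$. Naively, $\down_{n-1}^n$ is defined only on $(n-1)$-reducible tangles, and while the outer $\Delta_{(n,n-1)}$ is consumed by the cap $\eta_{n-1}$ in the definition, one must check that the remaining $\Delta_{\pi_{(n-1) \red k}}$-structure survives after the push functor $\_^{(n,n-1)}$ acts on the rest of $K$. This essentially relies on the fact that $\_^{(n,n-1)}$ is monoidal (Lemma \ref{K-push/thm}) and acts trivially on diagrams whose labels avoid $n$, applied to the comultiplication part $\Delta_{\pi_{(n-1) \red k}}$. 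Once that check is carried out, the rest of the argument is formal.
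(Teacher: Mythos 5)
Your proposal is correct and follows essentially the same route as the paper: induction on $n-k$ with base case Lemma \ref{K-reduction1/thm}, the factorizations ${\up_k^n} = {\up_{n-1}^n} \circ {\up_k^{n-1}}$ and ${\down_k^n} = {\down_k^{n-1}} \circ {\down_{n-1}^n}$, and the composite natural equivalence $\xi^{(n,n-1)} \circ (\id_{(n,n-1)} \diam \xi^{(n-1) \red k})$. The compatibility checks you single out (that stabilizations are $k$-reducible, via Proposition \ref{K-monoidal-stab/thm} and coassociativity, and that $\down_{n-1}^n$ carries $\K_{n \red k}$ into $\K_{(n-1) \red k}$ because $\_^{(n,n-1)}$ is monoidal and fixes the $\Delta_{\pi_{(n-1)\red k}}$ factor) are left implicit in the paper's proof, and your verification of them is sound.
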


\begin{proof}
We prove the statement by induction on the difference $n - k$. For $k = n - 1$ it follows
from the previous lemma. For $k < n - 1$, taking into account that ${\up_k^n} =
{\up_{n-1}^n} \circ {\up_k^{n-1}}$ and $\up_k^n \K_k \subset \K_{n \red k}$, by the
induction hypothesis we have $${\down_k^n} \circ {\up_k^n} = {\down_k^{n-1}} \circ
{\down_{n-1}^n} \circ {\up_{n-1}^n} \circ {\up_k^{n-1}} = {\down_k^{n-1}} \circ
{\up_k^{n-1}} = \id_{\K_k}\,.$$ Moreover, for any $K \in \K_{n \red k}$ we can write
$\up_k^n \down_k^n K = \id_{n,n-1} \diam (\up_k^{n-1} \down_k^{n-1} \down_{n-1}^n K)$,
which induces a natural equivalence $\id_{(n,n-1)} \diam \xi^{(n-1) \red k}: {\up_k^n}
\circ {\down_k^n} \simeq {\up_{n-1}^n} \circ {\down_{n-1}^n}$. Then, by composing with
$\xi^{(n,n-1)}: {\up_{n-1}^n} \circ {\down_{n-1}^n} \simeq \id_{\K_{n \red (n-1)}}$, we
get the natural equivalence $\xi^{n \red k}: {\up_k^n} \circ {\down_k^n} \simeq \id_{\K_{n
\red k}}$.
\end{proof}

\newpage

\section{Labeled ribbon surface tangles%
\label{surfaces/sec}}

In Chapter \ref{cobordisms/sec} we have seen how to represent 4-dimensional relative
2-handlebody cobordisms between 3-dimensional 1-handlebodies with $n$ 0-handles up to
2-equiva\-lence by means of $n$-labeled Kirby tangles in $E \times [0,1]$. These tangles,
considered up to isotopy and 2-deformation moves, was encoded as the morphisms of the
categories $\K_n$ with $n \geq 1$, equivalent to the categories of cobordisms 
$\Chb^{3+1}_n$.

The goal of this chapter is to give a different representation of such cobordisms in terms
of $n$-labeled ribbon surface tangles, by describing them as $n$-fold simple coverings of
$E \times [0,1] \times [0,1]$ (cf. Sections \ref{ribbons/sec} and \ref{coverings/sec}).
Such ribbon surface tangles, up to labeled 1-isotopy and ribbon moves (cf. Figure
\ref{coverings05/fig}), will represent the morphisms of the categories $\S_n$ with $n \geq
2$.

The relation between the two representations of 4-dimensional relative 2-han\-dlebody
cobordisms, as Kirby tangles and as labeled ribbon surface tangles, will be established by
the functor $\Theta_n: \S_n \to \K_n$ defined in Section \ref{Theta/sec}. The restriction
$\Theta_n: \S^c_n \to \K^c_n$ to a suitable subcategory $\S^c_n \subset \S_n$,
representing connected hanldebodies, will be proved to be a category equivalence for $n
\geq 4$ in Section \ref{K=S/sec}.

We emphasize that the same does not hold for $n < 4$. Indeed, it is shown in \cite{Mo78}
that all connected 4-dimensional 2-handlebodies are 3-fold (irregular) branched covers of
$B^4$, but only the symmetric ones are 2-fold (regular) branched covers of $B^4$. This
implies that $\Theta_2$ cannot be full. Moreover, the result in \cite{BE78} (cf. also
\cite{Mo85}) implies two 3-fold branched covering of $B^4$ representing the same
4-dimensional 2-handlebody are not necessarily related through 1-isotopy and the ribbon
move \(R1) (note that move \(R2) does not appear when $n \leq 3$). Therefore $\Theta_3$,
as well as $\Theta_2$, cannot be faithful.

\subsection{The category $\S$ of ribbon surface tangles%
\label{rs-tangles/sec}}

We define the category $\S$ of ribbon surface tangles as follows. An object of $\S$ is any
finite (possibly empty) trivial family $A$ of regularly embedded arcs in $\Int E \times
\left[0,1\right[\,$, with $E = [0,1]^2$. Given two objects $A_0,A_1 \in \Obj \S$, a
morphism of $\S$ with source $A_0$ and target $A_1$ is a ribbon surface tangle $S \subset
E \times [0,1] \times \left[0,1\right[\,$, considered up to 1-isotopy, such that
$\partial_0 S = i_0(A_0)$ and $\partial_1 S = i_1(A_1)$, where $i_0,i_1: E \times
\left[0,1\right[ \to E \times [0,1]\times \left[0,1\right[$ are the inclusions defined
$i_0(x,y,t) = (x,y,0,t)$ and $i_1(x,y,t) = (x,y,1,t)$ respectively.

The composition of two morphisms $S_1:A_0 \to A_1$ and $S_2: A_1 \to A_2$ in $\S$ is
represented by the ribbon surface tangle obtained by stacking $S_2$ over $S_1$, so that
$\partial_0 S_2$ coincides with $\partial_1 S_1$, and then smoothing the union surface and
rescaling the third coordinate by $1/2$. Then, the identity morphism $\id_A: A \to A$ of
an object $A \in \S$ is represented by the product ribbon surface tangle $\{(x,y,z,t)
\;|\; (x,y,t) \in A \text{ and } z \in [0,1]\} \subset E \times [0,1] \times
\left[0,1\right[\,$.

\medskip

\label{S-standard}
For any $m \geq 0$ we consider the {\sl standard object} $J_m$ of $\S$ to be the (possibly
empty) sequence $J_m = (a_{m,1}, \dots, a_{m,m})$ of regularly embedded arcs in $\Int E
\times \left[0,1\right[\,$ defined as follows. For any $1 \leq k \leq m$, we start with
the interval in $E$
$$
a_{m,k} = [(k - 0.8)/m,(k - 0.2)/m] \times \{0.5\}\,,
$$
then we push its interior in the interior of $\Int E \times \left[0,1\right[\,$ to get
the regularly embedded arc $a_{m,k}$ (we use the same notation $a_{m,k}$ for both the 
interval and the arc). All the arcs forming the $J_m$'s are assumed to be equivalent up to
translations and rescaling of the first coordinate. 

We say that a ribbon surface tangle $S$ has {\sl standard ends} if it represents a
morphism between standard objects, that is $\partial_0 S = i_0(J_{m_0})$ and
$\partial_1 S = i_1(J_{m_1})$ for some $m_0,m_1 \geq 0$. In particular, we denote by 
$\id_m: J_m \to J_m$ the identity morphism of $J_m$ for any $m \geq 0$.

\medskip

We can introduce a strict monoidal structure on the full subcategory of $\S$ with standard
objects, in the following way. On the set of standard objects, we consider the product
$J_m \diam J_{m'} = J_{m + m'}$. Then, given two ribbon surface tangles $S$ and $S'$
with standard ends, we define their product $S \diam S'$, by horizontal juxtaposition of
$S$ and $S'$, followed by a reparame\-trization of the first coordinate depending only on
the third one (that is a diffeomorphism $(x,y,z,t) \mapsto (h^z(x),y,z,t)$ with $h^z$
increasing function of $x$ for every $z$).

\begin{proposition}\label{subcategoryS/thm}
$\S$ is equivalent to its full subcategory whose objects are the standard objects $J_m$
for $m \geq 0$, through the inclusion functor. The product defined above, makes such
subcategory into a strict monoidal category, whose unit is represented by the empty 
tangle $\id_0: J_0 \to J_0$.
\end{proposition}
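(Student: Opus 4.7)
The plan is to apply Corollary \ref{subcat-equiv/thm}, which reduces the equivalence claim to verifying that the subcategory of standard objects is full (immediate by definition) and that every object of $\S$ is isomorphic to a standard one. Then the monoidal axioms will be checked directly against Definition \ref{strict-mon-cat/def}.

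For the first claim, I would fix an object $A \in \Obj \S$ consisting of $m$ regularly embedded trivial arcs in $\Int E \times \left[0,1\right[$, and construct an isomorphism $S_A: A \to J_m$. Since $A$ is by definition a trivial family, there is an ambient isotopy of $\Int E \times \left[0,1\right[$ carrying $A$ to the standard arc system $J_m$ (after choosing any bijection between the components of $A$ and those of $J_m$). Trace out this isotopy as a ribbon surface tangle $S_A \subset E \times [0,1] \times \left[0,1\right[$ with $\partial_0 S_A = i_0(A)$ and $\partial_1 S_A = i_1(J_m)$, whose underlying surface is the union of $m$ disjoint disks (no ribbon intersections). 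The reverse isotopy gives an inverse morphism $S_A': J_m \to A$, and stacking $S_A'$ over $S_A$ (respectively $S_A$ over $S_A'$) produces a ribbon surface tangle 1-isotopic to the product identity, since a product of disks with no ribbon intersections admits an obvious adapted 1-handlebody decomposition with a single 0-handle per component and no 1-handles, and cancellation/isotopy moves from Definition \ref{1-isotopy/def} trivially identify the composition with $\id_A$ (resp. $\id_{J_m}$).

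For the second claim, the product $J_m \diam J_{m'} = J_{m+m'}$ is strictly associative on objects, with unit $J_0$ (the empty sequence), and one checks $\id_{m+m'} = \id_m \diam \id_{m'}$ from the explicit description of $\id_m$ as a product tangle. On morphisms, associativity and the identity laws $\id_0 \diam S = S = S \diam \id_0$ follow from the fact that horizontal juxtaposition, followed by the first-coordinate reparametrization $(x,y,z,t) \mapsto (h^z(x),y,z,t)$ with $h^z$ monotone in $x$, is strictly associative up to rescaling and places the empty tangle as a two-sided unit. The bifunctoriality identity $(S_2' \diam S_2) \circ (S_1' \diam S_1) = (S_2' \circ S_1') \diam (S_2 \circ S_1)$ reduces to checking that horizontal juxtaposition commutes with vertical stacking up to a reparametrization of the first coordinate depending on the third, and such a reparametrization is realized by an ambient isotopy fixing the tangle ends, hence yields 1-isotopic representatives.

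The only substantive point, and the step I expect to be the main technical obstacle, is the verification that $S_A' \circ S_A \simeq \id_A$ (and symmetrically) up to 1-isotopy, rather than just up to isotopy. This requires exhibiting adapted 1-handlebody decompositions on both sides and relating them by the moves of Definition \ref{1-equivalence/def}; but since the surfaces involved are disjoint unions of disks without ribbon intersections, the argument is essentially the same as the standard fact that a product cobordism built from an isotopy is 1-equivalent to the trivial product, and it can be handled by picking 0-handles matching the components and using only handle cancellations from Figure \ref{ribbon-surf04/fig}. All the remaining monoidal identities are routine bookkeeping about reparametrizations of $E$.
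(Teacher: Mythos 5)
Your proposal is correct and follows essentially the same route as the paper: invoke Corollary \ref{subcat-equiv/thm}, exhibit the isomorphism $A \cong J_m$ as the trace of an ambient isotopy of $\Int E \times \left[0,1\right[$ carrying one arc system to the other, and treat the monoidal axioms as routine reparametrization bookkeeping. The paper simply asserts the trace tangle is an isomorphism and leaves the monoidal verification to the reader, so your extra discussion of the 1-isotopy $S_A' \circ S_A \simeq \id_A$ is a more detailed rendering of the same argument rather than a different one.
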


\begin{proof}
For the first part of the statement, according to Corollary \ref{subcat-equiv/thm}, it
suffices to prove that any object of $A \in \S$ consisting of $m$ regularly embedded arcs
in $\Int E \times \left[0,1\right[$ is isomorphic to the standard object $J_m$. In fact,
an isomorphism from $J_m$ to $A$ is represented by the ribbon surface tangle (with no
ribbon self-intersections) $S = \{(x(p,t), y(p,t), t, z(p,t))\ |\ p \in J_m\,, t \in
[0,1]\}$, with $h_t(p) = (x(p,t), y(p,t), z(p,t))$ any ambient isotopy of $\Int E \times
\left[0,1\right[$ such that $h_1(J_m) = A$.

Concerning the strict monoidal structure, the verification of the required properties is
straightforward, being the unit of the product the empty morphism, the one represented
by the empty ribbon surface tangle.
\end{proof}

{\sl From now on, we will use the notation $\S$ for the strict monoidal category of ribbon
surface tangles with standard ends given by Proposition \ref{subcategoryS/thm}.}

\medskip

The rest of this section will be dedicated to the study of the structure of $\S$. We will
prove that $\S$ is equivalent to a strict monoidal braided category generated by a single
object and a set of elementary morphisms and relations in sense of Definition
\ref{presentation/def}. Moreover, we will show that such category carries also a tortile
structure (cf. Definition \ref{tortile/def}).

\medskip

We start with the elementary diagrams in Figure \ref{ribbon-morph01/fig}, considered up to
isotopy preserving horizontal lines, that is isotopy of the form $((x,y) \mapsto
(h_t^y(x),h_t(y)))_{t \in [0,1]}$ with $h_t^y$ an increasing function of $x$ for every $y$
and $t$, and $h_t$ increasing function of $y$ for every $t$. Here, the elementary diagram 
\(a) represents the identity of $J_1$.

\begin{Figure}[htb]{ribbon-morph01/fig}
{}{Elementary diagrams in the category $\S$}
\centerline{\fig{}{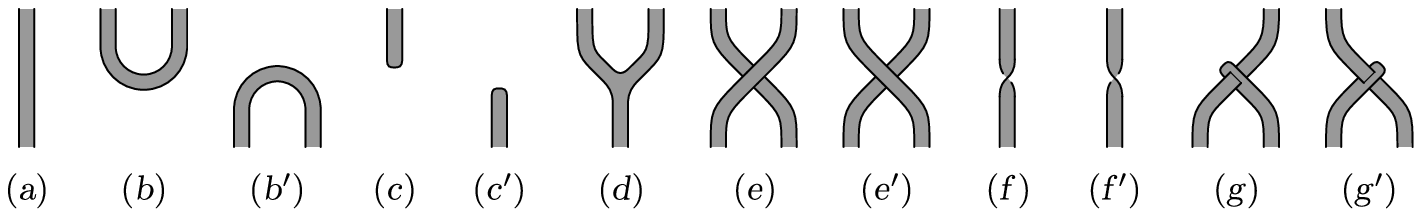}}\vskip-4pt
\end{Figure}

Then, also iterated products/compositions of such elementary diagrams, defined by
horizontal/vertical juxtaposition and rescaling, turn out to be well-defined up to
isotopy preserving horizontal lines.

In this context, the planar isotopy moves in Figure \ref{ribbon-tang01/fig}, where the
boxes $D$ and $D'$ in \(I1) contain any of the elementary diagrams in Figure
\ref{ribbon-morph01/fig}, can be interpreted as relations between iterated
products/compositions of elementary diagrams.

\begin{Figure}[htb]{ribbon-tang01/fig}
{}{Planar isotopy relations in the category $\S$}
\centerline{\fig{}{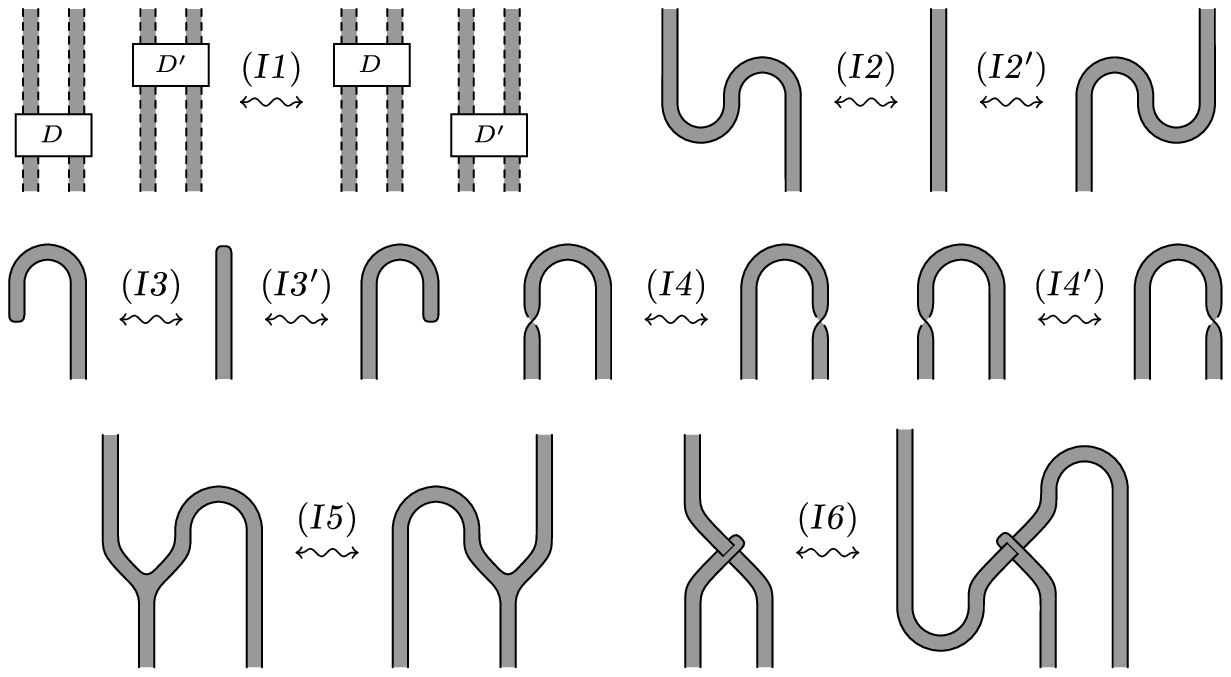}}
\end{Figure}

An analogous interpretation of the moves introduced in Figures \ref{ribbon-surf09/fig},
\ref{ribbon-surf10/fig} and \ref{ribbon-surf11/fig} to realize 3-dimensional diagram
isotopy of ribbon surface tangles, and of the 1-isotopy moves in Figure
\ref{ribbon-surf13/fig}, leads to the relations depicted in Figures
\ref{ribbon-tang02/fig}, \ref{ribbon-tang03/fig} and \ref{ribbon-tang04/fig}, where the
box $D$ in \(I8) and \(I9) contains any of the elementary diagrams in Figure
\ref{ribbon-morph01/fig}. In particular, relations \(I7-7') and \(I12-12') say that \(e')
and \(f') are the inverses of \(e) and \(f) with respect to the composition.

\begin{Figure}[htb]{ribbon-tang02/fig}
{}{3-dimensional isotopy relations in the category $\S$}
\centerline{\fig{}{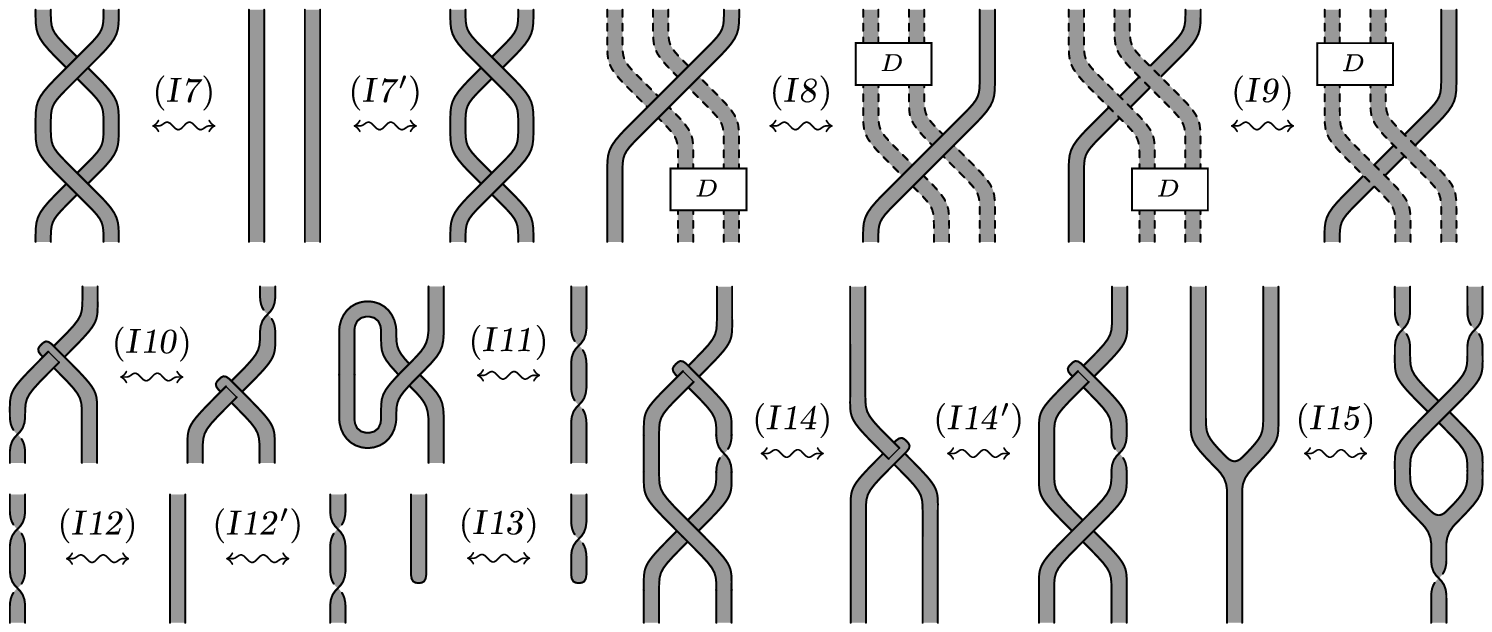}}
\end{Figure}

\begin{Figure}[htb]{ribbon-tang03/fig}
{}{Graph changing relations in the category $\S$}
\centerline{\fig{}{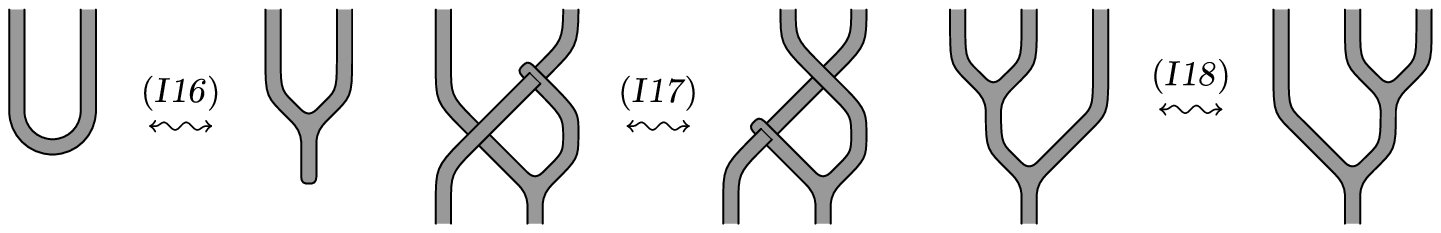}}
\end{Figure}

\begin{Figure}[htb]{ribbon-tang04/fig}
{}{1-isotopy relations in the category $\S$}
\centerline{\fig{}{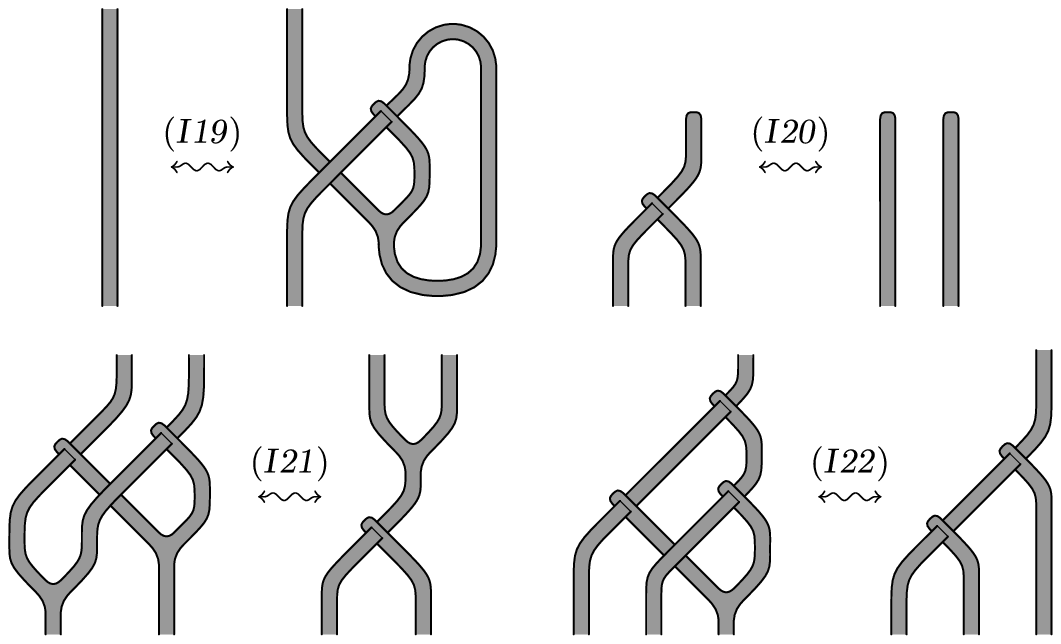}}
\end{Figure}

\pagebreak

Now, consider the strict monoidal category $C(J_1,E,R)$ generated by the single object
$J_1$ and the set $E$ of elementary morphisms in Figure \ref{ribbon-morph01/fig} modulo
the set $R$ of elementary relations \(I1) to \(I22). Here, the relations \(I1) could be
omitted, since it is universally valid as requested in point \(c) of Proposition
\ref{presentation/thm}. 

Remember from Section \ref{categories/sec} that $\Obj C(J_1,E,R)$ is the free monoid
$\seq J_1$ generated by $J_1$, which can be identified with the set of standard objects
$J_m = J_1 \diam \dots \diam J_1$ for $m \geq 0$. On the other hand, each iterated
product/composition of elements of $E$ can be expressed as a composition of expansions
(that is products of with identities) of elements of $E$, that is a morphism of the free
category $F(J_1,E)$, which turns out to be uniquely determined up to \(I1) (and its
expansions). Then, all the relations \(I2) to \(I22) can be seen as relations in the free
category $F(J_1,E)$, hence as defining relations for $C(J_1,E,R)$ according to Definition
\ref{presentation/def}.

\begin{proposition}\label{S-category/thm}
$\S$ is equivalent to $C(J_1,E,R)$ as a strict monoidal category, through a monoidal 
equivalence functor $C(J_1,E,R) \to \S$, which is the identity on the objects and send 
each morphism of $C(J_1,E,R)$ represented by a given diagram to the morphism of $\S$ 
represented by the corresponding ribbon surface tangle.
\end{proposition}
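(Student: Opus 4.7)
The plan is to construct $F: C(J_1,E,R) \to \S$ as the identity on the objects $J_m$, sending each elementary morphism in $E$ to the ribbon surface tangle represented by the corresponding local diagram in Figure \ref{ribbon-morph01/fig}, and extending to iterated products and compositions using the strict monoidal structure of $\S$. This gives a well-defined monoidal functor out of the free category $F(J_1,E)$; to descend it to the quotient $C(J_1,E,R)$, one has to verify that each defining relation \(I1) to \(I22) becomes an equality between 1-isotopy classes of ribbon surface tangles. This is a routine diagram-by-diagram check: \(I1) to \(I4) are instances of planar isotopy preserving horizontal lines; relations \(I5) to \(I12) reproduce pieces of 3-dimensional isotopy from Figures \ref{ribbon-surf10/fig} and \ref{ribbon-surf11/fig}; the graph-changing relations \(I13) to \(I17) correspond to the moves of Figure \ref{ribbon-surf09/fig}; and \(I18) to \(I22) encode the 1-isotopy moves of Figure \ref{ribbon-surf13/fig}.

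Since $F$ is the identity on objects, essential surjectivity is automatic, and by Proposition \ref{cat-equiv/thm} it suffices to show that $F$ is full and faithful. For fullness, given any morphism $[S]: J_{m_0} \to J_{m_1}$ in $\S$, Proposition \ref{planar-diagram/thm} provides a special planar diagram of $S$. After a small isotopy putting the diagram in \emph{horizontal general position}, meaning each horizontal level $y = \mathrm{const}$ contains at most one elementary spot of the types in Figure \ref{ribbon-morph01/fig} and elsewhere consists of disjoint flat vertical bands, the resulting picture is visibly an iterated composition of horizontal products of elementary morphisms. Hence $[S] = F(f)$ for some $f \in \Mor C(J_1,E,R)$.

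For faithfulness, suppose $F(f) = F(g)$ so that any planar diagram representatives of $f$ and $g$ yield 1-isotopic ribbon surface tangles. Combining Propositions \ref{isotopy/thm} and \ref{1-isotopy/thm}, the two diagrams are connected by a finite sequence of planar isotopies and local moves \(S1) to \(S26). Each such move, after being localized in a disk and conjugated into horizontal general position via planar isotopy, translates into a composition of expansions of the elementary relations \(I1) to \(I22); therefore $f = g$ already holds in $C(J_1,E,R)$.

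The main obstacle is this last translation step: converting a generic isotopy of planar diagrams through intermediate non-general positions into a purely horizontally sliced sequence of moves. This requires a Morse-theoretic stratification of the space of planar diagrams with fixed standard ends, ensuring that a generic path crosses the codimension-one stratum transversally only at events of the types \(S1) to \(S26) and that near each such event the diagram can be simultaneously arranged into horizontal general position before and after the crossing. A further subtlety is that some of the moves, notably \(S19) to \(S22) and the 1-isotopy moves \(S23) to \(S26), involve ribbon self-intersections and half-twists whose positioning with respect to the horizontal direction is not canonical, so one must first reduce to a normal form using \(I5) to \(I12) and then apply the corresponding relation; verifying that this reduction is always possible, up to the relations already in $R$, is where the bulk of the technical work lies.
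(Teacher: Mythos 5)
Your proposal follows essentially the same route as the paper's proof: the functor is defined on elementary diagrams and checked against the relations, fullness is obtained by putting a special planar diagram into a normal ("horizontal general") position with respect to the height direction, and faithfulness is reduced to translating planar isotopy (via a Morse-theoretic critical-level analysis) and the moves \(S3) to \(S26) into the relations \(I1) to \(I22). The technical crux you flag — slicing a generic planar isotopy into horizontally controlled events, and normalizing the non-elementary local configurations at trivalent vertices before applying the relations — is exactly what the paper's proof carries out.
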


\begin{proof}
We consider the map $C(J_1,E,R) \to \S$ defined by formal propagation over products and
compositions of the requirement that elementary diagrams are sent to the corresponding
ribbon surface tangle. In other words, any planar diagram given as iterated
product/composition of elementary ones is sent to the corresponding ribbon surface tangle
as well. Since any morphism in $C(J_1,E,R)$ is represented by a composition of expansions
of elements of $E$, in order to have a well-defined map on the level of morphisms, it
suffices to observe the images of the relations in $R$ are all 1-isotopy moves between the
corresponding ribbon surface tangles. Indeed, the relations in Figure
\ref{ribbon-tang01/fig} represent planar diagram isotopies, the ones in Figures
\ref{ribbon-tang02/fig} correspond to 3-dimensional diagram isotopies, while those in
Figures \ref{ribbon-tang03/fig} and \ref{ribbon-tang04/fig} can be realized by the moves
in Figures \ref{ribbon-surf09/fig} and \ref{ribbon-surf13/fig} respectively.

Therefore, the map $C(J_1,E,R) \to \S$ defined as the identity on the objects and as
described above on the morphisms is a strict monoidal functor. It remains to see that this 
functor is full and faithful, hence an equivalence of categories.

The fullness simply means that any morphism of $\S$ can be presented as a composition of
expansions of elementary diagrams. First of all, we observe that such a morphism is
assumed to have standard ends (after Proposition \ref{subcategoryS/thm}), hence it can be
represented by a special planar diagram thanks to Proposition \ref{planar-diagram/thm},
since standard ends are flat.

We say that a special planar diagram of a ribbon surface tangle $S$ is in {\sl normal
position} with respect to the $y$-axis if is satisfies the following
properties:
\begin{itemize}\itemsep0pt
\item[\(a)]
each edge of the core graph $G$ projects to a regular smooth arc immersed in
$\left]0,1\right[ \times [0,1]$, such that the $y$-coordinate restricts to a Morse
function on it;
\item[\(b)] 
vertices, half-twists, crossings and local minimum/maximum points for the $y$-coordinate
along the edges of the core graph $G$ have all different $y$-coordinates (in particular,
there are no horizontal tangencies at vertices, half-twists and crossings).
\end{itemize}

We observe that all the elementary diagrams in Figure \ref{ribbon-morph01/fig} are in
normal position with respect to the $y$-axis, hence this is also true for any 
composition of expansions of them.

Figure \ref{ribbon-tang05/fig} shows the different ways, up to planar isotopy preserving
horizontal lines, to put the spots \(a) to \(e) and \(h) of Figure \ref{ribbon-surf06/fig}
in normal position with respect to the $y$-axis, by planar diagram isotopies which do not
introduce any local minimum/maximum for the $y$-coordinate along the edges of the core
graph.

\begin{Figure}[htb]{ribbon-tang05/fig}
{}{Local models for planar diagrams in normal position}
\vskip3pt
\centerline{\fig{}{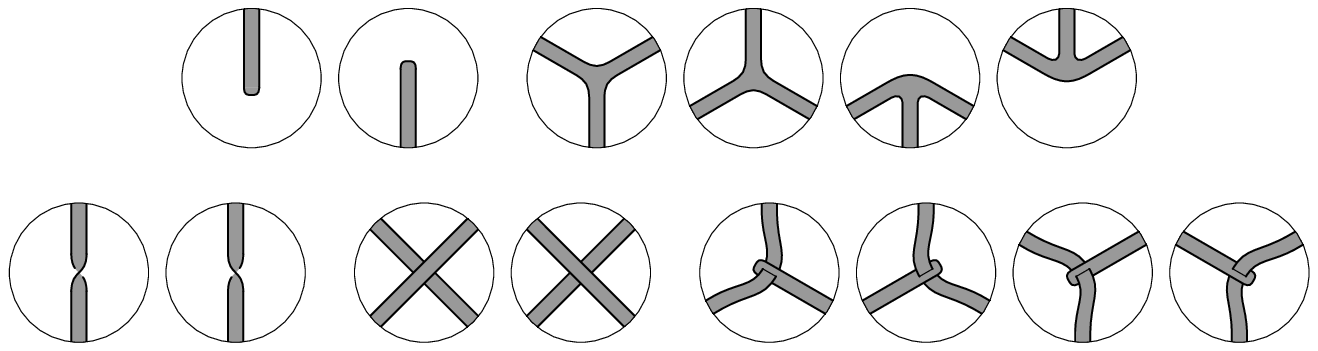}}
\end{Figure}

All such local configurations appear among the elementary diagrams in Figure
\ref{ribbon-morph01/fig}, except for some of those at 3-valent vertices of the core graph.
Namely, only the first of those at a flat 3-valent vertex and the first two of those at a
singular vertex are considered as elementary diagrams. Up to planar isotopy, the others 
can be expressed in terms of them as in Figure \ref{ribbon-tang06/fig}.

\begin{Figure}[htb]{ribbon-tang06/fig}
{}{Expressing local models in terms of elementary diagrams}
\centerline{\fig{}{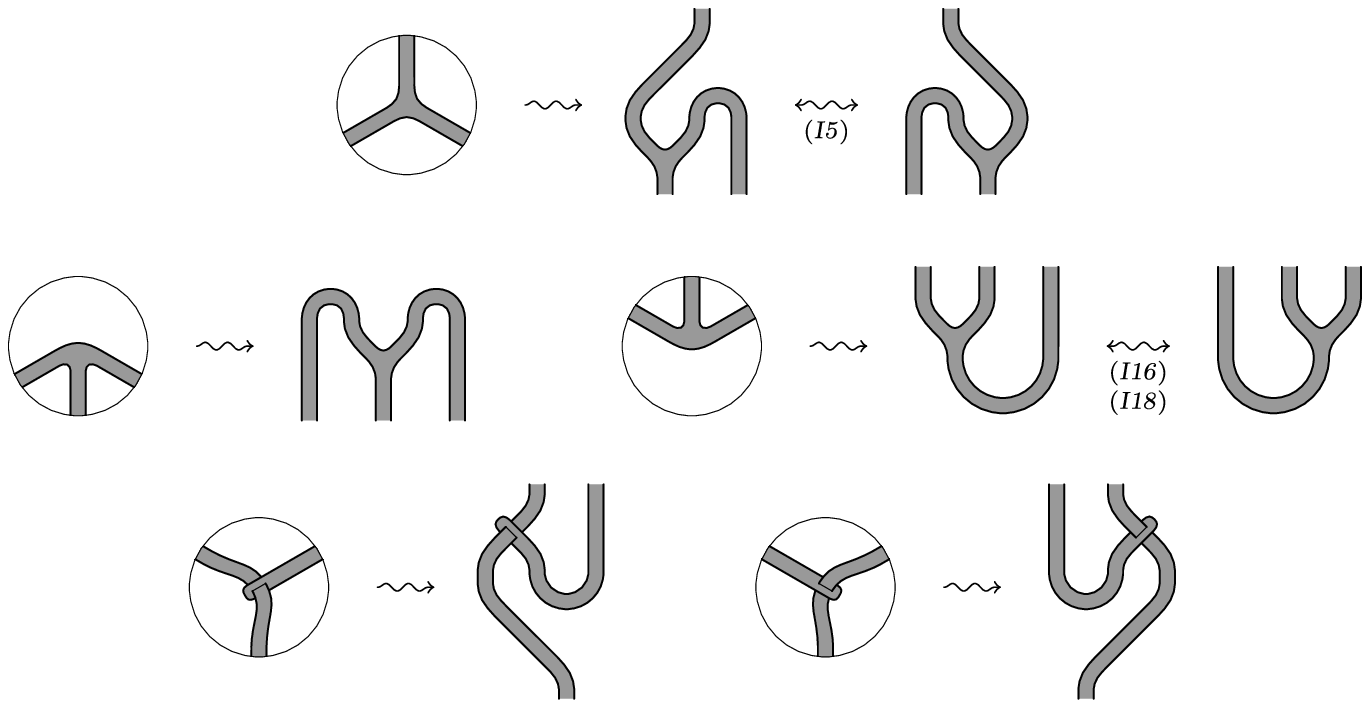}}
\end{Figure}

Now, let $S$ be any ribbon surface tangle with standard ends, represented by a special
planar diagram, as said above. Then, we can perturb such diagram to get normal position
with respect to the $y$-axis. Finally, the local isotopies described in Figure
\ref{ribbon-tang06/fig} can be applied to obtain a presentation of $S$ as composition of 
expansions of elementary diagrams. This completes the proof of the fullness.

In order to see that the functor is faithful, we have to prove that any two of $S$ as
composition of expansions of elementary diagrams are equivalent up to plane isotopy
preserving horizontal lines and the relations \(I1) to \(I22). In other words, that these
relations allow us to interpret any 1-isotopy of ribbon surface tangles with standard
ends.

We first focus on planar isotopy, with an argument analogous to that of Proposition
\ref{isotopy/thm}. Assume we are given an arbitrary smooth planar isotopy relating any two
given presentations of $S$ as above. Denote by $\bar G$ the planar graph associated to the
planar diagram of the core graph $G$ of $S$, whose vertices, other than the projections of
the vertices of $G$ and the 4-valent vertices at the crossings, also include 2-valent
vertices at the half-twists.

We use transversality to perturb the given planar isotopy in such a way there is only a
finite number of critical levels where $\bar G$ presents exactly one of the following:
\begin{itemize}\itemsep0pt
\item[1)]
the $y$-coordinate on one edge is not a Morse function;
\item[2)]
there is a horizontal tangent line at one of the vertices (including half-twists and 
crossings) ;
\item[3)] 
two points among the extremal ones along edges and the vertices (including half-twists and
crossings) have the same $y$-coordinate.
\end{itemize}

Away from these critical levels the diagram is in normal position with respect to the 
$y$-axis and the isotopy can be assumed to preserve horizontal lines. 

We will show that the relations in Figures 3.1.2 e 3.1.3 suffice to realize all changes 
occurring in the diagram when passing through one critical level.

The cases of critical levels of types 1 and 3 are respectively covered by relations
\(I2-2') and \(I1). At critical levels of type 2 the vertex with horizontal tangency is
switching from one to another of its normal positions depicted in Figure
\ref{ribbon-tang05/fig}. At the same time, one extremal point (resp. one pair of canceling
extremal points) for the $y$-coordinate is appearing/disappearing along the edge (resp.
the opposite edges) presenting the horizontal tangency. The cases when the vertex we are
considering is a uni-valent flat vertex or a half-twists correspond respectively to
relations \(I3-3') or \(I4-4'), modulo \(I1) and \(I2-2'). If the vertex is a crossing,
there are four symmetric possibilities and Figure \ref{ribbon-tang07/fig} shows how to
realize one them. The other three can be realized in a similar way, by using relations
\(I7-7'), \(I8) and \(I9). In order to deal with tri-valent vertices, we need to replace
the normal positions that are missing in the elementary diagrams as indicated in Figure
\ref{ribbon-tang06/fig}. After that, modulo \(I1) and \(I2-2'), all the cases reduce to
relation \(I6) and to the modification described in Figure \ref{ribbon-tang08/fig} for
singular vertices, and to relation \(I5) for flat vertices.

\begin{Figure}[htb]{ribbon-tang07/fig}
{}{Switching a crossing}
\centerline{\fig{}{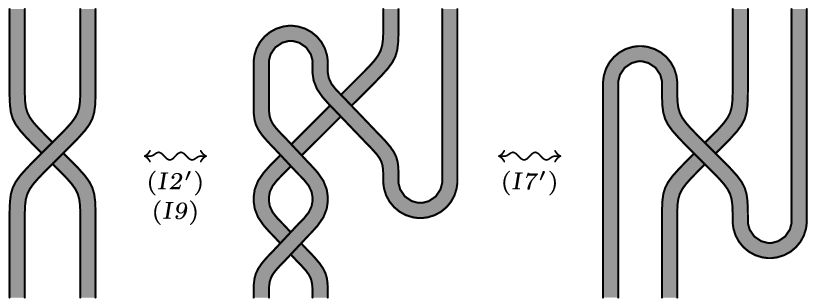}}
\end{Figure}

\begin{Figure}[htb]{ribbon-tang08/fig}
{}{Switching a ribbon intersection}
\centerline{\fig{}{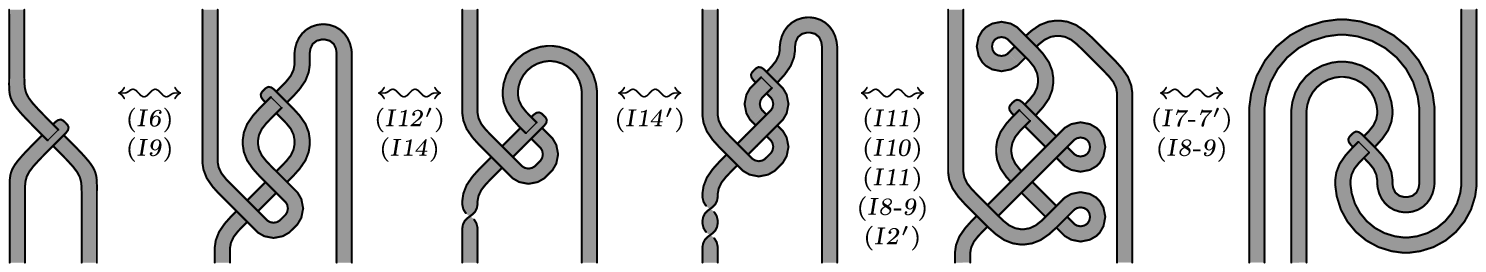}}
\end{Figure}

Now we pass to 3-dimensional diagram isotopy. By Proposition \ref{isotopy/thm}, we have to
interpret in terms of relations the moves \(S3) to \(S22) in Figures
\ref{ribbon-surf09/fig}, \ref{ribbon-surf10/fig} and \ref{ribbon-surf11/fig}. Since those
moves are defined up to planar isotopy, such interpretation is quite immediate. In
particular, we have that: the relations in Figure \ref{ribbon-tang03/fig} give the moves
in Figure \ref{ribbon-surf09/fig}; the relations in the first line of Figure
\ref{ribbon-tang02/fig} give the moves in Figure \ref{ribbon-surf10/fig} and \(S15-16);
the remaining relations in Figure \ref{ribbon-tang02/fig} give the moves \(S14) and \(S17)
to \(S22) in Figure \ref{ribbon-surf11/fig}, modulo the previous moves in the case of
\(S19).

Finally, concerning 1-isotopy moves in \ref{ribbon-surf13/fig} (cf. Proposition
\ref{1-isotopy/thm}), it is enough to observe that, after after expressing them in terms
of special planar diagrams by using move \(S1), they essentially correspond to the
relations in Figure \ref{ribbon-tang04/fig}.
\end{proof}

We can use the elementary diagrams in Figure \ref{ribbon-morph01/fig} to define a tortile
structure on the category $\S$ (cf. Proposition \ref{S-tortile/thm} below). In particular,
we make the following natural choices: \(f) for the braiding isomorphism
$\gamma_{J_1,J_1}$; \(b) and \(b') for the form and coform morphisms $\Lambda_{J_1}$ and
$\lambda_{J_1}$ respectively; the composition $(\lambda_{J_1}\diam \id_{J_1})\circ
(\id_{J_1}\diam \gamma_{J_1,J_1})\circ (\Lambda_{J_1}\diam \id_{J_1})$ for the twist
isomorphism $\theta_{J_1}$ (see Figure \ref{ribbon-tang09/fig}).

\begin{Figure}[htb]{ribbon-tang09/fig}
{}{The twist isomorphism $\theta_1$}
\centerline{\fig{}{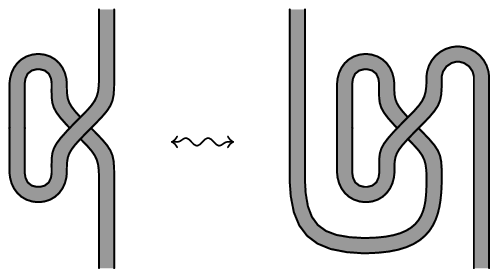}}
\end{Figure}

\medskip

{\sl From now on we will write only $m$ instead of $J_m$ in the subscripts of the
notation for the morphisms of $\S$, to keep that notation as simple as possible.} For
example $\gamma_{1,1}$, $\lambda_1$, $\Lambda_1$ and $\theta_1$ will denote the morphisms
just mentioned above.

\begin{proposition}\label{S-tortile/thm}
$\S$ is a tortile category, with the tortile structure uniquely determined by $J_m^\ast =
J_m$ for any $m \geq 0$, and by the choices above for $\gamma_{1,1}$, $\Lambda_1$,
$\lambda_1$ and $\theta_1$ and by the following recursive formulas for $m > 1$:
$$\Lambda_m = (\id_1 \diam \Lambda_{m-1} \diam \id_1) \circ \Lambda_1 \quad \text{and}
\quad \lambda_m = \lambda_1 \circ (\id_1 \diam \lambda_{m-1} \diam \id_1)\,.$$
\vskip-18pt
\end{proposition}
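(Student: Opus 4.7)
The plan is to exploit the presentation of $\S$ as the strict monoidal category $C(J_1,E,R)$ given by Proposition \ref{S-category/thm}, so that each required axiom in Definition \ref{autonomous/def} and Definition \ref{tortile/def} reduces to a finite check on diagrams that must be recognized as a consequence of the relations \(I1) to \(I22). The elementary morphisms in Figure \ref{ribbon-morph01/fig} supply the candidate braiding, form, coform, and twist on the generator $J_1$; on higher objects $J_m = J_1 \diam \dots \diam J_1$, the braiding $\gamma_{m,n}$ and the twist $\theta_m$ are determined inductively by the hexagon and twist-compatibility axioms (see Definitions \ref{braided-cat/def} and \ref{tortile/def}), while $\Lambda_m$ and $\lambda_m$ are given by the recursive formulas in the statement. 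The work therefore consists of verifying finitely many planar-diagram identities at the level of $J_1$ and its low tensor powers.

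First I would establish that $(\S, \diam\,, J_0, \gamma)$ is a braided strict monoidal category. Naturality of $\gamma_{1,1}$ with respect to each elementary morphism of $E$ is checked by displaying both sides as planar diagrams and recognizing their equivalence via the planar-isotopy relations in Figure \ref{ribbon-tang01/fig} together with relations \(I8) and \(I9), which encode the sliding of a box past a crossing. The two hexagon identities for $\gamma_{J_1,J_1}$, read off the definition of $\gamma_{m,n}$ on products, follow from \(I1) and the standard braid-type isotopies of three strings; extension to all $J_m$ is then forced by the inductive definition. Next, I would treat the autonomous structure: the snake identities required of $\Lambda_1$ and $\lambda_1$ are exactly the straightening moves \(I10) and \(I11) (and their mirrors \(I12-12')), which realize the first Reidemeister-free isotopies of a bent band; the recursive definition of $\Lambda_m$ and $\lambda_m$ then propagates the snake identities to all $m$ by a standard induction that only invokes associativity of $\diam$ and the already-proved properties on $J_1$.

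Finally I would verify the twist axioms. Writing $\theta_1$ as in Figure \ref{ribbon-tang09/fig}, the relation $\theta_1 \diam \id_1 = \gamma_{1,1} \circ (\id_1 \diam \theta_1) \circ \gamma_{1,1}$ and the naturality of $\theta$ with respect to each elementary morphism are instances of the 1-isotopy relations in Figure \ref{ribbon-tang04/fig}; the formula $\theta_{m+n} = \gamma_{n,m} \circ (\theta_n \diam \theta_m) \circ \gamma_{m,n}$ then determines $\theta_{J_m}$ uniquely and consistently, with $\theta_{J_0} = \id_{J_0}$. The self-dual condition $\theta_{J_1^\ast} = (\theta_{J_1})^\ast$ reduces, under $J_1^\ast = J_1$, to a planar identity between two bent, half-twisted bands, and I expect this to be the main obstacle: expanding $(\theta_{J_1})^\ast$ from Definition \ref{autonomous/def} using the chosen $\Lambda_1$ and $\lambda_1$ produces a diagram that must be straightened to $\theta_{J_1}$ by a combination of the snake identities, the twist-braid relation, and the half-twist transfer moves \(I13), \(I14), together with the 1-isotopy moves of Figure \ref{ribbon-tang04/fig}. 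Once this is verified on $J_1$, the corresponding identity on $J_m$ follows from the recursive definitions of $\theta_m$, $\Lambda_m$, $\lambda_m$, using only the already-proved braiding and autonomous axioms.
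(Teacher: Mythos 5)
Your proposal follows essentially the same route as the paper's proof: present $\S$ by the generators and relations of Proposition \ref{S-category/thm}, note that the braided/twist axioms force the recursive definitions of $\gamma_{m,m'}$ and $\theta_m$ (hence uniqueness), verify the duality and twist identities on $J_1$ as diagrammatic consequences of the listed relations, and extend to all $J_m$ by induction. The only discrepancies are in which numbered relations do the work — in the paper the snake identities reduce to \(I1) and \(I2-2') (not \(I10), \(I11), \(I12-12')), the self-duality of the twist uses \(I2), \(I4) and \(I11), and the induction on $m$ uses \(I7-7'), \(I8) and \(I9), while the 1-isotopy relations of Figure \ref{ribbon-tang04/fig} are not involved at all since no ribbon intersections occur in these diagrams — but these are labeling details you could not have pinned down without seeing the figures, and they do not affect the strategy.
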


\begin{proof}
The general relations for the braiding isomorphisms (cf. Definition \ref{braided-cat/def})
imply that $\gamma_{0,m} = \gamma_{m,0} = \id_m$ for any $m \geq 0$ and that the recursive
formula $\gamma_{m,m'} \!= (\id_{m'-1} \!\diam \gamma_{m,1}) \circ (\gamma_{m, m'-1}
\!\diam \id_1) = (\gamma_{m-1, m'} \!\diam \id_1) \circ (\id_{m-1} \!\diam \gamma_{1,
m'})$ holds for any $m, m' \geq 1$. Analogously, for the twist isomorphisms (cf. Section
\ref{categories/sec}) we have $\theta_0 \!= \id_0$ and $\theta_m = \gamma_{1,m-1} \!\circ
(\theta_{m-1} \!\diam \theta_1) \circ \gamma_{m-1,1}$ for any $m \geq 1$. Then, taking
also into account the recursive formulas in the statement, the definitions of
$\gamma_{m,m'}$, $\Lambda_m$, $\lambda_m$ and $\theta_m$ for any $m, m' \geq 0$, are
uniquely determined by the ones of $\gamma_{1,1}$, $\Lambda_1$, $\lambda_1$ and
$\theta_1$.

The verification that the families of morphisms we get in this way from our start\-ing
choices give a true tortile structure is straightforward. For $m = 1$, the defining
properties of forms and coforms reduce to relations \(I1) and \(I2-2'), while the
self-duality of the twist $\theta_m$ follows from relations \(I2), \(I4) and \(I11)
(cf. Figure \ref{ribbon-tang09/fig}). Then, we can proceed by induction on $m$, using
moves \(I7-7'), \(I8) and \(I9).
\end{proof}

\subsection{The categories $\S_n$ and the functors $\up_k^n$%
\label{S/sec}}

As we said at the beginning of the chapter, we want to consider $n$-fold simple coverings 
of $E \times [0,1] \times [0,1]$, branched over ribbon surface tangles. According to 
Section \ref{coverings/sec}, these can be described in terms of ribbon surface tangles 
labeled by transpositions in the permutation group $\Sigma_n$.

Here, we construct the categories $\S_n$ of such labeled ribbon surface tangles up to
certain moves preserving the diffeomorphism type of the covering space for $n \geq 2$.
Moreover, we define stabilization functors $\up_k^n: \S_k \to \S_n$ relating them for any
$n > k \geq 2$.

\medskip

Let $\Gamma_n \subset \Sigma_n$ be the set of all transpositions in the permutation group
$\Sigma_n$ and $\seq\Gamma_n = \cup_{m=0}^\infty \Gamma_n^m$ be the set of (possibly
empty) finite sequences of elements of $\Gamma_n$. For any sequence $\sigma =
(\tp{i_1}{j_1}, \dots, \tp{i_m}{j_m}) \in \seq\Gamma_n$, we denote by $J_\sigma$ the
sequence of intervals $J_m = (a_{m,1}, \dots,a_{m,m})$ (the standard object of $\S$
defined at page \pageref{S-standard}) with each $a_{m,k}$ labeled by the corresponding
transposition $\tp{i_k}{j_k}$.

\begin{definition}\label{lrs-tangle/def}
Let $n \geq 2$ and $\sigma_0 = (\tp{i_1^0}{j_1^0}, \dots, \tp{i_{m_0}^0}{j_{m_0}^0})$ and
$\sigma_1 = (\tp{i_1^1}{j_1^1}, \dots, \tp{i_{m_1}^1}{j_{m_1}^1})$ be two sequences 
in $\seq\Gamma_n$. By an {\sl $n$-labeled ribbon surface tangle} from $J_{\sigma_0}$ 
to $J_{\sigma_1}$ we mean a ribbon surface tangle $S$ from $J_{m_0}$ to $J_{m_1}$ with a
$\Gamma_n$-label\-ing satisfying the following properties:
\begin{itemize}
\item[1)]
each region in the diagram of $S$ is labeled by a transposition in $\Gamma_n$ associated
to the corresponding meridian, in such a way that the Wirtinger relations in $\pi_1(E 
\times [0,1] \times [0,1] - S)$ are respected (cf. Section \ref{coverings/sec});
\item[2)]
the labels on $\partial_0 S$ and $\partial_1 S$ coincide with those in $J_{\sigma_0}$ and
$J_{\sigma_1}$ respectively.
\end{itemize}\vskip-\lastskip
\vskip-12pt
\end{definition}

We notice that property 1 is the same as requiring that the labeling represents the
monodromy homomorphism $\omega_p: \pi_1(E \times [0,1] \times [0,1] - S) \to \Sigma_n$ of
an $n$-fold simple covering of $p: W \to E \times [0,1] \times [0,1]$ branched over $S$
(cf. Section \ref{coverings/sec}). By \cite{Mo78} we know that $W$ a is relative
4-dimensional 2-handlebody cobordism between the 3-dimensional 1-handlebodies $M_0 =
p^{-1}(E \times \{0\} \times [0,1])$ and $M_1 = p^{-1}(E \times \{1\} \times [0,1])$.
Moreover, as discussed in Section \ref{coverings/sec}, labeled isotopy and the moves \(R1)
and \(R2) in Figure \ref{ribbon-moves01/fig} (cf. Figure \ref{coverings05/fig}) do not
change $W$ up to diffeomorphism.

\begin{Figure}[htb]{ribbon-moves01/fig}
{}{Ribbon moves ($i,j,k$ and $l$ all different)}
\vskip3pt
\centerline{\fig{}{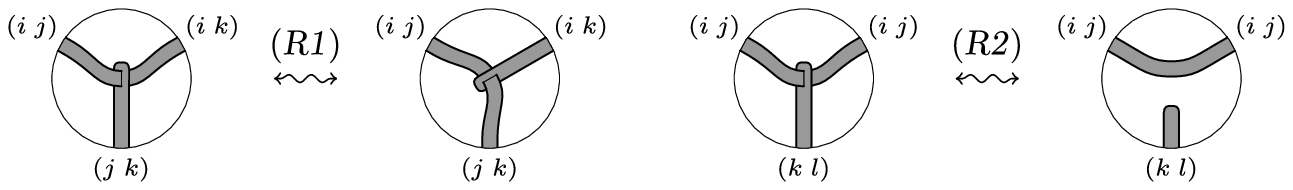}}
\vskip-3pt
\end{Figure}

Actually, in Section \ref{Theta/sec} we will also see that the handlebody structures
of $M_0$ and $M_1$ are uniquely determined, while that of $W$ is determined only up to
2-equivalence, depending on the choice of an adapted 1-handlebody decomposition of $S$. We
do not know whether labeled isotopy preserves the 2-equivalence class of $W$, but we will
show in Proposition \ref{theta/thm} that labeled 1-isotopy does and that the same holds 
for moves \(R1) and \(R2). This is the motivation for the following definition.

\begin{definition}\label{lrs-equivalence/def}
Two $n$-labeled ribbon surface tangles are said to be {\sl equivalent} if they are related
by the labeled version of the 1-isotopy moves \(S1) to \(S26) in Figures
\ref{ribbon-surf08/fig}, \ref{ribbon-surf09/fig}, \ref{ribbon-surf10/fig},
\ref{ribbon-surf11/fig} and \ref{ribbon-surf13/fig}, and by the two covering moves \(R1)
and \(R2) in Figure \ref{ribbon-moves01/fig}.
\end{definition}

\begin{Figure}[b]{ribbon-moves02/fig}
{}{Other moves for labeled ribbon surfaces ($i,j,k$ and $l$ all different)}
\centerline{\fig{}{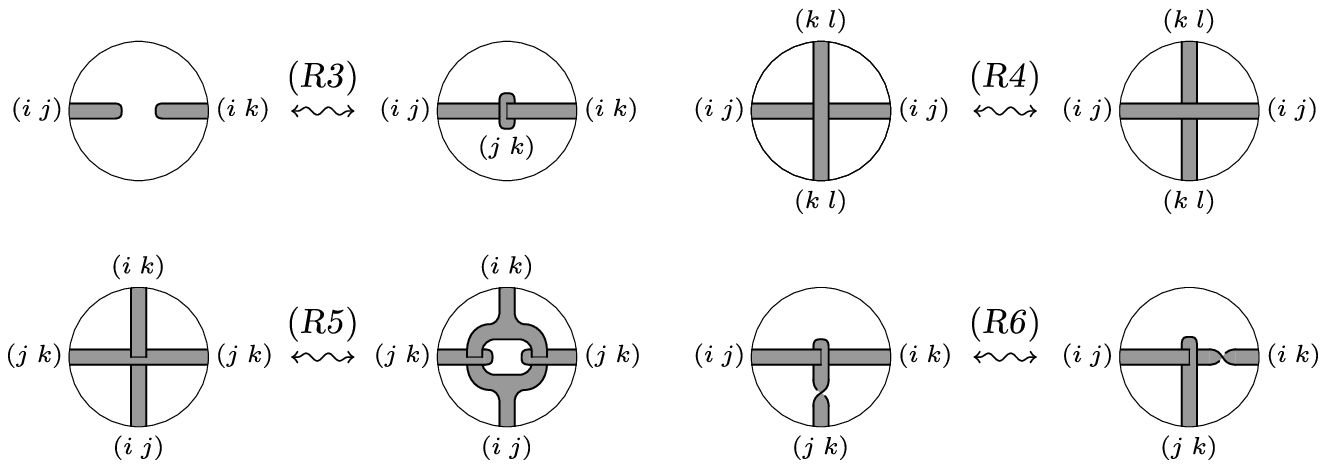}}
\vskip-3pt
\end{Figure}

Before going on, we recall the auxiliary moves \(R3) to \(R6) in Figure
\ref{ribbon-moves02/fig}, which were introduced in Section \ref{coverings/sec} (cf. Figure
\ref{coverings07/fig}). These have been shown to derive from the equivalence moves \(R1)
and \(R2) up to 1-isotopy (cf. Proposition \ref{moves-aux/thm}), hence they are
equivalence moves for labeled ribbon surface tangles as well. Once we will prove that
\(R1) and \(R2) preserve the 2-equivalence class of the covering handlebody $W$, we will
know that the same holds for the moves \(R3) to \(R6).

At this point, we can define the category $\S_n$ of $n$-labeled ribbon surface tangles for
any $n \geq 2$, in the following way. The objects of $\S_n$ are the sequences of labeled
intervals $J_\sigma$ with $\sigma = (\tp{i_1}{j_1}, \dots, \tp{i_m}{j_m}) \in
\seq\Gamma_n$ (cf. the notation introduced before Definition \ref{lrs-tangle/def}), while
the morphisms in $\S_n$ with source $J_{\sigma_0}$ and target $J_{\sigma_1}$ are the
$n$-labeled ribbon surface tangles from $J_{\sigma_0}$ to $J_{\sigma_1}$, considered up
equivalence in the sense of Definition \ref{lrs-equivalence/def}. The composition of
morphisms in $\S_n$ is just the labeled version of that in $\S$. Similarly, we define a
strict monoidal structure on $\S_n$ having as the product $\diam$ the labeled version
of that of $\S$. In particular, $J_\sigma \diam J_{\sigma'} = J_{\sigma \diam \sigma'}$ 
for every $\sigma,\sigma' \in \seq\Gamma_n$.

The next Propositions \ref{Sn-category/thm} and \ref{Sn-tortile/thm} generalize
Propositions \ref{S-category/thm} and \ref{S-tortile/thm} to the categories of labeled
ribbon surface tangles $\S_n$ with $n \geq 2$. 

In Propositions \ref{Sn-category/thm}, $E_n$ will denote the set of the $n$-labeled
versions of the elementary diagrams in Figure \ref{ribbon-morph01/fig}, while $R_n$ will
denote the set consisting of the $n$-labeled versions of relations \(I1) to \(I22) in
Figures \ref{ribbon-tang01/fig}, \ref{ribbon-tang02/fig}, \ref{ribbon-tang03/fig} and
\ref{ribbon-tang04/fig} and of the relations \(R1) and \(R2) in Figure
\ref{ribbon-moves03/fig}.

\begin{Figure}[htb]{ribbon-moves03/fig}
{}{Covering relations in $\S_n$ ($i,j,k$ and $l$ all different)}
\centerline{\fig{}{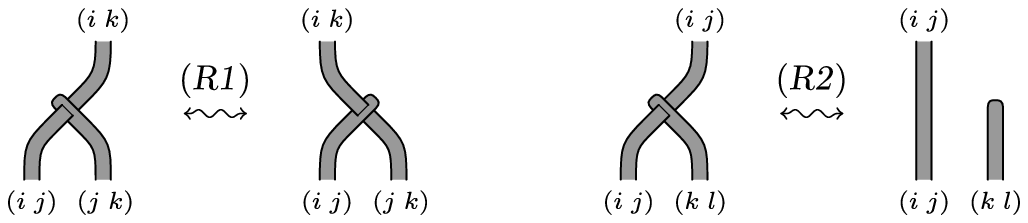}}
\vskip-6pt
\end{Figure}

\begin{proposition}\label{Sn-category/thm}
For any $n \geq 2$, the strict monoidal category $\S_n$ is equivalent to the strict
monoidal category $C(\{J_\tau\}_{\tau \in \Gamma_n}, E_n, R_n)$ generated by the objects
$J_\tau$ with $\tau \in \Gamma_n$ and the set of elementary morphisms $E_n$ modulo the
relations $R_n$, where $E_n$ and $R_n$ are the sets defined above. The equivalence is
given by a monoidal functor $C(\{J_\tau\}_{\tau \in \Gamma_n}, E_n, R_n) \to
\S_n$, which is the identity on the objects and send each morphism of $C(\{J_\tau\}_{\tau
\in \Gamma_n}, E_n, R_n) \to \S_n$ represented by a given $n$-labeled diagram to the
morphism of $\S$ represented by the corresponding $n$-labeled ribbon surface tangle.
\end{proposition}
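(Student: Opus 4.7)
The plan is to adapt the proof of Proposition \ref{S-category/thm} to the labeled setting, with the main additional ingredient being the handling of the Wirtinger label consistency and of the two covering relations \(R1) and \(R2). First, I would define a strict monoidal functor $F_n: C(\{J_\tau\}_{\tau \in \Gamma_n}, E_n, R_n) \to \S_n$ by the same ``formal propagation'' procedure as before: send each labeled elementary object $J_\tau$ to itself, send each labeled elementary diagram in $E_n$ to the morphism of $\S_n$ represented by the corresponding $n$-labeled ribbon surface tangle, and extend to compositions and products (expansions of identities). Well-definedness on the level of morphisms requires checking that every relation in $R_n$ is sent to a genuine equivalence of $n$-labeled ribbon surface tangles in the sense of Definition \ref{lrs-equivalence/def}: the labeled versions of \(I1) to \(I22) represent labeled planar, 3-dimensional and 1-isotopy, exactly as in the proof of Proposition \ref{S-category/thm}, while \(R1) and \(R2) are by definition the two distinguished covering moves, and are thus built into the equivalence relation on $\Mor \S_n$.

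For fullness, given any morphism $S: J_{\sigma_0} \to J_{\sigma_1}$ of $\S_n$, by Proposition \ref{planar-diagram/thm} we may represent $S$ by a special labeled planar diagram with flat ends. Repeating the argument in the proof of Proposition \ref{S-category/thm}, I would perturb this diagram to be in normal position with respect to the $y$-axis and use the local identifications of Figure \ref{ribbon-tang06/fig} to decompose it into a composition of expansions of elementary diagrams. The only new point is the labeling bookkeeping: the label on each region of the diagram is uniquely determined by the labels on $\partial_0 S$ (which are prescribed by $\sigma_0$) together with the Wirtinger relations at the crossings, half-twists and ribbon intersections, and these are exactly the constraints embedded in the labeled elementary diagrams of $E_n$. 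Hence the decomposition is a composition of expansions of elementary morphisms of $E_n$ that lands on $J_{\sigma_1}$.

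For faithfulness, the task is to show that any two such decompositions of the same morphism of $\S_n$ are related by a finite sequence of expansions of relations in $R_n$. The discussion of critical levels (types 1, 2 and 3) in the proof of Proposition \ref{S-category/thm} transfers verbatim to the labeled setting, since passing through a critical level of a planar isotopy, of a 3-dimensional diagram isotopy, or of a 1-isotopy, never alters the Wirtinger labeling: it only involves local replacements that are precisely the labeled versions of \(I1) to \(I22). Two decompositions that differ by a covering move are directly related by an expansion of \(R1) or \(R2). Thus every equivalence in $\S_n$ is realised in $C(\{J_\tau\}_{\tau \in \Gamma_n}, E_n, R_n)$.

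The main obstacle I anticipate is not conceptual but notational, namely verifying carefully that the labeling survives the normal position and critical level analysis of Proposition \ref{S-category/thm}. This amounts to checking that each of the moves in Figures \ref{ribbon-tang05/fig}, \ref{ribbon-tang06/fig}, \ref{ribbon-tang07/fig} and \ref{ribbon-tang08/fig} admits a labeled lift for every admissible labeling, and that in particular the auxiliary local replacements introduced to reach elementary form (for instance at tri-valent or singular vertices) are compatible with the Wirtinger constraints at those vertices. Since the labels are determined by monodromy and every elementary diagram in $E_n$ exists in all admissible label patterns, this verification is systematic and finite, so the same proof as in the unlabeled case yields the desired equivalence of strict monoidal categories.
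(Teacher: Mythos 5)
Your proposal is correct and follows essentially the same route as the paper: the paper's own proof simply observes that the relations \(R1) and \(R2) in $R_n$ express the covering moves of Definition \ref{lrs-equivalence/def} in terms of labeled elementary diagrams and then invokes Proposition \ref{S-category/thm}, which is exactly the reduction you carry out (with the labeled isotopy analysis spelled out in more detail, the labels being forced by the Wirtinger constraints throughout).
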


\begin{proof}
We observe that the relations \(R1) and \(R2) in Figure \ref{ribbon-moves03/fig} express
the homonymous covering moves of Figure \ref{ribbon-moves01/fig} in terms of $n$-labeled
elementary diagrams (and their expansions). Then, the statement follows immediately from
Proposition \ref{S-category/thm} and Definition \ref{lrs-equivalence/def}.
\end{proof}

\begin{Figure}[b]{ribbon-morph02/fig}
{}{Some elementary morphisms in $\S_n$}
\centerline{\fig{}{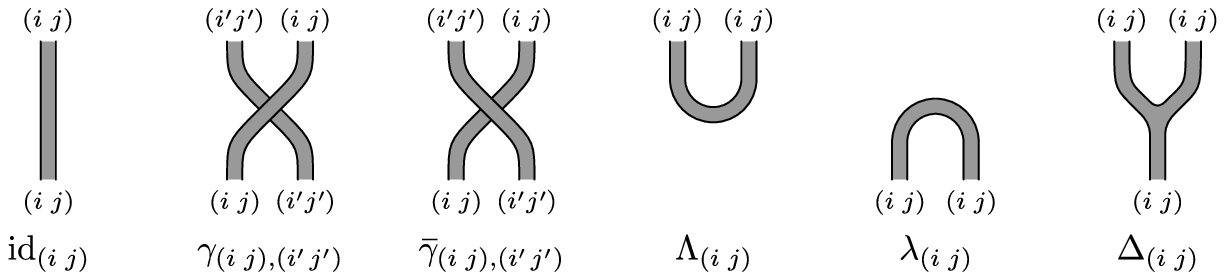}}
\vskip-6pt
\end{Figure}

{\sl Analogously to what we did for $\S$, we will write $\sigma$ instead of $J_\sigma$ in
the subscripts of the notation for the morphisms of $\S_n$.} Adopting this convention,
Figure \ref{ribbon-morph02/fig} shows some elementary morphisms of $\S_n$.

\begin{proposition}\label{Sn-tortile/thm}
For any $n \geq 2$, $\S_n$ is a braided category with tortile structure, where for
$\sigma$ and $\sigma'$ sequences in $\seq\Gamma_n$ of length $m$ and $m'$ respectively:
the braiding morphism $\gamma_{\sigma,\sigma'}$ is the braiding morphism $\gamma_{m,m'}$
of $\S$ labeled according to $\sigma$ and $\sigma'$ (see Figure \ref{ribbon-morph02/fig}
and Figure \ref{ribbon-morph03/fig}, and note that $\gamma_{\sigma,\sigma'}^{-1} =
\bar\gamma_{\sigma', \sigma}$); the dual object of $J_\sigma$ is $J_\sigma^{\,\ast} =
J_{\sigma^\ast}$, with $\sigma^\ast$ the sequence obtained by reversing the order of
$\sigma$; the coform, form and twist morphisms $\Lambda_\sigma$, $\lambda_\sigma$ and
$\theta_\sigma$ are the homologous morphisms $\Lambda_m$, $\lambda_m$ and $\theta_m$ of
$\S$ labeled according to $\sigma$ (cf. Figure \ref{ribbon-morph02/fig}).
\end{proposition}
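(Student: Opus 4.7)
The plan is to transport the tortile structure from $\S$ (Proposition \ref{S-tortile/thm}) to $\S_n$ by verifying that the indicated labelings are consistent and that all the required axioms survive the passage to the labeled setting. The key tool is Proposition \ref{Sn-category/thm}, which presents $\S_n$ through the same generators and relations as $\S$ plus the covering relations \(R1) and \(R2) in Figure \ref{ribbon-moves03/fig}.

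First I would verify that the candidate morphisms $\gamma_{\sigma,\sigma'}$, $\Lambda_\sigma$, $\lambda_\sigma$, $\theta_\sigma$ really exist as morphisms of $\S_n$. Since they are built only out of the elementary diagrams \(b), \(b'), \(e), \(e'), \(f), \(f') of Figure \ref{ribbon-morph01/fig}, no ribbon intersection occurs in any of them: the only singularities in the diagrams are ribbon crossings, where by the rule in Figure \ref{coverings02/fig} the monodromy of the undercrossing band is conjugated only locally and the external labels simply follow the strand through the crossing. Consequently, cups, caps, half-twists and crossings just carry the transposition labels along (note that a transposition is its own inverse, so cups and caps connect strands with the same label regardless of orientation). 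This makes each of the listed morphisms a well-defined $n$-labeled ribbon surface tangle whose source and target labels match the prescribed $\sigma$ and $\sigma'$.

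Next I would check the braided and tortile axioms. All of them were established for $\S$ in Proposition \ref{S-tortile/thm} using (expansions of) the relations \(I1)--\(I22). By Proposition \ref{Sn-category/thm} the labeled versions of these relations belong to $R_n$, so the very same formal manipulations are valid in $\S_n$ once every diagram is decorated with the induced labels. This handles the hexagon identities for $\gamma_{\sigma,\sigma'}$, the coform/form triangular identities for $(\Lambda_\sigma,\lambda_\sigma)$ using the recursive formulas, the twist identity $\theta_{\sigma\diam\sigma'}=\gamma_{\sigma',\sigma}\circ(\theta_{\sigma'}\diam\theta_\sigma)\circ\gamma_{\sigma,\sigma'}$, and the self-duality $\theta_{\sigma^*}=(\theta_\sigma)^*$. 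The latter uses $J_\sigma^{\,\ast}=J_{\sigma^*}$, which is consistent at the level of labels exactly because each transposition equals its inverse, so reversing a strand does not alter its label.

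Finally, I would verify the naturality of $\gamma$ and $\theta$ with respect to an arbitrary morphism $f: J_\sigma\to J_\tau$ of $\S_n$. This reduces to checking the usual square $\gamma_{\tau,\sigma'}\circ(f\diam\id_{\sigma'})=(\id_{\sigma'}\diam f)\circ\gamma_{\sigma,\sigma'}$ (and the analog for $\theta$); the underlying unlabeled identity holds in $\S$ by Proposition \ref{S-tortile/thm}, and both sides carry the same boundary labels because the braiding merely permutes strands without interacting with $f$. The only delicate point — and the step I expect to require the most care — is to confirm that the covering relations \(R1) and \(R2), which enter the presentation of $\S_n$ but have no analog in $\S$, do not spoil these verifications; but since \(R1) and \(R2) are supported inside small cells and all the structural morphisms above can be isotoped away from any such cell, they act trivially on the structural morphisms and the argument goes through.
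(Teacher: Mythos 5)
Your proposal is correct and follows essentially the same route as the paper, which simply observes that the statement is an immediate consequence of Proposition \ref{S-tortile/thm} once the labeling is taken into account; your elaboration (labels propagate through cups, caps, half-twists and crossings since transpositions are involutions, and the labeled versions of \(I1)--\(I22) used in the unlabeled proof belong to $R_n$) is exactly the content the paper leaves implicit. The only minor remark is that the extra relations \(R1)--\(R2) could never ``spoil'' identities already established, since adding relations to a quotient only identifies more morphisms; the relevant point for naturality is just that it must be checked against all labeled tangles, which the same isotopy argument handles.
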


\begin{proof}
This is an immediate consequence of Proposition \ref{S-tortile/thm}, once we take into
account the labeling.
\end{proof}

\begin{Figure}[htb]{ribbon-morph03/fig}
{}{The braiding isomorphisms in $\S_n$}
\vskip-6pt
\centerline{\fig{}{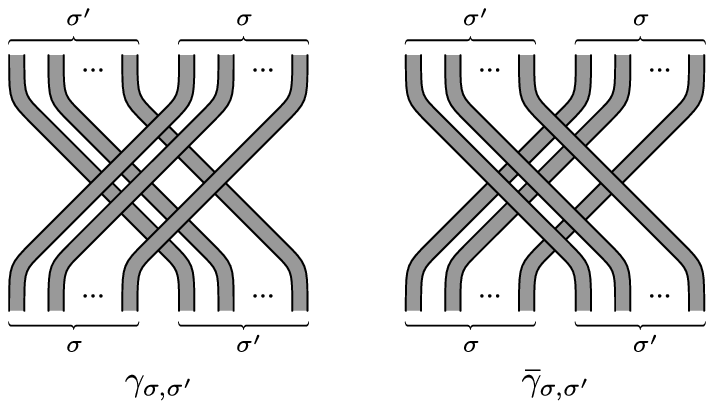}}
\vskip-6pt
\end{Figure}

Now, for any $n > k \geq 2$ we define the stabilization functor $\up_k^n: \S_k \to \S_n$, 
which is analogous to the functor $\up_k^n: \K_k \to \K_n$ introduced in Definition
\ref{K-stabilization/def}. Similarly to the case of Kirby tangles, we denote by
$\iota_k^n: \S_k \to \S_n$ the faithful functor induced by the inclusion $\Gamma_k
\subset \Gamma_n$, which considers any $k$-labeled ribbon surface tangle as an
$n$-labeled one.

\begin{definition}\label{S-stabilization/def}
Given $n > k \geq 1$, let $\sigma_{n \red k} = (\tp{n}{n{-}1}, \dots, \tp{k{+}1}{k})$
and let $\id_{\sigma_{n \red k}}\!$ be the identity morphism of $J_{\sigma_{n \red k}}\!$ 
in $\S_n$. Then, for any $n > k \geq 2$ the {\sl stabilization functor} $\up_k^n: 
\S_k \to \S_n$ is defined by:
$$
\begin{array}{c}
\up_k^n J_\sigma = J_{\sigma_{n \red k}} \!\diam \iota_k^n(J_\sigma) 
\,\text{ for any } J_\sigma \in \Obj \S_k\,,\\[6pt]
\up_k^n S = \id_{\sigma_{n \red k}} \!\diam \iota_k^n(S) 
\,\text{ for any } S \in \Mor \S_k\,.
\end{array}$$\vskip-\lastskip
\end{definition}\vskip-\lastskip

\begin{Figure}[b]{ribbon-morph04/fig}
{}{The comultiplication morphism $\Delta_\sigma\,$ in $\S_n$}
\vskip-3pt
\centerline{\fig{}{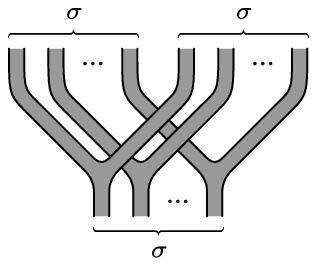}}
\vskip-6pt
\end{Figure}

For any transposition $\tp{i}{j} \in \Gamma_n$, let $\Delta_{\tp{i}{j}}: J_{\tp{i}{j}} \to
J_{\tp{i}{j}} \diam J_{\tp{i}{j}}$ be the labeled ribbon surface tangle presented in
Figure \ref{ribbon-morph02/fig}. As shown in Figure \ref{ribbon-morph04/fig}, this
definition extends inductively to $\Delta_\sigma : J_\sigma \to J_\sigma \diam J_\sigma$
for any sequence $\sigma \in \seq\Gamma_n$, by putting
$$
\Delta_\sigma = \Delta_{\sigma' \diam \sigma''} = (\id_{\sigma'} \diam 
\gamma_{\sigma',\sigma''} \diam \id_{\sigma''}) \circ (\Delta_{\sigma'} \diam 
\Delta_{\sigma''})\,,
$$
which turns out to not depend on the decomposition $\sigma = \sigma' \diam \sigma''$ with 
$\sigma',\sigma'' \in \seq\Gamma_n$.

Up to labeled 1-isotopy and move \(I18) in Figure \ref{ribbon-tang03/fig}, $\Delta_\sigma$
satisfies the coassociativity property, that is:
$$
(\Delta_\sigma \diam \id_\sigma) \circ \Delta_\sigma =
(\id_\sigma \diam \Delta_\sigma) \circ \Delta_\sigma\,.
$$

\medskip

\begin{definition}\label{S-reducible/def}
Given $n > k \geq 1$, we say that a labeled ribbon surface tangle $S \in \S_n$ from
$J_{\sigma_{n \red k}} \!\diam J_{\sigma_0}$ to $J_{\sigma_{n \red k}} \!\diam 
J_{\sigma_1}$ is {\sl $k$-reducible} if it is of the form
$$S = (\id_{\sigma_{n \red k}} \!\diam T) \circ 
(\Delta_{\sigma_{n \red k}} \!\diam \id_{\sigma_0})\,,$$
for some $T: J_{\sigma_{n \red k}} \!\diam J_{\sigma_0} \to J_{\sigma_1} \in \S_n$ (see
Figure \ref{ribbon-stab01/fig}). We will refer to the vertical ribbons forming
$\id_{\sigma_{n \red k}}$ as the {\sl reduction ribbons} of $S$.

The composition of two $k$-reducible labeled tangles is still $k$-reducible (by
coassociativity) and we denote by $\S_{n \red k}$ the subcategory of $\S_n$, whose
objects are $J_{\sigma_{n \red k}} \!\diam J_\sigma$ with $\sigma \in \seq\Gamma_n$ and
whose morphisms are $k$-reducible labeled tangles.\break In particular, for any $n \geq
2$, we denote by $\S_n^c$ the subcategory $\S_{n \red 1}$ of $1$-reducible labeled tangles
in $\S_n$.
\end{definition}

\begin{Figure}[htb]{ribbon-stab01/fig}
{}{The generic $k$-reducible morphism $S \in \S_{n \red k}$}
\vskip-15pt
\centerline{\fig{}{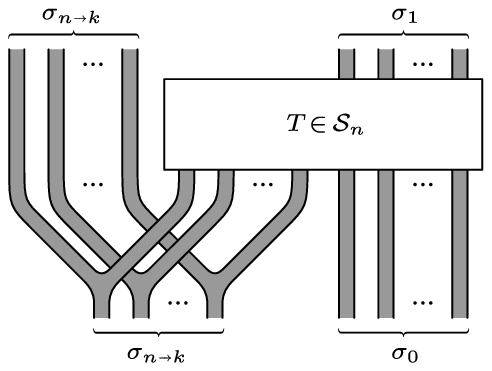}}
\vskip-6pt
\end{Figure}

\begin{Figure}[b]{ribbon-stab02/fig}
{}{The product $S \rdiam S'$ of two morphisms in $\S_{n \red k}$}
\vskip-3pt
\centerline{\fig{}{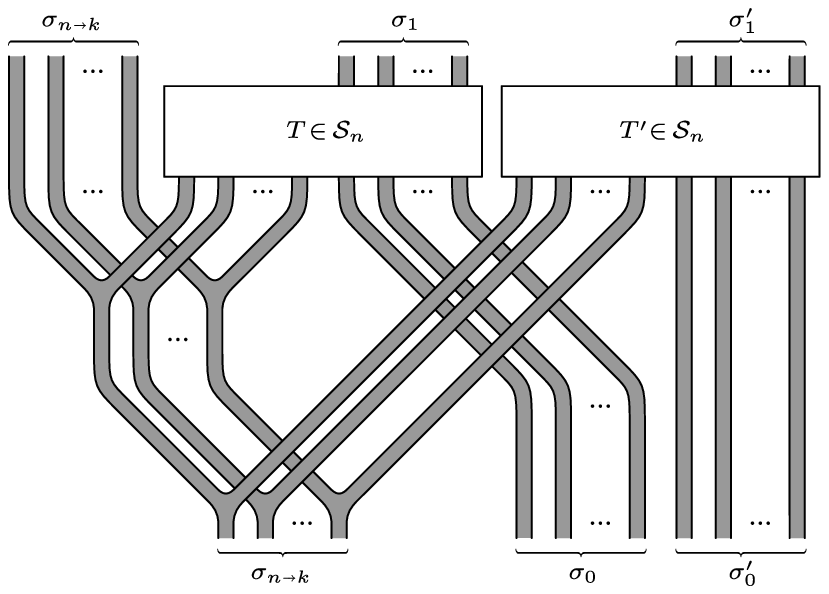}}
\vskip-6pt
\end{Figure} 

Since the subcategory $\S_{n \red k}$ of $k$-reducible ribbon surface tangles is not
closed with respect to the product $\diam: \S_n \times \S_n \to \S_n$, we endow it with a
product structure ${\rdiam}: \Mor_{\S_{n \red k}} \times \Mor_{\S_{n\red k}} \to
\Mor_{\S_{n \red k}}$, similarly to what we have done for $k$-reducible Kirby tangles.
Namely, given two morphisms $S = (\id_{\sigma_{n \red k}} \! \diam T) \circ
(\Delta_{\sigma_{n \red k}} \!\diam \id_{\sigma_0}): J_{\sigma_{n \red k}} \!\diam
J_{\sigma_0}\to J_{\sigma_{n \red k}} \!\diam J_{\sigma_1}$ and $S' = (\id_{\sigma_{n \red
k}} \! \diam T') \circ (\Delta_{\sigma_{n \red k}} \!\diam \id_{\sigma'_0}): 
\pagebreak
J_{\sigma_{n
\red k}} \!\diam J_{\sigma_0'}\to J_{\sigma_{n \red k}} \!\diam J_{\sigma_1'},$ in $\S_{n
\red k}$, their product $S \rdiam S': J_{\sigma_{n \red k}} \!\diam J_{\sigma_0 \diam
\sigma'_0}\to J_{\sigma_{n \red k}} \!\diam J_{\sigma_1 \diam \sigma'_1}$ is defined by:
\begin{eqnarray*}
S \rdiam S' 
&=& S \circ (\id_{\sigma_{n \red k}} \!\diam \gamma_{\sigma'_1,\sigma_0}) \circ (S'
\diam \id_{\sigma_0}) \circ (\id_{\sigma_{n \red k}} \!\diam 
\gamma^{-1}_{\sigma'_0,\sigma_0})\\
&=& (\id_{\sigma_{n \red k}} \!\diam T \diam T') \circ (\Delta_{\sigma_{n \red k}}
\!\diam \gamma_{\sigma_{n \red k},\sigma_0} \diam \id_{\sigma'_0}) \circ
(\Delta_{\sigma_{n \red k}} \!\diam \id_{\sigma_0 \diam \sigma'_0})\,.
\end{eqnarray*}
These two expressions for $S \rdiam S'$ are related by diagram isotopy, as the reader can
easily realize by looking at Figure \ref{ribbon-stab02/fig} representing the second one.
The associativity of $\,\rdiam\,$ is a consequence of the coassociativity property of
$\Delta$ and its unit is given by $\id_{\sigma_{n \red k}}$. Observe that, as in the case
of the category of $k$-reducible Kirby tangles,\break $\rdiam\,$ is a useful tool, but it
does not define a monoidal structure on $\S_{n \red k}$, since the product of the
compositions of two morphisms, does not coincide with the composition of the corresponding
products.

\begin{proposition}\label{S-monoidal-stab/thm}
For any $n > k \geq 2$, the image $\up_k^n \S_k$ of the stabilization functor is a
subcategory $\S_{n \red k}$. Moreover, for any two morphisms $S$ and $S'$ in $\S_k$, we
have $\up_k^n(S \diam S') = (\up_k^n S) \rdiam (\up_k^n S')$. Hence, the product
$\,\rdiam\,$ defines a monoidal structure on the subcategory $\up_k^n \S_k$.
\end{proposition}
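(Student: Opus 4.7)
\emph{Proposal.} The plan is to follow closely the three-step structure used in the proof of the Kirby-tangle analog, Proposition \ref{K-monoidal-stab/thm}, translating the reduction argument from Kirby diagrams into the setting of labeled ribbon surface tangles. Specifically, I will first establish the elementary inclusion $\up_{n-1}^n \S_{n-1} \subset \S_{n \red (n-1)}$ by exhibiting an explicit $(n-1)$-reducible presentation of the stabilization; then extend to arbitrary $n > k \geq 2$ by induction on $n - k$; and finally verify the monoidal compatibility directly from the definitions.

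For the elementary case, given any morphism $S: J_{\sigma_0} \to J_{\sigma_1}$ in $\S_{n-1}$, I would rewrite the stabilization $\up_{n-1}^n S = \id_{\tp{n}{n-1}} \diam \iota_{n-1}^n(S)$ as
$$(\id_{\tp{n}{n-1}} \diam \epsilon_{\tp{n}{n-1}} \diam \iota_{n-1}^n(S)) \circ (\Delta_{\tp{n}{n-1}} \diam \id_{\sigma_0})\,,$$
which is visibly $(n-1)$-reducible in the sense of Definition \ref{S-reducible/def}, with $T = \epsilon_{\tp{n}{n-1}} \diam \iota_{n-1}^n(S)$. Verifying this identity reduces to checking the local counit-like relation $(\id_{\tp{n}{n-1}} \diam \epsilon_{\tp{n}{n-1}}) \circ \Delta_{\tp{n}{n-1}} = \id_{\tp{n}{n-1}}$ on the reduction ribbon, starting from the elementary pictures of $\Delta$ and $\epsilon$ in Figure \ref{ribbon-morph02/fig}. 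This is the ribbon-surface counterpart of the Kirby-tangle cancellation shown in Figure \ref{kirby-stab04/fig}.

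The general case $n > k \geq 2$ then follows by induction on $n - k$, using the factorization $\up_k^n = \up_{n-1}^n \circ \up_k^{n-1}$ together with the coassociativity of the iterated comultiplication $\Delta_{\sigma_{n \red k}}$ (cf. Figure \ref{ribbon-morph04/fig}) to repackage the resulting nested reduction ribbons into a single $k$-reducible presentation. The compatibility identity $\up_k^n(S \diam S') = (\up_k^n S) \rdiam (\up_k^n S')$ follows by a direct unfolding of the definitions of $\up_k^n$, $\diam$, and $\rdiam$, exactly parallel to the corresponding step in the Kirby case; combined with the associativity of $\rdiam$ and its unit $\id_{\sigma_{n \red k}}$, this yields the claimed monoidal structure on $\up_k^n \S_k$.

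The main obstacle I expect is the diagrammatic verification of the local counit-like cancellation in the elementary step. Although formally nothing but the counit axiom of a braided Hopf algebra, it requires checking that the specific ribbon surfaces representing $\Delta_{\tp{n}{n-1}}$ and $\epsilon_{\tp{n}{n-1}}$ can be brought to the identity using only the 1-isotopy moves of Figures \ref{ribbon-surf08/fig}--\ref{ribbon-surf13/fig}. If this purely 1-isotopic cancellation fails on the nose, the ribbon moves \(R1) and \(R2) of Figure \ref{ribbon-moves01/fig} or the auxiliary ones of Figure \ref{ribbon-moves02/fig} would have to be invoked, which would still suffice for the conclusion but render the picture analysis more delicate.
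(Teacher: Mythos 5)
Your proposal is correct and is essentially the paper's own argument: the paper's one-line proof ("expand a tongue from each stabilization ribbon to the box containing $S$") is exactly the geometric realization, via 1-isotopy moves \(S24)--\(S25), of the counit-type cancellation $(\id_{\tp{n}{n{-}1}} \diam \epsilon_{\tp{n}{n{-}1}}) \circ \Delta_{\tp{n}{n{-}1}} = \id_{\tp{n}{n{-}1}}$ that you isolate as the key step, and the monoidal compatibility is read off from the definition of $\rdiam$ in both versions. Your worry at the end is unfounded but harmless: the cancellation does hold by labeled 1-isotopy alone, and since morphisms of $\S_n$ are taken modulo \(R1) and \(R2) as well, nothing would be lost even if those were needed.
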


\begin{proof}
Given a ribbon surface tangle $S$ in $\S_k$, the stabilization $\up_k^n S$ can be put in
the form shown in Figure \ref{ribbon-stab01/fig}, by expanding a tongue from each
stabilization ribbon to the box containing $S$ itself. The identity $\up_k^n (S \diam S')
= (\up_k^n S) \rdiam (\up_k^n S')$ for any $S$ and $S'$ in $\S_k$, immediately follows
from the definition of the product $\rdiam$ in $\S_{n \red k}$.
\end{proof}

We are going to prove that $\S_{n \red k}$ is equivalent to $\up_{k+2}^n \S_{(k+2) \red
k}$ when $n \geq k+3 \geq 4$. As a consequence, $\S_n^c$ is equivalent to $\up_3^n \S_3^c$
for $n \geq 4$. Actually, the much stronger statement that $\up_4^n: \S_4^c \to \S_n^c$ is
a category equivalence for $n \geq 5$ is also true, as it will follow from Proposition
\ref{K-reduction/thm} once Proposition \ref{stab-theta/thm} and Theorem
\ref{ribbon-kirby/thm} will be established.

\begin{proposition}\label{S-reduction/thm}
For any $n \geq k + 3 \geq 4$, the inclusion of $\up_{k+2}^n \S_{(k+2) \red k}$ in $\S_{n 
\red k}$ is an equivalence of categories. 
\end{proposition}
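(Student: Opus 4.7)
By Corollary \ref{subcat-equiv/thm}, to show that the inclusion $\up_{k+2}^n \S_{(k+2) \red k} \hookrightarrow \S_{n \red k}$ is an equivalence of categories I have to verify two things: every object of $\S_{n \red k}$ is isomorphic, inside $\S_{n \red k}$, to an object in the image of $\up_{k+2}^n$; and the inclusion is full. A general object of $\S_{n \red k}$ has the shape $J_{\sigma_{n \red k}} \diam J_\sigma$ with $\sigma \in \seq\Gamma_n$, while those of $\up_{k+2}^n \S_{(k+2) \red k}$ have the same shape but with $\sigma \in \seq\Gamma_{k+2}$, so in both points the task is to get rid of the labels strictly greater than $k+2$.

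For the object part I would handle one transposition in $\sigma$ at a time. Given a label $\tau = \tp{i}{j}$ with $\max(i,j) > k+2$, I construct an isomorphism in $\S_{n \red k}$ that carries $\tau$ to a transposition $\tau' \in \Gamma_{k+2}$ and leaves everything else untouched. The idea is that the reduction ribbon $\tp{n}{n-1}$ in $\sigma_{n \red k}$ can be split by $\Delta$, and a local configuration extracted from the covering move \(R1) (together with labeled 1-isotopy) then implements the label replacement $n \leftrightarrow n-1$ on the strand $\tau$; iterating with the remaining reduction ribbons $\tp{n-1}{n-2}, \dots, \tp{k+3}{k+2}$ pushes both indices of $\tau$ into $\{1,\dots,k+2\}$. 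Assembling these local isomorphisms and using the $\rdiam$ product on $\S_{n \red k}$ yields the desired global isomorphism.

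For fullness, given objects $A,B$ of $\up_{k+2}^n \S_{(k+2) \red k}$ and a $k$-reducible morphism $S: A \to B$ in $\S_n$, writing $S = (\id_{\sigma_{n \red k}} \diam T) \circ (\Delta_{\sigma_{n \red k}} \diam \id_{\sigma_0})$, I must show that $T$ can be deformed, under labeled 1-isotopy and the ribbon moves \(R1) and \(R2), into a diagram using only labels in $\Gamma_{k+2}$; after such a deformation $S$ becomes a stabilization $\up_{k+2}^n T'$ for some $T' \in \S_{(k+2) \red k}$. The strategy is the ribbon-surface analog of Lemma \ref{K-push/thm}: I process the diagram outside-in, pushing each connected branching component labeled by some $\tp{i}{j}$ with $\max(i,j) > k+2$ through the reduction ribbon of matching index. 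At a ribbon crossing or intersection with a third label, move \(R2) (or its derivatives \(R3)-\(R6) from Proposition \ref{moves-aux/thm}) relocates the label; at a half-twist or a flat vertex, 1-isotopy suffices. The hypothesis $n \geq k+3$ enters here in an essential way, since move \(R2) requires \emph{four} distinct labels, and during the elimination of label $n$ we may well encounter a pre-existing $\tp{i}{j}$ with $\{i,j\} = \{k{+}1,k{+}2\}$; the extra label $n-1$ provided by the reduction sequence is what makes move \(R2) applicable. After label $n$ has been purged, the reduction ribbon $\tp{n-1}{n-2}$ becomes the new outermost one and the same argument is repeated for $n-1$, and so on down to $k+3$.

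The main obstacle will be the bookkeeping in the fullness step: one must verify that the local pushing procedure really terminates and that the resulting diagram factors through $\id_{\sigma_{n \red k+2}}$ in the form required by $\up_{k+2}^n$, not merely that it uses no labels above $k+2$. Concretely, I expect the delicate point to be the interaction between ribbon intersections, half-twists and the reduction ribbons when several strands labeled $n$ simultaneously traverse the $\tp{n}{n-1}$-ribbon: each traversal introduces auxiliary moves whose effect on the rest of the diagram has to be controlled without reintroducing the label $n$ elsewhere. This is the ribbon-surface counterpart of the diagram analysis carried out in Lemma \ref{K-push/thm} for Kirby tangles, and I plan to organize it in the same style, namely by first putting $T$ in a normal position with respect to the reduction ribbons (analogous to the strictly regular diagrams of Definition \ref{strictly-regular/def}) and then checking, case by case on the local models of Figure \ref{ribbon-surf06/fig}, that the covering moves of Figures \ref{ribbon-moves01/fig} and \ref{ribbon-moves02/fig} suffice to realize every required replacement.
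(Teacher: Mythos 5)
Your plan follows essentially the same route as the paper: both invoke Corollary \ref{subcat-equiv/thm}, both build explicit label-changing isomorphisms by running tongues from the reduction ribbons and applying \(R1), \(R2) and labeled 1-isotopy, and both reduce fullness to a normal-form-plus-case-analysis on the elementary local models, with the hypothesis $n \geq k+3$ entering to guarantee enough auxiliary reduction labels. The only organizational difference is that the paper eliminates one label at a time by induction on $n-k$ (base case $k = n-3$) and merges your two verification steps into one, showing that conjugation of an arbitrary $k$-reducible morphism by the isomorphisms $\zeta^{n,n-3}_\sigma$ lands in $\up_{n-1}^n \S_{(n-1)\red(n-3)}$ — which simultaneously certifies the object condition and fullness.
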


\begin{proof} 
We first prove the statement for $k = n - 3$. More precisely, to any se\-quence $\sigma
\in \seq\Gamma_n$ we associate a sequence $\sigma' \in \seq\Gamma_{n-1}$ and a natural
isomorphism
$$\zeta^{n,n-3}_\sigma : J_{\sigma_{n \red (n-3)}} \!\diam J_\sigma \to 
J_{\sigma_{n \red (n-3)}} \!\diam J_{\sigma'}$$
such that: any labeled ribbon surface tangle 
$$\zeta^{n,n-3}_{\sigma_1} \circ (\id_{\sigma_{n \red (n-3)}} \!\diam S) \circ 
(\Delta_{\sigma_{n \red (n-3)}} \!\diam \id_{\sigma_0}) \circ 
(\zeta^{n,n-3}_{\sigma_0})^{-1}$$
with $S: J_{\sigma_{n \red (n-3)}} \!\diam J_{\sigma_0} \to J_{\sigma_1}$ in $\S_n$, 
is equivalent to the product of $\id_{\tp{n}{n{-}1}}$ and an $(n-3)$-reducible tangle in
$\S_{n-1}$, that is to
$$\id_{\tp{n}{n{-}1}} \diam ((\id_{\sigma_{(n-1) \red (n-3)}} \!\diam T) \circ 
(\Delta_{\sigma_{(n-1) \red (n-3)}} \!\diam \id_{\sigma'_0}))$$
for some $T: J_{\sigma_{(n-1) \red (n-3)}} \!\diam J_{\sigma'_0} \to J_{\sigma'_1}$ in 
$\S_{n-1}$ (see Figure \ref{ribbon-stab03/fig}).
Then Corollary \ref{subcat-equiv/thm}, allows us to conclude that the inclusion of
$\up_{n-1}^n \S_{(n-1) \red (n-3)}$ in $\S_{n \red (n-3)}$ is an equivalence of monoidal
categories.

\begin{Figure}[htb]{ribbon-stab03/fig}
{}{The equivalence between $\S_{n \red (n-3)}$ and $\up_{n-1}^n \S_{(n-1) \red (n-3)}$}
\centerline{\fig{}{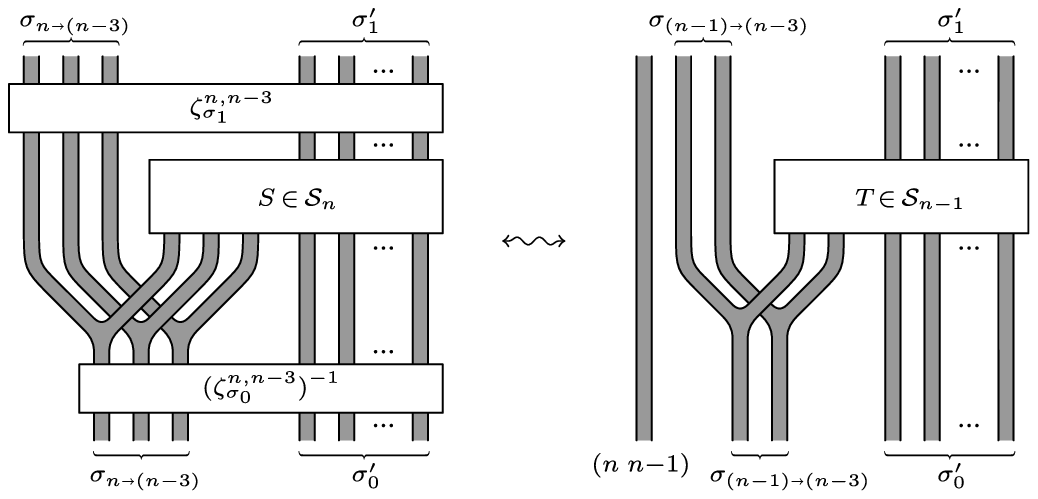}}
\vskip-6pt
\end{Figure}

Given $\sigma = (\tp{i_1}{j_1}, \dots, \tp{i_m}{j_m})$, we let $\sigma'$ be obtained from
$\sigma$ by replacing any transposition $\tp{i}{n}$ with $\tp{i}{n{-}1}$ if $i < n-1$ and
with $\tp{n{-}2}{n{-}1}$ if $i = n-1$. Moreover, we define $\zeta^{n,n-3}_\sigma =
\zeta^{n,n-3}_{\sigma,m} \circ \dots \circ \zeta^{n,n-3}_{\sigma,1}$, where
$\zeta^{n,n-3}_{\sigma,h}$ is the identity if $\tp{i_h}{j_h} \neq \tp{i}{n}$ while it is
illustrated in Figure \ref{ribbon-stab04/fig} for $\tp{i_h}{j_h} = \tp{i}{n}$. Here, the
tongue starting from the reduction ribbon of $\id_{\tp{n}{n{-}1}}$ passes through the
reduction ribbon of $\id_{\tp{n{-}1}{n{-}2}}$ if $i = n-1$ and in front of all the other
ribbons in any case, then it forms a ribbon intersection with the $h$-th ribbon of
$\id_\sigma$. The $\zeta^{n,n-3}_{\sigma,h}$'s are isomorphisms, their inverses being
obtained just by vertical reflection. Therefore $\zeta^{n,n-3}_\sigma$ is an
isomorphism as well.

\begin{Figure}[htb]{ribbon-stab04/fig}
{}{The isomorphism $\zeta^{n,n-3}_{\sigma,h}$ ($i < n-1$)}
\vskip-3pt
\centerline{\fig{}{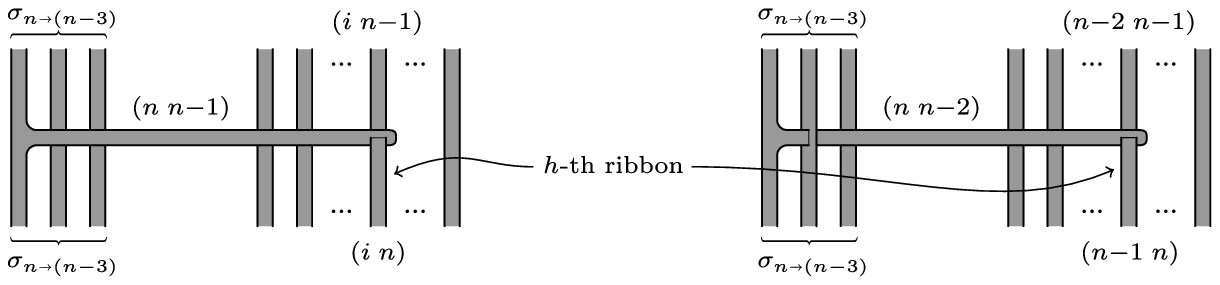}}
\vskip-6pt
\end{Figure}

We observe that $\zeta^{n,n-3}_{\sigma_1} \circ (\id_{\sigma_{n \red (n-3)}} \!\diam S)
\circ (\Delta_{\sigma_{n \red (n-3)}} \!\diam \id_{\sigma_0}) \circ
(\zeta^{n,n-3}_{\sigma_0})^{-1}$ factorizes as the composition $P_1 \circ P_2$, where
$P_1 = \zeta^{n,n-3}_{\sigma_1} \circ (\id_{\sigma_{n \red (n-3)}} \!\diam S) \circ
(\zeta^{n,n-3}_{\sigma_{n \red (n-3)} \diam \sigma_0})^{-1}$ and
$P_2 = \zeta^{n,n-3}_{\sigma_{n\red (n-3)} \diam \sigma_0} \circ (\Delta_{\sigma_{n \red
(n-3)}} \!\diam \id_{\sigma_0})\circ (\zeta^{n,n-3}_{\sigma_0})^{-1}$. Then, it will
suffice to show that both these factors are equivalent to the product of
$\id_{\tp{n}{n{-}1}}$ and an $(n-3)$-reducible tangle in $\S_{n-1}$.

According to Proposition \ref{planar-diagram/thm}, we can assume $S$ to be presented by a
labeled special planar diagram. Moreover, Figure \ref{ribbon-stab05/fig} shows how $S$ can
be transformed through equivalence moves in such a way that:
\begin{itemize}
\item[\(a)]
at any crossing between two ribbons respectively labeled by $\tp{i}{n}$ and 
$\tp{j_1}{j_2}$ with $j_1,j_2 < n$, the first ribbon passes in front of the second one;
\item[\(b)] 
there are no crossings between ribbons labeled by $\tp{i}{n}$ and $\tp{j}{n}$ with 
$i \neq j$ (actually, we will need this only for $i$ or $j$ equal to $n-1$);
\item[\(c)]
there are no ribbon intersections involving the three transpositions $\tp{n{-}2}{n{-}1}$, 
$\tp{n{-}2}{n}$ and $\tp{n{-}1}{n}$ as labels.
\end{itemize}

\begin{Figure}[htb]{ribbon-stab05/fig}
{}{}
\centerline{\kern9pt\fig{}{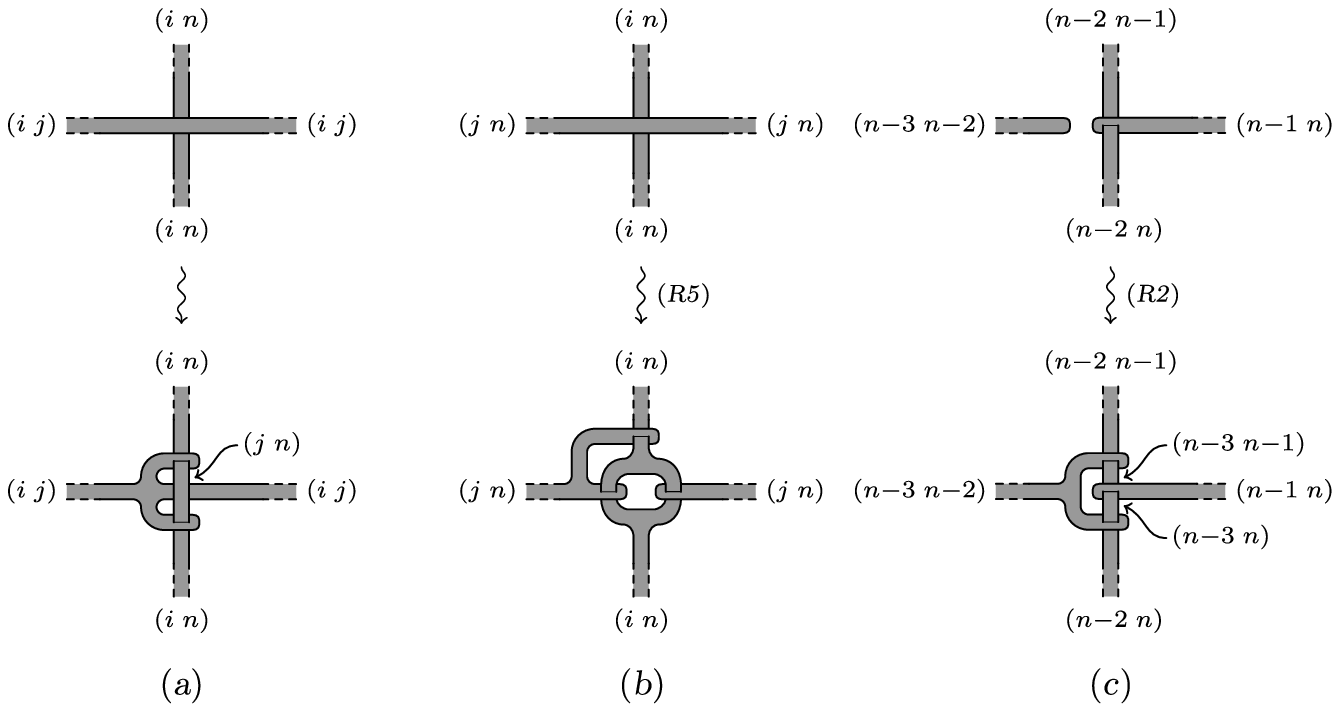}}
\vskip-3pt
\end{Figure}

In particular, to get property \(a) we invert any wrong crossing involving the labels
$\tp{i}{n}$ and $\tp{j_1}{j_2}$ by applying a move \(R4) if $j_1,j_2 \neq i$, and by
1-isotopy (inserting two extra ribbon intersections) as shown in Figure \ref{ribbon-stab05/fig} 
\pagebreak
\(a) otherwise. Then, we proceed as in Figure
\ref{ribbon-stab05/fig} \(b) to transform all the crossings forbidden by property \(b)
into ribbon intersections. Finally, in Figure \ref{ribbon-stab05/fig} \(c) we see how to
get rid of the ribbon intersections forbidden by property \(c), possibly after performing
move \(R1) and/or move \(S2) in Figure \ref{ribbon-surf08/fig} to obtain the starting
configuration in the figure.

After all those modifications have been performed, we use move \(S2) to put the diagram of
$S$ again into special form, without losing properties \(a), \(b) and \(c).

\begin{Figure}[b]{ribbon-stab06/fig}
{}{($i < n-2$)}
\centerline{\fig{}{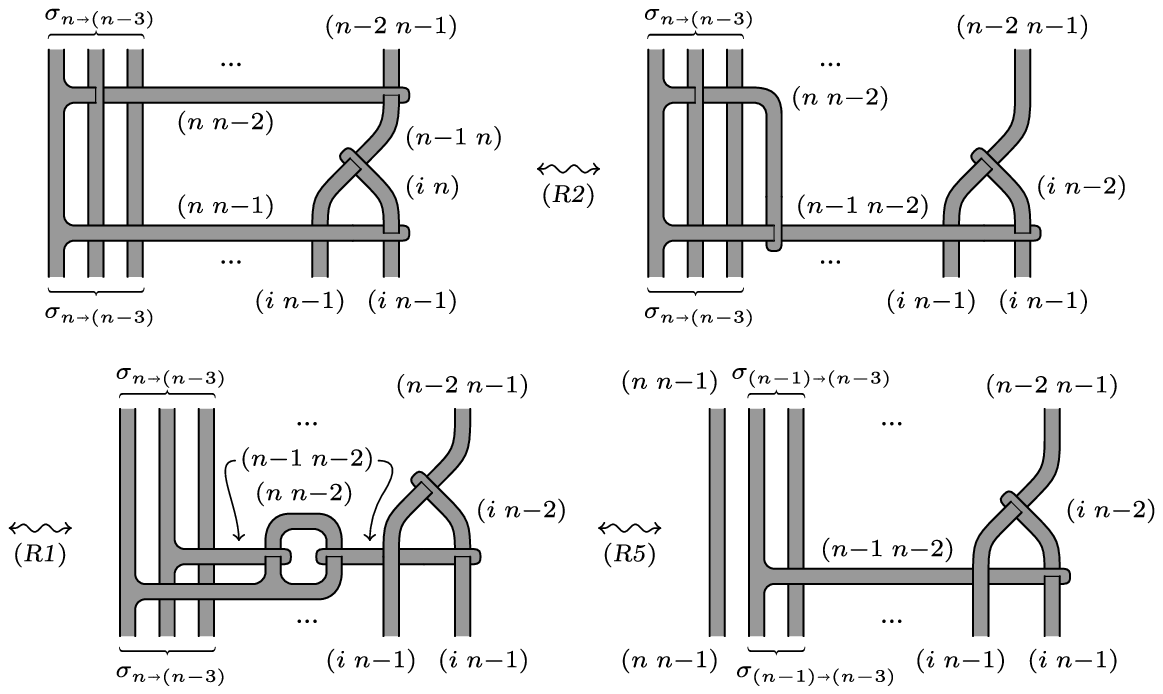}}
\vskip-6pt
\end{Figure}

Then, we express $S$ as a composition $S_l \circ \dots \circ S_1$, where each $S_k$ is a
product of a single labeled elementary tangle (cf. Figure \ref{ribbon-morph01/fig}) and
some identity ribbons on the left and/or on the right of it. We do that by means of planar
isotopy (cf. proof of Proposition \ref{S-category/thm}). Up to move \(R1), we can also
assume that each ribbon intersection in the $S_k$'s is like \(g) in Figure
\ref{ribbon-morph01/fig}. Moreover, if the labels involved in \(g) are $\tp{i}{n{-}1}$,
$\tp{i}{n}$ and $\tp{n{-}1}{n}$ thanks to (c) we have necessarily that $i < n-2$, and up
to plane isotopy and move \(I14'), we can assume that the target is $J_{\tp{n{-}1}{n}}$
and the source is $J_{\tp{i}{n-1}}\diam J_{\tp{i}{n}}$ (cf. the first diagram of Figure
\ref{ribbon-stab06/fig}). Observe that the resulting plane diagram still satisfies
properties \(a), \(b) and \(c).

At this point, in order to prove that $P_1$ is the product of $\id_{\tp{n}{n{-}1}}$ with
an $(n-3)$-reducible tangle in $\S_{n-1}$, we can limit ourselves to consider the case of
$\zeta^{n,n-3}_{\sigma_1} \circ\,(\id_{\sigma_{n \red (n-3)}} \!\diam S)
\,\circ\,(\zeta^{n,n-3}_{\sigma_0})^{-1}$ with $S$ from $\sigma_0$ to $\sigma_1$ given by
the product of a single labeled elementary tangle and some identity ribbons as above.

If such elementary tangle is not of type \(g) with ribbons labeled $\tp{i}{n{-}1}$,
$\tp{i}{n}$ and $\tp{n{-}1}{n}$, then a straightforward case by case verification shows
that $\zeta^{n,n-3}_{\sigma_1} \circ (\id_{\sigma_{n \red (n-3)}} \!\diam S) \circ
(\zeta^{n,n-3}_{\sigma_0})^{-1}$ is equivalent up to labeled 1-isotopy to $\id_{\sigma_{n
\red (n-3)}} \!\diam S'$, where $S'$ is obtained from $S$ by replacing any transposition
$\tp{i}{n}$ with $\tp{i}{n{-}1}$ if $i < n-1$ and with $\tp{n{-}2}{n{-}1}$ if $i = n-1$.
In fact, starting from $h = 1$, we can progressively cancel $\zeta^{n,n-3}_{\sigma_1,h}$'s
with $(\zeta^{n,n-3}_{\sigma_0,h})^{-1}$'s on all identity ribbons on the left of the
elementary tangle involved. Then, if the resulting tangle is itself the identity, \(f-f')
or \(e-e'), we can continue the cancelation until the end. Observe that in the case of the
crossings, we can do that only because of conditions \(a) and \(b). On the other hand, if
we reach an elementary tangle of the type \(b-b'), \(c-c') or \(d), before the cancelation
some moves \(I20) and \(I21) must be performed. Finally, if we reach a ribbon intersection
with ribbons labeled $\tp{i}{n}$, $\tp{j}{n}$ and $\tp{i}{j}$, where $i\neq n-1\neq j$, we
achieve the cancelation of the ribbons involved in the natural transformation, after
performing moves \(I22) and \(R2).

\begin{Figure}[b]{ribbon-stab07/fig}
{}{}
\vskip6pt
\centerline{\fig{}{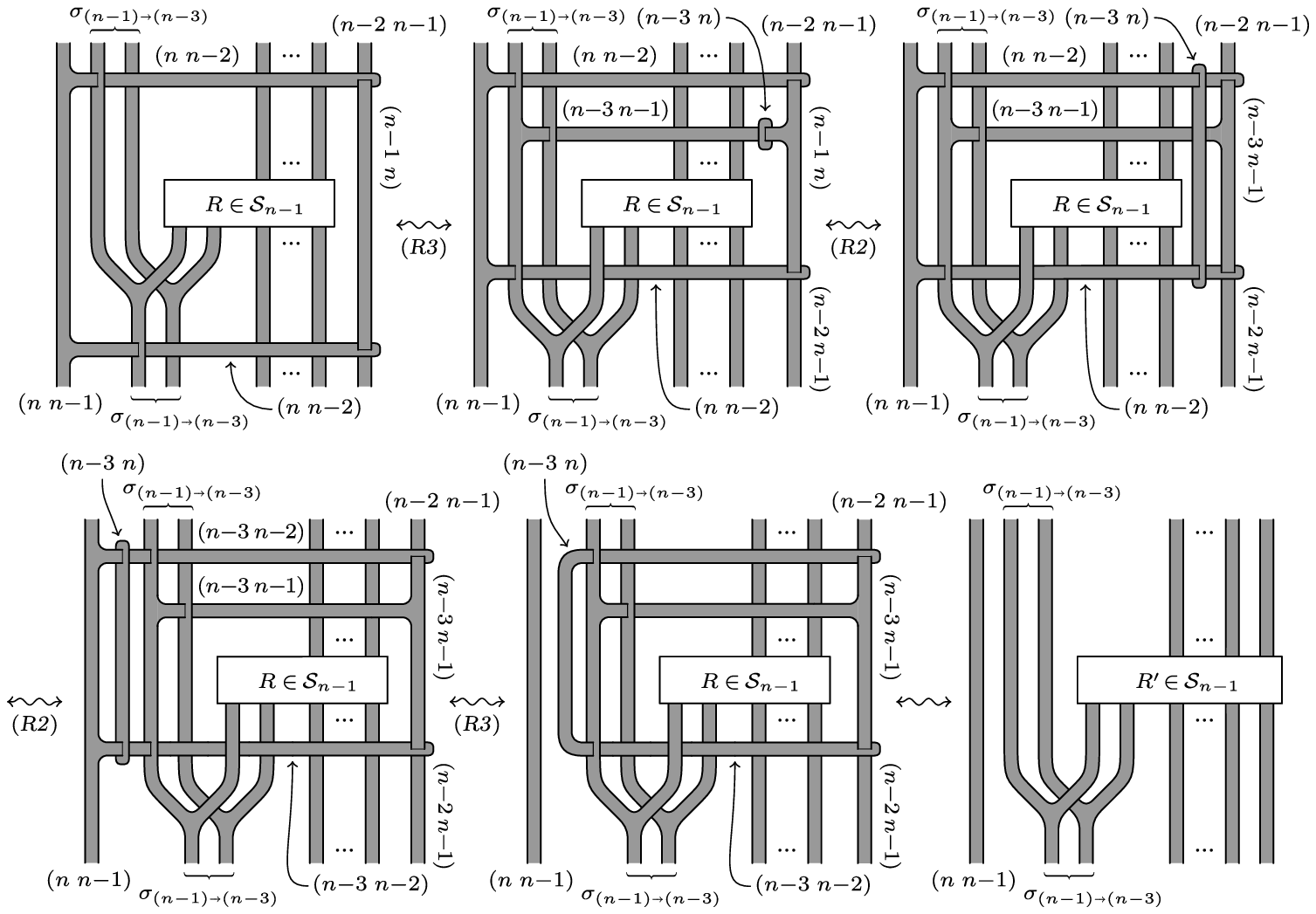}}
\vskip-6pt
\end{Figure}

It remains to consider the case in which the elementary tangle is of type \(g) with
ribbons labeled $\tp{i}{n{-}1}$, $\tp{i}{n}$ and $\tp{n{-}1}{n}$. We start as above by
cancelling $\zeta^{n,n-3}_{\sigma_1,h}$'s and $(\zeta^{n,n-3}_{\sigma_0,h})^{-1}$'s,
corresponding to the identity ribbons on the left of the elementary diagram. Then, we
perform the modifications described in Figure \ref{ribbon-stab06/fig} (here the 1-isotopy
moves are not indicated). Notice that the final tangle in Figure \ref{ribbon-stab06/fig}
is the product of $\id_{\tp{n}{n{-}1}}$ with an $(n-3)$-reducible tangle in $\S_{n-1}$.
After that, we proceed by induction on the number of identity ribbons on the right of the
elementary diagram. The inductive step consists in including into a new $(n-3)$-reducible
tangle in $\S_{n-1}$ a single identity ribbon on the right together with the corresponding
$\zeta^{n,n-3}_{\sigma_1,h}$ and $(\zeta^{n,n-3}_{\sigma_0,h})^{-1}$. If the label of the
identity ribbon is $\tp{i}{j}$ with $i,j < n$ we are already done, while a trivial
1-isotopy suffices if the label is $\tp{i}{n}$ with $i < n-1$. Figure
\ref{ribbon-stab07/fig} shows how to deal the remaining case of label $\tp{n{-}1}{n}$.
This completes the proof that $P_1$ is equivalent to the product of $\id_{\tp{n}{n{-}1}}$
and an $(n-3)$-reducible tangle in $\S_{n-1}$.

Now, we show that the same holds for the tangle $P_2$. We proceed by induction on the 
difference $n - k$. The starting case is when $n - k = 3$, that is $k = n-3$.
In this case, modify the horizontal ribbon of $\zeta^{n,n-3}_{\sigma_{n \red (n-3)}}$ as
described in Figure \ref{ribbon-stab08/fig}. Once again, we get the product of
$\id_{\tp{n}{n{-}1}}$ with an $(n-3)$-reducible tangle in $\S_{n-1}$. Then, we can
conclude by applying the inductive argument in Figure \ref{ribbon-stab07/fig}.

\begin{Figure}[htb]{ribbon-stab08/fig}
{}{}
\vskip-3pt
\centerline{\fig{}{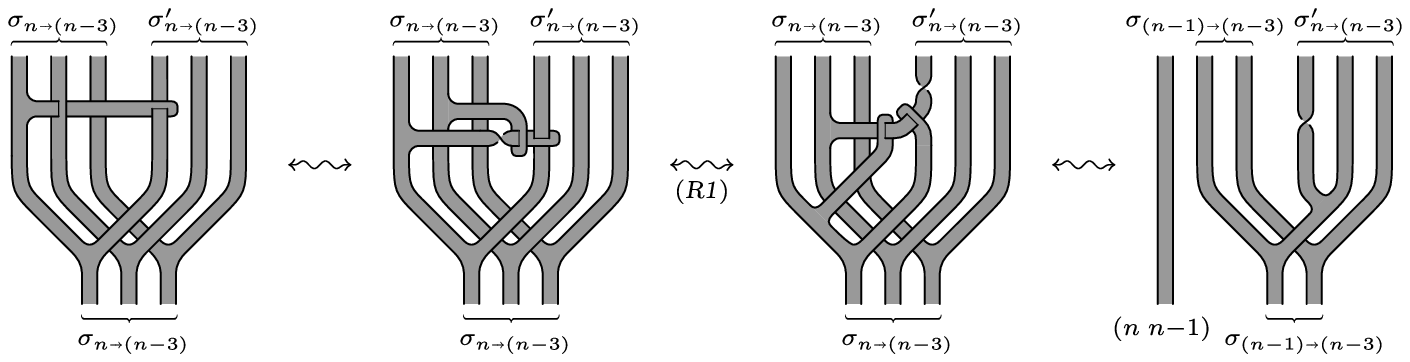}}
\vskip-6pt
\end{Figure}

For $n - k > 3$, the inductive step goes as follows. We define inductively the natural
equivalences $\zeta^{n,k}_\sigma = (\id_{\tp{n}{n{-}1}} \diam \zeta^{n-1,k}_\sigma) \circ
\zeta^{n,n-3}_\sigma$. Then, by applying the result for $k = n-3$ and the inductive
hypothesis, we obtain that for any $n$-labeled ribbon surface tangle $S$ in $\S_{n \red
k}$, the composition $\zeta^{n,k}_{\sigma_1} \circ S \circ (\zeta^{n,k}_{\sigma_0})^{-1}$
is equivalent to a tangle in $\up_{n-1}^n \up_{k+2}^{n-1} \S_{(k+2) \red k} = \up_{k+2}^n
\S_{(k+2) \red k}$.
\end{proof}

\subsection{The functors $\Theta_n: \S_n \to \K_n$%
\label{Theta/sec}}

The object of this section is to define the family of functors $\Theta_n: \S_n \to \K_n$
for $n \geq 2$, which provide the branched covering representation of relative
4-dimensional 2-handlebody cobordisms.

This will be done by exploiting the ideas introduced by Montesinos in \cite{Mo78}, to give
an effective explicit construction of a generalized Kirby tangle $K_S \in \K_n$ for the
branched covering space of $E \times [0,1] \times [0,1]$ determined by an $n$-labeled
ribbon surface tangle $S$ with $n \geq 2$. 

Before of going into details, let us briefly sketch how such construction derives from
\cite{Mo78}. Assuming $n \geq 2$, let $n$-labeled ribbon surface tangle $S$ from
$J_{\sigma_0}$ to $J_{\sigma_1}$, with $\sigma_0 = (\tp{i^0_1}{j^0_1}, \dots,
\tp{i^0_{m_0}}{j^0_{m_0}})$ and $\sigma_1 = (\tp{i^1_1}{j^1_1}, \dots,
\tp{i^1_{m_1}}{j^1_{m_1}})$ sequences in $\seq\Gamma_n$, and let $p: W \to E \times [0,1]
\times [0,1]$ be the simple $n$-fold branched covering represented by $S$, according to
Section \ref{coverings/sec}.

\pagebreak

We start with the simple $n$-fold branched covering of $E \times [0,1] \times [0,1]$
represented by the labeled ribbon surface tangle $T$ depicted on the left side of Figure
\ref{ribbon-kirby01/fig}. The corresponding covering space can be easily seen to be
$Y(M_{\pi_0},M_{\pi_1})$, where $\pi_0 = ((i^0_1,j^0_1), \dots, (i^0_{m_0},j^0_{m_0}))$
and $\pi_1 = ((i^1_1,j^1_1), \dots, (i^1_{m_1},j^1_{m_1}))$ are determined by assuming
$i^c_h > j^c_h$ for any $h = 1, \dots, m_c$ and $c = 0,1$. In particular, the 3-cell $E
\times \{c\} \times [0,1]$ with the labeled arcs $J_{\sigma_c}$ represents $M_{\pi_c}
\cong M_{\pi_c} \times \{c\} \subset Y(M_{\pi_0},M_{\pi_1})$ as a $n$-fold simple branched
cover. An $n$-labeled Kirby tangle for $Y(M_{\pi_0},M_{\pi_1})$ is given on the right side
of the same Figure \ref{ribbon-kirby01/fig}.

\begin{Figure}[htb]{ribbon-kirby01/fig}
{}{$Y(M_{\pi_0},M_{\pi_1})$ as a branched cover of $E \times [0,1] \times [0,1]$}
\centerline{\fig{}{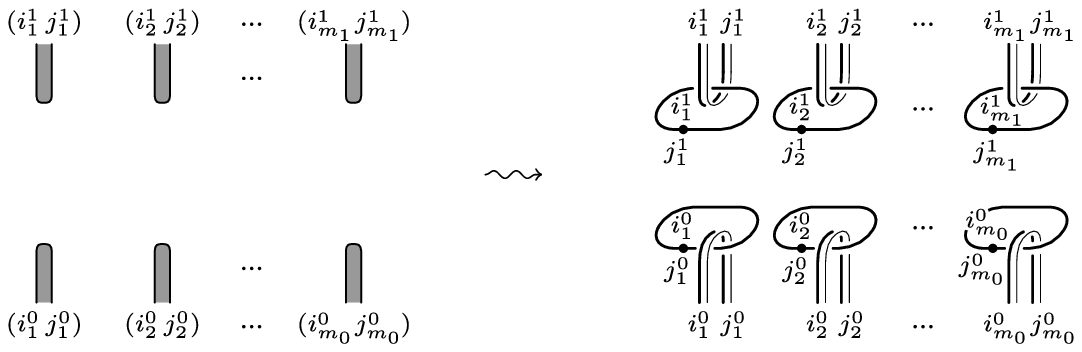}}
\end{Figure}

The ribbon surface tangle $S$ can be obtained from $T$ as follows. First add some
regularly embedded disjoint labeled disks $D_1, \dots, D_r$, and then attach to $S^0 = T
\cup D_1 \cup \dots \cup D_r$ some disjoint regularly embedded labeled bands $B_1, \dots,
B_s$, which possibly pass through the disks to form ribbon intersections. In the end,
those disks and bands will give an adapted 1-handlebody decomposition of $S$, considered
as a labeled embedded 1-handlebody build on $J_{\sigma_0} \cup J_{\sigma_1}$.

According to \cite{Mo78} (cf. also \cite{IP02}), each disks $D_h$ with label $\tp{i}{j}$
gives raise to a 1-handle $H^1_h$ attached to $Y(M_{\pi_0},M_{\pi_1})$ between the sheets
$i$ and $j$ of its branched covering representation, while each band $B_h$ gives raise to
a 2-handle attached to the covering space represented by $S^0$, whose attaching framed
knot coincides with the unique annular component of the counterimage of $B_h$ in such
covering space. The final result is a relative 2-handlebody decomposition of the space $W$
build on $X(M_{\pi_0},M_{\pi_1})$.

\medskip

Now, we want to make the above sketchy recipe into a formal definition of the functor
$\Theta_n: \S_n \to \K_n$ for $n \geq 2$. The non-trivial points here are: 1) the
description of the attaching framed knots of the 2-handles; 2) the proof that the
2-deformation class of the 2-handlebody structure of $W$ only depends on the equivalence
class of $S$ in the sense of Definition \ref{lrs-equivalence/def}, and in particular it
does not depend on the 1-handlebody decomposition of $S$ used in the construction. As we
will see, generalized Kirby tangles provide a quite natural way to face the first point,
while the second point requires some work.

To define $\Theta_n$ on the objects, we consider the map $\Gamma_n \to \G_n$ given by
$\tp{i}{j} \mapsto (i,j)$ with $i > j$ and the induced map $\pi: \seq\Gamma_n \to
\seq\G_n$ on the sequences. Then, we put $\Theta_n (J_{\sigma}) = I_{\pi(\sigma)}$ for any
$\sigma \in \seq\Gamma_n$.

The definition of $\Theta_n$ on the morphisms is much more involved. Let $S \in \S_n$ be
an $n$-labeled ribbon surface tangle from $J_{\sigma_0}$ to $J_{\sigma_1}$, with $\sigma_0
= (\tp{i^0_1}{j^0_1}, \dots, \tp{i^0_{m_0}}{j^0_{m_0}})$ and $\sigma_1 =
\tp{(i^1_1}{j^1_1}, \dots, \tp{i^1_{m_1}}{j^1_{m_1}})$ sequences in $\seq\Gamma_n$,
considered up to 3-dimensional diagram isotopy. Then we define the generalized Kirby
tangle $\Theta_n(S) = K_S$ (well-defined up to 2-equivalence, cf. Lemma
\ref{ribbon-to-kirby/thm} below) by the following steps.
\begin{itemize}
\item[1)]
Start with the $n$-labeled Kirby tangle on the right side of Figure
\ref{ribbon-kirby01/fig}, where $\pi_0 = \pi(\sigma_0) = ((i^0_1,j^0_1), \dots,
(i^0_{m_0},j^0_{m_0}))$ and $\pi_1 = \pi(\sigma_1) = ((i^1_1,j^1_1), \dots,
(i^1_{m_1},j^1_{m_1}))$.
\item[2)]
Choose an adapted relative 1-handlebody decomposition $S = T \cup D_1 \cup \dots \cup D_r
\cup B_1 \cup \dots \cup B_s$ build on $J_{\sigma_0} \!\cup J_{\sigma_1}$, where: $T =
(\cup_{h=1}^{m_0} T^0_h) \cup (\cup_{h=1}^{m_1} T^1_h)$ is a collar of $J_{\sigma_0}
\!\cup J_{\sigma_1}$ in $S$, with $T^c_h \cong J_{\tp{i^c_h}{j^c_h}} \times [0,1]$ a
collar of $J_{\tp{i^c_h}{j^c_h}}$ for $h = 1,\dots,m_c$ and $c = 0,1$; $D_1, \dots, D_r$
are disjoint disks (the 0-handles of the decomposition); $B_1, \dots, B_s$ are disjoint
bands attached to $S^0 = T \cup D_1 \cup \dots \cup D_r$ (the 1-handles of the
decomposition).
\item[3)]
Add to the starting Kirby tangle a dotted unknot spanning the disk $D_h$ for each $h = 1,
\dots, r$; if $\tp{i}{j} \in \Gamma_n$ is the label of $D_h$ in $S$, then choose one of
the two possible ways to assign the labels $i$ and $j$ to the faces of $D_h$ (cf. Figure
\ref{ribbon-kirby02/fig}). We call such a choice a {\sl polarization} of the disk $D_h$.
\begin{Figure}[htb]{ribbon-kirby02/fig}
{}{From $0$-handles of $S$ to $1$-handles of $K_S$}
\vskip-3pt
\centerline{\fig{}{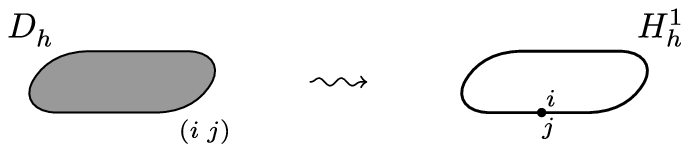}}
\vskip-3pt
\end{Figure}
\item[4)] 
Replace the terminal part of each band attached to any disk $D_h$ by a labeled framed arc
consisting of two opposite parallel displacements of it joined together to form a ribbon
intersection with $D_h$ as shown in Figure \ref{ribbon-kirby03/fig} \(a). Do the same for
the terminal parts of the bands attached to $T$, as shown in Figure
\ref{ribbon-kirby03/fig} \(b) and \(c). Notice that in all the cases the labeling of the
framed arc is uniquely determined by that of the disk spanned by the dotted component.
\begin{Figure}[htb]{ribbon-kirby03/fig}
{}{From $1$-handles of $S$ to $2$-handles of $K_S$: the ends}
\vskip-3pt
\centerline{\fig{}{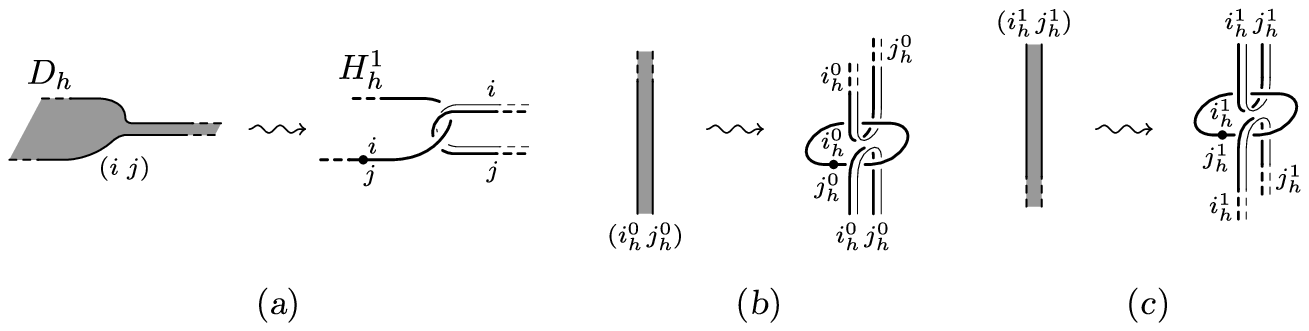}}
\vskip-3pt
\end{Figure}
\item[5)]
For each ribbon intersection arc in $D_h$ choose a regularly embedded arc $\alpha \subset
D_h$ transversally starting from it and ending in $\Bd D_h \cap \Bd S$. All these arcs are
chosen to be disjoint from each other and not to meet elsewhere the ribbon intersections
in $D_h$. Moreover, up to 3-dimensional diagram isotopy moving the relative ribbon 
intersection inside $D_h$, we can contract each arc $\alpha$ in a small neighborhood of
its end point in $\Bd D_h$ (cf. Figure \ref{ribbon-kirby04/fig}).
\begin{Figure}[htb]{ribbon-kirby04/fig}
{}{From $1$-handles of $S$ to $2$-handles of $K_S$: the ribbon intersections}
\centerline{\fig{}{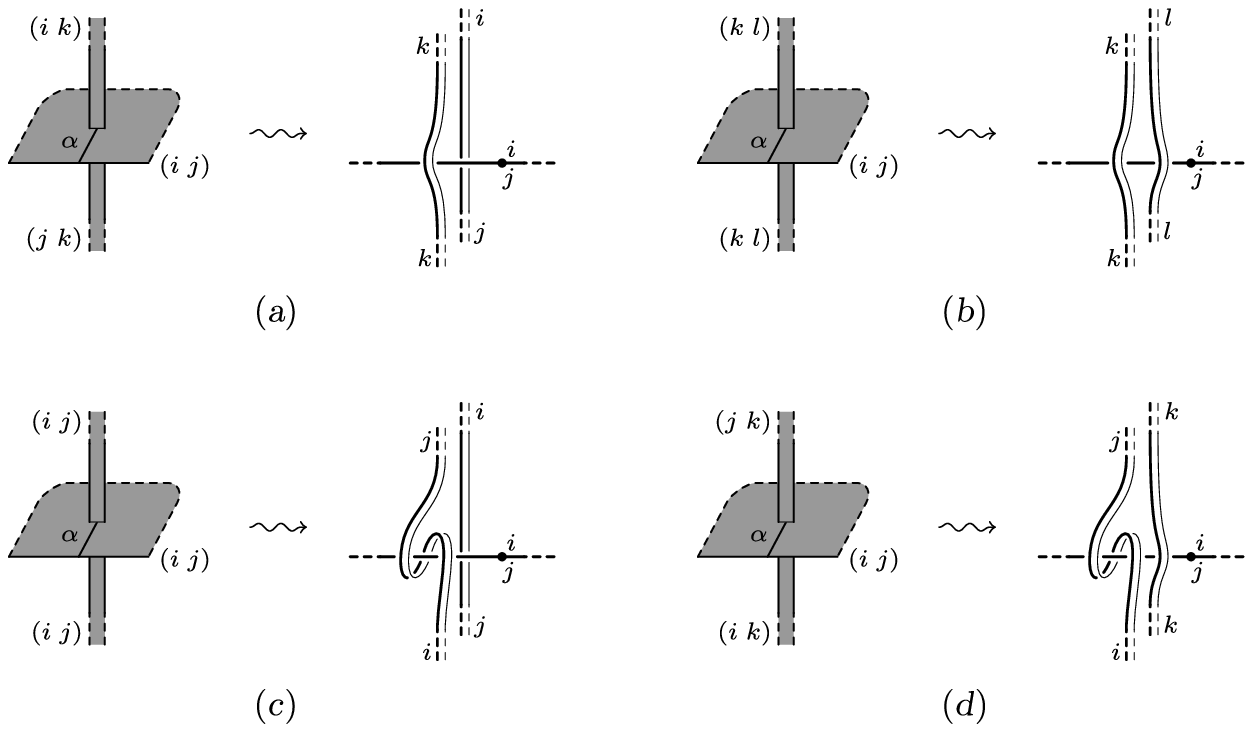}}
\vskip-3pt
\end{Figure}
\item[6)]
Replace a small portion of the band involved in each ribbon intersection in a neighborhood
of the corresponding arc $\alpha$ by two labeled framed arcs as shown in Figure
\ref{ribbon-kirby04/fig} (the four cases depend on how the local labeling of the band is
related to the polarization of $D_h$). Here, the framed arcs are two opposite parallel
displacements of the band suitably modified in order to allow labeling compatibility (the
opposite choice for the kinks in \(c) and \(d) would be equivalent up to labeled isotopy)
Like in the previous point 4, the labeling of the framed arcs is uniquely determined by
the polarization of the disk, with the only exception of case \(b) where the labels $k$
and $l$ could be interchanged.
\item[7)]
Finally, replace the remaining part of each band by opposite parallel displacements of it,
joining those already inserted in the previous points 4 and 6, inserting one crossover as
shown in Figure \ref{ribbon-kirby05/fig} (the two possible choices for the crossover are
equivalent up to labeled isotopy) where needed to match the labeling.
\begin{Figure}[htb]{ribbon-kirby05/fig}
{}{From $1$-handles of $S$ to $2$-handles of $K_S$: the crossovers}
\vskip-3pt
\centerline{\fig{}{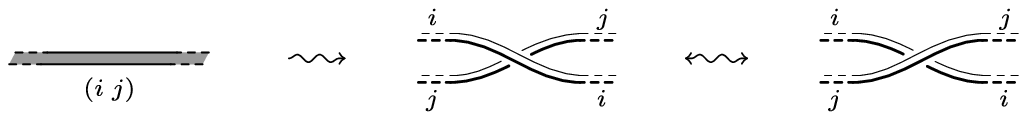}}
\end{Figure}
\end{itemize}

We remark that at the end of the construction each band is replaced by a framed knot in
the resulting generalized Kirby tangle. By the very definition of generalized Kirby
tangle, one could easily check that such framed knot does really represent the unique
annular component in the counterimage of the band through the branched covering determined
by the labeled ribbon surface tangle $S^0 = T \cup D_1 \cup \dots \cup D_r$, no matter
what choices are made at points 3, 5 and 6 \(b). Then, it would immediately follows from
\cite{Mo78}, that $K_S$ represents a relative 2-handlebody structure of $W$, which only
depends on the 1-handlebody decomposition of $S$ chosen at point 2.

Nevertheless, in the next lemma we prove directly that $K_S$ is well-defined up to
2-equivalence (cf. Definition \ref{kt-equivalence/def}) for a given $n$-labeled ribbon
surface tangle $S$. Actually, in Proposition \ref{theta/thm} we will see that the
2-equivalence class of $K_S$ is also invariant under 1-isotopy and covering moves of $S$.
In other words, the 2-deformation class of the relative 4-dimensional 2-handlebody
represented by $K_S$ does not depend on the choices involved in the above construction of
$K_S$ (included the 1-handlebody decomposition of $S$), in fact it only depends on the
equivalence class of $S$ (cf. Definition \ref{lrs-equivalence/def}). Then, from the
results of next Sections (Propositions \ref{full-theta3/thm}, \ref{xi4/thm} and
\ref{full-xi/thm}), it will follow that such 2-deformation class is the same of the
relative 2-handlebody structure of $W$ deriving from \cite{Mo78}.

\begin{lemma} \label{ribbon-to-kirby/thm}
The generalized Kirby tangle $K_S$ constructed above from a given $n$-labeled ribbon 
surface tangle $S$ with $n \geq 2$ is well-defined up to 2-equivalence.
\end{lemma}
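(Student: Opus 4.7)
The plan is to show that $K_S$ is unchanged up to 2-equivalence under each of the choices made in the construction, beginning with the local choices made after an adapted 1-handlebody decomposition has been fixed, and then addressing the choice of the decomposition itself.

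First I would handle the local choices in steps 3, 5, 6 and 7. Reversing the polarization of a disk $D_h$ (step 3) amounts to a full twist of the corresponding 1-handle, together with the compensating sign change in all the parallel arcs of step 4 and 6; this is realized by labeled isotopy and the 1-handle twist move, which are among the 2-equivalences permitted by Definition \ref{kt-equivalence/def} (cf. the 1-handle moves of Figure \ref{kirby-tang02/fig}). Two different choices of the arcs $\alpha \subset D_h$ in step 5 differ by an isotopy of $\alpha$ inside $D_h$ relative to $\partial D_h$, which may cross other ribbon intersection arcs of the decomposition; each such crossing induces only a slide of a framed arc over the 1-handle spanning $D_h$, hence a 2-equivalence. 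The label ambiguity $k \leftrightarrow l$ in case \(b) of Figure \ref{ribbon-kirby04/fig} corresponds to two labeled isotopic positions of the replacement inside the annular counterimage of the band, and so is handled similarly. The choice of location and type of each crossover in step 7 is resolved because two parallel framed arcs of consistent labeling that differ by insertion/deletion of a canceling pair of crossovers are isotopic as labeled framed tangles.

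Next I would show that $K_S$ is independent of the adapted 1-handlebody decomposition chosen in step 2. By Proposition \ref{1-handles/thm}, any two such decompositions of $S$ are related by the special ``no vertical disks'' cases of the moves in Figures \ref{ribbon-surf03/fig} and \ref{ribbon-surf04/fig}, performed inside $S$ so that $S$ itself is kept fixed. It suffices to check that each of these elementary modifications transforms $K_S$ by a 2-equivalence. For the embedded handle operations of Figure \ref{ribbon-surf04/fig}: addition/deletion of a canceling $0/1$-handle pair produces on the Kirby side a dotted unknot spanning $D$ with a single framed arc passing once through it, which is a canceling $1/2$-pair in the sense of Figure \ref{kirby-tang01/fig}; an embedded 1-handle sliding turns into a 2-handle sliding, again one of the moves of Figure \ref{kirby-tang01/fig}. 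For the ribbon intersection sliding of Figure \ref{ribbon-surf03/fig}, one writes $K_S$ before and after with the same choices of polarization, arcs $\alpha$, and crossovers wherever possible, and verifies that the two resulting Kirby tangles are related by pushing a framed arc across the disk spanned by the relevant dotted unknot, that is, by the sliding-under-a-1-handle move.

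The main obstacle will be the bookkeeping in this last step. When a ribbon intersection is slid from one 0-handle of the decomposition to another along a 1-handle, the counterimage of the band acquires or loses a monodromy arc in the branched cover, and one must carefully produce a sequence of 2-equivalence moves that simultaneously implements the relabeling of sheets, the re-routing of the two parallel framed arcs inside the new tongue, and the insertion/removal of the crossovers forced by step 7; the different subcases depending on the polarizations of the two disks and on whether the pushed label equals the polarization labels have to be treated separately. Once this case analysis is completed, combining all the above invariance statements shows that the 2-equivalence class of $K_S$ depends only on $S$, proving the lemma.
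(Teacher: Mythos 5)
Your overall strategy --- checking invariance under each local choice and then reducing the choice of adapted 1-handlebody decomposition to the moves of Figures \ref{ribbon-surf03/fig} and \ref{ribbon-surf04/fig} via Proposition \ref{1-handles/thm} --- is exactly the paper's, and your treatment of the polarizations, of the label ambiguity in Figure \ref{ribbon-kirby04/fig} \(b), of the crossovers, and of the embedded handle operations of Figure \ref{ribbon-surf04/fig} matches the paper's proof.

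There is, however, one concrete misstep: you claim that the ribbon intersection sliding of Figure \ref{ribbon-surf03/fig} is realized on the Kirby side by ``pushing a framed arc across the disk spanned by the relevant dotted unknot, that is, by the sliding-under-a-1-handle move.'' Pushing a framed arc \emph{across} the spanning disk of a dotted unknot changes which 1-handle the corresponding 2-handle runs over, hence changes the attaching map and is not an isotopy or 2-equivalence move by itself. The correct realization is by \emph{two 2-handle slidings} of the framed knot coming from $H^1_l$ over the framed knot coming from $H^1_k$ (the 1-handle of $S$ along which the ribbon intersection slides) --- one sliding for each of the two parallel displacements of $H^1_l$ that form the framed loop. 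The paper also uses a simplification you omit and which makes the case analysis you worry about essentially disappear: since the two 0-handles $H^0_i$ and $H^0_j$ may be assumed distinct, their polarizations can be chosen so that no crossover appears along $H^1_k$, after which the identification of the moves is immediate. Finally, your discussion of the arcs $\alpha$ is slightly too coarse: the elementary replacements of Figure \ref{ribbon-kirby06/fig} include switching the side of the ribbon intersection from which $\alpha$ emanates, which amounts to inserting two opposite half-twists in the band near the intersection; this is compensated not by a handle slide but by the extra crossovers forced by the labeling, using the fact that a half-twist only alters the framing by a full twist of the same sign.
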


\begin{proof}
First of all, we note that the construction of $S_K$ is clearly invariant under labeled
3-dimensional diagram isotopy (preserving ribbon intersections). Then, the relevant
choices occurring in it are in the order: the adapted 1-handlebody decomposition of $S$;
the polarizations of the $D_h$'s; the arcs $\alpha$ associated to the ribbon
intersections; the labeling of the framed arcs in Figure \ref{ribbon-kirby04/fig} \(b). We
prove that $K_S$ does not depend on them up to 2-equivalence, by proceeding in the
reversed orther and assuming each time that all previous choices are kept fixed.

For the labeling of the framed arcs in Figure \ref{ribbon-kirby04/fig} \(b), it suffices
to observe that switching the labels $k$ and $l$ is compensated up to labeled isotopy by 
the crossovers inserted in point 7.

Concerning the arcs $\alpha$, up to labeled 3-dimensional diagram isotopy different
choices can be related by a finite sequence of the elementary moves of Figure
\ref{ribbon-kirby06/fig}, where we replace a single arc $\alpha$ by $\alpha'$. By the
invariance under labeled 3-dimensional diagram isotopy, we only need to deal with these
elementary moves.

\begin{Figure}[htb]{ribbon-kirby06/fig}
{}{The elementary moves for the arcs $\alpha$}
\centerline{\fig{}{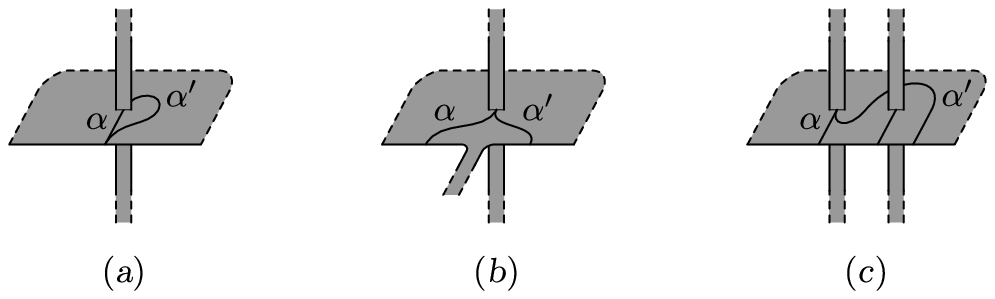}}
\vskip-3pt
\end{Figure}

Move \(a) is the same as inserting two opposite half-twists in the vertical band just
above and below the ribbon intersection (while leaving $\alpha$ unchanged). But this does
not produce any difference on the resulting Kirby tangle, due to the extra crossovers
(possibly canceling preexisting ones) needed to keep the labeling consistency. Figure
\ref{ribbon-kirby07/fig} shows how the insertion of one half-twist along a band only 
changes the framing of the corresponding framed knot by one full twist of the same sign.

\begin{Figure}[htb]{ribbon-kirby07/fig}
{}{Inserting a half-twist along a band}
\centerline{\fig{}{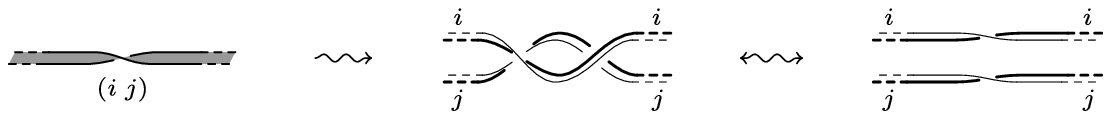}}
\vskip-3pt
\end{Figure}

A simple case by case comparison of the portion of Kirby tangles obtained using $\alpha$
and $\alpha'$ for all the possible labelings in moves \(b) and \(c), confirms that also
these moves change the resulting Kirby tangle by labeled isotopy.

What happens when we invert the polarization of a disk $D_h$ is described in Figure
\ref{ribbon-kirby08/fig}. We start with a given polarization in \(a), where we assume that
the\break framed arcs passing through $D_h$, coming either from bands attached to it or
from ribbon intersection inside it, have been isotoped all together into a canonical
position. Then, we isotope upside down the dotted unknot to obtain \(b) and use labeled
isotopy to make the arcs labeled by $i$ and those labeled by $j$ form separate negative
half twists. These two half twists add up to give a unique negative full twist in \(c).
In the end, we get \(d) by performing a positive twist on the 1-handle represented by the
dotted unknot (cf. Figures \ref{kirby-tang02/fig} and \ref{kirby-tang03/fig}). This last
diagram, possibly after canceling some of the crossovers appearing in it with preexisting
ones or with kinks coming from ribbon intersections (as in Figure \ref{ribbon-kirby04/fig}
\(c) and \(d)), is exactly what one gets by choosing the reversed polarization for $D_h$.

\begin{Figure}[htb]{ribbon-kirby08/fig}
{}{Reversing the polarization of a disk}
\centerline{\fig{}{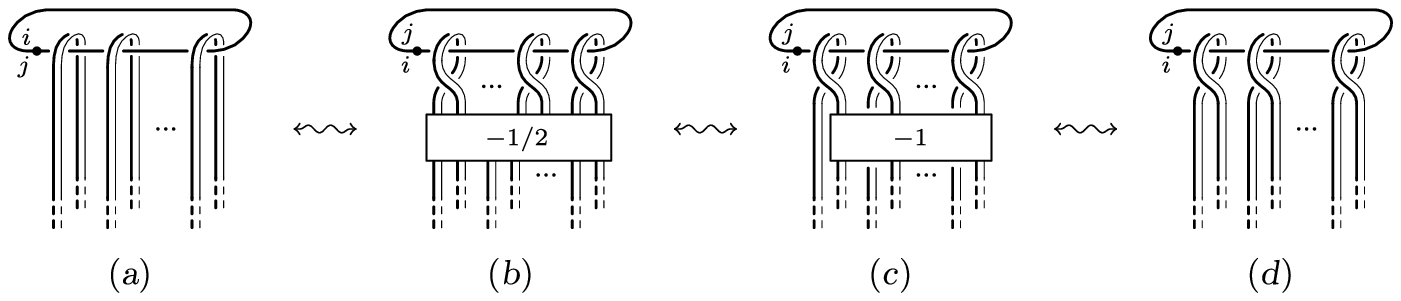}}
\vskip-3pt
\end{Figure}

Finally, the independence of $K_S$ of the adapted 1-handlebody decomposition of $S$ will
follow from Proposition \ref{1-handles/thm}, once we prove that performing on $S$ labeled
versions of the moves of Figures \ref{ribbon-surf03/fig} and \ref{ribbon-surf04/fig},
without vertical disks, corresponds to modifying $K_S$ by certain 2-deformation moves.

In all cases, since $H^0_i$ and $H^0_j$ can be assumed to be distinct 0-handles (cf. note
after Figure \ref{ribbon-surf04/fig}), we can choose the polarizations of them in such a
way that no crossover appears along $H^1_k$. Then, apparently the two moves of Figure
\ref{ribbon-surf04/fig} correspond respectively to addition/deletion of a canceling pair
of 1/2-handles and to sliding the 2-handle deriving from $H^1_l$ over the one deriving
from $H^1_k$. Similarly, in the case of move of Figure \ref{ribbon-surf03/fig} we have two
slidings involving the same 2-handles, one sliding for each of the two parallel copies of
$H^1_l$ forming the framed loop originated from it. We leave to the reader the
straightforward verification of this fact for all the four cases of Figure
\ref{ribbon-kirby04/fig}.
\end{proof}

\begin{proposition}\label{theta/thm}
$\Theta_n: \S_n \to \K_n$, defined by $\Theta_n(J_\sigma) = I_{\pi(\sigma)}$ and
$\Theta_n(S) = K_S$ as above, is a braided monoidal functor from the category of
$n$-labeled ribbon surface tangles to the category of $n$-labeled Kirby tangles, for any
$n \geq 2$.
\end{proposition}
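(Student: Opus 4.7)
The plan is to verify each of the four properties required for $\Theta_n$ to be a braided monoidal functor: invariance under the equivalence relation in $\S_n$, functoriality, compatibility with the monoidal product, and preservation of the braiding.

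First I would establish that $\Theta_n(S) = K_S$ depends only on the equivalence class of $S$ in $\S_n$. Lemma \ref{ribbon-to-kirby/thm} already shows that $K_S$ is well-defined up to 2-equivalence for a fixed $S$ modulo 3-dimensional diagram isotopy (moves \(S1)–\(S22)), and its proof shows further that the moves of Figures \ref{ribbon-surf03/fig} and \ref{ribbon-surf04/fig} correspond to 2-deformations of $K_S$. Combined with Proposition \ref{1-isotopy/thm}, this takes care of all 1-isotopy moves \(S23)–\(S26). It then remains to verify invariance under the covering moves \(R1) and \(R2). For each, I would pick an adapted 1-handlebody decomposition compatible with the local picture of the move (bands and disks in standard position with prescribed polarizations) and compute the corresponding local modification of $K_S$: in \(R1) the change can be realized by a 1/2-handle cancellation combined with a 2-handle slide and a crossing change, while \(R2) corresponds directly to an isotopy among framed components with disjoint labels, using the move of Figure \ref{kirby-tang01/fig} that swaps crossings between components living on disjoint 0-handles.

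Next I would address functoriality. On the identity $\id_{J_\sigma}$, the trivial product adapted decomposition and the canonical polarization yield, after adding and then cancelling the 1/2-handle pairs coming from the collar, the identity Kirby tangle $\id_{I_{\pi(\sigma)}}$ on the nose. For composition $S_2 \circ S_1$, I would choose adapted 1-handlebody decompositions of $S_1$ and $S_2$ whose collars along the common end $J_{\sigma_1}$ match, and take the induced decomposition of $S_2 \circ S_1$. The construction of $K_{S_2 \circ S_1}$ from this decomposition yields precisely the stacking $K_{S_2} \circ K_{S_1}$ modulo a collection of canceling 1/2-handle pairs arising along the gluing interval; removing these pairs gives the required 2-equivalence. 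Monoidality is handled analogously by working with juxtaposed decompositions: the construction is local in the first coordinate, so $K_{S \diam S'} = K_S \diam K_{S'}$ directly.

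Finally, for the braided structure, I would compute $\Theta_n$ on the elementary braiding morphism $\gamma_{\tau,\tau'}$ of Figure \ref{ribbon-morph03/fig}: choosing the minimal decomposition (collars only, no internal disks or bands) and applying the construction of Figure \ref{ribbon-kirby01/fig} produces exactly the Kirby braiding $\gamma_{(i,j),(i',j')}$ of Figure \ref{kirby-morph01/fig}. The inductive formulas for $\gamma_{\sigma,\sigma'}$ in $\S_n$ and $\gamma_{\pi(\sigma),\pi(\sigma')}$ in $\K_n$ then match by the already established monoidality and functoriality. The main obstacle I anticipate is the careful bookkeeping needed for the covering move \(R2), where one must track crossovers, kinks and framings introduced by the four cases of Figures \ref{ribbon-kirby04/fig}–\ref{ribbon-kirby05/fig} to ensure the local change in $K_S$ is indeed a 2-equivalence rather than only an ambient homeomorphism; the key simplification will be to exploit the freedom in the choice of polarizations (provided by Lemma \ref{ribbon-to-kirby/thm}) to eliminate extraneous crossovers before performing the verification.
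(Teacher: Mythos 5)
There is a genuine gap in your first step, and it swallows the bulk of the actual proof. You claim that Lemma \ref{ribbon-to-kirby/thm} together with Proposition \ref{1-isotopy/thm} ``takes care of all 1-isotopy moves \(S23)--\(S26)''. It does not. Lemma \ref{ribbon-to-kirby/thm} establishes invariance of $K_S$ only under the \emph{special cases without vertical disks} of the moves of Figures \ref{ribbon-surf03/fig} and \ref{ribbon-surf04/fig} --- precisely the cases that, by Proposition \ref{1-handles/thm}, relate different adapted 1-handlebody decompositions of the \emph{same} surface $S$. The 1-isotopy moves \(S23)--\(S26) correspond instead to the cases of those modifications \emph{with} a vertical disk (see the proof of Proposition \ref{1-isotopy/thm}): these genuinely change the surface, and neither direction of the correspondence reduces them to the vertical-disk-free cases. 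Consequently you still have to verify, for each labeled version of \(S23)--\(S26), that the two resulting Kirby tangles are 2-equivalent. This is where nearly all the work lies: \(S24) and \(S25) each admit three distinct labelings requiring 1/2-handle additions, 2-handle slidings and cancelations, and \(S26) admits eighteen labelings reducing to seven relevant cases, each needing its own chain of handle moves. None of this appears in your proposal.

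A secondary point: your anticipated main obstacle is misplaced. Move \(R2) is the easy covering move --- it is a direct application of the disjoint-monodromies crossing change and requires essentially no bookkeeping --- while \(R1) is handled by choosing compatible decompositions on both sides so the outputs agree up to labeled isotopy. The genuinely delicate case analysis sits in the labeled 1-isotopy moves, above all \(S26). Your treatment of well-definedness on a fixed $S$, of functoriality, monoidality and the braiding is in line with the intended argument, but the proof cannot stand until the labeled \(S23)--\(S26) verifications are supplied.
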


\begin{proof}
When thinking of an $n$-labeled ribbon surface tangle $S$ as a morphism of $\S_n$, we
consider it up to the equivalence relation introduced in Definition
\ref{lrs-equivalence/def}. Hence, we have to prove that the 2-equivalence class of $K_S$
in invariant under such equivalence relation. In the light of Proposition
\ref{1-isotopy/thm} and Lemma \ref{ribbon-to-kirby/thm}, we only need to check the
2-equivalence invariance of $K_S$ when $S$ is changed by the labeled versions of the
1-isotopy moves \(S23), \(S24), \(S24) and \(S26) of Figure \ref{ribbon-surf13/fig} and by
the covering moves \(R1) and \(R2) of Figure \ref{ribbon-moves01/fig}.

Move \(S23) admits a unique labeling up to conjugation in $\Sigma_n$. The generalized
Kirby diagrams arising from the labeled ribbon surfaces involved in the resulting labeled
move are depicted in Figure \ref{ribbon-kirby09/fig} (we assume that the surfaces are
endowed with the handlebody structures of the corresponding move of Figure
\ref{ribbon-surf02/fig}). As the reader can easily realize, the two diagrams are related
by labeled isotopy.

\begin{Figure}[htb]{ribbon-kirby09/fig}
{}{Realizing move \(S23)}
\centerline{\fig{}{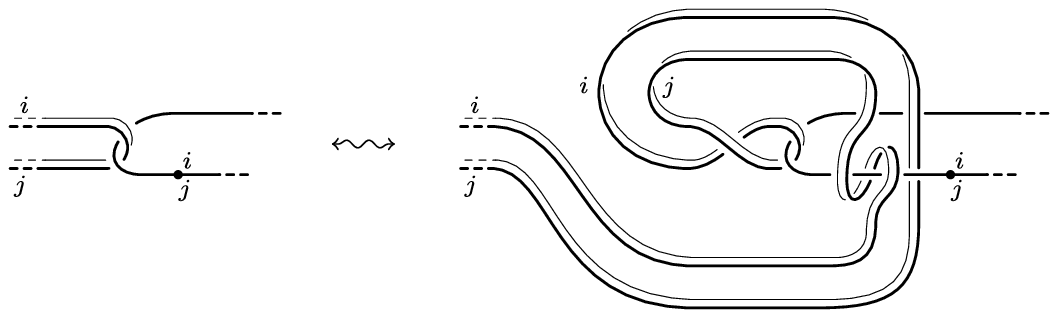}}
\end{Figure}

Moves \(S24) and \(S25) admit three distinct labelings. Namely, if $\tp{i}{j}$ is the
label of the horizontal component, then the top end of the vertical one can be labeled by
$\tp{i}{j}$, $\tp{j}{k}$ or $\tp{k}{l}$.

The first case is considered in Figure \ref{ribbon-kirby10/fig} for \(S24) and in Figure
\ref{ribbon-kirby11/fig} for \(S25). Looking at these figures, we have that the leftmost
and rightmost diagrams correspond respectively to the surfaces on the left and right side
of the move with the simplest adapted handlebody structures. The first step in both
figures is given by 1/2-handle addition, followed by a 2-handle sliding only in Figure
\ref{ribbon-kirby11/fig}. The next two steps are obtained in turn by a 2-handle sliding
and 1/2-handle cancelation. The same figures also apply to the second case, after
replacing by $k$'s all the $i$'s in the upper half and the $j$'s in the lower half (except
for the labels of the dotted line in the middle). The third case is trivial and we leave
it to the reader.

\begin{Figure}[htb]{ribbon-kirby10/fig}
{}{Realizing move \(S24)}
\centerline{\fig{}{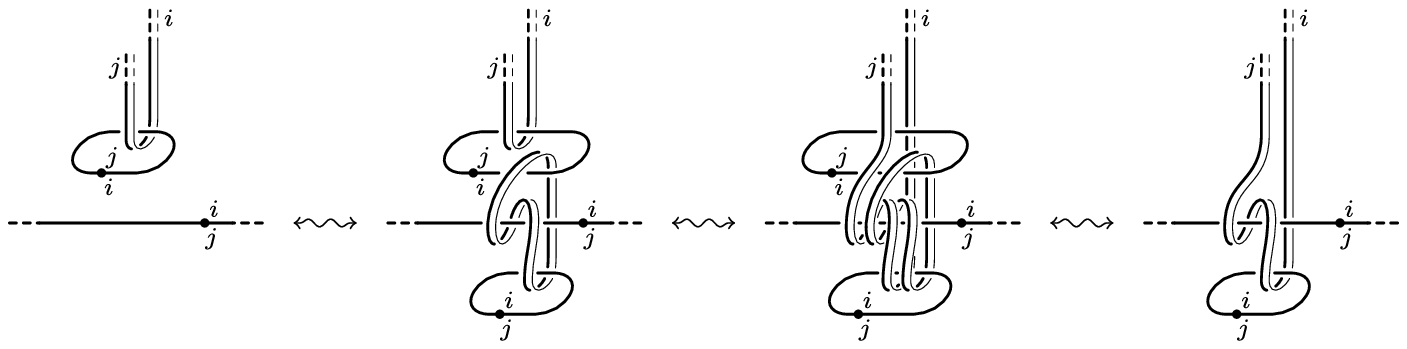}}
\vskip-3pt
\end{Figure}

\begin{Figure}[hbt]{ribbon-kirby11/fig}
{}{Realizing move \(S25)}
\centerline{\fig{}{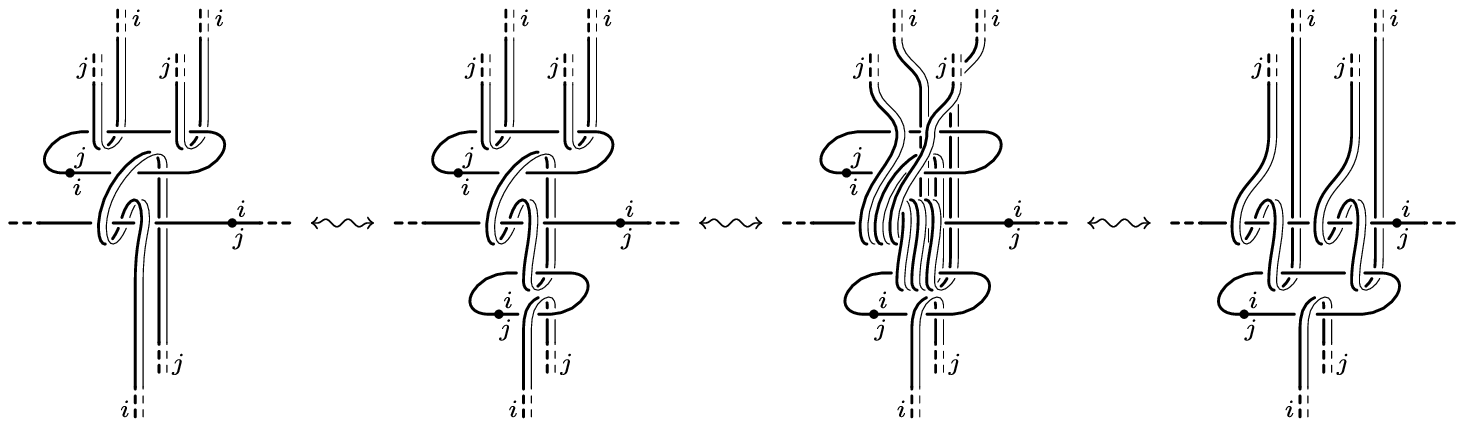}}
\end{Figure}

Finally, let us come to move \(S26), which requires a bit more work. As above, let
$\tp{i}{j}$ be the label of the horizontal band. Then, there are eighteen possible
ways\break to label the move, each one determined by the transpositions $\lambda$ and
$\rho$ labeling respectively the left and right bottom ends of the diagonal bands.

By direct inspection we see that, excluding the trivial cases when at least two of
the three ribbon intersections involve bands with disjoint monodromies, which are left to
the reader, and taking into account the symmetry of the move with\break respect to its
inverse, there are only seven relevant cases:
1)~$\lambda = \tp{i}{j}$ and $\rho = \tp{i}{j}$; 
2)~$\lambda = \tp{i}{j}$ and $\rho = \tp{i}{k}$;
3)~$\lambda = \tp{i}{k}$ and $\rho = \tp{i}{j}$; 
4)~$\lambda = \tp{i}{k}$ and $\rho = \tp{i}{k}$;
5)~$\lambda = \tp{i}{k}$ and $\rho = \tp{i}{l}$; 
6)~$\lambda = \tp{i}{k}$ and $\rho = \tp{j}{l}$;
7)~$\lambda = \tp{i}{k}$ and $\rho = \tp{k}{l}$.

Figure \ref{ribbon-kirby12/fig} regards case 1. Here, the first and last diagrams
correspond respectively to the surfaces on the left side and right side of the move with
suitable adapted handlebody structures, while the second one is related to the first by
two 2-handle slidings and to the third by labeled isotopy. This figure also applies to
case 4, after replacing by $k$'s all the $i$'s in the upper half and the $j$'s in the 
lower half (except for the labels of the dotted line in the middle), as above.

\begin{Figure}[b]{ribbon-kirby12/fig}
{}{Realizing move \(S26) -- I}
\vskip3pt
\centerline{\fig{}{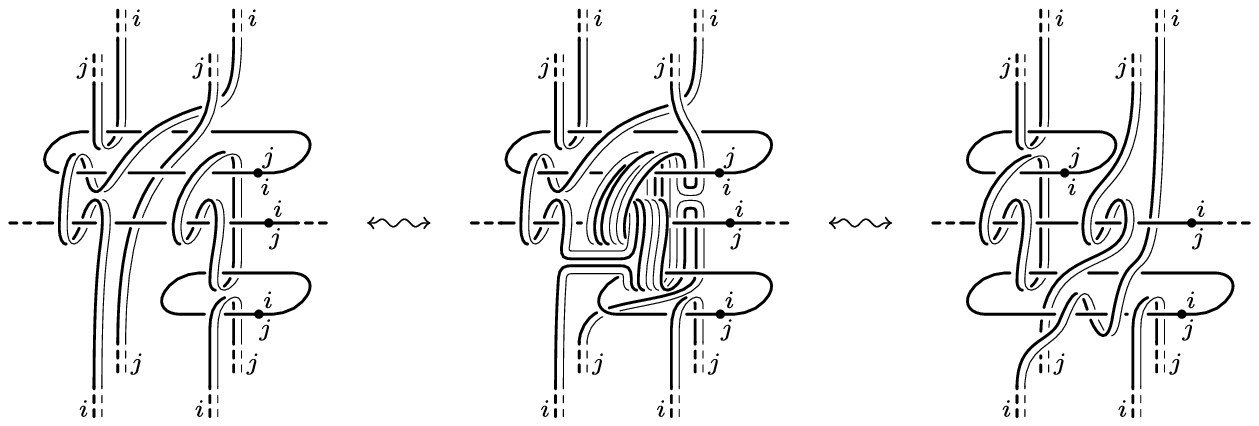}}
\end{Figure}

\begin{Figure}[htb]{ribbon-kirby13/fig}
{}{Realizing move \(S26) -- II}
\vskip6pt
\centerline{\fig{}{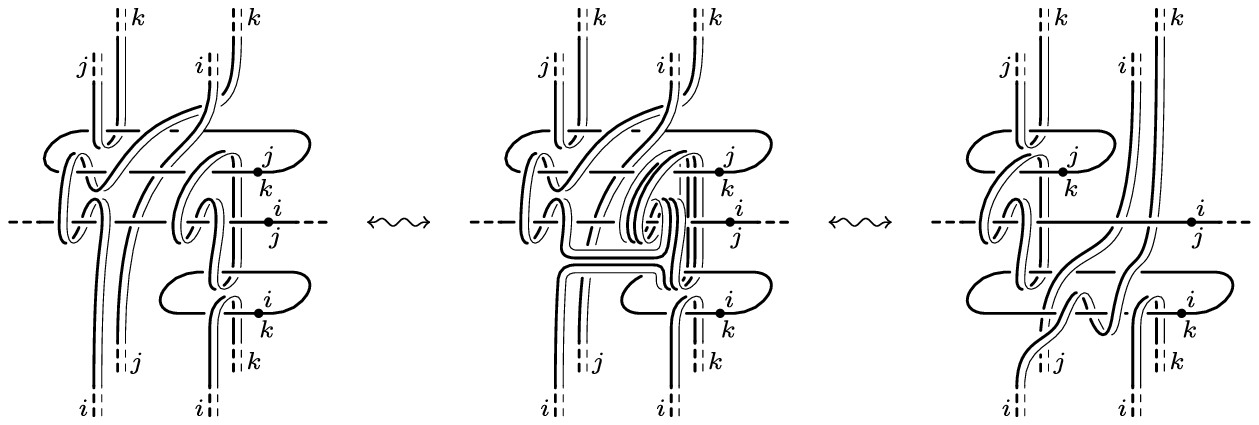}}
\end{Figure}

\begin{Figure}[htb]{ribbon-kirby14/fig}
{}{Realizing move \(S26) -- III}
\vskip6pt
\centerline{\fig{}{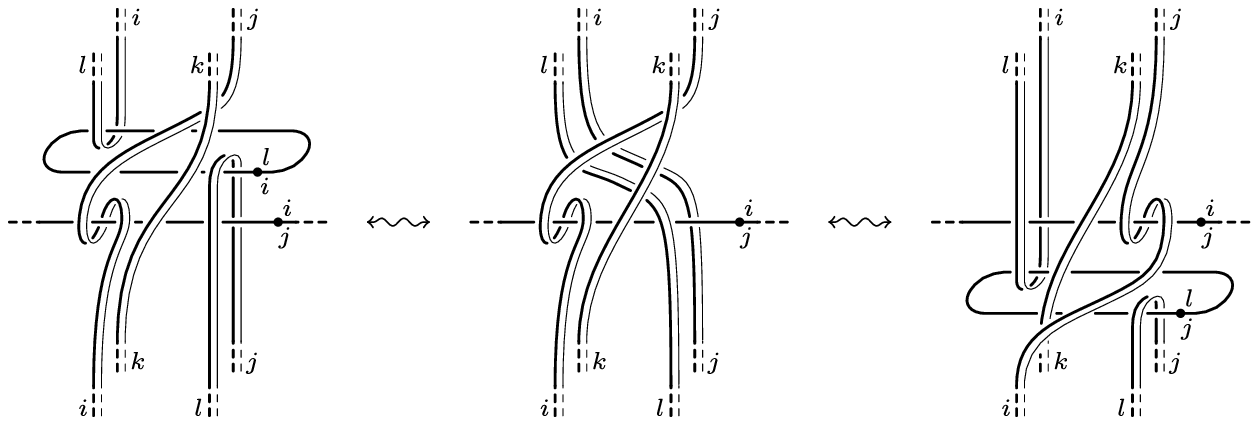}}
\end{Figure}

\begin{Figure}[htb]{ribbon-kirby15/fig}
{}{Realizing move \(S26) -- IV}
\centerline{\fig{}{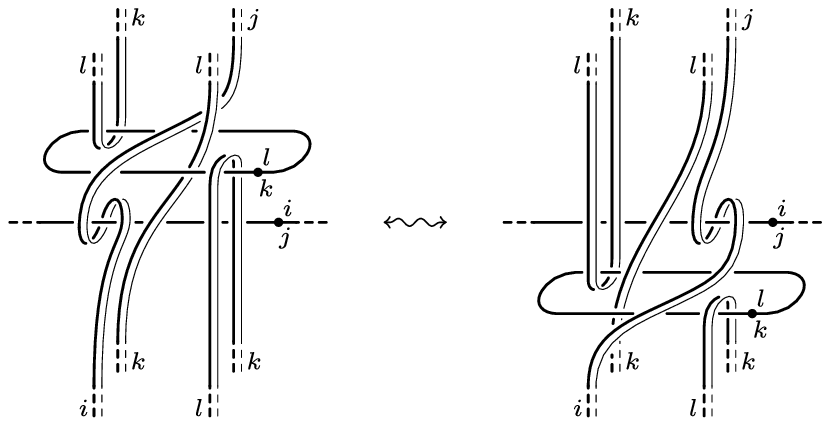}}
\end{Figure}

Similarly, Figure \ref{ribbon-kirby13/fig} concerns case 2 and, after the appropriate
label replacements, also cases 3 and 5. This time only one 2-handle sliding is needed
to relate the first two diagrams. Figures \ref{ribbon-kirby14/fig} and
\ref{ribbon-kirby15/fig} deal with the remaining cases 5 and 7. The three diagrams of
Figure \ref{ribbon-kirby14/fig} are related by 1/2-handle addition/deletion, while the two
diagrams of Figure \ref{ribbon-kirby15/fig} by labeled isotopy.

It remains to consider the covering moves \(R1) and \(R2). If $S$ and $S'$ differ by such
a move, then by making the right choices in the construction of $\Theta_n(S)$ and
$\Theta_n(S')$ we get the same result up to labeled isotopy. This is shown in Figure
\ref{ribbon-kirby16/fig} for move \(R1), while the analogous easier case of move
\(R2) is left to the reader.

\begin{Figure}[htb]{ribbon-kirby16/fig}
{}{Realizing move \(R1)}
\centerline{\fig{}{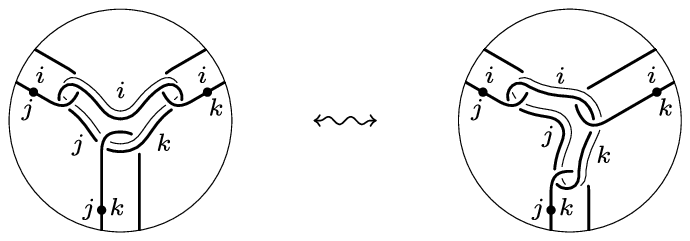}}
\end{Figure}

This completes the proof that $\Theta_n$ is well-defined as a functor from $\S_n$ to
$\K_n$, being\break the identity morphisms and the composition of morphisms clearly
preserved by it.

The fact that $\Theta_n$ is actually a braided monoidal functor follows from a
straightforward verification of the identities $\Theta_n(\gamma_{J_\sigma, J_{\sigma'}}) =
\gamma_{I_{\pi(\sigma)}, I_{\pi(\sigma')}}$ for any $\sigma,\sigma' \in \seq\Gamma_n$ and
$\Theta_n(S \diam S') = \Theta_n (S) \diam \Theta_n(S')$ for any $n$-labeled ribbon
surface tangles.
\end{proof}

It is worth remarking that the Theorem \ref{theta/thm} becomes much simpler if we limit
ourselves to require that the relative 4-dimensional 2-handlebodies represented by
$\Theta_n(S)$ and $\Theta_n(S')$ are diffeomorphic, without insisting that they are
2-equivalent. In fact, labeled isotopy between $S$ and $S'$ (instead of labeled 1-isotopy)
suffices for that, since it induces equivalence between the corresponding branched
coverings, as recalled in Section \ref{coverings/sec}. The relation between isotopy and
1-isotopy of ribbon surfaces in $B^4$ on one hand and diffeomorphism and 2-equivalence of
4-dimensional 2-handlebodies on the other hand, will be discussed in Remark 
\ref{1-isotopy/rem}.

\begin{proposition} \label{theta-c/thm}
For any $n \geq 2$, $\Theta_n$ restricts to a functor\/ $\Theta_n: \S_n^c \to \K_n^c$,
such that $\Theta_n(S\rdiam S') = 
\Theta_n (S) \rdiam \Theta_n (S')$ for any two morphisms $S$ and $S'$ in $\S_n^c$.
\end{proposition}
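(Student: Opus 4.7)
The plan is to leverage the fact that $\Theta_n: \S_n \to \K_n$ is already a braided monoidal functor (Proposition \ref{theta/thm}), so that the entire statement reduces to identifying $\Theta_n$ on the comultiplication morphisms. Recall that the map $\pi: \seq\Gamma_n \to \seq\G_n$ used to define $\Theta_n$ on objects sends $\tp{i}{j}$ with $i>j$ to $(i,j)$, hence $\pi(\sigma_{n \red 1}) = \pi_{n \red 1}$. Thus $\Theta_n$ automatically sends $\Obj \S^c_n$ into $\Obj \K^c_n$.

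The central step is to verify that, for every transposition $\tp{i}{j} \in \Gamma_n$,
$$\Theta_n(\Delta_{\tp{i}{j}}) \;=\; \Delta_{(i,j)}$$
as morphisms of $\K_n$ (up to $2$-equivalence). This is a direct computation: one takes the canonical planar diagram of the elementary labeled ribbon surface tangle $\Delta_{\tp{i}{j}}$ shown in Figure \ref{ribbon-morph02/fig}, equips it with an adapted 1-handlebody decomposition with the minimal possible number of handles (one 0-handle at the branch point together with the collar of the three boundary arcs), and applies the recipe of steps 1--7 of Section \ref{Theta/sec}. A case check on the two polarizations of the single disk shows that the resulting generalized Kirby tangle coincides with the diagram in Figure \ref{kirby-morph01/fig} defining $\Delta_{(i,j)}$, up to labeled isotopy. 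Once this elementary identity is known, the recursive definitions
$$\Delta_\sigma = (\id_{\sigma'} \diam \gamma_{\sigma',\sigma''} \diam \id_{\sigma''}) \circ (\Delta_{\sigma'} \diam \Delta_{\sigma''}),\qquad
\Delta_\pi = (\id_{\pi'} \diam \gamma_{\pi',\pi''} \diam \id_{\pi''}) \circ (\Delta_{\pi'} \diam \Delta_{\pi''})$$
together with the braided monoidality of $\Theta_n$ give $\Theta_n(\Delta_\sigma) = \Delta_{\pi(\sigma)}$ for every $\sigma \in \seq\Gamma_n$, and in particular $\Theta_n(\Delta_{\sigma_{n \red 1}}) = \Delta_{\pi_{n \red 1}}$.

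From this, the functorial restriction is immediate: if $S = (\id_{\sigma_{n \red 1}} \diam T) \circ (\Delta_{\sigma_{n \red 1}} \diam \id_{\sigma_0}) \in \S^c_n$, then applying $\Theta_n$ and using that it preserves composition, identity and $\diam$ yields
$$\Theta_n(S) = (\id_{\pi_{n \red 1}} \diam \Theta_n(T)) \circ (\Delta_{\pi_{n \red 1}} \diam \id_{\pi(\sigma_0)}),$$
which is exactly the defining form of a $1$-reducible Kirby tangle in $\K^c_n$. For the product identity, one expands both sides using the second expression given for $\rdiam$ in Definition \ref{S-reducible/def} and in the analogous construction for $\K^c_n$; both consist of the same pattern built from $\diam$, identities, comultiplications $\Delta_{\sigma_{n \red 1}}$ and $\Delta_{\pi_{n \red 1}}$, and braidings $\gamma_{\sigma_{n \red 1},\sigma_0}$ and $\gamma_{\pi_{n \red 1},\pi(\sigma_0)}$. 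Because $\Theta_n$ is braided monoidal and sends $\Delta_{\sigma_{n \red 1}}$ to $\Delta_{\pi_{n \red 1}}$, applying it termwise to the expansion of $S \rdiam S'$ reproduces exactly the expansion of $\Theta_n(S) \rdiam \Theta_n(S')$.

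The only potentially delicate step is the elementary identity $\Theta_n(\Delta_{\tp{i}{j}}) = \Delta_{(i,j)}$: the construction of Section \ref{Theta/sec} involves several choices (the 1-handlebody decomposition, the polarization of the 0-handle disk, the arcs $\alpha$ at ribbon intersections, the labeling in Figure \ref{ribbon-kirby04/fig}(b)) and, although Lemma \ref{ribbon-to-kirby/thm} guarantees that the resulting Kirby tangle is independent of these choices up to $2$-equivalence, one must carry out the construction once, in the chosen normal form, and explicitly match it against the diagram of $\Delta_{(i,j)}$ in Figure \ref{kirby-morph01/fig}. Everything else in the proof is formal bookkeeping from the braided monoidal structure of $\Theta_n$.
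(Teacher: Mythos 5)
Your proposal is correct and follows essentially the same route as the paper: the paper's proof likewise reduces everything to the monoidality of $\Theta_n$ together with the identities $\Theta_n(\Delta_{\sigma}) = \Delta_{\pi(\sigma)}$ and $\Theta_n(\gamma_{\sigma,\sigma'}) = \gamma_{\pi(\sigma),\pi(\sigma')}$. Your extra care in checking the elementary case $\Theta_n(\Delta_{\tp{i}{j}}) = \Delta_{(i,j)}$ (which the paper records in Figure \ref{theta03/fig} and dismisses as apparent) is a welcome elaboration rather than a divergence.
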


\begin{proof}
The proposition follows from the monoidality of $\Theta_n$ and from the identities
$\Theta_n(\Delta_{\sigma}) = \Delta_{\pi(\sigma)}$ and $\Theta_n(\gamma_{\sigma,\sigma'})
= \gamma_{\pi(\sigma),\pi(\sigma')}$, which apparently hold for any $\sigma,\sigma' \in
\seq\Gamma_n$.
\end{proof}

\begin{proposition} \label{stab-theta/thm}
For any $n > k \geq 2$ the following diagram commutes.
\vskip9pt
\centerline{\epsfbox{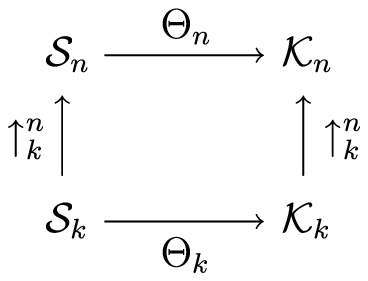}}
\vskip-6pt
\end{proposition}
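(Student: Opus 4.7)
The plan is to verify the commutativity by a direct computation on objects and morphisms, exploiting the monoidality of $\Theta_n$ established in Proposition \ref{theta/thm}.

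First I would check the agreement on objects. Given $J_\sigma \in \Obj\S_k$ with $\sigma \in \seq\Gamma_k$, by the very definition of the stabilization functors we have
$$\up_k^n(\Theta_k(J_\sigma)) = \up_k^n(I_{\pi(\sigma)}) = I_{\pi_{n\red k}}\diam\iota_k^n(I_{\pi(\sigma)}) = I_{\pi_{n\red k}\,\diam\,\pi(\sigma)},$$
while
$$\Theta_n(\up_k^n(J_\sigma)) = \Theta_n(J_{\sigma_{n\red k}}\diam\iota_k^n(J_\sigma)) = I_{\pi(\sigma_{n\red k}\,\diam\,\sigma)}.$$
These coincide because $\pi(\tp{i}{i{-}1}) = (i,i-1)$ for each transposition in $\sigma_{n\red k}$, so $\pi(\sigma_{n\red k}) = \pi_{n\red k}$, and $\pi$ is monoidal with respect to juxtaposition.

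For morphisms, let $S: J_{\sigma_0} \to J_{\sigma_1}$ be a morphism of $\S_k$. Then
$$\up_k^n(\Theta_k(S)) = \id_{\pi_{n\red k}}\diam\iota_k^n(\Theta_k(S)),$$
and by the monoidality of $\Theta_n$,
$$\Theta_n(\up_k^n(S)) = \Theta_n(\id_{\sigma_{n\red k}}\diam\iota_k^n(S)) = \Theta_n(\id_{\sigma_{n\red k}})\diam\Theta_n(\iota_k^n(S)).$$
Hence it suffices to establish two auxiliary identities: (i) $\Theta_n(\id_{\sigma_{n\red k}}) = \id_{\pi_{n\red k}}$, and (ii) $\Theta_n\circ\iota_k^n = \iota_k^n\circ\Theta_k$ on morphisms.

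For (i), the identity tangle $\id_{\sigma_{n\red k}}$ is a disjoint union of flat vertical bands with standard ends and no ribbon intersections; the construction of Section \ref{Theta/sec} starts from the Kirby tangle on the right of Figure \ref{ribbon-kirby01/fig} and adds no 0- or 1-handles of $S$, so $K_{\id_{\sigma_{n\red k}}}$ reduces to the standard identity diagram $\id_{\pi_{n\red k}}$ depicted in Figure \ref{kirby-morph01/fig}, up to the 2-equivalence established in Lemma \ref{ribbon-to-kirby/thm}. For (ii), the construction $S \mapsto K_S$ is entirely local and its outputs depend only on the pairs $(i,j)$ underlying the transpositions labeling the ribbons, not on the ambient symmetric group; the choices of adapted handlebody decomposition, polarizations, and auxiliary arcs for $\iota_k^n(S)$ can be taken to be the same as those for $S$, and they produce identical generalized Kirby tangles once the labels are viewed in $\{1,\dots,n\}$ rather than $\{1,\dots,k\}$. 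Combining (i) and (ii) gives the desired equality of morphisms.

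No step here is a real obstacle: the entire argument is a bookkeeping exercise that invokes the already established monoidality of $\Theta_n$ (Proposition \ref{theta/thm}) together with the naturality of the construction $S \mapsto K_S$ under the inclusion $\Gamma_k \subset \Gamma_n$. The only point that might require a moment's care is (i), because in principle one should confirm that the canonical choices made in steps 2--7 of the construction of $K_S$ collapse to the trivial diagram for $S = \id_{\sigma_{n\red k}}$; this is however immediate from the fact that such an $S$ admits an adapted 1-handlebody decomposition with no 0-handles and no bands beyond the collar $T$, so only step~1 contributes to $K_S$.
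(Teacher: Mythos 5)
Your proof is correct and is essentially the paper's own argument: the paper simply states that the commutativity is a direct consequence of the definitions of the functors involved, and your write-up is the detailed bookkeeping behind that one-line claim (note that your point (i) already follows from the functoriality of $\Theta_n$ established in Proposition \ref{theta/thm}, since functors preserve identities). Nothing further is needed.
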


\begin{proof}
This is a direct consequence of the definitions of the functors involved.
\end{proof}

In order to have an explicit form of $\Theta_n(S)$ for any $S \in \S_n$, we need to choose
a specific adapted 1-handlebody structure on $S$. By Proposition \ref{Sn-category/thm}, it
is enough to specify this choice for the elementary morphisms in $E_n$, represented by the
$n$-labeled versions of the planar diagrams in Figure \ref{ribbon-morph01/fig}. Actually,
we will consider only the elementary morphisms bases on diagrams \(a) to \(g) in that
figure, and use move \(I6) in Figure \ref{ribbon-tang01/fig} to reduce those based on
\(g') to the ones based on \(g).

This is done in Figures \ref{theta01/fig}, \ref{theta02/fig} and \ref{theta03/fig}, where
the 0-handles are denoted with lighter gray and the 1-handles with heavier gray color.
Notice that there are two types of 0-handles: the ones which are neighborhoods of vertices
of the core graph,\break and the others which divide the ribbons in such a way that none
of them contains two boundary arcs. Moreover, since any 1-handle forms at most one ribbon
intersection with any disk 0-handle, and in this case the 0-handle has a single band
attached to it, the choice of the arcs $\alpha$ is essentially unique up to isotopy, so we
omit them.

\begin{Figure}[b]{theta01/fig}
{}{Specifying $\Theta_n$ -- I ($i > j$ and $i' > j'$)}
\centerline{\fig{}{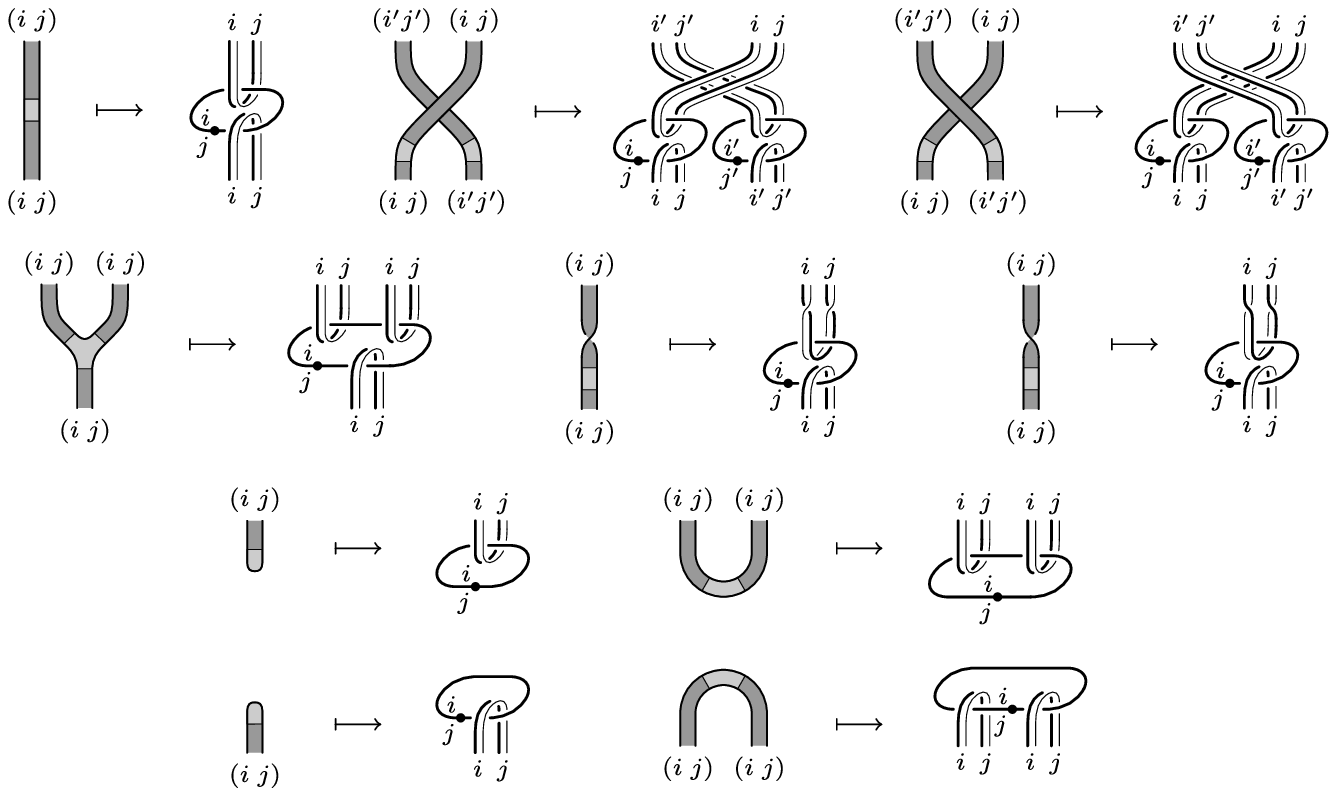}}
\vskip-3pt
\end{Figure}

\begin{Figure}[htb]{theta02/fig}
{}{Specifying $\Theta_n$ -- II
   ($i > j > k$, $h > l$ and $\{i,j\} \cap \{h,l\} = \emptyset$)}
\centerline{\fig{}{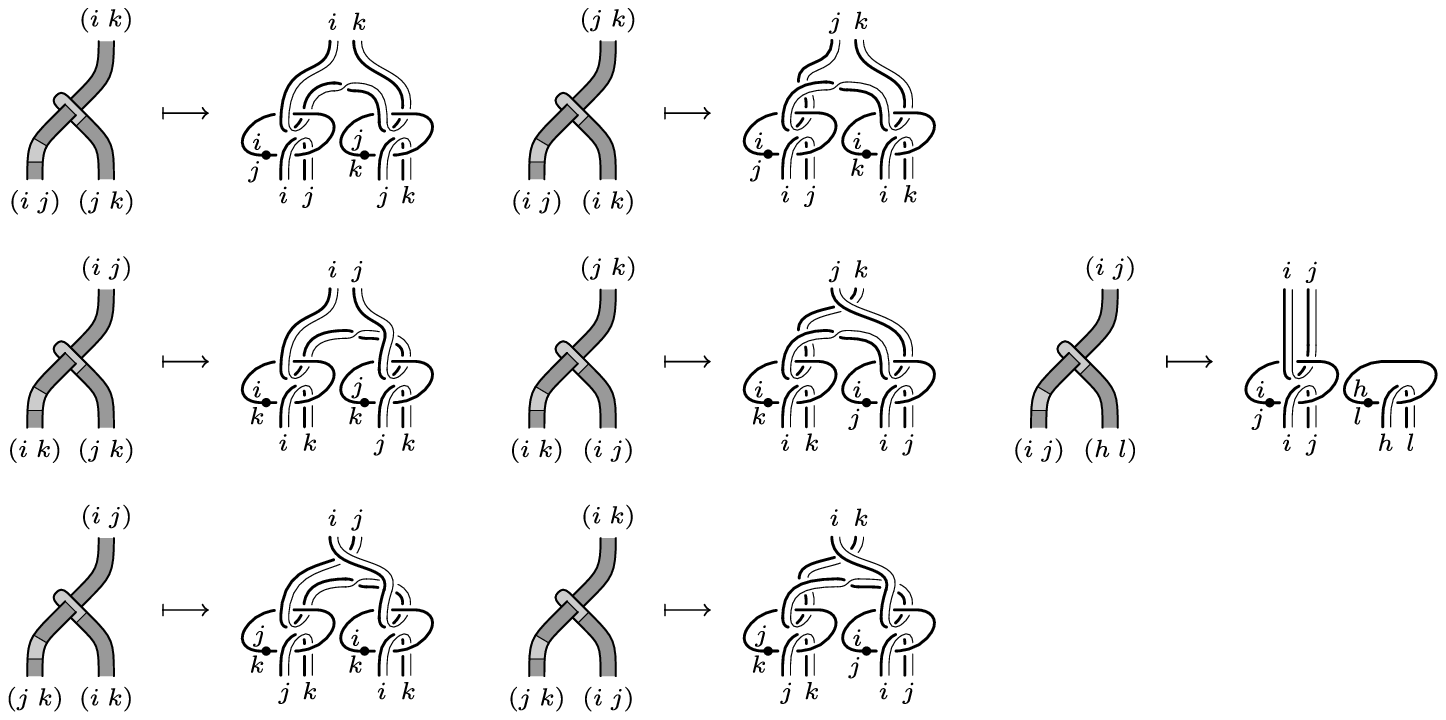}}
\vskip-3pt
\end{Figure}

\begin{Figure}[htb]{theta03/fig}
{}{Specifying $\Theta_n$ -- III ($i > j$)}
\vskip6pt
\centerline{\fig{}{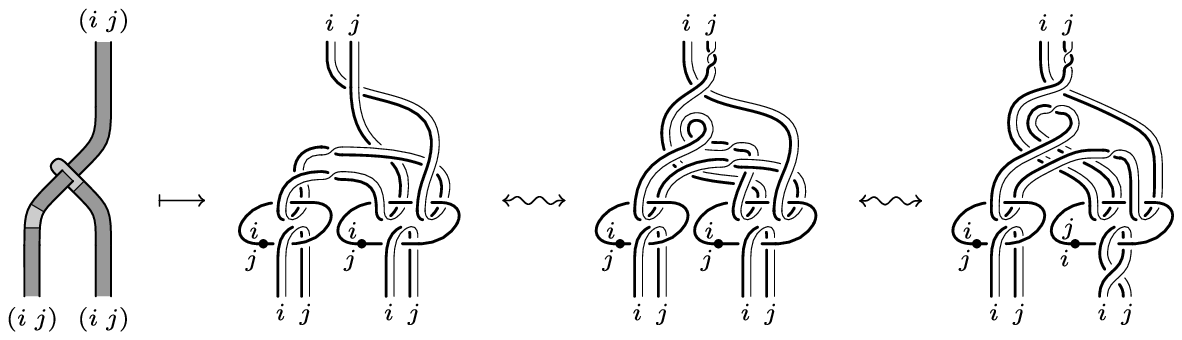}}
\vskip-3pt
\end{Figure}

In the same Figures \ref{theta01/fig}, \ref{theta02/fig} and \ref{theta03/fig}, we also
specify generalized Kirby tangles $\Theta_n(S)$ obtained from the chosen 1-handlebody
structures. For later use the image of the elementary morphism in Figure \ref{theta03/fig}
has been transformed by labeled isotopy and by inverting the polarization of the dotted
unknot on the right.

Then, given $S \in \S_n$ as an iterated product/composition of elementary diagrams, we
construct $\Theta_n(S)$ as the formal iterated product/composition of the corresponding
generalized Kirby tangles depicted in those figures. Theorem \ref{theta/thm} assures us
that such composition is well-defined as a morphism in $K_n$.

\subsection{Fullness of $\Theta_n: \S^c_n \to \K^c_n$ for $n \geq 3$%
\label{fullness/sec}}

As we will see in Proposition 3.4.4, the fullness of $\Theta_n: \S_n^c \to \K_n^c$
for any $n \geq 3$ follows from the fullness of ${\down_1^3} \circ \Theta_3: \S_3^c \to
\K_1$. Hence, we focus on the latter.

Given a Kirby tangle $K: I_{m_0} \to I_{m_1}$ in $\K_1$, we will construct a labeled 
ribbon surface tangle $S_K: J_{\sigma_{3 \red 1}} \!\diam J_{m_0} \to J_{\sigma_{3 \red 
1}} \!\diam J_{m_1}$ in $\S_3^c$ such that $\down_1^3 \Theta_3(S_K) = K$, with all 
intervals in $J_{m_0}$ and $J_{m_1}$ labeled by $\tp12$.

Actually, the notation $S_K$ is somewhat abusive, since the ribbon surface
tangle to which it refers is not uniquely determined by the Kirby tangle $K$, depending on
some choices involved in its construction (at steps 1, 3, 6 and 7). However, in the next
sections (cf. Lemma \ref{SK-welldef/thm} and Proposition \ref{trivial-state/thm}), the
uniqueness will be shown to hold for $\up_3^4 S_K$ up to equivalence of ribbon surface
tangles.

The global structure of $S_K = Q_{m_1} \!\circ R_K \circ \bar Q_{m_0}$ is illustrated in
Figure \ref{kirby-ribbon01/fig}. Here, $\bar Q_{m_0}\!: J_{\sigma_{3 \red 1}} \!\diam
J_{m_0} \to J_{\sigma_{3 \red 1}} \!\diam J_{2m_0}$ and $Q_{m_1}\!: J_{\sigma_{3 \red 1}}
\!\diam J_{2m_1} \to J_{\sigma_{3 \red 1}} \!\diam J_{m_1}$ are standard morphisms in
$\S_{3 \red 1}$ that only depend on $m_0$ and $m_1$ respectively. On the contrary, the
morphism $R_K = \id_{\sigma_{3 \red 1}} \!\diam_{\beta, \gamma, \delta} T_K: J_{\sigma_{3
\red 1}} \!\diam J_{2m_0} \to J_{\sigma_{3 \red 1}} \!\diam J_{2m_1}$ of $\S_{3 \red
1}$, which is obtained by attaching to $\id_{\sigma_{3 \red 1}} \!\diam T_K$ certain
families of bands $\beta, \gamma, \delta$ between $\id_{\sigma_{3 \red 1}}$ and $T_K$ (the
horizontal ones on the left of $T_K$ in Figure \ref{kirby-ribbon01/fig}), depends on the
internal structure of $K$ and on the choice of the bands $\beta, \gamma, \delta$.

\begin{Figure}[htb]{kirby-ribbon01/fig}
{}{The global structure of $S_K$}
\centerline{\kern50pt\fig{}{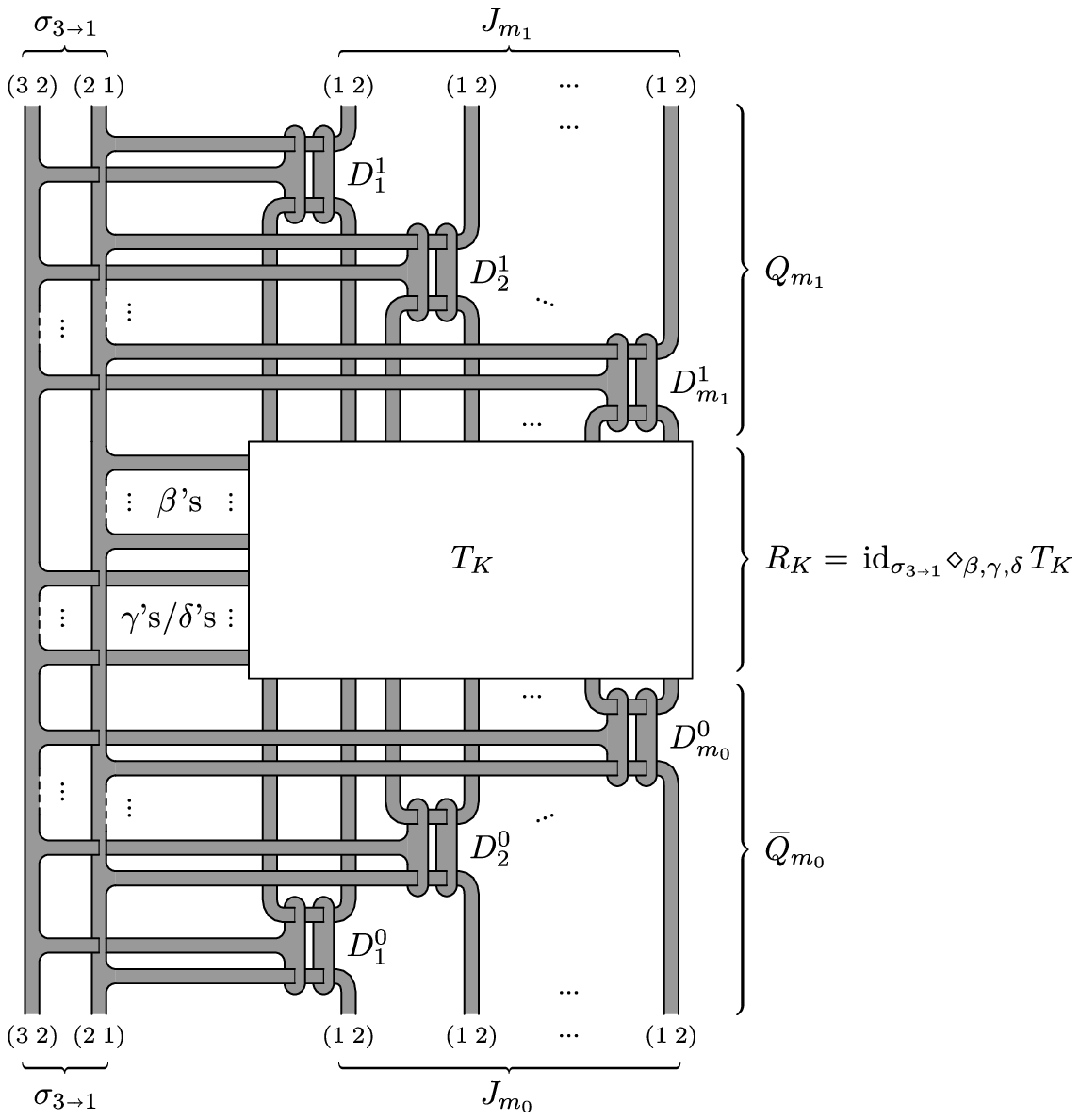}}
\vskip-6pt
\end{Figure}

The main steps in the construction of $S_K$ are explained below (namely, step 1 is a
preparatory one on $K$, step 2 concerns $\bar Q_{m_0}$ and $Q_{m_1}$, steps 3 to 5 deal
with $T_K$, steps 6 and 7 regard the families of bands $\beta, \gamma, \delta$, step 8 is 
devoted to the labeling).
\begin{itemize}
\begin{Figure}[b]{kirby-ribbon02/fig}
{}{Breaking the open framed components of $K$}
\centerline{\fig{}{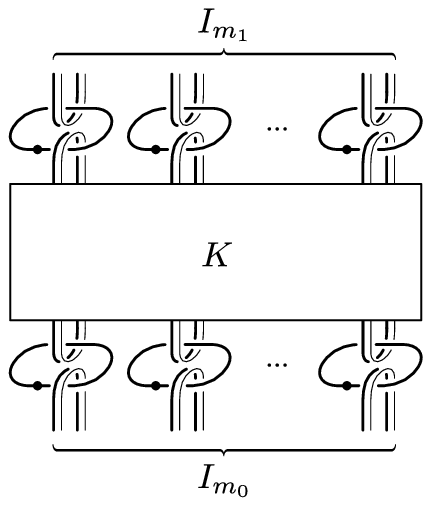}}
\vskip-3pt
\end{Figure}
\item[1)]
Represent $K$ by a strictly regular planar diagram (see Definition
\ref{strictly-regular/def}) and break all the open framed components of $K$, by inserting
dotted components near their ends at $I_{m_0}$ and $I_{m_1}$, as depicted in Figure
\ref{kirby-ribbon02/fig}. This change is achieved by a 2-deformation, namely by inserting
canceling 1/2-pairs and then performing 2-handle slidings, so the resulting Kirby tangle
can still be denoted by $K$. Moreover, denote by $D_1, \dots, D_r \subset E \times [0,1]$
the disjoint disks spanned by the dotted unknots of the original tangle $K$ (meaning
excluded the new ones we have just inserted) and by $L = L_1 \cup \dots \cup L_s$ the
framed link formed by the closed framed components of the modified tangle $K$
(corresponding to all the framed curves, both open and closed, of the original $K$).
\item[2)]
Replace the exterior of the box $K$ in Figure \ref{kirby-ribbon02/fig}, by the standard
ribbon surface tangles $\bar Q_{m_0}$ and $Q_{m_1}$ shown in Figure
\ref{kirby-ribbon01/fig}, and extend their reduction ribbons on the left of the box, to
compose them into a unique copy of $\id_{\sigma_{3 \red 1}}$. We observe that $\bar
Q_{m_0}$ and $Q_{m_1}$ are symmetric to each other (except for the fact that $m_0$ and
$m_1$ can be different), but they do no represent inverse morphisms (even if $m_0 = m_1$).
For future references, we introduce the following notations: $\tilde{\id}_{\tp21}$ for the
union of $\id_{\tp21}$ with the bands connecting it to $J_{m_0}$ and $J_{m_1}$ in $\bar
Q_{m_0}$ and $Q_{m_1}$; $\tilde{\id}_{\tp32}$ for the union of $\id_{\tp32}$ with the
parts of $\bar Q_{m_0}$ and $Q_{m_1}$ connected to it (the horizontal bands springing from
it and the vertical disks where they end); $D^0_1, \dots, D^0_{m_0}$ and $D^1_1, \dots,
D^1_{m_1}$ for the remaining vertical disks in $\bar Q_{m_0}$ and $Q_{m_1}$ respectively
(see Figure \ref{kirby-ribbon01/fig}).
\item[3)]
Choose a trivial state for the diagram of the unframed link $|L|$ (cf. Section
\ref{links/sec}), and let $L' = L'_1 \cup \dots \cup L'_s$ be the new framed link obtained
from $L$ by a regular vertical homotopy, whose corresponding unframed link $|L'|$ is
represented by that trivial state. Moreover, assume $L'$ to coincide with $L$ outside $F_1
\cup \dots \cup F_l$, where each $F_i$ is a cylinder projecting onto a small circular
neighborhood of a changing crossing. Such a cylinder $F_i$, together with the relative
portion of diagram, is depicted in Figure \ref{kirby-ribbon03/fig} \(a) and \(b), where
$j$ and $k$ may or may not be distinct. Here $C_i \subset F_i$ is a regularly embedded
disk without vertical tangencies, separating the two bands of $L \cap F_i$ and forming
four transversal intersection arcs with $L'$.
\begin{Figure}[htb]{kirby-ribbon03/fig}
{}{The disk $C_i$ and the framed links $L$ and $L'$ at a changing crossing}
\centerline{\kern-5pt\fig{}{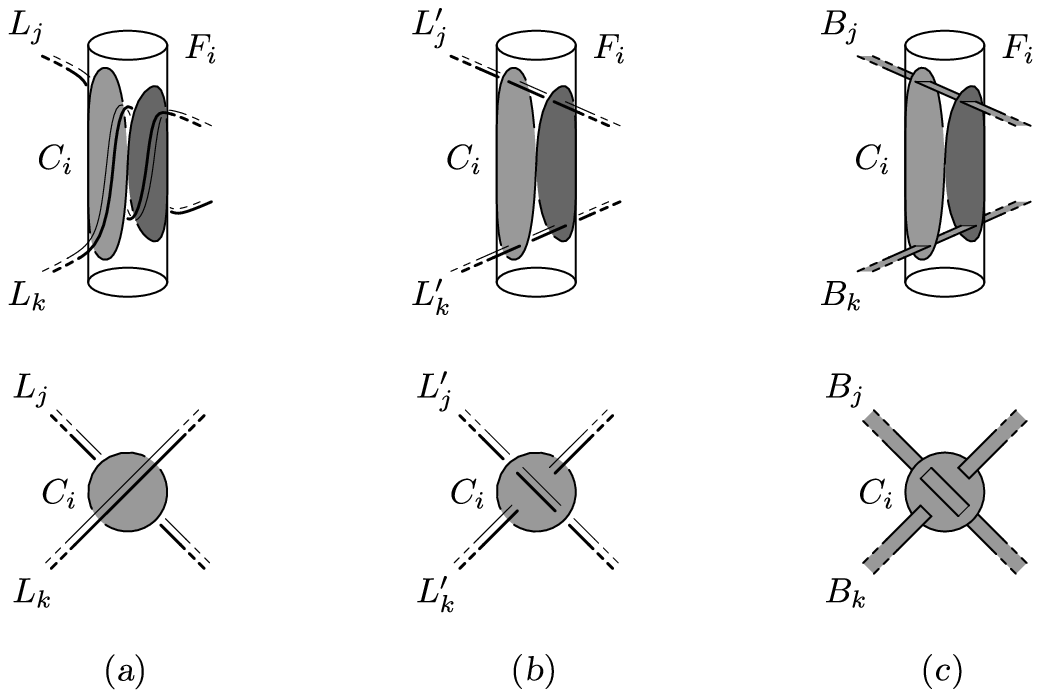}}
\vskip-3pt
\end{Figure}
\item[4)]
For each disk $D_i$, take a parallel copy $D'_i$ on one of the two sides of it (see Figure
\ref{kirby-ribbon04/fig} \(b)). Up to 1-isotopy, it does not matter what side of $D_i$ is
chosen for $D'_i$, since the moves in Figure \ref{ribbon-surf13/fig} allows
us to push one disk through the other. Denote by $G_i$ the cylinder between $D_i$ and
$D'_i$ and assume that $L' \cap G_i = L \cap G_i$ consists of trivial framed arcs as shown
in Figure \ref{kirby-ribbon04/fig} \(b). Of course, the height function of the disks $D_i$
and $D'_i$ varies according to that of such arcs.
\begin{Figure}[htb]{kirby-ribbon04/fig}
{}{The parallel disks $D_i$ and $D'_i$ for a dotted unknot of $K$}
\centerline{\kern15pt\fig{}{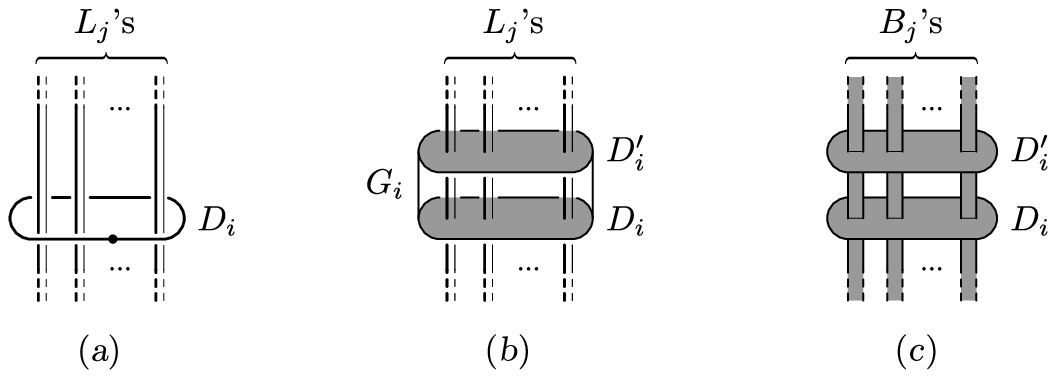}}
\vskip-3pt
\end{Figure}
\item[5)]
For each framed component $L_j$, consider a (possibly non-orientable) narrow closed band
$B_j$, whose core is the base curve $|L'_j|$ of $L'_j$ and whose framing number is
$\fr(L'_j)/2$. In particular, $B_j$ is orientable when $\fr(L'_j)$ is even, while it is
non-orientable when $\fr(L'_j)$ is odd. In both cases, $\fr(L'_j)$ coincides with
$\lk(L'_j,\Bd B_j)$ if $L'_j$ and $\Bd B_j$ are coherently oriented. The bands $B_j$ are
assumed to form with the $D_i$'s, the $D'_i$'s and the $C_i$'s only ribbon intersections,
as shown in Figures \ref{kirby-ribbon03/fig} \(c) and \ref{kirby-ribbon04/fig} \(c).
Furthermore, all the portions of the $B_j$'s outside of the box $K$ in Figure
\ref{kirby-ribbon02/fig} are assumed to be blackboard parallel and to coincide with
components of $\bar Q_{m_0}$ or $Q_{m_1}$ attached to the box $T_K$ in Figure
\ref{kirby-ribbon01/fig}.
\item[6)]
Connect each band $B_j$ to the boundary component on the right of the reduction ribbon
$\id_{\tp21}$ by a narrow band $\beta_j$, to get a connected non-singular surface $$B =
{\tilde{\id}_{\tp21}} \cup \beta_1 \cup \dots \cup \beta_s \cup B_1 \cup \dots \cup
B_s\,.$$ The bands $\beta_1, \dots, \beta_s$ are assumed to be disjoint from the $F_i$'s
and the $G_i$'s defined above and from an arbitrary family of disjoint spanning disks
$A_1, \dots, A_s \subset E \times [0,1] - \tilde{\id}_{\tp21}$ for the components $|L'_1|,
\dots, |L'_s|$ of the trivial link $|L'|$.\break The surface $B$ is assumed to be entirely
contained in the box $T_K$, except for $\tilde{\id}_{\tp21}$ and for the portions of the
$B_j$'s mentioned at the end of the previous step 5 and those of the $\beta_j$'s coming
out from the left side of the box.
\item[7)]
Connect each disk $C_i$ and each disk $D'_i$ to the boundary component on the right of the
reduction ribbon $\id_{\tp32}$ by a narrow bands $\gamma_i$ and $\delta_i$ respectively,
to get a connected non-singular surface $$C = {\tilde{\id}_{\tp32}} \cup \gamma_1 \cup
\dots \cup \gamma_l \cup \delta_1 \cup \dots \cup \delta_r \cup C_1 \cup \dots \cup C_l
\cup D'_1 \cup \dots \cup D'_r\,.$$ The bands $\gamma_1, \dots, \gamma_l$ and $\delta_1,
\dots, \delta_r$ are assumed to be disjoint from $B \cup D_1 \cup \dots \cup D_r$ and from
the interiors of the $F_i$'s and the $G_i$'s, except for the ribbon intersections with the
reduction ribbon $\id_{\tp21}$ shown in Figure \ref{kirby-ribbon01/fig}. The surface $C$
is assumed to be entirely contained in the box $T_K$, except for
$\smash{\tilde{\id}}_{\tp32}$ and for the portions the $\gamma_i$'s and $\delta_i$'s
coming out from the left side of the box.
\item[8)]
Finally, define $S_K$ to be the labeled ribbon surface tangle given by the union $$S_K = B
\cup C \cup D_1 \cup \dots \cup D_r \cup D^0_1 \cup \dots \cup D^0_{m_0} \cup D^1_1 \cup
\dots \cup D^1_{m_1}$$ with the unique labeling extending that one we already have at the
source and at the target (cf. Figure \ref{kirby-ribbon01/fig}). In particular, the
labeling around the disks $C_i$ and $D_i$ results as in Figure \ref{kirby-ribbon05/fig}.
Then, letting $T_K: J_{2m_0} \to J_{2m_1}$ be the union of the part of $B_1 \cup \dots
\cup B_s$ inside the box in Figure \ref{kirby-ribbon01/fig} with the disks $C_1, \dots,
C_l$, $D_1, \dots, D_r$ and $D'_1, \dots, D'_r$, we have $S_K = Q_{m_1} \!\circ R_K \circ
\bar Q_{m_0}$ with $R_K = \id_{\sigma_{3 \red 1}} \!\diam_{\beta, \gamma, \delta} T_K$.
\begin{Figure}[htb]{kirby-ribbon05/fig}
{}{The labeling around the disks $C_i$ and $D_i$}
\vskip-6pt
\centerline{\fig{}{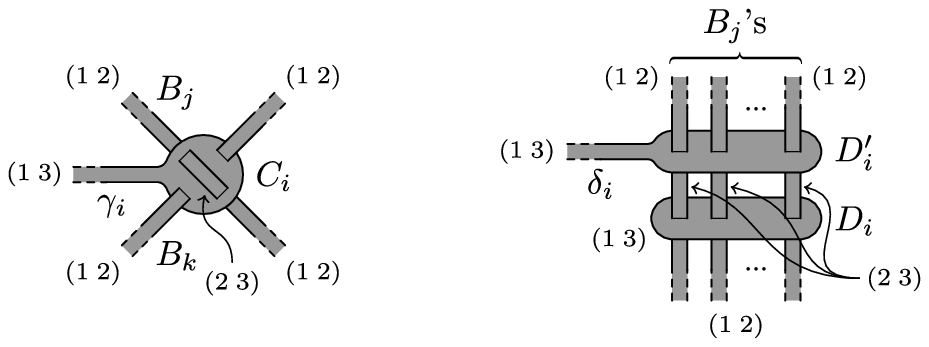}}
\vskip-3pt
\end{Figure}
\end{itemize}

\begin{remark}\label{crossing/rem}
We recall that, up to 2-deformation, any crossing in a Kirby diagram can be inverted by
adding a suitable pair of 1/2-handles, as in Figure \ref{crossing01/fig}. Actually,
up to handle trading this is the trick used in \cite{Mo80} to symmetrize framed links in
order to represent closed 3-manifolds as branched covers of $S^3$.

\begin{Figure}[htb]{crossing01/fig}
{}{Inverting a crossing up to 2-deformation}
\centerline{\fig{}{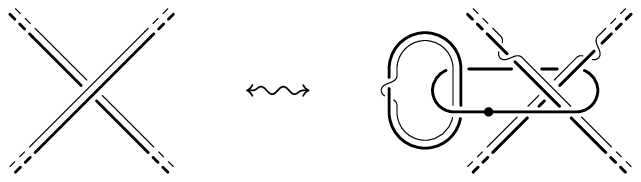}}
\end{Figure}

\begin{Figure}[b]{crossing02/fig}
{}{Interpreting the disks $C_i$ in terms as a pair of 1/2-handles}
\centerline{\fig{}{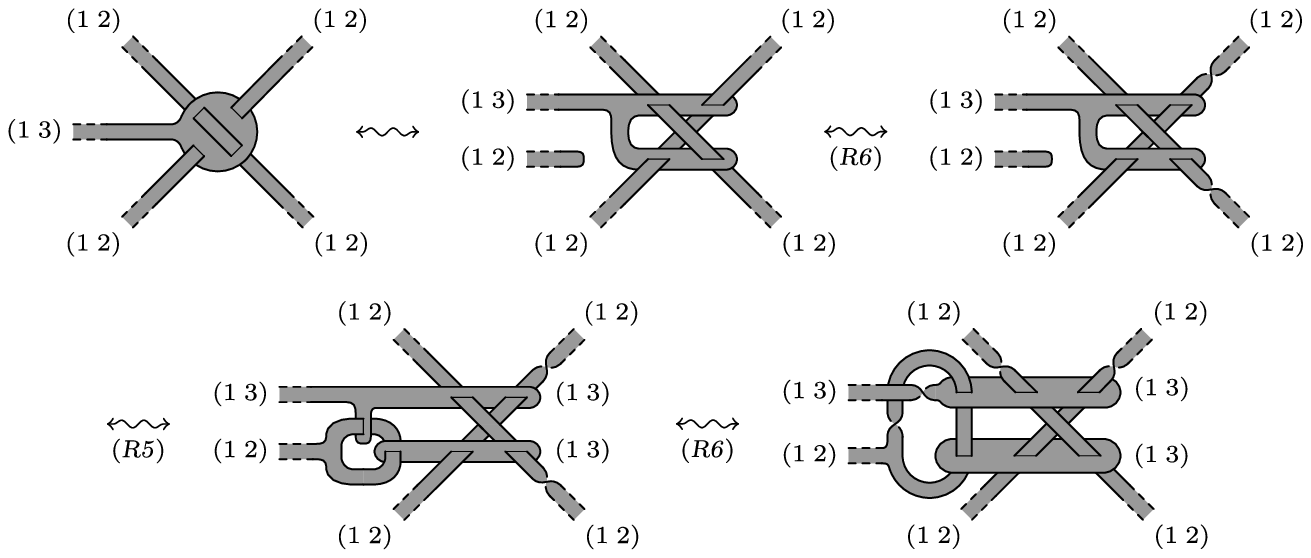}}
\vskip-3pt
\end{Figure}

Figure \ref{crossing02/fig} shows, how to interpret in this way the disks $C_i$ we insert
at the changing crossings in step 3 of our construction of $S_K$. Apart from the indicated 
moves only diagram isotopy is needed for that.
\end{remark}

\begin{remark}\label{non-monoidal/rem}
The definition of $S_K$ does not preserve the corresponding products. The most we can say
about it is that, if $K:I_{m_0} \to I_{m_1}$ and $K': I_{m'_0} \to I_{m'_1}$\break are two
Kirby tangles in $\K_1$, then $R_{K \diam K'} = T_K \diam T_{K'}$, hence $R_{K \diam K'} =
R_K \rdiam R_{K'}$ and $S_{K \diam K'} = Q_{m_1 + m'_1} \circ (R_K \rdiam R_{K'}) \circ
\bar Q_{m_0 + m'_0}$ (cf. Figure \ref{kirby-ribbon01/fig}).
\end{remark}

An example of the construction of $S_K$ is presented in Figure \ref{kirby-ribbon06/fig}.
Here, all the framed components in the Kirby tangle $K$ on the left side are blackboard
parallel, except for one half-twist needed to satisfy the last requirement in point 2 of
Definition \ref{kirby-tangle/def}. We assume such half-twist to be positive for the two
components ending at the top and negative for the component ending at the bottom, in such
a way that they cancel with the extra half-twists that appear when closing those
components in step 1 of the construction of $S_K$. Notice the half-twist introduced along
the trefoil band in $S_K$ according to step 5 of the construction.

\begin{Figure}[htb]{kirby-ribbon06/fig}
{}{An example of labeled ribbon surface tangle $S_K$}
\centerline{\fig{}{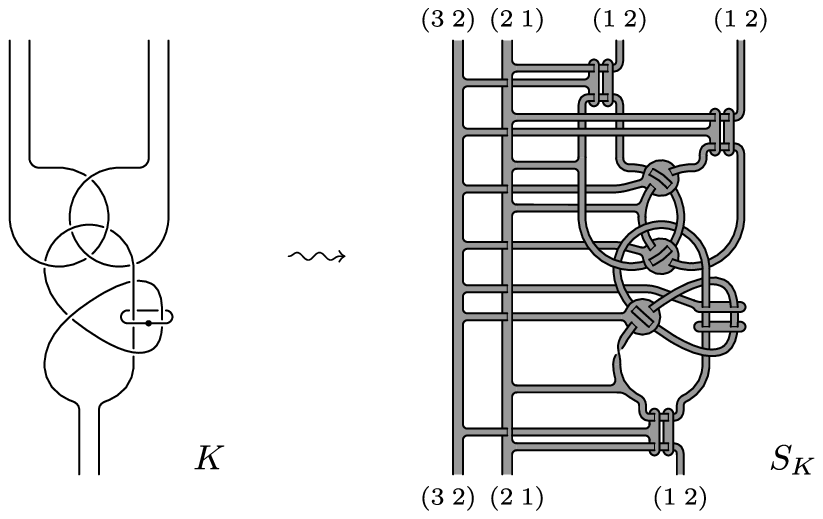}}
\vskip-3pt
\end{Figure}

\begin{proposition} \label{full-theta3/thm}
$\down_1^3 \Theta_3 (S_K) = K$ for any Kirby tangle $K \in \K_1$.
\end{proposition}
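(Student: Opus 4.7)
The plan is to compute $\Theta_3(S_K)$ directly using the recipe of Section \ref{Theta/sec}, by choosing a canonical adapted 1-handlebody decomposition of $S_K$, and then to trace through the reduction $\down_1^3 = \down_1^2 \circ \down_2^3$ of Definition \ref{K-reduction/def} and show that all the extra structure introduced in the construction of $S_K$ collapses back to $K$.

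First I would fix an adapted 1-handlebody decomposition of $S_K$ compatible with its natural stratification: take as 0-handles small neighborhoods of the component disks $D_i, D'_i, C_i, D^0_k, D^1_k$ and of the 0-cells of the parts $\tilde\id_{\tp21}$ and $\tilde\id_{\tp32}$ of the reduction ribbons, and as 1-handles the connecting pieces of the bands $B_j, \beta_j, \gamma_i, \delta_i$ together with the residual bands inside the $Q$- and $\bar Q$-blocks. By Figures \ref{theta01/fig}--\ref{theta03/fig}, this decomposition yields a completely explicit form for $\Theta_3(S_K) \in \K_3^c$: each 0-handle disk becomes a dotted unknot labeled by the pair associated to its surface label, and each band becomes a closed framed component whose framing records the half-twists along the band (in particular, each $B_j$ produces a framed loop above $|L'_j|$ with framing $\fr(L'_j)$, because the half-framing $\fr(L'_j)/2$ of $B_j$ gets doubled by the double-cover structure over sheet $2$; the non-orientable case of odd $\fr(L'_j)$ is handled exactly by the extra half-twist along $B_j$, which contributes a full twist to the framing via Figure \ref{ribbon-kirby07/fig}).

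Next I would apply the two elementary reductions $\down_2^3$ and $\down_1^2$, each of which by Lemma \ref{K-push/thm} and Definition \ref{K-reduction/def} consists of pushing the relevant-sheet portion of the diagram through a reduction 1-handle and then performing a 1/2-handle cancellation. Pushing through the $(3,2)$-handle absorbs all components with label $3$: by Remark \ref{crossing/rem}, every disk $C_i$ together with the sheet-3 portion at the $i$-th changing crossing becomes precisely the canceling 1/2-pair used to invert a crossing, and similarly each pair $(D'_i,\delta_i)$ reduces to a trivially canceling 1/2-pair against $D_i$. Pushing through the $(2,1)$-handle then absorbs all components with label $2$: the disks $D_i$ reproduce the dotted unknots of $K$, the framed loops from the bands $B_j$ survive as closed framed components along $|L'_j|$ with framing $\fr(L'_j)$, and the framing of the source/target frames $\bar Q_{m_0}, Q_{m_1}$ (together with the dotted unknots inserted in Step 1 of the construction, which recombine with the broken framed arcs) reduces to the identity morphisms at $I_{m_0}$ and $I_{m_1}$ of $\K_1$.

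At this point the resulting tangle in $\K_1$ consists of the dotted unknots of $K$, of closed framed components along the trivial-state link $|L'|$ with the framings of $L'$, and of one canceling 1/2-pair at each changing crossing of $L \to L'$. Cancelling these pairs reverses all the crossing inversions of Step 3 and carries $L'$ back to $L$ with its original framings, so one ends up with $K$ itself. The main obstacle will be the bookkeeping: verifying the exact framings, and checking that all the kinks, crossovers, and polarization choices introduced by $\Theta_3$ in Figures \ref{ribbon-kirby03/fig}--\ref{ribbon-kirby05/fig} either match the corresponding pieces of Remark \ref{crossing/rem} and the handle-sliding cancellations of $\down_1^3$, or are absorbed by the 2-equivalence ambiguity in $\K_1$. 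By Lemma \ref{ribbon-to-kirby/thm} and Proposition \ref{theta/thm}, the final result is independent of all such choices, so it suffices to verify the computation once for the canonical decomposition above.
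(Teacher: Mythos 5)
Your overall architecture matches the paper's proof: fix a canonical adapted 1-handlebody decomposition of $S_K$ (with explicit polarizations), read off $\Theta_3(S_K)$ from Figures \ref{theta01/fig}--\ref{theta03/fig}, and then cancel the auxiliary handles coming from the $D'_i$, $C_i$, $\beta_j$, $\gamma_i$, $\delta_i$ and the $\bar Q_{m_0}$, $Q_{m_1}$ blocks before invoking Proposition \ref{K-reduction/thm}. The appeal to Lemma \ref{ribbon-to-kirby/thm} and Proposition \ref{theta/thm} to justify verifying a single choice of decomposition is also exactly what the paper does.

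However, there is a genuine gap in your treatment of the 2-handles lifted from the bands $B_j$, and it sits precisely where the real content of the proof lies. A band labeled $\tp12$ does not lift to ``a framed loop above $|L'_j|$ with framing $\fr(L'_j)$'': its counterimage contributes an annular component that double-covers $B_j$, so after the reduction the surviving framed component $\bar L_j$ is a \emph{band connected sum}, through $\beta_j$, of the original $L_j$ (lying over sheet $1$, with the \emph{original} crossings, because of the rerouting through the disks $C_i$) and a parallel copy $L''_j$ of $L'_j$ (lying over sheet $2$), together with $-2\fr(L'_j)$ extra half-twists coming from the half-framing $\fr(L'_j)/2$ of $B_j$. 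The crux of the proof is then the framing identity $\fr(\bar L_j) = \fr(L_j) + \fr(L''_j) - \fr(L'_j) = \fr(L_j)$, and the observation that the sheet-$2$ copy $L''_j$ can be isotoped away \emph{only because} $|L'|$ is a trivial link and the bands $\beta_j$ miss a family of trivializing disks. Your final step never invokes the triviality of the state $|L'|$ --- a strong sign the picture is off, since if the lift really were a single copy of $L'_j$ corrected by crossing-change $1/2$-pairs as in Remark \ref{crossing/rem}, the elaborate choice of a trivial state in step 3 of the construction of $S_K$ would be unnecessary. To close the gap you would need to (i) describe the attaching circle of the lifted 2-handle as the band-sum above, (ii) carry out the framing computation, and (iii) use the triviality of $|L'|$ and the position of the $\beta_j$ to slide the sheet-$2$ copy off, recovering $L$ labeled entirely by $1$.
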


\begin{proof}
Recall that the construction of $\Theta_3(S_K)$ involves some choices. In particular, we  
need to choose an adapted 1-handlebody decomposition of $S_K$ and a polarization for the 
0-handles of such decomposition.

To this aim, we first decompose the reduction ribbons $\id_{\tp32}$ and $\id_{\tp21}$ as a
single long 0-handle and two short 1-handles connecting it with the collars of the source
and the target. We denote the two 0-handles by $R^0_{\tp32}$ and $R^0_{\tp21}$ and let
them contain the attaching arcs of all the horizontal bands attached to the ribbons,
including those in $\bar Q_{m_0}$ and $Q_{m_1}$, as well as all the ribbon intersections
between such bands and $\id_{\tp21}$.

We also decompose each of the closed bands $B_1, \dots, B_s$ defined in step 5 of the
construction of $S_K$, by putting $B_i = B_i^0 \cup B_i^1$, with $B_i^0$ a small 0-handle
containing the attaching arc of $\beta_i$ and $B_i^1$ a 1-handle attached to $B_i^0$.

Then, we consider the adapted 1-handlebody structure of $S_K$, whose 0-handles are
$R^0_{\tp32}$, $R^0_{\tp21} \cup \beta_1 \cup \dots \cup \beta_s \cup B_1^0 \cup \dots 
\cup B_s^0$, all the vertical disks in $\bar Q_{m_0}$ and $Q_{m_1}$ and the disks $D_1, 
\dots, D_r$, $D'_1, \dots, D'_r$, $C_1, \dots, C_l$ discussed in steps 3 and 4 of the 
construction of $S_K$. Consequently, as the 1-handles we have those in $\id_{\tp32}$ and 
$\id_{\tp21}$, the bands $\gamma_1, \dots, \gamma_l, \delta_1, \dots, \delta_r$ and all 
the horizontal bands in $\bar Q_{m_0}$ and $Q_{m_1}$.

Concerning the polarizations, we put the greater label on the top face for all the
0-handles, except in the case of the disks $D^0_1, \dots, D^0_{m_0}$ and $D^1_1, \dots,
D^1_{m_1}$ in $\bar Q_{m_0}$ and $Q_{m_1}$, for which we put the label 3 on the bottom
face, and the disks $D_i$ and $D'_i$, for which we put the label 3 on the face internal to
the cylinder $G_i$.

The first diagram in Figure \ref{kirby-ribbon07/fig} shows the generalized Kirby diagram
$\Theta_3(S_K)$ based on the handlebody structure described above, in the case when $K$
and $S_K$ are the Kirby tangle and the ribbon surface tangle presented in Figure
\ref{kirby-ribbon06/fig}. In this figure, as well as in the next ones of this proof, we
omit to draw the framings for the sake of readability.

\begin{Figure}[htb]{kirby-ribbon07/fig}
{}{Reducing $\Theta_3(S_K)$ to $K$: overview}
\centerline{\fig{}{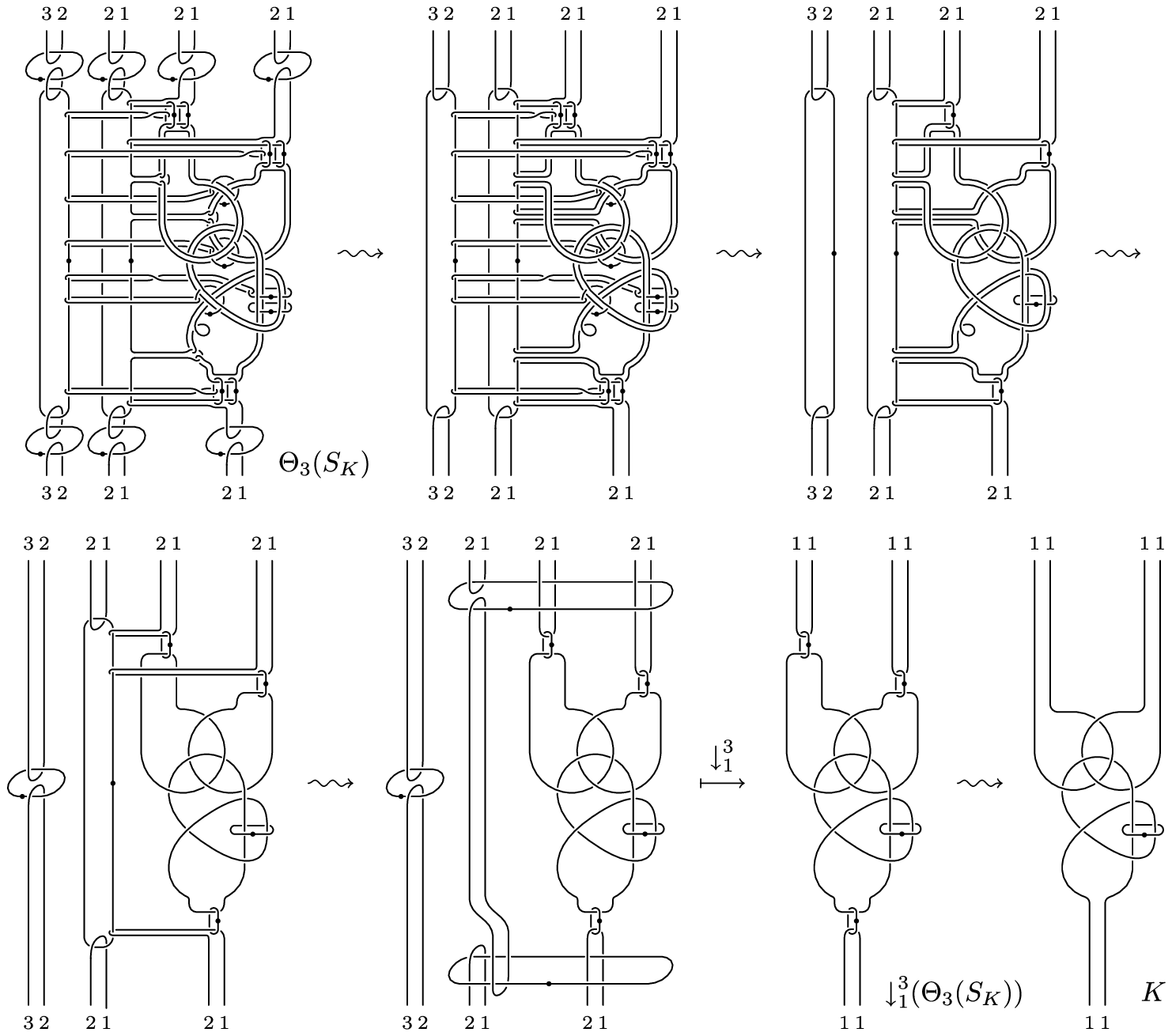}}
\end{Figure}

It remains to prove that $\Theta_3(S_K)$ can be reduced to $K$ via $\down_1^3$. The
details of this reduction process are given below. The whole process is outlined in Figure 
\ref{kirby-ribbon07/fig} for the specific example considered there.

As the first step, we slide the open framed components over the corresponding closed ones
linked to the same dotted unknot on the top and on the bottom of the diagram and then
eliminate such dotted unknots by 0/1-handle cancelation. In the same step, we also perform
at each band $\beta_j$ the isotopy modification described in Figure
\ref{kirby-ribbon08/fig} (cf. second diagram in Figure \ref{kirby-ribbon07/fig}).

\begin{Figure}[htb]{kirby-ribbon08/fig}
{}{Reducing $\Theta_3(S_K)$ to $K$: the bands $\beta_j$}
\centerline{\fig{}{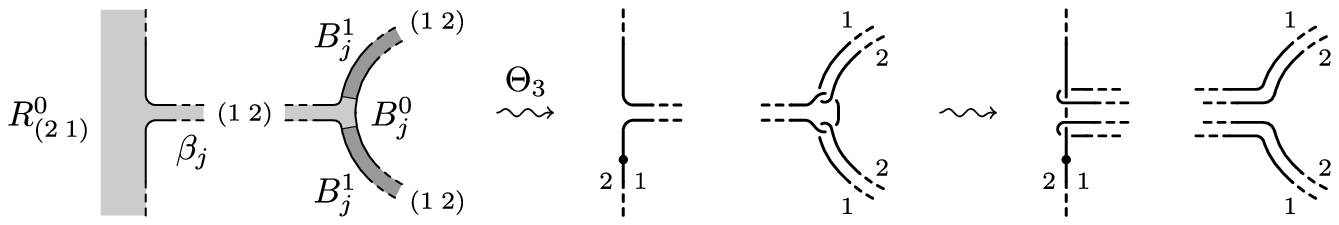}}
\vskip-3pt
\end{Figure}

As the second step, we perform a 1-handle sliding of the dotted unknots spanning the disks
$D'_i$ over the corresponding ones spanning the disks $D_i$, as it is shown in Figure
\ref{kirby-ribbon09/fig}, and then we eliminate them by 0/1-handle cancelation. We
eliminate in the same way one dotted unknot from each pair deriving from a pair of
parallel vertical disks in $\bar Q_{m_0}$ and $Q_{m_1}$.

\begin{Figure}[htb]{kirby-ribbon09/fig}
{}{Reducing $\Theta_3(S_K)$ to $K$: the disks $D_i$}
\vskip-9pt
\centerline{\fig{}{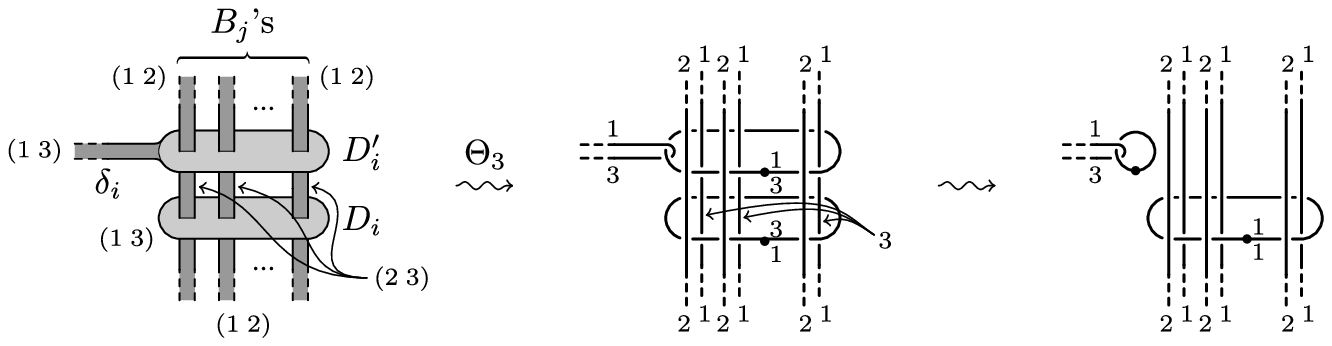}}
\vskip-3pt
\end{Figure}

Moreover, at each band crossing we modify the Kirby diagram by the crossing changes 
depicted in Figure \ref{kirby-ribbon10/fig} and then we use 0/1-handle cancelation once 
again to eliminate the dotted unknots spanning the disks $C_i$.

\begin{Figure}[htb]{kirby-ribbon10/fig}
{}{Reducing $\Theta_3(S_K)$ to $K$: the crossings}
\centerline{\fig{}{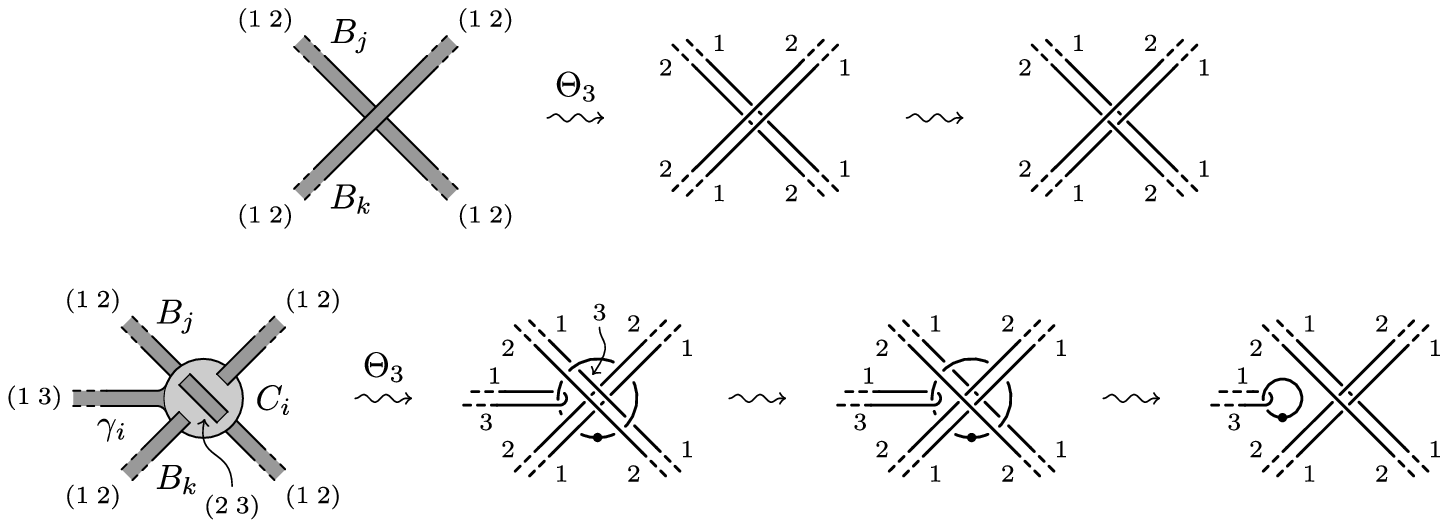}}
\vskip-6pt
\end{Figure}

At this point, we are left with a generalized Kirby diagram whose framed link $\bar L$ is
a componentwise band connected sum of the original framed link $L$, labeled by 1, and a
parallel copy $L''$ of $L'$, labeled by 2, with a certain number of extra half-twists
added to the framing. Namely, each component $\bar L_j$ of $\bar L$ is the band connected
sum of $L_j$ and the parallel copy $L''_j$ of the corresponding component $L'_j$, through
connecting bands running back and forth on the two sides of $\beta_j$, with $-2\fr(L'_j)$
extra half-twists added to the framing (cf. step 5 in the construction of $S_K$ and Figure
\ref{ribbon-kirby07/fig}). Looking at the rightmost diagrams in Figures
\ref{kirby-ribbon09/fig} and \ref{kirby-ribbon10/fig}, we see that the part of $\bar L$
labeled by 2 has been pulled up over everything else, including the dotted unknots (cf.
third diagram in Figure \ref{kirby-ribbon07/fig}).

Since the unframed link $|L'|$ is trivial and the bands $\beta_i$ are disjoint from a set
of trivializing disks for it, we can isotope $|\bar L|$ to get back $|L|$ entirely labeled
by 1, without moving the rest of the diagram (cf. fourth diagram in Figure
\ref{kirby-ribbon07/fig}).

We want to show that the last isotopy actually takes the framed link $\bar L$ to $L$, or
equivalently that the equality $\fr(\bar L_j) = \fr(L_j)$ holds for every $j = 1, \dots,
s$. In fact $\fr(\bar L_j) = \fr(L_j) + \fr(L''_j) - \fr(L'_j)$, with the last term
resulting from the $-2 \fr(L'_j)$ extra half-twists. Then, it suffices to observe that
$\fr(L''_j) = \fr(L'_j)$, being $L''_j$ a parallel\break copy of $L'_j$.

As it is illustrated by the fifth diagram in Figure \ref{kirby-ribbon07/fig}, the
resulting Kirby tangle is 2-equivalent to $\up_2^3\, (\xi^{(2,1)} \circ {\up_1^2} K \circ
(\xi^{(2,1)})^{-1})$, where $\xi^{(2,1)}$ is the natural equivalence defined in Section
\ref{K/sec} (see Lemma \ref{K-reduction1/thm}). Thus, we have $\down_1^3 \Theta_3(S_K) =
\down_1^3 \up_2^3\, (\xi^{(2,1)} \circ {\up_1^2} K \circ (\xi^{(2,1)})^{-1}) = \down_1^3
\up_2^3 \up_1^2 \down_1^2 {\up_1^2} K = K$, by Proposition \ref{K-reduction/thm} (cf. last
two diagrams in Figure \ref{kirby-ribbon07/fig}).
\end{proof}

\begin{proposition} \label{full-theta/thm}
The functor $\Theta_n: \S_n^c \to \K_n^c$ is full for any $n \geq 3$.
\end{proposition}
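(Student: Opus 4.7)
The plan is to first establish fullness of the composition $\down_1^3 \circ \Theta_3: \S_3^c \to \K_1$, then use the faithfulness of $\down_1^3$ as part of a categorical equivalence to deduce fullness of $\Theta_3$, and finally bootstrap to $n \geq 4$ via the stabilization functors.

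For the first step, given two objects $J^+_{\sigma_c} := J_{\sigma_{3 \red 1}} \diam J_{\sigma_c} \in \S_3^c$ with $\sigma_c \in \seq\Gamma_3$ of length $m_c$ ($c = 0, 1$), and a morphism $K \in \Mor_{\K_1}(I_{m_0}, I_{m_1})$, I want to produce $S \in \Mor_{\S_3^c}(J^+_{\sigma_0}, J^+_{\sigma_1})$ with $\down_1^3 \Theta_3(S) = K$. Proposition \ref{full-theta3/thm} already produces $S_K \in \S_3^c$ with source $J^+_{(\tp{1}{2})^{m_0}}$, target $J^+_{(\tp{1}{2})^{m_1}}$, and $\down_1^3 \Theta_3(S_K) = K$. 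To match arbitrary boundary labels, I will construct isomorphisms $\iota_c: J^+_{\sigma_c} \to J^+_{(\tp{1}{2})^{m_c}}$ in $\S_3^c$ by inserting near each boundary endpoint a short ribbon arc forming one or two ribbon intersections with the reduction ribbons $\id_{\tp{3}{2}}$ and $\id_{\tp{2}{1}}$; because these two transpositions generate $\Sigma_3$ and a ribbon intersection conjugates the boundary label by the label of the traversed ribbon, any element of $\Gamma_3$ can be obtained from $\tp{1}{2}$ by such conjugation. Setting $S := \iota_1^{-1} \circ S_K \circ \iota_0$ then has the required source and target. The essential verification is that $\down_1^3 \Theta_3(\iota_c) = \id$: since the push-through functors $\_^{(n,n-1)}$ in the definition of $\down_1^3$ (see Lemma \ref{K-push/thm}) collapse all labels to $1$, the additional 1-handles that $\Theta_3$ creates from the label-changing ribbon intersections will become trivially cancellable after reduction. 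This verification is the main technical obstacle.

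With fullness of $\down_1^3 \circ \Theta_3$ in hand, fullness of $\Theta_3: \S_3^c \to \K_3^c$ follows at once: given $K \in \Mor_{\K_3^c}(\Theta_3(A_0), \Theta_3(A_1))$, the $S$ just constructed satisfies $\down_1^3 \Theta_3(S) = \down_1^3 K$, and since $\down_1^3: \K_3^c \to \K_1$ is a category equivalence (Proposition \ref{K-reduction/thm}) and hence faithful, the matching of sources and targets of $\Theta_3(S)$ and $K$ (by construction) forces $\Theta_3(S) = K$.

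Finally, for $n \geq 4$ I will deduce fullness of $\Theta_n$ from fullness of $\Theta_3$ by a standard categorical argument. Proposition \ref{S-reduction/thm} (with $k = 1$) yields that the inclusion $\up_3^n \S_3^c \hookrightarrow \S_n^c$ is a category equivalence, and restricting the equivalences $\up_3^n$, $\down_3^n$ from Proposition \ref{K-reduction/thm} to the 1-reducible subcategories gives analogously that $\up_3^n \K_3^c \hookrightarrow \K_n^c$ is an equivalence. Combined with the commutative diagram $\Theta_n \circ \up_3^n = \up_3^n \circ \Theta_3$ from Proposition \ref{stab-theta/thm}, fullness propagates from $n = 3$ to $n$: given $K: \Theta_n(A_0) \to \Theta_n(A_1)$ in $\K_n^c$, choose isomorphisms $\phi_c: A_c \to \up_3^n(B_c)$ in $\S_n^c$ with $B_c \in \S_3^c$; then $\Theta_n(\phi_1) \circ K \circ \Theta_n(\phi_0)^{-1}$ has source and target in $\up_3^n \K_3^c$, hence equals $\up_3^n(K^{(3)})$ for some $K^{(3)} \in \K_3^c$ by fullness of the inclusion; applying fullness of $\Theta_3$ gives $K^{(3)} = \Theta_3(S^{(3)})$, and $K = \Theta_n(\phi_1^{-1} \circ \up_3^n(S^{(3)}) \circ \phi_0)$ provides the required preimage.
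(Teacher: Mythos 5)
Your overall route is the same as the paper's: fullness for $n=3$ is extracted from Proposition \ref{full-theta3/thm} together with the fact that $\down_1^3: \K_3^c \to \K_1$ is a category equivalence (Proposition \ref{K-reduction/thm}), and the case $n \geq 4$ is obtained by transporting along the stabilization functors via Proposition \ref{stab-theta/thm} and the equivalences of Propositions \ref{K-reduction/thm} and \ref{S-reduction/thm}; the paper's own proof is just a terser version of this, leaving the boundary-label matching implicit (the isomorphisms you call $\iota_c$ are essentially the $\zeta_{\sigma,h}$ that appear later in the proof of Proposition \ref{full-xi/thm}).

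The one point you flag as "the main technical obstacle" --- verifying $\down_1^3 \Theta_3(\iota_c) = \id$ --- is in fact not needed, so your argument closes without it. All you need is the existence of the isomorphisms $\iota_c: J^+_{\sigma_c} \to J^+_{(\tp{1}{2})^{m_c}}$ in $\S_3^c$, which your ribbon-intersection construction does give. Indeed, given $K' \in \Mor_{\K_3^c}(\Theta_3(A_0),\Theta_3(A_1))$, set $u_c = \down_1^3\Theta_3(\iota_c)$, an isomorphism of $I_{m_c}$ in $\K_1$ (whatever it is), and apply Proposition \ref{full-theta3/thm} to the conjugated morphism $\tilde K = u_1 \circ \down_1^3(K') \circ u_0^{-1}: I_{m_0} \to I_{m_1}$ rather than to $\down_1^3(K')$ itself. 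Then $S = \iota_1^{-1} \circ S_{\tilde K} \circ \iota_0$ has source $A_0$ and target $A_1$ and satisfies $\down_1^3\Theta_3(S) = u_1^{-1} \circ \tilde K \circ u_0 = \down_1^3(K')$, and faithfulness of $\down_1^3$ forces $\Theta_3(S) = K'$ exactly as in your second step. With this adjustment your proof is complete and matches the paper's in substance, while making explicit a bookkeeping step the paper suppresses; your $n \geq 4$ bootstrap (objects of $\S_n^c$ isomorphic to stabilized ones by Proposition \ref{S-reduction/thm}, fullness of $\up_3^n$ into $\K_n^c$, commutativity from Proposition \ref{stab-theta/thm}) is likewise sound, since $\up_3^n$ restricted to the $1$-reducible subcategories is naturally isomorphic to $\up_1^n \circ \down_1^3$, a composite of equivalences.
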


\begin{proof}
First of all, we observe that $S_K \in \S_3^c$. Indeed, it can be put in the form of
Figure \ref{ribbon-stab01/fig} with $n = 3$ and $k = 1$, through a quite obvious labeled
isotopy. Then, the fullness of $\Theta_3: \S_3^c \to \K_3^c$ follows from Proposition
\ref{full-theta3/thm} and the fact that the reduction functor $\down_1^3: \K_3^c \to \K_1$
is a category equivalence (Proposition \ref{K-reduction/thm}).

For $n > 3$, we have that $\Theta_n (\up_3^n S_K) = \up_3^n \Theta_3 (S_K)$ by Proposition
\ref{stab-theta/thm}. Hence, taking into account that $\up_3^n$ is a category equivalence
(Proposition \ref{K-reduction/thm}), we can derive the fullness of $\Theta_n: \S_n^c \to
\K_n^c$ from the one of $\Theta_3: \S_3^c \to \K_3^c$.
\end{proof}

\subsection{The functor $\Xi_n: \K_1 \to \S^c_n$ for $n \geq 4$%
\label{Xi/sec}}

As discussed at the beginning of the previous section, the ribbon surface tangle $S_K$
does not depend only to the given Kirby diagram $K$, but also on the various choices
involved in its definition. In particular, in step 3 of the construction of $S_K$ we
made the choice of a trivial state $|L'|$ of for the diagram of the link $|L|$ contained
in a strictly regular diagram of $K$.

Let $\check S_K$ denote any ribbon surface tangle resulting from that construction,
under the extra assumption that the trivial state $|L'|$ is vertically trivial.

In this section, we will show that $\up_3^4 \check S_K$ is well-defined up to labeled
1-isotopy and moves \(R1) and \(R2), in other words it does not depend on the choices
involved in the construction of $\check S_K$, and that it is invariant under
2-deformations of $K$.

This will give us a functor $\Xi_4: \K_1 \to \S^c_4$ defined by $\Xi_4(K) = \up_3^4
\check S_K$, from the category $\K_1$ of ordinary Kirby tangles to the category $\S_4^c =
\S_{4 \red 1}$ of 1-reducible $4$-labeled ribbon surface tangles. Then, by composing with
$\up_4^n$, we will get an analogous functor $\Xi_n: \K_1 \to \S^c_n$ also for $n > 4$.

Eventually, in the next section we will see that the functor $\Xi_4$ is full (Proposition
\ref{full-xi/thm}), which together with Proposition \ref{full-theta3/thm} implies that
$\Xi_4(K) = S_K$, where $S_K$ is constructed using an arbitrary trivial state (Proposition
\ref{trivial-state/thm}).

\begin{lemma}\label{SK-welldef/thm}
Let $K$ be a given Kirby tangle in $\K_1$. Up to labeled 1-isotopy and moves \(R1) and
\(R2), the labeled ribbon surface tangle $\up_3^4 \check S_K$ does not depend on the
choices of the strictly regular planar diagram of $K$, of the vertically trivial state of
the link $|L|$ and of the bands $\beta_1, \dots, \beta_s$, $\gamma_1, \dots, \gamma_l$ and
$\delta_1, \dots, \delta_r$ involved in the construction of $\check S_K$ (cf. steps 1, 3,
6 and 7).
\end{lemma}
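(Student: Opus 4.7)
The plan is to verify the invariance by dealing separately with the three sources of non-uniqueness in the construction of $\check S_K$, in order of increasing difficulty, only invoking the stabilization by $\up_3^4$ at the last stage.

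First I would handle the choice of the connecting bands $\beta_j$, $\gamma_i$, $\delta_i$ (steps~6 and~7 of the construction). Any two choices with the same endpoints are ambient isotopic in $E \times [0,1]$, and a generic isotopy between them meets the rest of $\check S_K$ transversally in a finite sequence of band-through-surface passages. Each such local modification is absorbed by labeled 1-isotopy together with the auxiliary ribbon moves \(R3)--\(R6), which were derived from \(R1) and \(R2) in Proposition \ref{moves-aux/thm}. Stabilization is not yet needed. Next I would address the choice of the strictly regular planar diagram of $K$. By Proposition \ref{strictly-regular-isotopy/thm}, any two such diagrams are related by planar isotopy, labeled framed Reidemeister moves, and the 1-handle moves of Figure \ref{kirby-tang06/fig}. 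Each such move on $K$ induces a local modification of $\check S_K$ near the affected strands, disks $D_i$, $D'_i$, $C_i$, and bands $B_j$, and a direct case-by-case check (parallel to the argument used in Proposition \ref{theta/thm} for $\Theta_n$ in the reverse direction) shows that these modifications are realizable by labeled 1-isotopy and the ribbon moves. Again the fourth sheet plays no role.

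The main obstacle, and the only place where stabilization to four sheets is essential, is the independence from the vertically trivial state $|L'|$ of $|L|$. For this I would invoke Proposition \ref{vert-state/thm}, which reduces any two vertically trivial states to a sequence of two elementary modifications: (i)~simultaneously inverting all crossings between two vertically adjacent components, and (ii)~inverting a single self-crossing whose singular link spans a $1$-point union of two disks disjoint from the rest. For type~(ii), the spanning pair of disks provides a local region in which the self-crossing change translates to the insertion and removal of a pair of canceling kinks along a band of $\check S_K$, which is absorbed by 1-isotopy directly. For type~(i), the change manifests in $\check S_K$ as a passage between two parallel copies of the bands $B_j$ and $B_{j'}$, all carrying the transposition $(1\;2)$, so at most three labels are involved and move \(R2) does not apply inside $\S_3^c$. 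After stabilizing by $\up_3^4$, the extra reduction ribbon labeled $(3\;4)$ becomes available, and I would use it in combination with \(R2) and \(R4) to temporarily recolor tongues of the affected bands by transpositions involving sheet~$4$, slide the resulting disjoint-monodromy portions past each other, and then recolor back. The simultaneity of the crossing changes across the two adjacent components in~(i) is exactly what makes this temporary relabeling globally consistent; this is why Proposition \ref{vert-state/thm} (rather than an arbitrary trivial-state change) is the right reduction step.

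Combining the three steps yields the claimed invariance. The hard part is step~(i) of the third item: it is there that the passage of two parallel $(1\;2)$-labeled bands demands a fourth available sheet, which is precisely what $\up_3^4$ supplies.
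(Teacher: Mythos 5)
Your overall skeleton (separating the three sources of choice, reducing the trivial-state ambiguity via Proposition \ref{vert-state/thm}, and invoking the derived moves \(R3)--\(R6)) matches the paper's, but you have misallocated where the fourth sheet is actually needed, and this creates genuine gaps. Your claim that independence from the bands $\beta_j,\gamma_i,\delta_i$ needs no stabilization is unjustified: a generic isotopy of such a band sweeps it through portions of $\check S_K$ whose monodromies are {\sl not} disjoint from the band's label (e.g.\ a $\tp13$-labeled band passing through a $\tp12$-labeled ribbon $B_j$), and none of \(R2)--\(R6) permits such a passage directly. The paper's mechanism is to expand a tongue from the reduction ribbon $\id_{\tp43}$, producing small disks labeled $\tp34$ (for the $\gamma_i,\delta_i$) or $\tp24$ (for the $\beta_i$) whose monodromy {\sl is} disjoint from everything they must cross; the cut end of the band rides on these disks from the old attaching arc to the new one. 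So the fourth sheet is essential already at this stage. Moreover, for the $\beta_i$'s you ignore the spanning disks $A_1,\dots,A_s$: two admissible families of bands must avoid two possibly different families of spanning disks of $|L'|$, and the paper needs an induction on the number of intersection curves of the two disk families, with a cut-and-paste step, before the one-band-at-a-time replacement applies. Nothing in your argument addresses this.

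Your treatment of the single self-crossing change (the second move of Proposition \ref{vert-state/thm}) is also wrong. Such a change is not a pair of canceling kinks: it alters $\fr(L'_i)$ by $\pm 2$, so by step~5 of the construction the band $B_i$ must gain or lose a full twist, and the crossing carries a disk $C_i$ that must be created or destroyed. The paper handles this with a $\tp24$-labeled disk hung from $\id_{\tp43}$, moved around the subdisk $A'_i$ by \(R2) and \(R4), and with move \(R6) to transfer the resulting two half-twists onto $B_i$; none of that is plain 1-isotopy. (Likewise, the 1-handle moves of Figure \ref{kirby-tang06/fig} in your second step require a temporary $\tp13\to\tp14$ relabeling when $L'$ differs from $L$ at the involved crossings, so the fourth sheet is not absent there either.) By contrast, your case of swapping two vertically adjacent components is handled in the paper essentially as you describe, by temporarily recoloring $B_i$ from $\tp12$ to $\tp14$. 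The net effect is that, as written, your proof establishes only the step you single out as hard, while the steps you dispose of without stabilization are precisely where the stabilization and the covering moves do most of the work.
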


\begin{proof}
First of all, we observe that the spanning disks $A_1, \dots, A_s$ in step 6 of the 
construction of $\check S_K$ can be always assumed to satisfy the following conditions 
(possibly after a small perturbation of them):
\begin{itemize}
\item[\(a)] 
$A_i \cap B_i$ consists of $|L'_i|$ and a certain number of disjoint clasps connecting
$|L'_i|$ with the boundary of $A_i$, in such a way that $A_i \cup B_i$ collapses to $A_i$, 
for every $i = 1, \dots, s$; while $A_i \cap B_j = \emptyset$ for every $i \neq j$;
\item[\(b)]
$A_1 \cup \dots \cup A_s$ forms with each $C_i$ four clasps and some (possibly none) 
ribbon intersections, as shown in Figure \ref{xi01/fig} (left side);
\item[\(c)] 
$A_1 \cup \dots \cup A_s$ forms with each $D_i \cup D'_i$ two clasps for each intersection 
point in $|L| \cap D_i = |L'| \cap D_i$ and some (possibly none) ribbon intersections, as 
shown in Figure \ref{xi01/fig} (right side);
\item[\(d)]
the $\gamma_i$'s and the $\delta_i$'s may pass through $A_1 \cup \dots \cup A_s$ forming 
only ribbon intersections with them.
\end{itemize}

\begin{Figure}[htb]{xi01/fig}
{}{Intersections between $A_1 \cup \dots \cup A_s$ and the disks $C_i$, $D_i$ and $D'_i$}
\centerline{\fig{}{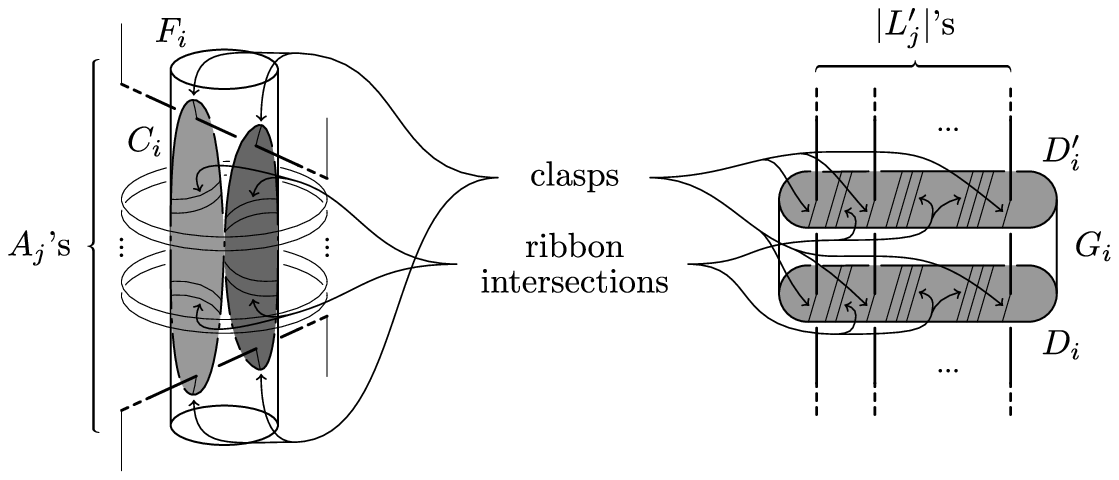}}
\vskip-3pt
\end{Figure}

At this point we pass to the core of the proof. We prove that $\check S_K$ is independent
(up to labeled 1-isotopy and moves \(R1) and \(R2)) on the choices listed in the
statement, by proceeding in the reverse order and assuming each time that all previous
choices have been fixed. By Proposition \ref{moves-aux/thm}, in addition to moves \(R1)
and \(R2), we can use also the moves \(R3) to \(R6) in Figure \ref{ribbon-moves02/fig}.

Concerning the $\gamma_i$'s and the $\delta_i$'s, it suffices to prove that labeled
1-isotopy and the moves above enable us to change them one by one.

We start by showing how to replace a band $\gamma_i$ by a different band $\gamma'_i$.
Observe that, up by a 3-dimensional isotopy, we can assume $\gamma_i$ and $\gamma_i'$ to
be disjoint. Then, Figure \ref{xi02/fig} illustrates the sequence of moves realizing the
replacement. First we modify the diagram in \(a) to get the labeled ribbon surface tangle
in \(c), where both $\gamma_i$ and $\gamma_i'$ are attached to $C_i$, but $\gamma_i'$
passes through two small disks $D$ and $D'$ labeled $\tp34$, with $D'$ attached by a
narrow band $\delta$ to $\id_{\tp43}$. The tangle in \(c) is obviously equivalent to
$\check S_K$, since we can use move \(R3) to cut $\gamma_i'$ (see diagram \(b)), retract
the resulting tongue to $\id_{\tp32}$ and then retract the tongue $D' \cup \delta$ to
$\id_{\tp43}$. Now, since the label $\tp12$ of the bands $B_j$ and $B_k$ is disjoint from
$\tp34$, we can move the disks $D$ and $D'$ in turn, to let $C_i$ pass through them by
1-isotopy and four moves \(R2), obtaining in this way \(d) and then \(e). Finally, the
same procedure described above to see the equivalence between \(a) and \(c), but with the
roles of $\gamma_i$ and $\gamma'_i$ interchanged, gives the equivalence between \(e) and
\(f).

\begin{Figure}[htb]{xi02/fig}
{}{Independence of $\check S_K$ on the bands $\gamma_i$}
\centerline{\fig{}{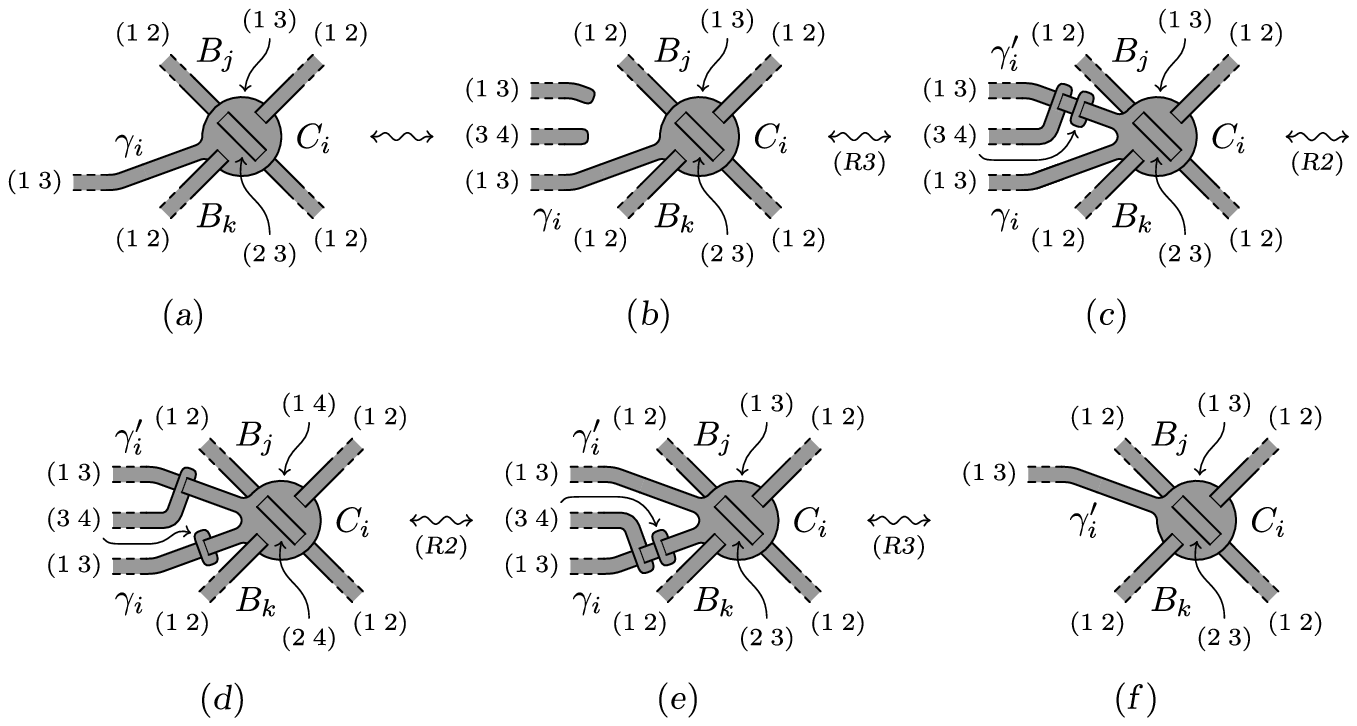}}
\vskip-3pt
\end{Figure}

The same argument works to change a band $\delta_i$ into a different band $\delta'_i$. The
process begins and ends like above, with $\delta_i$ and $\delta'_i$ in place of $\gamma_i$
and $\gamma'_i$, while the central part is illustrated by Figure \ref{xi03/fig}. In this
case, two \(R2) moves for each arc passing through $D_i$ are needed in order to move the
disks $D$ and $D'$ from $\delta_i'$ to $\delta_i$.

\begin{Figure}[htb]{xi03/fig}
{}{Independence of $\check S_K$ on the bands $\delta_i$}
\centerline{\fig{}{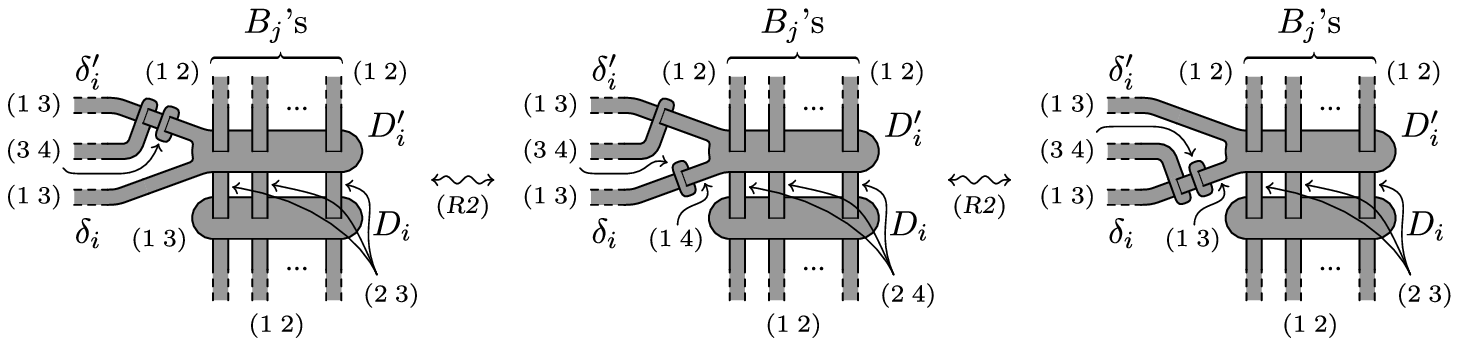}}
\vskip-3pt
\end{Figure}

The proof of the independence of $\check S_K$ on the $\beta_i$'s is more involved, but
still based on the same idea. We want to replace certain bands $\beta_1, \dots, \beta_s$
with different bands $\beta'_1, \dots, \beta'_s$. According to step 6 in the construction
of $\check \S_k$, there exist two families of disjoint spanning disks $A_1, \dots, A_s$
and $A'_1, \dots, A'_s$ for the link $|L'|$ which are disjoint from the $\beta_i$'s and
from the $\beta'_i$'s respectively.

We first consider the special case when $A_i = A'_i$ for every $i = 1, \dots, s$. In this
case (but not in general, as we will see) we can replace the bands one by one.

The idea of the proof that a given band $\beta_i$ can be replaced with a different band
$\beta'_i$, is presented in Figure \ref{xi04/fig}, even if that figure is much more
sketchy than Figure \ref{xi02/fig}. In fact, instead of the disk $C_i$ we have here the
complex $A_i \cup B_i$ that can be large and complicated, although still collapsible, as
it follows from assumption \(a) at the beginning of the proof. The first step, to get the
labeled ribbon surface tangle in Figure \ref{xi04/fig} \(a) from $\check S_K$, and in
particular the disks $D$ and $D'$, and the last step, to get the final result from the
labeled ribbon surface tangle in \(e), are once again similar to the above ones. But this
time the bands $\beta_i$ and $\beta'_i$ in place of $\gamma_i$ and $\gamma'_i$ originate
from $\id_{\tp21}$ and are labeled $\tp12$, while the band $\delta$ connects the disk $D'$
to $\id_{\tp43}$ passing through $\id_{\tp32}$ to form a ribbon intersection. We obtain in
this way the label $\tp24$ for the disks $D$ and $D'$, which is disjoint from the label
$\tp13$ of the disks $D_j$, $D_j'$ and the bands $\gamma_j$ and $\delta_j$. This permits
to get \(e) from \(a) by moving $D$ and $D'$ in turn by 1-isotopy and some moves \(R2) and
\(R4), within a neighborhood of $A_i$ and letting $A_i$ pass through them. In particular,
the moves \(R2) are needed when the disks cross the clasps that $A_i$ forms with the
$D_j$'s, the $D'_j$'s and the $C_k$'s (cf. assumptions \(b) and \(c) at the beginning of
the proof). These clasps always appear in pairs, each pair being formed with one of the
$C_j$'s or with a $D_k$ and the corresponding $D'_k$ or (cf. Figure
\ref{kirby-ribbon05/fig}). Each pair looks like the only one depicted on the right part of
the diagrams in Figure \ref{xi04/fig}. Comparing \(b) and \(c), we see that the disk $D$
can be pushed beyond such pair by two \(R2) moves. On the other hand, the moves \(R4) are
used in the obvious way to push $D$ beyond each ribbon intersection that the interior of
$A_i$ forms with the $\gamma_j$'s and the $\delta_k$'s (cf. assumption \(d) at the
beginning of the proof). Finally, once the disk $D$ has been completely moved from
$\beta_i$ to $\beta'_i$ (diagram \(d) in the figure), we move in the same way the disk
$D'$ to get \(e) and we are done.

\begin{Figure}[htb]{xi04/fig}
{}{Independence of $\check S_K$ on the bands $\beta_i$}
\centerline{\fig{}{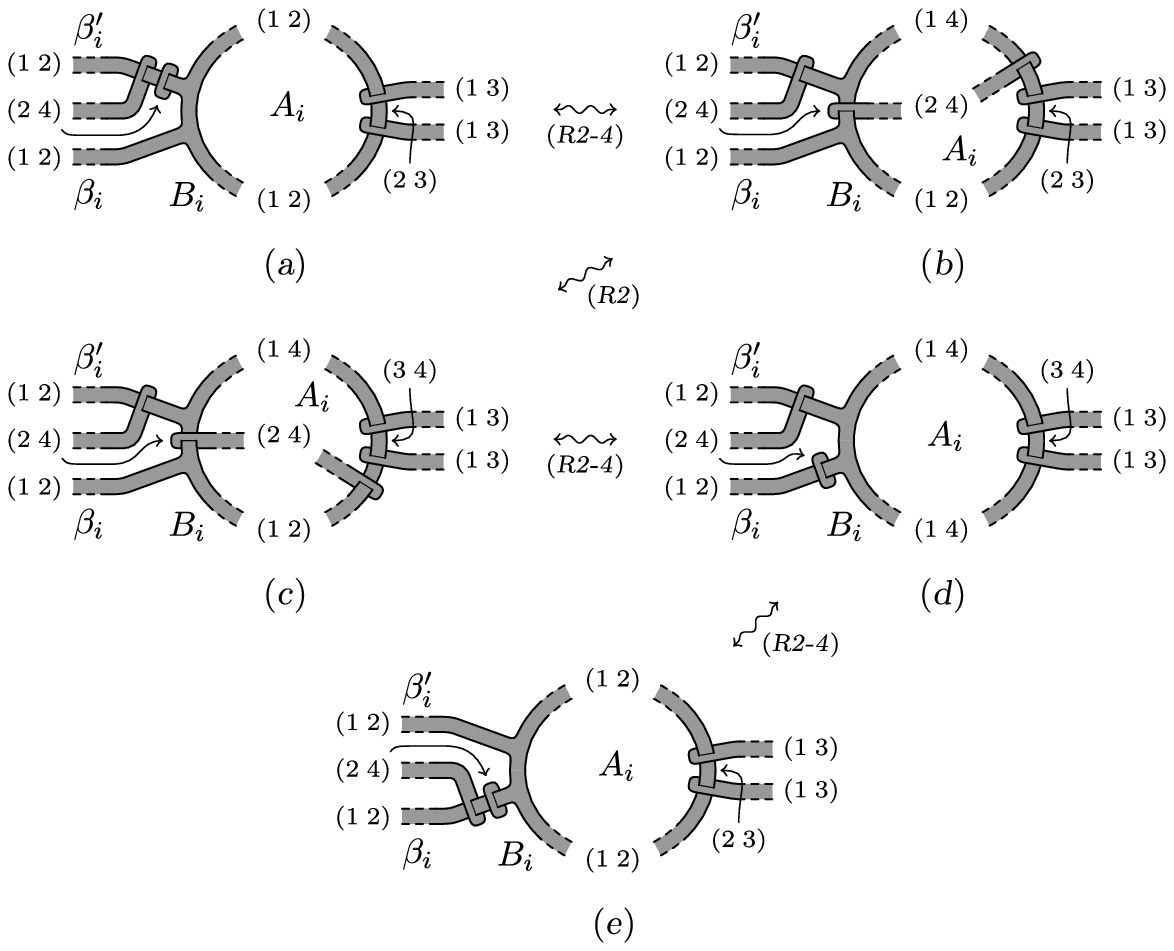}}
\vskip-3pt
\end{Figure}

\begin{statement}
{Remark}\label{gen-beta/rem}\ns
The above argument without any modification with the exception of some extra 1-isotopy
moves, proves that in the special case $\beta_i$ can be replaced by $\beta'_i$ even
if the bands $\beta_1, \dots, \beta_s$ and $\beta'_1, \dots, \beta'_s$ form ribbon
intersections with the disks $A_j$ with $j \neq i$. Of course, in this case the labeled
ribbon surface tangles involved are not $\check S_K$'s, but we need to extend our argument
to include such kind of tangles, since they will appear in the following as intermediate
stages between genuine $\check S_K$'s.
\end{statement}

In order to deal with the general case, we need a preliminary adjustment. Up to a small
isotopy inside a tubular neighborhood of $|L'|$, we separate the intersection $\Int(A_1
\cup \dots \cup A_s) \cap \Int(A'_1 \cup \dots \cup A'_s)$ from $|L'|$. After that, we put
$\Int(A_1 \cup \dots \cup A_s)$ and $\Int(A'_1 \cup \dots \cup A'_s)$ in general position,
so that they meet transversally along a finite family of closed curves. Then, we proceed
by induction on the number of components of $\Int(A_1 \cup \dots \cup A_s) \cap \Int(A'_1
\cup \dots \cup A'_s)$.

The induction starts from the case when such intersection is empty. In this case, we have
pairwise disjoint spheres $S_i = A_i \cup A'_i$ with $i = 1, \dots, s$. Then, there exist
new bands $\beta''_i$ between $B_i$ and $\id_{\tp21}$, such that: 1) $\beta''_i \cap S_j =
\emptyset$ if either $i = j$ or $i \neq j$ and $S_i$ lies in the exterior $E(S_j)$ of
$S_j$; 2) $\beta''_i \cap S_j$ consists of a single ribbon intersection if $i \neq j$ and
$S_i$ lies in the interior $I(S_j)$ of $S_j$. We choose an indexing such that case 2
possibly happens only for $i > j$. The $\beta''_i$'s are not legitimate bands for a
genuine $\check S_K$, but according to the remark above about the proof of the special
case, we can still apply that argument to change the $\beta_i$'s into the $\beta''_i$'s
one by one, by using the $A_i$'s, provided we proceed in the order given by the chosen
indexing. After that, we can change the $\beta''_i$'s into the $\beta'_i$ in the same way,
but using the $A'_i$'s in place of the $A_i$'s and proceeding in the reversed order.

We are left with the inductive step. Choose a closed curve $C$ among the components of
$\Int(A_1 \cup \dots \cup A_s) \cap \Int(A'_1 \cup \dots \cup A'_s)$, which is contained
in $\Int A_i \cap \Int A'_j$ and is an innermost one in $\Int A'_j$ for $i,j \leq s$. We
apply the usual cut and paste technique to $A_i$ to remove $C$ from the intersection.
Namely, if $D \subset A_i$ and $D' \subset A'_j$ are the subdisks spanned by $C$, then we
replace a subdisk of $A_i$ slightly larger of $D$ with a disk parallel to $D'$, to get a
new spanning disk $A''_i$. Then, by putting $A''_k = A_k$ for any $k \neq i$, we have a
new family $A''_1, \dots, A''_s$ of spanning disks for $|L'|$, whose interiors intersect
$\Int(A'_1 \cup \dots \cup A'_s)$ in a smaller number of curves than the original
disks\break $A_1, \dots, A_s$. Let $\beta''_1, \dots, \beta''_s$ be any family of bands
disjoint from the disks $A''_1, \dots, A''_s$ (observe that $\beta_1, \dots, \beta_s$ may
meet $A''_i$ in $D'$). Since the disks $A''_1, \dots, A''_s$ can be obviously perturbed to
make their interiors disjoint from $\Int(A_1 \cup \dots \cup A_s)$, the starting step of
the induction applies to transform the bands $\beta_1, \dots, \beta_s$ into $\beta''_1,
\dots, \beta''_s$. Finally, these latter can be made into the bands $\beta'_1, \dots,
\beta'_s$, by the induction hypothesis. This completes the proof of the independence of
$\check S_K$ on the $\beta_i$'s.

Now we pass to the vertically trivial state $|L'|$. Recall that we are thinking of it as a
vertically trivial link, that is a vertically trivial diagram together with a compatible
height function. Of course, different choices of the height function, compatible with the
same diagram, are related by a vertical diagram isotopy and such an ambient isotopy can be
used to relate the resulting labeled ribbon surface tangles.

So, we have only to consider the case of different choices for the vertically trivial
state $|L'|$. According to Proposition \ref{vert-state/thm}, any two such choices are
related by a sequence of the following two moves: the first one is a single self-crossing
change of a given component; the second one is a simultaneous change of all the crossings
between two vertically adjacent components.

We first address the second move, which changes the vertical order of two vertically
adjacent components. Thanks to what we have already proved, we can choose the disks $A_1,
\dots, A_s$ to be perturbations of those generated by the horizontal intervals with
endpoints in $|L'|$ (cf. Section \ref{links/sec}) and the bands $\beta_1, \dots, \beta_s$
to satisfy the following conditions:
\begin{itemize}
\item[1)]
the intervals $[a_i,b_i] = h(A_i \cup \beta_i)$, with $i= 1, \dots, s$ and $h$ being the
height function in the 3-dimensional space, are pairwise disjoint;
\item[2)]
the planar diagram of the band $\beta_i$ is disjoint from the projection in the diagram's
plane of the corresponding disk $A_i$, for every $i = 1, \dots, s$.
\end{itemize}

Assuming that the components of $|L'|$ are numbered according to their vertical order, let
$|L'_i|$ and $|L'_{i+1}|$ be the two components involved in the move. We attach to $L_i$
an arbitrary auxiliary band $\beta'_i$ as in Figure \ref{xi04/fig} \(a) and then perform
the indicated moves which lead to diagram \(d) in the same figure. As a result, the
labeling of the entire band $B_i$ changes from $\tp12$ to $\tp14$, while the labeling of
$B_{i+1}$ is left unchanged. This allows us to perform the required crossing changes,
bringing $B_i$ on the top of $B_{i+1}$. Figure \ref{xi05/fig} describes the main steps in
the realization of such crossing changes in the case when a disk $C_j$ is present in the
original crossing. Here, apart from 1-isotopy, we have used one move \(R4) relating the
second and the third diagram. The other case, when the disk $C_j$ is not present in the
original crossing, is obtained by the same steps in the reverse order with the roles of
$B_i$ and $B_{i+1}$ (and their labels) exchanged. Once all the crossing changes have been
performed, we restore the original labeling of $B_i$ and cancel the auxiliary band
$\beta'_i$, by reversing the steps from \(a) to \(d) of Figure \ref{xi04/fig}. At this
point, we are in position to push the disk $A_i$ and the band $B_i$ in the height interval
$\left] b_{i+1}, a_{i+2} \right[\,$ through a vertical isotopy (put $a_{i+2} = \infty$ if
$i+1 = s$). We can assume that during this isotopy $A_k$, $B_k$ and $\beta_k$ with $k \neq
i$ are kept fixed while $A_i$ and $B_i$ are being moved and the necessary vertical
deformations are performed on $\beta_i$ and on those $C_j$'s, $D_j$'s and $D'_j$'s which
form ribbon intersections with $B_i$. The only problem which can arise here, is that after
such isotopy $\beta_i$ could intersect $A_{i+1}$, so that the resulting labeled ribbon
surface tangle would not be a genuine $\check S_K$. To fix this, we can replace the
deformed $\beta_i$ by a legitimate band, thanks to the remark at page
\pageref{gen-beta/rem}.

\begin{Figure}[htb]{xi05/fig}
{}{Independence of $\check S_K\!$ on the vertical order of the components of $|L'|\!$}
\centerline{\fig{}{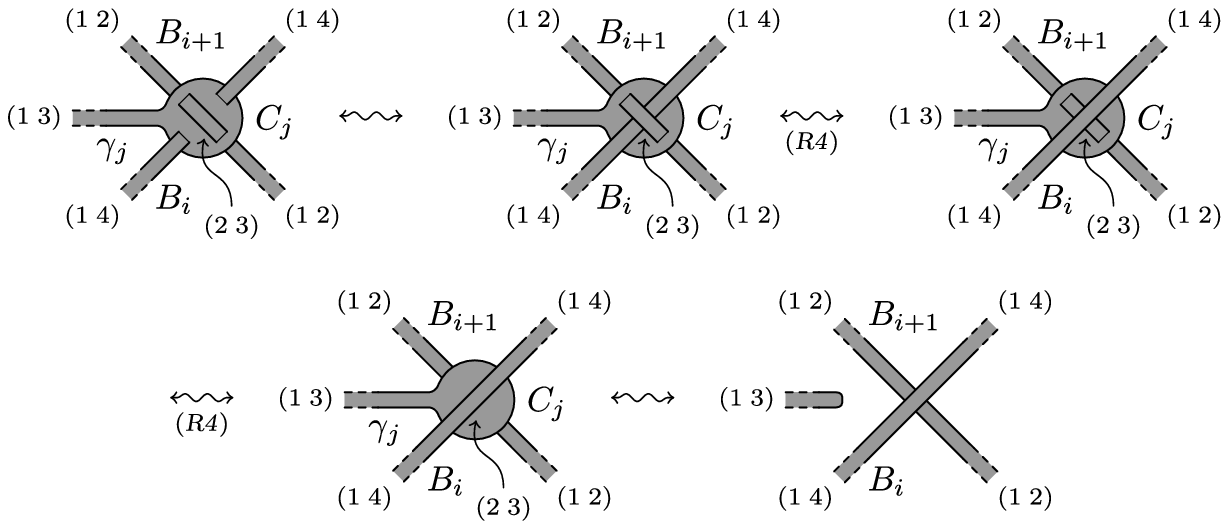}}
\vskip-3pt
\end{Figure}

Concerning a single crossing change making a vertically trivial component $|L'_i|$ into a
different vertically trivial state of $|L_i|$, there are four cases to be considered,
depending on sign of the crossing and on whether $|L'_i|$ coincides with $|L_i|$ at that
crossing or not. In all cases, according to Proposition \ref{vert-state/thm}, there exists
a disk $A_i'$ bounded by one of the two arcs of $|L'_i|$ determined by the two points
projecting to the changing crossing, and the vertical segment joining these two points.
Actually, such a disk can be thought as a perturbation of a subdisk of $A_i$, in such a
way that it does not meet the bands $\beta_1, \dots, \beta_s$. Moreover, we can choose
$\beta_i$ to be attached to $B_i$ in the portion of it corresponding to $|L'_i| - \Bd
A'_i$.

Figure \ref{xi06/fig} indicates how to realize the crossing change in one of the four
cases. For the other three cases it suffices to apply a mirror symmetry to all the stages
and/or reverse their order. In step \(a), as it was done above, we create a disk $D$ of
label $\tp24$ attached to $\id_{\tp43}$ through a narrow band $\delta$ passing through
$\id_{\tp32}$. Then the disk $D$ is moved within a neighborhood of $A'_i$ to let $A'_i$
pass through it, by 1-isotopy and some moves \(R2) and \(R4) like in Figure
\ref{xi04/fig}, until diagram \(b) is achieved. After that, we get \(e) as described in
the intermediate steps. At this point, we use move \(R6) to transfer the two negative
half-twists from $D$ to $B_i$ and we push back $D$ to its original position, by reversing
the process from \(a) to \(b). We remind that the additional negative full twist which
appears on $B_i$ compensates the change of crossing (cf. step 5 of the construction of
$S_K$). Indeed, the move changes the framing of $L'_i$ by $-2$ and therefore the framing
of $B_i$ should change by $-1$ as it does.

\begin{Figure}[htb]{xi06/fig}
{}{Independence of $\check S_K$ on the vertical trivial component $|L'_i|$}
\centerline{\fig{}{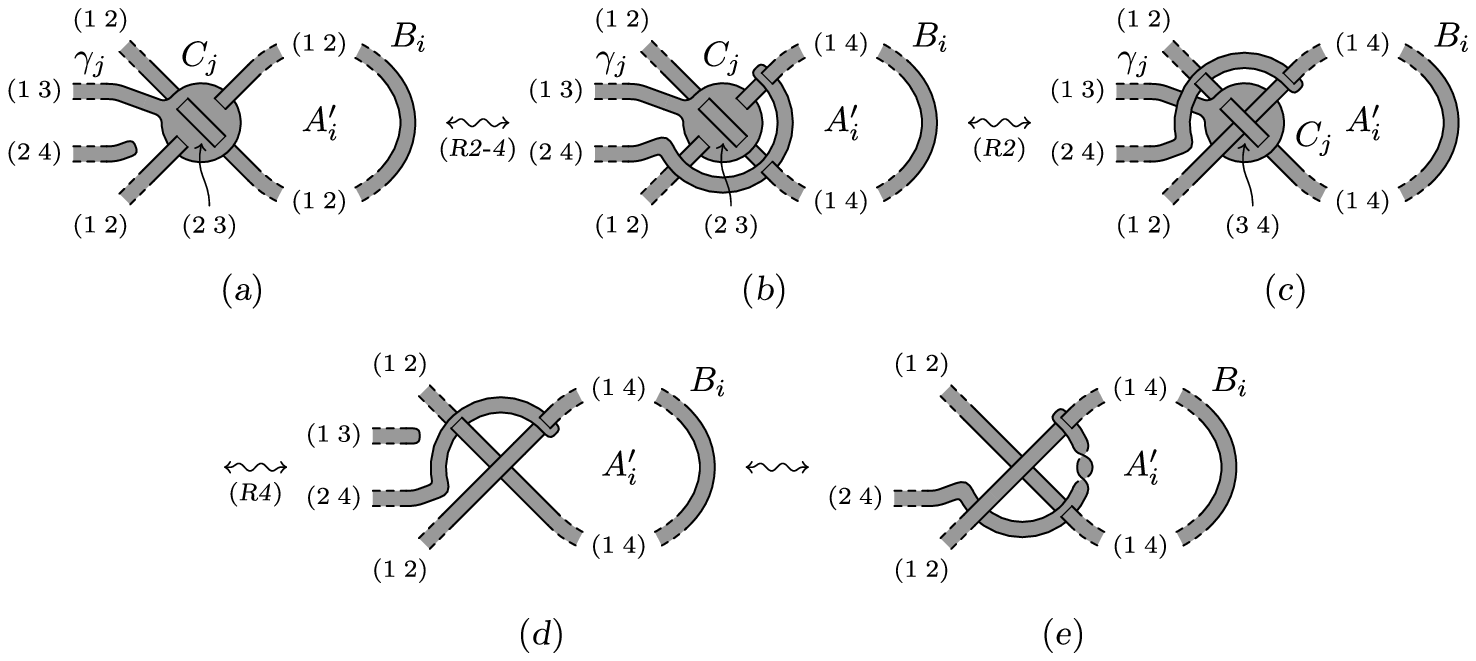}}
\vskip-3pt
\end{Figure}

Finally, we prove the independence of $\check S_K$ on the strictly regular planar diagram 
of $K$. We recall that two strictly regular planar diagrams represent isotopic Kirby
tangles if they are related through planar isotopy, Reidemeister moves and the moves 
presented in Figure \ref{kirby-tang06/fig}.

\begin{Figure}[b]{xi07/fig}
{}{The exceptional Reidemeister move}
\centerline{\fig{}{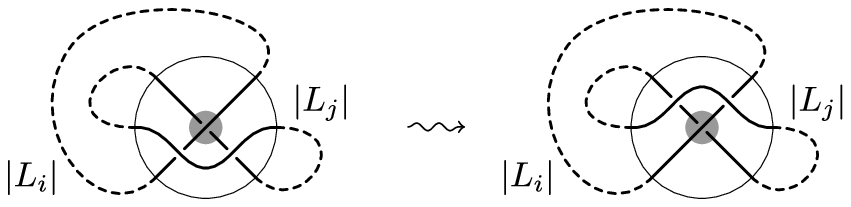}}
\vskip-6pt
\end{Figure}

We observe that any Reidemeister move on the link $L$ of closed framed components of $K$
induces the same move on its vertically trivial state $L'$, and so just a diagram isotopy
on $\check S_K$, provided that none of the involved crossings (before and after the move)
has been changed when passing from $|L|$ to $|L'|$. The reason is that in this case the
two links coincide inside a small 3-cell where the move takes place and such 3-cell is
free from the $C_i$'s. We leave to the reader the straightforward verification that, with
the only exception depicted in Figure \ref{xi07/fig}, a vertically trivial state $|L'|$ of
$|L|$ that satisfies this property can be always achieved by a suitable application of
the naive unknotting procedure described in Section \ref{links/sec} with height function
on each component as in Figure \ref{links01/fig} \(a) or \(c), depending on the move. In
the remaining case of Figure \ref{xi07/fig}, we need to invert at least one of the two
crossings formed by $|L_i|$ and $|L_j|$, in order to get the corresponding components
$|L'_i|$ and $|L'_j|$ of the vertically trivial state $|L'|$. Nevertheless, assuming that
we invert the crossing inside the shaded circle, the move still induces diagram isotopy on
$\check S_K$.

\begin{Figure}[htb]{xi08/fig}
{}{Moving an inverted crossing over/under a pair $D_i$ and $D'_i$}
\centerline{\fig{}{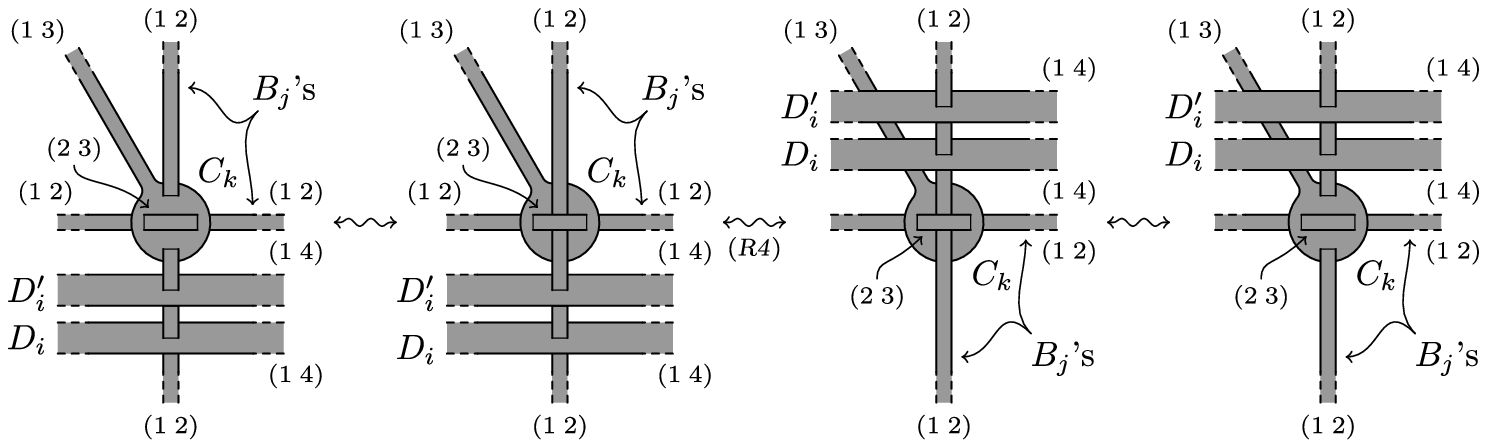}}
\vskip-3pt
\end{Figure}

To complete the proof, we consider the isotopy moves of Figure \ref{kirby-tang06/fig},
which involve the dotted components of $K$. Moves \(a) and \(e) in the figure clearly
induce labeled diagram isotopy on $\check S_K$. The same is true for move \(d) once a
suitable vertically trivial state has been chosen leaving unchanged the involved crossing
between framed arcs. Also moves \(b) and \(c) induce labeled diagram isotopy on $\check
S_K$ when $L'$ coincides with $L$ at all the involved crossings between framed arcs.
Otherwise, we perform the first move in Figure \ref{xi03/fig} on the disks $D_i$ and
$D_i'$ originating from the one handle, changing in this way their labels from $\tp13$ to
$\tp14$. Then, at any crossing where a disk $C_k$ appears, we proceed as indicated in
Figure \ref{xi08/fig}. Here, four moves \(R4) occur in the second step, while labeled
1-isotopy suffices for the other steps. Eventually, we perform backwards the first move in
Figure \ref{xi03/fig} to restore the original labels of $D_i$ and $D_i'$.
\end{proof}

\begin{remark}\label{gen-bands/rem}
The proof of the independence of $\check S_K$ on the choice of the bands $\beta$, $\gamma$
and $\delta$ in Lemma \ref{SK-welldef/thm}, works in the following much more general 
context.

Let $S$ be any labeled ribbon surface tangle in $\S_4$ whose set of labels is complete,
i.e. it generates the whole permutation group $\Sigma_4$. If $S$ contains a band
$\gamma_i$ (resp. $\delta_i$) attached to a local configuration (included the labeling) as
in the left (resp. right) side of Figure \ref{kirby-ribbon05/fig}, then up equivalence
moves, such band can be replaced by any other band $\gamma'_i$ (resp. $\delta'_i$)
attached in the same way to that local configuration. Analogously, if $S$ contains a band
$\beta_i$ attached to a (possibly non-orientable) closed band $B_i$ labeled $\tp12$ whose 
core spans a disk $A_i$ as in Figure \ref{xi04/fig}, such that $B_i$ (resp. $A_i$) forms 
ribbon intersections only passing through disks (resp. being passed through by bands) of 
label $\tp13$, then up equivalence moves, $\beta_i$ can be replaced by any other band 
$\beta'_i$ attached in the same way to $B_i$.

Indeed, thanks to the completeness of the labeling, we can always create a tongue of label
$\tp34$ (resp. $\tp24$) and use it to perform the modification described in Figures
\ref{xi02/fig} or \ref{xi03/fig} (resp. Figure \ref{xi04/fig}) in order to replace the
band $\gamma_i$ or $\delta_i$ (resp. $\beta_i$). We also observe the same argument still
works with any labeling obtained from the specific one considered above by conjugation in
$\Sigma_4$.
\end{remark}

\begin{lemma}\label{SK-invariance/thm}
Up to labeled 1-isotopy and moves \(R1) and \(R2), the labeled ribbon surface tangle 
$\up_3^4 \check S_K$ is invariant under 2-equivalence of $K$.
\end{lemma}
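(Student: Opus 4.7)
The plan is to verify invariance of $\up_3^4 \check S_K$ under each elementary 2-equivalence move on ordinary Kirby tangles. By Definition \ref{kt-equivalence/def} and the discussion following Figure \ref{kirby-tang01/fig}, for $n = 1$ these reduce to labeled isotopy preserving framed/disk intersections, pushing through a 1-handle, 2-handle sliding (over a closed framed component), and addition/deletion of a canceling 1/2-pair; the crossing change is unavailable for $n = 1$. The guiding principle is to deploy Lemma \ref{SK-welldef/thm} together with its strengthened form in Remark \ref{gen-bands/rem}, choosing the ingredients of $\check S_K$ and $\check S_{K'}$ on the two sides of each move to be as compatible as possible outside the small region where the move takes place, so that the comparison reduces to a local modification realizable by labeled 1-isotopy and the moves \(R1), \(R2).

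Isotopy invariance follows directly from Lemma \ref{SK-welldef/thm}. For pushing through a 1-handle, I would pick the vertically trivial state so that the sliding arc is left intact across the move, in the spirit of the last part of the proof of Lemma \ref{SK-welldef/thm} dealing with Figure \ref{kirby-tang06/fig}; the effect on $\check S_K$ is then the local relabeling and band manipulation already realized in Figure \ref{xi08/fig} via several applications of move \(R4). For addition/deletion of a canceling 1/2-pair, I would select the new component of the vertically trivial state to be a tiny unknot and the corresponding connecting bands disjoint from everything else; the added configuration, consisting of two parallel disks $D_{r+1}, D'_{r+1}$ of label $\tp13$ and a meridional closed band $B_{s+1}$ of label $\tp12$ encircling them, then sits inside a disjoint cell and can be removed by a labeled 1-isotopy together with move \(R3).

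The main case, and the main obstacle, is 2-handle sliding. Suppose $L_i$ is slid over a closed component $L_j$ through a band $\beta_{sl}$, producing a new framed link $\widetilde L$ in which $L_i$ becomes the band sum of $L_i$ with a parallel copy of $L_j$, all other components unchanged. I plan to first choose strictly regular planar diagrams for $K$ and its slide $\widetilde K$ that coincide outside a neighborhood $U$ of $\beta_{sl} \cup L_j$, with $L_j$ blackboard framed. Next, by Proposition \ref{vert-state/thm}, I would pick vertically trivial states $|L'|$ and $|\widetilde L'|$ that also agree outside $U$, with $|L'_j|$ common to both and $|\widetilde L'_i|$ realized as the vertically trivial band sum of $|L'_i|$ with a parallel of $|L'_j|$. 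With such compatible choices, $\check S_K$ and $\check S_{\widetilde K}$ coincide outside $U$, while inside $U$ the band $B_i$ is to be replaced by a band sum of $B_i$ with a parallel of $B_j$, with a framing correction of $\fr(\widetilde L'_i) - \fr(L'_i) - \fr(L'_j) \pm 2 \lk(L'_i, L'_j)$ half-twists. This local replacement I plan to realize by creating an auxiliary disk of label $\tp34$ out of $\id_{\tp32}$, exactly as in the proof of Lemma \ref{SK-welldef/thm}, using it to perform the band sum through labeled 1-isotopy and moves \(R2), \(R4), \(R6) in analogy with Figures \ref{xi04/fig} and \ref{xi06/fig}, then retracting the auxiliary disk; the framing correction is absorbed by transferring half-twists along $B_i$ via move \(R6), as in the last step of Figure \ref{xi06/fig}.

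The principal technical difficulty is arranging the vertically trivial states on the two sides to agree outside a small neighborhood of $\beta_{sl} \cup L_j$, since a priori they refer to distinct links and naive applications of Proposition \ref{vert-state/thm} might force global changes. The 4-fold stabilization is essential throughout: it provides the disjoint label pairs $\tp34$ and $\tp24$, disjoint from the $\tp12$ and $\tp13$ occurring in the construction of $\check S_K$, needed to create and maneuver the auxiliary disks that make the local ribbon surface surgeries feasible.
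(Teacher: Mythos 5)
Your overall framework matches the paper's: invariance is checked move by move, isotopy is disposed of by Lemma \ref{SK-welldef/thm}, pushing through a 1-handle and cancelling 1/2-pairs are handled essentially as in the paper (the paper removes the cancelling pair by one \(R3) move on $D_i$ followed by retraction of the resulting tongues and of $\beta_i\cup D'_i$, which is what you describe), and the fourth sheet is correctly identified as the source of the disjointly-labeled auxiliary disks. The problem is the 2-handle slide, which is where the lemma actually lives, and there your argument has a genuine gap.

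First, your reduction to a ``local modification inside $U$'' does not hold as stated. A neighborhood $U$ of $\beta_{sl}\cup L_j$ contains the entire component $L_j$, hence every dotted disk $D_k$ that $L_j$ threads, every crossing-disk $C_m$ sitting at a changed crossing involving $L_j$, and arcs of every other band crossing $B_j$. Producing ``a parallel of $B_j$'' is therefore a global operation: the new ribbon must acquire a ribbon intersection with each such $D_k$ (labels $\tp12$ against $\tp13$ are not disjoint, so these cannot be created by isotopy alone), and each $C_m$ met by $B_j$ must be split into two twin disks so that both $B_j$ and its parallel copy carry the corresponding crossing change. None of this is covered by the moves of Figures \ref{xi04/fig} and \ref{xi06/fig}, which replace an attaching band $\beta_i$ or invert a single self-crossing; the paper needs a separate mechanism (sliding an attaching arc of $B_1$ along the whole of $B_2$, duplicating the $C_m$'s as in Figure \ref{xi11/fig}, and inserting extra marked crossings to unlink the two parallel ribbons). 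Second, your appeal to ``the vertically trivial band sum of $|L'_i|$ with a parallel of $|L'_j|$'' presupposes that such a state exists and agrees with $|L'|$ outside $U$; a band sum of two vertically separated components is vertically trivial only if the band attaches at the extrema of the two height functions, and arranging this (together with the compatibility of the planar diagrams) is precisely the set-up the argument must supply, not a consequence of Proposition \ref{vert-state/thm}. Third, the framing correction is asserted rather than verified: since the components of a vertically trivial state are vertically separated, one has $\lk(L'_i,L'_j)=0$, and the actual check that the slid surface represents the slid Kirby tangle reduces to a count of marked crossings inside and outside the twist box showing $\lk(|L_2|,|\hat L_2|)=\fr(L_2)$ and $\fr(\hat L_2)=\fr(L_2)$; without this computation you have not shown that the surface you produce is $\check S_{\widetilde K}$ for any admissible choice of data. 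These three points are exactly the content of the hard case, so as it stands the proposal identifies the difficulty but does not overcome it.
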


\begin{proof}
According to Definition \ref{kt-equivalence/def}, in order to prove the invariance of
$\check S_K$ under 2-equivalence of $K$, it is enough to consider the moves in Figure
\ref{kirby-tang01/fig}, since we have already proved the invariance under isotopy.
Moreover, in our case $K$ is an ordinary Kirby tangle and therefore no crossing change is
possible, while the other three moves reduce to the ordinary ones without labels. We also
recall that in addition to moves \(R1) and \(R2), we can use the moves \(R3) to \(R6)
introduced in the previous section (cf. Proposition \ref{moves-aux/thm}).

A pushing through 1-handle move on $K$ trivially induces labeled isotopy on $\check S_K$.
Similarly, the addition/deletion of a canceling 1/2-pair can be easily interpreted in
terms of equivalence moves on $\check S_K$. In fact, if the disk $D_i$ and the loop $L_j$
represent a canceling pair in $K$, then in $\check S_K$ the band $B_j$ passes only once
through $D_i$. Thus, by a\break move \(R3) we can remove $D_i$ and break $B_j$ into two
tongues. At this point, by labeled 1-isotopy we can retract such tongues to
$\id_{\tp21}$ and after that to retract to $\id_{\tp32}$ the tongue $\beta_i \cup D'_i$
and all the $\gamma_k \cup C_k$'s related to crossing changes involving $L_j$.

The case of a 2-handle sliding requires some preliminaries. First of all, we number the
components of $L$ starting from the two ones involved in the sliding, in such a way that
$L_1$ slides over $L_2$. In terms of Kirby tangles, this means to replace $L_1$ with the
band connected sum $L_1 \#_\delta \bar L_2$, where $\bar L_2$ is a parallel copy of $L_2$,
and $\delta$ is a band connecting $L_1$ to $\bar L_2$. Since we have already proved the
invariance of $\check S_K$ under isotopy, we can isotope $K$ in such a way that in its
planar diagram $\delta$ is a blackboard parallel band, which does not form any crossing
with the $L_i$'s and lies in a neighborhood of $\id_{\tp21}$ as shown in Figure
\ref{xi09/fig} \(a).

\begin{Figure}[htb]{xi09/fig}
{}{Standard set up for a 2-handle sliding}
\centerline{\fig{}{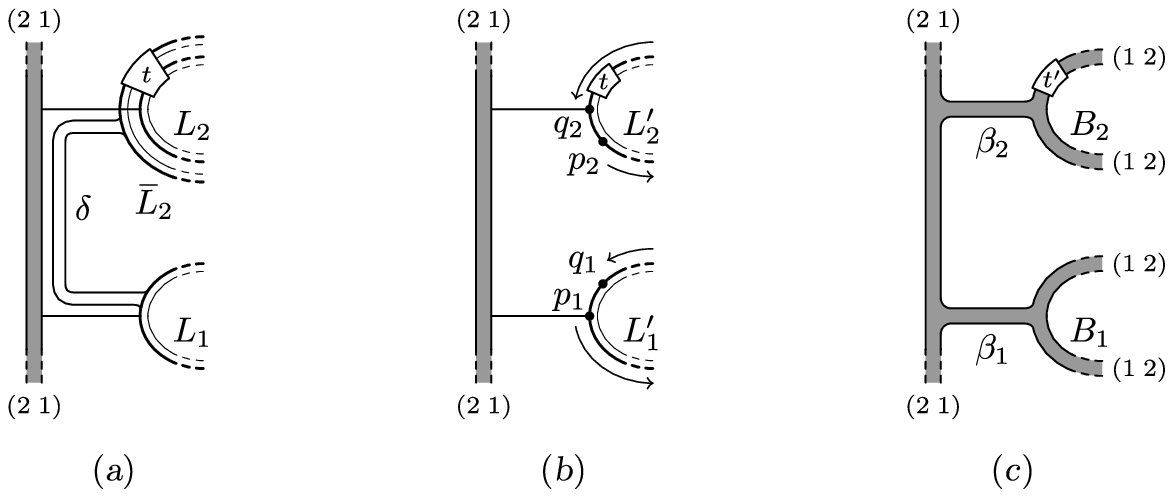}}
\vskip-3pt
\end{Figure}

Then, in the construction of $\check S_K$ we choose a vertically trivial state $|L'|$ such
that: 1) the vertical order of the components is the one given by the numbering ($|L'_i|$
lies under $|L'_j|$, for $i < j$); 2) the minimum point $p_1$ (resp. $p_2$) and the
maximum point $q_1$ (resp. $q_2$) of the height function $h$ on $|L'_1|$ (resp. $|L'_2|$)
coincide with the end points of the attaching arc of $\delta$ to $L_1$ (resp. $\bar L_2$),
as in Figure \ref{xi09/fig} \(b). Here, the arrows indicate the orientations that we will
use in the framing computation at the end of the proof, so they are not relevant for the
moment. Finally, we choose $\beta_1$ and $\beta_2$ to be blackboard parallel bands, such
that $\delta$ can be thought to run parallel to them and to the part of the boundary of
$\id_{\tp21}$ between them, as in Figure \ref{xi09/fig} \(c). For the sake of convenience,
the framed curves $L_2$ and $L'_2$ and the ribbon $B_2$ are assumed to be blackboard
parallel outside the twist boxes $t$ and $t'$ in Figure \ref{xi09/fig}. The reader should
be aware that the numbers of twists inside those boxes may differ, accordingly to step 5
of the construction of $\check S_K$.

\begin{Figure}[htb]{xi10/fig}
{}{A 2-handle sliding in terms of moves on $\up_3^4 \check S_K$}
\vskip-3pt
\centerline{\fig{}{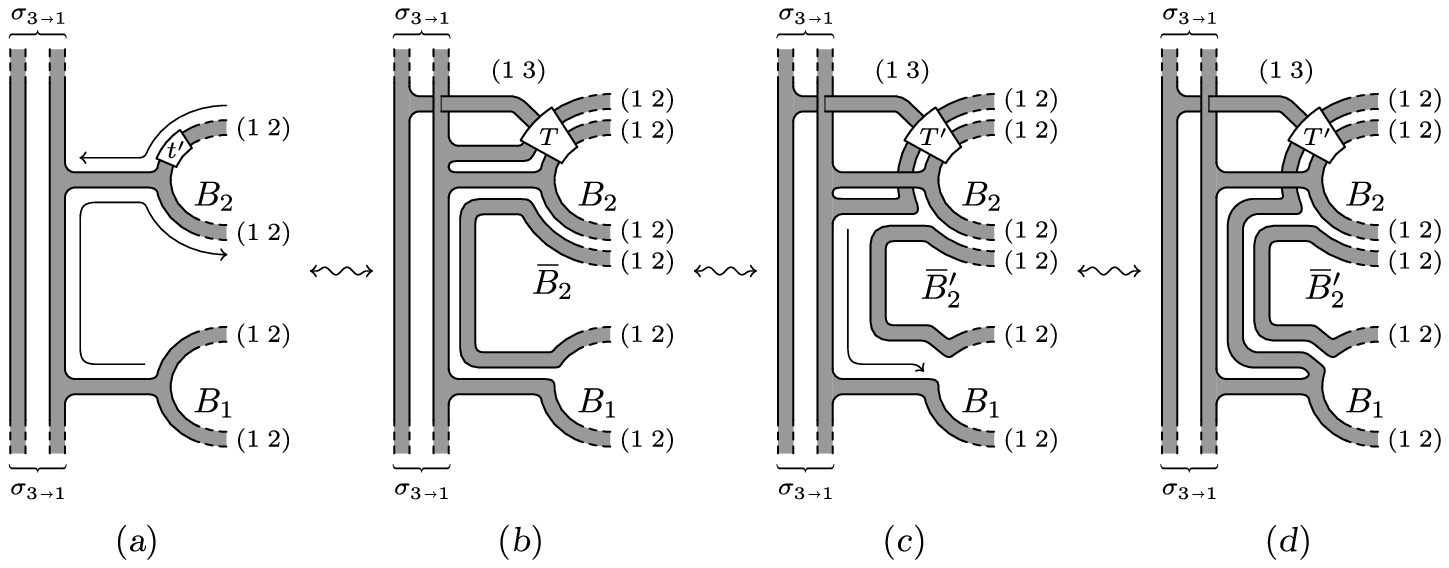}}
\end{Figure}

Once it has been set up in this way, the sliding can be interpreted in terms of moves on
$\up_3^4 \check S_K$, as sketched in Figure \ref{xi10/fig}. Here, we omit to draw
the reduction ribbon $\id_{\tp43}$ that is only implicitly involved in the step from \(b)
to \(c).

We think of $B_1$ as a 1-handle attached to $\id_{\tp21}$ (like $B_j$ in Figure
\ref{kirby-ribbon08/fig}) and slide one of its attaching arcs as indicated by the arrows
in \(a), to form a new ribbon $\bar B_2$ parallel to $B_2$. Before reaching the twist box
$t'$, this sliding can be entirely realized by labeled diagram isotopy, except for the
labeled 1-isotopy moves needed to pass through the $D_i$'s and $C_i$'s encountered by
$B_2$. Each time a disk $C_i$ is passed through, two new ribbon intersections appear as
in the first diagram of Figure \ref{xi11/fig}. Then, we use again 1-isotopy to split
$C_i$ into two twin disks similar to the original one, as suggested by the rest of Figure
\ref{xi11/fig}.

\begin{Figure}[htb]{xi11/fig}
{}{}
\centerline{\fig{}{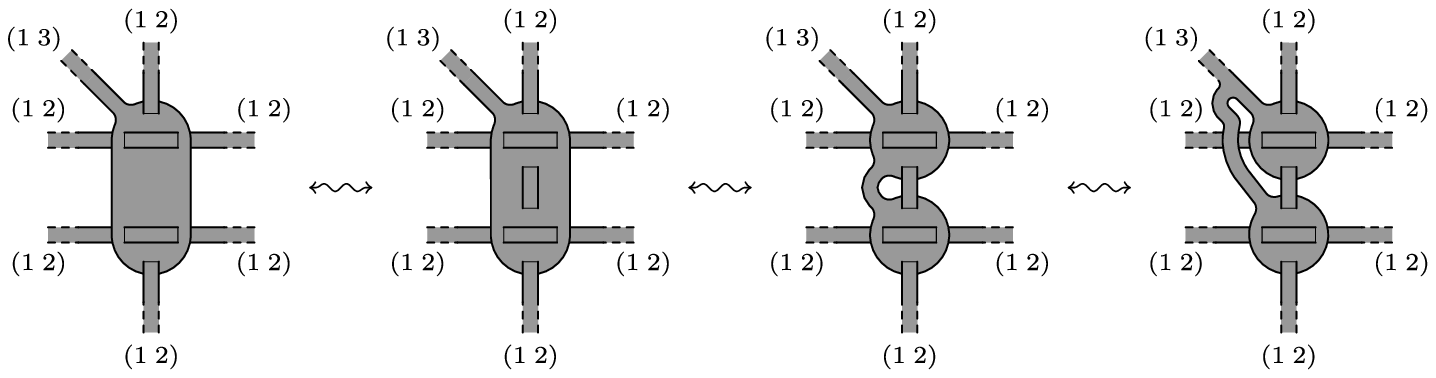}}
\vskip-3pt
\end{Figure}

To get the twist box $T$ in \(b), after having followed all the twists of $B_2$ in the
twist box $t'$, we add some further crossings between $B_2$ and $\bar B_2$ (together with
further $C_i$'s), in order to make them unlinked. Figure \ref{xi12/fig} shows how to
add a positive crossing; for a negative one it suffices to mirror the figure. Here, some
moves other than 1-isotopy are needed: one move \(R5) in the first step; two opposite
moves \(R6) in the third; two moves \(R1) in the fourth. In particular, we observe that
$\bar B_2$ enters and leaves the box $T$ in Figure \ref{xi10/fig} \(b) on the same
side of $B_2$.

\begin{Figure}[htb]{xi12/fig}
{}{}
\centerline{\fig{}{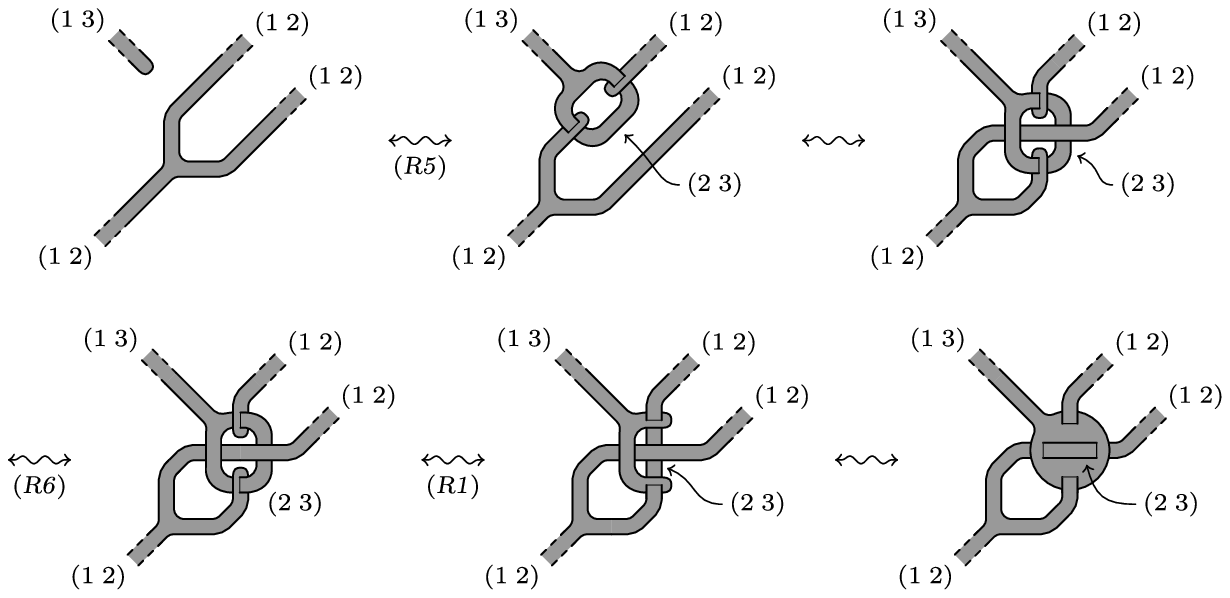}}
\vskip-3pt
\end{Figure}

Now, we consider a disk $A_2$ spanned by $|L'_2|$ as in step 6 of the construction of
$\check S_K$, and perturb it near $B_2$ in such a way that it becomes disjoint from $\bar
B_2$, while remaining disjoint from all the other $B_i$'s and continuing to form only
clasps and ribbon intersections with the rest of the ribbon surface tangle. Indicating the
perturbed disk still by $A_2$, we use it to replace the band $\beta_2$ in \(b) by that in
\(c) and at the same time pull $\bar B_2$ below $B_2$. To do that we first perform steps
\(a) to \(d) in Figure \ref{xi04/fig} with $i = 2$, to change the labeling of $B_2$ from
$\tp12$ to $\tp14$. Then we change all the crossings where $\bar B_2$ passes over $B_2$,
by operating as in Figure \ref{xi05/fig}. Finally, we perform the last step in Figure
\ref{xi04/fig} obtaining \(e) and eventually cut and retract $\beta_2$. In this way, we
get the diagram in Figure \ref{xi10/fig} \(c), where $\bar B'_2$ and $T'$ differ from
$\bar B_2$ and $T$ only by the performed crossing changes. Eventually we get \(d) by
completing the sliding as indicated by the arrow in \(c).

We claim that the labeled ribbon surface tangle in Figure \ref{xi10/fig} \(d)
coincides up to 3-dimensional isotopy with $\check S_{K'}$, where $K'$ is the ordinary
Kirby diagram obtained from $K$ by replacing $L_1$ with $L_1 \#_\delta \bar L_2$.

To prove this claim, let $\bar L'_2$ be the framed unknot whose base curve is the core of
the ribbon $\bar B'_2$ and whose framing is the double of that represented by $\bar B'_2$.
Taking into account our starting assumptions on the height function of $|L'|$, the link
formed by $|L'_1 \#_\delta \bar L'_2|, |L'_2|, \dots, |L'_s|$ is vertically trivial for a
suitable height function compatible with the planar diagram in Figure \ref{xi10/fig}
\(d). Moreover, $|L'_1 \#_\delta \bar L'_2|$ is the core of the ribbon $B_1 \#_\delta \bar
B'_2$ and $\fr(L'_1 \#_\delta \bar L'_2) = \fr(L'_1) + \fr(\bar L'_2)$ is the double of
the framing represented by $B_1 \#_\delta \bar B'_2$. Therefore, still referring to
diagram \(d) in Figure \ref{xi10/fig}, our claim amounts to say that by inverting
the crossings marked by the presence of a disk $C_i$ in the framed link formed by $L'_1
\,\#_\delta \bar L'_2, L'_2, \dots, L'_s$, we get the framed link formed by $L_1
\,\#_\delta \bar L_2, L_2, \dots, L_s$, up to diagram isotopy of $K$. It is clear from the
construction that such crossing inversions produce a framed link of components $L_1
\#_\delta \hat L_2, L_2, \dots, L_s$, where $\hat L_2$ is a framed knot whose base curve
$|\hat L_2|$ is parallel to $|L_2|$. Hence, it is left to prove that $\lk(|L_2|, |\hat
L_2|) = \lk(|L_2|, |\bar L_2|) = \fr(L_2)$ (in other words, $|\hat L_2|$ and $|\bar L_2|$
represent the same framing of $|L_2|$) and $\fr(\hat L_2) = \fr(\bar L_2) = \fr(L_2)$,
where in both cases the second equality directly derives from the construction.

Since $B_2$ and $\bar B'_2$ are vertically separated, $\lk(|L'_2|,|\bar L'_2|) = 0$ and
thus $\lk(|L_2|, |\hat L_2|)$ is the opposite of the signed number of crossings between
$B_2$ and $\bar B'_2$ marked by a $C_i$ in \(d). On the other hand, being also $B_2$ and
$\bar B_2$ unlinked, this equals the opposite of the signed number of crossings between
$B_2$ and $\bar B_2$ marked by a $C_i$ in \(b). According to the construction of the
diagram in \(b), there are $-\fr(L'_2)$ such crossings inside the twist box $T$, while the
number of them outside the twist box $T$ is the double of the signed number of the
self-crossings of $B_2$ marked by a $C_i$ in $S_K$. But this last number is equal to the
difference $\fr(L'_2) - \fr(L_2)$, so we can immediately conclude that $\lk(|L_2|, |\hat
L_2|) = \fr(L_2)$.

Finally, the equality $\fr(\hat L_2) = \fr(L_2)$ can be derived in a similar way from
$\fr(\bar L'_2) = 2 \fr(\bar B'_2) = 2 \fr(\bar B_2) = 2 \fr(B_2) = \fr(L'_2)$, once one
observes that the signed number of the self-crossings of $\bar B'_2$ marked by a $C_i$ in
\(d) coincides with that of the self-crossings of $B_2$ marked by a $C_i$ in $S_K$.
\end{proof}

\begin{proposition}\label{xi4/thm}
The map $\,\Xi_4$ defined $\,\Xi_4(I_m) = J_{\sigma_{4 \red 1}} \!\diam J_m$ for every $m
\geq 0$ and $\,\Xi_4(K) = \up_3^4 \check S_K$ for every Kirby tangle $K$ in $\,\K_1$ (cf.
Proposition \ref{trivial-state/thm}), represents a faithful functor $\,\Xi_4: \K_1 \to
\S_4^c$, such that $\,\Xi_4(\K_1) \subset \S_{4 \red 1}^2$ and ${\down^4_1} \circ \Theta_4
\circ \Xi_4 = \id_{\K_1}$. Moreover, for any two morphisms $K: I_{m_0} \to I_{m_1}$ and
$K': I_{m'_0} \to I_{m'_1}$ in $\K_1$ we have $\,\Xi_4(K \diam K') = \up_3^4\,(Q_{m_1 +
m'_1} \circ (R_K \rdiam R_{K'}) \circ \bar Q_{m_0 \diam m'_0})$ (cf. Figure
\ref{kirby-ribbon01/fig}).
\end{proposition}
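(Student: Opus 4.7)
The plan is to establish the four assertions of the proposition by combining the preceding lemmas with a direct check of functoriality, then extract faithfulness from the commutative triangle. First I will observe that $\Xi_4(K)$ is a well-defined morphism of $\S_4$: by Lemmas \ref{SK-welldef/thm} and \ref{SK-invariance/thm}, $\up_3^4 \check S_K$ is independent, up to labeled 1-isotopy and moves \(R1) and \(R2), of the strictly regular planar diagram of $K$, of the chosen vertically trivial state of the link $|L|$, of the connecting bands $\beta_i,\gamma_i,\delta_i$, and of the 2-equivalence class of $K$. The inclusion $\Xi_4(\K_1)\subset \S_4^c$ will follow directly from the structure $\check S_K=Q_{m_1}\circ R_K\circ \bar Q_{m_0}$ with $R_K=\id_{\sigma_{3\red 1}}\diam_{\beta,\gamma,\delta}T_K$: this exhibits $\check S_K$ in the 1-reducible form of Definition \ref{S-reducible/def}, and stabilization by $\up_3^4$ preserves reducibility.

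For functoriality, I will first treat identities. Since $\id_{I_m}$ in $\K_1$ is the tangle of Figure \ref{kirby-morph01/fig}, the construction of $\check S_{\id_{I_m}}$ produces $m$ pairs of unknotted zero-framed loops linked with dotted pairs and joined to the reduction ribbon $\id_{\tp21}$; after stabilization, a sequence of applications of move \(R3) and labeled 1-isotopy collapses each pair to a straight ribbon between source and target, yielding $\id_{\Xi_4(I_m)}$. For composition, given $K_c:I_{m_{c-1}}\to I_{m_c}$ with $c=1,2$, I will compare $\check S_{K_2\circ K_1}$ with $\check S_{K_2}\circ \check S_{K_1}$: at the interface $I_{m_1}$ the latter contains the block $\bar Q_{m_1}\circ Q_{m_1}$ in which each of the $m_1$ pairs of vertical disks $D^0_k, D^1_k$, together with the small closing loops introduced by step 1 of the construction, form canceling 1/2-pairs of the type removed in the proof of Lemma \ref{SK-invariance/thm}. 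Removing them via move \(R3) and labeled 1-isotopy produces $\check S_{K_2\circ K_1}$ for a compatible choice of vertically trivial state, the resulting tangle being independent of that choice by Lemma \ref{SK-welldef/thm}.

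Given functoriality, the commutative triangle ${\down^4_1}\circ \Theta_4\circ \Xi_4 = \id_{\K_1}$ will follow formally. Using Proposition \ref{stab-theta/thm} together with ${\down^4_3}\circ {\up^4_3}=\id_{\K_3}$ from Proposition \ref{K-reduction/thm}, I can write
$$ {\down^4_1}\Theta_4\Xi_4(K) \,=\, {\down^4_1}\Theta_4({\up^4_3}\check S_K) \,=\, {\down^4_1}{\up^4_3}\Theta_3(\check S_K) \,=\, {\down^3_1}\Theta_3(\check S_K)\,,$$
which equals $K$ by Proposition \ref{full-theta3/thm}. Faithfulness of $\Xi_4$ is then immediate: $\Xi_4(K_1)=\Xi_4(K_2)$ forces $K_1=K_2$ upon applying ${\down^4_1}\circ \Theta_4$. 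Finally, the product formula is a direct consequence of Remark \ref{non-monoidal/rem}, which already gives $\check S_{K\diam K'}=Q_{m_1+m'_1}\circ(R_K\rdiam R_{K'})\circ \bar Q_{m_0+m'_0}$, combined with the definitions of $\Xi_4$ and of $\up_3^4$.

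The hardest step will be the composition preservation in the second paragraph. The construction of $\check S_K$ depends on choices of vertically trivial states, of adapted 1-handlebody decompositions and of connecting bands, and these must be orchestrated compatibly across $K_1$, $K_2$, and $K_2\circ K_1$. The cancellations at the interface require careful bookkeeping of the framings computed in step 5 of the construction and of the interactions of the disks $C_i, D_i, D'_i$ of one factor with those of the other. The flexibility afforded by Lemma \ref{SK-welldef/thm} and the cancellation techniques developed in Lemma \ref{SK-invariance/thm} should suffice, provided I first choose vertically trivial states of the link components of $K_1$ and $K_2$ such that, after reopening the arcs at the interface $I_{m_1}$, their concatenation descends to a vertically trivial state of the link of $K_2\circ K_1$.
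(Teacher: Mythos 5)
Your proposal is correct and follows essentially the same route as the paper: well-definedness from Lemmas \ref{SK-welldef/thm} and \ref{SK-invariance/thm}, functoriality checked on identities and on the interface blocks of $\bar Q_{m_1}\circ Q_{m_1}$ via band replacements and move \(R3), the triangle identity from Propositions \ref{stab-theta/thm} and \ref{full-theta3/thm}, faithfulness as an immediate corollary, and the product formula from Remark \ref{non-monoidal/rem}. The only difference is that the paper is more explicit about the composition step, merging the two closed bands $B_1$ and $B_2$ of $R_{K_1}$ and $R_{K_2}$ into a single closed band block by block, rather than describing the interface cancellation purely in terms of canceling pairs; your flagged plan for that step uses the same tools.
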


\begin{proof}
In the light of the previous Lemmas \ref{SK-welldef/thm} and \ref{SK-invariance/thm}, 
$\Xi_4$ is well-defined as a map. To prove that it is a functor, we have to show 
that it preserves the identity morphisms and the composition of morphisms.

\begin{Figure}[htb]{xi13/fig}
{}{$\Xi_4$ preserves the identity morphisms}
\centerline{\fig{}{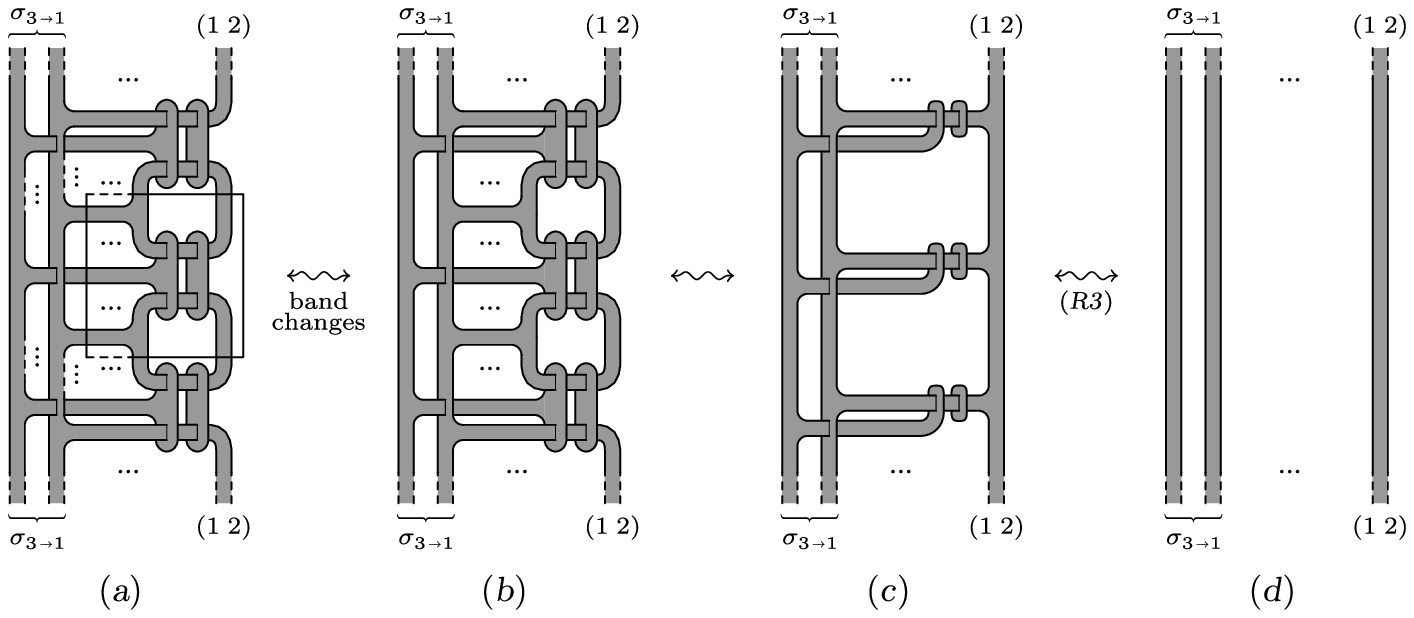}}
\vskip-3pt
\end{Figure}

Concerning the identities, we observe that for $K = \id_{I_m}$ the labeled ribbon
sur\-face tangle $\check S_K$ looks like in Figure \ref{xi13/fig} \(a), where only the
rightmost of the $m$ blocks forming $T_K$ (in the box), $Q_m$ and $\bar Q_m$ is drawn. The
rest of Figure \ref{xi13/fig} outlines how we can reduce this block to a single identity
ribbon. Namely, to get \(b) we change the $\beta_i$'s and the $\delta_i$'s relative to the
other blocks and laying between the uppermost and lowermost horizontal bands of label
$\tp12$ in the figure, in such a way that they allow the labeled 1-isotopy from \(b) into
\(c). This can be done by the same arguments used in the proof of Lemma
\ref{SK-welldef/thm} thanks to Remark \ref{gen-bands/rem}, by using the stabilization
ribbon $\id_{\tp43}$ that is there even if not drawn in the figure. Then, we use move
\(R3) to cut the horizontal bands in \(c) and we retract the resulting tongues to get
\(d). Once the reduction of the rightmost block has been completed, the process can be
iterated to reduce one by one, from right to left, also the other $m-1$ blocks to identity
ribbons, hence all $\Xi_4(K)$ to the identity.

Now let us consider the composition. Given two morphisms $K_1: I_{m_0} \to I_{m_1}$ and
$K_2: I_{m_1} \to I_{m_2}$ in $\K_1$, we have to prove that $\up_3^4 (\check S_{K_2} \circ
\check S_{K_1})$ and $\up_3^4 \check S_{K_2\circ K_1}$ are equivalent up to 1-isotopy and
moves \(R1) and \(R2). This will follow once show that $ R_{K_2} \circ \bar Q_{m_1} \circ
Q_{m_1} \circ R_{K_1}$ is equivalent to $R_{K_2\circ K_1}$. Apart from the two reduction
ribbons on the left, the composition $\bar Q_{m_1} \!\circ Q_{m_1}$ consists of $m_1$
blocks like the one shown in Figure \ref{xi14/fig} \(a). Each block contains two arcs
labeled $\tp12$, ending one in the source and the other in the target and originating from
two open components in $K_1$ and $K_2$, which create a single closed component in
$K_2\circ K_1$. Figure \ref{xi14/fig} describes in a schematic way the transformation of
these blocks one by one, starting from the left. Let $B_1$ and $B_2$ be the closed bands
in $R_{K_1}$ and $R_{K_2}$ to which the two arcs of the block shown in \(a) belong. By
replacing bands if necessary, we can assume that their attaching bands $\beta_1$ and
$\beta_2$ are the as shown in \(b), where we have also slided the two innermost horizontal
bands in \(a) until they form an extra closed ribbon connected to $\id_{\tp21}$. In the
last step from \(c) to \(d) we first use two \(R3) moves to cut and retract the horizontal
bands passing through disks, so that in the resulting tangle $B_1$ and $B_2$ are replaced
with a single closed band attached to $\id_{\tp12}$ by a band $\beta$. Then, we change
such $\beta$ in order to allow the iteration of the whole process to the subsequent blocks 
of $\bar Q_{m_1} \!\circ Q_{m_1}$.

\begin{Figure}[htb]{xi14/fig}
{}{$\Xi_4$ preserves the composition of morphisms}
\centerline{\fig{}{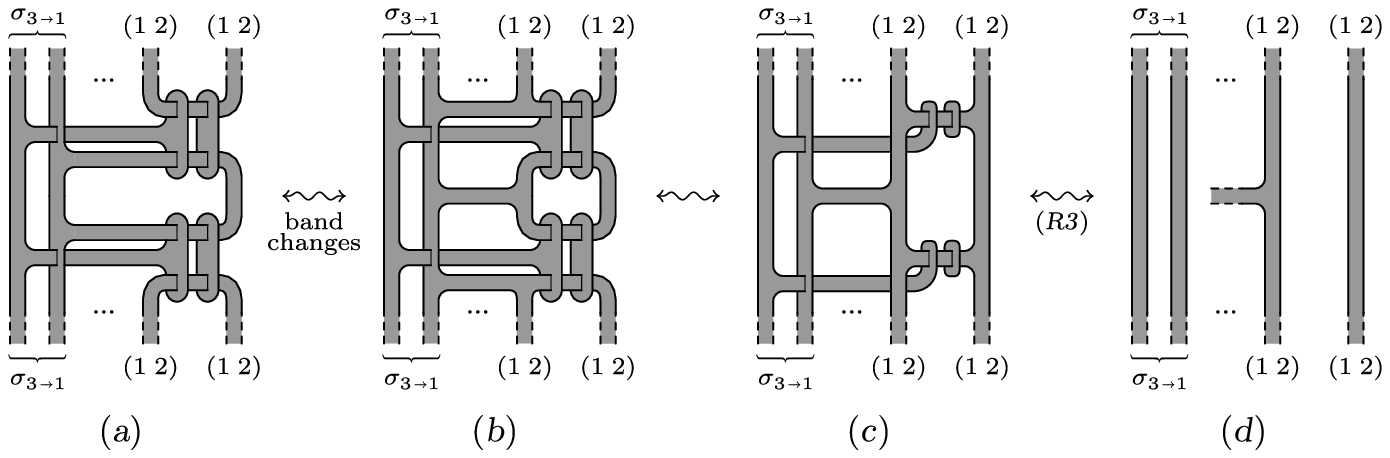}}
\vskip-3pt
\end{Figure}

Finally, the equality $\down^4_1 \Theta_4 \circ \Xi_4 = \id_{\K_1}$ and hence the
faithfulness of $\Xi_4$ immediately follow from Propositions \ref{stab-theta/thm} and
\ref{full-theta3/thm}, while the second part of the statement is just a reinterpretation
of Remark \ref{non-monoidal/rem} under the stabilization $\up_3^4$.
\end{proof}

We note that $\Xi_4: \K_1 \to \S^c_4$ will be proved to be a category equivalence in the
next section (cf. Theorem \ref{ribbon-kirby/thm}), and therefore it induces a monoidal
structure on $\S^c_4$, which we do not provide in explicit form.

\begin{proposition}\label{xi/thm}
For any $n \geq 4$, $\Xi_n = {\up_4^n} \circ \Xi_4: \K_1 \to \S_n^c$ is a faithful functor
and satisfies the identity ${\down^n_1} \circ \Theta_n \circ \Xi_n = \id_{\K_1}$.
\end{proposition}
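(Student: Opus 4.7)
The plan is to deduce this proposition entirely from the results already established in Propositions \ref{xi4/thm}, \ref{stab-theta/thm}, \ref{K-reduction/thm}, and \ref{S-reduction/thm}; no new topological construction is needed.

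First, I will verify that $\Xi_n$ is a well-defined functor with target $\S_n^c$. Since $\Xi_4$ and $\up_4^n$ are both functors, the composition $\Xi_n = \up_4^n \circ \Xi_4$ is a functor; the only subtle point is that the image must lie in $\S_n^c$. For this I will use the explicit form $\Xi_4(K) = \up_3^4 \check S_K$ from Proposition \ref{xi4/thm}, together with the factorization $\up_4^n \circ \up_3^4 = \up_3^n$ coming from the iterated definition of stabilization, to rewrite $\Xi_n(K) = \up_3^n \check S_K$. Since the construction of $\check S_K$ makes the reduction ribbons $\id_{\sigma_{3 \red 1}} = \id_{\tp32} \diam \id_{\tp21}$ explicit (see Figure \ref{kirby-ribbon01/fig}), we have $\check S_K \in \S_3^c = \S_{3 \red 1}$, and Proposition \ref{S-reduction/thm} applied with $k = 1$ gives $\up_3^n \check S_K \in \up_3^n \S_{3 \red 1} \subset \S_{n \red 1} = \S_n^c$.

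Second, I will verify the identity ${\down^n_1} \circ \Theta_n \circ \Xi_n = \id_{\K_1}$ by chaining three facts: Proposition \ref{stab-theta/thm} gives the commutativity $\Theta_n \circ \up_4^n = \up_4^n \circ \Theta_4$; Definition \ref{K-reduction/def} factors $\down_1^n = \down_1^4 \circ \down_4^n$; and Proposition \ref{K-reduction/thm} yields $\down_4^n \circ \up_4^n = \id_{\K_4}$. Combining these gives
\begin{equation*}
\down_1^n \circ \Theta_n \circ \Xi_n
= \down_1^4 \circ \down_4^n \circ \up_4^n \circ \Theta_4 \circ \Xi_4
= \down_1^4 \circ \Theta_4 \circ \Xi_4
= \id_{\K_1},
\end{equation*}
where the last equality is precisely Proposition \ref{xi4/thm}. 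Faithfulness of $\Xi_n$ is then immediate: if $K_1, K_2 \in \Mor \K_1$ satisfy $\Xi_n(K_1) = \Xi_n(K_2)$, applying $\down_1^n \circ \Theta_n$ to both sides forces $K_1 = K_2$.

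Since each step is a formal consequence of previously established machinery, there is no real obstacle; the content of the proposition rests entirely on Proposition \ref{xi4/thm} combined with the compatibility of stabilization and reduction with $\Theta$. The one observation worth flagging is that rewriting $\Xi_n(K)$ as $\up_3^n \check S_K$ is what allows Proposition \ref{S-reduction/thm} to be applied directly, sidestepping any concern about whether the general stabilization $\up_4^n$ preserves $1$-reducibility on all of $\S_4^c$.
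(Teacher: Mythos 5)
Your argument is correct and is essentially the paper's own proof, which simply cites Propositions \ref{stab-theta/thm}, \ref{full-theta3/thm} and \ref{xi4/thm} and leaves the chaining of $\Theta_n \circ \up_4^n = \up_4^n \circ \Theta_4$, $\down_4^n \circ \up_4^n = \id_{\K_4}$ and $\down_1^4 \circ \Theta_4 \circ \Xi_4 = \id_{\K_1}$ implicit. Your additional care in checking that the image actually lands in $\S_n^c$ (via $\Xi_n(K) = \up_3^n \check S_K$ and Proposition \ref{S-reduction/thm}) is a welcome elaboration of a point the paper glosses over, but it does not change the route.
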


\begin{proof}
This is a direct consequence of Propositions \ref{stab-theta/thm}, \ref{full-theta3/thm}
and \ref{xi4/thm}.
\end{proof}

\subsection{Equivalence between $\K^c_n$ and $\S^c_n$ for $n \geq 4$%
\label{K=S/sec}}

We want to prove that the functors $\Theta_n: \S^c_n \to \K^c_n$ and $\Xi_n: \K_1 \to
\S^c_n$ are equivalences of categories for any $n \geq 4$. Taking into account
Propositions \ref{cat-equiv/thm} and \ref{xi/thm}, to achieve this result it is enough to
show that the functor $\Xi_4: \K_1 \to \S^c_4$ is full and that any object in $\S^c_n$ is
isomorphic to one in its image, i.e. that the inclusion of $\Xi_4 (\K_1)$ in $\S^c_4$ is
an equivalence of categories.

\begin{lemma}\label{special-ribbon/thm}
Any labeled ribbon surface tangle $T: J_{\sigma_{3 \red 1}} \diam J_{\sigma_0} \to
J_{\sigma_1}$ in $\S_3$ is equivalent to a composition $T_l \circ \dots \circ T_1$, where
each $T_k$ is an expansion (that is a product with some identity ribbons on the left
and/or on the right) of one of the special elementary ribbon surface tangles presented in 
Figure \ref{xi-full01/fig}.
\end{lemma}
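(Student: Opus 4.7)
The plan is to first invoke Proposition \ref{Sn-category/thm} to write $T$ as a composition of expansions of the elementary morphisms in $E_3$, namely the $3$-labeled versions of the diagrams \(a) through \(g') in Figure \ref{ribbon-morph01/fig}. Since the source of $T$ contains the reduction ribbon $J_{\sigma_{3 \red 1}} = J_{\tp{3}{2}}$, every intermediate stage in this composition contains an identity ribbon $\id_{\tp{3}{2}}$ running from bottom to top on its leftmost side, and each expanded elementary morphism appears somewhere to the right of this ribbon. Hence the task reduces to showing that each individual expanded elementary morphism, placed to the right of $\id_{\tp{3}{2}}$, can be rewritten as a composition of expansions of the special elementary tangles of Figure \ref{xi-full01/fig}.

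The key technique for these local rewritings is to draw off a tongue from the reduction ribbon, use it to perform the required local modification inside a neighborhood of the elementary morphism, and then reabsorb the tongue, in the spirit of the auxiliary-tongue manipulations carried out in Lemma \ref{SK-welldef/thm}. The admissible tools for this process are labeled 1-isotopy, the covering moves \(R1) and \(R2), and the derived moves \(R3) through \(R6) from Proposition \ref{moves-aux/thm}. The fact that $\Gamma_3 = \{\tp{1}{2}, \tp{1}{3}, \tp{2}{3}\}$ has the property that $\tp{3}{2}$ together with any transposition disjoint from it generates a nontrivial subgroup, and that any two transpositions in $\Gamma_3$ generate the whole $\Sigma_3$, is what makes these tongue manipulations effective: \(R2) can be applied across bands whose labels are disjoint, and \(R1) can be used to propagate changes through triple intersections.

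I would then proceed by a finite case analysis running over the seven types of elementary diagrams in Figure \ref{ribbon-morph01/fig} together with all consistent labelings by transpositions in $\Gamma_3$. For each configuration, either it is already in the list of special tangles of Figure \ref{xi-full01/fig} (possibly after a trivial 1-isotopy), or it is reduced to a composition of special ones. The identity, cup, cap, half-twist, and crossing cases should fall into line through labeled 1-isotopy combined with at most one application of \(R2) between the tongue from the reduction ribbon and the piece being rewritten. The crossing morphisms \(f) and \(f') may require additional care when one strand carries label $\tp{3}{2}$, in which case a relabeling trick analogous to those used in Figures \ref{ribbon-stab05/fig}--\ref{ribbon-stab07/fig} would be employed.

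The main obstacle lies in the ribbon intersection \(g) (and its inverse \(g')), whose labeling involves three transpositions $\tp{i}{j}, \tp{j}{k}, \tp{i}{k}$ satisfying the Wirtinger relation. In $\Gamma_3$ this forces all three transpositions to appear, so one strand inevitably carries the label $\tp{3}{2}$ of the reduction ribbon, and the local rewriting must absorb that strand into the reduction ribbon by a controlled application of \(R1) together with a tongue manipulation. Making this absorption compatible with all possible cyclic orderings of the three labels around the intersection is where the argument is most delicate, and it is essentially here that the choice of which elementary tangles are declared special in Figure \ref{xi-full01/fig} must be exactly matched to the outcomes of these rewritings.
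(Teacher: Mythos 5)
Your overall strategy --- decompose first via Proposition \ref{Sn-category/thm} and then rewrite each expanded elementary morphism locally --- inverts the order of the paper's argument, and the inversion creates genuine problems. The most concrete one is your reliance on move \(R2): that move requires four pairwise distinct indices $i,j,k,l$, so it is never applicable in $\S_3$, where any two transpositions of $\Gamma_3$ share an index. Your plan to dispose of the crossing morphisms ``through labeled 1-isotopy combined with at most one application of \(R2)'' therefore cannot work. In fact no crossings at all survive in the target normal form: the paper first eliminates every crossing and every uni-labeled ribbon intersection from the whole planar diagram by pulling a tongue carrying a third label out of the nearest ribbon that has one, converting the crossing into ribbon intersections; then eliminates the new crossings so created; converts negative half-twists into positive ones plus crossings (a case you do not address at all); and only afterwards applies \(R1) to arrange every surviving tri-labeled ribbon intersection so that the disk passed through is the one labeled $\tp12$. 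The decomposition into special elementary tangles is performed last, on this normalized diagram, at which point the absence of crossings and of the unwanted local models is automatic.

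Your local scheme also rests on the claim that ``every intermediate stage contains an identity ribbon $\id_{\tp32}$ running from bottom to top.'' This is false: the target of $T$ is $J_{\sigma_1}$, not $J_{\sigma_{3 \red 1}} \diam J_{\sigma_1}$, so the reduction ribbons are absorbed somewhere inside $T$ and need not be present at every height. Tongues must instead be expanded from whichever part of the surface carries the needed label --- always possible because the source contains $J_{\sigma_{3 \red 1}}$, whose labels generate $\Sigma_3$ --- but such a tongue travels across the diagram and creates new crossings with everything it passes, so a rewriting confined to a neighborhood of one elementary morphism is not actually local, and your case-by-case induction has no evident termination argument. This is precisely why the paper performs the eliminations globally and in a fixed order before decomposing.
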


\begin{Figure}[htb]{xi-full01/fig}
{}{Special elementary morphisms in $\S_3$ ($\tp{i}{j} \in \Gamma_3$)}
\vskip-9pt
\centerline{\fig{}{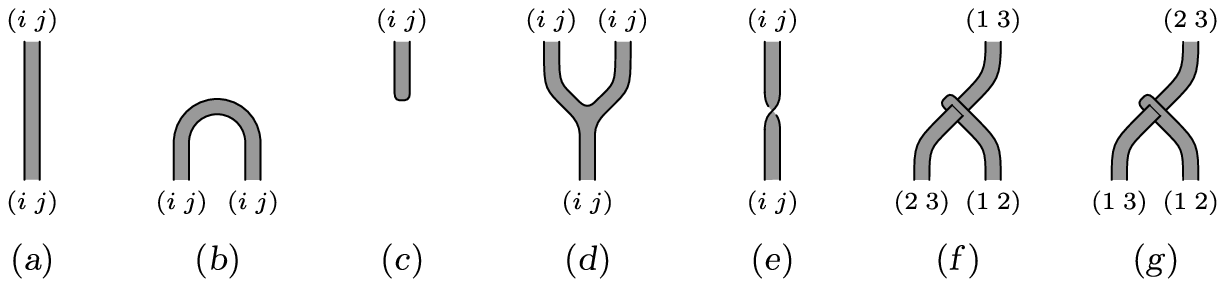}}
\vskip-3pt
\end{Figure}

\begin{proof}
According to Proposition \ref{planar-diagram/thm}, we can assume that $T$ is presented by
a\break 3-labeled special planar diagram, that is a planar diagram consisting of some
spots like \(a) to \(e) and \(h) in Figure \ref{ribbon-surf06/fig} and some flat bands,
with a compatible label\-ing in $\Gamma_3$. Moreover, we can convert all the negative
half-twists occurring in such a diagram into positive ones and crossings, by using labeled
versions of the moves \(S14) and \(S17) in Figure \ref{ribbon-surf11/fig}.

\begin{Figure}[b]{xi-full02/fig}
{}{}
\centerline{\fig{}{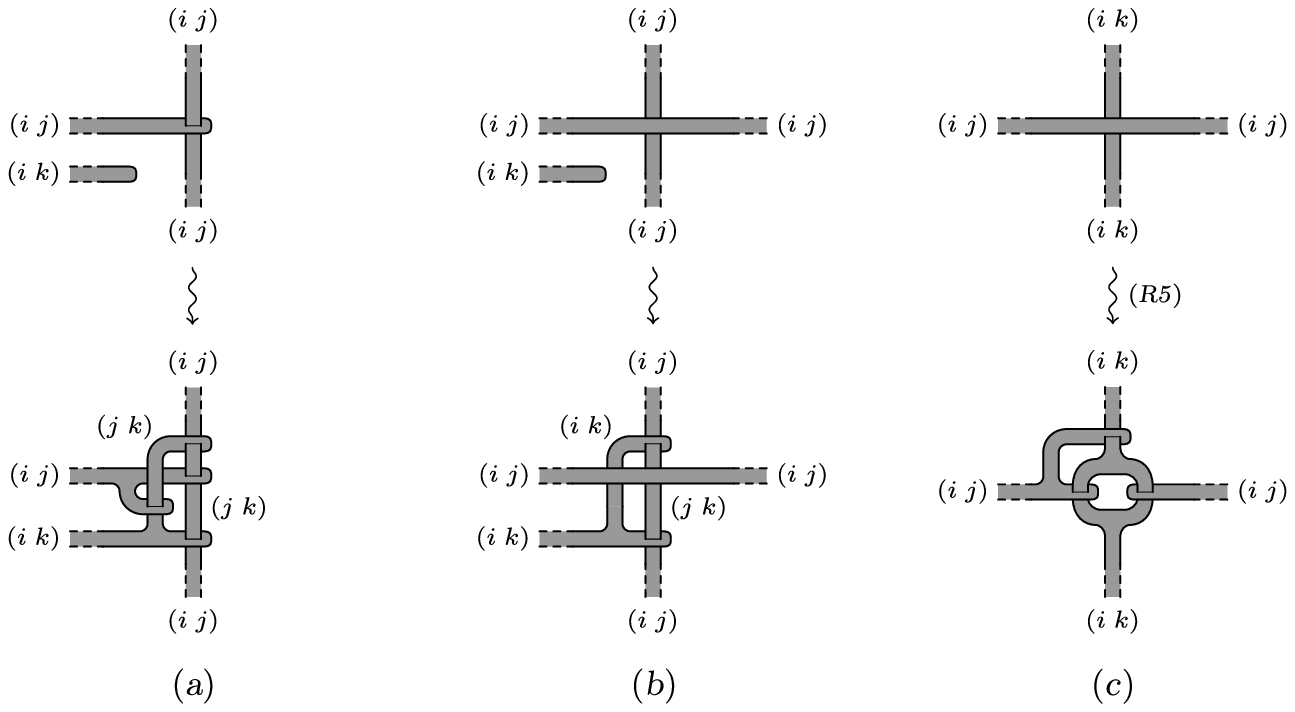}}
\vskip-3pt
\end{Figure}

Then, we proceed as shown in Figure \ref{xi-full02/fig} to eliminate all the monocromatic
ribbon intersections and all the crossings. More precisely, we first eliminate the
monocromatic ribbon intersections and crossings by performing the labeled 1-isotopies
described in \(a) and \(b) respectively. Here, the tongues labeled $\tp{i}{k}$ have been
pulled out of the closest ribbon with that label. Observe that such ribbon always be
created, due to the form of the source. Subsequently, we eliminate all the
remaining crossings, including the new ones deriving from \(a) and \(b), by the moves
depicted in \(c). Eventually, we put again the diagram into special form by move \(S2),
which only introduces positive half-twists.

After that, all the three transpositions of $\Gamma_3$ are involved in the labeling of 
any surviving ribbon intersection, so we can use move \(R1) to adjust it in such a way 
that the disk which is passed through is the one with label $\tp12$.

Finally, by using planar isotopy as in the proof of Proposition 3.1.2, we can express $T$
as $T_l \circ \dots \circ T_1$, where each $T_k$ is an expansion of one of the elementary
morphisms of Figure \ref{ribbon-morph01/fig}. The previous form of $T$ guarantees that no
spot like \(e), \(e') and \(f') occurs. Moreover, we can also eliminate all the spots like
\(g'), \(c') and \(b) in the order, by using move \(I6), \(I3) and \(I16) respectively.
This leaves us with the required expression of $T$.
\end{proof}

\begin{proposition}\label{full-xi/thm}
The inclusion of  $\,\Xi_4(\K_1)$ in $\S_4^c$ is a category equivalence.
\end{proposition}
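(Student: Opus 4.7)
The plan is to apply Corollary \ref{subcat-equiv/thm} to the inclusion functor $\Xi_4(\K_1) \hookrightarrow \S_4^c$, reducing the proof to the two claims: (a) every object of $\S_4^c$ is isomorphic to an object of $\Xi_4(\K_1)$, and (b) $\Xi_4(\K_1)$ is a full subcategory of $\S_4^c$. For claim (a), the objects of $\Xi_4(\K_1)$ have the form $J_{\sigma_{4 \red 1}} \diam J_m$ with all $m$ source intervals labeled $\tp{1}{2}$, whereas a generic object of $\S_4^c$ is $J_{\sigma_{4 \red 1}} \diam J_\sigma$ with arbitrary $\sigma \in \seq\Gamma_4$. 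Since the three transpositions $\tp{4}{3}, \tp{3}{2}, \tp{2}{1}$ appearing in $\sigma_{4 \red 1}$ generate $\Sigma_4$ under conjugation, I would construct, transposition by transposition, explicit isomorphisms converting any label $\tp{i}{j} \in \Gamma_4$ to $\tp{1}{2}$, by creating tongues emanating from the reduction ribbons and applying the ribbon moves \(R1), \(R2) together with their derived versions from Proposition \ref{moves-aux/thm}, in the spirit of the constructions in the proof of Proposition \ref{S-reduction/thm}. The product of these elementary isomorphisms would yield the required isomorphism.

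For claim (b), given a morphism $S \in \Mor_{\S_4^c}(\Xi_4(I_{m_0}), \Xi_4(I_{m_1}))$, the candidate Kirby tangle is $K := \down^4_1 \Theta_4(S)$, and one must show that $S$ is equivalent to $\Xi_4(K) = \up_3^4 \check S_K$. I would first invoke Proposition \ref{S-reduction/thm} with $n=4$ and $k=1$ to conjugate $S$ by the natural isomorphism $\zeta^{4,1}$ and replace it, up to the equivalence relation, by a morphism of the form $\id_{\tp{4}{3}} \diam T$, where $T$ is a $1$-reducible ribbon surface tangle in $\S_3$ from $J_{\sigma_{3 \red 1}} \diam J_{m_0}$ to $J_{\sigma_{3 \red 1}} \diam J_{m_1}$. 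Writing $T = (\id_{\sigma_{3 \red 1}} \diam T') \circ (\Delta_{\sigma_{3 \red 1}} \diam \id_{m_0})$ and applying Lemma \ref{special-ribbon/thm} to $T': J_{\sigma_{3 \red 1}} \diam J_{m_0} \to J_{m_1}$, I would obtain an expression of $T'$ as a composition $T'_l \circ \dots \circ T'_1$ of expansions of the special elementary morphisms of Figure \ref{xi-full01/fig}. Using the functoriality of $\Xi_4$ (Proposition \ref{xi4/thm}) together with the label-converting isomorphisms built in (a) to match sources and targets, the task reduces to verifying the claim separately for each special elementary morphism.

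The main obstacle is precisely this last, case-by-case verification. For each special elementary morphism in Figure \ref{xi-full01/fig}, I would have to exhibit an explicit, simple Kirby tangle $K$ in $\K_1$ and show that $\up_3^4 \check S_K$ is equivalent, under labeled 1-isotopy and the covering moves \(R1), \(R2), to that morphism (suitably framed by reduction ribbons and label-conversion isomorphisms). This is technically akin to the explicit manipulations already carried out in Lemma \ref{SK-welldef/thm} and Proposition \ref{full-theta3/thm}: since the construction of $\check S_K$ involves numerous auxiliary choices, namely a vertically trivial state of the underlying link, connecting bands $\beta$, $\gamma$, $\delta$, polarizations and framings, the verification demands careful tracking of ribbon intersections, half-twists and framings throughout the reduction process. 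In particular, the flexibility granted by Remark \ref{gen-bands/rem} in replacing the connecting bands is essential, since after matching elementary pieces one must glue the local pictures consistently. Once these elementary cases are settled, the functoriality of $\Xi_4$ assembles them into the desired equivalence $\Xi_4(K) \sim S$, completing the proof.
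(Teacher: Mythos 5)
Your overall strategy coincides with the paper's: reduce via Corollary \ref{subcat-equiv/thm} to (a) isomorphism of objects and (b) fullness onto the image, use the label-converting isomorphisms built from tongues off the reduction ribbons (the paper's $\zeta_\sigma$), decompose a general morphism via Lemma \ref{special-ribbon/thm} into special elementary pieces conjugated by these isomorphisms, and verify each piece is in the image of $\Xi_4$ by explicit manipulation. Part (a) and the elementary cases of (b) are exactly what the paper does.

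There is, however, one genuine gap in your reduction for (b). After applying Lemma \ref{special-ribbon/thm} you obtain $T'$ as a composition of \emph{expansions} of the special elementary morphisms, i.e.\ products $\id_\sigma \diam T_k \diam \id_{\sigma'}$ with identity ribbons on either side. You claim that ``functoriality of $\Xi_4$ together with the label-converting isomorphisms'' reduces these to the bare elementary morphisms. Functoriality only controls compositions; an expansion is a monoidal product, and $\Xi_4$ is \emph{not} monoidal (Remark \ref{non-monoidal/rem} states explicitly that the construction of $S_K$ does not preserve products --- the best one gets is the formula $\Xi_4(K \diam K') = \up_3^4(Q_{m_1+m'_1}\circ(R_K \rdiam R_{K'})\circ \bar Q_{m_0+m'_0})$ of Proposition \ref{xi4/thm}). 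Consequently, knowing that $\zeta_{\sigma_1}\circ(\id_{\sigma_{3\red 1}}\diam T_k)\circ\zeta_{\sigma_0}^{-1}$ lies in the image for the elementary $T_k$ does not by itself imply the same for its expansions. The paper closes this gap with a separate technical Claim describing how the image of a product $K\diam K'$ under $\Xi_4$ is expressed in terms of $\Xi_4(K)$ and $\Xi_4(K')$ after conjugation by explicit framed braids $B_{\sigma,\sigma'}$, and its proof (an induction on the number of expansion ribbons, with the auxiliary tangles $Z_{\sigma,\sigma'}$) occupies the larger part of the argument. A second, more minor omission: after writing $T = (\id_{\sigma_{3\red 1}}\diam T')\circ(\Delta_{\sigma_{3\red 1}}\diam\id_{m_0})$, the factor $\Delta_{\sigma_{3\red 1}}\diam\id_{m_0}$ also has to be shown to lie in the image after conjugation by the $\zeta$'s; this requires its own computation (and in particular the identification of $\id_{\tp42}\diam\Delta_{\tp12}$ with $\Xi_4(\eta_1)$), which your outline does not mention.
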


\begin{proof}
We observe that $\Xi_4(\K_1) \subset \up^4_3 \S_3^c$. Moreover, according to Proposition
\ref{S-reduction/thm}, the inclusion of $\up^4_3 \S_3^c$ in $\S_4^c$ is a category
equivalence. Therefore, it suffices to show that the inclusion $\Xi_4(\K_1) \subset
\up^4_3 \S_3^c$ is an equivalence as well. By Proposition \ref{subcat-equiv/thm}, this
amounts to say that any object in $\up^4_3 \S_3^c$ is isomorphic to one in the image of
$\Xi_4$ and that any morphisms in $\up^4_3 \S_3^c$ between two objects in the image of
$\Xi_4$ is the image of a morphism in $\K_1$ under $\Xi_4$.

About objects, we recall that those in the image of $\Xi_4$ are given by $J_{\sigma_{4
\red 1}} \diam J_m$ with $m \geq 0$, where $J_m$ denotes the sequence of $m$ intervals
each labeled by $\tp12$.\break For any $\sigma = (\tp{i_1}{j_1}, \dots, \tp{i_m}{j_m}) \in
\seq\Gamma_3$, we define $\zeta_\sigma: J_{\sigma_{3 \red 1}} \!\diam J_{\sigma} \to
J_{\sigma_{3 \red 1}} \!\diam J_m$, as the composition $\zeta_\sigma = \zeta_{\sigma,m}
\circ \dots \circ \zeta_{\sigma,1}$, where $\zeta_{\sigma,h}$ is the identity if
$\tp{i_h}{j_h} = \tp12$, while it is illustrated in Figure \ref{xi-full03/fig} for
$\tp{i_h}{j_h}$ equal to $\tp13$ and $\tp23$. Here, the horizontal tongues pass in front
of the first $h-1$ vertical ribbons originating in $J_\sigma$, and form ribbon
intersections with the $h$-th one. The $\zeta_{\sigma,h}$'s are isomorphisms, their
inverses being obtained by vertical reflection, hence $\zeta_\sigma$ is an isomorphism as
well. Then, the desired isomorphism between the generic object $J_{\sigma_{4 \red 1}}
\diam J_\sigma$ in $\up^4_3 \S_3^c$ and an object in the image of $\Xi_4$ is represented
by $\id_{4 \red 3} \diam \zeta_\sigma: J_{\sigma_{4 \red 1}} \diam J_\sigma\to
J_{\sigma_{4 \red 1}} \diam J_m$.

\begin{Figure}[htb]{xi-full03/fig}
{}{The isomorphism $\zeta_{\sigma,h}$}
\centerline{\fig{}{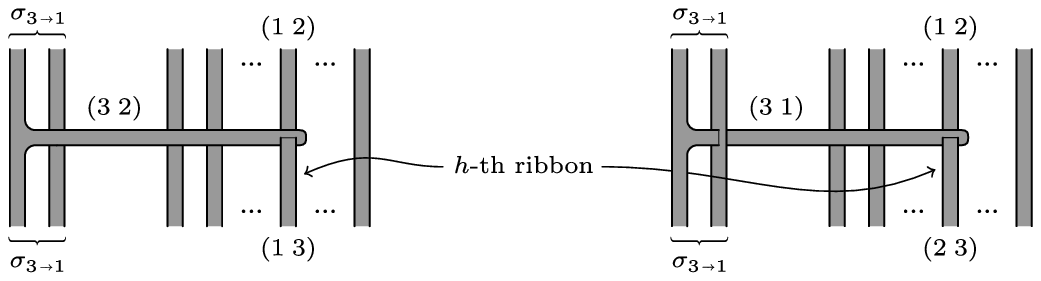}}
\vskip-6pt
\end{Figure}

The proof that any morphism $S: J_{\sigma_{4 \red 1}} \!\diam J_{m_0} \to J_{\sigma_{4
\red 1}} \!\diam J_{m_1}$ in $\up_3^4 \S_3^c$ is in the image if $\Xi_4$, requires more
work. By definition, $S = (\id_{\sigma_{4 \red 1}} \!\diam T) \circ (\id_{\tp43} \diam
\Delta_{\sigma_{3 \red 1}} \!\diam \id_{m_0})$ for some $T: J_{\sigma_{3 \red 1}} \!\diam
J_{m_0} \!\to J_{m_1}$ in $\S_3$. Then, we have the decomposition
$$S = \id_{\tp43} \diam (\zeta_{\sigma_1} \circ (\id_{\sigma_{3
\red 1}} \! \diam T) \circ \zeta_{\sigma_{3 \red 1} \diam \sigma_0}^{-1} \circ
\zeta_{\sigma_{3 \red 1} \diam \sigma_0} \circ (\Delta_{\sigma_{3 \red 1}} \!\diam
\id_{\sigma_0}) \circ \zeta_{\sigma_0}^{-1})\,,$$
\pagebreak
where $\zeta_{\sigma_1}$ and $\zeta_{\sigma_0}^{-1}$ act as identities, being the elements
of $J_{m_0}$ and $J_{m_1}$ all labeled by $\tp12$. Moreover, by Lemma
\ref{special-ribbon/thm} we can assume that $T$ is a composition $T_l \circ \dots \circ
T_1$, where each $T_k \in \S_3$ is an expansion of one of the special elementary morphisms
in Figure \ref{xi-full01/fig}. Therefore, if $J_{\sigma_0^k}$ and $J_{\sigma_1^k}$ are
respectively the source and the target of $T_k$, by inserting in $\id_{\sigma_{3 \red 1}}
\! \diam T = (\id_{\sigma_{3 \red 1}} \! \diam T_l) \circ \dots \circ (\id_{\sigma_{3 \red
1}} \! \diam T_1)$ the canceling pair $\zeta_{\sigma_0^{k+1}}^{-1} \circ
\zeta_{\sigma_1^k}$ between $\id_{\sigma_{3 \red 1}} \! \diam T_{k+1}$ and $\id_{\sigma_{3
\red 1}} \! \diam T_k$ for any $k = 1,\dots, l-1$, we get
$$\zeta_{\sigma_1} \circ (\id_{\sigma_{3 \red 1}} \!\diam T) \circ \zeta_{\sigma_{3 \red
1} \diam \sigma_0}^{-1} = (\zeta_{\sigma_1^l} \circ (\id_{\sigma_{3 \red 1}} \!\diam T_l)
\circ \zeta_{\sigma_0^l}^{-1}) \circ \dots \circ (\zeta_{\sigma_1^1} \circ (\id_{\sigma_{3
\red 1}} \!\diam T_1) \circ \zeta_{\sigma_0^1}^{-1})\,.$$
On the other hand, Figure \ref{xi-full04/fig} shows that $\zeta_{\sigma_{3 \red 1} \diam
\sigma_0} \circ (\Delta_{\sigma_{3 \red 1}} \!\diam \id_{\sigma_0}) \circ
\zeta_{\sigma_0}^{-1}$ is equivalent to the composition of $\id_{\tp32} \diam
\Delta_{\tp12} \diam \id_{\sigma_0}$ and two morphisms having the same form as the
$\id_{\sigma_{3 \red 1}} \!\diam T_k$'s above.

\begin{Figure}[htb]{xi-full04/fig}
{}{}
\centerline{\fig{}{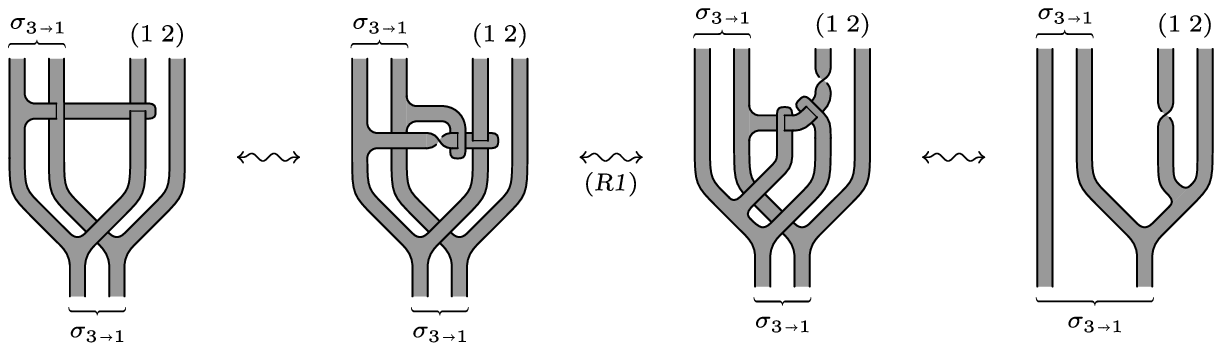}}
\vskip-6pt
\end{Figure}

In conclusion, it is enough to prove that a ribbon surface tangle $S \in \up^4_1 \S_{3
\red 1}$ is in the image of $\Xi_4$ in the special cases when $S = \id_{\sigma_{4 \red 2}}
\!\diam \Delta_{\tp12} \diam \id_m$ for some $m \geq 0$, or $S = \id_{\tp43} \diam
(\zeta_{\sigma_1} \circ (\id_{\sigma_{3 \red 1}} \!\diam T) \circ \zeta_{\sigma_0}^{-1})$,
with $T: J_{\sigma_0} \to J_{\sigma_1}$ being any expansion in $\S_3$ of one of the
special elementary morphisms in Figure \ref{xi-full01/fig}.

\smallskip

We start with the simplest cases when $m = 0$ and $T$ is an elementary morphism. Namely,
Figure \ref{xi-full09/fig} shows that $\id_{\tp42} \diam \Delta_{\tp12}$ is equivalent to
$\Xi_4(\eta_1)$, where $\eta_1$ is a single framed arc (cf. Figure
\ref{kirby-morph01/fig}). Here, we first modify the ribbon surface tangle by 1-isotopy and
then use move \(R3) to eliminate the pair of vertical disks and the horizontal band
passing through them.

\begin{Figure}[htb]{xi-full09/fig}
{}{}
\centerline{\fig{}{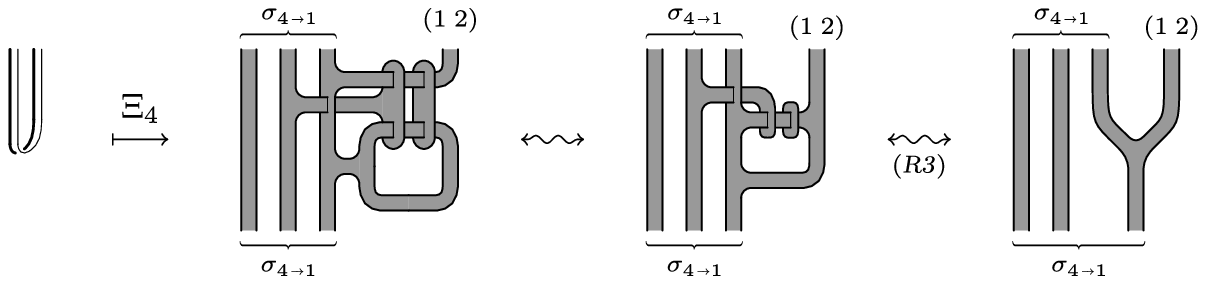}}
\vskip-3pt
\end{Figure}

Concerning $\id_{\tp43} \diam (\zeta_{\sigma_1} \circ (\id_{\sigma_{3 \red 1}} \!\diam T)
\circ \zeta_{\sigma_0}^{-1})$, we first observe that, if $T$ is one of the tangles \(a) to
\(e) in Figure \ref{xi-full01/fig}, then the equality $\id_{\tp43} \diam (\zeta_{\sigma_1}
\circ (\id_{\sigma_{3 \red 1}} \!\diam T) \circ \zeta_{\sigma_0}^{-1}) = \id_{\sigma_{4
\red 1}} \!\diam T$ can be immediately seen to hold just by using 1-isotopy. So, for those
$T$ it is enough to treat the case $\tp{i}{j} = \tp12$.
This has been already done for $T = \id_{\tp12}$\break in the proof of Proposition
\ref{xi4/thm}, while it is done in Figure \ref{xi-full10/fig} for the other $T$'s, from
\(b) to \(e). This time, the first modification of each ribbon surface tangle consists in
\pagebreak
the elimination of the pairs of vertical disks on the top and on the bottom (belonging to
the $Q_{m_1}$'s and the $\bar Q_{m_0}$'s), realized by moves \(R3) after suitable
1-isotopy (cf. Figure \ref{xi-full09/fig}, where an analogous elimination is detailed into
two steps).

\begin{Figure}[htb]{xi-full10/fig}
{}{}
\centerline{\fig{}{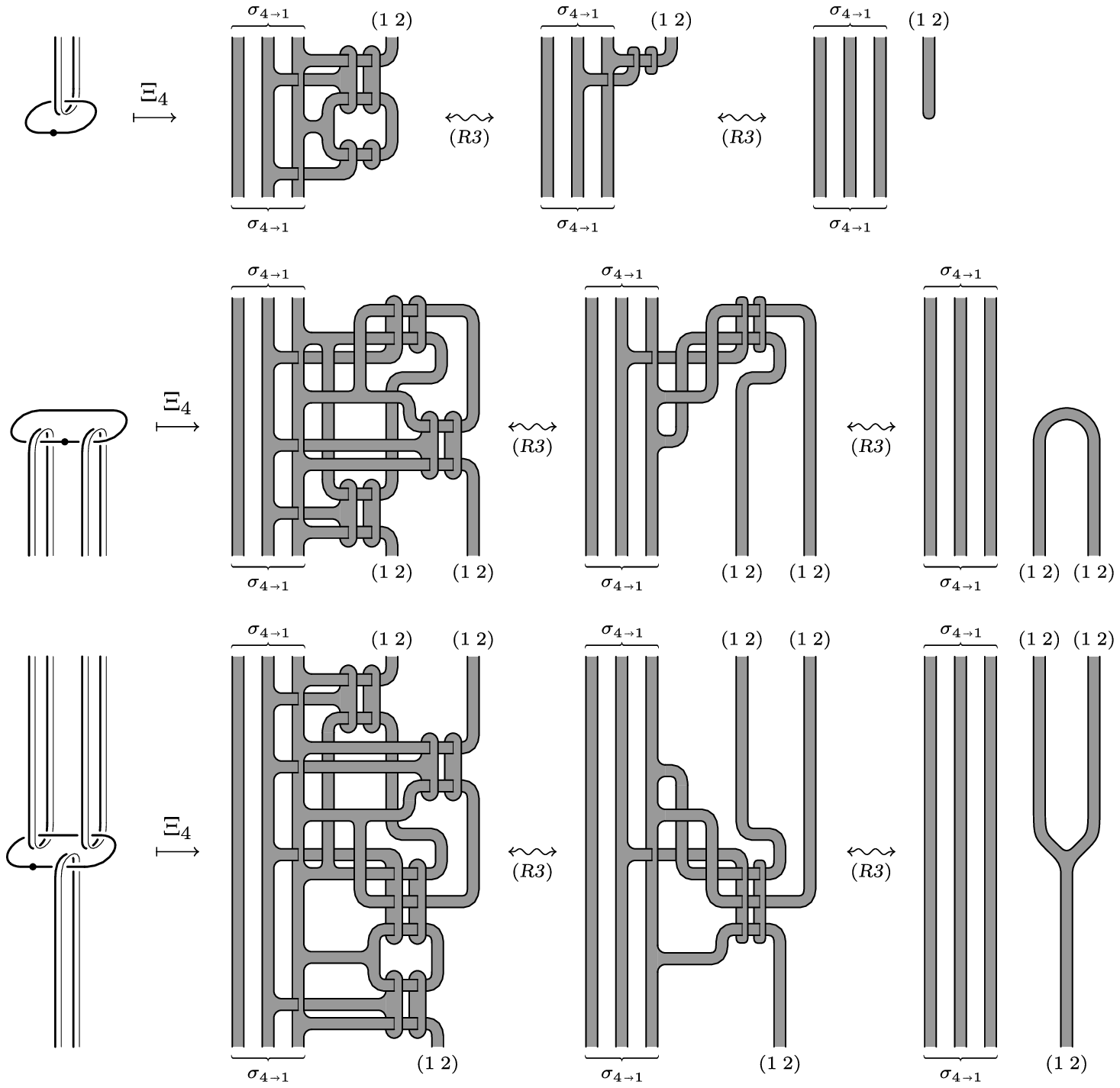}}
\vskip-3pt
\end{Figure}

The cases when $T$ coincides with \(f) and \(g) are shown in Figure \ref{xi-full11/fig}.
This figure is more sketchy then the previous ones. In particular, in the starting ribbon
surface tangles, the two pairs of vertical disks belonging to $\bar Q_2$ are omitted,
assuming that they have been previously eliminated as above. The first step in both cases
starts with the further elimination of two of the three pairs of vertical disks, once
again by the same procedure as above even if a band replacement is needed in the first
case. This is followed by the inversion of some of the ribbon intersection formed by the
surviving pair, like in move \(S2). The resulting half-twists, together with the
preexisting one in the first case, are then canceled with the help of move \(R6).\break
At this point, the second step just consists of three \(R1) moves and 1-isotopy.

\begin{Figure}[htb]{xi-full11/fig}
{}{}
\vskip-6pt
\centerline{\fig{}{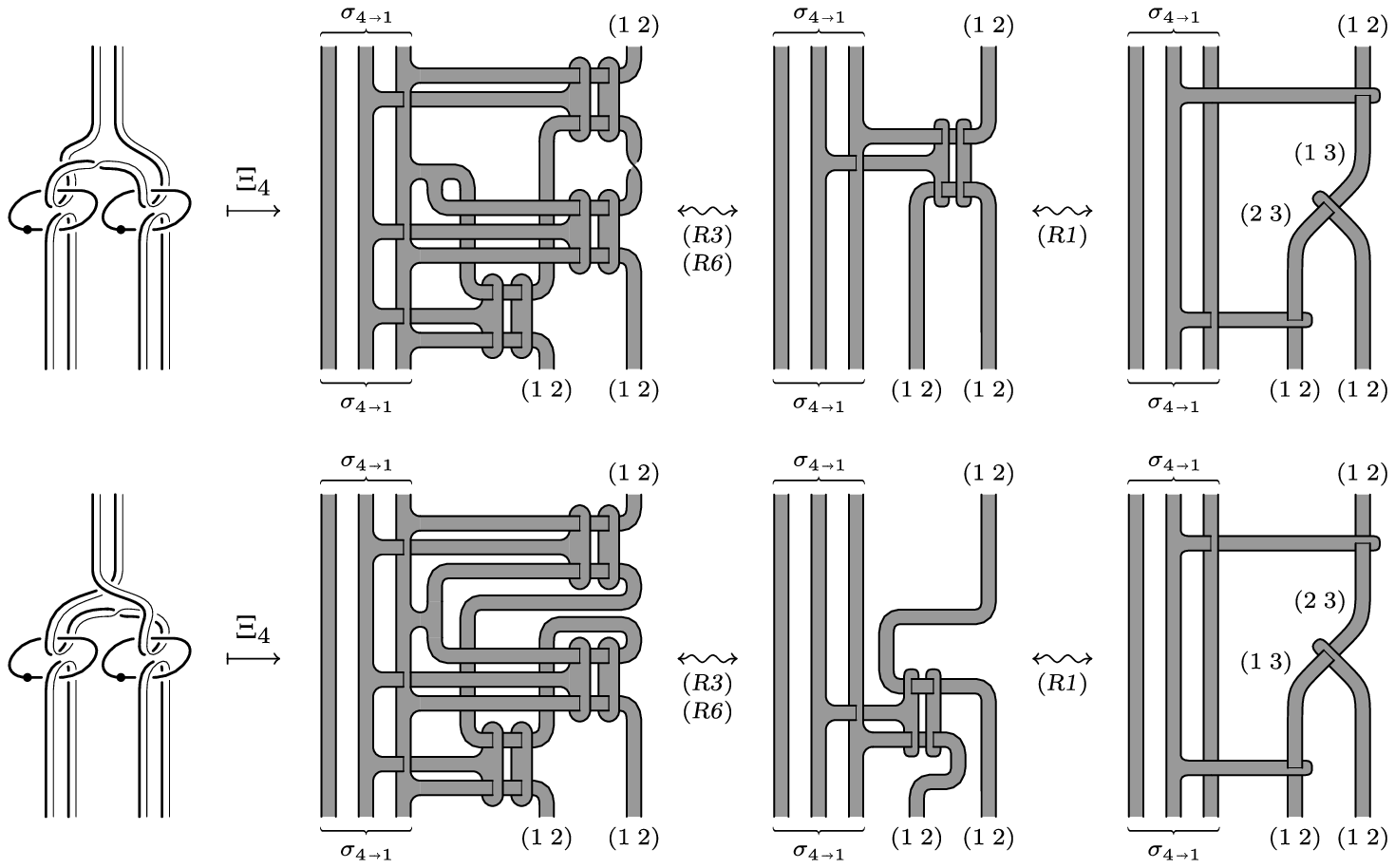}}
\vskip-3pt
\end{Figure}

Now, it remains to consider the cases of $\id_{\sigma_{4 \red 2}} \!\diam \Delta_{\tp12}
\diam \id_m$ with $m > 0$, and $\id_{\tp43} \diam (\zeta_{\sigma_1} \circ (\id_{\sigma_{3
\red 1}} \!\diam T) \circ \zeta_{\sigma_0}^{-1})$ with $T$ being a non-trivial expansion
in $\S_3$ of one of the special elementary morphisms in Figure \ref{xi-full01/fig}.

The following claim allows us to deduce these cases from the spacial ones we have just
considered, when $m = 0$ and $T$ is an elementary morphism, by a straightforward inductive
argument based on the number of the expansion ribbons.

Given two morphisms $K$ and $K'$ in $\K_1$, the claim relates the image of $K \diam K'$
under $\Xi_4$, with the images of $K$ and $K'$. Unfortunately, we do not have an explicit
monoidal structure on $\up^4_1 \S_{3 \red 1}$, and this makes the statement quite
technical. Actually, for the present aim the claim could be restricted by the assumption
that at least one of $T$ and $T'$ is an identity morphism, but this would not make its
proof simpler.

\begin{Figure}[b]{xi-full07/fig}
{}{The framed $2(m+m')$-braid $b_h$ for $\tp{i'_h}{j'_h} = \tp23$}
\vskip-3pt
\centerline{\fig{}{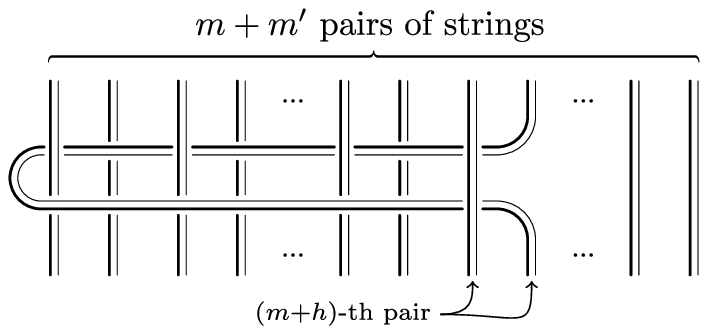}}
\vskip-3pt
\end{Figure}

\begin{statement}{Claim}
Let $T: J_{\sigma_{3 \red 1}} \!\diam J_{\sigma_0} \to J_{\sigma_{3 \red 1}} \!\diam
J_{\sigma_1}\!$ and $T': J_{\sigma'_0} \to J_{\sigma'_1}\!$ be morphisms in $\S_3$, such
that $\id_{\tp43} \diam (\zeta_{\sigma_1} \circ T \circ \zeta_{\sigma_0}^{-1}) = \Xi_4(K)$
and $\id_{\tp43} \diam (\zeta_{\sigma'_1} \circ (\id_{\sigma_{3 \red 1}} \!\diam T') \circ
\zeta_{\sigma'_0}^{-1}) =\break \Xi_4(K')$ for some $K$ and $K'$ in $\K_1$. Then
$$\id_{\tp43} \diam (\zeta_{\sigma_1 \diam \sigma'_1} \circ (T \diam T') \circ
\zeta_{\sigma_0 \diam \sigma'_0}^{-1}) = \Xi_4(B_{\sigma_1, \sigma'_1} \circ (K \diam K')
\circ \bar B_{\sigma_0 , \sigma'_0}^{-1})\,.$$
Here, given $\sigma = (\tp{i_1}{j_1}, \dots, \tp{i_m}{j_m})$ and $\sigma' =
(\tp{i'_1}{j'_1}, \dots, \tp{i'_{m'}}{j'_{m'}})$ in $\seq\Gamma_3$, we denote by
$B_{\sigma,\sigma'}$ the framed $2(m+m')$-braid defined as $B_{\sigma,\sigma'} = b_{m'}
\circ \dots \circ b_1$, where $b_h$ is the framed $2(m+m')$-braid shown in Figure
\ref{xi-full07/fig} if $\tp{i'_h}{j'_h} = \tp23$, while it is the identity otherwise.
\end{statement}

To prove the claim, let us consider $\zeta_{\sigma_1 \diam \sigma'_1} \circ (T \diam T')
\circ \zeta_{\sigma_0 \diam \sigma'_0}^{-1}$ and look at Figure \ref{xi-full05/fig},
where: $m_0,m'_0,m_1,m'_1$ denote the lengths of $\sigma_0,\sigma'_0,\sigma_1,\sigma'_1$
respectively. Here once again the reduction ribbon $\id_{\tp43}$ is omitted, being
involved only in performing the necessary band replacements as in Remark
\ref{gen-bands/rem}.

\begin{Figure}[htb]{xi-full05/fig}
{}{}
\centerline{\fig{}{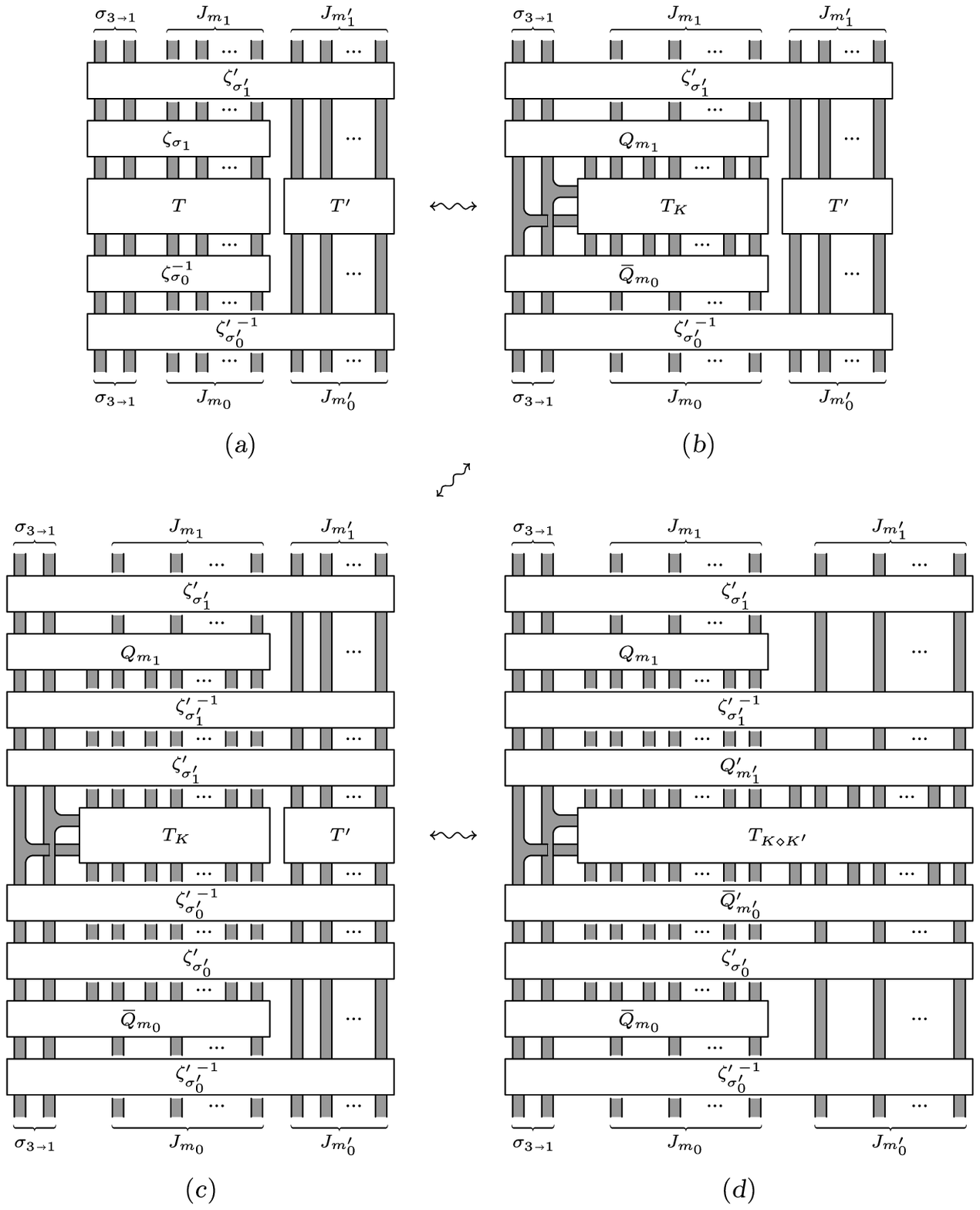}}
\vskip-3pt
\end{Figure}

Diagram \(a) in the figure is obtained by decomposing the natural isomorphisms
$\zeta_{\sigma_1 \diam \sigma'_1}$ and $\zeta_{\sigma_0 \diam \sigma'_0}^{-1}$, according
to the identity $\zeta_{\sigma \diam \sigma'} = \zeta'_{\sigma'} \circ (\zeta_\sigma \diam
\id_{\sigma'})$, with $$\zeta'_{\sigma'} = (\id_{4 \red 1} \diam \gamma_{\sigma,\sigma'})
\circ (\zeta_{\sigma'} \diam \id_{\sigma}) \circ (\id_{4 \red 1} \diam
\gamma_{\sigma,\sigma'}^{-1})=\zeta_{\sigma \diam \sigma',m+m'} \circ \dots \circ
\zeta_{\sigma \diam \sigma',m+1},$$ where $m$ and $m'$ denote the lengths of $\sigma$ and
$\sigma'$ respectively. Then, we get \(b) by using the hypothesis that $\id_{\tp43} \diam
(\zeta_{\sigma_1} \circ T \circ \zeta_{\sigma_0}^{-1}) = \Xi_4(K)$, and \(c) just by
inserting the canceling pairs ${\zeta'_{\sigma'_0}\!\!}^{-1} \circ \zeta'_{\sigma'_0}$ and
${\zeta'_{\sigma'_1}\!\!}^{-1} \circ \zeta'_{\sigma'_1}$. Here, the horizontal bands
forming the pseudo-products $\diam_{\beta,\gamma,\delta}$ are fused all together in their
terminal parts outside the boxes. In order to get \(d), we first change those bands into
new ones which pass under ${\zeta'_{\sigma'_0}\!\!}^{-1}$ and reach the reduction ribbons
$\id_{\sigma_{3 \red 1}}$ in the region between ${\zeta'_{\sigma'_0}\!\!}^{-1}$ and
${\zeta'_{\sigma'_0}\!\!}$ and then apply the hypothesis that $\id_{\tp43} \diam
(\zeta_{\sigma'_1} \circ (\id_{\sigma_{4 \red 1}} \!\diam T') \circ
\zeta_{\sigma'_0}^{-1}) = \Xi_4(K')$ and the fact that $R_{K}\rdiam R_{K'}=R_{K\diam K'}$
(see Remark \ref{non-monoidal/rem}). In diagram \(d), $Q'_{m'_1} = (\id_{3 \red 1} \diam
\gamma_{2m'_1, 2m_1}) \circ (Q_{m'_1} \diam \id_{2m_1}) \circ (\id_{3 \red 1} \diam
\gamma_{2m'_1, 2m_1}^{-1})$ coincides with the factor of $Q_{m_1 + m'_1}$ relative to
$J_{m'_1}$ in the decomposition $Q_{m_1 + m'_1} = (Q_{m_1} \diam \id_{m'_1}) \circ
Q'_{m'_1}$, while $\bar Q'_{m'_0} = (\id_{3 \red 1} \diam \gamma_{2m'_0, 2m_0}) \circ
(\bar Q_{m'_0} \diam \id_{2m_0}) \circ (\id_{3 \red 1} \diam \gamma_{2m'_0, 2m_0}^{-1})$
coincides with the factor of\break $\bar Q_{m_0 + m'_0}$ relative to $J_{m'_0}$ in the
decomposition $\bar Q_{m_0 + m'_0} = \bar Q'_{m'_0} \circ (\bar Q_{m_0} \diam
\id_{m'_0})$.

Now, we want to show that, in the presence of the reduction ribbon $\id_{\tp43}$, the
subtangles $\zeta'_{\sigma'_1}\!\circ (Q_{m_1} \!\diam \id_{m'_1}) \circ
{\zeta'_{\sigma'_1}\!\!}^{-1} \circ Q'_{m'_1}$ and $\bar Q'_{m'_0} \!\circ
{\zeta'_{\sigma'_0}\!\!}^{-1} \circ (\bar Q_{m_0} \diam \id_{m'_0}) \circ
\zeta'_{\sigma'_0}$ in Figure \ref{xi-full05/fig} \(d), can be respectively replaced by
$Q_{m_1 + m'_1} \circ Z_{\sigma_1, \sigma'_1}$ and $Z_{\sigma_0, \sigma'_0}^{-1} \circ
\bar Q_{m_0 + m'_0}$, with the tangles $Z$ defined as follows. Given $\sigma =
(\tp{i_1}{j_1}, \dots, \tp{i_m}{j_m})$ and $\sigma' = (\tp{i'_1}{j'_1}, \dots,
\tp{i'_{m'}}{j'_{m'}})$ in $\seq\Gamma_3$, $Z_{\sigma,\sigma'} = z_{m'} \circ \dots \circ
z_1$, where $z_h$ is the tangle shown in Figure \ref{xi-full06/fig} if $\tp{i'_h}{j'_h}
= \tp23$, while it is the identity otherwise. Observe that the $z_h$'s, hence
$Z_{\sigma,\sigma'}$ as well, are isomorphisms with their inverses obtained by vertical
reflection.

\begin{Figure}[htb]{xi-full06/fig}
{}{The 3-labeled ribbon surface tangle $z_h$ for $\tp{i'_h}{j'_h} = \tp23$}
\vskip-6pt
\centerline{\fig{}{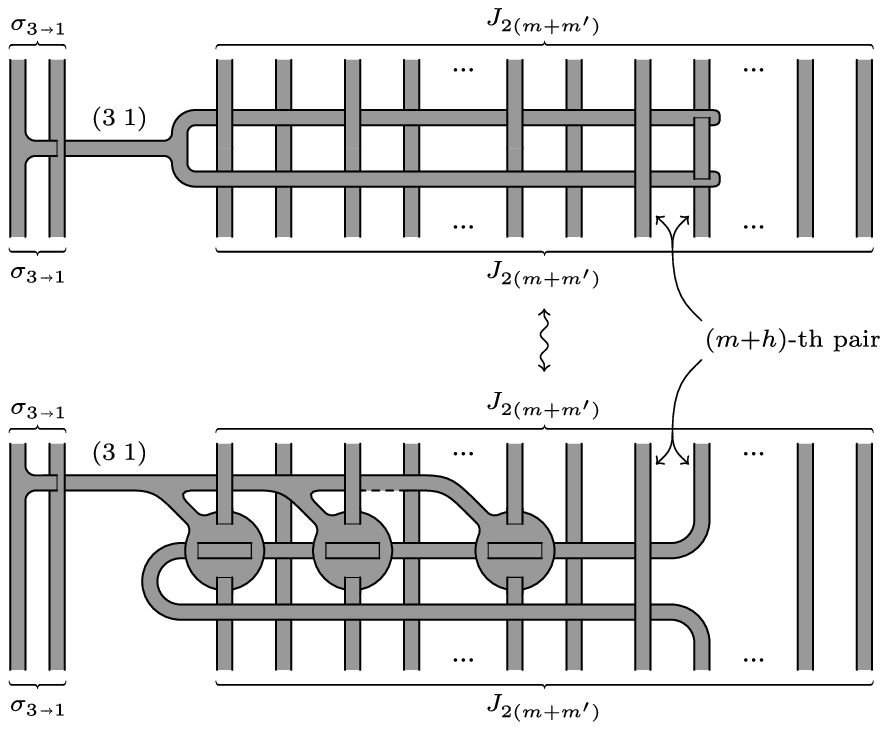}}
\vskip-3pt
\end{Figure}

The proof that $\zeta'_{\sigma'_1} \!\circ (Q_{m_1} \!\diam \id_{m'_1}) \circ
{\zeta'_{\sigma'_1}\!\!}^{-1} \circ Q'_{m'_1}$ can be replaced by $ Q_{m_1 + m'_1} \circ
Z_{\sigma_1, \sigma'_1}$ goes by induction on the length $m_1'$ of $\sigma_1'$, starting
with the trivial case of $m_1' = 0$.

The inductive step for $m_1' \geq 1$ presents some difficulties only when
$\tp{i'_{m_1'}}{j'_{m_1'}} = \tp23$. Indeed, if $\tp{i'_{m_1'}}{j'_{m_1'}} = \tp12$, both
$\zeta_{\sigma_1 \diam \sigma'_1,m_1 + m_1'}$ and $\zeta_{\sigma_1 \diam \sigma'_1, m_1 +
m_1'}^{-1}$ are identities, so there is nothing to prove. While if
$\tp{i'_{m_1'}}{j'_{m_1'}} = \tp13$, after a suitable replacement of the bands attached to
$\id_{\tp32}$, $\zeta_{\sigma_1 \diam \sigma'_1,m_1 + m_1'}$ and $\zeta_{\sigma_1 \diam
\sigma'_1,m_1 + m_1'}^{-1}$ can be moved to be contiguous and then canceled.

\begin{Figure}[b]{xi-full08/fig}
{}{}
\vskip-3pt
\centerline{\fig{}{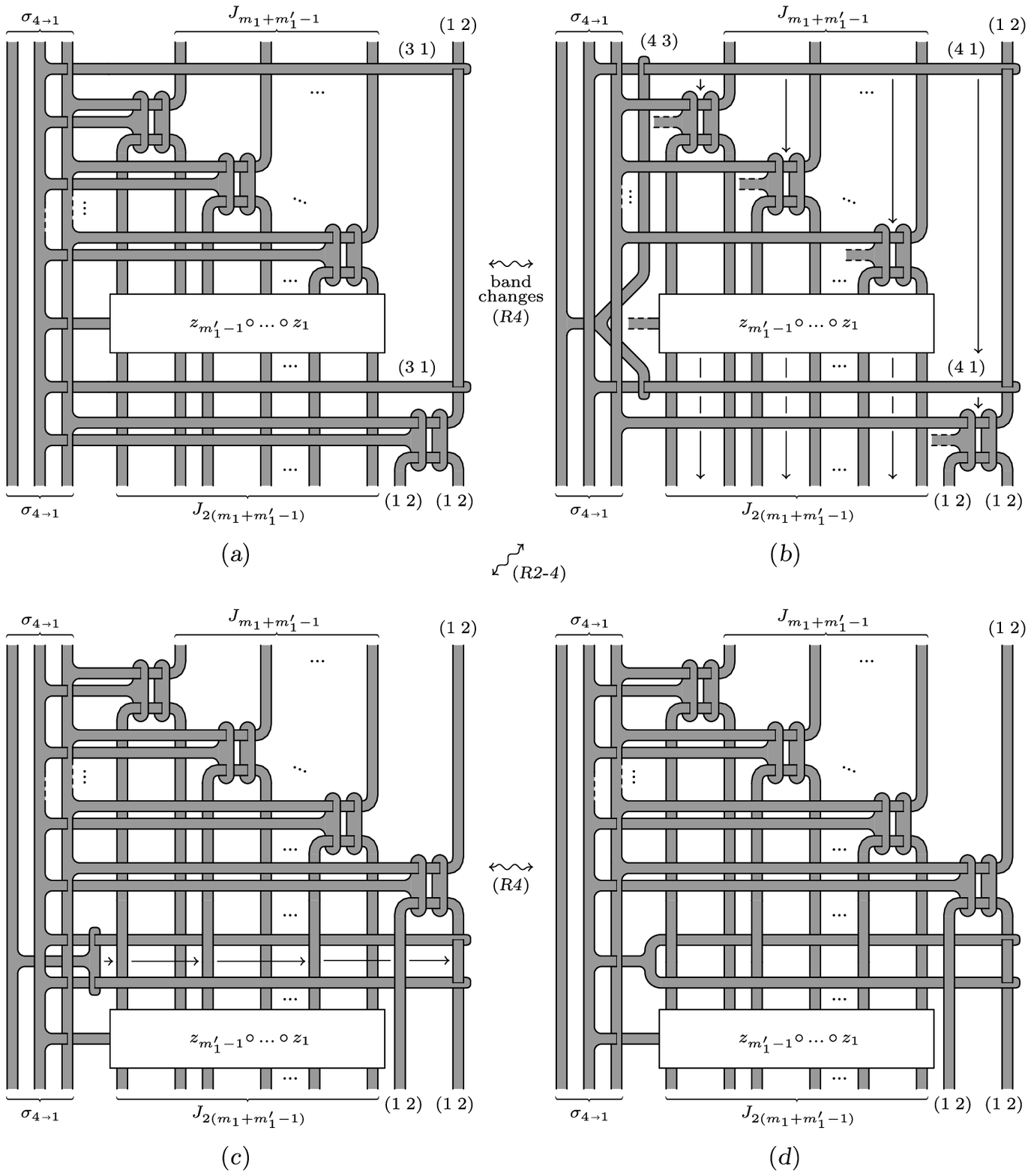}}
\vskip-3pt
\end{Figure}

The case of $\tp{i'_{m_1'}}{j'_{m_1'}} = \tp23$ is treated in Figure \ref{xi-full08/fig}.
Here, diagram \(a) is obtained by applying the induction hypothesis to the first $m_1'-1$
blocks of $Q'_{m_1'}$. In the same diagram, the horizontal bands labeled $\tp31$ represent
$\zeta_{\sigma_1 \diam \sigma'_1, m_1 + m_1'}$ and $\zeta_{\sigma_1 \diam \sigma'_1, m_1 +
m_1'}^{-1}$, and all the bands that should be connecting the left side of the box to the
reduction ribbon $\id_{\tp32}$ are fused together. To get \(b), we first replace the bands
connecting the vertical disks to $\id_{\tp32}$ with new ones, whose attaching arcs in
$\id_{\tp32}$ lie over that of $\zeta_{\sigma_1 \diam \sigma'_1, m_1 + m_1'}$, on the top
of the diagram. Similarly, we replace the bands coming out from the left side of the box,
moving their attaching arcs in $\id_{\tp32}$ under that of $\zeta_{\sigma_1 \diam
\sigma'_1, m_1 + m_1'}^{-1}$, on the bottom of the diagram. All those replacements can be
done thanks to Remark \ref{gen-bands/rem}. The new bands (not drawn in the diagram) can be
suitably chosen, in such a way that they allow the slidings we are going to perform in
order to get \(c). Moreover, we expand a tongue from $\id_{\tp43}$ and move it by
1-isotopy and move \(R4) until it reaches the final position in diagram \(b). As a
consequence, the horizontal bands forming $\zeta_{\sigma_1 \diam \sigma'_1, m_1 + m_1'}$
and $\zeta_{\sigma_1 \diam \sigma'_1, m_1 + m_1'}^{-1}$ acquire labels $\tp41$. Then, we
slide down those bands and the box as suggested by the arrows, up to their new position in
\(c). In doing that, we use move \(R4) to let the topmost band pass through the two short
bands labeled $\tp23$ in the middle of each of the first $m_1+m_1'-1$ pairs of vertical
disks. While for the $(m_1+m_1')$-th pair, we use move \(R2) to let the rightmost vertical
disk pass through both the $\zeta$'s. To pass from \(c) to \(d), we disentangle the
vertical disk attached to $\id_{\tp43}$ by moving it to the right, as indicated by the
arrow and using once again move \(R4) to let it pass through the vertical bands, and
finally we retract the resulting tongue to $\id_{\tp43}$.

This completes the proof that $\zeta'_{\sigma'_1} \!\circ (Q_{m_1} \!\diam \id_{m'_1})
\circ {\zeta'_{\sigma'_1}\!\!}^{-1} \circ Q'_{m'_1}$ can be replaced with $Q_{m_1 + m'_1}
\circ Z_{\sigma_1, \sigma'_1}$. The replacement of $\bar Q'_{m'_0} \!\circ
{\zeta'_{\sigma'_0}\!\!}^{-1} \circ (\bar Q_{m_0} \diam \id_{m'_0}) \circ
\zeta'_{\sigma'_0}$ by $Z_{\sigma_0, \sigma'_0}^{-1} \circ \bar Q_{m_0 +m'_0}$ is
symmetric and it is left to the reader.

Once both those replacements have been performed, we are left with $Z_{\sigma_1,
\sigma'_1}\circ (\id_{\sigma_{3\red 1}}\diam_{\beta,\gamma,\delta} T_{K\diam K'})\circ
Z_{\sigma_0, \sigma'_0}^{-1}$, which is equal to $\check S_{K''}$ for
$K'' = B_{\sigma_1, \sigma'_1} \circ (K \diam K') \circ B_{\sigma_0,\sigma'_0}^{-1}$ and a
suitable choice of the vertically trivial state involved in the construction.
\end{proof}

Before going on to our main theorem, we want to see the effect of Proposition
\ref{full-xi/thm} on the definition of the functor $\Xi_4$ itself. In the previous
section we defined it by putting $\Xi_4(K) = \up_3^4 \check S_K$ for any Kirby tangle $K
\in \K_1$, where the check means that the trivial state in point 3 of the construction of
the ribbon surface tangle $S_K$ (see Section \ref{fullness/sec}) has been actually
required to be vertically trivial. As a consequence of Proposition
\ref{full-xi/thm}, we can now relax such extra requirement and remove the check from
the definition of $\Xi_4$. This is the content of the next proposition.

\begin{proposition}\label{trivial-state/thm}
Given a Kirby tangle $K \in \K_1$, the equivalence class of the labeled ribbon surface
tangle $\up_3^4 S_K$ does not depend of the choices involved in the construction of $S_K$.
In particular, it does not depend on the choice of the trivial state occurring in point 3
of that construction. Then, we can write $\Xi_4(K) = \up_3^4 S_K$ without requiring
any more the vertically triviality of such state.
\end{proposition}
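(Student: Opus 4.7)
The plan is to exploit the category equivalence $\Xi_4:\K_1\to\S_4^c$ (already established in the previous two sections) together with the reduction ${\down_1^4}\circ\Theta_4$, which recovers $K$ from $\Xi_4(K)$. More precisely, combining Proposition \ref{xi4/thm} (which gives faithfulness of $\Xi_4$ and the identity ${\down_1^4}\circ\Theta_4\circ\Xi_4=\id_{\K_1}$) with Proposition \ref{full-xi/thm} (which says the inclusion $\Xi_4(\K_1)\subset\S_4^c$ is an equivalence of categories), one sees that $\Xi_4: \K_1\to\S_4^c$ is itself an equivalence of categories: it is faithful, essentially surjective (since $\Xi_4(\K_1)\hookrightarrow\S_4^c$ is so and $\Xi_4$ hits every object of $\Xi_4(\K_1)$ by definition), and full (again by definition of $\Xi_4(\K_1)$, followed by the fullness coming from the second inclusion being an equivalence). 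If we can show that ${\down_1^4}\circ\Theta_4(\up_3^4 S_K)=K$ holds regardless of which trivial state (and other choices) were used in constructing $S_K$, then the statement will follow: any two resulting $\up_3^4 S_K^{(1)},\up_3^4 S_K^{(2)}$ are morphisms in $\S_4^c$ with the same source $\Xi_4(I_{m_0})$ and target $\Xi_4(I_{m_1})$, and essential surjectivity yields $K_i\in\K_1$ with $\Xi_4(K_i)=\up_3^4 S_K^{(i)}$; reducing gives $K_i=K$, and faithfulness of $\Xi_4$ then forces $\up_3^4 S_K^{(1)}=\Xi_4(K)=\up_3^4 S_K^{(2)}$.

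Hence the crucial point reduces to verifying that Proposition \ref{full-theta3/thm} in fact holds for arbitrary trivial states of $|L|$, not just for vertically trivial ones. The second step is to revisit that proof and observe that vertical triviality is never actually invoked: the only properties of $|L'|$ that are used are (i) that $|L'|$ is obtained from $|L|$ by a vertical homotopy consisting of a finite sequence of crossing changes, each supported inside a disk $C_i$, and (ii) that $|L'|$ bounds a disjoint union of disks, so that the final isotopy $|\bar L|\rightsquigarrow|L|$ (reversing the band connected sums with parallel copies of $|L'_j|$) can be performed without moving the remainder of the diagram. The framing computation at the end, $\fr(\bar L_j)=\fr(L_j)+\fr(L''_j)-\fr(L'_j)=\fr(L_j)$, requires only that $L''_j$ be a parallel copy of $L'_j$. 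All other steps---sliding the open components, the 0/1-cancellations at the disks $D_i,D'_i$, and the crossing changes at the disks $C_i$---are purely local operations in Kirby calculus that have nothing to do with a choice of height function. Thus ${\down_1^3}\Theta_3(S_K)=K$ for any construction of $S_K$, and composing with $\up_3^4$ via Proposition \ref{stab-theta/thm}, together with $\down_1^4\circ\up_3^4=\down_1^3$ from Proposition \ref{K-reduction/thm}, yields $\down_1^4\Theta_4(\up_3^4 S_K)=K$ in the required generality.

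Putting the two ingredients together gives the conclusion as outlined in the first paragraph. The main potential obstacle is the bookkeeping in the second step: one must be sure that every move invoked in the original proof of Proposition \ref{full-theta3/thm} (in particular the slidings in Figures \ref{kirby-ribbon08/fig}--\ref{kirby-ribbon10/fig} and the isotopy of $|\bar L|$ back to $|L|$) continues to go through when $|L'|$ is merely trivial rather than vertically trivial; but this is a matter of inspection rather than of introducing any new ideas, since no step of that proof exploits an ordering of heights. Once this is checked, the argument via equivalence of categories is essentially formal, and we obtain that $\Xi_4(K)=\up_3^4 S_K$ for any $S_K$ produced by the construction of Section \ref{fullness/sec}, with no restriction on the trivial state chosen in step~3.
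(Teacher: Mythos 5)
Your proposal is correct and follows essentially the same route as the paper: use Proposition \ref{full-xi/thm} to write the morphism $\up_3^4 S_K$ (which runs between objects in the image of $\Xi_4$) as $\Xi_4(K')=\up_3^4\check S_{K'}$, apply ${\down_1^4}\circ\Theta_4$ together with Propositions \ref{stab-theta/thm}, \ref{K-reduction/thm} and \ref{full-theta3/thm} to conclude $K'=K$, and finish by the faithfulness of $\Xi_4$. Two small remarks: the property you need in the final step is fullness of $\Xi_4$, not essential surjectivity; and Proposition \ref{full-theta3/thm} is already stated and proved for an arbitrary trivial state (vertical triviality enters only later, in the definition of $\check S_K$, where Proposition \ref{vert-state/thm} is needed for well-definedness), so the re-verification you flag as the "crucial point" is already in hand.
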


\begin{proof}
Let $K$ and $S_K$ be as in the statement. Since the equivalence class of $\up_3^4 S_K$ is
a morphism between two objects in the image of $\Xi_4$, Proposition \ref{full-xi/thm}
tells us that it can be written in the form $\Xi_4(K') = \up_3^4 \check S_{K'}$ for some
Kirby tangle $K' \in \K_1$. Then, Propositions \ref{K-reduction/thm}, \ref{theta/thm},
\ref{stab-theta/thm} and \ref{full-theta3/thm}, give us the chain of equalities $K =
\Theta_3(S_K) = \down_1^4 \Theta_4(\up_3^4 S_K) = \down_1^4 \Theta_4(\up_3^4 \check
S_{K'}) = \Theta_3(\check S_{K'}) = K'$ holding in $\K_1$. Moreover, from $K = K'$ in
$\K_1$ we get $\up_3^4 \check S_K = \up_3^4 \check S_{K'}$ in $\up^4_1 S_{3\red 1}$,
thanks to Proposition \ref{xi4/thm}. Thus, the starting ribbon surface tangle $S_K$ turns
out to be equivalent to $\check S_K$. At this point, Lemmas \ref{SK-welldef/thm} and
\ref{SK-invariance/thm} allow us to conclude the proof.
\end{proof}

Finally, let us state and prove the main equivalence theorem.

\begin{theorem}\label{ribbon-kirby/thm}
For any $n \geq 4$, the functor $\,\Xi_n: \K_1 \to \S_n^c$ and the braided monoidal
functor $\Theta_n: \S_n^c\to \K_n^c$ (cf. Proposition \ref{theta/thm}) are category
equivalences. Moreover, ${\down_1^n} \circ \Theta_n \circ \Xi_n = \id_{\K_1}$, while
$\,\Xi_n \circ {\down_1^n} \circ \Theta_n$ is naturally equivalent to $\id_{\S_n^c}$.
\end{theorem}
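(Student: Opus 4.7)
The identity ${\down_1^n} \circ \Theta_n \circ \Xi_n = \id_{\K_1}$ is already provided by Proposition \ref{xi/thm}, so what remains is to establish that both $\Xi_n$ and $\Theta_n$ are equivalences of categories, after which the natural equivalence $\Xi_n \circ {\down_1^n} \circ \Theta_n \simeq \id_{\S_n^c}$ comes for free. The plan is to handle the case $n=4$ first, where the deep preparatory results in Propositions \ref{xi4/thm} and \ref{full-xi/thm} essentially package everything needed, and then to bootstrap to $n \geq 5$ by showing that the stabilization functor $\up_4^n: \S_4^c \to \S_n^c$ is itself an equivalence.

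For $n = 4$, I would verify the three criteria of Proposition \ref{cat-equiv/thm} for $\Xi_4: \K_1 \to \S_4^c$. Faithfulness is Proposition \ref{xi4/thm}. The statement of Proposition \ref{full-xi/thm}, that the inclusion $\Xi_4(\K_1) \hookrightarrow \S_4^c$ is an equivalence, simultaneously delivers essential surjectivity (every object of $\S_4^c$ is isomorphic to some $\Xi_4(I_m)$) and fullness (every morphism in $\S_4^c$ between objects of the form $\Xi_4(I_{m_0})$ and $\Xi_4(I_{m_1})$ is itself $\Xi_4(g)$ for some $g$ in $\K_1$). Hence $\Xi_4$ is an equivalence. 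Then from the strict identity $\down_1^4 \circ \Theta_4 \circ \Xi_4 = \id_{\K_1}$, the functor $\down_1^4 \circ \Theta_4$ is a quasi-inverse of $\Xi_4$, hence also an equivalence; composing with the quasi-inverse $\up_1^4$ of $\down_1^4$ (Proposition \ref{K-reduction/thm}) yields $\Theta_4 \simeq \up_1^4 \circ \down_1^4 \circ \Theta_4$ as a composition of equivalences, so $\Theta_4$ is an equivalence as well.

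For $n \geq 5$, I would exploit the factorization $\Xi_n = \up_4^n \circ \Xi_4$ and prove that $\up_4^n: \S_4^c \to \S_n^c$ is an equivalence. Essential surjectivity follows from Proposition \ref{S-reduction/thm} with $k=1$: every object of $\S_n^c$ is isomorphic to an object of $\up_3^n \S_3^c = \up_4^n \circ \up_3^4 \S_3^c \subset \up_4^n \S_4^c$. Faithfulness of $\up_4^n$ is a formal consequence of the commutative square $\Theta_n \circ \up_4^n = \up_4^n \circ \Theta_4$ of Proposition \ref{stab-theta/thm}, since the right hand side is a composition of equivalences, hence faithful, forcing $\up_4^n$ on the left to be faithful. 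Fullness of $\up_4^n$ is the main obstacle: given $f: \up_4^n A \to \up_4^n B$ in $\S_n^c$, I would apply Proposition \ref{S-reduction/thm} at level $n=4$ to obtain isomorphisms $\phi_A: A \to \up_3^4 A'$ and $\phi_B: B \to \up_3^4 B'$ in $\S_4^c$, transport $f$ through $\up_4^n \phi_A$ and $\up_4^n \phi_B$ to a morphism $f'': \up_3^n A' \to \up_3^n B'$ in $\S_n^c$, apply Proposition \ref{S-reduction/thm} a second time (at level $n$) to write $f'' = \up_3^n g = \up_4^n(\up_3^4 g)$ for some $g: A' \to B'$ in $\S_3^c$, and finally recover $f = \up_4^n(\phi_B^{-1} \circ \up_3^4 g \circ \phi_A)$. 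With $\up_4^n$ an equivalence, $\Xi_n = \up_4^n \circ \Xi_4$ is an equivalence.

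Once $\Xi_n$ is known to be an equivalence, the rest of the theorem follows by the same formal argument as in the $n = 4$ case: $\down_1^n \circ \Theta_n$ is a quasi-inverse of $\Xi_n$ (using the strict identity from Proposition \ref{xi/thm}), hence an equivalence; since $\down_1^n$ is an equivalence, so is $\Theta_n$; and the composition $\Xi_n \circ \down_1^n \circ \Theta_n$, being the second defining natural equivalence for the quasi-inverse pair $(\Xi_n, \down_1^n \circ \Theta_n)$, is naturally equivalent to $\id_{\S_n^c}$. The only substantial new ingredient beyond the results already proved is the fullness step for $\up_4^n$, which is a careful two-step chase through Proposition \ref{S-reduction/thm}; everything else is formal assembly.
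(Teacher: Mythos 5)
Your argument is correct, and its skeleton is the paper's: for $n=4$ you combine Propositions \ref{xi4/thm} and \ref{full-xi/thm} with the criterion of Proposition \ref{cat-equiv/thm}, and your final deductions (that $\down_1^n\circ\Theta_n$ is a quasi-inverse of $\Xi_n$, hence $\Theta_n\simeq\up_1^n\circ\down_1^n\circ\Theta_n$ is an equivalence, and that $\Xi_n\circ\down_1^n\circ\Theta_n\simeq\id_{\S_n^c}$ then comes for free) are exactly the formal steps the paper performs or leaves implicit. Where you genuinely depart is the passage to $n\geq 5$: the paper dispatches it in one sentence by invoking Proposition \ref{K-reduction/thm}, i.e.\ the equivalence $\up_1^n: \K_1 \to \K_n^c$ on the Kirby side; taken literally this only shows that the composite $\Theta_n\circ\Xi_n\simeq\up_1^n$ is an equivalence, which does not by itself make $\Xi_n$ one, so the stabilization step on the ribbon-surface side is left to the reader. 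Your completion — proving that $\up_4^n:\S_4^c\to\S_n^c$ is an equivalence, with essential surjectivity and fullness extracted from two applications of Proposition \ref{S-reduction/thm} and faithfulness from the square of Proposition \ref{stab-theta/thm} together with the already established equivalence $\Theta_4$ — is sound and self-contained, and it incidentally establishes the claim anticipated in Section \ref{S/sec} that $\up_4^n$ is an equivalence for $n\geq 5$. If you want to stay even closer to the paper's ingredients, you can skip the faithfulness detour for $\up_4^n$ and verify fullness and essential surjectivity of $\Xi_n$ directly: every object of $\S_n^c$ is isomorphic to one of $\up_3^n\S_3^c$ by Proposition \ref{S-reduction/thm}, and any morphism of $\S_n^c$ between objects of $\Xi_n(\K_1)\subset\up_3^n\S_3^c$ has the form $\up_4^n(\up_3^4 g)$ with $g$ in $\S_3^c$, where $\up_3^4 g$ lies in the image of $\Xi_4$ by fullness in Proposition \ref{full-xi/thm}; faithfulness of $\Xi_n$ is already given by Proposition \ref{xi/thm}.
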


\begin{proof}
According to Proposition \ref{xi/thm}, ${\down_1^n} \circ \Theta_4 \circ \Xi_4 =
\id_{\K_1}$, hence $\Xi_4: \K_1 \to \S_n^c$ is a faithful functor. Moreover, Proposition
\ref{full-xi/thm} implies that $\Xi_4$ is full and that any object in $\S_n^c$ is
isomorphic to one in its image. Then, $\Xi_4$ is a category equivalence by Proposition
\ref{cat-equiv/thm}. Since $\up_1^n$ is also a category equivalence by Proposition
\ref{K-reduction/thm}, we have that $\Xi_n = {\up_4^n} \circ \Xi_4: \K_1 \to \S_n^c$ is
such for any $n\geq 4$. Since ${\down_1^n} \circ \Theta_n \circ \Xi_n =
\id_{\K_1}$ by Proposition \ref{xi/thm}, $\Theta_n: \S_n^c \to \K_n^c$ is a category
equivalence as well.
\end{proof}

\newpage

\section{Universal groupoid ribbon Hopf algebra%
\label{algebra/sec}}

This chapter is dedicated to the construction and the study of the algebraic categories
$\H_n^{r}$ and of the functors $\Phi_n: \H_n^{r} \to \K_n$. For any $n \geq 1$, we will
define a suitable subcategory $\H_n^{r,c}\subset \H_n^{r}$ together with an equivalence
reduction functor\break $\down_1^n: \H_n^{r,c} \to \H_1^r$, and show that the restriction
$\Phi_n: \H_n^{r,c} \to \K_n^c$ is a category equivalence. In particular, the algebra
$\H^r = \H_1^r$ will give an algebraic characterization of the category $\Chb^{3+1} =
\Chb_1^{3+1}$ of 4-dimensional relative 2-handlebody cobordisms, in the sense of Kerler
(\cite{Ke02}).

The proof is based on the factorization of the category equivalence $\Theta_n: \S_n^c \to
\K_n^c$ defined in Section \ref{Theta/sec}, as a composition $\Phi_n \circ \Psi_n$, where
for $n\geq 4$ the functor $\Psi_n: \S_n^c \to\H_n^{r,c}$ is shown to be full. This implies
that both $\Psi_n $ and $\Phi_n$ are category equivalences. In particular, $\Psi_n$ gives
an algebraic interpretation of the simple branched covering representation of
4-dimensional relative 2-handlebody cobordisms.

$\H_n^r$ will be defined as the strict monoidal braided category freely generated by a
groupoid Hopf algebra for the groupoid $\G_n = \{1, \dots, n\}^2$, thought with its 
natural composition given by $(i,j) (j,k) = (i,k)$ for any $1 \leq i,j,k \leq n$.

Before going on, let us see how this groupoid structure of $\G_n$ fits into the picture.
As it has been established in \cite{Ke02, Ha00}, $I_{(1,1)} \in \Obj \K_1$ is a braided
Hopf algebra object with the comultiplication morphism $\Delta_{(1,1)}$ described in
Section \ref{K/sec}. Actually, by Proposition \ref{K-coassociativity/thm},
$\Delta_{(i,j)}: I_{(i,j)} \to I_{(i,j)} \diam I_{(i,j)}$ makes any $I_{(i,j)} \in
\Obj\K_n$ into a coalgebra object. From the topological point of view, $\Delta_{(i,j)}$
represents a single 1-handle along which run the attaching maps of three 2-handles. The
dual notion of multiplication morphism (see below) corresponds to a 2-handle which runs
along exactly three 1-handles. Now, while in $\K_1$ one can always attach such a 2-handle
along any given three 1-handles (since they are all attached on the same 0-handle), the
same is not true in $\K_n$. Here, to be able to close the loop of the attaching sphere,
the indices of the 1-handles need to be in the order $(i,j)$, $(j,k)$ and $(i,k)$. Then,
the algebraic structure of $\K_n$ involves multiplication morphisms $m_{(i,j),(i',j')}:
I_{(i,j)} \diam I_{(i',j')} \to I_{(i'',j'')}$ defined only for $i' = j$, $i'' = i$ and
$j'' = j'$, in other words when $(i,j)$ and $(i',j')$ are composable and $(i,j) (i',j') =
(i'',j'')$ in the groupoid $\G_n$. Moreover, the morphisms $\eta_i: \emptyset \to
I_{(i,i)}$, each given by a 2-handle that cancels against the 1-handle represented by
$I_{(i,i)}$, act as units for the multiplication. Therefore, in $\H^r_n$ we will have a
family of objects labeled by $\G_n$ and a family of multiplication and unit morphisms
between them, which reflects the groupoid structure of $\G_n$.

Actually, $\H_n^r$ will be introduced as a specialization of the more general notion of
universal groupoid ribbon Hopf algebra $\H^r(\G)$, with $\G$ being an arbitrary finite
groupoid. Disregarding the ribbon structure, the notion of groupoid braided Hopf algebra
$\H(\G)$ extends the one of group braided Hopf algebra, used in \cite{Vi01, Tu00} to
define TQFT invariants of regular (unbranched) coverings of three manifolds, just because
any group can be considered as a groupoid. The study the algebras $\H(\G)$ and $\H^r(\G)$
for an arbitrary groupoid $\G$ does not bring to any additional technical difficulties
with respect to the one of $\H_n^r$, allowing on the contrary somewhat simpler notations.

The total list of axioms of a groupoid ribbon Hopf algebra is quite long (11 elementary
morphisms and 34 relations between them). Moreover, these axioms have many important
consequences, to which we will refer as properties of the algebra. Of course, a given
property usually depends only on a small subset of axioms. So, in order to make more clear
the logical structure of such implications, we decided to introduce the axioms in four
steps. We start by presenting the axioms of a braided Hopf algebra and showing that the
corresponding universal category $\H(\G)$ carries an autonomous structure. Then, we
require the unimodularity condition, which leads to a tortile category $\H^u(\G)$. Here,
we require the existence of ribbon morphisms satisfying Kerler's axioms in \cite{Ke02}
with the exception of the self-duality condition (we remind that we are looking for an
algebra which describes 4-dimensional 2-handlebodies; self-duality will be considered
later in Chapter \ref{boundaries/sec}, where we study the category of framed 3-dimensional
cobordisms). The universal unimodular Hopf algebra with such ribbon morphisms will be
called a pre-ribbon Hopf algebra and will be denoted by $\H^u_v(\G)$. Finally, the
universal ribbon Hopf algebra $\H^r(\G)$ is obtained by adding two new axioms which relate
the ribbon morphism with the coalgebraic and braided structures.

The diagrammatic language, developed and used in the literature (see for example
\cite{Ha00, Ke02}), will be our main tool in representing the morphisms in the algebraic
categories and in the study of the functors between the geometric and the algebraic
categories. The next sections contain widespread references to the axioms and properties
of the universal ribbon Hopf algebra. In order to simplify the search of such references,
we assign a compact name (a letter and a number) to each axiom and property and collect
all the corresponding diagrams in few tables.

\subsection{The universal groupoid Hopf algebras $\H(\G)$ and $\H^u(\G)$%
\label{HG/sec}}

Let $\G$ be a groupoid, that is a small category whose morphisms are all invertible. Will
denote by $\G$ also the set of the morphisms of $\G$, endowed with the partial binary
operation given by the composition, for which we adopt the multiplicative notation from
left to right. The identity of $i \in \Obj\G$ will be denoted by $1_i \in \G$, while the
inverse of $g \in \G$ will be denoted by $\bar g \in \G$. For $i,j \in \Obj\G$, we denote
by $\G(i,j) \subset \G$ the subset of morphisms from $i$ to $j$. Consequently, if $g \in
\G(i,j)$ and $h \in \G(j,k)$ then $gh \in \G(i,k)$. In particular, $g \bar g = 1_i$ and
$\bar g g = 1_j$, and sometimes the identity morphisms will be represented in this way.

A groupoid is called {\sl connected} if $\G(i,j)$ is non-empty for any $i,j \in \Obj\G$.
Given two groupoids $\G \subset \G'$, we say that $\G$ is {\sl full} in $\G'$ if $\G$ is a
full subcategory of $\G'$, i.e. $\G(i,j) = \G'(i,j)$ for all $i,j \in \Obj \G$. Moreover,
given $k\in\Obj \G$, we denote by $\G^{\bs k}$ the full subgroupoid of $\G$ with $\Obj
\G^{\bs k} = \Obj\G - \{k\}$

\medskip

Now, we start with the definition of the notion of Hopf $\G$-algebra in a braided monoidal
category $\C$. This involves a family $H = \{H_g\}_{g \in \G}$ of objects of $\C$ and
certain families of morphisms indexed by (possibly pairs of) such objects.

\medskip

{\sl Here and in the sequel, we will write $g$ instead of $H_g$ in the subscripts of the
notation for morphisms of $\C$. For example, we will use the notations $\id_g = \id_{H_g}$
and $\gamma_{g,h} = \gamma_{H_g,H_h}$. Moreover, based on the MacLane's coherence result
for monoidal categories (p. 161 in \cite{McL}), we will omit the associativity morphisms
since they can be filled in a unique way}.

\begin{definition}\label{hopf-algebra/def}
Given a groupoid $\G$ and a braided monoidal category $\C$, a {\sl Hopf $\G$-algebra} in
$\C$ is a family of objects $H = \{H_g\}_{g \in \G}$ in $\C$, equipped with the families
of morphisms in $\C\,$ described below.

\smallskip\noindent
A {\sl comultiplication} $\Delta = \{\Delta_g: H_g \to H_g \diam H_g\}_{g \in \G}$,
such that for any $g \in \G$:
\vskip-4pt
$$ (\Delta_g\diam\id_g) \circ \Delta_g = (\id_g\diam\Delta_g) \circ \Delta_g;
\eqno{\(a1)}$$
\vskip4pt

\smallskip\noindent
a {\sl counit} $\epsilon = \{\epsilon_g: H_g \to \one\}_{g \in \G}$, such that
for any $g \in \G$:
\vskip-4pt
$$(\epsilon_g\diam\id_g)\circ\Delta_g = \id_g = (\id_g\diam\epsilon_g)\circ\Delta_g;
\eqno{\(a2-2')}$$
\vskip4pt

\smallskip\noindent
a {\sl multiplication} $m = \{m_{g,h}: H_g \diam H_h \to H_{gh}\}_{g,h,gh \in
\G}$ (notice that $m_{g,h}$ is defined only when $g$ and $h$ are composable in $\G$), such 
that for any $f,g,h,fgh \in \G$:
\vskip-4pt
$$m_{fg,h} \circ (m_{f,g} \diam \id_h) = m_{f,gh} \circ (\id_f \diam m_{g,h}),
\eqno{\(a3)}$$
$$(m_{g,h} \diam m_{g,h}) \circ (\id_g \diam \gamma_{g,h}\diam \id_h) \circ
(\Delta_g \diam \Delta_h) = \Delta_{gh} \circ m_{g,h},
\eqno{\(a5)}$$
$$\epsilon_{gh} \circ m_{g,h} = \epsilon_g \diam \epsilon_h;                  
\eqno{\(a6)}$$
\vskip4pt

\smallskip\noindent
a {\sl unit} $\eta = \{\eta_i: \one \to H_{1_i}\}_{i \in \Obj \G}$, such
that for any $g \in \G(i,j)$:
\vskip-4pt
$$m_{g,{1_j}}\circ(\id_g\diam\eta_j) = \id_g = m_{{1_i},g}\circ(\eta_i\diam\id_g),
\eqno{\(a4-4')}$$
$$\Delta_{1_i} \circ \eta_i = \eta_i \diam \eta_i,
\eqno{\(a7)}$$
$$\epsilon_{1_i} \circ \eta_i = \id_{\one};
\eqno{\(a8)}$$
\vskip4pt

\smallskip\noindent
 an {\sl antipode} $S = \{S_g: H_g \to H_{\bar g}\}_{g \in \G}$ and its inverse $\bar 
S = \{\bar S_g: H_g \to H_{\bar g}\}_{g \in \G}$, such that for any $g\in \G(i,j)$:
\vskip-12pt
$$m_{\bar g,g}\circ(S_g\diam\id_g)\circ\Delta_g = \eta_{1_j}\circ\epsilon_g,
\eqno{\(s1)}$$
$$m_{g,\bar g}\circ(\id_g\diam S_g)\circ\Delta_g = \eta_{1_i} \circ \epsilon_g,
\eqno{\(s1')}$$
$$S_{\bar g}\circ\bar S_g =\bar S_{\bar g}\circ S_g=\id_g.
\eqno{\(s2-2')}$$
\vskip-9pt
\end{definition}

We observe that an ordinary braided Hopf algebra in $\C$ is a Hopf $\G_1$-algebra, where
$\G_1$ is the trivial groupoid with a single object and a single morphism. In particular,
$H_{1_i}$ is a braided Hopf algebra in $\C$ for any $i \in \Obj \G$.

\begin{definition}\label{integral/def}
Let $\C$ be a braided monoidal category and $H = \{H_g\}_{g \in \G}$ be a Hopf
$\G$-algebra in $\C$. By a categorical {\sl left} (resp. {\sl right}\/) {\sl cointegral}
of $H$ we mean a family $l = \{l_i: H_{1_i} \to \one \}_{i \in \Obj \G}$ of morphisms in
$\C$, such that for any $i \in \Obj \G$
$$
\begin{array}{c}
 (\id_{1_i} \diam l_i) \circ \Delta_{1_i} = \eta_i \circ l_i: 
 H_{1_i} \to H_{1_i}\\[6pt]
 (\text{resp. } (l_i \diam \id_{1_i}) \circ \Delta_{1_i} = \eta_i \circ l_i: 
 H_{1_i} \to H_{1_i}).
\end{array} 
\eqno{\(i1-1')}
$$

\noindent
On the other hand, by categorical {\sl right} (resp. {\sl left}\/) {\sl integral} of
$H$ we mean a family $L = \{L_g: \one \to H_g\}_{g \in \G}$ of morphisms in $\C$, such
that if $g,h,gh \in \G$ then
$$
\begin{array}{c}
 m_{g,h} \circ (L_g \diam \id_h) = L_{gh} \circ \epsilon_h: 
 H_h \to H_{gh}\\[6pt]
 (\text{resp. } m_{g,h} \circ (\id_g \diam L_h) = L_{gh} \circ \epsilon_h: 
 H_g \to H_{gh}).
\end{array}
\eqno{\(i2-2')}
$$
If $l$ (resp. $L$) is both right and left categorical cointegral (integral) of $H$, we
call it simply a cointegral (integral) of $H$.
\end{definition}

\begin{Table}[p]{table-Hdefn/fig}
{}{}
\centerline{\fig{}{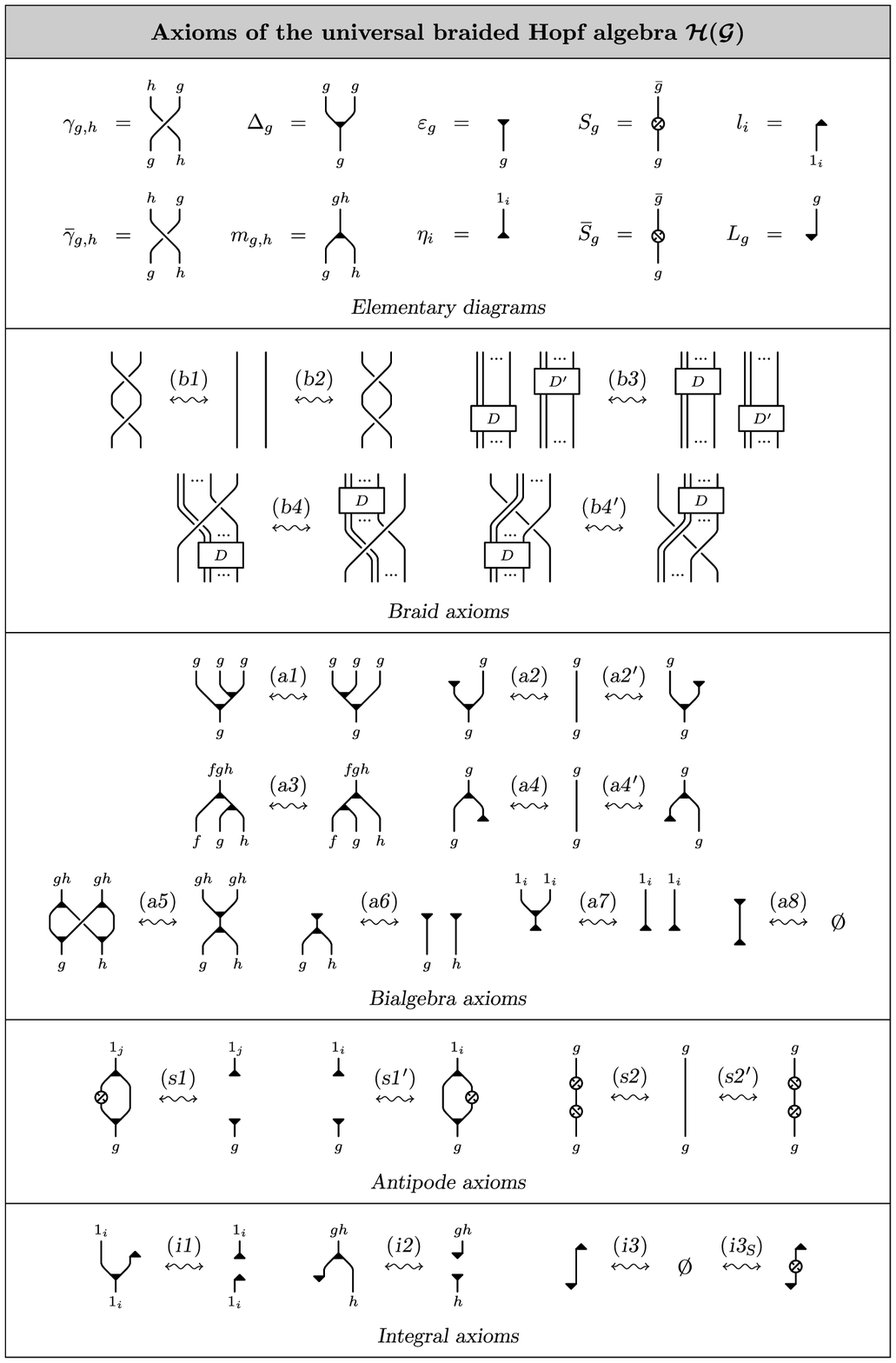}}
\vskip-3pt
\end{Table}

\begin{definition}\label{HG/def}
Given a groupoid $\G$, the {\sl universal Hopf $\G$-algebra} $\H(\G)$ is the strict
braided monoidal category freely generated by a Hopf $\G$-algebra $H =\break \{H_g\}_{g
\in \G}$ with a left cointegral $l$ and a right integral $L$, modulo the following
additional relations for any $i \in \Obj\G$
$$l_i \circ L_{1_i} = \id_{\one} = l_i \circ S_{1_i} \circ L_{1_i}\,.
\eqno{\(i3-3{$_{\text{S}}$})}$$
\vskip-12pt
\end{definition}

According to Definition \ref{presentation/def}, $\Obj \H(\G)$ is the free
monoid $\seq H = \cup_{m=0}^\infty H^m$ generated by $H = \{H_g\}_{g \in \G}$ and
consisting of all (possibly empty) finite sequences (that is products) of elementary
objects. We will use the notation $$H_\pi = H_{g_1} \!\diam \dots \diam H_{g_m}$$ for the
sequence corresponding to $\pi = (g_1, \dots, g_m) \in \seq\G$, in such a way that
$H_\emptyset$ is the unit object and $H_\pi \diam H_{\pi'} = H_{\pi \diam \pi'}$ for any
$\pi,\pi' \in \seq\G$.

\medskip

{\sl Extending the notational convention made above, we will write $\pi$ instead of
$H_\pi$ in the subscripts of the notation for morphisms of $\H(\G)$. For example, we will
use the notations $\id_\pi = \id_{H_\pi}$, $\gamma_{\pi,\pi'} = \gamma_{H_\pi,H_{\pi'}}$,
$\bar \gamma_{\pi,\pi'} = \bar \gamma_{H_\pi,H_{\pi'}}$ for any $\pi,\pi' \in \seq\G$.}

\medskip

On the other hand, $\Mor \H(\G)$ consists of all the compositions of products of
identities and one of the elementary morphisms $\gamma_{g,h}, \bar\gamma_{g,h}, \Delta_g,
\epsilon_g, m_{g,h}, \eta_i, S_g, \bar S_g, L_g, l_i$ as in Definition
\ref{hopf-algebra/def}, modulo the defining axioms for a braided structure and for a Hopf
$\G$-algebra with integrals listed in Definitions \ref{hopf-algebra/def} and
\ref{integral/def} (cf. Table \ref{table-Hdefn/fig}).

Moreover, $\H(\G)$ satisfies the following universal property: if $\C$ is any braided
monoidal category with a Hopf $\G$-algebra $H' = \{H'_g\}_{g \in \G}$ in it, and $H'$ has
a left cointegral and a right integral related by conditions \(i3-3{$_{\text{S}}$}), then
there exists a braided monoidal functor $\H(\G) \to \C$ sending $H_g$ to $H'_g$.

Analogously to \cite{Ke02}, $\H(\G)$ can be described as a category of planar diagrams in
$[0,1] \times [0,1]$. The objects of $\H(\G)$ are sequences of points in $[0,1]$ labeled
by elements in $\G$, and the morphisms are iterated products and compositions of the
elementary diagrams presented in Table \ref{table-Hdefn/fig}, modulo the relations
presented in the same figure and plane isotopies which preserve the $y$-coordinate. We
remind that the composition of diagrams $D_2 \circ D_1$ is obtained by stacking $D_2$ on
the top of $D_1$ and then rescaling, while the product $D_1 \diam D_2$ is given by
the horizontal juxtaposition of $D_1$ and $D_2$ and rescaling.

The plane diagrams and the relations between them are going to be our main tool. Observe
that the diagrams we use consist in projections of embedded graphs in $R^3$ with uni-, bi-
and tri-valent vertices. The vertices correspond to the defining morphisms in the algebra,
and we will call them with the name of the corresponding morphism. For example the
bi-valent vertices (which have one incoming and one outgoing edge) will be called antipode
vertices. Except the antipode ones, the rest of the vertices are represented by triangles
that point up (positively polarized) or point down (negatively polarized). The uni-valent
vertices are divided in unit vertices (corresponding to $\eta$ and $\epsilon$) and
integral vertices (corresponding to $l$ and $\Lambda$), while the positively (resp.
negatively) polarized tri-valent vertices will be called multiplication (resp.
comultiplication) vertices. Observe that the choice of polarization of the vertices is not
arbitrary.
\pagebreak
Indeed, as we will see in Section \ref{Phi/sec}, in the category of generalized
Kirby tangles $\K_n$ the univalent vertices with same polarization correspond to morphisms
with the same handle structure (upside down).

Most of our proofs consist in showing that some morphisms in the universal algebra are
equivalent, meaning that the graph diagram of one of them can be ob\-tained from the graph
diagram of the other by applying a sequence of the defining relations (moves) of the
algebra axioms. We will outline the main steps in this procedure by drawing in sequence
some intermediate diagrams, and for each step we will indicate in the corresponding order,
the main moves needed to transform the diagram on the left into the one on the right.
Actually, some steps can be understood more easily by starting from the diagram on the
right and reading the moves in the reverse order.\break Notice, that the moves represent
equivalences of diagrams and we use the same notation for them and their inverses. In the
captions of the figures the reader will find (in square brackets) the reference to the
pages where those moves are defined. As an example, the reader can see the proof of
Proposition \ref{antipode/thm}, where we have added some additional comments in order to
make clearer the interpretation of the figures.

We now proceed with the study of the properties of the category $\H(\G)$ listed in Table
\ref{table-Hprop/fig}. Such properties are divided in two sets. The first one generalizes
to the case of a groupoid Hopf algebra the well-known properties of the antipode (see
\cite{Ke02} and \cite{Vi01} for the case of braided and group Hopf algebras).

\begin{Table}[b]{table-Hprop/fig}
{}{}
\centerline{\fig{}{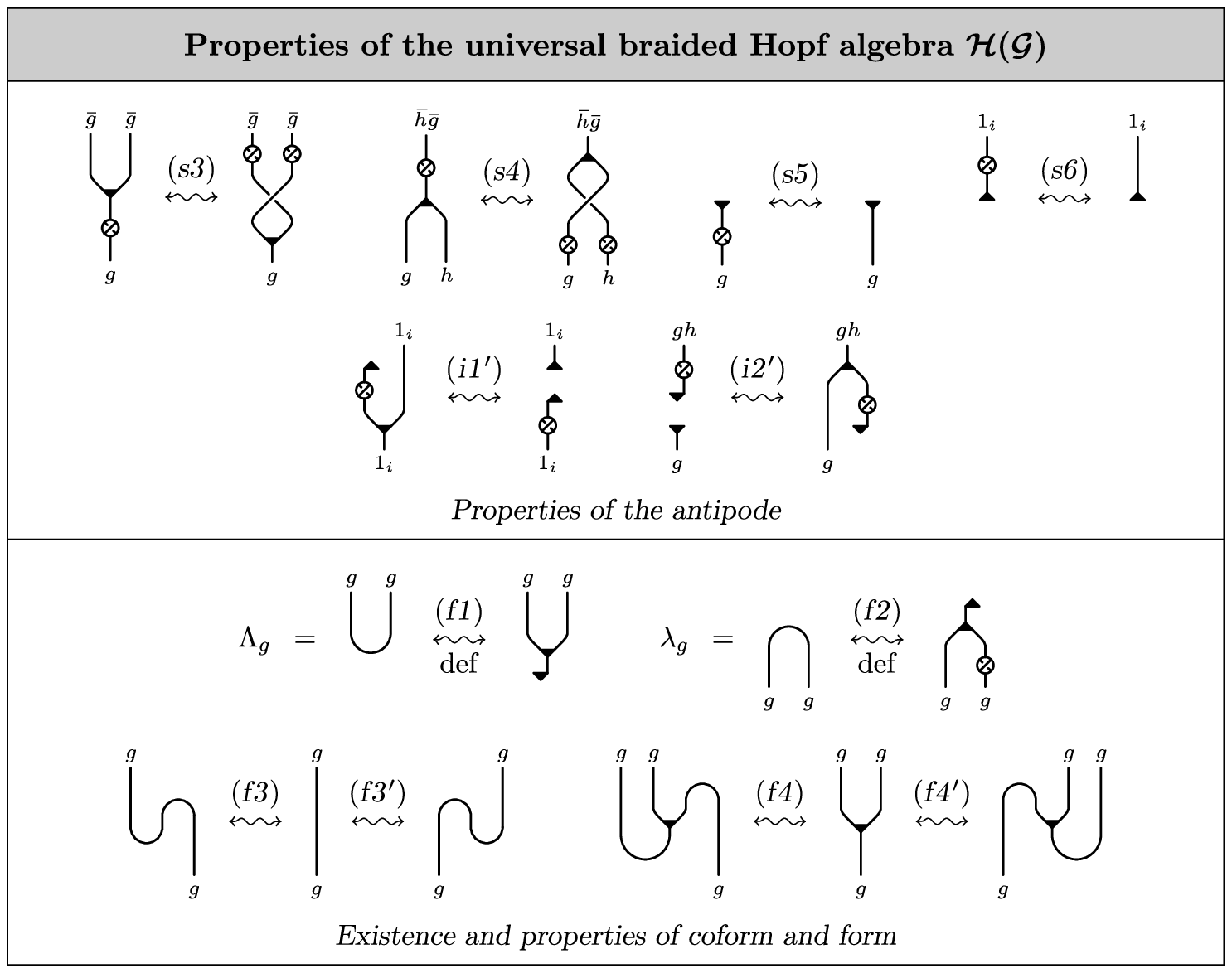}}
\vskip-3pt
\end{Table}

\begin{proposition}\label{antipode/thm}
The following properties of the antipode in $\H(\G)$, hold for any $g,h \in \G$ such that
$gh$ is defined, and for any $i \in \G$:
\vskip-4pt
$$\Delta_{\bar g} \circ S_g = (S_g \diam S_g) \circ \gamma_{g,g} \circ \Delta_g: 
H_g \to H_{\bar g} \diam H_{\bar g},
\eqno{\(s3)}$$
$$S_{gh} \circ m_{g,h} = m_{\bar h,\bar g} \circ (S_h \diam S_g) \circ \gamma_{g,h}:
H_g \diam H_h \to H_{\bar h \bar g},
\eqno{\(s4)}$$
$$\epsilon_{\bar g} \circ S_g = \epsilon_g,
\eqno{\(s5)}$$
$$S_{1_i} \circ \eta_i = \eta_i.
\eqno{\(s6)}$$
\vskip4pt\noindent
Moreover, if $l = \{l_j: H_{1_j} \to \one \}_{j \in \Obj \G}$ and $L = \{L_g: \one \to
H_g\}_{g \in \G}$ are respectively a left cointegral and a right integral of $H$, then:
\vskip-4pt
$$
l \circ S = \{l_j \circ S_{1_j}: H_{1_j} \to \one \}_{j \in \Obj \G}
\text{ is a right cointegral of $H$,} \eqno{\(i1')}
$$
$$
S \circ L = \{S_g \circ L_g: \one \to H_{\bar g}\}_{g \in \G}
\text{ is a left integral of $H$.} \eqno{\(i2')}
$$
\end{proposition}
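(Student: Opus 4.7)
The plan is to establish these six identities by standard Hopf-algebraic techniques adapted to the groupoid setting, working diagrammatically in $\H(\G)$ and freely using the axioms of Definitions \ref{hopf-algebra/def}, \ref{integral/def} and \ref{HG/def}.

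I would begin with the two easier identities (s5) and (s6). For (s5), post-composing the antipode axiom (s1) with $\epsilon_{1_j}$, rewriting $\epsilon_{1_j} \circ m_{\bar g,g}$ via (a6) as $\epsilon_{\bar g} \diam \epsilon_g$, and then using (a2-2') to collapse the $\epsilon_g$ factor, yields $\epsilon_{\bar g} \circ S_g$ on the left; the right side reduces to $\epsilon_g$ via (a8). For (s6), pre-composing (s1) specialized to $g = 1_i$ with $\eta_i$, expanding $\Delta_{1_i} \circ \eta_i$ by (a7) as $\eta_i \diam \eta_i$, collapsing the multiplication by (a4-4'), and simplifying with (a8), produces $S_{1_i} \circ \eta_i = \eta_i$.

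For the anti-homomorphism identities (s3) and (s4) I would adapt the classical convolution-inverse argument. In the ordinary (non-groupoid) case, both sides of (s3) are shown to be two-sided convolution inverses of $\Delta$ in the algebra $\Mor_{\H(\G)}(H, H \diam H)$, whose multiplication uses the braided algebra structure on $H \diam H$; uniqueness of convolution inverses then forces them to coincide, and an analogous argument in $\Mor_{\H(\G)}(H \diam H, H)$ gives (s4). In the groupoid case one cannot directly equip $H_{\bar g} \diam H_{\bar g}$ with an algebra structure, since $\bar g \cdot \bar g$ need not be a legal composition in $\G$; the remedy is to carry out the computation at the level of diagrams, verifying step by step that both sides of (s3), respectively (s4), satisfy the defining convolution-inverse relation with $\Delta_g$, respectively $m_{g,h}$, through repeated use of coassociativity (a1), the braided multiplicativity of $\Delta$ (a5), the antipode axioms (s1-1'), the (co)unit relations (a2-2'), (a4-4'), (a7), (a8), and the naturality of the braiding $\gamma$. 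The groupoid bookkeeping is transparent in the diagrammatic language, since every (co)multiplication and antipode vertex carries its source and target indices explicitly.

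The integral statements (i1') and (i2') then follow by substitution. For (i1'), I would post-compose the left-cointegral equation (i1) with $S_{1_j}$, expand $\Delta_{1_j} \circ S_{1_j}$ via (s3), and simplify the resulting braided expression using naturality of $\gamma$ together with (s2-2') and the immediate corollary $\bar S_{1_j} \circ \eta_j = \eta_j$ of (s6) and (s2-2'); the outcome is precisely the right-cointegral condition for $l_j \circ S_{1_j}$. Identity (i2') is symmetric: starting from the right-integral equation (i2) with suitably re-indexed arguments, applying (s4) to the antipode of the multiplication, and simplifying via naturality of $\gamma$ yields the left-integral condition for $S \circ L$. The main obstacle in this program will be the convolution-inverse argument for (s3) and (s4): in the non-groupoid case it is textbook, but here the failure of $\bar g \cdot \bar g$ to compose in $\G$ forces a direct diagrammatic verification in place of the algebraic one.
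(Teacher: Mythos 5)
Your proposal is correct and takes essentially the same route as the paper: (s5) and (s6) come from composing the antipode axiom with the counit/unit, (s3) and (s4) from the standard anti-(co)multiplicativity argument carried out diagrammatically with (a1), (a5), (s1-1') and the braid axioms, and (i1'), (i2') from (i1), (i2) combined with (s3), (s4), (s2-2') and $\bar S_{1_j}\circ\eta_j=\eta_j$. The only differences are cosmetic: the paper obtains (s4), (s6) and (i2') by rotating the diagrams for (s3), (s5) and (i1') upside down instead of arguing directly, and your worry that $\bar g\,\bar g$ need not compose is not an actual obstruction to the convolution-uniqueness phrasing (the convolutions required only use the always-defined products $m_{g,\bar g}$ and $m_{\bar g,g}$), though your fallback to a direct diagrammatic verification is precisely what the paper does; note also that for (i1') you should pre-compose (i1) with $S_{1_j}$, not post-compose, so that $\Delta_{1_j}\circ S_{1_j}$ actually appears and (s3) can be applied.
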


\begin{proof}
\(s3) is proved in Figure \ref{h-antipode01/fig}. In the first step we obtain the diagram
on the left from the one on the right by applying in the order moves \(a1-3), move \(s1)
and \(a2-4'). To be precise, before applying \(s1) and \(a4'), we also use the braid
axioms presented in Table \ref{table-Hdefn/fig}, but this in general will not be
indicated.

\begin{Figure}[htb]{h-antipode01/fig}
{}{Proof of \(s3) 
   [{\sl a-s}/\pageref{table-Hdefn/fig}]}
\centerline{\fig{}{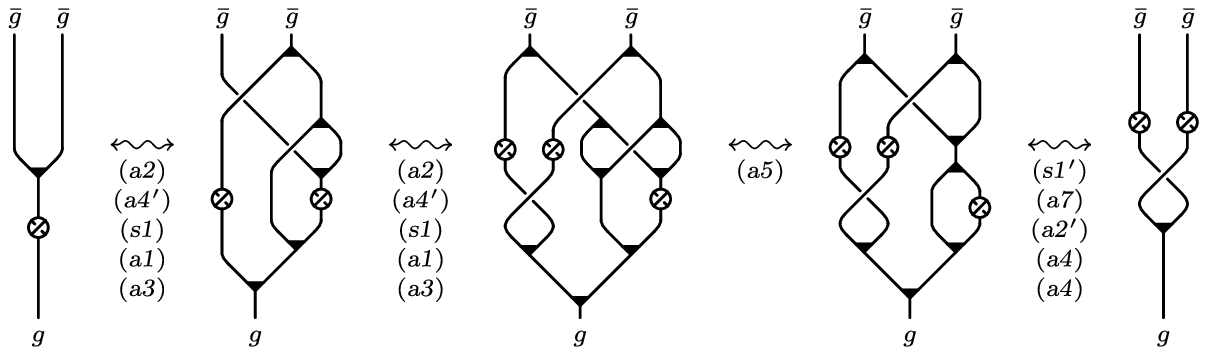}}
\vskip-3pt
\end{Figure}

\begin{Figure}[htb]{h-antipode02/fig}
{}{Proof of \(s5) and \(i1')
   [{\sl a-i}/\pageref{table-Hdefn/fig}, 
    {\sl s}/\pageref{table-Hdefn/fig}-\pageref{table-Hprop/fig}]}
\vskip-12pt
\centerline{\fig{}{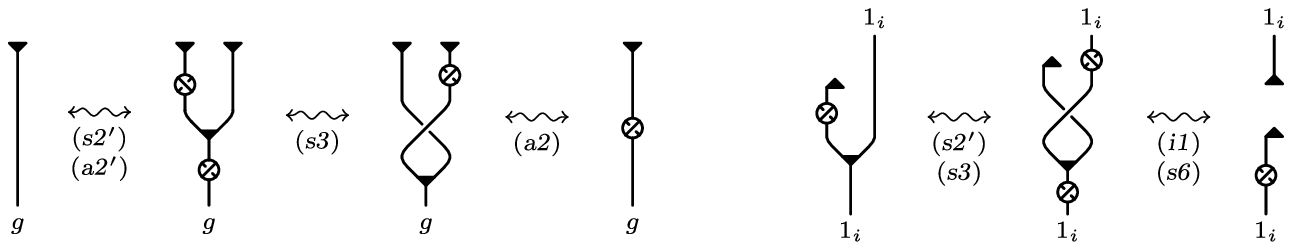}}
\vskip-3pt
\end{Figure}

Property \(s5) is proved in the left side of Figure \ref{h-antipode02/fig}. Then \(s4)
and \(s6) are obtained by rotating the diagrams in Figures \ref{h-antipode01/fig} and
\ref{h-antipode02/fig} upside down. Eventually, using \(s3) and \(s6) one obtains
\(i1') as shown in the right side of Figure \ref{h-antipode02/fig} and \(i1) by
rotating that figure.
\end{proof}

The second set of properties in Table \ref{table-Hprop/fig} states the existence of an
autonomous structure on $\H(\G)$ and describes the relation between such structure and the
algebraic one. The proposition below proves properties \(f1), \(f2) and \(f3-3'), 
extending the result in Lemma 7 of \cite{Ke02} to possibly non-unimodular categories.

Note that in the diagrams representing the morphisms of $\H(\G)$, it is appropriate to use
for the coform and the form the notations \(f1) and \(f2), presented in Table
\ref{table-Hprop/fig}. In fact, the relations \(f3-3') reduce to the standard ``pulling
the string'', which together with the braid axioms in Table \ref{table-Hdefn/fig} realize
regular isotopy of strings.

\begin{proposition}\label{autonomous/thm}
Given a groupoid $\G$, the universal Hopf $\G$-algebra $\H(\G)$ is an autonomous category,
with $H_\pi^{\,\ast} = H_{\pi^\ast}$ for every $\pi \in \seq\G$, where $\pi^\ast$ is the
sequence obtained by re\-versing the order of $\sigma$. In particular $H_g^{\,\ast} = H_g$
for $g \in \G$, while coform and form are defined by
\vskip-8pt
$$\Lambda_{H_g} \!= \Lambda_g = \Delta_g \circ L_g \,,
\eqno{\(f1)}$$
$$\lambda_{H_g} \!= \lambda_g = l_{g \bar g} \circ m_{g, \bar g} \circ (\id_g \diam S_g) 
\,,
\eqno{\(f2)}$$
\vskip4pt\noindent
for any $g \in \G$, and by the following recursive formulas for $\pi = \pi' \diam \pi''
\in \seq\G$ (note\break that the definition is well-posed, giving equivalent results for
different decompositions $\pi = \pi' \diam \pi''$)
\vskip-8pt
$$\Lambda_{H_\pi} \!= \Lambda_\pi = (\id_{(\pi'')^\ast} \diam \Lambda_{\pi'} \diam
\id_{\pi''}) \circ \Lambda_{\pi''}\,,$$
$$\lambda_{H_\pi} \!= \lambda_\pi = \lambda_{\pi'} \circ (\id_{\pi'} \diam \lambda_{\pi''}
\diam \id_{(\pi')^\ast})\,.$$
\vskip4pt\noindent
Hence, properties \(f3-3') in Table \ref{table-Hprop/fig} hold in $\H(\G)$.
\end{proposition}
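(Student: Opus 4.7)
The plan is to verify that the stated formulas make $H_g$ into its own dual by establishing the snake (zigzag) identities in the base case, then to extend this to arbitrary $H_\pi$ by induction on the length of $\pi$ via the recursive definitions, and finally to derive properties (f3-3') from the resulting autonomous structure and the antipode axioms.

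For the base case, let $g \in \G(i,j)$. Substituting the definitions $\Lambda_g = \Delta_g \circ L_g$ and $\lambda_g = l_i \circ m_{g,\bar g} \circ (\id_g \diam S_g)$, the first snake identity
$$(\lambda_g \diam \id_g) \circ (\id_g \diam \Lambda_g) = \id_g$$
becomes a specific composite $H_g \to H_g$ built from $L_g$, $\Delta_g$, $S_g$, $m_{g,\bar g}$, and $l_i$. The plan is to apply coassociativity (a1) to reorganize the three copies of $H_g$ produced from $\Delta_g \circ L_g$, so as to create a subdiagram of the form $m_{g,\bar g} \circ (\id_g \diam S_g) \circ \Delta_g$ applied to $L_g$; by axiom (s1') this collapses to $\eta_i \circ \epsilon_g \circ L_g$. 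Then, one combines the right-integral property (i2) of $L$ and the left-cointegral property (i1) of $l$ to reduce the remaining expression to $l_i \circ L_{1_i}$, which by the normalization (i3) equals $\id_\one$; absorbing this back into the identity strand via the unit axiom (a4-4') recovers $\id_g$. The second snake identity is treated by a symmetric computation using (s1), the right-cointegral property (i1') of $l \circ S$ established in Proposition \ref{antipode/thm}, and the normalization (i3$_\mathrm{S}$).

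For the inductive step, one must first verify that the recursive formulas for $\Lambda_\pi$ and $\lambda_\pi$ yield the same morphism regardless of how $\pi$ is decomposed as $\pi' \diam \pi''$; this is a routine check using the monoidal associativity together with the naturality of the braiding applied to the nested coforms and forms. Granted well-definedness, the snake identities for $H_\pi$ follow by induction on the length of $\pi$: the composite $(\lambda_\pi \diam \id_\pi) \circ (\id_\pi \diam \Lambda_\pi)$ unfolds via the recursive definitions into a nested expression whose outermost layer is the snake identity for the rightmost elementary factor and whose inner layer is the snake identity for the remainder, with braid morphisms mediating between the two layers and untangling the strands.

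Finally, properties (f3-3') express a standard "pulling the string" compatibility between the antipode and the duality; once the autonomous structure is in place, they can be derived directly from the snake identities combined with the antipode axioms (s1-s1') and the definitions (f1)-(f2). I expect the main obstacle to lie in the base case computation: in the groupoid setting one must keep strict track of the source and target objects along every strand, and the correct sequence of axiom applications---(a1) to reorganize comultiplications, (s1') to collapse, then (i2) and (i1) to funnel everything toward a single $l_i \circ L_{1_i}$, and finally (i3)---must be orchestrated so that the normalization becomes applicable at the last step. Once this is in place, the remainder of the argument is essentially bookkeeping.
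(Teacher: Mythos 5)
Your proposal follows essentially the same route as the paper: verify the zigzag identities for the elementary objects $H_g$ by a direct diagrammatic computation from the bialgebra, antipode, integral and normalization axioms (this is exactly what the paper's Figure 4.1.4 does), then propagate to arbitrary $H_\pi$ by induction on the length of $\pi$ via the recursive formulas. One small clarification: in the paper the properties (f3-3') \emph{are} the zigzag identities themselves (just written in the form/coform string notation), so there is nothing further to derive in your final step, and note that $\Delta_g\circ L_g$ produces only two strands, not three.
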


\begin{proof}
Figure \ref{h-form01/fig} shows that $\Lambda_g$ and $\lambda_g$ satisfy the relations in
the Definition \ref{autonomous/def}, that is the properties \(f3-3'). Then, it suffices to
observe that such relations propagate to $\Lambda_\pi$ and $\lambda_\pi$ for any $\pi \in 
\G$, by a simple induction on the length of $\pi$.
\end{proof}

\begin{Figure}[htb]{h-form01/fig}
{}{Proof of \(f3-3') 
   [{\sl a}/\pageref{table-Hdefn/fig}, 
    {\sl i-s}/\pageref{table-Hdefn/fig}-\pageref{table-Hprop/fig}]}
\vskip-3pt
\centerline{\fig{}{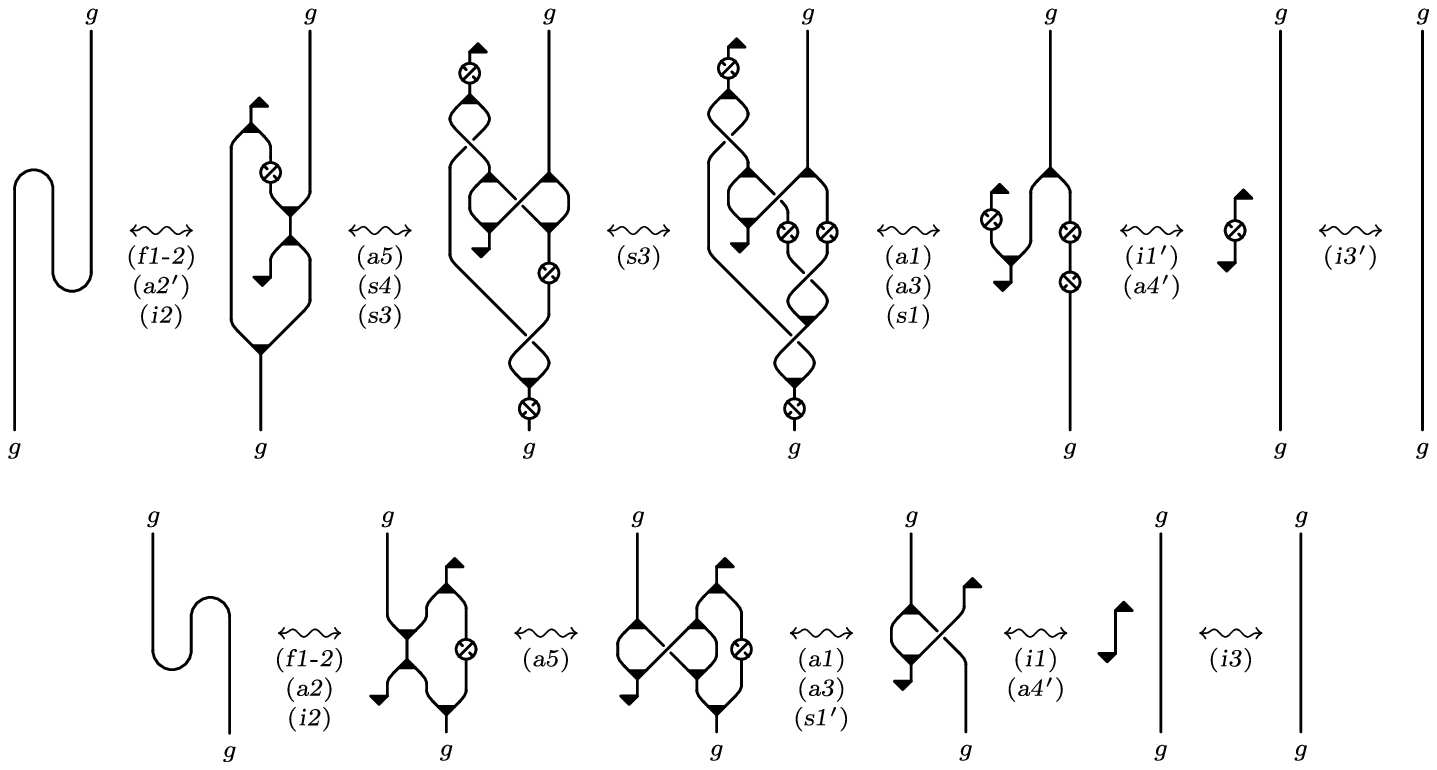}}
\vskip-3pt
\end{Figure}

The remaining two properties \(f4-4') in Table \ref{table-Hprop/fig} can be expressed in 
terms of the right and left ``rotation'' maps:
\begin{eqnarray*}
&\rot_r: \Mor_{\H(\G)}(H_{h_0 \diam \pi_0},H_{\pi_1 \diam h_1}) 
 \to \Mor_{\H(\G)}(H_{\pi_0 \diam h_1}, H_{h_0 \diam \pi_1}),&\\[2pt]
&\rot_l: \Mor_{\H(\G)}(H_{\pi_0 \diam h_0}, H_{h_1 \diam \pi_1}) 
 \to \Mor_{\H(\G)}(H_{h_1 \diam \pi_0}, H_{\pi_1 \diam h_0}),
\end{eqnarray*}
defined for any $h_0,h_1 \in \G$ and $\pi_0,\pi_1 \in \seq\G$ by the identities (see 
Figure \ref{h-form02/fig}):
\begin{eqnarray*}
&\rot_r(F) = (\id_{h_0 \diam \pi_1} \!\diam \lambda_{h_1}) \circ (\id_{h_0} \!\diam F 
 \diam \id_{h_1}) \circ (\Lambda_{h_0} \!\diam \id_{\pi_0 \diam h_1}),&\\[2pt]
&\rot_l(F) = (\lambda_{h_1} \!\diam \id_{\pi_1 \diam h_0}) \circ (\id_{h_1} \!\diam F
 \diam \id_{h_0}) \circ (\id_{h_1 \diam \pi_0} \!\diam \Lambda_{h_0}).&
\end{eqnarray*}

\begin{Figure}[htb]{h-form02/fig}
{}{The rotation maps $\rot_r$ and $\rot_l$}
\centerline{\fig{}{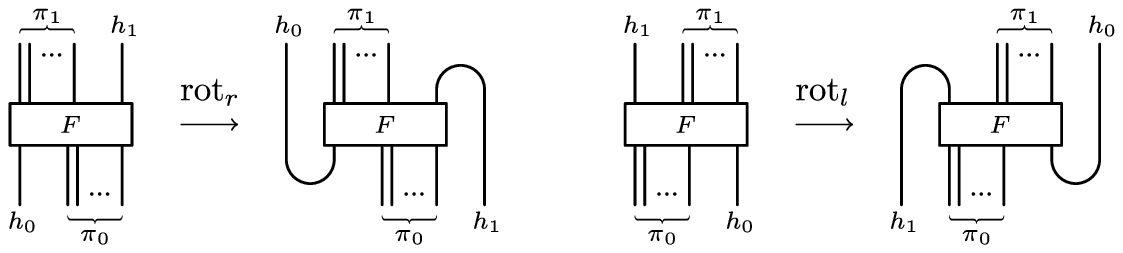}}
\vskip-3pt
\end{Figure}

Proposition \ref{autonomous/thm} implies that the maps $\rot_r$ and $\rot_l$ are inverses
to each other.\break Then properties \(f4-4') in Table \ref{table-Hprop/fig} state that
the comultiplication vertices are invariant under such rotation maps. We will see later
that the same is not true for the multiplication vertices. The reason for which we study
the action of the rotation maps to these types of vertices will become clear in Section
\ref{Psi/sec}, where the functors $\Psi_n$ will relate them to moves \(I5-6) and \(R1) in
Figures \ref{ribbon-moves01/fig} and \ref{ribbon-tang01/fig} in the category of labeled
ribbon surface tangles.

\begin{proposition}\label{rot-delta/thm}
In $\H(\G)$, for any $g \in \G$ we have
\vskip-6pt
$$\rot_r(\Delta_g) = \rot_l(\Delta_g) = \Delta_g: H_g \to H_g \diam H_g,
\eqno{\(f4-4')}$$
\vskip-12pt
\end{proposition}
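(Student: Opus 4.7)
My plan is to verify the identity $\rot_r(\Delta_g)=\Delta_g$ by a diagrammatic computation, unfolding the form and coform in terms of the integral, cointegral, antipode, and (co)multiplication, and then simplifying with the Hopf $\G$-algebra axioms already established. The symmetric identity $\rot_l(\Delta_g)=\Delta_g$ will then follow by a dual argument (or, once $\rot_r(\Delta_g)=\Delta_g$ has been shown, from the fact that $\rot_l=\rot_r^{-1}$, established in Proposition \ref{autonomous/thm}).

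First, I would substitute $\Lambda_g=\Delta_g\circ L_g$ and $\lambda_g=l_{1_j}\circ m_{g,\bar g}\circ(\id_g\diam S_g)$ into the definition of $\rot_r(\Delta_g)$. This produces a diagram where the integral $L_g$ at the bottom is comultiplied twice, with one of these two comultiplications being the middle $\Delta_g$ factor of $\rot_r$; after invoking coassociativity \(a1), this merges into a single triple comultiplication $(\id_g\diam\Delta_g)\circ\Delta_g\circ L_g$. At the top, the rightmost output of this triple comultiplication meets $S_g$ of the input via $m_{g,\bar g}$, after which $l_{1_j}$ is applied.

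The central step is then to recognize that the triangle formed by the rightmost comultiplication branch together with $S_g$ on the input and the multiplication $m_{g,\bar g}$ has a structure amenable to the antipode axiom \(s1). More precisely, I would first slide the input strand inside the iterated comultiplication using coassociativity a second time, produce a subdiagram of the form $m\circ(\id\diam S)\circ\Delta$ acting jointly on $L$ and $x$, and then apply the left cointegral axiom \(i1) to convert $(\id\diam l)\circ\Delta$ into $\eta\circ l$. The resulting subdiagram contains the block $l_{1_i}\circ L_{1_i}$, which collapses to $\id_{\one}$ by the normalization \(i3), leaving exactly $\Delta_g$ on the input.

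The main obstacle will be performing the diagrammatic bookkeeping cleanly, since only a \emph{left} cointegral and a \emph{right} integral are available: all applications of \(i1), \(i2), \(s1), \(s1') and the counit axioms must respect the correct handedness, and several intermediate simplifications require tracking which object $H_{g'}$ a strand lives in within the groupoid $\G$. For $\rot_l(\Delta_g)=\Delta_g$, the analogous argument uses the right cointegral $l\circ S$ and left integral $S\circ L$ constructed in \(i1') and \(i2') of Proposition \ref{antipode/thm}, together with \(i3$_S$), or equivalently one may deduce it directly from the already proved $\rot_r$ identity by composing with $\rot_l$ and using $\rot_l\circ\rot_r=\id$.
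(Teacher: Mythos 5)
The first half of your computation is exactly the paper's argument, and in fact it already finishes the proof: after substituting $\Lambda_g=\Delta_g\circ L_g$ and using coassociativity \(a1) to rewrite your triple comultiplication in the form $(\Delta_g\diam\id_g)\circ\Delta_g\circ L_g$ (rather than $(\id_g\diam\Delta_g)\circ\Delta_g\circ L_g$), the whole diagram factors as $\Delta_g\circ((\id_g\diam\lambda_g)\circ(\Lambda_g\diam\id_g))$, and the second factor is $\id_g$ by the duality identities \(f3-3'), which are already available from Proposition \ref{autonomous/thm}. That is the entire content of Figure \ref{h-form03/fig}; there is no need to unfold $\lambda_g$ at all. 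Your closing observation that $\rot_l(\Delta_g)=\rot_l(\rot_r(\Delta_g))=\Delta_g$ once $\rot_l\circ\rot_r=\id$ is known is correct, and is preferable to the route through \(i1') and \(i2'): the dual computation does not need the $S$-twisted integrals, only the other bracketing of \(a1) together with the other duality identity.

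The second half of your sketch --- re-deriving the duality from the raw axioms --- contains a step that fails as stated. Coassociativity cannot ``slide the input strand inside the iterated comultiplication'': \(a1) only rebrackets iterated $\Delta_g$ on a single strand and never merges the separate input with the comultiplication of $L_g$, so no subdiagram of the shape $m\circ(\id\diam S)\circ\Delta$ ``acting jointly on $L_g$ and the input'' is produced this way. What actually creates a configuration to which \(s1') and then \(i1) and \(i3) apply is the right-integral axiom \(i2) (which inserts a copy of the input next to $L_g$ at the cost of a counit) followed by the multiplicativity axiom \(a5) (which distributes the iterated comultiplication over the resulting product, with the attendant braidings); neither of these appears in your plan. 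This is precisely the mechanism of the proof of \(f3-3') in Figure \ref{h-form01/fig}, and that computation does go through with the two extra comultiplication legs carried along as spectators, so your strategy is repairable --- but the repair amounts to reproving \(f3-3'), which you are entitled simply to quote.
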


\begin{proof}
\(f4) is shown in  Figure \ref{h-form03/fig}. \(f4') is analogous and left to 
the reader.
\end{proof}

\begin{Figure}[htb]{h-form03/fig}
{}{Proof of \(f4) 
   [{\sl a}/\pageref{table-Hdefn/fig}, {\sl f}/\pageref{table-Hprop/fig}]}
\vskip-9pt
\centerline{\fig{}{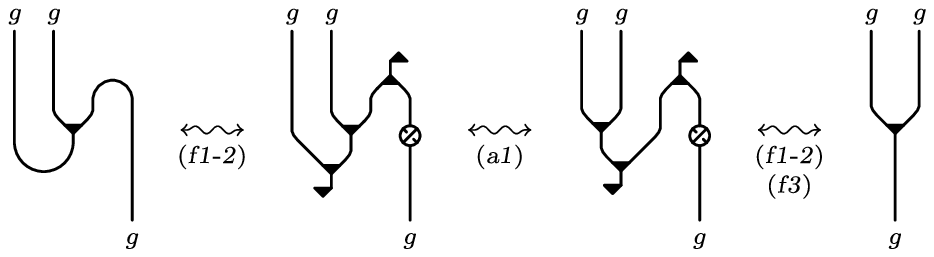}}
\vskip-3pt
\end{Figure}

\begin{definition}\label{unimodular}
A Hopf $\G$-algebra in a braided monoidal category is called {\sl unimodular}
if it has $S$-invariant (2-sided) integral and cointegral, that is:
\vskip-4pt
$$S_g \circ L_g = L_{\bar g},\eqno{\(i4)}$$ 
$$l_{\bar g} \circ S_g = l_g.\eqno{\(i5)}$$
\vskip4pt\noindent
Then, given a groupoid $\G$, the {\sl universal unimodular Hopf $\G$-algebra} $\H^u(\G)$
is the quotient of $\H(\G)$ modulo the relations \(i4) and \(i5) above.
\end{definition}

The diagrammatic presentation of the relation \(i4) and \(i5) above can be found in Table
\ref{table-Hu/fig}. Moreover, as it is indicated there, in $\H^u(\G)$ we change the
notation for the integral and cointegral vertices by connecting the edge to the middle
point of the base of the triangle, to reflect that the corresponding integral is
two-sided.\break In the same Table \ref{table-Hu/fig}, we also rewrite the integral axioms
\(i1-3) (notice that here \(i3{$_{\text{S}}$}) is redundant) and the properties \(i1') and
\(i2'), as well as the definitions \(f1-2) of form and coform, according to the new
notation.

\begin{Table}[p]{table-Hu/fig}
{}{}
\centerline{\fig{}{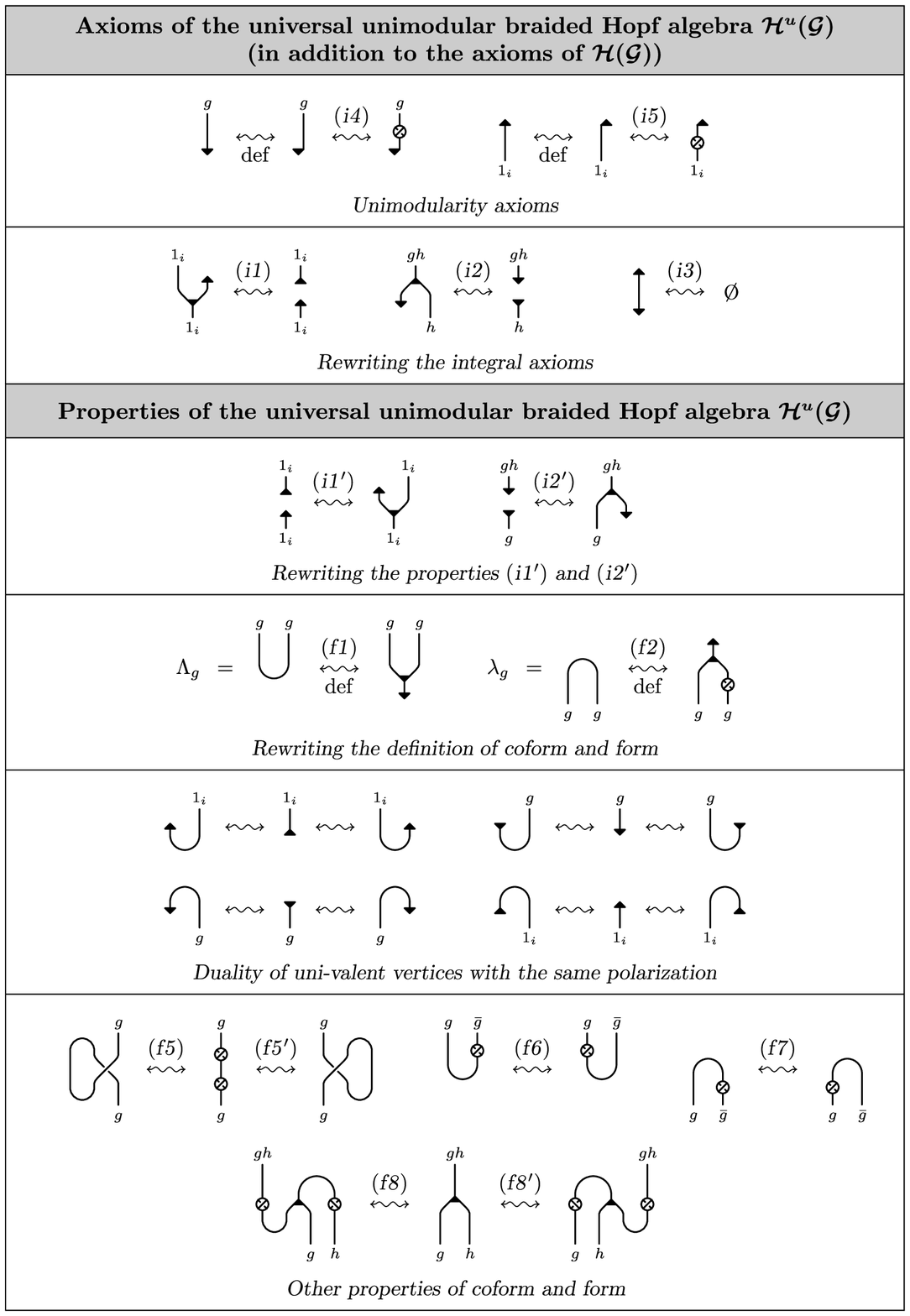}}
\vskip-3pt
\end{Table}

Adding the unimodularity condition to the algebra axioms brings to the symmetry of the 
structure with respect to the rotation around a vertical axis, which inverts the labels in 
$\G$ and reverse the products of objects and morphisms. In fact, axioms \(i4-5) make 
properties \(i1'-2') symmetric to axioms \(i1-2), completing the preexisting symmetry of 
the other axioms. This is the content of next proposition.

\pagebreak

\begin{proposition}\label{symmetry/thm}
Given a groupoid $\G$, there is an involutive antimonoidal category equivalence $\sym:
\H^u(\G) \to \H^u(\G)$, uniquely determined by the following identities, for any $g,h \in 
\G$ (composable for $m_{g,h}$), any $H_\pi,H_{\pi'} \in \Obj \H^u(\G)$ and any $F,F' \in 
\Mor \H^u(\G)$ :
\begin{eqnarray*}
&\sym (H_g) = H_{\bar g}\,,
  \,\sym(\gamma_{g,h}) = \gamma_{\bar h, \bar g}\,,
  \,\sym(\bar\gamma_{g,h}) = \bar\gamma_{\bar h, \bar g}\,,&\\[2pt]
&\sym(\Delta_g) = \Delta_{\bar g}\,,
  \,\sym(\epsilon_g) = \epsilon_{\bar g}\,,
  \,\sym(m_{g,h}) = m_{\bar h,\bar g}\,,
  \,\sym(\eta_i) = \eta_i\,,&\\[2pt]
&\sym(S_g) = S_{\bar g}\,,
  \,\sym(\bar S_g) = \bar S_{\bar g}\,,
  \,\sym(l_i) = l_i\,,
  \,\sym(L_g) = L_{\bar g}\,,&\\[2pt]
&\sym(H_\pi \diam H_{\pi'}) = \sym(H_{\pi'}) \diam \sym (H_\pi)
  \ \ \text{and} \ \
  \sym(F \diam F') = \sym(F') \diam \sym (F)\,.&
\end{eqnarray*}
\vskip-12pt
\end{proposition}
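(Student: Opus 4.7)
The plan is to construct $\sym$ by propagating the assigned values on generators through the free-category construction of $\H^u(\G)$ described in Section~\ref{categories/sec}, and then checking that the defining relations are respected. First I would extend the definition on objects by setting $\sym(H_{\pi_1 \diam \cdots \diam \pi_m}) = \sym(H_{\pi_m}) \diam \cdots \diam \sym(H_{\pi_1})$, and on the free category of composable products of elementary morphisms by a parallel formula that reverses both the order of the factors in each $\diam$ and the order of the identity expansions, while leaving the order of compositions $\circ$ unchanged. Diagrammatically, this is precisely the reflection of a planar diagram about a vertical axis, combined with the relabeling $g \mapsto \bar g$ on each strand; once formulated this way, antimonoidality and the identity $\sym(F\circ F')=\sym(F)\circ\sym(F')$ are automatic.

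Next I would verify that $\sym$ sends each defining relation of $\H^u(\G)$ (listed in Tables~\ref{table-Hdefn/fig} and~\ref{table-Hu/fig}) to another defining relation, which is what is required for $\sym$ to descend to a functor on the quotient. Most axioms come in left/right symmetric pairs which $\sym$ simply swaps: coassociativity \(a1) is self-symmetric, \(a2) and \(a2'\!) are exchanged, as are \(a4-4'\!), \(s1-1'\!), \(s2-2'\!), and the two braid relations. The mixed axioms \(a3), \(a5), \(a6), \(a7), \(a8) are each self-symmetric after relabeling $g, h \mapsto \bar h, \bar g$, noting that in the groupoid $\overline{gh} = \bar h\bar g$ so that the target labels transform consistently. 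The crucial use of unimodularity comes in checking the integral/cointegral axioms: \(i1) gets sent by $\sym$ to the relation expressing that $l\circ S$ (equivalently $l$, after \(i5)) is a right cointegral, i.e.\ to \(i1'\!); similarly \(i2) maps to \(i2'\!), and \(i4-5) are mutually symmetric. Without the unimodularity axioms \(i4-5), these pairs would produce genuinely new relations in $\H(\G)$; this is precisely the obstruction removed by passing to $\H^u(\G)$, and it is the conceptual content of the proposition.

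Once well-definedness is established, involutivity $\sym\circ\sym = \id$ is immediate on generators (each assignment squares to the identity, since $\bar{\bar g}=g$ and each generator is mapped to its companion in a pair that squares back), and the antimonoidality formula propagates this to all objects and morphisms by a straightforward induction on the length of a presentation as a composition of expansions of elementary morphisms. It follows that $\sym$ is its own inverse, hence a (strict) category equivalence, and antimonoidal by construction. The only delicate point in the whole argument is bookkeeping: ensuring that every one of the thirty-odd relations in the two tables is matched correctly under the vertical reflection, paying particular attention to the relations involving the braiding where the inverse braiding $\bar\gamma$ must appear in the symmetric image, and to the fact that \(i4) and \(i5) are exactly what is needed to close the last gap between the original axioms and their reflections.
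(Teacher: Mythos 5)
Your proposal is correct and follows essentially the same route as the paper's own (very brief) proof: define $\sym$ by propagating the generator assignments, observe that every axiom is either self-symmetric or exchanged with its primed companion — with the unimodularity axioms \(i4-5) being exactly what turns the reflected integral axioms \(i1'-2') into consequences of the axioms of $\H^u(\G)$ — and deduce the equivalence from involutivity on generators. Your additional remarks on the pairing of axioms and on the role of unimodularity match the paper's discussion preceding the proposition.
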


\begin{proof}
The universal property of $\H^u(\G)$ allows us to define the wanted functor by propagating
the identities in the statement over compositions, once we show that this preserves the
axioms of $\H^u(\G)$. This is indeed the case, being all the axioms invariant or
interchanged with their primed versions (possibly up to inversion). That such functor is
an involution and hence a category equivalence, follows from its involutive action on the
elementary morphisms and products.
\end{proof}

Some further properties of $\H^u(\G)$ are listed in Table \ref{table-Hu/fig}. First of
all, using the integral axioms \(i1) to \(i5) and the bialgebra axiom \(a8) in Table
\ref{table-Hdefn/fig}, it is easy to see that the uni-valent vertices of the same
polarization are dual to each other with respect to the form/coform. Moreover,
unimodularity leads to the existence of a tortile structure (cf. Definition
\ref{tortile/def}) with some additional properties, as stated by the next proposition,
which is a version of Lemma 8 in \cite{Ke02}.

\begin{Figure}[b]{h-form04/fig}
{}{Proof of \(f5) 
   [{\sl f-i}/\pageref{table-Hu/fig}, {\sl s}/\pageref{table-Hprop/fig}]}
\centerline{\fig{}{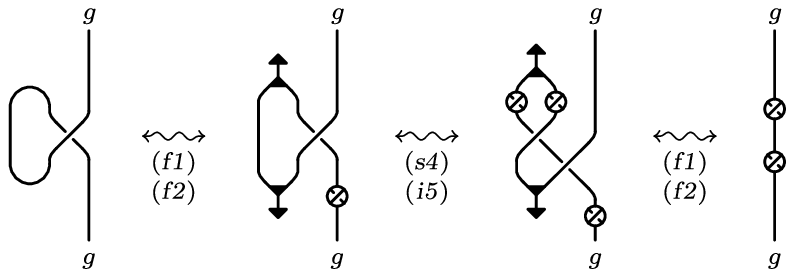}}
\vskip-3pt
\end{Figure}

\begin{proposition} \label{H-tortile/thm}  
Given a groupoid $\G$, the universal unimodular Hopf $\G$-algebra $\H^u(\G)$ is a tortile
category, with the twist $\theta_{H_\pi}\!$ defined for any $\pi \in \seq\G$\nobreak\ by
$$\theta_{H_\pi} = \theta_\pi = (\lambda_{\pi^\ast} \!\diam \id_\pi) \circ (\id_{\pi^\ast} 
\!\diam \gamma_{\pi,\pi}) \circ (\Lambda_\pi \diam \id_\pi)\,.$$
Moreover, the following properties (cf. Table \ref{table-Hu/fig}) hold for any $g\in\G$:
\vskip-4pt
$$\theta_g = S_{\bar g} \circ S_g= \theta_g^{\,\ast}, 
\eqno{\(f5-5')}$$
$$(\id_g \diam S_g) \circ \Lambda_{\bar g} = (S_{\bar g} \diam \id_g) \circ \Lambda_g\,,
\eqno{\(f6)}$$
$$\lambda_g \circ(\id_g \diam S_{\bar g}) = \lambda_{\bar g} \circ (S_g \diam \id_{\bar 
g}) \,.
\eqno{\(f7)}$$
\vskip-12pt
\end{proposition}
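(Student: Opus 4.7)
The plan is to first establish the key algebraic identity \(f5), namely $\theta_g = S_{\bar g} \circ S_g$, since this reduces the twist morphism on each elementary object to the composition of two antipodes. From this identity, the self-duality \(f5') will follow by computing $(\theta_g)^\ast$ directly from the definition of the dual morphism in Definition \ref{autonomous/def} and comparing it with $S_{\bar g} \circ S_g$, using \(s3) together with the form/coform axioms. The remaining properties \(f6) and \(f7) will then fall out of manipulating the form/coform identities \(f3-3') together with the antipode properties \(s2-2') and \(s3).

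To establish \(f5), I would unwind the definition
$$\theta_g = (\lambda_g \diam \id_g) \circ (\id_g \diam \gamma_{g,g}) \circ (\Lambda_g \diam \id_g)$$
and substitute the defining formulas \(f1) $\Lambda_g = \Delta_g \circ L_g$ and \(f2) $\lambda_g = l_{g\bar g} \circ m_{g,\bar g} \circ (\id_g \diam S_g)$. The resulting diagram contains a copy of the integral $L_g$ feeding into a comultiplication on the left, and a cointegral $l_{g\bar g}$ preceded by a multiplication with an antipode on the right. The key step is to use the integral axioms \(i1-2) together with unimodularity \(i4-5), the antipode axiom \(s1) and the bialgebra axiom \(a5) to rearrange and cancel these pieces. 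In essence, this is the generalization to groupoids of Kerler's Lemma 8 in \cite{Ke02}: after unimodularity is used to move antipodes across the integral, enough cancelations occur to leave only $S_{\bar g} \circ S_g$. The groupoid indices must be tracked throughout to verify that all multiplications and integrals appearing in intermediate steps are actually defined, i.e. that the relevant elements of $\G$ are composable.

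Once \(f5) is in hand, property \(f6) follows by rewriting $(\id_g \diam S_g) \circ \Lambda_{\bar g} = (\id_g \diam S_g) \circ \Delta_{\bar g} \circ L_{\bar g}$, applying \(s3) and then unimodularity $L_{\bar g} = S_g \circ L_g$ from \(i4) to symmetrize. Property \(f7) is obtained symmetrically, and can also be seen directly as the image of \(f6) under the symmetry functor $\sym$ of Proposition \ref{symmetry/thm}. For the twist axioms themselves, $\theta_\emptyset = \id_\emptyset$ is immediate since $\Lambda_\emptyset = \lambda_\emptyset = \id_\emptyset$. Naturality and invertibility of $\theta_\pi$ (with inverse obtained by replacing $S$ by $\bar S$ in the defining diagram and invoking \(s2-2')) follow from standard manipulations with form and coform. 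The product rule $\theta_{H_\pi \diam H_{\pi'}} = \gamma_{\pi',\pi} \circ (\theta_{\pi'} \diam \theta_\pi) \circ \gamma_{\pi,\pi'}$ is proved by induction on the lengths of $\pi$ and $\pi'$, using the recursive definitions of $\Lambda_\pi$ and $\lambda_\pi$ from Proposition \ref{autonomous/thm} together with the braiding hexagon axioms in Definition \ref{braided-cat/def}. Finally, the self-duality of the twist on general objects reduces by the product rule to the elementary case \(f5') already handled.

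The main obstacle will be the algebraic verification of \(f5), which is the computational heart of the proposition. The difficulty lies in correctly manipulating the integral $L_g$ and cointegral $l_{g\bar g}$ — which carry non-trivial groupoid indexing absent from the classical Hopf algebra case — together with the braiding and the antipode. Unimodularity \(i4) must be used to absorb antipodes into the integral, and each application of the bialgebra, integral or antipode axiom must be checked for index compatibility in $\G$. Once this core identity is established, the remaining claims of the proposition follow by comparatively routine diagrammatic manipulation.
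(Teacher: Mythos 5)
Your proposal is correct and follows essentially the same route as the paper: the computational core is the identity \(f5) $\theta_g = S_{\bar g}\circ S_g$, proved by expanding the twist via \(f1-2) and cancelling with the integral and unimodularity axioms, after which \(f5'), \(f6), \(f7), the product rule and naturality (checked on elementary morphisms) all follow by routine diagrammatics. The only cosmetic difference is that the paper obtains \(f5') and \(f7) by invoking the symmetry functor $\sym$ of Proposition \ref{symmetry/thm} rather than by direct dualization, which spares a little index bookkeeping but changes nothing essential.
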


\begin{proof}
Observe that the definition of $\theta_\pi$ guarantees that $\theta_\one = \id_\one$ and
that the identity $\theta_{\pi \diam \pi'} = \gamma_{\pi',\pi} \circ (\theta_{\pi'} \diam
\theta_\pi) \circ \gamma_{\pi,\pi'}$ holds, up to isotopy moves, for any $\pi,\pi' \in
\seq\G$. Therefore, in order to see that $\theta$ makes $\H^u(\G)$ into a tortile
category, it is enough to show its naturality and that $\theta_{\pi^\ast} =
\theta_\pi^{\,\ast}$ for any object $H_\pi$ in $\H^u(\G)$.

\begin{Figure}[htb]{h-form05/fig}
{}{Proof of \(f6)
   [{\sl f-i}/\pageref{table-Hu/fig},
    {\sl s}/\pageref{table-Hdefn/fig}-\pageref{table-Hprop/fig}]}
\centerline{\fig{}{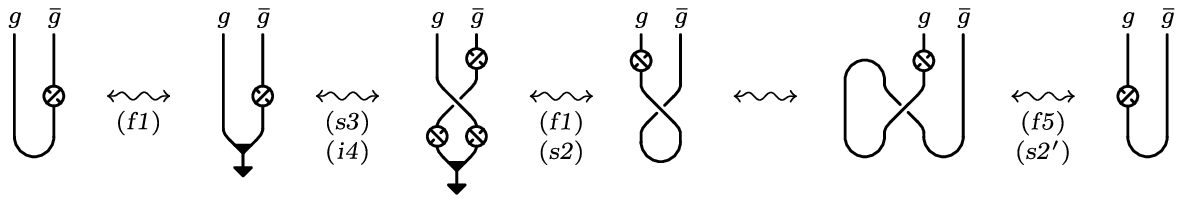}}
\vskip-3pt
\end{Figure}

We will first prove the last identity. Through an induction argument, one can easily see
that the general case follows from \(f5-5'). Figure \ref{h-form04/fig} shows that
$\theta_g$, represented by the diagram on the left side of \(f5), is equivalent to
$S_{\bar g} \circ S_g$, which proves \(f5). Then also $\theta_g^{\,\ast}$, represented up
to isotopy moves by the diagram on the right side of \(f5'), is equivalent to $S_{\bar g}
\circ S_g$, being properties \(f5) and \(f5') symmetric to each other under the category
equivalence in Proposition \ref{symmetry/thm}. Moreover, in Figure \ref{h-form05/fig} we
see that \(f5) implies \(f6), while \(f7) immediately follows from \(f6) and the
properties \(f3-3') of the form and coform presented in Table \ref{table-Hprop/fig}.

It is left to prove the naturality of $\theta$, i.e. that $\theta_{\pi_1} \circ F = F
\circ \theta_{\pi_0}$ for any morphism $F: H_{\pi_0} \to H_{\pi_1}$ in $\H^u(\G)$. Since
any morphism in $\H^u(\G)$ is a composition of expansions (i.e. products with identities)
of elementary morphisms, by using the identity $\theta_{\pi \diam \pi'} =
\gamma_{\pi',\pi} \circ (\theta_{\pi'} \diam \theta_\pi) \circ \gamma_{\pi,\pi'}$ and the
isotopy moves, we can reduce ourselves to the case when $F$ is any elementary morphism.
This case easily follows from \(f5) and the properties \(s3-6) in Table
\ref{table-Hprop/fig} and \(i4-5) in Table \ref{table-Hu/fig}
\end{proof}

As we said above, under the right and left rotation maps the multiplication vertices do
not remain invariant, but change as described by properties \(f8-8') in Figure
\ref{table-Hu/fig}, which are proved in the next proposition. We observe that \(f8) still 
holds in the non-unimodular case (in $\H(\G)$), while \(f8') needs the unimodularity 
axioms.

\begin{proposition}\label{rot-m/thm}
In $\H^u(\G)$, if $g,h,gh\in\G$ ($g$ and $h$ are composable) then
\vskip-4pt
$$\rot_r(m_{g,h}) = \bar S_{\bar g} \circ m_{h,\bar h \bar g} \circ (\id_h \diam
S_{gh}) :  H_h \diam H_{gh} \to H_g\,,
\eqno{\(f8)}$$
$$\rot_l(m_{g,h}) = \bar S_{\bar h} \circ m_{\bar h \bar g,g} \circ (S_{gh}  \diam
\id_g) :  H_h \diam H_{gh} \to H_g\,.
\eqno{\(f8')}$$
\end{proposition}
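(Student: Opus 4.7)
The plan is to prove \(f8) directly in $\H(\G)$ and then obtain \(f8') from it by applying the involutive antimonoidal category equivalence $\sym: \H^u(\G) \to \H^u(\G)$ of Proposition \ref{symmetry/thm}. Under $\sym$, the rotation $\rot_r$ is swapped with $\rot_l$ and the right-hand side of \(f8) is carried to that of \(f8') (with the roles of $g$ and $h$ appropriately interchanged), so the two identities become equivalent modulo the unimodularity axioms \(i4-5) built into $\sym$. This cleanly accounts for the asymmetric hypothesis of the statement: \(f8) relies only on the autonomous and Hopf structure present already in $\H(\G)$, whereas \(f8') needs the full $\H^u(\G)$.

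For \(f8) itself, the first step is to unfold $\rot_r(m_{g,h})$ via the definitions $\Lambda_g = \Delta_g \circ L_g$ and $\lambda_{gh} = l_{1_i} \circ m_{gh,\bar h\bar g} \circ (\id_{gh} \diam S_{gh})$ given by \(f1-2). The expanded diagram features the nested multiplication $m_{gh,\bar h\bar g} \circ (m_{g,h} \diam \id_{\bar h\bar g})$, which by associativity \(a3) and the groupoid identity $h\bar h\bar g = \bar g$ rewrites as $m_{g,\bar g} \circ (\id_g \diam m_{h,\bar h\bar g})$. This regrouping isolates the subfactor $m_{h,\bar h\bar g} \circ (\id_h \diam S_{gh})$ that already appears on the right-hand side of \(f8), and reduces the proof to the auxiliary identity
\[
T \;\coloneq\; (\id_g \diam l_{1_i}) \circ (\id_g \diam m_{g,\bar g}) \circ (\Delta_g \diam \id_{\bar g}) \circ (L_g \diam \id_{\bar g}) \;=\; \bar S_{\bar g} : H_{\bar g} \to H_g.
\]

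The identification $T = \bar S_{\bar g}$ is the heart of the argument and the main technical obstacle. Since by \(s2) $\bar S_{\bar g}$ is the two-sided inverse of $S_g$, it is enough to prove $T \circ S_g = \id_g$. My strategy is diagrammatic: use the rotation invariance \(f4) of the comultiplication, together with \(s3) (which conjugates $\Delta$ by the antipode and a braiding), to bring the comultiplied $L_g$ into a configuration where the antipode-product axiom \(s1') collapses a $m_{g,\bar g} \circ (\id_g \diam S_g) \circ \Delta_g$ pattern into $\eta_{1_i} \circ \epsilon_g$. The counit axiom \(a2-2') then removes the $\epsilon$-$\eta$ ends, and the normalization $l_{1_i} \circ L_{1_i} = \id_\one$ of \(i3) (possibly in its $S$-twisted form \(i3{$_{\text{S}}$})) closes the computation. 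The delicate point is to arrange the intermediate braidings so that they can be harmlessly absorbed against $\epsilon$ or $\eta$; I would present the full reduction as a single multi-step figure in the style of Figures \ref{h-form04/fig} and \ref{h-form05/fig}, with each arrow labeled by the axiom invoked.
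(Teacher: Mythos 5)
Your overall architecture coincides with the paper's: \(f8) is established by a direct diagrammatic computation that is already valid in $\H(\G)$, and \(f8') is then deduced by applying the antimonoidal equivalence $\sym$ of Proposition \ref{symmetry/thm}, which is exactly where unimodularity enters. Your unfolding of $\rot_r(m_{g,h})$ via \(f1-2), followed by the regrouping through associativity \(a3) and the groupoid identity $h\bar h\bar g=\bar g$, correctly isolates the factor $m_{h,\bar h\bar g}\circ(\id_h\diam S_{gh})$ and reduces everything to the auxiliary identity $T=\bar S_{\bar g}$, with $T=(\id_g\diam l_i)\circ(\id_g\diam m_{g,\bar g})\circ(\Lambda_g\diam\id_{\bar g})$. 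All of this is sound and is essentially what the paper's Figure \ref{h-form06/fig} does.

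The step you call the heart of the argument, however, does not close as written. The toolkit you list for proving $T\circ S_g=\id_g$ --- namely \(f4), \(s3), \(s1'), \(a2-2') and the normalization \(i3) --- is insufficient: the identity fails for an arbitrary pair of morphisms $(l,L)$ satisfying only $l_i\circ L_{1_i}=\id_\one$, so any from-scratch derivation must invoke the defining (co)integral axioms \(i1) and \(i2) (together with the multiplicativity \(a5)) in order to absorb the product $m_{g,\bar g}\circ(L_g\diam\,\cdot\,)$ into $L_{1_i}\circ\epsilon$ and to collapse the pattern $(\id\diam l_i)\circ\Delta_{1_i}$ into $\eta_i\circ l_i$; none of these appears in your sketch, and the braiding bookkeeping you defer is precisely where they are needed. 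Fortunately the gap is repaired by a one-line observation rather than by the multi-step figure you propose: unfolding \(f1-2) shows that $T\circ S_g=(\id_g\diam\lambda_g)\circ(\Lambda_g\diam\id_g)$, which is verbatim one of the two zig-zag identities \(f3-3') already proved in Proposition \ref{autonomous/thm}; composing on the right with $\bar S_{\bar g}=S_g^{-1}$ from \(s2-2') gives $T=\bar S_{\bar g}$ at once. With that substitution your argument becomes essentially the paper's proof.
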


\begin{proof}
See Figure \ref{h-form06/fig} for property \(f8). Then property \(f8') follows by 
symmetry, according to Proposition \ref{symmetry/thm}.
\end{proof}

\begin{Figure}[htb]{h-form06/fig}
{}{Proof of \(f8) 
   [{\sl a}/\pageref{table-Hdefn/fig}, {\sl f}/\pageref{table-Hu/fig}, 
    {\sl s}/\pageref{table-Hdefn/fig}]}
\vskip-6pt
\centerline{\fig{}{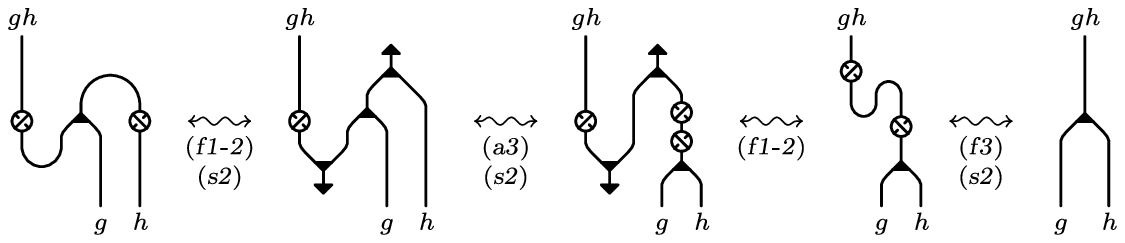}}
\vskip-3pt
\end{Figure}

\subsection{The universal groupoid ribbon Hopf algebra $\H^r(\G)$%
\label{HrG/sec}}

In this section we want to provide the universal unimodular Hopf $\G$-algebra $\H^u(\G)$
with a ribbon structure. The first step in this direction is to postulate the existence of
a family of ribbon morphisms, in the sense of the following definition.

\begin{definition}\label{ribbon-morphisms/def}
Given a unimodular Hopf $\G$-algebra $H$ in a braided monoid\-al category $\C$, a {\sl
family of ribbon morphisms} of $H$ is a set $v =\{v_g: H_g\to H_g\}_{g \in \G}$ of
invertible morphisms in $\C$, such that for any $g,h,gh \in \G$:
\vskip-6pt 
$$S_g \circ v_g = v_{\bar g} \circ S_g\,,
\eqno{\(r3)}$$
$$\epsilon_g \circ v_g = \epsilon_g\,,
\eqno{\(r4)}$$
$$m_{g,h} \circ (v_g \diam \id_h) = v_{gh} \circ m_{g,h}\,,
\eqno{\(r5)}$$
\vskip4pt\noindent
and the family of morphisms $\sigma = \{\sigma_{i,j}: \one \to H_{1_i} \diam 
H_{1_j}\}_{i,j \in \Obj \G}$, defined by
$$\arraycolsep0pt
\sigma_{i,j} = \left\{
\begin{array}{ll} 
 (v_{1_i}^{-1} \diam (v_{1_i}^{-1} \circ S_{1_i})) \circ \Delta_{1_i} \circ v_{1_i}
 \circ \eta_i &\quad \text{if } i = j,\\[2pt]
 \eta_i \diam \eta_j &\quad \text{if } i\neq j,
\end{array}\right.
\eqno{\(r6)}$$
\vskip8pt\noindent
satisfies the following identity for any $i \in \Obj \G$:
\vskip-6pt
$$(\Delta_{1_i} \diam \id_{1_i}) \circ \sigma_{i,i} = (\id_{1_i} \diam \id_{1_i} \diam 
m_{1_i,1_i}) \circ (\id_{1_i} \diam \sigma_{i,i} \diam \id_{1_i}) \circ \sigma_{i,i}.
\eqno{\(r7)}$$
The axioms above are presented in Table \ref{table-Huvdefn/fig}, where we use thinner
lines to draw the optional part in the diagram on the righthand side of \(r6). Observe
that when $i = j$, the thicker units can be deleted by axioms \(a4-4') in Table
\ref{table-Hdefn/fig}, and therefore $\sigma_{i,i}$ is given just by the thinner diagram.
\end{definition}

\begin{Table}[htb]{table-Huvdefn/fig}
{}{}
\centerline{\fig{}{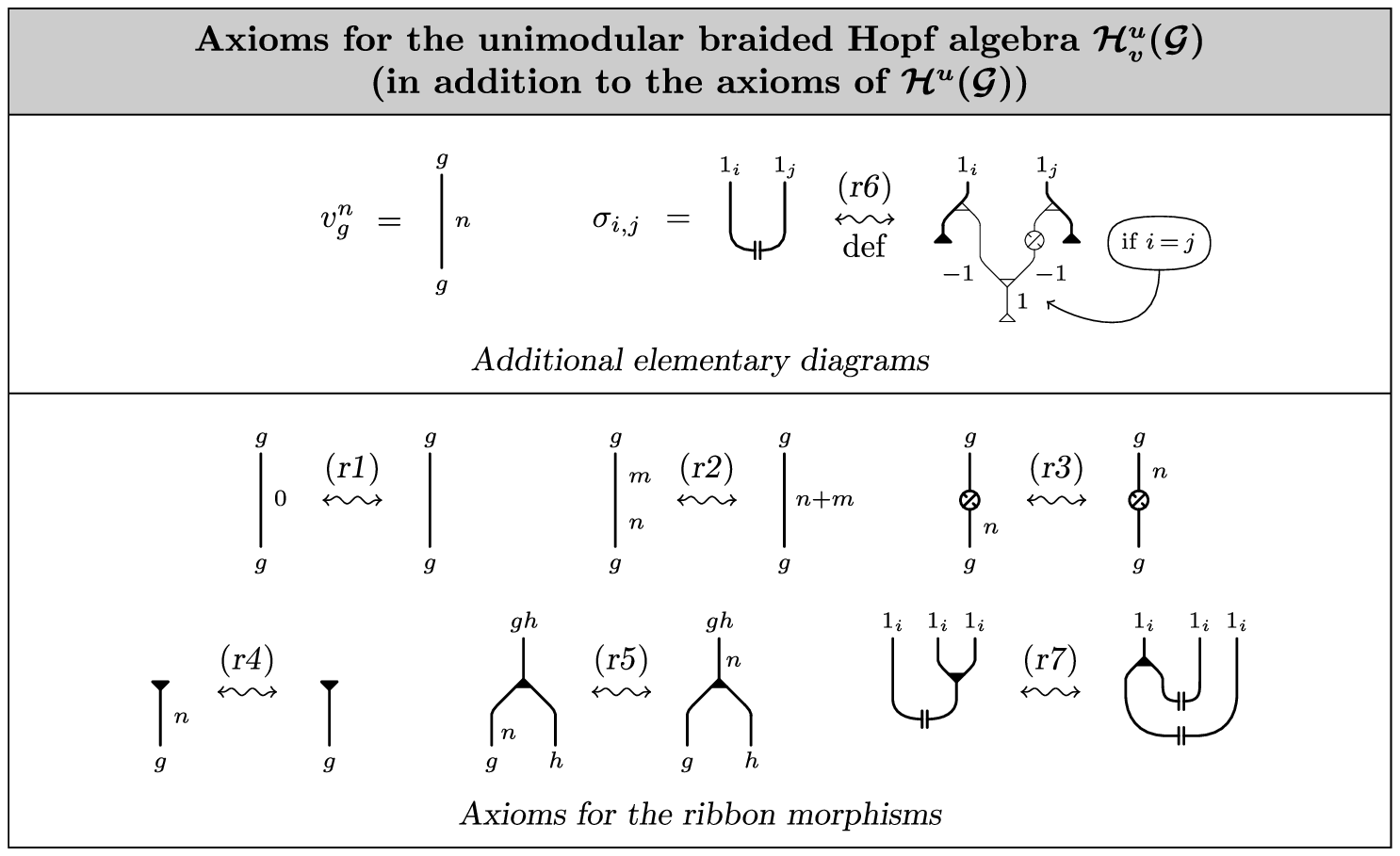}}
\vskip-3pt
\end{Table}

Usually in the literature (see Section 4 in \cite{Ke02} for the case of a trivial
groupoid), instead of ribbon morphisms are introduced ribbon elements, as the ones defined
below in the general case of a groupoid Hopf algebra.

\begin{definition}\label{ribbon-elements/def}
Given a unimodular Hopf $\G$-algebra $H$ in a braided monoid\-al category $\C$, a family
of {\sl ribbon elements} of $H$ consists of two sets $V =\{V_i: \one \to H_{1_i}\}_{i \in
\Obj\G}$ and $\bar V =\{\bar V_i: \one\to H_{1_i}\}_{i \in \Obj\G}$ of morphisms in $\C$,
such that for any $i \in \Obj \G$ and $g \in \G(i,j)$:
\vskip-12pt
$$S_{1_i} \circ V_i = V_i
\ \ \text{and} \ \ 
\epsilon_{1_i} \circ V_i = \one\,,$$
$$m_{1_i,1_i} \circ (V_i \diam \bar V_i) = \eta_i
\ \ \text{and} \ \
m_{1_i,g} \circ (V_i \diam \id_g) = m_{g,1_j} \circ (\id_g \diam V_j)\,,$$
\vskip4pt\noindent
and the family of morphisms $\sigma = \{\sigma_{i,j}: \one \to H_{1_i} \diam 
H_{1_j}\}_{i,j \in \Obj \G}$, defined by
$$\arraycolsep0pt
\sigma_{i,j} = \left\{
\begin{array}{ll} 
(m_{1_i,1_i}\diamond m_{1_i,1_i})\circ (\bar V_i \diam \id_{1_i}\diam S_{1_i} \diam \bar 
V_i) \circ \Delta_{1_i} \circ V_i &\quad \text{if } i = j,\\[2pt]
\eta_i \diam \eta_j &\quad \text{if } i\neq j,
\end{array}\right.
$$
\vskip4pt\noindent
satisfies the following identity:
\vskip-6pt
$$(\Delta_{1_i} \diam \id_{1_i}) \circ \sigma_{i,i} = (\id_{1_i} \diam \id_{1_i} \diam 
m_{1_i,1_i}) \circ
(\id_{1_i} \diam \sigma_{i,i} \diam \id_{1_i}) \circ \sigma_{i,i}.
$$\vskip-12pt
\end{definition}

Actually, next proposition states the equivalence of the two approaches. Moreover, the
axioms for ribbon elements can be considered conceptually simpler, in sense that when $H$
is the braiding of an ordinary Hopf algebra $\bar H$ the ribbon element is a special
central element in $\bar H$ (cf. \cite{Vi01}). Nevertheless, as we will discuss below, the
approach base on ribbon morphisms seems to be preferable in the present context.

\begin{proposition}\label{ribbon-elements/thm}
For any unimodular Hopf $\G$-algebra $H$ in a braided mon\-oidal category $\C$, there is a
bijective correspondence between the set of families of ribbon morphisms and the set of
families of ribbon elements of $H$, given by the map $v \mapsto \{V,\bar V\}$ defined by
$V_i = v_{1_i} \circ \eta_i$ and $\bar V_i = v_{1_i}^{-1} \circ \eta_i$ for every $i \in
\Obj \G$, whose inverse $\{V,\bar V\} \mapsto v$ is defined by $v_g = m_{1_i,g} \circ (V_i
\diam \id_g)$ for every $g \in \G(i,j)$.
\end{proposition}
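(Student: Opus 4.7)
The plan is to check three things: that $v\mapsto\{V,\bar V\}$ lands in ribbon element families, that $\{V,\bar V\}\mapsto v$ lands in ribbon morphism families, and that these two assignments are mutual inverses. The inverse-pair check is the quickest and I would do it first: starting from $v$, the round-trip gives $v'_g = m_{1_i,g}\circ(v_{1_i}\eta_i\diam\id_g) = v_g\circ m_{1_i,g}\circ(\eta_i\diam\id_g) = v_g$ by \(r5) and \(a4'); starting from $\{V,\bar V\}$, one recovers $V'_i = v_{1_i}\circ\eta_i = m_{1_i,1_i}\circ(V_i\diam\eta_i) = V_i$ by \(a4), and similarly $\bar V'_i = \bar V_i$ once one knows that $v_g^{-1} = m_{1_i,g}\circ(\bar V_i\diam\id_g)$.

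In the forward direction, given $v$ a family of ribbon morphisms, I would verify the ribbon element axioms one by one. $S$-invariance of $V_i$ follows from \(r3) together with \(s6); the counit identity comes from \(r4) and \(a8); the relation $m_{1_i,1_i}(V_i,\bar V_i) = \eta_i$ follows by applying \(r5) to pull $v_{1_i}$ past $m$ and then using $v_{1_i}\circ v_{1_i}^{-1} = \id$ combined with \(a4'). The centrality identity $m_{1_i,g}(V_i,-) = m_{g,1_j}(-,V_j)$ reduces to the statement that both sides equal $v_g$: for the left-hand side by \(r5) plus \(a4'), and for the right-hand side by the dual identity $m_{g,h}\circ(\id_g\diam v_h) = v_{gh}\circ m_{g,h}$ discussed below. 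Finally, the two prescriptions for $\sigma_{i,i}$ agree once $v_{1_i}^{-1}$ is identified with left multiplication by $\bar V_i$ and centrality is used to move $\bar V_i$ from left of $S_{1_i}$ to right of it on the second tensor factor; axiom \(r7) then transfers automatically between the two formulations.

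In the backward direction, given ribbon elements $\{V,\bar V\}$, invertibility of $v_g$ with inverse $m_{1_i,g}\circ(\bar V_i\diam\id_g)$ follows from associativity \(a3), the relation $m(V_i,\bar V_i) = \eta_i$, and \(a4'). Axiom \(r5) is just \(a3); axiom \(r4) uses \(a6) together with $\epsilon V_i = \id_\one$. Axiom \(r3) is the subtle one: I would rewrite $v_g = m_{g,1_j}(-,V_j)$ via centrality of $V_j$, then apply \(s4) to push $S_g$ through the multiplication, exploit naturality of the braiding against $V_j\colon\one\to H_{1_j}$, and finish with $S_{1_j}V_j = V_j$ to identify the result with $v_{\bar g}\circ S_g$. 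Axiom \(r6) is then the definition, and \(r7) is what is postulated on $\{V,\bar V\}$.

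The main obstacle, shared by both directions, is the derivation of the symmetric centrality property
$$m_{g,h}\circ(\id_g\diam v_h) = v_{gh}\circ m_{g,h}\,,$$
which is not postulated in Definition~\ref{ribbon-morphisms/def} but is needed to transport $v$ from one factor of $m$ to the other. It follows from \(r3), \(r5) and \(s4): applying \(s4) to $S_{gh}\circ m_{g,h}$ and then moving $v$ inside using \(r3) and \(r5) together with the naturality identity $(v_h\diam\id_g)\circ\gamma_{g,h} = \gamma_{g,h}\circ(\id_g\diam v_h)$, one reaches $S_{gh}\circ v_{gh}\circ m_{g,h} = S_{gh}\circ m_{g,h}\circ(\id_g\diam v_h)$, and invertibility of $S$ finishes the job. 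Once this lemma is in place, the remaining diagrammatic checks are routine, with Tables~\ref{table-Hdefn/fig}, \ref{table-Hprop/fig}, \ref{table-Hu/fig} and \ref{table-Huvdefn/fig} supplying all the required moves.
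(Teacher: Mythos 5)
Your proposal is correct and follows essentially the same route as the paper: the paper's proof likewise singles out the centrality identity $m_{1_i,g}\circ(V_i\diam\id_g)=m_{g,1_j}\circ(\id_g\diam V_j)$ as the only non-trivial point, resting on the property \(r5') derived from \(r5) via \(s4), exactly as in your key lemma, and leaves the remaining verifications to the reader (which you carry out explicitly, and correctly).
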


\begin{proof}
The only non-trivial point is to prove the identity $m_{1_i,g} \circ (V_i \diam \id_g) =
m_{g,1_j} \circ (\id_g \diam V_j)$, when $V_i = v_{1_i} \circ \eta_i$, $V_j = v_{1_j}
\circ \eta_j$ and $g \in \G(i,j)$. This requires the property \(r5'), which can be
obtained from \(r5) by using \(s4) (see below). The rest is straightforward and it is left
to the reader.
\end{proof}

As we will see in the next section, the image of the ribbon morphisms/elements in the
category $\K_n$ are twists in the attaching map of the corresponding 2-handles. In
particular, using ribbon elements, $k$ full twists along the framing of a 2-handle are
represented by $m_{1_i} \circ (\id_{1_i} \diam m_{1_i}) \circ \dots \circ (\id_{1_i} \diam
\dots \diam \id_{1_i} \diam m_{1_i}) \circ V_i^{\diam k}$, and the corre\-sponding graph
diagram is quite heavy. On the other hand, using ribbon morphisms, the same $k$ full
twists are represented by $v_{1_i}^k \circ \eta_i$, which seems to us much simpler. This
is the main reason why we decided to follow this last approach.

\begin{definition}\label{pre-ribbon/def}
Given a groupoid $\G$, we define the {\sl universal pre-ribbon Hopf $\G$-algebra}
$\H^u_v(\G)$ to be the braided strict monoidal category freely generated by a unimodular
Hopf $\G$-algebra $H$ with a family of ribbon morphisms $v = \{v_g: H_g \to H_g\}_{g \in
\G}$. Then, $\H^u_v(\G)$ has the same objects as $\H^u(\G)$, while its elementary
morphisms are those of $\H^u(\G)$ with the addition of the diagrammatic representations of
$v^n$ and $\sigma_{i,j}$ shown in Table \ref{table-Huvdefn/fig}. Moreover, the defining
relations of $\H_v^u(\G)$ are the axioms of $\H^u(\G)$ (where now in the braid axioms in
Table \ref{table-Hdefn/fig}, $D$ can be also a ribbon morphism) plus the axioms for the
ribbon morphisms presented in Table \ref{table-Huvdefn/fig}.
\end{definition}

\begin{Table}[b]{table-Huvprop/fig}
{}{}
\vskip12pt
\centerline{\fig{}{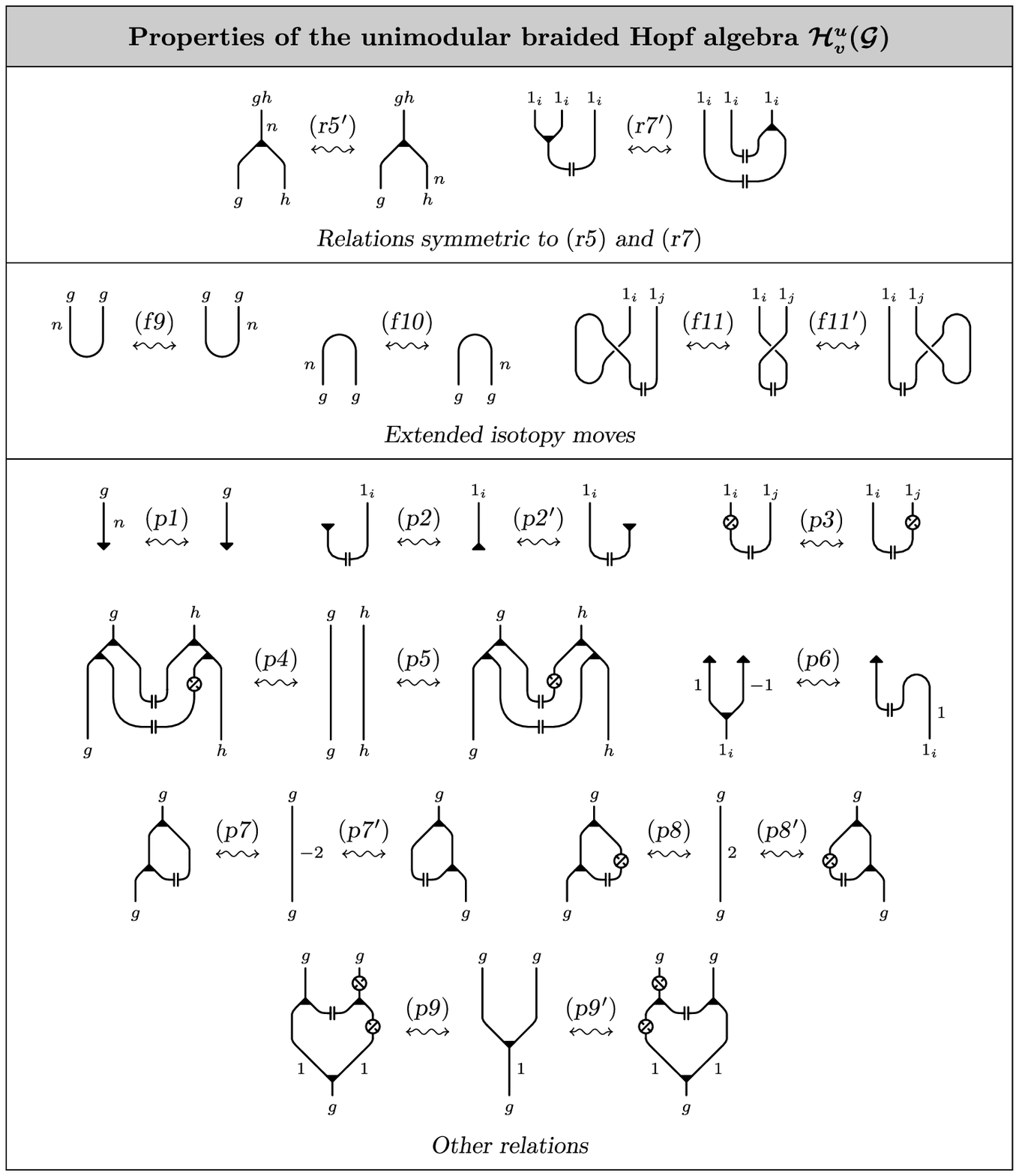}}
\vskip-3pt
\end{Table}

The category $\H^u_v(\G)$ has many important properties presented in Table
\ref{table-Huvprop/fig}. Before proving them, we make here few observations about the
basic relations of the pre-ribbon algebra and their diagrammatic representation.
\begin{itemize}
\item[\(a)]\vskip-\lastskip\smallskip
In the diagrams we represent $v_g^n$ by an edge with weight $n \in \Bbb Z$. In particular,
\(r1) states that an edge with weight 0 is the same as an edge without any weight.
\item[\(b)]
Axiom \(r4) says that the weight of an edge attached to a counit vertex can be changed
arbitrarily. By property \(p1), the same is true for the integral
vertex.
\item[\(c)] 
Axiom \(r5) and property \(r5') imply that the weight of an edge attached to a\break
multiplication vertex can be moved to any other edge attached to that vertex.
\item[\(d)] 
We remind that, given two Hopf algebras over the trivial groupoid $(A, m^A,\eta^A,\break
\Delta^A, \epsilon^A, S^A)$ and $(B, m^B,\eta^B, \Delta^B, \epsilon^B, S^B)$ in the same
braided monoidal category $\C$, a morphism $\sigma_{A,B}: \one \to A \diam B$ is called a
{\sl Hopf copairing} if the following conditions are satisfied:
\vskip-6pt
$$(\Delta^A \diam \id_B) \circ \sigma_{A,B} = (\id_{A \diam A} \diam m^B) \circ
(\id_A \diam \sigma_{A,B} \diam \id_B) \circ \sigma_{A,B},$$
$$(\id_A \diam \Delta^B) \circ \sigma_{A,B} = (m^A \diam \id_{B \diam B})
\circ (\id_A \diam \sigma_{A,B} \diam \id_B) \circ \sigma_{A,B},$$
$$(\epsilon^A \diam \id_B) \circ \sigma_{A,B} = \eta^B \quad \text{and} \quad 
(\id_A \diam \epsilon^B) \circ \sigma_{A,B} = \eta^A.$$
\vskip4pt\noindent
Moreover, the Hopf copairing $\sigma_{A,B}$ is called {\sl trivial} if $\sigma_{A,B} =
\eta^A \diam \eta^B$. Therefore, axioms \(r6) and \(r7) together with properties \(r7')
and \(p2-2'), imply that $\sigma_{i,j}: \one \to H_{1_i}\diamond H_{1_j}$ is a Hopf
pairing, and that such pairing is trivial for $i \neq j$. For this reason we will refer to
the morphisms $\sigma_{i,j}$'s as {\sl copairing morphisms} or simply {\sl copairings}.
\item[\(e)]
Property \(p3) tell us that the copairing is symmetric with respect to the rotation around
a vertical axis, while properties \(r5') and \(r7') are the symmetric version of axioms
\(r5) and \(r7). Hence, once those properties will be proved in Proposition
\ref{h-prop/thm}, they will imply that the functor $\sym: \H^u(\G) \to \H^u(\G)$ of
Proposition \ref{symmetry/thm} induces an analogous symmetry functor $\sym: \H_v^u(\G) \to
\H_v^u(\G)$, which fixes the ribbon morphisms $v_g$ for $g \in \G$.
\end{itemize}

We now proceed with the proof of the properties of $\H^u_v(\G)$ listed in Table
\ref{table-Huvprop/fig}, beginning with those which follow directly from the definition
\(r6) of $\sigma_{i,j}$ and the axioms \(r1-5), but do not depend on the copairing
property of $\sigma_{i,j}$ (axiom \(r7)).

\begin{lemma}\label{h-prop0/thm}
Properties \(r5'), \(f9), \(f10), \(f11-11'), \(p1), \(p2-2'), \(p7-7') and \(p9)
in Table \ref{table-Huvprop/fig}, hold in $\H_v^u(\G)$. Actually, they can be proved
without using axiom \(r7).
\end{lemma}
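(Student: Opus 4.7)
The plan is to verify each property in turn by direct diagrammatic manipulation using only the Hopf $\G$-algebra axioms of $\H^u(\G)$ together with the ribbon axioms \(r1)--\(r6); axiom \(r7) (the Hopf copairing relation for $\sigma$) will be deliberately avoided. I would organize the eight properties into four groups according to the technique required.

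First I would dispatch \(r5'), the ``right'' version of the multiplicativity axiom \(r5). The idea is to compose the two sides with $S_{\bar g \bar h}$ on the right and $S_g \diam S_h$ on the left, then use \(s4) to convert multiplication morphisms into one another, apply \(r5) to the conjugated expression, and invoke \(r3) plus \(s2-2') to return the ribbon morphisms to their original sides. Second, I would handle \(p1), which states that integral and cointegral vertices absorb arbitrary ribbon weights (the analogue of \(r4) for the integral, as anticipated in observation \(b) after Definition \ref{pre-ribbon/def}). For the right integral, the identity $v_g \circ L_g = L_g$ follows by writing $L_g = m_{g,1_j} \circ (L_g \diam \eta_j)$ via \(a4), pulling $v_g$ inside using \(r5), and rewriting $v_{1_j} \circ \eta_j$ using \(r4) together with the invertibility of $v$. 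The cointegral case is symmetric, using the left cointegral axiom \(i1) in place of the integral axiom \(i2), and the already-established \(r5').

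Third, I would treat the form/coform properties \(f9), \(f10) and \(f11-11'). These relate ribbon weights to rotations via the form and coform, and I expect them to take the shape $\rot_r(v_g) = v_g^{-1}$ and similar dualities of the ribbon morphism under the tortile structure, together with behaviour of $v$ relative to $\Lambda_g$ and $\lambda_g$. The tool here is that $\Lambda_g = \Delta_g \circ L_g$ and $\lambda_g = l_{g\bar g} \circ m_{g,\bar g} \circ (\id_g \diam S_g)$, so one can push a $v_g$ from one side of the form/coform to the other by combining \(p1) (to absorb any residual ribbon into the integral), \(r5-5') (to move ribbons past multiplication), \(a5) (to move across the comultiplication), and \(r3) (to commute with the antipode). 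Each of these proofs is a short ``string diagram'' chase that does not invoke $\sigma$ at all.

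Finally I would address the copairing-side conditions \(p2-2'), \(p7-7') and \(p9). These are expected to state the counit/antipode behaviour of $\sigma_{i,j}$ --- namely that $(\epsilon_{1_i} \diam \id_{1_i}) \circ \sigma_{i,i}$ and $(\id_{1_i} \diam \epsilon_{1_i}) \circ \sigma_{i,i}$ equal $\eta_i$, together with symmetric relations under $S$ and the rotation maps --- plus the triviality statement for $i \ne j$. For $i\ne j$ everything is immediate from the second clause of definition \(r6). For $i = j$ one expands $\sigma_{i,i}$ as in \(r6), applies the relevant counit/antipode on one factor, and simplifies using \(a2-2'), \(a6-8), \(s1-1'), \(s5-6), and \(r3-4). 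Crucially, since \(r4) already tells us that a counit absorbs $v^{\pm 1}$, the ribbon factors in \(r6) vanish as soon as a counit is applied, which is why \(r7) is not needed.

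The main obstacle I anticipate is the third group, in particular \(f11-11'): keeping book of the weight being moved while simultaneously dragging it across an integral vertex requires combining \(p1) with the tortile identities \(f3-7) and the newly-established \(r5') in the correct order, and care is needed so that the appearance of $v^{-1}$ (through \(r6) or through \(f5)) does not force an appeal to \(r7). Once \(r5'), \(p1) and the basic invariances \(r3-4) are in hand, however, all remaining properties are short diagrammatic identities, and the whole lemma reduces to a sequence of routine string-diagram verifications analogous to those carried out for Propositions \ref{antipode/thm}--\ref{rot-m/thm}.
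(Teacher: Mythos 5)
Your overall plan is the same as the paper's: a case-by-case diagram chase using only the axioms of $\H^u_v(\G)$ other than \(r7). In particular, deriving \(r5') from \(r5) via \(s4), getting \(f10) from the definition \(f2) of the form together with \(r3) and \(r5-5'), and proving \(p2-2') by expanding $\sigma_{i,i}$ as in \(r6) and cancelling with \(a2-2'), \(s5-6) and \(r3-4) are exactly the arguments the paper gives, and your observation that a counit kills the ribbon factors before \(r7) could ever be needed is the right reason the copairing axiom is dispensable here.

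There is, however, a concrete flaw in your argument for \(p1). After writing $L_g = m_{g,1_j}\circ(L_g\diam\eta_j)$ and pulling the ribbon through the multiplication with \(r5'), you arrive at $m_{g,1_j}\circ(L_g\diam(v_{1_j}\circ\eta_j))$, and you propose to ``rewrite $v_{1_j}\circ\eta_j$ using \(r4)''. But \(r4) is the statement $\epsilon_g\circ v_g=\epsilon_g$ about the \emph{counit}; it says nothing about the unit, and $v_{1_j}\circ\eta_j$ is precisely the ribbon element $V_j$, which is \emph{not} equal to $\eta_j$ (if it were, the whole ribbon structure would collapse). The missing ingredient is the integral axiom: by \(i2) one has $m_{g,1_j}\circ(L_g\diam\id_{1_j})=L_g\circ\epsilon_{1_j}$, which converts the spare tensor factor into a counit, after which \(r4) and \(a8) give $\epsilon_{1_j}\circ V_j=\id_\one$ and hence $v_g\circ L_g=L_g$. (The paper takes the equivalent route of dualizing \(r4) through the form/coform, using that $\epsilon$ and $L$ are dual uni-valent vertices, together with \(r5).) Separately, be aware that your treatment of \(f9) and of \(f11-11') in the case $i=j$ remains a statement of intent rather than a proof: \(f11-11') involve the copairing and split into a trivial case $i\neq j$ (immediate from \(r6)) and a genuinely diagrammatic case $i=j$, for which ``combine \(p1) with the tortile identities and \(r5')'' does not yet specify an argument; the paper devotes separate figures (its Figures \ref{h-form07/fig}, \ref{h-prop01/fig} and \ref{h-prop02/fig}) to these, and some of them additionally use the integral axioms and the already-established \(p)-properties.
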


\begin{proof}
\(r5') can be shown to be equivalent to \(r5), by using the property \(s4) of the antipode
(see Table \ref{table-Hprop/fig}). \(f10) is a direct consequence of definition \(f2)
in Table \ref{table-Hprop/fig} and of the relations \(r3) and \(r5-5').

\(f11-11') are trivial when $i \neq j$, due to the definition \(r6) and the duality of 
the corresponding univalent vertices in Table \ref{table-Hprop/fig}, while the proofs for 
$i = j$ are presented in Figure \ref{h-form07/fig}. Here, we first prove \(f11') in the 
top line and then we show how it implies \(f11) up to isotopy moves in the bottom line. 

\begin{Figure}[htb]{h-form07/fig}
{}{Proof of \(f11-11') for $i = j$
   [{\sl f}/\pageref{table-Hu/fig}-\pageref{table-Huvprop/fig},
    {\sl r}/\pageref{table-Huvdefn/fig}, {\sl s}/\pageref{table-Hprop/fig}]}
\centerline{\fig{}{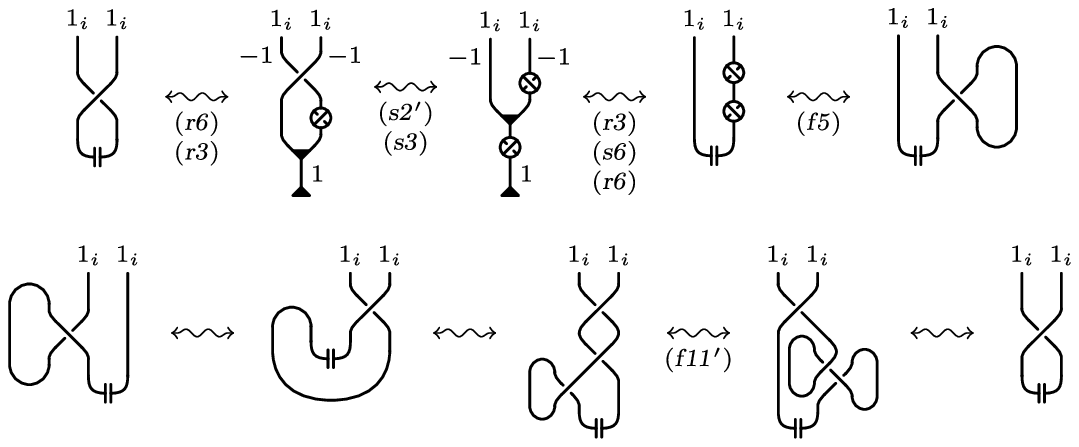}}
\vskip-3pt
\end{Figure}

\begin{Figure}[htb]{h-prop01/fig}
{}{Proof of \(p7) and \(p9)
   [{\sl a}/\pageref{table-Hdefn/fig},
    {\sl r}/\pageref{table-Huvdefn/fig}-\pageref{table-Huvprop/fig},
    {\sl s}/\pageref{table-Hdefn/fig}]}
\centerline{\fig{}{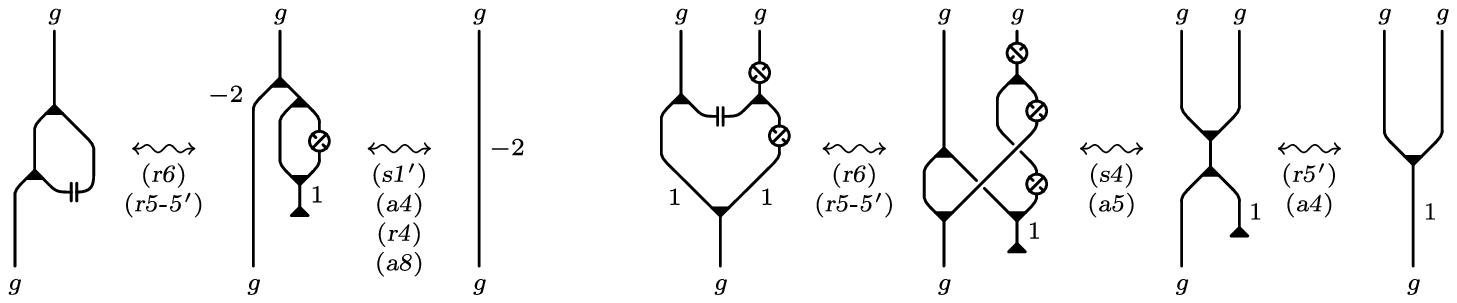}}
\vskip-3pt
\end{Figure}

\begin{Figure}[htb]{h-prop02/fig}
{}{Proof of \(f9) 
   [{\sl a}/\pageref{table-Hdefn/fig},
    {\sl f}/\pageref{table-Hu/fig}-\pageref{table-Huvprop/fig}, 
    {\sl i}/\pageref{table-Hu/fig}, {\sl p}/\pageref{table-Huvprop/fig},
    {\sl r}/\pageref{table-Huvdefn/fig}-\pageref{table-Huvprop/fig},
    {\sl s}/\pageref{table-Hdefn/fig}-\pageref{table-Hprop/fig}]\kern-3pt}
\vskip3pt
\centerline{\fig{}{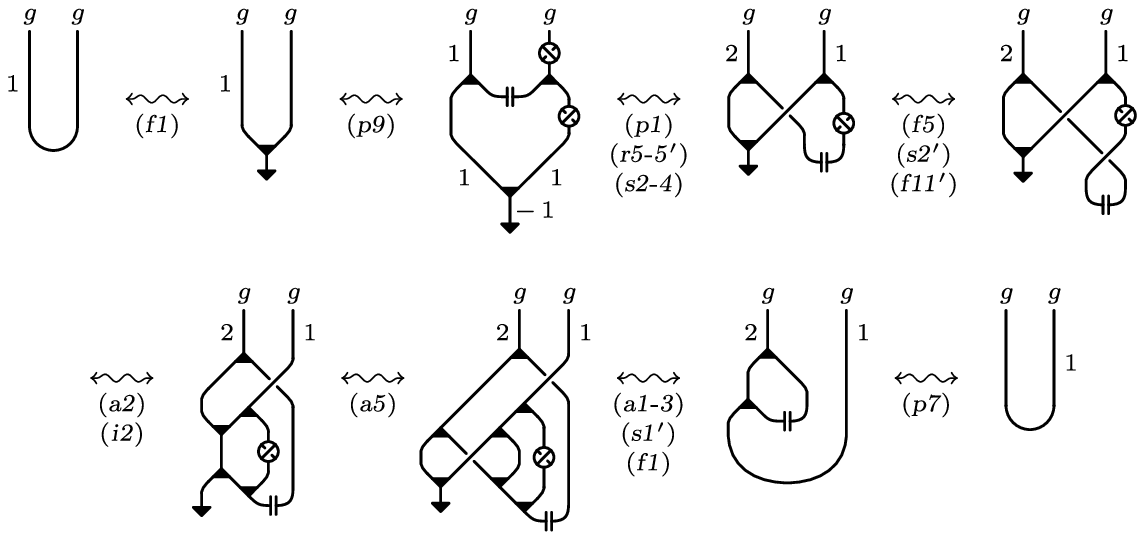}}
\vskip-3pt
\end{Figure}

\(p1) can be derived from \(r4), by using \(r5) and the duality of the negative uni-valent
vertices in Table \ref{table-Hprop/fig}. \(p2-2') immediately follow from the definition
\(r6) of the copairing, the axiom \(r2) and the relations \(a2-2') and \(s5-6) in
Tables \ref{table-Hdefn/fig} and \ref{table-Hprop/fig}. \(p7) and \(p9) are proved in
Figure \ref{h-prop01/fig}. \(p7') can be proved in a completely analogous way as \(p7).
\(f9) is proved in Figure \ref{h-prop02/fig}.
\end{proof}

\begin{lemma}\label{mu/thm}
For every $g \in \G(i,j)$ and $h \in \G(j,k)$, $$\mu_{g,h} = (m_{g,1_i} \!\diam m_{1_j,h})
\circ (\id_g \diam \sigma_{i,j} \diam \id_h) : H_g \diam H_h \to H_g \diam H_h$$ is an
isomorphism in $\H^u_v(\G)$, and $$\mu^{-1}_{g,h} = (m_{g,1_i} \!\diam m_{1_j,h}) \circ
(\id_g \diam ((\id_{1_i} \!\diam S_{1_j}) \circ \sigma_{i,j}) \diam \id_h): H_g \diam H_h
\to H_g \diam H_h$$ is its inverse (see Figure \ref{h-prop-mu/fig}). In other words,
properties \(p4-5) hold in $\H^u_v(\G)$.
\end{lemma}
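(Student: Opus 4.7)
My plan is to verify the two identities $\mu_{g,h} \circ \mu_{g,h}^{-1} = \id_g \diam \id_h$ and $\mu_{g,h}^{-1} \circ \mu_{g,h} = \id_g \diam \id_h$ by splitting into cases according to whether $i = j$ or $i \neq j$, since the structure of $\sigma_{i,j}$ depends dramatically on this dichotomy.

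First I would dispose of the easy case $i \neq j$. By definition \(r6), $\sigma_{i,j} = \eta_i \diam \eta_j$, so substituting into the formula for $\mu_{g,h}$ and using the unit axioms \(a4-4') yields
$$\mu_{g,h} = (m_{g,1_i} \diam m_{1_j,h}) \circ (\id_g \diam \eta_i \diam \eta_j \diam \id_h) = \id_g \diam \id_h\,.$$
For the putative inverse, property \(s6) gives $S_{1_j} \circ \eta_j = \eta_j$, so $(\id_{1_i} \diam S_{1_j}) \circ \sigma_{i,j} = \eta_i \diam \eta_j$ as well, and the same computation reduces $\mu_{g,h}^{-1}$ to the identity. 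Both compositions are then trivially the identity.

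The main work lies in the case $i = j$, where $\sigma_{i,i}$ is a genuine Hopf copairing on $H_{1_i}$. The strategy is to unfold $\mu_{g,h} \circ \mu_{g,h}^{-1}$ diagrammatically, obtaining an expression in which two copies of $\sigma_{i,i}$ interact through a multiplication vertex carrying an antipode on the middle strand. I would then use the associativity \(a3) to rearrange the products, so that the inner configuration involves $\sigma_{i,i}$ and an adjacent $\sigma_{i,i}$ linked by $(m_{1_i,1_i} \diam m_{1_i,1_i})$ applied after inserting $(\id \diam S_{1_i} \diam \id)$ on two of the four output strands. Applying the copairing axiom \(r7) together with its mirror \(r7') (both of which encode that $\sigma_{i,i}$ interacts with $\Delta_{1_i}$ and $m_{1_i,1_i}$ coherently) reduces this configuration to a single $\sigma_{i,i}$ with an antipode inserted on one strand. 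At this point the antipode axiom \(s1) (or \(s1')), combined with the counit property of $\sigma_{i,i}$ from \(p2-2'), collapses the remaining $\sigma_{i,i}$ together with the antipode to $\eta_i \diam \eta_i$, which is then absorbed by the outer multiplications via \(a4-4'). The reverse composition $\mu_{g,h}^{-1} \circ \mu_{g,h}$ is handled symmetrically, either by repeating the argument with the antipode on the opposite strand or by invoking the symmetry functor $\sym$ from Proposition \ref{symmetry/thm} (using property \(p3), once established, to see that $\sigma_{i,i}$ is $\sym$-invariant).

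The principal obstacle will be managing the sheer combinatorial complexity of the diagram with two nested copairings and several interleaved multiplications: one has to pick the right order in which to apply \(r7), \(s1), and the counit properties so that intermediate diagrams stay manageable. A secondary subtlety is that the axiom \(r7) as stated only describes the coassociativity side of the copairing; its companion \(r7') (asserting that $\sigma_{i,i}$ interacts symmetrically with the multiplication on the second tensor factor) is listed as a derived property in Table \ref{table-Huvprop/fig}, so it may need to be proved first — but this should follow from the definition \(r6), axioms \(r1-5) and Lemma \ref{h-prop0/thm}, independently of the present statement.
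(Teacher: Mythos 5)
Your proposal follows essentially the same route as the paper: Figure \ref{h-prop03/fig} proves \(p4) by exactly the diagrammatic computation you describe — one application of the copairing--multiplication compatibility \(r7'), then the antipode axiom \(s1') and the counit property \(p2') of $\sigma_{i,i}$ to collapse the surviving copairing to units, which are absorbed by \(a4-4') — and \(p5) is obtained by the analogous computation with \(s1) in place of \(s1'), i.e.\ your ``repeat with the antipode on the opposite strand'' option (your separate $i\neq j$ case is subsumed in the paper's diagrams via the optional edges of the trivial copairing). One caution: your alternative route to the reverse composition via $\sym$ and \(p3) would be circular, since the paper proves \(p3) precisely by observing that $\mu^{-1}_{g,h}$ and $\sym(\mu^{-1}_{g,h})$ are both two-sided inverses of $\mu_{g,h}$, which already presupposes \(p4) and \(p5); keep your primary route.
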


\begin{Figure}[htb]{h-prop-mu/fig}
{}{The isomorphisms $\mu_{g,h}$ and $\mu^{-1}_{g,h}$}
\centerline{\fig{}{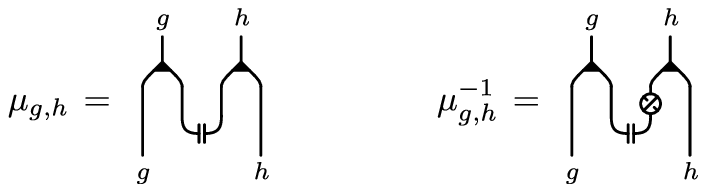}}
\vskip-3pt
\end{Figure}

\begin{proof}
Property \(p4) is proved in Figure \ref{h-prop03/fig}. The proof of \(p5) in analogous, 
using \(s1) in place of \(s1').
\end{proof}

\begin{Figure}[htb]{h-prop03/fig}
{}{Proof of \(p4)
   [{\sl a-s}/\pageref{table-Hdefn/fig}, {\sl r}/\pageref{table-Huvprop/fig}]}
\centerline{\fig{}{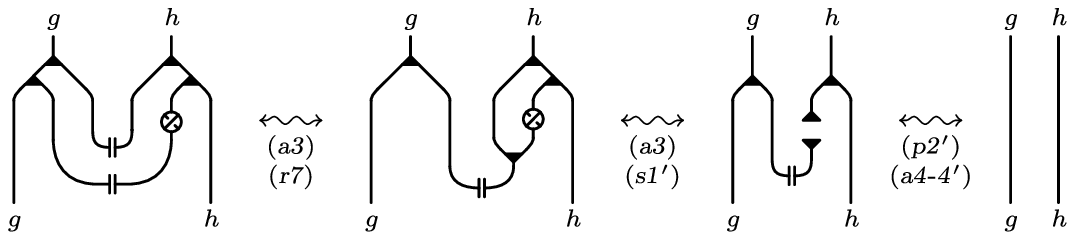}}
\vskip-3pt
\end{Figure}

\begin{proposition}\label{h-prop/thm}
Given a groupoid $\G$, all the relations in Table \ref{table-Huvprop/fig} are satisfied 
in the universal pre-ribbon Hopf $\G$-algebra $\H^u_v(\G)$. Moreover, there is an 
involutive antimonoidal category equivalence $\sym: \H_v^u(\G) \to \H_v^u(\G)$ uniquely 
determined by the identities in Proposition \ref{symmetry/thm} and by $\sym(v_g) = 
v_{\bar g}$.
\end{proposition}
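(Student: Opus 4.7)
The plan is to work through the list of unproven relations in Table \ref{table-Huvprop/fig} in an order that lets each step feed the next, and then to bootstrap the symmetry functor from the resulting self-dual set of axioms. Lemmas \ref{h-prop0/thm} and \ref{mu/thm} already cover \(r5'), \(f9), \(f10), \(f11-11'), \(p1), \(p2-2'), \(p4-5), \(p7-7'), and \(p9), and crucially all of these were obtained without invoking the copairing axiom \(r7). The remaining properties to establish are \(r7'), \(p3), \(p6), \(p8-8'), and \(f12-12'); once these are in hand, the construction of $\sym$ becomes formal.

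First I would prove \(p3), the symmetry of the copairing $\sigma_{i,j}$ under reflection about a vertical axis. For $i \neq j$ this is immediate from the definition \(r6) and \(s6). For $i = j$ the proof is the real work: using the definition \(r6) of $\sigma_{i,i}$, I would expand one side, insert a canceling pair $v_{1_i} \circ v_{1_i}^{-1}$, and use the Hopf copairing identity \(r7) together with \(a5), \(s3) and \(s4) to flip the two legs of $\sigma_{i,i}$ while rerouting the antipode. Once \(p3) is available, \(r7') follows by applying \(p3) to both sides of \(r7) and then cancelling common copairing factors, exactly the mirror of the derivation of \(r7). In diagrammatic terms, \(p3) makes $\sigma$ invariant under the rotation $\sym$ on the generators, and \(r7') is then just \(r7) read through that rotation.

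Next I would deduce \(f12-12') by combining \(r5-5'), \(r3), \(f8-8'), and the duality \(f11-11') just established: the rotation maps $\rot_r, \rot_l$ convert a weighted edge $v_g^n$ into the same weighted edge after passing through the antipode, because \(r3) makes $v$ commute with $S$ up to the index change $g \leftrightarrow \bar g$. Property \(p6) would be obtained by expanding $\sigma_{i,i}$ via \(r6), sliding the ribbon weights through a multiplication vertex using \(r5), and then applying \(r2) and \(a7). Finally, \(p8-8') are the expected compatibilities between the copairing and the multiplication/antipode; I would derive them from \(p4-5) (the invertibility of $\mu_{g,h}$) together with \(s1-1') and \(p3), in the style of Figure \ref{h-prop03/fig}.

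With every relation in Table \ref{table-Huvprop/fig} proved, the symmetry functor is produced by invoking the universal property of $\H^u_v(\G)$. Extending the assignments of Proposition \ref{symmetry/thm} by $\sym(v_g) = v_{\bar g}$, I need only check that each defining axiom of $\H^u_v(\G)$ is sent to a relation already valid in $\H^u_v(\G)$: the axioms of $\H^u(\G)$ are handled by Proposition \ref{symmetry/thm}, \(r1-5) are either self-symmetric or exchanged with their primed counterparts \(r5'), \(r6) is symmetric because of \(p3), and \(r7) is exchanged with \(r7'). Involutivity and antimonoidality then follow by checking the identities on the enlarged set of elementary morphisms, just as in Proposition \ref{symmetry/thm}. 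I expect the main obstacle to be the proof of \(p3) itself: the copairing axiom \(r7) is inherently one-sided in its statement, and forcing symmetry out of it requires a careful diagrammatic manipulation in which the antipode, the ribbon weights, and the comultiplication of $\eta_i$ all have to be moved past one another in the correct order.
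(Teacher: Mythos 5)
Your overall architecture matches the paper's: dispose of the relations already covered by Lemmas \ref{h-prop0/thm} and \ref{mu/thm}, prove the remaining ones, and then obtain $\sym$ formally from the universal property by checking that each axiom of $\H^u_v(\G)$ is sent to a valid relation (with \(r5) and \(r7) going to \(r5') and \(r7'), and \(r6) preserved thanks to \(p3)). That last, formal part of your argument is exactly what the paper does. The divergence — and the gap — is in how you reach \(p3) and \(r7'), and in which order.

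The paper first derives \(r7') directly from \(r7), by rotating the diagram using the duality \(f11') of the copairing with respect to the form/coform together with the braid axioms and the antipode properties; no symmetry of $\sigma_{i,i}$ is needed for this. It then proves \(p3) not by any direct diagrammatic ``flipping of the legs'' but by a uniqueness-of-inverse argument: the morphism $\sym(\mu^{-1}_{g,h})$ (the candidate inverse with the antipode on the \emph{left} leg of the copairing) satisfies the same identities \(p4) and \(p5) as $\mu^{-1}_{g,h}$ — one reruns Figure \ref{h-prop03/fig} with \(r7), \(p2), \(s1) in place of \(r7'), \(p2'), \(s1') — so both are two-sided inverses of $\mu_{g,h}$ and hence equal; composing with $\eta_g \diam \eta_h$ yields \(p3). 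Your plan inverts this order: you want a direct proof of \(p3) from \(r6), \(r7), \(a5), \(s3), \(s4), and then to deduce \(r7') from \(p3). The sketch ``expand one side, insert $v_{1_i}\circ v_{1_i}^{-1}$, and flip the two legs while rerouting the antipode'' is not a proof, and this is precisely the step you yourself flag as the main obstacle; the one-sided copairing axiom \(r7) does not obviously let you move the antipode across $\sigma_{i,i}$ by local moves. Moreover, establishing that both candidates are \emph{two-sided} inverses (the standard way to force the symmetry) already uses both \(r7) and \(r7'), so proving \(r7') independently first — as the paper does via \(f11') — is the cleaner logical order; deriving \(r7') ``by applying $\sym$'' before $\sym$ is known to be well defined would be circular. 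Finally, a bookkeeping point: the remaining relations are \(r7'), \(p3), \(p6), \(p8-8') and \(p9') (the last obtained from \(p9) by symmetry once $\sym$ exists); there are no relations \(f12-12') in Table \ref{table-Huvprop/fig}.
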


\begin{proof}
In the light of the previous lemmas, we are left to prove the relations \(r7'), \(p3), 
\(p6), \(p8-8') and \(p9').

\(r7') is derived from \(r7) in Figure \ref{h-prop04/fig}, by using the relation \(f11'), 
the braid axioms and the properties of the antipode.

\begin{Figure}[htb]{h-prop04/fig}
{}{Proof of \(r7')
   [{\sl f}/\pageref{table-Hu/fig}-\pageref{table-Huvprop/fig},
    {\sl r}/\pageref{table-Huvprop/fig}
    {\sl s}/\pageref{table-Hdefn/fig}-\pageref{table-Hprop/fig}]}
\centerline{\fig{}{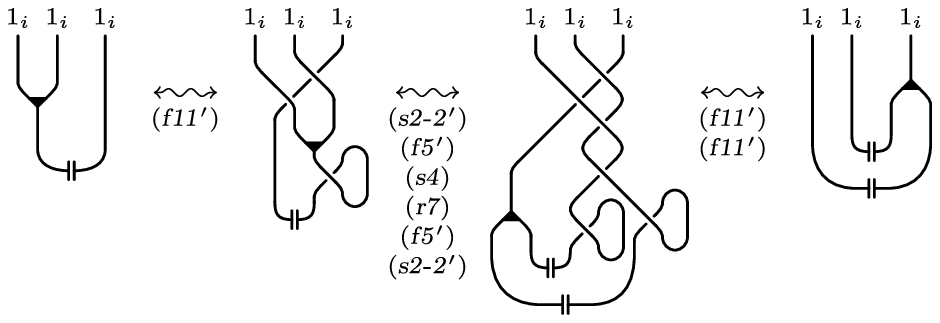}}
\vskip-3pt
\end{Figure}

To prove \(p3), let us consider the morphism $$\sym(\mu^{-1}_{g,h}) = (m_{g,1_i} \!\diam
m_{1_j,h}) \circ (\id_g \diam ((S_{1_i} \!\diam \id_{1_j}) \circ \sigma_{i,j}) \diam
\id_h): H_g \diam H_h \to H_g \diam H_h,$$ where for the moment $\sym(\mu^{-1}_{g,h})$ is
just as a notation for the symmetric of $\mu^{-1}_{g,h}$, with the antipode moved to the
left side of $\sigma_{i,j}$.
Then, by replacing \(r7'), \(p2') and \(s1') in Figure \ref{h-prop03/fig} respectively
with \(r7), \(p2) and \(s1), we obtain that \(p4) and \(p5) are still valid if
$\mu^{-1}_{g,h}$ is replaced by $\sym(\mu^{-1}_{g,h})$. Hence $\sym(\mu^{-1}_{g,h})$ and
$\mu^{-1}_{g,h}$ are equal, being both two-sided inverses of $\mu_{g,h}$. This gives
\(p3), by composing with $\eta_g \diam \eta_h$.

\begin{Figure}[b]{h-prop05/fig}
{}{Proof of \(p6)
   [{\sl a}/\pageref{table-Hdefn/fig},
    {\sl f}/\pageref{table-Hu/fig}-\pageref{table-Huvprop/fig},
    {\sl i}/\pageref{table-Hu/fig}, {\sl p}/\pageref{table-Huvprop/fig},
    {\sl r}/\pageref{table-Huvdefn/fig},
    {\sl s}/\pageref{table-Hdefn/fig}-\pageref{table-Hprop/fig}]}
\centerline{\fig{}{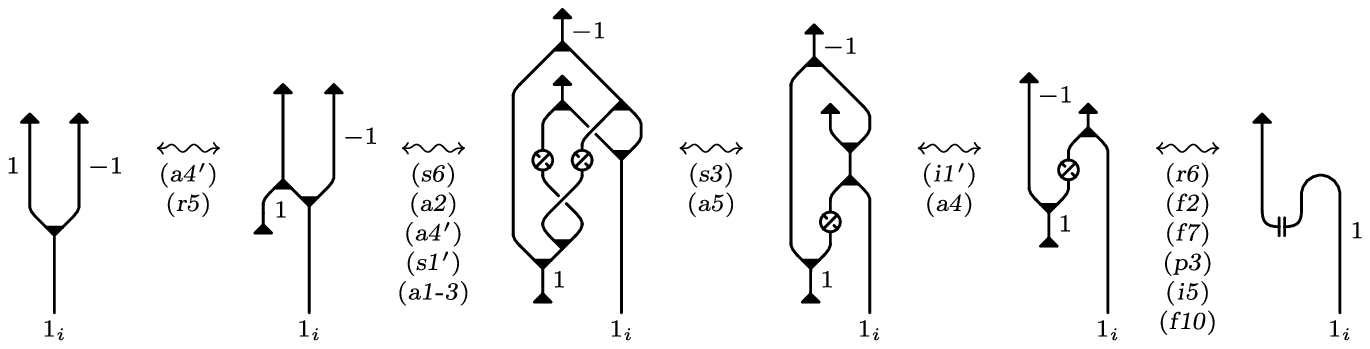}}
\vskip-3pt
\end{Figure}

\begin{Figure}[htb]{h-prop06/fig}
{}{Proof of \(p8)
   [{\sl a}/\pageref{table-Hdefn/fig}, {\sl p}/\pageref{table-Huvprop/fig},
    {\sl r}/\pageref{table-Huvdefn/fig}]}
\centerline{\fig{}{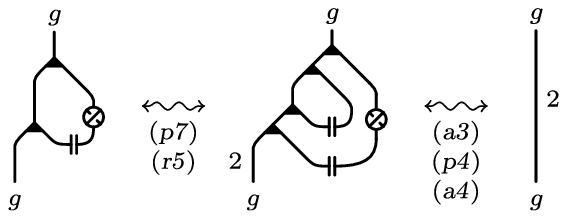}}
\vskip-3pt
\end{Figure}

At this point, we are ready to prove that $\sym: \H_v^u(\G) \to \H_v^u(\G)$ defines an
involutive antimonoidal category equivalence. According to Proposition \ref{symmetry/thm},
it is enough to check that it preserves the additional ribbon axioms. Indeed, property
\(p3) implies that the copairing is symmetric, i.e. the antipode which appears in its
definition \(r6) can be put on either side. Moreover, all ribbon axioms are left invariant
by $\sym$ with the exceptions of axioms \(r5) and \(r7), which are mapped to properties 
\(r5') and \(r7').

Finally, \(p6) and \(p8) are proved in Figures \ref{h-prop05/fig} and \ref{h-prop06/fig} 
respectively, while \(p8') and \(p9') follow from \(p8) and \(p9) by symmetry.
\end{proof}

We observe that the morphisms in $\H^u_v(\G)$ are represented by formal graph diagrams,
which can be interpreted as plane projection of uni/tri-valent graphs embedded in $R^3$.
Under this interpretation, some of the equivalence moves between such diagrams correspond
to graph isotopies in $R^3$. We collect these moves in the following definition.

\begin{definition}\label{isotopy/def}
Two diagrams representing morphisms in $\H^u_v(\G)$, as well as in its quotient $\H^r(\G)$
we will define below, will be called {\sl isotopic}, or obtainable from each other
through {\sl isotopy}, if they are related by a sequence of moves \(b1) to \(b4') in Table
\ref{table-Hdefn/fig}, \(f3-3') in Table \ref{table-Hprop/fig}, \(f5-5') and \(f6-7) in
Table \ref{table-Hu/fig}, \(f9), \(f10) and \(f11-11') in Table \ref{table-Huvprop/fig}.
\end{definition}

An example of diagram isotopy is shown in Figure \ref{h-form08/fig}. The reader can check 
that here only the braid axioms and moves \(f11-11') are needed.

\begin{Figure}[htb]{h-form08/fig} 
{}{An example of diagram isotopy}
\centerline{\fig{}{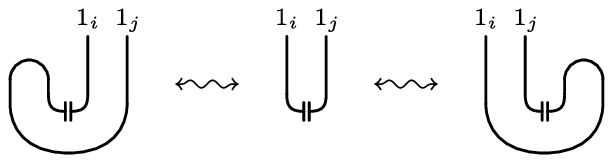}}
\vskip-3pt
\end{Figure}

{\sl Since the application of the isotopy moves will be frequent and quite intuitive, we 
will usually omit to explicitly indicate them in the diagrammatic proofs.}

\medskip

As we already mentioned, the universal algebra $\H^u_v(\G)$ is the algebra $\Alg$
introduced by Kerler in \cite{Ke02}, without the self-duality axiom, or equivalently
without the requirement that the copairing is non-degenerate. We do not impose yet the
self-duality, since for now we want to interpret the algebraic structure of cobordisms of
relative 4-dimensional 2-handlebodies. Self-duality will be added only later in Chapter
\ref{boundaries/sec}, when we will study 3-dimensional boundaries of such handlebodies.

The axioms of $\H^u_v(\G_n)$ are clearly too weak for what we need. In fact, they are
compatible with the trivial choice $v_g = \id_g$ of the ribbon morphisms, which brings to
trivial copairings. On the contrary, the ribbon morphisms of the Hopf algebra of
cobordisms are 2-handles with non-trivial framings and the copairing is the one introduced
by Lyubashenko in \cite{Ly95} (see \cite{Ke02} and Section \ref{Phi/sec} below).
Therefore, the universal algebraic category equivalent to such cobordism category
necessarily contains additional axioms connecting the ribbon structure to the braiding, in
such a way that if the braiding structure of the category is non-trivial, then the ribbon
one is forced to be non-trivial as well. Before introducing those axioms, we make the
following observation.

\begin{proposition}\label{rho/thm} 
Given any any $i \in \Obj\G$ and any $g \in \G(j,k)$, consider the morphisms $\rho_{i,g}$ 
and $\rho_{g,i} = \sym(\rho_{i,\bar g})$ of $\,\H^u_v(\G)$ defined by (see Figure 
\ref{h-prop-rho/fig})
$$
\begin{array}{rl}
 \rho_{i,g} &= (\id_{1_i} \diam m_{1_j,g}) \circ 
 (\sigma_{i,j} \diam \id_g): H_g \to H_{1_i} \diam H_g,\\[4pt]
 \rho_{g,i} &= (m_{g,1_k} \diam \id_{1_i}) \circ 
 (\id_g \diam \sigma_{k,i}): H_g \to H_g \diam H_{1_i}.
\end{array}
$$
Then $\rho_{i,g}$ (resp. $\rho_{g,i}$) makes $H_g$ into a left (resp. right)
$H_{1_i\!}$-comodule.
\begin{Figure}[htb]{h-prop-rho/fig}
{}{The morphisms $\rho_{g,k}$ and $\rho_{k,g}$}
\vskip-3pt
\centerline{\fig{}{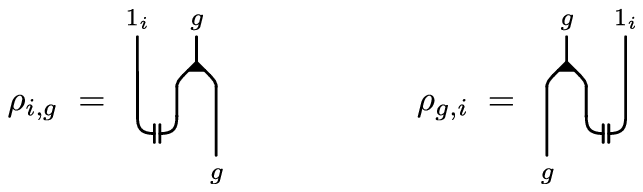}}
\vskip-3pt
\end{Figure}
\end{proposition}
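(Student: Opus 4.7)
The plan is to verify directly the two defining axioms of a left (resp. right) comodule, namely coassociativity and counitality, by exploiting the fact that the copairing $\sigma_{i,i}$ itself satisfies a Hopf copairing axiom \(r7) (with trivial off-diagonal version \(r6)) and that $H_{1_j}$ already acts on $H_g$ through the multiplication $m_{1_j, g}$ and the unit $\eta_j$.

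First, I would handle $\rho_{i,g}$ with $g \in \G(j,k)$. For counitality, applying $\epsilon_{1_i} \diam \id_g$ to $\rho_{i,g} = (\id_{1_i} \diam m_{1_j,g}) \circ (\sigma_{i,j} \diam \id_g)$ reduces, by property \(p2) (which gives $(\epsilon_{1_i} \diam \id_{1_j}) \circ \sigma_{i,j} = \eta_j$ when $i = j$, and by \(r6) and \(a6) when $i \neq j$), to $m_{1_j,g} \circ (\eta_j \diam \id_g)$, which equals $\id_g$ by the unit axiom \(a4'). For coassociativity, I need to show
\[
(\Delta_{1_i} \diam \id_g) \circ \rho_{i,g} = (\id_{1_i} \diam \rho_{i,g}) \circ \rho_{i,g}.
\]
Expanding both sides using the definition of $\rho_{i,g}$ and pushing $\Delta_{1_i}$ past $m_{1_j,g}$ via the bialgebra compatibility is not what is wanted here; rather, the key point is that $\Delta_{1_i}$ acts only on the first factor of $\sigma_{i,j}$. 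So the left-hand side equals $(\id \diam \id \diam m_{1_j,g}) \circ ((\Delta_{1_i} \diam \id_{1_j}) \circ \sigma_{i,j} \diam \id_g)$, while the right-hand side, after inserting $\rho_{i,g}$ inside $\rho_{i,g}$ and using associativity \(a3) of $m$ to regroup, equals $(\id \diam \id \diam m_{1_j,g}) \circ ((\id_{1_i} \diam \id_{1_i} \diam m_{1_j,1_j}) \circ (\id_{1_i} \diam \sigma_{i,j} \diam \id_{1_j}) \circ \sigma_{i,j} \diam \id_g)$. When $i = j$ these two expressions agree by axiom \(r7); when $i \neq j$ both sides collapse to $(\eta_i \diam \eta_i) \diam \id_g$ by \(r6) and \(a7).

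For $\rho_{g,i}$, I would either invoke the antimonoidal symmetry functor $\sym: \H^u_v(\G) \to \H^u_v(\G)$ of Proposition \ref{h-prop/thm}, observing that $\rho_{g,i} = \sym(\rho_{i,\bar g})$ and that $\sym$ interchanges the left comodule axioms for $H_{\bar g}$ with the right comodule axioms for $H_g$, or carry out the dual computation directly, using the symmetric axiom \(r7') and property \(p2') in place of \(r7) and \(p2).

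I expect the main obstacle to be bookkeeping rather than any deep idea: the coassociativity computation involves a three-fold product and two applications of the multiplication $m_{1_j,g}$ whose associativity must be used to match the two resulting regroupings. Once the diagrammatic reduction to axiom \(r7) is carried out carefully, the rest is routine.
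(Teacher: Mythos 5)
Your proof is correct and is exactly the paper's argument: the paper dispatches this proposition in one line as "a direct consequence of \(p2-2') and \(r7-7')", and your verification of counitality via \(p2-2') (with \(a4-4')) and of coassociativity via \(r7-7') (splitting into the diagonal case and the trivial-copairing case $i \neq j$) is precisely the intended expansion of that remark. The use of $\sym$ to transfer the left-comodule axioms to $\rho_{g,i}$ is likewise consistent with how the paper handles primed properties throughout.
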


\vskip-12pt\vskip0pt
\begin{proof}
The proposition is a direct consequence of \(p2-2') and \(r7-7').
\end{proof}

\begin{definition}\label{ribbon/def}
Given a groupoid $\G$, a {\sl ribbon Hopf $\,\G$-algebra} in a braided monoidal category
$\C$ is a unimodular Hopf $\G$-algebra $H$ with a family of ribbon morphisms $v$, which
satisfies the two additional conditions (cf. Table \ref{table-Hr/fig}):
\vskip-4pt
$$\Delta_g \circ v_g^{-1} = \mu_{g,g} \circ (v_g^{-1} \diam
v_g^{-1}) \circ \bar \gamma_{g,g} \circ \Delta_g: H_g \to H_g \diam H_g,
\eqno{\(r8)}$$
$$\arraycolsep0pt
\begin{array}{rl}
(m_{1_k,h} \diam m_{g,1_j}) \circ {}& (S_{1_k} \diam (\mu_{h,g} \circ \bar
\gamma_{g,h} \circ \mu_{g,h}) \diam S_{1_j}) \circ (\rho_{k,g} \diam
\rho_{h,j}) = {} \\[4pt] & {} = \gamma_{g,h}: H_g \diam H_h \to H_h \diam H_g,
\end{array}
\eqno{\(r9)}$$
\vskip4pt\noindent
for any $g \in \G(i,j)$ and $h \in \G(k,l)$.
\end{definition}

\begin{Table}[p]{table-Hr/fig}
{}{}
\centerline{\fig{}{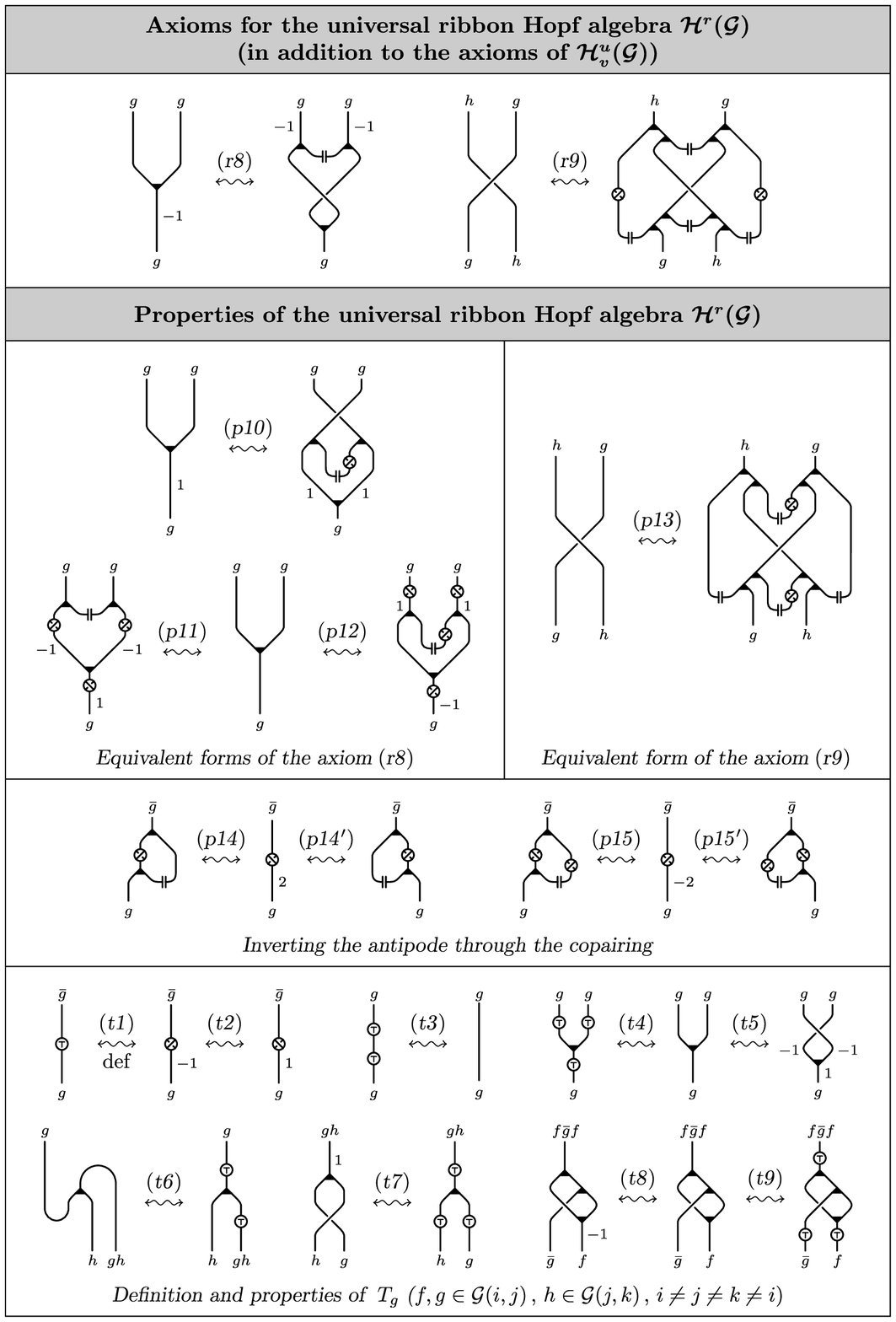}}
\end{Table}

\label{r9/rem}
We observe that the relations \(r8) and \(r9) simplify for some combinations of the
labels, because trivial copairings appear. In particular, move \(r9) reduces to a
crossing change when $g \in \G(i,j)$ and $h \in \G(k,l)$ with $\{i,j\} \cap \{k,l\} =
\emptyset$.

Moreover, for a ribbon Hopf algebra whose ribbon (and the copairings) morphisms are all
trivial the new axioms imply cocommutativity and trivial braiding. This proves the
independence of the two new axioms from the rest of the axioms of the algebra. However,
the same argument is not valid any more in the presence of the self-duality (see Section
\ref{Hb/sec}), which is not compatible with the trivial choice for the $v_g$'s. Therefore,
in the self-dual case, which refers to the algebraic description of 3-dimensional framed
cobordisms, we do not claim the independence of the axioms \(r8) and \(r9), even if we
are convinced that this is still true.

\begin{proposition}\label{hr-axioms/thm}
Modulo the axioms for $\H^u_v(\G)$, \(r8) is equivalent to either \(p10), \(p11) or
\(p12), while \(r9) is equivalent to \(p13), all these relations being defined in Table
\ref{table-Hr/fig}.
\end{proposition}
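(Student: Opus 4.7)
The plan is to establish both equivalence statements by direct diagrammatic manipulation within $\H^u_v(\G)$, relying only on the already-established structure (isotopy moves of Definition \ref{isotopy/def}, the properties of the isomorphism $\mu_{g,h}$ from Lemma \ref{mu/thm}, the symmetry functor $\sym$ from Proposition \ref{h-prop/thm}, and the properties in Tables \ref{table-Hu/fig} and \ref{table-Huvprop/fig}), without prematurely invoking \(r8) or \(r9) themselves.

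For the four-way equivalence between \(r8) and \(p10--12), I would set up a short cycle of implications and pass among the formulations by two basic operations. First, since $v_g$ is invertible (axiom \(r1)), composing both sides of \(r8) on the left with $v_g \diam v_g$ and using the ribbon/multiplication compatibility \(r5) together with \(p4-5) to cancel a $\mu_{g,g} \circ \mu_{g,g}^{-1}$ pair converts \(r8) into a statement in which the ribbon weights sit on different edges of the comultiplication vertex; each rearrangement of these weights produces one of \(p10--12). Second, the symmetry functor $\sym$ of Proposition \ref{h-prop/thm}, which fixes all the axioms of $\H^u_v(\G)$ and sends $v_g \mapsto v_{\bar g}$, permutes \(r8) and its variants because $\sigma_{i,j}$ is symmetric (property \(p3)) and the antipode relation \(s3) intertwines $\Delta$ with $\bar\gamma$. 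Combining these two manoeuvres and invoking the braided bialgebra axiom \(a5) whenever a comultiplication vertex needs to be pushed past a multiplication vertex should close the cycle.

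For the equivalence \(r9) $\Leftrightarrow$ \(p13), I would first unwrap the definitions of $\rho_{k,g}$ and $\rho_{h,j}$ given in Proposition \ref{rho/thm}, rewriting the left-hand side of \(r9) purely in terms of the copairings $\sigma$, the multiplications, the antipodes and the morphisms $\mu_{g,h}^{\pm 1}$. The morphism $\mu_{h,g} \circ \bar\gamma_{g,h} \circ \mu_{g,h}$ sandwiched in \(r9) is already in a form amenable to the rotation maps $\rot_r$ and $\rot_l$, so applying one of these rotations and using the form/coform axioms \(f3-3'), the duality of uni-valent vertices from Table \ref{table-Hu/fig}, and the antipode-multiplication rotation formulas \(f8-8') of Proposition \ref{rot-m/thm}, the relation should transform into \(p13). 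The reverse direction follows by applying the inverse rotation. As in the $r8$ case, the $\sym$-invariance of both \(r9) and \(p13) can be used as a sanity check and can halve the work.

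The main obstacle I expect is the careful logical bookkeeping: many of the identities I want to invoke (notably \(p4-5) in Lemma \ref{mu/thm} and the symmetry properties of Proposition \ref{h-prop/thm}) were proved \emph{without} recourse to \(r8) or \(r9), and this must remain true throughout the argument so that the equivalences are genuinely modulo the $\H^u_v(\G)$-axioms alone. A secondary difficulty is that, as noted on page \pageref{r9/rem}, \(r9) simplifies drastically for non-overlapping label sets, so the verification of the equivalence \(r9) $\Leftrightarrow$ \(p13) must be case-analysed according to which of the groupoid source/target coincidences hold among $g \in \G(i,j)$ and $h \in \G(k,l)$; in each case, the trivial copairings $\sigma_{i,j} = \eta_i \diam \eta_j$ collapse the diagram and one must check that the collapsed versions of \(r9) and \(p13) are each equivalent to the appropriate braided commutation of $H_g$ and $H_h$.
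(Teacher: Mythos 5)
Your treatment of the equivalence between \(r8) and \(p10)--\(p12) is essentially the paper's argument: one passes between the four formulations by pre- and post-composing both sides with explicit invertible morphisms built from $v_g^{\pm1}$, $S_g$, $\gamma_{g,g}$ and $\mu_{g,g}^{-1}$ (whose invertibility is exactly Lemma \ref{mu/thm}, proved without \(r8) or \(r9)), together with \(s3) and \(r3) to move antipodes and weights around the comultiplication vertex. The paper does this concretely --- \(p11) by composing \(r8) with $v_g$ on the bottom, \(p10) by additionally composing with $\gamma_{g,g} \circ \mu_{g,g}^{-1} \circ (v_g \diam v_g)$ on the top, and \(p12) from \(p11) by substituting $\bar g$ for $g$ and composing with $S_g \circ v_g^{-1}$ and $(\bar S_g \diam \bar S_g)\circ(v_g \diam v_g)\circ\mu_{g,g}^{-1}$ --- rather than routing through $\sym$, but your cycle of invertible conjugations is the same idea and your bookkeeping concern (that \(p4)--\(p5) and the $\sym$-invariance are established in $\H^u_v(\G)$ before the new axioms) is correctly resolved in the paper. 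The appeal to \(a5) is not needed.

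For \(r9) $\Leftrightarrow$ \(p13) your route diverges from the paper's and, as written, does not close. The paper's mechanism is more elementary than rotation maps: the right-hand side of \(p13) is literally a crossing $\gamma_{g,h}$ flanked by copairings in such a way that applying \(r9) to the crossing in its middle, followed by one application of \(p3) and four of \(p4)--\(p5), cancels all the copairings and collapses the diagram to the single crossing on the left-hand side; reading the same computation backwards gives the converse. No case analysis on coincidences among the labels $i,j,k,l$ is required, because \(p3) and \(p4)--\(p5) hold uniformly and trivial copairings cancel for free. Your proposed path through $\rot_r$, $\rot_l$ and \(f8)--\(f8') is only a hope that the rotated relation ``should transform into'' \(p13); to make it a proof you would still need to exhibit the cancellation of the copairings, which is precisely the step \(p3) plus \(p4)--\(p5) that the paper isolates. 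I would replace the rotation-map plan with that direct cancellation argument.
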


\pagebreak

\begin{proof}
The equivalence between \(r8) and \(p11) derives from \(s3) in Table \ref{table-Hprop/fig}
and \(r3) in Table \ref{table-Huvdefn/fig}, after having composed both sides of \(r8) with
$v_g$ on the bottom. \(p10) is obtained from \(r8) by composing both sides with $v_g$ on
the bottom and with the invertible morphism $\gamma_{g,g} \circ \mu^{-1}_{g,g} \circ (v_g
\diam v_g)$ (see Proposition \ref{mu/thm}) on the top. Analogously, \(p12) is obtained
from \(p11) by replacing $g$ with $\bar g$ and composing both sides with the invertible
morphisms $S_g \circ v_g^{-1}$ and $(\bar S_g \diam \bar S_g) \circ (v_g \diam v_g) \circ
\mu^{-1}_{g,g}$ respectively on the bottom and on the top.

To see that \(r9) implies \(p13), it suffices to observe that the diagram on the right
side of \(p13) can be reduced to the single crossing on the left side, by applying \(r9)
at the crossing in the middle of it, and then using one move \(p3) and four moves \(p4-5)
to cancel the corresponding copairings. The opposite argument shows that \(p13) implies
\(r9) as well.
\end{proof}

\begin{definition}\label{defnHr/def}
Given a groupoid $\G$, we define the {\sl universal ribbon Hopf $\,\G$-algebra} $\H^r(\G)$
as the braided strict monoidal category freely generated by a ribbon Hopf $\G$-algebra $H$
with a family of ribbon morphisms $v = \{v_g: H_g \to H_g\}_{g \in \G}$. Equiv\-alently,
$\H^r(\G)$ is the quotient of $\H^u_v$ modulo the relations \(r8) and \(r9), i.e. the
objects and elementary morphisms of $\H^r(\G)$ are the same as $\H^u_v(\G)$, while the
relations are those of $\H^u_v(\G)$ plus the additional axioms for the ribbon morphisms
\(r8) and \(r9) in Table \ref{table-Hr/fig}.
\end{definition}

\begin{proposition}\label{hr-prop/thm}
Given a groupoid $\G$, all the relations in Table \ref{table-Hr/fig} hold in the universal 
ribbon Hopf $\,\G$-algebra $\H^r(\G)$. Moreover, the category equivalence in Proposition 
\ref{h-prop/thm} passes to the quotient to give an involutive antimonoidal category 
equivalence $\sym: \H^r(\G) \to \H^r(\G)$.
\end{proposition}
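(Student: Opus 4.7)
The plan is to prove the proposition in two parts: first, that every relation in Table \ref{table-Hr/fig} is a consequence of the defining axioms of $\H^r(\G)$; and second, that the antimonoidal equivalence $\sym$ from Proposition \ref{h-prop/thm} descends to the quotient. Both parts reduce to finite diagrammatic computations using the results already accumulated in Tables \ref{table-Hdefn/fig} through \ref{table-Huvprop/fig}.

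For the first part, the equivalents \(p10), \(p11), \(p12), \(p13) of the two new defining axioms are already provided by Proposition \ref{hr-axioms/thm}, so they hold tautologically in $\H^r(\G)$. The remaining entries of Table \ref{table-Hr/fig} are identities describing how the ribbon morphisms $v_g$ interact with the braiding $\gamma$, the copairings $\sigma_{i,j}$, the isomorphisms $\mu_{g,h}$ of Lemma \ref{mu/thm}, and the coactions $\rho_{i,g}, \rho_{g,i}$ of Proposition \ref{rho/thm}. I would derive each of these by rewriting the relevant portion of the diagram using whichever form of \(r8) among \(p10)--\(p12) is most convenient (typically \(p11) for identities involving $\Delta$ composed with $v$, and \(p12) when the antipode naturally appears on both outputs) and using \(p13) whenever a crossing needs to be traded for copairings and $\mu$'s. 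After each local rewriting, the identity is closed up using the pre-ribbon properties (especially the invertibility \(p4-5) of $\mu_{g,h}$, the symmetry \(p3) of the copairing, and the comodule axioms implicit in Proposition \ref{rho/thm}) together with the autonomous/tortile structure of Tables \ref{table-Hprop/fig} and \ref{table-Hu/fig}.

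For the second part, by universality it suffices to check that $\sym$ maps the two new defining relations \(r8) and \(r9) into consequences of the axioms of $\H^r(\G)$. The relevant preliminary identities are $\sym(v_g) = v_{\bar g}$ (by definition), $\sym(\sigma_{i,j}) = \sigma_{j,i}$ (which follows from \(p3) together with the triviality of $\sigma$ for distinct indices), $\sym(\mu_{g,h}) = \mu_{\bar h,\bar g}$ (by expanding the definition and using $\sym(m_{g,h}) = m_{\bar h,\bar g}$), and $\sym(\rho_{i,g}) = \rho_{\bar g,i}$ (by the analogous expansion). With these in hand, a direct computation shows that applying $\sym$ to relation \(r8) for $g \in \G$ yields relation \(r8) for $\bar g$, and similarly $\sym$ of \(r9) for the pair $(g,h)$ yields \(r9) for the pair $(\bar h,\bar g)$. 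Thus $\sym$ preserves the defining axioms and descends to an antimonoidal functor $\H^r(\G) \to \H^r(\G)$, which is an involution on generators and products and is therefore a category equivalence, exactly as in the proof of Proposition \ref{h-prop/thm}.

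The main obstacle I expect is the bookkeeping in the first part: each non-trivial identity in Table \ref{table-Hr/fig} requires its own bespoke sequence of moves in which ribbon weights have to be pushed across multiplications (via \(r5-5')), copairings have to be introduced or canceled (via \(p2-2'), \(p4-5)), and the composability of the labels in $\G$ has to be tracked throughout. The descent of $\sym$, by contrast, will be essentially automatic once the preliminary identities on $\sym(\sigma)$, $\sym(\mu)$, and $\sym(\rho)$ are in place.
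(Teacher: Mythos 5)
Your proposal is correct and follows essentially the same route as the paper: relations equivalent to the axioms (\(p10)--\(p13)) are tautological by Proposition \ref{hr-axioms/thm}, the remaining entries of Table \ref{table-Hr/fig} are obtained by case-by-case diagrammatic rewriting using the equivalent forms of \(r8)/\(r9) together with the pre-ribbon machinery (copairing symmetry, invertibility of $\mu$, the coaction relations), and $\sym$ descends to $\H^r(\G)$ precisely because the two new axioms are carried by $\sym$ into instances of themselves (for $\bar g$, resp. $(\bar h,\bar g)$), which is the paper's one-line justification.
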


\begin{proof}
The existence of the induced involutive antimonoidal category equivalence $\sym: \H^r(\G)
\to \H^r(\G)$ is due to the invariance of the axioms \(r8) and \(r9) under
$\sym: \H_v^u(\G) \to \H_v^u(\G)$.

Relations \(p10) to \(p13) were considered in Proposition \ref{hr-axioms/thm}. Figure
\ref{h-prop07/fig} proves \(p15), while \(p15') follows by symmetry. Then \(p14-14') can
be derived from \(p15-15') in the same way as \(p8) was derived from \(p7) (see Figure
\ref{h-prop01/fig}) and we leave their proof to the reader.

\begin{Figure}[htb]{h-prop07/fig}
{}{Proof of \(p15)
   [{\sl a}/\pageref{table-Hdefn/fig}, {\sl p}/\pageref{table-Hr/fig},
    {\sl r}/\pageref{table-Huvdefn/fig}-\pageref{table-Huvprop/fig},
    {\sl s}/\pageref{table-Hdefn/fig}-\pageref{table-Hprop/fig}]}
\centerline{\fig{}{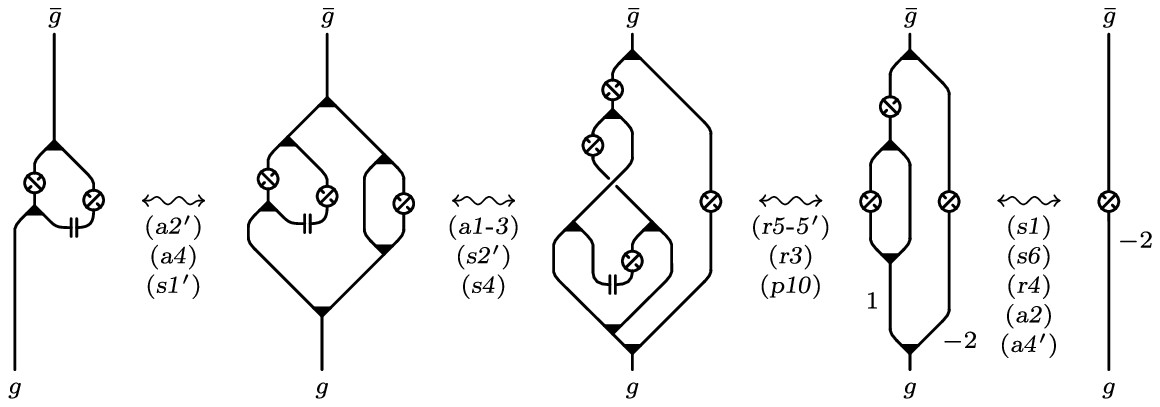}}
\vskip-3pt
\end{Figure}

Properties \(t2) to \(t9) concern the two morphisms $$T_g = S_g \circ v_g^{-1} \quad
\text{and} \quad \bar T_g = \bar S_g \circ v_g$$ for $g \in \G(i,j)$ with $i \neq j$.
To prove these properties, notice that for an arbitrary $g \in \G$ we have
$T_g^{-1} = \bar T_{\bar g}$, while $T_{\bar g} \circ T_g$ (resp. $\bar T_{\bar g} \circ
\bar T_g$) gives a full positive (resp. negative) twist of an edge with weight $-2$ (resp.
$+2$), as shown in Figure \ref{h-prop08/fig}.

\vskip0pt

\begin{Figure}[htb]{h-prop08/fig}
{}{$T_{\bar g} \circ T_g$ and $\bar T_{\bar g} \circ \bar T_g$
   [{\sl f}/\pageref{table-Hu/fig},
    {\sl r}/\pageref{table-Huvdefn/fig}]}
\centerline{\fig{}{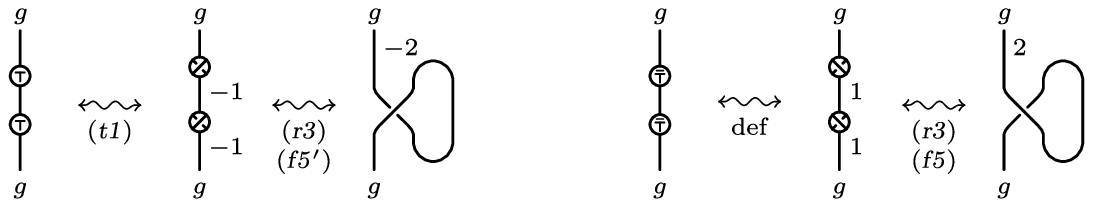}}
\vskip-3pt
\end{Figure}

Now, taking into account the triviality of $\sigma_{i,j}$ for $i \neq j$, we see that 
\(t2-3) immediately follow from relation \(p14), \(t4-5) rewrite \(p11) and \(r8) when $g 
\in \G(i,j)$ with $i\neq j$, and \(t6-7) rewrite \(f8) and \(s4) under the further 
assumption that $h \in \G(j,k)$ and $j \neq k \neq i$. Finally, \(t8-9) are proved in 
Figure \ref{h-prop09/fig}.
\end{proof}

\begin{Figure}[htb]{h-prop09/fig}
{}{Proof of \(t8-9)
   [{\sl a}/\pageref{table-Hdefn/fig},
    {\sl p}/\pageref{table-Huvprop/fig}-\pageref{table-Hr/fig},
    {\sl r}/\pageref{table-Huvdefn/fig}-\pageref{table-Huvprop/fig}, 
    {\sl s}/\pageref{table-Hprop/fig},
    {\sl t}/\pageref{table-Hr/fig}]}
\vskip-3pt
\centerline{\fig{}{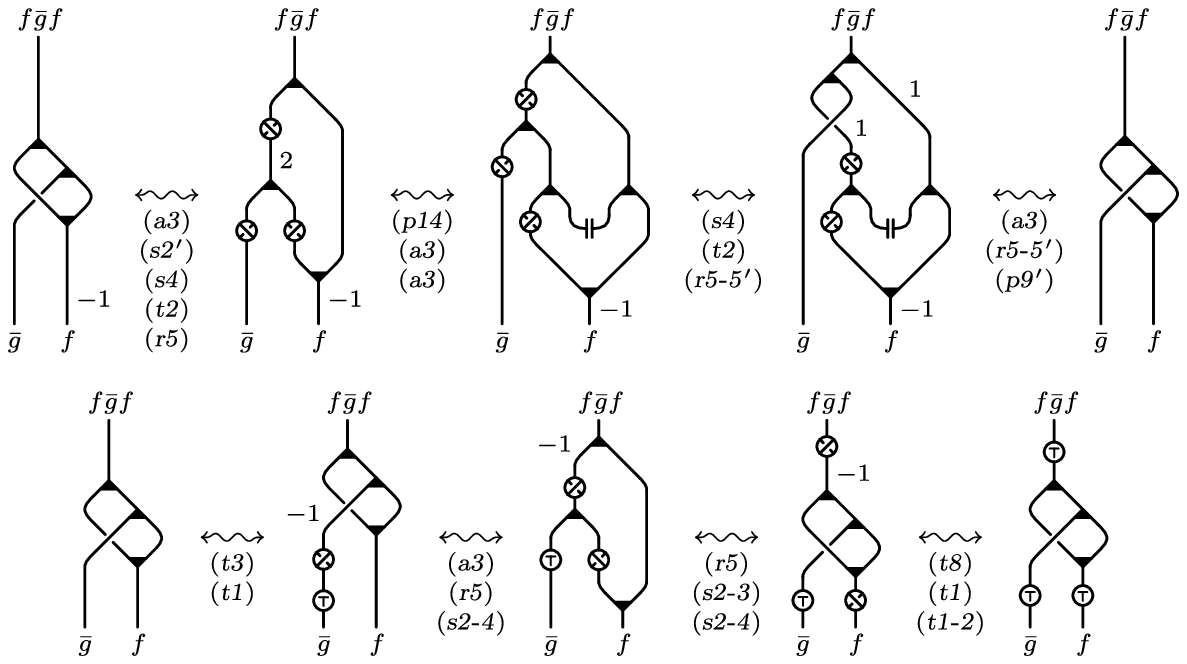}}
\vskip-3pt
\end{Figure}

We observe that, thanks to properties \(t3-4), when $g \in \G(i,j)$ with $i \neq j$ the
morphism $T_g$ is a coalgebra isomorphism, i.e. $\Delta_{\bar g}\circ T_g=(T_{\bar g}
\diam T_{\bar g}) \circ \Delta_g$.

\medskip

The next proposition tells us that an inclusion of groupoids $\G \subset \G'$ induces an
inclusion between the correspoding universal ribbon Hopf $\G$-algebras, hence we can
write $\H^r(\G) \subset \H^r(\G')$.

\begin{proposition}\label{formalext/thm} 
Any functor $\phi: \G \to \G'$ between groupoids which is injective on the set of objects
can be extended to a functor $\Upsilon_\phi: \H^r(\G) \to \H^r(\G')$. Moreover, if $\phi$
is faithful (an embedding) then $\Upsilon_\phi$ is also faithful.
\end{proposition}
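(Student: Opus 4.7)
The plan is to build $\Upsilon_\phi$ via the universal property of $\H^r(\G)$ stated in Definition \ref{defnHr/def}. Specifically, I would exhibit inside the category $\H^r(\G')$ a ribbon Hopf $\G$-algebra structure whose underlying family of objects is $\{H_{\phi(g)}\}_{g\in\G}$, by pulling back along $\phi$ the tautological ribbon Hopf $\G'$-algebra structure of $\H^r(\G')$. Concretely, to each elementary morphism of $\H^r(\G)$ indexed by tuples $(g_1,\dots,g_k) \in \G^k$ (namely the $\Delta_g, \epsilon_g, m_{g,h}, \eta_i, S_g, \bar S_g, L_g, l_i, v_g^{\pm1}, \sigma_{i,j}, \gamma_{g,h}, \bar\gamma_{g,h}$), assign the corresponding elementary morphism of $\H^r(\G')$ indexed by $(\phi(g_1),\dots,\phi(g_k))$. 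The injectivity of $\phi$ on objects guarantees that the putative units $\eta_{\phi(i)}$ are pairwise distinct, and functoriality of $\phi$ ensures that a multiplication $m_{g,h}$ with $g\in\G(i,j),h\in\G(j,k)$ pulls back to the legal $m_{\phi(g),\phi(h)}$, since $\phi(j)$ matches on both sides.

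Next I would verify that every defining axiom of a ribbon Hopf $\G$-algebra (the relations of Tables \ref{table-Hdefn/fig}, \ref{table-Hu/fig}, \ref{table-Huvdefn/fig}, and \ref{table-Hr/fig}) is satisfied by this pulled-back structure. This is essentially automatic, because each axiom has the form of an equality between two compositions whose elementary constituents are indexed by some tuple of $\G$-morphisms; applying $\phi$ componentwise converts the whole relation into the corresponding $\G'$-indexed relation, which holds by definition in $\H^r(\G')$. The universal property then delivers a unique braided strict monoidal functor $\Upsilon_\phi: \H^r(\G) \to \H^r(\G')$ with the prescribed action on generators.

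For the faithfulness assertion, assume $\phi$ is an embedding, so $\phi$ is injective both on objects and on $\Mor\G$. The task is to show that two parallel morphisms of $\H^r(\G)$ whose $\Upsilon_\phi$-images agree are already equal. My plan is to realise $\Upsilon_\phi(\H^r(\G))$ as the braided monoidal subcategory of $\H^r(\G')$ generated by the elementary morphisms indexed by $\phi(\G)$ and to observe that this subcategory admits, via the iso $\phi: \G \to \phi(\G)$, exactly the same presentation (generators and relations) as $\H^r(\G)$. The delicate point — and the main obstacle — is that in $\H^r(\G')$ two morphisms in the image of $\Upsilon_\phi$ could in principle be proved equal by a chain of rewrites that temporarily uses elementary morphisms not in $\phi(E)$. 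To dispose of this, I would track, for every morphism in the free category $F(S',E')$, the multiset of $\G'$-indices appearing in it (its ``support''): each defining relation of $\H^r(\G')$ either preserves the support or involves only a controlled local modification whose support lies in a single coset of $\phi(\G)$, so by performing inductively the substitution $\phi(g)\mapsto g$ we transport any derivation between two morphisms of $\H^r(\G)$-support entirely inside $\phi(\G)$, and hence inside $\H^r(\G)$, which forces equality in the source. Equivalently, one can package this into the construction of a retraction of braided monoidal categories onto $\Upsilon_\phi(\H^r(\G))$, which is the conceptually cleanest route though more technical to write out in detail.
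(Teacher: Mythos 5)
Your construction of $\Upsilon_\phi$ is essentially the paper's: relabel the elementary morphisms by $\phi$ and check that the defining relations survive. Note, though, that the one place where injectivity on objects really enters is not the distinctness of the units but the ribbon axioms \(r8) and \(r9): these involve the copairing $\sigma_{i,j}$, whose definition \(r6) distinguishes $i=j$ from $i\neq j$, and injectivity on objects is exactly what guarantees $\Upsilon_\phi(\sigma_{i,j})=\sigma_{\phi(i),\phi(j)}$, so that the images of those axioms are again instances of \(r8) and \(r9) in $\H^r(\G')$. With that said, the first half is fine and coincides with the paper's argument.

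The faithfulness half has a genuine gap. You propose to track the ``support'' of intermediate diagrams and transport a derivation in $\H^r(\G')$ back along the substitution $\phi(g)\mapsto g$, claiming each relation only makes a modification whose support lies ``in a single coset of $\phi(\G)$''. But intermediate diagrams of a derivation between two morphisms labeled in $\phi(\G)$ can legitimately carry labels that are not in $\phi(\G)$ at all, on which your substitution is undefined: axioms such as \(i3), \(a8) and \(r6), read backwards, create out of nothing closed components labeled by identities $1_i$ with $i$ an arbitrary object of $\G'$, possibly outside $\Obj\phi(\G)$, and \(r9) then lets these components braid through the rest of the diagram (there is also no meaningful notion of coset of a subgroupoid here). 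The missing idea, which is how the paper argues after reducing as you do to the inclusion $\iota:\phi(\G)\subset\G'$ via the isomorphism induced by $\hat\phi$, is this: a relation move can only introduce labels that are identities of $\G'$ or composites and inverses of labels already present, and since $\phi(\G)$ is a subgroupoid (again by injectivity on objects), the only foreign labels that can ever occur are identities at objects outside $\Obj\phi(\G)$; the pieces carrying such labels meet the rest of each diagram only through crossings, where \(r9) degenerates to a crossing change because the mixed copairings are trivial, and through trivial copairings, which \(r6) splits into units, so these pieces can be disconnected and deleted from every intermediate diagram, yielding a derivation entirely inside $\H(\phi(\G))$ and hence $F=F'$. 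Finally, your fallback of a monoidal retraction of $\H^r(\G')$ onto the image of $\Upsilon_\phi$ is not available in this generality: $\phi(\G)$ need not be a full subgroupoid of $\G'$, and even for full inclusions the retraction-like functors of the paper ($\_^x$ and $\down_X$) require connectivity hypotheses and the later machinery of Sections \ref{adjoint/sec} and \ref{reduction/sec}, so they cannot be invoked at this point without circularity.
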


\begin{proof}
We formally define $\Upsilon_\phi$ by applying $\phi$ to the indices of all the elementary
morphisms, that is $\Upsilon_\phi(\gamma_{g,h}) = \gamma_{\phi(g),\phi(h)}$,
$\Upsilon_\phi(\Delta_g) = \Delta_{\phi(g)}$, $\Upsilon_\phi(\eta_i) = \eta_{\phi(i)}$,
etc.

To see that $\Upsilon_\phi$ is well-defined, we need to check that all relations for
$\H^r(\G)$ are satisfied in the image. The only problem we might have would be with
relations \(r8) and \(r9) in Table \ref{table-Hr/fig}. Those relations will be satisfied
if we have that $\Upsilon_\phi(\sigma_{i,j}) = \sigma_{\phi(i),\phi(j)}$, which is always
true since $\phi$ is injective on objects. This concludes the first part of the
proposition since the functoriality of $\Upsilon_\phi$ is obvious.

At this point, it is left to show that when $\phi$ is injective on morphisms, then
$\Upsilon_\phi$ is injective on morphisms as well. In this case $\phi$ induces an
isomorphism of categories $\hat{\phi}: \G \to \phi(\G)$ and $\Upsilon_{\hat\phi}: \H(\G)
\to \H(\phi(\G))$ is an isomorphism of categories as well, being $\Upsilon_{\hat\phi}$ and
$\Upsilon_{\hat\phi^{-1}}$ inverse of each other by construction. Moreover, $\Upsilon_\phi
= \Upsilon_\iota \circ \Upsilon_{\hat\phi}$ where $\iota:\phi(\G) \subset \G'$ is the
corresponding inclusion. Hence, it suffices to show that the functor $\Upsilon_\iota$ is
injective on morphisms.

Let $F,F':A \to B$ be morphisms of $\H(\phi(\G))$ with $\Upsilon_\iota(F) =
\Upsilon_\iota(F')$. Then, they are represented by diagrams labeled in $\phi(\G)$ and
related by a sequence of moves in $\H(\G')$. Now, when we apply a relation move to a
diagram representing a morphism of $\H(\G')$, the only new labels that can appear are
identities of $\G'$ and products of labels already occurring in it or their inverses.
Therefore, since $\phi(\G)$ is a subcategory of $\G'$, the only labels not belonging to
$\phi(\G)$ that can occur in the intermediate diagrams of the sequence are identities
$1_i$ with $i \in \Obj \G' - \Obj \phi(\G)$. The parts of the diagram carrying such labels
interact with the rest of the diagram only through move \(r9) in Table
\ref{table-Hr/fig}. Hence, by applying move \(r6) to change the trivial copairings into
two units, we can disconnect those parts from the rest of the intermediate diagrams. After
that, we can delete them to get a new sequence of diagrams between $F$ and $F'$ related by
moves in $\H(\phi(\G))$, which proves that $F = F'$ in $\H(\phi(\G))$.
\end{proof}

\begin{Figure}[b]{phi01/fig}
{}{The functor $\Phi_n\!:\H_n^r \to \K_n$}
\centerline{\fig{}{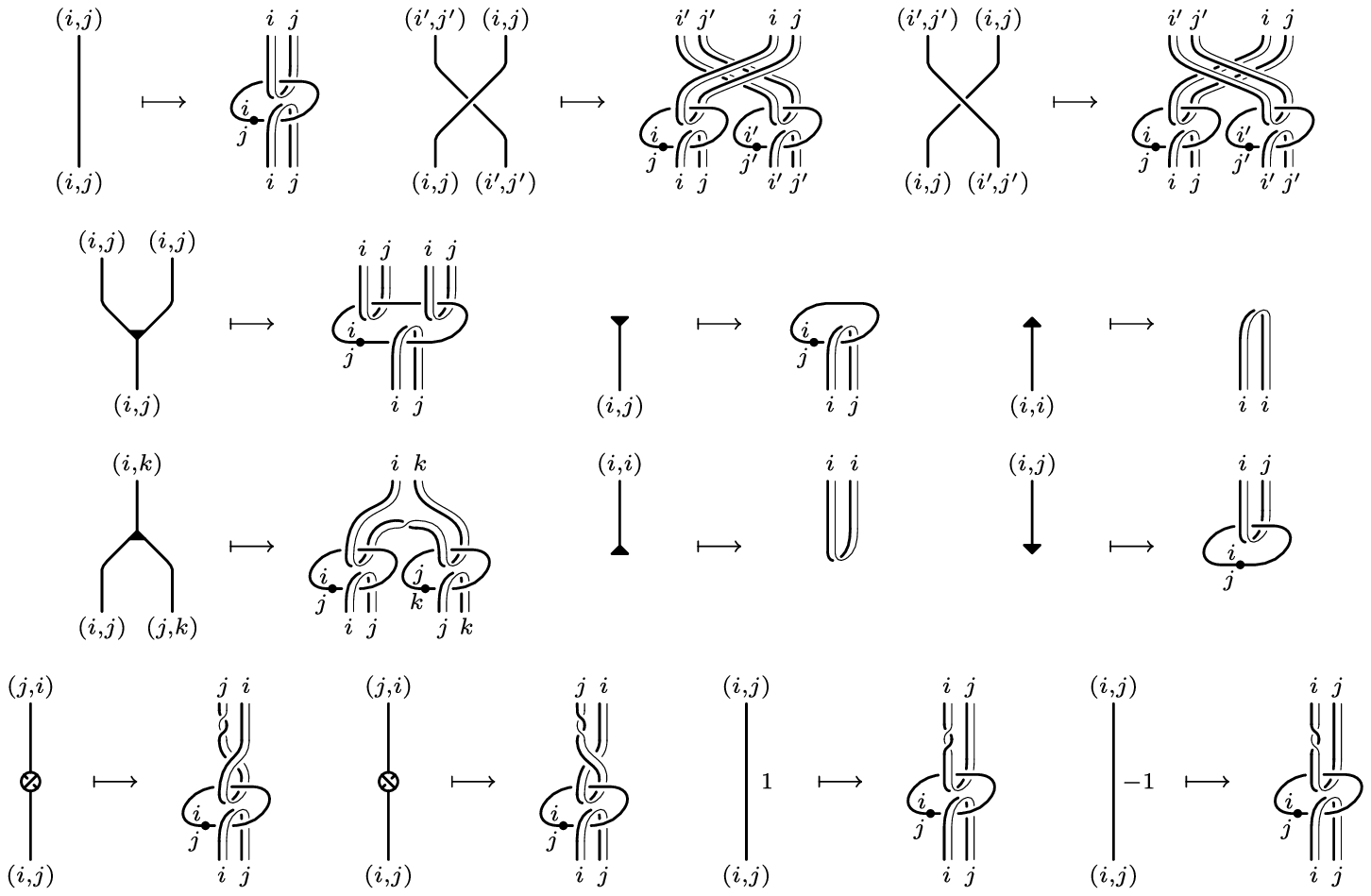}}
\vskip-3pt
\end{Figure}

\subsection{The functors $\Phi_n: \H^r_n \to \K_n$%
\label{Phi/sec}}

In the introduction of this chapter we argued that the category of generalized Kirby
tangles $\K_n$ contains a ribbon Hopf $\G_n$-algebra. We will prove this fact here by
constructing a functor from the universal ribbon Hopf $\G_n$-algebra to $\K_n$. Being this
algebra our main object of study we will simplify the notation as follows.

\medskip

For $n \geq 1$, we denote by $\H^r_n$ the universal ribbon Hopf $\G_n$-algebra
$\H^r(\G_n)$ associated to the groupoid $\G_n$, consisting of the set $\{1, \dots, n\}^2$
with the natural groupoid structure given by $(i,j) (j,k) = (i,k)$ for any $1 \leq i,j,k
\leq n$.

\medskip

The next theorem is an extension of the well-known fact that the category of
ad\-missible tangles contains a braided Hopf algebra (see \cite{Ke02, Ha00}). Indeed,
it shows that the categories of generalized Kirby tangles contain groupoid ribbon Hopf
algebras.

\begin{theorem}\label{phi/thm}
There exists a braided monoidal functor $\Phi_n: \H^r_n \to \K_n$, which sends every
object $H_{(i,j)}$ to $I_{(i,j)}$ (defined in Section \ref{Kirby/sec}) and the elementary
morphisms of $\H_n^r$ to the generalized Kirby tangles described in Figure 
\ref{phi01/fig}.
\end{theorem}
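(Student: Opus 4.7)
The plan is to apply the universal property of $\H^r_n$: it is the braided strict monoidal category freely generated by a ribbon Hopf $\G_n$-algebra (Definition \ref{defnHr/def}), so to produce a braided monoidal functor $\Phi_n: \H^r_n \to \K_n$ it suffices to specify the images of the generating objects and elementary morphisms, and then verify that every defining relation of $\H^r_n$ is satisfied in $\K_n$ up to 2-equivalence of $n$-labeled Kirby tangles (Definition \ref{kt-equivalence/def}). I would set $\Phi_n(H_{(i,j)}) = I_{(i,j)}$ on generating objects and extend monoidally by $\Phi_n(H_\pi) = I_\pi$ for every $\pi \in \seq\G_n$; on the elementary morphisms $\Delta$, $\epsilon$, $m$, $\eta$, $S$, $\bar S$, $l$, $L$, $v$, $\gamma$, $\bar\gamma$ I would use the Kirby tangles prescribed by Figure \ref{phi01/fig}. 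Note that $\Phi_n(\Delta_{(i,j)})$ then coincides with the comultiplication tangle introduced in Section \ref{K/sec} and $\Phi_n(\gamma_{(i,j),(i',j')})$ with the braiding of $\K_n$ from Proposition \ref{K-category/thm}, so the braided monoidal structure is built in by construction.

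The bulk of the work is the verification of the axioms collected in Tables \ref{table-Hdefn/fig}, \ref{table-Hu/fig}, \ref{table-Huvdefn/fig} and \ref{table-Hr/fig}. For each axiom one has to draw both sides of the relation under $\Phi_n$ and transform one into the other by labeled isotopy, crossing changes where applicable, 2-handle sliding, and 1/2-handle cancellation (Figure \ref{kirby-tang01/fig}). The braid axioms \(b1)--\(b4') are automatic from Proposition \ref{K-category/thm}. Coassociativity \(a1) is precisely Proposition \ref{K-coassociativity/thm}, the counit axioms \(a2-2') are the obvious cancellations of an $\epsilon$-cap against a comultiplication fork, the associativity \(a3), unit \(a4-4'), bialgebra \(a5), \(a6), \(a7), \(a8) and antipode axioms \(s1-1'), \(s2-2') follow by the same kind of handle-slide computations as in the ordinary case $n = 1$ treated in \cite{Ke02} and \cite{Ha00}, with the one new ingredient being the tracking of the groupoid labels at each step (permitted because $m_{(i,j),(i',j')}$ is only defined when $(i,j)$ and $(i',j')$ are composable in $\G_n$, which exactly mirrors the topological constraint on attaching a 2-handle running along three 1-handles). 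The integral and cointegral axioms \(i1-3) follow from the fact that $\Phi_n(L_{(i,j)})$ is a $0$-framed unknot that cancels against the dotted unknot of $\Phi_n(\Delta_{(i,j)})$, while $\Phi_n(l_i)$ is a 2-handle cancelling a canceling 1-handle; the unimodularity axioms \(i4-5) are then verified by direct inspection of those diagrams.

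The main obstacle will be the verification of axioms \(r6), \(r7), \(r8) and \(r9), which involve the ribbon morphism $v_{(i,j)}$, the copairing $\sigma_{i,j}$ and their interaction with the braiding. In the Kirby picture $v_{(i,j)}$ corresponds to adding a full positive twist on the attaching framed curve, so that $\Phi_n(v_{(i,j)}^k \circ \eta_i)$ is a $k$-framed unknot linked once with the dotted unknot of label $(i,i)$. The copairing $\sigma_{i,j}$ for $i \neq j$ is forced to be $\eta_i \diam \eta_j$ by Figure \ref{phi01/fig} and this matches the definition \(r6) automatically (the corresponding dotted unknots lie on disjoint 0-handles). The essential case is $\sigma_{i,i}$, which must be realized as the Lyubashenko copairing, i.e.\ a Hopf-linked pair of $0$-framed unknots each cancelling a dotted unknot of label $(i,i)$; one must check that this is indeed the tangle obtained by evaluating the right-hand side of \(r6) in the Kirby tangle picture. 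Given this, axiom \(r7) becomes the standard handle-slide identity expressing that the Hopf pairing is a coalgebra homomorphism in each variable, while \(r8) and \(r9) are Kirby-calculus translations of the facts that a full framing twist can be traded for a self-braiding enriched with the copairing, and that a crossing between two framed components whose groupoid labels overlap equals the braided crossing corrected by the appropriate copairing actions. Each of these last two relations reduces, after unpacking Figure \ref{h-prop-mu/fig} and $\mu_{g,h}$, to a sequence of sliding over the auxiliary linked unknots coming from $\sigma_{i,i}$ and then cancelling them; the computation is technical but mechanical, and the key point is simply checking that no unintended labeling obstruction appears.

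Once all relations are verified, the functoriality of $\Phi_n$ is immediate from the universal property, and the fact that it is braided monoidal follows directly from the choice $\Phi_n(\gamma_{(i,j),(i',j')}) = \gamma_{I_{(i,j)},I_{(i',j')}}$, since the naturality of the braiding of $\K_n$ then propagates to the images of all morphisms of $\H^r_n$.
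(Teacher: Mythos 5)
Your proposal is correct and follows essentially the same route as the paper: define $\Phi_n$ on objects and on the elementary morphisms as in Figure \ref{phi01/fig}, observe that $\sigma_{i,i}$ evaluates to the Lyubashenko copairing, and verify each defining axiom of $\H^r_n$ by labeled isotopy, handle slides, 1/2-handle cancellations and crossing changes, with the ribbon axioms \(r7)--\(r9) as the only genuinely non-routine checks (the paper handles these in Figures \ref{phi06/fig}--\ref{phi08/fig}). The braided monoidality then comes for free from the choice of image for the braiding, exactly as you argue.
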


Before proving the theorem, we observe that the images through $\Phi_n$ of the form
$\lambda_{(i,j)}$, the coform $\Lambda_{(i,j)}$, the copairing $\sigma_{i,j}$ and the
morphism $T_{(i,j)}$, are equivalent in $\K_n$ to the tangles presented in Figure
\ref{phi02/fig}. The case of $\sigma_{i,i}$ is shown in Figure \ref{phi03/fig}, where some
1/2-handle cancelation is understood in the first diagram, while the other cases are
easier and left to the reader. Notice that the image of the copairing $\sigma_{i,i}$ is
exactly the Lyubashenko's copairing defined in \cite{Ly95}.

\begin{Figure}[htb]{phi02/fig}
{}{Images under $\Phi_n$ of $\lambda_{(i,j)}\,, \Lambda_{(i,j)}\,, \sigma_{i,j}\,, 
   T_{(i,j)}$ and $\bar T_{(i,j)}$}
\centerline{\fig{}{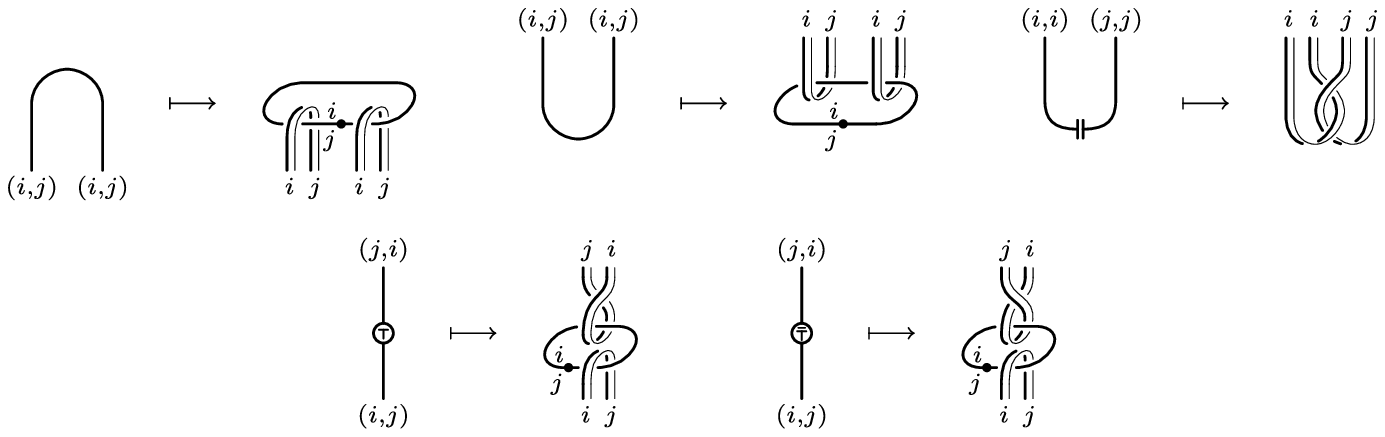}}
\vskip-3pt
\end{Figure}

\begin{Figure}[htb]{phi03/fig}
{}{Deriving $\Phi_n(\sigma_{i,i})$ from \(r6)}
\centerline{\fig{}{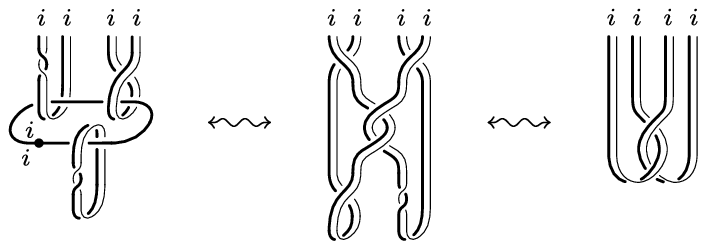}}
\vskip-3pt
\end{Figure}

\begin{proof}[Theorem \ref{phi/thm}]
We have to verify that the definition of $\Phi_n$ on the elementary morphisms is
compatible with the axioms for $\H_n^r$, namely that it determines equivalent images
in $\K_n$ for the two diagrams involved in each of those axioms.

This is easy to check for most of the unimodular braided Hopf algebra axioms in Tables
\ref{table-Hdefn/fig} and \ref{table-Hu/fig}. In particular, it reduces to isotopy for the
braid axioms and to deletion of canceling 1/2-pairs for the axioms \(a2-2'), \(a6), \(a8),
\(i3) and \(i4), while one needs to make one or two handle slides before deleting for the
axioms \(a1), \(a3), \(a4-4'), \(a7), \(s2-2'), \(i1), \(i2) and \(i5).

Axioms \(a5) and \(s1) are considered in Figures \ref{phi04/fig} and \ref{phi05/fig}
respectively. Axiom \(s1') can be treated similarly to \(s1).

\begin{Figure}[htb]{phi04/fig}
{}{The definition of $\Phi_n$ is compatible with \(a5)}
\centerline{\fig{}{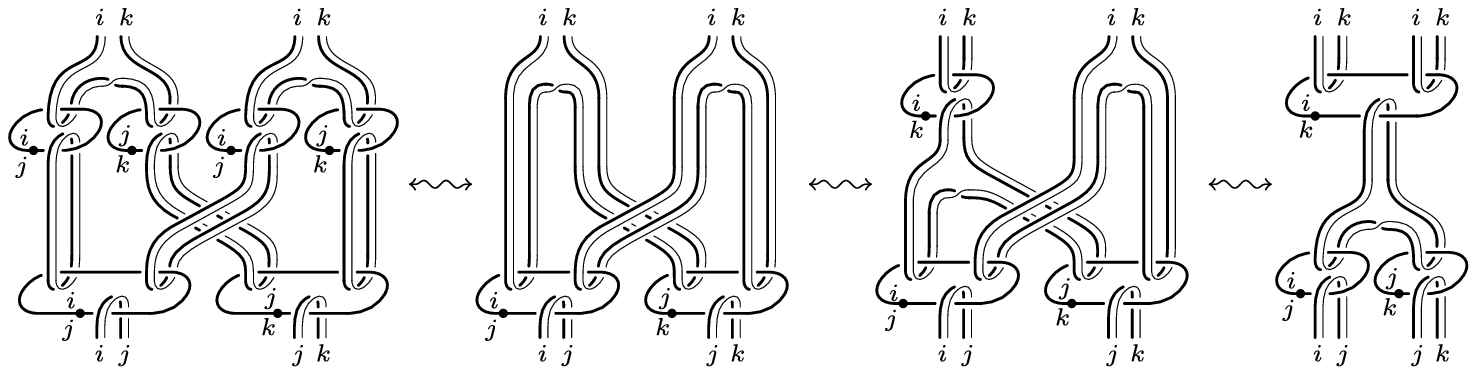}}
\vskip-3pt
\end{Figure}

\begin{Figure}[htb]{phi05/fig}
{}{The definition of $\Phi_n$ is compatible with \(s1)}
\vskip-3pt
\centerline{\fig{}{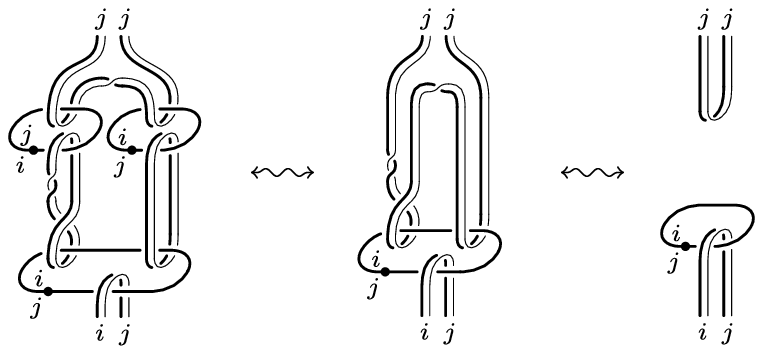}}
\vskip-3pt
\end{Figure}

\begin{Figure}[b]{phi06/fig}
{}{The definition of $\Phi_n$ is compatible with \(r7)}
\vskip-6pt
\centerline{\fig{}{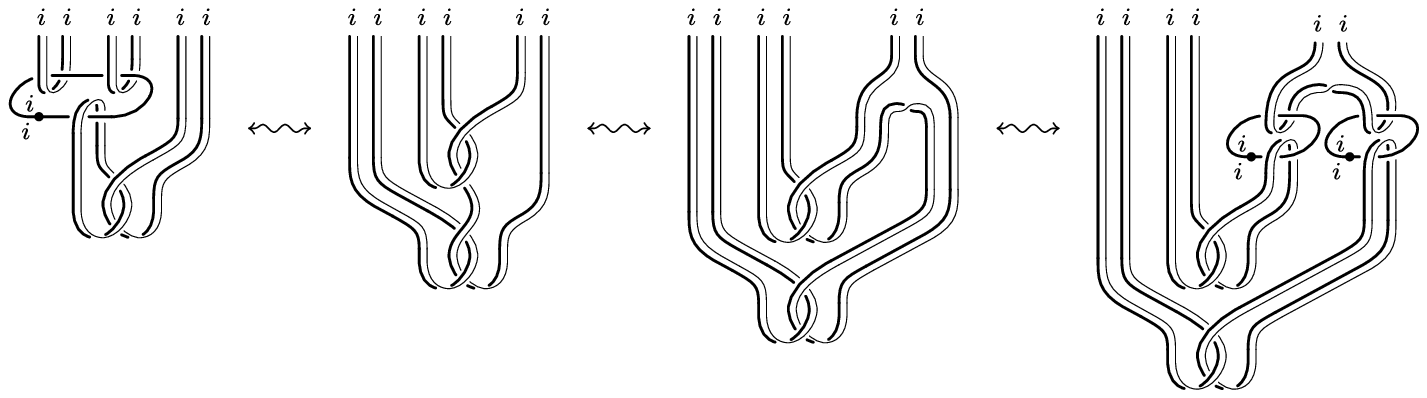}}
\vskip-3pt
\end{Figure}

\begin{Figure}[b]{phi07/fig}
{}{The definition of $\Phi_n$ is compatible with \(r8)}
\vskip-3pt
\centerline{\fig{}{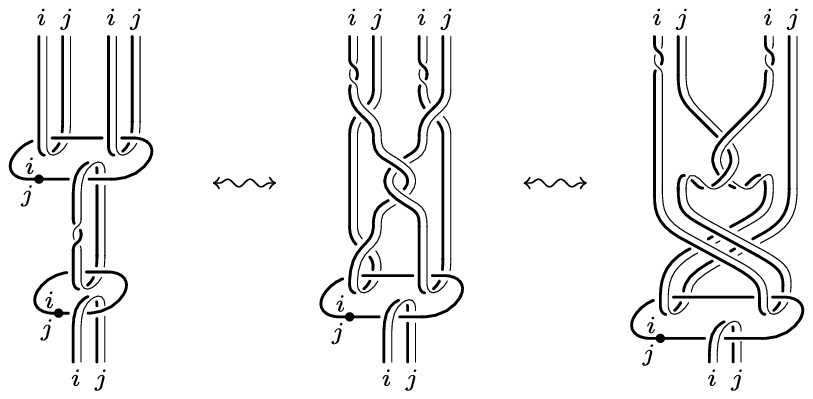}}
\vskip-3pt
\end{Figure}

\begin{Figure}[htb]{phi08/fig}
{}{The definition of $\Phi_n$ is compatible with \(r9)}
\centerline{\fig{}{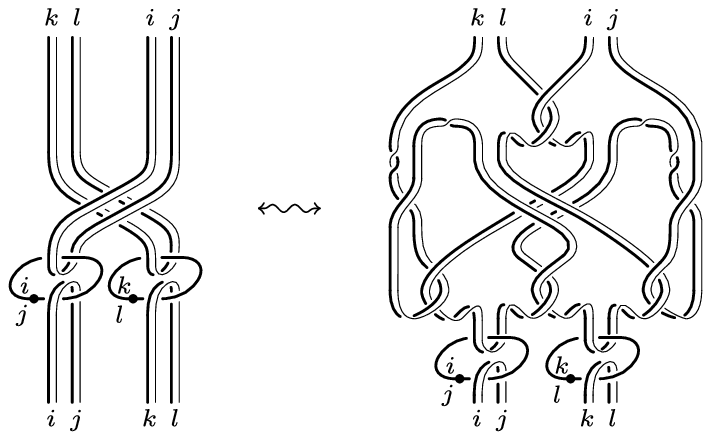}}
\vskip-3pt
\end{Figure}

Now, let us pass to the ribbon axioms in Tables \ref{table-Huvdefn/fig} and
\ref{table-Hr/fig}. The compatibility with the axioms \(r1) to \(r5) can be easily proved
by applying once again cancelations of 1/2-pairs after suitable handle slidings. The rest
of the ribbon axioms are dealt with in Figures \ref{phi06/fig}, \ref{phi07/fig} and
\ref{phi08/fig}. Here, in the rightmost diagrams of the last two figures some cancelations
of 1/2-pairs and some crossing changes moves between components with different labels
(when they appear) have been\break performed.
\end{proof}

\subsection{The adjoint morphisms%
\label{adjoint/sec}}

As we have seen in Section \ref{K/sec}, the {\sl pushing through an 1-handle} move plays
an essential role in the definition of the reduction functor $\down_k^n: \K_{n \red k} \to
\K_k$ for Kirby tangles and in the proof that such functor is a category equivalence.

Now we introduce and study the algebraic analog of such move: the intertwining of a
morphism with the adjoint action of the ribbon Hopf algebra on itself. This will be used
in Section \ref{reduction/sec} to define the reduction functor between the corresponding
algebraic categories.

\medskip

We remind that the (right) {\sl adjoint action} of a group $G$ on itself is defined by
$\ad(x): g \mapsto g^x = \bar x g x$ for any $x,g \in G$. This definition can be
extended to an arbitrary groupoid $\G$ just by deleting $x$ and/or $\bar x$ from $\bar x g
x$ when the corresponding composition with $g$ is not defined.

\begin{proposition}\label{red-groupoid/thm}
Let $\G$ be a groupoid. Given $x \in
\G(i_0,j_0)$, let $\_^x: \G \to \G$ be defined on the objects as $i_0^x = j_0$ and
$i^x = i$ for $i \neq i_0$, and on the morphisms as
\vskip-4pt
$$g^x = \left\{
\arraycolsep0pt
\begin{array}{ll}
 \bar x g  x &\quad\text{if } g \in \G(i_0,i_0),\\
 \bar x g &\quad\text{if } g \in \G(i_0,j) \text{ with } j \neq i_0,\\
 g x &\quad\text{if } g \in \G(i,i_0) \text{ with } i \neq i_0,\\
 g &\quad\text{if } g \in \G(i,j) \text{ with } i,j \neq i_0.
\end{array}
\right.$$
\vskip4pt

Denoting by $\G^{\bs i} \subset \G$ the full subgroupoid of $\G$ with $\Obj \G^{\bs i} =
\Obj\G - \{i\}$ for $i \in \Obj\G$, the following statements hold:
\begin{itemize}
\item[\(a)]
$\_^x: \G \to \G$ is a functor, in particular $(gh)^x = g^{x}h^x$ and $\bar{g^x} = \bar
g^x$ for any $g,h \in \G$;
\item[\(b)]
$(g^x)^y = g^{xy}$ for any $x,y,g\in \G$, that is the functors $\_^x$ with $x \in \G$ give
a right action of $\G$ on itself;
\item[\(c)] 
if $i_0 \neq j_0$, then the image $\G^x$ of $\_^x$ is the subgroupoid $\G^{\bs i_0}$ and
$\_^x: \G \to \G^{\bs i_0}$ is a left inverse of the inclusion $\G^{\bs i_0} \subset \G$;
\item[\(d)] 
$\_^x$ restricts to an equivalence of categories $\G^{\bs j_0} \to \G^{\bs i_0}$, whose
inverse $\G^{\bs i_0} \to \G^{\bs j_0}$ is the corresponding restriction of $\_^{\bar x}$
(both are the identity of $\G^{\bs i_0}$ if $i_0 = j_0$);
\item[\(e)] 
for any $x \in \G(i_0,j_0)$ and $y \in \G(i_0,k_0)$, there exists a natural
transformation $N: \_^x \to \_^y$ such that $N(i_0) = \bar x y$ and $N(i) = 1_i$ for
$i \neq i_0$.
\end{itemize}
\vskip-12pt
\end{proposition}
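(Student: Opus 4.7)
My plan is to verify all five statements by straightforward case analysis, leveraging the fact that the definition of $\_^x$ on morphisms splits naturally into four cases depending on whether the source and/or target of the morphism equals $i_0$. No real conceptual obstacle is anticipated; the proof is essentially a bookkeeping exercise that exploits the two basic identities $\bar x x = 1_{j_0}$ and $x \bar x = 1_{i_0}$.

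For part (a), I would first verify that identities are preserved: for $i \neq i_0$ this is immediate from the fourth case of the definition, while for $i_0$ one computes $1_{i_0}^x = \bar x \, 1_{i_0} \, x = \bar x x = 1_{j_0} = 1_{i_0^x}$. For compositions $(gh)^x = g^x h^x$ with $g \in \G(i,j)$ and $h \in \G(j,k)$, I would split into nine subcases according to which of $i$, $j$, $k$ equal $i_0$. In each subcase, the identity reduces either trivially (when the intermediate vertex $j$ is not $i_0$, so no insertions at the joining point) or by using $\bar x x = 1_{j_0}$ or $x \bar x = 1_{i_0}$ to cancel an adjacent pair $x \bar x$ or $\bar x x$ inserted at $j = i_0$. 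The formula $\bar{g^x} = \bar g{}^x$ follows either by the same case analysis or as a formal consequence of functoriality.

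For part (b), given $x \in \G(i_0,j_0)$ and $y$ composable with $x$ so that $y \in \G(j_0,l_0)$ and $xy \in \G(i_0,l_0)$, I would again do a short case analysis on the source and target of $g$: the only subtlety is that applying $\_^y$ after $\_^x$ only inserts $y$-factors at occurrences of the \emph{new} object $j_0 = i_0^x$, while $\_^{xy}$ inserts $xy$-factors at occurrences of $i_0$; the two operations match on the nose. Parts (c) and (d) are then essentially observations: when $i_0 \neq j_0$, the definition never produces a morphism with $i_0$ as source or target, so the image lies in $\G^{\bs i_0}$, and restricting to $g \in \G^{\bs i_0}$ falls into the last case of the definition, giving $g^x = g$, which proves the left-inverse statement. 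Combining this with (b) applied to $\bar x$ yields $\_^x \circ \_^{\bar x} = \id_{\G^{\bs i_0}}$ and $\_^{\bar x} \circ \_^x = \id_{\G^{\bs j_0}}$, so the restrictions are mutually inverse equivalences.

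Finally, for part (e) I would directly check naturality: for a morphism $g \colon i \to j$ in $\G$, the naturality square requires $g^x \cdot N(j) = N(i) \cdot g^y$. Splitting into the four cases based on whether $i = i_0$ and/or $j = i_0$, in every case both sides reduce to the same element, with the only nontrivial cancellations being $x \bar x y = y$ (when $j = i_0$) and $\bar x y \bar y = \bar x$ (when $i = i_0$). The case $i, j \neq i_0$ is trivial. This verifies that $N$ is a natural transformation with the prescribed values, completing the proof.
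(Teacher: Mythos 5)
Your overall strategy --- a direct case analysis on whether the endpoints of a morphism hit $i_0$, driven by the two cancellations $x\bar x = 1_{i_0}$ and $\bar x x = 1_{j_0}$ --- is exactly the verification the paper intends (its proof is left to the reader as straightforward), and your treatments of (a), (c) and (e) are correct; in particular the naturality computation in (e) is complete and the cancellations you list are the right ones.

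There is, however, a genuine problem in your part (b): the claim that the two operations ``match on the nose'' is false, and indeed the identity $(g^x)^y = g^{xy}$ does not hold in the stated generality. The subtlety you dismiss is precisely where it breaks: $\_^y$ inserts $y$-factors at \emph{every} occurrence of $j_0$ as an endpoint, not only at those produced by $\_^x$ from $i_0$. Concretely, take three distinct objects $i_0, j_0, k_0$ with $x \in \G(i_0,j_0)$, $y \in \G(j_0,k_0)$ (even $\G = \G_3$ works) and $g = 1_{j_0}$: then $g^x = 1_{j_0}$, so $(g^x)^y = \bar y\, 1_{j_0}\, y = 1_{k_0}$, whereas $g^{xy} = 1_{j_0}$ because $g$ does not touch $i_0$; the two sides lie in different hom-sets, so they cannot be equal. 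What your case analysis actually proves is the restricted identity $(g^x)^y = g^{xy}$ for all $g$ in the full subgroupoid $\G^{\bs j_0}$ (equivalently, the functors $\_^x$ form an action only up to canonical natural isomorphisms of the kind you construct in (e)); the statement as printed needs the same caveat, and it is only ever needed in this restricted form. The slip is harmless for the rest of your argument: the only instances of (b) you invoke, in (d), are $(g^x)^{\bar x} = g^{x\bar x} = g$ for $g \in \G^{\bs j_0}$ and the symmetric one for $g \in \G^{\bs i_0}$, which are exactly the restricted instances and are easily checked directly. So either verify those two composites by hand in (d), or restate and prove (b) with the restriction to $\G^{\bs j_0}$, rather than asserting that $\_^y \circ \_^x$ and $\_^{xy}$ coincide on all of $\G$.
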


\begin{proof}
All statements are straightforward and left to the reader.
\end{proof}

Given $x \in \G(i_0, j_0)$, the functor $\_^x: \G \to \G$ uniquely extends to a monoidal
map $\_^x: \seq\G \to \seq\G$ given by $\pi^x = (g_1^x, \dots, g_m^x)$ for any $\pi =
(g_1, \dots, g_m) \in \seq\G$, and this in turn induces a monoidal map $\_^x: \Obj\H^r(\G)
\to \Obj\H^r(\G)$, by putting $H_\pi^x = H_{\pi^x}$. Note that the specialization of
$\_^x$ to the case of the groupoid $\G_n$, coincides the homonymous map defined in Lemma
\ref{K-push/thm}, i.e. given a sequence $\pi\in\seq\G_n$, $\pi^x$ is obtained by changing
all elements $i_0$ to $j_0$.

The main goal of this section is to construct a monoidal functor $\_^x: \H^r(\G) \to
\H^r(\G)$ which coincides with this map on the objects and it is the algebraic analog of
the functor $\_^x: \K_n \to \K_n$ defined in Lemma \ref{K-push/thm}. In particular, we
require that the following conditions are satisfied:
\begin{itemize}
\item [\(a)]
if $x = 1_{i_0}$ then $\_^x$ is the identity functor;
\item[\(b)] 
given $y \in \G(i_0,k_0)$, there is a natural equivalence $$\xi^{x,y}: \id_{\bar y x}
\diam\, \_^x \to \id_{\bar y x} \diam\, \_^y\,;$$ that is, to any $\pi \in \seq\G$ is
associated an isomorphism $\xi^{x,y}_\pi: H_{\bar yx} \diam H_\pi^x \to H_{\bar yx} \diam
H_\pi^y$, such that for every morphism $F: H_{\pi_0} \to H_{\pi_1}$ in $\H^r(\G)$ we have
$$(\id_{\bar y x} \diam F^y) \circ \xi^{x,y}_{\pi_0} =
\xi^{x,y}_{\pi_1} \circ (\id_{\bar y x} \diam F^x)\,;$$
%
\item[\(c)]
if $\G = \G_n$, then $\Phi_n(F^x) = \Phi_n(F)^x$ and $\Phi_n(\xi^{x,y}_\pi) =
\xi^{x,y}_\pi$ according to Lemma \ref{K-push/thm}, for any $x,y \in \G_n$, $\pi \in \seq
\G_n$ and $F \in \Mor \H^r_n$.
\end{itemize}

\medskip

The natural transformation $\xi^{x,y}$ in \(b) will be expressed in terms of the
categorical generalization of the adjoint action of the ribbon groupoid Hopf algebra on
itself. Before introducing this generalization, we remind the definition of the adjoint
action in the case of the trivial groupoid.

Let $\C$ be a (strict) braided monoidal category, $(H, m_H, \eta_H, \Delta_H, \epsilon_H,
S_H)$ be a braided Hopf algebra in $\C$ over the trivial groupoid, and $A$ be an algebra
in $\C$ with multiplication $m_A$ and unit $\eta_A$. We remind that a morphism $\alpha: H 
\diam A \to A$ defines a left action of $H$ on $A$ if the following conditions hold:
\vskip-4pt
$$\alpha \circ (\eta_H \diam \id_A) = \id_A: A \to A\,;$$
$$\alpha \circ (m_H \circ \id_A) = \alpha \circ (\id_H \diam \alpha): H \diam H \diam A
\to A\,;$$
$$\alpha \circ (\id_H \diam \eta_A) = \eta_A \circ \epsilon_H: H \to A\,;$$
$$\alpha \circ (\id_H \diam m_A) = m_A \circ (\alpha \diam \alpha) \circ (\id_H \diam
\gamma_{H,A} \diam \id_A) \circ (\Delta_H \diam \id_{A \diam A}): H \diam A \diam A \to
A\,.$$
\vskip4pt\noindent
The first two conditions express the fact that $A$ is a left $H$-module, while the last
two state that the action intertwines with the multiplication and the unit of $A$, giving
in this way a left $H$-algebra structure on $A$ (cf. Definition 4.1.2 in \cite{Mon92}).
The notion of right action is symmetric and corresponds to a right $H$-algebra structure 
on $A$.

The (left) adjoint action of $H$ on itself is defined as
$$\ad_H = m_H \circ (m_H \diam S_H) \circ (\id_{H} \diam \gamma_{H,H}) \circ 
(\Delta_H \diam \id_H): H \diam H\to H.$$
One can introduce analogously the right adjoint action of $H$ on itself, by a symmetric
formula with interchanged roles of the two $H$'s in the source. In the case of a group
algebra and trivial braiding, the formula for the right adjoint action of $H$ on itself
coincides with the adjoint action of a group $G$ on itself recalled above.

\begin{Table}[b]{table-adjointdefn/fig}
{}{}
\vskip6pt
\centerline{\fig{}{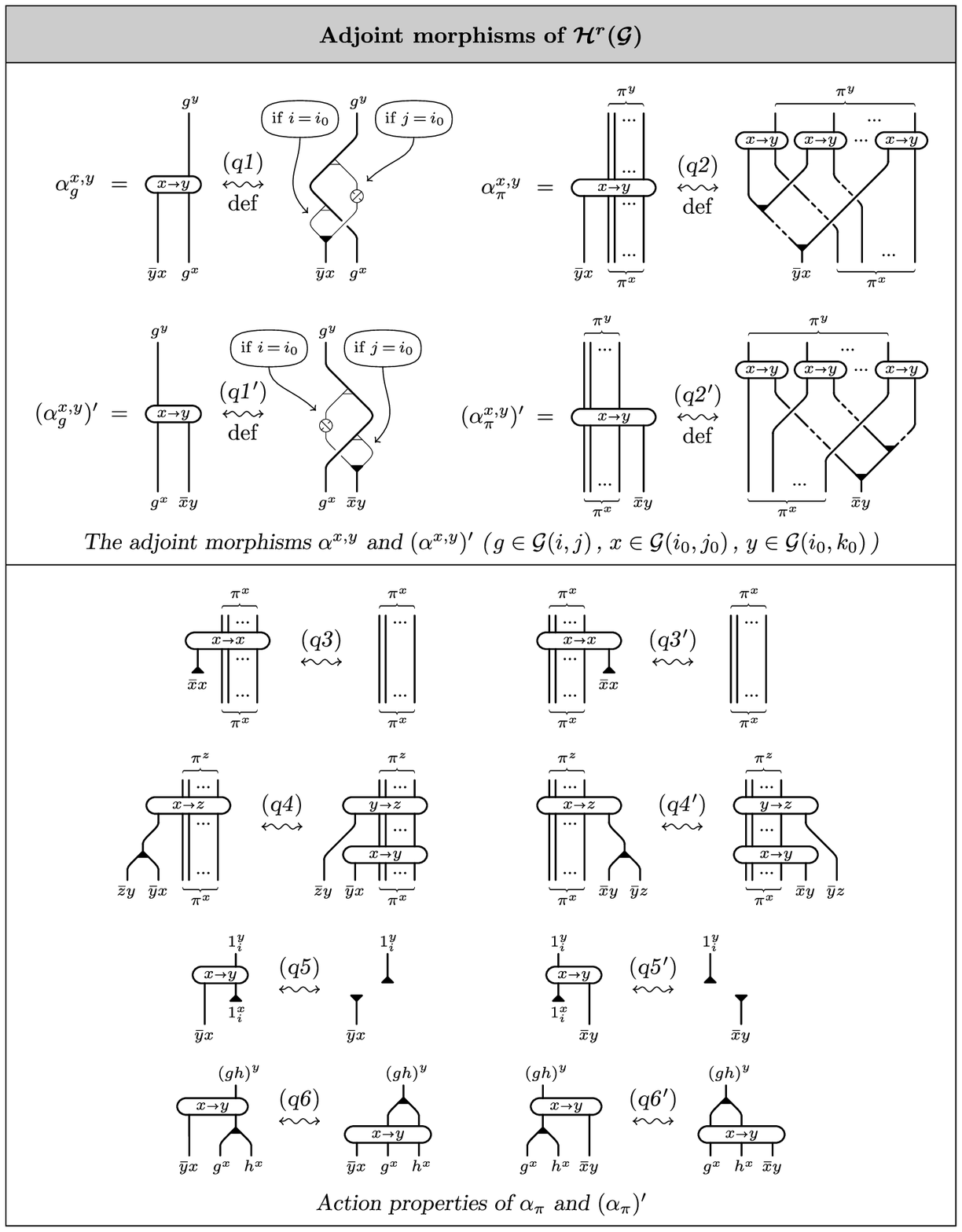}}
\vskip-3pt
\end{Table}

The fact that these are indeed left and right actions is a classical result and the reader
can find the proof in a more general context in Proposition \ref{alpha/thm} below. In
particular, the adjoint action intertwines with the multiplication and the unit morphisms.
On the other hand, in the classical case (trivial braiding) such action is known to
intertwine with the comultiplication and the antipode only when the Hopf algebra is
cocommutative (see Lemma 5.7.2 in \cite{Mon92}). One of the main results of this section
will that, in the case of a ribbon braided Hopf algebra over the trivial groupoid, the
adjoint action intertwines with all elementary morphisms including comultiplication,
antipode and braiding (cf. Proposition \ref{H-push/thm}). Since a ribbon Hopf algebra with
non-trivial braiding is not cocommutative, the proof of this fact necessarily makes use of
the new ribbon axioms \(r8) and \(r9) in Table \ref{table-Hr/fig}.

Now, we proceed with the generalization of the notion of adjoint action to a groupoid Hopf
$\G$-algebra with a possibly non-trivial groupoid $\G$.

\begin{definition}\label{alpha/def}
Let $\G$ be a groupoid. Given $x \in \G(i_0,j_0)$ and $y \in \G(i_0,k_0)$, for any $\pi
\in \seq\G$ we define the left adjoint morphism $\alpha_\pi^{x,y}: H_{\bar y x} \diam
H_\pi^x \to H_\pi^y$ inductively by the following identities (cf. Table
\ref{table-adjointdefn/fig}), where $g \in \G(i,j)$ and $\pi = \pi' \diam \pi''$:
\vskip-6pt
$$\arraycolsep0pt
\alpha_g^{x,y} = \left\{
\begin{array}{ll} 
 \epsilon_{\bar y x} \diam \id_{g^x} &\quad \text{if } i \neq i_0 \neq j,\\[2pt]
 m_{\bar y x,g^x} &\quad  \text{if } i = i_0 \neq j,\\[2pt]
 m_{g^x\!,\bar x y} \circ (\id_{g^x} \diam S_{\bar y x}) \circ 
 \gamma_{\bar y x,g^x} &\quad  \text{if } i \neq i_0 = j,\\[2pt]
 m_{\bar y x g^x\!, \bar x y} \circ (m_{\bar y x,g^x} \diam S_{\bar y x}) \circ {}\\[2pt]
 \quad {} \circ (\id_{\bar y x} \diam \gamma_{\bar y x,g^x}) \circ (\Delta_{\bar y x}
 \diam \id_{g^x}) 
 &\quad \text{if } i = i_0 = j;
\end{array}\right.
\eqno{\(q1)}
$$
$$
\alpha_\pi^{x,y} = \alpha_{\pi' \diam \pi''}^{x,y} = (\alpha_{\pi'}^{x,y} \diam 
 \alpha_{\pi''}^{x,y}) \circ (\id_{\bar y x} \diam \gamma_{\bar y x, \pi'^x} \diam 
 \id_{\pi''^x}) \circ (\Delta_{\bar y x} \diam \id_{\pi'^x \diam \pi''^x}).
 \eqno{\(q2)}
$$
\vskip4pt\noindent
We also define the symmetric right adjoint morphism $(\alpha_\pi^{x,y})': H_\pi^x \diam
H_{\bar x y} \to H_\pi^y$ by the following identity (cf. Table
\ref{table-adjointdefn/fig}), where $\sym(\pi) = \sym(g_1, \dots, g_m) = (\bar g_m, \dots,
\bar g_1)$:
$$
(\alpha_\pi^{x,y})' = \sym(\alpha_{\sym(\pi)}^{x,y})\,.
$$
We will use the simplified notation $\alpha_\pi^x$ and $(\alpha_\pi^y)'$ respectively
for $\alpha_\pi^{\smash{x,1_{i_0}}}\!: H_x \diam H_\pi^x \to H_\pi$ and $
(\alpha_\pi^{\smash{1_{i_0},y}})': H_\pi^y \diam H_y \to H_\pi$.
\end{definition}

{\sl We emphasize that in the definition above $H^x_\pi$ and $H^y_\pi$ should not be
thought as objects of $\,\H^r(\G)$, but instead as pairs $(H_\pi,x)$ and $(H_\pi,y)$, and
similarly $\pi^x$ and $\pi^y$ in the corresponding diagrams should not be thought as
sequences in $\seq\G$, but instead as pairs $(\pi,x)$ and $(\pi,y)$. This requires that
both the algebraic and the graphical notation keep track of such pairs. Actually, it is
enough to specify the target or the source, since one of them determines the other. The
only exception to such a rule will be in the case of $x = y = 1_i$, when we will write
simply $\alpha_\pi^{\smash{1_i}}\!: H_{1_i} \diam H_\pi \to H_\pi$ and
$(\alpha_\pi^{\smash{1_i}})': H_\pi \diam H_{1_i}\! \to H_\pi$, where $H_\pi$ stays for
$H_\pi^{1_i}$. In particular, in the expression $\alpha_\pi^{\smash{1_i}}\!: H_{1_i} \diam
H_{\pi^z} \to H_{\pi^z}$, $\pi^z$ should be interpreted as an element in $\seq\G$, and
$H_{\pi^z}$ stays for $H_{\pi^z}^{1_i}$.}

\begin{proposition}\label{alpha/thm}
The adjoint morphisms $\alpha$ and $\alpha'$ defined by the identities \(q1) and \(q2)
above satisfy all the action properties in Table \ref{table-adjointdefn/fig}, for any $x
\in \G(i_0,j_0)$, $y \in \G(i_0,k_0)$, $z \in \G(i_0,l_0)$ and $\pi \in \seq \G$, with
arbitrary composable $g,h \in \G$ and $i \in \Obj\G$. In particular, $\alpha^{1_i}_\pi$
(resp. $(\alpha^{1_i}_\pi)'\,$) makes $H_\pi$ into a left (resp. right) $H_{1_i}$-module,
for any $i \in \Obj \G$.
\end{proposition}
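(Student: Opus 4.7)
The plan is to verify the properties in Table \ref{table-adjointdefn/fig} in two stages: first on a single generator $\pi = g \in \G$, where the definition splits into the four cases of \(q1) according to whether the source and/or target of $g$ equals $i_0$; second, extend to arbitrary $\pi \in \seq\G$ by induction on its length using the recursive formula \(q2). The right-action identities for $\alpha'$ will then follow from the corresponding left-action identities by applying the involutive antimonoidal symmetry functor $\sym: \H^r(\G) \to \H^r(\G)$ of Proposition \ref{hr-prop/thm}, since by definition $(\alpha^{x,y}_\pi)' = \sym(\alpha^{x,y}_{\sym(\pi)})$ and $\sym$ exchanges left with right module structures.

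For the single-generator case, the three subcases of \(q1) with $i \neq i_0$ or $j \neq i_0$ are essentially formal: they reduce to applications of associativity \(a3), the counit axioms \(a2-2'), the naturality of the braiding, and the antipode relations \(s1-1'). The fourth subcase, $i = i_0 = j$, is the genuine adjoint action formula in its classical form, and the proofs of the module axiom $\alpha^{x,x}_g \circ (\eta \diam \id_g) = \id_g$ and of the interaction of two successive actions $\alpha^{y,z}_g \circ (\id \diam \alpha^{x,y}_g) = \alpha^{x,z}_g \circ (m \diam \id)$ proceed via coassociativity \(a1), the Hopf bialgebra axiom \(a5), and the antipode identities, exactly as in the standard derivation for an ordinary Hopf algebra acting on itself. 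The intertwining of $\alpha^{x,y}_g$ with the multiplication and unit vertices of $H_g$ is similarly routine, resting on \(a3-4').

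The main obstacle will be the intertwining of the adjoint action with those elementary morphisms that in the classical theory require the Hopf algebra to be cocommutative, namely the comultiplication $\Delta_g$, the antipode $S_g$, and the braiding $\gamma_{g,h}$. For a genuinely braided (hence non-cocommutative) ribbon Hopf algebra these equations fail unless the new ribbon axioms are used, and the whole point of introducing \(r8) and \(r9) in Definition \ref{ribbon/def} is precisely to restore them. Concretely, the commutation of $\alpha^{x,y}_\pi$ past a comultiplication vertex will be proved by invoking \(r8) (or the equivalent forms \(p10-12) in Table \ref{table-Hr/fig}), the copairing $\sigma$ appearing there serving to absorb the discrepancy caused by the non-cocommutativity. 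The commutation past a crossing will use \(r9) (equivalently, its redrawn form \(p13)), and the commutation past the antipode will then be deduced from the interaction with $\Delta_g$ together with the antipode--comultiplication identity \(s3). The compatibility with the ribbon morphism $v_g$ itself will reduce to axioms \(r3) and \(r5).

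Finally, the inductive step $\pi = \pi' \diam \pi''$ via \(q2) is a pure bookkeeping argument on the coalgebra structure of $H_{\bar y x}$: one distributes the copy of $H_{\bar y x}$ across the two factors using coassociativity \(a1) and the bialgebra axiom \(a5), while the naturality of $\gamma$ lets one permute the strands. Once each of the elementary intertwining identities has been verified on generators, \(q2) propagates it mechanically to products, and the global action and module-structure properties for $x = y = 1_i$ drop out as the special instance in which all the copairings $\sigma_{i,j}$ appearing in the arguments are trivial by \(r6), so that the adjoint action of $H_{1_i}$ on $H_\pi$ reduces to an honest left (or right) module structure.
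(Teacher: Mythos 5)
Your treatment of what Proposition \ref{alpha/thm} actually asserts --- the action properties (q3)--(q6) and their primed versions --- matches the paper's proof: verify them on a single generator through the case analysis built into (q1), extend to arbitrary $\pi$ by induction via (q2), and deduce the right-handed statements from the left-handed ones through the antimonoidal symmetry functor $\sym$, using only the bialgebra and antipode axioms (together with the derived antipode properties such as (s6)). In particular the $H_{1_i}$-module structure on $H_\pi$ is nothing but the specialization of the unit and composition properties (q3) and (q4) to $x = y = z = 1_i$.

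However, the central paragraph of your proposal is aimed at material that is not part of this statement. The ``action properties in Table \ref{table-adjointdefn/fig}'' comprise only the module axioms and the intertwining of $\alpha$ with the multiplication and unit of $H$; the intertwining with $\Delta_g$, $S_g$, $\gamma_{g,h}$ and $v_g$, which indeed hinges on the ribbon axioms (r8) and (r9), is the content of the later Proposition \ref{adjoint/thm}, Lemma \ref{intertwining/thm} and Proposition \ref{H-push/thm} --- no ribbon axiom enters the proof of the present proposition at all, and recognizing this is exactly the point of the section's organization. Relatedly, your closing remark is off on two counts: no copairing appears anywhere in the verification of (q3)--(q6), so there is nothing to ``trivialize''; and the claim that for $x = y = 1_i$ the copairings are trivial by (r6) is backwards, since $\sigma_{i,i}$ is precisely the non-trivial one ((r6) makes $\sigma_{i,j}$ trivial only for $i \neq j$). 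Neither slip undermines the part of your argument that proves the proposition, but the stated dependence on (r8)--(r9) and on trivial copairings misrepresents which axioms the result rests on.
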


\begin{proof}
Up to symmetry, the properties \(q3') to \(q6') of the morphisms $\alpha'$ are equivalent
the corresponding properties \(q3) to \(q6) of the morphisms $\alpha$, hence it suffices
to prove the latter ones.

Identity \(q3) is an immediate consequences of axioms \(a4-4') and \(a7) in Table
\ref{table-Hdefn/fig} and property \(s6) in Table \ref{table-Hprop/fig}. To prove \(q4),
we first consider the special case when $\pi = g \in \G$ in Figure \ref{adjoint01/fig}.
Then, the general case follows by the inductive argument shown in Figure
\ref{adjoint02/fig}.

\begin{Figure}[htb]{adjoint01/fig}
{}{Proof of \(q4): the case of $\pi = g \in \G$
   [{\sl a}/\pageref{table-Hdefn/fig}, {\sl s}/\pageref{table-Hprop/fig}]}
\centerline{\fig{}{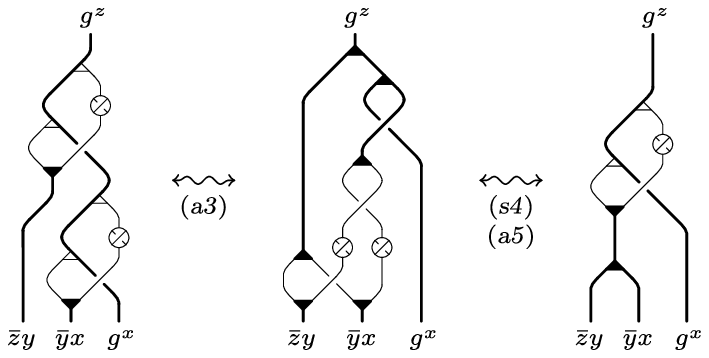}}
\vskip-3pt
\end{Figure}

\begin{Figure}[htb]{adjoint02/fig}
{}{Proof of \(q4): the inductive step 
   [{\sl a}/\pageref{table-Hdefn/fig}, 
    {\sl q}/\pageref{table-adjointdefn/fig}]}
\vskip-9pt
\centerline{\fig{}{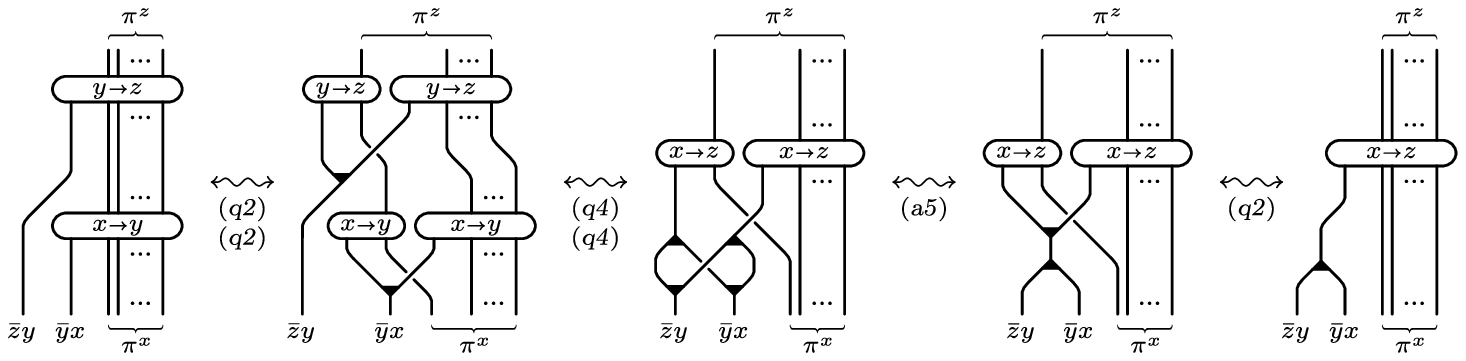}}
\vskip-3pt
\end{Figure}

\begin{Figure}[htb]{adjoint03/fig}
{}{Proof of \(q6) for $g,h \in \G(i_0,i_0)$
   ($x \in \G(i_0,j_0)$ and $y \in \G(i_0,k_0)$) 
   [{\sl a-s}/\pageref{table-Hdefn/fig}]}
\vskip-9pt
\centerline{\fig{}{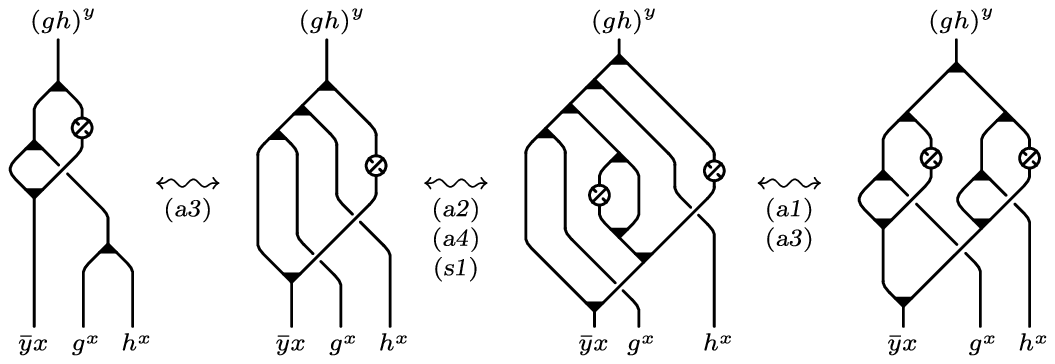}}
\vskip-3pt
\end{Figure}

Identity \(q5) is trivial for $i \neq i_0$, while it follows from axioms \(a4) and \(s1')
in Table \ref{table-Hdefn/fig} for $i = i_0$. Finally, identity \(q6) is considered in
Figure \ref{adjoint03/fig}, in the case when $g,h \in \G(i_0,i_0)$. For the other
cases, one can just delete the proper edges from the diagrams in that figure.
\end{proof}

\begin{remark}\label{adjoint/rem}
In the case when $\G = \G_1$ is the trivial groupoid, the unique left (resp. right) 
adjoint morphism $\alpha = \alpha_1^{(1,1)}$ (resp. $\alpha' = (\alpha_1^{(1,1)})'$) in 
Definition \ref{alpha/def} gives the left (resp. right) adjoint action of $H = H_(1,1)$ on 
itself. In the general case, according to what we said after the definition, the
left (resp. right) adjoint morphisms $\alpha_g^{x,y}$ (resp. $(\alpha_g^{x,y})'$) 
can be thought in a certain sense to act on the product $H \times \G$ of the Hopf 
$\G$-algebra $H = \{H_g\}_{g \in \G}$ with the groupoid $\G$ itself.
\end{remark}

Before going on, we prove the properties of the adjoint morphisms listed in Table 
\ref{table-adjointprop/fig}. These will be used to define the functor $\_^x: \H^r(\G) \to 
\H^r(\G)$ and the natural transformation $\xi^{x,y}$ which relates $\_^x$ and $\_^y$ for 
composable $\bar y$ and $x$.

\begin{Table}[htb]{table-adjointprop/fig}
{}{}
\vskip6pt
\centerline{\fig{}{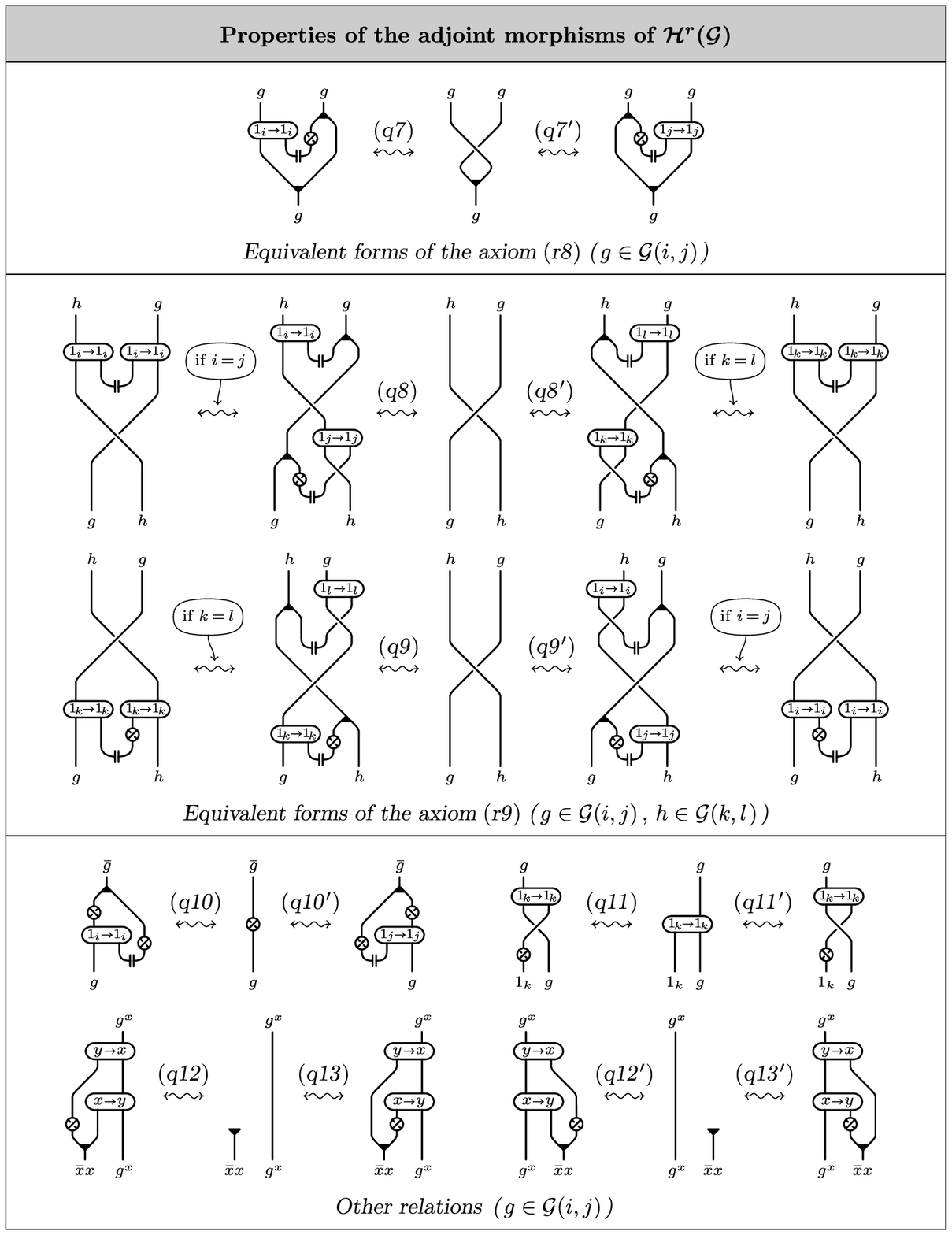}}
\vskip-3pt
\end{Table}

\begin{proposition}\label{adjoint/thm}
The properties in Table \ref{table-adjointprop/fig} hold in $\H^r(\G)$. Moreover, modulo
the other axioms of $\H^r(\G)$, \(r8) is equivalent to either \(q7) or \(q7'), while \(r9) 
is equivalent to either \(q8), \(q8'), \(q9) or \(q9').
\end{proposition}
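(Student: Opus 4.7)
The plan is to prove the properties in Table \ref{table-adjointprop/fig} by reducing them to their elementary versions (i.e.\ for $\pi = g \in \G$), then handle the two equivalence claims using the reductions already established in Proposition \ref{hr-axioms/thm} together with the adjoint action properties of Proposition \ref{alpha/thm}.

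First I would observe that, thanks to the symmetry functor $\sym: \H^r(\G) \to \H^r(\G)$ given by Proposition \ref{hr-prop/thm} and to its compatibility with the defining identity $(\alpha_\pi^{x,y})' = \sym(\alpha_{\sym(\pi)}^{x,y})$, each primed property (q$k'$) is equivalent to its unprimed counterpart (q$k$). Hence it suffices to prove (q7), (q8) and (q9). Next, using the inductive formula \(q2) expressing $\alpha_\pi^{x,y}$ in terms of a comultiplication of $H_{\bar yx}$ followed by $\alpha_{\pi'}^{x,y} \diam \alpha_{\pi''}^{x,y}$, and the naturality of comultiplication and braiding under product, I would reduce every property in the table to the case $\pi = g \in \G$. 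The recursive step goes through provided the corresponding elementary case is available, so the problem reduces to verifying the elementary versions.

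For the elementary cases, I would split each identity according to the four possibilities for $g \in \G(i,j)$ with $i, j$ each equal to or different from $i_0$, matching the four branches of \(q1). In the three branches where at least one of $i, j$ differs from $i_0$, the adjoint morphism collapses (via \(a2-2') and \(a4-4') in Table \ref{table-Hdefn/fig}) to a simple multiplication or counit composition, so the properties follow directly from the braided Hopf algebra axioms together with \(s1-1') and \(s4). The nontrivial branch is $g \in \G(i_0,i_0)$, where $\alpha_g^{x,y}$ is the full four-term expression in \(q1). Here I would use property \(f4) of the comultiplication (invariance under rotation) together with the action identities \(q3-q6) of Proposition \ref{alpha/thm} to propagate the adjoint through comultiplication, antipode and unit vertices. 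This is essentially a diagrammatic calculation: the action intertwines with $\Delta$, $\epsilon$, $\eta$, $S$, and $m$ by Proposition \ref{alpha/thm}, so the only genuinely new input needed comes from pushing the adjoint past the ribbon morphism $v_g$ and past braidings, which is precisely where axioms \(r8) and \(r9) enter.

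Finally, for the equivalence claims, I would argue as follows. For (r8) $\Leftrightarrow$ (q7): using \(q1) in the branch $g \in \G(i_0,i_0)$ together with \(q4-q5), the property \(q7) unfolds to an identity of the form $\Delta_g \circ v_g^{-1}$ vs.\ $\mu_{g,g}\circ(v_g^{-1} \diam v_g^{-1}) \circ \bar\gamma_{g,g}\circ \Delta_g$ after conjugating by $\alpha$; since the adjoint action by the identity $1_{i_0}$ amounts to multiplying by $\eta$ followed by multiplication, which cancels, one recovers exactly \(r8). Conversely, starting from \(r8) and applying \(q2), \(q4) and \(q5) one reconstructs \(q7). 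The equivalences (r9) $\Leftrightarrow$ (q8) and (r9) $\Leftrightarrow$ (q9) are handled the same way, using the reformulation (p13) of \(r9) supplied by Proposition \ref{hr-axioms/thm}, which already expresses the braiding in terms of the rotational and copairing structure; then (q8) and (q9) become \(p13) intertwined with adjoint factors that cancel via the action axioms. The main obstacle I anticipate is the bookkeeping at the branch $i = i_0 = j$ in \(q1), where the four-term adjoint formula must be pushed past ribbon vertices and braidings simultaneously; this is the step where axioms \(r8)–\(r9) are indispensable and where the diagrammatic manipulation is most delicate, but it is essentially the same type of calculation carried out in Figures \ref{h-prop07/fig}–\ref{h-prop09/fig}.
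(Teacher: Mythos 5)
There is a genuine gap in your handling of the equivalence claims, which is the real content of the proposition. Your stated mechanism for (r8) $\Leftrightarrow$ (q7) --- ``conjugating by $\alpha$'' and then cancelling ``the adjoint action by the identity $1_{i_0}$'' via the unit --- does not work: precomposing with $\eta_{i}\diam\id$ and invoking the unit property (q3) is not an invertible operation, so at best it shows that (q7) implies a weakened consequence, and it cannot yield the converse; nor is (q7) literally (r8) conjugated by an invertible adjoint factor. In the paper the two directions are separate diagrammatic derivations (Figure \ref{adjoint05/fig}): the direction (r8) $\Rightarrow$ (q7) uses the copairing properties (p14-15), the Hopf-copairing moves (r7-7') and the isomorphisms $\mu_{g,g}$, while the direction (q7) $\Rightarrow$ (r8) in addition needs the invertibility identity (q13'). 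You never establish (or even invoke) the invertibility-type properties (q12-13), which must be proved first (Figure \ref{adjoint04/fig}) precisely because everything else leans on them; the module identities (q4-q5) from Proposition \ref{alpha/thm} that you cite are not enough. The same problem recurs for (r9): you propose to derive both (q8) and (q9) from (p13) ``with adjoint factors that cancel via the action axioms,'' whereas the actual argument first shows that the morphisms in (q8) and (q9) are mutually inverse (again via (q12-13)), reducing everything to the single equivalence (r9) $\Leftrightarrow$ (q8), which is then a copairing computation (Figure \ref{adjoint08/fig}) in which the trivial/non-trivial copairing cases (the optional edges) must be tracked --- a point your sketch does not address.

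Two smaller issues: the remaining properties of Table \ref{table-adjointprop/fig} ((q10), (q11)) are not generic ``elementary branch'' verifications --- they require (q7) together with (t2) and (p14-15'), so they cannot be absorbed into your blanket reduction; and the symmetry reduction of primed to unprimed properties needs a correction for (q11'), which only becomes the symmetric of (q11) after composing with $\gamma_{g,1_k}\circ S_{1_k}$. The induction on $\pi$ via (q2) is harmless but superfluous, since the table's properties are already stated for elementary labels.
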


\begin{proof}
We can limit ourselves to prove properties \(q7) to \(q13), since the corresponding 
primed properties are the symmetric of them under the functor $\sym$. Actually, \(q11')
becomes the symmetric of \(q11) after composition with the isomorphism $\gamma_{g,1_k} 
\circ S_{1_k}$.

Property \(q12) is proved in Figure \ref{adjoint04/fig}, and \(q13) can be proved in a
similar way, by using \(s1') instead of \(s1). We observe that the proof of \(q4), and
therefore that of \(q12), uses only the axioms of $\H^u_v(\G)$.

\begin{Figure}[htb]{adjoint04/fig}
{}{Proof of \(q12)
   [{\sl q}/\pageref{table-adjointdefn/fig}, {\sl s}/\pageref{table-Hdefn/fig}]}
\centerline{\fig{}{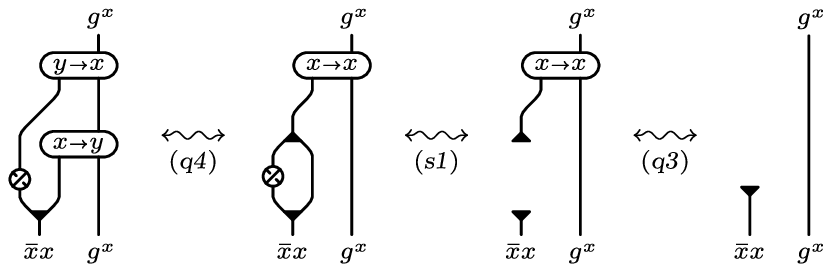}}
\vskip-3pt
\end{Figure}

Relation \(q7) is obtained in the top line of Figure \ref{adjoint05/fig}. The bottom line
of the same figure shows that \(q7) implies \(r8) modulo the relations of $\H^u_v(\G)$,
including \(q13') (see above). Actually, the two moves \(r7') in the figure occur only
when both the involved copairings are non-trivial, that is when $i = j$.

\begin{Figure}[htb]{adjoint05/fig}
{}{Equivalence between \(q7) and \(r8)
   ($g \in \G(i,j)$)
   [{\sl a}/\pageref{table-Hdefn/fig},
    {\sl f}/\pageref{table-Hu/fig}-\pageref{table-Huvprop/fig},
    {\sl p}/\pageref{table-Huvprop/fig}--\pageref{table-Hr/fig},
    {\sl q}/\pageref{table-adjointdefn/fig}-\pageref{table-adjointprop/fig},
    {\sl r}/\pageref{table-Huvdefn/fig}-\pageref{table-Huvprop/fig},
    {\sl s}/\pageref{table-Hdefn/fig}-\pageref{table-Hprop/fig}]}
\centerline{\fig{}{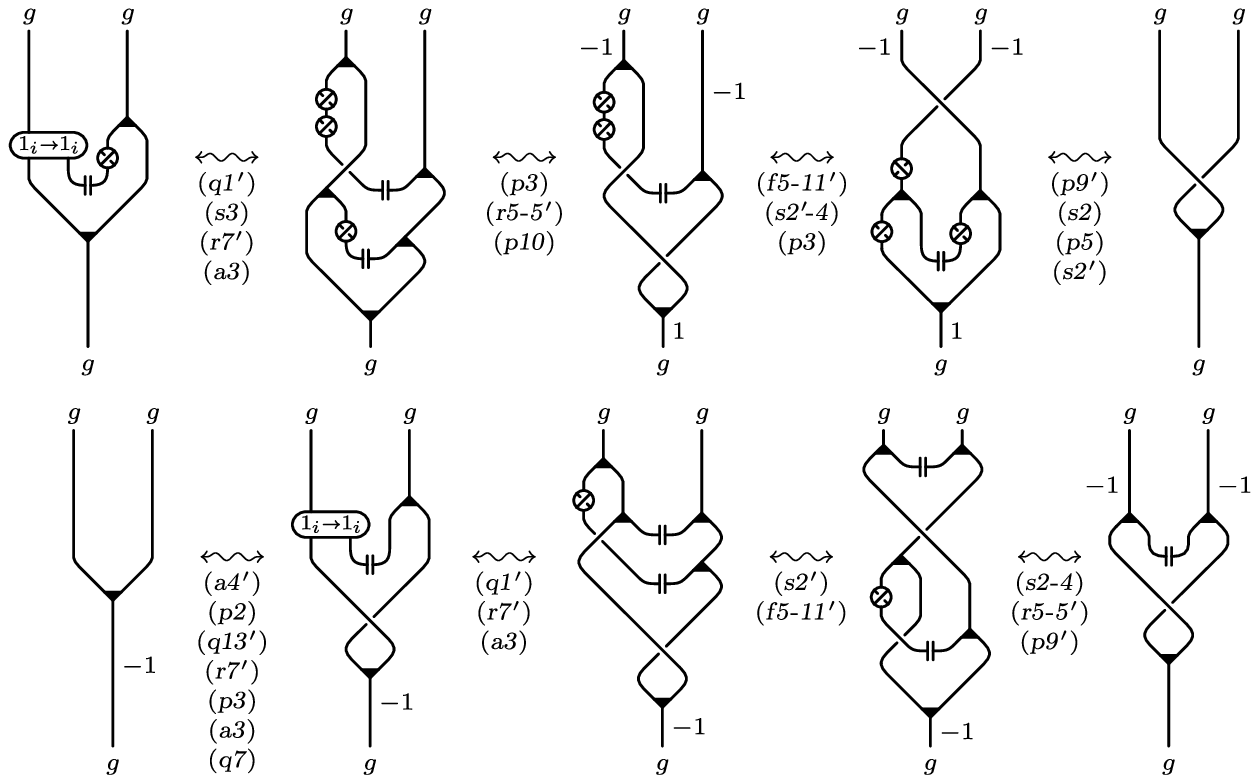}}
\vskip-3pt
\end{Figure}

Then, we can derive \(q11) and \(q10) in the order, as described in Figures
\ref{adjoint06/fig} and \ref{adjoint07/fig} respectively. In steps four and six of the
former figure relation \(t2) is used instead of \(p14') and \(p15') when $i \neq k$, while
in the latter it is used instead of \(p15) when $i \neq j$.

\begin{Figure}[htb]{adjoint06/fig}
{}{Proof of \(q11)
   ($g \in \G(i,j)$)
   [{\sl a}/\pageref{table-Hdefn/fig}, {\sl p}/\pageref{table-Hr/fig},
    {\sl q}/\pageref{table-adjointdefn/fig}-\pageref{table-adjointprop/fig},
    {\sl r}/\pageref{table-Huvprop/fig}, {\sl s}/\pageref{table-Hprop/fig},
    {\sl t}/\pageref{table-Hr/fig}]}
\centerline{\fig{}{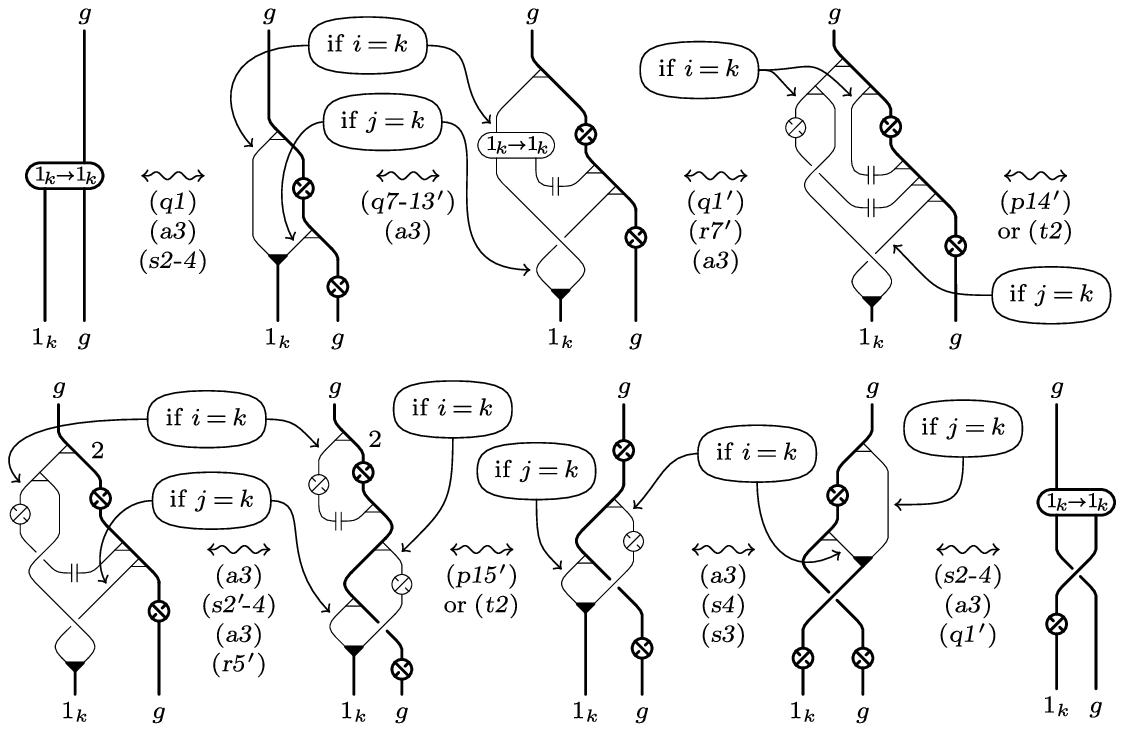}}
\vskip-3pt
\end{Figure}

\begin{Figure}[htb]{adjoint07/fig}
{}{Proof of \(q10)
   ($g \in \G(i,j)$)
   [{\sl a}/\pageref{table-Hdefn/fig},
    {\sl p}/\pageref{table-Huvprop/fig}-\pageref{table-Hr/fig},
    {\sl q}/\pageref{table-adjointdefn/fig}-\pageref{table-adjointprop/fig},
    {\sl r}/\pageref{table-Huvdefn/fig}-\break\pageref{table-Huvprop/fig},
    {\sl s}/\pageref{table-Hprop/fig}]}
\vskip6pt
\centerline{\fig{}{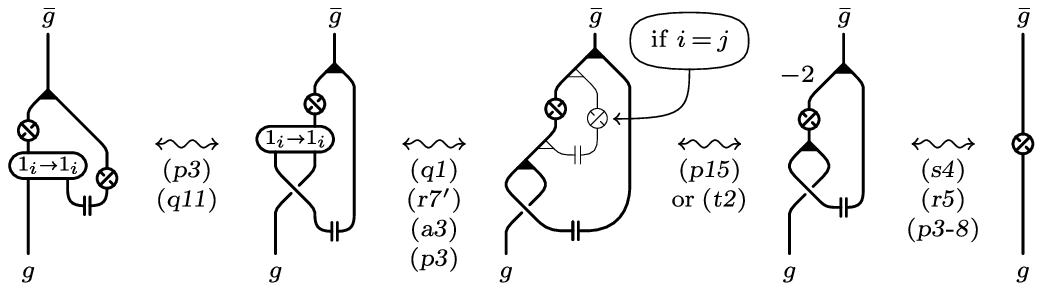}}
\vskip-3pt
\end{Figure}

\begin{Figure}[b]{adjoint08/fig}
{}{Equivalence between \(r9) and \(q8) 
   ($g \in \G(i,j)$)
   [{\sl a}/\pageref{table-Hdefn/fig},
    {\sl f}/\pageref{table-Hu/fig}-\break\pageref{table-Huvprop/fig},
    {\sl p}/\pageref{table-Huvprop/fig},
    {\sl q}/\pageref{table-adjointdefn/fig}-\pageref{table-adjointprop/fig},
    {\sl r}/\pageref{table-Huvdefn/fig}-\pageref{table-Huvprop/fig},
    {\sl s}/\pageref{table-Hdefn/fig}-\pageref{table-Hprop/fig}]}
\vskip-12pt
\centerline{\fig{}{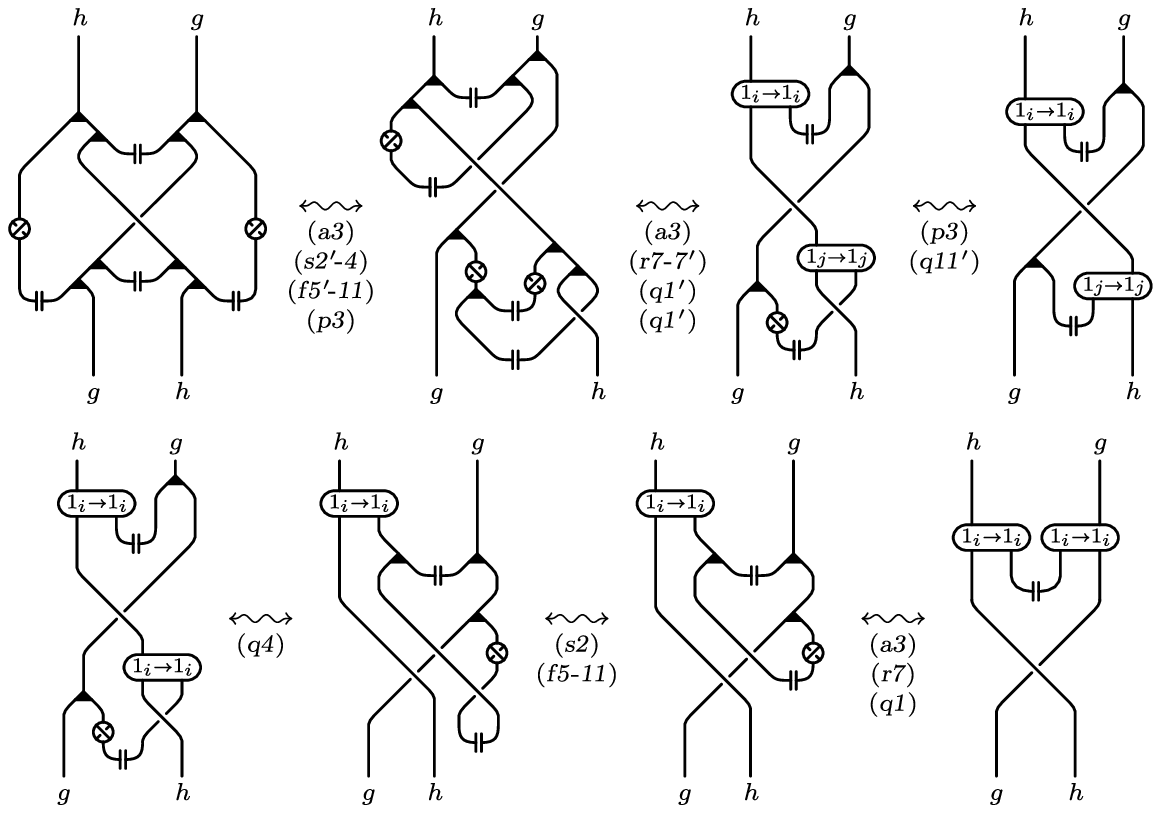}}
\vskip-3pt
\end{Figure}

Finally, we are left with \(q8) and \(q9). A straightforward application of \(q12-12') and
\(q13-13') shows that the diagrams in \(q9) represent the inverse morphisms of those
represented by the diagrams in \(q8), which gives the equivalence between \(q8) and \(q9).
Then, it suffices to verify that \(r9) is equivalent to \(q8) modulo the axioms of
$\H^u_v(\G)$. This is done in the top line of Figure \ref{adjoint08/fig}. Like above, the
moves \(r7-7') occur only when both the involved copairings are non-trivial, that is when
the corresponding optional edges in the resulting $(\alpha_h^{\smash{1_i}})'$ and
$(\alpha_h^{\smash{1_{j}}})'$ are present. In the bottom line of the same figure we obtain
the special form for $i = j$, starting from the last diagram in the top line.
\end{proof}

Now we will use the left adjoint morphisms to define the algebraic analog of the family of
isomorphisms $\xi_\pi^{x,y}$, introduced in Lemma \ref{K-push/thm}. Like in the case of
the category of Kirby tangles, such isomorphisms will eventually define a natural
transformation between the functors $\_^x$ and $\_^y$.

\begin{definition}\label{xinat/def}
Given $x \in \G(i_0,j_0)$ and $y \in \G(i_0,k_0)$, we define the family of morphisms
$\xi_\pi^{x,y} \in \H^r(\G)$ for any $\pi \in \seq\G$ as follows (see Figure
\ref{adjoint09/fig}): $$\xi_\pi^{x,y} = (\id_{\bar yx} \diam \alpha_\pi^{x,y}) \circ
(\Delta_{\bar y x} \diam \id_{\pi^x}): H_{\bar y x} \diam H_\pi^x \to H_{\bar y x} \diam
H_\pi^y\,.$$ We will use the simplified notation $\xi_\pi^x$ for $\xi_\pi^{x,1_{i_0}}\!:
H_x \diam H_\pi^x \to H_x \diam H_\pi$.
\end{definition}

\begin{Figure}[htb]{adjoint09/fig}
{}{}
\vskip-3pt
\centerline{\fig{}{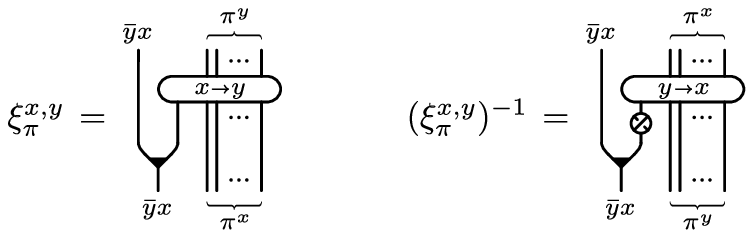}}
\vskip-3pt
\end{Figure}

\begin{proposition}\label{xinat/thm}
For any $x \in \G(i_0,j_0)$, $y \in \G(i_0,k_0)$ and $\pi \in \seq\G$ the mor\-phism 
$\xi_\pi^{x,y}$ is invertible and its inverse is (see Figure \ref{adjoint09/fig}):
$$(\xi_\pi^{x,y})^{-1} = (\id_{\bar yx} \diam \alpha_\pi^{y,x}) \circ (\id_{\bar y x} 
\diam S_{\bar y x} \diam \id_{\pi^y}) \circ (\Delta_{\bar y x} \diam \id_{\pi^y}): 
H_{\bar y x} \diam H_\pi^y \to H_{\bar y x} \diam H_\pi^x\,.$$
\end{proposition}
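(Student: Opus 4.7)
\medskip
\noindent
\emph{Proof plan.} The plan is to verify directly that the formula for $(\xi_\pi^{x,y})^{-1}$ yields two-sided inverses to $\xi_\pi^{x,y}$. Both $\xi_\pi^{x,y}$ and the candidate inverse have the same basic shape: the first tensor factor $H_{\bar y x}$ is duplicated by $\Delta_{\bar y x}$, one copy is left as the first output factor, and the other copy acts on the remaining factor via an adjoint morphism (with an antipode inserted in one of the two cases). Computing the composition $(\xi_\pi^{x,y})^{-1} \circ \xi_\pi^{x,y}$ therefore produces, after using the coassociativity axiom \(a1) to rearrange the two applications of $\Delta_{\bar y x}$, three consecutive copies of $H_{\bar y x}$: the outermost becomes the output factor, while the two inner ones feed into $\alpha_\pi^{x,y}$ and then $\alpha_\pi^{y,x}$ with an antipode between them.

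The next step is to collapse this middle portion using the ``inverse'' properties of the adjoint morphisms already established. Precisely, properties \(q12) and \(q13) of Table~\ref{table-adjointprop/fig} are exactly the statements that composing an $\alpha^{x,y}$ with an $\alpha^{y,x}$, with the appropriate antipode inserted, reduces to a counit times identity. These were the ingredients used in the proof of Proposition~\ref{adjoint/thm} to show that the diagrams of \(q9) invert those of \(q8), and the same mechanism applies here: we would first move the relevant pieces around using the action properties \(q3)--\(q6) from Table~\ref{table-adjointdefn/fig}, then apply \(q12) (respectively \(q12')) to cancel the paired adjoint actions, leaving a configuration of the form $(\id_{\bar y x}\diam \alpha_\pi^{?}) \circ (\id_{\bar y x} \diam \eta \circ \epsilon \diam \id) \circ \Delta_{\bar y x}$ which collapses to the identity via axioms \(a2-2') and \(q3). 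The other composition $\xi_\pi^{x,y} \circ (\xi_\pi^{x,y})^{-1}$ is handled symmetrically, using \(q13) instead of \(q12).

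The induction in Definition~\ref{alpha/def} reduces everything to the case $\pi = g \in \G$, so one can first verify the statement when $\pi$ has length one and then propagate to arbitrary $\pi$ using the recursive formula \(q2) together with coassociativity and the braid axioms. Within the case $\pi = g$, there are four sub-cases depending on whether $i,j \in \{i_0\}$ or not (mirroring the four branches in \(q1)); in the simplest sub-cases $i,j \neq i_0$ the morphism $\alpha_g^{x,y}$ degenerates to $\epsilon \diam \id$ and the verification is immediate via \(a2).

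The main obstacle is a bookkeeping one rather than a conceptual one: the adjoint morphisms $\alpha_\pi^{x,y}$ for the mixed cases (when some indices of the $g_k$'s equal $i_0$ and others do not) involve different combinations of multiplication, braiding, and antipode, so one must either carry out the cancelation uniformly by working directly from the recursive definition \(q2), or else split into the four sub-cases for $g$ and check each one separately. I would favour the uniform approach via \(q2), since then the whole computation reduces, after coassociativity, to an application of \(q12) tensored with the recursively established identity on the remaining factors, giving a clean inductive proof.
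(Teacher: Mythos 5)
Your proposal is correct and follows essentially the same route as the paper: the paper's entire proof consists of the remark that the claim is a direct consequence of the cancellation properties (q12)--(q13) of Table \ref{table-adjointprop/fig} for the paired adjoint morphisms, which is exactly the mechanism you identify. The extra coassociativity/counit bookkeeping and the induction on $\pi$ you outline are already absorbed into (q12)--(q13) as stated for arbitrary $\pi$, so they add detail but not a genuinely different argument.
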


\begin{proof}
This is a direct consequence of properties \(q12-13) in Table \ref{table-adjointprop/fig}.
\end{proof}

The next proposition makes into a formal statement the idea that the isomorphisms
$\xi^{x,y} \in \Mor \H_n^r$ given by the above definition for $\G = \G_n$ are nothing else
than the algebraic counterpart, under the func\-tors $\Phi_n$ defined in Section
\ref{Phi/sec}, of the homonymous isomorphisms $\xi^{x,y} \in \Mor \K_n$ introduced in
Section \ref{K/sec}.

\begin{proposition}\label{xinat-kirby/thm}
Given $x = (i_0,j_0)$ and $y = (i_0,k_0)$ in $\G_n$, for any $\pi \in \seq\G_n$ 
we have that $\Phi_n(H_\pi^x) = I_\pi^x$ and $\Phi_n(\xi_\pi^{x,y}) = \xi_\pi^{x,y}$
(cf. Lemma \ref{K-push/thm}).
\end{proposition}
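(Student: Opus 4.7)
The proof splits naturally into two parts corresponding to the two assertions. For the object identity $\Phi_n(H_\pi^x) = I_\pi^x$, I would just unwind the definitions: $\Phi_n$ is strict monoidal with $\Phi_n(H_{(i,j)}) = I_{(i,j)}$, and the map $\_^x: \seq\G_n \to \seq\G_n$ coming from Proposition \ref{red-groupoid/thm} coincides entry-by-entry with the map $\_^x$ appearing in Lemma \ref{K-push/thm}. When $i_0 = j_0$ both leave $\pi$ unchanged, while when $i_0 \neq j_0$ both replace every occurrence of $i_0$ by $j_0$ in the pair entries. Hence $\Phi_n(H_\pi^x) = \Phi_n(H_{\pi^x}) = I_{\pi^x} = I_\pi^x$.

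For the morphism identity $\Phi_n(\xi_\pi^{x,y}) = \xi_\pi^{x,y}$, I would argue by induction on the length $m$ of $\pi$. When $m=0$ the identity is trivial, since on both sides $\xi_\emptyset^{x,y}$ reduces to $\id_{\bar yx}$ after applying \(q2) with empty factors. For the base case $m=1$, so $\pi = g$ with $g \in \G_n(i,j)$, I would unwind the four cases in \(q1) separately. In each case, applying $\Phi_n$ to $\xi_g^{x,y} = (\id_{\bar y x} \diam \alpha_g^{x,y}) \circ (\Delta_{\bar y x} \diam \id_{g^x})$ produces a generalized Kirby tangle built from the images given in Figure \ref{phi01/fig} for $\Delta$, $m$, $S$, $\epsilon$ and the braiding. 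I would then reduce the resulting diagram by 1/2-handle cancellations between the dotted components coming from $\Delta_{\bar yx}$ and the 2-handle loops coming from the multiplication vertices, together with the handle slidings that convert multiplication-antipode pairs into ``pushing through a 1-handle''. The outcome should match precisely the local configurations for $\xi_g^{x,y}$ displayed in Figure \ref{kirby-stab06/fig}, where a single dotted unknot embraces the framed string originally labeled $i_0$ (if any).

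For the inductive step with $\pi = \pi' \diam \pi''$, I would apply \(q2) together with the fact that $\Phi_n$ is a braided monoidal functor. The key observation is that
\[
\xi_{\pi' \diam \pi''}^{x,y} = (\xi_{\pi'}^{x,y} \rdiam_{\Delta} \xi_{\pi''}^{x,y})
\]
on both the algebraic and the Kirby side: in the algebra this is essentially \(q2) composed with an extra $\Delta_{\bar yx}$, while on the Kirby side it corresponds to splitting the spanning disk of the dotted unknot embracing the $i_0$-labeled strings of $\pi$ into the two subsets coming from $\pi'$ and $\pi''$, which is possible because a dotted unknot in $\K_n$ satisfies the same coassociativity and cocommutativity as $\Delta$ (Proposition \ref{K-coassociativity/thm} and the definition of $\Delta_\pi$ in Figure \ref{kirby-morph03/fig}). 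By functoriality and the inductive hypothesis applied to $\pi'$ and $\pi''$, the two sides agree.

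The main obstacle will be the bookkeeping in the base case, specifically the sub-case $i = j = i_0$ where $\alpha_g^{x,y}$ involves the full formula with two multiplications, a comultiplication, an antipode and a braiding. Here one must verify that the crossing, the antipode kink and the framing contributions deriving from applying $\Phi_n$ to this composition, combine correctly with the ambient dotted unknot to reproduce the crossing change (and, when $i_0 \neq j_0$, the label change from $i_0$ to $j_0$) displayed in Figure \ref{kirby-stab06/fig}. The other three sub-cases of \(q1) follow the same pattern but with fewer ingredients, and the sub-case $i,j \neq i_0$ reduces to applying $\Phi_n(\epsilon_{\bar yx}) = $ a 1/2-handle cancellation, giving immediately the identity portion of $\xi_g^{x,y}$.
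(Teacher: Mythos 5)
Your proposal is correct and follows essentially the same route as the paper: the object identity is immediate from the definitions, and the morphism identity is reduced to the elementary case $\xi_{(i,j)}^{x,y}$ via the same recursive decomposition of $\xi_{\pi'\diam\pi''}^{x,y}$ (in terms of $\Delta_{\bar yx}$, the braiding and the two factors), observed to hold in both $\H^r_n$ and $\K_n$, followed by a case-by-case check of the elementary morphisms by handle cancellations and slidings. The paper likewise leaves that elementary verification to the reader (pointing to Figure \ref{adjoint10/fig}), so your extra detail on the subcase $i=j=i_0$ is a welcome but not divergent elaboration.
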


\begin{proof}
The identity on the objects follows immediately from the definition of $\Phi_n$. For the
one on the morphisms, a straightforward verification shows that
$$\xi_{\pi' \diam \pi''}^{x,y} = (\xi_{\pi'}^{x,y} \diam 
  \xi_{\pi''}^{x,y}) \circ (\id_{\bar y x} \diam \gamma_{\bar y x, \pi'^x} \diam 
  \id_{\pi''^x}) \circ (\Delta_{\bar y x} \diam \id_{\pi'^x \diam \pi''^x})$$
holds in both the categories $\H^r_n$ and $\K_n$. Hence, it suffices to consider the
elementary case of $\xi_{(i,j)}^{x,y}$ with $(i,j) \in \G_n$. We leave to the reader to
check that $\Phi_n(\xi_{(i,j)}^{x,y}) = \xi_{(i,j)}^{x,y}$ holds for all possible
combinations of indices, but Figure \ref{adjoint10/fig} should be helpful.
\end{proof}

\begin{Figure}[htb]{adjoint10/fig}
{}{($x = (i_0,j_0)$, $y = (i_0,k_0)$, $g = (i_0,i_0)$, $h = (i_0,i)$ with $i \neq i_0$)}
\vskip-3pt
\centerline{\fig{}{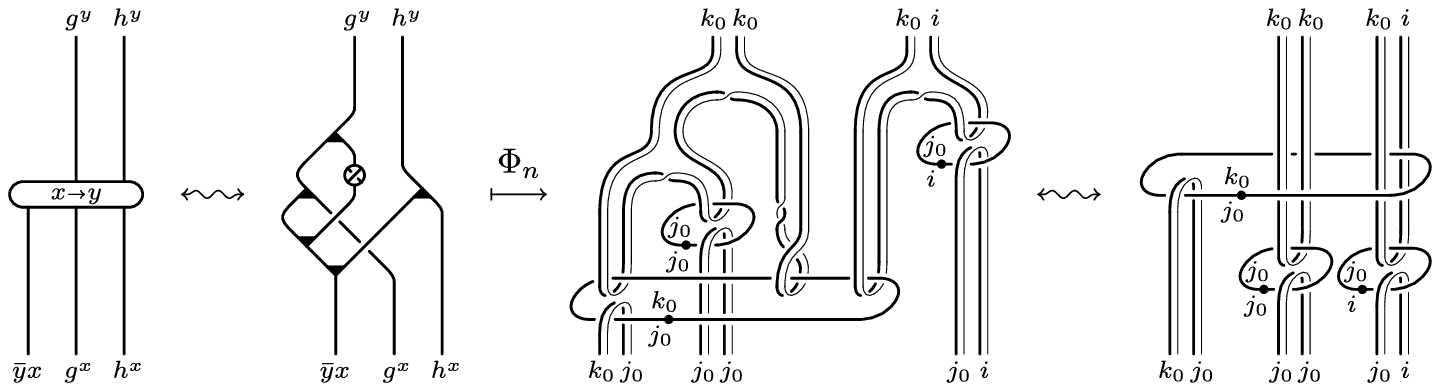}}
\vskip-3pt
\end{Figure}

At this point, we can proceed with the main result of this section, which is the algebraic
analog of Lemma \ref{K-push/thm}.

\medskip

As the first step, given $x \in \G(i_0, j_0)$, we define $F^x$ for any elementary morphism
$F$ of $\H^r(\G)$. The most obvious definition, consisting in the formal extension of
$\_^x: \G \to \G$ with $\_^x$ acting on the indices, works for $i_0 = j_0$ but it runs
into problems for $i_0 \neq j_0$, due to the fact that in the latter case the copairing
$\sigma_{i_0, j_0}$ is trivial, while $\sigma_{i_0^x, j_0^x} = \sigma_{j_0, j_0}$ is not.
Thus, some corrections are needed. Since we want to have $\Phi_n(F)^x = \Phi_n(F^x)$ for
any morphism $F$ in $\H^r_n$, the reader can realize the nature of these corrections by
looking at the $\Phi_n(F)^x$'s in Figures \ref{kirby-stab10/fig}, \ref{kirby-stab11/fig} 
and \ref{kirby-stab12/fig} (cf. Lemma \ref{K-push/thm} and the discussion following it).

\begin{definition}\label{Fx/def}
We define $\_^x$ on the elementary morphisms of $\H^r(\G)$ by putting, for any $i,j,k,l
\in \Obj \G$, $g \in \G(i,j)$ and $h \in \G(k,l)$, with $j = k$ for $m_{g,h}$:
$$\arraycolsep0pt
\begin{array}{c}
  (\eta_i)^x = \eta_{i^x};
  \hfill (m_{g,h})^x = m_{g^x\!,h^x};
  \hfill (\epsilon_g)^x = \epsilon_{g^x};
  \hfill (l_i)^x = l_{i^x};
  \hfill (L_g)^x = L_{g^x};
  \hfill (v_g)^x = v_{g^x};
\\[10pt]
(\Delta_g)^x = \left\{%
  \begin{array}{lr}
    \mu^{-1}_{g^x\!,g^x} \!\circ \Delta_{g^x} 
      = (S_{\bar g^x} \!\circ v_{\bar g^x}^{-1}) \diam (S_{\bar g^x} 
        \!\circ v_{\bar g^x}^{-1}) \circ \Delta_{\bar g^x} 
        \!\circ \bar S_{g^x} \!\circ v_{g^x}
      & \ \ \text{if } i \neq i_0 = j,\\[2pt]
    \Delta_{g^x} &\quad\text{otherwise};
  \end{array}\right.\\[18pt]
(S_g)^x = \left\{%
  \begin{array}{lr}
    m_{\bar g^x\!,1_i} \!\circ (S_{g^x} \diam \id_{1_i}) \circ \rho_{g^x\!,i}
      = \bar S_{g^x} \!\circ v_{g^x}^2 
      & \ \ \text{if } i \neq i_0 = j,\\[2pt]
    S_{g^x} &\text{otherwise};
  \end{array}\right.
\\[15pt]
(\bar S_g)^x = \left\{%
  \begin{array}{lr}
    m_{\bar g^x\!,1_i} \!\circ (\bar S_{g^x} \diam S_{1_i}) \circ \rho_{g^x\!,i}    
      = S_{g^x} \!\circ v_{g^x}^{-2} 
      & \ \ \text{if } i = i_0 \neq j,\\[2pt]
    \bar S_{g^x} &\text{otherwise}.
 \end{array}\right.\\[18pt]
(\gamma_{g,h})^x = \left\{%
  \begin{array}{lr}
    \gamma_{g^x\!,h^x} 
      &\ \ \text{if } i = i_0 = j,\\[6pt]
    ((\alpha^{x,x}_h \!\circ \bar\gamma_{h^x\!,1_{j_0}}) \diam \id_{g^x}) \circ
      (\id_{h^x} \diam \rho_{j_0,g^x}) \circ 
      \gamma_{g^x\!,h^x}
      &\ \ \text{if } i \neq i_0 = j,\\[6pt]
    \gamma_{g^x\!,h^x} \!\circ 
      (\id_{g^x} \diam (\alpha^{x,x}_h \!\circ (S_{1_{j_0}} \!\!\diam \id_{h^x}))) \circ
      (\rho_{g^x\!,j_0} \!\diam \id_{h^x})
      &\ \ \text{if } i = i_0 \neq j,\\[6pt]
    ((\alpha^{x,x}_h \!\circ \bar\gamma_{h^x\!,1_{j_0}}) \diam \id_{g^x}) \circ
      (\id_{h^x} \diam \rho_{j_0,g^x}) \circ \gamma_{g^x\!,h^x} \!\circ {}\\[2pt]
    \hfill {} \circ 
      (\id_{g^x} \diam (\alpha^{x,x}_h \!\circ (S_{1_{j_0}} \!\!\diam \id_{h^x}))) \circ
      (\rho_{g^x\!,j_0} \!\diam \id_{h^x})
      &\ \ \text{if } i \neq i_0 \neq j;
  \end{array}\right.\\[50pt]
(\bar\gamma_{g,h})^x = \left\{%
  \begin{array}{lr}
    \bar\gamma_{g^x\!,h^x} 
      & \ \ \text{if } i = i_0 = j,\\[6pt]
    (\id_{h^x} \diam \alpha^{x,x}_g) \circ
      (\rho_{h^x\!,j_0} \!\diam \id_{g^x}) \circ
      \bar \gamma_{g^x\!,h^x}
      & \ \ \text{if } i \neq i_0 = j,\\[6pt]
    \bar\gamma_{g^x\!,h^x} \!\circ 
      ((\alpha^{x,x}_g \circ \bar\gamma_{g^x,1_{j_0}} \!\!\circ (\id_{g^x}
      \diam \bar S_{1_{j_0}})) \diam \id_{h^x}) \circ
      (\id_{g^x} \diam \rho_{j_0,h^x})
      & \ \ \text{if } i = i_0 \neq j,\\[6pt]
    (\id_{h^x} \diam \alpha^{x,x}_g) \circ
      (\rho_{h^x\!,j_0} \!\diam \id_{g^x}) \circ
      \bar \gamma_{g^x\!,h^x} \circ {}\\[2pt]
      \hfill {} \circ ((\alpha^{x,x}_g \circ \bar\gamma_{g^x,1_{j_0}} \!\!\circ 
      (\id_{g^x} \diam \bar S_{1_{j_0}})) \diam \id_{h^x}) \circ
      (\id_{g^x} \diam \rho_{j_0,h^x})
      & \ \ \text{if } i \neq i_0 \neq j;
  \end{array}\right.
\end{array}$$

Then, according to relation \(r6) in Table \ref{table-Huvdefn/fig}, we also put:
$$\label{sigmax/par}
\arraycolsep0pt
(\sigma_{i,j})^x = \left\{%
  \begin{array}{lr} 
  \eta_{j_0} \!\diam \eta_{j_0} 
    & \ \ \text{if } \{i, j\} = \{i_0, j_0\} \text{ and } i_0 \neq j_0,\\[2pt]
  \sigma_{i^x\!,j^x} &\text{otherwise}.
\end{array}\right.
$$\vskip-12pt
\end{definition}

The diagrams of $(\Delta_g)^x$, $(S_g)^x$ and $(\bar S_g)^x$ are presented in Figure
\ref{adjoint11/fig}, while those of $(\gamma_{g,h})^x$ and $(\bar \gamma_{g,h})^x$ are
presented in Figure \ref{adjoint12/fig} (here, the antipodes have been moved by \(p3) just 
for pictorial convenience). The equivalences in Figure \ref{adjoint12/fig}
can be obtained by moves symmetric to those in the top line of Figure
\ref{adjoint08/fig} performed in the reversed order.

\begin{Figure}[htb]{adjoint11/fig}
{}{$(\Delta_g)^x$, $(S_g)^x$ and $(\bar S_g)^x$ ($g \in \G(i,j)$ and $x \in \G(i_0,j_0)$) 
   [{\sl p}/\pageref{table-Huvprop/fig}-\pageref{table-Hr/fig}]}
\centerline{\fig{}{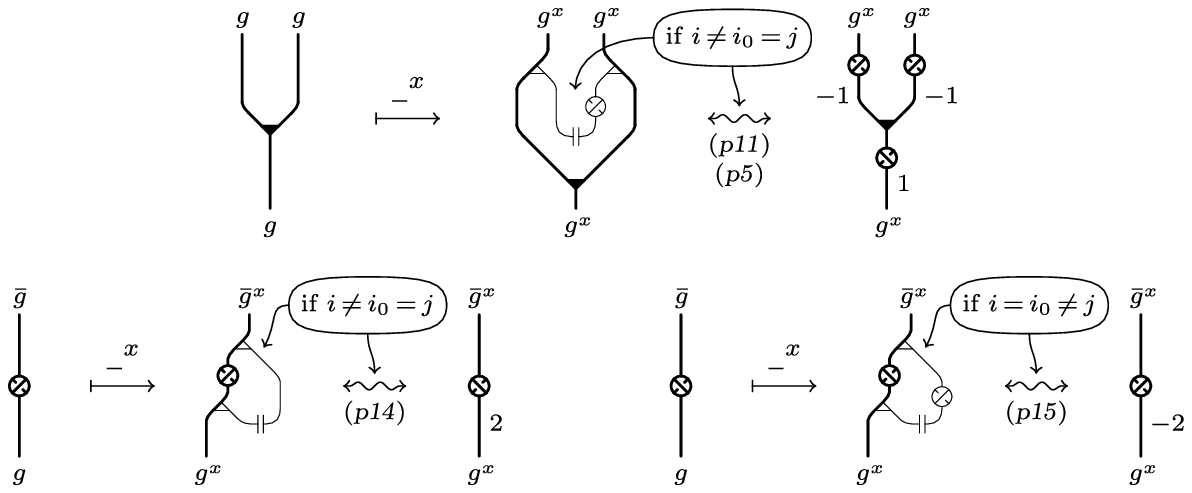}}
\vskip-3pt
\end{Figure}

\begin{Figure}[htb]{adjoint12/fig}
{}{$(\gamma_{g,h})^x$ and $(\bar\gamma_{g,h})^x$ 
   ($g \in \G(i,j)$, $h \in \G(k,l)$ and $x \in \G(i_0,j_0)$)
   [{\sl f}/\pageref{table-Hu/fig}-\pageref{table-Huvprop/fig},
    {\sl q}/\pageref{table-adjointdefn/fig},
    {\sl r}/\pageref{table-Huvdefn/fig}-\pageref{table-Huvprop/fig},
    {\sl s}/\pageref{table-Hdefn/fig}-\pageref{table-Hprop/fig}]}
\centerline{\kern15pt\fig{}{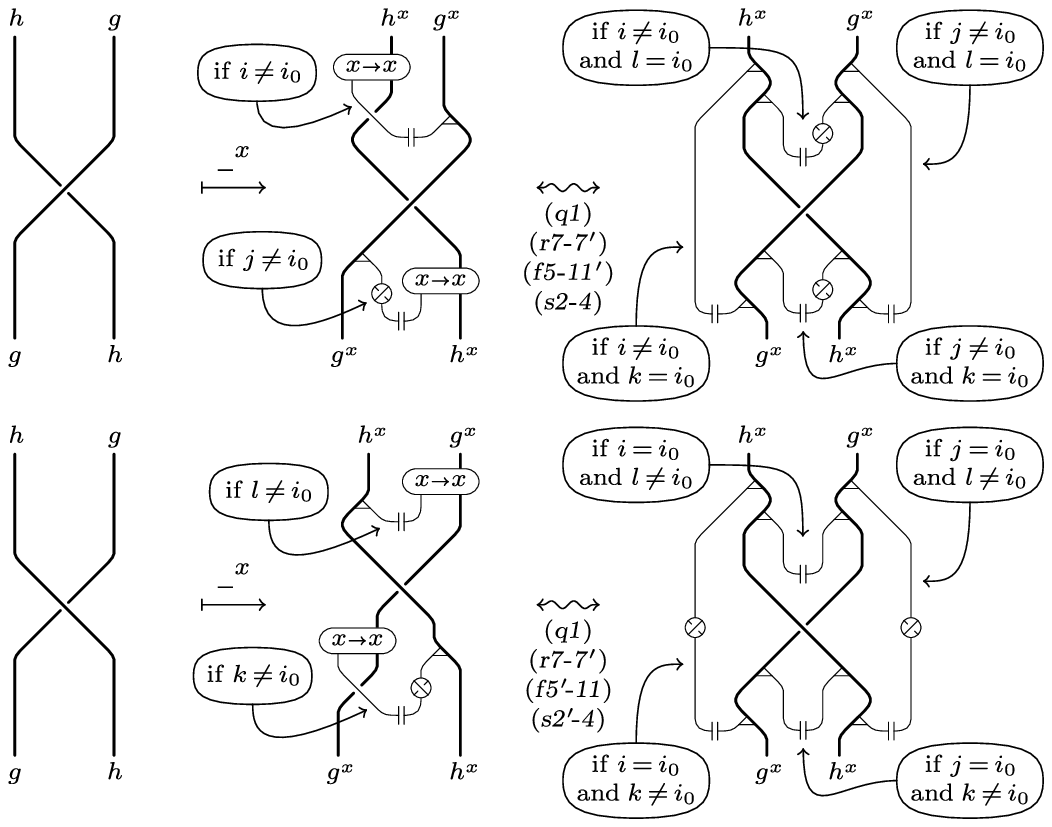}}
\vskip-6pt
\end{Figure}

We observe that all the corrections to the formal extension of $\_^x: \G \to \G$ to the
elementary morphisms are inessential when $i_0 = j_0$, being the involved copairings
trivial in this case. Moreover, if $i_0 \neq j_0$ then all conditions requiring that
certain index is $\neq i_0$ (resp. $\neq j_0$) can be replaced by $= j_0$ (resp. $= i_0$),
because only in that case the copairing involved is non-trivial. The reason for applying
the corrections even in some inessential cases, like we did with crossing changes when
defining $\_^x$ for Kirby tangles in Section \ref{K/sec}, is that this simplifies some
proofs in the following. In particular, the proof of property \(q15) in Table
\ref{table-Fx/fig} below for $i_0 = j_0$, which is far from being trivial in spite of the
triviality of $\_^x$ as a formal extension, will split into subcases of other cases.

It is also worth noticing that the righthand diagrams in Figures \ref{adjoint11/fig} and
\ref{adjoint12/fig} are not symmetric, because of the conditions controlling the presence
of the optional edges. In other words, our definition of $\_^x$ on the elementary
morphisms does not commute with the symmetry functor $\sym$.

\begin{Table}[htb]{table-Fx/fig}
{}{}
\vskip3pt
\centerline{\fig{}{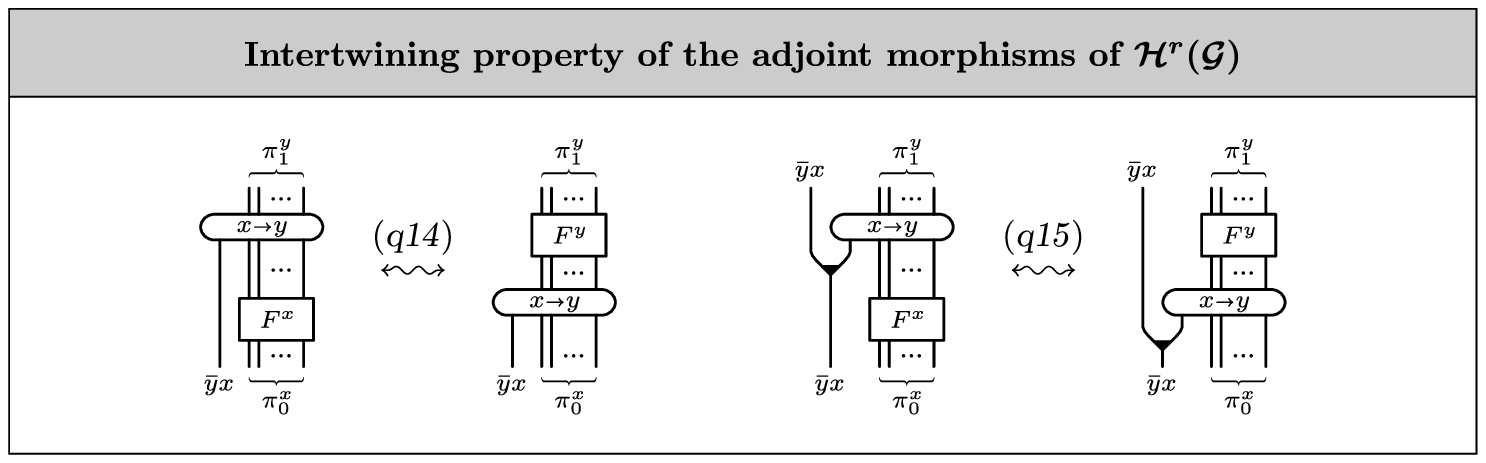}}
\vskip-3pt
\end{Table}

\begin{lemma}\label{intertwining/thm}
Given $x \in \G(i_0,j_0)$ and $y \in \G(i_0,k_0)$, the properties \(q14) and \(q15) shown
in Table \ref{table-Fx/fig} hold for $F$ being any elementary morphism of $\H^r(\G)$.
\end{lemma}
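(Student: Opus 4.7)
My plan is to prove the lemma by a case-by-case verification for each of the eleven types of elementary morphisms of $\H^r(\G)$, namely $\eta_i$, $\epsilon_g$, $l_i$, $L_g$, $v_g$, $m_{g,h}$, $\Delta_g$, $S_g$, $\bar S_g$, $\gamma_{g,h}$ and $\bar\gamma_{g,h}$. For each such $F: H_{\pi_0} \to H_{\pi_1}$, the two properties \(q14) and \(q15) amount to the intertwining identities $\alpha^{x,y}_{\pi_1} \circ (\id_{\bar y x} \diam F^x) = F^y \circ \alpha^{x,y}_{\pi_0}$ and $(\id_{\bar y x} \diam F^y) \circ \xi^{x,y}_{\pi_0} = \xi^{x,y}_{\pi_1} \circ (\id_{\bar y x} \diam F^x)$; moreover by Definition \ref{xinat/def}, \(q15) follows from \(q14) by precomposition with $\Delta_{\bar y x} \diam \id_{\pi_0^x}$ and use of axiom \(a1). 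So it suffices to prove \(q14) for each elementary morphism.

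First I would dispose of the easy morphisms $\eta_i$, $\epsilon_g$, $l_i$, $L_g$, $v_g$ and $m_{g,h}$, for which $F^x$ is the formal relabeling. Here \(q14) reduces to the action properties of $\alpha^{x,y}$ proved in Proposition \ref{alpha/thm}: namely \(q3) and its symmetric for the units, \(q4) for the multiplication, \(q5) and its primed version for the counits and integrals, and (for $v_g$) the observation that the ribbon morphism commutes with $\alpha^{x,y}$ because $v$ intertwines with $m$ and $\Delta$ by \(r5) and \(r5'), together with $v_{\bar y x} = v_{\bar y x}$ trivially. Next I would handle $\Delta_g$, $S_g$ and $\bar S_g$, for which the definition of $F^x$ contains the ``correction factors'' $\mu^{-1}_{g^x,g^x}$ or $v_{g^x}^{\pm 2}$ when one of the indices of $g$ matches $i_0 = j$ or $i = i_0$. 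In each of these cases the verification splits according to the positions of $i, j$ relative to $i_0, j_0$; the key ingredients are properties \(q10), \(q10'), \(q11) and \(q11') from Table \ref{table-adjointprop/fig}, which encode precisely how $\alpha^{x,y}$ interacts with $\Delta$ and with the antipodes in the presence of non-trivial copairings.

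The main obstacle will be the two braiding morphisms $\gamma_{g,h}$ and $\bar\gamma_{g,h}$, since their images under $\_^x$ have the most intricate definition involving both $\rho$-type comodule structures and adjoint actions (see Figure \ref{adjoint12/fig}). For these, the relevant intertwining identities are exactly properties \(q8), \(q8'), \(q9), \(q9'), which by Proposition \ref{adjoint/thm} are equivalent to axiom \(r9). I would treat the four subcases according to which of $i = i_0$, $j = i_0$, $k = i_0$, $l = i_0$ hold (using composability to eliminate impossible combinations), verifying each subcase by a diagrammatic computation that reads \(q8--q9') into the compound diagrams on each side of \(q14). The case $i = j = k = l = i_0$ reduces to \(r9) itself, while the mixed cases require combining \(q8--q9') with the action properties \(q3--q6') and the compatibility properties \(q10--q11'). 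The key technical point will be to absorb the extra $\mu_{\cdot,\cdot}$ and $\rho_{\cdot,\cdot}$ factors appearing in $(\gamma_{g,h})^x$ into the adjoint morphism on the other side of the equation.

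Finally, once \(q14) is established for all elementary morphisms, \(q15) follows as explained above, and this will complete the proof. In the next section the lemma will be used to propagate $\_^x$ and $\xi^{x,y}$ to a well-defined functor on all of $\H^r(\G)$ and a natural transformation between such functors, mirroring the role of Lemma \ref{K-push/thm} for Kirby tangles and compatible with the functor $\Phi_n$ thanks to Proposition \ref{xinat-kirby/thm}.
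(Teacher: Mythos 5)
Your proposal follows essentially the same route as the paper: reduce \(q15) to \(q14) by precomposing with $\Delta_{\bar y x} \diam \id$, then verify \(q14) case by case — dispatching $\eta$, $m$, $\epsilon$, $l$, $L$, $v$ via the action properties and algebra/integral/ribbon axioms, handling the corrected morphisms $\Delta_g$, $S_g$, $\bar S_g$ through the derived properties of Table \ref{table-adjointprop/fig} (which encode \(r8)--\(r9)), and treating the braidings by a diagrammatic computation built on \(q8)--\(q9'). The paper's proof differs only in minor bookkeeping (handling $\bar S$ and $\bar\gamma$ by invertibility, rewriting $(\gamma_{g,h})^x$ in an adjoint-action form, and reusing the already-proved cases of \(q14) inside the $\gamma$ computation), so your plan is correct and essentially identical in structure.
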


\begin{proof}
We can limit ourselves to the proof of \(q14), since \(q15) can be obtained from it, 
essentially by composing with $\Delta_{\bar y x} \diam \id_{\pi_0^x}$.

For $F = \eta_i$ and $F = m_{g,h}$ \(q14) coincides with action properties \(q5) and \(q6)
respectively, and was already established in Proposition \ref{alpha/thm}. Moreover, the
identity \(q14) for $F = \sigma_{i,j}$ follows from the ones for the other elementary
morphisms, being the definition of $(\sigma_{i,j})^x$ based on \(r6).

\begin{Figure}[b]{natural-xi01/fig}
{}{Proof of \(q14) for $F = l_{i_0}$
   ($x \in \G(k_0,i_0)$ and $y \in \G(j_0,k_0)$)
   [{\sl a}/\pageref{table-Hdefn/fig}, {\sl i-f}/\pageref{table-Hu/fig},
    {\sl s}/\pageref{table-Hdefn/fig}-\pageref{table-Hprop/fig}]}
\centerline{\fig{}{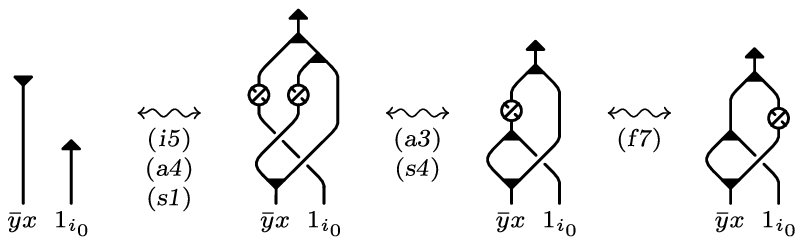}}
\vskip-3pt
\end{Figure}

Before considering the rest of the elementary morphisms, we notice that, when $F$ is
invertible, the identity \(q14) for $F$ implies the one for $F^{-1}$, once we have checked
that $(F^{-1})^x = (F^x)^{-1}$. This is trivially true for $F = S_g$, while it can be
easily verified for $F = \gamma_{g,h}$ by using moves \(r7-7') and \(q12-13) in Tables
\ref{table-Huvdefn/fig}, \ref{table-Huvprop/fig} and \ref{table-adjointprop/fig}.

Hence, we are reduced to proving \(q14) for $F = l_i,\, \epsilon_g,\, L_g,\, v_g,\,
\Delta_g,\, S_g,\, \gamma_{g,h}$ with $i \in \Obj\G$, $g \in \G(i,j)$ and $h \in \G(k,l)$,
and any $x \in \G(i_0,j_0)$ and $y \in \G(i_0,k_0)$.

\smallskip

{\bf $F = l_i$.}
If $i \neq i_0$ there is nothing to prove, being $\xi^{x,y}_{1_i}$ trivial and $F^x = F^y
= F$. The case when $i = i_0$ is shown in Figure \ref{natural-xi01/fig}.


{\bf $F = \epsilon_g, L_g, v_g$.}
The statements follow respectively from \(a6) in Table \ref{table-Hdefn/fig}, \(i2-2') in
Table \ref{table-Hprop/fig} and \(r5-5') in Table \ref{table-Huvdefn/fig} and 
\ref{table-Huvprop/fig}, modulo the relations \(a2-2') in Table \ref{table-Hdefn/fig} and 
\(s5) in Table \ref{table-Hprop/fig} in the first two cases.

\smallskip

\begin{Figure}[b]{natural-xi02/fig}
{}{Proof of \(q14) for $F = \Delta_g$ with $g \in \G(i,i_0)$ and $i \neq i_0$
   ($x \in\break \G(i_0,j_0)$ and $y \in \G(i_0,k_0)$)
   [{\sl a}/\pageref{table-Hdefn/fig}, 
    {\sl r}/\pageref{table-Huvdefn/fig}-\pageref{table-Huvprop/fig},
    {\sl s}/\pageref{table-Hprop/fig}]}
\centerline{\fig{}{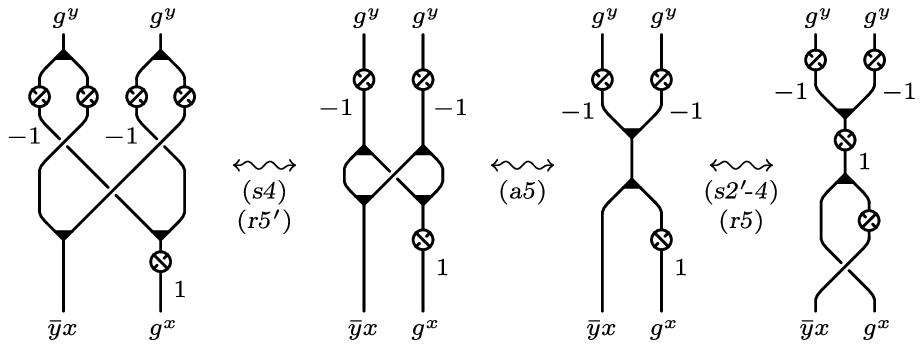}}
\vskip-3pt
\end{Figure}

\begin{Figure}[htb]{natural-xi03/fig}
{}{Proof of \(q14) for $F = \Delta_g$ with $g \in \G(i_0,i_0)$
   ($x \in \G(i_0,j_0)$ and $y \in \G(i_0,k_0)$)
   [{\sl a}/\pageref{table-Hdefn/fig}, {\sl p}/\pageref{table-Huvprop/fig},
    {\sl q}/\pageref{table-adjointdefn/fig}-\pageref{table-adjointprop/fig}, 
    {\sl s}/\pageref{table-Hdefn/fig}-\pageref{table-Hprop/fig}]}
\centerline{\fig{}{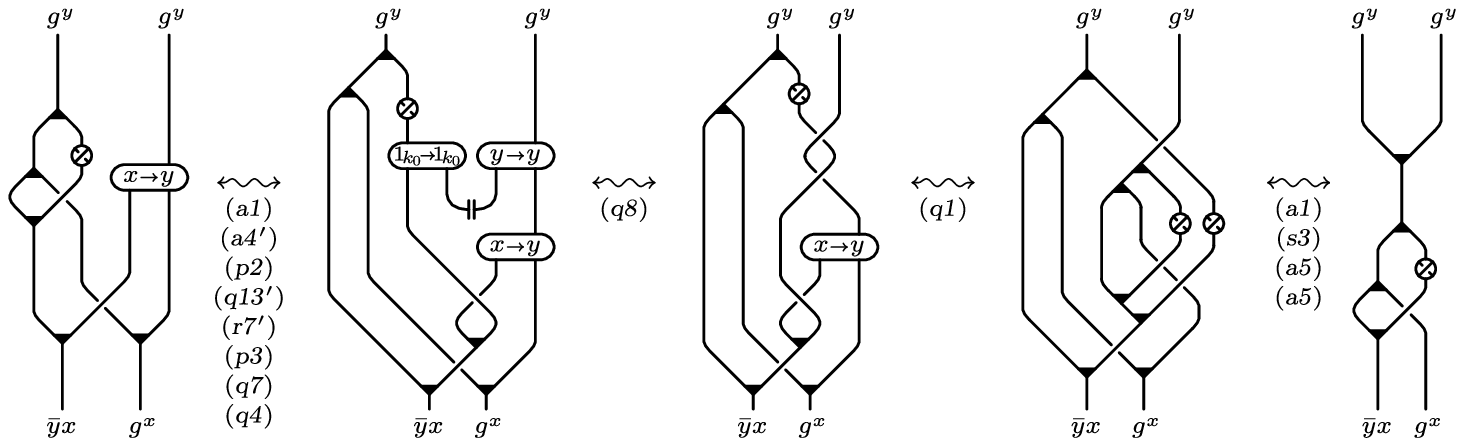}}
\vskip-3pt
\end{Figure}

\begin{Figure}[htb]{natural-xi04/fig}
{}{Proof of \(q14) for $F = S_g$ with $g \in \G(i_0,i_0)$
   ($x \in \G(i_0,j_0)$ and $y \in \G(i_0,k_0)$)
   [{\sl a}/\pageref{table-Hdefn/fig}, 
    {\sl f}/\pageref{table-Hu/fig}-\pageref{table-Huvprop/fig},
    {\sl q}/\pageref{table-adjointprop/fig}, 
    {\sl s}/\pageref{table-Hprop/fig}]}
\centerline{\fig{}{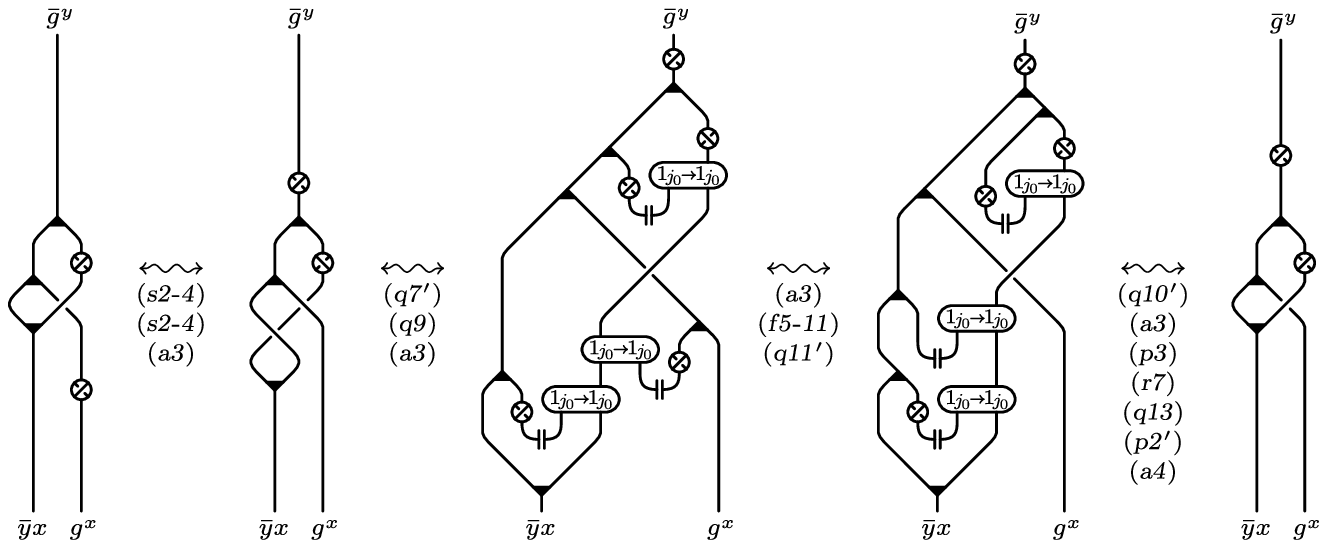}}
\vskip-3pt
\end{Figure}

{\bf $F = \Delta_g$.}
As above, there is nothing to prove for $g \in \G(i,j)$ with $i,j \neq i_0$. For $g \in
\G(i_0,j)$ with $j \neq i_0$, the statement essentially reduces to the relation \(a5) in
Table \ref{table-Hdefn/fig}. The proof for $g \in \G(i,i_0)$ with $i \neq i_0$ is
presented in Figure \ref{natural-xi02/fig}, while the case $g \in \G(i_0,i_0)$ is shown in
Figure \ref{natural-xi03/fig}.
\pagebreak
The first step of the latter figure goes like in the 
first step of the second line of Figure \ref{adjoint05/fig}. Moreover, before applying 
\(q8) in the second step, we replace the morphism $\alpha_g^{y,y}$ with the formally 
identical one $\alpha_g^{\smash{1_{k_0}}}$.

\smallskip

{\bf $F = S_g$.}
If $g \in \G(i,j)$ with $i,j \neq i_0$ there is nothing to prove, being $\alpha^{x,y}_g$
trivial and $(S_g)^x = (S_g)^y = S_g$. When $g \in \G(i,i_0)$ with $i \neq i_0$ or $g \in
\G(i_0,j)$ with $j \neq i_0$, the statement is equivalent to the property \(s4) of the
antipode in Table \ref{table-Hprop/fig}. Figure \ref{natural-xi04/fig} addresses the case 
of $g \in \G(i_0,i_0)$.

\smallskip

{\bf $F = \gamma_{g,h}$.}
We first prove that $(\gamma_{g,h})^x$ and $(\bar\gamma_{g,h})^x$ can be represented by
the diagrams depicted in Figure \ref{natural-xi05/fig}. Since these diagrams can be easily
seen to be inverse to one another by using the relations \(r7'), \(q4) and \(q12-13) in
Tables \ref{table-Huvprop/fig}, \ref{table-adjointdefn/fig} and
\ref{table-adjointprop/fig}, it suffices to consider $(\gamma_{g,h})^x$.

\begin{Figure}[htb]{natural-xi05/fig}
{}{Equivalent forms of $(\gamma_{g,h})^x$ and $(\bar\gamma_{g,h})^x$ 
   ($x \in \G(i_0,j_0)$)}
\vskip3pt
\centerline{\fig{}{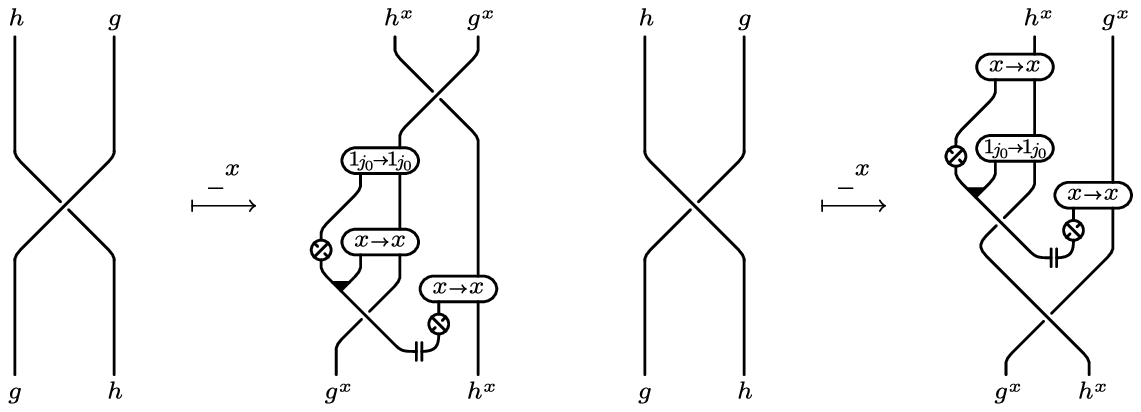}}
\vskip-3pt
\end{Figure}

Look at the diagram corresponding to $(\gamma_{g,h})^x$ in Figure \ref{natural-xi05/fig}.
When $i_0 = j_0$, we can substitute $\alpha^{\smash{1_{j_0}}\vphantom{|}}_{g^x}$ with
$\alpha^{x,x}_g$ and cancel this with the preexisting one by move \(q12) in Table
\ref{table-adjointprop/fig}. After that also $\alpha^{x,x}_h$ can be deleted to get
$\gamma_{g^x,h^x}$, which in this case is equal to $(\gamma_{h,g})^x$ as we noticed above.

\begin{Figure}[b]{natural-xi06/fig}
{}{($g \in \G(i_0,j)$ and $x \in \G(i_0,j_0)$)
   [{\sl a}/\pageref{table-Hdefn/fig}, 
    {\sl q}/\pageref{table-adjointdefn/fig}, 
    {\sl s}/\pageref{table-Hdefn/fig}-\pageref{table-Hprop/fig}]}
\vskip3pt
\centerline{\fig{}{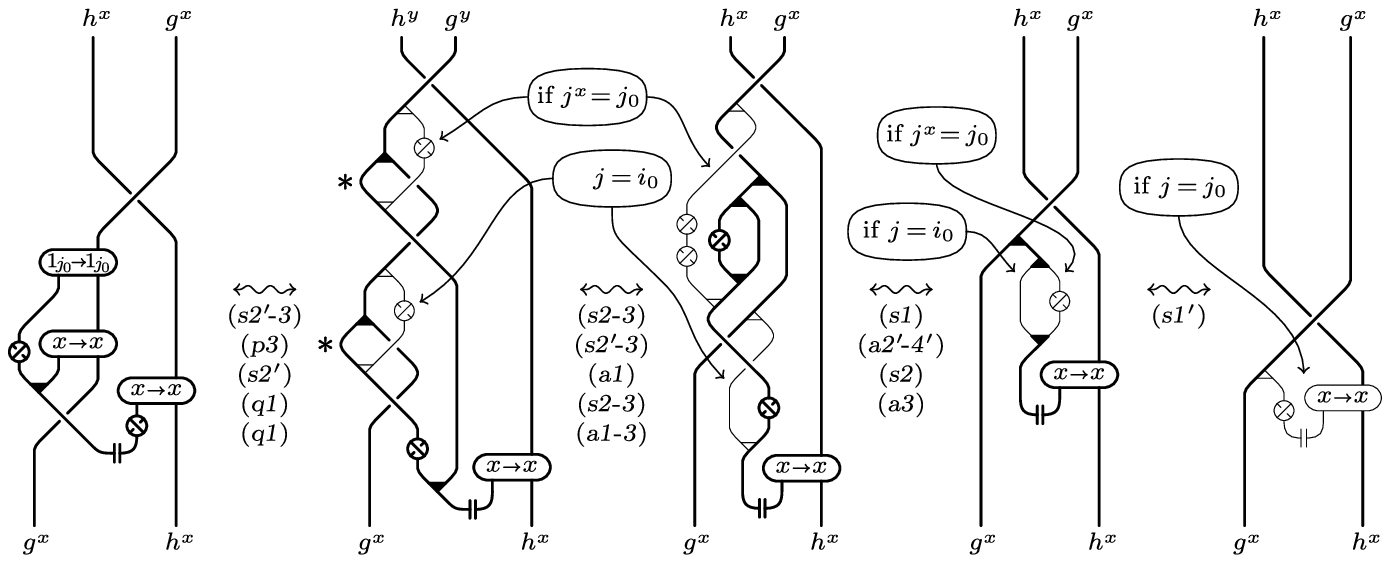}}
\vskip-3pt
\end{Figure}

Now assume $i_0 \neq j_0$. We have already observed that in this case the condition $i
\neq i_0$ (resp. $j \neq i_0$) in the definition of $(\gamma_{g,h})^x$ (cf. Figure
\ref{adjoint12/fig}) is equivalent to $i = j_0$ (resp. $j = j_0$). Then, Figures
\ref{natural-xi06/fig} and \ref{natural-xi07/fig} deal with the cases $g \in \G(i_0,j)$
and $g \in \G(j_0,j)$ respectively. For $g \in \G(i,j)$ with $i \neq i_0,j_0$, we refer
again to Figure \ref{natural-xi06/fig}, noting that the second diagram in the
figure is directly isotopic to the fourth one, once the edges marked with an asterisk in 
the expansions of the $\alpha$'s are deleted.

\begin{Figure}[htb]{natural-xi07/fig}
{}{($g \in \G(j_0,j)$ and $x \in \G(i_0,j_0)$)
   [{\sl a}/\pageref{table-Hdefn/fig},
    {\sl f}/\pageref{table-Hu/fig}-\pageref{table-Huvprop/fig},
    {\sl p}/\pageref{table-Huvprop/fig},
    {\sl q}/\pageref{table-adjointdefn/fig}-\break\pageref{table-adjointprop/fig},
    {\sl r}/\pageref{table-Huvprop/fig},
    {\sl s}/\pageref{table-Hdefn/fig}-\pageref{table-Hprop/fig}]}
\centerline{\fig{}{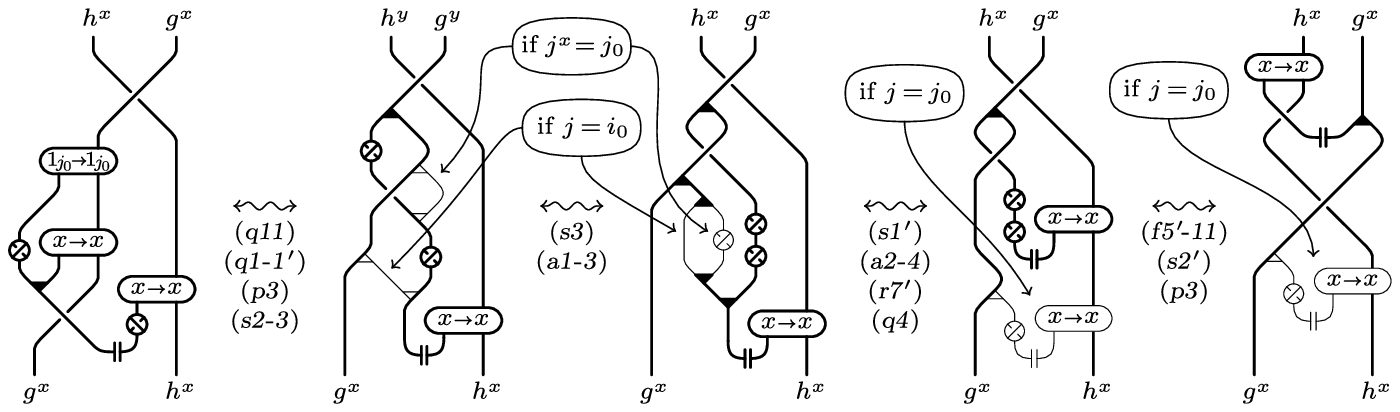}}
\vskip-3pt
\end{Figure}

\begin{Figure}[htb]{natural-xi08/fig}
{}{Proof of \(q14) for $\gamma_{g,h}$ ($x \in \G(i_0,j_0)$ and $y \in \G(i_0,k_0)$)
   [{\sl a}/\pageref{table-Hdefn/fig},
    {\sl f}/\pageref{table-Hu/fig}-\pageref{table-Huvprop/fig},
    {\sl p}/\pageref{table-Huvprop/fig}-\pageref{table-Hr/fig},
    {\sl q}/\pageref{table-adjointdefn/fig}-\pageref{table-adjointprop/fig}%
            -\pageref{table-Fx/fig},
    {\sl r}/\pageref{table-Huvdefn/fig}-\pageref{table-Huvprop/fig},
    {\sl s}/\pageref{table-Hdefn/fig}-\pageref{table-Hprop/fig}]}
\vskip6pt
\centerline{\fig{}{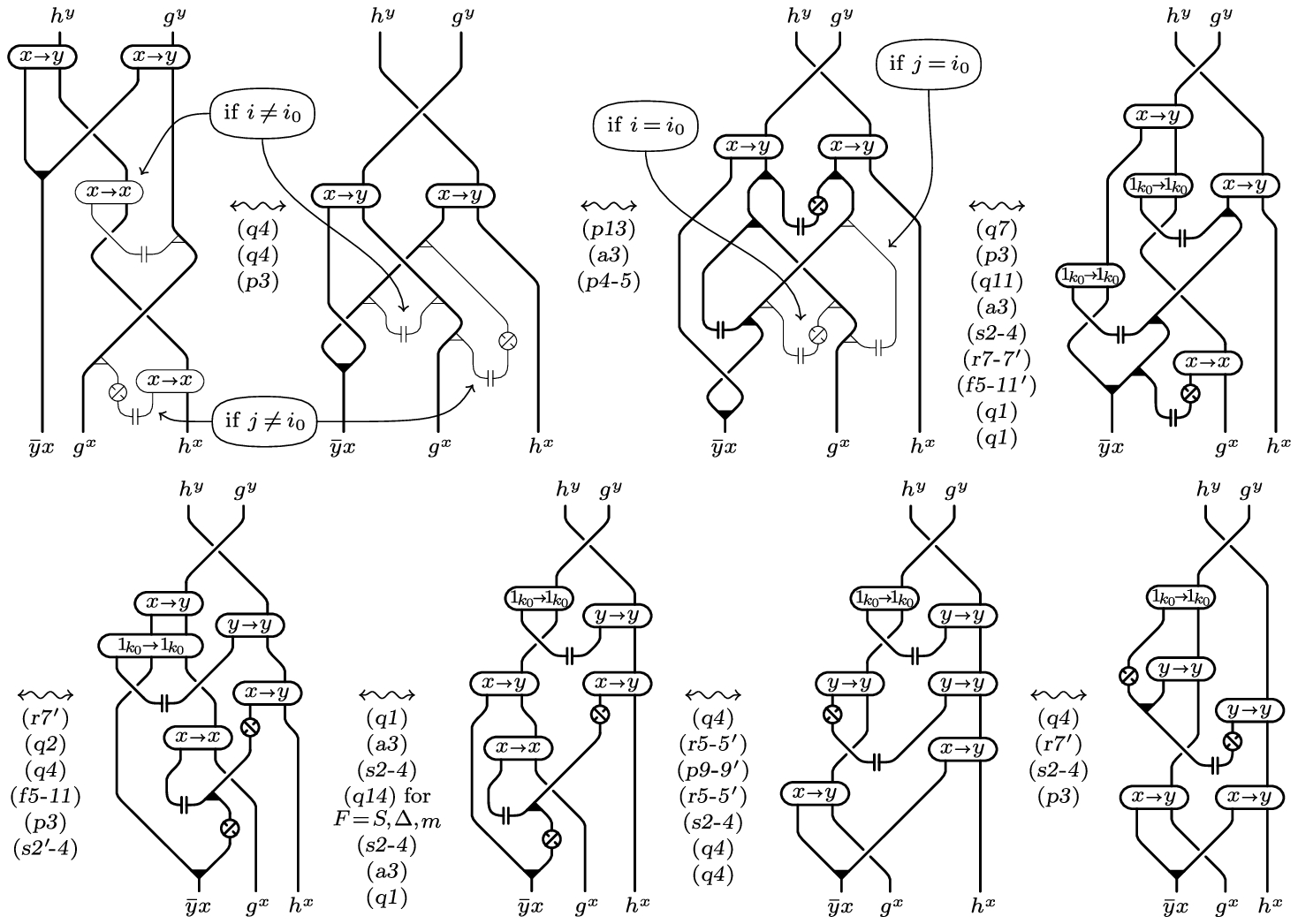}}
\vskip-3pt
\end{Figure}

\pagebreak\vglue0pt
At this point, identity \(q14) for $\gamma_{g,h}$ is proved in Figure
\ref{natural-xi08/fig}, where $(\gamma_{g,h})^x$ in the first diagram has the original
form, while $(\gamma_{g,h})^y$ in the last diagram has the equivalent form given above. We
note that the top line ends with two applications of \(q1) to get $\alpha^{x,x}_g$ and
$\alpha^{\smash{1_{k_0}}\vphantom{|}}_{g^x}$ in the fourth diagram. The conditions of the
corresponding optional edges in the third diagram are explicitly indicated for
$\alpha^{x,x}_g$, while they are implicitly provided by the copairings occurring in them
for $\alpha^{\smash{1_{k_0}}\vphantom{|}}_{g^x}$. Moreover, we observe that the second
diagram in the bottom line is obtained from the first one, by moving $\alpha^{x,y}_g$ down
and letting it pass through $\alpha^{\smash{1_{k_0}}\vphantom{|}}_{\bar yx}$. This can
be done thanks to the identity \(q14) we have already proved for the elementary morphisms
$S$, $\Delta$ and $m$, once $\alpha^{x,y}_g$ has been transformed as in the first step of
Figure \ref{adjoint06/fig}.
\end{proof}

Below, $\G^{\bs i}$ denotes the full subgroupoid of $\G$ with $\Obj \G^{\bs i} = \Obj \G -
\{i\}$ (cf. Proposition \ref{red-groupoid/thm}) and $\H^r(\G^{\bs i}) \subset \H^r(\G)$ is
the universal ribbon Hopf algebra constructed on $\G^{\bs i}$, with the inclusion given by
Proposition \ref{formalext/thm}.

\begin{proposition}\label{H-push/thm}
Let $\G$ be a groupoid. For any $x \in \G(i_0,j_0)$ the map $\_^x: \Obj \H^r(\G) \to \Obj
\H^r(\G)$ defined after Proposition \ref{red-groupoid/thm} extends to a monoidal functor
$$\_^x: \H^r(\G) \to \H^r(\G)\,,$$ 
which transforms the elementary morphisms according to in
Definition \ref{Fx/def}, and satisfies the following properties:
\begin{itemize}
\item[\(a)] 
if $i_0 \neq j_0$ then $(\H^r(\G))^x \subset \H^r(\G^{\bs i_0})$, hence there is an 
induced functor 
$$\_^x:\H^r(\G) \to \H^r(\G^{\bs i_0})\,\text{;}$$
\vskip0pt
\item[\(b)]
$\_^x$ restricts to the identity on $\H^r(\G^{\bs i_0})$ and to an equivalence of
categories $ \H^r(\G^{\bs j_0}) \to \H^r(\G^{\bs i_0})$, whose inverse is given by the
restriction of $\_^{\bar x}$ to $\H^r(\G^{\bs i_0})$;
\item[\(c)]
for any other $y \in \G(i_0,k_0)$, the isomorphisms $\xi_\pi^{x,y}$ introduced in
Definition \ref{xinat/def} (cf. Proposition \ref{xinat/thm}) give a natural equivalence
$$\xi^{x,y}: \id_{\bar yx} \diam \_^x \to \id_{\bar yx} \diam \_^y\,,$$
i.e. for any morphism $F: H_{\pi_0} \to H_{\pi_1}$ in $\H^r(\G)$ we have
(cf. Table \ref{table-Fx/fig}):
$$\xi_{\pi_1}^{x,y} \circ (\id_{\bar y x} \diam F^x) = (\id_{\bar y x} \diam F^y)
\circ \xi_{\pi_0}^{x,y}\,.
\eqno{\(q15)}$$
\vskip-10pt
\end{itemize}
\end{proposition}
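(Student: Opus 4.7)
The plan is to define $\_^x$ on all morphisms of $\H^r(\G)$ by propagating the recipe of Definition \ref{Fx/def} over products and compositions, which makes the resulting map monoidal and functorial on the free category generated by the elementary morphisms, and then to show that it descends to a well-defined monoidal functor on $\H^r(\G)$. Concretely, what must be verified is that every defining axiom of $\H^r(\G)$, collected in Tables \ref{table-Hdefn/fig}, \ref{table-Hu/fig}, \ref{table-Huvdefn/fig} and \ref{table-Hr/fig}, becomes a valid identity after the substitutions of Definition \ref{Fx/def}. Once this functoriality is in place, properties \(a), \(b) and \(c) follow with considerably less work.

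I would organise the axiom verification in three layers. Layer one consists of axioms in which every occurring index lies outside $\{i_0\}$, or in which $i_0 = j_0$, so that no correction ever applies; here $\_^x$ is a pure index relabeling and the axioms are preserved formally. Layer two consists of axioms involving a single problematic index equal to $i_0$: the corrections introduced in $(\Delta_g)^x$, $(S_g)^x$ and $(\bar S_g)^x$ are precisely the residues that arise from pushing a copairing leg across a multiplication, an antipode or a comultiplication via properties \(f8-8'), \(p14-14') and \(p15-15'). Layer three, which is the main obstacle, consists of the braiding-dependent axioms \(a5), \(s4), the copairing axioms \(r6-7), and the two crucial ribbon axioms \(r8) and \(r9). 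Here the definitions of $(\Delta_g)^x$ and $(\gamma_{g,h})^x$ have been engineered exactly so that, after substitution, the required identities reduce to properties \(q7-7'), \(q8-8') and \(q9-9') in Table \ref{table-adjointprop/fig}, which by Proposition \ref{adjoint/thm} are equivalent to \(r8) and \(r9) themselves. The verification amounts to a systematic case analysis over all configurations of the indices $i,j,k,l$ relative to $i_0$ and $j_0$, using the action identities \(q3-6') together with the properties of the adjoint morphisms.

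Once the functor $\_^x: \H^r(\G) \to \H^r(\G)$ has been established, property \(a) is immediate: when $i_0 \neq j_0$, inspection of Definition \ref{Fx/def} shows that no right-hand side produces the label $i_0$, so after propagation the image of any morphism lies in $\H^r(\G^{\bs i_0})$. For property \(b), the restriction of $\_^x$ to $\H^r(\G^{\bs i_0})$ falls into the ``otherwise'' branches on every elementary morphism and is visibly the identity; combining this with the groupoid-level identities from Proposition \ref{red-groupoid/thm}\(b) and \(d) gives the two-sided inverse property of the restriction $\H^r(\G^{\bs j_0}) \to \H^r(\G^{\bs i_0})$ and hence the claimed equivalence of categories. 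Finally, property \(c) --- the naturality identity \(q15) in Table \ref{table-Fx/fig} --- is already known on elementary morphisms by Lemma \ref{intertwining/thm}, and its extension to arbitrary morphisms is a straightforward structural induction: for a composition $F_2 \circ F_1$ one stacks two instances of \(q15) along the common middle $\xi^{x,y}_\pi$, and for a product $F_1 \diam F_2$ one invokes the recursive formula \(q2) for $\alpha^{x,y}_{\pi \diam \pi'}$ from Definition \ref{alpha/def}, which is exactly what makes $\xi^{x,y}$ monoidal. Combined with the invertibility of each $\xi^{x,y}_\pi$ from Proposition \ref{xinat/thm}, this completes the natural equivalence.
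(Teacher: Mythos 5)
Your proposal takes essentially the same route as the paper's proof: one defines $\_^x$ by propagating Definition \ref{Fx/def} over products and compositions, checks (only the case $i_0 \neq j_0$ really matters) that every defining axiom of $\H^r(\G)$ is preserved --- the substantive work being the braiding-, antipode- and ribbon-related axioms, which are absorbed exactly through the copairing and adjoint properties of Tables \ref{table-Huvprop/fig}, \ref{table-Hr/fig} and \ref{table-adjointprop/fig} --- and then obtains \(a) and \(b) by inspecting which labels and which (inessential) corrections survive, and \(c) by extending \(q15) from the elementary case of Lemma \ref{intertwining/thm} through functoriality and the recursive structure of $\alpha^{x,y}_\pi$. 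This matches the paper's argument in both structure and the key ingredients.
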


\begin{proof}
We want to show that the propagation of the definition of $\_^x$ over products and
compositions is well-defined, i.e. it preserves all the axioms relating the elementary
morphisms in $\H^r(\G)$ presented in Tables \ref{table-Hdefn/fig}, \ref{table-Hu/fig},
\ref{table-Huvdefn/fig} and \ref{table-Hr/fig}. This will give us the functor
$\_^x: \H^r(\G) \to \H^r(\G)$. 

We only need to consider the case $i_0\neq j_0$, since for $i_0=j_0$ the map $\_^x$ acts
as identity on the elementary morphisms.

\begin{Figure}[b]{h-push01/fig}
{}{$(\gamma_{\pi,\pi'})^x$ and $(\bar\gamma_{\pi,\pi'})^x$ ($x \in \G(i_0,j_0)$)}
\vskip-3pt
\centerline{\fig{}{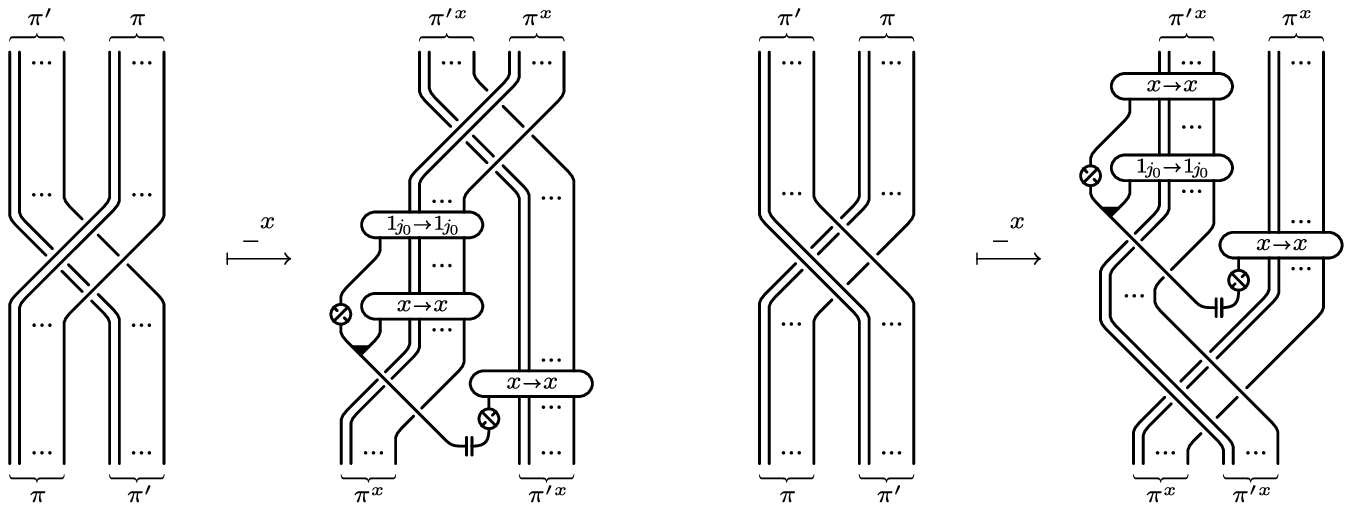}}
\vskip-3pt
\end{Figure}

Let us begin with the braid axioms in Table \ref{table-Hdefn/fig}. The preservation of
\(b1) and \(b2) is equivalent to the identity $(\gamma_{g,h}^{-1})^x =
((\gamma_{g,h})^x)^{-1}$, which was already discussed at the beginning of the proof of
Lemma \ref{intertwining/thm}. Move \(b3) is trivially preserved.

\begin{Figure}[b]{h-push02/fig}
{}{Deriving $(\bar\gamma_{(g_1,g_2),h})^x$ ($x \in \G(i_0,j_0)$)
   [{\sl a}/\pageref{table-Hdefn/fig}, {\sl p}/\pageref{table-Huvprop/fig},
    {\sl q}/\pageref{table-adjointdefn/fig}-\pageref{table-Fx/fig},
    {\sl r}/\pageref{table-Huvdefn/fig},
    {\sl s}/\pageref{table-Hdefn/fig}-\pageref{table-Hprop/fig}]}
\vskip9pt
\centerline{\fig{}{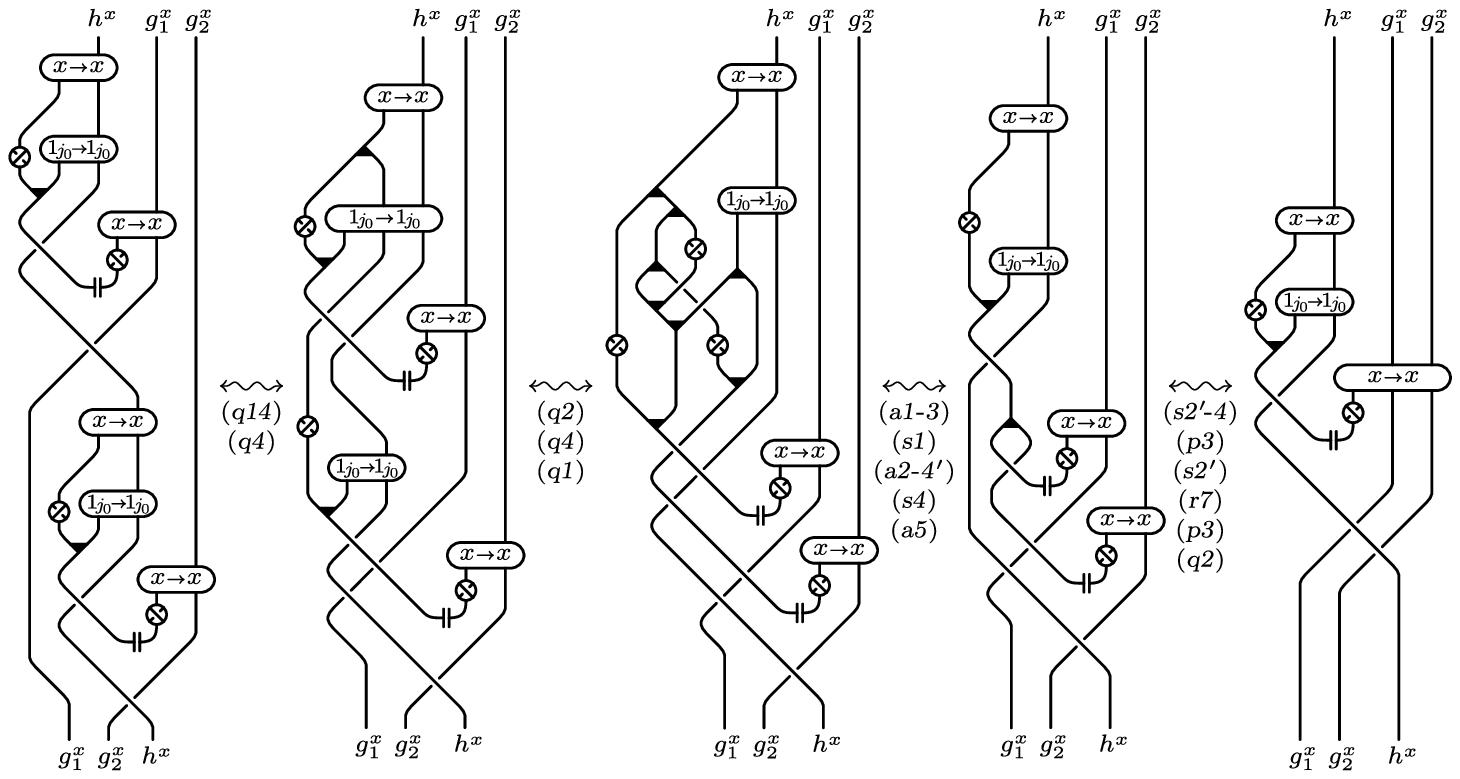}}
\vskip-3pt
\end{Figure}

\begin{Figure}[htb]{h-push03/fig}
{}{Deriving $(\bar\gamma_{g,(h_1,h_2)})^x$ ($x \in \G(i_0,j_0)$)
   [{\sl a}/\pageref{table-Hdefn/fig},
    {\sl f}/\pageref{table-Hu/fig}-\pageref{table-Huvprop/fig}
    {\sl q}/\pageref{table-adjointdefn/fig}-\pageref{table-adjointprop/fig},
    {\sl r}/\pageref{table-Huvprop/fig},
    {\sl s}/\pageref{table-Hdefn/fig}-\pageref{table-Hprop/fig}]}
\vskip9pt
\centerline{\fig{}{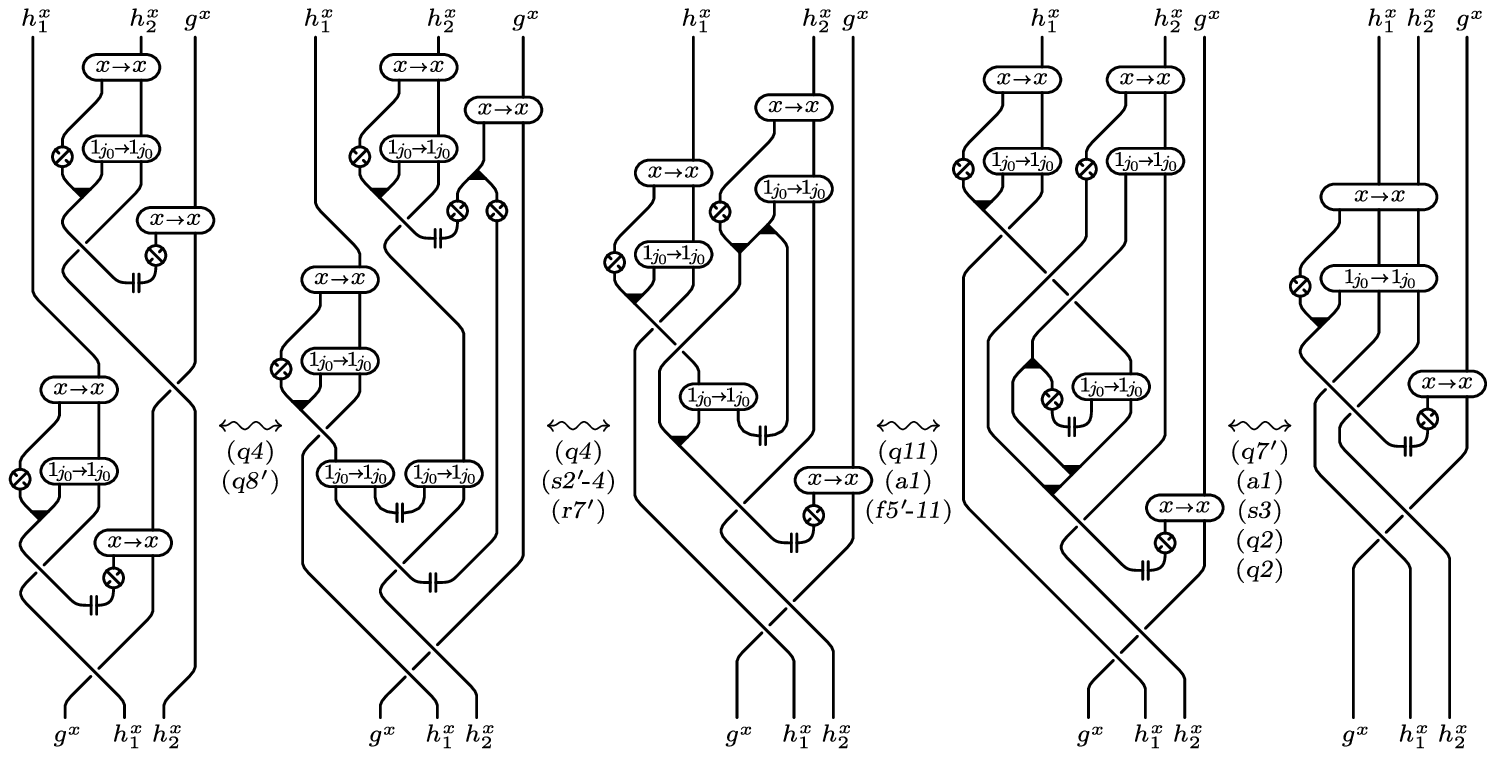}}
\vskip-3pt
\end{Figure}

To deal with moves \(b4-4'), we first prove that the alternative form provided in Figure
\ref{natural-xi05/fig} for $(\gamma_{g,h})^x$ and $(\bar\gamma_{g,h})^x$ can be
generalized to $(\gamma_{\pi,\pi'})^x$ and $(\bar\gamma_{\pi,\pi'})^x$ for any $\pi,\pi'
\in \seq\G$, as shown in Figure \ref{h-push01/fig}. Actually, only the cases when $\pi =
(g_1,g_2)$ and $\pi' = h$ or $\pi = g$ and $\pi' = (h_1,h_2)$ are needed for our purposes.
We derive these cases for $(\bar\gamma_{\pi,\pi'})^x$ in Figures \ref{h-push02/fig} and
\ref{h-push03/fig} respectively, while leaving to the reader to see that the same argument
also allows to prove the general case, by a double induction on the lengths of $\pi$ and
$\pi'$. Then, the expression for $(\gamma_{\pi,\pi'})^x$ follows, being the inverse of
that for $(\bar\gamma_{\pi,\pi'})^x$.

Now, using the form of $(\gamma_{\pi,\pi'})^x$ in Figure \ref{h-push01/fig}, the
preservation of \(b4-4') follows from the special cases of \(q14) for $\alpha^{x,x}$ and
$\alpha^{\smash{1_{j_0}}}$.

We continue with the bi-algebra axioms in Table \ref{table-Hdefn/fig}. The only
non-trivial ones are \(a1), \(a2) and \(a5) when $g$ or $h$ are in $\G(j_0,i_0)$. In this
case, \(a1) and \(a2) follow directly from the definition of $(\Delta_g)^x$ in the
rightmost diagram in Figure \ref{adjoint11/fig}. The proof of \(a5) for $g \in
\G(j_0,i_0)$ and $h \in \G(i_0,i_0)$ is presented in Figure \ref{h-push04/fig}. For
different choices of $g$ and $h$ we have analogous or simpler proofs.

\begin{Figure}[htb]{h-push04/fig}
{}{$\_^x$ preserves \(a5) for $g \in \G(j_0,i_0)$ and $h \in \G(i_0,i_0)$
   ($x \in \G(i_0,j_0)$)
   [{\sl a}/\pageref{table-Hdefn/fig},
    {\sl p}/\pageref{table-Huvprop/fig}]}
\centerline{\fig{}{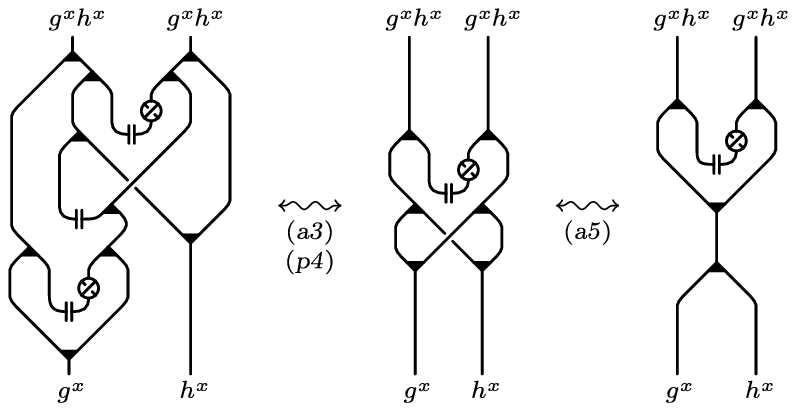}}
\vskip-3pt
\end{Figure}

From the antipode axioms in Table \ref{table-Hdefn/fig} the only non-trivial ones are
\(s1) and \(s1') for $g \in \G(j_0,i_0)$. Figure \ref{h-push05/fig} proves the
preservation of \(s1) in this case, while the proof for \(s1') is obtained by symmetry.

\begin{Figure}[htb]{h-push05/fig}
{}{$\_^x$ preserves \(s1) for $g \in \G(j_0,i_0)$ ($x \in \G(i_0,j_0)$)
   [{\sl a}/\pageref{table-Hdefn/fig},
    {\sl p}/\pageref{table-Huvprop/fig}-\break\pageref{table-Hr/fig},
    {\sl s}/\pageref{table-Hdefn/fig}]}
\centerline{\fig{}{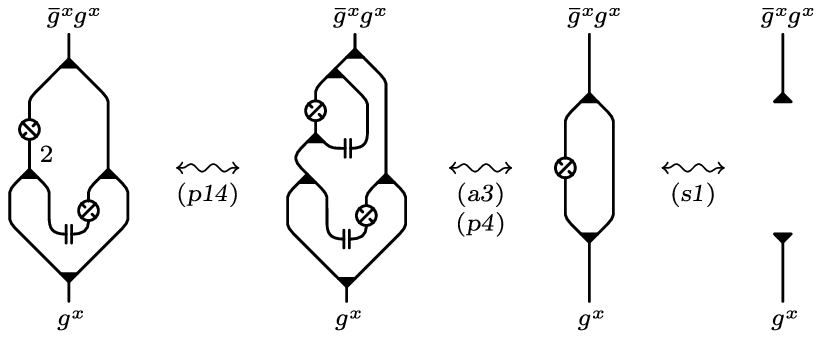}}
\vskip-3pt
\end{Figure}

The integral axioms in Table \ref{table-Hu/fig} are trivially preserved because
$(m_{g,h})^x = m_{g^x,h^x}$ and $(\Delta_{1_i})^x = \Delta_{1_i^x}$.

Also the ribbon axioms in Table \ref{table-Huvdefn/fig} are trivially preserved, while the
only non-trivial cases in Table \ref{table-Hr/fig} are \(r8) when $g \in \G(j_0,i_0)$ and
\(r9) with $g \in \G(i,j)$ and $h \in \G(k,l)$ such that some of $i,j$ are equal to $i_0$
and some of $k,l$ are equal to $j_0$ or vice versa. Figure \ref{h-push06/fig} deals with
the above-metioned case of \(r8). Some of the cases of \(r9) are presented in Figure
\ref{h-push07/fig} (cf. expression of $(\sigma_{i,j})^x$ on page \pageref{sigmax/par}) and
the others are analogous. This concludes the proof of the functoriality of $\_^x$.

\begin{Figure}[htb]{h-push06/fig}
{}{$\_^x$ preserves \(r8) when $g \in \G(j_0,i_0)$ ($x \in \G(i_0,j_0)$)
   [{\sl p}/\pageref{table-Huvprop/fig},
    {\sl r}/\pageref{table-Huvdefn/fig}-\break\pageref{table-Huvprop/fig}%
            -\pageref{table-Hr/fig}]}
\centerline{\fig{}{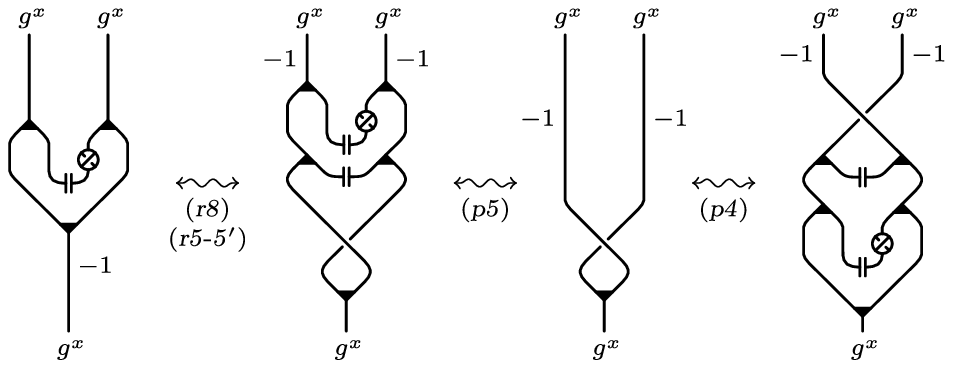}}
\vskip-3pt
\end{Figure}

\begin{Figure}[htb]{h-push07/fig}
{}{$\_^x$ preserves \(r9) ($g \in \G(i,j)$, $h \in \G(k,l)$, $x \in \G(i_0,j_0)$)
   [{\sl p}/\pageref{table-Huvprop/fig}-\break\pageref{table-Hr/fig},
    {\sl r}/\pageref{table-Hr/fig}]}
\centerline{\fig{}{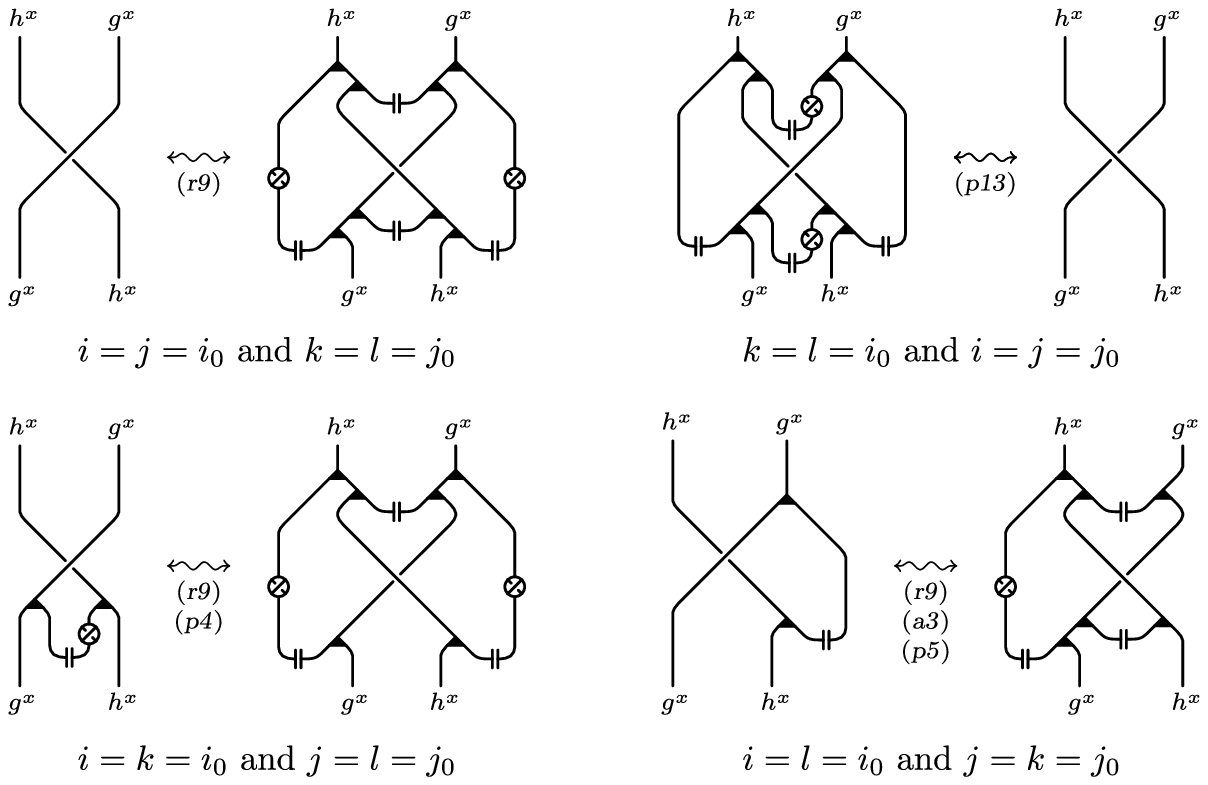}}
\vskip-3pt
\end{Figure}

At this point, thanks to the functoriality of $\_^x$, the validity of identity \(q15),
already known from Lemma \ref{intertwining/thm} for $F$ being an elementary morphism,
extends to any other morphism $F$. This gives the naturality of $\xi^{x,y}$ stated in 
point \(c).

Finally, we oserve that the definition $\_^x$ on the elementary morphisms of $\H^r(\G)$
which do not involve $i_0$ or $j_0$ coincides with the formal extension of $\_^x$ (cf.
Figures \ref{adjoint11/fig} and \ref{adjoint12/fig}). This fact together with
Proposition \ref{red-groupoid/thm} \(d) imply points \(a) and \(b) of the statement.
\end{proof}

\begin{proposition}\label{KtoH-push/thm}
For $x = (i_0,j_0) \in \G_n$, the following diagram commutes.
\vskip9pt
\centerline{\epsfbox{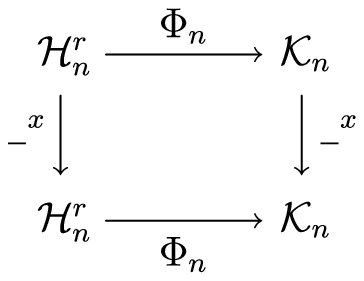}}
\vskip3pt
\end{proposition}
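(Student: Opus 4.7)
The plan is to verify that $\Phi_n \circ \_^x = \_^x \circ \Phi_n$ on objects and on the elementary morphisms of $\H^r_n$. Since both $\_^x:\H^r_n \to \H^r_n$ (by Proposition \ref{H-push/thm}) and $\_^x:\K_n \to \K_n$ (by Lemma \ref{K-push/thm}) are monoidal functors, and $\Phi_n$ is monoidal by Theorem \ref{phi/thm}, both compositions are monoidal. Hence once the identity is checked on objects and on a generating set of morphisms, it extends to all of $\H^r_n$ by formal propagation over products and compositions.

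On objects, by definition $\Phi_n(H_\pi) = I_\pi$ and both $\_^x$ operations send a sequence $\pi$ to $\pi^x$ (the sequence obtained by replacing every occurrence of $i_0$ by $j_0$, cf. Proposition \ref{red-groupoid/thm} and Lemma \ref{K-push/thm}), so $\Phi_n(H_\pi^x) = I_{\pi^x} = (\Phi_n H_\pi)^x$. The case $i_0 = j_0$ of the whole statement is trivial, since on each side $\_^x$ is the identity functor; so throughout the remainder of the proof I will assume $i_0 \neq j_0$.

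I would then proceed case by case through the elementary morphisms $\eta_i,\, m_{g,h},\, \epsilon_g,\, L_g,\, l_i,\, v_g,\, \Delta_g,\, S_g,\, \bar S_g,\, \gamma_{g,h},\, \bar\gamma_{g,h}$ (the case of $\sigma_{i,j}$ follows automatically from the other cases via the defining relation \(r6)). For $\eta_i, m_{g,h}, \epsilon_g, L_g, l_i, v_g$, the algebraic definition of $\_^x$ is a pure relabeling on indices; in parallel, the Kirby diagrams of these morphisms (Figure \ref{phi01/fig}) contain no crossings or 1-handle polarizations that can be affected by the push-through procedure of Lemma \ref{K-push/thm}, so both sides produce the same relabeled diagram. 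For $\Delta_g$ the content lies in the case $i \neq i_0 = j$: on the algebraic side $(\Delta_g)^x = \mu_{g^x,g^x}^{-1} \circ \Delta_{g^x}$, whose image under $\Phi_n$ is the standard comultiplication Kirby diagram with an inserted copairing $\Phi_n(\sigma_{j_0,j_0})$ (as in Figure \ref{phi03/fig}); on the topological side one must reverse the 1-handle polarization exactly as in Figure \ref{kirby-stab12/fig}, which after a 1-handle twist produces the same diagram. For $S_g$ and $\bar S_g$ the relevant cases are $i \neq i_0 = j$ and $i = i_0 \neq j$ respectively, and the algebraic formulas $\bar S_{g^x} \circ v_{g^x}^2$ and $S_{g^x} \circ v_{g^x}^{-2}$ translate under $\Phi_n$ into precisely the framed arcs with $\pm 2$ full twists that arise topologically in Figure \ref{kirby-stab10/fig}. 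For $\gamma_{g,h}$ and $\bar\gamma_{g,h}$, one checks the four subcases determined by which of $i,j,k,l$ equal $i_0$; in each case the algebraic formula of Definition \ref{Fx/def}, via Figure \ref{natural-xi05/fig} and Proposition \ref{rho/thm}, maps under $\Phi_n$ to the topological diagram obtained by the crossing changes described in Figure \ref{kirby-stab11/fig}, the key ingredient being that $\Phi_n(\rho_{k,g}) = \Phi_n(\rho_{g,k})$ realizes Lyubashenko's copairing, which is exactly what the adjoint $\_^x$ on $\K_n$ inserts when an essential crossing change is performed (cf. the discussion at page \pageref{r9/rem}).

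The main obstacle is the braiding case: there are many subcases according to the distribution of the indices $i,j,k,l$ relative to $i_0$, and the algebraic diagrams on the righthand sides of Figure \ref{adjoint12/fig} involve pairs of adjoint actions together with the $\rho$ morphisms and copairings, so each subcase requires a careful comparison between an algebraic formula that, after applying $\Phi_n$, simplifies via 1/2-handle cancelations, 2-handle slides, and the identification of Lyubashenko's copairing, against the topological prescription of essential crossing changes in Figure \ref{kirby-stab11/fig}. Once this checking is completed for $\gamma_{g,h}$, the case of $\bar\gamma_{g,h}$ follows by inverting, and Proposition \ref{xinat-kirby/thm} already ensures the compatibility of the natural transformations $\xi^{x,y}$ under $\Phi_n$, closing the argument.
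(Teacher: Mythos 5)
Your proposal is correct and follows essentially the same route as the paper: reduce via monoidality/functoriality to the elementary morphisms, observe that most cases are pure relabelings, check $\Delta$, $S$, $\bar S$ against Definition \ref{Fx/def} and Figures \ref{kirby-stab10/fig}, \ref{kirby-stab12/fig}, and handle the braidings by noting that the copairings in $F^x$ realize under $\Phi_n$ exactly the pulling of the $i_0$-labeled strands above the $j_0$-labeled ones (Figures \ref{h-push07/fig}, \ref{kirby-stab11/fig}). The only cosmetic blemishes are the loosely stated identity between $\Phi_n(\rho_{k,g})$ and $\Phi_n(\rho_{g,k})$ (these morphisms have different targets) and the superfluous appeal to Proposition \ref{xinat-kirby/thm}, neither of which affects the argument.
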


\begin{proof}
We remind that both functors $\_^x: \K_n \to \K_n$ and $\_^x: \H^r_n \to \H^r_n$
are the identities when $i_0 = j_0$. Moreover, when $i_0 \neq j_0$, they still leave
unchanged diagrams and Kirby tangles where no label $i_0$ occurs, while they just replace
any label $i_0$ with $j_0$ in the diagrams and Kirby tangles where no label $j_0$ occurs.

This shows that $\Phi_n(F^x) = \Phi_n(F)^x$ when $x = (i_0,i_0)$ or $F\in \H^r_n$ is
an elementary morphism which does not contain at least one of the labels $i_0$ or $j_0$.
Therefore, it remains to check such identity for elementary morphisms $F \in
\H^r_n$ that contain both $i_0$ and $j_0$ with $i_0 \neq j_0$. 

The cases $F = \Delta_{(j_0,i_0)}, S_{(j_0,i_0)}, \bar S_{(i_0,j_0)}$ can be checked
directly by comparing the $\Phi_n(F)^x$'s in Figures \ref{kirby-stab10/fig} and
\ref{kirby-stab12/fig} with the definitions of $(\Delta_{(j_0,i_0)})^x$,
$(S_{(j_0,i_0)})^x$ and $(\bar S_{(i_0,j_0)})^x$ (see Definition \ref{Fx/def} and Figure
\ref{adjoint11/fig}).

When $F = \gamma_{(i,j),(k,l)}$ or $F = \bar\gamma_{(k,l),(i,j)}$ with some of $k,l$ equal
to $i_0$, we observe that the copairings which appear in $F^x$ (see Definition
\ref{Fx/def} and Figure \ref{adjoint12/fig}) have the effect of pulling the paths labeled
$i_0$ above the ones labeled $j_0$ in $\Phi_n(F)$ (cf. Figure \ref{phi08/fig}), obtaining
this way $\Phi_n(F)^x$. For example, Figure \ref{kirby-stab11/fig} depicts the
$\Phi_n(F)^x$'s corresponding to the $F^x$'s on the left of the arrows in Figure
\ref{h-push07/fig}.

In all the other cases, no corrections occur in Definition \ref{Fx/def} of $F^x$ and also
$\Phi_n(F)^x$ differs from $\Phi_n(F)$ only for the replacement of labels $i_0$ with 
$j_0$, since $\Phi_n(F)$ admits a strictly regular planar diagram where the paths labeled 
$i_0$ pass over the ones of label $j_0$.
\end{proof}

\subsection{The stabilization and reduction functors $\up_X$ and $\down_X$%
\label{reduction/sec}}

Based on the results of the previous section, we are now ready to define the reduction
functors in the context of the universal algebraic categories. This will be done in a way
completely analogous to what we did in Section \ref{K/sec} for Kirby tangles, even if we
will continue to work in the more general framework of Hopf algebras over an arbitrary
groupoid.

We remind that to any (small) category we can associate an oriented graph whose vertices
are the objects of the category and whose edges are its morphisms/arrows. We will use the
same notation for the category and its graph.

\begin{definition}\label{spanning/def}
Let $\G \subset \G'$ be a (strict) full inclusion of groupoids such that\break the
quotient $\G'/\G$ of the graph of $\G'$ by the one of $\G$ is connected. Then a sequence
$X= (x_n, \dots, x_1) \in \seq(\G' - \G)$ will be called a {\sl spanning sequence} for the
pair $(\G',\G)$ if its image $(x_n, \dots, x_1) / \G \subset \G'/\G$ forms a spanning tree
for $\G/\G'$ with all edges oriented towards the root and $(x_i, \dots, x_1) / \G \subset
\G'/\G$ is still a tree for each $i < n$.
\end{definition}

Given a full inclusion of groupoids $\G \subset \G'$ and a spanning sequence $X$ for
$(\G',\G)$, let $\Upsilon_X = \Upsilon_\iota: \H^r(\G)\to\H^r(\G')$ denote the faithful
functor induced by the inclusion $\iota: \G \to \G'$ (cf. Proposition 
\ref{formalext/thm}).

\begin{proposition}\label{H-stabilization0/thm}
Let $\G \subset \G'$ be a full inclusion of groupoids and $X = (x_n, \dots, x_1)$ be a
spanning sequence for $(\G', \G)$. Then, the map:
\vskip-4pt
$$\up_X H_\pi = H_X \diam H_\pi \text{ for any } H_\pi \in \Obj \H^r(\G)\,,$$
$$\up_X F = \id_{X} \diam \Upsilon_X(F) \text{ for any} F \in \Mor \H^r(\G)\,,$$
\vskip4pt\noindent
defines a functor $\up_X:\H^r(\G) \to \H^r(\G')$, called the $X\!$-stabilization functor.
Moreover, if $\G'\subset \G''$ is another full inclusion of groupoids and $Y = (y_m,
\dots, y_1)$ is a spanning sequence for $(\G'', \G')$, then $\up_Y \circ \up_X = \up_{Y
\cup X}$, where $Y \cup X = (y_m, \dots, y_1, x_n, \dots, x_1)$ is a spanning sequence for
$(\G'',\G)$.
\end{proposition}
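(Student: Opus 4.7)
The plan is to recognise $\up_X$ as the composition of two known functors and then verify the composition identity by direct computation. Explicitly, I would factor $\up_X$ as $L_X \circ \Upsilon_X$, where $\Upsilon_X : \H^r(\G) \to \H^r(\G')$ is the faithful inclusion functor supplied by Proposition \ref{formalext/thm} (applied to the inclusion $\iota : \G \hookrightarrow \G'$, which is injective on objects because the inclusion is full) and $L_X : \H^r(\G') \to \H^r(\G')$ is the ``tensor on the left with $H_X$'' map, sending $H_\pi \mapsto H_X \diam H_\pi$ and $F \mapsto \id_X \diam F$. Note that the spanning sequence hypothesis plays no role in establishing functoriality itself; it only guarantees that $H_X \in \Obj \H^r(\G')$ makes sense (which is automatic from $X \in \seq\G'$) and will matter later when one defines the associated reduction functors.

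Next I would check that $L_X$ is a functor. This reduces to the two identities $\id_X \diam \id_\pi = \id_{X \diam \pi}$ and $(\id_X \diam F_2) \circ (\id_X \diam F_1) = \id_X \diam (F_2 \circ F_1)$, both of which are immediate consequences of the strict monoidal structure on $\H^r(\G')$ (they are the bifunctoriality axioms for $\diam$ applied to the trivial morphism $\id_{\id_X}$, and they hold by construction in the categories $C(S,E,R)$ of Proposition \ref{presentation/thm}). The functoriality of $\up_X = L_X \circ \Upsilon_X$ is then automatic.

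For the composition identity $\up_Y \circ \up_X = \up_{Y \cup X}$, I would first verify that $Y \cup X = (y_m,\dots,y_1,x_n,\dots,x_1)$ is indeed a spanning sequence for $(\G'',\G)$. The quotient $\G''/\G$ collapses onto $\G''/\G'$, and under this collapse the image of $(Y \cup X)/\G$ becomes $Y/\G'$, which is a spanning tree of $\G''/\G'$ by hypothesis; the fibre over the root vertex of $\G''/\G'$ is $\G'/\G$, which is spanned by the tree $X/\G$. Gluing gives a spanning tree of $\G''/\G$ with all edges oriented towards the root, and each initial segment of $Y \cup X$ yields a subtree because this holds separately for initial segments of $X$ and, after passing to the quotient, for initial segments of $Y$ attached to the already-established tree $X/\G$. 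The identity itself then follows by a direct computation: on objects $\up_Y(\up_X H_\pi) = H_Y \diam H_X \diam H_\pi = H_{Y \cup X} \diam H_\pi$, and on morphisms
$$\up_Y(\up_X F) = \id_Y \diam \Upsilon_Y(\id_X \diam \Upsilon_X(F)) = \id_Y \diam \id_X \diam \Upsilon_{Y \cup X}(F) = \up_{Y \cup X}(F),$$
where the middle step uses that $\Upsilon_Y$ fixes $\id_X$ (since $X \in \seq\G'$) and that $\Upsilon_Y \circ \Upsilon_X = \Upsilon_{Y \cup X}$ because both functors are induced by the composable inclusions of groupoids $\G \hookrightarrow \G' \hookrightarrow \G''$.

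I do not anticipate any real obstacle: the statement is essentially a compatibility between two very transparent constructions, and everything reduces either to bifunctoriality of $\diam$ or to the functoriality of the assignment $\phi \mapsto \Upsilon_\phi$ already established in Proposition \ref{formalext/thm}. The only mildly delicate point is the combinatorial verification that $Y \cup X$ satisfies the spanning sequence condition, but this is a direct unwinding of definitions.
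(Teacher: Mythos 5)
Your proposal is correct and matches the paper's treatment, which simply records that all statements are straightforward; your factorization $\up_X = L_X \circ \Upsilon_X$, the appeal to strict monoidality for functoriality, and the unwinding of the spanning-sequence condition for $Y \cup X$ are exactly the intended verifications. One negligible quibble: injectivity of $\iota$ on objects comes from its being an inclusion, not from fullness (fullness enters only later, for the reduction functors), but this does not affect the argument.
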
\vskip-\lastskip

\begin{proof}
All statements are straightforward.
\end{proof}

We remind that the definition of the comultiplication can be extended to $\Delta_\pi:
H_\pi \to H_\pi \diam H_\pi$ for any $\pi \in \seq\G$ (see Figure \ref{H-reduction01/fig})
in the following way:
$$
\Delta_\pi = \Delta_{\pi' \diam \pi''} = 
(\id_{\pi'} \diam \gamma_{\pi',\pi''} \diam
\id_{\pi''}) \circ (\Delta_{\pi'} \diam \Delta_{\pi''})\,.
$$

\begin{Figure}[htb]{H-reduction01/fig}
{}{The comultiplication morphism $\Delta_\pi$ in $\H^r(\G)$}
\vskip-9pt
\centerline{\fig{}{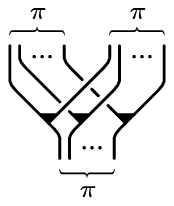}}
\vskip-3pt
\end{Figure}

\begin{definition}\label{H-reducible/def}
Given a full inclusion of groupoids $\G \subset \G'$ and a spanning sequence $X = (x_n,\,
\dots, x_1)$ for $(\G',\G)$, we say that a morphism $F: H_X \diam H_{\pi_0} \to H_X \diam
H_{\pi_1}$ in $\H^r(\G)$ is {\sl $X\!$-reducible} if it is equivalent to one in the form
$$F = (\id_X \diam G) \circ (\Delta_X \diam \id_{\pi_0})\,,$$ for some morphism $G: H_X
\diam H_{\pi_0} \to H_{\pi_1}$ in $\H^r(\G')$ (see Figure \ref{H-reduction02/fig}).

\begin{Figure}[htb]{H-reduction02/fig}
{}{The generic $X$-reducible morphism $F \in \H^r_X(\G')$}
\vskip-3pt
\centerline{\fig{}{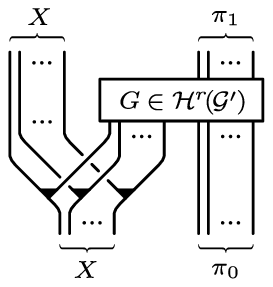}}
\vskip-3pt
\end{Figure}

The composition of two $X$-reducible morphisms is still $X$-reducible (by the
coassociativity) and we denote by $\H^r_X(\G')$ the subcategory of $\H^r(\G')$, whose
objects are $H_X \diam H_\pi$ with $H_\pi \in \Obj \H^r(\G')$ and whose morphisms are
$X$-reducible morphisms.
\end{definition}

\begin{Figure}[b]{H-reduction03/fig}
{}{The product $F \rdiam F'$ of two morphisms in $\H^r_X(\G')$}
\vskip-3pt
\centerline{\fig{}{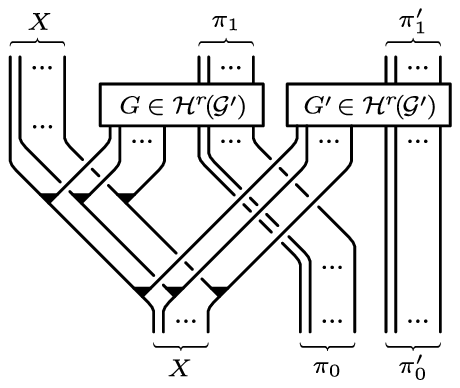}}
\vskip-3pt
\end{Figure}

Analogously to the categories of reducible Kirby tangles and reducible ribbon surface
tangles, also $\Mor_{\H^r_X(\G')}$ can be endowed with a product structure $\rdiam:
\Mor_{\H^r_X(\G')} \times \Mor_{\H^r_X(\G')} \to\Mor_{\H^r_X(\G')}$ as follows. The
product of two morphisms $F = (\id_X \diam G) \circ (\Delta_X \diam \id_{\pi_0})$ and $F'
= (\id_X \diam G') \circ (\Delta_X \diam \id_{\pi'_0})$ of $\H^r_X(\G')$ is defined by
(see Figure \ref{H-reduction03/fig})
\begin{eqnarray*}
F \rdiam F'
&=& F \circ (\id_X \diam \gamma_{\pi'_1,\pi_0}) \circ (F' \diam
\id_{\pi_0}) \circ (\id_X \diam \gamma^{-1}_{\pi'_0,\pi_0})\\
&=& (\id_X \diam G \diam G') \circ (\Delta_X \diam
\gamma_{X,\pi_0} \!\diam \id_{\pi'_0}) \circ (\Delta_X \diam \id_{\pi_0 \diam \pi'_0})\,.
\end{eqnarray*}

The coassociativity property of $\Delta$ implies the associativity of $\,\rdiam\,$, while
$\id_X$ is its unit. Observe also that $\rdiam$ does not define a monoidal structure on
$\H^r_X(\G')$, since it does not intertwine with the composition.

\begin{proposition}\label{H-stabilization/thm}
Given a full inclusion of groupoids $\G \subset \G'$ and a span\-ning sequence $X$ for
$(\G',\G)$, we have $\up_X \H^r(\G) \subset \H^r_X(\G')$. Moreover, for any two morphisms
$F, F'\in \H^r(\G)$, we have $\up_X (F\diam F')=\up_X F\rdiam \up_X F'$. In particular,
$\rdiam$ induces a monoidal structure on $\up_X \H^r(\G)$.
\end{proposition}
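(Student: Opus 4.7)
The plan is to prove the three assertions in order, mirroring the argument used for the Kirby tangle analog in Proposition \ref{K-monoidal-stab/thm}, but replacing the 2-handle cancelation picture by the counit axiom for $\Delta_X$. First I would exhibit an explicit $X$-reducible expression for $\up_X F$. Given any morphism $F:H_{\pi_0}\to H_{\pi_1}$ in $\H^r(\G)$, set
$$G_F = \Upsilon_X(F)\circ(\epsilon_X\diam\id_{\pi_0}): H_X\diam H_{\pi_0}\to H_{\pi_1},$$
where $\epsilon_X = \epsilon_{x_n}\diam\cdots\diam\epsilon_{x_1}$. Then $(\id_X\diam G_F)\circ(\Delta_X\diam\id_{\pi_0}) = (\id_X\diam\Upsilon_X(F))\circ\bigl(((\id_X\diam\epsilon_X)\circ\Delta_X)\diam\id_{\pi_0}\bigr) = \id_X\diam\Upsilon_X(F) = \up_X F$, where the middle equality uses that the counit axioms \(a2-2') in Table \ref{table-Hdefn/fig} propagate from the single generators $x_k$ to the sequence $X$ via the recursive definition of $\Delta_X$ from Figure \ref{H-reduction01/fig} (a straightforward induction on the length of $X$). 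This shows $\up_X\H^r(\G) \subset \H^r_X(\G')$.

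Next I would verify the product identity $\up_X(F\diam F') = \up_X F\rdiam\up_X F'$ by substituting $G_F$ and $G_{F'}$ into the second defining formula for $\rdiam$:
$$\up_X F\rdiam\up_X F' = (\id_X\diam G_F\diam G_{F'})\circ(\Delta_X\diam\gamma_{X,\pi_0}\diam\id_{\pi'_0})\circ(\Delta_X\diam\id_{\pi_0\diam\pi'_0}).$$
The two copies of $\epsilon_X$ hidden inside $G_F$ and $G_{F'}$ absorb, respectively, the duplicate $H_X$ factor created by the outer $\Delta_X$ (via counit, as above) and the $H_X$ factor that has just been braided past $H_{\pi_0}$. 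For the latter I would use the naturality identity $(\id_{\pi_0}\diam\epsilon_X)\circ\gamma_{X,\pi_0} = \epsilon_X\diam\id_{\pi_0}$, which follows from the naturality of $\gamma$ together with $\gamma_{\one,H_{\pi_0}} = \id_{\pi_0}$. After these two cancelations the whole composition collapses to $\id_X\diam\Upsilon_X(F)\diam\Upsilon_X(F') = \id_X\diam\Upsilon_X(F\diam F') = \up_X(F\diam F')$, using that $\Upsilon_X$ is a (strict) monoidal functor.

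Finally, the monoidal structure on $\up_X\H^r(\G)$ is a formal consequence of the preceding identity. Associativity of $\rdiam$ follows from the coassociativity of $\Delta_X$, as already noted between Definition \ref{H-reducible/def} and Figure \ref{H-reduction03/fig}; the morphism $\id_{H_X} = \up_X(\id_{H_\emptyset})$ is a two-sided unit for $\rdiam$ by the counit axiom. Interchange of $\rdiam$ with composition, namely
$$(\up_X F_1\rdiam\up_X F'_1)\circ(\up_X F_2\rdiam\up_X F'_2) = (\up_X F_1\circ\up_X F_2)\rdiam(\up_X F'_1\circ\up_X F'_2),$$
is obtained by rewriting both sides as $\up_X\bigl((F_1\diam F'_1)\circ(F_2\diam F'_2)\bigr) = \up_X\bigl((F_1\circ F_2)\diam(F'_1\circ F'_2)\bigr)$ via the product identity, the functoriality of $\up_X$, and the bifunctoriality of $\diam$ in $\H^r(\G)$. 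Thus $\up_X$ becomes a strict monoidal functor from $(\H^r(\G),\diam)$ onto $(\up_X\H^r(\G),\rdiam)$.

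The main obstacle is the bookkeeping in step two: one must keep track of which of the two copies of $H_X$ produced by the outer $\Delta_X$ is being killed by which counit, and check that the braiding $\gamma_{X,\pi_0}$ is genuinely canceled against a counit rather than generating a correction term. Once this is done correctly, no axioms of $\H^r(\G)$ beyond \(a2-2') and braiding naturality are needed, in agreement with the essentially combinatorial character of the analogous Kirby tangle statement.
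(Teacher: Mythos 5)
Your proposal is correct and follows essentially the same route as the paper: the paper's proof consists precisely of the counit rewriting $\up_X F=(\id_X\diam(\Upsilon_X(F)\circ(\epsilon_X\diam\id_{\pi_0})))\circ(\Delta_X\diam\id_{\pi_0})$ shown in Figure \ref{H-reduction04/fig} (using axioms \(a2-2')), with the product identity and the induced monoidal structure dismissed as straightforward. Your second and third steps simply spell out that straightforward part, correctly, via the counit axioms, braiding naturality, and functoriality of $\up_X$.
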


\begin{proof}
Figure \ref{H-reduction04/fig} shows that the $X$-stabilization of a morphism $F \in
\H^r(\G)$ is $X$-reducible, in other words $\up_X \H^r(\G)$ is a subcategory of
$\H^r_X(\G')$. The rest of the statement is straightforward.
\end{proof}

\begin{Figure}[htb]{H-reduction04/fig}
{}{Stabilizations are reducible
   [{\sl a}/\pageref{table-Hdefn/fig}]}
\vskip-6pt
\centerline{\fig{}{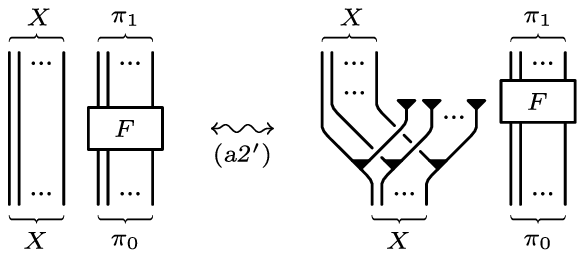}}
\vskip-3pt
\end{Figure}

Our goal is to prove that $\up_X: \H^r(\G) \to \H^r_X(\G')$ is actually an equivalence of
categories. Like we did in Section \ref{K/sec} for Kirby tangles, we will show this by
defining a reduction functor $\down_X: \H^r_X(\G') \to \H^r(\G)$ which, up to natural
equivalence, is the inverse of the stabilization functor. We first consider the case when
the spanning sequence $X$ consists of a single element $x \in \G'$, that is $X = (x)$. In
this case, we use the simplified notations $\H^r_x(\G') = \H^r_{(x)}(\G')$, ${\up_x} = 
{\up_{(x)}}$ and ${\down_x} = {\down_{(x)}}$.

\begin{definition}\label{H-reduction/def}
Let $\G$ be a groupoid. Given $x \in \G(i_0,j_0)$ with $i_0 \neq j_0$, considered as a
spanning sequence for $(\G,\G^{\bs i_0})$, we define the {\sl elementary reduction 
functor} ${\down_x}: \H^r_x (\G) \to \H^r(\G^{\bs i_0})$, by putting
$$\down_x (H_x \diam H_\pi) = H_{\pi^x}$$
for any object $H_x \diam H_\pi$ of $\H^r_x(\G)$, and  (see Figure 
\ref{H-reduction05/fig})
$$\down_x F = (\epsilon_{1_{j_0}} \!\diam \id_{\pi_1^x}) \circ F^x 
\circ (\eta_{j_0} \diam \id_{\pi_0^x}) = G^x \circ (\eta_{j_0} \diam \id_{\pi_0^x})$$
for any morphism $F = (\id_x \diam G) \circ (\Delta_x \diam \id_{\pi_0}): H_x \diam 
H_{\pi_0} \to H_x \diam H_{\pi_1}$. Here, $\_^x: \H^r_x(\G)\to \H^r(\G^{\bs i_0})$ is the 
restriction of the functor defined in Proposition \ref{H-push/thm}.

\begin{Figure}[b]{H-reduction05/fig}
{}{The reduction functor $\down_x$
   [{\sl a}/\pageref{table-Hdefn/fig}]}
\vskip-6pt
\centerline{\fig{}{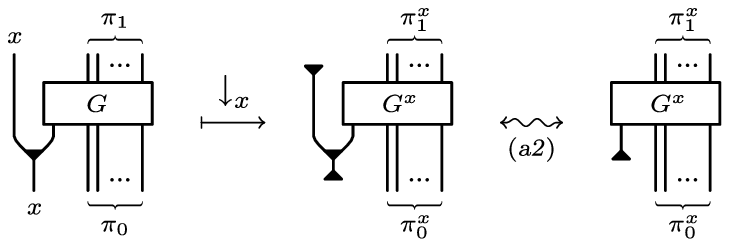}}
\vskip-3pt
\end{Figure}

Given a full inclusion $\G \subset \G'$ of groupoids and a spanning sequence $X = (x_n,
\dots, x_1)$ for $(\G',\G)$, the {\sl reduction functor} ${\down_X}: \H^r_X(\G') \to
\H^r(\G)$ is defined as the composition ${\down_X} = {\down_{x_1}} \!\circ \dots \circ
{\down_{x_n}}$ of elementary reduction functors. Observe that the composition is 
well-defined, since if $x_n \in \G(i_n,j_n)$ then $(x_{n-1}, \dots, x_1)$ forms a spanning 
tree for $(\G')^{\bs i_n}$.
\end{definition}

\begin{lemma}\label{H-reduction1/thm}
For any $x \in \G(i_0,j_0)$, the reduction ${\down_x}: \H^r_x(\G) \!\to \H^r(\G^{\bs
i_0})$ is a functor such that ${\down_x} \circ {\up_x} = \id_{\H^r(\G^{\bs i_0})}$,
while ${\up_x} \circ {\down_x} \simeq \id_{\H^r_x(\G)}$ up to the natural equivalence
$\xi^x = \xi^{x,1_{i_0}}$. Therefore, $\down_x$ and $\up_x$ are category equivalences
between $\H^r_x(\G)$ and $\H^r(\G^{\bs i_0})$.
\end{lemma}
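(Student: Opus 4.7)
The proof mirrors Lemma \ref{K-reduction1/thm} in the Kirby tangle setting, carried out now at the algebraic level using the machinery of Section \ref{adjoint/sec}.

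I would first verify functoriality of $\down_x$. Given composable reducible morphisms $F = (\id_x \diam G) \circ (\Delta_x \diam \id_{\pi_0})$ and $F' = (\id_x \diam G') \circ (\Delta_x \diam \id_{\pi_1})$ in $\H^r_x(\G)$, the interchange law together with coassociativity \(a1) rewrites $F' \circ F$ in reducible form with $G'' = G' \circ (\id_x \diam G) \circ (\Delta_x \diam \id_{\pi_0})$. Since $\_^x$ is a functor (Proposition \ref{H-push/thm}) sending $\Delta_x$ to $\Delta_{x^x} = \Delta_{1_{j_0}}$, and since $\Delta_{1_{j_0}} \circ \eta_{j_0} = \eta_{j_0} \diam \eta_{j_0}$ by \(a7), one obtains $\down_x(F' \circ F) = \down_x F' \circ \down_x F$; preservation of identities reduces to \(a2) and \(a8). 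For the identity $\down_x \circ \up_x = \id_{\H^r(\G^{\bs i_0})}$, observe that for $F \in \H^r(\G^{\bs i_0})$ the stabilization $\up_x F = \id_x \diam F$ admits the reducible form $(\id_x \diam (\epsilon_x \diam F)) \circ (\Delta_x \diam \id_{\pi_0})$ via \(a2); since $F^x = F$ and $\pi_i^x = \pi_i$ by Proposition \ref{H-push/thm}(b), and $(\epsilon_x)^x = \epsilon_{1_{j_0}}$, axiom \(a8) gives $\down_x \up_x F = (\epsilon_{1_{j_0}} \circ \eta_{j_0}) \diam F = F$.

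The main step is the naturality of $\xi^x$: for $F = (\id_x \diam G) \circ (\Delta_x \diam \id_{\pi_0})$ with $G: H_x \diam H_{\pi_0} \to H_{\pi_1}$, I need to show
\begin{equation*}
F \circ \xi^x_{\pi_0} \;=\; \xi^x_{\pi_1} \circ (\id_x \diam \down_x F).
\end{equation*}
Starting from the right-hand side and using $\down_x F = G^x \circ (\eta_{j_0} \diam \id_{\pi_0^x})$, I would apply the naturality identity \(q15) from Proposition \ref{H-push/thm} to $G$ viewed as a morphism $H_{(x,\pi_0)} \to H_{\pi_1}$, with $y = 1_{i_0}$ (so $G^{1_{i_0}} = G$). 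This yields $\xi^x_{\pi_1} \circ (\id_x \diam G^x) = (\id_x \diam G) \circ \xi^x_{(x,\pi_0)}$. It then suffices to establish the key identity
\begin{equation*}
\xi^x_{(x,\pi_0)} \circ (\id_x \diam \eta_{j_0} \diam \id_{\pi_0^x}) \;=\; (\Delta_x \diam \id_{\pi_0}) \circ \xi^x_{\pi_0},
\end{equation*}
since substituting this in reduces the right-hand side to $(\id_x \diam G) \circ (\Delta_x \diam \id_{\pi_0}) \circ \xi^x_{\pi_0} = F \circ \xi^x_{\pi_0}$.

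The main technical obstacle is this last identity. Its proof is a direct computation based on the recursive formula \(q2) for $\alpha^x_{(x,\pi_0)}$, which expands as a composition involving $\alpha^x_x$, $\alpha^x_{\pi_0}$, the internal braiding $\gamma_{x,1_{j_0}}$, and $\Delta_x$. Since $x^x = 1_{j_0}$, Definition \ref{alpha/def} gives $\alpha^x_x = m_{x,1_{j_0}}$, and the unit axiom \(a4) yields $m_{x,1_{j_0}} \circ (\id_x \diam \eta_{j_0}) = \id_x$. One pushes the unit $\eta_{j_0}$ through $\gamma_{x,1_{j_0}}$ by naturality of the braiding, absorbs it into $\alpha^x_x$ via \(a4), and rebrackets the resulting triple comultiplication using coassociativity \(a1); the outcome is exactly $(\Delta_x \diam \id_{\pi_0}) \circ \xi^x_{\pi_0}$. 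With this identity in hand the natural equivalence $\xi^x$ is proved, and the category-equivalence assertion follows from Proposition \ref{cat-equiv/thm}.
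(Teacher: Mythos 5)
Your proof is correct and follows essentially the same route as the paper's: functoriality of $\down_x$ from the functoriality of $\_^x$ together with the unit/counit/comultiplication identities, the equality $\down_x \circ \up_x = \id$ from Proposition \ref{H-push/thm}\,\(b) and \(a8), and the natural equivalence from the intertwining property \(q15) of $\xi^x$ combined with Hopf-algebra moves. The only difference is presentational: where the paper verifies the naturality square diagrammatically (Figure \ref{H-reduction06/fig}), you carry out the same manipulation in formulas, isolating the unit-absorption identity $\xi^x_{(x)\diam\pi_0} \circ (\id_x \diam \eta_{j_0} \diam \id_{\pi_0^x}) = (\Delta_x \diam \id_{\pi_0}) \circ \xi^x_{\pi_0}$, which indeed follows from \(q2), \(a4), naturality of the braiding and coassociativity as you indicate.
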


\begin{proof}
That $\down_x$ is well-defined functor follows from the identity $$(\epsilon_{1_{j_0}}
\!\diam \id_{j_0} \!\diam \id_{j_0}) \circ (\Delta_{1_{j_0}} \!\diam \id_{j_0}) \!\circ
\Delta_{1_{j_0}} \!\circ \eta_{j_0}=\eta_{j_0} \!\diam \eta_{j_0}$$ and from the
functoriality of the map $\_^x$.

Looking at Figure \ref{H-reduction05/fig}, we see that $\down_x \circ \up_x =
\id_{\H^r(\G^{\bs i_0})}$. Indeed, if the left\-most diagram in the figure comes from the
stabilization of a morphism in $\H^r(\G^{\bs i_0})$ as in Figure \ref{H-reduction04/fig},
we have ${\pi_0}^x = \pi_0$, ${\pi_1}^x = \pi_1$ and $G^x = G$ by Theorem \ref{H-push/thm}
\(b). Hence, we end up with the rightmost diagram that represents $(\epsilon_{1_{j_0}}
\circ \eta_{j_0}) \diam G = G$ by \(a8) in Table \ref{table-Hdefn/fig}. Finally, Figure
\ref{H-reduction06/fig} shows that $\xi^x$ gives a natural equivalence $\up_x \circ\down_x
\simeq \id_{\H^r_x(\G)}$.
\end{proof}

\begin{Figure}[htb]{H-reduction06/fig}
{}{The natural equivalence $\xi^x: \up_x \circ \down_x \simeq \id_{\H^r_x(\G)}$
   ($x \in \G(i_0,j_0))$\break
   [{\sl a}/\pageref{table-Hdefn/fig},
    {\sl s}/\pageref{table-Hdefn/fig}-\pageref{table-Hprop/fig},
    {\sl q}/\pageref{table-adjointdefn/fig}-\pageref{table-Fx/fig}]}
\vskip-6pt
\centerline{\fig{}{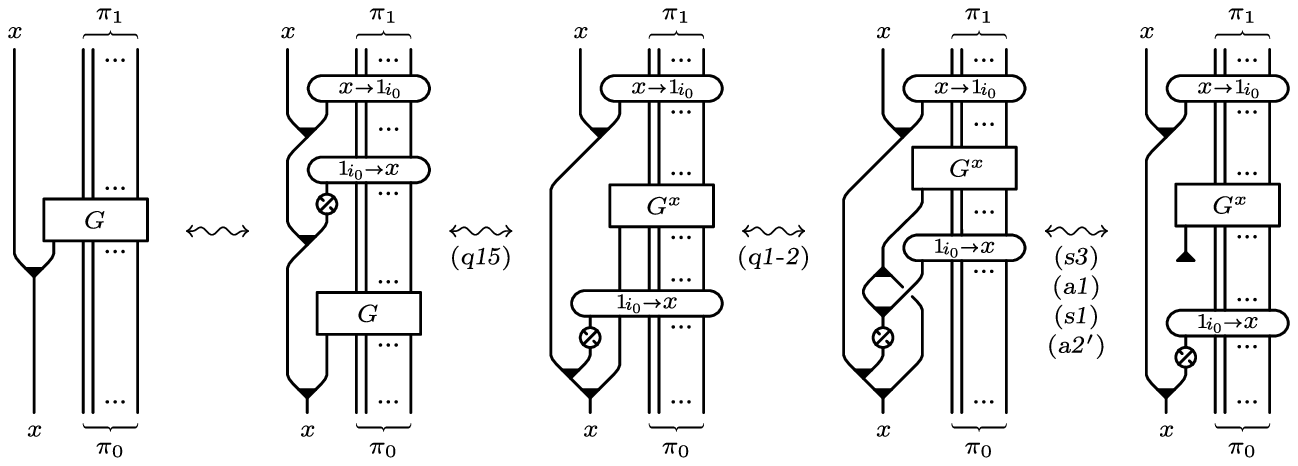}}
\vskip-3pt
\end{Figure}

\begin{proposition}\label{H-reduction/thm}
Given a full inclusion of groupoids $\G \subset \G'$ and a spanning sequence $X = (x_n,
\dots, x_1)$ for $(\G',\G)$, the reduction ${\down_X}: \H^r_X(\G') \to \H^r(\G)$ is a
functor such that ${\down_X} \circ {\up_X} = \id_{\H^r(\G)}$, while there is a natural
equivalence $\xi^X: {\up_X} \circ {\down_X} \simeq \id_{\H^r_X(\G)}$, inductively defined
by $\xi^{(x_1)} = \xi^{x_1}$ and $$\xi^X = \xi^{x_n} \circ (\id_{x_n} \diam \xi^Y)$$ with
$Y = (x_{n-1}, \dots, x_1)$. Therefore, $\down_X$ and $\up_X$ are category equivalences
between $\H^r_X(\G')$ and $\H^r(\G)$.
\end{proposition}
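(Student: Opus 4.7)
The plan is to prove the statement by induction on the length $n$ of the spanning sequence $X = (x_n, \dots, x_1)$. The base case $n = 1$ is exactly Lemma \ref{H-reduction1/thm}, which establishes the result for a single-element spanning sequence via the natural equivalence $\xi^{x_1}$ built from the adjoint morphisms of Section \ref{adjoint/sec}.

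For the inductive step, set $Y = (x_{n-1}, \dots, x_1)$ and note that if $x_n \in \G'(i_n, j_n)$, then $Y$ is a spanning sequence for $(\G'', \G)$ where $\G'' = (\G')^{\bs i_n}$, while $(x_n)$ is a spanning sequence for $(\G', \G'')$. By Proposition \ref{H-stabilization0/thm}, $\up_X = \up_{x_n} \circ \up_Y$, and directly from Definition \ref{H-reduction/def}, $\down_X = \down_Y \circ \down_{x_n}$. Combining the inductive hypothesis for $Y$ with Lemma \ref{H-reduction1/thm} applied to $x_n$, I obtain
$$\down_X \circ \up_X = \down_Y \circ \down_{x_n} \circ \up_{x_n} \circ \up_Y = \down_Y \circ \up_Y = \id_{\H^r(\G)}.$$

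For the natural equivalence, the inductive hypothesis gives $\xi^Y: \up_Y \circ \down_Y \simeq \id_{\H^r_Y(\G'')}$, and Lemma \ref{H-reduction1/thm} gives $\xi^{x_n}: \up_{x_n} \circ \down_{x_n} \simeq \id_{\H^r_{x_n}(\G')}$. For any object $H_X \diam H_\pi$ in $\H^r_X(\G')$, I can write $\up_X \down_X(H_X \diam H_\pi) = H_{x_n} \diam (\up_Y \down_Y \down_{x_n}(H_X \diam H_\pi))$, so whiskering gives a natural equivalence $\id_{x_n} \diam \xi^Y: \up_X \circ \down_X \simeq \up_{x_n} \circ \down_{x_n}$ restricted to the subcategory $\H^r_X(\G') \subset \H^r_{x_n}(\G')$. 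Composing with $\xi^{x_n}$ then yields the required $\xi^X = \xi^{x_n} \circ (\id_{x_n} \diam \xi^Y): \up_X \circ \down_X \simeq \id_{\H^r_X(\G')}$. The final conclusion that $\up_X$ and $\down_X$ are category equivalences is immediate from the existence of these two relations.

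I do not anticipate a serious obstacle here: the proof is a mechanical induction once Lemma \ref{H-reduction1/thm} and the monoidal compatibility $\up_Y \circ \up_X = \up_{Y \cup X}$ of Proposition \ref{H-stabilization0/thm} are in place. The mildest care is needed to check that $\id_{x_n} \diam \xi^Y$ defines a natural transformation between the restricted functors on the $X$-reducible subcategory (rather than on the full category), but this is automatic since $\xi^Y$ is itself built from adjoint morphisms acting on the ribbons of $Y$, which are present in any $X$-reducible morphism by construction, and whiskering with $\id_{x_n}$ just attaches an extra passive ribbon on the left. The nontrivial algebraic content has already been absorbed into the base case through the adjoint-morphism machinery and properties \(q12\)–\(q13\).
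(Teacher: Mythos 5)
Your proof is correct and follows essentially the same route as the paper: induction on the length of $X$, using the decompositions $\up_X = \up_{x_n} \circ \up_Y$ and $\down_X = \down_Y \circ \down_{x_n}$ together with the base case Lemma \ref{H-reduction1/thm}, and building $\xi^X$ as $\xi^{x_n} \circ (\id_{x_n} \diam \xi^Y)$. Your extra remark about checking naturality of the whiskered transformation on the $X$-reducible subcategory matches the (implicit) care taken in the paper's argument.
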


\begin{proof}
We proceed by induction on $n$. For $n = 1$ the statement follows from the previous lemma.
For $n > 1$, we have ${\up_X} = {\up_{x_n}} \circ {\up_Y}$ and ${\down_X} = {\down_Y}
\circ {\down_{x_n}}$ with $Y = (x_1, \dots, x_{n-1})$. Then, taking into account that
$\up_X \H^r(\G) \subset \H^r_X(\G')$, by the induction hypothesis we have $${\down_X}
\circ {\up_X} = {\down_Y} \circ {\down_{x_n}} \circ {\up_{x_n}} \circ {\up_Y} = {\down_Y}
\circ {\up_Y} = \id_{\H^r(\G)}\,.$$ 

\pagebreak

\noindent
Moreover, for any $F \in \H^r_X(\G)$ we can
write $\up_X \down_X F = \id_{x_n} \diam (\up_Y \down_Y (\down_{x_n} F))$, which induces a
natural equivalence $\id_{x_n} \diam \xi^Y: {\up_X} \circ {\down_X} \simeq {\up_{x_n}}
\circ {\down_{x_n}}$. Then, by composing with $\xi^{x_n}: {\up_{x_n}} \circ {\down_{x_n}}
\simeq \id_{\H^r_{x_n}(\G)}$, we get the natural equivalence $\xi^X: {\up_X} \circ
{\down_X} \simeq \id_{\H^r_X(\G)}$.
\end{proof}

Next proposition specializes the results above to the case when $\G = \G_k$, $\G' = \G_n$
and $X = \pi_{n \red k} = ((n,n-1), \dots, (k+1,k))$, with $n > k \geq 1$. In this case,
we will use a notation analogous to the one introduced in the context of the categories of
Kirby tangles. In particular, we put $\up_k^n=\up_{\pi_{n \red k}}$,
$\down_k^n=\down_{\pi_{n \red k}}$, $\xi^{n \red k}=\xi^{\pi_{n \red k}}$ and $\H^r_{n
\red k}=\H^r_{\pi_{n \red k}}(\G_n)$. Then we have the stabilization and reduction
functors:
$$\up_k^n = \up_{n-1}^n \circ \dots \circ \up_k^{k+1}: \H_k^r \to \H_n^r \ \text{ and }
\ \down_k^n = \down_k^{k+1} \circ \dots \circ \down_{n-1}^n: \H_{n \red k}^r \to 
\H_k^r\,.$$

\begin{proposition}\label{Hn-reduction/thm}
For any $n >k \geq 1$, the reduction $\down_k^n: \H^r_{n \red k} \to \H^r_k$ and
the restriction of the stabilization $\up_k^n: \H^r_k \to \H^r_{n \red k}$ are
category equivalences such that $\down_k^n \circ \up_k^n = \id_{\H^r_k}$ and $\xi^{n \red
k}: \up_k^n \circ \down_k^n \simeq \id_{\H^r_{n \red k}}$ is a natural equivalence.
Moreover, we have the following commutative diagrams.
\vskip9pt
\centerline{\epsfbox{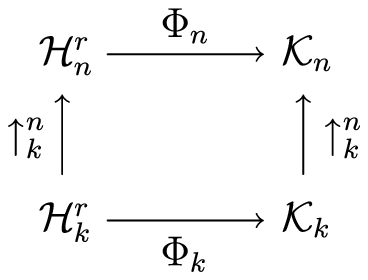}\kern15mm\epsfbox{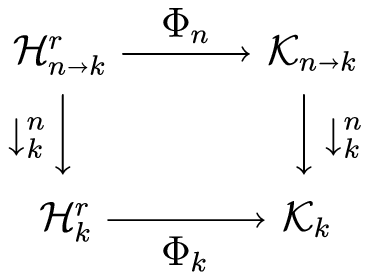}}
\vskip-18pt
\end{proposition}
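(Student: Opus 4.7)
The first three assertions follow as a direct specialization of Proposition \ref{H-reduction/thm}, so the essential task is to verify the hypotheses and then to establish the two commutative diagrams. I plan to proceed as follows.

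First I would check that $\pi_{n\red k} = ((n,n-1),\dots,(k+1,k))$ really is a spanning sequence for the pair $(\G_n,\G_k)$ in the sense of Definition \ref{spanning/def}. The inclusion $\G_k \subset \G_n$ is full because $\G_n(i,j) = \{(i,j)\} = \G_k(i,j)$ whenever $i,j \leq k$, and the quotient graph $\G_n/\G_k$ is connected with objects represented by $k,k+1,\dots,n$ (identifying all objects of $\G_k$ to a single root $k$). The edges $(n,n-1),\dots,(k+1,k)$ form a directed path from $n$ down to the root $k$, each of its initial segments $(x_i,\dots,x_1)$ is a subtree, and all edges point toward the root. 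Once this is in place, parts 1, 2, and 3 of the statement are immediate consequences of Proposition \ref{H-reduction/thm} applied to this spanning sequence, using the iterated definitions $\up_k^n = \up_{n-1}^n \circ \dots \circ \up_k^{k+1}$, $\down_k^n = \down_k^{k+1} \circ \dots \circ \down_{n-1}^n$, and $\xi^{n\red k} = \xi^{(n,n-1)} \circ (\id_{(n,n-1)} \diam \xi^{(n-1)\red k})$.

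For the two commutative diagrams (the compatibility of $\Phi$ with stabilization and with reduction), my plan is to argue by induction on $n-k$, reducing both statements to the elementary case of $\up_{n-1}^n$ and $\down_{n-1}^n$. For the stabilization diagram, this is essentially tautological: on objects both compositions send $H_\pi$ (resp.\ $I_\pi$) to $I_{\pi_{n\red k}} \diam I_\pi$, and on morphisms $\up_k^n F = \id_{\pi_{n\red k}} \diam \Upsilon_X(F)$ by definition, so one only has to observe that $\Phi_n$ restricted to the image of the inclusion functor $\Upsilon: \H^r_k \to \H^r_n$ coincides with $\iota_k^n \circ \Phi_k$, since the elementary morphisms are sent by both routes to the same generalized Kirby tangles described in Figure \ref{phi01/fig} (with the same labels). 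For the reduction diagram, the core input is Proposition \ref{KtoH-push/thm}, which gives $\Phi_n(F^x) = \Phi_n(F)^x$ for any $x \in \G_n$ and $F \in \H^r_n$; combining this with the obvious identifications $\Phi_n(\eta_i) = \eta_i$ and $\Phi_n(\epsilon_{(i,i)}) = \epsilon_{(i,i)}$ (again from Figure \ref{phi01/fig}) and comparing with Definition \ref{K-reduction/def} for the Kirby-tangle reduction yields $\Phi_k(\down_{n-1}^n F) = \down_{n-1}^n \Phi_n(F)$ in the elementary case.

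The main obstacle I anticipate is purely bookkeeping, namely tracking the labels and the $\G_n$-grading through the composition of elementary reductions in the inductive step. Once the elementary case is established, the inductive step amounts to the computation $\Phi_k \circ \down_k^n = \Phi_k \circ \down_k^{n-1} \circ \down_{n-1}^n = \down_k^{n-1} \circ \Phi_{n-1} \circ \down_{n-1}^n = \down_k^{n-1} \circ \down_{n-1}^n \circ \Phi_n = \down_k^n \circ \Phi_n$, provided one has checked that $\down_{n-1}^n$ sends $k$-reducible morphisms to $k$-reducible ones on both the algebraic and topological sides, which is straightforward from coassociativity (Proposition \ref{K-coassociativity/thm} and its algebraic analog). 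The stabilization square is handled analogously, or equivalently deduced from the reduction square via the natural equivalences $\xi^{n\red k}$ in $\H^r_{n\red k}$ and the corresponding $\xi^{n\red k}$ in $\K_{n\red k}$, noting that $\Phi_n$ intertwines them by Proposition \ref{xinat-kirby/thm}.
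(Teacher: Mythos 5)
Your proposal is correct and follows essentially the same route as the paper: the first assertions are obtained by specializing the general groupoid results (Propositions \ref{H-stabilization/thm} and \ref{H-reduction/thm}) to the spanning sequence $\pi_{n\red k}$, and the commutative diagrams come from unwinding the definitions of the stabilization/reduction functors on both sides together with Proposition \ref{KtoH-push/thm}. The paper's proof is just a terser version of the same argument, leaving the elementary-case bookkeeping and the induction on $n-k$ implicit.
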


\begin{proof}
The first part of the statement is just a special case of Propositions 
\ref{H-stabilization/thm} and \ref{H-reduction/thm}. The commutativity of the diagrams 
immediately follows from the definitions of the stabilization and reduction functors on 
both categories and Proposition \ref{KtoH-push/thm}.
\end{proof}

\subsection{The functors $\Psi_n: \S_n \to \H_n^r$%
\label{Psi/sec}}

We remind that in Section \ref{Theta/sec} it was defined a functor $\Theta_n: \S_n \to
\K_n$ from $n$-labeled ribbon surface tangles to $n$-labeled Kirby tangles. The goal of
this section is to show that $\Theta_n$ factorizes through the functor $\Phi_n: \H_n^r \to
\K_n$ introduced in Section~\ref{Phi/sec}. Namely, we will construct a functor $\Psi_n:
\S_n \to \H_n^r$ such that $\Theta_n = \Phi_n \circ \Psi_n$. Moreover, we will show that
the restriction $\Psi_n: \S_n^c \to \H^{r,c}_n$ is well-defined and full for any $n \geq
4$. This fact will allow us to prove the equivalence of the categories $S_n^c$, $\K_n^c$
and $\H^{r,c}_n$ for any $n\geq 4$ and the equivalence between $\K_n^c$ and $\H^{r,c}_n$
for any $n \geq 1$.

First of all, we observe that the objects of $\S_n$ correspond to sequences of
transpositions in $\Gamma_n$, i.e. unordered pairs of different indices, while the objects
of $\H^r_n$ correspond to sequences of morphisms in $\G_n$, i.e. ordered pairs of indices.
Hence, any functor $\S_n \to \H^r_n$ would require a choice of an ordering of the indices.

\begin{theorem}\label{psi/thm} 
For any strict total order $\prec$ on the set of objects of $\,\G_n\!$ with $n \geq
2$, there exists a braided monoidal functor $\Psi_n^\prec: \S_n \to \H^r_n$, such that:
\begin{itemize}
\item[(a)]
$\Psi_n^\prec$ restricts to a monoidal functor $\Psi_n^\prec: \S_{n \to k} \to \H_{n \to 
k}$ for any $1 \leq k < n$;
\item[(b)]
if $\prec'$ is another strict total order on the set of objects of $\,\G_n$, there is a 
natural equivalence $\tau: \Psi_n^\prec \to \Psi_n^{\prec'}$, which is identity on the 
empty set;
\item[(c)]
the following diagram commutes
\vskip9pt
\centerline{\epsfbox{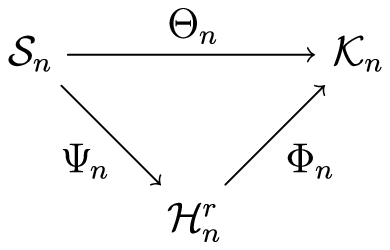}}
\vskip3pt
\noindent
where $\Psi_n = \Psi_n^<$ with $<$ being the natural order on $\Obj\G_n$.
\end{itemize}
\vskip-9pt
\end{theorem}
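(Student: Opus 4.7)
The plan is to exploit the presentation of $\S_n$ provided by Proposition 3.2.1, defining $\Psi_n^\prec$ on the generators of that presentation and then verifying that it respects all the defining relations.

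On objects, given $\sigma = (\tp{i_1}{j_1}, \dots, \tp{i_m}{j_m}) \in \seq\Gamma_n$, I would set $\Psi_n^\prec(J_\sigma) = H_{(a_1,b_1)} \diam \dots \diam H_{(a_m,b_m)}$, where $(a_k,b_k)$ is the pair $\{i_k,j_k\}$ ordered so that $a_k \succ b_k$. On the elementary morphisms of Figure 3.2.2 (the labeled version of Figure 3.1.1), I would assign morphisms of $\H^r_n$ chosen so that $\Phi_n \circ \Psi_n^\prec$ reproduces the generalized Kirby tangles of Figures 3.3.1, 3.3.2 and 3.3.3 that specify $\Theta_n$ on the same generators. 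Concretely: identities go to identities; the coform/form morphisms go to $\Lambda$/$\lambda$; the half-twist morphisms go to powers of the ribbon morphism $v$; crossings go to $\gamma$ or $\bar\gamma$; the half-twist at a uni-valent vertex goes to a composition involving the antipode $S$ (or $\bar S$); and the ribbon intersection \(g) gets sent to the multiplication morphism $m$, possibly conjugated by $S$ or $v$'s to account for the orderings dictated by $\prec$. Property (c) then holds on generators by construction, so it will extend automatically to all morphisms once $\Psi_n^\prec$ is shown to be a well-defined functor.

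The heart of the argument is the verification that $\Psi_n^\prec$ respects the relations \(I1)--\(I22) of Figures 3.1.2--3.1.4 (transported to the labeled setting) and the covering relations \(R1)--\(R2) of Figure 3.2.3. Most isotopy relations follow immediately from the braided structure of $\H^r_n$ (Definition 4.1.1, Proposition 4.1.4, and the autonomous/tortile axioms of Proposition 4.1.4 and Proposition 4.1.8). The graph-changing relations \(I14)--\(I22) reduce to properties of the form, coform, antipode and twist (properties \(f3-3') through \(f11-11')), together with the definition of the copairing. The two covering relations will be the main obstacle: I expect \(R1) to correspond to axioms \(a3) and \(a5) together with properties \(f8), \(q10), \(q11) relating the multiplication to braiding and form, while \(R2) will translate to the crossing-change behaviour between Hopf components with disjoint indices, which is guaranteed by the triviality of $\sigma_{i,j}$ when $i \neq j$ (axiom \(r6)) together with the new ribbon axioms \(r8) and \(r9). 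Each of these verifications will require a case split on how the transpositions labelling the ribbons interact under the ordering $\prec$, and the case where all four indices of \(R1) are distinct but some pair coincides under $\prec$ is where the copairing-driven corrections in Definition 4.4.7 become essential.

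For property (a), I observe that $\Psi_n^\prec$ sends the reduction ribbons $\id_{\sigma_{n \red k}}$ to $\id_{\pi_{n \red k}}$ (up to the choice of $\prec$) and that $\Psi_n^\prec(\Delta_{\sigma_{n \red k}}) = \Delta_{\pi_{n \red k}}$ by the inductive definition of $\Delta_\sigma$ and $\Delta_\pi$ in Sections 3.2 and 4.5 respectively; hence $k$-reducible tangles in $\S_{n \red k}$ are sent to $\pi_{n \red k}$-reducible morphisms in $\H_{n \red k}$. For property (b), given two orders $\prec$ and $\prec'$ that differ by a single adjacent transposition swapping $i$ and $j$, I would define $\tau_{J_\sigma}$ as the product over the components $\tp{i}{j}$ of $\sigma$ of the antipode isomorphism $S_{(i,j)}: H_{(i,j)} \to H_{(j,i)}$ (composed with the appropriate ribbon correction $v^{\pm 1}$ dictated by the comparison of the Kirby tangles); naturality will follow from the antipode axioms \(s3), \(s4), \(s5), \(s6) and their interaction with $v$ via \(r3). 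The general case of two arbitrary orders is then obtained by composing such elementary natural equivalences along a chain of adjacent transpositions, and the coherence of the result in the connected case is automatic because $\tau_{J_\emptyset} = \id_\emptyset$ by construction and the orderings only affect non-empty sequences.
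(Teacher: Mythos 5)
Your route is the same as the paper's: present $\S_n$ by generators and relations (Proposition \ref{Sn-category/thm}), define $\Psi_n^\prec$ on the labeled elementary diagrams so that $\Phi_n \circ \Psi_n^\prec$ reproduces the tangles defining $\Theta_n$, verify the relations, and read off (a) and (c) from the generator images; your candidate for $\tau$ (antipode corrected by the ribbon morphism on the components where the two orders disagree, i.e. $T_{(i,j)} = S_{(i,j)} \circ v_{(i,j)}^{-1}$) is exactly the paper's. The genuine problem is your allocation of axioms to relations. You assert that the graph-changing relations (I14)--(I22) reduce to the autonomous/tortile properties (f3-3') through (f11-11') plus the definition (r6) of the copairing, and you reserve the new ribbon axioms (r8), (r9) essentially for the covering move (R2). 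That step would fail as written: all of (f3)--(f11) and (r6) already hold in $\H^u_v(\G_n)$, where (r8) and (r9) are independent of the remaining axioms (see the remark after Definition \ref{ribbon/def}), so those tools cannot suffice for relations whose images genuinely need the new axioms. And they do need them: (I17) requires (r9) already in the bi-labeled case (there it is precisely a crossing change), and the uni- and tri-labeled instances of (I14-14'), (I17), (I19), (I21), (I22) are exactly where (r8) and (r9) enter, through the derived properties (p10)--(p15) and (t1)--(t9) (Figures \ref{psi03/fig}--\ref{psi12/fig}). Conversely, the covering moves you single out as the main obstacle are the easy part: (R2) is immediate from the definition, and (R1) follows from (t3), (t6) and the propagation of $T$ through form and coform. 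Without recognizing that the hard content sits in the uni-labeled 1-isotopy moves, the verification at the heart of the theorem is missing.

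Smaller points. (R1) involves three labels, not four (it is (R2) that has four), and distinct indices never ``coincide under $\prec$''; the order only decides which member of each pair is larger, i.e. how many cases the verification splits into. For (b), composing elementary equivalences along a chain of adjacent transpositions does yield a natural equivalence that is the identity on the empty sequence, so existence is fine, but the claimed ``automatic coherence'' is neither needed nor true: $T_{\bar g} \circ T_g = \theta_g \circ v_g^{-2} \neq \id_g$ in $\H^r_n$, so only a reduced chain reproduces the paper's $\tau$, which is instead defined by comparing $\prec$ and $\prec'$ directly on each pair. Finally, the paper first establishes the naturality square for $\tau$ on the elementary morphisms and then uses it to check every relation for whichever order is most convenient; adopting this trick would collapse most of the case analysis on $\prec$ that you anticipate.
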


\begin{proof}
Given a strict total order $\prec$ on $\Obj\G_n$, we define inductively $\Psi_n^\prec:
\S_n \to \H^r_n$ on the objects according to the identities:
\vskip-4pt
$$\Psi_n^\prec(J_{\tp{i}{j}}) = H_{(i, j)} \ \text{ for any } i,j \in \Obj\G_n 
  \text{ with } i \succ j\,,$$
$$\Psi_n^\prec(J_{\sigma}) = \Psi_n^\prec(J_{\sigma'}) \diam \Psi_n^\prec(J_{\sigma''})
  \ \text{ for any } \sigma = \sigma' \diam \sigma'' \in \seq\Gamma_n\,.$$
\vskip4pt

Moreover, given another order $\prec'$ on $\Obj \G_n$, we consider the invertible
morphisms $\tau_\sigma: \Psi^\prec(J_\sigma) \to \Psi^{\prec'}(J_\sigma)$ with $\sigma \in
\seq\Gamma_n$ uniquely determined by:
$$
\tau_{(i \,j)} = \left\{%
\begin{array}{ll}
 \id_{(i,j)}: H_{(i,j)} \to H_{(i,j)} 
   & \text{ if } i \succ j \text{ and } i \succ' j\\[2pt]
 T_{(i,j)}: H_{(i,j)} \to H_{(j,i)} 
   & \text{ if } i \succ j \text{ and } j \succ' i
\end{array}\right.,$$
\vskip-4pt
$$\tau_\sigma = \tau_{\sigma'} \diam \tau_{\sigma''}
  \ \text{ for any } \sigma = \sigma' \diam \sigma'' \in \seq\Gamma_n\,.$$

On the morphisms of $\S_n$ we define $\Psi_n^\prec$ as follows. Figures \ref{psi01/fig}
and \ref{psi02/fig} describe the images under $\Psi_n^\prec$ of any labeling of the
elementary morphisms from \(a) to \(g) in Figure \ref{ribbon-morph01/fig}, while the image
of any labeling of the ribbon surface tangle \(g') in the same Figure
\ref{ribbon-morph01/fig} is defined through relation \(I6) in Figure
\ref{ribbon-tang01/fig}.

\begin{Figure}[htb]{psi01/fig}
{}{The functor $\Psi_n^\prec$ -- I ($i \succ j$, $i' \succ j'$)}
\centerline{\ \fig{}{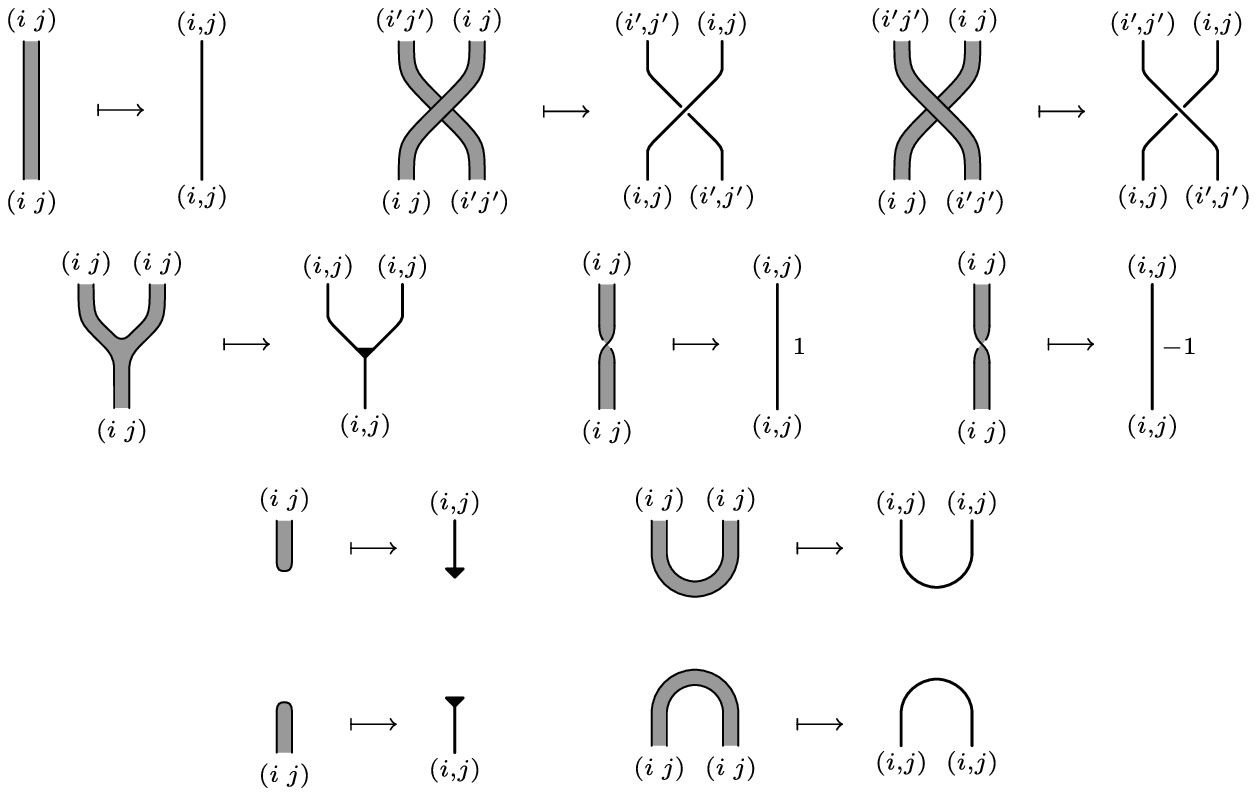}}
\vskip-3pt
\end{Figure}

\begin{Figure}[htb]{psi02/fig}
{}{The functor $\Psi_n^\prec$ -- II
   ($i \succ j \succ k$, $h \succ l$ and $\{i,j\} \cap \{h,l\} = \emptyset$)}
\centerline{\fig{}{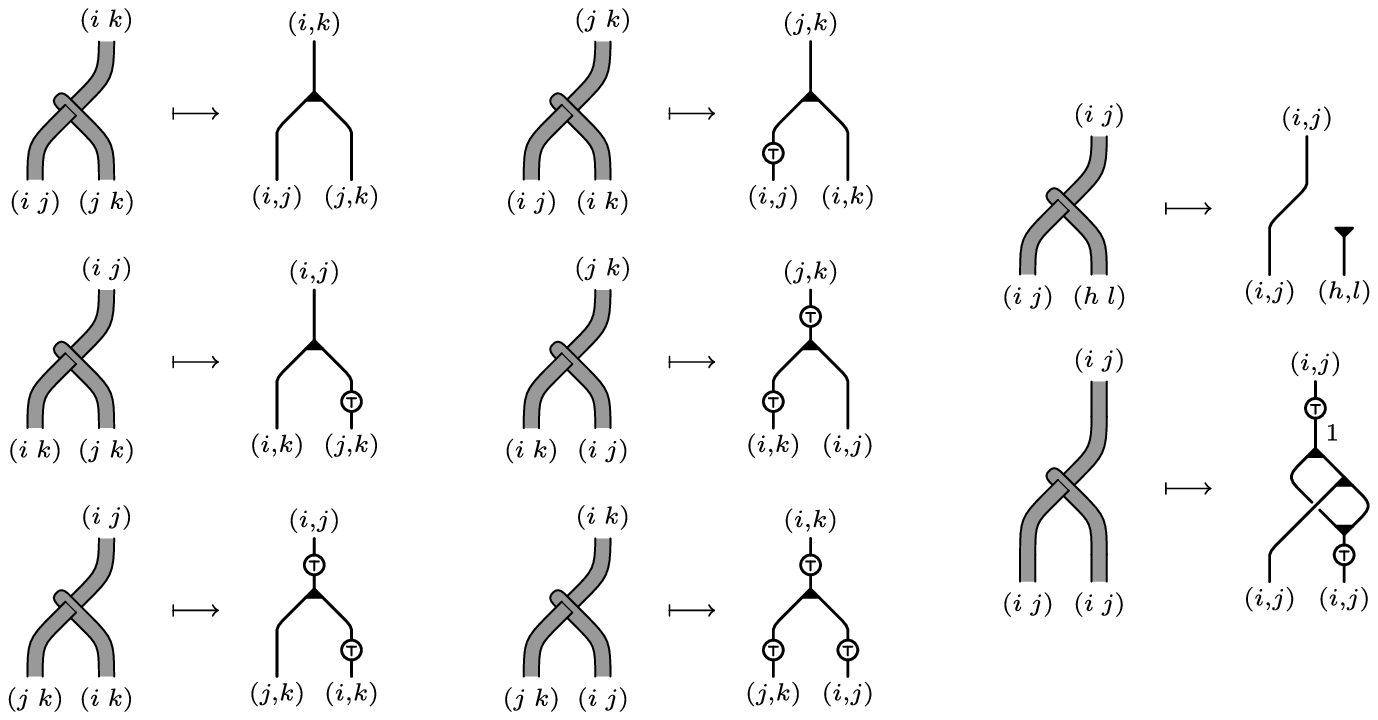}}
\vskip-3pt
\end{Figure}

\pagebreak

Then, we formally propagate this definition over products and compositions of elementary
morphisms. In order to be well-defined on the level of morphisms, such propagation has to
preserve all the defining relations of $\S_n$. Before checking that, we observe that for
any morphism $F: J_{\sigma_0} \to J_{\sigma_1}$ in $\S_n$ represented by a given
composition of products of elementary morphisms, the following diagram commutes.

\vskip12pt
\centerline{\epsfbox{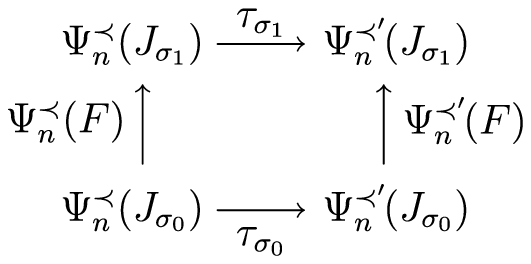}}
\vskip6pt

Indeed, the commutativity in the case when $F$ is a elementary morphism can be easily
derived from the relations \(t1-3-4-9) in Table \ref{table-Hr/fig}, \(s5) in Table
\ref{table-Hprop/fig}, \(i4) and \(f6-7) in Table \ref{table-Hu/fig}, \(r4) in Table
\ref{table-Huvdefn/fig}, \(f9-10) and \(p1) in Table \ref{table-Huvprop/fig}. On the other
hand, the extension to products is trivial and while that to compositions is guaranteed by
the relation \(t3).

Now we show that the formal definition of $\Psi_n^\prec$ on iterated products/compositions
of elementary morphisms actually preserves all the defining relations of $\S_n$, that is
the labeled versions of the 1-isotopy moves in Figures \ref{ribbon-tang01/fig},
\ref{ribbon-tang02/fig}, \ref{ribbon-tang03/fig} and \ref{ribbon-tang04/fig} together with
the two ribbon moves in Figure \ref{ribbon-moves03/fig}. In doing that, by the
commutativity of the above diagram, we can choose the most convenient order $\prec$ for
each single move.

We notice that move \(I6) is trivially preserved, since it was used to define the images
under $\Psi_n^\prec$ of the morphisms on the left side of it. Analogously, \(R2) is
trivially preserved by the definition of $\Psi_n^\prec$.

Most of the other moves essentially rewrite the algebra axioms and the relations of
$\H^r_n$ introduced in Sections \ref{HG/sec} and \ref{HrG/sec}. Namely, we have that:
\(I1), \(I7-7') and \(I8-9) correspond to the braid axioms in Table \ref{table-Hdefn/fig};
\(I18) rewrites the bi-algebra axiom \(a1) in the same Table \ref{table-Hdefn/fig};
\(I2-2'), \(I5) and \(I16) respectively rewrite \(f3-3'), \(f4-4') and \(f1) in Table
\ref{table-Hprop/fig}; \(I3-3') rewrite the bottom-left duality moves in Table
\ref{table-Hu/fig}; \(I4-4') follow from \(f10) in Table \ref{table-Huvprop/fig}; \(I10),
\(I12-12') and \(I13) follow from \(r2), \(r5) and \(p1) in Tables \ref{table-Huvdefn/fig}
and \ref{table-Huvprop/fig}; \(I11) follows from \(f5) and \(t1-3) in Tables
\ref{table-Hu/fig} and \ref{table-Hr/fig}; \(I15) follows from \(t5) in Table
\ref{table-Hr/fig}; \(I20) follows from \(a6), \(s6) and \(r4) in Tables
\ref{table-Hdefn/fig}, \ref{table-Hprop/fig} and \ref{table-Huvdefn/fig}; \(R1) follows
from \(t3) and \(t6) in Table \ref{table-Hr/fig}, taking into account that $T$ propagates
through the form and the coform, due to \(f6-7) in Table \ref{table-Hu/fig} and \(f9-10)
in Table \ref{table-Huvprop/fig}.

\smallskip

Below we indicate how the verification goes for the remaining moves.  

\smallskip

We start with \(I19) in Figure \ref{psi03/fig}. The labeling of this move is unique up to
reordering the indices, while the labelings of other moves present different cases,
depending on the labels at the ribbon intersections. We will call a ribbon intersection
uni-, bi- or tri-labeled according to the number of different labels that occur in it.

\begin{Figure}[htb]{psi03/fig}
{}{Preservation of \(I19) ($i \succ j$) 
   [{\sl a}/\pageref{table-Hdefn/fig}, 
    {\sl f}/\pageref{table-Hprop/fig}-\pageref{table-Hu/fig},
    {\sl r}/\pageref{table-Huvdefn/fig}-\pageref{table-Huvprop/fig}, 
    {\sl s}/\pageref{table-Hdefn/fig}, {\sl t}/\pageref{table-Hr/fig}]}
\centerline{\fig{}{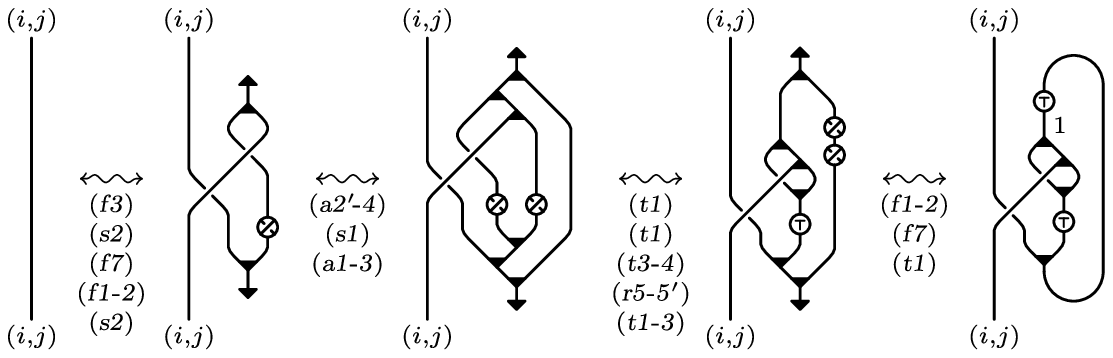}}
\vskip-3pt
\end{Figure}


\begin{Figure}[htb]{psi04/fig}
{}{Preservation of \(I14-14') in the uni-labeled case -- I ($i \succ j$) 
   [{\sl a}/\pageref{table-Hdefn/fig},
    {\sl f}/\pageref{table-Hprop/fig}-\pageref{table-Hu/fig}-\pageref{table-Huvprop/fig}, 
    {\sl r}/\pageref{table-Huvdefn/fig}-\pageref{table-Huvprop/fig}, 
    {\sl s}/\pageref{table-Hdefn/fig}-\pageref{table-Hprop/fig}, 
    {\sl t}/\pageref{table-Hr/fig}]}
\centerline{\fig{}{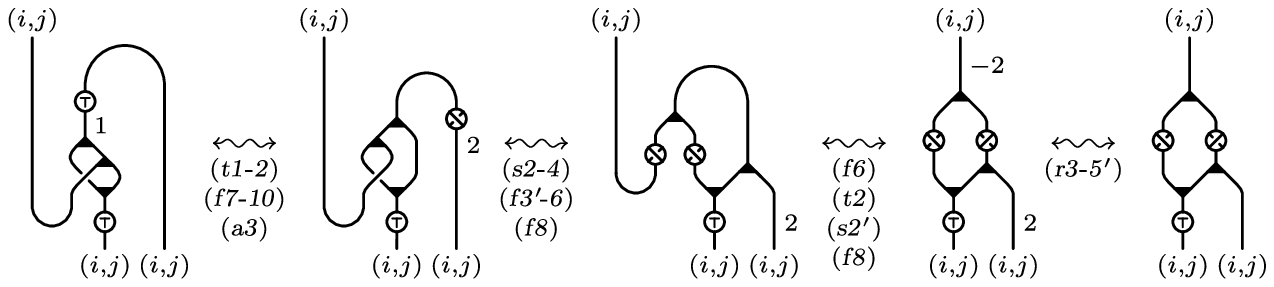}}
\vskip-3pt
\end{Figure}

\begin{Figure}[htb]{psi05/fig}
{}{Preservation of \(I14-14') in the uni-labeled case -- II ($i \succ j$) 
   [{\sl a}/\pageref{table-Hdefn/fig},
    {\sl r}/\pageref{table-Huvdefn/fig}-\pageref{table-Huvprop/fig}, 
    {\sl s}/\pageref{table-Hdefn/fig}-\pageref{table-Hprop/fig}, 
    {\sl t}/\pageref{table-Hr/fig}]}
\centerline{\fig{}{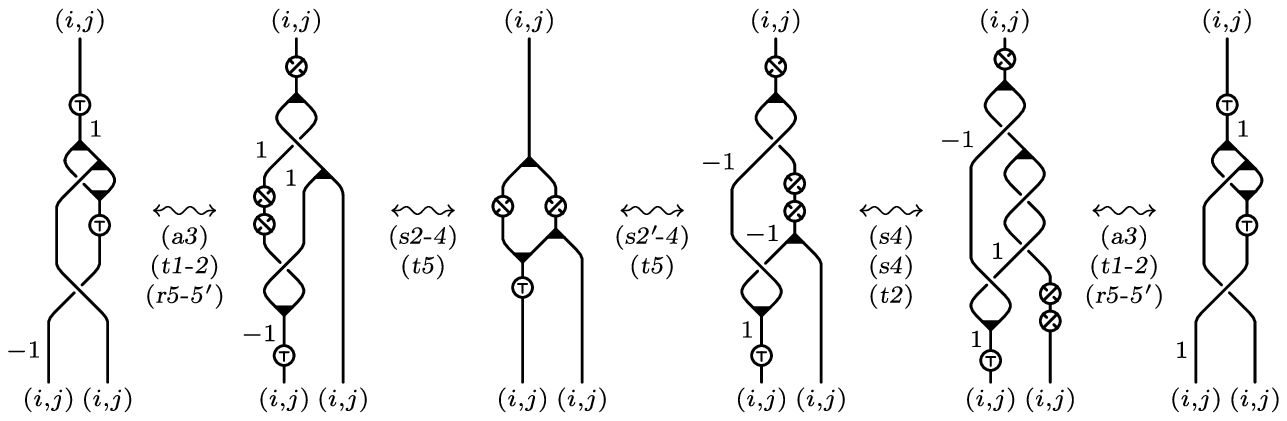}}
\vskip-3pt
\end{Figure}

\break

\(I14-14') for a bi-labeled ribbon intersection are trivial, since they follow from the
braid axioms. For a tri-labeled ribbon intersection they follow directly from \(t3-6) and
\(t7) in Table \ref{table-Hr/fig}. Figures \ref{psi04/fig} and \ref{psi05/fig} deal with
the uni-labeled case. More precisely, Figure \ref{psi04/fig} shows that the image under
$\Psi_n^\prec$ of the labeled ribbon surface tangle in the middle of \(I14-14') is
equivalent to the third graph diagram in Figure \ref{psi05/fig}.

\smallskip

\(I17) for a bi-labeled ribbon intersection reduces to a crossing change, so it follows
from axioms \(r9) in Table \ref{table-Hr/fig}. The tri- and uni-labeled cases are
presented in Figures \ref{psi06/fig} and Figure \ref{psi07/fig} respectively.
 
\begin{Figure}[htb]{psi06/fig}
{}{Preservation of \(I17) in the tri-labeled case ($i \succ j \succ k$)
   [{\sl a}/\pageref{table-Hdefn/fig}, {\sl f-p}/\pageref{table-Huvprop/fig},
    {\sl r}/\pageref{table-Hr/fig},
    {\sl s}/\pageref{table-Hdefn/fig}-\pageref{table-Hprop/fig},
    {\sl t}/\pageref{table-Hr/fig}]}
\centerline{\fig{}{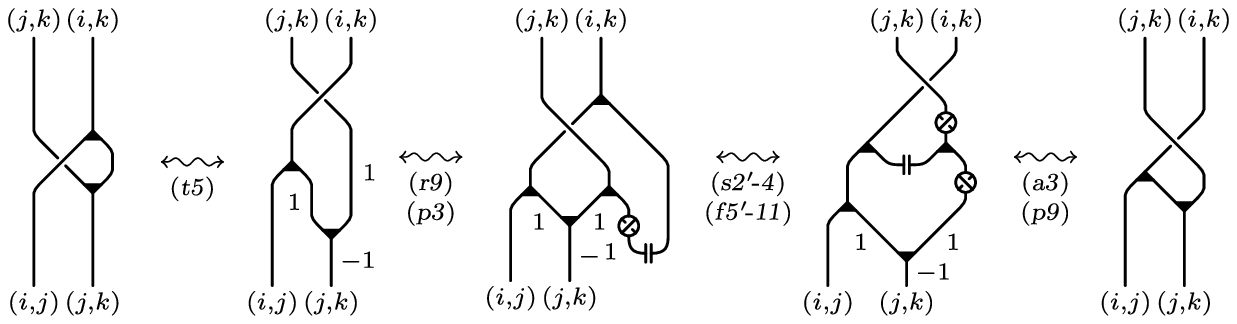}}
\vskip-3pt
\end{Figure}

\begin{Figure}[htb]{psi07/fig}
{}{Preservation of \(I17) in the uni-labeled case ($i \succ j$)
   [{\sl a}/\pageref{table-Hdefn/fig}, 
    {\sl p}/\pageref{table-Huvprop/fig}-\pageref{table-Hr/fig},
    {\sl r}/\pageref{table-Huvdefn/fig}-\pageref{table-Huvprop/fig}, 
    {\sl s}/\pageref{table-Hdefn/fig}-\pageref{table-Hprop/fig}, 
    {\sl t}/\pageref{table-Hr/fig}]}
\vskip-6pt
\centerline{\fig{}{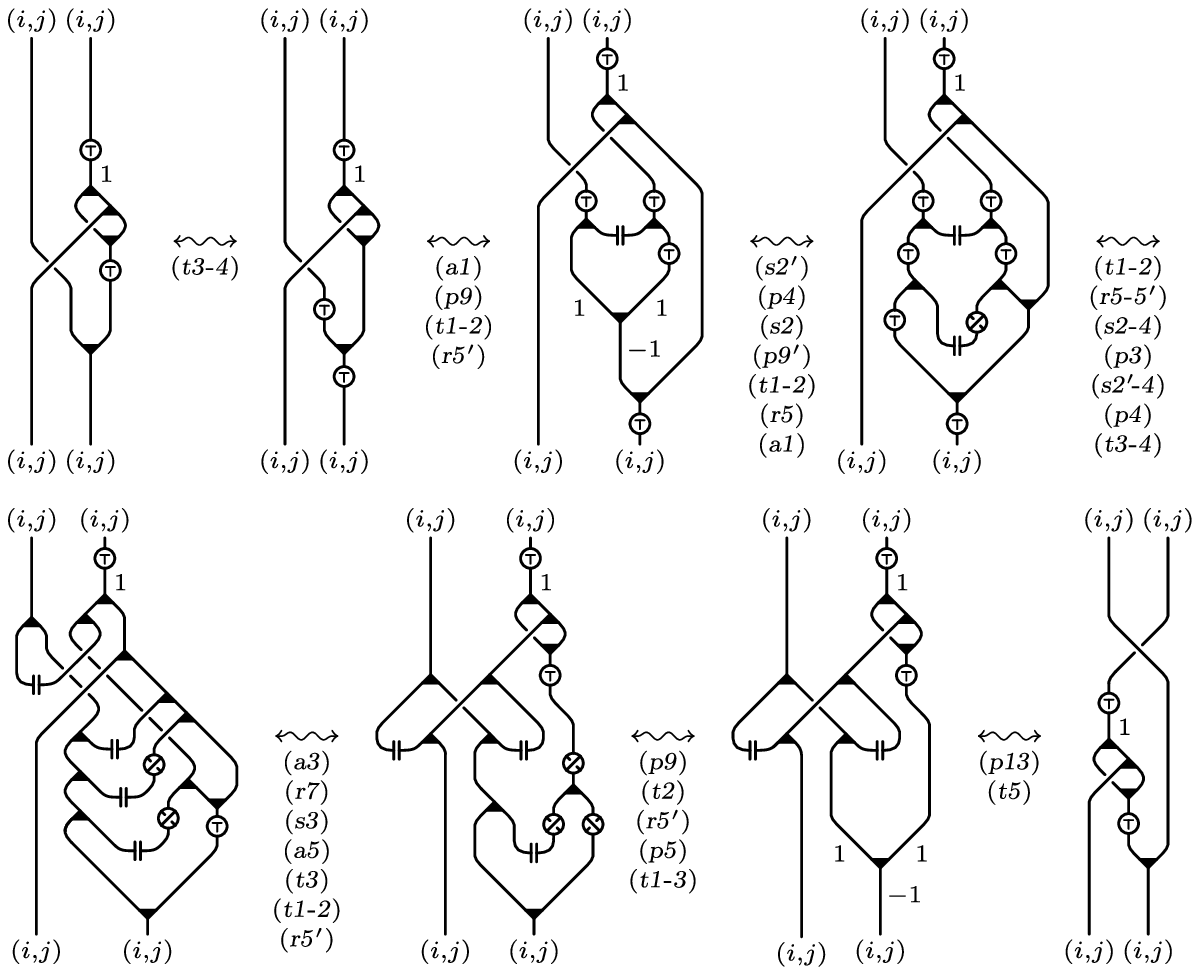}}
\vskip-3pt
\end{Figure}

\(I21) for bi-labeled ribbon intersection is trivial, while for tri-labeled ribbon
intersection, with the proper choice of the order $\prec\,$, it corresponds to the
bi-algebra axiom \(a5) in Table \ref{table-Hdefn/fig}. The uni-labeled case is treated in
Figure \ref{psi08/fig}.

\begin{Figure}[htb]{psi08/fig}
{}{Preservation of \(I21) in the uni-labeled case ($i \succ j$) 
   [{\sl a}/\pageref{table-Hdefn/fig}, 
    {\sl f}/\pageref{table-Hu/fig}-\pageref{table-Huvprop/fig},
    {\sl p}/\pageref{table-Huvprop/fig},
    {\sl r}/\pageref{table-Huvdefn/fig}-\pageref{table-Huvprop/fig}%
           -\pageref{table-Hr/fig}, 
    {\sl s}/\pageref{table-Hdefn/fig}-\pageref{table-Hprop/fig}, 
    {\sl t}/\pageref{table-Hr/fig}]}
\centerline{\ \fig{}{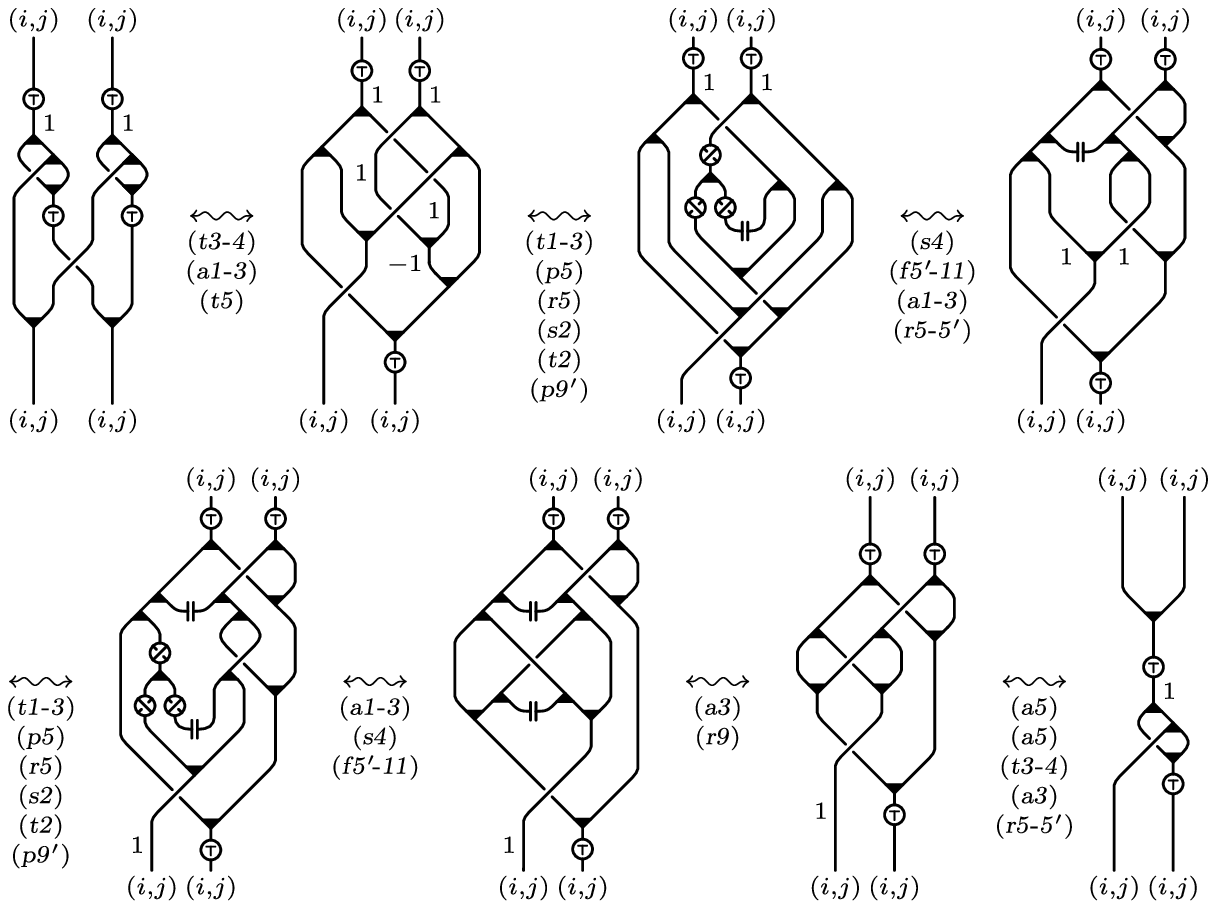}}
\end{Figure}

\begin{Figure}[b]{psi09/fig}
{}{Preservation of \(I22) in the non-trivial cases when a bi-labeled ribbon    
   intersection occurs, ($i \succ j \succ k \succ l$)
   [{\sl a}/\pageref{table-Hdefn/fig},
    {\sl r}/\pageref{table-Huvdefn/fig}-\pageref{table-Huvprop/fig}%
           -\pageref{table-Hr/fig}, 
    {\sl s}/\pageref{table-Hdefn/fig}, {\sl t}/\pageref{table-Hr/fig}]}
\vskip3pt
\centerline{\fig{}{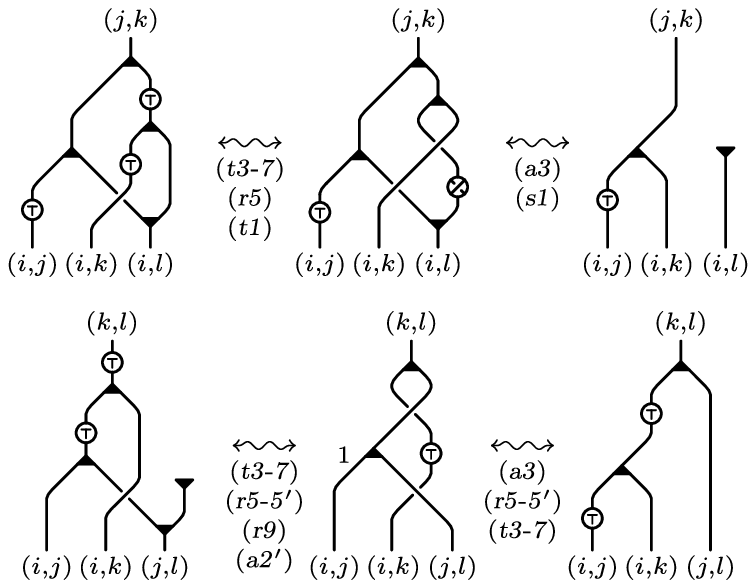}}
\end{Figure}

\pagebreak\vglue-3pt

\(I22) is the most complicated relation to deal with, since the source of the involved
morphisms consists of three intervals which can be labeled independently from each other,
so there are many different cases. First of all, we observe that the presence of disjoint
labels allows us to simplify the relation by using move \(R2) in Figure
\ref{ribbon-moves03/fig} to remove the bi-labeled ribbon intersections. In particular,
when one of those labels is disjoint from both the other two, such simplification reduce
\(I22) to \(I16) and \(I20) modulo \(I2-2'), \(I3-3'), \(I5) and \(I8). Up to
conjugation, the only remaining labelings of the three intervals that include a pair of
disjoint labels, are given by the sequences $(\tp{i}{j}, \tp{i}{k}, \tp{k}{l})$ and
$(\tp{i}{j}, \tp{k}{l}, \tp{i}{k})$ where $i,j,k,l$ are all distinct. Assuming $i \succ j
\succ k \succ l$, after simplification the first case corresponds to the bi-algebra axiom
\(a3) in Table \ref{table-Hdefn/fig}, while the second one reduces to \(I16) and \(I20) as
above. Figure \ref{psi09/fig} concerns the two remaining cases when a bi-labeled ribbon
intersection occurs (even if there is no pair of disjoint labels in the source). The rest
of the cases are presented in Figures \ref{psi10/fig}, \ref{psi11/fig} and \ref{psi12/fig}
respectively, depending on the number of uni-labeled ribbon intersections.

\begin{Figure}[htb]{psi10/fig}
{}{Preservation of \(I22) in the cases when one uni-labeled ribbon intersection occurs 
   ($i \succ j \succ k$)
   [{\sl a}/\pageref{table-Hdefn/fig}, 
    {\sl r}/\pageref{table-Huvdefn/fig}-\pageref{table-Huvprop/fig},
    {\sl s}/\pageref{table-Hdefn/fig}, {\sl t}/\pageref{table-Hr/fig}]}
\centerline{\fig{}{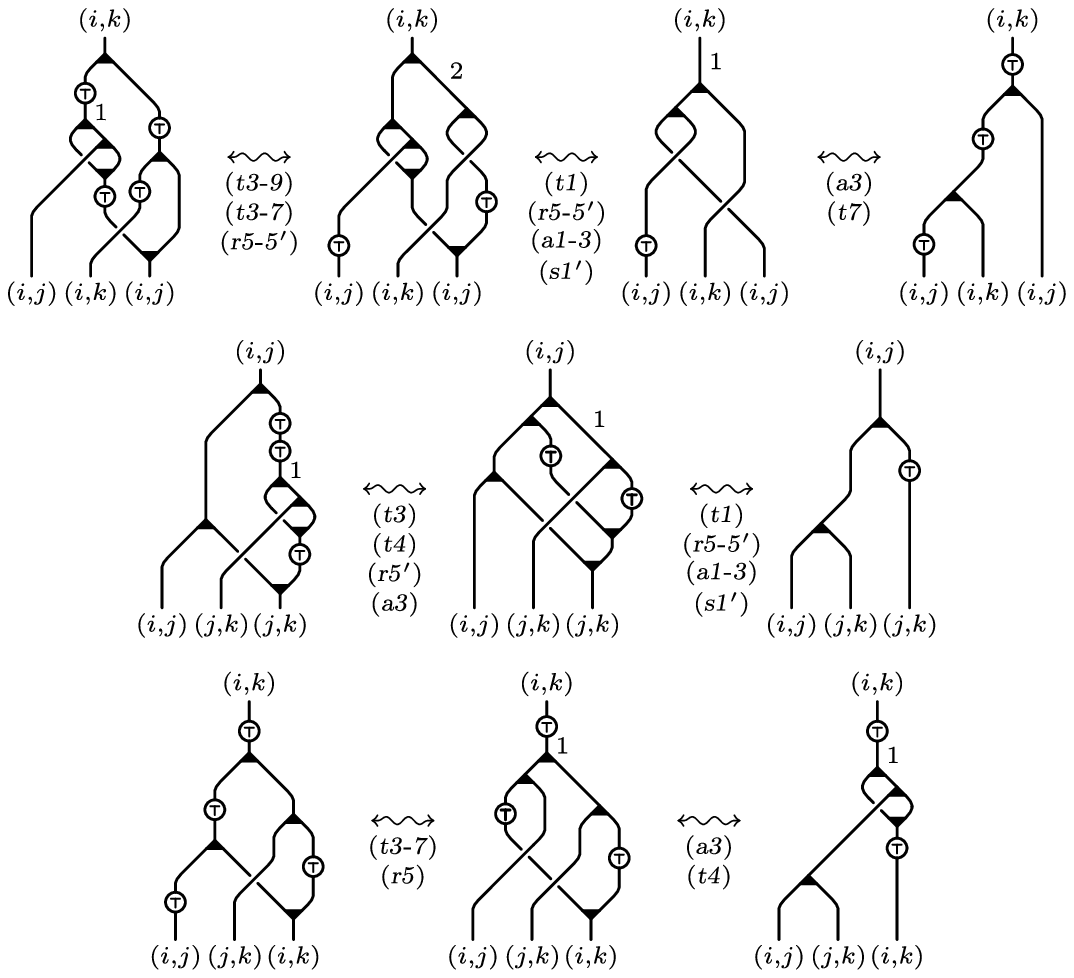}}
\vskip-3pt
\end{Figure}

\begin{Figure}[htb]{psi11/fig}
{}{Preservation of \(I22) in the cases when two uni-labeled ribbon intersections occur
   ($i \succ j \succ k$)
   [{\sl a}/\pageref{table-Hdefn/fig},
    {\sl r}/\pageref{table-Huvdefn/fig}-\pageref{table-Huvprop/fig},
    {\sl s}/\pageref{table-Hdefn/fig}-\pageref{table-Hprop/fig}, 
    {\sl t}/\pageref{table-Hr/fig}]}
\centerline{\fig{}{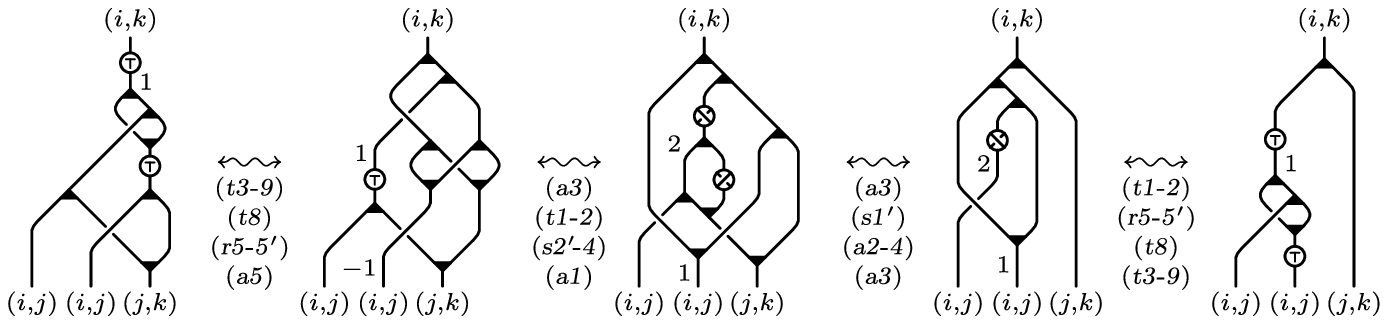}}
\vskip-3pt
\end{Figure}

\begin{Figure}[htb]{psi12/fig}
{}{Preservation of \(I22) in the uni-labeled case ($i \succ j$)
   [{\sl a}/\pageref{table-Hdefn/fig},
    {\sl f}/\pageref{table-Hu/fig}-\pageref{table-Huvprop/fig},
    {\sl p}/\pageref{table-Huvprop/fig}-\pageref{table-Hr/fig},
    {\sl r}/\pageref{table-Huvdefn/fig}-\pageref{table-Huvprop/fig}%
           -\pageref{table-Hr/fig}, 
    {\sl s}/\pageref{table-Hdefn/fig}-\pageref{table-Hprop/fig},
    {\sl t}/\pageref{table-Hr/fig}]}
\centerline{\fig{}{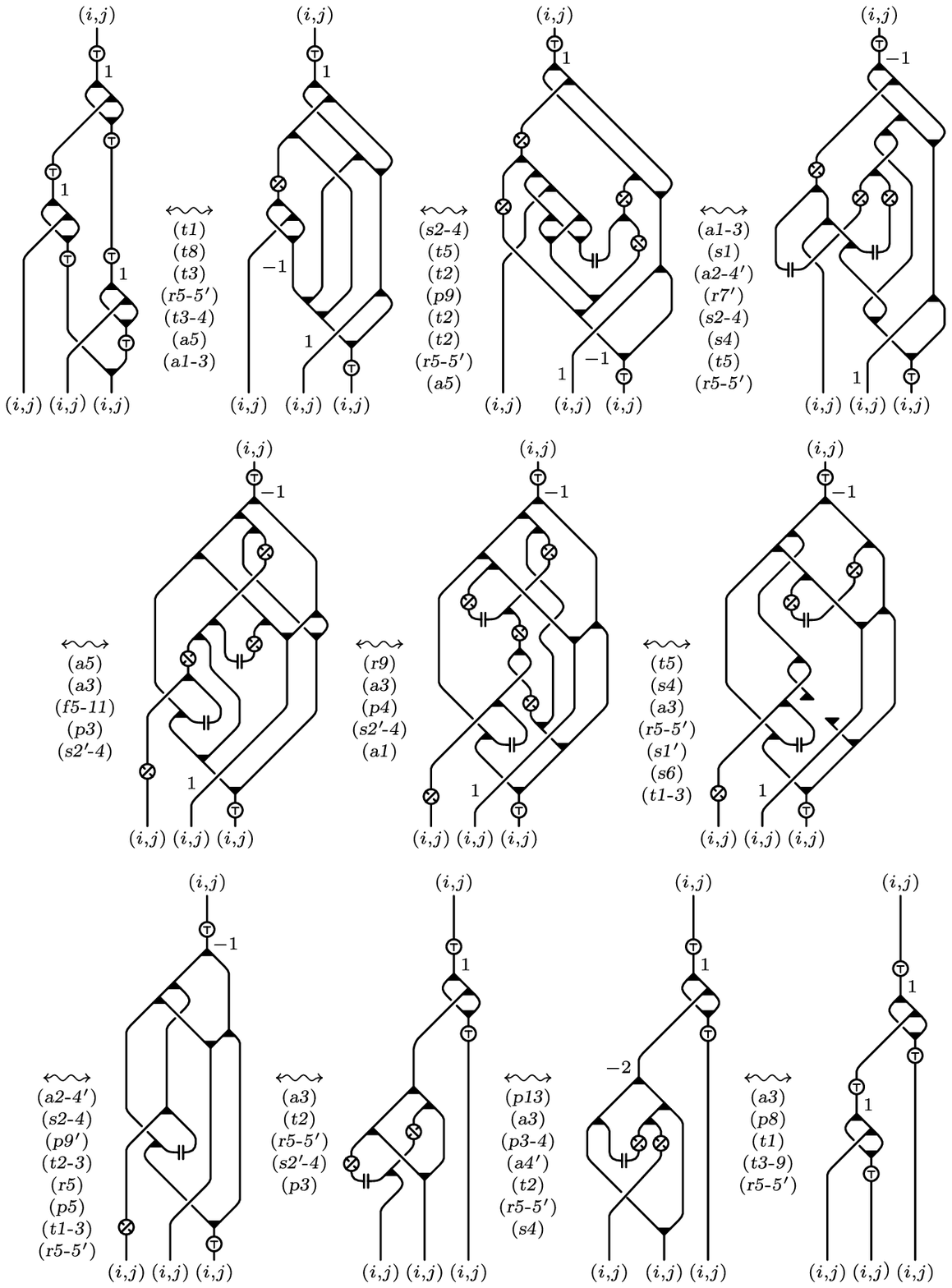}}
\vskip-3pt
\end{Figure}

This completes the proof that $\Psi_n^\prec$ is a well-defined functor. Then, its
monoidality is trivial while the desired natural equivalence $\tau$ in \(b) was defined in
the beginning. Moreover, property \(a) directly derives from the fact that the image of
the elementary morphism $\Delta_{\tp{i}{j}}$ in Figure \ref{ribbon-morph02/fig} is exactly
$\Delta_{(i,j)}$ for $i \succ j$. Finally, the commutativity of the diagram in \(c) can be
\pagebreak
seen by comparing the definitions of $\Phi_n$ in Section \ref{Phi/sec} (cf. Figures
\ref{phi01/fig} and \ref{phi02/fig}) and $\Psi_n$ (cf. Figures \ref{psi01/fig} and
\ref{psi02/fig}) with the definition of $\Theta_n$ in Section \ref{Theta/sec} (cf. Figures
\ref{theta01/fig}, \ref{theta02/fig} and \ref{theta03/fig}).
\end{proof}

Theorem \ref{psi/thm} with $n = 2$ and Proposition \ref{Hn-reduction/thm} imply that there
exists a functor from the category of 1-isotopy equivalence classes of (unlabeled) ribbon
surface tangles $\S$ (cf. Section \ref{surfaces/sec}) to the universal ribbon Hopf algebra
$\H^r = \H_1^r$ (over the trivial groupoid). Observe that the two categories are obviously 
not equivalent, but the corollary below allows us to associate to any braided unimodular
ribbon Hopf algebra an invariant of ribbon surface tangles under 1-isotopy moves.

\pagebreak

\begin{corollary}\label{rstangle-inv/thm}
There exists a monoidal functor $\Psi: \S \to \H^r$, defined by $\Psi(J_m) = H^{\diam m}$
on the objects and by Figure \ref{psi13/fig} on the elementary morphisms.
\end{corollary}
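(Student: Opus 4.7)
The plan is to realise $\Psi$ essentially as a ``uni-labeled collapse'' of the functor $\Psi_2$ from Theorem \ref{psi/thm}. First I would observe that any ribbon surface tangle $S\in\Mor\S$ admits a canonical labeling with the unique transposition $(1\;2)\in\Gamma_2$, giving a morphism $\iota(S)\in\Mor\S_2$. The Wirtinger labeling condition is automatic, since conjugation by $(1\;2)$ in $\Sigma_2$ acts trivially on $\{(1\;2)\}$; and the covering moves \(R1) and \(R2) require respectively three and four pairwise distinct transpositions, none of which exist in $\Sigma_2$. Thus both covering moves are vacuous on $\iota(\Mor\S)$, and the equivalence in $\S_2$ restricted to the image of $\iota$ reduces to 1-isotopy. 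In particular, $\iota:\S\to\S_2$ is a well-defined faithful monoidal functor.

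Next I would define $\Psi$ on objects by $\Psi(J_m)=H^{\diam m}$ and on the elementary morphisms of Figure \ref{ribbon-morph01/fig} via Figure \ref{psi13/fig}. These are precisely obtained from the $(1\;2)$-labeled versions of the images shown in Figures \ref{psi01/fig} and \ref{psi02/fig} by collapsing all four indices $(1,1),(1,2),(2,1),(2,2)\in\G_2$ to the unique morphism of the trivial groupoid $\G_1$, and by identifying both non-trivial copairings $\sigma_{1,1},\sigma_{2,2}$ of $\H_2^r$ with the single copairing $\sigma$ of $\H^r$. The resulting definition is formally propagated over products and compositions.

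To show that $\Psi$ is a well-defined monoidal functor, I would verify that all defining relations of $\S$ — the planar isotopy relations of Figure \ref{ribbon-tang01/fig}, the $3$-dimensional isotopy relations of Figure \ref{ribbon-tang02/fig}, the graph changing relations of Figure \ref{ribbon-tang03/fig}, and the 1-isotopy relations of Figure \ref{ribbon-tang04/fig} — are preserved. The key observation is that each of these relations is exactly the uni-labeled case of the corresponding relation in $\S_2$, and the uni-labeled verifications carried out in the proof of Theorem \ref{psi/thm} (in particular Figures \ref{psi04/fig}, \ref{psi05/fig}, \ref{psi07/fig}, \ref{psi08/fig}, \ref{psi11/fig}, \ref{psi12/fig}) rely only on the generic ribbon Hopf algebra axioms collected in Tables \ref{table-Hdefn/fig}--\ref{table-Hr/fig}. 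Since $\H^r=\H^r(\G_1)$ satisfies precisely these axioms, the uni-labeled arguments transcribe verbatim after the collapsing procedure, with all indices of $\G_2$ replaced by the unique morphism of $\G_1$ and every copairing $\sigma_{i,i}$ replaced by $\sigma$.

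The main obstacle will be the bookkeeping around relation \(I22), and to a lesser extent \(I17) and \(I21): their uni-labeled proofs involve several intermediate diagrams containing multiple copairings, and one must confirm that only non-trivial copairings $\sigma_{i,i}$ appear anywhere along the way, never the trivial $\sigma_{1,2}$ or $\sigma_{2,1}$. This is the point where the collapse from $\H_2^r$ to $\H^r$ could in principle fail, since a trivial copairing $\eta\diam\eta$ in $\H_2^r$ would correspond to the non-trivial $\sigma$ in $\H^r$. However, in every intermediate diagram arising in the uni-labeled verifications, the copairings are produced either by uni-labeled ribbon intersections (yielding $\sigma_{1,1}$ or $\sigma_{2,2}$) or by applications of the adjoint morphisms $\alpha^{x,x}$ with matching labels, so no trivial copairing ever appears. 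Once this is checked, the monoidality of $\Psi$ is immediate from the construction.
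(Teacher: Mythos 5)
Your first step agrees with the paper: labeling everything by the transposition $(1\;2)$ identifies $\S$ with $\S_2$, since the covering moves (R1) and (R2) need at least three distinct labels and are therefore vacuous there. The divergence, and the gap, is in how you pass from $\S_2$ to $\H^r$. You propose to define $\Psi$ by collapsing the labels of the images in Figures \ref{psi01/fig} and \ref{psi02/fig} along the groupoid map $\G_2 \to \G_1$ and then to re-run the uni-labeled relation checks from the proof of Theorem \ref{psi/thm} ``verbatim''. But this collapse is exactly the kind of map excluded in Proposition \ref{formalext/thm} (it is not injective on objects), and the uni-labeled verifications do \emph{not} use only label-independent axioms: every relevant figure (Figures \ref{psi03/fig}, \ref{psi04/fig}, \ref{psi05/fig}, \ref{psi07/fig}, \ref{psi08/fig}, \ref{psi12/fig}) invokes the moves (t2)--(t9) of Table \ref{table-Hr/fig}, and these hold in $\H_2^r$ precisely because the mixed copairings $\sigma_{1,2}$, $\sigma_{2,1}$ are trivial. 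After collapsing, (t4)/(t5) become (p11)/(r8) with the $\mu$-correction erased, and the coalgebra-morphism property of $T$ becomes the false assertion that $S \circ v^{-1}$ intertwines $\Delta$ in $\H^r$; so the transcribed computations are not valid in $\H^r$. Your safeguard --- checking that no trivial copairing occurs in the intermediate diagrams --- does not detect this, because the failure lies in the rewrite moves applied \emph{between} diagrams, not in the diagrams themselves. For the same reason your reading of Figure \ref{psi13/fig} as the naive collapse of Figures \ref{psi01/fig}--\ref{psi02/fig} is doubtful: under the correct reduction the image of a crossing acquires copairing/adjoint corrections, and $T_{(2,1)}$, $T_{(1,2)}$ are sent to the distinct morphisms $S \circ v^{-1}$ and $\bar S \circ v$ of $\H^r$, which a plain label erasure conflates.

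The paper avoids all of this by not redoing any verification: having identified $\S$ with $\S_2$, it simply sets $\Psi = {\down_1^2} \circ \Psi_2$, where the reduction comes from the machinery of Section \ref{reduction/sec} built on the functor $\_^{(2,1)}: \H_2^r \to \H^r$ of Proposition \ref{H-push/thm}, whose definition contains exactly the copairing corrections that your collapse omits; the only remaining computation is the identity of Figure \ref{psi14/fig}, matching the resulting elementary images with Figure \ref{psi13/fig}. To repair your route you would have to replace each (t)-move in the uni-labeled computations by its corrected $\H^r$ counterpart and show that the corrections cancel --- which amounts to reproving the properties of $\_^{(2,1)}$, i.e.\ to reconstructing the paper's argument rather than bypassing it.
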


\begin{Figure}[htb]{psi13/fig}
{}{The functor $\Psi: \S \to \H^r$}
\centerline{\fig{}{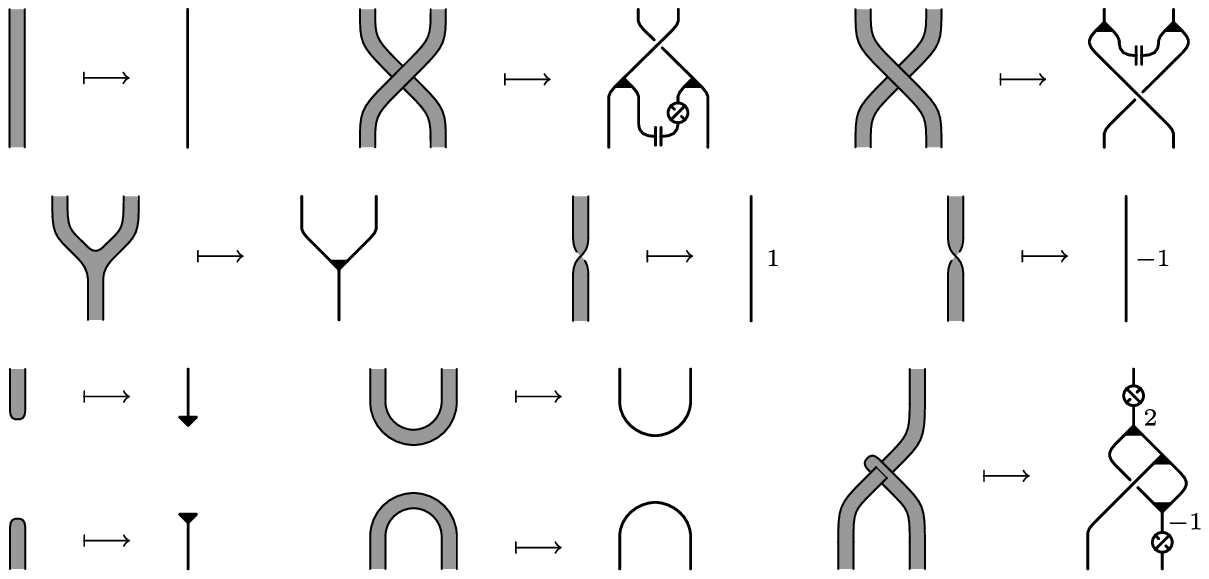}}
\vskip-3pt
\end{Figure}

\begin{proof}
Comparing the presentation of the category $\S$ in Proposition \ref{S-category/thm} with
the one of the category $\S_n$ in Proposition \ref{Sn-category/thm}, we observe that the
the relations of $\S_2$ are just the labeled versions of the relations of $\S$ (see Figure
\ref{ribbon-moves03/fig}). In fact, the extra relations \(R1) and \(R2) do not appear in
$\S_2$ since they involve at least three different labels. Therefore, the category $\S$ is
equivalent to $\S_2$, where the equivalence functor is given by labeling the whole surface
by the transposition $\tp{1}{2}$. Then, we put $\Psi = \down_1^2 \circ \Psi_2$ and the
statement follows from Theorem \ref{psi/thm}, Proposition \ref{Hn-reduction/thm} and the 
identity in Figure \ref{psi14/fig}.
\end{proof}

\begin{Figure}[htb]{psi14/fig}
{}{[{\sl a}/\pageref{table-Hdefn/fig}, {\sl p}/\pageref{table-Huvprop/fig},
    {\sl t}/\pageref{table-Hr/fig}]}
\vskip-3pt
\centerline{\fig{}{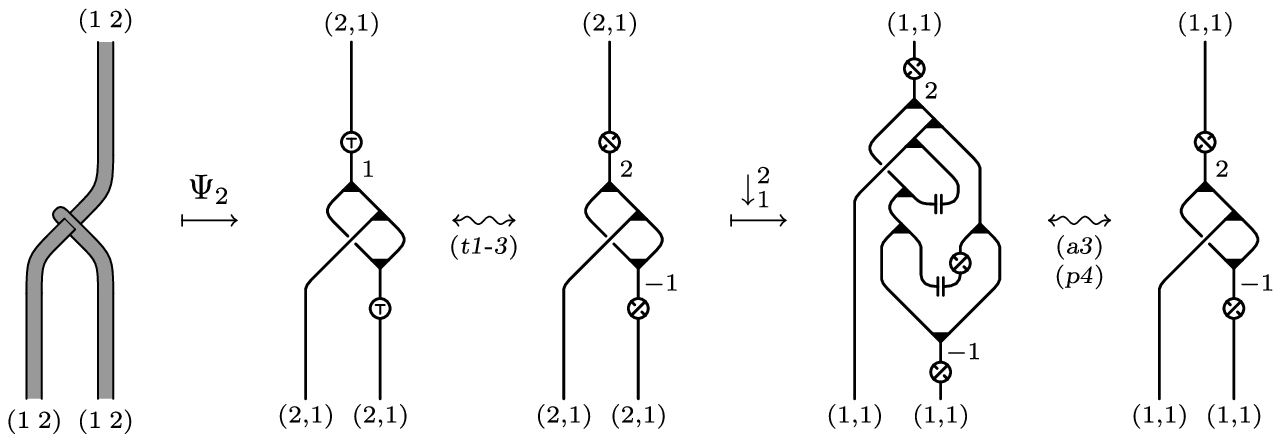}}
\vskip-3pt
\end{Figure}

\subsection{Equivalence between $\K_n^c$ and $\H_n^{r,c}$%
\label{K=H/sec}}

According to point \(a) of Theorem \ref{psi/thm}, we know that the functor $\Psi_n: \S_n
\to \H_n$ restricts to a well-defined functor $\Psi_n: \S_n^c \to \H_n^{r,c}$, where we
use the notation $\S_n^{r,c} = \S_{n \red 1}^r$ and $\H_n^{r,c} = \H_{n \red 1}^r$. Now we
will show that for $n \geq 3$ such restriction is full (cf. Proposition
\ref{full-psi/thm}) and this will allow us to complete the proof of the equivalence of the
categories $\S_n^c$, $\H_n^{r,c}$ and $\K_n^c$ with $n \geq 4$ (cf. Theorem
\ref{equivalence/thm}).

We first need two technical lemmas.

\begin{lemma}\label{ij/thm}
Given $(i,j) \in \G_n$ with $i \neq j$, the identity $\id_{\pi_{n \red 1}}\!$ as a
morphism of $\,\H^{r,c}_n$ can be represented by a diagram containing a counit vertex
$\epsilon_{(i,j)}$ and no edge labeled $(k,k)$ with $1 \leq k \leq n$.
\end{lemma}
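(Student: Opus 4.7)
The strategy is to start from the canonical $1$-reducible presentation of the identity and then modify it, using the groupoid-multiplicative structure of $\H_n^r$, to display an explicit counit of the prescribed label $(i,j)$, while never introducing an edge whose label is a loop $(k,k)$.

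First I would represent $\id_{\pi_{n \red 1}}$ in the reduced form guaranteed by Definition \ref{H-reducible/def}, namely
\[
\id_{\pi_{n \red 1}} = \bigl(\id_{\pi_{n \red 1}} \diam \epsilon_{\pi_{n \red 1}}\bigr) \circ \Delta_{\pi_{n \red 1}},
\]
where $\epsilon_{\pi_{n \red 1}} = \epsilon_{(n,n-1)} \diam \epsilon_{(n-1,n-2)} \diam \dots \diam \epsilon_{(2,1)}$ and $\Delta_{\pi_{n \red 1}}$ is defined as in Figure \ref{H-reduction01/fig}; equality with the identity is immediate from the counit axiom \(a2) applied componentwise. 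This diagram already displays counit vertices $\epsilon_{(k+1,k)}$ for $k = 1, \dots, n-1$, and every edge carries one of the labels $(k+1,k)$, so no $(a,a)$-edge is present.

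Next I would fuse the appropriate consecutive block of counits into a single counit of label $(i,j)$. If $i > j$, the subsequence $(i,i-1),(i-1,i-2),\dots,(j+1,j)$ is a consecutive block of $\pi_{n \red 1}$; iterating the bialgebra axiom \(a6) gives
\[
\epsilon_{(i,i-1)} \diam \epsilon_{(i-1,i-2)} \diam \dots \diam \epsilon_{(j+1,j)} \;=\; \epsilon_{(i,j)} \circ m_{(i,i-1),(i-1,i-2),\dots,(j+1,j)},
\]
so replacing the block by this composition yields a representation of $\id_{\pi_{n \red 1}}$ with an explicit $\epsilon_{(i,j)}$ vertex; the edges created by the iterated multiplication carry labels $(i,i-2),(i,i-3),\dots,(i,j)$, all with distinct coordinates. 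If $i < j$, I would first convert each $\epsilon_{(k+1,k)}$ with $k = i,\dots,j-1$ into $\epsilon_{(k,k+1)} \circ T_{(k+1,k)}$ by inserting the identity $\bar T_{(k,k+1)} \circ T_{(k+1,k)} = \id_{(k+1,k)}$ along the edge and absorbing $\bar T_{(k,k+1)}$ into the counit via property \(t2); after this, the same iterated \(a6) argument fuses the block $\epsilon_{(i,i+1)} \diam \dots \diam \epsilon_{(j-1,j)}$ into $\epsilon_{(i,j)} \circ m_{(i,i+1),\dots,(j-1,j)}$. The auxiliary edges introduced by the $T$-insertions carry labels $(k+1,k)$ and $(k,k+1)$, and the intermediate multiplication outputs carry labels $(i,i+2),\dots,(i,j)$; none of these is of the form $(a,a)$.

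The only point that requires real attention is the verification that no $(k,k)$-edge sneaks in during the manipulations. This is essentially a bookkeeping check: at no stage is a unit $\eta_k$ or an integral $L_g$ (the only elementary morphisms whose target can carry an identity label) introduced; all new edges arise either as subproducts along the groupoid chain $(i, i \mp 1)(i \mp 1, i \mp 2)\cdots$, or as $T$-arcs between $(k+1,k)$ and $(k,k+1)$, and in every case the two coordinates differ. Since the whole construction lives inside the reduction ribbon part of the diagram, the resulting morphism remains in $\H_n^{r,c} = \H_{n \red 1}^r$, and this completes the proof.
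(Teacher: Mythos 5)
Your proposal is correct and takes essentially the paper's route: the paper likewise produces the counits from the counit axiom on the sub-identity $\id_{\pi_{i \red j}}$ and fuses them along the groupoid chain via the iterated bialgebra relation \(a6), handling the reversed label with antipode moves (its Figure exhibits both an $\epsilon_{(i,j)}$ and an $\epsilon_{(j,i)}$ version). Your only deviations are cosmetic — using $T_g$, $\bar T_g$ instead of bare antipodes, and in the $i<j$ case the flipped counits sit in decreasing order in $\id_{\pi_{n \red 1}}$, so the \(a6)-fusion needs either a harmless braiding or a single antipode flip of the fused counit, neither of which introduces a $(k,k)$-labeled edge.
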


\begin{proof} 
Since $\id_{\pi_{n \red 1}}$ contains $\id_{\pi_{i \red j}}$ for $n \geq i \geq j \geq 1$, 
it suffices to prove that $\id_{\pi_{i \red j}}$ can be represented by two diagrams 
containing $\epsilon_{(i,j)}$ and $\epsilon_{(j,i)}$ respectively and no edge labeled 
$(k,k)$. This is done in Figure \ref{equivalence01/fig}.
\end{proof}

\begin{Figure}[htb]{equivalence01/fig}
{}{($i > j$)
   [{\sl a}/\pageref{table-Hdefn/fig}, {\sl s}/\pageref{table-Hprop/fig}]}
\vskip-9pt
\centerline{\fig{}{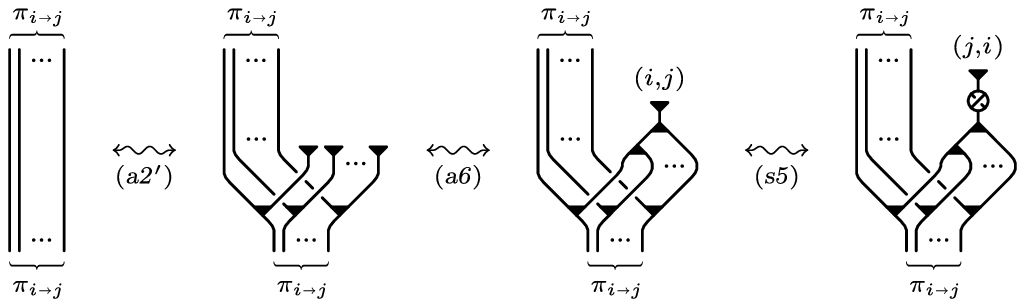}}
\vskip-3pt
\end{Figure}

\begin{lemma}\label{psi-image/thm}
Let $F$ be any morphism of $\,\H^{r,c}_n$ whose source and target are in $\Psi_n(\Obj
\S^c_n)$, i.e. they have the form $H_{\pi_{n \red 1}} \!\diam H_\pi$, with $\pi =
((i_1,j_1), \dots, (i_m,j_m)) \in \seq\G_n$ such that $i_h > j_h$ for $h = 1, \dots, m$.
If $F$ is given by a composition of products of the elementary diagrams presented in
Figure \ref{equivalence02/fig} with $i \neq j \neq k \neq i$ and $i' \neq j'$, then it is 
in the image of $\Psi_n$.
\end{lemma}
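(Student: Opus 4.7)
The plan is to leverage the monoidality and functoriality of $\Psi_n$, so that it suffices to verify the conclusion for each elementary diagram appearing in Figure \ref{equivalence02/fig} individually. If each such diagram can be realized as $\Psi_n(S)$ for some morphism $S$ in $\S^c_n$ with matching source and target, then any composition of products of such diagrams equals $\Psi_n$ applied to the corresponding composition of products in $\S^c_n$, since $\Psi_n$ is a strict monoidal functor.

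First, I would compare each elementary diagram $E$ of Figure \ref{equivalence02/fig} against the list of images of elementary ribbon surface tangles recorded in Figures \ref{psi01/fig} and \ref{psi02/fig}. The restrictions $i \neq j$, $i \neq k$, $j \neq k$ and $i' \neq j'$, together with the absence of edges labeled $(k,k)$, are precisely the conditions that make each label $(i,j)$ come from a transposition $\tp{i}{j} \in \Gamma_n$ with $i \neq j$; since the labels at each ribbon intersection in the image of $\Psi_n$ are mutually distinct (cf.\ Figure \ref{psi02/fig}), these constraints match exactly the ones that arise naturally from $\Psi_n$. For each $E$, I expect either a direct identification $E = \Psi_n(e)$ for an elementary ribbon tangle $e$, or at worst a short derivation expressing $E$ as a small composition $\Psi_n(S)$ built from such elementary tangles.

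The main obstacle will be the ordering issue: $\Psi_n = \Psi_n^<$ is built using the natural order $<$ on $\{1,\dots,n\}$ so that a label $(i,j)$ with $i > j$ is required on certain edges, whereas the elementary diagrams of Figure \ref{equivalence02/fig} may carry labels $(i,j)$ in either order. Here I would invoke Theorem \ref{psi/thm}(b): for a suitable strict total order $\prec$ there is a natural equivalence $\tau: \Psi_n \to \Psi_n^\prec$, so a diagram $E$ labeled ``the wrong way'' can still be written as $\Psi_n(S)$ after conjugating by the appropriate components of $\tau$ (which themselves are built from the morphisms $T_{(i,j)}$ and are in the image of $\Psi_n$ by the commutative diagram in Theorem \ref{psi/thm}(c) composed with reduction). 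Since the source and target of $F$ have the form $H_{\pi_{n\red1}} \diam H_\pi$ with $i_h > j_h$, the outermost uses of $\tau$ cancel and no genuine ordering mismatch survives at the endpoints of $F$.

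Finally, to align auxiliary factors of $\id_{\pi_{n \red 1}}$ that appear in intermediate stages (since $F$ is a morphism of the reducible category $\H^{r,c}_n$ and the elementary diagrams of Figure \ref{equivalence02/fig} do not automatically carry the stabilization ribbons), I would apply Lemma \ref{ij/thm} to represent $\id_{\pi_{n \red 1}}$ by a diagram containing counit vertices $\epsilon_{(i,j)}$ and no edges labeled $(k,k)$. Inserting such a representation next to each elementary piece converts the decomposition of $F$ into a composition of products of morphisms each of which is the image under $\Psi_n$ of a morphism in $\S^c_n$, concluding that $F \in \Psi_n(\Mor \S^c_n)$.
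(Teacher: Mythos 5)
There is a genuine gap, and it is at the very first step: the reduction to individual elementary diagrams cannot work, because the elementary diagrams of Figure \ref{equivalence02/fig} are in general \emph{not} individually in the image of $\Psi_n$ — not even up to a short derivation. The image of $\Psi_n$ on objects consists only of products of $H_{(i,j)}$ with $i>j$, while the intermediate objects in the given decomposition of $F$ may carry ``bad'' labels $(i,j)$ with $i<j$; for instance a single $T_{(i,j)}$ (one of the allowed elementary pieces) has target $H_{(j,i)}$, so it can never equal $\Psi_n(S)$ for any $S$, and the same goes for a bare multiplication vertex with mixed-order labels. Your attempted repair via Theorem \ref{psi/thm}\,(b) does not close this hole: a natural equivalence $\tau:\Psi_n\to\Psi_n^{\prec'}$ corresponds to one global choice of order, whereas a single diagram for $F$ can contain both labels $(i,j)$ and $(j,i)$, so no single $\prec'$ makes all pieces ``rightly'' labeled; moreover the components $\tau_\sigma$ are built from the $T_{(i,j)}$'s and are themselves not in the image of $\Psi_n$ (their targets again contain bad labels), so conjugating by them does not keep you inside the image, and the claimed justification via the diagram in Theorem \ref{psi/thm}\,(c) is not pertinent to this point.

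What is actually needed — and what the paper does — is a global rewriting of the whole diagram rather than a factor-by-factor identification. One first turns every comultiplication, counit and integral vertex into a ``good'' one by the moves of Figure \ref{equivalence03/fig}, so that only multiplication vertices can be bad; then one slides the $T$'s along the edges (braid axioms and Figure \ref{equivalence04/fig}) and cancels them in pairs by \(t3), using a parity bookkeeping which shows that after cancellation each edge carries $0$ or $1$ copies of $T$, positioned so that every bad label is confined to a short arc between a $T$ and a bad multiplication vertex. Only the diagram obtained after this rearrangement — not the original factorization — visibly decomposes into products and compositions of the specific composite shapes appearing in Figures \ref{psi01/fig} and \ref{psi02/fig}, i.e.\ of images of elementary ribbon surface tangles. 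This sliding/parity argument is the substantive content of the proof and is absent from your proposal; the appeal to Lemma \ref{ij/thm} at the end is also misplaced here (it is used in the subsequent fullness argument, not in this lemma).
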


\begin{Figure}[htb]{equivalence02/fig}
{}{($i\neq j \neq k \neq i$, $i' \neq j'$)}
\vskip-9pt
\centerline{\fig{}{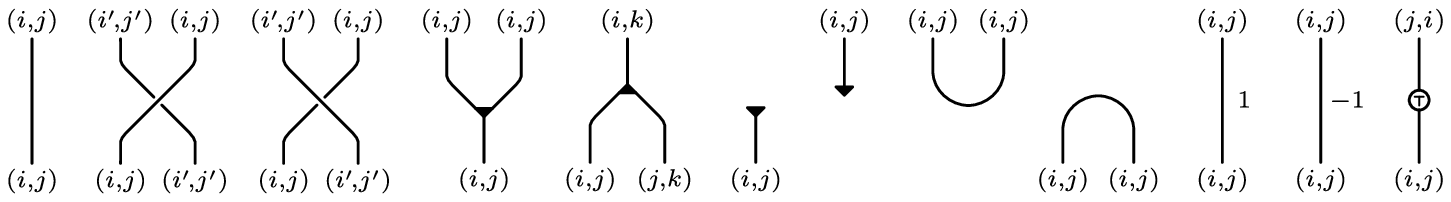}}
\vskip-3pt
\end{Figure}

\begin{proof}
Consider a morphism $F$ as in the statement. We call a label $(i,j)$
in the diagram {\sl good} if $i > j$ and {\sl bad} if $i < j$. We also call a vertex of
the diagram {\sl good} if all the labels of the edges attached to it are good and {\sl
bad} otherwise. Observe that we can transform any comultiplication, counit and any
integral vertex into a good one by applying to it, if necessary, the moves in Figure
\ref{equivalence03/fig}. Therefore, we can assume that any bad vertex of the diagram is a
multiplication vertex.

\begin{Figure}[htb]{equivalence03/fig}
{}{$(i < j)$
   [{\sl i}/\pageref{table-Hu/fig}, {\sl p}/\pageref{table-Huvprop/fig},
    {\sl r}/\pageref{table-Huvdefn/fig}, {\sl s}/\pageref{table-Hprop/fig},
    {\sl t}/\pageref{table-Hr/fig}]}
\centerline{\fig{}{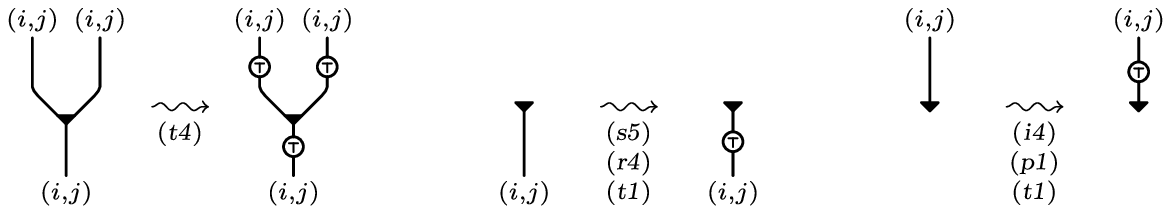}}
\vskip-3pt
\end{Figure}

Now, consider the diagram of $F$ as a planar diagram of a graph in which each edge $e$ is
weighted by an integer and decorated by a certain number $n(e)$ of $T$'s. We observe that
the $T$'s can be slided along the edges by using the braid axioms in Table
\ref{table-Hdefn/fig} and the moves in Figure \ref{equivalence04/fig}. Then, we can always
assume $n(e) = 0,1$, since move \(t3) in Table \ref{table-Hr/fig} allows us to eliminate
any two $T$'s along the same edge, once they are slided next to each other. If both the
ends of an edge $e$ have good (resp. bad) labels, then $n(e) = 0$ and the entire edge $e$
has a good (resp. bad) label. In the bad case, we insert two $T$'s along $e$ by \(t3) and
slide them near to the ends of $e$. On the contrary, if the labels at the ends of an edge
are one good and the other bad, then there is exactly one $T$ along $e$ and we slide it
near to the badly labeled end.

\begin{Figure}[htb]{equivalence04/fig}
{}{$(i \neq j)$
   [{\sl f}/\pageref{table-Hu/fig}-\pageref{table-Huvprop/fig}, 
    {\sl r}/\pageref{table-Huvdefn/fig}, {\sl t}/\pageref{table-Hr/fig}]}
\centerline{\fig{}{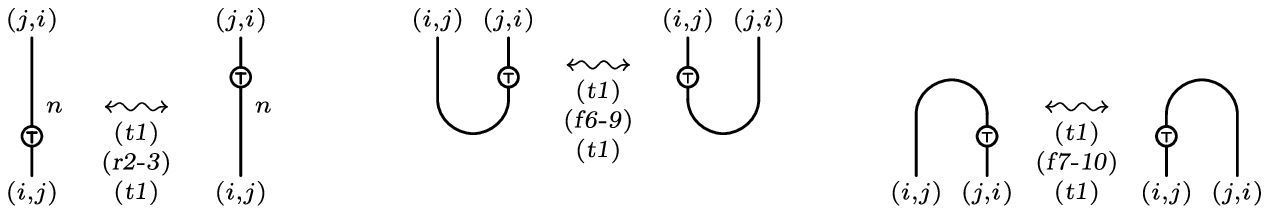}}
\vskip-3pt
\end{Figure}

After that, any bad label in the diagram is confined to a small arc between a $T$ and a
bad multiplication vertex. Taking into account the definition of $\Psi_n$ in Figures
\ref{psi01/fig} and \ref{psi02/fig}, we see that such a diagram is a composition of
products of diagrams each of which is the image of an elementary morphism under the
monoidal functor $\Psi_n$. Therefore, $F$ itself is in the image of $\Psi_n$.
\end{proof}

\begin{proposition}\label{full-psi/thm}
The functor $\Psi_n: \S^c_n \to \H^{r,c}_n$ is full for any $n \geq 3$.
\end{proposition}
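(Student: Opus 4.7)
The plan is to show that any morphism $F: H_{\pi_{n \red 1}} \diam H_{\pi_0} \to H_{\pi_{n \red 1}} \diam H_{\pi_1}$ in $\H^{r,c}_n$, with source and target in $\Psi_n(\Obj\S^c_n)$, admits a diagrammatic representation in which every elementary vertex is off-diagonal in the sense required by Figure \ref{equivalence02/fig}; once this is achieved, Lemma \ref{psi-image/thm} immediately exhibits a morphism in $\S^c_n$ whose image under $\Psi_n$ equals $F$. To set this up, I would first invoke the definition of $\H^{r,c}_n = \H^r_{n \red 1}$ to write $F = (\id_{\pi_{n \red 1}} \diam G) \circ (\Delta_{\pi_{n \red 1}} \diam \id_{\pi_0})$ for some $G \in \Mor \H^r_n$, and fix a representation of $G$ as a composition of products of elementary morphisms. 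Since $\Delta_{\pi_{n \red 1}}$ is already built from off-diagonal comultiplications and braidings, the whole burden reduces to eliminating every "bad" vertex of $G$, i.e.\ one carrying some diagonal label $(k,k)$. I would proceed by induction on the number of bad vertices.

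The inductive step combines two supplies of flexibility. On one side, Lemma \ref{ij/thm} furnishes, for each off-diagonal pair $(i,j)$, a presentation of $\id_{\pi_{n \red 1}}$ that exhibits an $\epsilon_{(i,j)}$ vertex and contains no diagonal edges at all; pre- or post-composing $F$ with such a presentation creates an off-diagonal auxiliary structure ready to interact with any chosen bad vertex. On the other side, the natural equivalences $\xi^{x,y}$ from Proposition \ref{H-push/thm}\(c), together with the intertwining properties of the adjoint action from Proposition \ref{adjoint/thm}, allow me to push the chosen bad vertex through an edge of that auxiliary structure, so that the functor $\_^x$ of Definition \ref{Fx/def} converts its diagonal labels $(k,k)$ into off-diagonal ones for a suitable $x \in \G_n(k,l)$ with $l \neq k$. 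The existence of such an $x$ is guaranteed by the hypothesis $n \geq 3$, which ensures that $\pi_{n \red 1}$ involves all indices $1,\dots,n$ and hence makes every label $k$ available as the endpoint of some edge in the identity ribbon.

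The main obstacle will be that the functor $\_^x$ is not a pure relabeling: although Proposition \ref{H-push/thm}\(a) ensures that the source index of $x$ is removed from every elementary label in the region to which it is applied, the formulas of Definition \ref{Fx/def} show that $(\Delta_g)^x$, $(\gamma_{g,h})^x$ and $(\bar\gamma_{g,h})^x$ may introduce new non-trivial copairings $\sigma_{l,l}$ (see Figures \ref{adjoint11/fig} and \ref{adjoint12/fig}), which are themselves diagonally labeled. Ensuring that the induction terminates requires showing that each such newly generated copairing can be absorbed, via the alternative presentations of $\id_{\pi_{n \red 1}}$ provided by Lemma \ref{ij/thm} and the adjoint-action properties \(q7) to \(q11), into the reduction ribbon rather than propagating throughout the diagram. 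Here the axioms \(r7), \(r8), \(r9) and the symmetry functor $\sym$ of Proposition \ref{hr-prop/thm} play an essential role, the latter giving uniform treatment of left- and right-handed cases so that the net count of bad vertices is strictly reduced at each step. Once no bad vertex remains, Lemma \ref{psi-image/thm} closes the argument.
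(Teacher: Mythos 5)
Your overall target is the right one---reduce $F$ to a diagram built only from the off-diagonal pieces of Figure \ref{equivalence02/fig} and then quote Lemma \ref{psi-image/thm}---and you correctly single out Lemma \ref{ij/thm} as the source of auxiliary off-diagonal counits. But the engine you propose for the induction does not work. By Proposition \ref{red-groupoid/thm} and Definition \ref{Fx/def}, the functor $\_^x$ attached to $x \in \G_n(k,l)$ sends the diagonal label $(k,k)$ to $\bar x\,(k,k)\,x = (l,l)$: pushing through a 1-handle removes the index $k$ from the diagram, but a diagonal label stays diagonal, so $\_^x$ never ``converts diagonal labels into off-diagonal ones'' and the number of bad vertices is not decreased by your step. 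Worse, $\_^x$ and the natural equivalence $\xi^{x,y}$ of Proposition \ref{H-push/thm}~(c) are global operations: they relabel every occurrence of $k$ in the (sub)morphism to which they are applied and change its source and target, so there is no well-defined way to apply them to one chosen vertex inside a fixed diagram of $F$ while leaving the rest untouched; the compensating isomorphisms $\xi^{x,y}_\pi$ that must then be inserted are themselves built from the adjoint morphisms and reintroduce diagonally labelled multiplications and comultiplications. So the induction has no reason to terminate, and its basic step is in fact vacuous.

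The paper's proof replaces this mechanism by purely local Hopf-algebra rewriting. After representing $F$ without the copairing and form/coform abbreviations, one defines $i$-edges and $i$-vertices and eliminates them by the moves of Figures \ref{equivalence06/fig}--\ref{equivalence09/fig}, which use only the bialgebra, antipode and integral axioms together with a nearby counit $\epsilon_{(i,j)}$, $j \neq i$, supplied by Lemma \ref{ij/thm}; the hypothesis $n \geq 3$ enters exactly once, in the right-hand move of Figure \ref{equivalence09/fig}, where a counit labelled $(j,k)$ with $i \neq j \neq k \neq i$ is required. The residual edges labelled $(i,i)$ joining a multiplication to a cointegral vertex are not eliminated but absorbed into the form notation (f2), and the remaining antipodes are rewritten through the morphisms $T$ by (t1-2); only then is the diagram literally a composition of products of the pieces in Figure \ref{equivalence02/fig}, so that Lemma \ref{psi-image/thm} applies. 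If you want to salvage your write-up, this local elimination procedure is what must replace the $\_^x$/$\xi^{x,y}$ step.
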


\begin{proof}
Let $F$ be a morphism of $\,\H^{r,c}_n$ whose source and target are in $\Psi_n(\Obj
\S^c_n)$. We represent $F$ by a diagram which does not use the copairing and form/coform
notation. This is possible, since those morphisms are defined in terms of the other
elementary morphisms.

Then an edge of the diagram will be called an $i$-{\sl edge}, $1 \leq i \leq n$, if it is
labeled $(i,i)$ and if it does not join a multiplication and a cointegral vertex (cf.
Figure \ref{equivalence05/fig}). Moreover, a vertex will be called an $i$-{\sl vertex} if
it is not cointegral vertex and all edges attached to it are labeled $(i,i)$. In the
figures below we will indicate $i$-edges by thinner lines and $i$-vertices by empty 
triangles.

\begin{Figure}[htb]{equivalence05/fig}
{}{Not an $i$-edge}
\centerline{\fig{}{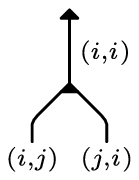}}
\vskip-3pt
\end{Figure}

As a preliminary step, we will show how to transform the diagram representing $F$ into an
equivalent one, where no $i$-vetrices and $i$-edges appear for any $1 \leq i \leq n$.
Actually, the figures below deal only with edges of zero weight and not containing
antipodes, but the generalization to other weights or to the presence of the antipodes is
straightforward. Observe also that since $n \geq 2$, according to Lemma \ref{ij/thm} we
may assume that the diagram of $F$ contains a counit $\epsilon_{(i,j)}$ with $j \neq i$.
Moreover, through isotopy moves such counit can be moved near any given $i$-edge.

We start by eliminating all uni-valent $i$-vertices as described in Figure
\ref{equivalence06/fig}. Then by applying, if necessary, the edge breaking shown in Figure
\ref{equivalence07/fig}, we obtain a diagram where all $i$-edges connect two tri-valent
vertices such that at most one of them is an $i$-vertex. In particular, no $i$-edge
connects two comultiplication vertices.

\begin{Figure}[htb]{equivalence06/fig}
{}{Eliminating the uni-valent $i$-vertices ($i \neq j$)
   [{\sl a-s}/\pageref{table-Hdefn/fig}, {\sl i}/\pageref{table-Hu/fig}]}
\centerline{\fig{}{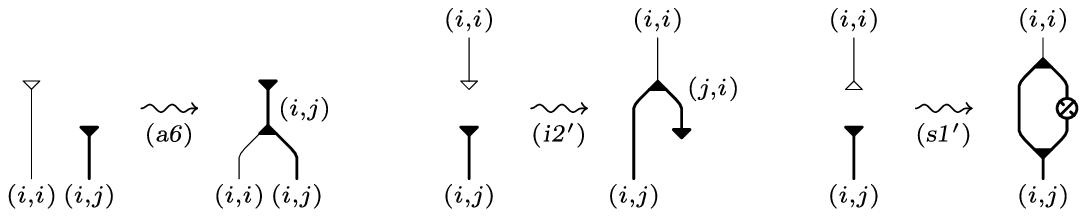}}
\vskip-3pt
\end{Figure}

\begin{Figure}[htb]{equivalence07/fig}
{}{Breaking an $i$-edge ($i \neq j$)
   [{\sl a-s}/\pageref{table-Hdefn/fig}]}
\centerline{\fig{}{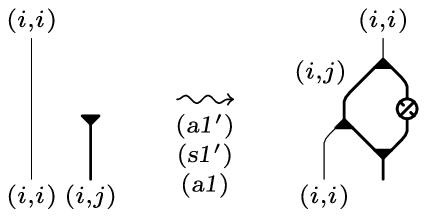}}
\vskip-3pt
\end{Figure}

We proceed by eliminating the tri-valent $i$-vertices, first the comultiplication ones by
the leftmost move in Figure \ref{equivalence08/fig} and then the multiplication ones by
using the two other moves in the same figure (or their vertical reflections).

\begin{Figure}[htb]{equivalence08/fig}
{}{Eliminating tri-valent $i$-vertices ($i \neq j$)
   [{\sl a}/\pageref{table-Hdefn/fig}]}
\centerline{\fig{}{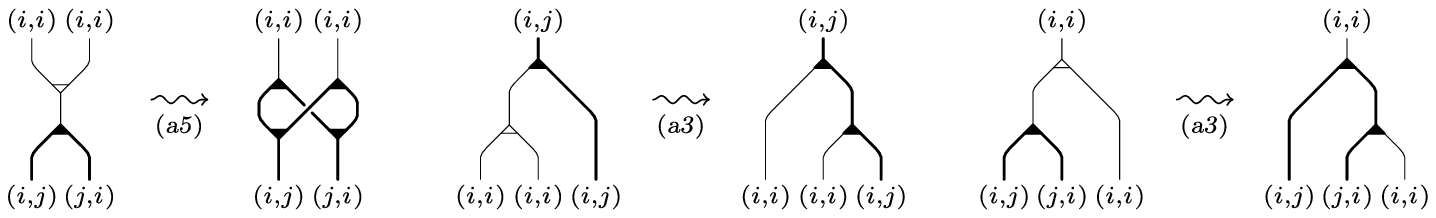}}
\vskip-3pt
\end{Figure}

At this point the only remaining $i$-edges connect two multiplication tri-valent vertices
none of which is an $i$-vertex. Such edges are eliminated through the moves shown in
Figure \ref{equivalence09/fig} (or their vertical reflections). For the move on the right 
side, by applying Lemma \ref{ij/thm} in the case $n \geq 3$, we assume that there is close 
by a counit of label $(j, k)$ with $i \neq j \neq k \neq i$.

\begin{Figure}[hbt]{equivalence09/fig}
{}{Eliminating $i$-edges between two tri-valent vertices none of which is an $i$-vertex 
   ($i \neq j \neq k \neq i$)
   [{\sl a-s}/\pageref{table-Hdefn/fig}]}
\centerline{\fig{}{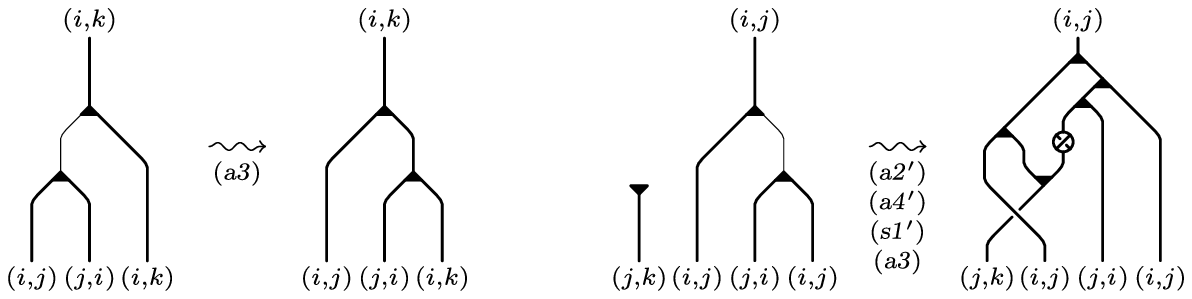}}
\vskip-3pt
\end{Figure}

This gives a diagram, where any edge labeled $(i,i)$ is attached to one multiplication
tri-valent and one cointegral vertex as in Figure \ref{equivalence05/fig} where $i \neq
j$. By using the form notation \(f2) in Table \ref{table-Hu/fig}, we can eliminate those
exceptional edges as well, leaving only labels of the type $(i,j)$ with $i \neq j$.
Finally, we express all antipodes in terms of $T$'s and ribbon morphisms through the
moves \(t1-2) in Table \ref{table-Hr/fig}.

In the end, the resulting diagram of $F$ is a composition of products of the diagrams in
Figure \ref{equivalence02/fig} and the proposition follows from Lemma \ref{psi-image/thm}.

We note that the images of the uni-labeled ribbon intersections do not appear. This is 
because any ribbon surface tangle is equivalent to one which does not contain such ribbon 
interserction.
\end{proof}

\begin{theorem}\label{equivalence/thm}
For any $n \geq 4$, we have the following commutative diagram of equivalence functors:
\vskip3pt
\centerline{\epsfbox{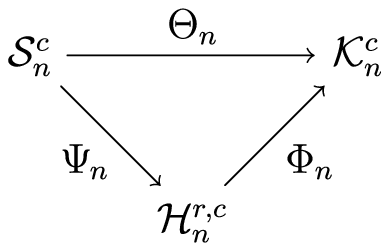}}
\vskip-9pt
\end{theorem}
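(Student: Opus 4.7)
The plan is to derive the theorem from the already-established fact that $\Theta_n$ is an equivalence (Theorem \ref{ribbon-kirby/thm}), together with the factorization $\Theta_n = \Phi_n \circ \Psi_n$ (Theorem \ref{psi/thm}(c)) and the fullness of $\Psi_n$ (Proposition \ref{full-psi/thm}), by first showing that $\Psi_n \colon \S_n^c \to \H_n^{r,c}$ is an equivalence and then deducing the same for $\Phi_n$ via the two-out-of-three principle for equivalences. Faithfulness of $\Psi_n$ is immediate, since $\Psi_n(f) = \Psi_n(g)$ forces $\Theta_n(f) = \Theta_n(g)$ and hence $f = g$, as $\Theta_n$ is faithful. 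By Proposition \ref{cat-equiv/thm}, the only non-trivial step will be essential surjectivity of $\Psi_n$.

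For essential surjectivity, I would take an arbitrary object $H_{\pi_{n \red 1}} \!\diam H_\pi$ of $\H_n^{r,c}$ with $\pi \in \seq\G_n$ and argue as follows. Inspecting Definition \ref{H-reduction/def}, each elementary reduction $\down_{k-1}^k$ applies the push-through functor $\_^{(k,k-1)}$ after stripping off the leading $(k,k-1)$-factor; therefore the iterated reduction $\down_1^n = \down_1^2 \circ \cdots \circ \down_{n-1}^n$ successively replaces every index $k$ appearing in a label of $\pi$ by $k-1$, eventually collapsing every label to $(1,1) \in \G_1$. Hence $\down_1^n(H_{\pi_{n \red 1}} \!\diam H_\pi) = H_{(1,1)}^{\diam m}$, where $m = |\pi|$, a result depending only on the length of $\pi$. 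By the natural equivalence $\xi^{n \red 1} \colon \up_1^n \circ \down_1^n \simeq \id_{\H_n^{r,c}}$ of Proposition \ref{Hn-reduction/thm}, any two objects of the form $H_{\pi_{n \red 1}} \!\diam H_\pi$ and $H_{\pi_{n \red 1}} \!\diam H_{\pi'}$ with $|\pi| = |\pi'|$ are therefore isomorphic in $\H_n^{r,c}$. In particular, choosing $\pi' = ((j_1,k_1), \ldots, (j_m,k_m))$ with $j_i > k_i$ for each $i$ (possible since $n \geq 2$), the resulting object equals $\Psi_n(J_{\sigma_{n \red 1}} \!\diam J_\sigma)$ for $\sigma = (\tp{j_1}{k_1}, \ldots, \tp{j_m}{k_m})$, yielding the required isomorphism.

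Once $\Psi_n$ is established as an equivalence, $\Phi_n$ will follow by picking a quasi-inverse $\Psi_n^{-1}$ of $\Psi_n$ and observing that $\Phi_n \simeq \Phi_n \circ \Psi_n \circ \Psi_n^{-1} = \Theta_n \circ \Psi_n^{-1}$ is a composition of equivalences. The only real technical point in the plan is the computation of $\down_1^n$ on the labels claimed above; this is a direct unwinding of the inductive definitions of $\_^{(k,k-1)}$ from Proposition \ref{H-push/thm} and of $\down_{k-1}^k$ from Definition \ref{H-reduction/def}, but it is the one place where a calculation, rather than a citation, is required.
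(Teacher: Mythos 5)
Your proposal is correct, and its overall skeleton coincides with the paper's: commutativity comes from Theorem \ref{psi/thm}~\(c), faithfulness of $\Psi_n$ from the factorization $\Theta_n = \Phi_n \circ \Psi_n$ and Theorem \ref{ribbon-kirby/thm}, fullness from Proposition \ref{full-psi/thm}, and $\Phi_n$ is then an equivalence because $\Theta_n$ and $\Psi_n$ are. Where you genuinely diverge is the essential surjectivity step. The paper constructs, by induction on the number of ``bad'' labels $(i_k,j_k)$ with $i_k \leq j_k$, explicit isomorphisms $\phi_\pi: H_{\pi_{n \red 1}} \!\diam H_\pi \to H_{\pi_{n \red 1}} \!\diam H_{\pi'}$ onto an object in $\Psi_n(\Obj \S_n^c)$: the antipode handles $i_k < j_k$, while the diagonal labels $(i,i)$ require the explicit diagrams of Figures \ref{equivalence10/fig} and \ref{equivalence11/fig}, which exploit the reduction strings and cancel by \(s1-1'). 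You instead reuse the stabilization/reduction machinery: $\down_1^n$ collapses every label of $\pi$ to $(1,1)$, so $\up_1^n \down_1^n(H_{\pi_{n \red 1}} \!\diam H_\pi) = H_{\pi_{n \red 1}} \!\diam H_{(1,1)}^{\diam m}$ depends only on $m = |\pi|$, and the components of the natural equivalence $\xi^{n \red 1}$ of Proposition \ref{Hn-reduction/thm} (which are reducible morphisms, hence isomorphisms in $\H_n^{r,c}$) show that any two objects of the same length are isomorphic, in particular to one in the image of $\Psi_n$. This is a valid shortcut: it buys brevity, avoids any new diagrammatic construction, and even yields a canonical representative per length, at the cost of routing through the full weight of Propositions \ref{H-push/thm} and \ref{Hn-reduction/thm}; the paper's explicit $\phi_\pi$'s are more self-contained and make the isomorphisms concrete. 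Your one flagged computation, the effect of $\down_1^n$ on objects, is indeed a correct direct unwinding of Definition \ref{H-reduction/def}, so there is no gap.
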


\begin{proof}
Observe that the commutativity of the diagram has already been estab\-lished in Theorem
\ref{psi/thm} \(c). Moreover, Theorem \ref{ribbon-kirby/thm} tells us that $\Theta_n$ is a
category equivalence for $n \geq 4$. Then, for $n \geq 4$ the functor $\Psi_n$ is
faithful. On the other hand, is it also full, by Proposition \ref{full-psi/thm}.

Therefore, according to Proposition \ref{cat-equiv/thm}, to conclude that $\Psi_n$ is a
category equivalence, and hence so is $\Phi_n$, it is enough to prove that for any object
$H_{\pi_{n \red 1}} \!\diam H_\pi \in \Obj \H^{r,c}_n$ with $\pi = ((i_1,j_1), \dots,
(i_m,j_m)) \in \seq\G_n$ an arbitrary sequence, there exists an isomorphism $\phi_\pi:
H_{\pi_{n \red 1}} \!\diam H_\pi \to H_{\pi_{n \red 1}} \!\diam H_{\pi'}$ with $H_{\pi_{n
\red 1}} \!\diam H_\pi' \in \Psi_n(\Obj \S^c_n)$, that is $i'_k > j'_k$ for $k = 1, \dots,
m$.

\begin{Figure}[b]{equivalence10/fig}
{}{Definition of $\phi_\pi$ when $i_k = j_k < n$}
\centerline{\fig{}{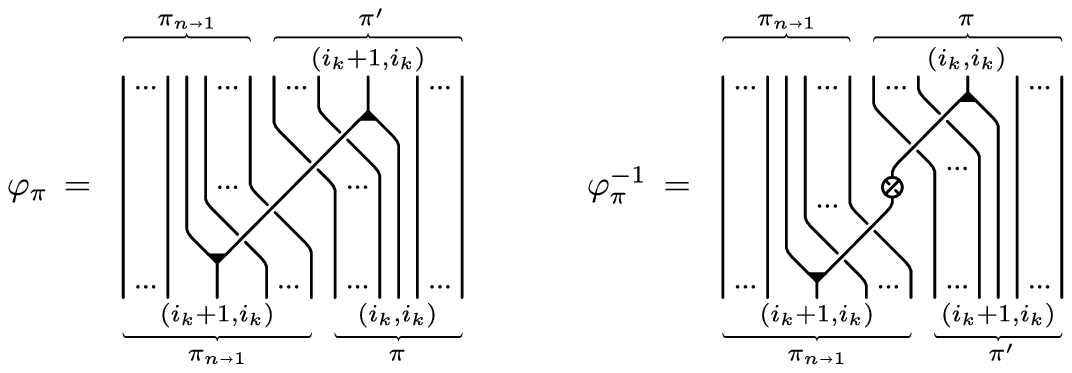}}
\vskip-6pt
\end{Figure}

\begin{Figure}[htb]{equivalence11/fig}
{}{Definition of $\phi_\pi$ when $i_k = j_k = n$.}
\vskip3pt
\centerline{\fig{}{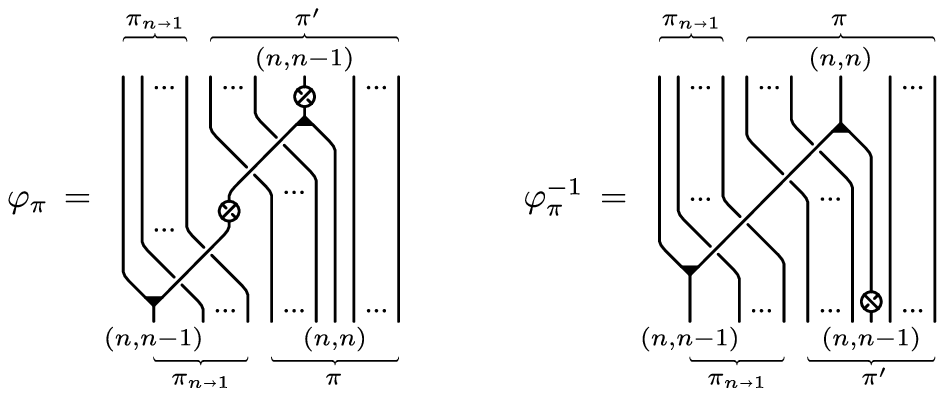}}
\vskip-6pt
\end{Figure}

We call an element $(i_l,j_l) \in \pi$ {\sl bad} if $i_l \leq j_l$. Then, we proceed by
induction on the number of bad elements of $\pi$. The inductive step is provided by the
following claim: if the number of bad elements in $\pi$ is $s > 0$, then there is an
isomorphism $\phi_\pi: H_{\pi_{n \red 1}} \!\diam H_\pi \to H_{\pi_{n \red 1}} \!\diam
H_{\pi'}$, where the number of bad elements in $\pi'$ is $s-1$.

To prove the claim, suppose that $m > 1$ and that the first bad element in $\pi$ is
$(i_k,j_k)$. If $i_k < j_k$ the desired isomorphism $\phi_\pi$ is given by the product of
identity morphisms and the antipode applied on the $k$-th string of $H_\pi$. If $i_k = j_k
< n$ the isomorphism $\phi_\pi$ and its inverse $\phi_\pi^{-1}$ are presented in Figure
\ref{equivalence10/fig}, while if $i_k = j_k = n$ they are presented in Figure
\ref{equivalence11/fig}. The identities $\phi_\pi^{-1} \circ \phi_\pi = \id_\pi$ and
$\phi_\pi \circ \phi_\pi^{-1} = \id_{\pi'}$ directly follow from the axioms \(s1-1') in
Table \ref{table-Hdefn/fig}.
\end{proof}

We are ready now to state the main theorem of this section, proving that the universal
algebraic category $\H^r = \H^r_1= \H^{r,c}_1$ is equivalent to the category of relative
cobordisms of 4-dimensional 2-handlebodies. The elementary diagrams and defining relations
are of $\H^r$ have been collected in Tables \ref{table-Hr1diags/fig} and
\ref{table-Hr1axioms/fig}. Of course, they are simply the specializations of the ones in
Tables \ref{table-Hdefn/fig}, \ref{table-Hu/fig}, \ref{table-Huvdefn/fig} and
\ref{table-Hr/fig} to the case of the trivial groupoid $\G_1$. In this case, all labels 
are equal to $(1,1)$, and are therefore omitted. 
Moreover, we remind that in this case the adjoint morphisms $\alpha:H\diam H\to H$ (resp.
$\alpha':H \diam H \to H$) define the left (resp. right) adjoint action of $H$ on itself,
and the two addition ribbon axioms \(r8) and \(r9) can be equivalently expressed in terms
of such action as shown in Table \ref{table-Hr1adjoint/fig} (cf. Proposition
\ref{adjoint/thm}).

\begin{theorem}\label{alg-kirby-eq/thm}
The functor $\Phi_n: \H_n^{r,c} \to \K_n^c$ is a category equivalence for any $n\geq 1$.
In particular, the universal ribbon Hopf algebra $\H^r$ is equivalent to the category
$\,\Chb^{3+1}$ of relative cobordisms of 4-dimensional 2-handle\-bodies.
\end{theorem}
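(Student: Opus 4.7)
The plan is to reduce this theorem to the work already done in Theorem \ref{equivalence/thm} (which handles $n \geq 4$) and then propagate the result down to $n = 1, 2, 3$ using the reduction functors. For $n \geq 4$, the commutative diagram in Theorem \ref{equivalence/thm} reads $\Theta_n = \Phi_n \circ \Psi_n$ with both $\Theta_n: \S_n^c \to \K_n^c$ and $\Psi_n: \S_n^c \to \H_n^{r,c}$ being category equivalences; hence $\Phi_n$ itself is a category equivalence by the standard two-out-of-three principle for equivalences.

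For $k \in \{1,2,3\}$, I would fix some $n \geq 4$ (e.g.\ $n=4$) and invoke the commutative square from Proposition \ref{Hn-reduction/thm},
$$\down_k^n \circ \Phi_n \;=\; \Phi_k \circ \down_k^n,$$
where the vertical arrows $\down_k^n: \K_n^c \to \K_k^c$ and $\down_k^n: \H_n^{r,c} \to \H_k^{r,c}$ are category equivalences by Propositions \ref{K-reduction/thm} and \ref{Hn-reduction/thm} respectively (with quasi-inverses given by the stabilization functors $\up_k^n$ and the natural equivalences $\xi^{n \red k}$). Since three of the four functors in this square are equivalences and equivalences compose, $\Phi_k$ is also an equivalence; equivalently, up to natural equivalence one can write $\Phi_k \simeq \down_k^n \circ \Phi_n \circ \up_k^n$, which exhibits $\Phi_k$ as a composition of equivalences.

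For the ``in particular'' part, I would note that when $n=1$ the reducibility condition is vacuous: the sequence $\pi_{1 \red 1}$ is empty, so $\H_1^{r,c} = \H_1^r = \H^r$ and $\K_1^c = \K_1$. Combining the equivalence $\Phi_1: \H^r \to \K_1$ just established with the equivalence $\K_1 \simeq \Chb^{3+1}_1 = \Chb^{3+1}$ given by Proposition \ref{K-category/thm}, one obtains the asserted equivalence between $\H^r$ and $\Chb^{3+1}$.

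There is no genuine obstacle in this final step; all the hard work has been carried out beforehand, in particular the fullness of $\Psi_n: \S_n^c \to \H_n^{r,c}$ for $n \geq 3$ (Proposition \ref{full-psi/thm}) together with the equivalence $\Theta_n: \S_n^c \to \K_n^c$ for $n \geq 4$ (Theorem \ref{ribbon-kirby/thm}). The only minor care needed is to verify that the commutative square from Proposition \ref{Hn-reduction/thm} genuinely intertwines $\Phi_n$ with $\Phi_k$ at the level of the reduced subcategories $\H_n^{r,c}$ and $\K_n^c$, which is immediate since both reduction functors were defined so as to respect the subcategories of $1$-reducible morphisms.
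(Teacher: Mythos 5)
Your proposal is correct and follows essentially the same route as the paper: equivalence for $n\geq 4$ from Theorem \ref{equivalence/thm}, propagation to $n\leq 3$ via the commutative square of Proposition \ref{Hn-reduction/thm} with the reduction/stabilization equivalences, and then Proposition \ref{K-category/thm} for the identification of $\K_1$ with $\Chb^{3+1}$. No gaps.
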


\begin{proof}
According to Theorem \ref{equivalence/thm}, $\Phi_n$ is a category equivalence for any $n
\geq 4$. Then, for any $1 \leq n \leq 3$ the commutative diagram in Proposition
\ref{Hn-reduction/thm}, implies that $\Phi_n \circ {\down_1^4} = {\down_1^4} \circ
\Phi_4$, where the reduction functors are known to be category equivalences by
Propositions \ref{K-reduction/thm} and \ref{H-reduction/thm}. Therefore $\Phi_n$ is a
category equivalence as well. In particular, for $n = 1$ we obtain that $\Phi_1: \H^r =
\H^r_1 \to \K = \K_1$ is a category equivalence. But according to Theorem
\ref{K-category/thm}, $\K$ is equivalent to $\Chb^{3+1}$, hence $\H^r$ is equivalent to
$\Chb^{3+1}$ as well.
\end{proof}

\begin{Table}[htb]{table-Hr1diags/fig}
{}{}
\centerline{\fig{}{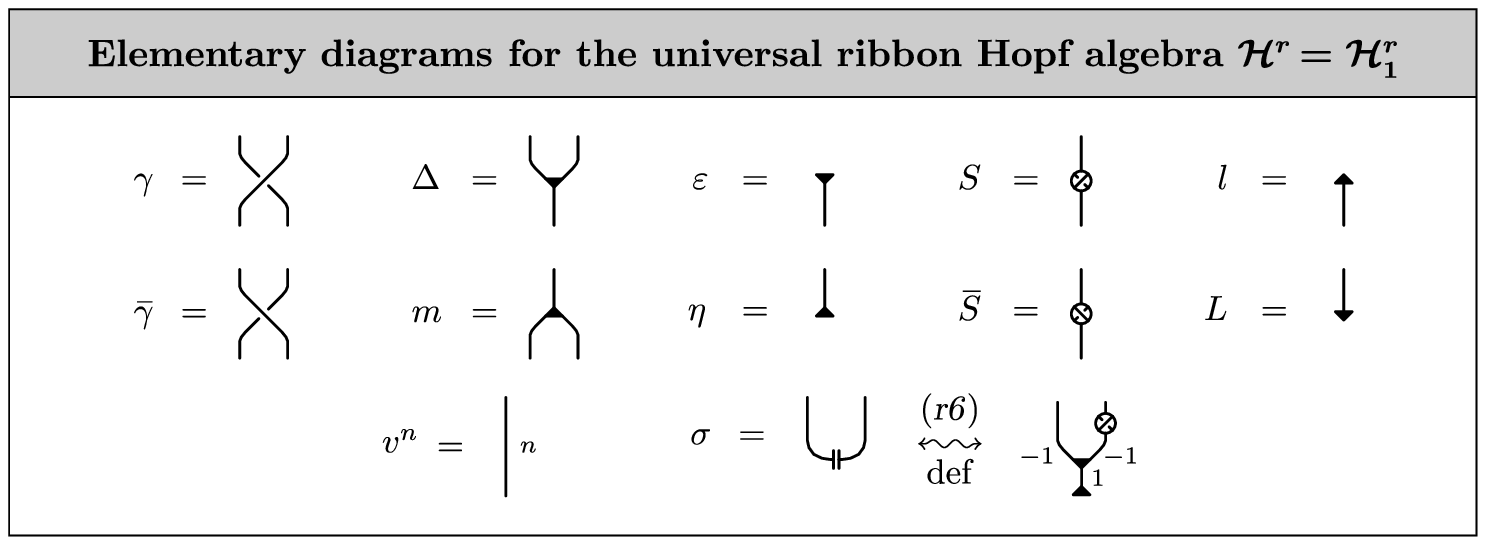}}
\vskip-3pt
\end{Table}

\begin{Table}[p]{table-Hr1axioms/fig}
{}{}
\centerline{\fig{}{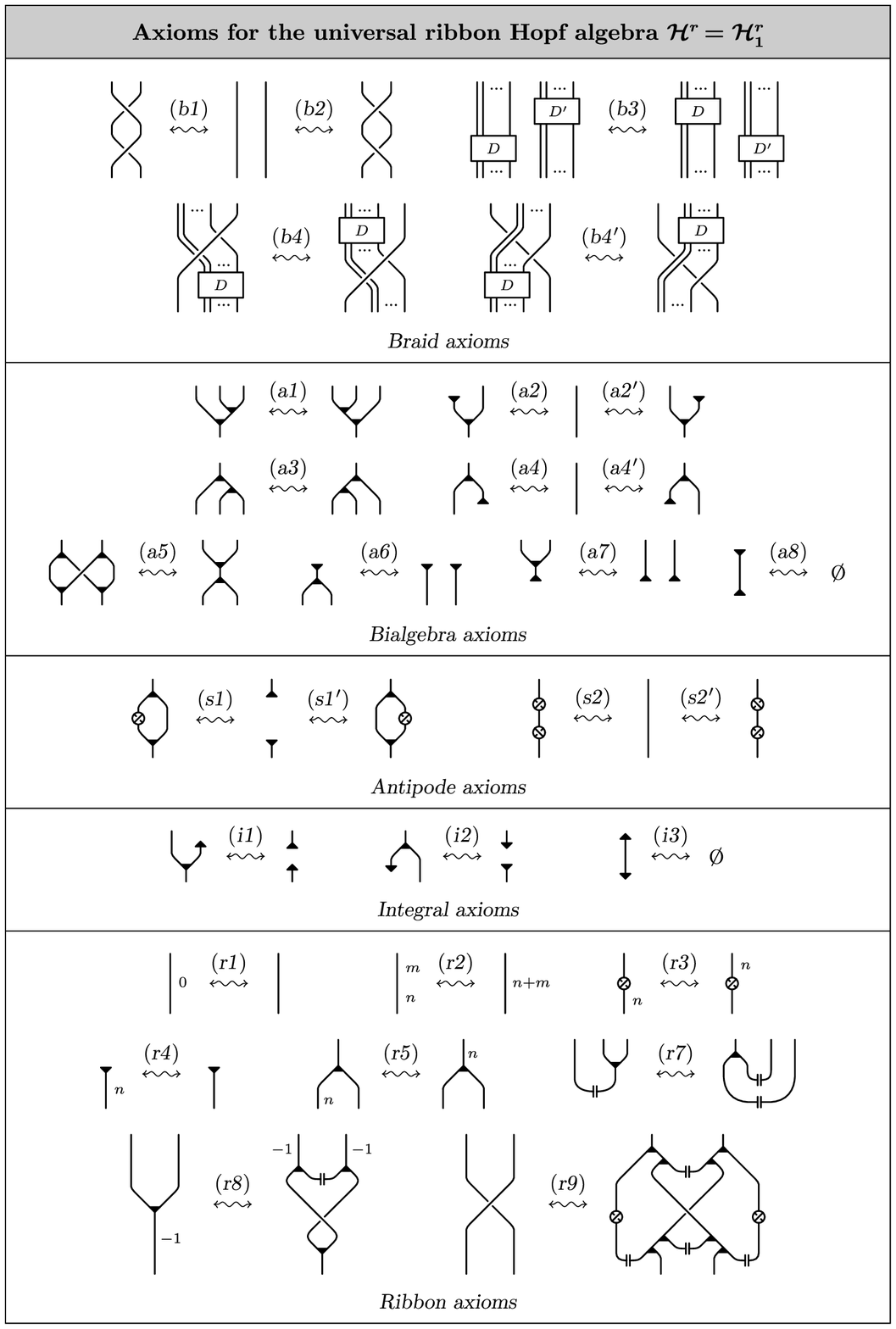}}
\vskip-3pt
\end{Table}

\begin{Table}[htb]{table-Hr1adjoint/fig}
{}{}
\centerline{\fig{}{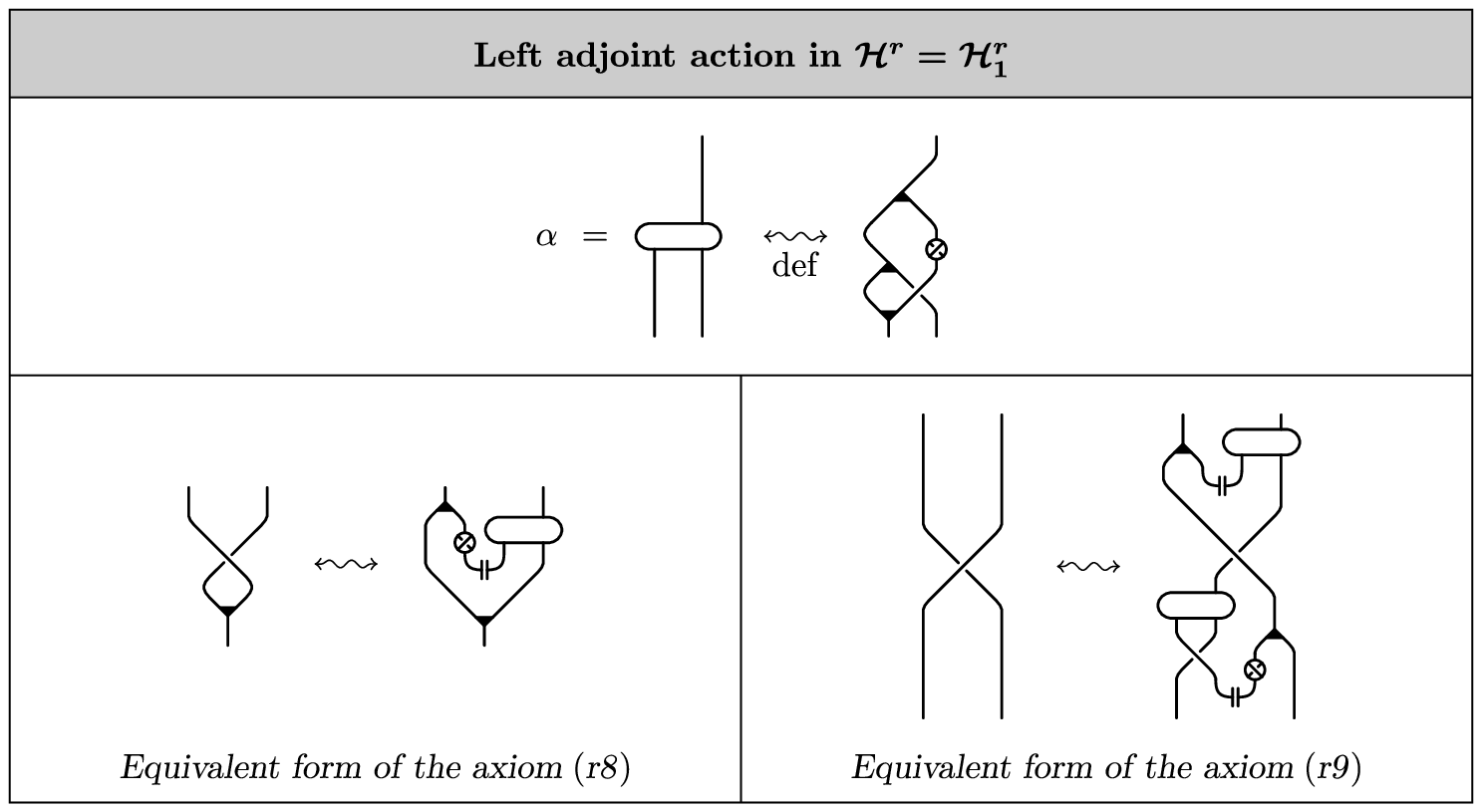}}
\vskip-3pt
\end{Table}

\begin{remark}
The category equivalence established in Theorem \ref{alg-kirby-eq/thm} implies that one
can use braided ribbon Hopf algebras to construct sensitive invariants of 4-dimensional
2-handlebodies. Of course, in order to be able to calculate such invariants, given a Kirby
tangle $K: I_{m_0} \to I_{m_1}$ in $\K = \K_1$, we need an explicit form for the morphism
$F_K$ in $\H^r$ such that $\Phi_1(F_K) = K$. One such form is given by $F_K = \down^3_1
\Psi (S_K)$, where $S_K$ was constructed in Section \ref{fullness/sec}, while $\Psi_n$ and
$\down^3_1$ were defined respectively in Section \ref{Psi/sec} and Section
\ref{reduction/sec}. Of course, such approach requires a good understanding of all
functors used in it, and it is quite long in practice.

\pagebreak

We will outline here a simpler procedure for constructing $F_K$, which is less explicit
but works quite well in many concrete examples.

We start by representing $K$ by a strictly regular planar diagram and use the following
objects (and notations) from the indicated steps of the construction of the surface $S_K$
in Section \ref{fullness/sec}:
\begin{itemize}
\item[1)] the disk $D_1, \dots, D_r$ spanned by the dotted unknots of $K$ and framed link 
$L = L_1 \cup \dots \cup L_s$ introduced in step 1;
\item[2)] a trivial state for the diagram of the unframed link $|L|$ and the framed link
$L' = L'_1 \cup \dots \cup L'_s$, whose corresponding unframed link $|L'|$ is represented
by that trivial state, together with the family of cylinders $F_1, \dots, F_l$ inside
which the trivializing crossing changes take place, all considered in step 3;
\item[3)] a family $A_1, \dots, A_s$ of disjoint spanning disks for the components
$|L'_1|, \dots, |L'_s|$ of the trivial link $|L'|$, as in step 6.
\end{itemize}
The $A_j$'s can be assumed to intersect each cylinder $F_i$ as depicted in the left side
of Figure \ref{xi01/fig} (disregard the disk $C_i$). Then, we can extend them by
introducing one clasp inside each $F_i$, to give a family $\hat A_1, \dots, \hat A_s$ of
spanning disks for the original link $|L|$. Actually, in doing that we possibly introduce
other singularities, which consist of two double arcs meeting at a triple point along the
clasp intersection for each horizontal subdisk of the $A_j$'s inside the $F_i$'s. By
suitable finger moves at the interior of the disks, we can eliminate all the triple points
to leave only ribbon intersections, and then transform each ribbon intersection into a
pair of new clasps.

Moreover, we can assume that the $\hat A_j$'s form with the $D_i$'s only clasp and ribbon
intersections, like in the right side of Figure \ref{xi01/fig}. Also in this case, each
ribbon intersection can be transformed into a pair of new clasps, by a finger move as 
above.

Finally, if $L_j$ is the closure of an open framed component in $K$ joining $a'_{m_c,i_j}$ 
to $a''_{m_c,i_j}$ with $c = 0,1$, then we assume that the portion of $\hat A_j$ outside 
the box $K$ in Figure \ref{kirby-ribbon02/fig} is flat and cut off it (cf. 
Figure \ref{phi-inv01/fig} \(a)).

We still denote by $D_1, \dots, D_r$ and $\hat A_1, \dots, \hat A_s$ the disks resulting 
after the above modifications, which form only clasp singularities as the ones in Figure 
\ref{phi-inv01/fig} \(b).

\begin{Figure}[htb]{phi-inv01/fig}
{}{}
\centerline{\fig{}{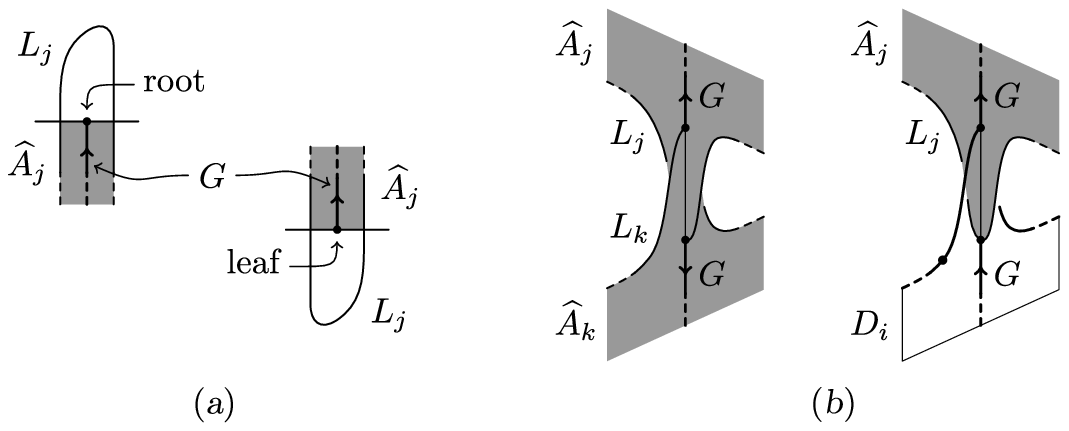}}
\vskip-3pt
\end{Figure}

Now, we consider an oriented graph $G$ consisting of a rooted uni/tri-valent tree embedded
in each of such disks. For the tree $T$ inside any disk, we require that (cf. Figure
\ref{phi-inv01/fig}): the root of $T$ belongs to the boundary of the disk and coincides
the middle point of the segment at the target level if such segment exists (in the case of
a disk $A_j$); the leaves of $T$ include all the end points of the clasp arcs in the
interior of the disk and the middle point of the segment at the source level if such
segment exists (in the case of a disk $A_j$); $T$ does not meet the boundary of the disk
and the clasp arcs at any point other than those already mentioned. Moreover, we orient
the edges of $T$ towards the leaves in the case of a disk $D_i$ and towards the root in
the case of a disk $\hat A_j$.

Let $\bar G$ be the oriented graph obtained by adding to $G$ all the clasp arcs as new
oriented edges. The orientation of each new edge goes from the tree of $D_i$ to that of
$\hat A_j$ if the corresponding clasp involves such disks, while it is arbitrary otherwise
(if the clasp involves two $A_j$'s).

\begin{Figure}[b]{phi-inv02/fig}
{}{}
\centerline{\fig{}{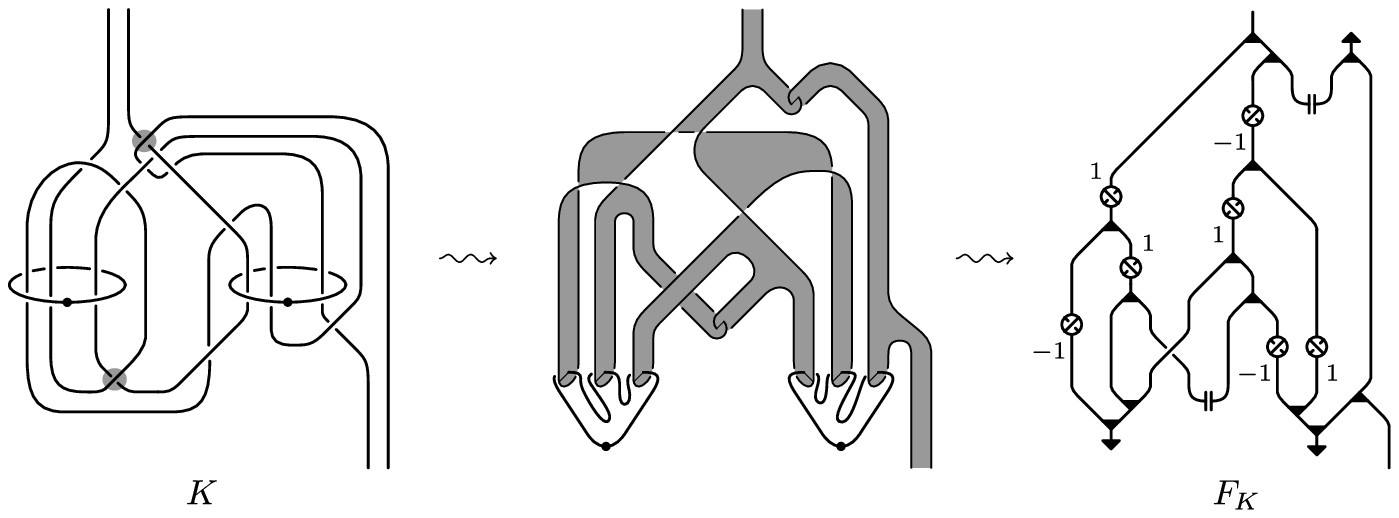}}
\vskip-3pt
\end{Figure}

At this point, a suitable ambient isotopy of $E \times [0,1]$ fixing $E \times \{0,1\}$
allows us to put the oriented graph $\bar G$ in regular position with respect to the
projection plane, in such a way that $y$-coordinate is increasing along the projection of
each oriented edge, and then to deform the union $D_1 \cup \dots \cup D_r \cup \hat A_1
\cup \dots \cup \hat A_s$ to a narrow regular neighborhood $N$ of $\bar G$ in it.

\pagebreak

As a result of such ambient isotopy, the original planar diagram of $K$ is transformed
into an equivalent planar diagram, which can be easily expressed as a composition of
expansions of Kirby tangles as in Figures \ref{phi01/fig} and \ref{phi02/fig}, after
insertion of canceling pairs of 1/2-handles.

Then, the morphism $F_K$ is given by the analogous product of expansions of the
corresponding elementary morphisms of $\H^r_1$ in the same figures, whose images under
$\Phi_1$ are those Kirby tangles. In particular, each claps intersections between two
$A_j$'s gives rise to a copairing morphism in $F_K$.

Figure \ref{phi-inv02/fig} shows the morphism $F_K$ for a very simple Kirby tangle $K$.
This is represented by the leftmost diagram, where the trivializing crossings are
encircled by a small gray disk. The intermediate diagram represent the regular
neighborhood $N$ of the graph $G$ in the construction above.
\end{remark}

\newpage

\section{3-dimensional cobordisms as boundaries%
\label{boundaries/sec}}

This chapter is mainly aimed to prove the Kerler's conjecture (stated in \cite{Ke02}),
that the category of 2-framed 3-dimensional relative cobordisms $\Cobt^{2+1}$ admits a
purely algebraic characterization in terms of a universal algebraic category generated by 
a Hopf algebra object.

Here, we think of $\Cobt^{2+1}$ as the quotient category of $\Chb_1^{3+1}$ modulo
1/2-handle trading. This point of view provides a canonical monoidal functor $\Chb_1^{3+1}
\to \Cobt^{2+1}\,$, through which we will derive the proof of the Kerler's conjecture from
the results of the previous chapter.

An analogous algebraic characterization will be also given for the category of
3-dimensional relative cobordisms $\Cob^{2+1}$, as the further quotient of
$\Cobt^{2+1}$ modulo blowing down/ups.

\subsection{The categories of relative cobordisms $\bfCobt^{2+1}_n$ and $\Cob^{2+1}_n$%
\label{Cob/sec}}

We recall from Section \ref{Chbn/sec} that the objects of $\,\Chb_n^{3+1}$ have been
restricted to be the standard 3-dimensional 1-handlebodies $M^n_\pi$ described in
Definition \ref{standard-hb/def}, where $n$ is the number of 0-handles and $\pi \in \seq
\G_n$. To such an $M^n_\pi$ is associated its front boundary $F^n_\pi = \partial M^n_\pi$,
which is a standard compact oriented surface with marked $(S^1_1 \sqcup \dots \sqcup
S^1_n)$-boundary. In fact, $\Bd F^n_\pi$ has $n$ components, one for each 0-handle of
$M^n_\pi$, and the given numbering of these 0-handles induces the marking of $\Bd F^n_\pi$
(which depends only on $n$ and not on $\pi$).

Moreover, for any morphism $W: M_0 \to M_1$ in $\,\Chb_n^{3+1}$, which is a 4-dimensional
relative 2-handlebody build on $X(M_0,M_1)$, the front boundary $\partial W$ can be seen
as a 3-dimensional relative cobordism $\partial W: \partial M_0 \to \partial M_1$ between
surfaces with marked boundary. Actually, $X(M_0,M_1)$ itself can be seen as such a 
3-dimensional cobordism, and $W$ represents a relative cobordism, build up with only 1- 
and 2-handles, between $X(M_0,M_1)$ and $\partial W$ as oriented 3-manifolds with marked
boundary.

\medskip

Now, consider the handle trading and blowing up/down moves on $W$, whose description in
terms of $n$-labeled Kirby tangle is given in Figure \ref{boundary01/fig}. These are
nothing else than the labeled versions of the well-known Kirby calculus moves and like
them preserve the boundary of $W$. In particular, they preserve the 3-dimensional relative
cobordism $\partial W: \partial M_0 \to \partial M_1$.

\begin{Figure}[htb]{boundary01/fig}
{}{Kirby calculus moves}
\centerline{\fig{}{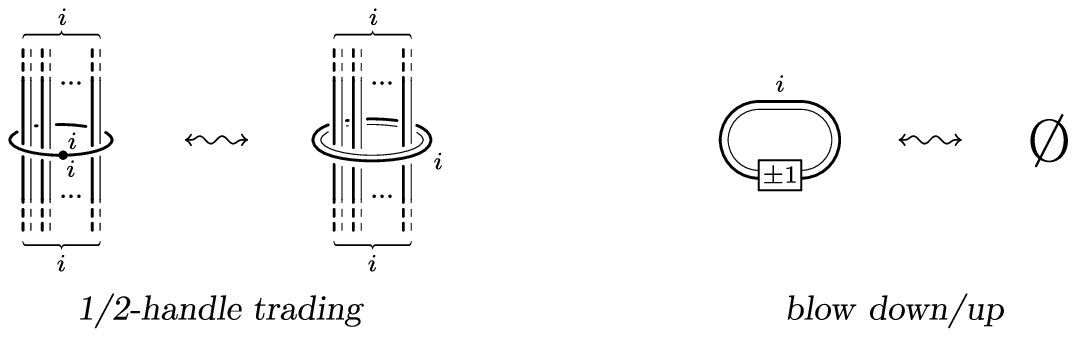}}
\vskip-3pt
\end{Figure}

As discussed in Section \ref{handles/sec}, a 1/2-handle trading changes $W$ into a new
4-dimen\-sional relative 2-handlebody $\bar W$, which is related to $W$ by a relative
5-dimensional cobordism of 4-manifolds with marked boundary. Then, 1/2-handle trading also
preserves the signature of $W$, i.e. $\sigma(\bar W) = \sigma(W)$. According to
\cite{At90} (cf. \cite{Wa91, Ke99, KL01}), the extra information given by $\sigma(W)$ can
be interpreted as a 2-framing on $\partial W$, i.e. a homotopy class of trivializations
of $TM \oplus TM$ as a $\Spin(6)$ bundle. So, we can say that 1/2-handle trading preserves
$\partial W$ as a 2-framed 3-dimensional relative cobordism. On the contrary, a blowing up
transforms $W$ into the connected sum with $W \mathbin{\#} \pm CP^2$, hence it changes the
signature by $\pm1$.

\medskip

In the light of the above considerations, for any $n \geq 1$ we define the category of 
2-framed 3-dimensional relative cobordisms $\Cobt_n^{2+1}$ and the category of 
3-dimensional relative cobordisms $\Cob_n^{2+1}$, directly as quotients of $\Chb_n^{3+1}$.

Namely, the objects of both the categories $\Cobt_n^{2+1}$ and $\Cob_n^{2+1}$ are those of
$\Chb_n^{3+1}$. The morphisms of $\Cobt_n^{2+1}$ are equivalence classes of morphisms of
$\Chb_n^{3+1}$ under the equivalence relation generated by 1/2-handle trading, while the
morphisms of $\Cob_n^{2+1}$ are equivalence classes of morphisms of $\Chb_n^{3+1}$ under
the equivalence relation generated by 1/2-handle trading and blowing down/up.

Notice that composition and product of cobordisms are preserved by 1/2-handle trading and
blow down/up moves, since these take place in the interior leaving the boundary unchanged.
Therefore, $\Cobt_n^{2+1}$ and $\Cob_n^{2+1}$ inherit from $\Chb_n^{3+1}$ a strict
monoidal structure, whose product will be still denoted by $\,\diam\,$, and we have
monoidal quotient functors $$\Chb_n^{3+1} \to \,\Cobt_n^{2+1} \to \,\Cob_n^{2+1}\,.$$
\medskip

In particular, the categories $\Cobt^{2+1} = \Cobt_1^{2+1}$ and $\Cob^{2+1} =
\Cob_1^{2+1}$ are respectively equivalent to the categories $\widetilde{\text{\boldmath
$Cob$}}$ and {\boldmath $Cob$} introduced in \cite{KL01}, admitting the same presentations
in terms of elementary morphisms and relations given for those categories in Chapter 4 of
\cite{KL01} (cf. \cite{Ke02}, where the notation {\boldmath $\cal Cob$} is used for
$\Cobt^{2+1}$).

\medskip

The definitions of $\Cobt_n^{2+1}$ and $\Cob_n^{2+1}$ as quotients of $\Cob_n^{3+1}$ are 
enough for our present aim to prove the equivalence of $\Cobt^{2+1}$ and $\H^r$.

Such definitions are motivated by the fact that the classical Lickorish-Rohlin-Wallace's
theorem \cite{Li62,Ro51,Wa60} asserting that any closed oriented 3-manifold is the
boundary of a 4-dimensional 2-handlebody, and the Kirby calculus \cite{Ki78} relating any
two such handlebodies with diffeomorphic boundaries, can be adapted to the context of the
4-dimensional relative 2-handlebody cobordisms in $\Cob_n^{3+1}$. As a consequence,
$\Cobt_n^{2+1}$ and $\Cob_n^{2+1}$ can be identified as the categories of all the
(2-framed) 3-dimen\-sional cobordisms between surfaces with $n$-component marked boundary
and given 3-dimensional filling, up to diffeomorphisms preserving (the 2-framing and) the
fillings. In fact, the case of $n = 1$ is treated in \cite{KL01} (cf. comment 1. in 0.4.1
at page 8 of the same reference for $n > 1$).

\subsection{The quotient categories $\Kb_n$, $\Kbb_n$, $\Sb_n$ and $\Sbb_n$%
\label{quotients/sec}}

We want to define the quotient categories $\Kb_n$ and $\Kbb_n$ of $\K_n$ corresponding 
to $\Cobt_n^{2+1}$ and $\Cob_n^{2+1}$ respectively. Before doing that, we see how to 
eliminate some redundancy in the 1/2-handle trading and blow down/up moves.

\begin{lemma}\label{red-bdmoves/thm}
Modulo 2-handle sliding, any 1/2-handle trading can be reduced to one where the involved 
1-handle is a canceling one, as in Figure \ref{boundary03/fig} \(a). Modulo 2-handle 
sliding and 1/2-handle trading, positive and negative blow down/up are inverse to one 
another.
\end{lemma}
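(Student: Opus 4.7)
The lemma has two parts, which I would address separately.

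For part 1, I would start by recalling that a general 1/2-handle trading (as depicted in Figure \ref{boundary01/fig}) replaces a trivially attached 1-handle, represented by a dotted unknot $C$, by a trivially attached 2-handle realized as a $0$-framed circle along the same curve, with arbitrarily many framed components threading through $C$. The canceling version in Figure \ref{boundary03/fig}(a) is the special case where $C$ has been paired with a canceling $0$-framed meridian and nothing else passes through it. The strategy is to reduce the general case to this canonical one by first adding a canceling 1/2-pair $(C', \mu')$ adjacent to $C$ (an allowed move in $\K_n$ by Definition \ref{kt-equivalence/def}), then using 2-handle slidings to transfer the framed components passing through $C$ onto $C'$ via $\mu'$, so that the configuration at $C$ acquires exactly the canonical canceling form. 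After applying the canceling 1/2-handle trading in this local picture, the preparatory slidings are undone, producing precisely the result of the general trading.

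For part 2, the claim is that, modulo 2-handle slidings and 1/2-handle tradings, the composition of a positive and a negative blow up is the identity. Concretely, this amounts to showing that a disjoint pair consisting of a $+1$-framed unknot $K_+$ and a $-1$-framed unknot $K_-$ can be eliminated using only these two families of moves together with those already present in $\K_n$. My plan is to write down an explicit sequence: sliding $K_+$ over $K_-$ in the orientation that cancels framings gives a new $0$-framed unknot $K_+'$ linked once with $K_-$; applying 1/2-handle trading to $K_+'$ converts it into a dotted unknot $C$ linked once with the still $-1$-framed $K_-$; then further 2-handle slidings, making use of a temporarily introduced auxiliary canceling 1/2-pair, adjust the framing of $K_-$ by $+1$ so that it becomes a genuine $0$-framed canceling meridian of $C$; at this point the entire configuration reduces to a canceling 1/2-pair and can be eliminated by the moves of $\K_n$.

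The only genuinely delicate point in the argument is the framing bookkeeping in part 2: after the initial slide-and-trade the surviving framed component has framing $\pm 1$ rather than $0$, so it is not yet a bona-fide canceling partner for the dotted unknot, and matching the framing requires one or two further slidings over an auxiliary canceling pair. The main obstacle is thus to carry out this framing correction while keeping track of linking numbers under the successive band sums; once the correct sequence is in place, the final cancellation and the verification of part 1 are both routine applications of the standard moves listed in Definition \ref{kt-equivalence/def}.
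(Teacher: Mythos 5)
Your Part~1 contains a genuine gap, in fact two. First, the mechanism: sliding the framed components that pass through $C$ over the meridian $\mu'$ of a newly added canceling pair $(C',\mu')$ only makes them pass \emph{in addition} through $C'$; it does not remove any pass through $C$. A 2-handle slide changes a strand's intersection with the spanning disk of a dotted circle only according to how the handle slid over meets that disk, and $\mu'$ runs over $C'$, not over $C$. The only way to strip strands off a 1-handle is to slide them over a 2-handle that already runs over that same 1-handle -- which is precisely what the trading is supposed to manufacture, so the proposed preparation is circular. Second, and more fundamentally, the normal form you aim at (the 1-handle paired with a small $0$-framed meridian and \emph{nothing else} passing through it) is too weak to generate the general move: trading such a completely split canceling pair just replaces the empty configuration by a split $0$-framed Hopf link, i.e.\ it changes the 4-dimensional handlebody by a boundary connected sum with $(S^2\times S^2) \setminus B^4$, which preserves $H_1$ (and $\pi_1$); a general 1/2-handle trading need not -- trading the bare dotted unknot turns $S^1\times B^3$ into $S^2\times D^2$. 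Hence no amount of 2-handle sliding can reduce a general trading to tradings of fully split canceling pairs, and the configuration of Figure \ref{boundary03/fig} (a) has to be read as keeping the canceling 2-handle running through the 1-handle, that 2-handle being otherwise unconstrained. The paper's proof (the diagram sequence of Figure \ref{boundary02/fig}) is organized around exactly this: the remaining strands are slid over the canceling 2-handle of the traded 1-handle, with the derived 1-handle moves of Figures \ref{kirby-tang02/fig} and \ref{kirby-tang05/fig} available to set the configuration up, rather than being ``transferred'' onto an auxiliary 1-handle.

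Your Part~2 is essentially the right argument, and it is the route the second half of Figure \ref{boundary02/fig} takes: slide the $+1$-framed unknot over the $-1$-framed one to get a $0$-framed unknot linking it once, trade that $0$-framed unknot back into a dotted circle, and cancel. The one slip is the proposed framing correction: sliding $K_-$ over the meridian of a temporarily introduced canceling 1/2-pair changes neither its framing nor any linking number (that meridian is a $0$-framed unknot linking nothing else), so it cannot ``adjust the framing by $+1$''. No adjustment is needed anyway: a dotted circle with a single framed unknot passing through it geometrically once is a canceling pair for any framing, since a twist on the 1-handle (Figure \ref{kirby-tang02/fig}) changes that framing at will before cancelation.
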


\begin{proof}
See Figure \ref{boundary02/fig}.
\end{proof}

\begin{Figure}[htb]{boundary02/fig}
{}{Proof of Lemma \ref{red-bdmoves/thm}}
\vskip-6pt
\centerline{\fig{}{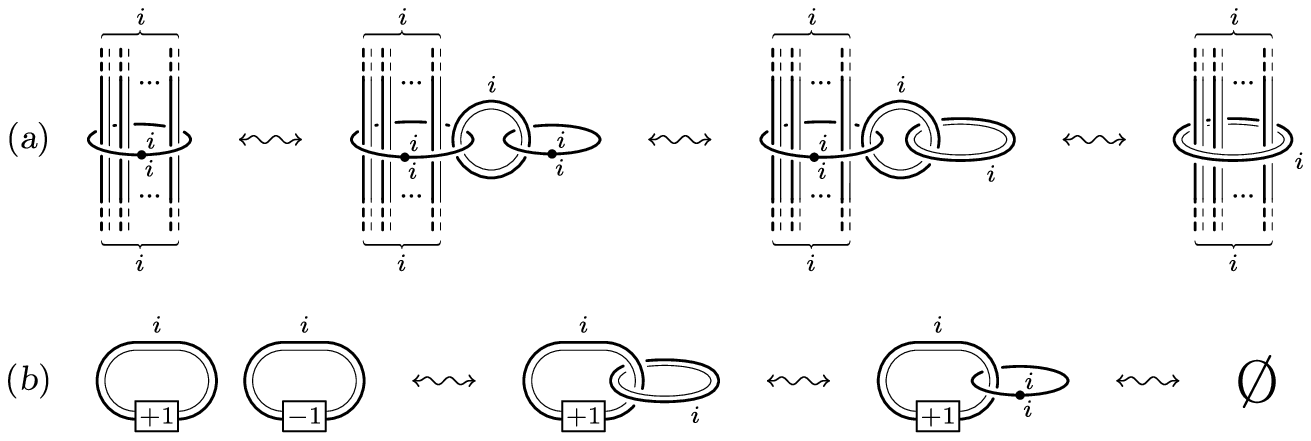}}
\vskip-3pt
\end{Figure}

According to the previous lemma, we define $\Kb_n$ and $\Kbb_n$ as follows. The objects of
both $\Kb_n$ and $\Kbb_n$ coincide with those of $\K_n$. The morphisms of $\Kb_n$
are equivalence classes of morphisms of $\K_n$ modulo the move in Figure
\ref{boundary03/fig} \(a), while the morphisms of $\Kbb_n$ are equivalence classes
of morphisms of $\K_n$ modulo both moves in the same Figure \ref{boundary03/fig}.

\begin{Figure}[htb]{boundary03/fig}
{}{Additional moves in $\Kb_n$ and $\Kbb_n$}
\vskip-3pt
\centerline{\fig{}{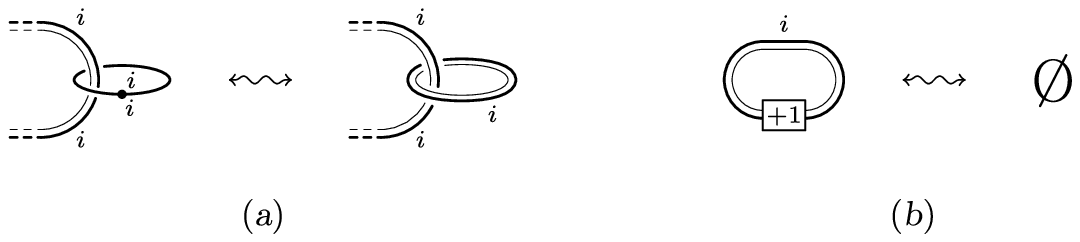}}
\vskip-3pt
\end{Figure}

Since the moves in Figure \ref{boundary03/fig} involve only closed components of the 
diagram, they preserve composition and product of $n$-labeled Kirby tangles. 
Hence, $\Kb_n$ and $\Kbb_n$ inherit from $\K_n$ a strict monoidal structure, whose product 
will be still denoted by $\,\diam\,$, and we have monoidal quotient functors
$$\K_n \to \,\Kb_n \to \,\Kbb_n\,.$$
\medskip

\begin{proposition}\label{Kb-category/thm}
The equivalence of monoidal categories $\,\K_n \cong \Chb_n^{3+1}$ given by Proposition
\ref{K-category/thm} induces equivalences of monoidal categories on the quotients $\,\Kb_n
\cong \Cobt_n^{2+1}$ and $\,\Kbb_n \cong \Cob_n^{2+1}$.
\end{proposition}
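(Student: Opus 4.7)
The plan is to observe that the equivalence $\K_n \cong \Chb_n^{3+1}$ of Proposition \ref{K-category/thm}, induced by the map $K \mapsto W_{T_K}$, descends to the quotients in a completely formal way once we match the generating relations on both sides. First I would check that the two moves in Figure \ref{boundary03/fig} are valid when interpreted on the handlebody side: move \(a) represents the replacement of a trivially attached 1-handle (a canceling 1/2-pair with a small unknotted 2-handle) by a trivially attached 2-handle, i.e.\ a 1/2-handle trading in the sense of Section \ref{handles/sec}; move \(b) is the standard blowing down/up, which changes $W$ into $W \mathbin{\#} \pm CP^2$. Both take place in the interior of the cobordism and leave the marked boundary untouched, so in particular the functor $\K_n \to \Chb_n^{3+1} \to \Cobt_n^{2+1}$ kills move \(a), and $\K_n \to \Chb_n^{3+1} \to \Cob_n^{2+1}$ kills both \(a) and \(b). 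This gives well-defined monoidal functors $\Kb_n \to \Cobt_n^{2+1}$ and $\Kbb_n \to \Cob_n^{2+1}$.

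Next I would establish surjectivity of these functors on morphisms, which is the content of Lemma \ref{red-bdmoves/thm} transported to the handlebody side. Namely, given a 1/2-handle trading in $\Chb_n^{3+1}$, Proposition \ref{0-handles/thm} together with the standard handle reordering allow us to assume that the trivially attached 1-handle is a canceling one, at which point it is precisely move \(a); similarly, after a 1/2-handle trading any negative blow-down can be converted into a positive one and vice-versa, so it suffices to include a single sign in move \(b). Hence every relation defining $\Cobt_n^{2+1}$ (resp.\ $\Cob_n^{2+1}$) as a quotient of $\Chb_n^{3+1}$ is already imposed in $\Kb_n$ (resp.\ $\Kbb_n$). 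Combined with the fact that the functor $\K_n \to \Chb_n^{3+1}$ is full and essentially surjective, this shows that the induced functors on the quotients are full and essentially surjective as well.

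For faithfulness, I would use the universal property of the quotient: if two $n$-labeled Kirby tangles $K_1, K_2 \in \K_n$ become equal in $\Cobt_n^{2+1}$ (resp.\ $\Cob_n^{2+1}$), then $W_{T_{K_1}}$ and $W_{T_{K_2}}$ are related by a finite sequence of 2-deformations together with 1/2-handle tradings (resp.\ and blowing down/ups). Each 2-deformation step is already an equality in $\K_n$ by Proposition \ref{kirby-tangle/thm}, and each 1/2-handle trading (resp.\ blow-down/up) step, by the reduction in Lemma \ref{red-bdmoves/thm}, can be realized inside $\K_n$ by 2-handle slidings followed by one instance of move \(a) (resp.\ \(a) or \(b)), hence it becomes trivial in the corresponding quotient. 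Therefore $K_1 = K_2$ in $\Kb_n$ (resp.\ $\Kbb_n$), which gives faithfulness.

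The one point that deserves genuine care, and which I expect to be the main obstacle, is the correspondence between move \(a) of Figure \ref{boundary03/fig} and the general notion of 1/2-handle trading on a cobordism $(W,\eta)$ that is not necessarily elementary: one must check that the reduction sketched in Figure \ref{boundary02/fig} of an arbitrary trivially attached 1-handle to a canceling one is compatible with the handle reordering process used to pass between $\Chb_n^{3+1}$ and its Kirby-tangle description, and in particular that 2-handle slidings in $\K_n$ suffice to absorb the auxiliary canceling pairs introduced in the reduction. Once this bookkeeping is in place, the three steps above combine to give the two asserted monoidal equivalences.
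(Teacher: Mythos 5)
Your proposal is correct and follows essentially the same route as the paper: the paper's proof simply notes that the result is an immediate consequence of Proposition \ref{K-category/thm} and Lemma \ref{red-bdmoves/thm}, since the tangle moves of Figures \ref{boundary02/fig} and \ref{boundary03/fig} already interpret 1/2-handle trading and blowing down/up under the equivalence $\K_n \cong \Chb_n^{3+1}$. Your more explicit three-step organization (well-definedness, fullness, faithfulness) and the closing remark about reducing a general trading to the canceling-1-handle case are exactly the content the paper delegates to that lemma and to the cited figures.
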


\begin{proof}
This is an immediate consequence of Proposition \ref{K-category/thm} and Lemma
\ref{red-bdmoves/thm}, taking into account that the labeled Kirby tangle moves in Figures
\ref{boundary02/fig} and \ref{boundary03/fig} already interpret 1/2-handle trading and
blow down/up under the equivalence of $\K_n$ and $\Chb_n^{3+1}$ in Proposition 
\ref{K-category/thm}.
\end{proof}

Given $n > k \geq 1$, we define the subcategories $\Kb_{n \red k} \subset \Kb_n$ and
$\Kbb_{n \red k} \subset \Kbb_n$ to be the images of $\K_{n \red k} \subset \K_n$ under
the quotient functors. Since the additional moves in Figure \ref{boundary03/fig} involve
only uni-labeled components, when applied to a reducible Kirby tangle $K$ as in Figure
\pagebreak
\ref{kirby-stab02/fig}, they essentially take place inside the box $L$.
Then, $\Kb_{n \red k}$ and $\Kbb_{n \red k}$ can also be though as quotients of $\K_{n
\red k}$, and we have the following commutative diagram of inclusion and quotient
functors.
$$
\begin{array}{ccccc}
\K_{n \red k} & \to & \Kb_{n \red k} & \to & \Kbb_{n \red k}\\[2pt]
\cap & & \cap & & \cap\\[2pt]
\K_n & \to & \Kb_n & \to & \Kbb_n\end{array}
$$

\begin{proposition}\label{Kb-reduction/thm}
For any $n > k \geq 1$, the category equivalences $\up_k^n: \K_k \to \K_{n \red k}$ and
$\down_k^n: \K_{n \red k} \to \K_k$ given by the stabilization and reduction functors
induce well-defined functors on the quotient categories
\vskip-16pt
\begin{eqnarray*}
&\up_k^n: \Kb_k \to \Kb_{n \red k} \text{ \ and }\,
&\up_k^n: \Kbb_k \to \Kbb_{n \red k}\,,\\[2pt]
&\down_k^n: \Kb_{n \red k} \to \Kb_k \text{ \ and }\,
&\down_k^n: \Kbb_{n \red k} \to \Kbb_k\,.
\end{eqnarray*}
Moreover, $\down_k^n \circ \up_k^n$ is equal to $\id_{\Kb_k}$ (resp. $\id_{\Kbb_k}$) 
while $\up_k^n \circ \down_k^n$ is naturally equivalent to $\id_{\Kb_{n \red k}}$ (resp. 
$\id_{\Kbb_{n \red k}}$). Then, $\up_k^n$ and $\down_k^n$ are category equivalence.
\end{proposition}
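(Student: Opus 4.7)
The plan is to establish that $\up_k^n$ and $\down_k^n$ descend well-defined to the quotients, and then use the already-existing relations ${\down_k^n} \circ {\up_k^n} = \id_{\K_k}$ and $\xi^{n \red k}\colon {\up_k^n} \circ {\down_k^n} \simeq \id_{\K_{n \red k}}$ from Proposition~\ref{K-reduction/thm} to transport them to the quotients, obtaining category equivalences via Proposition~\ref{cat-equiv/thm}. By the definitions of $\Kb_{n \red k}$ and $\Kbb_{n \red k}$ as images of $\K_{n \red k}$ under the quotient functors, it suffices to handle the $\K_k \leftrightarrow \K_{n \red k}$ pair, since by induction on $n-k$ via $\up_k^n = \up_{n-1}^n \circ \up_k^{n-1}$ and similarly for $\down_k^n$, we reduce to $k=n-1$.

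For the descent of $\up_k^n$ to the quotients, I would note that $\up_k^n(K) = \id_{\pi_{n \red k}} \!\diam\, \iota_k^n(K)$, and the two moves in Figure~\ref{boundary03/fig} take place inside a disk disjoint from the rest of the tangle and involve only a single label. Thus if $K$ and $K'$ differ in $\K_k$ by such a move, their stabilizations differ by the identical move performed on $\iota_k^n(K)$, which is still an instance of the same move in $\K_n$; reducibility is preserved because the added stabilizing ribbons $\id_{\pi_{n \red k}}$ are untouched. Hence $\up_k^n$ descends to $\Kb_k \to \Kb_{n \red k}$ and to $\Kbb_k \to \Kbb_{n \red k}$.

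For $\down_k^n$, writing $K = (\id_{(n,n-1)} \diam L) \circ (\Delta_{(n,n-1)} \diam \id_{\pi_0})$ as in Definition~\ref{K-reducible/def}, the formula $\down_{n-1}^n K = L^{(n,n-1)} \circ (\eta_{n-1} \diam \id_{\pi_0^{(n,n-1)}})$ applies the push-through functor $\_^{(n,n-1)}$ of Lemma~\ref{K-push/thm}. I would verify that this functor sends each of the two boundary moves in Figure~\ref{boundary03/fig} to an instance of the same move in $\K_{n-1}$: the local configuration involves only the uni-label being modified, and the push-through consists of crossing changes and polarization reversals which, when restricted to uni-labeled components disjoint from the reduction ribbons, act as the identity up to labeled isotopy. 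The trailing $\eta_{n-1}$ is unaffected since the boundary moves are localized away from it. Therefore $\down_k^n$ descends to $\Kb_{n \red k} \to \Kb_k$ and to $\Kbb_{n \red k} \to \Kbb_k$.

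Once both functors descend, the identity ${\down_k^n} \circ {\up_k^n} = \id_{\K_k}$ passes to the quotients verbatim. The natural equivalence $\xi^{n \red k}$ is built from Kirby tangles $\xi^{(n,n-1)}$ involving a single dotted unknot and its framed partner (Figure~\ref{kirby-stab06/fig}), which are morphisms in $\K_{n \red k}$ and hence descend to $\Kb_{n \red k}$ and $\Kbb_{n \red k}$; its naturality squares, being diagram identities in $\K_{n \red k}$, descend as well, yielding $\up_k^n \circ \down_k^n \simeq \id$ on each quotient. The main obstacle I anticipate is the case analysis for $\down_{n-1}^n$ applied to the 1/2-handle trading move of Figure~\ref{boundary03/fig}\,(a): one must check that all the subcases of Figure~\ref{kirby-stab05/fig}, and in particular the behavior of $(\Delta_{(i,j)})^x$ when $j = i_0$ (Figure~\ref{kirby-stab12/fig}) for labels meeting the reduction index $n$, produce an equivalent Kirby tangle on both sides of the boundary move; however this can be handled by first pushing the trading configuration away from the stabilization ribbons using 2-handle slidings (which are part of 2-equivalence) before applying $\_^{(n,n-1)}$.
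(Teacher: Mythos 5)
Your proposal is correct and follows essentially the same route as the paper: reduce everything to showing the two functors descend to the quotients (stabilization trivially, reduction via the elementary case $\down_{n-1}^n$ and the observation that the moves of Figure~\ref{boundary03/fig} are uni-labeled and hence preserved by $K \mapsto K^{(n,n-1)}$), then transport the identities of Proposition~\ref{K-reduction/thm}. The extra case analysis you anticipate for $\_^{(n,n-1)}$ is not needed beyond the uni-labeled locality observation, which is exactly the point the paper's proof rests on.
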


\smallskip

\begin{proof}
By Proposition \ref{K-reduction/thm}, $\down_k^n \circ \up_k^n = \id_{\K_k}$
while $\up_k^n \circ \down_k^n$ is naturally equivalent to $\id_{\K_{n \red k}}$.
Therefore, the statement will follow once we show that $\up_k^n$ and $\down_k^n$ are
well defined on the quotient categories.

That the stabilization functor $\up_k^n$ is well defined on the quotients is 
straightforward, since by definition $\up_k^n K = \id_{n \red k} \!\diam K$ for any $K \in 
\K_k$.

Concerning the reduction functor $\down_k^n$, it has been defined as composition of
elementary reduction functors (cf. Definition \ref{K-reduction/def}). Then, it is enough
to consider the case of $\down_{n-1}^n$. In this case, for any $K \in \K_{n \red (n-1)}$
we have by definition:
$$\down_{n-1}^n K = (\epsilon_{(n-1,n-1)} \diam \id_{\pi_1^{(n,n-1)}}) \circ K^{(n,n-1)} 
\circ (\eta_{n-1} \diam \id_{\pi_0^{(n,n-1)}})\,,$$
where $K^{(n,n-1)}$ is obtained from $K$ by pulling all tangle components labeled $n$
above the ones labeled $n-1$ and then changing the label $n$ with $n-1$ (cf. proof of
Lemma \ref{K-push/thm}). Since both the additional relations in $\Kb_k$ and $\Kbb_k$
involve only uni-labeled sub-tangles, they are obviously preserved by the map $K \mapsto
K^{(n,n-1)}$, hence $\down_{n-1}^n K$ is well defined, being the composition of
$K^{(n,n-1)}$ with fixed tangles.
\end{proof}

Now we define the quotient categories $\Sb_n$ and $\Sbb_n$ of the category $\S_n$ of 
$n$-labeled ribbon surface tangles, and make some considerations about them analogous to 
those made above for $\Kb_n$ and $\Kbb_n$. In the next section, we will establish the 
relation between $\Sb_n$ (resp. $\Sbb_n$) and $\Kb_n$ (resp. $\Kbb_n$).

\medskip

Once again, the objects of $\Sb_n$ and $\Sbb_n$ are those of $\S_n$. The morphisms of
$\Sb_n$ are equivalence classes of morphisms of $\S_n$ modulo the relation \(T) in Figure
\ref{boundary04/fig}, while the morphisms of $\Sbb_n$ are equivalence classes of morphisms
of $\S_n$ modulo both the relations \(T) and \(P) in the same Figure \ref{boundary04/fig}.
Notice that such moves do not change the boundary of the ribbon surface tangle up to 
isotopy. 

The moves in Figure \ref{boundary04/fig} clearly preserve composition and product of 
$n$-labeled ribbon surface tangles, being local in nature. Hence, $\Sb_n$ and $\Sbb_n$ 
inherit from $\S_n$ a strict monoidal structure, whose product will be still denoted by 
$\,\diam\,$, and we have monoidal quotient functors
$$\S_n \to \,\Sb_n \to \,\Sbb_n\,.$$

\begin{Figure}[htb]{boundary04/fig}
{}{Additional relations in $\Sb_n$ and $\Sbb_n$}
\centerline{\fig{}{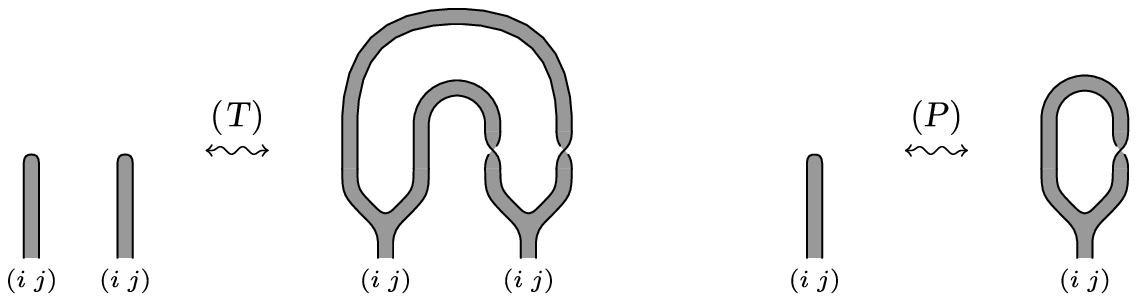}}
\vskip-3pt
\end{Figure}

Given $n > k \geq 1$, we define the subcategories $\Sb_{n \red k} \subset \Sb_n$ and
$\Sbb_{n \red k} \subset \Sbb_n$ to be the images of $\S_{n \red k} \subset \S_n$ under
the quotient functors. Thus, we have quotient functors
$$\S_{n \red k} \to \Sb_{n \red k} \to \Sbb_{n \red k}$$

\medskip

\begin{proposition}\label{Sb-stabilization/thm}
For any $n > k \geq 2$ the functor $\up_k^n: \S_k \to \S_n$ induces well-defined functors 
on the quotient categories, for which we use the same notation:
\begin{eqnarray*}
&\up_k^n: \Sb_k \to \Sb_n \text{ \ and }\,
&\up_k^n: \Sbb_k \to \Sbb_n\,.
\end{eqnarray*}
\vskip-12pt
\end{proposition}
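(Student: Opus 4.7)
The plan is to verify that the stabilization functor $\up_k^n:\S_k\to\S_n$ respects the additional equivalence relations $(T)$ and $(P)$ that define the quotients $\Sb_k$ and $\Sbb_k$. Since $\up_k^n$ is a well-defined functor on $\S_k$ by Definition \ref{S-stabilization/def}, to obtain induced functors on the quotient categories it suffices to show: if $S,S' \in \Mor\S_k$ are related by a single instance of $(T)$ (respectively of $(T)$ or $(P)$), then $\up_k^n S$ and $\up_k^n S'$ are related by a sequence of the same moves in $\S_n$.

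First I would recall from Definition \ref{S-stabilization/def} that
$$\up_k^n S = \id_{\sigma_{n \red k}} \diam \iota_k^n(S),$$
where $\iota_k^n:\S_k\to\S_n$ is the faithful functor induced by the inclusion $\Gamma_k\subset\Gamma_n$, and $\sigma_{n\red k}=(\tp{n}{n{-}1},\dots,\tp{k{+}1}{k})$. The key observations are that (a) the moves $(T)$ and $(P)$ of Figure \ref{boundary04/fig} are local, involving only the portion of a ribbon surface tangle displayed in the picture and leaving everything outside fixed; (b) their labelings use transpositions $\tp{i}{j}\in\Gamma_k$, and both sides of each move use exactly the same labeling, so the relations lift verbatim under $\iota_k^n$; and (c) the identity $\id_{\sigma_{n\red k}}$ consists of disjoint reduction ribbons with labels $\tp{h{+}1}{h}$ for $h\geq k$, all disjoint from the transpositions in $\Gamma_k$ appearing in any instance of $(T)$ or $(P)$ inside $\iota_k^n(S)$.

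Concretely, if $S$ and $S'$ in $\S_k$ differ by a single application of $(T)$ supported in a small cell $C\subset E\times[0,1]\times [0,1)$, then $\iota_k^n(S)$ and $\iota_k^n(S')$ differ inside the same cell $C$ by the identical move (with the same transposition labels), since the labels lie in $\Gamma_k\subset\Gamma_n$. Juxtaposing with $\id_{\sigma_{n\red k}}$ on the left puts the cell $C$ inside the larger $E\times[0,1]\times[0,1)$ without altering its content, so the same local move $(T)$ takes $\up_k^n S$ to $\up_k^n S'$ in $\S_n$. The argument for $(P)$ is identical. By the transitivity of the equivalence relations defining the quotients, this extends to arbitrary sequences of moves, so $\up_k^n$ sends equivalent morphisms in $\S_k$ to equivalent morphisms in $\S_n$, and restricts (via the monoidality of the quotient) to morphisms of the subcategories $\Sb_{n\red k}$ and $\Sbb_{n\red k}$ when applied to elements of $\Sb_k^c$ or $\Sbb_k^c$-type reducible tangles.

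There is no real obstacle here: the verification is entirely formal, resting on the locality of the relations in Figure \ref{boundary04/fig} and the observation that the stabilization only enlarges the ambient label set and adds disjoint identity ribbons. The one point worth emphasizing is that $(T)$ and $(P)$ are genuinely local (their figures depict the full extent of the modification), so no interaction with the reduction ribbons $\id_{\sigma_{n\red k}}$ can occur. Consequently the induced functors $\up_k^n:\Sb_k\to\Sb_n$ and $\up_k^n:\Sbb_k\to\Sbb_n$ are well defined, and the commutativity with the quotient functors $\S_k\to\Sb_k\to\Sbb_k$ and $\S_n\to\Sb_n\to\Sbb_n$ is automatic from the construction.
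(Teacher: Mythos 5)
Your proposal is correct and matches the paper's intent: the paper simply declares the statement straightforward, and your argument — that $\up_k^n S = \id_{\sigma_{n \red k}} \!\diam \iota_k^n(S)$ with the local moves \(T) and \(P) labeled in $\Gamma_k \subset \Gamma_n$, so each move on $S$ induces the identical move on $\up_k^n S$ away from the disjoint reduction ribbons — is exactly the routine verification being left to the reader. No gap; the closing remark about reducible tangles is unnecessary for the statement but harmless.
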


\begin{proof}
The statement is straightforward.
\end{proof}

\subsection{Equivalences $\Kb_n^c \cong \Sb_n^c$ and $\Kbb_n^c \cong \Sbb_n^c$ 
            for $n \geq 4$%
\label{Kb=Sb/sec}}

According to the notation introduced in Sections \ref{K/sec} and \ref{S/sec}, we put
$\Kb_n^c = \Kb_{n \red 1}$, $\Kbb_n^c = \Kbb_{n \red 1}$, $\Sb_n^c = \Sb_{n \red 1}$ and
$\Sbb_n^c = \Sbb_{n \red 1}$.

In Section \ref{Theta/sec} we defined the functor $\Theta_n: \S_n \to \K_n$, whose
restriction $\Theta_n: \S_n^c \to \K_n^c$ was shown to be a category equivalence for every
$n \geq 4$ in Section \ref{K=S/sec} (see Theorem \ref{ribbon-kirby/thm}). The goal of this
section is to prove an analogous fact for the quotients introduced in the previous
section.

\begin{Figure}[b]{boundary05/fig}
{}{The image of move \(T) under the functor $\Theta_n$}
\vskip3pt
\centerline{\fig{}{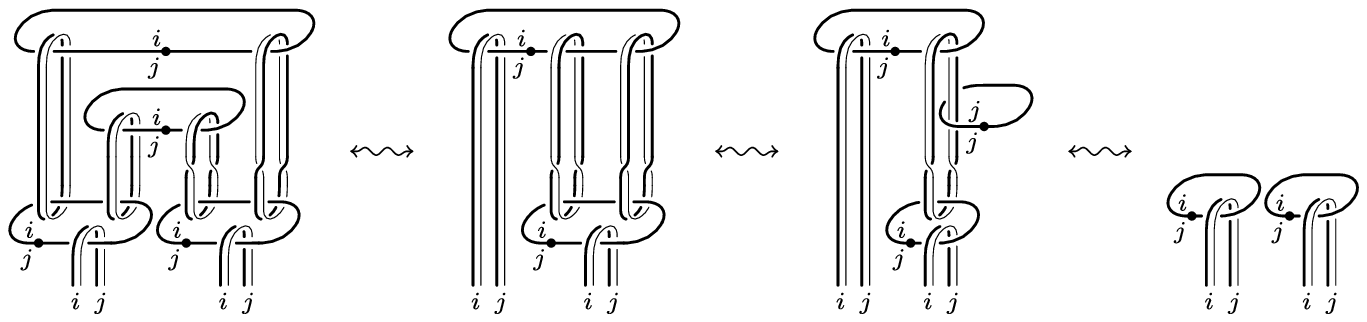}}
\vskip-3pt
\end{Figure}

\begin{proposition}\label{theta-bd/thm}
For any $n \geq 2$ the functor $\Theta_n: \S_n \to \K_n$ induces well-defined braided 
monoidal functors on the quotient categories:
\begin{eqnarray*}
&\bTheta_n: \Sb_n \to \Kb_n \text{ \ and }\,
&\bTheta_n: \Sb_n^c \to \Kb_n^c\,,\\[2pt]
&\bbTheta_n: \Sbb_n \to \Kbb_n \text{ \ and }\,
&\bbTheta_n: \Sbb_n^c \to \Kbb_n^c\,.
\end{eqnarray*}
\vskip-12pt
\end{proposition}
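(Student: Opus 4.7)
The plan is to exploit the universal property of quotient categories: since $\Theta_n: \S_n \to \K_n$ and the quotient maps $\S_n \to \Sb_n \to \Sbb_n$ and $\K_n \to \Kb_n \to \Kbb_n$ are all braided monoidal, the induced functor on quotients will exist and be braided monoidal as soon as we verify that $\Theta_n$ sends the additional defining relations of $\Sb_n$ (resp. $\Sbb_n$) to identities in $\Kb_n$ (resp. $\Kbb_n$). Thus the proof reduces to two local verifications, one for each of moves \textit{(T)} and \textit{(P)} of Figure \ref{boundary04/fig}, followed by a short argument to restrict the functors to the connected subcategories.

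First I would treat move \textit{(T)}. Both sides of \textit{(T)} are ribbon surface tangles with the same ends and the same labeling, differing only in a compact region by the attachment of a single ``finger'' that is labeled by a transposition disjoint from the labeling of the rest of the surface. Choosing an adapted 1-handlebody decomposition of the two surfaces that coincides outside this region and contains a single 0-handle and a single 1-handle inside it, I would compute the image under $\Theta_n$ by the recipe of Section \ref{Theta/sec}. The effect is to add (or remove) a dotted unknot spanning a disk disjoint from all other components, together with a 0-framed meridian closed component linked once with it; by the very definition of $\Kb_n$ (cf.\ Figure \ref{boundary03/fig}\,(a) and Lemma \ref{red-bdmoves/thm}), such a canceling 1/2-pair can be added or removed freely, so the two images are identified in $\Kb_n$.

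Next I would treat move \textit{(P)}, which must go through only in the $\Kbb_n$ setting. Again the two sides differ inside a compact region by a local configuration (a positive or negative half-twisted disk), and on one chooses an adapted 1-handlebody decomposition compatible with this region. A direct computation from Figures \ref{theta01/fig}--\ref{theta03/fig} shows that $\Theta_n$ sends the two sides to generalized Kirby tangles that differ by the insertion of an unknotted $\pm1$-framed closed component, linked with nothing; this is exactly the blowing up/down move of Figure \ref{boundary03/fig}\,(b). Hence the images are identified in $\Kbb_n$ (but not in $\Kb_n$, as expected).

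Once these two local calculations are done, the existence of $\bTheta_n: \Sb_n \to \Kb_n$ and $\bbTheta_n: \Sbb_n \to \Kbb_n$ as braided monoidal functors follows. For the restriction to the connected subcategories, I would use that both $\Sb_n^c$ and $\Kb_n^c$ are by definition the images of $\S_n^c$ and $\K_n^c$ under the quotient functors, and that $\Theta_n(\S_n^c) \subset \K_n^c$ by Proposition \ref{theta-c/thm}. The main obstacle I anticipate is purely technical: presenting the Kirby tangle pictures for both sides of moves \textit{(T)} and \textit{(P)} cleanly enough to make the cancellation/blow-up structure visible, especially in the presence of the auxiliary kinks and crossovers that the construction of $K_S$ introduces at ribbon intersections (Figures \ref{ribbon-kirby04/fig} and \ref{ribbon-kirby05/fig}). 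Once a sufficiently symmetric handle decomposition of the local model is chosen, however, the identification should become transparent, as suggested by Figure \ref{boundary05/fig}.
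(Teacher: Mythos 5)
Your overall reduction is the right one and matches the paper: it suffices to check that $\Theta_n$ carries the two extra relations (T) and (P) of Figure \ref{boundary04/fig} into the extra relations of $\Kb_n$ and $\Kbb_n$ respectively, and then Proposition \ref{theta-c/thm} handles the restriction to the connected subcategories. Your treatment of (P) is also essentially what the paper does (Figure \ref{boundary06/fig}): after a 2-deformation the images differ by a split $\pm1$-framed unknot, i.e.\ by a blow down/up.

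The gap is in your verification for (T). You claim the images of the two sides differ by ``a dotted unknot together with a $0$-framed meridian linked once with it'', i.e.\ by a canceling 1/2-pair, and you then invoke the defining relation of $\Kb_n$ to delete it. But addition/deletion of a canceling 1/2-pair is already one of the 2-deformation moves of $\K_n$ (Figure \ref{kirby-tang01/fig}); it is not the new relation of $\Kb_n$, which is 1/2-handle \emph{trading} (Figure \ref{boundary03/fig}\,(a)). If your computation were correct, the two sides of (T) would already be identified in $\K_n$, and then, by faithfulness of $\Theta_n$ on $\S_n^c$ for $n \geq 4$ (Theorem \ref{ribbon-kirby/thm}), move (T) would be a consequence of labeled 1-isotopy and (R1), (R2), forcing $\Kb_n^c = \K_n^c$ --- which is false, since trading genuinely changes the 2-equivalence class (it replaces a trivially attached 1-handle by a $0$-framed 2-handle, e.g.\ turning $S^1 \times B^3$ into $S^2 \times B^2$, preserving only the boundary). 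The source of the confusion is your reading of (T) itself: it is not the attachment of a finger with disjoint label, but the replacement of a labeled disk (producing a 1-handle upstairs, cf.\ the discussion around Figure \ref{boundary07/fig}) by a $0$-framed closed band with the same label (producing a 2-handle upstairs). Accordingly, the correct computation --- carried out in the paper's Figure \ref{boundary05/fig} --- shows that the images of the two sides of (T) are related by a 2-handle sliding \emph{followed by a genuine 1/2-handle trading}, so the identification holds in $\Kb_n$ but not in $\K_n$. Your argument needs to be redone with this local model; once that is fixed, the rest of your proof goes through as in the paper.
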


\begin{proof}
Thanks to Propositions \ref{theta/thm} and \ref{theta-c/thm}, it is enough to prove
that if two ribbon surface tangles $S$ and $S'$ in $\S_n$ are related by a \(T) move, then 
their images $\Theta_n(S)$ and $\Theta_n(S')$ in $\K_n$ are related by a handle trading, 
while if $S$ and $S'$ are related by a \(P) move, then $\Theta_n(S)$ and $\Theta_n(S')$ 
are related by a blow down/up. 

\begin{Figure}[htb]{boundary06/fig}
{}{The image of move \(P) under the functor $\Theta_n$}
\centerline{\fig{}{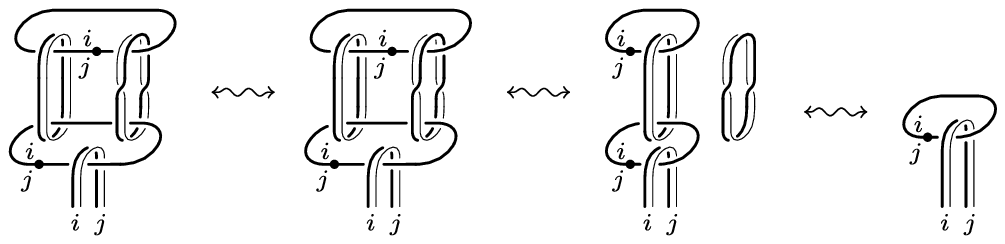}}
\vskip-3pt
\end{Figure}

These facts are shown in Figures \ref{boundary05/fig} and \ref{boundary06/fig}
respectively. In particular, the second equivalence in Figure \ref{boundary05/fig}
consists in a handle sliding followed by a 1/2-handle trading, while a blow down is 
performed in the last step of Figure \ref{boundary06/fig}.
\end{proof}

\begin{Figure}[b]{boundary07/fig}
{}{}
\centerline{\kern12pt\fig{}{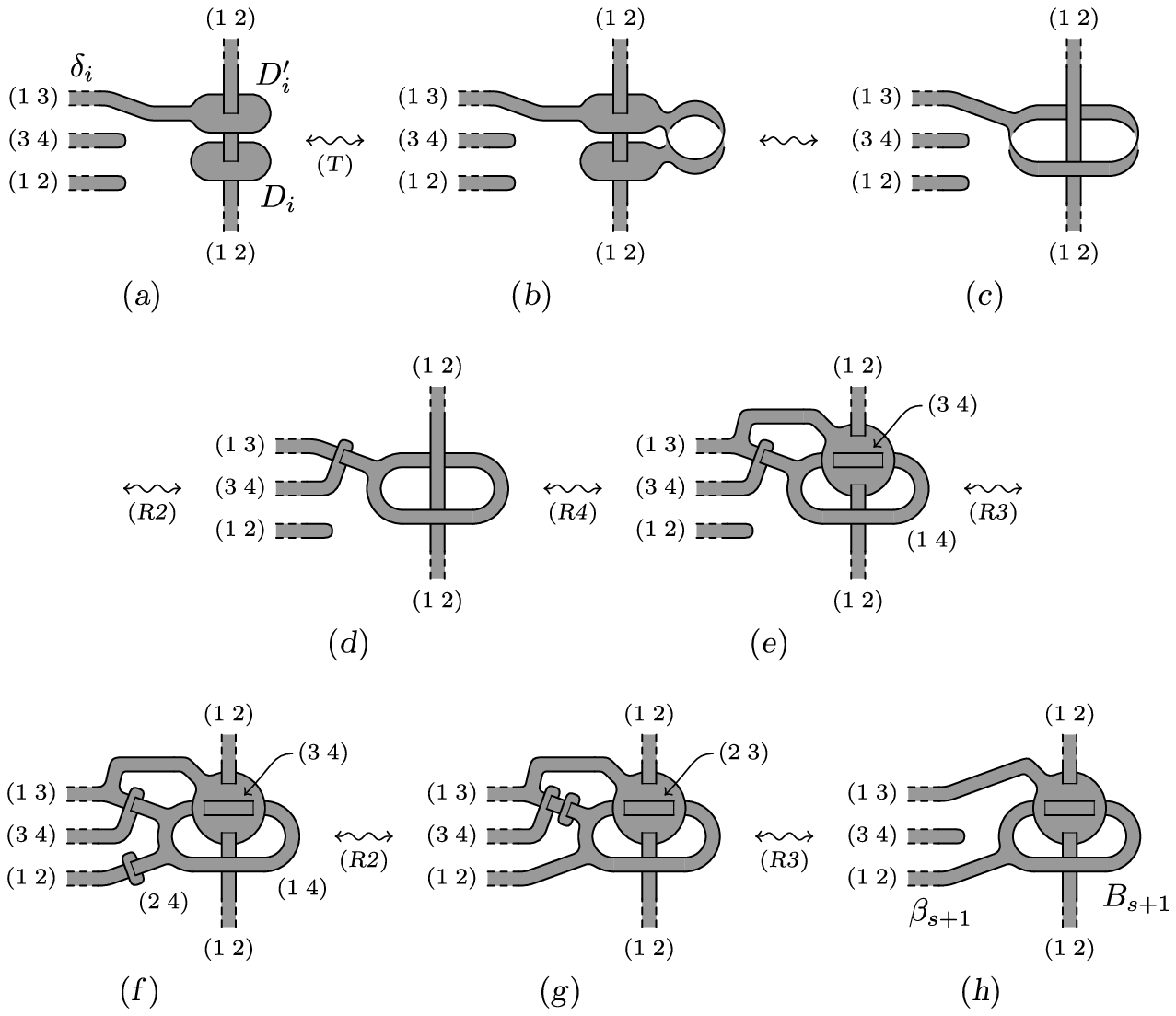}}
\vskip-3pt
\end{Figure}

\begin{proposition}\label{xi-bd/thm}
For any $n \geq 4$, the functor $\Xi_n: \K_1 \to \S_n^c$ induces well-defined functors on
the quotient categories:
\begin{eqnarray*}
&\bXi_n: \Kb_1 \to \Sb_n^c \text{ \ and }\,
&\bbXi_n: \Kbb_1 \to \Sbb_n^c\,.
\end{eqnarray*}
\vskip-12pt
\end{proposition}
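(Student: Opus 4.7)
The plan is to reduce to the case $n = 4$ and then verify, locally, the correspondence of elementary moves. By construction, $\Xi_n = {\up_4^n} \circ \Xi_4$ for every $n \geq 4$, and the stabilization functor $\up_4^n$ induces well-defined functors on all the relevant quotient categories by Proposition \ref{Sb-stabilization/thm}. Hence, it is enough to treat the cases $\bXi_4$ and $\bbXi_4$. For $\bXi_4: \Kb_1 \to \Sb_4^c$, I need to show that if $K, K' \in \K_1$ are related by a 1/2-handle trading move (Figure \ref{boundary03/fig} \(a)), then the ribbon surface tangles $\Xi_4(K)$ and $\Xi_4(K')$ are equivalent in $\Sb_4^c$, i.e.\ related by a (T) move modulo labeled 1-isotopy and the ribbon moves \(R1) and \(R2). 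Analogously, for $\bbXi_4$, I need to show that blow down/ups in $\K_1$ correspond via $\Xi_4$ to (P) moves in $\Sbb_4^c$. By Lemma \ref{red-bdmoves/thm}, it suffices to check these correspondences on the simplest local forms of the two moves.

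The strategy is then as follows. Since the modification between $K$ and $K'$ is supported in a small ball $B \subset E \times [0,1]$, I use the flexibility provided by Lemma \ref{SK-welldef/thm}, Proposition \ref{trivial-state/thm}, and Remark \ref{gen-bands/rem} to choose all the auxiliary data entering in the construction of $\Xi_4(K)$ and $\Xi_4(K')$ (a strictly regular planar diagram, a vertically trivial state of the underlying link, the spanning disks $A_j$, and the connecting bands $\beta_j,\gamma_i,\delta_i$) to agree outside $B$. In this way, $\Xi_4(K)$ and $\Xi_4(K')$ differ only in a region covering $B$, and one can compute this local difference explicitly from the recipe of Section \ref{fullness/sec}. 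For a 1/2-handle trading, the dotted-unknot side contributes a disk configuration (together with its parallel copy and a connecting band to the reduction ribbon $\id_{\tp{3}{2}}$), while the 0-framed trivial unknot contributes an orientable closed band of label $\tp12$ with trivial framing and a connecting band to $\id_{\tp{2}{1}}$. For a $\pm 1$-blow up, the $\pm 1$-framed trivial unknot contributes a single half-twisted, hence non-orientable, closed band of label $\tp12$, while the empty tangle contributes nothing. Direct inspection of the resulting local pictures against the definitions of the (T) and (P) moves in Figure \ref{boundary04/fig} then closes the argument.

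The main technical obstacle is the global coherence of the auxiliary choices. A 1/2-handle trading removes or adds a closed framed component of $K$, hence changes the underlying unframed link $|L|$; accordingly, a new vertically trivial state and new spanning disks $A_j$ are needed on the two sides of the move. However, Lemma \ref{SK-welldef/thm} guarantees that different admissible choices yield equivalent labeled ribbon surface tangles within $\S_4^c$, so the situation can always be reduced to the ideal one in which all auxiliary data agree outside $B$. The same mechanism takes care of the blow down/up, with the additional remark that the non-orientable local band produced by the $\pm 1$-framing, far from being an obstruction, is precisely the configuration matched by the (P) move, possibly after applying the twist-transfer procedure of Remark \ref{orient/rem} to normalize the connecting band $\beta$.
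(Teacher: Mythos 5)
Your opening reduction is exactly the paper's: $\Xi_n = {\up_4^n} \circ \Xi_4$ together with Proposition \ref{Sb-stabilization/thm} reduces everything to $\bXi_4$ and $\bbXi_4$, and your treatment of the blow down/up also matches the paper's, since the $\pm1$-framed unknot contributes precisely a half-twisted closed band as in the right-hand side of \(P). The problem is the trading case. Your key claim — that after normalizing the auxiliary data the surfaces $\Xi_4(K)$ and $\Xi_4(K')$ differ only in a region covering the ball $B$ where the move takes place, so that ``direct inspection'' against \(T) in Figure \ref{boundary04/fig} closes the argument — is not correct. The extra configurations produced by the construction on the two sides are non-local and of genuinely different type: on the $K$ side one has the parallel disks $D_i$, $D'_i$ labeled $\tp13$ (cf. Figure \ref{kirby-ribbon05/fig}) with the canceling 2-handle's band passing through them and with the band $\delta_i$ running off to the reduction ribbon $\id_{\tp32}$, while on the $K'$ side one has a closed untwisted band labeled $\tp12$ with a band $\beta_{s+1}$ running off to the other reduction ribbon $\id_{\tp21}$. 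Different labels, different target ribbons: this difference cannot be confined to a cell, and in particular the two configurations are not related by a single application of the local move \(T).

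What is actually required — and what constitutes the substance of the paper's proof — is an explicit chain of labeled 1-isotopies, \(R1)/\(R2) moves and one \(T) move (Figure \ref{boundary07/fig}, diagrams \(a) through \(h)) converting the $D_i \cup D'_i$ configuration into the new ribbon $B_{s+1}$: two bands are expanded from the stabilization ribbons, one labeled $\tp12$ which eventually becomes $\beta_{s+1}$, and an auxiliary one labeled $\tp34$ which is retracted at the end (this is also where the degree-4 stabilization is genuinely used, in the same spirit as Lemma \ref{SK-welldef/thm} and Remark \ref{gen-bands/rem}, which you rightly invoke for normalizing choices but which by themselves only let you move bands around, not change their labels or the ribbon they attach to). Since your proposal neither supplies this chain nor indicates its shape, and the locality claim underlying the ``direct inspection'' fails, the 1/2-handle trading case — the only nontrivial part of the statement — remains unproved.
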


\begin{proof}
Taking into account the defining identity $\Xi_n = {\up_4^n} \circ \Xi_4$ (cf. Proposition
\ref{xi/thm}) and Proposition \ref{Sb-stabilization/thm}, it suffices to show that if two
Kirby tangles $K$ and $K'$ in $\K_1$ are related by a 1/2-handle trading as in Figure
\ref{boundary03/fig} \(a), then their images $\Xi_4(K)$ and $\Xi_4(K')$ in $\S_4^c$ are
related by a \(T) move, while if $K$ and $K'$ are related by blowing down/up as in Figure
\ref{boundary03/fig} \(b), then $\Xi_4(K)$ and $\Xi_4(K')$ are related by a \(P) move.

In the light of the construction of the surface $S_K$ in Section \ref{fullness/sec}, the
latter fact is essentially trivial, since the unknot with frame $+1$ involved in the blow
\pagebreak
down/up corresponds in $S_K$ exactly to a positively half twisted closed band as the one
in the right-hand side of the move (P). The former fact is proved in Figure
\ref{boundary07/fig}. This figure shows the sequence of moves needed to replace the disks
$D_i$ and $D'_i$ in diagram \(a), corresponding to a canceling 1-handle of $K$, with the
new ribbon $B_{s+1}$ in diagram \(h), corresponding to the new 2-handle of $K'$ deriving
from the trading. The two bands labeled \tp{1}{2} and \tp{3}{4} in \(a) are assumed to
have been expanded from the stabilization ones. The first band will give rise to the band
$\beta_{s+1}$, the second one is only an auxiliary band, which has to be retracted back in
\(h).
\end{proof}

\begin{theorem}\label{ribbon-kirby-bd/thm}
For any $n \geq 4$, the restriction functor $\bTheta_n: \Sb_n^c\to \Kb_n^c$ (resp.
$\bbTheta_n: \Sbb_n^c\to \Kbb_n^c$) and the functor $\bXi_n: \Kb_1 \to \Sb_n^c$ (resp.
$\bbXi_n: \Kbb_1 \to \Sbb_n^c$) are category equivalences. Moreover, ${\down_1^n} \circ
\bTheta_n \circ \bXi_n$ (resp. ${\down_1^n} \circ \bbTheta_n \circ \bbXi_n$) is equal to
$\id_{\Kb_1}$ (resp. $\id_{\Kbb_1}$), while $\bXi_n \circ {\down_1^n} \circ \bTheta_n$
(resp. $\bbXi_n \circ {\down_1^n} \circ \bbTheta_n$) is naturally equivalent to
$\id_{\Sb_n^c}$ (resp. $\id_{\Sbb_n^c}$).
\end{theorem}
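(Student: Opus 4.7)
The plan is to deduce the statement directly from its unquotiented counterpart, Theorem \ref{ribbon-kirby/thm}, by showing that all the functors and natural equivalences involved descend to the quotient categories. The groundwork has already been laid: Proposition \ref{theta-bd/thm} gives the well-defined functors $\bTheta_n:\Sb_n^c\to\Kb_n^c$ and $\bbTheta_n:\Sbb_n^c\to\Kbb_n^c$; Proposition \ref{xi-bd/thm} gives the well-defined functors $\bXi_n:\Kb_1\to\Sb_n^c$ and $\bbXi_n:\Kbb_1\to\Sbb_n^c$; and Proposition \ref{Kb-reduction/thm} provides a well-defined reduction functor $\down_1^n$ on $\Kb_n^c$ and $\Kbb_n^c$ which is itself a category equivalence. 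So the functors in the statement all make sense.

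The first step is to observe that the identity ${\down_1^n}\circ\Theta_n\circ\Xi_n=\id_{\K_1}$ of Theorem \ref{ribbon-kirby/thm} holds on the nose in $\Mor\K_1$, hence its image under the quotient functors $\K_1\to\Kb_1$ and $\K_1\to\Kbb_1$ yields ${\down_1^n}\circ\bTheta_n\circ\bXi_n=\id_{\Kb_1}$ and ${\down_1^n}\circ\bbTheta_n\circ\bbXi_n=\id_{\Kbb_1}$ respectively. The second step treats the natural equivalence $\Xi_n\circ{\down_1^n}\circ\Theta_n\simeq\id_{\S_n^c}$ from Theorem \ref{ribbon-kirby/thm}: this is a family of isomorphisms in $\S_n^c$ indexed by the objects, and since the quotient functor $\S_n^c\to\Sb_n^c$ (resp.\ $\Sbb_n^c$) sends isomorphisms to isomorphisms and preserves all the commutativity squares defining naturality, the image family is a natural equivalence $\bXi_n\circ{\down_1^n}\circ\bTheta_n\simeq\id_{\Sb_n^c}$ (resp.\ $\bbXi_n\circ{\down_1^n}\circ\bbTheta_n\simeq\id_{\Sbb_n^c}$).

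Combining these two steps, ${\down_1^n}\circ\bTheta_n$ and $\bXi_n$ are inverse category equivalences between $\Sb_n^c$ and $\Kb_1$ (resp.\ between $\Sbb_n^c$ and $\Kbb_1$). Since $\down_1^n:\Kb_n^c\to\Kb_1$ is already a category equivalence by Proposition \ref{Kb-reduction/thm}, we can cancel it and conclude that $\bTheta_n:\Sb_n^c\to\Kb_n^c$ is a category equivalence as well, and analogously for $\bbTheta_n$.

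I do not expect serious obstacles: the descent of the identity in Theorem \ref{ribbon-kirby/thm} is automatic, and the descent of the natural equivalence is a routine check. The only thing to keep in mind is that, formally, the natural equivalence in Theorem \ref{ribbon-kirby/thm} is expressed as an equivalence of functors into $\S_n^c$, so what one verifies is that its component at each object, being a morphism of $\S_n^c$, still represents an isomorphism once projected into $\Sb_n^c$ (resp.\ $\Sbb_n^c$); this is immediate because the quotient functor is the identity on objects and surjective on morphisms. All the other pieces — monoidality of $\bTheta_n$ and $\bbTheta_n$ as braided monoidal functors, and functoriality of $\bXi_n$ and $\bbXi_n$ — have been handled in Propositions \ref{theta-bd/thm} and \ref{xi-bd/thm}.
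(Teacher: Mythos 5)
Your proposal is correct and follows essentially the same route as the paper: the paper's own proof is just the observation that the statement "follows immediately" from Theorem \ref{ribbon-kirby/thm} together with Propositions \ref{theta-bd/thm}, \ref{xi-bd/thm} and \ref{Kb-reduction/thm}, i.e.\ descend the identity and the natural equivalence to the quotients and cancel the equivalence $\down_1^n$ to get that $\bTheta_n$ (resp.\ $\bbTheta_n$) is itself an equivalence. Your write-up merely makes explicit the routine checks (quotient functors are identity on objects, full, and compatible with the induced functors) that the paper leaves implicit.
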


\begin{proof}
It follows immediately by Theorem \ref{ribbon-kirby/thm} and the propositions above.
\end{proof}

\subsection{The quotient categories $\Hb_n$ and $\Hbb_n$%
\label{Hb/sec}}

Analogously to what was done for the categories of Kirby and ribbon surface tangles, we
introduce the quotients of the universal groupoid ribbon Hopf algebra $\H^r(\G)$ by two
additional relations. The first relation states the duality of the algebra integral and
cointegral with respect to the copairing. The second one is a kind of normalization
telling that a specific closed morphism equals the empty morphism $\id_\one$. In
particular, the corresponding quotients of $\H_n^{r,c} \subset \H_n^r$ will be shown to be
equivalent to $\Kb_n^c$ and $\Kbb_n^c$ in the next section.

\begin{Table}[htb]{table-Hrb/fig}
{}{}
\centerline{\fig{}{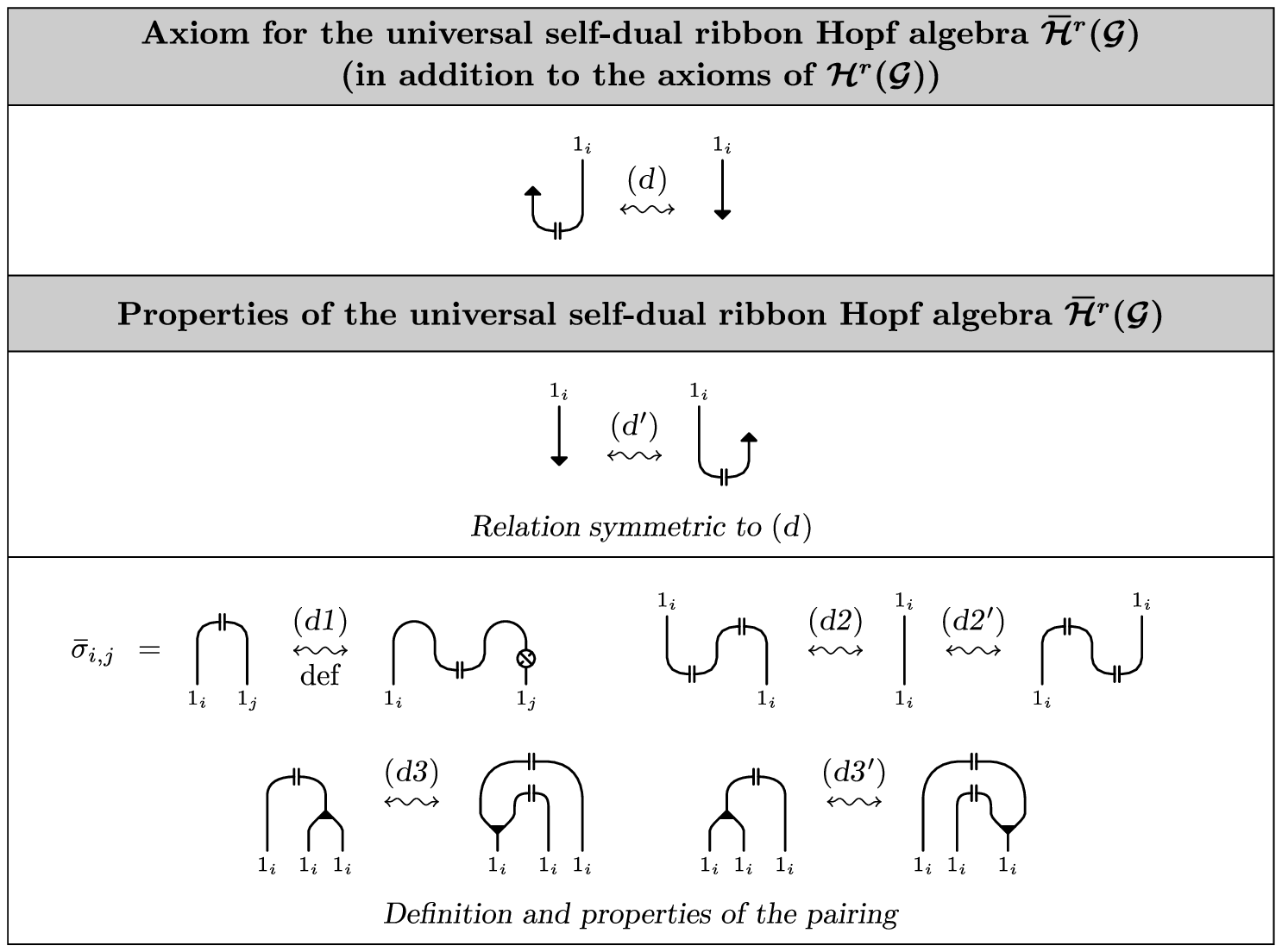}}
\end{Table}

\begin{Table}[htb]{table-Hrbb/fig}
{}{}
\vskip6pt
\centerline{\fig{}{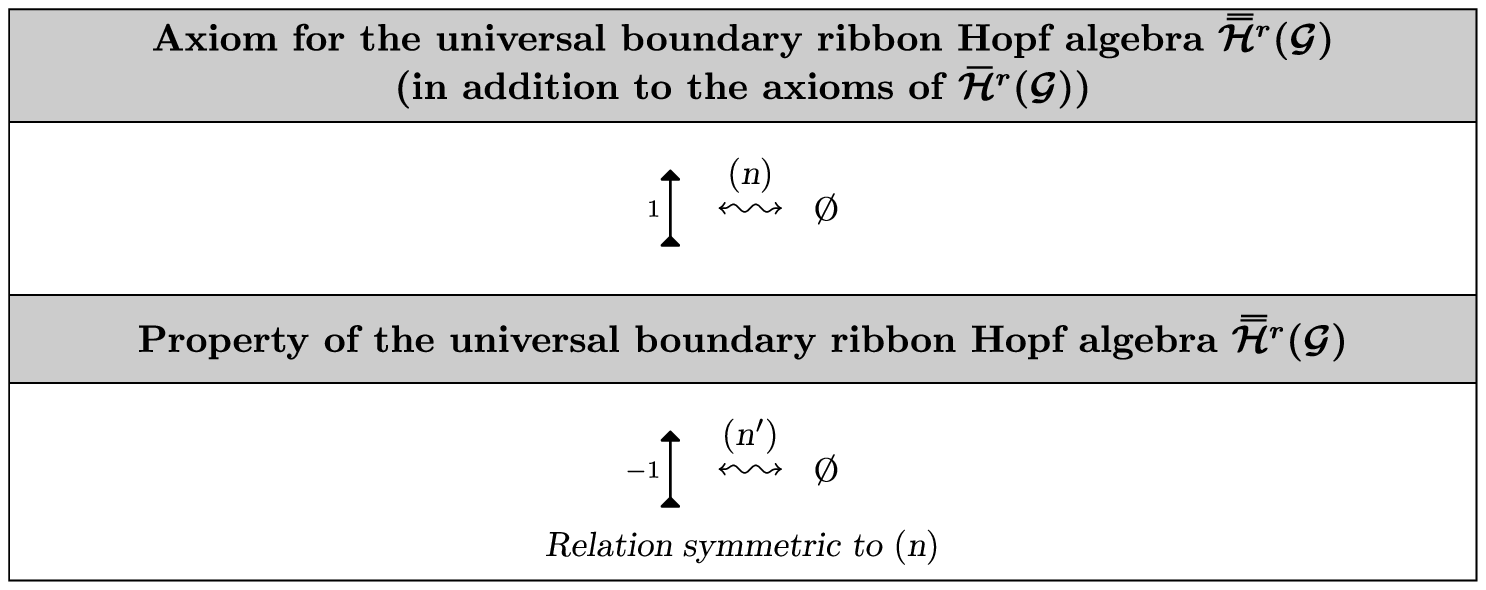}}
\end{Table}

\begin{definition}\label{self-dual/def}
Given a groupoid $\G$, a ribbon Hopf $\G$-algebra $H$ in a braided monoidal category $\C$ 
is called {\sl self-dual} if the following identity (cf. Table \ref{table-Hrb/fig})\break 
holds for every $i \in \G$:
$$(l_i \diam \id_{1_i}) \circ \sigma_{i,i} = L_{1_i}\,.
\eqno{\(d)}$$
\vskip4pt
Moreover, a self-dual ribbon Hopf $\G$-algebra $H$ is called {\sl boundary} if the 
following normalization identity (cf. Table \ref{table-Hrbb/fig}) holds for every $i \in 
\G$: 
$$l_i \circ v_{1_i} \circ \eta_i = \id_\one\,.
\eqno{\(n)}$$

We define the {\sl universal self-dual ribbon Hopf algebra} $\Hb^r(\G)$ to be the quotient
category of $\H^r(\G)$ modulo the relations \(d) presented in Table \ref{table-Hrb/fig},
and the {\sl universal boundary ribbon Hopf algebra} $\Hbb^r(\G)$ to be the quotient
category of $\Hb^r(\G)$ modulo the relations \(n) presented in Tables 
\ref{table-Hrbb/fig}.
\end{definition}

Like for the categories of tangles, the quotients $\Hb^r(\G)$ and $\Hbb^r(\G)$ inherit
from $\H^r(\G)$ a strict monoidal structure, whose product will be still denoted by
$\,\diam\,$, and we have monoidal quotient functors $$\H^r(\G) \to \,\Hb^r(\G)
\to\,\Hbb^r(\G)\,.$$

It is well-known (cf. \cite{Ke02}) that the relation \(d) in Table \ref{table-Hrb/fig}
implies the symmetric relation \(d') and also the non-degeneracy of the copairing and the
duality of the multiplication and comultiplication morphisms in $\Hb^r(\G)$. For the sake
of completeness we present the result below.

\begin{proposition}\label{selfH/thm}
Let $\G$ be any groupoid. Then, the following relation hold in $\Hb^r(\G)$ (cf. Table 
\ref{table-Hrb/fig}):
$$
(\id_{1_i} \diam l_i) \circ \sigma_{i,i} = L_{1_i},
\eqno{\(d')}
$$
Moreover, the pairing morphisms $\bar\sigma_{i,j}: H_{1_i} \diam H_{1_j} 
\to \one$ in $\Hb^r(\G)$ defined by
$$
\bar\sigma_{i,j} = (\Lambda_{1_i} \diam \Lambda_{1_j}) \circ (\id_{1_i} \diam
\sigma_{i,j} \diam S_{1_j}^{-1})
\eqno{\(d1)}
$$
for every $i,j \in \G$, satisfy the following identities \(d2-2') and \(d3-3') in Table 
\ref{table-Hrb/fig}.
\end{proposition}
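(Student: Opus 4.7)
My plan is to prove the three statements in sequence, with the first being essentially a symmetry argument and the latter two being diagrammatic computations built on it together with the universal properties of $\sigma$ and the form/coform.

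First, for identity \(d'), the natural approach is to invoke the antimonoidal category equivalence $\sym: \H^r(\G) \to \H^r(\G)$ established in Proposition \ref{hr-prop/thm}. I would verify that $\sym$ passes to the quotient $\Hb^r(\G)$ by checking that the defining axiom \(d) is mapped to itself. Indeed, $\sym$ is antimonoidal with $\sym(l_i) = l_i$, $\sym(L_{1_i}) = L_{1_i}$ (since $\overline{1_i} = 1_i$), and $\sym(\sigma_{i,i}) = \sigma_{i,i}$ by property \(p3). Applying $\sym$ to both sides of \(d) then yields
$$
(\id_{1_i} \diam l_i) \circ \sigma_{i,i} = L_{1_i},
$$
which is precisely \(d'). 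This also shows that \(d) and \(d') are logically equivalent in the presence of the other axioms of $\H^r(\G)$.

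Next, for the pair \(d2-2'), I would expand the defining formula for $\bar\sigma_{i,j}$ graphically and use the snake identities \(f3-3') for the form/coform to reduce the verification to a statement purely about $\sigma_{i,j}$. The idea is that $\bar\sigma_{i,j}$ is constructed from $\sigma_{i,j}$ by ``closing off'' one strand on each side via the form/coform and twisting one leg by $S^{-1}_{1_j}$. The zigzag identities for $\bar\sigma$ then reduce, after applying \(f3-3'), to the combination of the Hopf copairing axioms \(p2-2'), \(r7-7') for $\sigma_{i,j}$ and the self-duality relation \(d) (or its symmetric form \(d') just established). Concretely, one strand-closure allows one $\Lambda/\lambda$ pair to cancel, and \(d) is exactly what converts the remaining $l \diam \id$ applied to $\sigma_{i,i}$ into $L_{1_i}$, producing the right-hand side of the snake identity.

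Finally, for \(d3-3'), which express the compatibility of the pairing $\bar\sigma$ with the multiplication/comultiplication structure, the strategy is to combine the Hopf copairing property of $\sigma_{i,j}$ (relations \(r7-7')) with the antipode properties \(s3-4) and the behavior of form/coform on algebra structure \(f6-8'). In the unimodular setting of $\H^u(\G)$ these relations already provide half-pairing identities; the self-duality axiom is what promotes $\bar\sigma$ into a genuine non-degenerate pairing by providing the compatibility needed at the unit, via \(d-d'). I expect the main obstacle to be the bookkeeping in the diagrammatic manipulations for \(d3-3'), since several antipodes, twists, and copairings interact nontrivially; the key technical lemma to keep in mind is property \(p3) (symmetry of $\sigma$ up to antipode placement) together with the action properties of $\alpha$ and $\alpha'$ from Section \ref{adjoint/sec}, which streamline pushing $\bar\sigma$ past a multiplication vertex on either side.
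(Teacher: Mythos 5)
Your step for \(d') has a circularity problem. The antimonoidal equivalence $\sym$ of Proposition \ref{hr-prop/thm} is only known to be a functor on $\H^r(\G)$; for it to descend to the quotient $\Hb^r(\G)$ you must check that the image under $\sym$ of each defining relation already holds in the quotient. Your own computation shows that $\sym$ does \emph{not} map \(d) to itself (contrary to what you assert): since $\sym(\sigma_{i,i}) = \sigma_{i,i}$ by \(p3) and $\sym$ reverses the monoidal product, $\sym$ sends the relation \(d) precisely to the relation \(d'). Hence "$\sym$ passes to the quotient" is equivalent to "\(d') is derivable from \(d) over $\H^r(\G)$", which is exactly what you are trying to prove; applying $\sym$ to both sides of \(d) inside $\Hb^r(\G)$ presupposes the conclusion. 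The mere existence of $\sym$ on $\H^r(\G)$ only gives an isomorphism between the quotient by \(d) and the quotient by \(d'), not the validity of \(d') in the former.

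The gap is easy to close, and the paper does so directly: one shows that the two left-hand sides $(l_i \diam \id_{1_i}) \circ \sigma_{i,i}$ and $(\id_{1_i} \diam l_i) \circ \sigma_{i,i}$ are already equal in $\H^r(\G)$, before any quotient, using the unimodularity of the cointegral \(i5) together with the form/twist and copairing properties (the paper cites \(i5) and \(f5-11); equivalently one can use \(p1) and the two-sided cointegral axioms to reduce both sides to the same scalar multiple of $v_{1_i}^{-1} \circ \eta_i$). Once \(d') is established this way, the set of additional relations becomes $\sym$-stable, $\sym$ does descend to $\Hb^r(\G)$, and symmetry arguments are then legitimate — which is how the paper (and implicitly you) obtain \(d2') from \(d2). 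Your outline for \(d2-2') and \(d3-3') is otherwise in line with the paper's proof: \(d2) is a direct diagrammatic computation from the definition of $\bar\sigma_{i,j}$ using \(d), the form/coform identities and the copairing properties, and \(d3-3') then follow from \(r7-7') together with \(d2-2'); the adjoint-action machinery you invoke for \(d3-3') is not needed.
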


\begin{proof}
Relation \(d') is equivalent to \(d) modulo \(i5) and \(f5-11) in Tables
\ref{table-Hu/fig} and \ref{table-Huvprop/fig}. Identity \(d2) is proved in Figure
\ref{boundary08/fig}, and then \(d2') follows by symmetry. Finally, by using \(d2-2'), one
can easily derive \(d3-3') from \(r7-7') in Tables \ref{table-Huvdefn/fig} and
\ref{table-Huvprop/fig}.
\end{proof}

\begin{Figure}[htb]{boundary08/fig}
{}{Deriving relation \(d2) in $\Hb^r(\G)$
   [{\sl d}/\pageref{table-Hrb/fig},
    {\sl f}/\pageref{table-Hu/fig}-\pageref{table-Huvprop/fig},
    {\sl p}/\pageref{table-Huvprop/fig}, {\sl r}/\pageref{table-Huvdefn/fig}]}
\vskip-9pt
\centerline{\fig{}{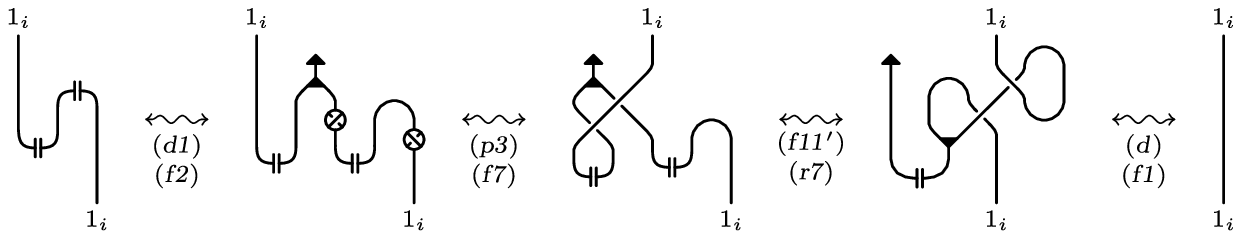}}
\vskip-3pt
\end{Figure}

\begin{proposition}\label{boundary/thm}
Given a groupoid $\G$, the following relation holds in $\Hbb_n^r$ (see Table
\ref{table-Hrbb/fig}):
$$l_i \circ v^{-1}_{1_i} \circ \eta_i = \id_\one.
\eqno{\(n')}$$
\vskip-12pt
\end{proposition}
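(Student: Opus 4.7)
The plan is to derive relation \(n') from the two defining relations of $\Hbb^r(\G)$, namely self-duality \(d) and normalization \(n), together with a handful of standard properties of the unimodular ribbon structure already established in $\H^r(\G)$.

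First, I would expand the left-hand side of \(d) by inserting the explicit formula \(r6) for the copairing, $\sigma_{i,i} = (v_{1_i}^{-1} \diam (v_{1_i}^{-1} \circ S_{1_i})) \circ \Delta_{1_i} \circ v_{1_i} \circ \eta_i$. Composing with $l_i \diam \id_{1_i}$ on the left and simplifying by means of the ribbon-invariance of the cointegral $l_i \circ v_{1_i}^{-1} = l_i$ (property \(p1)), the right cointegral identity $(l_i \diam \id_{1_i}) \circ \Delta_{1_i} = \eta_i \circ l_i$ (relation \(i1'), valid since $l_i$ is two-sided in the unimodular setting), and the antipode-unit identity $S_{1_i} \circ \eta_i = \eta_i$ (property \(s6)), the self-duality axiom translates into the intermediate equality
$$L_{1_i} = v_{1_i}^{-1} \circ \eta_i \circ (l_i \circ v_{1_i} \circ \eta_i)\,,$$
which already holds in $\Hb^r(\G)$.

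Second, passing to $\Hbb^r(\G)$, the normalization axiom \(n) collapses the trailing factor $l_i \circ v_{1_i} \circ \eta_i$ to $\id_\one$, so that the intermediate equality reduces to $L_{1_i} = v_{1_i}^{-1} \circ \eta_i$. Pre-composing both sides with $l_i$ and invoking $l_i \circ L_{1_i} = \id_\one$ (relation \(i3) in Table \ref{table-Hu/fig}) then yields $l_i \circ v_{1_i}^{-1} \circ \eta_i = \id_\one$, which is precisely relation \(n').

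The argument is essentially a short diagrammatic calculation and no step is a serious obstacle. The only conceptual point worth highlighting is that self-duality combined with the normalization produces a canonical closed formula $L_{1_i} = v_{1_i}^{-1} \circ \eta_i$ expressing the integral in terms of the inverse ribbon morphism and the unit, from which the inverse normalization \(n') drops out by composing with the cointegral.
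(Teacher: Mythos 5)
Correct, and essentially the paper's own argument (the diagrammatic derivation of Figure \ref{boundary09/fig}): both proofs expand $\sigma_{i,i}$ via (r6) inside the self-duality relation (d), simplify using the ribbon-invariance and two-sidedness of the cointegral, and then combine the normalization (n) with $l_i \circ L_{1_i} = \id_\one$ to obtain (n'). Your intermediate identity $L_{1_i} = v_{1_i}^{-1} \circ \eta_i$ in $\Hbb^r(\G)$ is just a convenient algebraic repackaging of the same computation.
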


\begin{proof}
See Figure \ref{boundary09/fig}.
\end{proof}

\begin{Figure}[htb]{boundary09/fig}
{}{Deriving relation \(n') in $\Hbb^r(\G)$
   [{\sl a}/\pageref{table-Hdefn/fig}, {\sl d}/\pageref{table-Hrb/fig},
    {\sl n}/\pageref{table-Hrbb/fig}, {\sl p}/\pageref{table-Huvprop/fig},
    {\sl r}/\pageref{table-Huvdefn/fig}]}
\vskip-6pt
\centerline{\fig{}{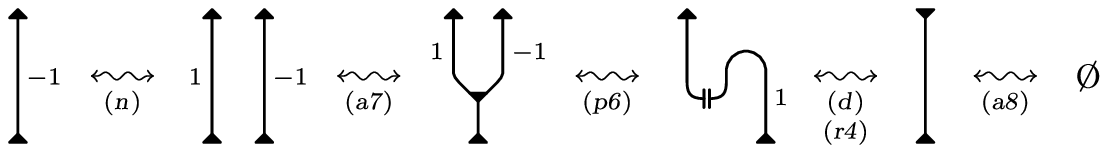}}
\end{Figure}

Given a full inclusion of groupoids $\G \subset \G'$ and a spanning sequence $X = (x_n,
\dots, x_1)$ for the pair $(\G',\G)$, we define the subcategories $\Hb^r_X(\G) \subset
\Hb^r(\G)$ and $\Hbb^r_X(\G) \subset \Hbb^r(\G)$ to be the images of $\H^r_X(\G) \subset
\H^r(\G)$ under the quotient functors. As for Kirby tangles, we have the following
commutative diagram of inclusion and quotient functors
$$
\begin{array}{ccccc}
\H^r_X(\G) & \to & \Hb^r_X(\G) & \to & \Hbb^r_X(\G)\\[2pt]
\cap & & \cap & & \cap\\[2pt]
\H^r(\G) & \to & \Hb^r(\G) & \to & \Hbb^r(\G)\end{array}
$$

\medskip

\begin{proposition}\label{Hb-stabilization/thm}
For any full inclusion of groupoids $\G \subset \G'$ and spanning sequence $X = (x_n,
\dots, x_1)$ for the pair $(\G',\G)$, the stabilization functor $\up_X: \H^r(\G) \to
\H^r_X(\G')$ and the reduction functor $\down_X: \H^r_X(\G') \to \H^r(\G)$ induce
well-defined functors on the quotient categories
\begin{eqnarray*}
&\up_X: \Hb^{r}(\G) \to \Hb^r_X(\G') \text{ \ and }\,
&\down_X: \Hb^r_X(\G') \to \Hb^r(\G),\\[2pt]
&\up_X: \Hbb^r(\G) \to \Hbb^r_X(\G') \text{ \ and }\,
&\down_X: \Hbb^r_X(\G') \to \Hbb^r(\G)\,.
\end{eqnarray*}
Moreover, $\down_X \circ \up_X = \id_{\Hb^r(\G)}$ (resp. $\down_X \circ \up_X =
\id_{\Hbb^r(\G)}$), while $\up_X \circ \down_X$ is naturally equivalent to
$\id_{\Hb^r_X(\G')}$ (resp. $\id_{\Hbb^r_X(\G')}$). In particular, $\down_X$ and $\up_X$
are equivalences of categories.
\end{proposition}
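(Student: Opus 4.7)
The plan is to reduce the proof to verifying that the defining relations \(d) and \(n) of the quotient categories are preserved at the level of individual morphisms by the stabilization functor $\up_X$ and by the reduction functor $\down_X$. Once such preservation is established, the induced functors on the quotients are automatically well-defined, and the identity $\down_X \circ \up_X = \id$ together with the natural equivalence $\xi^X: \up_X \circ \down_X \simeq \id$ from Proposition~\ref{H-reduction/thm} descend directly: indeed, $\xi^X$ is assembled from comultiplications, (co)units, and adjoint morphisms whose diagrammatic expressions are insensitive to \(d) and \(n), so the naturality squares in $\H^r(\G)$ and $\H^r_X(\G')$ remain commutative after passing to the quotients.

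For the stabilization functor, the formula $\up_X F = \id_X \diam \Upsilon_X(F)$ makes the verification immediate: $\Upsilon_X$ is only the relabeling along the inclusion $\iota: \G \hookrightarrow \G'$, so any instance of \(d) (resp. \(n)) at index $i \in \Obj\G$ inside $F$ is carried to the same relation at $i \in \Obj \G'$ inside $\Upsilon_X(F)$, and juxtaposition with $\id_X$ does not interfere. Hence $\up_X$ sends equivalent morphisms to equivalent ones, and the induced functors on $\Hb^r(\G)$ and $\Hbb^r(\G)$ are well-defined.

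For the reduction functor, the decomposition $\down_X = \down_{x_1} \circ \dots \circ \down_{x_n}$ reduces the question to an elementary step $\down_x$ with $x \in \G(i_0,j_0)$ and $i_0 \neq j_0$. By the formula $\down_x F = G^x \circ (\eta_{j_0} \diam \id_{\pi_0^x})$ for an $x$-reducible morphism $F = (\id_x \diam G) \circ (\Delta_x \diam \id_{\pi_0})$, the preservation of \(d) and \(n) by $\down_x$ is equivalent to their preservation by the push-through functor $\_^x:\H^r(\G) \to \H^r(\G^{\bs i_0})$ of Proposition~\ref{H-push/thm}. Splitting by the index $i$ at which a relation is applied: when $i \neq i_0$ the defining formulas in Definition~\ref{Fx/def} show that $\_^x$ acts as the identity on each of $l_i$, $\eta_i$, $v_{1_i}$, $L_{1_i}$, $\sigma_{i,i}$, so the relation transports trivially; when $i = i_0$ the same formulas together with $(l_{i_0})^x = l_{j_0}$, $(\eta_{i_0})^x = \eta_{j_0}$, $(v_{1_{i_0}})^x = v_{1_{j_0}}$, $(L_{1_{i_0}})^x = L_{1_{j_0}}$, and $(\sigma_{i_0,i_0})^x = \sigma_{j_0,j_0}$ (this being the \textquotedblleft otherwise\textquotedblright\ clause of the formula on page~\pageref{sigmax/par}, since $\{i_0,i_0\} \neq \{i_0,j_0\}$) send the instance of \(d) (resp. \(n)) at $i_0$ to the corresponding instance at $j_0$, which holds in $\Hb^r(\G^{\bs i_0})$ (resp. $\Hbb^r(\G^{\bs i_0})$).

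The main obstacle, although mild, is a bookkeeping check that the direct stipulation $(\sigma_{i_0,i_0})^x = \sigma_{j_0,j_0}$ in Definition~\ref{Fx/def} is internally consistent with the action of $\_^x$ on the expansion of $\sigma_{i_0,i_0}$ via the defining formula \(r6): one must verify that the image under $\_^x$ of the composite $(v_{1_{i_0}}^{-1} \diam (v_{1_{i_0}}^{-1} \circ S_{1_{i_0}})) \circ \Delta_{1_{i_0}} \circ v_{1_{i_0}} \circ \eta_{i_0}$ agrees, in $\H^r(\G^{\bs i_0})$, with the analogous expression at $j_0$. This reduces to a small diagrammatic manipulation entirely parallel to (and simpler than) the ones already carried out in the proof of Proposition~\ref{H-push/thm}, using the defining formulas for $(\Delta_{1_{i_0}})^x$ and the commutation of $\_^x$ with $v$, $S$, and $\eta$. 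Once this consistency is in place, the well-definedness of $\up_X$ and $\down_X$ on $\Hb^r$ and $\Hbb^r$, together with their status as mutually inverse equivalences via $\xi^X$, follow at once from Proposition~\ref{H-reduction/thm}.
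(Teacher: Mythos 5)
Your proposal is correct and follows essentially the same route as the paper: well-definedness of $\up_X$ is immediate, $\down_X$ is reduced to the elementary reductions and hence to the push-through functor $\_^x$, which preserves the relations \(d) and \(n) because these involve only labels $1_i$ and $\_^x$ merely relabels $1_i$ to $1_{i^x}$, after which the identity $\down_X \circ \up_X = \id$ and the natural equivalence $\xi^X$ descend from Proposition \ref{H-reduction/thm}. The "consistency" check you flag for $(\sigma_{i_0,i_0})^x$ is in fact automatic, since $(\Delta_{1_{i_0}})^x = \Delta_{1_{j_0}}$ and the image of the \(r6) expression at $i_0$ is literally the \(r6) expression at $j_0$.
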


\begin{proof} 
The last part of the statement will follow immediately from Theorem \ref{H-reduction/thm}, 
once we prove that $\up_X$ and $\down_X$ induce well-defined functors between the quotient
categories. This fact is obvious for the stabilization functor $\up_X$. Concerning the 
reduction $\down_X$, we recall from Definition \ref{H-reduction/def} that $\down_X = 
{\down_{x_1}} \!\circ \dots \circ {\down_{x_n}}$. Then, it is enough to consider the 
case of an elementary reduction $\down_x: H^r_x(\G) \to H^r(\G^{\bs i_0})$ for $x \in 
\G(i_0,j_0)$. On the other hand, still referring to Definition \ref{H-reduction/def}, we 
have $\down_x F = (\epsilon_{1_{j_0}} \!\diam \id_{\pi_1^x}) \circ F^x \circ (\eta_{j_0} 
\!\diam \id_{\pi_0^x})$ for any $F: H_x \diam H_{\pi_0} \to H_x \diam H_{\pi_1}$ in 
$\H^r_x(\G)$. Hence, we are reduced to proving that the functor $\_^x: \H^r(\G) \to 
\H^r(\G^{\bs i_0})$ defined in Proposition \ref{H-push/thm} passes to the quotient, giving  
well-defined functors
\begin{eqnarray*}
&\_^x: \Hb^r(\G) \to \Hb^r(\G) \text{ \ and }\, 
&\_^x: \Hbb^r(\G) \to \Hbb^r(\G)\,.
\end{eqnarray*}

This follows from the fact that the additional axioms \(d) and \(n) defining the quotient
categories $\Hb^r(\G)$ and $\Hbb^r(\G)$ have the form $F_1 = F_2$, where all labels
occurring in $F_1$ and $F_2$ are equal to $1_i$ for some $i \in \G$. Then, by Definition
\ref{Fx/def} $F_1^x$ and $F_2^x$ have the same graph diagrams as $F_1$ and $F_2$, but
label $1_{i^x}$ instead of $1_i$. In particular, $F_1^x$ and $F_2^x$ satisfy the same
relation as $F_1$ and $F_2$.
\end{proof}

\subsection{Equivalences $\Kb_n^c \cong \Hb_n^{r,c}$ and $\Kbb_n^c \cong \Hbb_n^{r,c}$%
\label{Kb=Hb/sec}}

Now we want to prove that the commutative diagram in Theorem \ref{equivalence/thm} induces
analogous commutative diagrams of equivalence functors between the quotient categories.
This will imply the equivalence of $\,\Cobt^{2+1}$ (resp. $\Cob^{2+1}$) and $\Hb^r =
\Hb_1^r$ (resp. $\Hbb^r = \Hbb_1^r$).

Analogously to what was done in Sections \ref{reduction/sec} and \ref{K=H/sec}, when $\G =
\G_k$, $\G' = \G_n$ and $X = \pi_{n \red k} = ((n,n-1), \dots, (k+1,k))$ with $n > k \geq
1$, we adopt the notation $\Hb^r_{\pi_{n \red k}} \!= \Hb^r_{n \red k}$ and
$\Hbb^r_{\pi_{n \red k}} \!= \Hbb^r_{n \red k}$, and we put $\Hb^{r,c}_n = \Hb^r_{n \red
1}(\G_n)$ and $\Hbb^{r,c}_n = \Hbb^r_{n \red 1}(\G_n)$.

\begin{proposition}\label{phi-bd/thm} 
For any $n \geq 1$, the functor $\Phi_n: \H_n \to \K_n$ induces well-defined monoidal 
functors on the quotient categories
\begin{eqnarray*}
&\bPhi_n: \Hb^r_n \to \Kb_n \text{ \ and }\,
&\bbPhi_n: \Hbb^r_n \to \Kbb_n\,.
\end{eqnarray*}
Moreover, for any $n > k \geq 1$ we have the following commutative diagrams.
\end{proposition}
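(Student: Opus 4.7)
The plan is to reduce the proposition to two diagrammatic checks plus a compatibility argument, relying on the category equivalence $\Phi_n : \H_n^r \to \K_n$ (Theorem \ref{alg-kirby-eq/thm}) and on the commutative diagrams at the unquotiented level (Proposition \ref{Hn-reduction/thm}).

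First, to see that $\Phi_n$ descends to monoidal functors $\bPhi_n : \Hb_n^r \to \Kb_n$ and $\bbPhi_n : \Hbb_n^r \to \Kbb_n$, it suffices to verify that the images under $\Phi_n$ of the additional defining relations of $\Hb_n^r$ and $\Hbb_n^r$ become equalities in $\Kb_n$ and $\Kbb_n$ respectively. For the self-duality relation (d), I would draw $\Phi_n((l_i \diam \id_{1_i}) \circ \sigma_{i,i})$ using the explicit forms given in Figure \ref{phi01/fig} and the computation of $\Phi_n(\sigma_{i,i})$ in Figure \ref{phi03/fig}: this is a Hopf‑link–like configuration consisting of a dotted unknot (coming from the cointegral $l_i$) clasped with the Lyubashenko copairing tangle, all labelled by $i$. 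A single $1/2$-handle trading of the type depicted in Figure \ref{boundary03/fig} (a), followed by isotopy and a $1/2$-handle cancellation, should transform this into a $0$-framed unknot, which is precisely $\Phi_n(L_{1_i})$. For the normalization (n), the image $\Phi_n(l_i \circ v_{1_i} \circ \eta_i)$ is by inspection a closed framed unknot carrying a single positive full twist coming from the ribbon morphism $v_{1_i}$, i.e.\ a $\pm 1$-framed unknot separate from the rest of the diagram, which is killed by a blow-down. Hence (n) maps to the empty tangle in $\Kbb_n$, as required.

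Second, monoidality of $\bPhi_n$ and $\bbPhi_n$ is automatic, since both the quotient functors $\H_n^r \to \Hb_n^r \to \Hbb_n^r$ and $\K_n \to \Kb_n \to \Kbb_n$ are monoidal quotient functors, and $\Phi_n$ itself is monoidal by Theorem \ref{phi/thm}. Braidedness is preserved for the same reason.

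For the commutative diagrams, I would invoke Proposition \ref{Hn-reduction/thm}, which already gives commutative squares
\[
\Phi_k \circ \down_k^n = \down_k^n \circ \Phi_n \quad\text{and}\quad
\up_k^n \circ \Phi_k = \Phi_n \circ \up_k^n
\]
on the unquotiented categories $\H_k^r, \H_n^r, \K_k, \K_n$ (with $\Phi_n$ restricted to $\H^r_{n\red k}$ for the reduction square). Combining this with Propositions \ref{Kb-reduction/thm} and \ref{Hb-stabilization/thm}, which say that $\up_k^n$ and $\down_k^n$ descend to well-defined functors on all the quotients of interest and remain category equivalences, the analogous identities pass to the quotients by naturality of the quotient maps. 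This gives the required commutative diagrams for $\bPhi_n$ and $\bbPhi_n$.

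The main obstacle is the first step: verifying cleanly that the image of relation (d) under $\Phi_n$ is precisely the effect of a single $1/2$-handle trading (up to isotopy, handle slides and cancellation). The difficulty is purely diagrammatic, because $\Phi_n(\sigma_{i,i})$ is a somewhat intricate Hopf-link tangle interlocking a dotted component with two framed strands (cf.\ Figure \ref{phi03/fig}), and one must identify the cointegral loop correctly with the canceling $1$-handle on the right-hand side of Figure \ref{boundary03/fig} (a). The analogous check for (n) is by contrast straightforward, since the ribbon morphism contributes exactly the $\pm 1$-framing required by the blow-down move.
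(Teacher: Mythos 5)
Your proposal follows essentially the same route as the paper: one checks that the images under $\Phi_n$ of the extra relations \(d) and \(n) are exactly the additional moves \(a) and \(b) of Figure \ref{boundary03/fig} defining $\Kb_n$ and $\Kbb_n$, and the commutative diagrams are obtained by quotienting the second diagram of Proposition \ref{Hn-reduction/thm}, using Propositions \ref{Kb-reduction/thm} and \ref{Hb-stabilization/thm} for the well-definedness of the stabilization and reduction functors on the quotients. The only caveat is a small imprecision in your sketch of the check for \(d) (e.g.\ $\Phi_n(L_{1_i})$ is an open tangle ending at $I_{(i,i)}$, not a closed $0$-framed unknot), but this does not affect the argument, which the paper likewise treats as a direct diagrammatic translation.
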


\centerline{\epsfbox{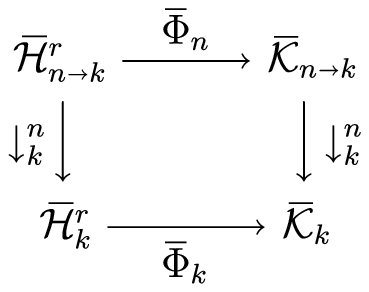}\kern15mm\epsfbox{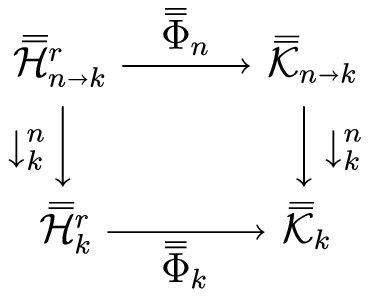}}

\begin{proof}
For the first part of the statement, it suffices to observe that, under the functor
$\Phi_n: \H_n \to \K_n$ defined in Theorem \ref{phi/thm}, the relations \(d) and \(n) in
Tables \ref{table-Hrb/fig} and \ref{table-Hrbb/fig} translate directly in the moves \(a)
and \(b) in Figure \ref{boundary03/fig}. Then, the commutative diagrams are obtained just
by quotienting the second diagram in Proposition \ref{Hn-reduction/thm}.
\end{proof}

\pagebreak

\begin{proposition}\label{psi-bd/thm}
For any $n \geq 4$, the functor $\Psi_n: \S_n \to \H_n^r$ induces well-defined braided 
monoidal functors between the quotient categories:
\begin{eqnarray*}
&\bPsi_n: \Sb_n \to \Hb^r_n \text{ \ and }\,
&\bbPsi_n: \Sbb_n \to \Hbb^r_n\,,\\[2pt]
&\bPsi_n: \Sb_n^c \to \Hb^{r,c}_n \text{ \ and }\,
&\bbPsi_n: \Sbb_n^c\to \Hbb^{r,c}_n\,.
\end{eqnarray*}
\vskip-12pt
\end{proposition}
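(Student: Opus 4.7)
The strategy is to verify that the images under $\Psi_n$ of the two additional relations \(T) and \(P) defining $\Sb_n$ and $\Sbb_n$ already hold in $\Hb_n^r$ and $\Hbb_n^r$ respectively. Once this is established for the full categories $\Sb_n \to \Hb_n^r$ and $\Sbb_n \to \Hbb_n^r$, the restricted versions to the connected (i.e. $1$-reducible) subcategories $\Sb_n^c \to \Hb_n^{r,c}$ and $\Sbb_n^c \to \Hbb_n^{r,c}$ follow automatically from Theorem \ref{psi/thm}(a) together with the compatibility of stabilization with the quotients (Propositions \ref{Sb-stabilization/thm} and \ref{Hb-stabilization/thm}). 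The braided monoidality is inherited from $\Psi_n$ since the two additional relations are respected componentwise by $\,\diam\,$.

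First I would handle the \(T) relation. Choosing a convenient strict total order $\prec$ on $\Obj\G_n$ (so that by Theorem \ref{psi/thm}(b) the verification is independent of the chosen order up to natural equivalence), I would express both sides of the move \(T) as compositions of expansions of the elementary morphisms \(a)--\(g) of Figure \ref{ribbon-morph01/fig}, and then apply Figures \ref{psi01/fig} and \ref{psi02/fig} to read off the corresponding morphisms in $\H_n^r$. The key point is that the closed component introduced by the \(T) move should translate, after simplification via the integral/cointegral axioms \(i1-5) in Tables \ref{table-Hdefn/fig} and \ref{table-Hu/fig} and the ribbon axioms of Table \ref{table-Huvdefn/fig}, into exactly the closed subdiagram $(l_i \diam \id_{1_i}) \circ \sigma_{i,i}$ appearing on the left-hand side of the self-duality relation \(d) in Table \ref{table-Hrb/fig}. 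Replacing this by $L_{1_i}$ and then simplifying yields the other side of the move. As a sanity check, one can compare with Figure \ref{boundary05/fig}, which shows that $\Theta_n$ sends \(T) to a $1/2$-handle trading, and recall that $\Theta_n = \Phi_n \circ \Psi_n$ by Theorem \ref{psi/thm}(c); this is consistent with $\bPhi_n$ sending \(d) to the $1/2$-handle trading move in $\Kb_n$, as noted in the proof of Proposition \ref{phi-bd/thm}.

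Next I would handle the \(P) relation in the same way, but this time the image under $\Psi_n$ of the closed component introduced by \(P) (the analogue, on the ribbon surface side, of a $\pm 1$-framed unknot) should reduce to a closed scalar morphism $l_i \circ v_{1_i}^{\pm 1} \circ \eta_i: \one \to \one$ in $\H_n^r$, possibly after a sequence of slidings that use axiom \(r5), the ribbon propagation, and the counit/unit axioms. In $\Hbb_n^r$ this scalar is $\id_\one$ by the normalization axiom \(n) and its consequence \(n') proved in Proposition \ref{boundary/thm}, so the move \(P) becomes an identity. Again, the consistency check via $\bbPhi_n \circ \bbPsi_n = \bbTheta_n$ and Figure \ref{boundary06/fig} confirms that the expected scalar indeed lives in the closed part of the diagram and that no extra relation is needed.

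The main obstacle I expect is the bookkeeping of the computation on \(T): because the \(T) move may be labeled by any transposition $\tp{i}{j} \in \Gamma_n$ and involves ribbon intersections/half-twists, the decomposition into elementary morphisms of $\S_n$ is not canonical, and the reduction in $\H_n^r$ will typically require repeated use of antipode propagation through form/coform (properties \(f6-7) in Table \ref{table-Hu/fig}) and the adjoint action identities of Table \ref{table-adjointprop/fig} to bring the image into a shape where \(d) applies directly. A convenient way to avoid unnecessary case analysis would be to localize the argument around a single reduction ribbon and to choose $\prec$ so that the index $i$ in play is $\prec$-maximal, thereby eliminating the antipodes introduced by $\tau_\sigma$ in the proof of Theorem \ref{psi/thm}; after that, the identity on the two sides of \(T) reduces essentially to the bare relation \(d). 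The verification for \(P) is similar but strictly easier, since the closed component contributes just a scalar factor that is killed by \(n).
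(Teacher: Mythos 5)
Your proposal is correct and follows essentially the same route as the paper: the paper's proof consists precisely in checking (via the explicit diagram computations of Figures \ref{boundary10/fig} and \ref{boundary11/fig}) that the images under $\Psi_n$ of the two sides of \(T) agree in $\Hb^r_n$ using the self-duality axiom \(d), and that those of \(P) agree in $\Hbb^r_n$ using the normalization \(n), with the restriction to the $1$-reducible subcategories and the braided monoidality handled exactly as you indicate.
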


\begin{proof}
Figure \ref{boundary10/fig} (resp. \ref{boundary11/fig}) respectively show that the
images under $\Psi_n: \S_n \to \H_n^r$ of the two sides of the relation \(T) and (resp.
\(P)) in Figure \ref{boundary04/fig}, according to the definition of are equivalent in 
$\Hb_n^r$ (resp. $\Hbb_n^r$). Therefore, $\Psi_n$ induces well-defined functors between
the quotient categories.
\end{proof}

\begin{Figure}[htb]{boundary10/fig}
{}{The image of move \(T) in $\Hb^r_n$ ($i > j$)
   [{\sl a}/\pageref{table-Hdefn/fig}, {\sl f}/\pageref{table-Hu/fig}, 
    {\sl d}/\pageref{table-Hrb/fig}, {\sl p}/\pageref{table-Huvprop/fig},
    {\sl r}/\pageref{table-Huvdefn/fig}-\pageref{table-Huvprop/fig}, 
    {\sl s}/\pageref{table-Hprop/fig}]}
\vskip-6pt
\centerline{\fig{}{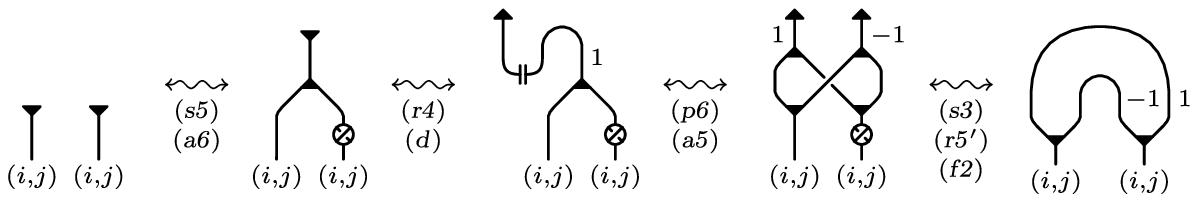}}
\end{Figure}

\begin{Figure}[htb]{boundary11/fig}
{}{The image of move \(P) in $\Hbb^r_n$ ($i > j$)
   [{\sl n}/\pageref{table-Hrbb/fig}, {\sl r}/\pageref{table-Huvprop/fig},
    {\sl s}/\pageref{table-Hdefn/fig}]}
\vskip-9pt
\centerline{\fig{}{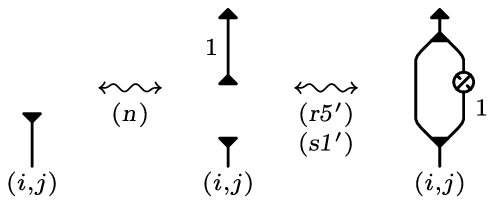}}
\end{Figure}

\begin{theorem}\label{3-cob/thm}
For any $n \geq 4$, we have the following commutative diagrams of equivalence functors:
\end{theorem}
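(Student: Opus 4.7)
The commutative diagrams in question are quotients of the triangle $\Theta_n = \Phi_n \circ \Psi_n$ of Theorem \ref{psi/thm}(c), augmented by $\bXi_n:\Kb_1 \to \Sb_n^c$ (and the double-bar version $\bbXi_n$). Their commutativity is inherited from the non-quotient setting, because all four functors $\bTheta_n, \bPsi_n, \bPhi_n, \bXi_n$ (and their double-bar analogs) are induced by well-defined quotient constructions, verified in Propositions \ref{theta-bd/thm}, \ref{xi-bd/thm}, \ref{phi-bd/thm}, \ref{psi-bd/thm}. By Theorem \ref{ribbon-kirby-bd/thm}, the functors $\bTheta_n, \bXi_n, \bbTheta_n, \bbXi_n$ are already category equivalences, so the only remaining task is to upgrade $\bPhi_n$ and $\bbPhi_n$ to category equivalences.

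My strategy is to first establish that $\bPsi_n: \Sb_n^c \to \Hb_n^{r,c}$ is a category equivalence, and then deduce the equivalence of $\bPhi_n$ from the factorization $\bTheta_n = \bPhi_n \circ \bPsi_n$ together with the equivalence of $\bTheta_n$. I would verify the three conditions of Proposition \ref{cat-equiv/thm} for $\bPsi_n$ as follows. Fullness is essentially automatic: any morphism class $[F] \in \Mor \Hb_n^{r,c}$ has a representative $F \in \Mor \H_n^{r,c}$ which, by Proposition \ref{full-psi/thm}, is of the form $\Psi_n(G)$ for some $G \in \Mor \S_n^c$, whence $\bPsi_n([G]) = [F]$. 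Faithfulness is free: since $\bTheta_n = \bPhi_n \circ \bPsi_n$ is an equivalence (hence faithful) by Theorem \ref{ribbon-kirby-bd/thm}, $\bPsi_n$ must be faithful as well. For essential surjectivity, I would reuse the explicit isomorphisms $\phi_\pi: H_{\pi_{n \red 1}} \diam H_\pi \to H_{\pi_{n \red 1}} \diam H_{\pi'}$ constructed in the proof of Theorem \ref{equivalence/thm}: the objects of $\Hb_n^{r,c}$ coincide with those of $\H_n^{r,c}$, and the quotient functor carries each $\phi_\pi$ to an isomorphism in $\Hb_n^{r,c}$ realizing any object as isomorphic to one in $\bPsi_n(\Obj \Sb_n^c)$.

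Once $\bPsi_n$ is known to be a category equivalence, the factorization $\bTheta_n = \bPhi_n \circ \bPsi_n$ forces $\bPhi_n \cong \bTheta_n \circ \bPsi_n^{-1}$ to be an equivalence as a composition of equivalences. The same three-step argument (fullness via lifting through $\Psi_n$, faithfulness via $\bbTheta_n$, essential surjectivity via the $\phi_\pi$'s) transplants without change to the double-bar setting, giving the equivalence of $\bbPsi_n$ and then of $\bbPhi_n$. This produces both commutative diagrams of equivalence functors.

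I do not anticipate a serious technical obstacle, since the genuinely difficult work is already packaged into Theorem \ref{ribbon-kirby-bd/thm} and Proposition \ref{full-psi/thm}. The delicate point, which is already taken care of, is the compatibility of the three distinct quotient relations — 1/2-handle trading in $\Kb_n$, move \(T) in $\Sb_n$, and axiom \(d) in $\Hb_n^r$ (plus blow down/up, \(P) and \(n) for the double-bar versions) — under each of the functors, established in Propositions \ref{theta-bd/thm}, \ref{phi-bd/thm}, \ref{psi-bd/thm}. As a downstream corollary, Kerler's algebraic description $\Hb^r \simeq \Cobt^{2+1}$ (and $\Hbb^r \simeq \Cob^{2+1}$) follows by specializing to $n = 1$: one combines the commutative diagram in Proposition \ref{phi-bd/thm} with the equivalences of reduction functors in Propositions \ref{Kb-reduction/thm} and \ref{Hb-stabilization/thm} and the equivalence $\Cobt^{2+1} \cong \Kb_1$ of Proposition \ref{Kb-category/thm}.
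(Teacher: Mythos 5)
Your proposal is correct and follows essentially the same route as the paper: faithfulness of $\bPsi_n$ (and $\bbPsi_n$) from the equivalence of $\bTheta_n$ (and $\bbTheta_n$) via Theorem \ref{ribbon-kirby-bd/thm}, fullness and essential surjectivity inherited from $\Psi_n$ (Proposition \ref{full-psi/thm} and the isomorphisms $\phi_\pi$ of Theorem \ref{equivalence/thm}) by passing to the quotient, and then the equivalence of $\bPhi_n$, $\bbPhi_n$ from the commutativity of the quotiented diagram. The only cosmetic difference is that the quotient diagrams are just the triangles $\Sb_n^c \to \Hb_n^{r,c} \to \Kb_n^c$ (and their double-bar analogues), so $\bXi_n$ need not be invoked beyond its role inside Theorem \ref{ribbon-kirby-bd/thm}.
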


\centerline{\epsfbox{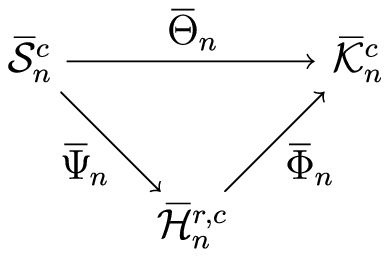}\hskip15mm\epsfbox{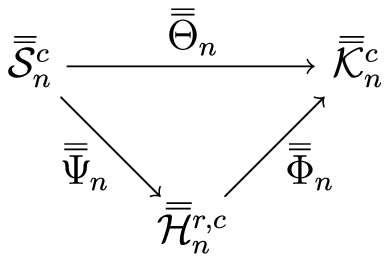}}
\vskip6pt\vskip0pt

\begin{proof}
The existence of the functors has been already established in Propositions
\ref{theta-bd/thm}, \ref{phi-bd/thm} and \ref{psi-bd/thm}. The commutativity of the
diagrams follows from that of the diagram in Theorem \ref{equivalence/thm} by taking the
respective quotients. According to Theorem \ref{ribbon-kirby-bd/thm}, for $n \geq 4$ the
functors $\bTheta_n$ and $\bbTheta_n$ are category equivalences, which implies that in
this case $\bPsi_n$ and $\bbPsi_n$ are faithful. Now, since $\Psi_n$ is a category
equivalence, it is full and any object in $\H_n$ is isomorphic to one in its image. Then,
the same holds for its quotients $\bPsi_n$ and $\bbPsi_n$, which implies that these are
category equivalences as well, and by the commutativity of the diagram so are $\bPhi_n$
and $\bbPhi_n$.
\end{proof}

At this point, we are ready to give the announced algebraic description of the category of
(2-framed) 3-dimensional relative codordisms in terms of the categories $\Hb^r = \Hb^r_1$
(in the 2-framed case) and $\Hbb^r = \Hbb^r_1$. The elementary morphisms and defining
axioms of such algebraic categories are listed in Tables \ref{table-Hr1diags/fig} and
\ref{table-Hr1axioms/fig} and Table \ref{table-Hrbb1/fig}. Observe that axiom \(d) in the
last table expresses the integral $L$ in terms of the cointegral $\lambda$ and the
copairing $\sigma$, and by substituting such expression in axioms \(i2-3) in
\ref{table-Hr1axioms/fig}, we can cancel both, $L$ from the list of the elementary
diagrams in Tables \ref{table-Hr1diags/fig}, and \(d) from the list of the axioms.

\begin{Table}[htb]{table-Hrbb1/fig}
{}{}
\centerline{\fig{}{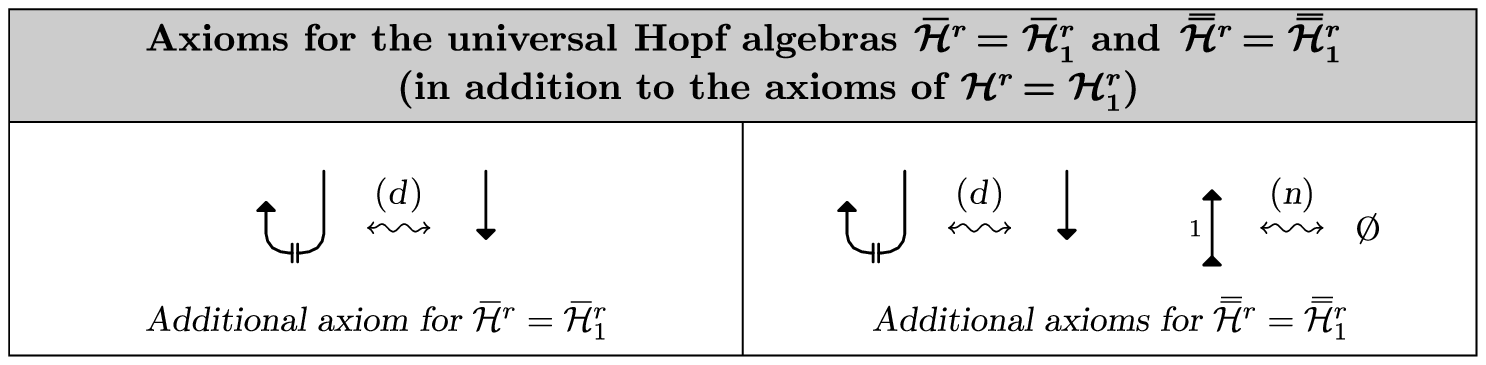}}
\vskip-3pt
\end{Table}

\begin{theorem}\label{alg-kirby-eq-bd/thm}
The functors $\bPhi_n: \Hb_n^{r,c} \to \Kb_n^c$ and $\bbPhi_n:\Hbb_n^{r,c} \to \Kbb_n^c$
are category equivalences for any $n \geq 1$. In particular, the universal self-dual
ribbon Hopf algebra $\Hb^r = \Hb^r_1$ is equivalent to the category of 2-framed
3-dimensional relative cobordisms $\Cobt^{2+1} \!= \Cobt^{2+1}_1$, while universal
boundary ribbon Hopf algebra $\Hbb^r= \Hbb^r_1$ is equivalent to the category of
3-dimensional relative cobordisms $\Cob^{2+1} \!= \Cob^{2+1}_1$.
\end{theorem}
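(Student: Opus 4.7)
The plan is to mimic the proof of Theorem \ref{alg-kirby-eq/thm} (the non-quotient version), reducing everything first to the case $n \geq 4$ that is handled by Theorem \ref{3-cob/thm}, and then descending to $n = 1, 2, 3$ via the reduction functors on the quotient categories.

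First, I would observe that for $n \geq 4$ the functors $\bPhi_n: \Hb_n^{r,c} \to \Kb_n^c$ and $\bbPhi_n: \Hbb_n^{r,c} \to \Kbb_n^c$ are already known to be equivalences of categories. Indeed, this is exactly the content of the two commutative diagrams in Theorem \ref{3-cob/thm}, which state that for $n \geq 4$ all four functors $\bTheta_n$, $\bPsi_n$, $\bPhi_n$ (and their double-barred versions) are equivalences. So nothing more needs to be said in that range.

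Next, for $1 \leq n \leq 3$, I would invoke the commutative diagrams of Proposition \ref{phi-bd/thm}, namely
\[
\bPhi_n \circ {\down_1^4} = {\down_1^4} \circ \bPhi_4 \quad \text{and} \quad \bbPhi_n \circ {\down_1^4} = {\down_1^4} \circ \bbPhi_4.
\]
By Proposition \ref{Hb-stabilization/thm}, the reduction functors ${\down_1^4}: \Hb_4^{r,c} \to \Hb_n^{r,c}$ and ${\down_1^4}: \Hbb_4^{r,c} \to \Hbb_n^{r,c}$ are category equivalences (with quasi-inverses ${\up_1^4}$), while by Proposition \ref{Kb-reduction/thm} the same holds for the reduction functors ${\down_1^4}: \Kb_4^c \to \Kb_n^c$ and ${\down_1^4}: \Kbb_4^c \to \Kbb_n^c$. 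Since $\bPhi_4$ and $\bbPhi_4$ are equivalences by the previous paragraph, the commutativity of the diagrams forces $\bPhi_n$ and $\bbPhi_n$ to be equivalences as well, for every $1 \leq n \leq 3$.

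Finally, to deduce the ``in particular'' part of the statement, I would specialize to $n = 1$. In that case $\Hb_1^{r,c} = \Hb^r$ and $\Kb_1^c = \Kb_1$ by definition (there is no reduction condition to impose), and similarly in the double-barred case. Combining the equivalence $\bPhi_1: \Hb^r \to \Kb_1$ with the equivalence $\Kb_1 \cong \Cobt^{2+1}$ of Proposition \ref{Kb-category/thm} gives the desired equivalence $\Hb^r \cong \Cobt^{2+1}$, and the parallel argument using $\Kbb_1 \cong \Cob^{2+1}$ gives $\Hbb^r \cong \Cob^{2+1}$. I do not expect any real obstacle here: all the substantive work has already been carried out earlier (the construction and naturality of the reduction functors on the quotients, the preservation of the additional moves \textup{(T)} and \textup{(P)} under $\Psi_n$, $\Theta_n$, $\Phi_n$, and above all the equivalence result of Theorem \ref{3-cob/thm}). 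The only mild point to check is that the commutative diagrams of Proposition \ref{phi-bd/thm} indeed transport the equivalence property from level 4 down to levels 1, 2, 3; this is a formal consequence of the two-out-of-three property for equivalences of categories.
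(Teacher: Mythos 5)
Your proposal follows essentially the same route as the paper's own proof: invoke Theorem \ref{3-cob/thm} for $n \geq 4$, transport the equivalence down to $1 \leq n \leq 3$ via the commutative diagrams of Proposition \ref{phi-bd/thm} together with the equivalences of the reduction functors from Propositions \ref{Kb-reduction/thm} and \ref{Hb-stabilization/thm}, and then conclude the ``in particular'' statement from Proposition \ref{Kb-category/thm} at $n=1$. No gaps; this matches the paper's argument step for step.
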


\begin{proof}
According to Theorem \ref{3-cob/thm}, $\Phi_n$ is a category equivalence for any $n \geq
4$. For any $1 \leq n \leq 3$, the commutative diagrams in Proposition \ref{phi-bd/thm}
imply that $\bPhi_n \circ {\down_1^4} = {\down_1^4} \circ \bPhi_4$ (resp. $\bbPhi_n \circ
{\down_1^4} = {\down_1^4} \circ \bbPhi_4$). By Propositions \ref{Kb-reduction/thm} and
\ref{Hb-stabilization/thm}, the reduction functors involved in this identity are category
equivalences and therefore, $\bPhi_n$ (resp. $\bbPhi_n$) is a category equivalence as
well. In particular, for $n = 1$ we obtain that $\bPhi_1: \Hb^r_1 \to \Kb_1$ (resp.
$\bbPhi_1: \Hbb^r_1 \to \Kbb_1$) is a category equivalence. Then, the second part of the
statement follows from Proposition \ref{Kb-category/thm} with $n = 1$.
\end{proof}

\newpage

\section{Branched coverings of $B^4$ and $S^3$%
\label{covering-moves/sec}}

In this chapter we apply the previous results to the covering moves problem for
branched coverings of $B^4$ and $S^3$.

In particular, Section \ref{B4/sec} concerns the representation of 4-dimensional
2-handle\-bodies as simple coverings of $B^4$ branched over ribbon surfaces. Here, we will
derive from the results of Chapters \ref{cobordisms/sec} and \ref{surfaces/sec} an
effective way to convert any Kirby diagram into a 3-labeled ribbon surface providing such
a representation, and a 2-equivalence criterion in terms of moves for labeled ribbon
surface.

Then, in Section \ref{S3/sec}, by adding the further moves introduced in Section
\ref{quotients/sec} and restricting all the moves to the boundary, we obtain a complete
solution of the Fox-Montesinos covering moves problem for simple coverings of $S^3$
branched over links. Finally, we extend such result to arbitrary coverings of $S^3$
branched over graphs.

\subsection{Covers of $B^4$ simply branched over ribbon surfaces%
\label{B4/sec}}

Recalling the definitions in Sections \ref{Chbn/sec}, we have that the 2-equivalence
classes of connected 4-dimensional 2-handlebodies bijectively correspond to the morphisms
$W: M^1_\emptyset \to M^1_\emptyset$ in $\Chb_1^{3+1}$, or equivalently, in terms of Kirby
diagrams, to the morphisms $K: I_\emptyset \to I_\emptyset$ in $\K_1$ (cf. Proposition
\ref{K-category/thm}). This follows from Propositions \ref{0-handles/thm} and
\ref{0-handles/thm}, by taking into account that such a morphism $W$ is a relative
4-dimensional 2-handlebody build on the unique 0-handle $H^0 = Y(M^1_\emptyset,
M^1_\emptyset) \cong B^4$, considered up to 2-deformations that fix $H^0$ and do not
introduce any extra 0-handle.

On the other hand, an $n$-fold covering of $B^4$ simply branched over a ribbon surface, is
represented by an $n$-labeled ribbon surface in $B^4$, which is nothing else than a
morphism $S: J_\emptyset \to J_\emptyset$ in $\S_n$. Moreover, the functor $\Theta_n: \S_n
\to \K_n$ introduced in Section \ref{Theta/sec} sends $S$ to a $n$-labeled Kirby diagram
$K_S: I_\emptyset \to I_\emptyset$ in $\K_n$. This in turn represents a relative
4-dimensional 2-handlebody $W_S: M^n_\emptyset \to M^n_\emptyset$ build on the $n$
0-handles $H^0_1 \sqcup \dots \sqcup H^0_n = Y(M^n_\emptyset, M^n_\emptyset) \cong B^4
\sqcup \dots \sqcup B^4$, considered up to 2-equivalence modulo the 0-handles.

Then, the restriction of $\Theta_n$ from $n$-labeled ribbon surfaces (i.e. ribbon
surface tangles with empty source and target) to $n$-labeled Kirby diagrams (i.e. Kirby
tangles with empty source and target), exactly encodes the realization of 4-dimensional
2-handlebodies up to 2-equivalence as simple branched coverings of $B^4$ given by
Montesinos in \cite{Mo78}. In fact, according to the discussion at the beginning of
Section \ref{Theta/sec}, if $S \subset B^4$ is an $n$-labeled ribbon surface representing
a simple branched covering $p: W \to B^4$, any adapted 1-handlebody decomposition of $S$
induces a 2-handlebody decomposition of $W$ with $n$ 0-handles, whose 1-handles (resp. 
2-handles) correspond to the 0-handles (resp. 1-handles) of $S$. Moreover, Lemma 
\ref{ribbon-to-kirby/thm} says that 1-deformations in $S$ induce 2-deformations in $W$, 
hence the 2-handlebody structure of $W$ turns out to be well-defined up to 2-equivalence
by Propositions \ref{1-handles/thm}.

The main result of Montesinos \cite{Mo78} is that any connected oriented 4-dimensional
2-handlebody $W$ has a 3-fold branched covering representation as above. Actually, the
ribbon surface $S$ can always be made orientable (cf. Remark \ref{orient/rem} or
\cite{LP01,PZ03} for other constructions giving directly orientable branching surfaces).
In that paper the labeled ribbon surface $S$ is obtained from a Kirby diagram of $W$,
after it has been suitably symmetrized with respect to a standard 3-fold simple covering
representation of the 1-handlebody $W^1$.

A simpler and more effective construction of a labeled ribbon surface $S$ representing
$W$, similar to that of labeled links given in \cite{Mo80} for 3-manifolds (cf. Remark
\ref{crossing/rem}), can be derived from Proposition \ref{full-theta3/thm}. This is the
content of Proposition \ref{hat-xi/thm} below. 

But first we need the following definition. Let us denote by $\K_n^\o$ the set of all
$n$-labeled Kirby diagrams $K: I_\emptyset \to I_\emptyset$ in $\K_n$ and by $\S_n^\o$ the
set of all $n$-labeled ribbon surfaces $S: J_\emptyset \to J_\emptyset$ in $\S_n$.

\begin{definition}\label{closure/def}
Given any Kirby tangle $K: I_{\pi_0} \to I_{\pi_1}$ in $\K_n$ with $\pi_0 = ((i^0_1,
j^0_1), \dots, (i^0_{m_0}, j^0_{m_0}))$ and $\pi_1 = ((i^1_1, j^1_1), \dots, (i^1_{m_1},
j^1_{m_1}))$, we define the {\sl closure} of $K$ to be the Kirby diagram $\hat K =
(\epsilon_{(i^1_1, j^1_1)} \diam \dots \diam \epsilon_{(i^1_{m_1}, j^1_{m_1})}) \circ K
\circ (L_{(i^0_1, j^0_1)} \diam \dots \diam L_{(i^0_{m_0}, j^0_{m_0})})$ in $\K_n^\o$ (see
Figure \ref{closure01/fig}). Similarly, given a ribbon surface tangle $S: J_{\sigma_0} \to
J_{\sigma_1}$ in $\S_n$, we define the {\sl closure} of $S$ to be the ribbon surface $\hat
S$ in $\S_n^\o$ shown in Figure \ref{closure02/fig}.
\end{definition}

\begin{Figure}[htb]{closure01/fig}
{}{The closure $\hat K$ of a Kirby tangle $K$ in $\K_n$}
\vskip-6pt
\centerline{\fig{}{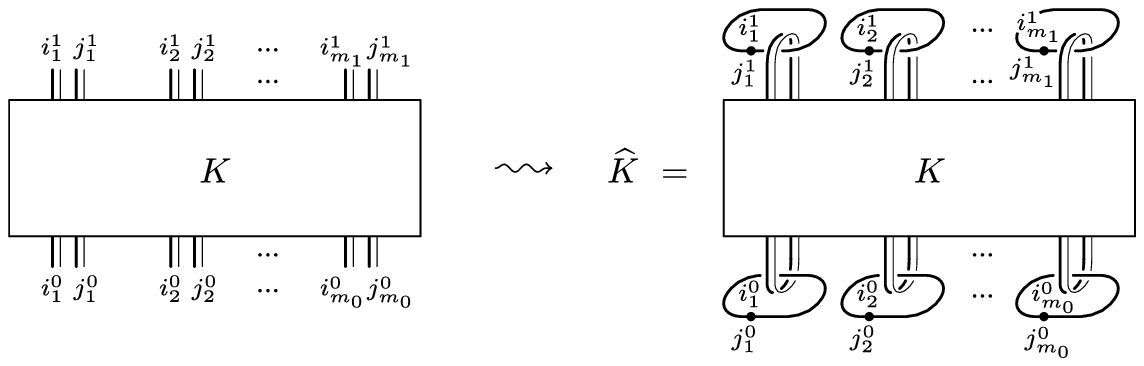}}
\vskip-6pt
\end{Figure}

\begin{Figure}[htb]{closure02/fig}
{}{The closure $\hat S$ of a ribbon surface tangle $S$ in $\S_n$}
\vskip-6pt
\centerline{\fig{}{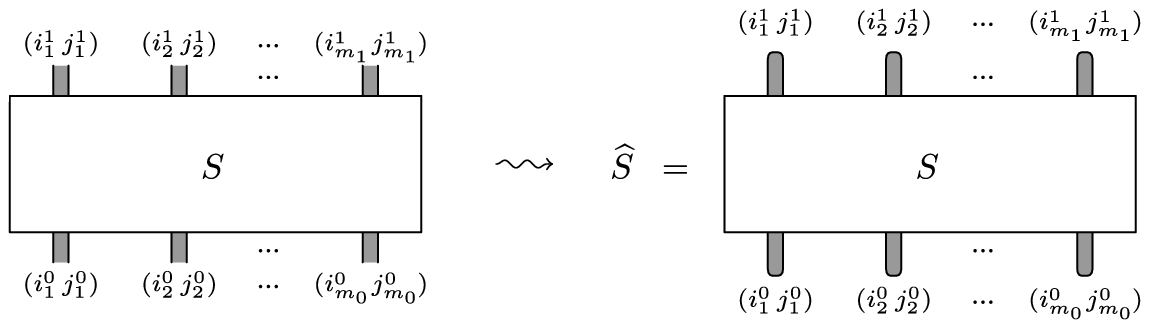}}
\vskip-6pt
\end{Figure}

It is clear from the definitions (cf. Figure \ref{theta01/fig}) that the functor 
$\Theta_n: S_n \to K_n$ preserves closures, i.e. if $\Theta_n(S) = K$ then $\Theta_n(\hat 
S) = \hat K$.

We also observe that the stabilization $\up_k^n K$ of a $k$-labeled Kirby diagram $K$ is
an $n$-labeled Kirby tangle with non-empty source and target $I_{\pi_{n \red k}}$. But the
closure of $\up_k^n K$ is an $n$-labeled Kirby diagram and the corresponding 4-dimensional
2-handlebody is 2-equivalent to the one given by $K$ (see Figure \ref{closure03/fig} for
$k = n-1$). Such 2-equivalence consists in the deletion of canceling pairs of 1/2-handles
and then 0/1-handles (in the rightmost diagram of Figure \ref{closure03/fig}, the $n$-th
0-handle can be canceled against the 1-handle connecting it to the $(n-1)$-th 0-handle,
since $K$ lives in the other 0-handles).

\begin{Figure}[htb]{closure03/fig}
{}{$n$-stabilization of an $(n-1)$-labeled Kirby diagram}
\centerline{\fig{}{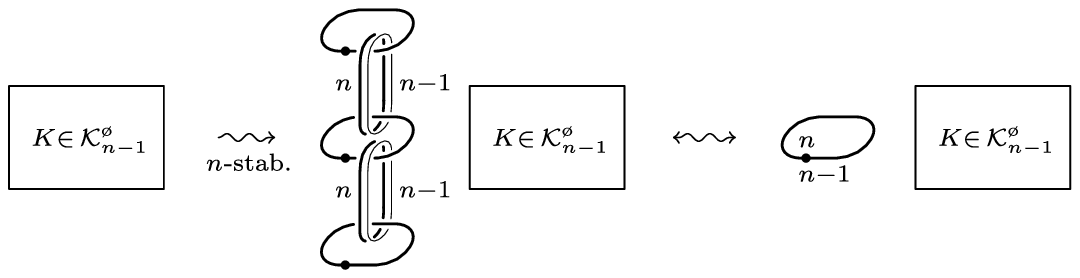}}
\vskip-3pt
\end{Figure}

Analogously, the closure of the stabilization $\up_k^n S$ of a $k$-labeled ribbon surface
$S$ corresponds to an $n$-fold stabilization of the $k$-fold branched covering represented
by $S$, as defined in Section \ref{coverings/sec} (see Figure \ref{closure04/fig} for $k =
n-1$).

\begin{Figure}[htb]{closure04/fig}
{}{$n$-stabilization of an $(n-1)$-labeled ribbon surface}
\vskip3pt
\centerline{\fig{}{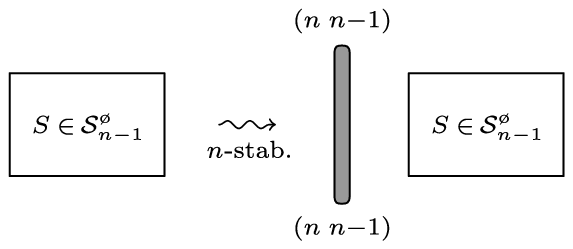}}
\vskip-9pt
\end{Figure}

\begin{proposition}\label{hat-xi/thm}
Let $W$ be a connected 4-dimensional 2-handlebody and $K$ be any uni-labeled Kirby diagram
of it. Then, the closure $\hat S_K$ of the ribbon tangle $S_K$ defined in Section
\ref{fullness/sec} is a 3-labeled ribbon surface such that the 4-dimensional 2-handlebody
described by the 3-labeled Kirby diagram $\Theta_3(\hat S_K)$ is 2-equivalent to $W$. In
other words, $\hat S_K$ represents $W$ up to 2-equivalence as a 3-fold branched covering
of $B^4$. The global structure of $\hat S_K$ is depicted in Figure \ref{closure05/fig}
(see Section \ref{fullness/sec} for the definition of $T_K$).
\end{proposition}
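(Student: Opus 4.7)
The plan is to derive the statement essentially by combining three ingredients already established in the paper: Proposition \ref{full-theta3/thm}, the fact that the reduction/stabilization pair $(\down_1^3,\up_1^3)$ is a category equivalence on $\K_3^c$, and the observation (illustrated in Figure \ref{closure03/fig}) that the closure of a stabilized Kirby diagram is 2-equivalent, as a 4-dimensional 2-handlebody, to the closure of the original diagram.

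First I would note that, by construction, $S_K$ lives in $\S_3^c = \S_{3\red 1}$, so that $\Theta_3(S_K)$ lives in $\K_3^c = \K_{3\red 1}$ (this is used, and established, in the proof of Proposition \ref{full-theta/thm}). Proposition \ref{full-theta3/thm} then gives the key identity $\down_1^3\Theta_3(S_K)=K$ in $\K_1$. Applying the stabilization functor yields $\up_1^3 K=\up_1^3\down_1^3\Theta_3(S_K)$, and Proposition \ref{K-reduction/thm} provides the natural equivalence $\xi^{3\red 1}\colon \up_1^3\circ\down_1^3\simeq\id_{\K_3^c}$. Thus $\Theta_3(S_K)$ and $\up_1^3 K$ are isomorphic morphisms of $\K_3^c$ from $I_{\pi_{3\red 1}}$ to $I_{\pi_{3\red 1}}$, with the isomorphism given by the specific Kirby tangle $\xi^{3\red 1}_\emptyset$ built from dotted unknots carrying labels $(n,n-1)$.

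Next I would use the observation, recorded just before the statement of the proposition, that $\Theta_n$ preserves closures; in particular $\Theta_3(\hat S_K)=\widehat{\Theta_3(S_K)}$. The step that requires an argument is then that capping off the two isomorphic tangles $\Theta_3(S_K)$ and $\up_1^3 K$ by the closure operation produces 2-equivalent Kirby diagrams in $\K_3^\o$: the isomorphism $\xi^{3\red 1}_\emptyset$ placed against the capping morphisms $L_{(n,n-1)}\diam L_{(n-1,n-2)}$ at the bottom (and symmetrically against $\epsilon$'s at the top) is itself a local Kirby tangle which, once closed, can be simplified by the handle slides and 1/2-handle cancelations already exploited in Section \ref{K/sec}; this is the diagrammatic content of Lemma \ref{K-reduction1/thm} read ``in the closed setting''. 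Hence $\widehat{\Theta_3(S_K)}$ and $\widehat{\up_1^3 K}$ are 2-equivalent Kirby diagrams in $\K_3^\o$.

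Finally, I would invoke the observation of Figure \ref{closure03/fig}: the closure $\widehat{\up_1^3 K}$ is 2-equivalent, as a 4-dimensional 2-handlebody, to $\hat K = K$ (closure acts trivially on $K$ since its source and target are already empty), because the two extra 0-handles of $\up_1^3 K$ are attached to the main 0-handle by canceling 1-handles that can be deleted by 0/1-handle cancelation once the associated 2-handles have been unlinked by the closure. Combining these three equivalences gives a chain $\Theta_3(\hat S_K)=\widehat{\Theta_3(S_K)}\sim_2\widehat{\up_1^3 K}\sim_2 K$, which is exactly the claim. The principal technical point to be careful about is the middle equivalence, i.e.\ verifying that the local ``closed $\xi^{3\red 1}_\emptyset$'' configurations reduce via 2-deformation to the identity; but this is just a bookkeeping exercise, closely parallel to the diagrammatic manipulations already carried out in Figures \ref{kirby-stab13/fig} and \ref{kirby-stab14/fig}, so no essentially new difficulty arises.
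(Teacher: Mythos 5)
Your argument is correct, but it takes a different route from the paper's. The paper proves the proposition by rerunning, directly on $\Theta_3(\hat S_K)$, the explicit diagrammatic reduction from the proof of Proposition \ref{full-theta3/thm}: one chooses the same adapted 1-handlebody decomposition (with the collars now counted as 0-handles, the pieces $\bar Q_{m_0}$, $Q_{m_1}$ being absent), performs the slidings and crossing changes of Figures \ref{kirby-ribbon08/fig}--\ref{kirby-ribbon10/fig}, and recognizes the result as a 3-stabilization of $K$, which is then reduced to $K$ by 1/2- and 0/1-handle cancelations as in Figure \ref{closure03/fig}. You instead argue formally from already-established statements: $\down_1^3\Theta_3(S_K)=K$, the natural equivalence $\xi^{3\red 1}\colon \up_1^3\circ\down_1^3\simeq\id_{\K_3^c}$, closure-compatibility of $\Theta_3$, and the Figure \ref{closure03/fig} observation for $\widehat{\up_1^3K}$. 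This is legitimate, and the step you single out as the ``principal technical point'' is even easier than you suggest: since here $m_0=m_1=0$, we have $\up_1^3K=\id_{\pi_{3\red 1}}\!\diam K$ with $K$ a closed diagram, so by the interchange law it commutes with $\xi^{3\red 1}_{\pi_{3\red 1}}$ and its inverse, and the conjugation disappears before taking closures; alternatively, your step is exactly the content of Lemma \ref{hat-reduction/thm} (stated after the proposition but proved independently, by the same cancelation/pushing-through/sliding moves you invoke). The trade-off: your derivation is shorter and avoids repeating the Kirby-diagram manipulations, while the paper's proof is geometrically self-contained and exhibits the explicit sequence of handle moves carrying $\Theta_3(\hat S_K)$ to $K$, which is what makes the construction of Figure \ref{closure05/fig} usable in practice.
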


\begin{Figure}[htb]{closure05/fig}
{}{The global structure of the ribbon surface $\hat S_K$}
\vskip-6pt
\centerline{\fig{}{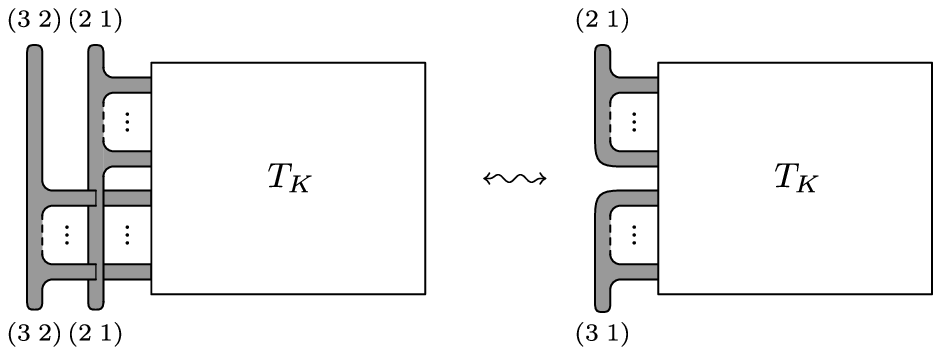}}
\vskip-3pt
\end{Figure}

\begin{proof}
The same argument of the proof of Proposition \ref{full-theta3/thm} still works here,
except for the absence of all the components relative to the ribbon surface tangles $\bar
Q_{m_0}$ and $Q_{m_1}$, which are empty in the present context, and for the use of handle
cancelation in place of the reduction functor $\down_1^3$ in the end. Namely, we start 
with the adapted 1-handlebody decomposition of $\hat S_K$ having the same handles as the 
one of $S_K$ in the proof of Proposition \ref{full-theta3/thm}, apart from the fact that 
the collars of the source and the target are now considered as 0-handles. Then, we 
consider the corresponding 3-labeled Kirby diagram $\Theta_3(\hat S_K)$ and perform on it 
the slidings and the crossing changes described in the Figures \ref{kirby-ribbon08/fig},
\ref{kirby-ribbon09/fig} and \ref{kirby-ribbon10/fig}. After that, we can isotope the 
resulting diagram to the separate union of two chains of handles on the left, like those 
in the first diagram of Figure \ref{closure03/fig} for $n = 3$ and $n = 2$ respectively, 
and a copy of the original $K$ labeled by 1. This is just a 3-stabilization of $K$, hence 
it can be reduced to $K$ by canceling all the handles carrying the labels 3 and 2 in the 
order as shown in Figure \ref{closure03/fig}.
\end{proof}

Our first covering moves theorem concerns the 2-equivalence of 4-dimensional
2-handlebodies represented by labeled ribbon surfaces in $B^4$, as described above in
terms of the $\Theta_n$'s. Its proof will make use of the next two lemmas.

\begin{lemma}\label{hat-reduction/thm}
For any $k$-reducible Kirby tangle $K \in \K_{n \red k}$, the 4-dimensional 2-handlebodies
represented by the Kirby diagrams $\hat{\down_k^n K}$ and $\hat K$ are 2-equivalent.
\end{lemma}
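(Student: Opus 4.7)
My plan is to proceed by induction on $n-k$. Using the factorization $\down_k^n = \down_k^{k+1} \circ \down_{k+1}^n$ from Definition \ref{K-reduction/def}, the inductive step follows by applying the induction hypothesis twice (to $K$ itself and then to $\down_{k+1}^n K$), so I need only treat the base case $n = k+1$, namely $\hat K \sim \widehat{\down_{n-1}^n K}$ for an $(n-1)$-reducible tangle $K \in \K_{n \red (n-1)}$.

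I split the base case into two equivalences: first, $\hat K \sim \widehat{\up_{n-1}^n \down_{n-1}^n K}$; and then $\widehat{\up_{n-1}^n \down_{n-1}^n K} \sim \widehat{\down_{n-1}^n K}$.

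For the first equivalence I would use the natural equivalence $\xi^{(n,n-1)}\colon \up_{n-1}^n \circ \down_{n-1}^n \simeq \id_{\K_{n \red (n-1)}}$ from Lemma \ref{K-reduction1/thm}. Its naturality yields the $\K_n$-identity
$$K \;=\; \xi^{(n,n-1)}_{\pi_1}\circ (\up_{n-1}^n \down_{n-1}^n K) \circ (\xi^{(n,n-1)}_{\pi_0})^{-1},$$
which is established by the explicit sequence of interior Kirby moves displayed in Figure \ref{kirby-stab14/fig}. Those moves (2-handle slidings, crossing changes and 1/2-handle cancellations) take place in the interior of the tangle and do not touch its free ends, so they apply verbatim after closure: $\hat K$ is 2-equivalent to the closure of the right-hand side. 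It then remains to verify that the modified caps $(\epsilon_{(n,n-1)} \diam \epsilon_{\pi_1})\circ \xi^{(n,n-1)}_{\pi_1}$ and $(\xi^{(n,n-1)}_{\pi_0})^{-1}\circ (L_{(n,n-1)} \diam L_{\pi_0})$ are 2-equivalent, respectively, to the standard closure caps $\epsilon_{(n,n-1)} \diam \epsilon_{\pi_1^{(n,n-1)}}$ and $L_{(n,n-1)} \diam L_{\pi_0^{(n,n-1)}}$ of $\up_{n-1}^n \down_{n-1}^n K$. The explicit form of $\xi^{(n,n-1)}_\pi$ in Figure \ref{kirby-stab06/fig} (a single dotted unknot embracing a prescribed subfamily of framed strings, with the remaining strands going straight) reduces this to a direct Kirby-calculus check: the dotted unknot of each $\xi$ cancels as a 1/2-pair against one of the closure 2-handles supplied by $L$ (respectively is absorbed by $\epsilon$), and the remaining framed arcs straighten out.

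For the second equivalence I would appeal to the observation recorded just after Figure \ref{closure03/fig}: the stabilization $\up_{n-1}^n N$ of any $N \in \K_{n-1}$ augments the tangle only by the trivially attached ribbon $\id_{(n,n-1)}$, which upon closure forms a canceling 1/2-handle pair; removing it, together with the resulting isolated $n$-th 0-handle (cf.\ Proposition \ref{0-handles/thm}), yields $\hat N$. Applying this with $N = \down_{n-1}^n K$ completes the base case.

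The hard part will be the cap-simplification half of the first equivalence: although topologically transparent, the explicit verification that the two $\xi^{(n,n-1)}$ factors dissolve harmlessly against the closure caps via 2-deformation requires careful Kirby-calculus bookkeeping, in the spirit of Figures \ref{kirby-stab07/fig} and \ref{kirby-stab14/fig}. The remaining ingredients---the second equivalence and the inductive reduction---are routine applications of results already established in the paper.
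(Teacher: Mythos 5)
Your proposal is correct and takes essentially the same route as the paper: induction on $n-k$, with the base case reduced first to $\widehat{\up_{n-1}^n \down_{n-1}^n K}$ via the pushing-through-a-1-handle mechanism of Lemma \ref{K-push/thm} (the natural equivalence $\xi^{(n,n-1)}$ of Figure \ref{kirby-stab14/fig}) and then to $\widehat{\down_{n-1}^n K}$ by the closure-of-stabilization observation of Figure \ref{closure03/fig}. The cap-absorption step you defer as "the hard part" is exactly the content of the paper's Figure \ref{closure06/fig} computation (a 1/2-handle cancelation, a push-through based on Lemma \ref{K-push/thm}, and a 2-handle sliding), so your packaging via the naturality identity on the open tangle is the same work done directly on the closed diagram in the paper.
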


\begin{proof}
The case of $k = n-1$ is shown in Figure \ref{closure06/fig}, while the general case
follows by induction on the difference $n-k$. Starting from $\hat K$ with $K =\break
(\id_{(n,n-1)} \diam L) \circ (\Delta_{(n,n-1)} \diam \id_{\pi_0}) \in \K_{n \red k}$, the
first step in the figure is just a 1/2-handle cancelation, the second one is based on
Lemma \ref{K-push/thm} like the first step in Figure \ref{kirby-stab14/fig}, the third one
consists in a 2-handle sliding. Finally, the last diagram is the closure of can be reduced 
to the closure of $\up_{n-1}^n \down_{n-1}^n K$, hence it can be reduced to the closure of 
$\down_{n-1}^n K$ as said above (cf. Figure \ref{closure03/fig}).
\end{proof}

\begin{Figure}[htb]{closure06/fig}
{}{Proof of Lemma \ref{hat-reduction/thm}}
\centerline{\fig{}{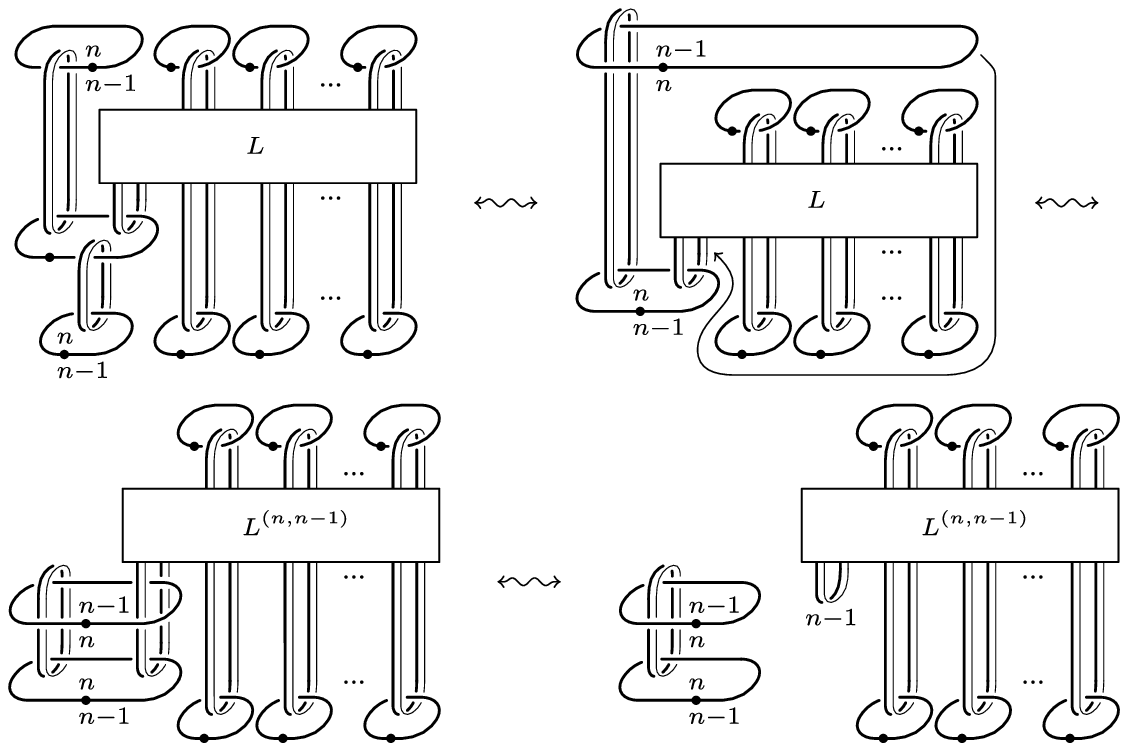}}
\vskip-3pt
\end{Figure}

\begin{lemma}\label{connected-red/thm}
Up to labeled 1-isotopy, any $n$-labeled ribbon surface $S$ representing a connected
branched covering of $B^4$ is the closure $\hat R$ of a 1-reducible ribbon surface tangle
$R = (\id_{\sigma_{n \red 1}} \!\diam T) \circ \Delta_{\sigma_{n \red 1}} : I_{\sigma_{n
\red 1}} \to I_{\sigma_{n \red 1}}$ in $\S^c_n$ (cf. Figure \ref{closure07/fig}).
\end{lemma}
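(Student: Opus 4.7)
The plan is to use the connectedness of the covering to find and extract a generating chain of transpositions among the labels of $S$. Since $S$ represents a connected $n$-fold simple covering of $B^4$, the transpositions appearing as labels of $S$ generate a transitive subgroup of $\Sigma_n$; because the orbits of such a subgroup coincide with the connected components of the transposition graph on $\{1,\dots,n\}$, transitivity forces this graph to contain a spanning tree, and the $n-1$ transpositions corresponding to the edges of such a tree generate $\Sigma_n$. I would therefore begin by selecting $n-1$ transpositions $\tau_1,\dots,\tau_{n-1}$ from the labels of $S$, corresponding to distinguished sub-ribbons or components of $S$, whose associated edges on $\{1,\dots,n\}$ form a spanning tree.

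Next I would show that any such generating set of $n-1$ transpositions can be converted to the specific chain $\tp{n}{n-1}, \tp{n-1}{n-2}, \dots, \tp{2}{1}$ of $\sigma_{n \red 1}$ by a finite sequence of replacement operations $(\tau_i,\tau_j) \mapsto (\tau_i\tau_j\tau_i,\tau_j)$ for $\tau_i,\tau_j$ sharing an index. This is a purely combinatorial claim --- any tree can be straightened into a path by successively moving a leaf along an edge --- and each such replacement can be realized on $S$ by a sequence of labeled 1-isotopy moves together with the covering moves \(R1), \(R2) and their derived consequences \(R3)-\(R6) (Proposition \ref{moves-aux/thm}): sliding one of the distinguished ribbons through another via a ribbon intersection conjugates its label by the label of the other ribbon, exactly as prescribed by the Wirtinger relations. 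Throughout this process, the $n-1$ distinguished ribbons remain identifiable as disjoint components.

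Finally, using the labeled 1-isotopy moves \(S23)-\(S26) of Figure \ref{ribbon-surf13/fig} together with Proposition \ref{1-handles/thm} on the adapted 1-handlebody structures, I would pull the $n-1$ relabeled distinguished ribbons to the left side of the planar diagram as parallel unknotted bands carrying labels $\tp{n}{n-1},\dots,\tp{2}{1}$ from top to bottom, each connected to the bulk of $S$ by a single tongue. Cutting each band at a meridional cross-section and reorganizing the result yields the required 1-reducible form: the left portions become the two copies of $\id_{\sigma_{n \red 1}}$ at source and target, the small splitting at each cut realizes $\Delta_{\sigma_{n \red 1}}$, and the remaining part of $S$ with its tongues attached constitutes a tangle $T: J_{\sigma_{n \red 1}} \to J_\emptyset$, so that $S = \hat R$ with $R = (\id_{\sigma_{n \red 1}} \diam T) \circ \Delta_{\sigma_{n \red 1}}$.

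The hard part will be the second step, namely realizing the combinatorial straightening of the transposition tree as a concrete sequence of equivalence moves on $S$ that does not entangle the distinguished ribbons with the rest of $S$ in a way that obstructs their subsequent extraction. This requires a careful case analysis of the local configurations at each ribbon-intersection operation; the techniques should closely parallel the band-replacement arguments developed in Lemma \ref{SK-welldef/thm} and summarized in Remark \ref{gen-bands/rem}, which exploit the full generating label set in the ambient surface to provide enough flexibility for the required local rearrangements without affecting the rest of the diagram.
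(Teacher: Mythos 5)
Your overall strategy diverges from what the lemma actually needs, and the divergence is where the gap lies. First, the statement is ``up to labeled 1-isotopy'' only, but your second step invokes the covering moves (R1)--(R6); these are not 1-isotopy moves, so even if every step worked you would prove a weaker statement (equivalence in $\S_n$) rather than the lemma as stated. Second, and more seriously, the extraction step is unjustified and in general impossible: you propose to take $n-1$ \emph{pre-existing} ribbons of $S$, relabel them by the chain $\tp{n}{n{-}1},\dots,\tp{2}{1}$, and then pull them to the side as parallel unknotted bands, each joined to the rest of $S$ by a single tongue, so that they can play the role of the closed-up reduction ribbons of $\hat R$. But those closed-up reduction ribbons are trivially embedded bands attached to $T$ by one tongue apiece, whereas the distinguished ribbons you select inside $S$ may be knotted, linked with the rest of the diagram, carry ribbon intersections and nontrivial framing, or not even be separable sub-bands at all (a spanning-tree transposition is the label of some region of the diagram, not necessarily of a free-standing band). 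Neither labeled 1-isotopy nor the moves (R1)--(R6) can unknot or disentangle such a band, so the ``hard part'' you defer is not a matter of careful local case analysis: it is an obstruction. Relatedly, passing a band through a disk only conjugates its label on the portion beyond the ribbon intersection, so turning an entire component into one carrying a single prescribed transposition is not an operation you have available.

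The fix is to not touch the existing ribbons at all. Since the labels of $S$ generate $\Sigma_n$ (your first observation is exactly right), every transposition $\tau$ is a conjugate $g\,\tau_0\,g^{-1}$ of some label $\tau_0$ of $S$ by a product $g$ of labels of $S$. Hence one can expand from $S$ a tongue by the 1-isotopy move (S24) of Figure \ref{ribbon-surf13/fig} and pass it through sheets realizing the factors of $g$, so that its tip carries the label $\tau$; doing this for $\tau = \tp{n}{n{-}1}, \dots, \tp{2}{1}$ produces, purely by labeled 1-isotopy, new trivially positioned bands that serve as the reduction ribbons, and the resulting surface is literally the closure $\hat R$ of a 1-reducible tangle. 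This is the paper's (one-paragraph) argument; no relabeling, straightening, or extraction of existing ribbons is required.
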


\begin{Figure}[htb]{closure07/fig}
{}{Connected coverings are given by 1-reducible ribbon surfaces}
\centerline{\fig{}{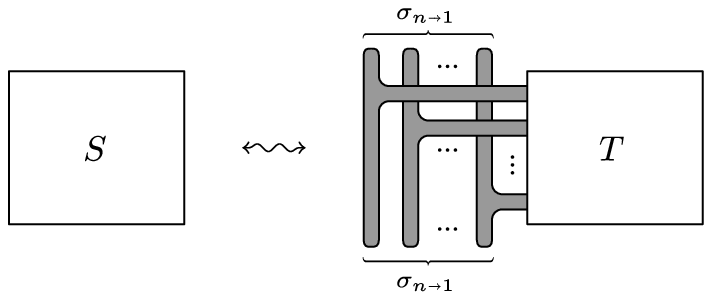}}
\vskip-6pt
\end{Figure}

\begin{proof}
The connectedness of the covering implies that the transpositions which appear as labels
of $S$ generate a transitive subgroup of the symmetric group $\Sigma_n$. This is trivially
equivalent to say that they generate all $\Sigma_n$. Then, we can use the labeled
1-isotopy move \(S24) in Figure \ref{ribbon-surf13/fig} to expand from $S$ a tongue which,
after a suitable sequence of ribbon intersections, is labeled with any given
transposition $\tau \in \Sigma_n$ on its tip. In particular, in this way we can expand
from $S$ the reduction bands making it the closure of a 1-reducible $n$-labeled ribbon
surface tangle, as in Figure \ref{closure07/fig}.
\end{proof}

\begin{theorem}\label{equiv4/thm}
Two connected simple coverings of $B^4$ branched over ribbon surfaces represent
2-equivalent 4-dimensional 2-handlebodies if and only if after sta\-bilization to the same
degree $\geq 4$ their labeled branching surfaces can be related by labeled 1-isotopy, i.e.
labeled 3-dimensional diagram isotopy and the labeled versions of the 1-isotopy moves in
figure Figure \ref{ribbon-surf13/fig} (cf. Proposition \ref{1-isotopy/thm}), and the
ribbon moves \(R1) and \(R2) in Figure \ref{coverings05/fig}. For the reader convenience
those moves are reproduced in Figures \ref{covering-moves01/fig} and
\ref{covering-moves02/fig} below.
\end{theorem}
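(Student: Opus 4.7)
The plan is to derive the theorem from the category equivalence $\Theta_n:\S^c_n\to\K^c_n$ for $n\geq 4$ established in Theorem \ref{ribbon-kirby/thm}, combined with Lemma \ref{connected-red/thm} (every closed labeled ribbon surface representing a connected covering is 1-isotopic to the closure of a 1-reducible tangle in $\S^c_n$) and Lemma \ref{hat-reduction/thm} (the closure of a reducible Kirby tangle is 2-equivalent to the closure of its reduction).

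The ``if'' direction is essentially immediate. By Proposition \ref{theta/thm}, labeled 1-isotopy moves and the covering moves \(R1), \(R2) are defining equivalences of $\S_n$, and $\Theta_n$ carries them to the 2-equivalence relation on $\K_n$, which is precisely 2-deformation of the represented handlebody. Moreover, stabilization does not change the total space of the branched covering, as reviewed in Section \ref{coverings/sec}.

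For the ``only if'' direction, let $S_1,S_2$ be labeled ribbon surfaces in $B^4$ of degrees $n_1,n_2$ representing 2-equivalent connected $4$-dimensional $2$-handlebodies. Fix $n\geq\max\{n_1,n_2,4\}$ and stabilize $S_1$ and $S_2$ to degree $n$ to obtain surfaces $\tilde S_1,\tilde S_2$ representing the same handlebodies. By Lemma \ref{connected-red/thm} each $\tilde S_i$ is labeled 1-isotopic to the closure $\hat R_i$ of some tangle $R_i\in\Mor_{\S^c_n}(J_{\sigma_{n\red 1}},J_{\sigma_{n\red 1}})$. Since $\Theta_n$ commutes with closure, $\Theta_n(\hat R_i)=\hat{\Theta_n(R_i)}$ is a closed $n$-labeled Kirby diagram representing the same handlebody as $\tilde S_i$. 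Lemma \ref{hat-reduction/thm} then gives a 2-equivalence between $\hat{\Theta_n(R_i)}$ and $\hat{\down^n_1\Theta_n(R_i)}\in\K_1^\o$, so that $\hat{\down^n_1\Theta_n(R_1)}$ and $\hat{\down^n_1\Theta_n(R_2)}$ represent 2-equivalent handlebodies; by Proposition \ref{K-category/thm} they are equal as morphisms of $\K_1$.

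The decisive step is now that $\down^n_1\circ\Theta_n:\S^c_n\to\K_1$ is a composition of category equivalences (Theorem \ref{ribbon-kirby/thm} together with Proposition \ref{K-reduction/thm}), hence is faithful on every hom-set. Consequently $R_1=R_2$ as morphisms of $\S^c_n$, meaning they are related by a finite sequence of labeled 1-isotopy moves and ribbon moves \(R1), \(R2). By functoriality of closure (composition with $\Lambda_{\sigma_{n\red 1}}$ and $\lambda_{\sigma_{n\red 1}}$), $\hat R_1$ and $\hat R_2$ are related by the same moves, and concatenating with the two 1-isotopies provided by Lemma \ref{connected-red/thm} yields the required equivalence between $\tilde S_1$ and $\tilde S_2$. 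The main obstacle is purely bookkeeping: one must verify carefully that closure, stabilization and reduction interact coherently with $\Theta_n$, so that 2-equivalence of closed surfaces translates faithfully into equality of the opened-up tangles $R_i$ in $\S^c_n$; Lemma \ref{hat-reduction/thm} is the key technical tool that bridges the gap between tangles in $\K^c_n$ with non-trivial endpoints and their closures in $\K_1^\o$, where the defining equivalence is simply 2-deformation.
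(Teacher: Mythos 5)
Your proposal is correct and follows essentially the same route as the paper's proof: the paper packages the identical ingredients (Lemma \ref{connected-red/thm}, Lemma \ref{hat-reduction/thm}, commutation of $\Theta_n$ with closure, and the bijectivity of ${\down_1^n}\circ\Theta_n$ on the relevant hom-set coming from Theorem \ref{ribbon-kirby/thm} and Proposition \ref{K-reduction/thm}) into a commutative diagram of sets, deducing injectivity from the surjectivity of the closure map $\S_n^{c,\o}\to\hat\S_n^{c,\o}$, which is exactly your element-wise faithfulness argument. No gaps.
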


\begin{Figure}[htb]{covering-moves01/fig}
{}{Labeled 1-isotopy moves (with any labeling)}
\centerline{\fig{}{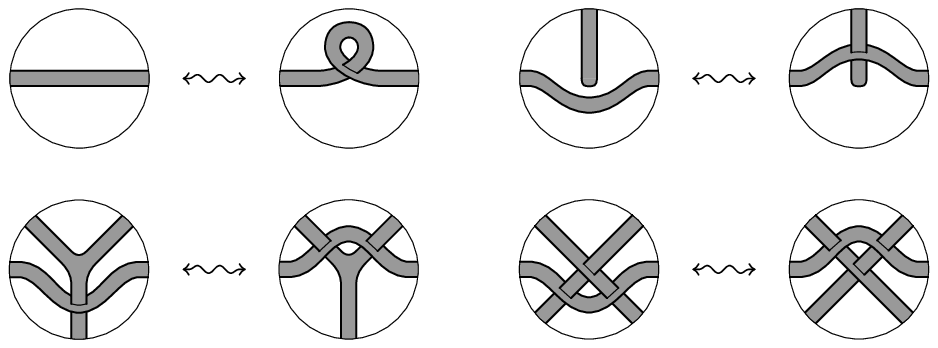}}
\vskip-3pt
\end{Figure}

\begin{Figure}[htb]{covering-moves02/fig}
{}{Ribbon moves ($i,j,k$ and $l$ all different)}
\centerline{\fig{}{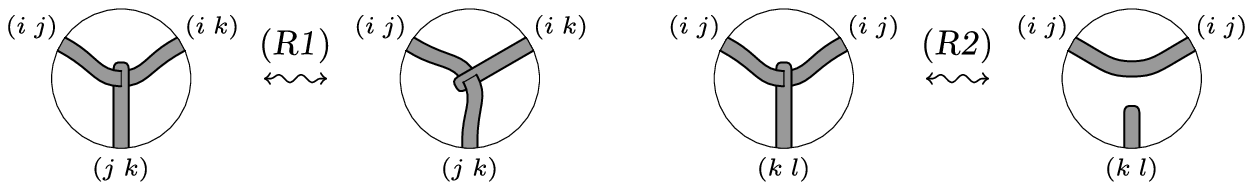}}
\vskip-3pt
\end{Figure}

\begin{proof}
Let $\K_n^{c,\o}$ be the set of morphisms $K: I_{\pi_{n \red 1}} \to I_{\pi_{n \red 1}}$
in $\K_n^c$, and $\S_n^{c,\o}$ be\break the set of morphisms $S: J_{\sigma_{n \red 1}} \to
J_{\sigma_{n \red 1}}$ in $\S_n^c$. Moreover, consider the subsets $\hat\K_n^{c,\o}
\subset \hat\K_n^\o$ and $\hat\S_n^{c,\o} \subset \hat\S_n^\o$ consisting of the closures
of the morphisms in $\K_n^{c,\o}$ and $\S_n^{c,\o}$ respectively. Then, by the above
discussion of the closure and Lemma \ref{hat-reduction/thm}, we have the following
commutative diagram.

\vskip9pt
\centerline{\epsfbox{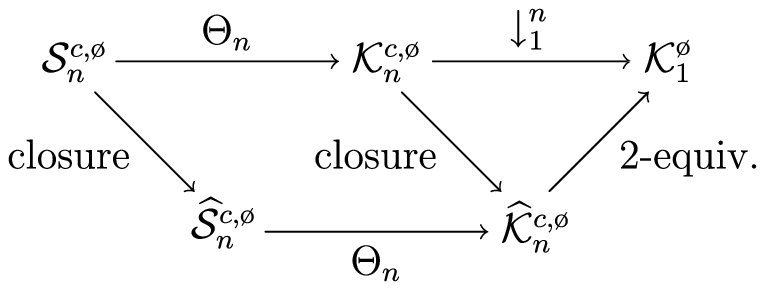}}
\vskip6pt

We observe the $1$-reduction $\down_1^n K$ of a Kirby tangle $K \in \K^{c,\o}_n$ is
already a Kirby diagram, hence we have $\hat{\down_1^n K} = \down_1^n K$. Then, the
2-equivalence arrow is given by Lemma \ref{hat-reduction/thm} and it consists in the
reduction to only one 0-handle as described in the proof of that lemma.

By Lemma \ref{connected-red/thm}, any $n$-labeled ribbon surface that represents a
connected branched covering of $B^4$ can be seen as a representative of an element of
$\hat\S_n^{c,\o}$. Then, according to the discussion at the beginning of the section, the
branched covering representation of 4-dimensional 2-handlebodies in the statement
coincides with the composition of $\Theta_n: \hat\S_n^{c,\o} \to \hat\K_n^{c,\o}$ and the
2-equivalence map $\hat\K_n^{c,\o} \to \K_1^\o$.

Now, the ``if'' part of the statement just says that such composition is well-defined,
which is known from Propositions \ref{theta/thm} and \ref{stab-theta/thm} for any $n \geq
2$. While the ``only if'' part says that it is injective for $n \geq 4$. This immediately
follows from the surjectivity of the closure map $\S_n^{c,\o} \to \hat\S_n^{c,\o}$ and the
bijectivity of ${\down_1^n} \circ \Theta_n: \S_n^{c,\o} \to \K_1^\o$, guaranteed by
Theorem \ref{ribbon-kirby/thm} for any $n \geq 4$.
\end{proof}

Before of going on, let us make a pair of remarks on the above results about the 
representation of 4-dimensional 2-handlebodies as branched coverings of $B^4$.

\begin{remark}\label{disconnected-4hb/rem}
Both Proposition \ref{hat-xi/thm} and Theorem \ref{equiv4/thm} concern connected
handlebodies. However, their generalization to more connected components is
straightforward, being different components independent from each other. Of course, to
represent 4-dimensional 2-handlebodies with $c$ connected components in general are needed
coverings of degree $3c$. While coverings of degree $3c + 1$ are involved in relating two
covering representations of 2-equivalent handlebodies (degree $4c$ is needed if the
stabilizations are required to be performed once for all at the beginning). Moreover, in
contrast with the connected case, also labeling conjugation has to be allowed, in order to
get the same set of labels for the sheets of the two coverings contained in corresponding
components.
\end{remark}

\begin{remark}\label{1-isotopy/rem}
We recall that 1-isotopy of ribbon surfaces in $B^4$ was derived from embedded
1-deformation of embedded 2-dimensional 1-handlebodies in $B^4$, by forgetting the
handlebody structure. On the other hand, isotopy is related in a similar way to a suitable
notion of embedded 2-deformation. In this perspective, isotopy differs from 1-isotopy just
for allowing also addition/deletion of embedded canceling pairs of 1/2-handles and
2-handle isotopy (which may involve non-ribbon intersections in the diagram, such as
double loops and triple points).

Analogously, diffeomorphic 4-dimensional 2-handlebodies are 3-equivalent (cf. Section
\ref{handles/sec}). Hence, the notion of diffeomorphism between 4-dimensional
2-handle\-bodies differs from that of 2-equivalence for allowing also addition/deletion of
canceling pairs of 2/3-handles and 3-handle isotopy.

Now, the connection between labeled 1-isotopy of ribbon surfaces in $B^4$ and
2-equivalence of 4-dimensional 2-handlebodies, established in the proofs of Lemma
\ref{ribbon-to-kirby/thm} and Proposition \ref{theta/thm} (cf. also Lemma
\ref{SK-invariance/thm} for the opposite direction) through branched coverings and
covering moves, can be at least partially extended. More precisely, attaching a labeled
2-handle to the branching surface $S \subset B^4$ of a simple branched covering $p: W \to
B^4$ corresponds to attaching a 3-handle to the covering 4-dimensional handlebody $W$, in
such a way that a canceling pair of labeled 1/2-handles in $S$ corresponds to a canceling
pair of 2/3-handles in $W$.

This suggests a possible approach towards the study of the difference between
2-deformation and diffeomorphism of 4-dimensional 2-handlebodies, by relating it to the
difference between 1-isotopy and isotopy of ribbon surfaces. Good test examples could be
the Akbulut-Kirby 4-balls $\Delta_n$ (see figure \ref{gompf/fig} for the case of $n = 3$),
which were proved to be diffeomorphic to $B^4$ in \cite{Go91}, but are not known to be
2-equivalent to $B^4$. In fact, the proof of the diffeomorphism $\Delta_n \cong B^4$ is
based on the cleaver addition of a canceling pair of 2/3-handles, followed by an isotopy
of the attaching map of the 3-handle and eventually by the cancelation of it against
another 2-handle. It would be interesting to see whether this process corresponds to
changing the branching ribbon surface by labeled isotopy.
\end{remark}

\begin{Figure}[htb]{gompf/fig}
{}{The Akbulut-Kirby 4-ball $\Delta_3$}
\vskip-6pt
\centerline{\kern10pt\fig{}{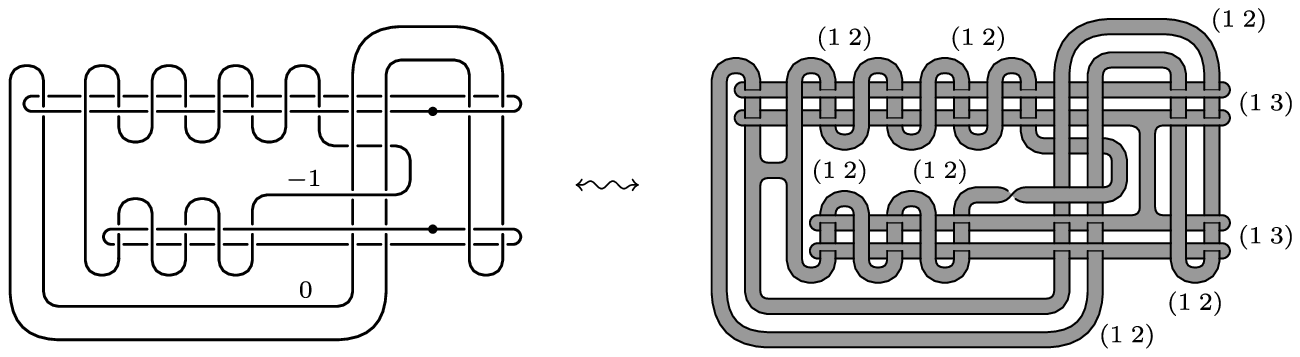}}
\end{Figure}

We conclude the section with the following theorem concerning 4-dimensional 2-handlebodies
having diffeomorphic boundaries. This will be applied in the next section to obtain
covering moves theorems for 3-manifolds.

\begin{theorem}\label{equiv4b/thm}
Two connected simple coverings of $B^4$ branched over ribbon surfaces represent
4-dimensional 2-handlebodies with diffeomorphic oriented boundaries if and only if after
stabilization to the same degree $\geq 4$ their labeled branching surfaces can be related
by labeled 1-isotopy, the ribbon moves \(R1) and \(R2), and the moves \(P) and \(T) in
Figure \ref{covering-moves03/fig} (cf. Figure \ref{boundary04/fig}).
\end{theorem}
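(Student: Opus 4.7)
The plan is to mirror the strategy of the proof of Theorem \ref{equiv4/thm}, but working consistently in the quotient categories introduced in Chapter \ref{boundaries/sec}. The starting observation is that, by Proposition \ref{Kb-category/thm} and the definition of $\Kbb_n$, two connected $4$-dimensional $2$-handlebodies represented by Kirby diagrams $K, K' \in \K_1^\circ$ have diffeomorphic oriented boundaries if and only if they become equal as morphisms in $\Kbb_1 \cong \Cob^{2+1}$, i.e.\ after the addition of the two moves of Figure \ref{boundary03/fig} corresponding to $1/2$-handle trading and blow down/up. Consequently, the proof of the theorem reduces to checking that the chain of constructions used for Theorem \ref{equiv4/thm} (closure, stabilization/reduction, and the branched covering functor) descends coherently to the quotient categories $\Sbb_n$ and $\Kbb_n$, and that on this level the additional relations imposed on ribbon surfaces are exactly $(P)$ and $(T)$.

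Concretely, I would proceed as follows. First, I would observe that the closure operation $K \mapsto \hat K$ and $S \mapsto \hat S$ of Definition \ref{closure/def} descend to the respective quotients without change, since the relations defining $\Kbb_n$ and $\Sbb_n$ are local in the interior of the tangle diagrams. Second, Lemma \ref{connected-red/thm}, whose proof uses only labeled $1$-isotopy, continues to hold in $\Sbb_n^c$, so any connected labeled ribbon surface represents the closure of a morphism in $\Sbb_n^{c,\circ}$. Third, Lemma \ref{hat-reduction/thm} passes to the quotient because the diagrammatic manipulations in its proof (2-handle sliding, 1/2-handle cancellation, and application of Lemma \ref{K-push/thm}) are all $2$-deformations, hence relations in $\Kbb_n$ as well. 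With these ingredients in hand, I would form the commutative diagram analogous to the one in the proof of Theorem \ref{equiv4/thm}, with $\hat\Sbb_n^{c,\circ}$ and $\hat\Kbb_n^{c,\circ}$ in place of $\hat\S_n^{c,\circ}$ and $\hat\K_n^{c,\circ}$ and with the induced branched covering functor $\bbTheta_n$ of Proposition \ref{theta-bd/thm} in place of $\Theta_n$. The ``if'' direction is then the well-definedness of $\bbTheta_n$ together with Proposition \ref{theta-bd/thm} (which interprets $(T)$ as $1/2$-handle trading and $(P)$ as a blow down/up on the covering side) and Proposition \ref{stab-theta/thm}. The ``only if'' direction follows from the surjectivity of the closure map and the bijectivity of ${\down_1^n}\circ\bbTheta_n: \Sbb_n^{c,\circ} \to \Kbb_1^\circ$, which is a consequence of Theorem \ref{ribbon-kirby-bd/thm} for $n \geq 4$ together with Proposition \ref{Kb-reduction/thm}.

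The argument is therefore essentially a translation of the proof of Theorem \ref{equiv4/thm}, and I do not expect any genuinely new obstacle: all the heavy lifting has already been done in Proposition \ref{theta-bd/thm} (which shows that $(T)$ and $(P)$ realize exactly $1/2$-handle trading and blow down/up on the Kirby side), in Theorem \ref{ribbon-kirby-bd/thm} (which provides the equivalence $\bbTheta_n: \Sbb_n^c \to \Kbb_n^c$ for $n \geq 4$), and in Proposition \ref{Kb-reduction/thm} (which ensures that reduction is a category equivalence on the quotient). The only point that requires mild care is the verification that Lemma \ref{hat-reduction/thm} and Lemma \ref{connected-red/thm} remain valid after passing to the quotients $\Kbb_n$ and $\Sbb_n$, which, as noted above, is immediate because their proofs only use moves already present in $\K_n$ and $\S_n$.
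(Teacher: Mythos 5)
Your proposal is correct and takes essentially the same route as the paper: pass the commutative diagram from the proof of Theorem \ref{equiv4/thm} to the quotient categories $\Sbb_n$ and $\Kbb_n$, observe that the moves (T) and (P) are exactly the defining relations of $\Sbb_n$, and conclude well-definedness and injectivity via Proposition \ref{theta-bd/thm} and Theorem \ref{ribbon-kirby-bd/thm} (with Proposition \ref{Kb-reduction/thm} for the reduction). The only point worth flagging is that the identification of ``diffeomorphic oriented boundaries'' with ``equivalent in $\Kbb_1^\o$'' rests on the classical Kirby calculus theorem, as the paper states explicitly, rather than being a formal consequence of Proposition \ref{Kb-category/thm} and the definition of $\Kbb_n$ alone.
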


\begin{Figure}[htb]{covering-moves03/fig}
{}{Ribbon moves ($i,j,k$ and $l$ all different)}
\centerline{\fig{}{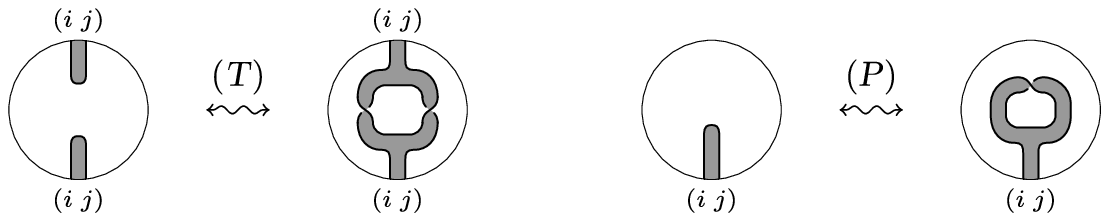}}
\vskip-3pt
\end{Figure}

\begin{proof}
In the light of Propositions \ref{Kb-reduction/thm} and \ref{theta-bd/thm}, the
commutative diagram in the proof of Theorem \ref{equiv4/thm} induces the following one
under the quotient functors $\S_n \to \Sbb_n$ and $\K_n \to \Kbb_n$. Here, the double bar 
denotes the image in the quotient of the corresponding set in the original diagram.

\vskip12pt
\centerline{\epsfbox{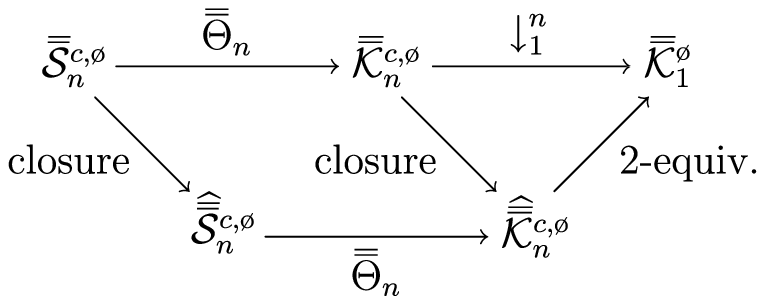}}
\vskip6pt

Up to planar isotopy the moves \(T) and \(P) in Figure \ref{covering-moves03/fig} are 
the same as the homonymous additional relations defining the quotient category $\Sbb_n$.
On the other hand, we know that two uni-labeled Kirby diagrams represent 4-dimensional 
2-handlebodies with diffeomorphic boundaries if and only if they are equivalent in 
$\Kbb_1^\o$ (this is just the classical Kirby's theorem).

Then, the theorem can be rephrased by saying that the composition of $\bbTheta_n:
\Sbbh_n^{c,\o} \to \Kbbh_n^{c,\o}$ with the 2-equivalence arrow is a well-defined
injective map. This can be proved by arguing on the commutative diagram as in the proof of
Theorem \ref{equiv4/thm}, thanks to Proposition \ref{theta-bd/thm} and Theorem
\ref{ribbon-kirby-bd/thm}.
\end{proof}

\subsection{Equivalence of branched covers $S^3$%
\label{S3/sec}}

Our last goal is to derive from Theorem \ref{equiv4b/thm} the announced general solution 
of the covering moves problem for branched coverings of $S^3$.

As a preliminary step, we show that any simply labeled link in $S^3$ can be transformed
through the Montesinos moves in Figure \ref{coverings04/fig} into the boundary of a
labeled ribbon surface in $B^4$ (see Proposition \ref{link/ribbon/thm}). This follows
quite directly from Theorem B of \cite{MP01} about liftable braids, which we state here as
Lemma \ref{braids/thm} after having recalled a couple of definitions.

A simply labeled braid is called a {\sl liftable braid} when the two labelings at its
ends coincide. By an {\sl interval} we mean any braid that is conjugate to a standard
generator in the braid group. Actually, to make both the terms ``liftable'' and
``interval'' meaningful, one should think of braids as self-homeomorphisms of the disk in
the usual way (see \cite{BW85} or \cite{MP01}), but this is not relevant in the present
context.

Of course, a labeled interval, as well as a standard generator, may or may not be
liftable depending on the labeling. We say that a labeled interval $x$ is of {\sl type
$i$} if $x^i$ is the first positive power of $x$ which is liftable. It is not difficult to
realize that conjugation preserves interval types and that each interval is of type 1, 2
or 3 (cf. Lemma 2.4 of \cite{BW85} or Lemma 2.3 of \cite{MP01}).

The labeled intervals $x$, $y$ and $z$, whose first liftable positive powers are depicted
in Figure \ref{covering-moves04/fig}, are the standard models for the three types above.
Namely, any labeled interval of type 1, 2 or 3 is respectively a conjugate of $x^{\pm1}$,
$y^{\pm1}$ or $z^{\pm1}$. Evidently, in the figure only the two non-trivial strings of
each labeled braid are drawn, the other ones being just horizontal arcs with arbitrary
labels. Moreover, in the labeling of each single braid, we assume that $i$, $j$, $k$ and
$l$ are all different.

\begin{Figure}[htb]{covering-moves04/fig}{}{}
\centerline{\fig{}{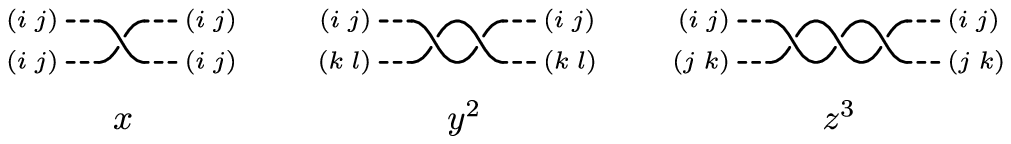}}
\vskip-3pt
\end{Figure}

The main result of \cite{MP01} is the following.

\begin{lemma} \label{braids/thm}
Any liftable braid is a product liftable powers of intervals.
\end{lemma}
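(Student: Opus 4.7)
The plan is to prove Lemma \ref{braids/thm} by induction on the length (number of Artin generators) of the given liftable braid, reducing at each step to a shorter liftable braid by peeling off a liftable interval power from the right. The natural framework is to view a simply labeled braid $\beta$ as an element of $B_n$ acting on the labeling via the Hurwitz action on $n$-tuples of transpositions in $\Sigma_m$ (where $m$ is the degree of the covering). Under this action, the set $\L$ of liftable braids is precisely the stabilizer of the given labeling, and the goal becomes to show that $\L$ is generated by liftable powers of intervals.

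The first step is to set up the necessary algebraic/geometric bookkeeping. I would express $\beta = \alpha \cdot \sigma_i^{\pm 1}$ for some standard generator $\sigma_i$, where $\alpha$ has shorter length. The factor $\sigma_i^{\pm 1}$ at the end is conjugate to a single interval of type $1$, $2$, or $3$ with respect to the labeling carried on the strands it touches. If this interval happens to be of type $1$ (i.e.\ already liftable), then $\beta = \alpha \cdot (\text{liftable interval})$, so $\alpha$ is itself liftable, shorter, and induction finishes the job. The same works for any type $t$ interval that already appears raised to a multiple of $t$ at the right end.

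The main work, and the place where I expect the real difficulty, is when the tail of $\beta$ consists of an interval $x$ of type $2$ or $3$ raised to a power $e$ not divisible by $t(x)$. In that case peeling off $x^e$ would leave a braid that is no longer liftable, so one must first manipulate $\beta$ to collect enough further copies of $x$ (or of conjugates of $x$) at the right end to produce an exponent divisible by $t(x)$. The plan is to identify, somewhere inside $\alpha$, another interval $x'$ that the covering monodromy forces to be a conjugate of $x^{\pm 1}$, and then carry $x'$ to the right using braid relations of the form
\[
\sigma_j \cdot \sigma_k = \sigma_k \cdot (\sigma_k^{-1} \sigma_j \sigma_k),
\]
each application conjugating a single generator but preserving interval type. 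The obstruction is showing that such an interval $x'$ can always be found inside a liftable braid whose rightmost interval has type $>1$ and exponent not divisible by its type: this is where the hypothesis that $\beta$ (rather than just $\alpha$) is liftable enters, through a parity/divisibility argument on the monodromy along the strands.

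Once this geometric lemma is in place, the induction closes: every liftable $\beta$ can be rewritten as $\beta = \alpha' \cdot x^{kt(x)}$ with $\alpha'$ a liftable braid strictly shorter after counting the combined modifications, and $x^{kt(x)}$ a liftable power of the interval $x$. Iterating yields the desired factorization into liftable powers of intervals. The hardest technical point will be controlling the induction parameter through the conjugation steps so that the process actually terminates; one likely needs a well-ordering finer than mere Artin length, e.g.\ lexicographic on (length, number of ``misaligned'' interval tails), to ensure strict decrease at every stage.
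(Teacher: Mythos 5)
The paper itself offers no proof of this lemma: it is quoted as the main result (Theorem B) of \cite{MP01}, so the only ``proof'' here is a citation to a separate and substantially more involved paper. Judged on its own terms, your proposal has a genuine gap, and it sits exactly where you locate the ``real difficulty''. Your induction peels the last Artin generator off a liftable braid $\beta=\alpha\cdot\sigma_i^{\pm1}$; the easy case (the final interval is already liftable, so $\alpha$ is liftable and shorter) is fine, but in the remaining case your plan rests on the unproven claim that one can always find, inside $\alpha$, another interval $x'$ forced by the monodromy to be conjugate to the tail interval, and transport it to the right end. Liftability is a global condition on the Hurwitz orbit of the labeling; it does not force any individual letter of the word to match the last one, since the ``completion'' of $\sigma_i^{\pm1}$ to something liftable can be smeared over the whole word with no single letter conjugate to $x$. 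No parity or divisibility argument is given, and none is obvious; this claim is essentially a restatement of the theorem's difficulty rather than a reduction of it.

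The termination scheme is also not in place. Each transport move $\sigma_j\sigma_k=\sigma_k\,(\sigma_k^{-1}\sigma_j\sigma_k)$ replaces a single generator by an interval word of length three, so Artin length typically grows, and the remainder you hand to the induction is not ``strictly shorter''. You acknowledge this and appeal to a finer well-ordering (length together with a count of ``misaligned tails''), but that order is never defined and no argument is given that it decreases under the moves you allow; without it the induction does not close. So the proposal is a plausible-looking plan whose two load-bearing steps (existence of the matching interval, and a decreasing complexity measure) are both missing, whereas the cited proof in \cite{MP01} proceeds by a dedicated, considerably more elaborate analysis of the stabilizer of the labeling under the Hurwitz action and is not recovered by a word-length induction of this kind.
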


We emphasize that the lemma holds without restrictions on the degree $n$ of the labeling.
However, it is worth observing that the case of $n = 2$ is trivial (every braid is
liftable in this case), while the case of $n = 3$ differs from the general one for the
absence of intervals of type 2. This special case was previously proved in \cite{BW85}
(cf. also \cite{BW94}), but the proof of Lemma \ref{braids/thm} given in \cite{MP01} does
not depend on \cite{BW85}.

The relevant consequence of Lemma \ref{braids/thm} in the present context is the following
branched covering counterpart of the vanishing of the oriented cobordism group $\Omega_3$.

\begin{proposition} \label{link/ribbon/thm}
Any labeled link $L \subset S^3$ representing a (possibly disconnected) $n$-fold simple
branched covering of $S^3$ is equivalent, up to labeled isotopy and moves \(M1) and
\(M2) in Figure \ref{coverings04/fig}, to the boundary of labeled ribbon surface $S
\subset B^4$ representing an $n$-fold simple branched covering of $B^4$.
\end{proposition}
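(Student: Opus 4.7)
The plan is to reduce the proposition to the case in which $L$ is the closure of a labeled braid whose crossings are all \emph{monochromatic} (both strands carrying the same transposition). A labeled link of this form is manifestly the boundary of a labeled ribbon surface in $B^4$: take $n$ parallel flat disks (slightly pushed into the interior of $B^4$) whose boundaries are the $n$ parallel strands of the braid closure, and for each monochromatic crossing of label $\tau\in\Gamma_n$ attach a band with one half-twist labeled by $\tau$. Since the attached bands never pierce any of the disks, this assembly is a bona fide ribbon surface whose boundary is the given labeled link, with the correct simple $n$-fold covering of $B^4$ restricting to the given covering of $S^3$ on $\partial B^4$.

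To achieve such a reduction I would first apply labeled isotopy to present $L$ as the closure of a labeled braid $\beta$; the global consistency of the labeling of $L$ forces $\beta$ to be liftable in the sense recalled before Lemma \ref{braids/thm}. That lemma then writes $\beta = \prod_k \gamma_k\, w_k\, \gamma_k^{-1}$, with each $w_k$ a conjugate of $x^{\pm 1}$, $y^{\pm 2}$ or $z^{\pm 3}$ from Figure \ref{covering-moves04/fig}. The factors of type $y^{\pm 2}$ consist of two crossings between strands with disjoint labels $(i,j)$ and $(k,l)$, and so can be deleted by applications of move \(M2). The factors of type $z^{\pm 3}$ consist of three consecutive crossings between strands labeled $(i,j)$ and $(j,k)$; after routing in the third strand of label $(i,k)$ (available because of the transitivity of the monodromy on the three sheets involved), move \(M1) converts such a factor into a braid whose crossings are all monochromatic. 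The conjugating braids $\gamma_k$ can be progressively pushed through the $w_k$'s using the same moves together with labeled isotopy, so that after finitely many reductions $\beta$ becomes a product of monochromatic $x^{\pm 1}$'s, putting us in the standard form described above.

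The main obstacle I expect is the bookkeeping in the second step: the conjugating braids $\gamma_k$ are arbitrary, and sliding them past a factor $w_k$ generically produces new non-monochromatic crossings that must themselves be eliminated. One therefore has to set up an induction on some suitable measure of complexity (for instance the total number of non-monochromatic crossings, suitably weighted by braid length), and to verify that each application of \(M1) or \(M2) combined with labeled isotopy strictly decreases this measure without creating new types of obstructions. The strength of Lemma \ref{braids/thm} -- that it provides a decomposition \emph{for every} liftable braid, not just for one in some special normal form -- is what makes the strategy available, and once the braid word is in monochromatic form the construction of the bounding ribbon surface is immediate.
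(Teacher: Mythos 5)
Your first step coincides with the paper's: brade $L$ by labeled isotopy, observe the braid is liftable, invoke Lemma \ref{braids/thm}, and delete the factors conjugate to $y^{\pm2}$ and $z^{\pm3}$ by moves \(M2) and \(M1) (note that \(M1) simply trivializes a $z^{\pm3}$ factor; no auxiliary strand of label $\tp{i}{k}$ needs to be ``routed in''). The genuine gap is in your second step. You insist on reducing the braid word all the way to a product of plain monochromatic generators, i.e.\ on eliminating the conjugating braids $\gamma_k$, and you concede yourself that pushing a $\gamma_k$ past an interval creates new non-monochromatic crossings, so that some induction on an unspecified complexity measure would be needed. That induction is never set up, and it is not clear it can be: \(M1) and \(M2) only apply to very particular local label configurations, so there is no reason a priori that each slide can be compensated by a strictly decreasing measure. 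As written, the reduction to monochromatic form is an unproven claim on which the whole argument rests.

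The reduction is also unnecessary, and this is where the paper's proof diverges. Once the braid is a product of liftable intervals (conjugates of $x^{\pm1}$), one does not need the crossings to be monochromatic on the nose: the band presentation associated to this factorization (in the sense of Rudolph) already produces the desired surface. One takes labeled trivial disks spanned by the braid obtained by deleting all the $x^{\pm1}$ terms, and attaches one labeled half-twisted band per interval; the conjugating braids are absorbed into the embedding of these bands, which are allowed to pass through the disks forming ribbon intersections. Your requirement that the bands ``never pierce any of the disks'' is exactly what forces your extra, problematic reduction, whereas ribbon surfaces by definition tolerate ribbon intersections, and the liftability of each interval is what guarantees the labeling of each band is consistent. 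So either supply the missing induction making every crossing monochromatic, or (better) stop at the interval factorization and build the surface from the band presentation as above.
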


\begin{proof}
Up to labeled isotopy, we can assume that the link $L$ is the closure $\widehat B$ of
simply labeled braid $B$ (for example, we can use the labeled version of the well-known
Alexander's braiding procedure). Of course $B$ is a liftable braid. Then, Lemma
\ref{braids/thm} tells us that, up to labeled isotopy, we can think of $B$ a product of
conjugates of braids like $x^{\pm1}$, $y^{\pm2}$ or $z^{\pm3}$ (see Figure
\ref{covering-moves04/fig}). Since braids $y^{\pm2}$ and $z^{\pm3}$ can be obviously
trivialized respectively by moves \(M2) and \(M1), we can reduce ourselves to the
case when $B$ is a product of liftable intervals.

In this case, a simply labeled ribbon surface $S \subset B^4$ bounded by $L$ can be easily
constructed from the band presentation of $B$ (see \cite{Ru83,Ru85}) determined by its
factorization into liftable intervals. Namely, we start with a disjoint union of labeled
trivial disks in $B^4$, spanned by the labeled trivial braid obtained from $B$ by
trivializing all the terms $x^{\pm1}$ appearing in the factorization. Then, we attach to
these disks a labeled half-twisted band for each such term. The 3-dimensional diagram of
$S$ may or may not form ribbon intersection, depending on the conjugating braids of the
liftable intervals in the factorization of $B$ (cf. \cite{Ru83,Ru85}). The labeling
consistency when attaching the bands is always ensured by the liftability of the
intervals.
\end{proof}

Now we can prove our equivalence theorem for simply labeled links in $S^3$.

\begin{theorem}\label{equiv3/thm}
Two connected simple coverings of $S^3$ branched over links represent diffeomorphic
oriented 3-manifolds if and only if after stabilization to the same degree $\geq 4$ their
labeled branching links can be related by labeled isotopy and the Montesinos moves \(M1)
and \(M2) in Figure \ref{covering-moves05/fig} (cf. Figure \ref{coverings04/fig}).
\end{theorem}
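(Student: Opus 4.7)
The plan is to deduce this 3-dimensional equivalence theorem from its 4-dimensional counterpart, Theorem \ref{equiv4b/thm}, by restricting labeled ribbon surfaces in $B^4$ to their boundaries in $S^3$. The ``if'' part is essentially immediate: both labeled isotopy and the Montesinos moves \(M1) and \(M2) preserve the covering manifold (the latter, as recalled in Section \ref{coverings/sec}, are concrete instances of the coherent monodromies merging and disjoint monodromies crossing principles), and the stabilization of a covering of $S^3$ leaves the covering space unchanged up to connected sum with $S^3$. So the whole content of the theorem lies in the ``only if'' part.

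First I would fix two labeled links $L_0, L_1 \subset S^3$ representing $n$-fold simple branched coverings $p_c: M \to S^3$ with $M$ a fixed connected oriented 3-manifold. Applying Proposition \ref{link/ribbon/thm} to each $L_c$, I obtain, after a finite sequence of labeled isotopies and moves \(M1), \(M2), a labeled ribbon surface $S_c \subset B^4$ whose boundary is a stabilized version of $L_c$ and which represents an $n$-fold simple branched covering $B^4 \to W_c$ with $\partial W_c = M$. After further stabilizing both sides to a common degree $n \geq 4$, Theorem \ref{equiv4b/thm} guarantees that $S_0$ and $S_1$ can be related by a finite sequence of labeled 1-isotopy moves, ribbon moves \(R1) and \(R2), and the moves \(P) and \(T) of Figure \ref{covering-moves03/fig}.

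The heart of the argument, and the main technical obstacle, will be to show that the restriction to $S^3 = \partial B^4$ of each of these 4-dimensional equivalence moves can be realized at the boundary level by a finite sequence of labeled isotopies and Montesinos moves \(M1) and \(M2). For most of the 1-isotopy moves in Figure \ref{covering-moves01/fig}, the restriction either leaves the boundary link unchanged or induces a labeled isotopy of its diagram, and similarly the ribbon moves \(R1) and \(R2), being localized in the interior of $B^4$, either leave $\partial S$ invariant or affect it by a labeled isotopy. Move \(T) of Figure \ref{covering-moves03/fig} takes place entirely away from the boundary, so it has no effect on the branching link. The only genuinely non-trivial case is move \(P): its restriction to the boundary inserts or removes a small labeled unknot with one positive half-twist, which is precisely a move of type \(M1) modulo labeled isotopy (and the analogous mirror version, obtained using move \(R6) to transfer the twist, gives the negative variant). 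I expect this case-by-case verification, and in particular the careful bookkeeping of how half-twists along bands of $S$ translate into linking and framing on $\partial S$, to be the most delicate part.

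Finally, once each interior move has been shown to induce at the boundary only labeled isotopy and Montesinos moves, composing these interpretations along the full sequence relating $S_0$ to $S_1$ produces a sequence of labeled isotopies and \(M1), \(M2) moves relating the stabilized versions of $L_0$ and $L_1$, which is exactly what the theorem requires. As in Theorem \ref{equiv4b/thm}, the common stabilization degree $n \geq 4$ is forced by the use of Theorem \ref{ribbon-kirby/thm}, and it cannot be lowered without losing either the fullness of $\Theta_n$ (when $n = 2$) or its faithfulness (when $n = 3$), as discussed at the beginning of Chapter \ref{surfaces/sec}.
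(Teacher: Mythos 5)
Your overall strategy — pass to labeled ribbon surfaces via Proposition \ref{link/ribbon/thm}, invoke Theorem \ref{equiv4b/thm}, and then read off the boundary effect of each 4-dimensional move — is exactly the route the paper takes, but your case-by-case analysis of the boundary restrictions is wrong precisely in the cases that carry all the content. The moves \(T) and \(P) of Figure \ref{covering-moves03/fig} are the ones that leave the branching link unchanged up to labeled isotopy (this is observed already when the quotient categories $\Sb_n$ and $\Sbb_n$ are introduced in Section \ref{quotients/sec}); in particular \(P) does not insert a split half-twisted unknot into the boundary link, and such an insertion would in any case not be an \(M1) move. By contrast, the ribbon moves \(R1) and \(R2) are not interior moves: the free edges of the bands and disks appearing in their local pictures are part of $\Bd S$, which is the branching link itself, so both moves genuinely modify the boundary. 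Their restrictions to $S^3$ are exactly where the Montesinos moves enter: the restriction of \(R2) is a crossing change between arcs carrying disjoint transpositions, i.e.\ \(M2), while showing that the restriction of \(R1) is generated by labeled isotopy and \(M1) requires an explicit verification (this is the content of Figure \ref{covering-moves06/fig}), not a dismissal.

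As written, your argument therefore has a genuine gap: the single non-trivial step of the ``only if'' direction — realizing the boundary trace of \(R1) by \(M1) moves — is never addressed, because you have classified \(R1) and \(R2) as boundary-trivial. Taken at face value, your attributions would yield the stronger (and implausible) conclusion that the stabilized links are related by labeled isotopy and \(M1) alone, with \(M2) never arising, which already signals that the bookkeeping is off. A minor further inaccuracy: Proposition \ref{link/ribbon/thm} produces, up to labeled isotopy and \(M1), \(M2), a ribbon surface whose boundary is the given link itself, with no stabilization at that stage; the stabilization to a common degree $\geq 4$ is only needed before applying Theorem \ref{equiv4b/thm}. With the boundary analysis of \(R1), \(R2), \(T), \(P) corrected as above, your outline coincides with the paper's proof.
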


\begin{Figure}[htb]{covering-moves05/fig}
{}{Montesinos moves ($i,j,k$ and $l$ all different)}
\centerline{\fig{}{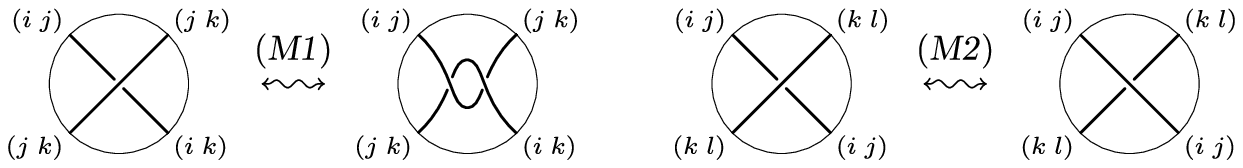}}
\end{Figure}

\begin{proof}
As mentioned in Section \ref{coverings/sec}, it has been known for a long time, since the
early work of Montesinos, that moves \(M1) and \(M2) are covering moves. That is they, as
well as labeled isotopy and stabilization, do not change the covering manifold up to
diffeomorphism (see Figure \ref{coverings06/fig} for a proof of this fact). Therefore,
nothing more has to be added about the ``if'' part of the theorem.

\begin{Figure}[htb]{covering-moves06/fig}{}{}
\centerline{\fig{}{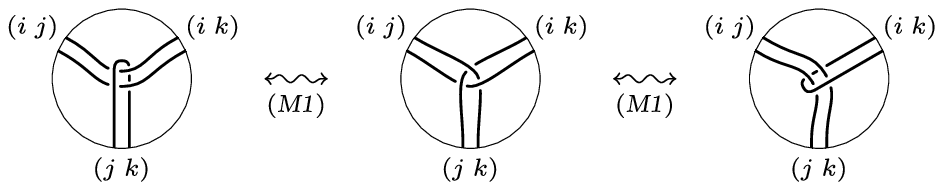}}
\end{Figure}

The ``only if'' part follows from Proposition \ref{link/ribbon/thm} and Theorem
\ref{equiv4b/thm}, taking into account that moves \(T) and \(P) preserve the boundary up
to labeled isotopy, while the restriction of moves \(R1) and \(R2) to the boundary can be
realized by moves \(M1) and \(M2) respectively. The last fact is trivial for move \(R2)
and it is shown in Figure \ref{covering-moves06/fig} for move \(R1).
\end{proof}

Finally, we want to extend Theorem \ref{equiv3/thm} to arbitrary connected branched
coverings of $S^3$, by adding the extra moves \(G1) and \(G2) in Figure
\ref{covering-moves07/fig} (cf. Figure \ref{coverings03/fig}), where the local
orientations of the branching set is only needed to specify the monodromy. The further
extension to disconnected branched coverings is left to the reader (cf. Remark
\ref{disconnected-4hb/rem}).

\begin{theorem}\label{equiv3g/thm}
Two connected coverings of $S^3$ branched over a graph represent diffeomorphic oriented
3-manifolds if and only if after stabilization to the same degree $\geq 4$ their branching
graphs can be related by labeled isotopy and the moves \(M1), \(M2),
\(G1) and \(G2) in Figures \ref{covering-moves05/fig} and \ref{covering-moves07/fig}.
\end{theorem}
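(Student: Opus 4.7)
The plan is to reduce the general graph case to the link case already solved by Theorem \ref{equiv3/thm}, using moves \(G1) and \(G2) to convert an arbitrary $\Sigma_n$-labeled graph into a simply labeled link without changing the covering manifold.

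For the ``if'' part, labeled isotopy and moves \(M1), \(M2) are known to preserve the covering manifold by Theorem \ref{equiv3/thm}, while moves \(G1) and \(G2) were already noted in Section \ref{coverings/sec} to be direct applications of the coherent monodromies merging principle, hence they too preserve the covering space up to PL homeomorphism.

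For the ``only if'' part, let $G_1, G_2 \subset S^3$ be two labeled graphs representing the same connected oriented 3-manifold $M$ as branched covers of respective degrees $n_1, n_2$. First I would show that each $G_c$ can be transformed, using only labeled isotopy and moves \(G1), \(G2), into a labeled link $L_c$ whose labels are all transpositions, so that $L_c$ represents the same manifold $M$ as an $n_c$-fold simple branched covering of $S^3$. The construction goes in two stages: along each edge of $G_c$, write its monodromy $\sigma \in \Sigma_n$ as a product of transpositions $\sigma = \tau_1 \cdots \tau_r$ and apply \(G1) iteratively to split the edge into $r$ parallel arcs labeled by $\tau_1, \dots, \tau_r$, meeting at two new vertices with the original endpoints; then at each remaining vertex of the graph, the Wirtinger-type compatibility condition around the vertex (the product of the monodromies of the outgoing meridians is the identity) together with the fact that each edge is now labeled by a transposition allows repeated application of \(G2) to eliminate the vertex, replacing it by a local link configuration. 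Once $G_1$ and $G_2$ have been converted into simply labeled links $L_1$ and $L_2$ both representing $M$, Theorem \ref{equiv3/thm} applies: after stabilization to a common degree $\geq \max(n_1, n_2, 4)$, $L_1$ and $L_2$ are related by labeled isotopy and the Montesinos moves \(M1) and \(M2). Composing with the \(G1), \(G2) transformations (and their inverses) gives the desired equivalence between $G_1$ and $G_2$.

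The main obstacle I anticipate is the second stage of the graph-to-link reduction, namely checking that moves \(G2) suffice to absorb every vertex once all edges carry transposition labels. The compatibility condition at a vertex $v$ of valence $d$ is a relation of the form $\tau_1 \cdots \tau_d = 1$ in $\Sigma_n$ among the incident transpositions (read in the correct cyclic order with appropriate signs from the local orientations). Move \(G2) is tailored exactly to handle such trivial relations of length three, so the task reduces to showing that any trivial product of transpositions in $\Sigma_n$ can be bracketed, by using moves \(G1) in reverse to pair up transpositions with composable monodromies, into nested triples that can be eliminated one by one via \(G2) (possibly after stabilization to introduce auxiliary transpositions that facilitate the bracketing). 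This is a combinatorial fact about relations in the symmetric group analogous to the Artin-type presentations used in Lemma \ref{braids/thm}, and should follow from Lemma \ref{braids/thm} itself applied to a braided presentation of the vertex link, much as Proposition \ref{link/ribbon/thm} used it to convert labeled links into boundaries of ribbon surfaces.
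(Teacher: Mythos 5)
Your overall architecture is the same as the paper's: both halves reduce to Theorem \ref{equiv3/thm} by using \(G1) to split each edge label into transpositions and \(G2) to absorb the vertices, and the ``if'' direction is handled exactly as in the paper. But the step you yourself flag as the main obstacle --- eliminating a vertex once all incident edges carry transposition labels --- is where the proposal has a genuine gap. Your plan is to ``bracket'' an arbitrary trivial product of transpositions into nested triples and to deduce this from Lemma \ref{braids/thm}. That lemma says a liftable braid factors into liftable powers of intervals; it says nothing about normalizing the cyclic sequence of transpositions around a vertex, and it is not what is needed here. The correct ingredient is the classification of branched coverings of $S^2$: looking at a small $2$-sphere around the vertex $v$ transversal to $G$, the incident monodromies $\tau_1,\dots,\tau_h$ (with $\tau_1\cdots\tau_h=1$, so $h$ is even) can be reordered \emph{by labeled isotopy} so that $\tau_i=\tau_{h-i+1}$ for $i=1,\dots,h/2$. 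Only after this pairing-up does the vertex split, by $h/2-1$ applications of \(G2), into $h/2$ non-singular bivalent vertices joining edges with equal labels. Without this reordering step your ``nested triples'' scheme has no justification, and in fact a vertex relation of length three among transpositions cannot even occur ($h$ must be even), which signals that \(G2) is not being applied the way you describe.

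A second, related omission: \(G1) and \(G2) are instances of the coherent monodromies merging principle, so each application must be checked to be \emph{performable}, i.e.\ the monodromies being merged must be coherent. The paper does this at both stages --- when splitting an edge it uses a coherent (e.g.\ minimal) factorization of $\sigma$ into transpositions, and at a vertex it verifies that the paired sequence $\tau_1,\dots,\tau_{h/2}$ is coherent. Your proposal factors each edge label into an arbitrary product of transpositions and applies \(G2) freely, with no coherence check; as written, the moves you invoke need not be legitimate covering moves at those spots. Fixing the proof means replacing the appeal to Lemma \ref{braids/thm} by the $S^2$ classification argument above and inserting the coherence verifications at each application of \(G1) and \(G2); with those changes the reduction to Theorem \ref{equiv3/thm} goes through as you intend.
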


\begin{Figure}[htb]{covering-moves07/fig}
{}{Covering moves for labeled graphs ($\sigma = \sigma_1 \!\cdot \sigma_2$)}
\centerline{\fig{}{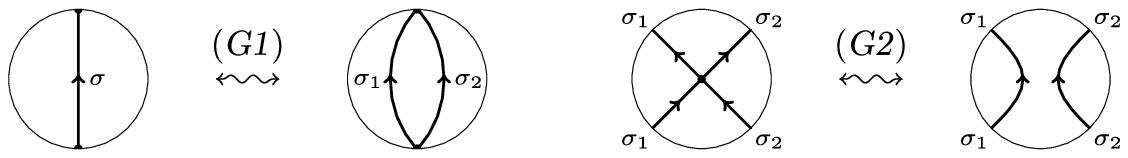}}
\end{Figure}

\begin{proof}
We have already observed in Section \ref{coverings/sec} that moves \(G1) and \(G2) are
covering moves, as they are applications of the coherent monodromies merging principle. In
the light of Theorem \ref{equiv3/thm}, we have only to show that they allow us to
transform any labeled graph into a simply labeled link. We proceed in two steps: first we
make the labeling simple, by performing moves \(G1) on the edges; then we make the graph
into a link, by performing moves \(G2) on the vertices.

Let $G \subset R^3$ be a labeled embedded graph, endowed with a given graph structure
without loops (that is every edge has distinct endpoints). We make the labeling simple,
by operating on the edges of $G$ one by one. Each time, we assume, up to labeled isotopy,
that the edge $e$ under consideration is not involved in any crossing. Denoting by $\sigma
\in \Sigma_n$ the label of $e$, we consider a coherent factorizations $\sigma = \tau_1
\dots \tau_k$ into transpositions (any minimal factorization of $\sigma$ is coherent).
Then, we split $e$ into $k$ edges $e_1, \dots, e_k$ with the same endpoints, such that
$e_i$ is labeled by $\tau_i$, for each $i = 1, \dots, k$. To do that, we perform $k-1$
moves \(G1), which progressively isolate the transpositions $\tau_i$ as labels of new
edges. Once all edges of $G$ have been managed in this way, we are left with a simply
labeled graph which we still denote by $G$.

Now, we operate on the vertices of $G$ one by one, in order to make $G$ into a link. Let
$v$ be a vertex of $G$ and $e_1, \dots, e_h$ be the edges of $G$ having $v$ as an
endpoint, numbered according to the counterclockwise order in which they appear around $v$
in the planar diagram of $G$. Since the total monodromy $\tau_1 \dots \tau_h$ around $v$
must be trivial, $h$ must be even and the edges around $v$, can be reordered, up to
labeled isotopy, in such a way that $\tau_i = \tau_{h-i+1}$, for every $i = 1, \dots,
h/2$. This immediately follows from the well-known classification of the branched
coverings of $S^2$, if one looks at a small 2-sphere around $v$ transversal to $G$ (cf.
\cite{BE79} or \cite{MP01}). Then, by $h/2 - 1$ applications of move \(G2), we replace the
vertex $v$ by $h/2$ non-singular vertices $v_1, \dots, v_{h/2}$, such that $v_i$ is a
common endpoint of $e_i$ and $e_{h-i+1}$, for each $i = 1, \dots, h/2$.\break We leave to
the reader to verify that the sequence $\tau_1, \dots, \tau_{h/2}$ is coherent and that
this suffices for the needed moves \(G2) to be performable. Obviously, after all the
singular vertices of $G$ have been replaced by non-singular ones, we are done.
\end{proof}

\newpage

\thebibliography{---%
\label{references}}

\bibitem{Ak91} S. Akbulut, {\sl An exotic 4-manifold}, J. Diff. Geom. {\bf 33} (1991),
357--361.

\bibitem{AK80} S. Akbulut and R. Kirby, {\sl Branched covers of surfaces in 4-manifolds},
Math. Ann. {\bf 252} (1980), 111--131.

\bibitem{Al20} J.W. Alexander, {\sl Note on Riemann spaces}, Bull. Amer. Math. Soc. {\bf
26} (1920), 370--373.

\bibitem{Ap03} N. Apostolakis, {\sl On 4-fold covering moves}, Algebraic \& Geometric
Topology {\bf 3} (2003), 117--145.

\bibitem{APZ11} N. Apostolakis, R. Piergallini and D. Zuddas, {\sl Lefschetz fibration over the disk}, preprint arXiv:1104.4536 (2011).

\bibitem{At90} S. Atiyah, {\sl On framings of 3-manifolds}, Topology {\bf 29} (1990), 
1--7.

\bibitem{BE78} I. Bernstein and A.L. Edmonds, {\sl The degree and branch set of a brenched 
covering}, Invent. math. {\bf 45} (1978), 213--220.

\bibitem{BE79} I. Bernstein and A.L. Edmonds, {\sl On the construction of branched
coverings of low-dimensional manifolds}, Trans. Amer. Math. Soc. {\bf 247} (1979),
87--124.

\bibitem{BW85} J.S. Birman and B. Wajnryb, {\sl 3-fold branched coverings and the mappings
class group of a surface}, in ``Geometry and Topology'', Lecture Notes in Math. {\bf
1167}, Springer-Verlag 1985, 24--46.

\bibitem{BW94} J. S. Birman and B. Wajnryb, {\sl Presentations of the mapping class group.
Errata: ``3-fold branched coverings and the mapping class group of a surface'' and ``A
simple presentation of the mapping class group of an orientable surface''}, Israel J.
Math. {\bf 88} (1994), 425--427.

\bibitem{BM03} I. Bobtcheva and M.G. Messia, {\sl HKR-type invariants of 4-thickenings of
2-di\-mensional CW-complexes}, Algebraic and Geometric Topology {\bf 3} (2003),
33--87.\label{references/sec}

\bibitem{BP04} I. Bobtcheva and R. Piergallini, {\sl Covering Moves and Kirby Calculus},
preprint arXiv:math.GT/0407032 (2004).

\bibitem{BP05} I. Bobtcheva and R. Piergallini, {\sl A universal invariant of
four-dimensional 2-handlebodies and three-manifolds}, preprint arXiv:math.GT/0612806 
(2006).

\bibitem{BQ} I. Bobtcheva and F. Quinn, {\sl The reduction of quantum invariants of
4-thicken\-ings}, Fundamenta Mathematicae {\bf 188} (2005), 21--43.

\bibitem{BKLT97} Y. Bespalov, T. Kerler, V. Lyubashenko and V. Turaev, {\sl Integrals for
braided Hopf algebras}, J. Pure Appl. Algebra {\bf 148} (2000), 113--164.

\bibitem{Ce70} J. Cerf, {\sl La stratification naturelle des espaces fonction
diff\'erentiables r\'eelles et la th\'eor\`eme de la pseudo-isotopie}, Publ. Math.
I.H.E.S. {\bf 39} (1970).

\bibitem{DMA10} D. Denicola , M. Marcolli and A.Z. al-Yasry, {\sl Spin foams and 
noncommutative geometry}, Class. Quantum Grav. {\bf 27} (2010), 205025. 

\bibitem{FR79} R. Fenn and C. Rourke, {\sl On Kirby's calculus of links}, Topology {\bf
18} (1979), 1--15.

\bibitem{Fo57} R.H. Fox, {\sl Covering spaces with singularities}, in ``Algebraic Geometry
and Topology, A symposium in honour of S. Lefschetz'', Princeton University Press 1957,
243--257.

\bibitem{Gi82} C.A. Giller, {\sl Towards a classical knot theory for surfaces in $R^4$},
Illinois J. Math. {\bf 26} (1982), 591--631.

\bibitem{Go91} R.E. Gompf, {\sl Killing the Akbulut-Kirby 4-sphere, with relevance to the
An\-drews-Curtis and Schoenflies problems}, Topology {\bf 30} (1991), 97--115.

\bibitem{GS99} R.E. Gompf and A.I. Stipsicz, {\sl 4-manifolds and Kirby calculus}, Grad.
Studies in Math. {\bf 20}, Amer. Math. Soc. 1999.

\bibitem{IP02} M. Iori and R. Piergallini, {\sl 4-manifolds as covers of $S^4$ branched
over non-singular surfaces}, Geometry \& Topology {\bf 6} (2002), 393--401.

\bibitem{Ha00} K. Habiro, {\sl Claspers and finite type invariants of links}, Geometry \&
Topology, {\bf 4} (2000), 1--83.

\bibitem{Hr01} F. Harou, {\sl Description chirugicale des rev\^etements triples simples de
$S^3$ ramifi\'es le long d'un entrelacs}, Ann. Inst. Fourier {\bf 51} (2001), 1229--1242.

\bibitem{Hr03} F. Harou, {\sl Description en terme de rev\^etements simples de
rev\^etements ramifi\'es de la sph\`ere}, preprint 2002.

\bibitem{Ht10} E. Hatakenaka, {\sl Invariants of 3-manifolds derived from covering 
presentations}, Math. Proc. Camb. Philos. Soc. {\bf 149} (2010), 263--295.

\bibitem{He96} M. Hennings, {\sl Invariants from links and 3-manifolds obtained from Hopf
algebras}, J. London Math. Soc. (2) {\bf 54} (1996), 594--624.

\bibitem{Hi74} H.M. Hilden, {\sl Every closed orientable 3-manifold is a 3-fold branched
covering space of $S^3$}, Bull. Amer. Math. Soc. {\bf 80} (1974), 1243--1244.

\bibitem{Hi76} H.M. Hilden, {\sl Three-fold branched coverings of $S^3$}, Bull. Amer. J.
Math. {\bf 98} (1976), 989--997.

\bibitem{Hs74} U. Hirsch, {\sl \"Uber offene Abbildungen auf die 3-Sph\"are}, Math. Z.
{\bf 140} (1974), 203--230.

\bibitem{Ke97} T. Kerler, {\sl Genealogy of nonpertrubative quantum invariants of
3-manifolds -- The surgical family}, in ``Geometry and Physics'', Lecture Notes in Pure
and Applied Physics {\bf 184}, Marchel Dekker 1997, 503--547.

\bibitem{Ke98} T. Kerler, {\sl Equivalence of bidged links calculus and Kirby's calculus
on links on nonsimply connected 3-manifolds}, Topology and its Appl. {\bf 87}, (1998),
155--162.

\bibitem{Ke99} T. Kerler, {\sl Bidged links and tangle presentations of cobordism
categories}, Adv. Math {\bf 141} (1999), 207--281.

\bibitem{Ke02} T. Kerler, {\sl Towards an algebraic characterization of 3-dimensional
cobordisms}, Contemporary Mathematics {\bf 318} (2003), 141--173.

\bibitem{KL01} T. Kerler and V.V. Lyubashenko, {\sl Non-semisimple topological quantum
field theories for 3-manifolds with corners}, Lecture Notes in Mathematics {\bf 1765},
Springer-Verlag 2001.

\bibitem{Ki78} R. Kirby, {\sl A calculus for framed links in $S^3$}, Invent. math. {\bf
45} (1978), 36--56.

\bibitem{Ki89} R. Kirby, {\sl The topology of 4-manifolds}, Lecture Notes in Mathematics
{\bf 1374}, Springer-Verlag 1989.

\bibitem{Ku94} G. Kuperberg, {\sl Non-involutory Hopf algebras and 3-manifold invariants},
Duke Math. J. {\bf 84} (1996), 83--129.

\bibitem{LP72} F. Laudenbach and V. Poenaru, {\sl A note on 4-dimensional handlebodies},
Bull. Soc. Math. France {\bf 100} (1972), 337--344.

\bibitem{Li62} W.R.B. Lickorish, {\sl A representation of orientable, combinatorial 
3-manifolds}, Ann. Math. {\bf 76} (1962), 531--540.

\bibitem{LP01} A. Loi and R. Piergallini, {\sl Compact Stein surfaces with boundary as
branched covers of $S^4$}, Invent. math. {\bf 143} (2001), 325--348.

\bibitem{Lu93} G. Lusztig, {\sl Introduction to quantum groups}, Progress in Mathematics
{\bf 110}, Birkh\"auser 1993.

\bibitem{Ly95}  V.V. Lyubashenko, {\sl Modular transformations for tensor categories}, J. 
Pure Appl. Algebra {\bf 98},(1998), 279--327.

\bibitem{McL} S. MacLane, {\sl Natural associativities and commutativities}, Rice Univ.
Studies {\bf 49} (1963), 28--46.

\bibitem{McL71} S.MacLane, {\sl Categories for the working Mathematicians}, Graduate Texts
in Mathematics {\bf 5}, Springer-Verlag 1971.

\bibitem{Ma03} S. Matveev, {\sl Algorithmic topology and classification of 3-manifolds},
Algorithms and Computation in Mathematics {\bf 9}, Springer 2003.

\bibitem{MaP92} S. Matveev and M. Polyak, {\sl A geometrical presentation of the surface
mapping class group and surgery}, Comm. Math. Phys. {\bf 160} (1994), 537--550.

\bibitem{Mo74} J.M. Montesinos, {\sl A representation of closed, orientable 3-manifolds as
3-fold branched coverings of $S^3$}, Bull. Amer. Math. Soc. {\bf 80} (1974), 845--846.

\bibitem{Mo75} J.M. Montesinos, {\sl Sobre la representacion de variedades 
tridimensionales}, unpublished preprint (1975).

\bibitem{Mo76} J.M. Montesinos, {\sl Three-manifolds as 3-fold branched covers of $S^3$},
Quart. J. Math. Oxford (2) {\bf 27} (1976), 85--94.

\bibitem{Mo78} J.M. Montesinos, {\sl 4-manifolds, 3-fold covering spaces and ribbons},
Trans. Amer. Math. Soc. {\bf 245} (1978), 453--467.

\bibitem{Mo79} J.M. Montesinos, {\sl Heegaard diagrams for closed 4-manifolds}, in
``Geometric Topology'', J.C. Cantrell ed., Academic Press 1979, 219--237.

\bibitem{Mo80} J.M. Montesinos, {\sl A note on 3-fold branched coverings of $S^3$}, Math.
Proc. Camb. Phil. Soc. {\bf 88} (1980), 321--325.

\bibitem{Mo83} J.M. Montesinos, {\sl Representing 3-manifolds by a universal branching
set}, Proc. Camb. Phil. Soc. {\bf 94} (1983), 109--123.

\bibitem{Mo85} J.M. Montesinos, {\sl A note on moves and irregular coverings of $S^4$},
Contemp. Math. {\bf 44} (1985), 345--349.

\bibitem{Mon92} S. Montgomery, {\sl Hopf algebras and their action on rings}, CBMS
Regional Conference Series in Mathematics {\bf 82}, Amer. Math. Soc. 1993.

\bibitem{MP01} M. Mulazzani and R. Piergallini, {\sl Lifting braids}, Rend. Ist. Mat.
Univ. Trieste {\bf XXXII} (2001), Suppl. 1, 193--219.

\bibitem{No11} T. Nosaka, {\sl 4-fold symmetric quandle invariants of 3-manifolds},
Algebraic \& Geometric Topology {\bf 11} (2011), 1601--1648.

\bibitem{Oh02} T. Ohtsuki, {\sl Problems on invariants of knots and 3-manifolds}, Geom.
Topol. Monogr. {\bf 4}, in ``Invariants of knots and 3-manifolds (Kyoto, 2001)'', Geom.
Topol. Publ. 2002, 377--572.

\bibitem{Pi91} R. Piergallini, {\sl Covering Moves}, Trans Amer. Math. Soc. {\bf 325}
(1991), 903--920.

\bibitem{Pi95} R. Piergallini, {\sl Four-manifolds as 4-fold branched covers of $S^4$},
Topology {\bf 34} (1995), 497--508.

\bibitem{PZ03} R. Piergallini and D. Zuddas, {\sl A universal ribbon surface in $B^4$},
Proc. London Math. Soc. {\bf 90} (2005), 763--782.

\bibitem{Qu95} F. Quinn, {\sl Lectures on Axiomatic Topological Quantum Field Theory},
IAS/Park City Mathematical Series {\bf 1}, Amer. Math. Soc. 1995.

\bibitem{RT91} N.Yu. Reshetikhin and V.G. Turaev, {\sl Invariants of 3-manifold via link
polynomials and quantum groups}, Invent. Math. {\bf 103} (1991), 547--597.

\bibitem{Ro51} V.A. Rohlin , {\sl A three dimensional manifold is the boundary of a four 
dimensional one}, Dokl. Akad. Nauk. SSSR {\bf 114} (1951), 355--357.

\bibitem{Ru83} L. Rudolph, {\sl Braided surfaces and Seifert ribbons for closed braids},
Comment. Math. Helvetici {\bf 58} (1983), 1--37.

\bibitem{Ru85} L. Rudolph, {\sl Special position for surfaces bounded by closed braids},
Rev. Mat. Ibero-Americana {\bf 1} (1985), 93--133; revised version: preprint 2000.

\bibitem{ST89} M. Scharlemann and A. Thompson, {\sl Link genus and Conway moves}, Comment.
Math. Helvetici {\bf 64} (1989), 527--535.

\bibitem{Sh94} M.C. Shum, {\sl Tortile tensor categories}, Journal of Pure and Applied
Algebra {\bf 93} (1994), 57--110.

\bibitem{St96} T. Standford, {\sl Finite type invariants of knots, links and graphs},
Topology {\bf 35} (1996), 1027--1050.

\bibitem{Tu00} V. Turaev, {\sl Homotopy field theory in dimension 3 and crossed
group-categories}, eprint arXiv:math/0005291 (2000).

\bibitem{Vi01} A. Virelizier, {\sl Alg\'ebres de Hopf gradu\'ees et fibr\'es plats sur les
3-vari\'et\'es}, PhD thesis, Institut de recherche math\'ematique avanc\'ee, Universit\'e
Louis Pasteur et CNRS 2001.

\bibitem{Wa91} K.Walker, {\sl On Witten's 3-manifold invariants}, preprint 1991.

\bibitem{Wa60} A.H. Wallace, {\sl Modifications and cobounding manifolds}, Canad. J. Math. 
{\bf 12} (1960), 503--528.

\end{document}